\title[Bloch-Kato for some four-dimensional symplectic Galois representations]{On the Bloch-Kato Conjecture for some four-dimensional symplectic Galois representations}
\author{Naomi Sweeting}
\address{Department of Mathematics\\Princeton University}
\email{naomiss@math.princeton.edu}
\date{2025}
\newcommand*{\home}{\mathcal H\! \mathit{om}}
\DeclareMathOperator{\inner}{Int}
\DeclareMathOperator{\JL}{JL}
\DeclareMathOperator{\nerd}{Nrd}
\DeclareMathOperator{\image}{Im}
\DeclareMathOperator{\CH}{CH}
\DeclareMathOperator{\MP}{Mp}
\DeclareMathOperator{\volume}{Vol}
\newcommand{\PM}{\pm}
\let\And\relax
\DeclareMathOperator{\SPF}{Spf}
\DeclareMathOperator{\GSP}{GSp}
\DeclareMathOperator{\symmetric}{Sym}
\DeclareMathOperator{\VL}{VL}
\DeclareMathOperator{\SO}{SO}
\DeclareMathOperator{\SP}{Sp}
\DeclareMathOperator{\Lie}{Lie}
\DeclareMathOperator{\nilpotent}{Nilp}
\DeclareMathOperator{\debt}{det}
\DeclareMathOperator{\spin}{GSpin}
\DeclareMathOperator{\Shimura}{Sh}
\DeclareMathOperator{\Sym}{Sym}
\newcommand{\tors}{{\operatorname{tors}}}
\DeclareMathOperator{\CNL}{CNL}
\DeclareMathOperator{\smooth}{sm}
\DeclareMathOperator{\Res}{Res}
\DeclareMathOperator{\Restriction}{Res}
\DeclareMathOperator{\dimension}{dim}
\DeclareMathOperator{\And}{End}
\DeclareMathOperator{\Test}{Test}
\DeclareMathOperator{\degree}{deg}
\DeclareMathOperator{\Home}{Hom}
\DeclareMathOperator{\Induction}{Ind}
\DeclareMathOperator{\induction}{Ind}
\DeclareMathOperator{\kernel}{ker}
\DeclareMathOperator{\cokernel}{coker}
\DeclareMathOperator{\cind}{c-Ind}
\newcommand{\integral}{\int}
\newcommand{\IFF}{\iff}
\newcommand{\hook}{\hookrightarrow}
\newcommand{\blackboardone}{\mathbbm{1}}
\newcommand{\total}{{\operatorname{tot}}}
\newcommand{\ramified}{{\operatorname{ram}}}
\newcommand{\ram}{{\operatorname{ram}}}
\newcommand{\lr}{{\operatorname{lr}}}
\renewcommand{\check}{\vee}
\renewcommand{\special}{\star}
\DeclareMathOperator{\trace}{tr}
\DeclareMathOperator{\Sh}{Sh}
\let\ss\relax
\newcommand{\ss}{{ss}}
\DeclareMathAlphabet{\mathdutchcal}{U}{dutchcal}{m}{n}
\SetMathAlphabet{\mathdutchcal}{bold}{U}{dutchcal}{b}{n}
\DeclareMathAlphabet{\mathdutchbcal}{U}{dutchcal}{b}{n}
\newcommand{\paranormal}{\Sh_{K ^ qK_q ^ {\paramodular}} (V_{D/q})}
\newcommand{\funnyT}{\mathtt{T}}
\newcommand{\q}{{\mathdutchbcal{q}}}
\renewcommand{\p}{\mathdutchbcal{p}}
\renewcommand{\p}{\mathfrak{p}}
\newcommand{\product}{\prod}
\newcommand{\cl}{\operatorname{cl}}
\newcommand{\isomorphism}{\xrightarrow{\sim}}
\newcommand{\support}{{\operatorname{supp}}}
\newcommand{\perpendicular}{{\perp}}
\newcommand{\metaplectic}{{\operatorname{Mp}}}
\DeclareMathOperator{\WD}{WD}
\newcommand{\AJ}
{{\operatorname{AJ}}}
\newcommand{\derived}{{\operatorname{der}}}
\newcommand{\der}{{\operatorname{der}}}
\newcommand{\Leo}{\operatorname{Lie}}
\DeclareMathOperator{\SL}{SL}
\newcommand{\fitting}{{\operatorname{Fitt}}}
\newcommand{\Frobenius}{{\operatorname{Frob}}}
\newcommand{\unramified}{{\operatorname{unr}}}
\newcommand{\ordinary}{{\operatorname{ord}}}
\newcommand{\divisors}{{\operatorname{div}}}
\newcommand{\congruent}{{\operatorname{cong}}}
\newcommand{\Selmer}{{\operatorname{Sel}}}
\newcommand{\Summer}{{\operatorname{Sel}}}
\newcommand{\summer}{{\operatorname{Sel}}}
\newcommand{\adjoint}{{\operatorname{ad}}}
\newcommand{\Adjoint}
{{\operatorname{ad}}}
\newcommand{\rel}{{\operatorname{rel}}}
\newcommand{\length}{{\operatorname{lg}}}
\newcommand{\annihilator}{{\operatorname{Ann}}}
\newcommand{\twist}{{\operatorname{twist}}}
\DeclareMathOperator{\automorphisms}{Aut}
\DeclareMathOperator{\neck}{\neq}
\DeclareMathOperator{\PGL}{PGL}
\DeclareMathOperator{\Ind}{Ind}
\DeclareMathOperator{\BC}{BC}
\DeclareMathOperator{\Span}{Span}
\DeclareMathOperator{\Spf}{Spf}
\DeclareMathOperator{\determinant}{det}
\newcommand{\Extension}{{\operatorname{Ext}}}
\newcommand{\approximate}{{\approx}}
\newcommand{\et}{{\operatorname{\acute{e}t}}}
\newcommand{\identity}{{\operatorname{id}}}
\newcommand{\union}{{\cup}}
\newcommand{\cyc}{{\operatorname{cyc}}}
\newcommand{\sick}{{\operatorname{cyc}}}
\newcommand{\Siegel}{\operatorname{Sie}}
\DeclareMathOperator{\shimmer}{Sh}
\newcommand{\nomad}{\nmid}
\newcommand{\universal}{{\operatorname{univ}}}
\let\univ\relax
\newcommand{\univ}{{\operatorname{univ}}}
\DeclareMathOperator{\Max}{max}
\newcommand{\pseudodeformations}
{{\operatorname{PsDef}}}
\let\red\relax
\newcommand{\dR}{{\operatorname{dR}}}
\newcommand{\red}{{\operatorname{red}}}
\newcommand{\Prod}{\prod}
\DeclareMathOperator{\Ord}{ord}
\DeclareMathOperator{\order}{ord}
\newcommand{\tw}{{\operatorname{tw}}}
\newcommand{\sing}{{\operatorname{sing}}}
\DeclareMathOperator{\pr}{pr}
\newcommand{\Crystal}{{\operatorname{cris}}}
\newcommand{\crystal}{{\operatorname{cris}}}
\newcommand{\local}{\operatorname{loc}}
\newcommand{\half}{{\frac{1}{2}}}
\newcommand{\SC}{{\operatorname{SC}}}
\newcommand{\quiver}{\equiv}
\newcommand{\paramodular}{{\operatorname{Pa}}}
\newcommand{\multiplicity}{\operatorname{mult}}
\newcommand{\Siegelnormal}{\Sh_{K ^ qK_q ^ {\Siegel}} (V_{D/q})}
\newcommand{\hypernormal}{\Sh_{K ^ qK_q} (V_{D/q})}
\DeclareMathOperator{\inc}{inc}
\DeclareMathOperator{\Nm}{Nm}
\DeclareMathOperator{\HT}{HT}
\begin{document}
\begin{abstract}
    The Bloch-Kato conjecture predicts a far-reaching connection between orders of vanishing of $L$-functions and the ranks of Selmer groups of $p$-adic Galois representations. In this article, we consider the four-dimensional, symplectic  Galois representations arising from automorphic representations $\pi$ of $\GSP_4(\A_\Q)$ with trivial central character and with the lowest cohomological archimedean weight. Under mild technical conditions, we prove that the Selmer group vanishes when the central value $L(\pi,\operatorname{spin},1/2)$ is nonzero.    
    In the spirit of bipartite Euler systems, we bound the Selmer group by using level-raising congruences to construct ramified Galois cohomology classes. The relation to $L$-values comes via the $\spin_3\hookrightarrow \spin_5$ periods on a compact inner form of $\GSP_4$. We also prove a result towards the rank-one case:
     if the $\pi$-isotypic part of the  Abel-Jacobi image of any of Kudla's one-cycles on the Siegel threefold is nonzero, it generates the full Selmer group. These cycles are linear combinations of embedded quaternionic Shimura curves, and under the conjectural arithmetic Rallis inner product formula, their heights  are related to $L'(\pi,\operatorname{spin},1/2)$.
\end{abstract}
\maketitle
\tableofcontents

\numberwithin{equation}{section}
\newpage
\setcounter{section}{-1}
\section{Introduction}
\subsection{Main results}\label{subsec:intro_main}
Let $\pi=\otimes' \pi_v$ be a cuspidal automorphic representation of $\GSP_4(\A_\Q)$ of trivial central character and conductor $N(\pi)$, such that $\pi_\infty$ belongs to the discrete series $L$-packet of parallel weight $(3,3)$. In particular, $\pi$ appears in the \'etale cohomology of the $\GSP_4$ Shimura variety with trivial coefficients. Let $E$ be a sufficiently large coefficient field; then we have a compatible family of $p$-adic Galois representations
$$\rho_{\pi,\p}: G_\Q \to \GSP_4(E_\p)$$ 
indexed by primes $\p|p$ of $E$, and normalized so that the similitude character of $\rho_{\pi,\p}$ is cyclotomic. Let $V_{\pi,\p}$ be the underlying four-dimensional $E_\p[G_\Q]$-module of $\rho_{\pi,\p}$, and consider the Bloch-Kato Selmer group 
$$H^1_f(\Q, V_{\pi,\p}) = \ker\left(H^1(\Q, V_{\pi,\p}) \to H^1(\Q_p, V_{\pi,\p}\otimes B_\crystal) \times \prod_{\l\neq p} H^1(I_{\Q_\l}, V_{\pi,\p})\right),$$ where $I_{\Q_\l}$ is the local inertia subgroup. 
The Bloch-Kato conjecture applied to $V_{\pi,\p}$ predicts
$$\dim_{E_\p} H^1_f(\Q, V_{\pi,\p}) = \ord_{s = 1/2} L(\pi,\operatorname{spin}, 1/2),$$ where the $L$-function is normalized so that $s = 1/2$ is the central value.  Our first main result proves many cases of the Bloch-Kato conjecture for $V_{\pi,\p}$ in rank zero.
\begin{thmintro}[Theorem \ref{thm:rk_zero_main}]\label{thm:intro_rk0}
    Suppose $\pi$  is not CAP or endoscopic, and for some $\l|N(\pi)$, $\pi_\l$ has a local Jacquet-Langlands transfer to the compact inner form of $\GSP_{4,\Q_\l}$. Let $\p|p$ be a prime of $E$ such that:
    \begin{enumerate}

        \item\label{item:intro_rk0_1} $\pi_p$ is unramified.
        \item\label{item:intro_rk0_2} The residual representation $\overline \rho_{\pi,\p}$ is absolutely irreducible and generic (Definition \ref{def:generic}).
        \item \label{item:intro_rk0_3}There exists a prime $q\nmid N(\pi)$ such that $q^4\not\equiv 1\pmod p$, and $\overline\rho_{\pi,\p}(\Frob_q)$ has eigenvalues $$\set{q, 1, \alpha, q/\alpha}$$ with $\alpha\not\in \set{\pm 1, \pm q, q^2, q^{-1}}$.
    \end{enumerate}
    Then $$L(\pi, \operatorname{spin}, 1/2) \neq 0 \implies H^1_f(\Q, V_{\pi,\p})=0.$$
\end{thmintro}

  \begin{rmksunnumbered}
  \begin{enumerate}[label = (\roman*)]
      \item  The conditions (\ref{item:intro_rk0_1}) and (\ref{item:intro_rk0_2}) are always satisfied for cofinitely many $\p$. Condition (\ref{item:intro_rk0_3}) is always satisfied for the primes $\p$ lying over a set of rational primes of positive Dirichlet density; it is satisfied for all but finitely many $\p$ under conditions listed in Theorem \ref{thm:when_adm_primes}. 
                      \item When $p > 5$, which is necessary for (\ref{item:intro_rk0_3}),  $\pi$ is  CAP or endoscopic if and only if $V_{\pi,\p}$ is reducible.

      \item For all but finitely many of the primes $\p$ satisfying the conditions in Theorem \ref{thm:intro_rk0}, we are able to strengthen the result to the 
     vanishing of the  dual Selmer group $H^1_f(\Q, V_{\pi,\p}/T_{\pi,\p})$, where $T_{\pi,\p}\subset V_{\pi,\p}$ is a Galois-stable lattice; see Definition \ref{def:divisible_Selmer} and Corollary \ref{cor:rk_zero_integral}.
  \end{enumerate}
      \end{rmksunnumbered}
    For instance, from Theorem \ref{thm:intro_rk0} we can deduce the following: 
    \begin{corintro}[Corollary \ref{cor:rk_zero_integral_IIa}]
        Suppose $\pi$ is not CAP or endoscopic, and there exists a prime $\l|N(\pi)$ such that $\pi_\l$ is of type IIa in the notation of \cite{roberts2007local}. Then $$L(\pi,\operatorname{spin}, 1/2) \neq 0 \implies H^1_f(\Q, V_{\pi,\p}/T_{\pi,\p}) = 0$$ for all but finitely many $\p$. 
    \end{corintro}
    In \S\ref{subsec:corollaries_rk0}, we give  applications of Theorem \ref{thm:intro_rk0} to certain $\pi$ arising by automorphic induction; this gives new results towards Bloch-Kato for Hilbert modular forms over real quadratic fields of non-parallel weight $(2,4)$ (including CM forms), and for twists of classical modular forms of weight 3 by certain Hecke characters of imaginary quadratic fields.

    Our second main result concerns the rank one case of Bloch-Kato for $V_{\pi,\p}$. To state it,
    let $V$ be a quadratic space of signature $(3,2)$, and suppose $\pi$ admits a Jacquet-Langlands transfer $\Pi$ to the inner form $\spin(V)$ of $\GSP_4$; for instance, if $V$ is split, then $\spin(V) = \GSP_4$ and we can take $\Pi = \pi$. Let $K \subset \spin(V)(\A_f)$ be a neat compact open subgroup such that $\Pi_f^K \neq 0$, and let $\Sh_K(V)$ be the Shimura variety for $\spin(V)$ at level $K$, which is  a classical Siegel threefold when $V$ is split.  
 Let $\p|p$ be a prime of $E$; then we have
\begin{equation}\label{eq:intro_etale}H^i_\et(\Sh_K(V)_{\overline \Q}, E_\p)[\Pi_f] =\begin{cases} \Pi_f^K \otimes V_{\pi,\p}(-2), & i = 3,\\ 0, & i\neq 3.\end{cases}\end{equation}
In particular, if $\CH^2(\Sh_K(V))_{\m_{\pi_f}}$ denotes the Chow group of codimension-two algebraic cycles,  localized at the maximal ideal of an appropriate Hecke algebra corresponding to the eigenvalues of $\pi$, we have an Abel-Jacobi map
\begin{equation}
\begin{split}
   \partial_{ \AJ,{\Pi_f}}: \CH^2(\Sh_K(V))_{\m_{\pi_f}} &\to H^1_f(\Q, H^3_\et(\Sh_K(V)_{\overline \Q}, E_\p(2)) [\Pi_f]) \\ &= H^1_f(\Q, V_{\pi,\p}) \otimes \Pi_f^K.
   \end{split}
\end{equation}
We consider the codimension-two Kudla cycles
\begin{equation}
    \label{eq:intro_kudla}
    Z(T,\phi) \in \CH^2(\Sh_K(V)),
\end{equation} which are indexed by the data of a $2\times 2$ symmetric, positive-semidefinite matrix $T$ and a $K$-invariant test function $\phi \in \mathcal S(V^2\otimes \A_f, \Z).$
When $T$ is nondegenerate, $Z(T, \phi)$ is a linear combination of Shimura curves over $\Q$, embedded into $\Sh_K(V)$ by $\spin(V)(\A_f)$-translates of group maps of the form
$$\spin(1,2) \hookrightarrow\spin(3,2);$$
conversely, any such embedded Shimura curve can be written as a Kudla cycle for some $T$ and $\phi$.
\begin{thmintro} [Theorem \ref{thm:rk1_non_endoscopic_ultimate}]\label{thm:intro_rk1}
Suppose $\pi$ is not CAP or endoscopic, and its base change to $\GL_4(\A_\Q)$ does not arise by automorphic induction from an imaginary quadratic or quartic CM field.
Let $\p|p$ be a prime of $E$ satisfying (\ref{item:intro_rk0_1})-(\ref{item:intro_rk0_3}) from Theorem \ref{thm:intro_rk0}.
Then for any Kudla cycle $Z(T, \phi) \in \CH^2(\Sh_K(V))$,
$$\partial_{\AJ, \Pi_f} (Z(T, \phi)) \neq 0 \implies \dim_{E_\p} H^1_f(\Q, V_{\pi,\p}) = 1.$$
    
\end{thmintro}
We now explain the relation of the Bloch-Kato Conjecture to Theorem \ref{thm:intro_rk1}, in which $L$-values do not directly appear. According to the resolution by Bruinier and Westerholt-Raum of Kudla's modularity conjecture \cite{bruinier2015kudla}, the formal $q$-series
\begin{equation}
  \Theta_\phi\coloneqq   \sum_{T \in \Sym_2(\Q)_{\geq 0}} Z(T, \phi) q^T  
\end{equation}
lies in $\CH^2(\Sh_K(V)) \otimes_\Z M_{5/2,2}$,
where $M_{5/2, 2}$ is the space of holomorphic Siegel modular forms 
of degree 2 and weight 5/2. Let $f\in M_{5/2, 2}$ generate the automorphic representation of the metaplectic group $\MP_4(\A_\Q)$ corresponding to $\Pi$ under the generalized Shimura-Waldspurger correspondence of Gan-Li \cite{gan2018shimura}; then the Petersson inner product $\Theta_\phi(f) \coloneqq \langle \Theta_\phi, f\rangle $ lies in $\CH^2(\Sh_K(V))_\C$. The still-conjectural\footnote{But see the important progress towards this result by Li-Zhang \cite{li2022arithmetic}, and the unitary case proved by Li-Liu \cite{li2021chowI, li2022chowII}.} \emph{arithmetic Rallis inner product formula} proposes that, up to some local factors depending on $\phi$ and $f$, 
the derivative $L'(\pi, \operatorname{spin}, 1/2)$ is related to the height of the cycle $\Theta_\phi(f)$ (suitably interpreted). Assuming Beilinson's conjecture on the injectivity of the $p$-adic Abel-Jacobi map, Theorem \ref{thm:intro_rk1} could then be reformulated as 
$$L'(\pi,\operatorname{spin}, 1/2)\neq 0 \implies \dim_{E_\p} H^1_f(\Q, V_{\pi,\p}) = 1,$$
   which is consistent with the Bloch-Kato conjecture.
\subsection{Overview of the proofs}\label{subsec:overview_intro}
Before giving more detailed sketches below, we briefly indicate the methods of proof of Theorems \ref{thm:intro_rk0} and \ref{thm:intro_rk1}. Pursuing a strategy initiated by Bertolini and Darmon for elliptic curves \cite{bertolini2005iwasawa} -- and later extended to many new contexts by other authors
\cite{
chida2015anticyclotomic,corato2023spherical,
liu2016hirzebruch,
liu2019bounding,liu2020supersingular,liu2022beilinson,wang2022triple} -- we bound the Selmer group by constructing ramified Galois cohomology classes through level-raising congruences and special cycles on Shimura varieties for  ramified $\spin_5$ groups. Proving the classes we construct are ramified is the most delicate part: by a calculation on the special fiber, the ramification is essentially measured by linear combinations of compact $\spin_3-\spin_5$ periods for a Jacquet-Langlands transfer of $\pi$.\footnote{Or a congruent automorphic form to $\pi$, in the case of Theorem \ref{thm:intro_rk1}.} To access the underlying representation theory of these periods and relate them to $L$-values, we interpret them as the Fourier coefficients of certain theta lifts in $M_{5/2,2}$.

\subsection{Sketch of the proof in the rank zero case}\label{subsec:intro_sketch_rk0}
Let us explain the main ideas involved in the proof of Theorem \ref{thm:intro_rk0}. Fix a $G_\Q$-stable lattice $T_{\pi,\p} \subset V_{\pi,\p}$, and let $T_{\pi,n} \coloneqq T_{\pi,\p}/\p^n T_{\pi,\p}$ for $n \geq 1$.   The mechanism for bounding the Selmer group is a collection of auxiliary  Galois cohomology classes
$$\kappa_n(q) \in H^1(\Q, T_{\pi,n}),$$ indexed by primes $q$ satisfying appropriate level-raising congruence conditions modulo $\p^n$, and 
 having the following two properties:
\begin{enumerate}
    \item The restriction $\Res_\l \kappa_n(q)$ is unramified for all $\l\nmid N(\pi)p q$, and crystalline for $\l = p$.
    \item Under the assumption $L(\pi, \operatorname{spin}, 1/2) \neq 0$,  $\Res_q \kappa_n(q)$ is ramified (if $n$ is sufficiently large). 
\end{enumerate}
Given such a system of classes, a standard argument using Poitou-Tate duality shows that $H^1_f(\Q, V_{\pi,\p})$ vanishes. 

As indicated in \S\ref{subsec:overview_intro}, we obtain the classes $\kappa_n(q)$ by level-raising congruences from special cycles on  Shimura varieties for  ramified $\spin_5$ groups. More precisely, fix the prime $\l|N(\pi)$ such that $\pi_\l$ has a local Jacquet-Langlands transfer (we will see later that this is crucial for the argument). Let $V_{q\l}$ be the unique five-dimensional quadratic space of signature $(3,2)$ and trivial discriminant that ramifies precisely at $q$ and $\l$; one exists by the local-global classification of quadratic forms. On the corresponding $\spin$ Shimura variety $\Sh(V_{q\l})$ we have the codimension-two Kudla cycles $Z(T, \phi) \in \CH^2(\Sh(V_{q\l}))$ as in \S\ref{subsec:intro_main}. (For simplicity, we suppress  the choice of level structure.)
After localizing at the maximal ideal $\m$ of the Hecke algebra corresponding to $\overline\rho_{\pi,\p}$, the main result of \cite{hamann2023torsion} implies that $H^4(\Sh(V_{q\l}), O_\p)_\m = 0$, where $O \subset E$ is the ring of integers --  this is the most crucial way that the condition (\ref{item:intro_rk0_2}) of Theorem \ref{thm:intro_rk0} enters the argument. In particular, one has an Abel-Jacobi map
$$\partial_{\AJ, \m}: \CH^2(\Sh(V_{q\l}))_\m \to H^1(\Q, H^3_\et(\Sh(V_{q\l})_{\overline\Q}, O_\p(2))_\m).$$
However, it is no longer true that $V_{\pi,\p}$ appears in the \'etale cohomology $H^3_\et(\Sh(V_{q\l})_{\overline \Q}, E_\p)$, because, as $\pi_q$ is unramified, $\pi$ does not have a Jacquet-Langlands transfer to $\spin(V_{q\l})$.

Instead, we construct \emph{level-raising maps} (see below)
$$\alpha_n: H^3_\et(\Sh(V_{q\l})_{\overline\Q}, O_\p(2))_\m \to T_{\pi,n}.$$ Then we obtain a family of Galois cohomology classes $$\kappa_n(q, T, \phi, \alpha_n) \coloneqq \alpha_n \circ \partial_{\AJ,\m} (Z(T,\phi))\in H^1(\Q, T_{\pi,n}),$$ for varying $T$ and $\phi\in \mathcal S(V_{q\l}^2 \otimes \A_f, \Z)$. Any $\kappa_n(q, T, \phi, \alpha_n)$ will satisfy 
the property (1) above, as long as the level structure for $\Sh(V_{q\l})$ is chosen to be hyperspecial outside $N(\pi)q$. The proof of (2) is far more subtle, as we explain below, and we are only able to show that \emph{some} $\kappa_n(q, T,\phi,\alpha_{n})$ is ramified at $q$; in particular, the choice of $T$ cannot be made explicit. Once we have any ramified class, however, we can use it as the class called $\kappa_n(q)$ above.
\subsubsection{Ramification and the relation to $L$-values}

Let $V_\l$ be the unique five-dimensional, positive-definite quadratic space of trivial discriminant that ramifies only at $\l$. By the assumption that $\pi_\l$ has a local Jacquet-Langlands transfer, one can find a function
\begin{equation}\label{eq:beta_pi_intro}\beta_\pi: \spin(V_\l)(\Q) \backslash \spin(V_\l)(\A_f)/K \to O\end{equation} with the same spherical Hecke eigenvalues as $\pi$, where $K$ is a sufficiently small compact open subgroup, with $K_q$ hyperspecial. We abbreviate this double coset space as $\Sh(V_\l)$. Because $\pi$ has trivial central character, $\beta_\pi$ descends to an automorphic form on $\SO(V_\l)$; in particular, 
for any $\phi\in \mathcal S(V_\l^2\otimes \A_f , \Z)$, one can consider the classical theta lift
$$\Theta_\phi(\beta_\pi) \in M_{5/2,2}\otimes_\Z O.$$
The classical Rallis inner product formula computes the Petersson norm of $\Theta_\phi(\beta_\pi)$:
\begin{equation}
    \langle \Theta_\phi(\beta_\pi), \Theta_\phi(\beta_\pi) \rangle \doteq \langle \beta_\pi, \beta_\pi\rangle \cdot L(\pi, \operatorname{spin}, 1/2)
\end{equation}
 up to local factors depending on $\phi$ and $\beta_\pi$.
   
On the other hand, the Fourier coefficients of $\Theta_\phi(\beta_\pi)$ can be  computed as $\spin_3$-periods of $\beta_\pi$. In fact, for $T\in \Sym_{2}(\Q)_{\geq 0}$ and $\phi\in \mathcal S(V_\l^2\otimes \A_f, \Z)$, one can define $Z(T, \phi) \in \Z[\Sh(V_\l)]$ analogously to Kudla's cycles in (\ref{eq:intro_kudla}), arising from group embeddings $\spin_3\hookrightarrow \spin(V_\l)$. Then we have:
$$\Theta_\phi(\beta_\pi) 
= \sum_{T\in \Sym_2(\Q)_{\geq 0}}  \beta_\pi(Z(T, \phi))q^T.$$
In particular,
\begin{equation}\label{eq:intro_L_Z}L(\pi,\operatorname{spin}, 1/2) \neq 0 \iff\; \exists\, T, \phi \text{ s.t. } \beta_\pi(Z(T, \phi)) \neq 0.\end{equation}
The connection to Galois cohomology classes comes from the following (idealized) identity, for a certain choice of $\alpha_n$: 
\begin{equation}\label{eq:intro_magic_identity}\Res_q\kappa_n (q, T, \phi^q\otimes \phi_q^\ram, \alpha_n) = \beta_\pi(Z(T, \phi^q\otimes \phi_q^{\unr})) 
 \pmod {\p^n}.\end{equation}
 Here $\phi^q\in \mathcal S(V_{q\l}^2\otimes \A_f^q, \Z) \simeq \mathcal S(V_\l^2 \otimes \A_f^q, \Z)$ is any test function, $\phi_q^\ram\in \mathcal S(V_{q\l}^2 \otimes \Q_q, \Z)$ is  designed so that $Z(T, \phi^q\otimes \phi_q^\ram) \in \CH^2(\Sh(V_{q\l}))$ will have a reasonable integral model over $\Z_{(q)}$ -- in fact, $Z(T, \phi^q\otimes \phi_q^\ram)$ is a linear combination of quaternionic Shimura curves ramified at $q$, with the usual maximal compact level structure -- and $\phi_q^{\unr}\in \mathcal S(V_\l^2 \otimes \Q_q, \Z)$ is an explicit test function whose origin is described in the next paragraph.
Finally, to make sense of the identity (\ref{eq:intro_magic_identity}), we use that $H^1(I_{\Q_q}, T_{\pi,n})^{\Frob_q = 1}$ is free of rank one over $O/\p^n$, by the congruence conditions imposed on $q$; so we are viewing both sides  as elements of $O/\p^n$. 
 
The main tool for proving (\ref{eq:intro_magic_identity}) is an analysis of the weight spectral sequence for a semistable integral model of $\Sh(V_{q\l})$ over $\Z_{(q)}$; this is obtained by blowup from the canonical PEL model, which has ordinary quadratic singularities. The double coset space $\Sh(V_\l)$ enters the picture as the indexing set for the irreducible components of the (two-dimensional) supersingular locus of the special fiber of $\Sh(V_{q\l})$, and indeed $\phi_q^\unr$ arises from considering the intersections of the special fiber of $Z(T, \phi^q\otimes\phi_q^\ram)$ with various strata in the supersingular locus.
 (Of course, for this discussion to be accurate, the level structures for $\Sh(V_{q\l})$ and $\Sh(V_\l)$ are chosen to be compatible away from $q$.)

However, even once we have proved (\ref{eq:intro_magic_identity}), one major obstacle remains to proving  that at least one of the classes $\Res_q\kappa_n(q, T, \phi^q\otimes\phi_q^\ram, \alpha_n)$ is ramified.   From (\ref{eq:intro_L_Z}), one can deduce that there exists $\phi^q$ and $T$ such that $\beta_\pi(Z(T, \phi^q\otimes \phi_q^{\operatorname{sph}})) \neq 0 \pmod {\p^n}$ for sufficiently large $n$, where $\phi_q^{\operatorname{sph}}$ is the indicator function of a self-dual lattice. Unfortunately, although $\phi_q^\unr$ is invariant by a hyperspecial subgroup $K_q$ of $\spin(V_{\l})(\Q_q)$, it is not equal to $\phi_q^{\operatorname{sph}}$. To get around this, we give a  criterion on $K_q$-invariant functions $\phi_q\in \mathcal S(V_\l^2 \otimes \Q_q, \Z)$ under which we can prove:
\begin{equation}\label{eq:intro_change_test}\beta_\pi(Z(T, \phi^q\otimes\phi_q^{\operatorname{sph}})) \neq 0 \pmod {\p^n} \implies \exists\, T'\text{ s.t. }\beta_\pi(Z(T', \phi^q\otimes \phi_q))\neq 0 \pmod {\p^n}.\end{equation}
The function $\phi_q^{\unr}$ satisfies the criterion, so indeed we can use (\ref{eq:intro_L_Z}), (\ref{eq:intro_magic_identity}), and (\ref{eq:intro_change_test}) to obtain the local ramification of some class $\Res_q\kappa_n(q, T', \phi,\alpha_n)$. 
The lack of control over $T'$ (in particular its possible dependence on $q$) is the reason that we must consider the full family of classes $\kappa_n(q, T, \phi, \alpha_n)$.

The proof of (\ref{eq:intro_change_test}) uses that the theta lift map
$$\Theta_{-} (\beta_\pi): \mathcal S(V_\l^2 \otimes \A_f, \Z) \to M_{5/2,2} \otimes_\Z O$$ factors as a product of local maps, defined purely in terms of the local Weil representation. Thus (\ref{eq:intro_change_test}) can be reduced to a local question about the mod-$p$  Weil representation of $\MP_4(\Q_q)\times \SO(V_{\l})(\Q_q)$ on $\mathcal S(V_\l^2, \overline \F_p)$.

\subsubsection{Level-raising}\label{subsubsec:intro_lr}
In the discussion above, we asserted the existence of a level raising map \begin{equation}\label{eq:alpha_n_intro}
    \alpha_n : H^3_\et (\Sh(V_{q\l})_{\overline\Q}, O_\p(2))_\m \to T_{\pi,n}
\end{equation} such that the classes $\kappa_n(q, T, \phi^q\otimes \phi_q^\ram, \alpha_n)$ satisfy the identity (\ref{eq:intro_magic_identity}). We now explain the construction of $\alpha_n$ in more detail. 
The weight spectral sequence for $\Sh(V_{q\l})$ gives rise to a diagram of the following form:

\begin{equation}\label{eq:cd_intro}
\begin {tikzcd}
M_{-1} H ^ 1\left (I_{\Q_q}, H ^ 3_{\et} (\Shimura (V_{q\l})_{\overline\Q}, O_\p (2))_\m\right) \arrow [r, hook]\arrow [d, twoheadrightarrow, "\xi"] &H ^ 1\left (I_{\Q_q}, H ^ 3_{\et} (\Shimura(V_{q\l})_{\overline\Q}, O_\p (2))_\m\right) ^ {\Frobenius_q = 1}\arrow[ddl, dashed, "\alpha_{n,\ast}"]\\
O_\p\left [\Shimura(V_{\l})\right]_\m/(\funnyT^\lr_q) \arrow[d,"\beta_\pi"] \\
O/\p^n\simeq H^1(I_{\Q_q}, T_{\pi,n})^{\Frob_q = 1},
\end{tikzcd}
\end{equation}
where $\funnyT^\lr_q$ is a certain spherical Hecke operator measuring the level-raising congruence at the prime $q$;  $M_{-1}$ refers to the monodromy filtration;  and the equality in the bottom row uses the level-raising condition on $q$. Any level-raising map as in (\ref{eq:alpha_n_intro}) induces a map
$\alpha_{n, \ast}$ as in (\ref{eq:cd_intro}), not necessarily making the diagram commute.

Using an idea of Scholze \cite{scholze2018lubintate} on ``typic-ness'' of Galois modules, we show that the data of a Hecke-equivariant $\alpha_{n, \ast}$ is actually \emph{equivalent} to the data of $\alpha_n$. Moreover, the identity
(\ref{eq:intro_magic_identity}) is more or less equivalent to the commutativity of the diagram (\ref{eq:cd_intro}). So our task is to find the Hecke-equivariant dashed arrow in (\ref{eq:cd_intro}) lifting $\beta_\pi\circ \xi$.

One approach to finding the lift in (\ref{eq:cd_intro}) would be Taylor-Wiles patching, in the spirit of \cite{liu2024deformation}.
This would show that $H^3_\et(\Sh(V_{q\l})_{\overline\Q}, O_\p)_\m$ is free as a Hecke module, and as a byproduct that 
the top arrow in (\ref{eq:cd_intro}) is an isomorphism.
However, the drawback of the patching argument is that it requires some strong ``residual ramification'' conditions on $\overline\rho_{\pi,\p}|_{G_{\Q_\l}}$, for primes $\l|N(\pi)$; in general, one expects that the Hecke-module structure of $H^3_\et(\Sh(V_{q\l})_{\overline\Q}, O_\p)_\m$ may be more complicated.

Instead of taking this route, our method still uses Galois deformation theory, but now only to obtain a rather coarse quantitative control on the possible congruences between $ \beta_\pi\circ \xi$ and other Hecke eigensystems in $H^1(I_{\Q_q}, H^3_\et(\Sh(V_{q\l})_{\overline\Q}, O_\p(2))_\m)$. Indeed, if there were no congruences at all, then (\ref{eq:cd_intro}) would just be a diagram of $O/\p^n$-modules, and the  lift $\alpha_{n, \ast}$ would exist for trivial reasons. In general, we are able to lift $\beta_\pi\circ \xi$ only after multiplying by a generator of  $\p^C$, where $C$ is essentially the length of a certain adjoint Selmer group for $T_{\pi,n}$.
We can control $C$  using a theorem of Newton and Thorne \cite{newton2023thorne, thorne2022vanishing} that $H^1_f(\Q, \ad^0\rho_{\pi,\p}) = 0;$ since $C$ is independent of $n$ and $q$ (and actually 0 for cofinitely many $\p$) its appearance causes no harm in the argument.\footnote{We remark that the theorem of Newton and Thorne still uses Taylor-Wiles patching, but in a more flexible context.} 
\subsection{Sketch of the proof in the rank one case}\label{subsec:intro_sketch_rk1}
Let $V$ be the quadratic space from \S\ref{subsec:intro_main}. For an auxiliary  prime $q_1$ satisfying a level-raising congruence condition, we consider the ``nearby''  quadratic space $V_{q_1}$, which has trivial discriminant and signature $(5,0)$, and is ramified precisely at $q_1$ and the primes of ramification for $V$. 

 The key point to prove Theorem \ref{thm:intro_rk1} is to produce a Hecke-equivariant map of the form \begin{equation}\label{eq:intro_beta}\beta_{q_1}: \Sh(V_{q_1}) \to O/\p^n\; \text{ s.t. }\; \exists\, T, \phi \text{ with } \beta_{q_1} (Z(T, \phi)) \neq 0,\end{equation}
where $\phi$ lies in $\mathcal S(V_{q_1}^2 \otimes \A_f, \Z)$, $\Sh(V_{q_1})$ is a finite double coset space like the one in (\ref{eq:beta_pi_intro}), the Hecke module structure on $O/\p^n$ is given by $\pi$, and again the choice of level structure is suppressed for simplicity. Once we have $\beta_{q_1}$, a similar argument to the proof of Theorem \ref{thm:intro_rk0} allows us to produce a Galois cohomology class
$\kappa_n(q_1q_2) \in H^1(\Q, T_{\pi,n})$ which is ramified at $q_2$, crystalline at $p$, and unramified at $\l$ for $\l\nmid N(\pi) pq_1q_2$. This class is then used as the input to the duality argument to bound $H^1_f(\Q, V_{\pi,\p})$. 

Let $Z(T, \phi)\in \CH^2(\Sh(V))$ be the class from Theorem \ref{thm:intro_rk1} with $\partial_{\AJ, \Pi_f}(Z(T, \phi)) \neq 0$. 
The idea to find $\beta_{q_1}$ is to study the special fiber of $\Sh(V)$ at $q_1$,  a prime of good reduction. The supersingular locus is purely one-dimensional, indexed by a Shimura set $\Sh(V_{q_1})$.  
We have the Abel-Jacobi map on the special fiber:
\begin{equation}\label{eq:AJ_intro}
    \partial_{\AJ, \m}: \CH^2(\Sh(V)_{\F_{q_1}}) \to H^1(\F_{q_1}, H^3_{\et}(\Sh(V)_{\overline\F_{q_1}}, O_\p(2))_\m)= H^1_\unr(\Q_{q_1}, H^3_{\et}(\Sh(V)_{\overline\Q}, O_\p(2))_\m).
\end{equation}
Restricting (\ref{eq:AJ_intro}) to supersingular cycles gives a map
\begin{equation}\label{eq:partial_on_ss}\partial_{\AJ, \m, \ss}: \Z[\Sh(V_{q_1})]\to H^1_\unr(\Q_{q_1}, H^3_{\et}(\Sh(V)_{\overline\Q}, O_\p(2))_\m).\end{equation}
Composing with a map $H^3_{\et} (\Sh(V)_{\overline\Q}, O_\p(2))_\m \to T_{\pi, \p}$, cf. (\ref{eq:intro_etale}), and noting that $H^1_\unr(\Q_{q_1}, T_{\pi,n})$ is free of rank one over $O/\p^n$ when $q_1$ satisfies the  level-raising congruence condition modulo $\p^n$,  we obtain our $\beta_{q_1}$. 

Using the Chebotarev density theorem and the local-global compatibility of the Abel-Jacobi map, one can ensure $\partial_{\AJ,\m} (Z(T, \phi)_{\F_{q_1}})$ is nonzero for a good choice of $q_1$. Now, if the special fiber  $Z(T, \phi)_{\F_{q_1}}$ were purely supersingular, then we could identify this local Abel-Jacobi class with the image of a special cycle in $\Z[\Sh(V_{q_1})]$ under $\beta_{q_1}$, and obtain the nonvanishing (\ref{eq:intro_beta}).
(This is the strategy of  \cite{bertolini2005iwasawa}, where the role of $Z(T, \phi)$ is played by a Heegner point.) Unfortunately, such is not the case for general $Z(T, \phi)$.\footnote{One could imagine modifying $\phi_q$ to ensure a purely supersingular special fiber, and applying a version of  (\ref{eq:intro_change_test}) to ensure that $Z(T, \phi)$ remains nontrivial under this change of test function. Somewhat to our surprise, this strategy fails: any appropriate choice of $\phi_q$ fails the criterion under which we prove (\ref{eq:intro_change_test}).} 

Instead, we use an auxiliary $\spin_4$ Shimura variety $S$ such that $Z(T, \phi) \subset S \subset \Sh(V)$. On the special fiber of $S$, the supersingular locus is one-dimensional, and -- by \cite{tian2019tate} -- any one-cycle  is cohomologically equivalent to a linear combination of supersingular cycles, at least after the application of a certain $\spin_4$-Hecke operator $\funnyT_{q_1}$.  We leverage this   to rewrite $\partial_{\AJ,\m} (\funnyT_{q_1} \cdot Z(T, \phi)_{\F_{q_1}})$ as the image of some $Z(T, \phi')$ under  $\partial_{\AJ,\m,\ss}$, with $\phi' \in \mathcal S(V_{q_1} ^2 \otimes \A_f, \Z)$. 

Now it remains to  choose $q_1$ so that $\partial_{\AJ,\m}(\funnyT_{q_1}\cdot  Z(T,\phi)_{\F_{q_1}})$ is nontrivial. 
For this choice to be possible, we need some  non-entanglement between $\rho_{\pi,\p}$ and the Galois representations appearing in the cohomology of $S$, which are closely related to Hilbert modular forms. In particular, the support of $Z(T, \phi)$  in the cohomology of $S$ needs to avoid some problematic Hecke eigensystems associated to CM forms.
To ensure this, we use a version of the modifying-the-test-function trick from (\ref{eq:intro_change_test}), this time at (non-level-raising) auxiliary primes $\l\neq q_1,q_2$. Since the trick uses representation theory, it relies on the modularity conjecture recalled after the statement of Theorem \ref{thm:intro_rk1}. (The subtleties that appear in this part of the argument prevent us from bounding the dual Selmer group $H^1_f(\Q, V_{\pi,\p}/T_{\pi,\p})$ in the context of Theorem \ref{thm:intro_rk1}, in contrast to the rank zero case.)

One final difficulty that arises in the proof of Theorem \ref{thm:intro_rk1} is the application of the level-raising arguments in \S\ref{subsubsec:intro_lr} to $\beta_{q_1}$. Since $\beta_{q_1}$ does not lift to a characteristic-zero Hecke eigenfunction in general, we cannot directly apply the theorem of Newton and Thorne to produce the lift in the diagram (\ref{eq:cd_intro}). Instead, we use the relative deformation theory developed by Fakhruddin, Khare, and Patrikis, along with a certain control of level-raised adjoint Selmer groups, to show that either $\beta_{q_1}$ lifts to an eigenfunction, or the Hecke eigensystem of $\pi$ is congruent modulo $\p^n$ to that of an automorphic representation $\pi'$ ramified at $q_1$ \emph{and} $q_2$;  in either case, we leverage the congruence to solve the lifting problem in (\ref{eq:cd_intro}). It is at this step that the extra hypothesis in Theorem \ref{thm:intro_rk1} (that $\pi$ does not arise from  certain automorphic inductions) enters the argument; if the image of the Galois representation $\rho_{\pi,\p}$ is too small, then we are unable to control the  adjoint Selmer groups needed to apply Fakhruddin, Khare, and Patrikis' method.

\subsection{Comparison to other work}
When $\pi_p$ is Borel-ordinary with respect to $\p$, and under a slightly different large-image condition, Theorem \ref{thm:intro_rk0} can be deduced from the main result of \cite{loeffler2023birchswinnertondyerconjecturemodularabelian}, which also covers more general weights. However, the existence of ordinary primes is still an open problem for general automorphic representations of $\GSP_4$, so the ordinarity hypothesis is potentially a serious one.

As indicated in \S\ref{subsec:overview_intro}, this article fits into an extensive literature of bounding Selmer groups using level-raising congruences (the method of ``bipartite Euler systems''). Compared to these works, one of the main underlying difficulties to prove Theorem \ref{thm:intro_rk0} is the non-factorizability of $\spin_3-\spin_5$ period integrals,  which is ultimately responsible for the appearance of the exotic test function $\phi_q^\unr$ (rather than the indictor function of a self-dual lattice or a translate of such by the spherical Hecke algebra) in the identity (\ref{eq:intro_magic_identity}). Another way to formulate this obstruction is in terms of spherical functions and Hironaka's conjecture \cite{hironaka1989spherical,coratozanarella2023sphericalfunctionssymmetricforms}. To our knowledge, the only prior work on bipartite Euler systems facing this challenge was the recently appeared PhD thesis of Corato Zanarella \cite{corato2024phd}. Interestingly, rather than our change-of-test-functions technique, the issue in \emph{op. cit.} is resolved using derived algebraic geometry; in our context, the analogous idea would be to work with $Z(T, \phi^q\otimes \phi_q^\ram)$ for more general $\phi_q^\ram$, even in the absence of a good integral model. 

Another novel aspect in the present work is in finding the $\beta_{q_1}$ in (\ref{eq:intro_beta}). To our knowledge, the idea of using intersection theory on the special fiber of an auxiliary Shimura variety $S$, in which the supersingular locus and the special cycle are of complementary dimension, is a new one. 

Finally, we warn the reader that the ``first and second reciprocity laws'' (corresponding to finding $\kappa_n(q)$ and $\beta_{q_1}$, respectively, in the discussions above) in the table of contents are named only by analogy to \cite{bertolini2005iwasawa}. In both cases, the full reciprocity law would constitute an equality, whereas we only prove an inequality, and that only up to a bounded error. The exact statements are Theorems \ref{thm:1ERL} and \ref{ultimate theorem for second TRL}.

\subsection{Comments on the endoscopic case}

An automorphic representation $\pi$ as in the beginning of \S\ref{subsec:intro_main} is called \emph{endoscopic} associated to a pair $(\pi_1,\pi_2)$ of
 cuspidal automorphic representations of $\GL_2(\A_\Q)$ -- necessarily arising from classical modular forms of weights 2 and 4, respectively, and trivial nebentypus  characters -- 
 if  the associated Galois representations satisfy \begin{equation}\label{eq:intro_endoscopic}\rho_{\pi,\p} = \rho_{\pi_1,\p} \boxplus \rho_{\pi_2,\p}\end{equation} for one, or equivalently all, primes $\p$ of $E$.  Here we normalize $\rho_{\pi_i,\p}:G_\Q\to \GL_2(E_\p)$  to have cyclotomic determinant; also let $V_{\pi_i,\p}$ be the underlying Galois module. The set of $\pi$ satisfying (\ref{eq:intro_endoscopic}) is by definition the endoscopic $L$-packet $\Pi(\pi_1,\pi_2)$. If $V$ is a quadratic space of signature $(3,2)$ and $\Pi=\Pi_f\otimes\Pi_\infty$ is an automorphic representation of $\spin(V)(\A_\Q)$ nearly equivalent to the members of $\Pi(\pi_1,\pi_2)$, then we have
 \begin{equation}H^i_\et(\Sh_K(V)_{\overline\Q}, E_\p)[\Pi_f] = \begin{cases}
  \Pi_f^K \otimes  V_{\pi_1,\p}(-2), & i = 3,\, \Pi_\infty \text{ generic},
\\\Pi_f^K\otimes  V_{\pi_2,\p}(-2), & i = 3, \, \Pi_\infty \text{ holomorphic},\\ 0 & i \neq 3.
 \end{cases}\end{equation}
Note here that $\Pi_\infty$ is uniquely determined by $\Pi_f$ via the Arthur multiplicity formula, which is known in this case \cite{chan2015local}.
In particular, we still have a well-defined map
\begin{equation}\label{eq:intro_AJ_endo}
    \partial_{\AJ,\Pi_f}: \CH^2(\Sh_K(V)) \to \Pi_f^K \otimes \left(H^1_f(\Q, V_{\pi_1,\p}\oplus V_{\pi_2,\p})\right).
\end{equation}
The analogue of Theorem \ref{thm:intro_rk1} is now:
\begin{propintro}[Theorem \ref{thm:endoscopic_rk1}]\label{prop:intro_endoscopic_rk1}
    Suppose $\pi_1$ and $\pi_2$ are non-CM, and $\p|p$ is a prime of $E$ such that:
    \begin{enumerate}
        \item $\pi_{1,p}$ and $\pi_{2,p}$ are unramified.
        \item The residual representations $\overline\rho_{\pi_1,\p}$ and $\overline\rho_{\pi_2,\p}$ are both absolutely irreducible, and $\overline\rho_{\pi_1,\p}\oplus \overline\rho_{\pi_2,\p}$ is generic (Definition \ref{def:generic}).
        \item For both $i = 1, 2$, there exists a prime $q \nmid N(\pi)$ such that $q^4\not\equiv 1\pmod p$, $\overline\rho_{\pi_i,\p}(\Frob_q)$ has eigenvalues $\set{1, q}$, and $\overline\rho_{\pi_{3-i},\p}(\Frob_q)$ has eigenvalues $\set{\alpha, q/\alpha}$ with $\alpha\not\in \set{\pm 1, \pm q, q^2, q^{-1}}.$ 
    \end{enumerate}
    Suppose as well that  \begin{equation*}
        \tag{$\ast$}H^1_f(\Q, V_{\pi_1,\p}\otimes V_{\pi_2,\p}(-1)) = 0.
    \end{equation*} Then for any Kudla cycle $Z(T, \phi) \in \CH^2(\Sh_K(V))$,
    $$\partial_{\AJ,\Pi_f} (Z(T, \phi)) \neq 0 \implies \dim_{ E_\p} H^1_f(\Q, V_{\pi_1,\p}) + \dim_{E_\p} H^1_f(\Q, V_{\pi_2,\p}) = 1.$$
\end{propintro}
\begin{rmksunnumbered}
    \begin{enumerate}[label = (\roman*)]
    \item The conditions (\ref{item:intro_rk0_1})-(\ref{item:intro_rk0_3}) hold for cofinitely many $\p$ (Lemma \ref{lem:easy_assumptions_cofinite} and Proposition \ref{prop:adm_endosc}).  
    \item The condition $(\ast)$ is always expected to hold, by a classical result of Shahidi on nonvanishing of Rankin-Selberg $L$-values \cite{shahidi1981certain}. Unfortunately, we remark that $(\ast)$ does \emph{not} follow from the main result of \cite{leiloefflerzerbes2014RSC}, because the necessary large-image condition is never satisfied for $V_{\pi_1,\p}\otimes V_{\pi_2,\p}(-1)$. The role of $(\ast)$ in the  proof  is to control congruences to non-endoscopic automorphic representations of $\spin_5$ groups.   
        \item Consider the case when $V$ is split, so that $\Sh_K(V)$ is a classical Siegel threefold, and $\pi_1$ is associated to an elliptic curve $E$ over $\Q$. 
        In particular, the Abel-Jacobi map (\ref{eq:intro_AJ_endo}) gives a way to construct classes in $H^1_f(\Q, T_pE)$ from special cycles on Siegel threefolds. Weissauer asked in \cite{weissauer2005four} 
how such classes related to the classical theory of Heegner points, a question to which Proposition 
     \ref{prop:intro_endoscopic_rk1} provides a partial answer: like the Selmer classes coming from Heegner points, these Abel-Jacobi classes can be nonzero only if $E$ has rank one (at least modulo the assumption on Rankin-Selberg convolutions).
     \item In the text, we prove a stronger version of Proposition \ref{prop:intro_endoscopic_rk1} that includes some CM cases.
    \end{enumerate}
\end{rmksunnumbered}

Similarly, the methods used to Prove Theorem \ref{thm:intro_rk0} can also be used for endoscopic representations under the condition $(\ast)$. This case is included in Theorem \ref{thm:rk_zero_endoscopic} for completeness, but note that the result gives nothing new beyond Kato's work  \cite{kato2004padic}.

\subsection{Organization of the paper}
In \S\ref{sec:prelim_BQAs}, we lay out the notational conventions for the article and cover preliminary materials on involutions, Clifford algebras, and Selmer groups. In \S\ref{sec:prelim_ARs}, we collect the necessary  background results related to automorphic representations, Galois representations, and Shimura varieties. The most important role of this section is to make some of the results of \cite{rosner2024global}  (on global Jacquet-Langlands transfers for inner forms of $\GSP_4$) unconditional on Conjecture 7.5 of \emph{op. cit.} In \S\ref{sec:sp_cycles_theta}, we define Kudla's cycles $Z(T, \phi)$ and their analogues on compact $\spin$ groups, and explain their relation to classical theta lifts. In \S\ref{sec:construction}, we define the Galois cohomology classes and special periods used in the Euler system arguments. In  \S\ref{sec:rep_theory}, we use the mod-$p$ theory of the Weil representation to prove the change-of-test-functions criterion explained above in (\ref{eq:intro_change_test}); this is one of the most crucial (and technical) parts of the argument.

In \S\ref{sec:RZ}, we turn to geometry, studying a ramified Rapoport-Zink space for $\spin_5$. This section is based on \cite{wang2020bruhat}, although we need more details on  intersection theory for our applications to special cycles. 
In \S\ref{sec:1ERL_geom}, these results are applied to study the special fiber of the ramified $\spin_5$ Shimura variety, and compute the local part of the Abel-Jacobi image of special cycles. (Some of the results in this section are generalizations of those in \cite{wang2022arithmetic}.) In \S\ref{sec:1erl_continued}, we perform the level-raising and complete the program described in \S\ref{subsec:intro_sketch_rk0} above. In \S\ref{sec:main_rk0}, we prove the main results in the rank zero case. In \S\ref{sec:2ERL_geometry}, we study special cycles on the special fibers of $\spin_4$ and $\spin_5$ Shimura varieties with good reduction. In \S\ref{sec:2ERL_contd} and \S\ref{sec:rk1}, we  prove our main result in the rank one case.

The paper contains three appendices: in the short Appendix \ref{sec:appendix_spin4}, we explain the relation of
the cohomology of $\spin_4$ Shimura varieties to Hilbert modular forms, which is well-known but for which we were unable to find a suitable reference. In Appendix \ref{sec:appendix_def_theory}, we develop a general framework for deformation-theoretic characteristic-zero level raising of $G$-valued Galois representations, using  ideas from \cite{fakhruddin2021relative}. 
These results are most important for the proof of the rank one case. In Appendix \ref{sec:large_image}, we prove some large-image results for  the $p$-adic Galois representations that appear in the article, which are necessary for various Chebotarev arguments.

\subsection{Acknowledgements}
I wish to thank my PhD advisor, Mark Kisin, for his encouragement and advice while I worked on this project. 
I am also grateful to   Aaron Landesman, Alice Lin, Sam Mundy, Chris Skinner, Shiang Tang, Salim Tayou, Wei Zhang,  and especially Murilo Corato Zanarella for helpful conversations. Thank you to Karen Schlain for help with typesetting. This work was supported by NSF grants DGE-1745303 and DMS-2401823.
\section{Preliminaries}\label{sec:prelim_BQAs}
\subsection{Notation}

\subsubsection{Number fields and Galois groups}\label{subsubsec:number_field_notation}\label{subsubsec:chi_cyc}
Let $L \subset \overline\Q$ be a number field.
\begin{itemize}
\item We denote by $O_L$ the ring of integers. If $\p\subset O_L$ is a prime, then we write $O_\p$ and $L_\p$ for the respective completions, and $\varpi_\p$ for a uniformizer of $O_{\p}$. 
\item Let $G_L\coloneqq \Gal(\overline \Q/L)$ be the absolute Galois group. If $S$ is a finite set of places of $L$, we denote by $L^S$ the maximal unramified-outside-$S$ extension of $L$, and set $G_{L,S} \coloneqq \Gal(L^S/L)\subset G_L$. If $\p$ is a prime of $L$, we also write $G_{L_\p}$ for the absolute Galois group of $L_\p$, with inertia subgroup $I_{L_\p}\subset G_{L_\p}$.
\item For any $p$, $\chi_{p,\cyc}: G_L \to \Z_p^\times$ denotes the $p$-adic cyclotomic character. We normalize the definition of Hodge-Tate weights so that $\chi_{p,\cyc}|_{G_{\Q_p}}$ has weight one.
  \end{itemize}

\subsubsection{Ad\`ele groups and class field theory}
\begin{itemize}
    \item \label{subsubsec:adeles_hecke}
For a number field $K$, let $\A_K$ be the ad\`ele ring; when $K = \Q$ we typically write $\A = \A_\Q$. For a finite set $S$ of places of $\Q$, we write 
$$\A_S = \prod_{v\in S} \Q_v,\;\; \A^S = \prod_{v\not\in S}'\Q_v,\;\; \A_f^S = \prod_{v\not\in S \cup \set{\infty}}' \Q_v.$$
\item If $\pi = \otimes'_v \pi_v$ is an irreducible admissible representation of $G(\A)$ for some $\Q$-group $G$, then for a squarefree integer $D$, $\pi_f^D$ denotes $\otimes'_{\l\nmid D} \pi_\l$, and $\pi_D$ denotes $\otimes_{\l|D}\pi_\l$. The similar notations $\pi_f^S$, $\pi_S$ hold for finite sets of primes $S$.  
\item For any prime $p$, let $\langle p \rangle \in \A_f^\times$ be the image of $p$ under the natural inclusion $\Q_p^\times \hookrightarrow \A_f^\times$. 
\item 
We always normalize the reciprocity maps of class field theory
to send uniformizers to geometric Frobenius.
\item For any number field $L$ and finite-order character $\chi: L^\times \backslash \A_L^\times \to k^\times$, with $k$ a field, we write $\rec(\chi): G_L \to k^\times $ for the pullback by the reciprocity map.

\end{itemize}

\subsubsection{Coefficient rings}\label{subsubsec:notation_coefficients}\label{subsubsec:CNLO_notation}
For coefficient rings, we will often use a discrete valuation ring $O$ which is a finite flat extension of $\Z_p$, with uniformizer $\varpi$. In this context:
\begin{itemize}
    \item We denote by $\CNL_O$ the category of complete local Noetherian $O$-algebras with residue field $O/\varpi$.
    \item For any torsion $O$-module $M$ and any $m\in M$, $\ord_\varpi (m)\geq 0$ denotes the least integer such that $\varpi^{\ord_\varpi(m)} m = 0.$
\end{itemize}
\subsubsection{Symplectic groups}\label{subsubsec:coordinates_gsp2n}
\begin{itemize}
\item For any $n$, let $\Omega_n = \begin{pmatrix}
       \mathbf{0}_n  & I_n \\ -I_n & \mathbf{0}_n
    \end{pmatrix}\in \GL_{2n}(\Z)$, 
    where $\mathbf{0}_n$ is the $n\times n$ matrix of zeros and $I_n$ is the $n\times n$ identity matrix.
    \item 
Define the algebraic group $\GSP_{2n}$ over $\Z$ by
\begin{equation}\label{eq:GSP4_embedded}\GSP_{2n}(R) = \set{(g,\lambda)\in \GL_{2n}(R)\times R^\times  \,: \, g \Omega g^t =  \lambda\Omega}\end{equation}
for any ring $R$. 
\item We have the natural similitude character $\nu: \GSP_{2n} \to \mathbb G_m,$ and the symplectic group $\SP_{2n}\subset \GSP_{2n}$ is the kernel of $\nu$.
\item For any prime $p$, we write $\langle p \rangle \in \GSP_{2n}(\A_f)$ for the scalar matrix corresponding to $\langle p \rangle\in \A_f^\times$.
\end{itemize}
\subsubsection{GSpin groups}\label{subsubsec:spin_groups_notation}
Let $V$ be a quadratic space over a field $F$. 
\begin{itemize}
    \item We denote the pairing $V\times V \to F$ by $(v, w) \mapsto v\cdot w$. 
    \item The Clifford algebra of $V$ is the associative $F$-algebra $C(V)$ generated by $v\in V$, subject to the relation
    $$v^2 = (v\cdot v)1.$$ There is a natural $\Z/2\Z$ grading on $C(V)$, with respect to which the plus part is denoted $C^+(V)$. 
    \item We denote by $\ast$ the natural involution on $C(V)$, determined by $v^\ast = v$. Then $\spin(V)$ is the algebraic group over $F$ defined by 
    $$\spin(V)(R) = \set{(g, \lambda) \in \left(C^+(V)\otimes R\right)^\times\times R^\times\,: \, gg^\ast = \lambda}.$$ The natural similitude character is again denoted $\nu$.
    \item If $F = \Q$, then for any prime $p$ we again denote by $\langle p \rangle \in \spin(V)(\A_f)$ the scalar element $p \in C^+(V)^\times.$
  
\end{itemize}
\subsubsection{Quaternion algebras and quadratic spaces}
\begin{itemize}
    \item \label{subsubsec:B_D_notation}
For a squarefree integer $D\geq 1$, let $B_D$ be the quaternion algebra over $\Q$ which ramifies at the factors of $D$, and possibly at infinity. Let $\ast_D$ be the standard involution on $B_D$. We also denote by $\ast_D$ the involution on $M_n(B_D)$ given by the composite of $\ast_D$ and transposition. 

\item For all squarefree $D$,  let\begin{equation}\label{eq:V_D}
    V_D \coloneqq M_2(B_D)^{\ast_D = 1,\tr = 0},
\end{equation}
which is a 5-dimensional quadratic space of trivial discriminant, whose Hasse invariant coincides with that of $B_D$. The signature of $V_D$ is $(5,0)$ or $(3,2)$ when $B_D$ is ramified or split at infinity, respectively.
\item We note that  $\spin(V_D)$ is an inner form of $\spin(V_1) \cong \GSP_4$.
\end{itemize}
\subsubsection{Algebraic geometry}
\begin{itemize}
    \item For a closed subscheme $X$ of a scheme $Y$, we denote by $\mathcal N_{X/Y}$ the normal sheaf. 
    \item If $X$ is a scheme, $X_\red\subset X$ denotes the maximal reduced subscheme.
\end{itemize}
\subsubsection{Miscellaneous}
\begin{itemize}
    \item For a squarefree 
integer $D \geq 1$, we denote by $\div(D)$ its set of prime factors, and let $\sigma(D)\coloneqq \#\div(D)$.

    \item \label{subsubsec:lattices_n} Suppose $V$ is an $F$-vector space for a nonarchimedean local field $F$ with ring of integers $O_F$. For two $O_F$-lattices $\Lambda \subset \Lambda'$ of $V$, the notation $\Lambda\subset_n \Lambda'$ means that $\Lambda'/\Lambda$ has $O_F$-length $n$. 
    \item 
 For any group $G$, we let $Z_G\subset G$ denote the center. 
 \item For a prime $p$, let $\breve \Z_p$ be the Witt vectors of $\overline \F_p$. 
\end{itemize}

\subsection{Central simple algebras and involutions}
\subsubsection{}
Let $M$ be a central simple algebra over a field $F$ of characteristic zero. Throughout this paper an involution on $M$ will be understood to mean an involution of the first kind, i.e. fixing $F$. Such involutions fall into two categories: main type or nebentype (also called symplectic and orthogonal type \cite{knus1998book}).  For example, the transpose involution on $M = M_2(F)$ is of nebentype; the standard involution on a quaternion algebra is of main type.
For later use, we now recall some basic facts.
\begin{prop}
\label{prop:tensoring involutions}
    Let $(M_1, \ast_1)$ and $(M_2, \ast_2)$ be two central simple algebras equipped with involutions. 
The induced involution $\ast_1\otimes \ast_2$ on $M$ is of nebentype if and only if $\ast_1$ and $\ast_2$ are of the \emph{same} type.
\end{prop}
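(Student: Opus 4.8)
The plan is to reduce the statement to the classical classification of involutions of the first kind on central simple algebras via the behaviour of the discriminant (or, equivalently, of the $\pm 1$-eigenspaces) under tensor product. Recall that for a central simple algebra $(M,\ast)$ of dimension $d^2$ over $F$, the involution $\ast$ is of nebentype (orthogonal) precisely when $\dim_F M^{\ast = 1} = d(d+1)/2$, and of main type (symplectic) precisely when $\dim_F M^{\ast = 1} = d(d-1)/2$; these two cases are distinguished by the value $\varepsilon(\ast) = +1$ or $-1$ respectively, where $\varepsilon(\ast)$ is the common eigenvalue by which $\ast$ acts on the one-dimensional space of ``top-degree'' reduced elements — concretely, one can detect $\varepsilon(\ast)$ after base change to a splitting field, where $M\otimes_F \overline F \cong M_d(\overline F)$ and $\ast$ becomes $X \mapsto u X^t u^{-1}$ for a matrix $u$ with $u^t = \varepsilon(\ast) u$.

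First I would base change both $(M_1,\ast_1)$ and $(M_2,\ast_2)$ to a common splitting field $\overline F$; since being of nebentype versus main type is insensitive to extension of the base field (the dimension of the fixed space is unchanged), it suffices to prove the claim for split algebras. So write $M_i \otimes \overline F \cong M_{d_i}(\overline F)$ with $\ast_i$ corresponding to $X \mapsto u_i X^t u_i^{-1}$, where $u_i^t = \varepsilon_i u_i$ and $\varepsilon_i = \varepsilon(\ast_i) \in \{\pm 1\}$. Then $M_1 \otimes_F M_2$ base changes to $M_{d_1 d_2}(\overline F)$, and under the standard isomorphism $M_{d_1}(\overline F)\otimes M_{d_2}(\overline F)\cong M_{d_1 d_2}(\overline F)$ the involution $\ast_1 \otimes \ast_2$ is conjugation by $u_1 \otimes u_2$ composed with transpose (using that the transpose on a tensor product of matrix algebras is the tensor product of the transposes). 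Since $(u_1\otimes u_2)^t = u_1^t \otimes u_2^t = (\varepsilon_1 u_1)\otimes(\varepsilon_2 u_2) = \varepsilon_1\varepsilon_2\,(u_1\otimes u_2)$, we conclude $\varepsilon(\ast_1\otimes\ast_2) = \varepsilon_1\varepsilon_2$.

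Hence $\ast_1\otimes\ast_2$ is of nebentype, i.e. $\varepsilon(\ast_1\otimes\ast_2) = +1$, if and only if $\varepsilon_1\varepsilon_2 = +1$, i.e. if and only if $\varepsilon_1 = \varepsilon_2$, which is exactly the assertion that $\ast_1$ and $\ast_2$ are of the same type. This completes the argument.

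I expect the only genuine subtlety — though it is standard — to be the verification that the identification of $M_{d_1}\otimes M_{d_2}$ with $M_{d_1 d_2}$ intertwines $(\ast\text{ via }u_1)\otimes(\ast\text{ via }u_2)$ with $\ast$ via $u_1\otimes u_2$, i.e. that transpose is compatible with the Kronecker product in the expected way; everything else is bookkeeping with dimensions of eigenspaces. Alternatively, one can bypass the base change entirely and cite the corresponding statement in Knus–Merkurjev–Rost–Tignol \cite{knus1998book} (the ``type of a tensor product of involutions'' is additive in the sign), but the split computation above is short enough to include directly.
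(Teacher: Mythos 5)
Your proof is correct. The paper's ``proof'' is simply a citation to Proposition~2.23 of Knus--Merkurjev--Rost--Tignol \cite{knus1998book} (the additivity of the type of an involution under tensor product), whereas you give a self-contained argument by passing to a common splitting field and computing directly with the Kronecker product. The two routes agree in substance --- your computation with $u_1 \otimes u_2$ and the sign $\varepsilon_1\varepsilon_2$ is precisely the proof one finds in \emph{loc.\ cit.}\ --- so the difference is only in presentation: the paper opts for brevity by deferring to the reference, while your version makes the mechanism transparent at the cost of two extra paragraphs. One minor point worth being careful about, which you flag yourself, is that the type of an involution of the first kind is insensitive to base field extension; the cleanest way to see this is exactly the dimension count $\dim_F M^{\ast=1} = d(d\pm1)/2$ that you invoke, so your reduction to the split case is sound.
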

\begin{proof}
    This is \cite[Proposition 2.23]{knus1998book}.
\end{proof}
\subsubsection{}
Suppose $F = \R$ or $\Q$, and recall that an element $\mu \in M$ is called \emph{totally positive} if it has strictly positive eigenvalues as an endomorphism of $M$. 
\begin{prop}\label{prop:positive involutions}
    Let $M=M_n(\R)$ or $M_n(\mathbb H)$, and suppose $M$ is equipped with a \emph{positive} involution $\ast$, i.e. such that $\tr(a^\ast a) > 0$ for all $0 \neq a\in M$. Then the positive involutions on $M$ are those of the form
    $$a\mapsto \mu a^\ast \mu^{-1},$$ where $\mu^\ast = \mu$ and either $\mu$ or $-\mu$ is totally positive. 
    In particular, all positive involutions on $M$ have the same type.
\end{prop}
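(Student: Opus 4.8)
The plan is to reduce the classification of positive involutions to the well-known classification of all involutions of the first kind on $M$, then verify positivity via an explicit trace computation. First I would recall the structure of involutions of the first kind on a split or quaternionic matrix algebra over $\R$: if $\ast_0$ is a fixed reference involution (e.g.\ conjugate-transpose for $M_n(\R)$ or $M_n(\mathbb H)$, which is positive), then every other involution $\ast$ of the first kind has the form $a\mapsto \mu\, a^{\ast_0}\mu^{-1}$ for some $\mu\in M^\times$ satisfying $\mu^{\ast_0}=\pm\mu$, and $\mu$ is determined up to $F^\times$; this is the standard Skolem--Noether argument, since $a\mapsto (a^\ast)^{\ast_0}$ is an $F$-algebra automorphism of $M$, hence inner. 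The type of $\ast$ is the same as that of $\ast_0$ precisely when $\mu^{\ast_0}=\mu$ (the symmetric case), and opposite when $\mu^{\ast_0}=-\mu$; since $\ast_0$ is positive it is of orthogonal (nebentype, in the paper's terminology) or symplectic type depending on which of $M_n(\R)$, $M_n(\mathbb H)$ we are in, and in either case I would aim to show the positive ones are exactly the symmetric case with the additional sign condition on $\mu$.

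Next I would pin down which $\mu$ give \emph{positive} involutions. Write $\ast = \inner(\mu)\circ\ast_0$ with $\mu^{\ast_0}=\pm\mu$; scaling $\mu$ by a positive real we may normalize so that $\mu$ or $-\mu$ is in the prescribed range. The key computation is $\trace(a^\ast a) = \trace(\mu\, a^{\ast_0}\mu^{-1} a)$, and using that $\ast_0$ is positive and $M$ carries the positive-definite form $\langle x,y\rangle_0 := \trace(x^{\ast_0}y)$, one rewrites this as $\langle \mu^{-1}x, \mu^{-1}x'\rangle_0$-type expressions after a change of variables, so positivity of $\ast$ becomes equivalent to positive-definiteness of the symmetric bilinear form $x\mapsto \trace(\mu x^{\ast_0}\mu^{-1}x)$ on $M$. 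Diagonalizing the self-adjoint (w.r.t.\ $\ast_0$) operator of left-multiplication by a suitable representative, this positivity holds if and only if $\mu$ (equivalently $-\mu$) is totally positive, i.e.\ has all eigenvalues of one sign; and the antisymmetric case $\mu^{\ast_0}=-\mu$ is excluded because then the form is alternating, hence indefinite, so $\ast$ cannot be positive. This yields both the stated parametrization $a\mapsto\mu a^\ast\mu^{-1}$ with $\mu^\ast=\mu$ and $\pm\mu$ totally positive, and the conclusion that all positive involutions share the type of $\ast_0$.

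The main obstacle I anticipate is the quaternionic case $M = M_n(\mathbb H)$: there one must be careful that ``eigenvalues'' and ``totally positive'' are interpreted via the reduced characteristic polynomial (or equivalently after extending scalars to $\overline{\mathbb R}=\C$, where $M_n(\mathbb H)\otimes_\R\C\cong M_{2n}(\C)$), and one must check that the self-adjointness condition $\mu^{\ast_0}=\mu$ for the conjugate-transpose involution really does force $\mu$ to be ``Hermitian'' with real spectrum so that total positivity is the right notion. I would handle this by base-changing to $\C$: the involution $\ast_0\otimes\mathrm{id}$ becomes conjugate-transpose on $M_{2n}(\C)$ (up to conjugation), the condition $\mu^{\ast_0}=\mu$ becomes Hermitian-ness, and the positivity of the trace form is the classical statement that $A\mapsto \trace(H A^* H^{-1} A)$ is positive-definite iff the Hermitian matrix $H$ is definite. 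Once the complexified statement is in hand, descent back to $\R$ is routine, and the ``same type'' assertion follows immediately from Proposition \ref{prop:tensoring involutions} together with the fact that $\inner(\mu)$ with $\mu$ symmetric does not change the type.
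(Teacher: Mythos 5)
The paper's proof is a citation to \cite[Proposition 8.4.7 and Lemma 8.4.12]{voight2021quaternion}, and your proposed strategy (Skolem--Noether parametrization $a\mapsto\mu a^{\ast_0}\mu^{-1}$ with $\mu^{\ast_0}=\pm\mu$, followed by a direct positivity computation for the trace form) is essentially the argument Voight gives. However, your execution contains two genuine errors that would need to be fixed before the sketch becomes a proof.

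First, to rule out the skew case $\mu^{\ast_0}=-\mu$ you say ``the form is alternating, hence indefinite.'' This is not correct: the trace form $a\mapsto\tr(a^{\ast'}a)$, where $\ast'=\inner(\mu)\circ\ast_0$, is a symmetric quadratic form on $M$ regardless of the sign $\mu^{\ast_0}=\pm\mu$ (indeed $\tr(a^{\ast'}b)=\tr(b^{\ast'}a)$ always, since trace is invariant under any involution of the first kind), so it is never alternating. You seem to be conflating the bilinear form on the underlying module $\R^n$ or $\mathbb H^n$ whose adjoint involution is $\ast'$ --- which \emph{is} alternating when $\ast'$ has the opposite type to $\ast_0$ --- with the trace form on $M$ itself. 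A correct way to rule out the skew case on $M_n(\R)$: the canonical form of a skew $\mu$ has $2\times 2$ blocks $\left(\begin{smallmatrix}0&\lambda\\-\lambda&0\end{smallmatrix}\right)$, and one computes directly (e.g.\ for $n=2$, $a^{\ast'}$ is the adjugate and $\tr(a^{\ast'}a)=2\det a$) that the form is indefinite. Alternatively one can count: a positive involution makes $\ast'$ an orthogonal involution of the positive-definite space $(M,\tr(a^{\ast'}b))$, so $\tr(a^2)$ must be positive on the $\ast'$-symmetric part and negative on the $\ast'$-antisymmetric part, and comparing the dimensions of these with the known signature of $\tr(a^2)$ on $M_n(\R)$ (namely $(n(n+1)/2,\,n(n-1)/2)$) forces $\ast'$ to be of orthogonal type.

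Second, your reduction of the quaternionic case via complexification does not work as stated. You claim that $\ast_0\otimes\mathrm{id}$ becomes conjugate-transpose on $M_{2n}(\C)$; but conjugate-transpose is $\C$-antilinear, whereas the $\C$-linear extension of the quaternionic conjugate-transpose involution on $M_n(\mathbb H)$ is the \emph{symplectic-type} involution $a\mapsto J^{-1}a^tJ$ on $M_{2n}(\C)$. More seriously, positivity cannot be checked after complexifying: every nondegenerate quadratic form over $\C$ is isotropic, so the complexified trace form is never definite, and the statement to be verified degenerates. The correct route in the $\mathbb H$ case is to stay over $\R$: a $\ast_0$-Hermitian quaternionic matrix $\mu$ is diagonalizable by a quaternionic unitary with real eigenvalues, and then the computation $\tr(\mu a^{\ast_0}\mu^{-1}a)=\sum_{i,j}(d_i/d_j)\,\mathrm{Nrd}(a_{ji})$-type expansion goes through exactly as in the real case.
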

\begin{proof}
This is \cite[Proposition 8.4.7 and Lemma 8.4.12]{voight2021quaternion}.
\end{proof}
\subsubsection{}
The tensor product of two quaternion algebras is called a \emph{biquaternion algebra} (BQA). Involutions of nebentype on rational BQAs are particularly simple:
\begin{lemma}\label{lem:BQA_symplectic_agree}
Let $M$ be a BQA over $\Q$, and let $\ast_1,$ $\ast_2$ be two involutions of $M$ of main type. If either $M\otimes \R$ is split, or $\ast_1,$ $\ast_2$ are both positive, then there exists an element $g\in M ^\times $
such that $$\inner (g)\circ\ast_1 =\ast_2\circ\inner (g). $$
In particular, conjugation by $g $ defines an isomorphism $(M, \ast_1)\simeq (M, \ast_2)$.
\end{lemma}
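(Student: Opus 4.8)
The goal is to show that any two main-type involutions $\ast_1, \ast_2$ on a rational biquaternion algebra $M$ become conjugate (as involutions) after conjugating one of them by an inner automorphism, under the hypothesis that $M\otimes\R$ is split or both involutions are positive. The key structural fact is that $M$ is a BQA, hence has dimension $16$ over $\Q$ and, being central simple of degree $4$, is either $M_4(\Q)$, $M_2(D)$ for a quaternion division algebra $D$, or a division algebra; in all cases $M\cong M_n(D)$ with $D$ a rational quaternion algebra and $2n = 4$.

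\textbf{Step 1: reduce to a statement about hermitian/skew-hermitian forms.} The standard dictionary between involutions on $M_n(D)$ and hermitian (or skew-hermitian) forms on $D^n$ with respect to the standard involution $\ast_D$ on $D$ says that every involution of the first kind on $M$ has the form $a\mapsto h^{-1} a^{\dagger} h$, where $\dagger$ is the conjugate-transpose involution $\ast_D$ acting entrywise-and-transposed and $h\in M^\times$ satisfies $h^\dagger = \pm h$; main type corresponds to one sign of $h^\dagger = \pm h$ (the skew-hermitian case relative to $\ast_D$ when $D$ is a quaternion algebra — I would check the sign conventions against \cite[Ch.~I]{knus1998book}, but the essential point is that main type pins down the symmetry class of $h$). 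Thus $\ast_1$ and $\ast_2$ correspond to forms $h_1, h_2$ in the \emph{same} symmetry class, and two such involutions are isomorphic via $\inner(g)$ precisely when $h_1$ and $h_2$ define isometric (skew-)hermitian forms, up to scaling — equivalently, when $h_2 = \lambda\, g^\dagger h_1 g$ for some $g\in M^\times$, $\lambda\in\Q^\times$. So the lemma is equivalent to: any two nondegenerate (skew-)hermitian forms of rank $n$ on $D^n$ (of the appropriate symmetry type) are similar.

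\textbf{Step 2: invoke the classification of hermitian forms over quaternion algebras.} Over a local field, (skew-)hermitian forms over a quaternion algebra with respect to the canonical involution are classified by rank alone (the relevant discriminant and Hasse-type invariants are trivial or absorbed into the quaternion algebra) — this is classical; over $\R$ one has in addition a signature-type invariant. Globally, by the Hasse principle for hermitian forms over quaternion algebras (Landherr's theorem / \cite{scharlau1985quadratic}), a rank-$n$ form is determined by its localizations. Away from $\infty$ the localizations agree automatically (same rank), so the only obstruction is at the archimedean place. If $M\otimes\R$ is split, the form over $\R$ is an ordinary (skew-)symmetric bilinear form, and in the skew-symmetric case such forms of a given even rank are all isometric, so there is no obstruction at all; a rank-$n$ form on $D^n$ over $\Q$ with $D\otimes\R\cong M_2(\R)$ always reduces to this case after the dictionary. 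If instead both $\ast_i$ are positive, then Proposition~\ref{prop:positive involutions} applies to $M\otimes\R \cong M_2(\mathbb H)$ (or $M_4(\R)$): both positive involutions have the same type \emph{and} the positivity rigidifies the archimedean invariant — a positive involution corresponds to a definite hermitian form, and all positive-definite (skew-)hermitian forms of a given rank over $\mathbb H$ are isometric. Hence in both allowed cases the local invariants of $h_1$ and $h_2$ coincide at every place, so $h_1$ and $h_2$ are isometric (even isometric, not merely similar), giving the desired $g$.

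\textbf{Step 3: conclude.} From $h_2 = g^\dagger h_1 g$ one checks directly that $\inner(g)\circ \ast_1 = \ast_2 \circ \inner(g)$ as maps $M\to M$, which is exactly the claimed intertwining relation, and consequently $\inner(g)$ carries $(M,\ast_1)$ isomorphically onto $(M,\ast_2)$. The main obstacle is really Step 2: getting the sign and symmetry-type bookkeeping exactly right in the involution-to-form dictionary for quaternionic matrix algebras (main type vs.\ the corresponding hermitian-or-skew-hermitian class depends on whether one uses the canonical involution or an orthogonal involution on $D$), and verifying that the hypotheses precisely kill the archimedean invariant — in the split case because alternating forms are unique of a given rank, and in the positive case because Proposition~\ref{prop:positive involutions} forces definiteness. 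Once the dictionary is set up correctly, the global statement is a routine appeal to the Hasse principle for hermitian forms.
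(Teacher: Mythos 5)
Your proposal is essentially correct, and the skeleton — Skolem–Noether gives a conjugating element, the obstruction to absorbing it is local–global in nature, the finite places contribute nothing, and the archimedean obstruction dies either because $M\otimes\R$ is split or because positivity rigidifies the form — is the same as the paper's. The difference is in how the local–global reduction is packaged. The paper, after reducing (exactly as you say) to whether $h = \lambda g^{\ast_1}g$ is solvable, cites a single targeted result, \cite[Theorem 16.19]{knus1998book}: solvability is governed by whether the Pfaffian norm $\operatorname{Pf}(h)$ lies in $(\Q^\times)^2 \nerd(M^\times)$. The two cases then fall out of the Hasse–Schilling–Maass norm theorem, which computes $\nerd(M^\times)$ as $\Q^\times$ or $\Q^\times_{>0}$ depending on ramification at $\infty$. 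Your route instead invokes the full classification of hermitian forms over quaternion algebras (Landherr/Jacobson plus the Hasse principle). That buys you nothing extra here and costs more — you need the local classification at every finite place, whereas the Pfaffian-norm criterion bundles all of that into one cited theorem.

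Two points that are asserted but not actually nailed down. First, your parenthetical ``(same rank)'' is doing a lot of hidden work: forms of the same rank over $D\otimes\Q_\l$ are \emph{not} automatically isometric on abstract grounds — the correct reason the finite places contribute nothing is the surjectivity of the reduced norm $\operatorname{Nrd}\colon D^\times\to\Q_\l^\times$ over a $p$-adic field, which trivializes the discriminant invariant. This is precisely what the Pfaffian-norm criterion together with $\nerd(M\otimes\Q_\l)^\times=\Q_\l^\times$ makes transparent, and it is what your argument needs to say out loud. Second, the hermitian-vs-skew-hermitian ambiguity that you flag does need to be resolved before the finite-place claim is even well-posed: since the canonical involution $\gamma$ on $D$ is of symplectic type, the adjoint involution $\sigma_h$ on $M_n(D)$ is symplectic (main type) precisely when $h$ is $\gamma$-\emph{hermitian}, not skew-hermitian. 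In the skew-hermitian case the local classification at a finite place has a genuine discriminant invariant (Tsukamoto), and ``same rank'' would be false as stated; the argument survives only because the main-type hypothesis puts you in the hermitian class. Once these two gaps are filled your proof goes through, but the paper's appeal to the Pfaffian norm sidesteps both of them.
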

Here, $\inner(g)$ is the automorphism $a \mapsto gag^{-1}$ of $M$.
\begin{proof}
  By the Skolem-Noether theorem, there exists $h\in M ^\times $
    such that $\inner (h)\circ\ast_1 =\ast_2. $
    The condition on $g $
    in the lemma is equivalent to $h =\lambda g ^{\ast_1} g $, for some $\lambda\in\Q ^\times $.
   By \cite[Theorem 16.19]{knus1998book}, there exists such a $g $
if and only if the Pfaffian norm of $h $ (with respect to $\ast_1 $)
belongs to $(\Q ^\times) ^ 2\nerd (M ^\times) $. If $M\otimes\R $
is split, then $\nerd (M ^\times) =\Q ^\times $, so we are done. If $M\otimes\R $
is not split, then $\nerd (M ^\times) =\Q ^\times_{>0} $, and it suffices to show that the equation $h =\lambda g^{\ast_1} g $
has a solution over $\R$. Assuming without loss of generality that $\ast_1$ is the standard involution on $M\otimes \R \simeq M_2(\mathbb H)$, this can be checked directly using Proposition \ref{prop:positive involutions}.
\end{proof}

\subsubsection {}
Let $F$ be a nonarchimedean local field with ring of integers $O_F$, and let $B$ be the unique nonsplit quaternion algebra over $O_F$. We denote by $O_B$ the unique maximal $O_F$-order in $B$, with uniformizer $\pi\in O_B$ and natural valuation $\ord_\pi$. 
\begin {prop}\label{proposition needed to define unit and non-unit type}
Suppose $g\in\GL_n (B) $
satisfies $$gM_n (O_B) g ^ {-1} = M_n (O_B). $$
Then, up to rescaling by an element of $F ^\times $, we have either $g\in\GL_n (O_B) $
or $g\in\pi\GL_n (O_B) $.
\end {prop}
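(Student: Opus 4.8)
The plan is to reduce the statement to a local question about lattices in $B^n$ and then to invoke the theory of the Bruhat--Tits building of $\GL_n(B)$, or equivalently the classification of $O_B$-lattices up to homothety. First I would observe that $g$ normalizing $M_n(O_B)$ is exactly the condition that conjugation by $g$ preserves the $O_F$-order $M_n(O_B)$ inside $M_n(B) = \operatorname{End}_B(B^n)$. Thinking of $B^n$ as a right $B$-module, $M_n(O_B)$ is the stabilizer of the standard lattice $\Lambda_0 = O_B^n$; more precisely $M_n(O_B) = \operatorname{End}_{O_B}(\Lambda_0)$. The key elementary fact is that an $O_F$-order in $M_n(B)$ of the form $\operatorname{End}_{O_B}(\Lambda)$ determines $\Lambda$ up to $B$-homothety (i.e. scaling by $B^\times$, but since $B^\times/O_B^\times$ is generated by $\pi$ and $\pi$ is central up to units this amounts to scaling by powers of $\pi$): this is because one recovers $\Lambda$ from the order as a simple module, or directly because the two-sided ideals of $\operatorname{End}_{O_B}(\Lambda)$ are $\pi^k\operatorname{End}_{O_B}(\Lambda)$. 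Hence $g\Lambda_0 = \Lambda_0 b$ for some $b \in B^\times$, and after rescaling $g$ by an element of $F^\times$ (using that $\nerd$ or just $\ord_\pi$ of $b$ can be adjusted modulo $2$ by scaling with $F^\times \subset B^\times$, since $\ord_\pi(x) \in 2\Z$ for $x \in F^\times$), we may assume $g\Lambda_0 = \Lambda_0$ or $g\Lambda_0 = \pi\Lambda_0$.

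Once $g\Lambda_0 = \Lambda_0$, the element $g$ is an $O_B$-linear automorphism of $\Lambda_0$, hence $g \in \GL_n(O_B)$; and if $g\Lambda_0 = \pi\Lambda_0$, then $\pi^{-1}g \in \GL_n(O_B)$, i.e. $g \in \pi\GL_n(O_B)$. The only genuine content is therefore the lattice-rigidity step, and I would carry it out as follows. Given $g$ with $g M_n(O_B) g^{-1} = M_n(O_B)$, set $\Lambda = g\Lambda_0 \subset B^n$, which is again an $O_B$-lattice. Then $\operatorname{End}_{O_B}(\Lambda) = g M_n(O_B) g^{-1} = M_n(O_B) = \operatorname{End}_{O_B}(\Lambda_0)$. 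Now I want to conclude $\Lambda = \Lambda_0 \pi^k$ for some $k \in \Z$. For this, note that $O_B$ is a (noncommutative) discrete valuation ring: it is local, its unique maximal ideal is $\pi O_B = O_B\pi$, and every one-sided ideal is a power of this. Consequently every finitely generated torsion-free right $O_B$-module is free, so $\Lambda \cong O_B^n$ as a right $O_B$-module; fix such an isomorphism, giving $h \in \GL_n(B)$ with $\Lambda = h\Lambda_0$. Then $h^{-1}g$ normalizes $\Lambda_0$, i.e. lies in the normalizer of $M_n(O_B)$ \emph{and} fixes $\Lambda_0$, so $h^{-1}g \in \GL_n(O_B)$, and it suffices to treat $h$, i.e. to show any $h$ with $h M_n(O_B) h^{-1} = M_n(O_B)$ that arises this way satisfies $h\Lambda_0 = \pi^k\Lambda_0$ — but this is circular, so instead I would argue directly: the two orders $\operatorname{End}_{O_B}(\Lambda)$ and $\operatorname{End}_{O_B}(\Lambda_0)$ coincide, and a hereditary (indeed maximal) order in $M_n(B)$ determines its lattice up to homothety because it has a \emph{unique} indecomposable projective module up to isomorphism and shift, namely $\Lambda$ itself; comparing with $\Lambda_0$ forces $\Lambda \cong \Lambda_0$ as modules over the common order, and chasing through the identification shows $\Lambda = c\Lambda_0$ for a scalar $c \in B^\times$, hence (adjusting by $F^\times$) $c \in \{1, \pi\}$ up to units.

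I expect the lattice-rigidity step — that $\operatorname{End}_{O_B}(\Lambda) = \operatorname{End}_{O_B}(\Lambda_0)$ implies $\Lambda$ and $\Lambda_0$ are homothetic — to be the main obstacle, in the sense that it is where one must be careful rather than where the difficulty is deep. The clean way to package it is via Morita theory: $M_n(O_B)$ and $O_B$ are Morita equivalent, the Morita equivalence sending the standard module $\Lambda_0$ to $O_B$; a lattice $\Lambda$ with $\operatorname{End}_{O_B}(\Lambda) = M_n(O_B)$ corresponds under this equivalence to a right ideal of $O_B$ with endomorphism ring $O_B$, and since $O_B$ is a (noncommutative) PID all of whose one-sided ideals are $\pi^k O_B$, such an ideal is $\pi^k O_B$, which pulls back to $\pi^k\Lambda_0$. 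Translating back through the normalization by $F^\times$ (recall $F^\times$ scales the grading $\ord_\pi$ by even integers, so exactly the two classes $k$ even and $k$ odd survive) gives precisely the dichotomy $g \in \GL_n(O_B)$ or $g \in \pi\GL_n(O_B)$ up to $F^\times$. I would present the argument in this Morita-theoretic language, citing a standard reference for the structure of maximal orders in central simple algebras over local fields (e.g. Reiner's \emph{Maximal Orders}) for the facts that $O_B$ is the unique maximal order, is a noncommutative DVR, and that all its finitely generated torsion-free modules are free.
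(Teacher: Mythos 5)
Your argument is correct, but it takes a genuinely different and more conceptual route than the paper's. The paper proceeds by an entirely elementary matrix computation: writing $g=(g_{ij})$, $g^{-1}=(h_{kl})$, and testing conjugation against the elementary matrices $\alpha_{jk}$ to extract the constraint $g_{ij}h_{kl}\in O_B$ for all $i,j,k,l$; from there one rescales $g$ so that $\min_{i,j}\ord_\pi(g_{ij})\in\{0,1\}$ and reads off the dichotomy directly. That proof is short, self-contained, and needs no external input. Your route instead identifies $M_n(O_B)=\operatorname{End}_{O_B}(\Lambda_0)$ for the standard lattice $\Lambda_0=O_B^n$, observes that the normalizer condition says $\operatorname{End}_{O_B}(g\Lambda_0)=\operatorname{End}_{O_B}(\Lambda_0)$, and then invokes the structure theory of the maximal order $O_B$ (a noncommutative DVR) together with Morita equivalence to conclude that $g\Lambda_0=\pi^k\Lambda_0$ for some $k\in\Z$; adjusting $k$ modulo $2$ by $F^\times$-rescaling (since $\ord_\pi|_{F^\times}\in 2\Z$) gives the two cases. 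This buys conceptual clarity — it explains \emph{why} the normalizer of a maximal order is $F^\times$ times its unit group in terms of the building/lattice picture, and it generalizes immediately to $\GL_n$ over any local division algebra — at the cost of requiring a citation to the theory of maximal orders over local fields (e.g.\ Reiner) rather than being a two-line computation. One stylistic remark: the middle of your write-up spends effort on a route you yourself flag as circular before pivoting to the Morita argument; for a clean exposition you should cut that detour and go directly from $\operatorname{End}_{O_B}(\Lambda)=\operatorname{End}_{O_B}(\Lambda_0)$ to the Morita/idempotent argument (applying $e_{11}(-)$ to reduce to the classification of fractional right $O_B$-ideals).
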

\begin {proof}
Suppose $g $
is given by the matrix $(g_{ij}) $
and $g ^ {-1} $
is given by the matrix $(h_{kl}) $. Let $\alpha_{jk} $
be the matrix with a 1 in the $jk $
position and 0s elsewhere; since $g\alpha_{jk} g ^ {-1}\in M_n (O_B) $, we see that \begin{equation}\label{eq:for_unit_type}g_{ij} h_{kl}\in O_B,\;\;\text {for all}\, i, j, k, l. \end{equation}
Without loss of generality, by rescaling $g $, we may assume $g_{ij}\in O_B $
for all $i, j $
but $\ord_\pi (g_{ij})\leq 1 $
for some $i, j $. If $h_{ij}\in O_B $
for all $i, j $, then $g\in\GL_n (O_B) $, so suppose without loss of generality that $\ord_\pi (h_{kl}) <0 $
for some $k, l $.
It follows from (\ref{eq:for_unit_type}) that $\ord_\pi (g_{ij})\geq 1 $
for all $i, j $, with equality holding for some $i, j $, and $\ord_\pi (h_{kl})\geq - 1 $
for all $k, l $; in particular, $g\in\pi\GL_n (O_B) $.
\end {proof}
Motivated by Proposition \ref{proposition needed to define unit and non-unit type}, we make the following definition.
\begin {definition}\label{unit and non-unit type definition}
An involution $\ast$
of $M_n (B) $
stabilizing $M_n (O_B) $
is called of
\emph {unit type} if it is of the form $\alpha ^\ast= g\overline\alpha ^ tg ^ {-1} $
for some $g\in\GL_n (O_B) $, and of
\emph {non-unit type} if it is of the form $\alpha ^\ast= g\overline\alpha ^ tg ^ {-1} $
for some $g\in\pi\GL_n (O_B) $.
\end {definition}
Similarly, if $B $
is a quaternion algebra over $\Q $
and $p $
is a prime such that $B\otimes\Q_p $
is not split, let $O_B\subset B $
be the unique maximal $\Z_{(p)} $-order. Then an involution $\ast$
of $M_n (B) $
stabilizing $M_n (O_B) $is called of unit or non-unit type according to whether the induced involution on $M_n (B)\otimes\Q_p $
is of unit or non-unit type.
\begin {remark}
By Proposition \ref{proposition needed to define unit and non-unit type}, an involution $\ast$
of $M_n (B) $
stabilizing $M_n (O_B) $
induces an involution on $M_n (O_B/\pi) $, which acts trivially on the center of $M_n (O_B/\pi) $
if and only if $\ast$
is of non-unit type.\end {remark}
\subsubsection{}
The same proof as Proposition \ref{proposition needed to define unit and non-unit type} also shows:
\begin{prop}\label{prop:conj_OF}
Let $F$ be a nonarchimedean local field with ring of integers $O_F$. 
If $g\in \GL_n(F)$ satisfies
    $$gM_n(O_F) g^{-1} = M_n(O_F),$$ then we have $g\in F^\times\GL_n(O_F)$.
\end{prop}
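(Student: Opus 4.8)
The plan is to mimic the proof of Proposition \ref{proposition needed to define unit and non-unit type}, which already contains all the essential combinatorial input; the quaternionic case there is genuinely harder (one has to track the uniformizer $\pi$ and the possibility $g \in \pi\GL_n(O_B)$), whereas over a commutative ring the argument collapses to the standard fact that $\GL_n(F)$ normalizing $M_n(O_F)$ is, up to scalars, $\GL_n(O_F)$. So "the same proof" really does work, and I would just spell out the simplification.

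First I would write $g = (g_{ij})$ and $g^{-1} = (h_{kl})$ with entries in $F$, and, exactly as in the earlier proof, apply the hypothesis to the elementary matrices $\alpha_{jk}$ (the matrix with a $1$ in position $jk$ and $0$ elsewhere): from $g\alpha_{jk}g^{-1} \in M_n(O_F)$ one reads off the key constraint
\begin{equation*}
g_{ij}h_{kl} \in O_F \quad \text{for all } i,j,k,l.
\end{equation*}
Next I would normalize: rescaling $g$ by an element of $F^\times$, assume every $g_{ij} \in O_F$ and some $g_{i_0 j_0}$ is a unit (i.e. $\ord(g_{i_0 j_0}) = 0$; this uses that $O_F$ is a PID / that $F$ is discretely valued, and one takes out the entry of minimal valuation). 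Then the displayed inclusion with $i = i_0$, $j = j_0$ forces $h_{kl} \in O_F$ for all $k,l$, so $g^{-1} \in M_n(O_F)$ as well, hence $g \in \GL_n(O_F)$. This gives $g \in F^\times \GL_n(O_F)$ as claimed.

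The only place the commutative case differs from the quaternionic one is that here there is no "non-unit" alternative: since the residue field and the uniformizer of $O_F$ are central, once some entry of $g$ is a unit we immediately win, and we never land in the coset $\pi\GL_n(O_F)$. The main (and essentially only) obstacle is the bookkeeping of valuations in the normalization step, which is entirely routine; no new idea beyond the proof of Proposition \ref{proposition needed to define unit and non-unit type} is required. I would therefore keep the write-up to a sentence or two, pointing to that proof.
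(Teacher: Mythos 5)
Your argument is exactly the paper's: the paper proves Proposition \ref{prop:conj_OF} by the remark that "the same proof as Proposition \ref{proposition needed to define unit and non-unit type} also shows" it, and you have simply unpacked that proof (elementary matrices give $g_{ij}h_{kl}\in O_F$; rescale so some entry of $g$ is a unit; conclude $g^{-1}\in M_n(O_F)$) together with the correct observation that the $\pi\GL_n(O_B)$ alternative disappears over a commutative $O_F$. Nothing is missing.
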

\subsection{PEL data}
\subsubsection{}
Recall the notion of a PEL datum $\mathcal D = (B, \ast, V, \psi)$ from \cite[Chapter 8]{milne2005introduction}, and  set $C =\End_B (V) $. Then $C $ is equipped with an involution $c\mapsto c' $, the adjoint with respect to $\psi$, and the $\Q $-group $G_{\mathcal D} $
associated to $\mathcal D $
is defined by $$G_{\mathcal D} (R) =\set {(g,\lambda)\in (C\otimes R) ^\times \times R^\times \,:\, gg'= \lambda}. $$ To $\mathcal D $, there is associated the reflex field $E = E_{\mathcal D} $, and, for any compact open subgroup $K\subset G_{\mathcal D} (\A_f) $, a moduli functor $M_{K} $ over $E$. Let us briefly recall the definition of $M_{K}$, for which more details can be found in  \cite[p. 390]{kottwitz1992points}. 
For a connected scheme $S\to \Spec E$, $M_{ K} (S) $
is the set of isomorphism classes of tuples $(A,\iota,\lambda,\eta)$ where:
\begin{itemize}
    \item $A/S $ is an abelian scheme up to isogeny;
    \item $\iota: B \hookrightarrow \End^0(A/S)$  is an embedding satisfying the {Kottwitz determinant condition}  derived from $\mathcal D$;
    \item $\lambda: A\to A ^\check $
    is a quasi-polarization such that $\iota (b^\ast) ^\check\circ\lambda =\lambda\circ\iota (b) $
    for all $b\in B $;
    \item $\eta$ is a $K $-level structure  for $A$, i.e., for any geometric point $s$ of $S$, a $\pi_1(S, s)$-stable $K$-orbit of isomorphisms 
 $$\eta:H_{1,\et} (A_s, \A_f)\xrightarrow{\sim} V\otimes_\Q \A_f$$ respecting the actions of $B $ and the symplectic pairings on both sides up to a scalar.
    \end{itemize}
    If $K $ is neat, then $M_K $ is represented by a smooth quasi-projective scheme over $E $.
    \subsubsection{}
    Let $p$ be a prime. A self-dual $p $-integral refinement $\mathscr D $
    of $\mathcal D $
    is the additional data of a $\ast $-stable maximal $\Z_{(p)}$-order $O_B\subset B $
    and a self-dual $O_B $-stable $\Z_{(p)} $-lattice $\Lambda\subset V $. (This is a special case of the notion in \cite[\S6]{rapoport1996period}.)
 For a compact open subgroup $K ^ p\subset G_{\mathcal D} (\A_f ^ {p}) $, the corresponding moduli problem $\mathcal M_{K^p}$ is defined as follows. For a connected scheme $S\to\Spec\O_E\otimes \Z_{(p)} $, $\mathcal M_{K^p}(S)$ is the set of isomorphism classes of tuples $(A,\iota,\lambda,\eta^p) $ where:
    \begin {itemize}
    \item $A/S $ is an abelian scheme up to prime-to-$p $ isogeny;
    \item $\iota: O_B\hook\And(A/S)\otimes\Z_{(p)} $
    is an embedding satisfying the Kottwitz condition;
    \item $\lambda: A\rightarrow A ^\check $ is a prime-to-$p$ quasi-polarization such that $\iota (b^\ast)^\check\circ\lambda =\lambda\circ\iota (b) $
    for all $b\in O_B $;
    \item $\eta ^ p$ is a $K ^ p $-level structure, i.e., for any geometric point $s$ of $S$, a $\pi_1(S, s)$-stable $K^p$-orbit of isomorphisms 
 $$\eta:H_{1,\et} (A_s, \A_f^p)\xrightarrow{\sim} V\otimes_\Q \A_f^p$$ respecting the actions of $B $ and the symplectic pairings on both sides up to a scalar.
    \end {itemize}
    If $K ^ p $ is neat, then $\mathcal M_{K ^ p} $
    is represented by a quasi-projective scheme over $O_E\otimes\Z_{(p)} $.
    Its generic fiber is $M_{K ^ pK_p} $, where $K_p = \operatorname{Stab}_{G_{\mathcal D}(\Q_p)} (\Lambda)$.

\begin{lemma}\label{lem:g_polarization}
Let $S$ be a scheme and $B$ a  simple $\Q$-algebra with involution $\ast$ of the first kind. Suppose given an abelian scheme $A/S$ with an embedding of $\Q$-algebras
$$\iota: B\hookrightarrow\End^0 (A/S),$$ and a quasi-polarization $\lambda: A\to A^\vee$ such that
$$\iota(b^\ast)^\vee\circ \lambda = \lambda\circ\iota(b),\;\;\forall b\in B.$$

Then for any totally positive $g\in B^\times$ with $g^\ast = g$,  $\lambda\circ g$ defines a quasi-polarization of $A$.
\end{lemma}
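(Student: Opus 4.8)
The plan is to check directly that $\mu := \lambda\circ\iota(g) : A\to A^\vee$ — where $\iota(g)$ denotes the quasi-isogeny of $A$ attached to $g\in B^\times$ under $\iota$ — is a quasi-polarization, i.e.\ a \emph{symmetric} quasi-isogeny that is fppf-locally a positive rational multiple of a polarization. That $\mu$ is a quasi-isogeny $A\to A^\vee$ is automatic, being the composite of the quasi-isogenies $\iota(g)$ and $\lambda$; so the content is symmetry and positivity.

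For symmetry I would argue formally. Under the biduality identification $A\cong A^{\vee\vee}$ we have $\lambda^\vee=\lambda$, hence $\mu^\vee=\iota(g)^\vee\circ\lambda^\vee=\iota(g)^\vee\circ\lambda$. Applying the hypothesis $\iota(b^\ast)^\vee\circ\lambda=\lambda\circ\iota(b)$ with $b=g$ and using $g^\ast=g$ gives $\iota(g)^\vee\circ\lambda=\lambda\circ\iota(g)$, so $\mu^\vee=\lambda\circ\iota(g)=\mu$. Equivalently: the hypothesis says that the Rosati involution $\dagger$ of $\End^0(A)$ attached to $\lambda$ restricts via $\iota$ to $\ast$ on $B$, whence $\iota(g)^\dagger=\iota(g^\ast)=\iota(g)$, and $\mu=\lambda\circ\iota(g)$ is symmetric as the composite of $\lambda$ with a $\dagger$-self-adjoint element.

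For positivity I would reduce to the case $S=\Spec k$ with $k$ algebraically closed, since whether a symmetric quasi-isogeny is a quasi-polarization is checked on geometric fibres (a standard property of abelian schemes). Over $k$, by the standard positivity criterion for polarizations (cf.\ Mumford, \emph{Abelian Varieties}, \S21), for a $\dagger$-self-adjoint $u\in\End^0(A)$ the quasi-isogeny $\lambda\circ u$ is a polarization precisely when $u$ is \emph{totally positive}, i.e.\ acts with strictly positive real eigenvalues on some (equivalently any) faithful $\End^0(A)$-module $W$. Taking $u=\iota(g)$: since $B$ is simple, $W$ is a direct sum of copies of the unique simple $B$-module, so the eigenvalues of $\iota(g)$ on $W$ coincide with those of $g$ acting on $B$ by left multiplication, and these are strictly positive by the assumption that $g$ is totally positive in $B$. (In positive characteristic one may first lift $A$ together with its $B$-action and polarization to characteristic zero, or invoke the characteristic-free form of the positivity criterion.) Hence $\mu$ is a polarization over $k$, and a quasi-polarization in general.

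The main obstacle is the positivity step, specifically keeping the various notions of ``positive'' aligned: the hypothesis packages total positivity of $g$ as positivity of eigenvalues on the $\Q$-algebra $B$, whereas the polarization criterion is phrased in terms of $\dagger$-self-adjoint endomorphisms with positive eigenvalues on $H_1$ (equivalently, positive-definiteness of the associated Hermitian form on $H_1(A,\C)$ in the analytic picture). Once the dictionary between $B$-modules, $\End^0(A)$-modules, and $H_1$ is set up — straightforward because $B$ is simple — and the reduction to geometric fibres (and, if needed, lifting to characteristic zero) is carried out, everything else is formal.
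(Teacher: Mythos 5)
Your proof is correct and follows essentially the same route as the paper's: reduce to geometric fibres, invoke the positivity criterion from Mumford, \emph{Abelian Varieties}, Chapter 21, and transfer the positivity of the eigenvalues of $g$ from $B$ to the ambient endomorphism algebra via the semisimple module theory of the simple algebra $B$. The paper treats the symmetry of $\lambda\circ\iota(g)$ as immediate, while you spell it out via the Rosati involution, but this is not a difference in approach.
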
  
\begin{proof}
    It suffices to prove the lemma when $S = \Spec k$ with $k$ an algebraically closed field. In this case, by the discussion in \cite[Chapter 21, Application III]{mumford1974abelian}, it suffices to show that $g$ has strictly positive eigenvalues on $\End^0(A)$.  But this follows from the positivity of the eigenvalues on $B$, because $\End^0(A)$ is a semisimple $\Q$-algebra containing $B$, and any such is a product of copies of $B$ as a $B$-module.
\end{proof}
The following
corollary is immediate. 

\begin{cor}\label{cor:changing_polarizations_PEL}
    Let $\mathcal D = (B,\ast, V, \psi) $
    be a PEL datum, where $B$ is a simple $\Q$-algebra and $\ast$ is an involution of the first kind. Suppose given a totally positive $g\in B ^\times $
    such that $g^\ast = g$,  and let $\mathcal D_g = (B,\ast_g, V, \psi_g), $
    where $\ast_g \coloneqq g \circ \ast \circ g^{-1}$ and $$\psi_g( x, y)\coloneqq\psi( x, g ^ {-1} y),\;\; \forall x,y\in V. $$
    Then $G_{\mathcal D} = G_{\mathcal D_g}$, and, for all $K\subset G_{\mathcal D}(\A_f) $, there is a canonical isomorphism $$M_{\mathcal D, K}\xrightarrow{\sim}M_{\mathcal D_g, K} $$
    defined by $$(A,\iota,\lambda,\eta)\mapsto (A,\iota,\lambda\circ g,\eta). $$
    If $\mathscr D = (O_B,\ast,\Lambda, \psi) $
    is a self-dual $p$-integral refinement of $\mathcal D $
    and $g $ lies in $O_B ^\times $, then $\mathscr D_g = (O_B,\ast_g,\Lambda,\psi_g) $
    is a self-dual $p $-integral refinement of $\mathcal D_g $ and the above isomorphism extends to an isomorphism of integral models $$\mathcal M_{\mathscr D, K^p}\xrightarrow{\sim}\mathcal M_{\mathscr D_g, K ^ p} $$ for all compact open $K ^ p\subset G_{\mathcal D} (\A_f^p) $.
    \end {cor}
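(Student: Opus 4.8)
The content of this corollary is the verification that the simultaneous twists $\psi\rightsquigarrow\psi_g$, $\ast\rightsquigarrow\ast_g$, $\lambda\rightsquigarrow\lambda\circ g$ are compatible with every structure entering the PEL moduli problem; Lemma \ref{lem:g_polarization} supplies the only non-formal ingredient, namely that $\lambda\circ g$ is still a \emph{polarization} (i.e.\ the positivity). I would organize the check in three steps.

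\emph{Step 1 (the datum and its group).} First I would confirm $\mathcal D_g=(B,\ast_g,V,\psi_g)$ is a PEL datum. Nondegeneracy of $\psi_g$ is immediate since $g$ is invertible, and specializing $\psi(bx,y)=\psi(x,b^\ast y)$ to $b=g^{-1}$ and using $g^\ast=g$ gives $\psi(x,g^{-1}y)=\psi(g^{-1}x,y)$, whence $\psi_g$ is alternating; the relation $\psi_g(bx,y)=\psi_g(x,b^{\ast_g}y)$ is then a one-line computation from the definition of $\ast_g$. Writing $C=\End_B(V)$, multiplication by $g$ on $V$ commutes with every element of $C$ by $B$-linearity, so the adjoint involutions of $\psi$ and $\psi_g$ on $C$ coincide; hence $G_{\mathcal D}=G_{\mathcal D_g}$ as $\Q$-groups. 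Moreover the Hodge homomorphism $h$ of $\mathcal D$ remains admissible for $\mathcal D_g$: the positivity of $(x,y)\mapsto\psi_g(x,h(i)y)$ follows from that of $(x,y)\mapsto\psi(x,h(i)y)$ after the change of variables $v\mapsto g^{1/2}v$, using the $\ast$-symmetric positive square root $g^{1/2}\in(B\otimes\R)^\times$. So $\mathcal D_g$ has the same associated group, reflex field $E$, and Kottwitz determinant condition.

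\emph{Step 2 (the moduli isomorphism).} Given a connected $S\to\Spec E$ and $(A,\iota,\lambda,\eta)\in M_{\mathcal D,K}(S)$, I claim $(A,\iota,\lambda\circ g,\eta)\in M_{\mathcal D_g,K}(S)$. The abelian scheme $A$, the embedding $\iota$, and the determinant condition are untouched; by Lemma \ref{lem:g_polarization} (applied to the $\ast$-symmetric totally positive $g$) the composite $\lambda\circ g$ is a quasi-polarization, and the Rosati-type compatibility $\iota(b^{\ast_g})^\vee\circ(\lambda\circ g)=(\lambda\circ g)\circ\iota(b)$ is obtained from $\iota(b^\ast)^\vee\circ\lambda=\lambda\circ\iota(b)$ by commuting $\iota(g)$ and $\iota(g)^\vee$ past $\lambda$ via $g^\ast=g$. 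The level structure $\eta$ is literally unchanged as a $K$-orbit of trivializations of $H_{1,\et}(A,\A_f)$, and it respects the pairings attached to $(\psi_g,\lambda\circ g)$ up to a scalar precisely because the form and the polarization have been twisted by the same $B$-linear automorphism $\iota(g)$. This defines a natural transformation $M_{\mathcal D,K}\to M_{\mathcal D_g,K}$; the same recipe with $g^{-1}$ (again $\ast$-symmetric and totally positive) yields a two-sided inverse, and $K$-equivariance is clear, so it is an isomorphism of $E$-schemes.

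\emph{Step 3 (integral refinement, and the main subtlety).} Now suppose $\mathscr D=(O_B,\ast,\Lambda,\psi)$ refines $\mathcal D$ and $g\in O_B^\times$. Then $g\Lambda=\Lambda$, so $\Lambda$ is $O_B$-stable for $\mathcal D_g$; and since $g^{-1}\Lambda=\Lambda$, the $\psi_g$-dual of $\Lambda$ equals its $\psi$-dual, which is $\Lambda$, so $\Lambda$ is self-dual for $\psi_g$ and $\mathscr D_g$ is a self-dual $p$-integral refinement of $\mathcal D_g$. Because $g\in O_B^\times$, the quasi-isogeny $\iota(g)$ is prime-to-$p$, so $\lambda\circ g$ is still a prime-to-$p$ quasi-polarization and the prime-to-$p$ level data transport verbatim; hence the isomorphism of Step 2 extends to $\mathcal M_{\mathscr D,K^p}\xrightarrow{\sim}\mathcal M_{\mathscr D_g,K^p}$, compatibly on generic fibers. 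I do not expect any real obstacle: the nontrivial content is Lemma \ref{lem:g_polarization}, and the only points needing a moment's care are the invariance of the reflex field and determinant condition in Step 1 (justified through $G_{\mathcal D}=G_{\mathcal D_g}$ and the admissibility of $h$) and the self-duality of $\Lambda$ for $\psi_g$ in Step 3, which is exactly where the hypothesis $g\in O_B^\times$ — rather than merely $g\in B^\times$ — is used.
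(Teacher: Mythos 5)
Your proposal is correct in approach and matches the paper's (implicit) argument: the paper gives this corollary no proof beyond a bare \textup{\qedsymbol}, so it is treated as a formal consequence of Lemma \ref{lem:g_polarization}, and you supply the routine checks in the intended way. Step 1 (the twisted datum has the same associated group, Hodge map, and reflex field) and Step 3 (self-duality of $\Lambda$ for $\psi_g$ via $g\in O_B^\times$, prime-to-$p$-ness of $\iota(g)$) are fine.

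However, there is a genuine gap in Step 2, precisely at the step you call routine. With the formulas as stated in the corollary -- $\psi_g(x,y)=\psi(x,g^{-1}y)$ and $b^{\ast_g}=gb^\ast g^{-1}$ -- the compatible polarization twist is $\lambda\circ\iota(g^{-1})$, not $\lambda\circ\iota(g)$. Indeed, carrying out the Rosati computation with $\lambda'=\lambda\iota(g)$ and using $\iota(g)^\vee\lambda=\lambda\iota(g)$ repeatedly gives
\begin{equation*}
\iota(b^{\ast_g})^\vee\circ(\lambda\iota(g)) \;=\; \lambda\,\iota\!\left(g^{-1}bg^2\right),
\end{equation*}
which agrees with $(\lambda\iota(g))\circ\iota(b)=\lambda\iota(gb)$ only if $g^2$ is central. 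With $\lambda'=\lambda\iota(g^{-1})$ the computation does close up:
\begin{equation*}
\iota(b^{\ast_g})^\vee\circ(\lambda\iota(g^{-1})) \;=\; (\iota(g)^{-1})^\vee\iota(b^\ast)^\vee\lambda \;=\; \lambda\,\iota(g^{-1})\,\iota(b).
\end{equation*}
The same inversion appears in your Weil-pairing claim: under $\eta$, the Weil pairing of $\lambda\circ\iota(c)$ corresponds to $(u,v)\mapsto\psi(u,cv)$, so to land on $\psi_g(u,v)=\psi(u,g^{-1}v)$ one needs $c=g^{-1}$. This is almost certainly a harmless $g\leftrightarrow g^{-1}$ typo in the corollary statement (both are $\ast$-symmetric totally positive units, so Lemma \ref{lem:g_polarization} applies equally), but your write-up should either flag the discrepancy or carry out the Rosati and Weil-pairing checks explicitly so that the formulas you verify are the ones you use.
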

    \qed
    
We also deduce:
\begin{cor}\label{cor:exists_basepoint}
    Let $B$ be an indefinite quaternion algebra over $\Q$, with $O_B\subset B$ a maximal $\Z_{(p)}$-order, and
    fix an integer $n \geq 1$. 
    Let $\ast$ be a 
   positive involution on $M_n(O_B)$, of non-unit type if $B\otimes \Q_p$ is ramified. Then there exists an abelian scheme $A$ over $\Spec \breve{\Z}_p$ of dimension $2n$ with supersingular reduction, a
    prime-to-$p$ quasi-polarizon $\lambda: A \to A^\vee$, and an embedding $\iota: M_n(O_B) \hookrightarrow \End(A)\otimes \Z_{(p)},$ such that 
    $$\iota(b^\ast)^\vee\circ \lambda = \lambda\circ\iota(b),\;\;\forall b\in M_n(O_B).$$
\end{cor}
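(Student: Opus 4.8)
\emph{Strategy.} The plan is to first produce, over $\overline\F_p$, a supersingular abelian variety $A_0$ of dimension $2n$ equipped with an action $\iota_0\colon M_n(O_B)\hook\End(A_0)\otimes\Z_{(p)}$ and a prime-to-$p$ quasi-polarization $\lambda_0$ satisfying $\iota_0(b^\ast)^\vee\circ\lambda_0=\lambda_0\circ\iota_0(b)$ for all $b$, and then to lift $(A_0,\iota_0,\lambda_0)$ to $\breve\Z_p$ by deformation theory. The main difficulty is a purely $p$-local question: the non-unit type hypothesis is exactly what is needed for the integral structure at $p$ to exist, and it is also what permits the lift to be defined over $\breve\Z_p$ itself rather than over a ramified extension.

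\emph{Construction over $\overline\F_p$.} There is a unique supersingular isogeny class of abelian varieties of dimension $2n$ over $\overline\F_p$, with endomorphism algebra $\End^0\cong M_{2n}(B_{p,\infty})$, where $B_{p,\infty}$ is the quaternion algebra over $\Q$ ramified exactly at $p$ and $\infty$. First, $M_n(B)$ embeds as a $\Q$-subalgebra of $M_{2n}(B_{p,\infty})$: since $[B]$ and $[B_{p,\infty}]$ lie in $\operatorname{Br}(\Q)[2]$, their product is again a quaternion class $[C]$, and $M_n(B)\otimes_\Q C$ is then a central simple $\Q$-algebra of degree $4n$ and Brauer class $[B_{p,\infty}]$, hence isomorphic to $M_{2n}(B_{p,\infty})$. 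To realize this embedding on an honest abelian variety $A_0$ in the supersingular isogeny class, together with a compatible prime-to-$p$ polarization $\lambda_0$, I would prescribe the corresponding local PEL data place by place: at primes $\ell\neq p$ there is no condition, everything being $\Q$-rational there; at $\infty$, the positivity required of $\lambda_0$ holds because $\ast$ is a positive involution; and at $p$, one needs an $M_n(O_B)\otimes\Z_p$-stable self-dual Dieudonné lattice carrying an alternating pairing that induces $\ast$. When $B\otimes\Q_p$ is split such a lattice always exists; when $B\otimes\Q_p$ is ramified, its existence is governed by the unit/non-unit type of $\ast$, and --- using Proposition \ref{proposition needed to define unit and non-unit type} and the remark following Definition \ref{unit and non-unit type definition} --- one checks that the non-unit type hypothesis is precisely what provides it. Assembling these local data --- e.g. via Honda--Tate theory, or concretely by starting from the superspecial abelian variety $E^{2n}$ and modifying its $p$-divisible group --- yields the desired $(A_0,\iota_0,\lambda_0)$, which is in fact defined over a finite field $\F_q$. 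Here Lemma \ref{lem:g_polarization} and Corollary \ref{cor:changing_polarizations_PEL} give the flexibility to adjust $\lambda_0$ by totally positive self-adjoint elements of $M_n(O_B)^\times$ without changing the moduli problem, which is convenient in matching up the data.

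\emph{Lifting to $\breve\Z_p$.} By Serre--Tate theory, deformations of $(A_0,\iota_0,\lambda_0)$ are equivalent to deformations of the polarized $p$-divisible group $A_0[p^\infty]$ together with its $O_B\otimes\Z_p$-action, and by Grothendieck--Messing these are controlled by the associated local model. Since the Dieudonné lattice was chosen self-dual and $M_n(O_B)\otimes\Z_p$-stable --- i.e. we are at a hyperspecial level --- this local model is smooth over $\breve\Z_p$. Hence the deformation functor of $(A_0,\iota_0,\lambda_0)$ is formally smooth over $\breve\Z_p$, so the triple lifts compatibly over $\breve\Z_p/p^m$ for every $m\geq 1$; this system algebraizes, by Grothendieck's existence theorem applied to the polarized abelian scheme, to an abelian scheme $A$ over $\breve\Z_p$ carrying the required $\iota$ and $\lambda$. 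The special fiber of $A$ is $A_0$, hence supersingular.

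\emph{Expected main obstacle.} The heart of the argument is the $p$-local existence of the self-dual $M_n(O_B)\otimes\Z_p$-stable lattice compatible with $\ast$, for which the unit/non-unit type hypothesis (via Proposition \ref{proposition needed to define unit and non-unit type}) is indispensable; this is also what makes the relevant local model smooth, and hence what allows the lift to land over $\breve\Z_p$ and not merely over a ramified extension. The remaining ingredients --- the Brauer-group existence of the $\Q$-algebra embedding, the matching of local PEL data at the other places, and the deformation-theoretic algebraization --- are routine.
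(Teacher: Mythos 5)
Your approach is genuinely different from the paper's. The paper's proof is a short reduction: using Lemma \ref{lem:g_polarization} together with Propositions \ref{proposition needed to define unit and non-unit type} and \ref{prop:conj_OF}, it first replaces $\ast$ by the standard involution $\alpha\mapsto \alpha^{\dagger t}$ (with $\dagger$ a positive involution on $O_B$, of non-unit type if $B$ ramifies at $p$), and then observes that the $n>1$ case follows from the $n=1$ case by taking an $n$-fold product; the $n=1$ case is then cited from the classical theory of Shimura curves as in Boutot--Carayol. Your proposal instead constructs the $\overline\F_p$-triple from scratch (Honda--Tate plus Brauer-group arithmetic plus local PEL considerations) and lifts it to $\breve\Z_p$ by deformation theory. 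Both routes can in principle work, and yours is more self-contained, but it is also considerably heavier: the unique-isogeny-class and endomorphism-algebra inputs you invoke are exactly what the Boutot--Carayol citation packages.

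However, there is a real gap in your lifting step. You assert that because the Dieudonn\'e lattice is self-dual and $M_n(O_B)\otimes\Z_p$-stable you are ``at a hyperspecial level'' and therefore the local model is smooth over $\breve\Z_p$. This is false precisely in the case you flag as interesting, namely when $B\otimes\Q_p$ is ramified. In that case the relevant group over $\Q_p$ is not unramified (for $n=1$ it is the unit group of the quaternion division algebra, anisotropic modulo center; for $n=2$ it is the inner form $\spin(V_{Dq})$ with $V_{Dq}$ ramified at $p$, as studied in \S\ref{sec:RZ} of this paper) and admits no hyperspecial compact subgroup. The integral model is accordingly \emph{not} smooth: for $n=1$ it is the Cherednik--Drinfeld semistable model, whose special fiber has ordinary double points, and for $n=2$ the paper itself (Theorem \ref{rz uniformization}(2), cf. Theorem \ref{BT stratification first version}(\ref{thm:BT_6})) shows the completed local ring at a singular point is $\breve\Z_q\llbracket x,y,z,w\rrbracket/(xy-zw-q)$, which has \emph{no} $\breve\Z_q$-points at all. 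So ``formal smoothness of the deformation functor'' is simply not available, and your lift need not land over $\breve\Z_p$. The conclusion is still true, but the argument must first locate an $\overline\F_p$-point whose deformation space is formally smooth --- equivalently, a point in the smooth locus of the special fiber --- and you give no such step. The paper's reduction to $n=1$ sidesteps this entirely by appealing to the known existence of such points in the classical Shimura curve picture.

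A secondary remark: the $p$-local lattice existence in your $\overline\F_p$-construction (``one checks'' using the non-unit type hypothesis) is plausible but compressed; it is the substantive place where the non-unit type hypothesis enters, and it deserves a real argument rather than an appeal to the remark after Definition \ref{unit and non-unit type definition}.
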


\begin{proof}
Using Lemma \ref{lem:g_polarization} along with Propositions \ref{proposition needed to define unit and non-unit type} and \ref{prop:conj_OF}, we  reduce to the case that $\alpha^\ast =  \alpha^{\dagger t}$ for all $\alpha\in M_n(O_B)$, where $\dagger$ is a positive involution on $O_B$, of non-unit type if $B$ ramifies at $p$. Thus it suffices to prove the corollary when $n = 1$, in which case it is well-known from the classical theory of Shimura curves; see the discussions in \cite[Chapter III]{boutot1991uniformisation}.
\end{proof}
\subsection{PEL data for $\spin_{3,2}$ groups}
\subsubsection{}
In this article, we will consider PEL data that arise in the following way. Let $D \geq 1$ be squarefree with $\sigma(D)$ \emph{even}, and let $q$ be a prime, possibly with $q|D$. Suppose fixed a maximal $\Z_{(q)}$-order $O_D \subset B_D$, with a nebentype involution $\ast$, and an embedding $\breve \Z_q \hookrightarrow \C$. 
\begin{definition}\label{def:O_Dtriple}
   \begin{enumerate}\item  An $(O_D, \ast)$-triple is a triple $(A_0, \iota_0, \lambda_0)$, where:
    \begin{enumerate}
        \item $A_0$ is an abelian scheme over $\Spec \Breve{\Z}_q$  of rank 4 with supersingular reduction;
        \item $\iota_0$ is an embedding $O_D\hookrightarrow\End(A_0) \otimes\Z_{(q)}$;
        \item $\lambda_0: A_0 \to A_0^\vee$ is a prime-to-$q$ polarization, such that 
    $$\iota_0(\alpha^\ast)^\vee \circ \lambda_0 = \lambda_0 \circ \iota_0(\alpha),\;\; \forall \alpha\in O_D.$$
    \end{enumerate}

    \item Given an $(O_D, \ast)$-triple as above, we set $H\coloneqq H_1(A_0(\C), \Q),$ with its canonical symplectic form $\psi$.  Let $\Lambda  \subset H$ be the lattice $ H_1(A_0(\C), \Z_{(q)})$. The PEL datum associated to $(A_0, \iota_0, \lambda_0)$ is defined by
$$\mathcal D = (B_D, \ast, H, \psi),$$ with self-dual $q$-integral refinement
$$\mathscr D = (O_D, \ast, \Lambda, \psi).$$
    \end{enumerate}
\end{definition}
\subsubsection{}
Let $D\cdot q \coloneqq Dq/\operatorname{gcd}(D, q)$.
Given an $(O_D, \ast)$-triple $(A_0, \iota_0, \lambda_0)$:
\begin{itemize}
 \item  $H$ is a $B_D$-module, and $\End(H, B_D)$ is isomorphic to $M_2(B_D)$. The adjoint involution $\dagger$ on $\End(H, B_D)$ is of main type (by Proposition  \ref{prop:tensoring involutions}, because the adjoint involution on $\End(H) = B_D\otimes \End(H, B_D)$ is of main type).

 \item Set $\overline A_0\coloneqq (A_0)_{\overline\F_q}$, with its induced $\O_D$-action $\overline \iota_0$ and polarization $\overline\lambda_0$. Then $\End^0(\overline A_0, \overline \iota_0)$ is isomorphic to $M_2(B_{D\cdot q})$, and its Rosati involution $\dagger$ is positive, hence of main type by Proposition \ref{prop:positive involutions}.
 \end{itemize}\begin{definition}\label{def:unif_datum_general}
 \leavevmode
 \begin{enumerate}
     \item \label{def:unif_datum_general_1}  A $q$-adic \emph{uniformization datum} $(A_0,\iota_0,\lambda_0, i_D, i_{D\cdot q})$
for $(O_D, \ast)$ is an $(O_D, \ast)$-triple $(A_0,\iota_0, \lambda_0)$ as above, along with a choice of isomorphisms of algebras with involution: $$i_D:\left(\End(H, B_D), \dagger\right) \isomorphism (M_2(B_D), \ast_D),$$ $$i_{D\cdot q}:\left(\End^0(\overline A_0, \overline\iota_0), \dagger\right) \isomorphism (M_2(B_{D\cdot q}), \ast_{D\cdot q}).$$
\item\label{def:unif_datum_general_2} A $q$-adic \emph{uniformization datum}  $(\ast, A_0, \iota_0, \lambda_0, i_D, i_{D\cdot q})$ for $V_D$ is a choice of positive nebentype involution $\ast$ on $O_D$ -- of unit type if $q|D$ -- and a uniformization datum $(A_0, \iota_0, \lambda_0, i_D, i_{D\cdot q})$ for $(O_D, \ast)$. 
 \end{enumerate}
\end{definition}
Recall here that $V_D$ was defined in (\ref{eq:V_D}).
\begin{rmk}\label{rmk:unif_datum}
    \begin{enumerate}
        \item \label{rmk:unif_datum_one}
        Given any $(O_D,\ast)$-triple $(A_0, \iota_0, \lambda_0)$, the choices 
         in Definition \ref{def:unif_datum_general}(\ref{def:unif_datum_general_1}) exist by Lemma \ref{lem:BQA_symplectic_agree}.
        \item \label{rmk:unif_datum_two}Given a $q$-adic uniformization datum $(\ast, A_0,\iota_0,\lambda_0, i_D, i_{D \cdot q})$ for $V_D$, we also obtain isomorphisms 
$$\End(H, B_D)^{\dagger = 1, \tr = 0} \isomorphism V_D, \; \End^0(\overline A_0, \overline \iota_0)^{\dagger = 1, \tr = 0} \isomorphism V_{D \cdot q}.$$
The former determines an isomorphism $$G_{\mathcal D} \isomorphism\spin(V_D).$$
Moreover,  the action of $\End(\overline A_0, \overline \iota_0)$ on  
$$H_{1,\et} (\overline A_0, \A_f^q) \cong H_1(A_0(\C), \A_f^q)$$ induces an isomorphism $V_{D \cdot q}\otimes_\Q \A_f^q\cong V_D \otimes_\Q \A_f^q$. In turn, this induces an isomorphism $\spin(V_D)(\A_f^q) \cong \spin(V_{D \cdot q})(\A_f^q)$.    
    \end{enumerate}
\end{rmk}

\subsection{Local conditions and Selmer groups}
\subsubsection{}
In this subsection, $O$ is the ring of integers of a finite extension  $E/\Q_p$, and $\varpi\in O$ is a uniformizer. 

\begin{notation}\label{notation:BK_etc}
\leavevmode
    \begin{enumerate}
        \item \label{notation:BK_part_tf}Suppose $M$ is a finite free $O$-module with an action of $G_{\Q_\l}$, where $\l$ may be equal to $p$. We consider the Bloch-Kato local conditions
\begin{equation*}
    H^1_f(\Q_\l, M) \coloneqq \ker\left(H^1(\Q_\l, M) \to \frac{H^1(\Q_\l, M\otimes\Q_p)}{H^1_f(\Q_\l, M\otimes\Q_p)}\right).
\end{equation*}
\item Globally, if $M$ is a finite free $O$-module with $G_\Q$-action, let
\begin{equation*}
    H^1_f(\Q, M) \coloneqq \ker\left(H^1(\Q, M) \to \prod_\l \frac{H^1(\Q_\l, M)}{H^1_f(\Q_\l, M)}\right).
\end{equation*}
\item\label{notation:BK_part_unr} Suppose $\l \neq p$. For an unramified, finitely generated $O$-module $M$ (either finite or infinite) with $G_{\Q_\l}$-action, we let
\begin{equation*}
    H^1_f(\Q_\l, M) \coloneqq H^1_{\unr} (\Q_\l, M).
\end{equation*}
    \end{enumerate}
\end{notation}

\begin{rmk}\label{rmk:compare_f_unr_BK}
    Notations \ref{notation:BK_etc}(\ref{notation:BK_part_tf},\ref{notation:BK_part_unr}) are consistent because, when $M$ is free over $O$ and unramified as a $G_{\Q_\l}$-module, the map
$H^1(I_\l, M) \to H^1(I_\l, M\otimes \Q_p)$ is injective. 
\end{rmk}

\subsubsection{}\label{subsubsec:torsion_cryst}
Fix integers $a\leq 0 \leq b$. 
Recall that a finite $\Z_p[G_{\Q_p}]$-module $M$ is said to be \emph{torsion crystalline} with Hodge-Tate weights in $[a, b]$ if there exists a crystalline $\Q_p[G_{\Q_p}]$-module $V$ with Hodge Tate weights in $[a,b]$, and two $G_{\Q_p}$-stable lattices $T_1 \subset T_2 \subset V$, such that $M = T_2/T_1.$ A finitely generated $\Z_p[G_{\Q_p}]$-module $M$ is torsion crystalline with Hodge-Tate weights in $[a, b]$ if $M/p^n$ is so for all $n \geq 1$. 

If $M$ is torsion crystalline with Hodge-Tate weights in $[a, b]$, let $$H^1_{f,\tors}(\Q_p, M) \subset H^1(\Q_p, M)$$ be the subspace of cohomology classes such that the corresponding extension
$$0 \to M \to \ast \to \Z_p\to 0$$ is torsion crystalline with Hodge-Tate weights in $[a,b]$. 
If $M$ is an $O[G_{\Q_p}]$-module, then $H^1_{f,\tors}(\Q_p, M)$ is an $O$-submodule of $H^1(\Q_p, M)$.
\begin{prop}\label{prop torsion crystalline iff crystalline}
A finite free $O[G_{\Q_p}]$-module $M$ is torsion crystalline with Hodge-Tate weights in $[a, b]$ if and only if $M \otimes \Q_p$ is crystalline with Hodge-Tate weights in $[a, b]$. In particular, if $M$ is a finite free $O[G_{\Q_p}]$-module, then $H^1_f(\Q_p, M) = H^1_{f,\tors} (\Q_p, M)$. 
\end{prop}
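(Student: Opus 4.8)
The plan is to prove the stated equivalence first, and then to read off the identity of Selmer groups. For the ``if'' direction there is essentially nothing to do: since $O$ is finite flat over $\Z_p$, a finite free $O[G_{\Q_p}]$-module $M$ is a $G_{\Q_p}$-stable $\Z_p$-lattice in $V\coloneqq M[1/p]$, as is $p^nM$ for every $n$; so if $V$ is crystalline with Hodge--Tate weights in $[a,b]$, then $M/p^nM$ is, by definition, torsion crystalline with Hodge--Tate weights in $[a,b]$ for all $n\ge 1$, which is exactly the assertion that $M$ is torsion crystalline with Hodge--Tate weights in $[a,b]$.

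For the converse I would argue as follows. Assume $M/p^nM$ is torsion crystalline with Hodge--Tate weights in $[a,b]$ for all $n$. Twisting $M$ by an integral power of $\chi_{p,\cyc}$ -- which preserves torsion-crystallinity and translates all Hodge--Tate weights by a fixed integer -- reduces us to $a=0$, with weights in $[0,h]$, $h=b-a$. Now I would invoke integral $p$-adic Hodge theory: Fontaine--Laffaille theory when $h\le p-2$, and the classification of torsion crystalline representations of bounded height due to Breuil, Kisin, and Liu in general. In each case one has an exact equivalence between torsion crystalline representations with Hodge--Tate weights in $[0,h]$ and a category of torsion semilinear-algebra objects, this equivalence is compatible with reduction modulo $p^n$, and it matches finite free objects with $G_{\Q_p}$-stable lattices in crystalline representations with Hodge--Tate weights in $[0,h]$. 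The objects attached to the $M/p^nM$ therefore form a compatible system; its inverse limit is a finite free object whose associated lattice $T$ reduces to $M/p^nM$ modulo $p^n$ for all $n$, hence equals $M$, so $V=M[1/p]=T[1/p]$ is crystalline with Hodge--Tate weights in $[0,h]$. (Alternatively, one can phrase this through the $\varpi$-adic completeness of a crystalline framed deformation ring with the given Hodge--Tate weights, whose $O/\varpi^n$-points are the torsion crystalline lifts and whose $O$-points are the lattices in crystalline representations.) I expect this converse to be the only real obstacle: there is no soft passage from the mod-$p^n$ statements to the characteristic-zero one, and one genuinely needs the classification together with its compatibility with reduction modulo $p^n$; everything else is bookkeeping.

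Finally, I would deduce the equality $H^1_f(\Q_p,M)=H^1_{f,\tors}(\Q_p,M)$ for $M$ finite free and torsion crystalline with Hodge--Tate weights in $[a,b]$ -- equivalently, by the first part, with $V=M[1/p]$ crystalline with Hodge--Tate weights in $[a,b]$. A class $c\in H^1(\Q_p,M)$ is represented by an extension $0\to M\to E\to \Z_p\to 0$ of $G_{\Q_p}$-modules with $E$ finite free over $\Z_p$, and by definition $c\in H^1_{f,\tors}(\Q_p,M)$ iff $E$ is torsion crystalline with Hodge--Tate weights in $[a,b]$. Applying the first part to the finite free $\Z_p$-module $E$ (the argument there never used the $O$-structure), this holds iff $E[1/p]$ is crystalline with Hodge--Tate weights in $[a,b]$. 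But $E[1/p]$ is an extension of $\Q_p$ by $V$, so its Hodge--Tate weights lie automatically in $\{0\}\cup[a,b]=[a,b]$, and the condition reduces to $E[1/p]$ being crystalline. Since $V$ is crystalline, $H^1_f(\Q_p,V)$ is exactly the set of classes whose associated extension of $\Q_p$ by $V$ is crystalline; hence the condition is equivalent to the image of $c$ in $H^1(\Q_p,V)$ lying in $H^1_f(\Q_p,V)$, i.e.\ (Notation~\ref{notation:BK_etc}) to $c\in H^1_f(\Q_p,M)$. This yields $H^1_{f,\tors}(\Q_p,M)=H^1_f(\Q_p,M)$, completing the proof.
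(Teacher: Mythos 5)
Your argument is correct and amounts to re-deriving the result that the paper simply cites: the published proof is a one-line reference to Liu's \emph{Torsion $p$-adic Galois representations and a conjecture of Fontaine} (\S 7.3), with a footnote noting that when $b-a<p-1$ the statement is already Breuil's. Your sketch of the ``only if'' direction via Fontaine--Laffaille (in the bounded range) and Liu's classification in general, together with compatibility of the semilinear-algebra equivalence with reduction mod $p^n$, is exactly the content of the cited references, so there is nothing genuinely different in the approach; you have just unpacked the citation. Two small remarks worth keeping in mind if you were to write this out in full: forming the inverse limit of the semilinear-algebra objects attached to $M/p^nM$ requires choosing the isomorphisms compatibly (the objects are a priori only determined up to isomorphism), which is a standard but nontrivial rigidity point handled in Liu's work; and your alternative via the $\varpi$-adic completeness of a crystalline framed deformation ring is somewhat circular as stated, since identifying the $O/\varpi^n$-points of that ring with the torsion crystalline lifts is essentially the content of the proposition being proved. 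Your deduction of $H^1_f(\Q_p,M)=H^1_{f,\tors}(\Q_p,M)$ from the first part, applied with $O=\Z_p$ to the extension $E$, is correct and is how one would expect the ``in particular'' to be read.
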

\begin{proof}
    This is  \cite[\S7.3]{liu2007torsion}.\footnote{In fact, for the main results it suffices to assume $b-a < p - 1$, in which case Proposition \ref{prop torsion crystalline iff crystalline} is  due to Breuil \cite[Proposition 6]{breuil1999remarque}.}
\end{proof}
\begin{notation}
We will from now on write $H^1_f(\Q_p, M)$ in place of $H^1_{f,\tors}(\Q_p, M)$ for any finitely generated $O$-module $M$ with $G_{\Q_p}$-action; by Proposition \ref{prop torsion crystalline iff crystalline}, this  is consistent with Notation \ref{notation:BK_etc}(\ref{notation:BK_part_tf}).
\end{notation}
\subsection{Duality}
\subsubsection{}
Let $\l$ be a prime, possibly equal to $p$.
If $M$ is a finite free $O$-module with $G_{\Q_\l}$-action, let $M^\ast= \Hom_O(M, O(1))$. Similarly, if $M$ is a  locally compact $O$-module with $G_{\Q_\l}$-action, let $M^\vee= \Hom_O(M, E/O(1))$ be the Cartier dual. 
We have the local Tate pairings
\begin{equation}\label{eq:tate_pairing_pontryagin}
 \langle \cdot, \cdot\rangle_\l:   H^1(\Q_\l, M) \times H^1(\Q_\l, M^\vee) \to H^2(\Q_\l, E/O(1)) = E/O
\end{equation}
and 
\begin{equation}\label{eq:tate_pairing_O}
   \langle \cdot, \cdot\rangle_\l:   H^1(\Q_\l, M) \times H^1(\Q_\l, M^\ast) \to H^2(\Q_\l, O(1)) = O.
\end{equation}
The former is perfect, and the latter is perfect modulo torsion.
We recall the following standard fact.

\begin{lemma}\label{lem:zero_local_pairing}
Suppose $M$ is a finite free $O$-module with $G_{\Q_\l}$-action. 
Then the subspaces
  $H^1_f(\Q_\l, M)$ and $H^1_f(\Q_\l, M^\ast)$ pair to zero under the local Tate pairing (\ref{eq:tate_pairing_O}).
\end{lemma}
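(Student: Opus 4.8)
The plan is to reduce the statement to the well-known fact that the Bloch–Kato subspace is its own annihilator under the Tate pairing rationally, and then descend to the integral statement. First I would tensor with $E$: both $H^1_f(\Q_\l, M)$ and $H^1_f(\Q_\l, M^\ast)$ are, by definition, pullbacks of $H^1_f(\Q_\l, M\otimes E)$ and $H^1_f(\Q_\l, M^\ast\otimes E)$ under the natural maps, and $M^\ast\otimes E \cong (M\otimes E)^\ast$ is the Kummer/Bloch–Kato dual of $M\otimes E$. The local Tate pairing \eqref{eq:tate_pairing_O} becomes, after inverting $p$, the perfect pairing $H^1(\Q_\l, M\otimes E)\times H^1(\Q_\l, (M\otimes E)^\ast)\to E$, and it is compatible with \eqref{eq:tate_pairing_O} via the obvious maps $H^1(\Q_\l,M)\to H^1(\Q_\l,M\otimes E)$, etc. So it suffices to know that $H^1_f(\Q_\l,M\otimes E)$ and $H^1_f(\Q_\l,(M\otimes E)^\ast)$ are exact annihilators of each other under the $E$-valued pairing; since any class in $H^1_f(\Q_\l,M)$ maps into $H^1_f(\Q_\l,M\otimes E)$ and likewise on the dual side, their Tate pairing lands in $E$ and equals the image of $\langle\cdot,\cdot\rangle_\l\in O$, which must therefore be $0$.

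The key input, then, is the orthogonality $H^1_f(\Q_\l,M\otimes E)^{\perp}=H^1_f(\Q_\l,(M\otimes E)^\ast)$ under local Tate duality, which is Proposition 3.8 of Bloch–Kato (and also appears in Fontaine–Perrin-Riou, or Rubin's book). For $\l\neq p$ this is elementary: $H^1_f=H^1_{\unr}=H^1(\mathrm{Frob})$ on the unramified quotient, and the unramified classes are self-orthogonal because $H^1_{\unr}(\Q_\l,N)$ and $H^1_{\unr}(\Q_\l,N^\ast)$ are the annihilators of each other by the inflation–restriction sequence and the perfectness of the pairing on $H^1(\F_\l,-)$ against $H^1(I_\l,-)$. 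For $\l=p$ one uses $\dim_E H^1_f(\Q_p,M\otimes E)+\dim_E H^1_f(\Q_p,(M\otimes E)^\ast)=\dim_E H^1(\Q_p,M\otimes E)$, which follows from the local Euler characteristic formula together with the computation $\dim H^1_f(\Q_p,W)=\dim H^0(\Q_p,W)+\dim D_{\mathrm{dR}}(W)/\mathrm{Fil}^0$, and then combines with the isotropy (which comes from the Bloch–Kato exponential/dual exponential being adjoint) to get exact orthogonality.

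The one mildly delicate point — really the only thing to check carefully — is the compatibility between the $O$-valued pairing \eqref{eq:tate_pairing_O} and the $E$-valued one after inverting $p$, and the fact that classes in the integral $H^1_f(\Q_\l,M)$ (defined as the preimage of $H^1_f(\Q_\l,M\otimes E)$) really do pair into $O\subset E$ rather than just into $E$; but this is immediate from functoriality of cup product in the coefficients, since $\langle\cdot,\cdot\rangle_\l$ on $H^1(\Q_\l,M)\times H^1(\Q_\l,M^\ast)$ takes values in $H^2(\Q_\l,O(1))=O$ by construction. So I expect no real obstacle here: the whole argument is a formal reduction to the classical Bloch–Kato orthogonality, and the proof can simply cite that result after noting the rationalization is harmless because $H^1_f$ is by definition a pullback.
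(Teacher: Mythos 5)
Your argument is essentially correct, and it takes a genuinely different — and arguably more direct — route than the paper's. You rationalize first (tensor with $E$), observe that $H^1_f(\Q_\l,M)$ is by definition the preimage of the rational subspace, and appeal to the orthogonality of the rational Bloch--Kato subspaces. The paper instead reduces first from $O$-coefficients to $\Z_p$-coefficients: it identifies $M^\ast = \Hom_O(M,O(1))$ with $\Hom_{\Z_p}(M,\Z_p(1))$ and uses the nondegeneracy of the trace pairing $O\times O\to\Z_p$ to deduce the $O$-valued vanishing from the $\Z_p$-valued one, then cites the same Bloch--Kato Proposition~3.8. The two are really transpositions of each other, since both a rationalization and a trace descent are needed somewhere: Proposition~3.8 is stated for $\Q_p$-vector spaces and the $\Q_p$-linear dual, while you want it for the $E$-valued cup product pairing against the $E$-linear dual $M^\ast\otimes E=\Hom_E(M\otimes E,E(1))$. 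The passage from the $\Q_p$-valued to the $E$-valued statement is exactly where you would run the trace-nondegeneracy argument that the paper runs integrally (using that $H^1_f$ is $E$-stable and $\langle\cdot,\cdot\rangle_E$ is $E$-bilinear), so you have not actually avoided it — you have just not written it down. Your proposal identifies the compatibility of cup products as the ``only delicate point,'' but that part is formal; the real point is the $E$-versus-$\Q_p$ coefficient mismatch, and you should make the trace step explicit. Two smaller remarks: you only need that the Bloch--Kato subspaces annihilate each other, not that they are exact annihilators — the latter requires a de~Rham hypothesis at $\l=p$ that the lemma does not impose, a distinction the paper flags in a footnote; and the argument for $\l\neq p$ should be phrased as ``the unramified subspaces annihilate each other'' rather than ``are self-orthogonal,'' since the pairing relates two different groups.
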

\begin{proof}
When $O = \Z_p$, 
the result follows from  \cite[Proposition 3.8]{bloch1990lfunctions}.\footnote{Although \emph{loc. cit.} assumes $M\otimes \Q_p$ is de Rham when $\l = p$, this is needed to show that $H^1_f(\Q_\l, M)$ and $H^1_f(\Q_\l, M^\ast)$  are \emph{exact} annihilators; the proof shows that they annihilate each other in general.} 
To reduce to this case, note that $M^\ast$ is canonically isomorphic to $M'\coloneqq \Hom_{\Z_p} (M, \Z_p)$, and we have a commutative diagram:
\begin{center}
\begin{tikzcd}
H^1_f(\Q_\l, M) \times H^1_f(\Q_\l, M^\ast) \arrow [d, "\sim",{anchor=west}]\arrow [r, "{\langle\cdot,\cdot\rangle_p}"] & O\arrow[d, "\tr"]\\H ^ 1_f (\Q_\l, M)\times H^1_f(\Q_\l, M')\arrow [r, "{\langle\cdot,\cdot\rangle_p} = 0"] &\Z_p.
\end{tikzcd}
\end{center}
Since $H^1_f(\Q_\l, M^\ast)\subset H^1(\Q_\l, M^\ast)$ is $O$-stable and the trace pairing $O \times O \to \Z_p$ is nondegenerate, the lemma follows.
\end{proof}

\begin{lemma}\label{lemma kill pairing on crystalline}
    Fix integers $a\leq0 \leq b$, and let $M$ be a finite free $O$-module with $[G_{\Q_p}]$-action which is torsion crystalline with Hodge-Tate weights in $[a, b]$. Then:
    \begin{enumerate}
        \item\label{lemma kill pairing on crystalline part one} We have $\varprojlim H^1_{f} (\Q_p, M/\varpi^n) = H^1_f (\Q_p, M)$. 
        \item\label{lem:kill_cryst_1.5}
        For all $n \geq 1$, there exists $m \geq n$ such that $$\im \left(H^1_f(\Q_p, M/\varpi^m) \to H^1_f(\Q_p, M/\varpi^n)\right) = \im \left(H^1_f(\Q_p, M) \to H^1_f(\Q_p, M/\varpi^n)\right).$$
        \item\label{lemma kill pairing on crystalline part two}For all $n\geq 1$, there exists $m \geq n$ such that the local Tate pairing
        $$\langle \cdot, \cdot \rangle: H^1(\Q_p, M/\varpi^n) \times H^1(\Q_p, M^\ast/ \varpi^n) \to O/\varpi^n$$
        vanishes when restricted to 
        $$\im \left(H^1_{f}(\Q_p, M/\varpi^m) \to H^1_{f} (\Q_p, M/\varpi^n)\right)\times \im \left(H^1_{f}(\Q_p, M^\ast/\varpi^m) \to H^1_{f} (\Q_p, M^\ast/\varpi^n)\right).$$ 
    \end{enumerate}
\end{lemma}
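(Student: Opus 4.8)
<br>

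The plan is to prove part (\ref{lemma kill pairing on crystalline part one}) directly, to deduce part (\ref{lem:kill_cryst_1.5}) from it by a Mittag--Leffler argument, and to deduce part (\ref{lemma kill pairing on crystalline part two}) from part (\ref{lem:kill_cryst_1.5}) together with Lemma \ref{lem:zero_local_pairing}. For part (\ref{lemma kill pairing on crystalline part one}), recall first that $G_{\Q_p}$ has finite Galois cohomology on finite modules, so that $H^1(\Q_p, M) = \varprojlim_n H^1(\Q_p, M/\varpi^n)$, compatibly with the reduction maps. The first point is that reduction carries $H^1_f$ into $H^1_f$: if $c\in H^1_f(\Q_p, M)$ is the class of an extension $0\to M\to V\to\Z_p\to 0$ with $V$ torsion crystalline with Hodge--Tate weights in $[a,b]$, then its reduction mod $\varpi^n$ is the pushout $0\to M/\varpi^n\to V/\varpi^n M\to\Z_p\to 0$, and for each $k$ the module $(V/\varpi^n M)/p^k = V/(p^kV+\varpi^n M)$ is a quotient of the torsion-crystalline module $V/p^k V$; since a quotient of a module of the form $T_2/T_1$ with $T_1\subseteq T_2$ lattices in a crystalline representation is of the form $T_2/T_3$ with $T_1\subseteq T_3\subseteq T_2$, it follows that $V/\varpi^n M$ is torsion crystalline with Hodge--Tate weights in $[a,b]$, i.e. $c\bmod\varpi^n\in H^1_f(\Q_p, M/\varpi^n)$. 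This gives the inclusion $H^1_f(\Q_p, M)\subseteq\varprojlim_n H^1_f(\Q_p, M/\varpi^n)$.

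For the reverse inclusion, let $(c_n)\in\varprojlim_n H^1_f(\Q_p, M/\varpi^n)$ and let $0\to M\to V\to\Z_p\to 0$ be the corresponding extension, so that $V$ is finitely generated over $\Z_p$ and $V/\varpi^n M$ is torsion crystalline with Hodge--Tate weights in $[a,b]$ for every $n$. For $n$ large enough that $\varpi^n M\subseteq p^k V$, one has $p^k(V/\varpi^n M) = p^k V/\varpi^n M$, hence $V/p^k V\cong (V/\varpi^n M)/p^k$, which is torsion crystalline with Hodge--Tate weights in $[a,b]$ because $V/\varpi^n M$ is. Thus $V/p^k V$ is torsion crystalline with Hodge--Tate weights in $[a,b]$ for all $k$, so $V$ itself is, and therefore $c\in H^1_f(\Q_p, M)$. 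This proves part (\ref{lemma kill pairing on crystalline part one}).

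For part (\ref{lem:kill_cryst_1.5}), the inclusion $\im(H^1_f(\Q_p, M)\to H^1_f(\Q_p, M/\varpi^n))\subseteq\im(H^1_f(\Q_p, M/\varpi^m)\to H^1_f(\Q_p, M/\varpi^n))$ is immediate for all $m\geq n$. Since $H^1(\Q_p, M/\varpi^n)$ is finite, the images on the right form a decreasing chain of subgroups of it, hence stabilize; fixing $m$ past the stabilization point and a class $c$ in the stable image, the sets $\{x\in H^1_f(\Q_p, M/\varpi^{m'}) : x\mapsto c\}$ for $m'\geq m$ form a nonempty inverse system of finite sets under the reduction maps, so their inverse limit is nonempty, and by part (\ref{lemma kill pairing on crystalline part one}) it produces a lift of $c$ to $H^1_f(\Q_p, M)$; this gives the reverse inclusion. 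For part (\ref{lemma kill pairing on crystalline part two}), note that $M^\ast$ is again torsion crystalline (by Proposition \ref{prop torsion crystalline iff crystalline}, since $M^\ast\otimes\Q_p$ is crystalline), with Hodge--Tate weights in some interval that we may enlarge to contain $0$; apply part (\ref{lem:kill_cryst_1.5}) to both $M$ and $M^\ast$ and choose a common $m$. Then the two images appearing in the statement of part (\ref{lemma kill pairing on crystalline part two}) are contained in the images of $H^1_f(\Q_p, M)$ and $H^1_f(\Q_p, M^\ast)$ in $H^1(\Q_p, M/\varpi^n)$ and $H^1(\Q_p, M^\ast/\varpi^n)$ respectively. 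The Tate pairing on $M/\varpi^n$ is the reduction modulo $\varpi^n$ of the $O$-valued pairing (\ref{eq:tate_pairing_O}) on $M$, under the identifications $M^\ast/\varpi^n\cong(M/\varpi^n)^\vee$ and the natural surjection $H^2(\Q_p, O(1)) = O\twoheadrightarrow O/\varpi^n$; since Lemma \ref{lem:zero_local_pairing} asserts that $H^1_f(\Q_p, M)$ and $H^1_f(\Q_p, M^\ast)$ pair to zero under (\ref{eq:tate_pairing_O}), the pairing in part (\ref{lemma kill pairing on crystalline part two}) vanishes on those images, as desired.

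I expect the main obstacle to be the reverse inclusion in part (\ref{lemma kill pairing on crystalline part one}): passing from ``every $\varpi^n$-reduction of the extension is torsion crystalline'' to ``the characteristic-zero extension is torsion crystalline'' relies on three facts which are standard but not purely formal --- that $H^1(\Q_p,-)$ commutes with the relevant inverse limit, that being torsion crystalline with Hodge--Tate weights in $[a,b]$ is detected on $p$-power-torsion quotients, and that quotients of torsion-crystalline modules remain torsion crystalline with the same Hodge--Tate bound. Given part (\ref{lemma kill pairing on crystalline part one}), parts (\ref{lem:kill_cryst_1.5}) and (\ref{lemma kill pairing on crystalline part two}) are essentially bookkeeping with finiteness and with the functoriality of cup products.
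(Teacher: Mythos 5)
Your proof is correct and follows the same overall route as the paper's: part (\ref{lemma kill pairing on crystalline part one}) is reduced to Proposition \ref{prop torsion crystalline iff crystalline} (the paper simply says it is ``clear from'' that proposition, whereas you spell out the pushout/quotient bookkeeping with torsion crystalline modules), part (\ref{lem:kill_cryst_1.5}) follows by the same Mittag--Leffler/finiteness argument, and part (\ref{lemma kill pairing on crystalline part two}) follows from part (\ref{lem:kill_cryst_1.5}), Lemma \ref{lem:zero_local_pairing}, and compatibility of the Tate pairings under reduction. The only difference is the level of detail in part (\ref{lemma kill pairing on crystalline part one}).
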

\begin{rmk}
    When $2(b-a) \leq p -2$ and $O = \Z_p$, we may take $m = n$ in the final part by \cite[Corollary 6.1]{niziol1993cohomology}.
\end{rmk}
\begin{proof}
    The first part is clear from Proposition \ref{prop torsion crystalline iff crystalline}, and it follows that $$\cap _{m\geq n} \im \left(H^1_{f} (\Q_p, M/\varpi^m) \to H^1_{f} (\Q_p, M/\varpi^n)\right) = \im \left(H^1_f(\Q_p, M) \to H^1_{f} (\Q_p, M/\varpi^n\right).$$
     Since $H^1_{f} (\Q_p, M/\varpi^n)$ is a finite $O$-module, 
it is clear that (\ref{lem:kill_cryst_1.5}) holds.
    
    For (\ref{lemma kill pairing on crystalline part two}), take $m$ sufficiently large to satisfy (\ref{lem:kill_cryst_1.5})  for both $M$ and $M^\ast$. The assertion then follows from Lemma \ref{lem:zero_local_pairing} and the commutativity of the diagram:
\begin{center}
\begin{tikzcd}
    H^1(\Q_p, M) \times H^1(\Q_p, M^\ast) \arrow[r] \arrow[d] & O\arrow[d]\\
    H^1(\Q_p, M/\varpi^n) \times H^1(\Q_p, M^\ast/\varpi^n) \arrow[r] & O/\varpi^n.
\end{tikzcd}
\end{center}
\end{proof}

\section {Automorphic representations, Hecke algebras, and Shimura varieties}\label{sec:prelim_ARs}

\subsection{Hecke algebras and Galois representations}
Let $G$ be a split, connected, reductive algebraic group over $\Z[S_0^{-1}]$ for a finite set of primes $S_0$, with Borel subgroup $B = TU\subset G$ and Weyl group $W_G$. For simplicity, we assume that the derived subgroup of $G$ is simply connected. 
\begin{definition}
    \leavevmode
    \begin{enumerate}
        \item For a prime $\l\not\in S_0$ and a ring $R$, let $\T_{G, \l, R}$ denote the spherical Hecke algebra of compactly supported, $R$-valued, $G(\Z_\l)$-biinvariant functions on $G(\Q_\l)$.
 For a finite set $S\supset S_0$ and a ring $R$,  $$ \T_{G, R}^S \coloneqq \bigotimes'_{\l\not\in S} \T_{G, \l, R}.$$
\item Let $P = MN \subset G$ be a parabolic subgroup. For all $\l\not\in S_0$, we define a natural map $$S_M^G: \T_{G, \l, R}\to \T_{M, \l, R}$$ by 
$$S_M^G(f) (m) = \int_{N(\Q_\l)} f(mn) \d n,$$
where the Haar measure on $N(\Q_\l)$ gives volume 1 to $N(\Z_\l)$. 
    \end{enumerate}
\end{definition}
When $R = \Z$, we drop it from the notation. 

\begin{prop}\label{prop:Hecke_diagram_induction}
We have a commutative diagram of functors
\begin{equation}
    \begin{tikzcd}
        R[M(\Q_\l)]-\operatorname{Mod} \arrow[r, "\Gamma_{M(\Z_\l)}"] \arrow[d, "\natural-\Ind"] & \T_{M, \l, R}-\operatorname{Mod} \arrow[d, "(S_M^G)^\ast"]\\
        R[G(\Q_\l)]-\operatorname{Mod} \arrow[r, "\Gamma_{G(\Z_\l)}"] & \T_{G, \l, R}-\operatorname{Mod},
    \end{tikzcd}
\end{equation}
where $\natural-\Ind$ denotes the unnormalized parabolic induction. 
\end{prop}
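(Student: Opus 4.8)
The claim is the commutativity of a square of functors relating unnormalized parabolic induction on representations to restriction of scalars along the unramified Satake-type homomorphism $S_M^G\colon \T_{G,\l,R}\to\T_{M,\l,R}$. The plan is to reduce everything to a single computation with Hecke operators acting on the spherical vectors, and the key input will be a Satake/Iwasawa-style decomposition identity for integrals over $G(\Q_\l)$ split along the parabolic $P=MN$.

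First I would unwind the functors on an object. Let $\sigma$ be an $R[M(\Q_\l)]$-module and let $\Pi=\natural\text{-}\Ind_{P(\Q_\l)}^{G(\Q_\l)}\sigma$ be the unnormalized induction, i.e. functions $f\colon G(\Q_\l)\to\sigma$ with $f(mng)=\sigma(m)f(g)$ and smooth. Going around the square one way gives the $\T_{M,\l,R}$-module $\sigma^{M(\Z_\l)}$ with its natural Hecke action, then pulled back along $S_M^G$ to a $\T_{G,\l,R}$-module. Going the other way gives $\Pi^{G(\Z_\l)}$ with its $\T_{G,\l,R}$-action. So I need (a) a natural $R$-linear isomorphism $\Pi^{G(\Z_\l)}\xrightarrow{\sim}\sigma^{M(\Z_\l)}$, and (b) verification that under this isomorphism the action of $h\in\T_{G,\l,R}$ on the left corresponds to the action of $S_M^G(h)$ on the right. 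For (a): by the Iwasawa decomposition $G(\Q_\l)=P(\Q_\l)G(\Z_\l)$, a $G(\Z_\l)$-fixed vector $f\in\Pi^{G(\Z_\l)}$ is determined by $f(1)$; moreover $f(1)$ must be fixed by $P(\Q_\l)\cap G(\Z_\l)$, which surjects onto $M(\Z_\l)$ (again by Iwasawa for $M$, or since $P(\Z_\l)=M(\Z_\l)N(\Z_\l)$ as $P$ is smooth over $\Z_\l$), so $f(1)\in\sigma^{M(\Z_\l)}$; conversely any such vector extends. This gives the natural isomorphism, functorially in $\sigma$, hence the natural isomorphism of the two composite functors at the level of $R$-modules.

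The substance is (b). Fix $h=\blackboardone_{G(\Z_\l)g G(\Z_\l)}\in\T_{G,\l,R}$ (these span), and decompose the double coset as $G(\Z_\l)gG(\Z_\l)=\bigsqcup_i x_i G(\Z_\l)$. Then $(h\cdot f)(1)=\sum_i f(x_i)$. Using the Iwasawa decomposition write each $x_i=p_i k_i$ with $p_i=m_in_i\in P(\Q_\l)$ and $k_i\in G(\Z_\l)$; since $f$ is $G(\Z_\l)$-fixed, $f(x_i)=f(p_i)=\sigma(m_i)f(1)$. So I must show $\sum_i \sigma(m_i) = S_M^G(h)$ acting on $\sigma^{M(\Z_\l)}$ — i.e. that the measure $\sum_i \delta_{m_i}$ on $M(\Q_\l)$, modulo the right $M(\Z_\l)$-action and the left $N$-translation ambiguity, equals the function $m\mapsto \int_{N(\Q_\l)}\blackboardone_{G(\Z_\l)gG(\Z_\l)}(mn)\,dn$ as an element of $\T_{M,\l,R}$. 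This is exactly the standard fact that $S_M^G$, defined by integration along $N$, is the ``constant term'' map computing how a $G(\Z_\l)$-double coset breaks up along $P$; concretely, grouping the cosets $x_iG(\Z_\l)$ by the image of $m_i$ in $M(\Q_\l)/M(\Z_\l)$, the number of $i$ landing in a given $M(\Z_\l)mM(\Z_\l)\cdot(\text{coset data})$ is precisely the $N$-integral above, because the fiber of $P(\Q_\l)\cap (G(\Z_\l)gG(\Z_\l))\to M(\Q_\l)$ over $m$ is, after quotienting by $P(\Z_\l)$ on the right, a torsor for a quotient of $N$-cosets whose count is $\mathrm{vol}(\{n\in N(\Q_\l): mn\in G(\Z_\l)gG(\Z_\l)\}/N(\Z_\l))$. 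I would carry this out carefully using that $N(\Z_\l)$ has volume $1$ and that $G(\Z_\l)\cap N(\Q_\l)=N(\Z_\l)$, then combine the count over all double cosets by linearity to get general $h$.

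\textbf{Main obstacle.} The delicate point is (b): making the bookkeeping of the Iwasawa decomposition of the coset representatives $x_i$ precise enough to match it, with correct multiplicities, to the $N$-integral defining $S_M^G$ — in particular handling the right $M(\Z_\l)$-ambiguity in $m_i$ and checking the normalization ($N(\Z_\l)$ has volume $1$) lines up so that no spurious index appears. Everything else (Iwasawa decomposition, smoothness of $P$ over $\Z_\l$ giving $P(\Z_\l)=M(\Z_\l)N(\Z_\l)$, functoriality, reduction to indicator functions of double cosets, and the compatibility with the Galois/Hecke pullback $(S_M^G)^\ast$) is formal. One clean way to organize the obstacle: first prove the identity for $\sigma=R[M(\Q_\l)]$ itself (the ``universal'' case), where it becomes a pure statement about Hecke modules / functions on cosets, and then deduce the general case by the natural transformation $R[M(\Q_\l)]\otimes_{R[M(\Q_\l)]}\sigma\cong\sigma$ together with the fact that $\natural\text{-}\Ind$ is $R[G(\Q_\l)]\otimes_{R[P(\Q_\l)]}(-)$, so both vertical functors are given by honest tensor products and the whole square is obtained by applying the $G(\Z_\l)$- (resp. $M(\Z_\l)$-) invariants functor to a diagram of bimodules; commutativity then follows from a single bimodule isomorphism $R[G(\Z_\l)\backslash G(\Q_\l)/P(\Q_\l)]\otimes_{R[M(\Q_\l)]}R[M(\Q_\l)/M(\Z_\l)]\cong R[G(\Z_\l)\backslash G(\Q_\l)/G(\Z_\l)]\otimes_{\T_{G,\l,R}}(\text{pullback structure})$, which is where the $N$-integral computation is localized.
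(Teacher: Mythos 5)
Your argument is correct, and the underlying mathematics is the same as what sits behind the lemmas of Newton--Thorne that the paper cites in lieu of a proof: reduce by the Iwasawa decomposition $G(\Q_\l)=P(\Q_\l)G(\Z_\l)$ to the natural bijection $\Pi^{G(\Z_\l)}\xrightarrow{\sim}\sigma^{M(\Z_\l)}$, $f\mapsto f(1)$ (using $P(\Q_\l)\cap G(\Z_\l)=P(\Z_\l)=M(\Z_\l)N(\Z_\l)$), and then match the convolution by $\mathbbm{1}_{G(\Z_\l)gG(\Z_\l)}$ against the $N$-integral defining $S_M^G$ by a coset count. Concretely, writing $G(\Z_\l)gG(\Z_\l)=\bigsqcup_i x_iG(\Z_\l)$ and $x_i\in m_iN(\Q_\l)G(\Z_\l)$, the quantity $S_M^G(h)(m)=\int_{N(\Q_\l)}h(mn)\,dn$ equals $\#\{i:m\in m_iM(\Z_\l)\}$ (each fiber has volume $0$ or $1$ once $N(\Z_\l)$ is normalized to volume $1$), which is exactly the right-coset decomposition $S_M^G(h)=\sum_i\mathbbm{1}_{m_iM(\Z_\l)}$ in $\T_{M,\l,R}$; acting on $\sigma^{M(\Z_\l)}$ this gives $\sum_i\sigma(m_i)$, matching $(h\cdot f)(1)=\sum_i f(x_i)=\sum_i\sigma(m_i)f(1)$. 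Your ``main obstacle'' is therefore already resolved by this bookkeeping, and the coset $m_iM(\Z_\l)$ is well-defined independent of choices, so no spurious indices appear. The only difference from the paper is presentational: the paper defers to \cite{newton2016torsion}, whereas you give the computation directly, which makes the normalizations explicit. One point worth making explicit for completeness is that $S_M^G$ is a ring homomorphism, so that $(S_M^G)^\ast$ is genuinely a restriction-of-scalars functor; this is standard and also follows from the same Iwasawa analysis, but it is logically prior to the diagram.
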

\begin{proof}
    This follows from combining Lemmas 2.4, 2.7, and 2.9 of \cite{newton2016torsion}.
\end{proof}

\begin{notation}
    Let $\widehat T \subset \widehat G$ be the dual torus to $T$.     We write $X^\bullet$ and $X_\bullet$ for the character and cocharacter groups of any split algebraic torus.  
\end{notation}
\begin{definition}
Suppose now that $R$ is a $\Z[\l^{1/2},\l^{-1/2}]$-algebra
and recall the Satake transform
\begin{equation}
\begin{split}
    R[X^\bullet(\widehat T)]^{W_G} &\isomorphism \T_{G, \l, R}\\
    \lambda &\mapsto [c_\lambda].
    \end{split}
\end{equation}
\begin{enumerate}
\item 
    If $\m \subset \T_{G, \l, R}$ is a maximal ideal with residue field $k$, the \emph{Satake parameter} for $\m$ is the unique element
        $$\Sat_{G, \l}(\m ) \in \widehat T(\overline k)/W_G$$ such that, for any $\lambda \in R[X^\bullet (\widehat T)]^{W_G}$, 
        $\lambda (\Sat_{G,\l}(\m)) = [c_\lambda] \pmod \m.$ A maximal ideal $\m\subset  {\T}^S_{G, R}$ defines a maximal ideal of each $\T_{G, \l, R}$ with $\l\not\in S$; we  denote by $\Sat_{G, \l}(\m)$ the corresponding element of $\widehat T(\overline k)/W_G$ for each $\l\not\in S$. 
        \item Let $\rho_G\in X_\bullet (\widehat T) = X^\bullet (T)$ be the half-sum of positive roots.  A \emph{normalization} of the Satake transform for $G$ (cf. \cite[\S8]{gross1998satake}) is a choice of element $\omega_G \in X_\bullet( Z_{\widehat G})$ such that
    $$\omega_G \equiv \rho_G \pmod{2 X_\bullet (\widehat T)}.$$
    \end{enumerate}
\end{definition}

\subsubsection{}
For the rest of this subsection, we assume that $R$ is a $\Z_{p^2}$-algebra, with a fixed choice of square root $\l^{1/2} \in \Z_{p^2}$ for any $\l \neq p$.  
\begin{definition}\label{def:Galois_type}
Let $S\supset S_0$ be a finite set of primes. 
\begin{enumerate}
    \item 
    Given a maximal ideal $\m \subset   {\T}^S_{G, R}$ with residue field $k$, a semisimple Galois representation
    $$\overline\rho_{\m}: G_{\Q} \to \widehat G(\overline{k})$$ is said to be \emph{associated} to $\m$ (with respect to a normalization $\omega_G$ of the Satake transform) if  for all but finitely many $\l\not\in S$, $\overline\rho_\m|_{G_{\Q_\l}}$ is unramified and
    $$\overline\rho_\m (\Frob_{\l^{-1}})^{ss} \sim \omega_G(\l^{-1/2}) \cdot \Sat_{G,\l}(\m);$$ here $\sim$ denotes $\widehat G(\overline{k})$-conjugacy. 
    If this holds for \emph{all} $\l\not\in S\cup \set{p}$, then $\overline\rho_\m$ is said to be \emph{strongly associated} with $\m$. 
\item
    If there exists a Galois representation (strongly) associated  to $\m$, then $\m$ is said to be (strongly) of \emph{Galois type}.
    \end{enumerate}

\end{definition}
Note that whether $\m$ is (strongly) of Galois type is independent of the choice of normalization; the corresponding representations $\overline\rho_\m$ differ by the composite of $\chi_p^\cyc$ and an algebraic cocharacter of $Z_{\widehat G}$. 
    \begin{example}
    Suppose $\pi$ is an automorphic representation of $\GSP_4(\A)$, unramified outside $S$, and fix an isomorphism $\iota: \overline\Q_p \isomorphism \C$. This determines a distinguished square root $\l^{1/2} \in \overline\Q_p$ for all $\l$.  Moreover, the Hecke action on $\iota^{-1}\pi^S$ defines a maximal ideal $\m_\pi\subset 
   \T^S_{\GSP_4, \overline \Q_p}$.
 If $\pi$ is relevant in the sense of Definition \ref{def:relevant_GSP} below, then the representation $\rho_{\pi,\iota}$ of Theorem \ref{thm:rho_pi_LLC}  is strongly associated to $\m_\pi$ with respect to the normalization given by the scalar subgroup $\mathbb G_m \hookrightarrow \GSP_4$. 
\end{example}

\begin{definition}
    Let $S \supset S_0$ be a finite set of primes. A maximal ideal $\m\subset {\T}_{G, R}^S$ with residue field $k$ is said to be \emph{Eisenstein} if there exists an associated $\overline \rho_\m$ (with respect to any normalization) which factors as
    $$\overline\rho_\m: G_{\Q} \to \widehat M(\overline{k}) \hookrightarrow \widehat G(\overline{k})$$ for some standard parabolic subgroup $P = MN\subsetneq G$.
    \end{definition}
\begin{prop}\label{prop:m_pullback_eisenstein}
Let $S\supset S_0$ be a finite set of primes. 
Suppose for some  proper parabolic subgroup $P = MN\subsetneq G$, $\m_M \subset {\T}^S_{M, R}$ is a maximal ideal of Galois type. 
Then
    $\m= (S_M^G)^\ast \m_M \subset {\T}^S_{G, R}$ is Eisenstein.
\end{prop}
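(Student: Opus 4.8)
The plan is to work at the level of Satake parameters and use the functoriality of the Satake transform under the constant-term map $S_M^G$. First I would fix a normalization $\omega_M$ of the Satake transform for $M$; since the derived groups are simply connected and $M \supset T$, the coweights $\rho_G$ and $\rho_M$ differ by a character of $Z_{\widehat M}$, so I can choose a compatible normalization $\omega_G$ for $G$. The key elementary input is that, for every prime $\l \notin S$, the diagram relating $R[X^\bullet(\widehat T)]^{W_G} \hookrightarrow R[X^\bullet(\widehat T)]^{W_M}$ on the one side and $S_M^G \colon \T_{G,\l,R} \to \T_{M,\l,R}$ on the other commutes up to the shift by $\delta_P$ coming from unnormalized versus normalized induction; this is exactly the content reflected in Proposition \ref{prop:Hecke_diagram_induction} (via $\Gamma_{G(\Z_\l)}$ and $(S_M^G)^\ast$) and in the classical computation of $S_M^G$ on Satake transforms (cf. \cite[\S8]{gross1998satake}). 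Concretely, this gives $\Sat_{G,\l}(\m) = \iota_{\widehat M \hookrightarrow \widehat G}\bigl(\delta \cdot \Sat_{M,\l}(\m_M)\bigr)$ in $\widehat T(\overline k)/W_G$, where $\delta \in X_\bullet(\widehat T)$ is the half-sum of the roots in $N$ evaluated at $\l^{1/2}$, i.e. the modulus shift.

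Next I would use the hypothesis that $\m_M$ is of Galois type: there is a semisimple $\overline\rho_{\m_M} \colon G_\Q \to \widehat M(\overline k)$ with $\overline\rho_{\m_M}(\Frob_{\l^{-1}})^{ss} \sim \omega_M(\l^{-1/2}) \cdot \Sat_{M,\l}(\m_M)$ for almost all $\l \notin S$. Define $\overline\rho_\m$ to be the composite
$$ G_\Q \xrightarrow{\overline\rho_{\m_M}} \widehat M(\overline k) \hookrightarrow \widehat G(\overline k), $$
possibly twisted by an algebraic cocharacter of $Z_{\widehat M}$ so as to absorb the discrepancy between the modulus shift $\delta$, the normalization $\omega_M$, and the chosen normalization $\omega_G$; since $\delta$ and $\omega_M - \omega_G$ both take values in $X_\bullet(Z_{\widehat M})$ once restricted appropriately, and a central twist by a power of $\chi_{p,\cyc}$ is permitted in the definition of "associated," this adjustment is legitimate. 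Comparing the displayed Frobenius formulas then shows $\overline\rho_\m(\Frob_{\l^{-1}})^{ss} \sim \omega_G(\l^{-1/2}) \cdot \Sat_{G,\l}(\m)$ for almost all $\l \notin S$, so $\overline\rho_\m$ is associated to $\m$; since by construction it factors through $\widehat M(\overline k) \hookrightarrow \widehat G(\overline k)$ with $P = MN$ a proper parabolic, $\m$ is Eisenstein.

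The main obstacle is purely bookkeeping: getting the normalization factors to line up exactly — i.e. checking that the modulus-character shift $\delta_P$ appearing in $S_M^G$, the difference $\rho_G - \rho_M$ between the two half-sums of positive roots, and the chosen normalizations $\omega_G$, $\omega_M$ combine into a single central cocharacter (valued in $Z_{\widehat M}$) that can be harmlessly twisted away. Nothing deep is needed here, but one must be careful about whether the square root $\l^{1/2}$ and the sign conventions for geometric Frobenius (recall uniformizers map to geometric Frobenius) are consistent with those used in the definition of $\Sat_{G,\l}$ and of "(strongly) associated." Once that is pinned down, the argument is immediate, and in fact it also shows that if $\m_M$ is strongly of Galois type then so is the Eisenstein structure on $\m$, though that refinement is not needed for the statement.
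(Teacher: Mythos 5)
Your proposal is correct and follows essentially the same route as the paper's proof: compute $\Sat_{G,\l}(\m)$ from $\Sat_{M,\l}(\m_M)$ via Proposition \ref{prop:Hecke_diagram_induction} together with the compatibility of the Satake isomorphism with normalized parabolic induction, and then take $\overline\rho_\m$ to be $\overline\rho_{\m_M}$ postcomposed with $\widehat M \hookrightarrow \widehat G$ and twisted by $\chi_{p,\cyc}$ composed with a $Z_{\widehat M}$-valued cocharacter chosen to reconcile $\omega_M$, $\omega_G$, and the modulus shift. The only difference is that the paper writes down the explicit cocharacter $\alpha = (\omega_G - \omega_M - d_P)/2$, whereas you assert its existence; that is a level-of-detail difference, not a different argument.
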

\begin{proof}
Fix normalizations $\omega_M$ and $\omega_G$ of the Satake transform for $M$ and $G$, respectively. Also let $d_P \in X_\bullet (Z_{\widehat M})$ be the 
dual of the character $$\det(\ad(-)| N): M \to \mathbb G_m.$$
By the well-known compatibility of the Satake transform with \emph{normalized} parabolic induction, combined with Proposition \ref{prop:Hecke_diagram_induction}, $\Sat_{G, \l}(\m) \in \widehat T(\overline k)/W_G$ is represented by any representative of $\Sat_{M, \l}(\m_M) \cdot d_P(\l^{1/2}) \in \widehat T(\overline k)/W_M.$ It follows that 
$$\overline\rho_\m \coloneqq \overline \rho_{\m_M} \otimes (\chi_p^\cyc)^\alpha: G_\Q \to \widehat M(\overline k) \hookrightarrow \widehat G(\overline k)$$ is associated to $\m$, where 
$$\alpha \coloneqq \frac{\omega_G -\omega_M - d_P}{2} \in X_\bullet(Z_{\widehat M}).$$ Hence $\m$ is Eisenstein. 

\end{proof}
\subsubsection{The case of $G= \GSP_4$}\label{subsubsec:GSP4_hecke}
When $G= \GSP_4$, the Hecke algebra $\T_{G,\l,R}$ can be identified with the polynomial algebra $R[T_{\l,1}, T_{\l,2}, \langle \l\rangle]$, where the generators correspond to the following double coset operators:

\begin{align*}
    T_{\l,1} &=  \mathbbm{1} \left(\GSP_4(\Z_\l)\begin{pmatrix} \l & && \\ & 1&&\\ && \l^{-1}&\\&&&1 \end{pmatrix}\GSP_4(\Z_\l)\right),\\
T_{\l,2} &=  \mathbbm{1} \left(\GSP_4(\Z_\l)\begin{pmatrix} \l & && \\ & \l&&\\ && 1&\\&&&1 \end{pmatrix} \GSP_4(\Z_\l)\right),\\
    \langle \l \rangle &= \mathbbm{1} \left(\GSP_4(\Z_\l)\begin{pmatrix} \l & &&\\ &\l&&\\&&\l&\\&&&\l\end{pmatrix}\GSP_4(\Z_\l)\right).
\end{align*}

The Satake parameter of a maximal ideal $\m\subset \T_{\GSP_4,\l, R}$ with residue field $k$
can be identified with the data of an element $\nu\in k$ -- the similitude factor of $\Sat_{\GSP_4,\l}(\m)$ -- together with the multi-set 
$\set{\alpha,\beta,\nu/\alpha,\nu/\beta}$ of elements of $\overline{k}$ --  the eigenvalues of $\Sat_{\GSP_4,\l}(\m)$ in the standard four-dimensional representation of $\widehat{\GSP_4} = \GSP_4$. 
We will abusively write the Satake parameter as $\set{\alpha,\beta,\nu/\alpha,\nu/\beta}$; in general, $\nu$ is not always determined by this unordered set.


The relation to Hecke eigenvalues is given explicitly in this case by
\begin{equation}\label{eq:hecke_eigenvalues_satake}
\begin{split}
T_{\l,1} &= \l^2\left(\alpha\beta/\nu + \alpha/\beta + \beta/\alpha + \nu/\alpha\beta \right)+ (\l^2 - 1) \pmod\m\\
    T_{\l, 2} &= \l^{3/2}\left(\alpha + \beta + \nu/\alpha + \nu/\beta \right)\pmod\m\\
    \langle \l\rangle &= \nu \pmod\m.
\end{split}
\end{equation}

When it is clear from context that $G = \GSP_4$, the subscript $G$ in Hecke algebras and Satake parameters may be omitted from the notation.

\subsection{Automorphic forms and Galois representations}
Let $F$ be a totally real field. If we fix a prime $p$ and an isomorphism $\iota: \overline\Q_p\isomorphism \C$, then to each archimedean place $v|\infty$ of $F$, we can associate an embedding $\iota^\ast v: F \hookrightarrow\overline\Q_p$. 
\begin{thm}\label{thm:rho_GL2_LLC}
Suppose $\pi$ is a unitary, cuspidal automorphic representation of $\GL_2(\A_F)$ associated to a Hilbert modular form of weight $(2k_v)_{v|\infty}$, where $v$ runs over archimedean places of $F$ and $k_v\geq 1$ for all $v$. Then for every isomorphism $\iota: \overline\Q_p \isomorphism \C$, with $p$ a prime, there exists a Galois representation
$$\rho_{\pi,\iota}: G_F \to \GL_2(\overline \Q_p)$$ with the following properties.
\begin{enumerate}
\item \label{part:rho_GL2_LLC_1}$\rho_{\pi,\iota}|_{G_{F_v}}$ is potentially semistable for all $v|p$, and for all nonarchimedean primes $v$ of $F$:
$$\iota WD(\rho_{\pi,\iota}|_{G_{F_v}})^{F-ss} \simeq \rec (\pi_v\otimes |\cdot |^{\frac{1}{2}}).$$
     Moreover this Weil-Deligne representation is pure of weight $-1$. 
\item\label{part:rho_GL2_LLC_HT} For each place $v|\infty$ of $F$, 
the Hodge-Tate weights of $\rho_{\pi, \iota}$ with respect to the embedding $\iota^\ast v: F\hookrightarrow\overline\Q_p$ are $1-{k_v}$ and ${k_v}$.
    \item \label{part:rho_GL2_LLC_char}The similitude character of $\rho_{\pi,\iota}$ is $\iota ^{-1}\rec(\omega_{\pi}) \chi_{p,\cyc}$, where $\omega_\pi$ is the central character. 
\end{enumerate}
\end{thm}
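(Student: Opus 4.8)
The plan is to realize $\rho_{\pi,\iota}$ inside the étale cohomology (with $\overline\Q_p$-coefficients) of a suitable Shimura variety and to extract each of the three asserted properties from a geometric input. Since $k_v\geq 1$ at every place, $\pi_\infty$ is a twist of a cohomological discrete series, so $\pi$ contributes to $H^1_\et$ with coefficients in an explicit automorphic local system. When $[F:\Q]$ is odd, or when $\pi_w$ is square-integrable at some finite place $w$, I would fix a quaternion algebra $B/F$ split at exactly one archimedean place (and at $w$ in the second case), transfer $\pi$ to an automorphic representation $\pi^B$ of $B^\times$ by Jacquet--Langlands, and set $\rho_{\pi,\iota}$ to be the $\pi^B_f$-isotypic part of $H^1_\et$ of the associated quaternionic Shimura curve with coefficients in the local system attached to $\Sym^{2k_v-2}$ at the split place. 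In the remaining case (even degree, everywhere principal series) I would pass to a totally real solvable extension $F'/F$ of even degree over which some local component of $\pi_{F'}$ becomes Steinberg, apply the previous case, and descend $\rho_{\pi_{F'},\iota}$ to $G_F$, the descent being pinned down by matching Satake parameters at the unramified places. (An alternative handling all cases uniformly is to base change $\pi$ to a CM field, where after a twist it becomes conjugate self-dual, and to use the cohomology of unitary Shimura varieties.)

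Granting the construction, the first half of (1) — local--global compatibility away from $p$ — is Carayol's theorem (with the remaining cases due to Taylor and T. Saito): it identifies $\WD(\rho_{\pi,\iota}|_{G_{F_v}})^{F-ss}$ with $\rec(\pi_v\otimes|\cdot|^\half)$ under $\iota$ and the geometric-Frobenius-normalized reciprocity map. In the descended case one checks this after restricting to $G_{F'}$, using that a Frobenius-semisimple Weil--Deligne representation is determined by its restriction to an open subgroup of the Weil group together with the known shape of $\pi_v$. Purity of this Weil--Deligne representation is the theorem of T. Saito: $\rho_{\pi,\iota}$ occurs in the cohomology of a proper smooth variety (after a base change and a suitable modification of the integral model at the primes of bad reduction), so Deligne's purity theorem together with the weight--monodromy analysis of the nearby cycles shows it is monodromy-pure of weight $-1$.

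For (2), at $v\mid p$ the $p$-adic comparison isomorphism identifies $\rho_{\pi,\iota}|_{G_{F_v}}$ with the $\pi$-part of the de Rham cohomology of the Shimura curve, and computes its Hodge--Tate weights from the Hodge numbers of that de Rham cohomology, which in the stated normalization (where $\chi_{p,\cyc}$ has Hodge--Tate weight $1$) are $1-k_v$ and $k_v$ for the embedding $\iota^\ast v$. Potential semistability at $v\mid p$ follows because the cohomology of a variety with potentially semistable reduction is potentially semistable, via alterations and the comparison theorems of Faltings and Tsuji. Finally, for (3): $\det\rho_{\pi,\iota}$ is a continuous character of $G_F$, and by (1) it agrees with $\iota^{-1}\rec(\omega_\pi)\chi_{p,\cyc}$ after restriction to $G_{F_v}$ for almost all $v$ — using $\det(\pi_v\otimes|\cdot|^\half)=\omega_{\pi_v}|\cdot|$ and that $\rec$ of the local norm character is the local cyclotomic character under our normalization — so the two characters of $G_F$ coincide.

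The hard parts are arithmetic rather than formal. The construction in the even-degree, everywhere-unramified-principal-series case is delicate: the solvable descent requires uniqueness of the extension of $\rho_{\pi_{F'},\iota}$ to $G_F$, which one gets from irreducibility (of $\rho_{\pi_{F'},\iota}$ or of its reduction) and a careful tracking of the compatibility of Jacquet--Langlands with the local Langlands dictionary, while the whole scheme depends on this compatibility throughout. Establishing purity and the precise monodromy operator at the bad primes — i.e. the weight--monodromy property for these possibly low-weight Hilbert modular forms — is the other genuinely nontrivial ingredient, resting on T. Saito's work rather than on anything elementary; it is also what underlies the ``pure of weight $-1$'' clause that later feeds into the local conditions and duality arguments of the paper.
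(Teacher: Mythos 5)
Your proposal is correct in broad outline, but it follows a genuinely different route from the paper's in two places, and the paper's choices are worth noting because they are cleaner. First, for the construction in the problematic case ($[F:\Q]$ even, $\pi$ a principal series at every finite place), you use solvable base change to a totally real $F'$ where some local factor becomes Steinberg, apply the Carayol-style construction over $F'$, and descend; the paper instead cites Taylor's construction \cite{taylor1989hilbert}, which works directly over $F$ by a congruence/pseudo-representation limiting argument and sidesteps the descent entirely. Your descent route is viable but has the extra work you already flag — existence of a $G_F$-extension (the cocycle obstruction over a cyclic tower) and its uniqueness up to twist by characters of $\Gal(F'/F)$, pinned down by matching Satake parameters — and one must also treat the dihedral/CM case separately, since there $\pi_{F'}$ may fail to be cuspidal. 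Second, and more substantively, you derive purity from T.~Saito's weight--monodromy theorem on the geometric side. The paper instead observes that once one has the Weil--Deligne identity $\iota\,\WD(\rho_{\pi,\iota}|_{G_{F_v}})^{F\text{-ss}}\simeq\rec(\pi_v\otimes|\cdot|^{1/2})$, purity of weight $-1$ is \emph{immediate} from the Ramanujan conjecture for Hilbert modular forms (Blasius \cite{blasius2006ramanujan}): temperedness of $\pi_v$ says exactly that $\rec(\pi_v\otimes|\cdot|^{1/2})$ is pure of weight $-1$, with no recourse to nearby cycles or the geometry of integral models. This is both shorter and logically independent of the geometric weight--monodromy input, which in your framing has to be invoked at every bad prime. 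Your treatment of Hodge--Tate weights via the $p$-adic comparison isomorphism and of the determinant by matching at almost all places are in line with what the paper cites (Skinner \cite{skinner2009note} for the $v\mid p$ statements, with (3) a formal corollary of (1)).
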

 Here $\rec$ is the usual local Langlands correspondence for $\GL_n$, normalized to coincide with the reciprocity map of local class field theory  when $n = 1$. 

\begin{proof}
Note that the purity in (\ref{part:rho_GL2_LLC_1}) follows immediately from the claimed identity of Weil-Deligne representations and the  Ramanujan conjecture for $\pi$ \cite{blasius2006ramanujan}. 

The existence of $\rho_{\pi,\iota}$ with the property (\ref{part:rho_GL2_LLC_1}) for all $v\nmid p$ was proved by Carayol \cite{carayol1986hilbert} under the assumption that either $[F:\Q]$ is odd, or $\pi_v$ is square-integrable for some finite prime $v$ of $F$. In general, $\rho_{\pi,\iota}$ was constructed by Taylor \cite{taylor1989hilbert}, and the proof of (\ref{part:rho_GL2_LLC_1}) for $v\nmid p$  follows the argument of \cite[Theorem 2.1.3]{wiles1988ordinary}. Finally, the property (\ref{part:rho_GL2_LLC_1}) for $v|p$, along with (\ref{part:rho_GL2_LLC_HT}), were established in general by Skinner \cite{skinner2009note} (except that our normalizations of Hodge-Tate weights and reciprocity maps are inverted from \emph{loc. cit.}). The property (\ref{part:rho_GL2_LLC_char}) is an immediate corollary of (\ref{part:rho_GL2_LLC_1}).
\end{proof}

\begin{definition}\label{def:coefficient_GL2}
    An automorphic representation $\pi$ of $\GL_2(\A_F)$ has (strong) coefficient field $E_0 \subset \C$ if $E_0$ is a number field and, for all primes $p$ and isomorphisms $\iota: \overline\Q_p \isomorphism \C$, $\rho_{\pi,\iota}$ is defined over the $p$-adic closure of $\iota^{-1} (E_0)$. In this case, it depends only on the prime $\p$ of $E_0$ induced by $\C$, and we obtain a well-defined $\rho_{\pi,\p} : G_F \to \GL_2(E_{0,\p})$ such that $\rho_{\pi,\iota}$ is the extension of scalars of $\rho_{\pi,\p}$.  
\end{definition}
\begin{rmk}\label{rmk:coeff_field_GL2}
    The argument of \cite[Proposition 3.2.5]{chenevier2013construction} shows that strong coeffient fields exist in the situation of Theorem \ref{thm:rho_GL2_LLC}.
\end{rmk}
\begin{notation}\label{notation:V_T_GL_2}
    Let $\pi$ be as in Theorem \ref{thm:rho_GL2_LLC}, and let $E_0$ be a strong coefficient field of $\pi$. Fix a prime $\p$ of $E_0$ with residue field $k(\p)$. 
    \begin{enumerate}
    \item Write $V_{\pi,\p}$ for the underlying $E_{0,\p}[G_F]$-module of $\rho_{\pi,\p}$. 
    \item Let $T_{\pi,\p} \subset V_{\pi,\p}$ be a $G_F$-stable $O_{E_0,\p}$-lattice. Then the $k(\p)[G_F]$-module $\overline T_{\pi,\p}$ depends only on $V_{\pi,\p}$ up to semisimplification. We write
    $$\overline\rho_{\pi,\p} : G_F \to \GL_2(k(\p))$$ for the corresponding semisimple Galois representation.
    \item When $\p$ is clear from context, it may be dropped from all subscripts in the above notations.
    \end{enumerate}
\end{notation}
We now turn to automorphic representations of $\GSP_4$. 

\begin{definition}\label{def:relevant_GSP}
    An automorphic representation $\pi$ of $\GSP_4(\A)$ will be called \emph{relevant} if:
    \begin{itemize}
        \item $\pi$ is cuspidal and not CAP, and has unitary central character.
        \item $\pi_\infty$ belongs to the discrete series $L$-packet of weight $(3,3)$.
    \end{itemize}
\end{definition}
\subsubsection{}
If $\pi$ is an automorphic representation of $\GSP_4(\A)$, then for all $\l$ such that $\pi_\l$ is unramified, recall that we write the Satake parameter of $\pi_\l$ as a multiset of four complex numbers of the form $\set{\alpha, \beta, \nu \alpha^{-1}, \nu\beta^{-1}}$. If $\pi$ has central character $\omega_\pi$, then $\nu  = \omega_\pi(\langle \l\rangle)= \rec(\omega_\pi)(\Frob_\l^{-1}).$

\begin{definition}\label{def:endoscopic_GSP}
    A cuspidal, non-CAP automorphic representation $\pi$ of $\GSP_4(\A)$ is \emph{endoscopic} associated to an unordered pair $(\pi_1,\pi_2)$ of cuspidal  automorphic representations of $\GL_2(\A)$ with the same central character if the following holds: for all but finitely many primes $\l$,    
    if $\pi_{i,\l}$ are both unramified with Satake parameters  $\set{\alpha_i, \nu/\alpha_i}$, then  $\pi_\l$ is unramified with   Satake parameter $\set{\alpha_1, \alpha_2, \nu/\alpha_1, \nu/\alpha_2}$. 
\end{definition}
\subsubsection{}
In this case, $\pi_1$ and $\pi_2$ are necessarily distinct by \cite[Lemma 5.2]{weissauer2009endoscopy}, and  the central character of $\pi$ is the common 
central character of the $\pi_i$. 
In the literature, the property in Definition \ref{def:endoscopic_GSP} is often called being \emph{weakly} endoscopic, and being endoscopic requires a stronger compatibility condition of local Langlands parameters at all finite places; however, the distinction is unimportant, cf. the results of \cite[\S5]{weissauer2009endoscopy}.
\begin{lemma}\label{lem:when_endoscopic_relevant}
    Suppose $\pi$ is an endoscopic automorphic representation of $\GSP_4(\A)$, associated to a pair $(\pi_1,\pi_2)$ of automorphic representations of $\GL_2(\A)$. Then $\pi$ is relevant if and only if $\pi_1$ and $\pi_2$ are unitary with discrete series archimedean components of weights 2 and 4 in some order.
\end{lemma}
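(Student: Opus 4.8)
The plan is to match both conditions against the archimedean Langlands parameter, using the compatibility of the weak endoscopic lift with local Langlands at $\infty$ — part of Arthur's classification for $\GSP_4$ (see \cite{chan2015local}), or Weissauer's explicit archimedean analysis \cite{weissauer2009endoscopy} — together with the fact that the central characters match. Since $\pi$ is endoscopic it is by definition cuspidal and non-CAP, and, as recalled after Definition \ref{def:endoscopic_GSP}, $\pi_1$ and $\pi_2$ are cuspidal with a common central character $\omega$ which is also the central character of $\pi$. So I would first reduce ``$\pi$ is relevant'' to the conjunction of: (a) $\omega$ is unitary; and (b) $\pi_\infty$ lies in the discrete series $L$-packet of $\GSP_4(\R)$ of weight $(3,3)$. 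Condition (a) holds if and only if the central characters of $\pi_1$ and $\pi_2$ are unitary, which (as $\pi_1,\pi_2$ are cuspidal) is equivalent to $\pi_1$ and $\pi_2$ being unitary; this disposes of the ``unitary'' clause, and everything else comes down to relating (b) to the archimedean components $\pi_{1,\infty},\pi_{2,\infty}$.

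For (b), I would use that the near-equivalence class of the endoscopic $\pi$ has tempered global $A$-parameter $\phi_{\pi_1}\boxplus\phi_{\pi_2}$ — which makes sense as a $\GSP_4$-parameter precisely because $\det\phi_{\pi_{i,v}}$ is independent of $i$, i.e. because the central characters agree — so that each local $A$-packet is an $L$-packet, and in particular $\pi_\infty$ lies in the $\GSP_4(\R)$ $L$-packet attached to $\varphi_\infty\coloneqq\phi_{\pi_{1,\infty}}\oplus\phi_{\pi_{2,\infty}}$. Now a discrete series of $\GL_2(\R)$ of weight $w$ has Langlands parameter $\Ind_{W_\C}^{W_\R}(z\mapsto(z/\bar z)^{(w-1)/2})$, an irreducible $2$-dimensional representation of $W_\R$ with trivial unramified part; so for $\{w_1,w_2\}=\{2,4\}$ the sum $\varphi_\infty$ restricts on $W_\C$ to the character datum with exponents $\{\pm 1/2,\pm 3/2\}$, which is exactly the restriction of the weight-$(3,3)$ discrete series parameter of $\GSP_4(\R)$ (equivalently: its associated Galois parameter has Hodge--Tate weights $\{-1,0,1,2\}$, symmetric about $1/2$). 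This gives the ``if'' direction. For the ``only if'' direction, assuming $\pi$ relevant, $\pi_\infty$ is a discrete series in the weight-$(3,3)$ $L$-packet; since $\pi_\infty$ also lies in the $L$-packet of $\varphi_\infty$, and a non-tempered or non-discrete-series $\varphi_\infty$ has no discrete series in its $\GSP_4(\R)$ $L$-packet, we must have $\varphi_\infty$ equal to the weight-$(3,3)$ parameter. As this is a sum of two non-isomorphic irreducible $2$-dimensional $W_\R$-representations with trivial unramified part, and each $\phi_{\pi_{i,\infty}}$ is $2$-dimensional, it follows that $\{\phi_{\pi_{1,\infty}},\phi_{\pi_{2,\infty}}\}=\{\Ind_{W_\C}^{W_\R}((z/\bar z)^{1/2}),\,\Ind_{W_\C}^{W_\R}((z/\bar z)^{3/2})\}$; by local Langlands for $\GL_2(\R)$, $\pi_{1,\infty}$ and $\pi_{2,\infty}$ are then discrete series of weights $2$ and $4$ in some order, with trivial unramified twist. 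Combined with (a), this is exactly the asserted characterization.

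The hard part is making the archimedean input precise rather than the bookkeeping above: one needs (i) that the endoscopic near-equivalence class really does have archimedean $L$-parameter $\varphi_\infty=\phi_{\pi_{1,\infty}}\oplus\phi_{\pi_{2,\infty}}$, so that $\pi_\infty$ must lie in the corresponding $\GSP_4(\R)$ $L$-packet, and (ii) that such an $L$-packet contains a discrete series if and only if each $\phi_{\pi_{i,\infty}}$ is an integral regular discrete series parameter of $\GL_2(\R)$. Both follow from Arthur's classification for $\GSP_4$ (available through \cite{chan2015local}) and Weissauer's treatment of the endoscopic lift \cite{weissauer2009endoscopy}. One could instead argue on the Galois side, via the identity $\rho_{\pi,\iota}=\rho_{\pi_1,\iota}\boxplus\rho_{\pi_2,\iota}$: relevance pins the Hodge--Tate weights of $\rho_{\pi,\iota}$ to $\{-1,0,1,2\}$, the only way to split this set into two pairs each summing to $1$ is $\{0,1\}\sqcup\{-1,2\}$, and Theorem \ref{thm:rho_GL2_LLC}(\ref{part:rho_GL2_LLC_HT}) converts these into weights $2$ and $4$; the cost of this route is that one must know a priori that each $\pi_i$ is cohomological (equivalently, that $\rho_{\pi_i,\iota}$ is de Rham), so it does not really avoid the archimedean input.
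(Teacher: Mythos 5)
Your proposal is correct and follows essentially the same route as the paper: reduce relevance to (a) unitarity of the common central character plus (b) identification of archimedean $L$-packets, then invoke the compatibility of the weak endoscopic lift with local Langlands at $\infty$ (the paper cites \cite[Lemma 5.6]{weissauer2009endoscopy} via the $\mathrm{GSO}_{2,2}\hookrightarrow\GSP_4$ dual-group map) to identify the $L$-parameter of $\pi_\infty$ with $\phi_{\pi_1,\infty}\oplus\phi_{\pi_2,\infty}$, and compare with the weight-$(3,3)$ discrete series parameter. The paper leaves the exponent bookkeeping implicit while you spell it out, and your alternative Galois-side sketch via Hodge--Tate weights is a nice additional remark, but the core argument and citations coincide.
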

\begin{proof}
Recall $\pi$ is cuspidal and not CAP by definition. Also, $\pi$ 
clearly has unitary central character if and only if $\pi_1$ 
and $\pi_2$ are unitary, so it suffices to consider the archimedean weights.

Consider the representation $\pi_{1,\infty}\boxtimes \pi_{2,\infty}$ of $\operatorname{GSO}_{2,2}(\R) = (\GL_2(\R)\times \GL_2(\R))/\R^\times$;
by \cite[Lemma 5.6]{weissauer2009endoscopy}, $\pi_\infty$ belongs to the local $L$-packet attached to $\pi_{1,\infty}\boxtimes\pi_{2,\infty}$ via the known Langlands parametrization for real groups and the map of dual groups 
$${}^L\operatorname{GSO}_{2,2} = (\GL_2\times_{\mathbb G_m} \GL_2)(\C) \hookrightarrow\GSP_4(\C) = {}^L\GSP_{4,\R}.$$ We conclude that $\pi_\infty$ belongs to the desired discrete series $L$-packet if and only if $\pi_{1,\infty}$ and $\pi_{2,\infty}$ are discrete series of weights 2 and 4 in some order.

    \end{proof}

\begin{thm}\label{thm:rho_pi_LLC}
    Let $\pi$ be a relevant automorphic representation of $\GSP_4(\A)$, and fix a prime $p$ along with an isomorphism $\iota:\overline\Q_p \isomorphism \C$. Then there exists a semisimple Galois representation
    $$\rho_{\pi,\iota}: G_\Q \to \GSP_4(\overline \Q_p)$$ with the following properties.
    \begin{enumerate}
        \item \label{part:rho_pi_LLC1} $\rho_{\pi,\iota}|_{G_{\Q_p}}$ is potentially semistable, and   for all primes $\l$, 
     $$\iota WD(\rho_{\pi,\iota}|_{G_{\Q_\l}})^{F-ss} \simeq \rec_{\operatorname{GT}} (\pi_\l\otimes |\cdot|^{\frac{1}{2}}).$$
     Moreover this Weil-Deligne representation is pure of weight $-1$. 
             \item \label{part:rho_pi_LLC_HT}The Hodge-Tate weights of $\rho_{\pi,\iota}|_{G_{\Q_p}}$ are $\set{-1,0,1,2}$. 

        \item \label{part:rho_pi_char}The similitude character of $\rho_{\pi,\iota}$ is $\iota^{-1}\rec(\omega_\pi)\chi_{p,\cyc}$, where $\omega_\pi$ is the central character. 
    \end{enumerate}
\end{thm}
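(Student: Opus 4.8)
\medskip

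\noindent The plan is to construct $\rho_{\pi,\iota}$ from a functorial transfer of $\pi$ to $\GL_4$ -- where Galois representations with strong local--global compatibility are available -- and then to descend the target group from $\GL_4$ back to $\GSP_4$. First I would dispose of the endoscopic case. If $\pi$ is endoscopic, associated to a pair $(\pi_1,\pi_2)$, then by Lemma \ref{lem:when_endoscopic_relevant} the $\pi_i$ are unitary cuspidal automorphic representations of $\GL_2(\A)$ with discrete series archimedean components of weights $2$ and $4$ and a common central character $\omega_\pi$; I would set $\rho_{\pi,\iota}\coloneqq\rho_{\pi_1,\iota}\oplus\rho_{\pi_2,\iota}$ using the representations of Theorem \ref{thm:rho_GL2_LLC}. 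The two summands carry the standard alternating self-pairings valued in their common similitude character $\iota^{-1}\rec(\omega_\pi)\chi_{p,\cyc}$, so $\rho_{\pi,\iota}$ lands in $\GSP_4(\overline\Q_p)$ with that similitude character, giving (3); and since $\rec_{\operatorname{GT}}$ on an endoscopic parameter is by construction the direct sum of the two $\GL_2$-parameters, properties (1) and (2) -- including purity of weight $-1$ and the Hodge--Tate weights $\set{-1,0,1,2}=\set{0,1}\cup\set{-1,2}$ -- follow termwise from the corresponding assertions of Theorem \ref{thm:rho_GL2_LLC}.

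It remains to treat $\pi$ non-endoscopic, and hence also non-CAP by relevance. Here I would invoke the endoscopic classification for $\GSP_4$ \cite{chan2015local} (following Arthur, with the local inputs of Gan--Takeda and Chan--Gan): $\pi$ transfers functorially to a \emph{cuspidal} automorphic representation $\Pi$ of $\GL_4(\A)$, which is tempered (as $\pi$ is non-CAP), regular $L$-algebraic with infinitesimal character pinned down by the weight-$(3,3)$ packet, and essentially self-dual of \emph{symplectic} type, since the image of $\GSP_4\hookrightarrow\GL_4$ preserves a symplectic similitude form. To such a $\Pi$ one attaches, by the now-standard construction of Galois representations for regular essentially self-dual cuspidal automorphic representations of $\GL_n$ over totally real fields via unitary Shimura varieties (Clozel, Kottwitz, Harris--Taylor, Shin, Chenevier--Harris, and others), a semisimple Galois representation $\rho_{\Pi,\iota}\colon G_\Q\to\GL_4(\overline\Q_p)$; it is potentially semistable at $p$ by $p$-adic Hodge theory (Faltings, Tsuji, and the $p$-adic monodromy theorem), as it occurs in the cohomology of a variety with potentially semistable reduction, and it satisfies full local--global compatibility at \emph{every} finite prime -- including the matching of the monodromy operator and of the $F$-semisimplification for $\ell\neq p$, and the analogous statement at $\ell=p$ -- by the work of Caraiani and of Barnet-Lamb--Gee--Geraghty--Taylor; purity of weight $-1$ of the attached Weil--Deligne representations then follows from temperedness of $\Pi$ together with the Weil conjectures.

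The symplectic type of $\Pi$ equips $\rho_{\Pi,\iota}$ with a non-degenerate, $G_\Q$-equivariant \emph{alternating} pairing valued in $\iota^{-1}\rec(\omega_\pi)\chi_{p,\cyc}$ -- the pairing is alternating rather than symmetric by virtue of the symplectic type, which can also be checked from the archimedean parameter or from the standard sign considerations for polarized Galois representations -- so that $\rho_{\Pi,\iota}$ takes values in $\GSP_4(\overline\Q_p)$; this is $\rho_{\pi,\iota}$. Property (3) is then immediate. Property (1) follows from local--global compatibility for $\rho_{\Pi,\iota}$, combined with the defining compatibility of $\rec_{\operatorname{GT}}$ with $\rec$ for $\GL_4$ under $\GSP_4\hookrightarrow\GL_4$ (and with the Satake isomorphism at the unramified places) and the fact that $\Pi$ is the transfer of $\pi$. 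For property (2), the Hodge--Tate weights of $\rho_{\pi,\iota}|_{G_{\Q_p}}$ are read off the infinitesimal character of $\pi_\infty$ -- equivalently, by realizing a Tate twist of $\rho_{\pi,\iota}$ in $H^3_{\et}(\Sh_K(V)_{\overline\Q},E_\p)$ for the $\GSP_4$-Shimura threefold with trivial coefficients, cf.\ (\ref{eq:intro_etale}), and reading off the Hodge filtration -- and the weight-$(3,3)$ normalization yields the set $\set{-1,0,1,2}$, symmetric about $1/2$ in accordance with the cyclotomic-type similitude character.

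The deepest ingredient, which I would import rather than reprove, is the precise local--global compatibility at the ramified primes $\ell\mid N(\pi)$ (both $\ell\neq p$ and $\ell=p$): the exact matching of the monodromy operator of $\rho_{\pi,\iota}|_{G_{\Q_\ell}}$ with $\rec_{\operatorname{GT}}(\pi_\ell)$. On the $\GL_4$ side this rests on Caraiani's analysis of monodromy acting on nearby cycles; alternatively it can be obtained directly for $\GSP_4$ from the cohomology of Siegel threefolds, in the work of Weissauer \cite{weissauer2009endoscopy}, Taylor, and Laumon, matched up with the transfer $\Pi$ via the classification. The remaining points -- semisimplicity (or passing to the semisimplification), descent of the structure group, the similitude character, and the Hodge--Tate weights -- are routine bookkeeping with Satake parameters, pairings, and Hodge filtrations.
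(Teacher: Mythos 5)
Your proposal is correct in outline, but it organizes the non-endoscopic case along a genuinely different route from the paper. The paper first reduces to the globally generic case using \cite[Proposition~10.1]{rosner2024global} together with Weissauer's theta-lift result \cite{weissauer2008whittaker}, and then cites Sorensen \cite{sorensen2010galois} -- who works from the Asgari--Shahidi generic transfer to $\GL_4$ and establishes the $\GSP_4$-valuedness directly -- with the additional hypotheses of that paper removed by Caraiani \cite{caraiani2012local,caraiani2014monodromy}. You instead invoke the Arthur-type endoscopic classification for $\GSP_4$ up front to produce a cuspidal, regular, symplectic-type $\Pi$ on $\GL_4(\A)$, apply the general construction of polarized Galois representations over CM fields, and then descend the target group. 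Both paths go through $\GL_4$; yours is more streamlined but imports more machinery, while the paper's is more self-contained on the $\GSP_4$ side because Sorensen already packages the structure-group descent. Two points you wave at are exactly where the substance lies: first, that the polarization on $\rho_{\Pi,\iota}$ is \emph{alternating} rather than symmetric (you appeal to ``standard sign considerations''; Sorensen proves the $\GSP_4$-valuedness directly, which is why the paper uses him); second, full local--global compatibility at $\ell=p$ in the ramified case (the paper traces this to \cite{caraiani2014monodromy}). Your endoscopic-case construction is essentially identical to the paper's, though the paper additionally cites \cite[Corollary~5.1, Theorem~5.2(3)]{weissauer2009endoscopy} to justify the local compatibility of $\rec_{\operatorname{GT}}(\pi_\ell)$ with the two $\GL_2$-parameters, which you absorb into ``by construction.''
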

Here $\rec_{\operatorname{GT}}$ is the Gan-Takeda local Langlands correspondence of \cite{gantakeda2011local}, which associates to an irreducible admissible representation of $\GSP_4(\Q_\l)$ a $\GSP_4(\C)$-valued Weil-Deligne representation.
\begin{proof}
Suppose first that $\pi$ is endoscopic associated to a pair $(\pi_1, \pi_2)$ of automorphic representations of $\GL_2(\A)$.  By Lemma \ref{lem:when_endoscopic_relevant} and Theorem \ref{thm:rho_GL2_LLC}, we can take $$\rho_{\pi,\iota} \coloneqq \rho_{\pi_1,\iota} \oplus \rho_{\pi_2,\iota}$$ under the natural embedding $\GL_2\times_{\mathbb G_m}\GL_2 \to \GSP_4$; this satisfies (\ref{part:rho_pi_LLC1}) by Theorem  \ref{thm:rho_GL2_LLC}(\ref{part:rho_GL2_LLC_1}) combined with \cite[Corollary 5.1, Theorem 5.2(3)]{weissauer2009endoscopy} and the construction of $\rec_{\operatorname{GT}}$ in \cite{gantakeda2011local}. Then (\ref{part:rho_pi_LLC_HT}) and (\ref{part:rho_pi_char}) are satisfied by Theorem \ref{thm:rho_GL2_LLC}(\ref{part:rho_GL2_LLC_HT}, \ref{part:rho_GL2_LLC_char}).

Now suppose $\pi$ is not endoscopic. By \cite[Proposition 10.1]{rosner2024global} combined with \cite[Theorem 1]{weissauer2008whittaker}, we may assume without loss of generality that $\pi$ is globally generic. 
Under the extra hypothesis that $\pi_v$ is Steinberg for some $v\neq p$, and omitting the case $\l = p$ of (\ref{part:rho_pi_LLC1}), the theorem then follows from the main result of \cite{sorensen2010galois}, except that we have twisted the $\rho_{\pi,\iota}$ of \emph{loc. cit.} by $(\chi_p^{\cyc})^2$. The extra hypothesis  in \cite{sorensen2010galois} is needed only to appeal to the results of  Taylor-Yoshida \cite{tayloryoshida2007}, 
 and has since been removed by work of Caraiani \cite{caraiani2012local}. Similarly, the proof in \cite{sorensen2010galois} of (\ref{part:rho_pi_LLC1}) for $\l \neq p$ extends to the case $\l=p$ with the additional input of \cite{caraiani2014monodromy}.
\end{proof}

\begin{notation}
In the setting of Theorem \ref{thm:rho_pi_LLC},
    we write $V_{\pi,\iota}$ for the underlying four-dimensional Galois module of $\rho_{\pi,\iota}$. 
\end{notation}
\begin{lemma}\label{lem:reducible_endoscopic}
    In the situation of Theorem \ref{thm:rho_pi_LLC}, suppose $p > 3$. Then $V_{\pi,\iota}$ is reducible if and only if $\pi$ is endoscopic. 
\end{lemma}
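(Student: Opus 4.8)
The plan is as follows. One implication is immediate: if $\pi$ is endoscopic attached to a pair $(\pi_1,\pi_2)$, then by the construction in the proof of Theorem~\ref{thm:rho_pi_LLC} one has $\rho_{\pi,\iota}\cong\rho_{\pi_1,\iota}\oplus\rho_{\pi_2,\iota}$, which is visibly reducible. For the converse, suppose $V\coloneqq V_{\pi,\iota}$ is reducible; I will extract an endoscopic datum from it. Recall from Theorem~\ref{thm:rho_pi_LLC} that $\rho_{\pi,\iota}$ is semisimple, is de Rham at $p$ (being potentially semistable), is pure of weight $-1$ at every finite place, has Hodge--Tate weights the four \emph{distinct} integers $\{-1,0,1,2\}$, and has similitude character $\nu\coloneqq\iota^{-1}\rec(\omega_\pi)\chi_{p,\cyc}$ with $\rec(\omega_\pi)$ of finite order. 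The first step is a structural reduction: since $\rho_{\pi,\iota}$ is semisimple and pure, every irreducible constituent is pure of weight $-1$; but a one-dimensional de Rham character of $G_\Q$ is $\chi_{p,\cyc}^n$ times a finite-order character, hence pure of the \emph{even} weight $-2n\neq-1$. As a subrepresentation or quotient of a de Rham representation is de Rham, it follows that $V\cong V_1\oplus V_2$ with $V_1,V_2$ irreducible two-dimensional de Rham representations; and distinctness of the Hodge--Tate weights of $V$ forces $V_1\not\cong V_2$ and forces each $V_i$ to carry exactly two of the weights $-1,0,1,2$.

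Next I would use the symplectic self-duality $V\cong V^\vee\otimes\nu$. Since $V_1\not\cong V_2$, either (A) $\det V_1=\det V_2=\nu$, or (B) $V_2\cong V_1^\vee\otimes\nu$ with $\det V_1\neq\nu$ (these cases being mutually exclusive here). Case (B) does not occur for a relevant $\pi$: by the classification of cuspidal automorphic representations of $\GSP_4$, a non-CAP cuspidal $\pi$ has global parameter either of general type (the transfer to $\GL_4$ is cuspidal) or of Yoshida type $\phi_2\boxplus\phi_2'$ with $\det\phi_2=\det\phi_2'=\nu$; a type-(B) parameter $V_1\oplus V_1^\vee\otimes\nu$ with $\det V_1\neq\nu$ is of neither shape, so it could only be attached to a CAP representation, which is excluded by relevance. (The general-type possibility is also excluded in our situation, because the Galois representation attached to a symplectic, regular algebraic, cuspidal representation of $\GL_4(\A_\Q)$ is irreducible; here the hypothesis $p>3$ enters, guaranteeing the relevant residual modularity and irreducibility inputs.) Hence we are in case (A). Then the Hodge--Tate weights of each $V_i$ sum to the Hodge--Tate weight $1$ of $\nu$, so after relabeling $V_1$ has Hodge--Tate weights $\{0,1\}$ and $V_2$ has $\{-1,2\}$; and since complex conjugation acts on $\rho_{\pi,\iota}$ with similitude $\nu(c)=\rec(\omega_\pi)(c)\chi_{p,\cyc}(c)=-1$ (using that the weight $(3,3)$ discrete series has central-character value $1$ at $-1$), we get $\det V_i(c)=-1$, so each $V_i$ is odd (here $p\neq2$ is used).

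Finally, each $V_i$ is a two-dimensional, odd, de Rham representation of $G_\Q$ with distinct Hodge--Tate weights, hence modular by the Fontaine--Mazur conjecture in this setting --- the step where $p>3$ is genuinely needed, to supply residual modularity (Khare--Wintenberger) and modularity lifting away from the small-residue-characteristic pathologies. Thus $V_i\cong V_{f_i,\iota}$ for a classical newform $f_i$, of weight $2$ for $i=1$ and weight $4$ for $i=2$ by the Hodge--Tate weights, with nebentypus determined by $\rec(\omega_\pi)$ (consistent with Lemma~\ref{lem:when_endoscopic_relevant}). Comparing, for $\ell\nmid N(\pi)p$, the characteristic polynomial of $\rho_{\pi,\iota}(\Frob_\ell)$ with that of $\rho_{\pi_{f_1},\iota}(\Frob_\ell)\oplus\rho_{\pi_{f_2},\iota}(\Frob_\ell)$, and translating via local--global compatibility (Theorems~\ref{thm:rho_pi_LLC}(\ref{part:rho_pi_LLC1}) and~\ref{thm:rho_GL2_LLC}(\ref{part:rho_GL2_LLC_1})) into Satake parameters, shows that the Satake parameter of $\pi_\ell$ is the multiset union of those of $f_{1,\ell}$ and $f_{2,\ell}$ for almost all $\ell$; this is exactly the defining property (Definition~\ref{def:endoscopic_GSP}) of $\pi$ being endoscopic attached to $(\pi_{f_1},\pi_{f_2})$. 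The main obstacle throughout is the deep input --- the modularity of the two-dimensional constituents, equivalently the irreducibility of $\rho_{\pi,\iota}$ in the general-type case, together with the classification needed to discard case (B); everything else is bookkeeping with purity, Hodge--Tate weights, the symplectic pairing, and Satake parameters.
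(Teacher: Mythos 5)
The paper's proof of this lemma is a one-line citation of \cite[Theorem 3.1]{weiss2022images}, so your proposal is a from-scratch reconstruction. The reconstruction is broadly right, but it is much longer than it needs to be and buries the only real input in a parenthetical. That input is the irreducibility of the Galois representation attached to a cuspidal, regular algebraic, essentially self-dual automorphic representation of $\GL_4(\A_\Q)$ (and this is where the hypothesis $p>3$ actually enters); that theorem is precisely what Weiss's result packages. Once you grant it, the argument collapses to two lines via Lemma~\ref{lem:BC}: if $V_{\pi,\iota}$ were reducible and $\BC(\pi)$ were cuspidal, the $\GL_4$ irreducibility theorem would be contradicted, so $\BC(\pi)$ is not cuspidal, and Lemma~\ref{lem:BC} identifies ``$\BC(\pi)$ not cuspidal'' with ``$\pi$ endoscopic.'' The symplectic (A)/(B) split, the oddness computation, the Fontaine--Mazur modularity step, and the Satake-parameter comparison then become superfluous bookkeeping.

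There is also a genuine logical wobble in your case-(B) exclusion. You say a ``type-(B) parameter'' is ``of neither shape'' under the classification and so must be CAP. But the classification sorts $\pi$ by its automorphic A-parameter --- that is, by whether $\BC(\pi)$ is cuspidal or an isobaric sum --- not by the shape of the attached Galois representation; a general-type $\pi$ has an irreducible A-parameter regardless of how $V_{\pi,\iota}$ happens to decompose. So the sentence preceding your parenthetical carries no force: ruling out a reducible $V_{\pi,\iota}$ for a general-type $\pi$ is exactly where the hard irreducibility theorem does the work, and the Fontaine--Mazur argument does not bypass it, since it only applies once case (B) has been excluded.
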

\begin{proof}
    This is \cite[Theorem 3.1]{weiss2022images}. 
\end{proof}

\begin{definition}\label{def:coefficient_pi}
    A relevant automorphic representation $\pi$ of $\GSP_4(\A)$ has (strong) coefficient field $E_0 \subset \C$ if $E_0$ is a number field and, for all primes $p$ and isomorphisms $\iota: \overline\Q_p \isomorphism \C$, $\rho_{\pi,\iota}$ is defined over the $p$-adic closure of $\iota^{-1} (E_0)$. In this case, it depends only on the prime $\p$ of $E_0$ induced by $\iota$, and we obtain a well-defined $\rho_{\pi,\p} : G_F \to \GSP_4(E_{0,\p})$ such that $\rho_{\pi,\iota}$ is the extension of scalars of $\rho_{\pi,\p}$.  
\end{definition}
\begin{lemma}\label{lem:coeff_field_exists}
    If $\pi$ is a relevant automorphic representation of $\GSP_4(\A)$, then a strong coefficient field $E_0$ exists for $\pi$.
\end{lemma}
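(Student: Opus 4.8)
\emph{Step 1: a number field for the Hecke eigensystem.} Imitating the $\GL_2$ argument recalled in Remark \ref{rmk:coeff_field_GL2}, I would first produce a number field $E_0\subset\C$ containing all unramified Hecke eigenvalues of $\pi$. Since $\pi$ is relevant, its finite part occurs in the \'etale cohomology $H^3_{\et}$ of the $\GSP_4$ Shimura variety with trivial coefficients; concretely, the $\pi_f$-isotypic part of a fixed finite-dimensional space of cusp forms (classical, or coherent-cohomological) is nonzero. The group $\mathrm{Aut}(\C/\Q)$ acts semilinearly on this space commuting with the prime-to-level Hecke operators, and only finitely many Hecke eigensystems occur; hence the subfield of $\C$ generated over $\Q$ by the eigenvalues of $T_{\l,1}$, $T_{\l,2}$, $\langle\l\rangle$ on $\pi$, for all $\l$ at which $\pi$ is unramified, is a number field. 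I take $E_0$ to be this field, enlarged so as also to contain the values of $\omega_\pi$ --- a finite-order Hecke character, since it is unitary and $\omega_{\pi,\infty}$ is trivial on the connected center of $\GSP_4(\R)$ by the weight condition in Definition \ref{def:relevant_GSP} --- and, when $\pi$ is endoscopic associated to a pair $(\pi_1,\pi_2)$, strong coefficient fields for $\pi_1$ and $\pi_2$ (which exist by Remark \ref{rmk:coeff_field_GL2}).

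\emph{Step 2: descent of the Galois representation.} Now fix a prime $p$ and $\iota:\overline\Q_p\isomorphism\C$, and let $L$ be the $p$-adic closure of $\iota^{-1}(E_0)$. If $\pi$ is endoscopic I would simply set $\rho_{\pi,\p}:=\rho_{\pi_1,\p}\oplus\rho_{\pi_2,\p}$ via $\GL_2\times_{\mathbb G_m}\GL_2\hookrightarrow\GSP_4$, which is defined over $L$ by Step 1 and Remark \ref{rmk:coeff_field_GL2}; this disposes of that case. Otherwise, by Theorem \ref{thm:rho_pi_LLC}\,(\ref{part:rho_pi_LLC1}) and (\ref{part:rho_pi_char}) together with the explicit formulas (\ref{eq:hecke_eigenvalues_satake}), for every prime $\l$ outside the ramified set $S$ and away from $p$, the characteristic polynomial of $\rho_{\pi,\iota}(\Frob_\l)$ on $V_{\pi,\iota}$ has coefficients in $\iota^{-1}(E_0)\subset L$. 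Since $\rho_{\pi,\iota}$ is semisimple and unramified outside $S\cup\{p\}$, Brauer--Nesbitt together with the Chebotarev density theorem show it is determined up to $\GL_4(\overline\Q_p)$-conjugacy by these characteristic polynomials, so its trace function is $L$-valued; by the descent argument of \cite[Proposition 3.2.5]{chenevier2013construction}, the semisimple $G_\Q$-module $V_{\pi,\iota}$ admits a model $V$ over $L$.

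\emph{Step 3: descent of the symplectic form, and conclusion.} It remains to realize the similitude pairing over $L$. Put $\eta:=\iota^{-1}\rec(\omega_\pi)\chi_{p,\cyc}:G_\Q\to L^\times$, and let $\mathcal B$ be the $L$-vector space of $G_\Q$-equivariant alternating bilinear forms $V\otimes_L V\to L(\eta)$; it is the solution space of a system of $L$-linear equations, and $\mathcal B\otimes_L\overline\Q_p$ is nonzero because it contains the alternating similitude form preserved by $\rho_{\pi,\iota}$. Hence $\mathcal B\neq 0$; since the nondegenerate forms in $\mathcal B$ form a Zariski-open subset which is nonempty after base change to $\overline\Q_p$ (again by that same form), and $L$ is infinite, there is a nondegenerate $b\in\mathcal B$. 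The pair $(V,b)$ realizes $\rho_{\pi,\iota}$ over $L$ as a $\GSP_4(L)$-valued representation; as a semisimple $\GSP_4$-valued representation is pinned down up to conjugacy by its underlying $\GL_4$-module and its similitude character, the resulting $\rho_{\pi,\p}$ depends only on the prime $\p$ of $E_0$ induced by $\iota$, and $\rho_{\pi,\iota}$ is its scalar extension.

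\textbf{Main obstacle.} The crux is Step 1 --- the algebraicity of the Hecke eigensystem of $\pi$ --- which relies on $\pi$ genuinely occurring in the cohomology of the $\GSP_4$ Shimura variety, so that finite-dimensionality and the $\mathrm{Aut}(\C/\Q)$-action can be invoked, rather than merely being nearly equivalent to a cohomological representation. Granting that, Steps 2 and 3 are the standard pseudorepresentation/descent formalism plus a routine linear-algebra descent of the alternating form.
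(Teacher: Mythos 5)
Your proof is correct, and it follows the same overall strategy as the paper's — invoking Chenevier's Proposition 3.2.5 (for the composition $r\circ\rho_{\pi,\iota}$ with $r:\GSP_4\hookrightarrow\GL_4$) to get the $\GL_4$-module defined over the $p$-adic closure of $\iota^{-1}(E_0)$, dealing with the endoscopic case directly, and then descending the $\GSP_4$-structure using absolute irreducibility — but your descent of the symplectic form in Step 3 takes a different route than the paper's. The paper observes that for each $\sigma$ in the Galois group of the coefficient field, the conjugating matrix $h(\sigma)$ must lie in $\GSP_4(\overline\Q_p)$ by Schur's lemma, hence defines a cocycle in $H^1(G,\operatorname{PGSp}_4(\overline\Q_p))$; this determines an inner form $H$ of $\GSP_4$ over $L$ in which $\rho_{\pi,\iota}$ takes values, and $H$ is then shown to be split using the extra output of Chenevier's argument that some $\rho_{\pi,\iota}(g)$ has distinct eigenvalues in $L^\times$. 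Your Step 3 instead descends the invariant alternating form directly by the linear-algebra observation that the $L$-space $\mathcal B$ of $G_\Q$-equivariant alternating pairings $V\otimes V\to L(\eta)$ is nonzero because it becomes so after extension of scalars. Both are valid: the paper's cocycle/inner-form argument is the one that generalizes cleanly to arbitrary reductive $G$, whereas yours is more hands-on (and incidentally you do not need the Zariski-density step — by Schur's lemma $\mathcal B$ is one-dimensional, so \emph{any} nonzero $b\in\mathcal B$ is already nondegenerate, its kernel being a proper $G_\Q$-stable subspace of the irreducible $V$).
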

\begin{proof}
If $\pi$ is endoscopic, this is clear from Remark \ref{rmk:coeff_field_GL2}, so assume otherwise.

    Let $r: \GSP_4 \hookrightarrow \GL_4$ be the natural embedding, and let $\overline{\iota^{-1}(E_0)}$ be the $p$-adic closure for any $\iota: \overline\Q_p \isomorphism \C$. By the argument of \cite[Proposition 3.2.5]{chenevier2013construction}, 
    there exists a number field $E_0$ such that, for all $\iota: \overline\Q_p \isomorphism \C$, $r\circ \rho_{\pi,\iota}$ is defined over  $\overline{\iota^{-1}(E_0)}$  and $\rho_{\pi,\iota}(g)$ has distinct eigenvalues in $\overline{\iota^{-1}(E_0)}^\times$ for some $g\in G_\Q$. 

It suffices to check 
 $\rho_{\pi,\iota}$ is defined over $\overline{\iota^{-1}(E_0)}$ whenever $p > 3$. For this, let $G$ be the absolute Galois group of $\overline{\iota^{-1}(E_0)}$. For all $\sigma \in G$, we have
    $$\rho_{\pi,\iota}^\sigma =h(\sigma ) \rho_{\pi,\iota} h(\sigma)^{-1}$$ for some $h(\sigma) \in \GL_4(\overline\Q_p)$. Using the absolute irreducibility from Lemma \ref{lem:reducible_endoscopic} and Schur's lemma, we conclude $h(\sigma) \in \GSP_4(\overline\Q_p)$, and $\sigma \mapsto h(\sigma)$ defines a cocycle $h\in H^1(G, \operatorname{PGSp}_4(\overline\Q_p))$. The class of $h$ determines an inner form $H$ of $\GSP_4$ over $\overline{\iota^{-1}(E_0)}$ such that $\rho_{\pi,\iota}$ can be conjugated to lie in $H(\overline{\iota^{-1}(E_0)})$. However, since $\rho_{\pi,\iota}(g)$ has distinct eigenvalues in 
    $\overline{\iota^{-1}(E_0)}$  for some $g\in G_\Q$, $H$ must be split, and this completes the proof. 
    \end{proof}
\subsubsection{}

Analogously to Notation \ref{notation:V_T_GL_2}, we make the following notations.
\begin{notation}\label{notation:V_T_pi}
    Let $\pi$ be as in Theorem \ref{thm:rho_pi_LLC}, and let $E_0$ be a strong coefficient field of $\pi$. Fix a prime $\p$ of $E_0$ with residue field $k(\p)$. 
    \begin{enumerate}
    \item Write $V_{\pi,\p}$ for the four-dimensional underlying $E_{0,\p}[G_\Q]$-module of $\rho_{\pi,\p}$.
    \item Let $T_{\pi,\p} \subset V_{\pi,\p}$ be any $G_\Q$-stable $O_{\p}$-lattice; 
    we define $\overline T_{\pi,\p} \coloneqq \left(T_{\pi,\p}/\varpi_\p\right)^{ss}$, which  depends only on $V_{\pi,\p}$.
    We also write
    $$\overline\rho_{\pi,\p} : G_F \to \GL_4(k(\p))$$ for the corresponding semisimple Galois representation.
    

    \item When $\p$ is clear from context, it may be dropped from all subscripts in the above notations.
    \end{enumerate}
\end{notation}


\begin{lemma}\label{lem:BC}
    Suppose $\pi$ is a relevant automorphic representation of $\GSP_4(\A)$. Then there exists a base change $\BC(\pi)$ to an automorphic representation of $\GL_4(\A)$, such that for each place $v$, $$\rec(\BC(\pi)_v) = r\circ\rec_{\operatorname{GT}}(\pi_v),$$ where $$r: {}^L \GSP_4 = \GSP_4(\C) \hookrightarrow \GL_4(\C) = {}^L \GL_4$$ is the natural embedding of dual groups. Moreover $\BC(\pi)$ is cuspidal if and only if $\pi$ is non-endoscopic. 
\end{lemma}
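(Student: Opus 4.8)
The plan is to treat the endoscopic and non-endoscopic cases separately, paralleling the structure of the proof of Theorem \ref{thm:rho_pi_LLC}.

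\emph{The endoscopic case.} Suppose $\pi$ is endoscopic, associated to a pair $(\pi_1,\pi_2)$ of cuspidal automorphic representations of $\GL_2(\A)$; by Lemma \ref{lem:when_endoscopic_relevant} these are unitary with discrete series archimedean components of weights $2$ and $4$ in some order, so in particular $\pi_1 \not\simeq \pi_2$. I would set $\BC(\pi) \coloneqq \pi_1 \boxplus \pi_2$, the isobaric automorphic representation of $\GL_4(\A)$ attached to this pair, which is non-cuspidal precisely because $\pi_1 \not\simeq \pi_2$. At every place $v$ one has $\rec\big((\pi_1\boxplus\pi_2)_v\big) = \rec(\pi_{1,v}) \oplus \rec(\pi_{2,v})$, while $r\circ\rec_{\operatorname{GT}}(\pi_v) = \rec(\pi_{1,v})\oplus\rec(\pi_{2,v})$ by the description of the Gan--Takeda parameter of an endoscopic representation via \cite{weissauer2009endoscopy} and \cite{gantakeda2011local} --- this is precisely the local input already used for the endoscopic case of Theorem \ref{thm:rho_pi_LLC}(\ref{part:rho_pi_LLC1}). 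Hence $\rec(\BC(\pi)_v) = r\circ\rec_{\operatorname{GT}}(\pi_v)$ for all $v$.

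\emph{The non-endoscopic case.} Here I would argue through the associated Galois representation. By \cite[Proposition 10.1]{rosner2024global} together with \cite[Theorem 1]{weissauer2008whittaker}, $\pi$ is nearly equivalent to a globally generic, cuspidal, non-CAP representation $\pi'$ of $\GSP_4(\A)$; the generic transfer of Asgari--Shahidi (equivalently, the cuspidal $\GL_4$-representation produced in the construction of $\rho_{\pi',\iota}$ in \cite{sorensen2010galois}; one may also invoke Gee--Ta\"ibi's form of Arthur's classification) yields an isobaric automorphic representation $\Pi$ of $\GL_4(\A)$ whose Satake parameters at almost all places $v$ equal $r$ applied to those of $\pi'_v = \pi_v$, and which is cuspidal since $\pi'$ is cuspidal, generic, non-CAP and non-endoscopic (the Asgari--Shahidi image theorem). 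I would set $\BC(\pi) \coloneqq \Pi$; cuspidality of $\Pi$ also supplies the ``only if'' direction of the last assertion. By the construction of $\rho_{\pi,\iota}$ in \cite{sorensen2010galois} --- or directly from the fact that $r\circ\rho_{\pi,\iota}$ is absolutely irreducible (Lemma \ref{lem:reducible_endoscopic}), $\rho_{\Pi,\iota}$ is irreducible by cuspidality, and the two share almost all Frobenius traces, via Brauer--Nesbitt and Chebotarev --- one has $r\circ\rho_{\pi,\iota} \simeq \rho_{\Pi,\iota}$. Comparing, at each finite place $v$, the local--global compatibility for $\rho_{\pi,\iota}$ from Theorem \ref{thm:rho_pi_LLC}(\ref{part:rho_pi_LLC1}) with that for $\rho_{\Pi,\iota}$ (Harris--Taylor, \cite{tayloryoshida2007, caraiani2012local, caraiani2014monodromy}), and using that $\rec$ is injective on Frobenius-semisimple Weil--Deligne representations, gives $\rec(\Pi_v) = r\circ\rec_{\operatorname{GT}}(\pi_v)$. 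At $v = \infty$ the identity follows from the archimedean local Langlands correspondence, or equivalently from the Hodge--Tate weights recorded in Theorem \ref{thm:rho_pi_LLC}(\ref{part:rho_pi_LLC_HT}).

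The step I expect to demand the most care is securing the \emph{existence} of the cuspidal $\GL_4$-transfer $\Pi$ in the non-endoscopic case: this is the one genuinely deep input, requiring the reduction to a globally generic representation and then the generic functorial transfer (or Arthur's endoscopic classification), together with its cuspidality criterion. Once $\Pi$ is in hand, the ramified local compatibilities are transported essentially for free through the identification $r\circ\rho_{\pi,\iota} \simeq \rho_{\Pi,\iota}$, using full local--global compatibility on both sides and injectivity of $\rec$; the endoscopic case and the bookkeeping with isobaric sums are routine.
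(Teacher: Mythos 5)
Your argument is correct in outline, and the endoscopic half is the same as the paper's (take $\BC(\pi)=\pi_1\boxplus\pi_2$, local compatibility as in Theorem~\ref{thm:rho_pi_LLC}). In the non-endoscopic case, however, you and the paper take genuinely different routes. The paper's proof of that case is essentially a two-citation affair: the existence of the transfer to $\GL_4$ is read off directly from \cite[Proposition~10.1]{rosner2024global} and its proof (via the twisted-endoscopic transfer to $\GL_4\times\GL_1$), and the local identity $\rec(\BC(\pi)_v)=r\circ\rec_{\operatorname{GT}}(\pi_v)$ at \emph{every} place follows because the main result of \cite{chan2015local} establishes exactly that the transfer is compatible with the Gan--Takeda local Langlands correspondence. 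In other words, the paper never leaves the automorphic/representation-theoretic side. Your route instead goes through Galois representations: produce $\Pi$ via the generic transfer, identify $r\circ\rho_{\pi,\iota}\simeq\rho_{\Pi,\iota}$ by Brauer--Nesbitt and Chebotarev, and then push the local identity to ramified places via local--global compatibility on both sides and injectivity of $\rec$. That is a valid and self-contained argument (and nicely illustrates that almost everything reduces to Theorem~\ref{thm:rho_pi_LLC}), but it is longer, requires care with the half-integral twist normalizations appearing in the two local--global compatibility statements, and at $v=\infty$ you should be a little careful: $\rec_{\operatorname{GT}}$ is a nonarchimedean correspondence, and the Hodge--Tate weights only determine the infinitesimal character rather than the full archimedean parameter --- this is enough because the discrete-series $L$-packet is pinned down by its infinitesimal character, but it's worth saying explicitly. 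The paper's approach buys brevity by delegating all the local work to Chan--Gan; yours buys a more elementary, Galois-theoretic self-containedness at the cost of reproving compatibilities that are already available off the shelf.
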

\begin{proof}
If $\pi$ is endoscopic associated to $(\pi_1,\pi_2)$, then there exists a non-cuspidal base change, which is the isobaric sum $\pi_1 \boxplus \pi_2$; the compatibility with local Langlands parameters is by the same reasoning as the endoscopic case of  Theorem \ref{thm:rho_pi_LLC}.

In the non-endoscopic case,  the lemma follows from \cite[Proposition 10.1]{rosner2024global} and its proof; note that the endoscopic transfer to $\GL_4\times \GL_1$ used in \emph{loc. cit.} is compatible with the Gan-Takeda local Langlands paremeters by the main result of \cite{chan2015local}.
\end{proof}

\begin{lemma}
    \label{lem:sym_cube_relevant}
        Suppose $\pi$ is a relevant automorphic representation of $\GSP_4(\A)$. If $\BC(\pi)$ is a symmetric cube lift of a non-CM automorphic representation $\pi_0$ of $\GL_2(\A)$, then $\pi_0$ has discrete series archimedean component of weight 2. 

\end{lemma}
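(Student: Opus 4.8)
\medskip

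The plan is to track archimedean Hodge-Tate weights through the symmetric cube construction. Suppose $\BC(\pi) = \Sym^3 \pi_0$ for a non-CM cuspidal automorphic representation $\pi_0$ of $\GL_2(\A_\Q)$. Since $\pi$ is relevant, Theorem \ref{thm:rho_pi_LLC}(\ref{part:rho_pi_LLC_HT}) tells us that $\rho_{\pi,\iota}|_{G_{\Q_p}}$ has Hodge-Tate weights $\{-1,0,1,2\}$; equivalently, by Lemma \ref{lem:BC}, the regular algebraic representation $\BC(\pi)$ of $\GL_4(\A_\Q)$ has infinitesimal character with "Hodge-Tate weights" $\{-1,0,1,2\}$ (up to the normalization fixed in the excerpt). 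On the other hand, $\pi_0$ is cuspidal, hence by (a consequence of) Theorem \ref{thm:rho_GL2_LLC} it corresponds to a classical modular form of some weight $2k$ with $k \geq 1$, so $\rho_{\pi_0,\iota}$ has Hodge-Tate weights $\{1-k, k\}$. I would then compute directly that $\Sym^3 \rho_{\pi_0,\iota}$ has Hodge-Tate weights
$$\{3(1-k),\ 2(1-k)+k,\ (1-k)+2k,\ 3k\} = \{3-3k,\ 2-k,\ k+1,\ 3k\}.$$
(Here I am using that for a two-dimensional representation with Hodge-Tate weights $a < b$, the symmetric cube has Hodge-Tate weights $3a, 2a+b, a+2b, 3b$, which follows from the weight decomposition of $\Sym^3$ of the standard representation of $\GL_2$.)

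\medskip

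Next I would match this multiset against $\{-1, 0, 1, 2\}$. Since the four Hodge-Tate weights $3-3k,\ 2-k,\ k+1,\ 3k$ must form the set $\{-1,0,1,2\}$, and $k \geq 1$, I observe that $3k \geq 3$ would already be too large unless... wait—this is where I need to be careful about the normalization. The twist by a power of the cyclotomic character matters: $\rho_{\pi,\iota}$ is normalized to have cyclotomic similitude character, while $\Sym^3 \rho_{\pi_0,\iota}$ has similitude character $(\chi_{p,\cyc})^3 \cdot (\text{central character twist})$, so one must first twist $\Sym^3\rho_{\pi_0,\iota}$ by an appropriate power of $\chi_{p,\cyc}$ (and absorb the central character) to land in the correct normalization for $\BC(\pi)$. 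After this twist by $\chi_{p,\cyc}^{-j}$ for the appropriate $j$, the Hodge-Tate weights become $\{3-3k-j,\ 2-k-j,\ k+1-j,\ 3k-j\}$, and the condition that consecutive differences be $1,2,3$ (the gaps within $\{3-3k,2-k,k+1,3k\}$, which are $(2-k)-(3-3k) = 2k-1$, then $(k+1)-(2-k) = 2k-1$, then $3k-(k+1) = 2k-1$) forces all three gaps to equal... no: the gaps in $\{-1,0,1,2\}$ are $1,1,1$ (sorted), so I need $2k-1 = 1$, giving $k = 1$. Thus $\pi_0$ has weight $2k = 2$, i.e., discrete series archimedean component of weight $2$, as claimed.

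\medskip

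The main obstacle I anticipate is bookkeeping the normalization: one must be precise about how the cyclotomic twist in the definition of $\BC(\pi)$ (via Lemma \ref{lem:BC} and the conventions of Theorem \ref{thm:rho_pi_LLC}(\ref{part:rho_pi_char})) interacts with the central character of $\pi_0$ and the similitude character of $\Sym^3\rho_{\pi_0,\iota}$, so that the comparison of Hodge-Tate weight \emph{multisets} is apples-to-apples. One clean way to sidestep the twist entirely is to compare instead the \emph{differences} of Hodge-Tate weights (equivalently, the "motivic weight gaps"), which are twist-invariant: the set $\{-1,0,1,2\}$ has successive gaps $1,1,1$, whereas $\Sym^3$ of a weight-$2k$ form has successive gaps $2k-1, 2k-1, 2k-1$; equating forces $2k - 1 = 1$, hence $k = 1$. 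This makes the argument insensitive to all normalization choices. A secondary, more minor point is to confirm that the relevant Galois representations really are crystalline/de Rham at $p$ with the stated Hodge-Tate weights in the non-CM regular setting so that Theorem \ref{thm:rho_pi_LLC}(\ref{part:rho_pi_LLC_HT}) and Theorem \ref{thm:rho_GL2_LLC}(\ref{part:rho_GL2_LLC_HT}) apply verbatim, which they do since $\pi$ is relevant and $\pi_0$ is cuspidal non-CM.
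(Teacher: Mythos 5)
The paper dispatches this in one line: it matches the archimedean $L$-parameter of $\BC(\pi)_\infty$ (which is fixed by relevance of $\pi$) against the archimedean $L$-parameter of $\Sym^3\pi_{0,\infty}$, using Kim's theorem to know the latter and Lemma \ref{lem:BC} to know the former, and reads off $k=1$. Your proof transports the same infinitesimal-character comparison to the $p$-adic side: you read the Hodge--Tate weights of $\rho_{\pi,\iota}$ from Theorem \ref{thm:rho_pi_LLC}(\ref{part:rho_pi_LLC_HT}), the Hodge--Tate weights of $\rho_{\pi_0,\iota}$ from Theorem \ref{thm:rho_GL2_LLC}(\ref{part:rho_GL2_LLC_HT}), and compare. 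Since the archimedean local $L$-parameter and the Hodge--Tate weights encode the same infinitesimal-character data (that is precisely the content of parts (\ref{part:rho_GL2_LLC_HT}) and (\ref{part:rho_pi_LLC_HT}) of those theorems), the two routes are morally the same computation housed in different localities. What your route buys is exactly what you flagged: the ``successive gaps'' formulation ($1,1,1$ versus $2k-1,2k-1,2k-1$, hence $k=1$) is manifestly invariant under cyclotomic twists, so all the normalization bookkeeping about the similitude/central character and the $(-1)$-twist in $V_{\pi,\iota}=\Sym^3\rho_{\pi_0,\iota}(-1)$ evaporates. That is a genuine simplification over juggling the raw multisets.

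One step to tighten: you write that ``$\pi_0$ is cuspidal, hence by (a consequence of) Theorem \ref{thm:rho_GL2_LLC} it corresponds to a classical modular form of some weight $2k$.'' As stated, cuspidality alone does not give this---a Maass form is cuspidal but not algebraic, and Theorem \ref{thm:rho_GL2_LLC} only applies once you already know $\pi_{0,\infty}$ is a holomorphic discrete series. What does give it is the hypothesis that $\BC(\pi)=\Sym^3\pi_0$ has regular algebraic archimedean component (since $\pi$ is relevant): applying $\Sym^3$ to the archimedean $L$-parameter of a spherical or limit-of-discrete-series $\pi_{0,\infty}$ would yield a $\GL_4$ parameter that is not regular algebraic, so $\pi_{0,\infty}$ must be a (holomorphic) discrete series of some even weight $2k\geq 2$. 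Once you record that sentence, the rest of your argument---identifying $r\circ\rho_{\pi,\iota}$ with $\Sym^3\rho_{\pi_0,\iota}$ up to twist via Chebotarev and the non-CM irreducibility of $\Sym^3\rho_{\pi_0,\iota}$, then matching gaps---is complete.
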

\begin{proof}
    This follows from matching archimedean $L$-parameters using \cite[Theorem B]{kim2002functorial} and Lemma \ref{lem:BC}.
\end{proof}

\begin{lemma}\label{lem:RQ_induction_relevant}
    Suppose $\pi$ is a relevant automorphic representation of $\GSP_4(\A)$. If $\BC(\pi)$ is the automorphic induction of a non-CM automorphic representation $\pi_0$ of $\GL_2(\A_K)$ with $K$ real quadratic, then $\pi_0$ has discrete series archimedean components of weights 2 and 4, in some order. Moreover the central characters $\omega_{\pi_0}$ of $\pi_0$ and $\omega_\pi$ of $\pi$ satisfy
    $$\omega_\pi \circ \operatorname{Nm}_{K/\Q} = \omega_{\pi_0}.$$
\end{lemma}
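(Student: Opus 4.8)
The plan is to deduce both halves of the statement by matching $L$-parameters, much as in the proof of Lemma~\ref{lem:sym_cube_relevant}: the archimedean parameters give the weights, and a global comparison of determinant characters (via Lemma~\ref{lem:BC} and class field theory) gives the identity of central characters.

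For the weights, I would first record the shape of the archimedean parameter of a relevant $\pi$: writing $\mathcal{D}_{2k}=\operatorname{Ind}_{W_\C}^{W_\R}\bigl((z/\bar z)^{(2k-1)/2}\bigr)$ for the parameter of the weight-$2k$ discrete series of $\GL_2(\R)$, the fact that $\pi_\infty$ lies in the discrete series $L$-packet of cohomological weight $(3,3)$ --- equivalently, the Hodge--Tate weights $\{-1,0,1,2\}$ of Theorem~\ref{thm:rho_pi_LLC}, which split as $\{-1,2\}\sqcup\{0,1\}$ --- means $r\circ\rec_{\operatorname{GT}}(\pi_\infty)\cong\mathcal{D}_4\oplus\mathcal{D}_2$ as a four-dimensional representation of $W_\R$. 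Now $\infty$ splits in the real quadratic field $K$, say as $v_1,v_2$, so the defining property of automorphic induction gives $\rec\bigl(\operatorname{AI}_{K/\Q}(\pi_0)_\infty\bigr)\cong\rec(\pi_{0,v_1})\oplus\rec(\pi_{0,v_2})$; combining this with $\BC(\pi)=\operatorname{AI}_{K/\Q}(\pi_0)$ and Lemma~\ref{lem:BC} yields $\rec(\pi_{0,v_1})\oplus\rec(\pi_{0,v_2})\cong\mathcal{D}_4\oplus\mathcal{D}_2$. Since $\mathcal{D}_2$ and $\mathcal{D}_4$ are irreducible and non-isomorphic, uniqueness of the decomposition of a $W_\R$-representation into irreducibles forces $\{\rec(\pi_{0,v_1}),\rec(\pi_{0,v_2})\}=\{\mathcal{D}_2,\mathcal{D}_4\}$; in particular each $\pi_{0,v_i}$ is a genuine discrete series and the weights are $2$ and $4$ in some order.

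With the weights in hand, $\pi_0$ is cohomological of regular weight, so Theorem~\ref{thm:rho_GL2_LLC} applied over $F=K$ produces $\rho_{\pi_0,\iota}'\colon G_K\to\GL_2(\overline\Q_p)$; comparing Frobenius-semisimplifications of Weil--Deligne representations at all places (using Lemma~\ref{lem:BC} together with Theorem~\ref{thm:rho_pi_LLC}(\ref{part:rho_pi_LLC1}) and Theorem~\ref{thm:rho_GL2_LLC}(\ref{part:rho_GL2_LLC_1})) shows $r\circ\rho_{\pi,\iota}\cong\operatorname{Ind}_{G_K}^{G_\Q}\rho_{\pi_0,\iota}'$. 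The left side is symplectic with similitude $\nu\coloneqq\nu(\rho_{\pi,\iota})$, i.e. $(r\circ\rho_{\pi,\iota})^\vee\cong(r\circ\rho_{\pi,\iota})\otimes\nu^{-1}$; by the projection formula, the compatibility of $(-)^\vee$ with induction, and Mackey's theorem, this forces either $(\rho_{\pi_0,\iota}')^\vee\cong\rho_{\pi_0,\iota}'\otimes(\nu|_{G_K})^{-1}$ or $(\rho_{\pi_0,\iota}')^\vee\cong(\rho_{\pi_0,\iota}')^{\sigma}\otimes(\nu|_{G_K})^{-1}$, where $\sigma$ generates $\operatorname{Gal}(K/\Q)$. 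In the second case one obtains $(\rho_{\pi_0,\iota}')^{\sigma}\cong\rho_{\pi_0,\iota}'\otimes\eta$ for some character $\eta$ of $G_K$, and restricting to a decomposition group at $v_1$ (which $\sigma$ carries to $v_2$) would make $\mathcal{D}_2$ and $\mathcal{D}_4$ twist-equivalent, which is false. Hence the first case holds, i.e. $\det\rho_{\pi_0,\iota}'=\nu|_{G_K}$. To conclude, Theorem~\ref{thm:rho_GL2_LLC}(\ref{part:rho_GL2_LLC_char}) identifies the left side with $\iota^{-1}\rec(\omega_{\pi_0})\chi_{p,\cyc}$ and Theorem~\ref{thm:rho_pi_LLC}(\ref{part:rho_pi_char}) identifies $\nu$ with $\iota^{-1}\rec(\omega_\pi)\chi_{p,\cyc}$; restricting the latter to $G_K$, the cyclotomic characters cancel, giving $\rec(\omega_{\pi_0})=\rec(\omega_\pi)|_{G_K}$, and since restriction of characters from $G_\Q$ to $G_K$ corresponds under the reciprocity maps to precomposition with the idelic norm $\operatorname{Nm}_{K/\Q}$, this is precisely $\omega_{\pi_0}=\omega_\pi\circ\operatorname{Nm}_{K/\Q}$.

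The main obstacle I anticipate is in the third paragraph: ruling out the ``wrong'' Mackey summand for the self-duality of $\operatorname{Ind}_{G_K}^{G_\Q}\rho_{\pi_0,\iota}'$. This is exactly where the \emph{non-parallel} weight $(2,4)$ is used, since it is the non-twist-equivalence of $\mathcal{D}_2$ and $\mathcal{D}_4$ that excludes $(\rho_{\pi_0,\iota}')^{\sigma}\cong\rho_{\pi_0,\iota}'\otimes\eta$; one must also be careful that the archimedean weight computation is logically prior to the construction of $\rho_{\pi_0,\iota}'$. Everything else is routine bookkeeping with normalizations of reciprocity maps.
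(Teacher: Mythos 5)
Your proof is correct and follows essentially the same route as the paper's: archimedean $L$-parameter matching for the weights, and a self-duality argument that hinges on the non-parallel weights for the central-character relation (yours phrased via Mackey's criterion on the induction, the paper's via matching the irreducible constituents of the restriction $V_{\pi,\iota}|_{G_K}\cong\rho_{\pi_0,\iota}\oplus\rho_{\pi_0^\tw,\iota}$). One imprecision to fix: ``restricting to a decomposition group at $v_1$'' with $v_1$ archimedean is not meaningful for a $p$-adic Galois representation, and the clean way to rule out the crossed Mackey summand — which is what the paper does — is to note that $\rho_{\pi_0,\iota}$ and $\rho_{\pi_0^\tw,\iota}$ have different Hodge--Tate weights (namely $\{0,1\}$ versus $\{-1,2\}$ at each embedding, by Theorem~\ref{thm:rho_GL2_LLC}(\ref{part:rho_GL2_LLC_HT})), hence cannot be twist-equivalent.
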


\begin{proof}
    The assertion on archimedean components follows from Lemma \ref{lem:BC} and
     the compatibility of automorphic induction with local Langlands parameters \cite[Chapter 3, Theorem 5.1]{arthur1989basechange}. To check the relation of central characters, 
     fix some prime $p$ along with an isomorphism $\iota: \overline\Q_p \isomorphism \C$. Let $\pi_0^\tw$ denote the $\Gal(K/\Q)$-twist. 
     Then we have
     $$V_{\pi,\iota}|_{G_K} = \rho_{\pi_0,\iota} \oplus \rho_{\pi_0^\tw, \iota}.$$
     Using the identities
     $$V_{\pi,\iota} = V_{\pi,\iota}^\check \otimes \iota^{-1}\rec(\omega_\pi) \otimes \chi_{p,\cyc},\;\; \rho_{\pi_0,\iota} = \rho_{\pi_0,\iota}^\check \otimes \iota^{-1}\rec(\omega_{\pi_0})\otimes \chi_{p,\cyc},$$
and likewise for $\pi_0^\tw$, it follows that 
$$\rho_{\pi_0,\iota} \otimes \iota^{-1} \left(\rec(\omega_{\pi})|_{G_K}/\rec(\omega_{\pi_0})\right) \oplus \rho_{\pi_0^\tw,\iota} \otimes \iota^{-1} \left(\rec(\omega_{\pi})|_{G_K}/\rec(\omega_{\pi_0^\tw})\right)\cong \rho_{\pi_0,\iota} \oplus \rho_{\pi_0^\tw,\iota}.$$
     
     Since $\rho_{\pi_0,\iota}$ and $\rho_{\pi_0^\tw, \iota}$ are absolutely irreducible by Theorem \ref{thm:nekovar}, and have different Hodge-Tate weights by  Theorem \ref{thm:rho_GL2_LLC}(\ref{part:rho_GL2_LLC_HT}), this implies $\rec(\omega_\pi) |_{G_K} = \rec(\omega_{\pi_0})$, i.e. $\omega_\pi \circ \Nm_{K/\Q} = \omega_{\pi_0}$. 
     \end{proof}

     In the next lemma, if $K$ is a quadratic field, we write $\BC_{K/\Q}$ for the base change of an automorphic representation of $\GL_n(\A)$ to $\GL_n(\A_K)$, which exists by \cite{arthur1989basechange}.\begin{lemma}\label{lem:IQ_induction_relevant}
             Suppose $\pi$ is a relevant, non-endoscopic automorphic representation of $\GSP_4(\A)$. If 
 $\BC(\pi)$ is the automorphic induction of an automorphic representation $\pi_0$ of $\GL_{2}(\A_K)$  with $K$ imaginary quadratic and $\pi_0$ is not itself an automorphic induction, then 
$\pi_0$ is of the form $\BC_{K/\Q} (\sigma) \otimes \chi$, where $\sigma$ is the unitary automorphic representation of $\GL_2(\A)$ corresponding to a non-CM classical modular form of weight $k =  2$ or 3, and $\chi$ is a Hecke character of $K$.
     \end{lemma}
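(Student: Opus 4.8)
The plan is to pass to $p$-adic Galois representations and then run a descent argument along $K/\Q$. Fix any prime $p>3$ and an isomorphism $\iota\colon\overline\Q_p\isomorphism\C$; this is harmless since the conclusion is purely automorphic. By Lemma~\ref{lem:BC}, $\BC(\pi)$ is cuspidal and corresponds (via $r\colon\GSP_4\hook\GL_4$) to $\rho\coloneqq\rho_{\pi,\iota}$, which is absolutely irreducible by Lemma~\ref{lem:reducible_endoscopic}. The hypothesis $\BC(\pi)=\Ind_{K/\Q}\pi_0$ translates into $\rho\cong\Ind_{G_K}^{G_\Q}\sigma_0$, where $\sigma_0\colon G_K\to\GL_2(\overline\Q_p)$ is absolutely irreducible (because $\rho$ is), satisfies $\sigma_0\not\cong\sigma_0^c$ (else $\rho$ would be reducible), and has the same local Langlands parameters as $\pi_0$; the assumption that $\pi_0$ is not itself an automorphic induction is equivalent to $\sigma_0$ not being induced from a quadratic extension of $K$, i.e. its projective image $\overline{\sigma_0}(G_K)\subset\PGL_2(\overline\Q_p)$ is non-dihedral, so $\sigma_0$ admits no nontrivial twist by a character. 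Since $\rho$ is symplectic with multiplier $\nu\coloneqq\iota^{-1}\rec(\omega_\pi)\chi_{p,\cyc}$, restricting $\rho\cong\rho^\vee\otimes\nu$ to $G_K$ and using $\sigma_0\not\cong\sigma_0^c$ shows that exactly one of the following holds: (A) $\sigma_0\cong\sigma_0^\vee\otimes\nu|_{G_K}$, or (B) $\sigma_0^c\cong\sigma_0^\vee\otimes\nu|_{G_K}$.

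I would first rule out case (A). There $\sigma_0^\vee\cong\sigma_0\otimes(\det\sigma_0)^{-1}$, so $\sigma_0\cong\sigma_0\otimes(\det\sigma_0)^{-1}\nu|_{G_K}$, and the absence of self-twists forces $\det\sigma_0=\nu|_{G_K}$. This is contradicted at the archimedean place: by compatibility of automorphic induction with Langlands parameters one has $\rec(\BC(\pi)_\infty)=\Ind_{W_\C}^{W_\R}\rec(\pi_{0,\infty})$, while $\BC(\pi)_\infty$ is the weight‑$(3,3)$ discrete‑series parameter of $\GSP_4(\R)$, which in the normalization of Theorem~\ref{thm:rho_pi_LLC} (so that $\rho$ has Hodge--Tate weights $\{-1,0,1,2\}$) equals $\Ind_{W_\C}^{W_\R}(z\mapsto z^2\bar z^{-1})\oplus\Ind_{W_\C}^{W_\R}(z\mapsto z)$. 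Hence $\rec(\pi_{0,\infty})=\gamma_1\oplus\gamma_2$ with $\gamma_1$ among $\{z\mapsto z^2\bar z^{-1},\ z\mapsto z^{-1}\bar z^{2}\}$ and $\gamma_2$ among $\{z\mapsto z,\ z\mapsto\bar z\}$, so the $z$-exponent of the central character $\omega_{\pi_0,\infty}=\gamma_1\gamma_2$ lies in $\{3,2,0,-1\}$; but $\det\sigma_0=\nu|_{G_K}$ would force it to equal the $z$-exponent of $\nu|_{W_\C}$, namely the motivic weight $1$. So we must be in case (B).

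In case (B), $\sigma_0^c\cong\sigma_0\otimes\psi$ for $\psi\coloneqq(\det\sigma_0)^{-1}\nu|_{G_K}$, so the projective representation $\overline{\sigma_0}$ of $G_K$ is $\Gal(K/\Q)$-invariant. Because $\Gal(K/\Q)$ is cyclic and $\overline{\sigma_0}(G_K)$ has trivial centralizer in $\PGL_2$ (being absolutely irreducible and non-dihedral), $\overline{\sigma_0}$ extends to a projective representation of $G_\Q$; this extension lifts to a genuine $\tau\colon G_\Q\to\GL_2(\overline\Q_p)$, because its obstruction in $H^2(G_\Q,\overline\Q_p^\times)$ restricts to $0$ in $H^2(G_K,\overline\Q_p^\times)$ (as $\overline{\sigma_0}$ lifts to $\sigma_0$) and therefore lies in the image of $H^2(\Gal(K/\Q),\overline\Q_p^\times)=\overline\Q_p^\times/(\overline\Q_p^\times)^2=0$. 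Comparing projectivizations gives $\sigma_0\cong\tau|_{G_K}\otimes\chi$ for a character $\chi$ of $G_K$; since $\tau|_{G_K}=\sigma_0\otimes\chi^{-1}$ is $\Gal(K/\Q)$-invariant, the cuspidal automorphic representation of $\GL_2(\A_K)$ attached to $\sigma_0\otimes\chi^{-1}$ — which is $\pi_0$ twisted by the Hecke character corresponding to $\chi^{-1}$ — is $\Gal(K/\Q)$-invariant, hence by cyclic base change \cite{arthur1989basechange} equals $\BC_{K/\Q}(\sigma)$ for a cuspidal automorphic representation $\sigma$ of $\GL_2(\A_\Q)$. Thus $\pi_0=\BC_{K/\Q}(\sigma)\otimes\chi$ with $\chi$ a Hecke character of $K$. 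Next I would check $\sigma$ is non-CM: if $\sigma$ were the automorphic induction of a Hecke character of a quadratic field $K'$, then either $K'=K$, in which case $\BC_{K/\Q}(\sigma)$ is non-cuspidal, or $K'\neq K$, in which case $\BC_{K/\Q}(\sigma)$ is an automorphic induction from $KK'/K$ — either contradicts the hypotheses on $\pi_0$. Finally, comparing archimedean parameters exactly as in the second paragraph, $\rec(\sigma_\infty)|_{W_\C}$ is a character twist of $\gamma_1\oplus\gamma_2$, which forces $\rec(\sigma_\infty)=\Ind_{W_\C}^{W_\R}(z\mapsto z^a\bar z^{b})$ with $|a-b|\in\{1,2\}$; so $\sigma_\infty$ is a holomorphic discrete series of weight $|a-b|+1\in\{2,3\}$, and after normalizing $\sigma$ to be unitary it is attached to a classical modular form of weight $2$ or $3$.

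The step I expect to be the main obstacle is ruling out case (A) in the second paragraph: this is the only place where the precise shape of the weight-$(3,3)$ archimedean $L$-parameter of $\pi$ is genuinely needed, and it is the step that would break for a different (or non-regular) archimedean weight. The remaining steps are comparatively routine but do require care: translating between the automorphic and Galois pictures through compatibility of automorphic induction with local parameters, carrying out the projective-representation descent and its lifting obstruction, and avoiding small-image pathologies (handled by taking $p>3$, so that $\sigma_0$ non-dihedral already rules out nontrivial self-twists and $\rho$ is irreducible).
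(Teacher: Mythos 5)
Your proposal takes a genuinely different route from the paper, and it has one real gap where the key step is elided.

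The paper stays entirely on the automorphic side: it writes $\BC_{K/\Q}\circ\BC(\pi)\cong\pi_0\boxplus\pi_0^{\tw}$, uses the archimedean $L$-parameter $(z/\bar z)^{\pm 1/2}\oplus(z/\bar z)^{\pm 3/2}$ of $\pi_{0,\infty}$ to exclude $\pi_0^\vee\cong\pi_0\otimes(\omega_\pi\circ\Nm)$, deduces the self-twist $\pi_0^{\tw}\cong\pi_0\otimes\xi$ with $\xi=\omega_{\pi_0}^{-1}(\omega_\pi\circ\Nm_{K/\Q})$, verifies $\xi|_{\A^\times}=1$ from the central character identity $\omega_\pi^2=\omega_{\pi_0}|_{\A^\times}$, and then applies (automorphic) Hilbert~90 to write $\xi=\chi/\chi^{\tw}$ for a \emph{Hecke} character $\chi$ of $K$. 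You instead pass to $\rho_{\pi,\iota}\cong\Ind_{G_K}^{G_\Q}\sigma_0$, set up the same dichotomy of symplectic self-duality types for $\sigma_0$, exclude case (A) by the same archimedean weight computation (so that part of your argument is parallel), and then, in case (B), run a projective descent over $K/\Q$ and lift to a two-dimensional representation $\tau$ of $G_\Q$. This is a legitimately different strategy.

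The gap is where you go from the Galois character $\chi=\sigma_0/(\tau|_{G_K})$ to ``the Hecke character corresponding to $\chi^{-1}$.'' The lift $\tau$ of the projective descent is unique only up to a twist by a continuous character of $G_\Q$, and nothing in the lifting argument forces $\tau$, hence $\chi$, to be geometric (de~Rham, arising from an algebraic Hecke character). Until that is established, ``$\pi_0$ twisted by the Hecke character corresponding to $\chi^{-1}$'' is not defined and cyclic base change cannot be invoked. The fix is short but not automatic, and it re-derives exactly the paper's Hilbert~90 step: the existence of \emph{any} continuous $\chi$ with $\chi^c/\chi=\psi$ forces $\psi$ to vanish on the image of the transfer $V\colon G_\Q^{\ab}\to G_K^{\ab}$ (since $V$ lands in the $\Gal(K/\Q)$-invariants), which by class field theory means $\psi_A|_{\A^\times}=1$, and \emph{then} Hilbert~90 furnishes an algebraic $\chi$ one can actually use. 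So the projective-lifting machinery is correct but does not avoid the need for the $\psi_A|_{\A^\times}=1$ verification --- it hides it. The paper's route gets that verification directly from $\omega_\pi^2=\omega_{\pi_0}|_{\A^\times}$ with much less overhead. On the plus side, you explicitly verify that $\sigma$ is non-CM, which the paper's proof asserts in the statement but does not spell out.

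Two small points worth being careful about if you tighten this up: your identification of the main obstacle as the exclusion of case (A) is, I think, misplaced --- that step is quick and parallel to the paper, while the Hecke-character issue above is where the substance lies; and in several places (``projective image non-dihedral,'' ``$z$-exponent of $\nu$ is $1$'') you use conventions that should be fixed once and tracked consistently, since a sign slip in the Hodge--Tate/archimedean dictionary could invert the comparison in case (A).
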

     \begin{proof}
         By hypothesis, $\BC_{K/\Q} \circ \BC(\pi) $ is an isobaric sum $\pi_0\,\boxplus\,\pi_0^\tw$, where $\tw$ denotes the $\Gal(K/\Q)$-twist.

         Considering archimedean $L$-parameters and using Lemma \ref{lem:BC} combined with \cite[Chapter 3, Theorem 5.1]{arthur1989basechange}, we see that the local $L$-parameter of $\pi_0$ at the unique archimedean place of $K$ is of the form 
         \begin{equation}\label{eq:arch_for_IQ}z \mapsto \begin{pmatrix}(z/\overline z)^{\epsilon_1\frac{1}{2}} & \\ & (z/\overline z)^{\epsilon _2\frac{3}{2}}\end{pmatrix}\end{equation}
         for $\epsilon_1,\epsilon_2 \in \set{\pm 1}$. Let $\omega_\pi: \A^\times \to \C^\times$ and $\omega_{\pi_0}$ be the central characters of $\pi$ and $\pi_0$, respectively; then because $\BC(\pi) \cong \BC(\pi)^\vee\otimes \omega_\pi$, we have
         \begin{align*}
             \pi_0^\vee\, \boxplus\,(\pi_0^\tw)^\vee&\cong \pi_0 \otimes \omega_\pi \circ \Nm_{K/\Q} \boxplus \,\pi_0^\tw \otimes \omega_\pi\circ \Nm_{K/\Q} \\
             &\cong \pi_0 \otimes \omega_{\pi_0}^{-1} \boxplus \pi_0^\tw \otimes \omega_{\pi_0}^{-\tw}.
         \end{align*} The archimedean $L$-parameter (\ref{eq:arch_for_IQ}) shows that $\pi_0^\vee \not\cong \pi_0 \otimes \omega_\pi\circ \Nm_{K/\Q} $, and hence \begin{equation}\label{eq:IQ_ind_mess}\pi_0^\vee \cong \pi_0^\tw \otimes \omega_\pi\circ\Nm_{K/\Q} \cong \pi_0\otimes \omega_{\pi_0}^{-1}.\end{equation}

         On the other hand, computing the central character of $\BC(\pi)$, we have:
\begin{equation}
    \omega_{\pi}^2 = \omega_{\pi_0}|_{\A^\times}. 
\end{equation}   
In particular, $(\omega_\pi\circ \Nm_{K/\Q})/\omega_{\pi_0}$ is trivial when restricted to $\A^\times$, hence of the form $\chi/\chi^\tw$ for an automorphic character $\chi$ of $\A_K^\times$. Then $(\pi_0 \otimes \chi^{-1})$ is isomorphic to its $\Gal(K/\Q)$-twist by (\ref{eq:IQ_ind_mess}), hence arises as the base change of a cuspidal automorphic representation $\sigma$ of $\GL_2(\A)$ by \cite[Chapter 3, Theorem 4.2]{arthur1989basechange}. Without loss of generality, we may assume $\sigma$ is unitary; 
 considering the archimedean local $L$-parameter of $\sigma$ and again using \cite[Chapter 3, Theorem 5.1]{arthur1989basechange}, the archimedean component of $\sigma$ is discrete series of weight 2 or 3. We have $\pi_0 =\BC_{K/\Q} (\sigma) \otimes \chi$ by construction. 
      \end{proof}


\subsection{Shimura varieties and Shimura sets}
\subsubsection{}
Let $V $
be a quadratic space over $\Q $, and recall the algebraic group $\spin(V)$ from (\ref{subsubsec:spin_groups_notation}). 

\subsubsection{Indefinite case}\label {subsubsection: indefinite case}
Suppose $V\otimes\R $
has signature $(n, 2) $. If $V ^ -\subset V_\R $
is a negative definite $2 $-plane, one obtains a map $$C ^ + (V ^ -)\simeq\C\rightarrow C ^ + (V_\R), $$
which induces a Shimura datum $$h:\Res_{\C/\R}\mathbb G_m\rightarrow\spin (V)_\R. $$
For a neat compact open subgroup $K\subset\spin (V) (\A_f) $, the resulting Shimura variety $\Shimura_K (V) $ is a smooth quasi-projective variety over $\Q $.
\subsubsection{Definite case}\label {subsubsection: definite case}
If $V $ is a positive definite quadratic space over $\Q $, then $\spin (V) (\R) $ is compact. For a compact open subgroup $K\subset\spin (V) (\A_f) $, let $\Shimura_K (V) $ denote the finite double coset space $$\Shimura_K (V)\coloneqq\spin (V) (\Q)\backslash\spin (V) (\A_f)/K. $$
 \subsubsection{Hecke algebras}
 Suppose $V = V_D$ is one of the quadratic spaces from (\ref{subsubsec:B_D_notation}), and let $O$ be a coefficient ring. 
 If
 $K = \prod_\l K_\l \subset \spin(V)(\A_f)$ is a neat compact open subgroup and $S$ is a finite set of primes of $\Q$ containing all those such that $K_\l$ is not hyperspecial, then $ {\T}^S_O= \T^S_{\GSP_4,O}$ acts on $H^\ast(\Sh_K(V)(\C), O)$; the cohomology is interpreted as $O[\Sh_K(V)]$ in the definite case. We denote by $\T^S_{ K,V_D,O}$ the quotient of $ {\T}^S_O$ defined by this action, and may drop the subscript $O$ when it is clear from context.


\subsection{Automorphic representations of $\spin_5$ groups}
\subsubsection{}
Recall the five-dimensional quadratic spaces $V_D$ from (\ref{subsubsec:B_D_notation}).
\begin{definition}\label{def:eisenstein}
    An automorphic representation $\pi$ of $\spin(V_D)(\A)$ is \emph{Eisenstein} if there exists a parabolic subgroup $P \subset \GSP_4$, with Levi factor $L$, and an automorphic representation $\sigma$ of $L(\A)$ such that $\pi$ is nearly equivalent to a constituent of $\Ind_{P(\A)}^{\GSP_4(\A)} \sigma$.
\end{definition}
The following  generalizes Definitions \ref{def:relevant_GSP} and \ref{def:endoscopic_GSP}. 
\begin{definition}\label{def:relevant}
    For a squarefree integer $D \geq 1$, an automorphic representation $\pi$ of $\spin(V_D)(\A)$ will be called \emph{relevant} if:
    \begin{enumerate}
    \item $\pi$ is  not Eisenstein (in particular is cuspidal), and has unitary central character.
        \item $\pi_\infty$ is 
        trivial if $\sigma(D)$ is odd (in which case $\spin(V_D)(\R)$ is a compact group); or 
              belongs to the discrete series $L$-packet of weight $(3,3)$ if $\sigma(D)$ is even (in which case $\spin(V_D)(\R) = \GSP_4(\R)$). 
    \end{enumerate}
    A non-Eisenstein automorphic representation $\pi$ of $\spin(V_D)(\A)$ is called \emph{endoscopic} associated to a pair $(\pi_1,\pi_2)$ of automorphic representations of $\GL_2(\A)$ if  the condition in Definition \ref{def:endoscopic_GSP} is satisfied for all but finitely many $\l\nmid D$. 
\end{definition}
\begin{lemma}\label{lem:coh_for_JL}
   Fix a squarefree $D\geq 1$ such that $\sigma(D)$ is {even},  and suppose $\pi$ is an automorphic representation of $\spin(V_D)(\A)$ that is not Eisenstein. 
 Then for any neat compact open subgroup $K= \prod K_\l$ of $\spin(V_D)(\A_f)$:
   \begin{enumerate}
   \item\label{item:coh_for_JL_c} The natural map  induces an isomorphism $$H^i_c(\Sh_K(V_D), \C)[\pi_f] \isomorphism H^i(\Sh_K(V_D), \C)[\pi_f].$$
   \item \label{item:coh_for_JL_matsushima}We have $$H^i(\Sh_K(V_D), \C)[\pi_f] = \bigoplus_{\pi_\infty'} m(\pi_f\otimes\pi_\infty') \cdot \pi_f^K\otimes  H^i(\mathfrak{gsp}_4, U(2); \pi_\infty') \neq 0,$$ where  $U(2) \subset \GSP_4(\R)$ is the maximal compact subgroup, $\pi_\infty'$ runs over cohomological representations of $\GSP_4(\R)$, and $m(\pi_f\otimes\pi_\infty')$ is the multiplicity in the discrete (equivalently cuspidal) automorphic spectrum of $\spin(V_D)(\A)$. 
       \item \label{item:coh_for_JL_3}  If $\pi$ is relevant and 
$H^i(\Sh_K(V_D), \C)[\pi_f] \neq 0$, then $i = 3.$
   \end{enumerate}
\end{lemma}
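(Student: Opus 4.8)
The plan is to deduce (\ref{item:coh_for_JL_3}) from part (\ref{item:coh_for_JL_matsushima}) together with the structure of cohomological $(\mathfrak g,K)$-modules of $\GSP_4(\R)$ and the temperedness at $\infty$ of cohomological cuspidal representations of $\GSP_4$ that are not CAP. By part (\ref{item:coh_for_JL_matsushima}), if $H^i(\Sh_K(V_D),\C)[\pi_f]\neq 0$ then there is an irreducible unitary $\pi_\infty'$ of $\GSP_4(\R)$ with $H^i(\mathfrak{gsp}_4,U(2);\pi_\infty')\neq 0$ and $m(\pi_f\otimes\pi_\infty')\neq 0$. Since $H^\bullet(\mathfrak{gsp}_4,U(2);\pi_\infty')\neq 0$, the representation $\pi_\infty'$ has the same infinitesimal character as the trivial representation, i.e. the (regular) infinitesimal character $\rho_{\GSP_4}$. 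By the Vogan--Zuckerman classification, the irreducible unitary representations of $\GSP_4(\R)$ with this regular infinitesimal character and nonzero relative Lie algebra cohomology are either discrete series --- in which case the cohomology is concentrated in the middle degree $3=\dim_{\C}\Sh_K(V_D)$ --- or non-tempered. Hence it suffices to show that $\pi_\infty'$ cannot be non-tempered.

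To this end I would use that $\pi$ is relevant. Set $\sigma\coloneqq\pi_f\otimes\pi_\infty'$; since $m(\pi_f\otimes\pi_\infty')\neq 0$, $\sigma$ lies in the discrete automorphic spectrum of $\spin(V_D)(\A)$, and it is nearly equivalent to $\pi$ (they share the finite part $\pi_f$). Being Eisenstein in the sense of Definition \ref{def:eisenstein} is a property of the near-equivalence class, so, as $\pi$ is not Eisenstein, neither is $\sigma$; in particular $\sigma$ is not residual, hence cuspidal, and is not CAP. Now transfer $\sigma$ to $\GSP_4(\A_\Q)$ using the Jacquet--Langlands correspondence for inner forms of $\GSP_4$ (available unconditionally; cf. \S\ref{sec:prelim_ARs} and \cite{rosner2024global}): one obtains a cuspidal, non-CAP automorphic representation $\sigma'$ of $\GSP_4(\A_\Q)$ with $\sigma'_\infty=\pi_\infty'$, the archimedean group being $\GSP_4(\R)$ in both cases since $\sigma(D)$ is even. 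By the known temperedness at $\infty$ of non-CAP cohomological cuspidal representations of $\GSP_4(\A_\Q)$ (a consequence of Arthur's classification, or of Weissauer's work), $\sigma'_\infty=\pi_\infty'$ is tempered.

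Combining the two paragraphs, $\pi_\infty'$ is a tempered cohomological representation of $\GSP_4(\R)$ with regular infinitesimal character, hence a discrete series; its $(\mathfrak g,K)$-cohomology vanishes outside degree $3$, so $H^i(\mathfrak{gsp}_4,U(2);\pi_\infty')\neq 0$ forces $i=3$, which is the claim. The step I expect to require the most care is the appeal to the inner-form Jacquet--Langlands transfer and the temperedness input: both are standard in spirit, but I would need to be sure the transfer is invoked only in the form made unconditional earlier in \S\ref{sec:prelim_ARs}, and that $\sigma'$ being non-CAP --- equivalently, $\sigma$ being non-Eisenstein, which is exactly what relevance of $\pi$ supplies --- is precisely the hypothesis that guarantees temperedness at $\infty$.
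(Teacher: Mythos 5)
Your proposal has a fundamental circularity. You appeal to the (unconditional) Jacquet--Langlands transfer of the cuspidal automorphic representation $\sigma$ on $\spin(V_D)(\A)$ to a cuspidal representation $\sigma'$ of $\GSP_4(\A_\Q)$, and then invoke temperedness-at-infinity for non-CAP cuspidal cohomological representations of $\GSP_4(\A_\Q)$. But in this paper the unconditional transfer from inner forms of $\GSP_4$ is Theorem~\ref{thm:JL}, which appears immediately after the present lemma, and its proof \emph{uses} Lemma~\ref{lem:coh_for_JL}(\ref{item:coh_for_JL_3}): the proof of Theorem~\ref{thm:JL}(\ref{part:JL_one}) begins with ``By \cite[Corollary 7.4]{rosner2024global} combined with Lemma~\ref{lem:coh_for_JL}(\ref{item:coh_for_JL_3}), we conclude that $\Pi^D_f$ can be completed to a cohomological automorphic representation $\pi$ of $\GSP_4(\A)$.'' The R\"o{\ss}ner--Weissauer result by itself is conditional on their Conjecture~7.5; the whole point of this section is to remove that condition, and part~(\ref{item:coh_for_JL_3}) is an input to that removal, not a consequence of it. Moreover, the identification ``non-Eisenstein for $\spin(V_D)$ $\Longleftrightarrow$ non-CAP after transfer to $\GSP_4$'' that you use at the end also presupposes a transfer compatible with local parameters, again Theorem~\ref{thm:JL}.

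The paper's own proof takes a genuinely different route that never uses a transfer. After reducing to showing $H^i_{\et}(\Sh_K(V_D)_{\overline\Q},\overline\Q_p)[\pi_f]\neq 0 \implies i=3$, it forms the isotypic pieces $W^i$ and deploys: (a) vanishing of $W^0,W^1,W^6$ via Margulis superrigidity and one-dimensionality; (b) the Eichler--Shimura relation at unramified $\l$ from \cite{wedhorn2000congruence}, bounding the number of Frobenius eigenvalues; (c) Poincar\'e duality on $\pi_f$-isotypic parts; (d) the Weil bounds on Frobenius eigenvalue weights; and finally (e) the Fontaine--Mazur conjecture for odd two-dimensional representations \cite{pan2022fontaine}, to show that if $W^2\neq 0$ then $\pi$ would be nearly equivalent to a constituent of a Siegel-parabolic induction, i.e.\ Eisenstein, contradicting relevance. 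Your reduction of the problem to classifying cohomological $\pi_\infty'$ with regular infinitesimal character and your use of the Vogan--Zuckerman classification are correct, but the temperedness input has to be obtained by some means internal to $\spin(V_D)$; the paper's \'etale/arithmetic argument plays exactly that role.
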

\begin{proof}
The first part is standard, cf. \cite[Chapter 9]{harder2019cohomology}. Then (\ref{item:coh_for_JL_matsushima}) is immediate from Matsushima's formula and the diagram in \cite[p. 293]{taylor1993ladic}. We now show (\ref{item:coh_for_JL_3}). Without loss of generality, we may assume  that $\pi_f^K\neq 0$; in particular, because $\pi$ is relevant, (\ref{item:coh_for_JL_matsushima}) implies that \begin{equation}\label{eq:contributes_H3}
    H^3(\Sh_K(V_D), \C)[\pi_f] \neq 0.
\end{equation}

    Now let $S$ be a finite set of places of $\Q$ such that $K_\l$ is hyperspecial for $\l\not\in S$, and fix a prime $p \not\in S \cup \set{2,3}$ along with 
    an isomorphism $\iota: \overline\Q_p \isomorphism \C$. It suffices to show \begin{equation}
    H^i_\et(\Sh_K(V_D)_{\overline\Q}, \overline \Q_p)[\pi_f] \neq 0\implies i = 3.
    \end{equation}
    For this, we argue as in \cite{taylor1993ladic}, with the additional input of the Fontaine-Mazur conjecture for $\GL_2$. 
Set $W^i$ to be the $\pi^S$-isotypic component\footnote{Defined using $\iota$. We elide $\iota$ from the discussion to ease the burden of notation.} of $
H^i_\et(\Sh_K(V_D)_{\overline\Q}, \overline\Q_p)$ with respect to the natural action of $ \T^S_{\overline\Q_p}$; then $W^i$ is a $G_\Q$-module.
We have the following facts:
\begin{enumerate}
\item\label{margulis_item} By   Margulis' superrigidity theorem  \cite[Chapter IX, Corollary 7.15(iii)]{margulis1991discrete}, $W^1 = 0$,  and clearly $W^0 = W^6 = 0$ because $\pi_f$ is not one-dimensional.
    \item\label{item:eichler} For each $\l\not\in S\cup\set{p}$,  $\Frob_\l$ satisfies the Eichler-Shimura relation on $H^\ast(\Sh_K(V_D)_{\overline\Q}, \overline \Q_p)$ by \cite{wedhorn2000congruence}; in particular,
    if $\pi_\l$ has Satake parameters $\set{\alpha,\beta,\nu\alpha^{-1},\nu\beta^{-1}}$, then    \begin{equation}\label{eq:eichler_shimura}(\Frob_\l^{-1} - \l^{3/2} \alpha) (\Frob_\l^{-1} - \l^{3/2} \beta) (\Frob_\l^{-1} - \l^{3/2}\nu\alpha^{-1}) (\Frob_\l^{-1} - \l^{3/2}\nu \beta^{-1}) = 0 \text{ on each } W^i.\end{equation}
    \item    \label{item:PD}  By Poincar\'e duality and part (\ref{item:coh_for_JL_c}) of the theorem there are perfect pairings
    $$W^i \times W^{6-i} \to \rec(\omega_\pi) (-3)$$ for all $i$, where $\omega_\pi$ is the central character of $\pi$, cf. \cite[p. 297]{taylor1993ladic}.
    \item\label{item:Weil} Again by (\ref{item:coh_for_JL_c}) of the theorem, for all $\l\not\in S\cup \set{p}$, the eigenvalues of $\Frob_\l$ on $W^i$ are Weil numbers of weight $i$. 
\end{enumerate}
Now suppose $W^i \neq 0$ for some $i \neq 3$. By (\ref{margulis_item}), $i \neq 0,1$, and then by (\ref{item:PD}) we conclude that $W^2 \neq 0$. If $\l\not\in S\cup \set{p}$  is a prime such that $\Frob_\l$ has $n$ distinct eigenvalues on $W^2$, it has $n$ distinct eigenvalues on $W^4$ as well by (\ref{item:PD}),
and at least one eigenvalue on $W^3$ by (\ref{eq:contributes_H3}).
In particular, by (\ref{item:Weil}) and the fact that $\Frob_\l$ has at most 4 total eigenvalues by (\ref{item:eichler}), 
 we conclude that $n = 1$ and $\Frob_\l$ also has at most two eigenvalues on $W^3$. Now the same argument as \cite[Proposition 3]{taylor1993ladic} shows that there is a two-dimensional representation $R: G_\Q \to \GL_2(\overline\Q_p)$ -- possibly reducible -- with distinct Hodge-Tate weights such that $(W^3)^{ss} = R^{\oplus e}$ for some $e\geq 1$, and   a character $\chi: G_\Q \to \overline\Q_p^\times$ such that  $(W^2)^{ss} =\chi^{\oplus d}$ and $(W^4)^{ss} =  (\rec(\omega_{\pi})\chi_{p,\cyc}^{-3}\chi^{-1})^{\oplus d} $ for some $d\geq 1$. Note that $\omega_\pi$ is even because the central character of $\pi_\infty$ is trivial, so $R$ is odd by (\ref{item:PD}) above.
Hence by \cite[Theorem 1.0.4]{pan2022fontaine}, $R$ is automorphic. 
Comparing with (\ref{item:eichler}), we see that there exists an automorphic character  $\omega$ of $\A^\times$ and an automorphic representation $\sigma$ of $\GL_2(\A)$ such that $\pi$ is nearly equivalent to a constituent of the representation $$\Ind_{P(\A)}^{\GSP_4(\A)} \sigma \boxtimes \omega,$$ where $P \subset \GSP_4$ is the Siegel parabolic subgroup with Levi factor $\GL_2\boxtimes \GL_1$. 
Hence $\pi$ is Eisenstein, which contradicts the assumption that $\pi$ is relevant and completes the proof. 
\end{proof}
We will require some information about Jacquet-Langlands transfers of relevant automorphic representations between the various $\spin(V_D)(\A)$. 
\begin{definition}\label{def:transferrable}
    For a tempered irreducible admissible representation $\pi_\l$ of $\GSP_4(\Q_\l)$, we say $\pi_\l$ is \emph{transferrable} if it does not belong to the types I, IIIa, VIa, VIb, VII, VIIIa, VIIIb, or IXa in the notation of \cite{roberts2007local}.
\end{definition}
In particular, if $\pi_\l$ is unramified, it is not transferrable.

\begin{thm}\label{thm:JL}
    Let $D\geq 1$ be squarefree. Then:
    \begin{enumerate}
    \item \label{part:JL_one} For each relevant, non-endoscopic automorphic representation $\Pi = \otimes'\Pi_v$ of $\spin(V_D)(\A)$, $\Pi^D_f \coloneqq \otimes'_{\l\nmid D } \Pi_\l$ can be completed to a relevant automorphic representation of $\GSP_4(\A)$. 
    \item \label{part:JL_two}Conversely, suppose $\pi=\otimes'\pi_v$ is a relevant, non-endoscopic automorphic representation of $\GSP_4(\A)$. Then $\pi^D_f\coloneqq \otimes'_{\l\nmid D}\pi_\l$ can be completed to 
 a relevant automorphic representation $\Pi$ of $\spin(V_D)(\A)$  if and only if $\pi_\l$ is transferrable for all primes $\l|D$. 
    
    \item\label{part:JL_four} Let $\pi=\otimes'\pi_\l$ be a relevant, non-endoscopic automorphic representation of $\GSP_4(\A)$, and assume $\pi_\l$ is transferrable for all $\l|D$. 
  Then any  relevant automorphic representation $\Pi$ of $\spin(V_D)(\A)$ with $\Pi^D_f \cong \pi^D_f$  has automorphic multiplicity one. 
  The set of all such $\Pi$
     is a Cartesian product of local $L$-packets for $v|D\infty$,  where the nonarchimedean $L$-packets are   determined only by the corresponding local factors of $\pi$.  The archimedean $L$-packet is the discrete series packet of weight $(3,3)$
     when $\sigma(D)$ is even and the trivial representation when $\sigma(D)$ is odd. 
        \end{enumerate}
\end{thm}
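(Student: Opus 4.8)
The plan is to deduce all four assertions from the description of the discrete automorphic spectrum of $\GSP_4$ and its inner forms $\spin(V_D)$ attached to general-type Arthur parameters, as assembled in \cite{rosner2024global}, together with the Gan--Takeda local correspondence $\rec_{\operatorname{GT}}$ of \cite{gantakeda2011local}, the local classification of \cite{roberts2007local}, and the local--global compatibility of \cite{chan2015local}. The common link between the two sides is the base change to $\GL_4(\A)$: by Lemma~\ref{lem:BC}, $\BC(\pi)$ (resp.\ $\BC(\Pi)$) is cuspidal precisely because we are in the non-endoscopic case, hence it is self-dual of symplectic type up to the similitude twist and defines a global parameter $\psi$ of general type. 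In particular the global component group $S_\psi$ is trivial, so the Arthur multiplicity formula --- for $\GSP_4$ or for any of its inner forms --- takes its simplest shape: the discrete spectrum attached to $\psi$ contains, with multiplicity one, exactly one copy of every representation in the restricted tensor product $\otimes'_v \Pi_v$ with $\Pi_v$ in the local $L$-packet of the localization of $\psi$ (these are honest $L$-packets since $\psi$ is tempered), and nothing else.

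First I would record the relevant local packets. For $\l \nmid D$, $\spin(V_D)(\Q_\l) \cong \GSP_4(\Q_\l)$ and the $L$-packet of $\rec_{\operatorname{GT}}(\pi_\l)$ is $\{\pi_\l\}$ --- automatic at the cofinitely many unramified places. For $\l \mid D$, $\spin(V_D)(\Q_\l)$ is the unique nonsplit inner form of $\GSP_4(\Q_\l)$, and by the local Jacquet--Langlands correspondence --- the classification of \cite{roberts2007local} refined by the inner-form bookkeeping in \cite{rosner2024global} --- the $L$-packet of $\rec_{\operatorname{GT}}(\pi_\l)$ is nonempty on this inner form if and only if $\pi_\l$ avoids the excluded types of Definition~\ref{def:transferrable}, i.e.\ is transferrable; when nonempty, it is determined by $\pi_\l$ alone. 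At $\infty$, when $\sigma(D)$ is even one has $\spin(V_D)(\R) = \GSP_4(\R)$ and the relevant packet is the weight $(3,3)$ discrete series packet; when $\sigma(D)$ is odd, $\spin(V_D)(\R)$ is compact modulo its center and the $L$-packet with the infinitesimal character of the weight $(3,3)$ discrete series consists of the single finite-dimensional representation with that infinitesimal character, which is the trivial representation since weight $(3,3)$ is the lowest cohomological weight.

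Granting these inputs, I expect parts~(\ref{part:JL_one}), (\ref{part:JL_two}), and (\ref{part:JL_four}) to follow formally. For the ``if'' direction of (\ref{part:JL_two}): if every $\pi_\l$ with $\l\mid D$ is transferrable, then every factor of the restricted product of local $L$-packets for $\spin(V_D)$ attached to the parameter of $\BC(\pi)$ is nonempty, so the multiplicity-one formula lets us choose $\Pi_\l=\pi_\l$ for $\l\nmid D$ and the relevance-compatible archimedean member; the resulting $\Pi$ is automorphic, relevant, with $\Pi_f^D\cong\pi_f^D$, and non-endoscopic since $\BC(\Pi)=\BC(\pi)$ is cuspidal. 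For the ``only if'' direction of (\ref{part:JL_two}) and for (\ref{part:JL_one}): given automorphic $\Pi$ on $\spin(V_D)(\A)$ with the stated properties, strong multiplicity one for $\GL_4$ applied to the cuspidal $\BC(\Pi)$ forces $\Pi_v$ into the $L$-packet of $\rec_{\operatorname{GT}}(\pi_v)$ at every $v$; at $v\mid D$ this packet is then nonempty on the nonsplit inner form, so $\pi_v$ is transferrable; and since the split group $\GSP_4$ realizes every local $L$-packet, the multiplicity formula for the parameter of $\BC(\Pi)$ produces a relevant automorphic $\pi$ on $\GSP_4(\A)$ with $\pi_f^D\cong\Pi_f^D$, whose archimedean component lies in the weight $(3,3)$ discrete series packet because $\Pi_\infty$ does (or $\Pi_\infty$ is trivial and $\sigma(D)$ is odd), and which is not CAP because its parameter is cuspidal of general type. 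Finally, (\ref{part:JL_four}) is exactly the multiplicity-one formula together with the local packet computations above: the set of relevant $\Pi$ with $\Pi_f^D\cong\pi_f^D$ is the Cartesian product of the local $L$-packets at $v\mid D\infty$, every member occurring with multiplicity one, and these packets depend only on the local factors of $\pi$, with the archimedean packet as described.

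The main obstacle --- and the reason this is not a bare citation --- is to supply the inner-form Arthur multiplicity formula and the ramified local Jacquet--Langlands correspondence \emph{unconditionally}, removing the dependence of \cite{rosner2024global} on its Conjecture~7.5. I would isolate and verify three points directly in the general-type case: (i) the existence and expected properties of the local $L$-packets on $\spin(V_D)(\Q_\l)$ for $\l\mid D$, compatible with $\rec_{\operatorname{GT}}$, for which the theta correspondence between $\GSP_4$ and orthogonal similitude groups used in \cite{rosner2024global}, combined with \cite{roberts2007local} and \cite{chan2015local}, should suffice; (ii) the triviality of $S_\psi$ for a cuspidal symplectic $\GL_4$-parameter, so that no sign condition obstructs automorphy; and (iii) the comparison of multiplicities between $\GSP_4$ and its inner form, which for stable parameters follows either from available stabilized trace formula inputs or, as in \emph{loc.\ cit.}, from a direct theta-lift argument. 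The rest of Theorem~\ref{thm:JL} is then formal.
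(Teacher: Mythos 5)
Your approach is to invoke the Arthur multiplicity formula for $\spin(V_D)$ directly, letting the triviality of the global component group $S_\psi$ do all the work. The difficulty you flag at the end --- that the inner-form multiplicity formula is not unconditionally available --- is not a loose end to be patched but the entire content of the theorem, and the routes you sketch for closing it do not work. The ``available stabilized trace formula inputs'' for inner forms of $\GSp_4$ are precisely what is missing; this is the substance of Conjecture~7.5 of \cite{rosner2024global}. And a ``direct theta-lift argument'' is what \cite{rosner2024global} use in the \emph{endoscopic} case (packaged here in Theorem~\ref{thm:endoscopic_packets}), not in the general-type case, which is what is at issue when $\Pi$ is non-endoscopic. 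So as written the proposal reduces the theorem to an unproved input.

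The paper takes a genuinely different route that avoids the multiplicity formula. When $\sigma(D)$ is odd it simply cites \cite[Theorem~11.4]{rosner2024global}, which is unconditional because $\spin(V_D)(\R)$ is compact. When $\sigma(D)$ is even, the only trace-formula input used is the comparison of \emph{cohomological} traces from the proof of that theorem, which yields an identity~(\ref{eq:trace_comparison}) involving the alternating sum $\sum_{\pi_\infty} m(\pi^D\otimes\Pi_D\otimes\pi_\infty)\chi(\pi_\infty)$ over cohomological archimedean representations. Two additional ingredients are then essential, neither of which appears in your proposal. First, Lemma~\ref{lem:coh_for_JL}(\ref{item:coh_for_JL_3}) --- whose proof invokes the Fontaine--Mazur conjecture for $\GL_2$ via \cite{pan2022fontaine} --- shows that a relevant $\Pi_f$ only contributes to $H^3$, so the alternating sum collapses to $m(\pi^D\otimes\Pi_D\otimes\pi_\infty^W) + m(\pi^D\otimes\Pi_D\otimes\pi_\infty^H)$ (equation~(\ref{eq:mult_coh_WH})). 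This already yields (\ref{part:JL_one}) and (\ref{part:JL_two}), since the sum is $2$ or $0$ according as $\Pi_D$ lies in the product of local packets or not. Second, to split this sum and get multiplicity one for each of $\pi_\infty^W$, $\pi_\infty^H$ separately --- the content of (\ref{part:JL_four}) --- the paper uses Galois representations: if the multiplicities were $(0,2)$ or $(2,0)$, then $H^3_\et(\Sh_K(V_D)_{\overline\Q},\overline\Q_p)[\Pi_f^K]$ would have only two distinct Hodge--Tate weights (by \cite[p.~296]{taylor1993ladic}), contradicting the Eichler--Shimura relation together with the irreducibility of $V_{\pi,\iota}$ (Lemma~\ref{lem:reducible_endoscopic}) and its four distinct Hodge--Tate weights (Theorem~\ref{thm:rho_pi_LLC}(\ref{part:rho_pi_LLC_HT})). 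This Galois-theoretic splitting of the archimedean multiplicity is the crux of the argument and has no counterpart in your proposal; without it one cannot conclude multiplicity one.
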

\begin{proof}
    When $\sigma(D)$ is odd, so that $\spin(V_D)(\R)$ is compact, this is \cite[Theorem 11.4]{rosner2024global}. 
    To prove these assertions when $\sigma(D)$ is even, we follow the sketch indicated in the discussion following \emph{loc. cit.}

The following fact will be used repeatedly:
\begin{equation}\label{eq:fact_coh_ds}
\begin{split}
    \text{If $\pi_\infty$ 
    is an irreducible admissible representation of $\GSP_4(\R)$ with $H^3(\mathfrak{gsp}_4, U(2); \pi_\infty) \neq 0$,}\\\text{ then $\pi_\infty$ lies in the discrete series $L$-packet of weight $(3, 3)$. 
  }
  \end{split}
\end{equation}
This fact follows from the calculations in \cite[p. 293]{taylor1993ladic}.

  To prove (\ref{part:JL_one}), suppose first that $\Pi= \otimes' \Pi_v$ is a relevant automorphic representation of $\spin(V_D)(\A)$.
   By \cite[Corollary 7.4]{rosner2024global} combined with Lemma \ref{lem:coh_for_JL}(\ref{item:coh_for_JL_3}), we conclude that $\Pi^D_f$ can be completed to a cohomological automorphic representation $\pi$ of $\GSP_4(\A)$. 
   Moreover $\pi_f$ necessarily contributes to cohomology in degree 3 by \cite[Proposition 8.2]{rosner2024global}, and in particular $\pi$ is relevant by (\ref{eq:fact_coh_ds}) combined with Lemma \ref{lem:coh_for_JL}(\ref{item:coh_for_JL_matsushima}), so this proves (\ref{part:JL_one}). 

   To prove (\ref{part:JL_two}) and (\ref{part:JL_four}), we need some preparation. Fix a relevant automorphic representation $\pi=\otimes'\pi_v$ of $\GSP_4(\A)$.
  For each prime $\l|D$ such that $\pi_\l$ is transferrable, there is a local $L$-packet of irreducible admissible representations of $\spin(V_D)(\Q_\l)$ determined by the character relations of Lemma 11.1 of \emph{op. cit.} Let $\A_D = \prod_{\l|D} \Q_\l$; by an irreducible admissible representation of $\spin(V_D)(\A_D)$ we mean a direct product of irreducible admissible representations of $\spin(V_D)(\Q_\l)$ for $\l|D$. 
  
  If $\pi_\l$ is transferrable for all $\l|D$,  let $S$ be the set of irreducible admissible representations of $\spin(V_D)(\A_D)$ obtained by taking the Cartesian product of the local $L$-packets. If $\pi_\l$ is not transferrable for some $\l|D$, then let $S$ be the empty set.

   The comparison of cohomological trace formulas from the proof of \cite[Theorem 11.4]{rosner2024global} shows the following identity: for all irreducible admissible representations $\Pi_D$ of $\spin(V_D)(\A_D)$, we have
      \begin{equation}\label{eq:trace_comparison}\frac{1}{2}\sum_{\pi_\infty} m (\pi^D\otimes \Pi_D\otimes \pi_\infty)\chi(\pi_\infty) = \begin{cases} 2, & \Pi_D \in S,\\ 0, & \Pi_D \not\in S.\end{cases}\end{equation}
      Here the notation is as follows:
      \begin{itemize}
          \item The sum over $\pi_\infty$ runs over cohomological representations of $\GSP_4(\R)$, and $\chi(\pi_\infty)$ 
is the negative Euler characteristic $\sum (-1)^{i+1} \dim H^i(\mathfrak{gsp}_4, U(2); \pi_\infty)$. 
\item $\pi^D$ is the representation $\otimes'_{\l\nmid D} \pi_\l$ of $\spin(V_D)(\A^D)$.
\item $m(\pi^D\otimes\Pi_D\otimes\pi_\infty)$ is the multiplicity in the discrete (equivalently cuspidal, since $\pi$ is relevant) automorphic spectrum of $\spin(V_D)(\A)$. 
      \end{itemize}

We can further manipulate the left-hand side of (\ref{eq:trace_comparison}). Let $\pi_\infty^W$ and $\pi_\infty^H$ be the generic and holomorphic members, respectively, of the discrete series $L$-packet of weight $(3,3)$. We then claim that
\begin{equation}\label{eq:mult_coh_WH}
 \frac{1}{2}   \sum_{\pi_\infty} m (\pi^D\otimes\Pi_D\otimes\pi_\infty) \chi(\pi_\infty) = m(\pi^D\otimes\Pi_D\otimes\pi_\infty^W)+ m(\pi^D\otimes\Pi_D\otimes\pi_\infty^H).
\end{equation}
To prove (\ref{eq:mult_coh_WH}), 
suppose first that the right-hand side is positive. Then any cohomological $\pi_\infty$ with $\Pi = \pi^D_f\otimes\Pi_D\otimes\pi_\infty$ automorphic can have Lie algebra cohomology only in degree 3 by Lemma \ref{lem:coh_for_JL}(\ref{item:coh_for_JL_matsushima}, \ref{item:coh_for_JL_3}). In particular, if $m(\pi^D\otimes\Pi_D\otimes\pi_\infty)\chi(\pi_\infty) \neq 0$ then $\pi_\infty$ is either $\pi_\infty^W$ or $\pi_\infty^H$ by (\ref{eq:fact_coh_ds}), so -- combined with the fact that $\chi(\pi_\infty^W) = \chi(\pi_\infty^H) = 2$ by \cite[p. 293]{taylor1993ladic} -- we have shown (\ref{eq:mult_coh_WH}) when the right-hand side is positive. On the other hand, suppose the right-hand side of (\ref{eq:mult_coh_WH}) vanishes; then each summand on the left-hand side of (\ref{eq:mult_coh_WH}) is non-positive by (\ref{eq:fact_coh_ds}) because Lie algebra cohomology vanishes for $\GSP_4(\R)$ outside degrees 0, 2, 3, 4, and 6. But the left-hand side of (\ref{eq:mult_coh_WH}) is also non-negative by (\ref{eq:trace_comparison}), so we conclude that it is zero, hence (\ref{eq:mult_coh_WH}) again holds.

In particular, by (\ref{eq:mult_coh_WH}) and (\ref{eq:trace_comparison}) together, we have
\begin{equation}\label{eq:new_trace_comparison}
    m(\pi^D\otimes\Pi_D\otimes\pi_\infty^W) + m(\pi^D\otimes\Pi_D\otimes\pi_\infty^H) = \begin{cases} 2, & \Pi_D \in S,\\ 0, & \Pi_D \not\in S.\end{cases}
\end{equation}

From (\ref{eq:new_trace_comparison}), it is clear that $\pi^D$ can be completed to a relevant automorphic representation of $\spin(V_D)(\A)$ if and only if $S$ is nonempty, i.e. if and only if $\pi_\l$ is transferrable for all $\l|D$; this shows (\ref{part:JL_two}).

We also see from (\ref{eq:new_trace_comparison}) that (\ref{part:JL_four}) is equivalent to the assertion that $m(\pi^D\otimes\Pi_D\otimes\pi_\infty^W) = m(\pi^D\otimes\Pi_D\otimes\pi_\infty^H) = 1$ for all $\Pi_D\in S$. Suppose for contradiction that  $m(\pi^D\otimes\Pi_D\otimes\pi_\infty^W)$ and $m(\pi^D\otimes\Pi_D\otimes\pi_\infty^H)$ are 0 and 2, in some order.
Let $K = \Prod K_\l\subset \spin(V_D)$ be a neat compact open subgroup such that $\Pi_f^K \neq 0$, and let $p > 3$ be a prime such that $K_p$ is hyperspecial. Fix as 
well an isomorphism $\iota: \overline\Q_p \isomorphism \C$.  Then by the discussion in \cite[p. 296]{taylor1993ladic}, $H^3_\et(\Sh_K(V_D)_{\overline\Q}, \overline \Q_p)[\Pi_f^K]$ is nonzero and has exactly two distinct Hodge-Tate weights as a representation of $G_{\Q_p}$. 

On the other hand, by (\ref{item:eichler}) from the proof of Lemma \ref{lem:coh_for_JL} combined with the irreducibility of $V_{\pi,\iota}$ (Lemma \ref{lem:reducible_endoscopic}), up to semisimplification $H^3_\et(\Sh_K(V_D)_{\overline\Q}, \overline \Q_p)[\Pi_f^K]$ is a sum of copies of $V_{\pi,\iota}$, which is a contradiction because $V_{\pi, \iota} $ has distinct Hodge-Tate weights by Theorem \ref{thm:rho_pi_LLC}(\ref{part:rho_pi_LLC_HT}). This completes the proof of  (\ref{part:JL_four}).
\end{proof}

\begin{rmk}
    One can similarly prove Theorem \ref{thm:JL} in all regular weights, by considering cohomology of Siegel threefolds with coefficients in more general local systems; we omit the details for concision.
\end{rmk}

\subsection{Relevant endoscopic automorphic representations}

\subsubsection{}
Relevant endoscopic automorphic representations of $\spin(V_D)(\A)$ can be constructed as follows. Fix unitary cuspidal automorphic representations $\pi_1$ and $\pi_2$ of $\GL_2(\A)$ with the same central character, and whose archimedean components are discrete series of weights 4 and 2, in some order. Let $D_1$ and $D_2$ be squarefree positive integers, such that $\pi_i$ admits a Jacquet-Langlands transfer $\pi_i^{D_i}$ to $B_{D_i}^\times(\A)$   for $i = 1, 2$ (notation as in (\ref{subsubsec:B_D_notation})). 

Define $D_1\ast D_2 \coloneqq D_1D_2/ \gcd(D_1,D_2)^2$, which is a squarefree positive integer. Then as in \cite[\S3.3]{chan2015local}, there is a global theta lift,
\begin{equation}
    \Theta(\pi_1^{D_1}\boxtimes \pi_2^{D_2}) = \Theta(\pi_2^{D_2}\boxtimes \pi_1^{D_1}),
\end{equation}
which is either zero or a cuspidal automorphic representation of $\spin(V_{D_1\ast D_2})(\A)$.
\begin{thm}\label{thm:endoscopic_packets}
With notation as above, relabel $\pi_1$ and $\pi_2$ if necessary so that $\pi_1$ has weight 2 and $\pi_2$ has weight 4. 
\begin{enumerate}
    \item \label{part:endoscopic_packets 1}The theta lift $\Theta(\pi_1^{D_1}\boxtimes\pi_2^{D_2})$ is nonzero if and only if either $\sigma(D_2)$ or $\sigma(D_1\ast D_2)$ is even. When nonzero, $\Theta(\pi_1^{D_1}\boxtimes \pi_2^{D_2})$ is always relevant and endoscopic associated to the pair $(\pi_1,\pi_2)$, and these representations are all distinct. 
    \item\label{part:endoscopic packets 2} Conversely, each relevant endoscopic automorphic representation of $\spin(V_D)(\A)$ arises in this way, and appears with automorphic multiplicity one in the discrete (equivalently cuspidal) spectrum. 
\end{enumerate}
\end{thm}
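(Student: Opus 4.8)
The plan is to obtain all four assertions from the explicit theory of the theta correspondence between the inner forms of $\operatorname{GSO}_4$ and the groups $\spin(V_D)$ (which are $\GSP_4$ and its inner forms), together with the Arthur multiplicity formula for the $\spin(V_D)$; both inputs are provided by \cite{chan2015local}, which rests on Arthur's endoscopic classification and the local theta results of Gan--Takeda and Roberts. Write $\psi = \psi_{\pi_1,\pi_2}$ for the Yoshida-type Arthur parameter of $\GSP_4$ attached to $(\pi_1,\pi_2)$: it is elliptic and tempered (the Arthur $\SL_2$ acts trivially), and its global component group is $S_\psi \cong \Z/2\Z$ since $\pi_1 \not\cong \pi_2$ (their archimedean weights differ). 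By \cite[\S3.3]{chan2015local}, for $D \coloneqq D_1 \ast D_2$ the global theta lift, when nonzero, is the restricted tensor product $\bigotimes'_v \theta_v(\pi_{1,v}^{D_1}\boxtimes\pi_{2,v}^{D_2})$ of local theta lifts (with $\pi_{i,v}^{D_i}$ the local Jacquet--Langlands transfer when $v\mid D_i$ and $\pi_{i,v}$ otherwise), each factor lying in the local Arthur packet $\Pi_{\psi_v}$ for $\spin(V_D)(\Q_v)$, with an explicitly computed character of the local component group.

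First I would establish the nonvanishing criterion in (\ref{part:endoscopic_packets 1}). Because $\pi_1 \not\cong \pi_2$, the theta lift of $\pi_1^{D_1}\boxtimes\pi_2^{D_2}$ to each inner form of $\GL_2$ vanishes, so $\GSP_4$ is the first occurrence in the theta tower; consequently the global lift to $\spin(V_D)(\A)$ is nonzero if and only if every local theta lift is nonzero. At finite $v \nmid D_1D_2$ the local lift is nonzero by the unramified computation, and at finite $v\mid D_1D_2$ by the explicit local correspondence, so the only possible obstruction is at $v = \infty$: there the source group $\operatorname{GSO}(W_\infty)$ is governed by the behaviour of $B_{D_1},B_{D_2}$ at $\infty$, the target $\spin(V_D)(\R)$ is compact exactly when $\sigma(D)$ is odd, and the archimedean theta lift of the weight-$(2,4)$ discrete series data is nonzero precisely when $B_{D_2}$ is indefinite or $\spin(V_D)(\R)$ is noncompact, i.e.\ when $\sigma(D_2)$ or $\sigma(D_1\ast D_2)$ is even. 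This archimedean computation, carried out in \cite[\S3.3]{chan2015local}, is the technical crux. Granting it, when the lift is nonzero it is cuspidal (which also follows from its being the first occurrence), has unitary central character, and is not Eisenstein in the sense of Definition~\ref{def:eisenstein} since $\psi$ is built from the cuspidal $\pi_i$; its archimedean component lies in $\Pi_{\psi_\infty}$, which is the discrete series $L$-packet of weight $(3,3)$ when $\sigma(D)$ is even and the trivial representation when $\sigma(D)$ is odd. Hence it is relevant, and it is endoscopic associated to $(\pi_1,\pi_2)$ directly from its Satake parameters $\{\alpha_{1,\ell},\alpha_{2,\ell},\nu_\ell/\alpha_{1,\ell},\nu_\ell/\alpha_{2,\ell}\}$ at unramified $\ell$.

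For distinctness: if two such lifts are isomorphic as automorphic representations of $\spin(V_D)(\A)$, then comparison of Satake parameters at unramified primes recovers the unordered pair of underlying $\GL_2$-representations, which the distinct archimedean weights refine to the ordered pair $(\pi_1,\pi_2)$; the ramification locus of $\spin(V_D)$ recovers $D = D_1\ast D_2$; the set of primes $\ell\nmid D$ at which the local component fails to be generic recovers $\gcd(D_1,D_2)$ (at such $\ell$, which divide both $D_1$ and $D_2$, the inputs are discrete series of nonsplit quaternionic groups, whose theta lift to the split $\GSP_4(\Q_\ell)$ is non-generic); and at each $\ell\mid D$ the member of the ramified local packet $\Pi_{\psi_\ell}$ determines, by injectivity of the local theta correspondence on $\Pi_{\psi_\ell}$, whether $B_{D_1}$ or $B_{D_2}$ ramifies at $\ell$. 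The only remaining coincidence is the symmetry $\pi_1^{D_1}\boxtimes\pi_2^{D_2}\leftrightarrow\pi_2^{D_2}\boxtimes\pi_1^{D_1}$, which is broken by the weights; thus the parametrization is injective, as also follows from \cite{chan2015local}.

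Finally, (\ref{part:endoscopic packets 2}) is where I would invoke Arthur's multiplicity formula for $\spin(V_D)$. If $\Pi$ is relevant and endoscopic associated to $(\pi_1,\pi_2)$, relevance forces the $\pi_i$ to be unitary discrete series of weights $2$ and $4$ in some order (by an argument analogous to the proof of Lemma~\ref{lem:when_endoscopic_relevant}, matching archimedean parameters), so $\Pi$ lies in the global Arthur packet $\Pi_\psi$ for $\spin(V_D)(\A)$ with $D = D_1\ast D_2$. Since $\psi$ is elliptic and tempered, every member of $\Pi_\psi$ is cuspidal and occurs with multiplicity at most one, and the automorphic members are exactly those $\otimes_v\Pi_v$ for which $\prod_v\langle\,\cdot\,,\Pi_v\rangle$ equals the prescribed sign character on $S_\psi\cong\Z/2\Z$; by \cite{chan2015local} these are precisely the nonzero theta lifts of (\ref{part:endoscopic_packets 1}). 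Hence $\Pi\cong\Theta(\pi_1^{D_1}\boxtimes\pi_2^{D_2})$ for some $(D_1,D_2)$, with automorphic multiplicity one. The principal obstacle throughout is the archimedean nonvanishing computation underpinning the criterion in (\ref{part:endoscopic_packets 1}); everything else is a formal consequence of the factorizability of the theta lift and the classification of the automorphic spectrum of the inner forms of $\GSP_4$.
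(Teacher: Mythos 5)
Your proposal is correct and takes essentially the same route as the paper, whose own proof is a one-line citation to the global theta-lift and Arthur-classification machinery of Chan--Gan (their \S 3) together with the archimedean theta correspondence (their \S 2.4). You have usefully unpacked what is implicit in that citation: the factorizability of the global lift, the Rallis tower argument giving cuspidality, the archimedean nonvanishing computation as the crux of part (1), the recovery of $(D_1,D_2)$ from local data for distinctness, and the multiplicity formula for part (2). One place where your sketch moves a bit too fast is the inference ``so $\GSP_4$ is the first occurrence in the theta tower; consequently the global lift \ldots is nonzero if and only if every local theta lift is nonzero.'' The tower property gives cuspidality of the lift, but the nonvanishing criterion in this equal-rank boundary case is not a formal consequence of first occurrence; it also requires the Rallis inner product formula together with the nonvanishing of the relevant Rankin--Selberg $L$-value $L(1,\pi_1\times\pi_2)$ (which holds here precisely because $\pi_1\not\cong\pi_2$). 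This is indeed established in the cited reference, so the conclusion is correct, but the step deserves to be flagged explicitly rather than presented as a consequence of the tower property alone.
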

\begin{proof}
  This follows from the results of \cite[\S3]{chan2015local}, combined with the discussion of the local archimedean theta lift in \S2.4 of \emph{op. cit.}.
\end{proof}
\begin{cor}\label{cor:JL_general}
    For any relevant automorphic representation $\Pi$ of $\spin(V_D)(\A)$, there exists a relevant automorphic representation $\pi$ of $\GSP_4(\A)$ such that:
    \begin{enumerate}
        \item \label{cor:JL_general_one} For each prime $\l\nmid D$, $\Pi_\l \cong \pi_\l$.
        \item\label{cor:JL_general_two} For each prime $\l|D$, $\pi_\l$ is transferrable, and $\Pi_\l$ lies in the corresponding local packet of representations of $\spin(V_D)(\Q_\l)$ from Theorem \ref{thm:JL}(\ref{part:JL_four}). 
    \end{enumerate}
\end{cor}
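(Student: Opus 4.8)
The plan is to reduce to the two regimes already settled: Theorem~\ref{thm:JL} treats non-endoscopic $\Pi$, and Theorem~\ref{thm:endoscopic_packets} treats endoscopic $\Pi$. A relevant automorphic representation is by definition cuspidal, non-CAP, and not Eisenstein, so every such $\Pi$ falls into exactly one of these classes. The non-endoscopic case is then purely formal. By Theorem~\ref{thm:JL}(\ref{part:JL_one}), $\Pi^D_f$ completes to a relevant automorphic representation $\pi$ of $\GSP_4(\A)$; then $\pi_\l\cong\Pi_\l$ for all $\l\nmid D$, which gives (\ref{cor:JL_general_one}), and $\pi$ is again non-endoscopic because being endoscopic is detected by Satake parameters away from $D$ (Definition~\ref{def:endoscopic_GSP}). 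Since $\Pi$ exhibits $\pi^D_f$ as completable to a relevant representation of $\spin(V_D)(\A)$, Theorem~\ref{thm:JL}(\ref{part:JL_two}) forces $\pi_\l$ to be transferrable for every $\l\mid D$; and Theorem~\ref{thm:JL}(\ref{part:JL_four}), which identifies the relevant representations of $\spin(V_D)(\A)$ agreeing with $\pi$ away from $D$ with a product of local $L$-packets at the places dividing $D\infty$ (the nonarchimedean ones depending only on the local factors of $\pi$), applied to $\Pi$ itself places $\Pi_\l$ in the local packet attached to $\pi_\l$ for each $\l\mid D$. This is (\ref{cor:JL_general_two}).

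For the endoscopic case, Theorem~\ref{thm:endoscopic_packets} lets me write $\Pi=\Theta(\pi_1^{D_1}\boxtimes\pi_2^{D_2})$ with $D_1,D_2$ squarefree, $D_1\ast D_2=D$, and $\pi_1,\pi_2$ unitary cuspidal automorphic representations of $\GL_2(\A)$ whose archimedean components are discrete series of weights $2$ and $4$. Taking instead $D_1=D_2=1$ (so that $\sigma(1)=0$ is even), the same theorem shows that $\pi\coloneqq\Theta(\pi_1\boxtimes\pi_2)$ is a nonzero, relevant, endoscopic automorphic representation of $\spin(V_1)(\A)=\GSP_4(\A)$ associated to $(\pi_1,\pi_2)$. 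To deduce (\ref{cor:JL_general_one}) and (\ref{cor:JL_general_two}) I would invoke the explicit local theta correspondence between (inner forms of) $\operatorname{GSO}_{2,2}$ and $\GSP_4$ studied in \cite[\S2--3]{chan2015local}: for $\l\nmid D$ the algebras $B_{D_1}$ and $B_{D_2}$ have the same invariant at $\l$ and $\spin(V_D)(\Q_\l)=\GSP_4(\Q_\l)$, so $\Pi_\l$ and $\pi_\l$ are local theta lifts of data with the same $L$-parameter and therefore lie in a common local packet; after replacing $\pi$, if necessary, by another automorphic member of the endoscopic packet $\Pi(\pi_1,\pi_2)$ — legitimate by the Arthur multiplicity formula for $\GSP_4$ and its inner forms, which is known by \cite{chan2015local} — one arranges $\pi_\l\cong\Pi_\l$ for all $\l\nmid D$. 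The same local analysis at the primes $\l\mid D$ should present $\pi_\l$ as a representation whose type in the classification of \cite{roberts2007local} is not one of the non-transferrable ones of Definition~\ref{def:transferrable}, and should place $\Pi_\l$ in the local packet attached to $\pi_\l$.

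I expect the main obstacle to be exactly this last point in the endoscopic case: matching the local components at the finitely many ramified primes $\l\mid D$ and confirming transferrability there. Both reduce to bookkeeping with the local theta dichotomy — governed by local root numbers — and with the fine structure of the admissible dual of $\GSP_4(\Q_\l)$ and of its inner form, all of which is developed in \cite{chan2015local}; by contrast, the non-endoscopic case is a formal consequence of Theorem~\ref{thm:JL}.
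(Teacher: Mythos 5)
Your proposal follows essentially the same route as the paper's: Theorem~\ref{thm:JL} disposes of the non-endoscopic case, and in the endoscopic case one appeals to Theorem~\ref{thm:endoscopic_packets} together with compatibility of the local theta correspondence with the local Langlands parametrizations of $\GSP_4$ and its inner forms (the paper cites Gan--Takeda, Gan--Tantono, and Table~3/Lemma~11.1 of Rosner--Weissauer; your appeal to \cite{chan2015local} covers the same ground). One small tightening: rather than starting with $D_1'=D_2'=1$ and then hedging with ``replace $\pi$ by another member of the packet,'' take $D_1'=D_2'=\gcd(D_1,D_2)$ from the outset --- then $B_{D_i'}\otimes\Q_\ell\cong B_{D_i}\otimes\Q_\ell$ for every $\ell\nmid D$, so $\pi_\ell\cong\Pi_\ell$ holds immediately and part~(\ref{cor:JL_general_one}) needs no further adjustment.
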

\begin{proof}
If $\Pi$ is not endoscopic, this follows from Theorem \ref{thm:JL}. If $\Pi$ is endoscopic, (\ref{cor:JL_general_one}) is immediate  from Theorem \ref{thm:endoscopic_packets}. For (\ref{cor:JL_general_two}), 
it follows because the Langlands correspondences of \cite{gantakeda2011local, gantantono2011inner} are constructed to be compatible with theta lifting,  and the 
correspondence of local representations in \cite[Table 3]{rosner2024global} respects Langlands parameters by Lemma  11.1 of \emph{op. cit.}
    \end{proof}
\begin{rmk}
    \label{rmk:rho_Pi}
     By Corollary \ref{cor:JL_general}, we can associate to each relevant automorphic representation $\Pi$ of $\spin(V_D)(\A)$ a compatible system of Galois representations  $\rho_{\Pi, \iota}$ as in Theorem \ref{thm:rho_pi_LLC}.
\end{rmk}


\subsection{Local representations with paramodular fixed vectors}
\begin{notation}\label{notation:paramodular_subgroup}
    For all primes $q$ (whether or not $q|D$), the \emph{paramodular subgroup} 
of $\spin(V_D)(\Q_q)$ is a maximal compact subgroup described in \cite[p. 918]{sorensen2009level}. To avoid confusion, we denote this subgroup by $K_q^{\paramodular}$ when $q\nmid D$ and by $K_q^\ramified$ when $q|D$. 
\end{notation}

\begin{lemma}\label{lem:IIa_new}
    Let $\pi$ be a relevant automorphic representation of $\spin(V_D)(\A)$, and suppose $q\nmid D$ is a prime such that $\pi_q$  has a $K_q^{\paramodular}$-fixed vector. Then:
    \begin{enumerate}
        \item \label{part:IIa_type} $\pi_q$ is either spherical or of type IIa in the notation of 
\cite{roberts2007local}.
    \item \label{part:IIa_completion}
    $\pi$ is the unique completion of $\pi_f^q\otimes \pi_\infty$ to an automorphic representation of $\spin(V_D)(\A)$.
\item\label{part:IIa_Galois} If $\pi_q$ is of type IIa, then  for any $\iota: \overline \Q_p \isomorphism \C$ with $q \neq p$, the action of $I_{\Q_q}$ on $V_{\pi,\iota}$ is unipotent with monodromy of rank one. Moreover, the corresponding local packet of representations of $\spin(V_{Dq})(\Q_q)$ in Theorem \ref{thm:JL}(\ref{part:JL_four}) is a single representation with a unique $K_q^\ramified$-fixed vector. 
\end{enumerate}
\end{lemma}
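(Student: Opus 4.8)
The plan is to dispatch the three assertions in turn, reducing each to a combination of the Roberts--Schmidt classification of representations of $\GSP_4(\Q_q)$ with small paramodular level, strong multiplicity one for $\GL_4$, and local--global compatibility together with the explicit local Jacquet--Langlands transfer. Since $q\nmid D$, throughout I identify $\spin(V_D)(\Q_q)$ with $\GSP_4(\Q_q)$ and $K_q^{\paramodular}$ with the paramodular maximal compact $K(\mathfrak q)$ of \cite{roberts2007local}, so the hypothesis on $\pi_q$ says exactly that it has paramodular conductor $\leq 1$. For Part (\ref{part:IIa_type}), I would first use Corollary \ref{cor:JL_general} to transfer $\pi$ to a relevant automorphic representation $\widetilde\pi$ of $\GSP_4(\A)$ with $\widetilde\pi_q\cong\pi_q$, so that Theorem \ref{thm:rho_pi_LLC}(\ref{part:rho_pi_LLC1}) (or Theorem \ref{thm:rho_GL2_LLC}(\ref{part:rho_GL2_LLC_1}) applied to the two $\GL_2$-factors, in the endoscopic case) shows $\rec_{\operatorname{GT}}(\pi_q\otimes|\cdot|^{\frac12})$ is pure of weight $-1$; hence $\pi_q$ is tempered, and its central character is unramified since $K(\mathfrak q)$ contains $\Z_q^\times$ in the center. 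Then I would read off from the Roberts--Schmidt tables that a tempered representation of $\GSP_4(\Q_q)$ with unramified central character and paramodular conductor $\leq 1$ is either unramified of conductor $0$ -- necessarily the irreducible unramified principal series, the remaining conductor-$0$ constituents being non-tempered -- or of type IIa of conductor $1$; every other conductor-$\leq 1$ type (Vb, Vc, VIc in the notation of \emph{loc.\ cit.}) is either non-tempered or, once the inducing data is unramified, carries two Steinberg-type blocks in its Weil--Deligne parameter and so has conductor $\geq 2$.

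For Part (\ref{part:IIa_completion}), the key point is that knowing $\pi_\l$ for all $\l\neq q$ determines the local factors $\BC(\pi)_\l$ for those $\l$ (for $\l\mid D$ this uses that the local packet of Theorem \ref{thm:JL}(\ref{part:JL_four}), resp.\ Theorem \ref{thm:endoscopic_packets}, pins down the underlying $\GSP_4(\Q_\l)$-parameter; cf.\ Lemma \ref{lem:BC}), so by strong multiplicity one for $\GL_4$ the isobaric representation $\BC(\pi)$ -- and in particular $\BC(\pi)_q$ -- depends only on $\pi^q_f$. If $\pi''$ is another relevant automorphic representation of $\spin(V_D)(\A)$ with $\pi''^q_f\cong\pi^q_f$ and $\pi''_\infty\cong\pi_\infty$, then $\pi''_q$ also transfers to $\BC(\pi)_q$; but $\pi_q$ is generic by Part (\ref{part:IIa_type}) (spherical or type IIa), and so is $\pi''_q$ (it too is tempered with a $K_q^{\paramodular}$-fixed vector, hence spherical or type IIa by the same argument), so by uniqueness of the generic member of a local $L$-packet we get $\pi''_q\cong\pi_q$. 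Thus $\pi''$ and $\pi$ are abstractly isomorphic, hence equal in the automorphic spectrum by the multiplicity-one statement of Theorem \ref{thm:JL}(\ref{part:JL_four}) or Theorem \ref{thm:endoscopic_packets}(\ref{part:endoscopic packets 2}).

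For Part (\ref{part:IIa_Galois}), when $\pi_q$ is of type IIa, Part (\ref{part:IIa_type}) gives $\pi_q=\chi\,\mathrm{St}_{\GL_2}\rtimes\sigma$ with $\chi,\sigma$ unramified, so $\rec_{\operatorname{GT}}(\pi_q\otimes|\cdot|^{\frac12})$ has unramified underlying Weil-group representation and monodromy operator $N$ a single size-$2$ Jordan block, of rank $1$, contributed by the $\mathrm{St}_{\GL_2}$-factor. Local--global compatibility (Theorem \ref{thm:rho_pi_LLC}(\ref{part:rho_pi_LLC1}), or Theorem \ref{thm:rho_GL2_LLC}(\ref{part:rho_GL2_LLC_1}) applied to the $\GL_2$-constituents in the endoscopic case) transports this to $\WD(\rho_{\pi,\iota}|_{G_{\Q_q}})$, and since $q\neq p$ Grothendieck's $\ell$-adic monodromy theorem shows $I_{\Q_q}$ acts on $V_{\pi,\iota}$ through $g\mapsto\exp(t_p(g)N)$ with no finite twist (the Weil-group part being unramified), i.e.\ unipotently with monodromy of rank one. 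For the final assertion: type IIa is transferrable (Definition \ref{def:transferrable}), so the local packet $S_q$ of $\spin(V_{Dq})(\Q_q)$ attached to $\pi_q$ in Theorem \ref{thm:JL}(\ref{part:JL_four}) is defined; since $q\mid Dq$, $\spin(V_{Dq})(\Q_q)$ is the non-split inner form of $\GSP_4(\Q_q)$, and by the character relations of \cite[Lemma~11.1]{rosner2024global} the set $S_q$ is the Jacquet--Langlands transfer of $\pi_q$, a single irreducible representation (the local $L$-parameter of $\pi_q$ being stable). That this representation has a one-dimensional space of $K_q^\ramified$-fixed vectors is precisely the local input behind Sorensen's level-raising, so I would cite \cite{sorensen2009level} and \cite[Table~3]{rosner2024global}. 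The endoscopic case is similar, noting that at $q$ only one of the two $\GL_2$-constituents of $\pi$ is a twist of Steinberg, so that no Yoshida-type packet of size two arises.

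The step I expect to be the main obstacle is the precise bookkeeping in Part (\ref{part:IIa_type}) -- verifying, case by case against the Roberts--Schmidt tables, that ``spherical'' and ``type IIa'' really are the \emph{only} tempered representations of paramodular conductor $\leq 1$ -- and, relatedly, the cited fact at the end of Part (\ref{part:IIa_Galois}) that the local Jacquet--Langlands transfer of a type IIa representation to the non-split inner form has a one-dimensional paramodular-fixed space; both rest on somewhat delicate local representation theory rather than on any new idea.
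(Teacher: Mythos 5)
Your treatment of Parts (\ref{part:IIa_type}) and (\ref{part:IIa_Galois}) matches the paper's in substance: transfer to $\GSP_4(\A)$ via Corollary~\ref{cor:JL_general}, temperedness from Theorem~\ref{thm:rho_pi_LLC}(\ref{part:rho_pi_LLC1}), a table look-up in Roberts--Schmidt for Part~(\ref{part:IIa_type}), and the explicit parameter $\chi\,\mathrm{St}_{\GL_2}\rtimes\sigma$ with unramified $\chi,\sigma$ for Part~(\ref{part:IIa_Galois}), citing \cite[Table~3]{rosner2024global} and \cite{sorensen2009level} for the paramodular fixed vector on the non-split form.

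For Part~(\ref{part:IIa_completion}) you take a somewhat different route: you pass to $\BC(\pi)$ and invoke strong multiplicity one for $\GL_4$ together with ``uniqueness of the generic member of an $L$-packet,'' whereas the paper uses local--global compatibility (Theorem~\ref{thm:rho_pi_LLC}(\ref{part:rho_pi_LLC1})) to put $\pi_q$ and $\pi'_q$ in the same Gan--Takeda $L$-packet and then notes that packets of type~I and IIa are singletons. The two are essentially the same at the level of the $L$-parameter, but the paper's version avoids any genericity assertion about the competitor.

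There is a genuine gap as you have written it: you assert that a second completion $\pi''_q$ ``too is tempered with a $K_q^{\paramodular}$-fixed vector, hence spherical or type~IIa,'' and then apply uniqueness of the generic member. Nothing in the hypotheses gives $\pi''_q$ a paramodular fixed vector; you only know $\pi''^q_f\cong\pi^q_f$ and $\pi''_\infty\cong\pi_\infty$. If $\pi''_q$ lay in a non-singleton packet, your genericity argument would break down. The fix is the one the paper uses: once the local $L$-parameter at $q$ is pinned down by $\pi^q_f$ (via the Galois representation or via $\BC(\pi)$), observe that the packet of $\pi_q$ -- which is type~I or IIa by Part~(\ref{part:IIa_type}) -- is a singleton, so $\pi''_q$ has no choice but to equal $\pi_q$; no information about $\pi''_q$ beyond its $L$-parameter is needed. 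After this you apply Theorem~\ref{thm:JL}(\ref{part:JL_four}) or Theorem~\ref{thm:endoscopic_packets}(\ref{part:endoscopic packets 2}) for multiplicity one, as you do.
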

\begin{proof}
By Corollary \ref{cor:JL_general}, $\pi_f^D= \otimes_{\l\nmid D} \pi_\l$ can be completed to a relevant automorphic representation of $\GSP_4(\A)$. In particular, $\pi_q$ is tempered (by Theorem \ref{thm:rho_pi_LLC}(\ref{part:rho_pi_LLC1})), so (\ref{part:IIa_type}) follows from \cite[Tables A.1, A.13]{roberts2007local}. 

If $\pi^q_f\otimes \pi_q'\otimes\pi_\infty$ is automorphic for some $\pi_q'$, then $\pi^{Dq}_f\otimes \pi_q'$ can be completed to an automorphic representation of $\GSP_4(\A)$ by the same reasoning as for $\pi_f^D$. Hence by Theorem \ref{thm:rho_pi_LLC}(\ref{part:rho_pi_LLC1}), $\pi_q'$ and $\pi_q$ belong to the same Gan-Takeda local $L$-packet. But the $L$-packets of type I and IIa are singletons, so this shows (\ref{part:IIa_completion}).

Finally, suppose
$\pi_q$ is the type IIa representation denoted $\chi \operatorname{St}_{\GL(2)} \rtimes \sigma$ in \emph{loc. cit.} Because $\pi_q$ has a $K_q^\paramodular$-fixed vector, 
 $\chi$ and $\sigma$ are unramified characters of $\Q_q^\times$. This implies the assertions on $\rho_{\pi,\iota}|_{I_{\Q_q}}$ in (\ref{part:IIa_Galois}) by Theorem \ref{thm:rho_pi_LLC}(\ref{part:rho_GL2_LLC_1}) and the explicit local Langlands paremeters found in \cite[Table A.7]{roberts2007local}. The final claim in (\ref{part:IIa_Galois})
 follows from \cite[Table 3]{rosner2024global} combined with \cite[Theorem B]{sorensen2009level}.
\end{proof}

\subsection{Generic maximal ideals and cohomology of $\spin_5$ Shimura varieties}

\subsubsection{}
For this subsection, fix a 
coefficient field $E\subset \overline\Q_p$ with $E$ a finite extension of $\Q_p$, and let $O\subset E$ be the ring of integers with uniformizer $\varpi$. Also fix an isomorphism $\iota: \overline\Q_p \isomorphism \C$, a
squarefree $D\geq 1$, a neat compact open subgroup $K = \prod_\l K_\l \subset \spin(V_D)(\A_f)$, and a set $S$ of places of $\Q$ containing all $\l$ such that $K_\l$ is not hyperspecial.

\begin{lemma}\label{lem:non-Eisenstein_coh}
    Suppose $\m\subset \T^S_O$ is non-Eisenstein and $\sigma(D)$ is even. Then for all $i$, the natural maps induce isomorphisms:
    \begin{equation}
    \begin{split}       
        H^i_c(\Sh_K(V_D), O)_\m&\isomorphism H^i(\Sh_K(V_D), O)_\m, \\
        H^i_c(\Sh_K(V_D), \overline\F_p) _\m&\isomorphism H^i(\Sh_K(V_D), \overline\F_p)_\m.
                \end{split}
    \end{equation}
\end{lemma}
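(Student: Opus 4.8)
The statement is a standard ``comparison of compactly-supported and ordinary cohomology after localizing at a non-Eisenstein maximal ideal'' result, and the approach is to deduce it from the analogous characteristic-zero statement (Lemma~\ref{lem:coh_for_JL}(\ref{item:coh_for_JL_c})) together with a boundary-vanishing argument. The key geometric input is the Borel--Serre (or toroidal/minimal) compactification $\overline{\Sh_K(V_D)}$ of $\Sh_K(V_D)$, with boundary $\partial$; the long exact sequence relating $H^i_c$ and $H^i$ factors through the cohomology of the boundary, so it suffices to show that $H^\ast(\partial, O)_\m = 0$ and $H^\ast(\partial,\overline\F_p)_\m = 0$. By the usual dévissage (universal coefficients / the exact sequence $0\to O\xrightarrow{\varpi} O\to \overline\F_p\to 0$, noting $O/\varpi = \overline\F_p$ after possibly enlarging $E$, or base-changing to the residue field), the $O$-coefficient statement and the $\overline\F_p$-coefficient statement are equivalent, so I would focus on the $\overline\F_p$ case.

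\textbf{Key steps.} First, I would set up the Hecke action on the cohomology of the boundary of a compactification of $\Sh_K(V_D)$ compatibly with that on $\Sh_K(V_D)$ itself, so that the long exact sequence
\[
\cdots \to H^i_c(\Sh_K(V_D),\overline\F_p)\to H^i(\Sh_K(V_D),\overline\F_p)\to H^i(\partial,\overline\F_p)\to\cdots
\]
is $\T^S_O$-equivariant; localizing at $\m$ is exact, so the claim reduces to $H^i(\partial,\overline\F_p)_\m = 0$ for all $i$. Second, I would stratify the boundary by the rational parabolic subgroups $P = M_PN_P\subsetneq \GSP_4$ (equivalently $\spin(V_D)$, since $\sigma(D)$ is even so this is an inner form of $\GSP_4$ split at the relevant places, or more precisely the boundary of the Borel--Serre compactification is built from arithmetic quotients for the $M_P$). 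The cohomology of each boundary stratum is, up to a Hecke-equivariant shift, built out of the cohomology of locally symmetric spaces for the Levis $M_P$ with coefficients in (the cohomology of nilpotent Lie algebras, i.e.\ Kostant's theorem in char $0$, or its integral/mod-$p$ analogue) — the upshot being that the Hecke eigensystems appearing in $H^\ast(\partial,\overline\F_p)$ all arise, via the constant-term maps $S_{M_P}^{\GSP_4}$, from the cohomology of $\GL_2$- or $\GL_1$-type locally symmetric spaces. Third, by Proposition~\ref{prop:m_pullback_eisenstein} (applied with the relevant proper parabolics), any maximal ideal of $\T^S_O$ in the support of $H^\ast(\partial,\overline\F_p)$ is pulled back from a maximal ideal of Galois type on a proper Levi, hence is Eisenstein; since $\m$ is assumed non-Eisenstein, $H^\ast(\partial,\overline\F_p)_\m = 0$, and we are done. (One should be slightly careful: the spectral sequence / stratification shows the \emph{support} of the boundary cohomology is contained in the union of images of $(S_{M_P}^{\GSP_4})^\ast\Spec\T^S_{M_P}$, which are precisely the Eisenstein maximal ideals — Proposition~\ref{prop:m_pullback_eisenstein} is what formalizes this.)

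\textbf{Main obstacle.} The subtle point is the passage from the topological/char-$0$ picture to mod-$p$ coefficients: Kostant's theorem computes $H^\ast(\mathfrak n_P,-)$ over characteristic $0$, and one needs a mod-$p$ substitute sufficient to identify the Hecke eigensystems in the boundary cohomology. The clean way around this is to work with the Borel--Serre compactification of the real manifold (so everything is genuinely topological and coefficients are arbitrary), and to use that the constant-term / Hecke action on the boundary is controlled by Proposition~\ref{prop:Hecke_diagram_induction} and Proposition~\ref{prop:m_pullback_eisenstein} regardless of the coefficient ring; then it genuinely suffices to know that \emph{some} Galois representation is associated to any Hecke eigensystem occurring on a proper Levi (a $\GL_2$ or $\GL_1$ statement, which is classical) to conclude Eisensteinness. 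I expect this boundary-analysis step — making the Hecke-equivariance and the reduction to proper Levis precise with $\overline\F_p$ coefficients — to be the only place requiring genuine care; the $O$ versus $\overline\F_p$ equivalence and the localization exactness are formal. An alternative, if a suitable reference for the integral boundary computation is available (e.g.\ in the style of the references on congruence cohomology used elsewhere in the paper), is simply to cite it; but the Borel--Serre argument above is self-contained modulo standard facts.
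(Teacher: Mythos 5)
Your proposal is correct and follows essentially the same route as the paper: reduce to the $\overline\F_p$ statement, pass to the Borel--Serre compactification so that the $H^i_c\to H^i$ comparison reduces to vanishing of localized boundary cohomology, identify the Hecke support of the boundary via the constant-term pullbacks $(S_M^{\GSP_4})^\ast$ from the proper Levis $M\cong B_D^\times\times\GL_1$ or $\GL_1^3$, and conclude Eisensteinness via Proposition~\ref{prop:m_pullback_eisenstein} since every eigensystem on such a Levi is of Galois type. The technical boundary analysis you flagged as the ``main obstacle'' is exactly what the paper outsources by citing the argument of Newton--Thorne (\cite[\S4]{newton2016torsion}), and your observation that working with the topological Borel--Serre picture sidesteps the need for any mod-$p$ Kostant theorem is the right way to think about why that argument goes through with arbitrary coefficients.
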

\begin{proof}
We show the second isomorphism; the first follows formally. Let $P = MN \subset \spin(V_D)$ be a parabolic subgroup. Then we may fix an identification $\spin(V_D)(\A^S) \simeq \GSP_4(\A^S)$ such that $P(\A^S) = P_1(\A^S)$ for a parabolic subgroup $P_1 = M_1N_1\subset \GSP_4$.

The Levi factor $M$  is abstractly isomorphic to either $B_D^\times\times \GL_1$ or $\GL_1^3$ (the latter occurring only if $D = 1$). 
To any compact open subgroup $K_M \subset M(\A_f)$ we can associate a locally symmetric space $S_{K_M}(M) = 
M(\Q) \backslash M(\A) / K_M \cdot K_\infty
$, where $K_\infty\subset M(\R)$ is the product of the center and a maximal compact subgroup. In particular, each connected component of $S_{K_M}(M)$ is either a Shimura curve or an isolated point.

Using the Borel-Serre compactification of $\Sh_K(V_D)$ and the argument of \cite[\S4]{newton2016torsion}, it
suffices to show the following: for any parabolic subgroup $P = MN\subset \spin(V_D)$ as above and any compact open subgroup $K_M = \prod_\l K_{M, \l} \subset M(\A_f)$ with $K_{M, \l}$ hyperspecial for $\l\not\in S$, the support of the $
 \T^S_{\GSP_4, \overline\F_p}$-module
$(S_{M}^{\GSP_4})^\ast H^i(S_{K_M}(M), \overline \F_p)$ is Eisenstein for all $i$. But this follows from Proposition \ref{prop:m_pullback_eisenstein} because, as $M = B_D^\times \times \GL_1$ or $\GL_1^3$, every maximal ideal of $ \T^S_{M, \overline\F_p}$ in the support of $H^i(S_{K_M}(M), \overline\F_p)$ is clearly of Galois type. 
\end{proof}

\begin{definition}\label{def:generic}
\leavevmode
\begin{enumerate}
     \item     A maximal ideal $\mathfrak m \subset \T_{\l, O}$ is called \emph{generic} if $p\nmid 2\l(\l^4 - 1)$, and the Satake parameter $\set{\alpha,\beta,\nu/\alpha,\nu/\beta}$  of $\m$ is multiplicity-free with no two elements having ratio $\l$.
 \item     A maximal ideal $\mathfrak m \subset {\T}^S_{O}$ is called \emph{generic} if there exist infinitely many $\l\not\in S$ such that the induced maximal ideal of $\T_{\l, O}$ is generic. For any quotient $\T$ of ${\T}^S_O$, a maximal ideal $\m\subset \T$ is called generic if its pullback to ${\T}^S_O$ is so.
\item A Galois representation $\overline \rho: G_\Q \to \GSP_4(\overline \F_p)$, unramified outside a finite set $S$, is called \emph{generic} if there exists a prime $\l\not\in S$ with $p\nmid 2\l(\l^4 - 1)$, such that $\overline\rho(\Frob_\l)$ has distinct eigenvalues, no two having ratio $\l$.
\end{enumerate}
\end{definition}
\begin{rmk}
    If $\m\subset {\T}^S_O$ has an associated Galois representation $\overline\rho_\m$, clearly $\m$ is generic if and only if $\overline\rho_\m$ is so.
\end{rmk}

\begin{thm}\label{thm:generic}
    Suppose $\m \subset \T^S_O$ is a generic maximal ideal and $\sigma(D)$ is even. 
    Then:
    \begin{enumerate}
        \item\label{part:thm_generic_one}  For all $i < 3$, we have $$H^i(\Sh_K(V_D), \overline\F_p)_\m = H^{6-i} _c(\Sh_K(V_D), \overline\F_\p)_\m = 0.$$ 
        \item \label{part:thm_generic_two}If $\m$ is also non-Eisenstein, then 
        $$H^i_c(\Sh_K(V_D), O)_\m = H^i(\Sh_K(V_D), O)_\m$$ is $\varpi$-torsion-free for all $i$, and vanishes unless $i = 3$.  
    \end{enumerate}
\end{thm}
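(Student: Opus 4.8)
The plan is to obtain Part (\ref{part:thm_generic_one}) from the torsion-vanishing theorem of Hamann \cite{hamann2023torsion} together with Poincar\'e duality, and then to bootstrap Part (\ref{part:thm_generic_two}) from Part (\ref{part:thm_generic_one}) by combining Lemma \ref{lem:non-Eisenstein_coh} with the usual Bockstein and Nakayama formalism.

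\emph{Part (\ref{part:thm_generic_one}).} Since $\m$ is generic, by definition there is a prime $\l\notin S$ with $p\nmid 2\l(\l^4-1)$ at which $K_\l$ is hyperspecial and the Satake parameter $\set{\alpha,\beta,\nu/\alpha,\nu/\beta}$ of $\m$ is multiplicity-free with no two entries of ratio $\l$; equivalently, any associated $\overline\rho_\m$ is generic at $\l$. The torsion-vanishing theorem of \cite{hamann2023torsion}, applied to the (PEL-type) Shimura variety $\Sh_K(V_D)$, which has good reduction at $\l$, shows that $R\Gamma_c(\Sh_K(V_D)_{\overline\Q},\overline\F_p)_\m$ is concentrated in cohomological degrees $\leq 3$; this is exactly the vanishing of $H^{6-i}_c(\Sh_K(V_D),\overline\F_p)_\m$ for $i<3$. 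To deduce $H^i(\Sh_K(V_D),\overline\F_p)_\m=0$ for $i<3$, I invoke Poincar\'e duality: $\Sh_K(V_D)$ is smooth of dimension $3$, so there is a perfect pairing $H^j_c(\Sh_K(V_D)_{\overline\Q},\overline\F_p)\times H^{6-j}(\Sh_K(V_D)_{\overline\Q},\overline\F_p)\to\overline\F_p(-3)$ under which the Hecke operators act by adjoints. It identifies $H^{6-j}(\Sh_K(V_D),\overline\F_p)_\m$, up to a Tate twist, with the $\overline\F_p$-linear dual of $H^j_c(\Sh_K(V_D),\overline\F_p)_{\m^\vee}$, where the Satake parameter of $\m^\vee$ at each good prime is the inverse $\set{1/\alpha,1/\beta,\alpha/\nu,\beta/\nu}=\nu^{-1}\set{\nu/\alpha,\nu/\beta,\alpha,\beta}$ of that of $\m$, hence differs from the original only by an overall scalar. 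Such a rescaling preserves the multiplicities of the Satake parameter, the set of ratios of its entries, and the divisibility $p\nmid 2\l(\l^4-1)$, so $\m^\vee$ is again generic. Applying the $\leq 3$ concentration to $\m^\vee$ and dualizing then gives $H^i(\Sh_K(V_D),\overline\F_p)_\m=0$ for $i<3$. Note that this part uses only the genericity of $\m$, not the non-Eisenstein hypothesis.

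\emph{Part (\ref{part:thm_generic_two}).} Assume now in addition that $\m$ is non-Eisenstein. By Lemma \ref{lem:non-Eisenstein_coh} the canonical maps $H^i_c(\Sh_K(V_D),O)_\m\to H^i(\Sh_K(V_D),O)_\m$ and $H^i_c(\Sh_K(V_D),\overline\F_p)_\m\to H^i(\Sh_K(V_D),\overline\F_p)_\m$ are isomorphisms for all $i$; this is the only point at which the non-Eisenstein hypothesis is used. Feeding the conclusions of Part (\ref{part:thm_generic_one}) through the second isomorphism --- the vanishing of $H^i$ for $i<3$ directly, and of $H^j_c$ for $j>3$ via $H^j_c\cong H^j$ --- we get $H^i(\Sh_K(V_D),\overline\F_p)_\m=0$ for every $i\neq 3$, and likewise with $\overline\F_p$ replaced by $k\coloneqq O/\varpi$ (the two differ by the flat base change $k\to\overline\F_p$). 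The coefficient sequence $0\to O\xrightarrow{\varpi}O\to k\to 0$ now yields short exact sequences $0\to H^i(\Sh_K(V_D)_{\overline\Q},O)_\m/\varpi\to H^i(\Sh_K(V_D)_{\overline\Q},k)_\m\to H^{i+1}(\Sh_K(V_D)_{\overline\Q},O)_\m[\varpi]\to 0$. For $i\neq 3$ the middle term vanishes, so $H^i(\Sh_K(V_D),O)_\m=\varpi\cdot H^i(\Sh_K(V_D),O)_\m$; since $H^i(\Sh_K(V_D)_{\overline\Q},O)$ is a finitely generated $O$-module, Nakayama's lemma forces $H^i(\Sh_K(V_D),O)_\m=0$ for $i\neq 3$. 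Taking $i=2$ in the Bockstein sequence above, the first two terms vanish, so $H^3(\Sh_K(V_D),O)_\m[\varpi]=0$, i.e.\ $H^3(\Sh_K(V_D),O)_\m$ is $\varpi$-torsion-free. Together with the first isomorphism of Lemma \ref{lem:non-Eisenstein_coh}, this is precisely the content of Part (\ref{part:thm_generic_two}).

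\emph{Main obstacle.} The only genuinely external input is the torsion-vanishing theorem \cite{hamann2023torsion}; granting it, the rest of the argument is formal. The one point meriting care is the self-duality of the genericity condition under the involution $\m\mapsto\m^\vee$ coming from Poincar\'e duality, which is what allows the low-degree and high-degree vanishing to be handled symmetrically and then folded together in Part (\ref{part:thm_generic_two}) once $H^\bullet_c\cong H^\bullet$ is available.
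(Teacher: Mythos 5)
Your proof is correct, and the structure matches the paper's: Part~(\ref{part:thm_generic_one}) from the torsion-vanishing theorem of \cite{hamann2023torsion}, then Part~(\ref{part:thm_generic_two}) from Part~(\ref{part:thm_generic_one}) together with Lemma~\ref{lem:non-Eisenstein_coh} via the standard Bockstein--Nakayama argument. The one place you diverge is Part~(\ref{part:thm_generic_one}) itself: you extract only the concentration of $R\Gamma_c$ in degrees $\leq 3$ from \cite[Theorem 1.16]{hamann2023torsion} and then derive the $H^i=0$ ($i<3$) half by Poincar\'e duality, checking that the dual maximal ideal $\m^\vee$ (Satake parameter scaled by $\nu^{-1}$ and inverted) remains generic. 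That self-duality check is correct and a nice observation, but the detour is unnecessary: the cited theorem states both concentration ranges (for $H_c^\ast$ in high degree and for $H^\ast$ in low degree) simultaneously, so the paper treats Part~(\ref{part:thm_generic_one}) as immediate. What your route buys is an explicit confirmation that genericity is a self-dual condition under the Hecke involution $T_g\mapsto T_{g^{-1}}$, which is worth keeping in mind even if it is not needed here. Your Part~(\ref{part:thm_generic_two}) is the same bootstrapping the paper has in mind.
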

\begin{proof}
    Part (\ref{part:thm_generic_one}) is immediate from \cite[Theorem 1.16]{hamann2023torsion} and our definition of genericity. Using Lemma \ref{lem:non-Eisenstein_coh}, (\ref{part:thm_generic_two}) is a standard consequence of (\ref{part:thm_generic_one}).
\end{proof}

\begin{lemma}\label{lem:yucky_eisenstein_lemma}
Suppose $\m\subset \T_{K, V_D, O}^S$ is  non-Eisenstein and generic, and suppose
 $\pi$ is an automorphic representation of $\spin(V_D)(\A)$ such that  $\pi_f^K\neq 0$, and the action of $ \T^S_O$ on $\iota^{-1}\pi_f^K$ factors through $\T_{K,V_D, O,\m}^S$. Then $\pi$ is not Eisenstein (in the sense of Definition \ref{def:eisenstein}). 
\end{lemma}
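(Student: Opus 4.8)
The plan is to argue by contraposition: assuming $\pi$ is Eisenstein in the sense of Definition \ref{def:eisenstein}, I will show that $\m$ is Eisenstein, contrary to hypothesis. So suppose $\pi$ is nearly equivalent to a constituent of $\Ind_{P(\A)}^{\GSP_4(\A)}\sigma$ for some proper parabolic $P = MN \subsetneq \GSP_4$ with Levi factor $M$ and some automorphic representation $\sigma$ of $M(\A)$. The Levi factor of any proper parabolic of $\GSP_4$ is, up to conjugacy, one of $\GL_1^3$ (Borel), $\GL_2 \times \GL_1$ (Siegel), or $\GL_1 \times \GL_2$ (Klingen); in particular $M$ is a product of copies of $\GL_1$ and at most one copy of $\GL_2$. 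First I would check that $\sigma$ is unramified outside $S$: since $\pi_f^K \neq 0$ and $K_\l$ is hyperspecial for $\l\notin S$, the component $\pi_\l$ is unramified for such $\l$, hence equals the unramified constituent of $\Ind_{P(\Q_\l)}^{\GSP_4(\Q_\l)}\sigma_\l$ for all but finitely many $\l\notin S$; by the Iwasawa decomposition $\GSP_4(\Q_\l) = P(\Q_\l)\cdot\GSP_4(\Z_\l)$, this representation has a nonzero $\GSP_4(\Z_\l)$-fixed vector only when $\sigma_\l$ has a nonzero $M(\Z_\l)$-fixed vector. Enlarging $S$ by the finitely many exceptional primes — which leaves the genericity and non-Eisensteinness of $\m$ intact, and is compatible with the formation of $(S_M^{\GSP_4})^\ast$ and the notion of Eisensteinness — we may assume $\sigma_\l$ is unramified for all $\l\notin S$.

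Then $\sigma^S = \otimes'_{\l\notin S}\sigma_\l$ determines, via $\iota$ and after enlarging the coefficient field if necessary, a maximal ideal $\m_\sigma \subset \T^S_{M,O}$; let $\overline\m_\sigma$ be its reduction. By the compatibility of the Satake transform with normalized parabolic induction — precisely the computation carried out in the proof of Proposition \ref{prop:m_pullback_eisenstein} — the maximal ideal of $\T^S_O$ obtained by reducing the Hecke eigensystem of $\pi^S$ modulo $\varpi$ equals $(S_M^{\GSP_4})^\ast\overline\m_\sigma$; combined with the hypothesis that the $\T^S_O$-action on $\iota^{-1}\pi_f^K$ factors through $\T^S_{K,V_D,O,\m}$, this gives $\m = (S_M^{\GSP_4})^\ast\overline\m_\sigma$. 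It therefore suffices, by Proposition \ref{prop:m_pullback_eisenstein}, to show that $\overline\m_\sigma$ is of Galois type. For the $\GL_1$-factors of $M$ this is class field theory, and for the $\GL_2$-factor it amounts to the existence of a (possibly reducible) mod-$p$ Galois representation attached to the Hecke eigensystem of the corresponding automorphic representation of $\GL_2(\A_\Q)$, which holds by the constructions of Deligne and Deligne--Serre for cusp forms and by class field theory in the residual and Eisenstein cases. This makes $\m = (S_M^{\GSP_4})^\ast\overline\m_\sigma$ Eisenstein, the desired contradiction.

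The one place requiring care — and the main obstacle to making the argument rigorous — is justifying that the $\GL_2$-factor of $\sigma$ is of a type for which the Deligne/Deligne--Serre Galois representations are available, i.e. that it is not a non-algebraic (Maass-type) cuspidal automorphic representation of $\GL_2(\A_\Q)$. I would rule this out using two ingredients. First, the genericity of $\m$ excludes the CAP possibilities: a cuspidal representation of $\GSP_4$ (or an inner form) that is CAP with respect to the Siegel or Klingen parabolic has, at every unramified place, a pair of Satake eigenvalues with ratio exactly $\l$ (for Saito--Kurokawa representations this is the familiar $\{\l^{1/2},\l^{-1/2}\}$ pair, and for the Howe--Piatetski-Shapiro/Soudry representations the inducing $\GL_2$-datum is moreover dihedral, hence algebraic), which is incompatible with $\m$ being generic. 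Second, when $\pi$ is a genuine Eisenstein (continuous or residual) automorphic representation of $\spin(V_D)(\A)$, its eigensystem occurs in the cohomology of the Borel--Serre boundary of $\Sh_K(V_D)$, whose strata are locally symmetric spaces for the Levi subgroups $B_D^\times\times\GL_1$ and $\GL_1^3$ of $\spin(V_D)$; since the cohomology of a (compact or open) Shimura curve involves only algebraic (holomorphic) automorphic forms, the inducing $\GL_2$-datum is again algebraic — this is exactly the mechanism already exploited in the proof of Lemma \ref{lem:non-Eisenstein_coh}. Combining these with the preceding paragraphs completes the proof.
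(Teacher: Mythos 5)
Your proposal takes a genuinely different route from the paper's proof. The paper's argument splits into two cases: for $D=1$, it uses Lemma~\ref{lem:non-Eisenstein_coh} to show that $\pi^S$ matches a \emph{discrete} automorphic representation $\pi'$ of $\GSP_4(\A)$; if $\pi$ were Eisenstein, $\pi'$ would be CAP or residual, whose Satake parameters always contain a pair with ratio $\l$ by the Piatetski-Shapiro/Soudry classification and irreducibility of principal series, contradicting genericity. For $D>1$, the paper runs a delicate trace-formula comparison (following \cite{rosner2024global}) to transfer $\pi^S$ to a discrete $\pi'$ on the split group $\GSP_4$, and then applies the $D=1$ case. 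Your proposal instead attempts to bypass this transfer entirely, arguing directly that an Eisenstein $\pi$ would force $\m = (S_M^{\GSP_4})^\ast\overline\m_\sigma$, and that $\overline\m_\sigma$ is of Galois type, hence $\m$ Eisenstein by Proposition~\ref{prop:m_pullback_eisenstein} — a contradiction.

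The framework up through the reduction $\m = (S_M^{\GSP_4})^\ast\overline\m_\sigma$ is sound. But the step where you establish that $\overline\m_\sigma$ is of Galois type has a genuine gap, precisely at the point you flag. To apply Proposition~\ref{prop:m_pullback_eisenstein} you need a mod-$p$ Galois representation attached to the $\GL_2$-factor $\sigma_0$ of $\sigma$, and the availability of Deligne/Deligne--Serre requires $\sigma_0$ to be holomorphic (or dihedral, or a constituent of a principal series). Your first argument (genericity excludes CAP) is the right mechanism, but it quietly uses that a \emph{cuspidal} $\pi$ on $\spin(V_D)(\A)$ which is Eisenstein in the sense of Definition~\ref{def:eisenstein} matches the Satake shape of a CAP representation of $\GSP_4$; for $D>1$ this is exactly the content that the paper's trace formula supplies, because a priori not every constituent of $\Ind_P^{\GSP_4}\sigma$ whose eigensystem $\pi$ shares is itself an automorphic representation of $\GSP_4$, so the $\GSP_4$ CAP/residual classification does not directly apply to $\pi$. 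Your second argument (boundary cohomology for non-cuspidal $\pi$) has two further problems: residual representations are discrete and contribute to interior rather than boundary cohomology (the paper handles these via irreducibility of principal series, not via the boundary), and the passage from ``the boundary strata are Shimura curves for $B_D^\times$'' to ``the $\GL_2$-datum $\sigma_0$ in the \emph{$\GSP_4$}-Eisenstein classification is algebraic'' requires an explicit Jacquet--Langlands matching between Levi factors of $\spin(V_D)$ and of $\GSP_4$ that you do not carry out. In short, the fact that the paper invokes the full trace-formula machinery for general $D$ is a strong hint that your direct route, while conceptually appealing, does not close without importing the same inputs by another name.
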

With a bit more care, the genericity assumption can be dropped; we leave the details to the reader.
\begin{proof}
Suppose first that $D = 1$, so $\pi$ is an automorphic representation of $\GSP_4(\A)$, and let $\m_\pi \subset {\T}^S_{\GSP_4, \overline\Q_p}$ be the maximal ideal determined by the Hecke action on $\iota^{-1}\pi^S$. Then by hypothesis, $\m_\pi$ is in the  support of the Hecke module $H^3(\Sh_K(V_1), \overline \Q_p)_\m.$
By Lemma \ref{lem:non-Eisenstein_coh} combined with the diagram in \cite[p. 293]{taylor1993ladic}, we conclude that $\pi^S$ can be completed to an automorphic representation $\pi'$ of $\GSP_4(\A)$ that appears in the discrete spectrum. Suppose for contradiction that $\pi$ is Eisenstein; then $\pi'$ is either CAP or a residual representation. In either case, it follows that the Satake parameter of $\pi_\l$ contains a pair of the form $\set{\alpha, \alpha\l}$ for all  $\l\not\in S$: when $\pi'$ is CAP, this uses \cite[Theorems 2.5, 2.6]{Piatetski1983saito} and \cite[Theorem C]{Soudry1988CAP}, and when $\pi'$ is residual it uses well-known results on the irreducibility of principal series representations for $\GSP_4$. But these Satake parameters are inconsistent with the genericity of $\m$, so $\pi$ cannot be Eisenstein, as desired.

We now handle the case of general $D$; the argument is a mild refinement of the trace formula method used in \cite{rosner2024global}. We abbreviate $G_D \coloneqq \spin(V_D)$ and fix a minimal parabolic subgroup $P_0 \subset G_D$ (which will be all of $G_D$ if  $\sigma(D)$ is odd).
Let $f_D\in C_c^\infty(G_D(\A_f), \C)$ be a  test function with regular support, and define
\begin{equation}\label{eq:def_T_f_D}
    T(f_D) = \sum_{P_0 \subset P = MN \subset G_D} (-1)^{\operatorname{rank}(M) - \operatorname{rank}(G_D)}\sum_{w \in W^P} (-1)^{\l(w)} \tr \left(\overline f_D^P\cdot \chi_P^G, \varinjlim_{K_M} H^\ast(S_{K_M} (M), V_{w\cdot \rho_{G_D} - \rho_M})\right). 
\end{equation}
Here $W^P$ is the set of minimal-length coset representatives for the Weyl group of $G_D$ modulo that of $M$, $\tr$ is the supertrace,  $\overline f^P$ and $\chi_P^{G_D}$ are defined as in \cite[\S2.6]{weissauer2009endoscopy},  $S_{K_M}(M)$ is the symmetric space from the proof of Lemma \ref{lem:non-Eisenstein_coh}, $\rho_{G_D}$ and $\rho_M$ are the half-sums of positive roots, and $V_{w\cdot\rho_{G_D} - \rho_M}$ is the complex local system on $S_{K_M}(M)$ of weight $w\cdot \rho_{G_D} - \rho_M$. Although $w\cdot \rho_{G_D} -\rho_M$ might not be integral, we interpret $V_{w\cdot\rho_{G_D} - \rho_M}$ as the twist of $V_{w\cdot \rho_{G_D} - \rho_{G_D}}$ by the real character $\delta_P^{1/2}$ of $M(\R)$, i.e.
\begin{equation}\label{eq:coh_rho_twist}H^\ast(S_{K_M} (M), V_{w\cdot \rho_{G_D} - \rho_M}) = \delta_{P}^{-1/2} \otimes H^\ast(S_{K_M} (M), V_{w\cdot \rho_{G_D} - \rho_{G_D}})\end{equation} as Hecke modules.

For test functions $f_1\in C_c^\infty(G_1(\A_f), \C)$ with regular support, we define $T(f_1)$ analogously to (\ref{eq:def_T_f_D}).
If $f_D$ and $f_1\in C_c^\infty(G_1(\A_f), \C)$ are matching functions in the sense of \cite[\S5]{rosner2024global}, then it follows from combining \cite[Lemma 2.10]{weissauer2009endoscopy} with \cite[Theorem 5.3]{rosner2024global} that
\begin{equation}\label{eq:trace_comparison_2}
    T(f_D) = T(f_1) c_D
\end{equation}
for a nonzero constant $c_D$ depending only on $D$. 

Shrinking $K_S$ if necessary, we can fix a $K_S$-biinvariant test function $f_{D,S} \in C_c^\infty(G_D(\A_S))$ with regular support such that 
\begin{equation}\label{eq:S_test_trace}
    \tr\left(f_{D, S}, H^\ast(\Sh_K(V_D), \C)[\pi^S])\right) \neq 0;
\end{equation}
when $\sigma(D)$ is even,
we use Theorem \ref{thm:generic}(\ref{part:thm_generic_two}) to ensure that contributions from $\pi$ in different degrees do not cancel.
Let $f_{1,S} \in C_c^\infty(G_1(\A_S))$ be a matching function, and let $K_{1,S} \subset G_1(\A_S)$ be a compact open subgroup such that $f_{1,S}$ is $K_{1,S}$-biinvariant.  We also fix an isomorphism $G_D(\A_f^S) \simeq G_1(\A_f^S)$, 
and let $K_1 = K^SK_{1,S} \subset G_1(\A_f)$, which is a compact open subgroup. 
Let $$\Pi(D) = \bigoplus_{P_0 \subset P = MN \subset G_D} \bigoplus_{w \in W^P} \Ind_{P(\A_f)}^{G_D(\A_f)}\varinjlim_{K_M}  H^\ast(S_{K_M} (M), V_{w\cdot \rho_{G_D} - \rho_M}),$$and 
let ${\mathcal P}_D$ be the  set  of irreducible $G_D(\A_f)$-constituents occurring in $\Pi(D)$ with a $K$-fixed vector. Likewise, we define $\Pi(1)$ and ${\mathcal P}_1$, where now we consider constituents with $K_1$-fixed vectors, and set ${\mathcal P} = {\mathcal P}_D\sqcup  {\mathcal P}_1$.

Then $\mathcal P$ contains finitely many near equivalence classes, cf. 
 \cite[p. 45]{weissauer2009endoscopy}.
We can therefore fix a finite set $T$ of primes of $\Q$, disjoint from $S$, so that two representations $\sigma,\tau\in 
{\mathcal P}$ are nearly equivalent if and only if  $\sigma_T\cong \tau_T$. 
Now fix a $K_T= \prod_{\l\in T}K_\l$-biinvariant test function $f_T\in C_c^\infty (G_D(\A_T), \C)= C_c^\infty(G_1(\A_T), \C)$, such that for all $\tau\in{ \mathcal P}$,  $\tr (f_T | \tau_T) = 0$ unless $\tau_T\cong \pi_T$, and $\tr(f_T |\pi_T) = 1$.

Also fix an auxiliary prime $v_0 \not\in S\cup T$, and for a large constant $C >0$ to be chosen later, let $f_{v_0,C}\in \T_{\GSP_4, v_0, \C}$ be a test function satisfying the conclusion of \cite[Lemma 3.9]{rosner2024global} for the representation $\pi_{v_0}$; in particular, $\tr(f_{v_0, C}| \pi_{v_0}) = 1.$ 

We consider the global test function
$$f_D = f_{D, S} f_Tf_{v_0,C}f^{S\cup T\cup \set{v_0}}\in C_c^\infty(G_D(\A_f)),$$
 where $f^{S\cup T\cup \set{v_0}}$ is the indicator function of $K^{S\cup T \cup \set{v_0}}$. The matching function is 
 $$f_1 = f_{1,S}f_Tf_{v_0, C}f^{S\cup T\cup \set{v_0}}\in C_c^\infty(G_1(\A_f)).$$

 \begin{claim}
     For all $P_0 \subset P = MN \subsetneq G_{1}$ and all $w\in W^P$, we have
     $$\tr\left(\overline f_1^P \cdot \chi_P^G,\; \delta_{P}^{-1/2}\varinjlim_{K_M} H^\ast(S_{K_M}(M), V_{w\cdot \rho_{G_1} - \rho_{G_1}})\right)=0,$$
     and likewise for all $P_0 \subset P = MN \subsetneq G_D$.  
 \end{claim}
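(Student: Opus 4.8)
The plan is to reduce the vanishing to a statement at the auxiliary place $v_0$, where it follows from the defining property of $f_{v_0,C}$ and the genericity of $\m$. Since the operator $\overline f_1^P\cdot\chi_P^{G_1}$ factors over places and its component at $v_0$ is (up to the $\delta_P^{1/2}$-normalization absorbed into the interpretation of $V_{w\cdot\rho_{G_1}-\rho_M}$) the spherical Hecke operator $\overline{f_{v_0,C}}^{\,P}=S_M^{\GSP_4}(f_{v_0,C})$ on $M(\Q_{v_0})$, it is enough to show that $\overline{f_{v_0,C}}^{\,P}$ acts by zero on the $M(\Z_{v_0})$-spherical part of $\varinjlim_{K_M}H^\ast(S_{K_M}(M),V_{w\cdot\rho_{G_1}-\rho_{G_1}})$; the whole trace then vanishes. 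Decomposing this cohomology under the automorphic spectrum of $M(\A)$, the operator $\overline{f_{v_0,C}}^{\,P}$ acts on the part attached to an unramified $\sigma_{v_0}$ by the scalar $\tr(f_{v_0,C}\mid\tau_{v_0})$, where $\tau$ is the unramified constituent of $\Ind_P^{\GSP_4}\sigma_{v_0}$ — this is the compatibility of Satake transforms with the map $S_M^{\GSP_4}$ and the $L$-group embedding $\widehat M\hookrightarrow\widehat{\GSP_4}=\GSP_4(\C)$, i.e.\ Proposition \ref{prop:Hecke_diagram_induction} at the level of Hecke eigenvalues. So it suffices to prove $\tr(f_{v_0,C}\mid\tau_{v_0})=0$ for all such $\tau$.

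Next I would argue that every such $\tau$ is ``Eisenstein at $v_0$'' while $\pi_{v_0}$ is not. As in the proof of Lemma \ref{lem:non-Eisenstein_coh}, a proper Levi $M$ of $\spin(V_D)$ (or of $\GSP_4$) is, up to isogeny, a product of (possibly quaternionic) $\GL_2$'s and $\GL_1$'s, so the automorphic $\sigma$ occurring in $H^\ast(S_{K_M}(M),V_{w\cdot\rho_{G_1}-\rho_{G_1}})$ are built from classical (or quaternionic) modular forms and Hecke characters; hence the maximal ideals of $\T_M^S$ they cut out are of Galois type, and by Proposition \ref{prop:m_pullback_eisenstein} their pullbacks along $S_M^{\GSP_4}$ are Eisenstein. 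Concretely, the $v_0$-Satake parameter of $\tau$ always contains a pair of eigenvalues whose ratio has absolute value $v_0$ — the ``Tate twist'' forced by the nontrivial modulus character $\delta_P$ of a proper parabolic, exactly as in the analysis of CAP and residual representations in the $D=1$ part of this proof — so $\tau_{v_0}$ is non-tempered. On the other hand, since $\m$ is generic we may choose the auxiliary prime $v_0\notin S\cup T$ so that the induced maximal ideal of $\T_{v_0,O}$ is generic; then $\pi_{v_0}$ is unramified, its Satake parameter is multiplicity-free with no two eigenvalues of ratio $v_0$, and it is tempered by the purity in Theorem \ref{thm:rho_pi_LLC}(\ref{part:rho_pi_LLC1}). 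Thus the Satake parameter of $\pi_{v_0}$ does not occur among the $\tau$'s, and in fact lies at positive distance from the set $\mathcal B$ of their $v_0$-Satake parameters, since $\mathcal B$ is contained in the non-tempered locus.

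Finally, as the coefficient systems $V_{w\cdot\rho_{G_1}-\rho_{G_1}}$ are fixed (independent of $K_M$), Deligne's Ramanujan bound for the $\GL_2$-factors together with elementary bounds for the Eisenstein and residual contributions on $M$ show that $\mathcal B$ is a \emph{bounded} subset of the unramified dual at $v_0$ not containing $\pi_{v_0}$. By the conclusion of \cite[Lemma 3.9]{rosner2024global}, after fixing $\mathcal B$ (and hence the sets $\mathcal P$, $T$ and the prime $v_0$) we may take $C$ large enough that $f_{v_0,C}$ annihilates every unramified representation with Satake parameter in $\mathcal B$ while still $\tr(f_{v_0,C}\mid\pi_{v_0})=1$ — this is exactly where the constant $C$ gets pinned down. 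Hence $\tr(f_{v_0,C}\mid\tau_{v_0})=0$ for all boundary constituents $\tau$, proving the Claim for $G_1$. The assertion for $G_D$ is identical: $v_0\notin S$ gives $\spin(V_D)(\Q_{v_0})\cong\GSP_4(\Q_{v_0})$, so the same $f_{v_0,C}$ applies, and the proper Levis of $\spin(V_D)$ — again isogeny quotients of products of (quaternionic) $\GL_2$'s and $\GL_1$'s, whose contributions are Jacquet--Langlands transfers of the $\GSP_4$ case — exhibit the same non-tempered behaviour at $v_0$.

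The step I expect to be the main obstacle is the assertion in the second paragraph that \emph{every} boundary constituent $\tau$ is non-tempered at $v_0$ with $v_0$-Satake parameter in a fixed bounded set. Boundedness is routine once one notes that the archimedean coefficient system is fixed. Non-temperedness is more delicate, because a constituent of, say, $\Ind_{P_{\mathrm{Siegel}}}^{\GSP_4}(\sigma_2\boxtimes\sigma_1)$ with $\sigma_2$ a holomorphic cusp form of weight $2$ looks superficially tempered; one must use that its archimedean component has to be cohomological for the \emph{trivial} coefficient system of $\GSP_4(\R)$ while being parabolically induced, so by Borel--Wallach it is a non-tempered Langlands quotient, which forces the remaining factor of $\sigma$ to be a nontrivial $\delta_P^{s}$-twist and hence forces a genuine power of $v_0$ into the Satake parameter. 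Making this uniform over all $P\subsetneq G_1$ (resp.\ $G_D$) and all $w\in W^P$ — or circumventing it by matching directly with the CAP/residual Satake-parameter analysis already invoked in the $D=1$ case — is the point requiring the most care.
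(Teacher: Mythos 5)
The proposal takes a genuinely different route from the paper, and contains a gap. You try to force the vanishing at the auxiliary place $v_0$, arguing that $f_{v_0,C}$ annihilates the $v_0$-component of every boundary constituent $\tau$. The paper's argument instead happens at the auxiliary set $T$: after using [Rosner, Prop.\,3.10] to replace the $v_0$-component $\overline{f_{v_0,C}}^P$ by \emph{some} auxiliary spherical function $f^P_{v_0}$ on $M(\Q_{v_0})$ (about which nothing further is needed), the trace is rewritten via [Rosner, Prop.\,3.5(2)] as
$\tr\bigl(f_T,\;\natural\text{-}\Ind_{M(\A_T)}^{G_1(\A_T)}\tau\bigr)$. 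Every constituent of $\natural\text{-}\Ind_{M(\A_f)}^{G_1(\A_f)}\tau$ lies in $\mathcal P_1$, so by the defining property of $f_T$ the trace vanishes unless $\pi_T$ coincides with the $T$-component of such a constituent, i.e.\ unless $\pi$ is nearly equivalent to it. But then, since $\m_\tau\subset\T^S_{M,\overline\Q_p}$ is of Galois type ($M$ being a product of $\GL_1$'s and possibly quaternionic $\GL_2$'s), Propositions \ref{prop:Hecke_diagram_induction} and \ref{prop:m_pullback_eisenstein} force $\m_\pi$ to be Eisenstein, contradicting the hypothesis. Thus the paper never has to prove anything about temperedness of boundary constituents at $v_0$, and the non-Eisenstein hypothesis on $\m$ is used essentially.

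The gap in your proposal is the assertion that $f_{v_0,C}$ can be chosen (by taking $C$ large) to annihilate every unramified representation with $v_0$-Satake parameter in the bounded set $\mathcal B$ of boundary parameters while keeping $\tr(f_{v_0,C}\mid\pi_{v_0})=1$. That is not what [Rosner, Lemma 3.9] provides: the role of $f_{v_0,C}$ in that framework is to make the Euler-characteristic manipulation in [Rosner, Prop.\,3.10] legitimate, i.e.\ to let one replace $\overline{f_{v_0,C}}^P$ by an auxiliary $f^P_{v_0}$, not to act as a pseudo-coefficient that vanishes on a prescribed bounded region of the unramified dual. Without that property, the reduction ``it suffices to show $\tr(f_{v_0,C}\mid\tau_{v_0})=0$'' has no mechanism to make that trace actually vanish. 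You flag the non-temperedness of the boundary at $v_0$ as the main worry; it is indeed delicate (and the $\delta_P^{\pm1/2}$ bookkeeping is subtle), but the more fundamental problem is the unsupported annihilation property of $f_{v_0,C}$. Notice too that your argument would use only genericity of $\m$ and never the non-Eisenstein hypothesis, which should raise a flag, since the lemma's statement invokes both and the paper's proof uses non-Eisenstein-ness at exactly this step.
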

 \begin{proof}[Proof of claim]
     To ease notation, we prove the claim for $G_1$; the proof for $G_D$ is identical. 
 Let $\tau$ be an irreducible constituent of $\varinjlim H^\ast(S_{K_M}(M), V_{w\cdot \rho_{G_{1}} - \rho_{G_{1}}})$.
 It suffices to show that 
 $$\tr\left( \overline f^P_{1}\cdot \chi_P^{G_{1}}, \tau\otimes \delta_{P}^{-1/2}\right) = 0.$$ 
 
Now by the argument of \cite[Proposition 3.10]{rosner2024global}, if $C$ is chosen sufficiently large, then $$\tr\left(\overline f^P_1 \cdot \chi_P^G, \tau\otimes \delta_{P}^{-1/2}\right) = 
\tr\left(\overline{f_{1,S} f_T f^{S\cup T\cup \set{v_0}}}^P \cdot f^P_{v_0}, \tau\otimes \delta_{P}^{-1/2}\right) 
$$
for an auxiliary spherical test function $f^P_{v_0}$ on $M(\Q_{v_0})$. In particular, because $\overline f^P$ is invariant under $K_1 \cap M(\A_f)$, 
we may assume without loss of generality that $\tau$ contains a fixed vector for $K_1 \cap M(\A_f)$. Moreover it suffices to show
$$\tr\left(\overline f^P_T, \tau_T \otimes \delta_P^{-1/2}\right) = 0.$$
By \cite[Proposition 3.5(2)]{rosner2024global}, the latter trace coincides with 
\begin{equation}\label{eq:T_tr_ind}
    \tr\left(f_T, \natural-\Ind_{M(\A_T)}^{G_1(\A_T)} \tau\right).
\end{equation}
We will show (\ref{eq:T_tr_ind}) vanishes. Indeed, since each constituent of $\natural-\Ind_{M(\A_f)}^{G_1(\A_f)}\tau$ lies in $ \mathcal P_1$, if (\ref{eq:T_tr_ind}) is nonzero then $\pi$ is nearly equivalent to a constituent of $\natural-\Ind_{M(\A_f)}^{G_1(\A_f)} \tau$ by the choice of $f_T$. Now, the maximal ideal $\m_\tau\subset {\T}^S_{{M}, \overline\Q_p}$ defined by the Hecke action on $\iota^{-1}\tau$ is of Galois type because $\tau$ is cohomological and 
 $M = \GL_1^3$ or $\GL_2\times \GL_1$. Hence by Propositions \ref{prop:Hecke_diagram_induction} and \ref{prop:m_pullback_eisenstein}, the maximal ideal $\m_\pi\subset  {\T}^S_{\GSP_4,\overline\Q_p}$ defined by $\iota^{-1}\pi$ is Eisenstein, which one can easily check contradicts the hypothesis that $\m$ is non-Eisenstein.
\end{proof}

In particular, the claim combined with (\ref{eq:S_test_trace}) 
 and the choice of $f_T$ shows that $T(f_D) \neq 0$, so $T(f_1) \neq 0$ by (\ref{eq:trace_comparison_2}). Using the claim  again,
 we see that
$$\tr\left(f_1,  H^\ast(\Sh_{K_1}(V_1), \C)\right) \neq 0.$$
By Franke's theorem \cite{franke1998harmonic} and our choice of $f_T$, we conclude there exists an automorphic representation $\pi'$ of $\GSP_4(\A)$ which is nearly equivalent to $\pi$ and unramified outside $S$, such that the Hecke eigensystem of $\pi'^S$ appears in $H^\ast(\Sh_{K_1}(V_1), \C)$. 
Expand $S$ to a larger set $S'$ such that $\pi^{S'} \cong \pi'^{S'}$. Then the maximal ideal $\m^{S'} \subset {\T}^{S'}_{\GSP_4, O}$ formed by restricting $\m$ descends to $\T^{S'}_{K_1, V_1,O}$. By the case $D= 1$ of the lemma for $\m^{S'}$ and $\pi'$, $\pi'$ is not Eisenstein, hence $\pi$ is not either. 
\end{proof}
\begin{cor}\label{cor:coh_relevant}
    Suppose $\sigma(D)$ is even, and $\m\subset \T^S_{K,V_D,O}$ is a generic, non-Eisenstein maximal ideal. Also  fix an isomorphism $\iota: \overline \Q_p \isomorphism \C$. Then $$H^3_\et (\Sh_K(V_D)_{\overline\Q}, \overline\Q_p)_\m  = \bigoplus_{\pi_f} \iota^{-1}\pi_f^K\otimes\rho_{\pi_f},$$ where:
    \begin{itemize}
        \item $\pi_f$ runs over the finite parts of relevant automorphic representations $\pi$ of $\spin(V_D)(\A_f)$ such that the $\T^S_{O}$-action on $\pi_f^K$ factors through $\T^S_{K,V_D,O,\m}$. 
        \item If $\pi$ is not endoscopic, then $\rho_{\pi_f} = \rho_{\pi, \iota}(-2)$, cf. Remark \ref{rmk:rho_Pi}.
        \item If $\pi$ is endoscopic associated to a pair $\pi_1, \pi_2$ of automorphic representations of $\GL_2(\A)$ with discrete series archimedean components of weights 2 and 4, respectively, we can write $\pi = \Theta(\pi_1^{D_1}\boxtimes \pi_2^{D_2})$ by Theorem \ref{thm:endoscopic_packets}. Then
      $$  \rho_{\pi_f} = \begin{cases}
            \rho_{\pi_1,\iota}(-2), & \sigma(D_1) \text{ even},\\
            \rho_{\pi_2,\iota}(-2), & \sigma(D_1) \text{ odd}.
        \end{cases}$$
    \end{itemize}
\end{cor}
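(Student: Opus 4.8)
The plan is to combine the cohomological decomposition of Jacquet--Langlands transfers (Lemma~\ref{lem:coh_for_JL}) with the Hecke-theoretic input of Theorem~\ref{thm:generic}, and then identify the Galois representations occurring in each $\pi_f$-isotypic piece. First I would observe that, by Theorem~\ref{thm:generic}(\ref{part:thm_generic_two}), the localization $H^3_\et(\Sh_K(V_D)_{\overline\Q}, \overline\Q_p)_\m$ is the only nonvanishing cohomology group (after localization at $\m$), and by Lemma~\ref{lem:non-Eisenstein_coh} it agrees with compactly-supported cohomology; so there is no interference from the boundary and the usual Matsushima-type decomposition applies cleanly. Comparing with the complex-coefficient statement of Lemma~\ref{lem:coh_for_JL}(\ref{item:coh_for_JL_matsushima},\ref{item:coh_for_JL_3}) via the comparison isomorphism (and the fixed $\iota$), one gets
\[
H^3_\et(\Sh_K(V_D)_{\overline\Q}, \overline\Q_p)_\m = \bigoplus_{\pi_f} \iota^{-1}\pi_f^K \otimes W_{\pi_f},
\]
where $\pi_f$ runs over finite parts of automorphic representations of $\spin(V_D)(\A)$ with $\pi_f^K\neq 0$ whose Hecke eigensystem factors through $\T^S_{K,V_D,O,\m}$, and $W_{\pi_f}$ is some Galois representation whose dimension equals $\dim H^3(\mathfrak{gsp}_4, U(2);\pi_\infty')$ summed against multiplicities. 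By Lemma~\ref{lem:yucky_eisenstein_lemma}, each such $\pi$ is not Eisenstein, and since $\m$ is generic each $\pi$ is relevant (its archimedean component lies in the discrete series $L$-packet of weight $(3,3)$); this uses (\ref{eq:fact_coh_ds}) together with Lemma~\ref{lem:coh_for_JL}(\ref{item:coh_for_JL_3}).

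Next I would identify $W_{\pi_f}$. For $\pi$ non-endoscopic, Corollary~\ref{cor:JL_general} and Remark~\ref{rmk:rho_Pi} attach to $\pi$ a compatible system $\rho_{\pi,\iota}$, which by Lemma~\ref{lem:reducible_endoscopic} is absolutely irreducible. The Eichler--Shimura relation (item~(\ref{item:eichler}) in the proof of Lemma~\ref{lem:coh_for_JL}, via \cite{wedhorn2000congruence}) shows that the characteristic polynomial of $\Frob_\l$ on $W_{\pi_f}$, for $\l\notin S\cup\{p\}$, divides the degree-four polynomial built from the Satake parameters of $\pi_\l$ normalized by $\l^{3/2}$; since $\rho_{\pi,\iota}$ is irreducible, $W_{\pi_f}$ must be a direct sum of copies of $\rho_{\pi,\iota}$ up to the cyclotomic twist. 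To pin the twist I would compare with the Hodge--Tate weights: Poincar\'e duality gives a pairing $H^3\times H^3 \to \rec(\omega_\pi)(-3)$, and matching normalizations forces the twist to be $(-2)$, so $W_{\pi_f}\cong \rho_{\pi,\iota}(-2)^{\oplus m}$; finally the multiplicity-one statement of Theorem~\ref{thm:JL}(\ref{part:JL_four}) together with the fact that $H^3(\mathfrak{gsp}_4,U(2);\pi_\infty^W)$ and $H^3(\mathfrak{gsp}_4,U(2);\pi_\infty^H)$ each account for a two-dimensional piece forces $m=1$ (i.e.\ the full four-dimensional $\rho_{\pi,\iota}(-2)$ appears once). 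For $\pi$ endoscopic, I would instead invoke Theorem~\ref{thm:endoscopic_packets} to write $\pi = \Theta(\pi_1^{D_1}\boxtimes\pi_2^{D_2})$ and then run the same Eichler--Shimura argument: the degree-four Frobenius polynomial now factors through the two $\GL_2$-parameters $\rho_{\pi_1,\iota}$ and $\rho_{\pi_2,\iota}$, and the distinction between which factor occurs is governed by whether $\pi_\infty$ is generic or holomorphic, i.e.\ by the parity of $\sigma(D_1)$ (part~(\ref{part:endoscopic_packets 1}) of Theorem~\ref{thm:endoscopic_packets}), giving the stated case split; the relevant two-dimensional piece then appears with multiplicity one for the same Hodge--Tate-weight reason.

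The main obstacle I anticipate is the \emph{endoscopic} multiplicity bookkeeping: one has to be careful that in the endoscopic packets the local archimedean component $\pi_\infty$ (generic vs.\ holomorphic) is correctly correlated with the choice of $D_i$, and that only one of $\rho_{\pi_1,\iota}$, $\rho_{\pi_2,\iota}$ --- not both --- shows up in the $\pi_f$-part, which requires knowing the Arthur multiplicity formula for these packets (available by \cite{chan2015local}) and the precise relation between the theta correspondence and the cohomological realization. A secondary subtlety is making the comparison isomorphism between $\overline\Q_p$- and $\C$-coefficients precise enough that the $\pi_f^K$ factor (as opposed to just its dimension) appears correctly; this is standard but needs the non-Eisenstein and $\varpi$-torsion-free conclusions of Theorem~\ref{thm:generic}(\ref{part:thm_generic_two}) so that the isotypic decomposition is defined integrally and survives inverting $p$. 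Everything else --- the Eichler--Shimura input, Poincar\'e duality, and the Hodge--Tate weight comparison --- is a direct transcription of the arguments already deployed in the proofs of Lemmas~\ref{lem:coh_for_JL} and~\ref{lem:yucky_eisenstein_lemma} and of Theorem~\ref{thm:JL}.
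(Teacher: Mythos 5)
Your proposal is correct and follows essentially the same route as the paper: it combines Lemmas \ref{lem:yucky_eisenstein_lemma} and \ref{lem:coh_for_JL}, Theorem \ref{thm:generic}(\ref{part:thm_generic_two}), the multiplicity one of Theorem \ref{thm:JL}(\ref{part:JL_four}), the Eichler--Shimura relation, and Theorem \ref{thm:endoscopic_packets} in the same order and for the same purposes. The only minor wobbles are that the twist is already pinned by the Eichler--Shimura relation together with the Hodge--Tate weight count from \cite[p.\ 296]{taylor1993ladic} (Poincar\'e duality is not actually needed for this), and that the generic--holomorphic dichotomy for $\pi_\infty$ in the endoscopic case is not stated in Theorem \ref{thm:endoscopic_packets}(\ref{part:endoscopic_packets 1}) itself but comes from the archimedean local theta lift, for which the paper cites \cite[Proposition 4.3.1]{harris1992arithmetic}.
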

\begin{proof}
    It follows from Lemmas \ref{lem:yucky_eisenstein_lemma} and \ref{lem:coh_for_JL}(\ref{item:coh_for_JL_matsushima})
    that  $$H^3_\et(\Sh_K(V_D)_{\overline\Q}, \overline\Q_p)_\m \cong  \bigoplus_{\pi_f} \iota^{-1}\pi_f^K\otimes\rho_{\pi_f}$$ as Hecke modules,
    where $\pi_f$ runs over the finite parts of non-Eisenstein automorphic representations of $\spin(V_D)(\A_f)$ with Hecke action factoring through $\T^S_{K,V_D,O,\m}$, and  $\rho_{\pi_f}$ is some Galois representation with $$\dim\rho_{\pi_f} = \sum_{\pi_\infty'} m(\pi_f\otimes\pi_\infty') \dim H^3(\mathfrak{gsp}_4, U(2); \pi_\infty').$$
    In particular, by (\ref{eq:fact_coh_ds}), the only $\pi_f$ with $\rho_{\pi_f} \neq 0$ are the finite parts of \emph{relevant} automorphic representations $\pi$.

    We first consider the non-endoscopic case. 
  As in \cite[p. 296]{taylor1993ladic}, we see from Theorem \ref{thm:JL}(\ref{part:JL_four}) that $\rho_{\pi_f}$ is four-dimensional with   Hodge-Tate weights  $\set{0,-1,-2,-3}$ if $\pi$ is not endoscopic. 
Since for all but finitely many $\l\not\in S$, $\Frob_\l$ satisfies the Eichler-Shimura relation (\ref{eq:eichler_shimura}) on $\rho_{\pi_f}$, we conclude that $\rho_{\pi_f} = \rho_{\pi,\iota} (-2)$. 
It remains to consider the endoscopic case, when $\pi = \Theta(\pi_1^{D_1} \boxtimes \pi_2^{D_2}).$
Then $\pi$ is the unique completion of $\pi_f$ to an automorphic representation of 
$\spin(V_D)(\A)$ by Theorem \ref{thm:endoscopic_packets}. Moreover, by the local archimedean theta lift described in \cite[Proposition 4.3.1]{harris1992arithmetic}, $\pi_\infty$ is  the generic or holomorphic member of the discrete series $L$-packet in the case that $\sigma(D_1)$ is even or odd, respectively. Hence we conclude  as above that $\rho_{\pi_f}$ is two-dimensional, with Hodge-Tate weights $\set{-1,-2}$ or $\set{0,-3}$ when $\sigma(D_1)$ is even or odd, respectively. Since we still have the Eichler-Shimura relation (\ref{eq:eichler_shimura}), it follows that $\rho_{\pi_f} $ is either $\rho_{\pi_1, \iota}(-2)$ or $\rho_{\pi_2, \iota}(-2)$ depending on the Hodge-Tate weights, and the corollary follows. 
\end{proof}

\begin{cor}\label{cor:T_embedding}
    Let $\m\subset \T^S_{K,V_D, O}$ be a generic and non-Eisenstein maximal ideal, and let $\mathcal T$ be the set of relevant automorphic representations $\pi$ of $\spin(V_D)(\A)$ such that $\pi_f^K\neq 0$ and the Hecke action on $\iota^{-1} \pi_f^K$ factors through $\T^S_{K,V_D,O, \m}$. Then we 
 have a natural embedding of $\T^S_{K, V_D,O,\m}$-algebras
    $$\T^S_{K, V_D,O, \m} \hookrightarrow \bigoplus_{\pi\in \mathcal T} \overline{\Q}_p(\pi),$$
    where $\overline\Q_p(\pi)$ is $\overline\Q_p$ with Hecke action through the eigenvalues on $\iota^{-1}\pi_f^K$. 
\end{cor}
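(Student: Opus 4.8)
The plan is to realise $\T\coloneqq\T^S_{K,V_D,O,\m}$ as a subalgebra of the endomorphism ring of a single torsion-free cohomology group and then read off the semisimple decomposition of that group over $\overline\Q_p$ from Corollary \ref{cor:coh_relevant}. I will assume $\sigma(D)$ is even; the definite case $\sigma(D)$ odd is entirely analogous, with $O[\Sh_K(V_D)]_\m$ and the Matsushima decomposition of automorphic forms on the compact group $\spin(V_D)$ playing the roles of $H^3$ and Corollary \ref{cor:coh_relevant}.

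First I would extract the structural facts from genericity. Since $\m$ is generic and non-Eisenstein, Theorem \ref{thm:generic}(\ref{part:thm_generic_two}) says $H^i(\Sh_K(V_D),O)_\m=0$ for $i\neq 3$ and that $H^3(\Sh_K(V_D),O)_\m$ is finite free over $O$. As $\T$ is by construction the image of $\T^S_O$ in $\End_O\!\big(H^\ast(\Sh_K(V_D),O)_\m\big)$, it therefore acts faithfully on the finite free $O$-module $H^3(\Sh_K(V_D),O)_\m$; hence $\T$ is finite and torsion-free, so free, over $O$. From $O$-freeness I get $\T\hook\T\otimes_O\overline\Q_p$, and from flat base change together with the Betti--\'etale comparison I get a faithful action of $\T\otimes_O\overline\Q_p$ on $H^3_\et(\Sh_K(V_D)_{\overline\Q},\overline\Q_p)_\m$.

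Then I would plug in Corollary \ref{cor:coh_relevant}: $H^3_\et(\Sh_K(V_D)_{\overline\Q},\overline\Q_p)_\m=\bigoplus_{\pi_f}\iota^{-1}\pi_f^K\otimes\rho_{\pi_f}$ as a Hecke module, and on the $\pi_f$-summand $\T$ acts through the scalar eigencharacter $\chi_{\pi_f}\colon\T\to\overline\Q_p$ with image $\overline\Q_p(\pi_f)$. Any element of $\T\otimes_O\overline\Q_p$ annihilated by every $\chi_{\pi_f}$ kills all summands, hence kills the whole cohomology group, hence is $0$ by faithfulness; so the $\chi_{\pi_f}$ assemble into an injection $\T\otimes_O\overline\Q_p\hook\bigoplus_{\pi_f}\overline\Q_p(\pi_f)$, which restricts along $\T\hook\T\otimes_O\overline\Q_p$ to an injection $\T\hook\bigoplus_{\pi_f}\overline\Q_p(\pi_f)$. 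Finally I would rewrite the index set in terms of $\mathcal T$: every $\pi_f$ occurring is the finite part of at least one $\pi\in\mathcal T$, and $\overline\Q_p(\pi)$ depends only on $\pi_f^S$, so composing with the evident $\T$-algebra map $\bigoplus_{\pi_f}\overline\Q_p(\pi_f)\to\bigoplus_{\pi\in\mathcal T}\overline\Q_p(\pi)$ (duplicating the factor $\overline\Q_p(\pi_f)$ once for each $\pi\in\mathcal T$ with that finite part) keeps injectivity and gives the asserted embedding of $\T^S_{K,V_D,O,\m}$-algebras.

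I do not expect a genuine obstacle: this is a bookkeeping consequence of the two deep inputs, Theorem \ref{thm:generic} and Corollary \ref{cor:coh_relevant}. The only points needing care are (i) getting faithfulness and $O$-freeness of $\T$ out of the concentration of cohomology in degree $3$ and its torsion-freeness, so that injectivity survives $-\otimes_O\overline\Q_p$; and (ii) noting that duplicating factors when passing from $\{\pi_f\}$ to $\mathcal T$ --- which occurs, e.g., when a single $\pi_f$ underlies both the generic and the holomorphic member of an archimedean discrete series $L$-packet --- does not spoil injectivity, since $\bigoplus_{\pi_f}\overline\Q_p(\pi_f)$ embeds diagonally into $\bigoplus_{\pi\in\mathcal T}\overline\Q_p(\pi)$.
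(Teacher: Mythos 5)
Your even-case argument is correct and matches the paper's terse proof (which simply invokes Theorem \ref{thm:generic}(\ref{part:thm_generic_two}) and Corollary \ref{cor:coh_relevant}); you have merely filled in the bookkeeping about faithfulness, $O$-freeness of $\T^S_{K,V_D,O,\m}$, and base change, all of which is fine. The one point where your write-up is loose is the odd (definite) case: you say the Matsushima decomposition of $\overline\Q_p[\Sh_K(V_D)]$ plays the role of Corollary \ref{cor:coh_relevant}, but Matsushima alone only decomposes the space into automorphic isotypic pieces --- it does not by itself tell you that every $\pi$ contributing to the $\m$-localized part is \emph{relevant} (i.e.\ non-Eisenstein). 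That is precisely the content of Lemma \ref{lem:yucky_eisenstein_lemma}, which is what the paper cites for the odd case; without it you cannot conclude that the index set of your injection is contained in $\mathcal T$ rather than some larger set of (possibly Eisenstein) automorphic representations. In the even case this issue is hidden inside Corollary \ref{cor:coh_relevant}, whose proof itself appeals to Lemma \ref{lem:yucky_eisenstein_lemma}, so your argument there is complete. Add an explicit appeal to Lemma \ref{lem:yucky_eisenstein_lemma} in the definite case and the proof is fine.
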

\begin{proof}
If $\sigma(D)$ is odd, this is immediate from  Lemma \ref{lem:yucky_eisenstein_lemma}. If $\sigma(D)$ is even, it follows from Theorem \ref{thm:generic}(\ref{part:thm_generic_two}) combined with Corollary \ref{cor:coh_relevant}.
\end{proof}


\section{Special cycles and theta lifts}\label{sec:sp_cycles_theta}

\subsection{Special cycles}
\subsubsection{}\label{subsubsec:basic_sp_cyc}

In this subsection, we explain the construction of special cycles $Z(T,\phi)$, due to Kudla  in the indefinite case \cite{kudla1997orthogonal}.
\begin{construction}\label{constr:Z_T_phi}
    Let $V$ be a quadratic space over $\Q$ of signature $(m, 2)$ or $(m,0)$. 
\begin{enumerate}
    \item If $V_0 \subset V$ is a positive definite subspace, then for any $g\in \spin(V_0)(\A_f)\backslash\spin(V)(\A_f)/K$,  
we obtain a canonical finite morphism
\begin{equation}\label{eq:finite_morphism_g}
    \Sh_{K_{0,g}}(V_0^\perp) \xrightarrow{\cdot g} \Sh_K(V),
\end{equation}
with $K_{0,g} \coloneqq gKg^{-1} \cap \spin(V_0^\perp)(\A_f).$
\begin{enumerate}[label=(\roman*)]
    \item If $V$ has signature $(m, 2)$ and $\dim V_0 = n$, then we write
\begin{equation*}
    Z(g, V_0, V)_K \in \CH^n(\Sh_K(V))
\end{equation*}
for the pushforward of the fundamental class on $\Sh_{K_{0,g}}(V_0^\perp)$ under (\ref{eq:finite_morphism_g}).
\item If $V$ has signature $(m, 0)$, then we write
\begin{equation*}
    Z(g, V_0, V)_K \in \Z\left[\Sh_K(V)\right]
\end{equation*}
for the pushforward of the constant function 1 on $\Sh_{K_{0,g}}(V_0^\perp)$ under (\ref{eq:finite_morphism_g}).

\end{enumerate}
\item  \label{constr:Z_T_phi_omega}For any $T\in \Sym_n(\Q)_{\geq 0}$, let
\begin{equation*}
    \Omega_{T, V}= \set{(x_1, \ldots, x_n)\in V^{ n} \,:\, x_i\cdot x_j = T_{ij} \;\forall\, 1\leq i,j\leq n},
\end{equation*}
viewed as an affine algebraic variety over $\Q$. 
\item Now suppose given a neat compact open subgroup $K \subset \spin(V)(\A_f)$, along with a test function $$\phi \in \mathcal S(V^{n} \otimes \A_f, R)^K$$ for some $n\leq m$ and some ring $R$. (The action of $K$ is the natural one, factoring through the map to $\SO(V)(\A_f)$.) For any   $T\in \Sym_n(\Q)_{\geq 0}$, if $\Omega_{T,V}(\Q) = \emptyset$, then the special cycle $Z(T, \phi)_K$, in $\CH^n(\Sh_K(V),R)\coloneqq \CH^n(\Sh_K(V))\otimes_\Z R$ or $R\left[\Sh_K(V)\right]$, is defined to vanish. 
Otherwise:
\begin{enumerate}[label=(\roman*)]
    \item If $V$ is positive definite, then fix a base point $ (x_1,\ldots, x_n) \in \Omega_{T, V}(\Q)$, and let $$V_0 = \Span_\Q\set{x_1, \ldots, x_n}\subset V.$$ Then we define
\begin{equation}\label{eq:Z_T_phi_K}
    Z(T, \phi)_K = \sum_{g\in \spin(V_0^\perp)(\A_f) \backslash \spin(V)(\A_f)/ K} \phi(g^{-1} x_1, \ldots, g^{-1} x_n) Z(g, V_0, V)_K \in R[\Sh_K(V)].
\end{equation}
\item If $V$ has signature $(m, 2)$ and $T$ is positive definite, we define $$Z(T, \phi)_K \in \CH^n(\Sh_K(V), R)$$ by the same formula (\ref{eq:Z_T_phi_K}). If $T$ is not positive definite (and still $\Omega_{T, V}(\Q) \neq 0)$, we define $Z(T, \phi)_K \in \CH^n(\Sh_K(V), R)$ by the recipe in \cite[p. 61]{kudla1997orthogonal}; the details will not be needed.
\end{enumerate}
\end{enumerate}
\end{construction}
\begin{rmk}
   By transitivity of the $\spin(V)(\Q)$-action on $\Omega_{T, V}(\Q)$, $Z(T, \phi)_K$ is independent of the choice of base point for $\Omega_{T, V}(\Q)$. 
\end{rmk}

\begin{prop}\label{prop:pullback_compatible_Z}
    For neat compact open subgroups $K' \subset K \subset \spin(V)(\A_f)$, if $\pr_{K, K'}:\Sh_{K'}(V) \to \Sh_{K}(V)$ is the natural map, then $\pr_{K,K'} ^\ast Z(T, \phi)_K = Z(T, \phi)_{K'}$.
\end{prop}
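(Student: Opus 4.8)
The plan is to reduce the statement to the compatibility of the individual cycles $Z(g,V_0,V)_K$ with pullback, and then to establish the latter by a fibre–product computation. If $\Omega_{T,V}(\Q)=\emptyset$ both sides vanish, so assume otherwise, fix a base point $(x_1,\dots,x_n)\in\Omega_{T,V}(\Q)$ (the same one for $K$ and $K'$), and set $V_0=\Span_\Q\{x_1,\dots,x_n\}$. Consider first the case where $V$ is positive definite, or $V$ has signature $(m,2)$ and $T$ is positive definite, so that $Z(T,\phi)_K$ is given directly by \eqref{eq:Z_T_phi_K}. First I would decompose each double coset $\spin(V_0^\perp)(\A_f)\,g\,K$ occurring in \eqref{eq:Z_T_phi_K} into $\spin(V_0^\perp)(\A_f)$–$K'$ double cosets $\spin(V_0^\perp)(\A_f)\,g'\,K'$, choosing representatives $g'=h_{g'}\,g\,k_{g'}$ with $h_{g'}\in\spin(V_0^\perp)(\A_f)$, $k_{g'}\in K$. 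The elementary point is that the image of $\spin(V_0^\perp)$ in $\SO(V)$ fixes $V_0$ pointwise — in $C(V)$ any $v\in V_0$ anticommutes with $V_0^\perp$ and hence commutes with $C^+(V_0^\perp)$ — so, together with the $K$-invariance of $\phi$, one gets $\phi((g')^{-1}x_1,\dots,(g')^{-1}x_n)=\phi(g^{-1}x_1,\dots,g^{-1}x_n)$ for every $g'$ in the block over $g$. This reduces the proposition (in these cases) to the identity
$$\pr_{K,K'}^\ast\,Z(g,V_0,V)_K=\sum_{g'}Z(g',V_0,V)_{K'},$$
the sum over representatives of the double cosets contained in $\spin(V_0^\perp)(\A_f)\,g\,K$.

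To prove this identity I would use that $\pr_{K,K'}$ is finite \'etale (both levels neat) and that $Z(g,V_0,V)_K$ is the proper pushforward of the fundamental class along the finite morphism $\iota_g\colon\Sh_{K_{0,g}}(V_0^\perp)\to\Sh_K(V)$ of \eqref{eq:finite_morphism_g}. By flat base change in Chow groups, $\pr_{K,K'}^\ast\,\iota_{g,\ast}[\Sh_{K_{0,g}}(V_0^\perp)]=p_\ast[W]$, where $W=\Sh_{K_{0,g}}(V_0^\perp)\times_{\Sh_K(V)}\Sh_{K'}(V)$ and $p\colon W\to\Sh_{K'}(V)$ is the second projection. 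The crucial step — and the one I expect to require the most care — is the identification, as schemes over $\Sh_{K'}(V)$,
$$W\;\cong\;\bigsqcup_{g'}\Sh_{K'_{0,g'}}(V_0^\perp),\qquad K'_{0,g'}:=g'K'(g')^{-1}\cap\spin(V_0^\perp)(\A_f),$$
under which $p$ restricts on the $g'$-component to the morphism $\cdot\,g'$, i.e.\ to $\iota_{g'}$. On complex points this is the elementary check that a pair $([h]_{K_{0,g}},[x]_{K'})$ with $[hg]_K=[x]_K$ determines, and is determined by, a unique double coset $\spin(V_0^\perp)(\A_f)\,g'\,K'\subset\spin(V_0^\perp)(\A_f)\,g\,K$ together with a point of $\Sh_{K'_{0,g'}}(V_0^\perp)$; on the canonical models it then follows from the functorial characterisation of the Shimura varieties involved, i.e.\ from the compatibility of the construction of $\Sh_\bullet(V_0^\perp)$ and of the maps \eqref{eq:finite_morphism_g} with change of level. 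Granting this, $[W]=\sum_{g'}[\Sh_{K'_{0,g'}}(V_0^\perp)]$ and applying $p_\ast$ yields exactly $\sum_{g'}Z(g',V_0,V)_{K'}$, as required.

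In the definite case the argument is identical, with $\CH^n$ replaced by $\Z[\Sh_\bullet(V)]$, pushforward of cycle classes replaced by pushforward of the constant function $1$, and the fibre–product identification replaced by the corresponding (purely combinatorial) decomposition of double-coset sets; no intersection theory is needed. Finally, when $V$ has signature $(m,2)$ and $T$ is positive semidefinite but not positive definite, $Z(T,\phi)_K$ is defined by Kudla's recipe in \cite[p.~61]{kudla1997orthogonal} in terms of special cycles $Z(T',\phi')$ with $T'$ nondegenerate of smaller size, together with operations — partial evaluation of $\phi$ and cup product with Chern classes of the relevant automorphic line bundles — that are visibly compatible with $\pr_{K,K'}^\ast$; so this case reduces to the nondegenerate one already treated. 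The only genuine obstacle is the scheme-level fibre–product identification in the middle paragraph, which is nonetheless a standard base-change statement for Shimura varieties.
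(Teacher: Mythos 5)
Your proof is correct. The paper itself gives no argument for this proposition — it simply cites \cite[Proposition 5.10]{kudla1997orthogonal} for the indefinite case and remarks that the definite case is proved in the same way — so what you have done is reconstruct the cited argument from scratch. The route you take (reduce to the compatibility $\pr_{K,K'}^\ast\,Z(g,V_0,V)_K=\sum_{g'}Z(g',V_0,V)_{K'}$ via the fibre–product decomposition of $\Sh_{K_{0,g}}(V_0^\perp)\times_{\Sh_K(V)}\Sh_{K'}(V)$, together with the observation that $\spin(V_0^\perp)$ acts trivially on $V_0$ so that the Schwartz coefficients are constant along each block of $K'$-cosets) is indeed the content of Kudla's proof, and your treatment of the degenerate case by compatibility of the tautological Chern class with pullback is likewise the standard reduction. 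The one place you flag as delicate — the scheme-level identification of the fibre product with a disjoint union of Shimura varieties for $V_0^\perp$ at the smaller levels $K'_{0,g'}$ — is correct but does deserve the caution you give it; on canonical models this follows from the functoriality of morphisms of Shimura data under shrinking the level, and spelling it out would be the bulk of a fully written-up proof.
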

\begin{proof}
    This is \cite[Proposition 5.10]{kudla1997orthogonal} in the indefinite case; the definite case is proved in the same way.
\end{proof}
\begin{notation}
    \label{notation:SC}
    For any compact open subgroup $K \subset \spin(V)(\A_f)$, and any ring $R$, we define $\SC_K^n(V, R) $ to be the $R$-span of the special cycles $Z(T,\phi)_K$ for $T\in \Sym_n(\Q)_{\geq 0}$ and $\phi \in \mathcal S(V^n\otimes \A_f, R)$. When $R = \Z$ it may be dropped from the notation.
\end{notation}

\begin{rmk}\label{rmk:SC_definitions}
    Note that $\SC_K^n(V, \Z)$ contains all of the special cycles $z = Z(g, V_0, V)_K$ from (\ref{subsubsec:basic_sp_cyc}) with $\dim V_0 = n$. Indeed, choose a basis $\set{e_1, \ldots, e_n}$ for $V_0$, and set $T_{ij} \coloneqq e_i \cdot e_j$.  Then one can choose $\phi\in \mathcal S(V\otimes \A_f, \Z)^K$ such that $\phi|_{\Omega_{T,V}(\A_f)}$ is the indicator function of $K\cdot g^{-1}(e_1,\ldots, e_n)$, and  it follows that $Z(T, \phi)_K = Z(g, V_0, V)_K$. 
\end{rmk}

\subsubsection{}
We will later need the following proposition to understand the double coset space appearing in (\ref{eq:Z_T_phi_K}).
\begin{prop}\label{prop:zanarella}
    Suppose $K_\l\subset \SO(V)(\Q_\l)$ is the stabilizer of a self-dual lattice $L \subset V\otimes \Q_\l$, and $V = V_0 \oplus V_1$ be an orthogonal decomposition of $V$. 
    Then the natural map
    \begin{align*}
\SO(V_0)(\Q_\l)\backslash \SO(V)(\Q_\l) / K_\l &\to \set{\textrm{lattices }L_1 \subset V_1\otimes \Q_\l} \\
      g & \mapsto g\cdot L \cap V_1
    \end{align*}
    is injective. If $\dim(V_1) <\dim (V_0)$, then its image consists of all $L_1$ on which the pairing is $\Z_\l$-valued. 
\end{prop}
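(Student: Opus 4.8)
The plan is to recast the double coset space in terms of self-dual $\Z_\l$-lattices. Write $V_\l := V\otimes\Q_\l$ and let $\mathcal L$ be the set of self-dual $\Z_\l$-lattices in $V_\l$. Since $K_\l$ is the stabilizer of $L$, the map $g\mapsto gL$ identifies $\SO(V)(\Q_\l)/K_\l$ with the $\SO(V)(\Q_\l)$-orbit of $L$; when $\l$ is odd this orbit is all of $\mathcal L$ (two self-dual lattices in $V_\l$ carry nondegenerate quadratic $\Z_\l$-forms of the same rank and discriminant, hence are isometric, and the isometry extends to $V_\l$; a reflection in a unit-norm vector of a self-dual lattice has determinant $-1$ and fixes the lattice, so $\SO$ is transitive as well). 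Thus $\SO(V_0)(\Q_\l)\backslash\SO(V)(\Q_\l)/K_\l$ is the set of $\SO(V_0)(\Q_\l)$-orbits on $\mathcal L$, and the map of the proposition becomes $M\mapsto M\cap V_1$. The engine of the proof is the glueing description of an $M\in\mathcal L$ relative to $V_\l=V_0\oplus V_1$: put $M_i:=M\cap V_i$ and $G_i:=M_i^\vee/M_i$; then the image of $M$ in $G_0\times G_1$ is the graph of an ``anti-isometry'' $\phi_M:G_0\xrightarrow{\sim}G_1$ (an isomorphism reversing the discriminant pairings), one has $\pi_i(M)=M_i^\vee$ for the orthogonal projections $\pi_i$, and conversely every triple $(N_0,N_1,\phi)$ with $N_i\subset V_i$ integral and $\phi:N_0^\vee/N_0\xrightarrow{\sim}N_1^\vee/N_1$ an anti-isometry comes from a unique such $M$ with $M\cap V_i=N_i$. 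The one nontrivial point here is a determinant count: both projections $M/(M_0\oplus M_1)\to G_i$ are injective, while self-duality forces $[M:M_0\oplus M_1]^2=|G_0|\,|G_1|$, so both are isomorphisms, and then self-duality is equivalent to $\phi_M$ being an anti-isometry.

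For injectivity, suppose $M,M'\in\mathcal L$ with $M\cap V_1=M'\cap V_1=:L_1$. Under the dictionary, $M\leftrightarrow(N_0,L_1,\phi)$ and $M'\leftrightarrow(N_0',L_1,\phi')$ for integral lattices $N_0,N_0'\subset V_0$ with isomorphic discriminant modules (both $\cong L_1^\vee/L_1$, via $\phi$ and $\phi'$). For $\l$ odd, the uniqueness of Jordan splittings implies that two integral lattices in the fixed space $V_0$ with isomorphic discriminant modules lie in one $\mathrm O(V_0)(\Q_\l)$-orbit, and moreover the ambient isometry can be chosen to induce any prescribed isomorphism of their discriminant modules; applying this with $(\phi')^{-1}\circ\phi$ produces $h\in\mathrm O(V_0)(\Q_\l)$ with $hN_0=N_0'$ and $h_\ast=(\phi')^{-1}\phi$, whence $hM=M'$ by the glueing description. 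To replace $h$ by an element of $\SO(V_0)(\Q_\l)$: since $\dim V_0>\dim V_1\ge\operatorname{length}(L_1^\vee/L_1)$, the unimodular Jordan block of $N_0$ is nonzero, so $N_0$ has a unit-norm vector $v$; the reflection $r_v$ lies in $\mathrm O(V_0)(\Q_\l)\setminus\SO(V_0)(\Q_\l)$, fixes $N_0$, and acts trivially on $N_0^\vee/N_0$ (using $\l$ odd, so that $2$ is a unit), so $hr_v$ works if $h$ did not.

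For the image, ``$\subseteq$'' is immediate. For ``$\supseteq$'', fix an integral $L_1\subset V_1$ and build $M\in\mathcal L$ with $M\cap V_1=L_1$ by induction on $\dim V_1$ (the case $\dim V_1=0$ being trivial). Choose a primitive $x\in L_1$ whose norm $\l^k u$, with $u\in\Z_\l^\times$, has minimal valuation; then $x$ pairs $L_1$ into $\l^k\Z_\l$, giving an orthogonal splitting $L_1=\Z_\l x\oplus L_1'$ with $L_1'\subset V_1':=x^\perp\cap V_1$. If $k=0$ then $\Z_\l x$ is self-dual in $\Q_\l x$, and the inductive hypothesis applied inside $x^\perp\cap V_\l=V_0\oplus V_1'$ (still unimodular of the required shape, with $\dim V_1'<\dim V_0$) gives a self-dual $M'$ with $M'\cap V_1'=L_1'$, so $M:=\Z_\l x\oplus M'$ works. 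If $k\ge 1$ one instead finds a rank-one $\l^k$-modular $\Z_\l w\subset V_0$ whose discriminant module is anti-isometric to that of $\Z_\l x$, glues $\Z_\l x$ and $\Z_\l w$ into a self-dual binary lattice $M_x$ with $M_x\cap\Q_\l x=\Z_\l x$, and applies induction inside $(\Q_\l x\oplus\Q_\l w)^\perp\cap V_\l\subset V_0\oplus V_1'$. In every case $M$ is self-dual, hence in the $\SO(V)(\Q_\l)$-orbit of $L$ by the transitivity above, and $M\cap V_1=L_1$.

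The crux — and the step I expect to be the main obstacle — is the existence of the vector $w\in V_0$ in the case $k\ge 1$: when $\dim V_0\ge 3$ it represents every element of $\Q_\l$, but when $\dim V_0=2$ (which forces $\dim V_1=1$) one must use that $V=V_0\perp V_1$ carries a self-dual lattice, which pins down the discriminant and Hasse invariant of $V_0$ precisely so that it represents $-\l^k u$; verifying this is a short Hilbert-symbol computation, but it is the place where the hypothesis $\dim V_1<\dim V_0$ is genuinely used, rather than any a priori flexibility. (At $\l=2$ the glueing formalism still applies, but ``even'' self-dual lattices have no unit-norm vectors, so the reflection in the injectivity step and the rank-one splitting in the image step would need to be replaced by manipulations with binary ramified Jordan blocks; in the intended applications $\l$ is odd and this complication does not arise.)
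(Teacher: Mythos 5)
The paper does not actually prove this statement: it simply cites \cite[Propositions 3.1.5, 3.1.6]{corato2023spherical}, so your proof is a self-contained derivation that the paper itself does not supply. Your route---reducing to a classification of self-dual lattices, then analyzing them via the gluing (genus-splitting) dictionary along $V=V_0\perp V_1$---is the natural one and is, as far as I can tell, also the spirit of the cited reference. The reduction from the double coset space to the set of self-dual lattices, the determinant count showing $M/(M_0\oplus M_1)\hookrightarrow G_i$ is bijective, and the reflection trick to pass from $\mathrm O(V_0)$ to $\SO(V_0)$ using the nonempty unimodular block of $N_0$ are all correct for $\l$ odd.

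One place you should tighten is the surjectivity argument. Your parenthetical claim that ``when $\dim V_0\ge 3$ it represents every element of $\Q_\l$'' is false at $\dim V_0 = 3$: a ternary quadratic form over $\Q_\l$ can miss exactly one square class (namely $d$ with $\langle a,b,c,-d\rangle$ the anisotropic quaternary form). So the ``short Hilbert-symbol computation'' you defer to the case $\dim V_0 = 2$ is actually also needed at $\dim V_0 = 3$; in both cases the saving grace is that the discriminant and Hasse invariant of $V_0$ are pinned down by the existence of a self-dual lattice in $V_0\perp V_1$ together with the given data of $L_1\subset V_1$. In fact you might find it cleaner to bypass the induction entirely: given integral $L_1$, write down the forced Jordan invariants of the candidate $N_0\subset V_0$ (the $\l^a$-blocks for $a\ge 1$ mirror those of $L_1$ with negated discriminants, the unimodular block's rank and discriminant are then determined by $\operatorname{rank} V_0$ and $\operatorname{disc} V_0$), check directly that these invariants have nonnegative rank and the right Hasse invariant using $\epsilon(V)=1$ and $\operatorname{disc}(V)\in\Z_\l^\times(\Q_\l^\times)^2$, and conclude such an $N_0$ exists; this makes the use of $\dim V_1 < \dim V_0$ fully transparent (it guarantees the unimodular block has positive rank). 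Your caveat about $\l=2$ is accurate and benign: in every application in the paper the prime is odd.
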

\begin{proof}
    This follows from (the proof of) \cite[Propositions 3.1.5, 3.1.6]{corato2023spherical}.
\end{proof}
\subsection{ Symplectic and metaplectic groups}
In this section, we set up basic notions for symplectic and metaplectic groups, mostly following the exposition of \cite{gan2012metaplectic}.
\begin{notation}\label{notation:symplectic}
    \begin{enumerate}
        \item \label{notation:symplectic_W}For an integer $n\geq 1 $, we define the standard symplectic lattice $W_{2 n} $
with basis $e_1,\ldots, e_n, e_1 ^\ast,\ldots, e_n ^\ast$
and pairing determined by
\begin{equation}
\langle e_i, e_j\rangle = 0,\;\;\langle e_i ^\ast, e_j ^\ast\rangle = 0,\;\;\langle e_i, e_j ^\ast\rangle =\delta_{ij}.
\end{equation}
The symplectic group $\SP_{2n}$ as defined in (\ref{subsubsec:coordinates_gsp2n}) is the isometry group of 
 $W_{2 n} $.
\item    The Siegel parabolic subgroup $P = MN\subset \SP_{2n}$ is the stabilizer of $\Span_\Z\set{e_1, \ldots, e_n} \subset W_{2n}$. 
We identify $N$ with $\Sym_n$, the space of $n\times n$ symmetric matrices, and $M$ with $\GL_n$ via the embedding $$g\mapsto\begin{pmatrix} g & 0\\0 & g ^ {- t}\end{pmatrix}\in\SP_{2 n},\;\; g\in\GL_n. $$
\item If $k $
is a local field of characteristic zero, the metaplectic group $\MP_{2 n} (k) $
is defined as the unique non-split central extension of $\SP_{2 n} (k) $
by $\mu_2 $:
\begin{equation*}
0\rightarrow\mu_2\rightarrow\MP_{2n} (k)\xrightarrow {g\mapsto\overline g}\SP_{2 n} (k)\rightarrow 0.
\end{equation*}

The double cover $\MP_{2n}(k) \to \SP_{2n}(k)$ splits uniquely over $N(k)$, with $P = MN$ the Siegel parabolic as above. The preimage $\widetilde P(k)$ of $P(k)$ therefore has a Levi decomposition
$$\widetilde P(k) = \widetilde M(k)N(k)$$ with $\widetilde M(k)$ a nonsplit double cover of $M(k)$. 
\item If $k $
is non-archimedean with ring of integers $\O $
and the residue characteristic of $\O $
is odd, let $\MP_{2n}(\O)\subset \MP_{2n}(k)$ be the unique lifting of $\SP_{2n}(\O)$ \cite[\S6]{gansavin2012metaplecticIIHecke}.
\item\label{notation:symplectic_part_cartan} Let $U(n) \hookrightarrow \SP_{2n}(\R)$ be the embedding defined by 
$$ A+ i B \mapsto \begin{pmatrix}
     A & B \\ -B & A
\end{pmatrix}.$$
We fix the Cartan decomposition
\begin{equation*}\label{cartan spr}
\mathfrak {s p}_{2 n,\C} =\mathfrak {u} (n)_\C\oplus\mathfrak {p} ^ +\oplus\mathfrak {p} ^ -,
\end{equation*}
such that $\mathfrak p^+$ is isomorphic to the symmetric square of the defining representation of $U(n)$. 
\item \label{notation:symplectic_part_weight}Let $\widetilde U(n)\subset \MP_{2n}(\R)$ be the preimage of $U(n)$. If $j^{1/2}(g, z)$ is the half-integral weight automorphy factor of \cite[p. 25]{shimura1995metaplectic}, then we let
$\det^{1/2}: \widetilde U(n) \to \C^\times$ be the restriction of $j^{1/2}(g, i)$, which is a square root of the determinant character. We set $\det^k\coloneqq (\det^{1/2})^{2k}$ for all $k \in \frac{1}{2} \Z$.
\item Globally, let $$\MP_{2 n} (\A) =\prod_v'\MP_{2 n} (\Q_v) $$
be the restricted  product with respect to the subgroups $\MP_{2 n} (\Z_v)\subset\MP_{2 n} (\Q_v) $
for $v\neq 2,\infty $. The inclusion $\SP_{2 n} (\Q)\hookrightarrow\SP_{2 n} (\A) $
lifts naturally to $$\SP_{2 n} (\Q)\hookrightarrow\MP_{2 n} (\A),$$ by which we will view $\SP_{2n}(\Q)$ as a subgroup of $\MP_{2n}(\A)$. 

    \end{enumerate}
\end{notation}

\begin{definition}
For $k\in\frac {1} {2}\Z $, 
 define the space 
  $M ^ {2 n}_k $  of adelic Siegel modular forms of degree $2 n $
and weight $k $, consisting of smooth functions
$$f:\SP_{2 n} (\Q)\backslash\MP_{2 n} (\A)\to\C $$
such that:
\begin{enumerate}
\item $f (g z) =\debt ^ k (z) f(g)$
for any $z\in\widetilde U (n)\subset\MP_{2 n} (\R) $.
\item $X\cdot f (g) = 0 $
for any $X\in\mathfrak p ^ -\subset\mathfrak {sp}_{2 n,\R} $.
\end{enumerate}
\end{definition}
\begin{rmk}
Note that $M_k ^ {2 n} $
is naturally an $\MP_{2 n} (\A _f) $-module.
\end{rmk}
\begin{notation}\label{notation:where_psi}
\leavevmode
    \begin{enumerate}
        \item Let $\psi $
be the additive character of $\Q\backslash\A $
which is unramified at all finite places and satisfies $$\psi (x_\infty) = e ^ {2\pi i x_\infty} $$
for $x_\infty\in\R\subset\A. $
\item For $T\in \Sym_n(\Q)$, define
$$\psi_T\coloneqq \psi\circ \left(\frac{1}{2}\tr(T-)\right),$$
a unitary character of $N(\Q)\backslash N(\A)$ (where $N$ is the unipotent radical of the Siegel parabolic).
    \end{enumerate}
\end{notation}

From basic Fourier analysis, one deduces the following proposition.
\begin{prop}
    Let $f\in M_k^{2n}$.
     Then we have an identity of functions on $N(\A) \subset \MP_{2n}(\A)$:
    $$f = \sum_{T\in \Sym_n(\Q)} a_T(f)q_\A^T,$$
    where $q_\A^T\coloneqq e^{-\tr (T)}\cdot \psi_T$ and \begin{equation*}
a_T (f)\coloneqq\frac {e ^ {\trace (T)}} {\volume ([ N])}\integral_{[ N]} f (n)\psi_T ^ {-1} (n)\d n.
\end{equation*}
\end{prop}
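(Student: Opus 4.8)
The plan is to restrict $f$ to the abelian subgroup $N(\A)\subset\MP_{2n}(\A)$ — recalling from Notation~\ref{notation:symplectic} that the metaplectic cover splits canonically over $N$, compatibly with the global embedding $\SP_{2n}(\Q)\hookrightarrow\MP_{2n}(\A)$ — and then to apply Fourier analysis on the compact abelian group $[N]\coloneqq N(\Q)\backslash N(\A)$. First I would note that, since $f$ is left-invariant under $\SP_{2n}(\Q)\supset N(\Q)$, its restriction $f|_{N(\A)}$ descends to a smooth function on $[N]$; under the identification of $N$ with $\Sym_n$, this quotient is the compact torus $(\Q\backslash\A)^{n(n+1)/2}$. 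Only smoothness and left $N(\Q)$-invariance of $f$ enter here; the differential condition $X\cdot f=0$ for $X\in\mathfrak p^-$ plays no role in this proposition.

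Next I would identify the characters of $[N]$. The self-duality of $\Q\backslash\A$ afforded by the character $\psi$ of Notation~\ref{notation:where_psi} identifies the Pontryagin dual of $\Q\backslash\A$ with $\Q$; transporting this through the trace pairing $(S,T)\mapsto\tfrac12\tr(ST)$ on $\Sym_n$ — the normalization built into the definition $\psi_T=\psi\circ\tfrac12\tr(T\,\cdot\,)$ — shows that every unitary character of $[N]$ equals $\psi_T$ for a unique $T\in\Sym_n(\Q)$, with distinct $T$ giving distinct characters.

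Then, by the standard fact that a smooth function on a compact abelian group is the sum of its absolutely and uniformly convergent Fourier series, I obtain $f|_{N(\A)}=\sum_{T\in\Sym_n(\Q)}c_T\,\psi_T$ with $c_T=\volume([N])^{-1}\int_{[N]}f(n)\,\psi_T^{-1}(n)\,dn$; smoothness of $f$ forces the $c_T$ to decay rapidly, which justifies the convergence and the equality. Writing $c_T=e^{-\tr(T)}a_T(f)$ and $q_\A^T=e^{-\tr(T)}\psi_T$, one has $c_T\,\psi_T=a_T(f)\,q_\A^T$, which is exactly the asserted identity.

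I expect no genuine obstacle: this is the promised ``basic Fourier analysis''. The only points requiring any care are matching the $\tfrac12$-normalization in the character identification with the defining formula for $\psi_T$, and recording the standard convergence statement for Fourier series of smooth functions on a compact torus — equivalently, integrating by parts in the archimedean directions (using the finite-level invariance of $f$ in the non-archimedean directions) to obtain rapid decay of the Fourier coefficients.
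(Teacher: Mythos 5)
Your proposal is correct and is exactly the argument the paper gestures at with ``From basic Fourier analysis, one deduces the following proposition'': restrict to $N(\A)$, use $N(\Q)$-invariance (via the compatible splittings of the metaplectic cover over $N(\A)$ and over $\SP_{2n}(\Q)$) to descend to the compact torus $[N]$, identify the unitary dual with $\Sym_n(\Q)$ through $\psi$ and the $\tfrac12\tr$ pairing, and invoke convergence of Fourier series for smooth functions. The normalization bookkeeping $c_T\psi_T=a_T(f)q_\A^T$ is also correct, and you are right that the $\mathfrak p^-$-annihilation condition is not used here.
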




\begin{definition}
For any subring $R\subset\C $, define
$$M_{k, R} ^ {2 n} =\set {f\in M_k ^ {2 n}\,:\, a_T (f)\in R\text { for all } T\in\symmetric_n (\Q)}. $$
\end{definition}

\subsection{ Weil representation}
\subsubsection{}\label{weil rep}
Let $k $
be a local field, and let $V $
be a quadratic space over $k $
of
\emph {odd}
dimension and trivial discriminant. Also fix an even integer $2 n\geq 2 $. For any nontrivial additive character $\psi $
of $k $, the Weil representation $\omega_\psi $
of $\SO (V) (k)\times\MP_{2 n} (k) $
is realized on the complex Schwartz space $\mathcal S (V ^ n,\C) $, and is determined by the following formulas.

\begin{equation}\label{eq:Weil_formulas}
\begin {cases}
\omega_\psi (g, 1)\phi (x) =\phi (g ^ {-1} x), & g\in\SO (V) (k).\\\omega_\psi (1, u)\phi (x) =\psi (\frac {1} {2} u (x)\cdot x)\phi (x), & u\in N (k)\cong\symmetric_n (k).\\
\omega_\psi (1, m)\phi (x) =\chi_\psi (m) |\debt (\overline m) | ^ {\frac {\dim V} {2}}\phi (\overline m ^ t x), & m\in\widetilde M (k).\\
\omega_\psi (1, w)\phi (x) =\gamma_w\integral_{V ^ n}\phi (y)\psi (x\cdot y)\d y.
\end {cases}
\end{equation}
Here, the notation is as follows:
\begin{itemize}[label= $\circ$]
\item $P = M N $
is the Siegel parabolic.
\item $\chi_\psi $
is a $\mu_8$-valued genuine character of $\widetilde M (k) $
described in \cite[p. 1661]{gan2012metaplectic}.

\item $w\in\MP_{2 n} (k) $
is a certain Weyl element such that $\overline wP\overline w ^ {-1} = P ^ {\text {op}} $.
\item $\gamma_w $
is a certain eighth root of unity.
\item The pairing on $V ^ n $
in the second and fourth equations is $$(x_1,\ldots, x_n)\cdot (y_1,\ldots, y_n) =\sum x_i\cdot y_i. $$
\item $\d x $
in the fourth equation is a self-dual Haar measure.\end {itemize}
\subsubsection {}\label{subsubsec:coeffs_for_Weil}
If $k $
is non-archimedean of residue characteristic $\l $
and $R\subset\C $
is a $\Z\left [\frac {1} {\l}\right] $-algebra containing all eighth and $\l $th power roots of unity, then the same formulas give a well-defined action of $\SO (V) (k)\times\MP_{2 n} (k) $
on the space $\mathcal S (V ^ n, R) $
of $R $-valued Schwartz functions.
\subsubsection{}
Globally, if $V $
is a quadratic space over $\Q $, we have Weil representations $\mathcal S(V ^ n\otimes\Q_v,\C) $
for all places $v $, defined using the localizations of the fixed global additive character $\psi $
of $\Q\backslash\A $. Similarly, we have the Weil representations $\mathcal S(V ^ n\otimes\A,\C) $, $\mathcal S(V ^ n\otimes\A_f,\C) $, etc., defined as restricted tensor products.

\subsection{ Classical theta lifting}
\begin{notation}\label{notation:classical_theta}
For a Schwartz function $\phi\in \mathcal S(V ^ n\otimes\A,\C) $
and a cuspidal automorphic form $\alpha $
on $\SO (V)(\A) $, we write $\theta_\phi (\alpha) $
for the automorphic form on $\MP_{2 n} (\A) $
defined by
\begin{equation}
\theta_\phi (\alpha) (h) =\integral_{[\SO(V)]}\alpha (g)\sum_{x\in V ^ n}\omega_\psi (g, h)\phi (x)\d g.
\end{equation}
The normalization depends on a choice of Haar measure.
\end{notation}
\subsubsection{}
Suppose $V $
is positive definite and let $K\subset\spin (V) (\A_f) $
be a neat compact open subgroup. For $\alpha: \Sh_K(V)\to \C$, define an automorphic form $\overline \alpha$ on $\spin(V)(\A)$, which descends to $\SO(V)(\A)$, by
 $$\overline \alpha(g_fg_\infty) = \frac{1}{\volume (\widehat \Z^\times)}\integral_{\widehat \Z^\times} \alpha(g_fz) \d z,\;\;\forall g = g_fg_\infty\in \spin(V)(\A).$$
\begin{lemma}\label{theta fourier coeffs}
Let $\phi_\infty\in \mathcal S(V ^ n\otimes\R,\C) $
be the Gaussian $$\phi_\infty (x) = e ^ {- x\cdot x}. $$
Then for  $\alpha:\Shimura_K (V)\to \C $ 
and  $$\phi_f\in \mathcal S(V ^ n\otimes\A_f,\C)^K, $$
$\theta_{\phi_f\otimes\phi_\infty} (\overline\alpha) $
lies in $M ^ {2 n}_{\frac {\dim (V)} {2}}
$
and $$a_T\left (\theta_{\phi_f\otimes\phi_\infty} (\overline \alpha)\right) =C_K\cdot \alpha\left (Z (T,\phi_f)_K\right), $$
where the constant is $$ C_K = \frac{\volume\left(\SO(V)(\R)\cdot \image\left(K \to \SO(V)(\A_f)\right)\right)}{[K\cdot \widehat \Z^\times: K]}.$$
\end {lemma}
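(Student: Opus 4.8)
The plan is to compute the Fourier coefficient $a_T$ of the classical theta lift directly from the definitions, reducing the adelic integral over $[\SO(V)]$ to a finite sum because $V$ is positive definite. First I would unwind $\theta_{\phi_f\otimes\phi_\infty}(\overline\alpha)$ as a function on $\MP_{2n}(\A)$ and apply the Fourier coefficient formula
\[
a_T(f) = \frac{e^{\trace(T)}}{\volume([N])} \int_{[N]} f(n)\psi_T^{-1}(n)\,dn.
\]
The key input is the explicit formula for $\omega_\psi(1,u)\phi(x) = \psi(\tfrac12 u(x)\cdot x)\phi(x)$ for $u\in N(\A)\cong\Sym_n(\A)$ from (\ref{eq:Weil_formulas}); plugging this into the theta kernel $\sum_{x\in V^n}\omega_\psi(g,h)\phi(x)$ and integrating against $\psi_T^{-1}$ over $[N]$ picks out exactly the terms $x\in V^n$ with Gram matrix equal to $T$, i.e. $x\in\Omega_{T,V}(\Q)$. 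Since $V$ is positive definite and $n \le \dim V$, the archimedean Gaussian contributes $e^{-\trace(T)}$, which cancels the normalizing $e^{\trace(T)}$, leaving a clean sum over $\Omega_{T,V}(\Q)$.

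Next I would handle the quotient by $\SO(V)(\Q)$. If $\Omega_{T,V}(\Q)=\emptyset$ the coefficient vanishes, matching the convention in Construction \ref{constr:Z_T_phi}. Otherwise fix a base point $(x_1,\dots,x_n)\in\Omega_{T,V}(\Q)$, let $V_0 = \Span_\Q\{x_1,\dots,x_n\}$, and use the transitivity of $\spin(V)(\Q)$ (equivalently $\SO(V)(\Q)$) on $\Omega_{T,V}(\Q)$ to rewrite the sum over $\Omega_{T,V}(\Q)$ as a sum over $\SO(V_0^\perp)(\Q)\backslash\SO(V)(\Q)$ — more precisely, over $\spin(V_0)(\Q)\backslash\spin(V)(\Q)$ acting through the stabilizer of the base point. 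After collapsing the resulting integral over $[\SO(V)]$ (which is now a finite measure space, since $\SO(V)(\R)$ is compact) against this orbit decomposition, I get a sum over
\[
g\in \spin(V_0)(\A_f)\backslash\spin(V)(\A_f)/K
\]
weighted by $\phi_f(g^{-1}x_1,\dots,g^{-1}x_n)$ times the value of $\overline\alpha$ on the component indexed by $g$; this is precisely $\alpha(Z(g,V_0,V)_K)$ by the definition of $Z(g,V_0,V)_K$ as the pushforward of the constant function $1$ under (\ref{eq:finite_morphism_g}), and the weighted sum is $\alpha(Z(T,\phi_f)_K)$ by (\ref{eq:Z_T_phi_K}). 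One must also verify that $\theta_{\phi_f\otimes\phi_\infty}(\overline\alpha)$ genuinely lies in $M^{2n}_{\dim(V)/2}$: the weight condition under $\widetilde U(n)$ follows from the transformation of the Gaussian $\phi_\infty$ under the action of $\widetilde U(n)$ via $\omega_\psi$ (this is the standard computation identifying the Gaussian as a lowest-weight vector of weight $\dim(V)/2$), and the annihilation by $\mathfrak p^-$ is the holomorphy of the Gaussian, again standard; I would cite the relevant computation rather than redo it.

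The main obstacle is bookkeeping the constant $C_K$ correctly: one has to track the Haar measure normalizations in Notation \ref{notation:classical_theta} (the definition of $\theta_\phi$ "depends on a choice of Haar measure"), the averaging over $\widehat\Z^\times$ in the definition of $\overline\alpha$, the volume $\volume([N])$ in the Fourier coefficient formula, and the index $[K\cdot\widehat\Z^\times : K]$ that appears because $\overline\alpha$ is built from $\alpha$ on $\Sh_K(V)$ rather than directly on $\SO(V)(\Q)\backslash\SO(V)(\A)$. Concretely, after the orbit unfolding the integral over $\spin(V_0^\perp)(\A_f)\backslash\spin(V)(\A_f)$ in the unfolded expression must be rewritten as a sum over the double coset space modulo $K$, which introduces the volume of $\SO(V)(\R)\cdot\image(K\to\SO(V)(\A_f))$ divided by $[K\cdot\widehat\Z^\times:K]$; matching this against the chosen measure on $\SO(V)(\A)$ is where all the numerical factors collect into $C_K$. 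Since only the shape of $C_K$ (a positive rational multiple of a fixed volume, independent of $T$ and $\phi_f$) matters for the applications, I would carry out this measure comparison carefully once and record the value, flagging that the precise constant depends on the implicit measure normalization fixed in Notation \ref{notation:classical_theta}.
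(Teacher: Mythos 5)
Your plan is correct and follows essentially the same route as the paper's proof: compute $a_T$ by the unfolding $\int_{[N]}\psi_T^{-1}(u)\,\omega_\psi(1,u)\,du$ to isolate $\Omega_{T,V}(\Q)$, cancel the Gaussian factor $e^{-\tr T}$ against the normalization $e^{\tr T}$, unfold the $\SO(V)(\Q)$-orbit against a chosen base point and its stabilizer $\SO(V_0^\perp)$, and match the resulting double-coset sum with the definition of $Z(T,\phi_f)_K$. The only cosmetic difference is in the modularity argument, where the paper invokes the Fock model (Howe, Adams--Barbasch) to see that $\phi_\infty$ has degree zero and hence is annihilated by $\mathfrak{p}^-$ while generating a $\det^{\dim V/2}$-isotypic line; your appeal to the standard lowest-weight computation for the Gaussian is the same statement in different words.
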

\begin{proof}
In the Fock model of the $\left(\mathfrak{sp}_{2n \dim(V)}, \widetilde U(n\dim(V))\right)$-module associated to $\mathcal S(V^n\otimes \R, \C)$, $\phi_\infty$ has degree zero, hence is annihilated by $\mathfrak p^-\subset \mathfrak {sp}_{2n}$; cf. \cite[(2.2)]{howe1989transcending}. By comparing degrees with  \cite[Proposition 2.1(2)]{adams1998genuine}, we also conclude that $\widetilde U(n)$ acts on $\theta_{\phi_f\otimes\phi_\infty}(\overline\alpha)$ by $\det^{\frac{\dim (V)}{2}},$ which proves
$\theta_{\phi_f\otimes\phi_\infty}(\overline \alpha) \in M^{2n}_{\frac{\dim(V)}{2}}$.

It remains to compute the Fourier coefficients. We calculate:
\begin{align*}
a_T (\theta_{\phi_f\otimes\phi_\infty} (\overline \alpha)) & =\frac {e ^ {\trace T}} {\volume ([N])}\integral_{[N]}\psi_T ^ {-1} (u)\integral_{[\SO (V)]}\overline\alpha (g)\sum_{x\in V ^ n (\Q)}\omega_\psi (g, u)\phi_f\otimes\phi_\infty (x)\d g\d u\\
& =\frac {e ^ {\trace T}} {\volume ([N])}\integral_{[\SO (V)]}\overline\alpha (g)\sum_{x\in V ^ n (\Q)}\integral_{[N]}\psi_T ^ {-1} (u)\psi \left(\frac {1} {2} u (x)\cdot x\right)\omega_\psi (g, 1)\phi_f\otimes\phi_\infty (x)\d g\d u\\
& =\integral_{[\SO (V)]}\overline\alpha (g)\sum_{x\in\Omega_{T, V}(\Q)}\omega_\psi (g, 1)\phi_f (x)\d g.
\end{align*}
Fix a base point $x= (x_1, \ldots, x_m) \in\Omega_{T, V}(\Q) $
and let $V_0 = \Span_\Q\set{x_1,\ldots, x_m}$. Then, since $\Omega_{T,V}(\Q) $
is a single $\SO (V) (\Q) $-orbit, we may rewrite the final equation as
$$a_T (\theta_{\phi_f\otimes\phi_\infty} (\alpha)) =\integral_{\SO (V_0^\perp) (\A)\backslash\SO (V) (\A)}\phi_f (g ^ {-1} x)\integral_{[\SO (V_0^\perp)]}\overline \alpha (hg)\d h\d g.$$ This coincides with the claimed formula by definition of $\alpha(Z(T, \phi_f)_K)$. 
\end{proof}
\subsubsection{}
As a special case, suppose $V =\Q $
is the one-dimensional space with quadratic form $a\mapsto a ^ 2 $. Then $\SO (V) $
is the trivial group and we take the unit Haar measure on $\SO (V) (\A) =\set {\operatorname {id}} $. For each prime $\l $, let $$\phi_\l\in \mathcal S (V ^ n\otimes\Q_\l,\Z) $$
be the indicator function of the lattice $$\Z_\l ^ n\subset\Q ^ n_\l = V ^ n\otimes\Q_\l, $$
and let $\phi_f =\otimes_\l\phi_\l $. Let $\blackboardone$ be the unit automorphic form on $\SO(V)(\A)$.  We write
\begin{equation}
\theta_{\frac {1} {2}}\coloneqq\theta_{\phi_f\otimes\phi_\infty} (\blackboardone).
\end{equation}

\begin{lemma}\label {action on wheat one half theta function lemma}
Let $\l $
be an odd prime, and suppose $R\subset\C $
is a $\Z\left [\frac {1} {\l}\right] $-algebra containing all eighth and $\l $th power roots of unity. Then for any $g\in\MP_{2 n} (\Q_\l) $, $g\cdot\theta_{\frac {1} {2}} $
lies in $M ^ {2 n}_{\frac {1} {2}, R} $, and the constant term $a_0 (g\cdot\theta_{\frac {1} {2}}) $
of its Fourier expansion lies in $R ^\times $.
\end{lemma}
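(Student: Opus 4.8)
The plan is to unwind the definition of $\theta_{\frac12}$ as a product of local Weil-representation data and check the claim at the place $\l$ only, since the archimedean and prime-to-$\l$ data are already in place from the definition of $\theta_{\frac12}$. Concretely, $\theta_{\frac12} = \theta_{\phi_f\otimes\phi_\infty}(\blackboardone)$ with $\phi_f = \otimes_v \phi_v$, and acting by $g\in\MP_{2n}(\Q_\l)$ only changes the $\l$-component: $g\cdot\theta_{\frac12} = \theta_{(\omega_\psi(1,g)\phi_\l)\otimes\phi^\l_f\otimes\phi_\infty}(\blackboardone)$. Because $R$ is a $\Z[\tfrac1\l]$-algebra containing all eighth and $\l$-th power roots of unity, the formulas \eqref{eq:Weil_formulas}, interpreted via \S\ref{subsubsec:coeffs_for_Weil}, show that $\omega_\psi(1,g)\phi_\l \in \mathcal S(V^n\otimes\Q_\l, R)$: each of the four generators of $\MP_{2n}(\Q_\l)$ (unipotent, Levi, Weyl, and the central $\mu_2$) acts on $R$-valued Schwartz functions by multiplication by roots of unity lying in $R$ and by linear changes of variable, and the Fourier transform in the Weyl-element case is $R$-valued because the self-dual Haar measure of $\Z_\l^n$ is a power of $\l$. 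Hence the global test function $(\omega_\psi(1,g)\phi_\l)\otimes\phi^\l_f\otimes\phi_\infty$ takes values in $R$ away from $\infty$, and $\phi_\infty$ is the Gaussian, so Lemma \ref{theta fourier coeffs} applies (with $V = \Q$, $\SO(V)$ trivial, $\overline{\blackboardone} = \blackboardone$) and gives $g\cdot\theta_{\frac12}\in M^{2n}_{\frac12}$ with Fourier coefficients $a_T(g\cdot\theta_{\frac12}) = \blackboardone(Z(T, \omega_\psi(1,g)\phi_\l\otimes\phi_f^\l)) \in R$. This proves $g\cdot\theta_{\frac12}\in M^{2n}_{\frac12, R}$.

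It then remains to show $a_0(g\cdot\theta_{\frac12})\in R^\times$. From the formula just obtained, $a_0(g\cdot\theta_{\frac12})$ is (up to the constant $C_K$, which here is a unit since $\SO(V)$ is trivial) the value $\big(\omega_\psi(1,g)\phi_\l\otimes\phi_f^\l\big)(0)$, i.e. the evaluation at $0\in V^n\otimes\A_f$. Away from $\l$ this is $\phi_v(0) = 1$, so the whole quantity equals $\big(\omega_\psi(1,g)\phi_\l\big)(0)$, the value at the origin of the $\l$-adic Schwartz function $\omega_\psi(1,g)\phi_\l$, where $\phi_\l$ is the indicator of $\Z_\l^n$. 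I would argue this lies in $R^\times$ as follows: the subgroup $\MP_{2n}(\Z_\l)$ fixes $\phi_\l$ (it is the $\MP$-analogue of the statement that $\SP_{2n}(\Z_\l)$ stabilizes the self-dual lattice, and $\phi_\l$ is $\MP_{2n}(\Z_\l)$-invariant by the construction of the splitting $\MP_{2n}(\Z_\l)\hookrightarrow\MP_{2n}(\Q_\l)$ in \cite[\S6]{gansavin2012metaplecticIIHecke}), so $a_0$ depends only on the coset $\MP_{2n}(\Z_\l) g$. Using the Cartan/Iwasawa decomposition $\MP_{2n}(\Q_\l) = \MP_{2n}(\Z_\l)\cdot \widetilde P(\Q_\l)\cdot(\text{double coset reps})$ and the explicit action \eqref{eq:Weil_formulas}, one computes $\big(\omega_\psi(1,g)\phi_\l\big)(0)$ directly: the unipotent part acts trivially at $x = 0$ (the factor $\psi(\tfrac12 u(x)\cdot x)$ is $1$ at $x=0$), the Levi part multiplies by $\chi_\psi(m)|\det\overline m|^{\dim V/2}$ which is a unit times a power of $\l$ — hence a unit in $R$ — and the Weyl part replaces $\phi$ by $\gamma_w\int\phi(y)\psi(0)\,dy = \gamma_w\cdot\mathrm{vol}$, again a root of unity times a power of $\l$. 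Tracking these through the decomposition, the value at $0$ is always a product of eighth roots of unity, $\l$-th power roots of unity, and powers of $\l$, all units in $R$.

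The main obstacle I expect is the bookkeeping in the last step: making the Iwasawa-type reduction precise for the \emph{metaplectic} group (as opposed to $\SP_{2n}$), handling the cocycle/genuine-character factors $\chi_\psi$, $\gamma_w$ carefully, and confirming that no combination of generators can produce a coefficient with positive $\l$-adic valuation (which would not be a unit in $R$ unless $\l$ is inverted — but $R$ is a $\Z[\tfrac1\l]$-algebra, so in fact powers of $\l$ \emph{are} units, which resolves this). A clean way to organize this, avoiding case analysis, is to note that $\omega_\psi(1,g)\phi_\l$ is, up to a unit scalar in $R$, the indicator function of some lattice $\Lambda_g\subset V^n\otimes\Q_\l$ — this follows because $\MP_{2n}(\Q_\l)$ acts on the set of (indicators of) lattices that are "nice" for the Weil representation, permuting them, and the scalar arising in each elementary move is a unit in $R$ by the formulas \eqref{eq:Weil_formulas} — and then $a_0(g\cdot\theta_{\frac12})$ is that unit scalar times $\mathbf 1_{\Lambda_g}(0)$; since $0\in\Lambda_g$ always, $\mathbf 1_{\Lambda_g}(0) = 1$ and we are done. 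I would flesh this out by identifying the orbit of lattices explicitly (e.g. the $\l$-power-scaled and $\GL_n(\Z_\l)$-transformed copies of $\Z_\l^n$, together with the lattices swapped in by the Weyl element), which is a standard but slightly tedious computation with the Weil representation on $\mathcal S(\Q_\l^n, R)$.
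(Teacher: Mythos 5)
Your main line of attack coincides with the paper's proof: establish modularity and $R$-integrality of the Fourier coefficients by the $\MP_{2n}(\Q_\l)$-stability of $\mathcal S(V^n\otimes\Q_\l, R)$ together with Lemma \ref{theta fourier coeffs}, and for $a_0\in R^\times$ reduce via the Iwasawa decomposition $\MP_{2n}(\Q_\l) = \widetilde P(\Q_\l)\cdot\MP_{2n}(\Z_\l)$ and the $\MP_{2n}(\Z_\l)$-invariance of $\theta_{\frac12}$ to $g\in\widetilde P(\Q_\l)$, then read off the value at $x=0$ from (\ref{eq:Weil_formulas}). After the Iwasawa reduction you never encounter the Weyl element, so your discussion of it is superfluous; your decomposition ``$\MP_{2n}(\Z_\l)\cdot\widetilde P(\Q_\l)\cdot(\text{double coset reps})$'' conflates Iwasawa and Cartan — Iwasawa has no third piece.

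Two points you should not gloss over. First, $|\det\overline m|^{\dim V/2}$ with $\dim V = 1$ is a \emph{half-integer} power of $\l$, not an integer power as your ``unit times a power of $\l$'' suggests; you need to observe that $\sqrt{\l}\in R$, which follows from the Gauss sum $\sqrt{\pm\l}\in\Q(\mu_\l)$ together with $i\in\mu_8\subset R$. Second, your proposed ``clean way'' is wrong as stated: the Weil representation of $\MP_{2n}(\Q_\l)$ does \emph{not} act on the set of lattice indicators up to unit scalar. The unipotent element $u\in N(\Q_\l)$ multiplies $\phi$ by the \emph{non-constant} phase $\psi(\tfrac12 u(x)\cdot x)$, which turns $\mathbf 1_{\Z_\l^n}$ into a function that is not a scalar multiple of any lattice indicator. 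This doesn't hurt the evaluation at $x=0$ (where the phase is trivial), which is why the direct calculation you sketch first still works; but the lattice-permutation reformulation is not a valid way to organize it and should be dropped.
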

\begin{proof}
By the obvious equivariance of the theta lift, we have $$g\cdot\theta_{\frac {1} {2}} =\theta_{\omega_\psi (1, g)\phi_f\otimes\phi_\infty} (\blackboardone). $$
Since $$\mathcal S(V ^ n\otimes\Q_\l, R)\subset \mathcal S(V ^ n,\C) $$
is $\MP_{2 n} (\Q_\l) $-stable, Lemma \ref{theta fourier coeffs} shows $$g\cdot\theta_{\frac {1} {2}}\in M ^ {2 n}_{\frac {1} {2}, R}. $$
Also,  $\theta_{\frac {1} {2}} $
is $\MP_{2 n} (\Z_\l) $-invariant  by construction.

By the Iwasawa decomposition
$$\SP_{2 n} (\Q_\l) = P (\Q_\l)\cdot\SP_{2 n} (\Z_\l), $$
it suffices to show $a_0 (g\cdot\theta_{\frac {1} {2}})\in R ^\times $
for $g\in\widetilde P (\Q_\l) $. Now, by Lemma \ref{theta fourier coeffs} again,
$$a_0 (g\cdot\theta_{\frac {1} {2}}) =\omega_\psi (1, g)\cdot\phi_f (0), $$
and it follows from the explicit formulas in (\ref{weil rep}) that $$\omega_\psi (1, g)\cdot\phi_f (0)\in R ^\times $$
for all $g\in\widetilde P (\Q_\l) $; this proves the lemma.
\end{proof}
\begin {prop}\label{prop:integral_structure_stable_metaplectic}
Let $R\subset\C $
be a $\Z\left [\frac {1} {\l}\right] $-algebra containing all eighth and $\l $th power roots of unity. Then for all $k\in\frac {1} {2}\Z $, $$M ^ {2 n}_{k, R}\subset M ^ {2 n}_k $$
is stable under the action of $\MP_{2 n} (\Q_\l) $.
\end{prop}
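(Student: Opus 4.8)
The plan is to reduce to the case of integral weight by multiplying with $\theta_{1/2}$, and then to appeal to the classical rationality theory of Siegel modular forms. I would first record two formal observations. (a) The group $\MP_{2n}(\Q_\l)$ acts on $\bigoplus_k M^{2n}_k$ by graded ring endomorphisms: a product of two genuine automorphic forms on $\MP_{2n}(\A)$ descends to $\SP_{2n}(\A)$, and since $g\in\MP_{2n}(\Q_\l)$ acts by right translation by a fixed finite-adelic element, the action is multiplicative, $g\cdot(f_1f_2)=(g\cdot f_1)(g\cdot f_2)$; moreover $g$ commutes with $\widetilde U(n)\subset\MP_{2n}(\R)$ and with $\mathfrak p^-\subset\mathfrak{sp}_{2n,\R}$, so it preserves each weight space, and likewise a product $f_1f_2$ of forms of weights $k_1,k_2$ lies in $M^{2n}_{k_1+k_2}$. (b) Restricting a product to $N(\A)$ and multiplying Fourier expansions gives $a_T(f_1f_2)=\sum_{T_1+T_2=T}a_{T_1}(f_1)a_{T_2}(f_2)$, a finite sum over $T_1,T_2\in\Sym_n(\Q)_{\geq 0}$, since the Fourier support of a form is a discrete subset of the cone $\Sym_n(\Q)_{\geq 0}$. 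Thus $f\mapsto\sum_T a_T(f)q_\A^T$ is a ring homomorphism from $\bigoplus_k M^{2n}_k$ into the integral domain of formal $q$-expansions, and $M^{2n}_{k,R}$ is, by definition, the locus where all coefficients lie in $R$.

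As $M^{2n}_k=0$ for $k<0$, it suffices to treat $k\ge0$. Suppose first $k\in\Z_{\ge0}$. Then every $f\in M^{2n}_k$ is non-genuine, the $\MP_{2n}(\Q_\l)$-action factors through $\SP_{2n}(\Q_\l)$, and $M^{2n}_k=\varinjlim_K M_k(\Gamma_K)$ is the space of classical scalar-valued Siegel modular forms of genus $n$ and weight $k$ of all levels. For this case I would invoke the classical fact that the $\SP_{2n}(\A_f)$-action preserves the submodule of forms whose $q$-expansion lies in $\Z[\tfrac1\l]$, hence preserves $M^{2n}_{k,R}$ for any $\Z[\tfrac1\l]$-subalgebra $R\subset\C$. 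This follows either from the $q$-expansion principle for Siegel modular varieties over $\Z[\tfrac1\l]$ (Faltings--Chai), the group action being realized by $\Z[\tfrac1\l]$-rational isomorphisms of the Siegel tower, or from Shimura's results on the arithmeticity of the slash action --- concretely, approximating $g\in\SP_{2n}(\Q_\l)$ by $\gamma\in\SP_{2n}(\Q)$ that is $\l$-adically close to $g$ and close to the identity at the remaining finite places, the operator $\cdot|_k\gamma$ introduces only denominators dividing powers of $\l$ and $\l$-power roots of unity, and so preserves $R$-rationality of $q$-expansions.

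Now let $k\in\tfrac12+\Z_{>0}$, $f\in M^{2n}_{k,R}$, and $g\in\MP_{2n}(\Q_\l)$. By Lemma \ref{action on wheat one half theta function lemma}, for every $g\in\MP_{2n}(\Q_\l)$ --- including $g=1$ --- the form $g\cdot\theta_{1/2}$ lies in $M^{2n}_{1/2,R}$ and has constant Fourier coefficient in $R^\times$; write $\vartheta\coloneqq g\cdot\theta_{1/2}$, whose $q$-expansion is therefore a unit in the ring of formal $q$-expansions over $R$. By (a) and (b), $h\coloneqq f\cdot\theta_{1/2}$ lies in $M^{2n}_{k+1/2,R}$, and $k+\tfrac12\in\Z$, so the integral case gives $g\cdot h\in M^{2n}_{k+1/2,R}$. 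Since $g\cdot h=(g\cdot f)\,\vartheta$ and the $q$-expansion map is a ring homomorphism into an integral domain, the $q$-expansion of $g\cdot f$ is the quotient of that of $g\cdot h$ by the unit $q$-expansion of $\vartheta$, hence has all coefficients in $R$; that is, $g\cdot f\in M^{2n}_{k,R}$, which completes the argument.

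I expect the main obstacle to be the integral-weight input: carefully matching the adelic definition of $M^{2n}_k$ with the geometry of the Siegel modular variety, and checking that the \emph{full} $\SP_{2n}(\Q_\l)$-action --- not merely the spherical Hecke action --- respects integrality of Fourier expansions. The rest is formal once Lemma \ref{action on wheat one half theta function lemma} is in hand, and the powers of $\l$ and the eighth roots of unity that intervene in the half-integral case are exactly those already built into the hypotheses on $R$.
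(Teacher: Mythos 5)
Your argument is essentially identical to the paper's: reduce to integral weight via the $q$-expansion principle of Faltings--Chai, then handle half-integral weight by multiplying by $\theta_{1/2}$ and dividing by the $q$-expansion of $g\cdot\theta_{1/2}$, which is invertible over $R$ by Lemma \ref{action on wheat one half theta function lemma}. (One trivial slip: your case $k\in\tfrac12+\Z_{>0}$ should read $k\in\tfrac12+\Z_{\geq 0}$ so as to include $k=\tfrac12$.)
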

\begin{proof}
If $k\in\Z $
is integral, this is a consequence of the $q $-expansion principle for classical Siegel modular forms \cite[Chapter 5, Proposition 1.8]{faltings1990degeneration}. In general, for any $f\in M ^ {2 n}_{k, R} $
and $f'\in M ^ {2 n}_{k', R} $, the product $f f'\in M ^ {2 n}_{k + k', R} $
has formal $q $-expansion:
\begin{equation}\label {Q expansion equation for product of two modular forms}
(ff') |_{N (\A)} =\sum_{T\in\symmetric_n (\Q)}\sum_{S\in\symmetric_n (\Q)} a_T (f) a_S (f') q_{\A} ^ {S + T}.
\end{equation}
(This expression make sense because $a_T (f) $
and $a_S (f') $
are each supported on positive semi-definite matrices with bounded denominators in their entries, see \cite[Proposition 1.1]{shimura1995metaplectic}.)

We apply this to $f\in M ^ {2 n}_{k, R} $, with $k\in\frac {1} {2} +\Z $, and $f' =\theta_{\frac {1} {2}} $. Since $k +\frac {1} {2}\in\Z $, we have $$g (f\theta_{\frac {1} {2}}) = g (f) g(\theta_{\frac {1} {2}})\in M ^ {2 n}_{k +\frac {1} {2}, R} $$
for any $g\in\MP_{2 n} (\Q_\l) $.

Now, by Lemma \ref{action on wheat one half theta function lemma} above, the $q $-expansion of $g(\theta_{\frac {1} {2}} )$
has an inverse power series $$\sum_{T\in\symmetric_n (\Q)} b_T q ^ T_{\A} $$
with $b_T\in R $. Hence, by the uniqueness of $q $-expansions, the Fourier coefficients of $g( f) $
lie in $R $
as well.
\end{proof}

\subsection{ Formal theta lifts}\label{subsec:formal_theta}
 \subsubsection{}
 Suppose $V $
has signature $(m,0)$ or  $(m, 2) $
for some $m\geq 1 $, and let $$K =\prod K_\l\subset\spin (V) (\A_f) $$
be a neat compact open subgroup.
For any subring $R\subset\C $, we define $\Test_K (V, R) $
to be $$\operatorname{Hom}_\Z (\Z [\Shimura_K (V)], R)= \operatorname{Hom}_R (R [\Shimura_K (V)], R) $$
in the positive definite case, or $$\operatorname{Hom}_\Z (\CH ^\ast(\Shimura_K (V)), R) =\operatorname{Hom}_R (\CH ^\ast(\Shimura_K (V),R), R)  $$
in the indefinite case.

\subsubsection{} Now fix $1\leq n \leq m$. For any $K $-invariant Schwartz function $\phi\in \mathcal S(V ^ n\otimes\A_f, R) $
and any $\alpha\in\Test_K (V, R) $, we define the formal theta lift
$$\Theta (\alpha,\phi)_K\coloneqq\sum_{T\in\symmetric_n (\Q)_{\geq 0}}\alpha\left (Z (T,\phi)_K\right) q_\A ^ T. $$
The subscript $K $
will be omitted when there is no risk of confusion.
\subsubsection{}\label{subsubsec:test_convolution}
Let $\l $ be a prime
such that $K_\l $
has pro-order  invertible in $R $, and let
$$\Test_{K ^ \l} (V, R)\coloneqq\varinjlim_{K_\l'}\Test_{K ^ \l K_\l'} (V), $$
where the transition maps are induced by pushforward. Note that $\Test_{K ^ \l} (V, R) $
has a natural action of $\spin (V) (\Q_\l) $, dual to the one described in \cite[p. 41]{zhang2009modularity}. For $\alpha\in\Test_{K ^ \l} (V) $
and a $K ^ \l $-invariant Schwartz function $\phi\in \mathcal S(V ^ n\otimes\A_f, R) $, we define the renormalized formal theta lift
\begin{equation}
\Theta_{K ^ \l} (\alpha,\phi)\coloneqq\frac {\Theta (\alpha,\phi)_{K ^ \l K_\l'}} {[K_\l: K_\l']},
\end{equation}
for any $K_\l'\subset K_\l $
fixing both $\alpha$
and $\phi $. 
Because the cycles $Z (T,\phi)_{K ^ \l K_\l'} $
are compatible under pullback, $\Theta_{K ^ \l} (\alpha,\phi) $
does not depend on the choice of $K_\l' $.
\begin{prop}\label{prop:equivariance_and_modularity}
Suppose $R$ is a $\Z[1/\l]$-algebra containing all eighth and $\l$th power roots of unity, and the pro-order of $K_\l$ is invertible in $R$. Then $\Theta_{K ^ \l} $
defines a $\spin (V) (\Q_\l)\times\MP_{2 n} (\Q_\l) $-equivariant map
$$\Test_{K ^ \l} (V, R)\otimes \mathcal S(V ^ n\otimes\A_f, R)^{K^\l}\rightarrow M_{\frac {\dimension (V)} {2}, R} ^ {2 n}. $$
\end{prop}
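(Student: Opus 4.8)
The plan is to deduce everything from the corresponding facts about the classical archimedean theta lift, namely Lemma \ref{theta fourier coeffs}, together with the integrality result Proposition \ref{prop:integral_structure_stable_metaplectic}. First I would reduce the modularity statement — that $\Theta_{K^\l}(\alpha,\phi)$, a priori only a formal $q$-series indexed by $\Sym_n(\Q)_{\geq 0}$, actually lies in $M^{2n}_{\dim(V)/2}$ — to the positive definite case, because in that case $\Test_{K^\l}(V,R)$ is just a space of $R$-valued functions on the finite set $\Sh_{K^\l K_\l'}(V)$, and Lemma \ref{theta fourier coeffs} literally identifies $\Theta(\alpha,\phi)_{K^\l K_\l'}$ (up to the explicit nonzero constant $C_{K^\l K_\l'}$, which is invertible in $R$ once the pro-order of $K_\l$ is) with the honest classical theta lift $\theta_{\phi_f\otimes\phi_\infty}(\overline{\alpha})\in M^{2n}_{\dim(V)/2}$. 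For the indefinite signature $(m,2)$ case, one uses that the special cycles $Z(T,\phi)$ are linear combinations of $Z(g,V_0,V)$ attached to embedded positive-definite Shimura subvarieties $\Sh_{K_{0,g}}(V_0^\perp)$, and the Fourier coefficients of $\Theta(\alpha,\phi)$ are then obtained by pairing $\alpha$ against these cycle classes; Kudla's modularity (or, for the integral refinement, the Bruinier–Westerholt-Raum theorem cited in the introduction) gives that the resulting generating series is modular of the stated weight. The key point is that for each fixed $T$ the coefficient $\alpha(Z(T,\phi))$ lies in $R$ by construction, so one stays inside $M^{2n}_{\dim(V)/2,R}$.

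Next I would verify the two equivariances separately. The $\spin(V)(\Q_\l)$-equivariance is essentially formal: the action on $\Test_{K^\l}(V,R)$ is the one dual to the action on cycles/cohomology, and $Z(T, h\cdot\phi)_K$ is related to the $h$-translate of $Z(T,\phi)$ via the morphisms $\Sh_{K_{0,g}}(V_0^\perp)\to\Sh_K(V)$; tracking the definition of the renormalized $\Theta_{K^\l}$ (dividing by $[K_\l:K_\l']$, which makes it independent of the level $K_\l'$) shows the lift intertwines these actions. The $\MP_{2n}(\Q_\l)$-equivariance is the substantive half: one uses that the local Weil representation $\omega_\psi$ of $\MP_{2n}(\Q_\l)\times\SO(V)(\Q_\l)$ on $\mathcal S(V^n\otimes\Q_\l,R)$ is defined by the explicit formulas \eqref{eq:Weil_formulas}, and that by the obvious equivariance of the adelic theta integral $g\cdot\theta_{\phi_f\otimes\phi_\infty}(\overline\alpha)=\theta_{\omega_\psi(1,g)\phi_f\otimes\phi_\infty}(\overline\alpha)$ for $g\in\MP_{2n}(\Q_\l)$. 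Combined with the computation of Fourier coefficients in Lemma \ref{theta fourier coeffs}, this says exactly that $\Theta_{K^\l}(\alpha,\omega_\psi(1,g)\phi)=g\cdot\Theta_{K^\l}(\alpha,\phi)$ as elements of $M^{2n}_{\dim(V)/2}$; one must check the right-hand side still lands in the $R$-integral subspace, which is where Proposition \ref{prop:integral_structure_stable_metaplectic} is invoked (it gives that $M^{2n}_{k,R}$ is $\MP_{2n}(\Q_\l)$-stable under the hypotheses on $R$).

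The main obstacle I anticipate is bookkeeping rather than anything deep: making the passage from the fixed-level lifts $\Theta(\alpha,\phi)_{K^\l K_\l'}$ to the renormalized limit $\Theta_{K^\l}(\alpha,\phi)$ fully compatible with both group actions and with the $R$-structure simultaneously, and in the indefinite case, being careful that the definition of $Z(T,\phi)$ for non-positive-definite $T$ via Kudla's recipe still produces coefficients in $R$ and is compatible with the Weil-representation action. In particular one should check that the renormalization constant and the constant $C_K$ of Lemma \ref{theta fourier coeffs} are units in $R$ precisely because the pro-order of $K_\l$ is invertible — this is the only place the hypothesis on $R$ beyond containing roots of unity is used, and it is what guarantees the lift is defined over $R$ rather than just over $R[1/N]$ for some $N$. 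Once these compatibilities are in place, the proposition follows by assembling Lemma \ref{theta fourier coeffs}, Proposition \ref{prop:integral_structure_stable_metaplectic}, and Kudla's modularity conjecture (resolved as cited).
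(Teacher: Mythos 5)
Your proposal correctly identifies the two cases (definite vs.\ signature $(m,2)$), handles the definite case via Lemma \ref{theta fourier coeffs}, and correctly cites Kudla/Bruinier--Westerholt-Raum for modularity in the indefinite case. However, there is a gap in the $\MP_{2n}(\Q_\l)$-equivariance argument for the \emph{indefinite} case: your proposed argument via ``$g\cdot\theta_{\phi_f\otimes\phi_\infty}(\overline\alpha)=\theta_{\omega_\psi(1,g)\phi_f\otimes\phi_\infty}(\overline\alpha)$ plus Lemma \ref{theta fourier coeffs}'' is a definite-case argument --- the adelic theta integral and the Fourier-coefficient identity of Lemma \ref{theta fourier coeffs} are only formulated for $V$ positive definite, where $\alpha$ is a function on a finite double-coset set. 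In the indefinite case $\alpha$ is a functional on Chow groups, there is no adelic theta integral to manipulate, and the equivariance of the formal generating series under $\spin(V)(\Q_\l)\times\MP_{2n}(\Q_\l)$ requires separate geometric input; the paper handles this by citing Kudla's \cite[Corollary 5.11]{kudla1997orthogonal} together with Zhang's \cite[Corollary 2.12]{zhang2009modularity}. Your plan as written has nothing to fill that role.

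A secondary point: the way you handle $R$-integrality is more roundabout than necessary. The coefficients $\alpha(Z(T,\phi)_{K^\l K_\l'})/[K_\l:K_\l']$ of $\Theta_{K^\l}(\alpha,\phi)$ already lie in $R$ by the construction itself (the index is a unit by hypothesis), so $\Theta_{K^\l}(\alpha,\phi)\in M^{2n}_{\dim(V)/2,R}$ once modularity over $\C$ is known; and once the equivariance identity $g\cdot\Theta_{K^\l}(\alpha,\phi)=\Theta_{K^\l}(\alpha,\omega_\psi(1,g^{-1})\phi)$ is verified over $\C$, its right-hand side is again in $M^{2n}_{\dim(V)/2,R}$ by the same token. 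This is the cleaner reduction the paper uses (``both modularity and equivariance can be checked after extending scalars to $\C$''). Proposition \ref{prop:integral_structure_stable_metaplectic} is needed only to make the $\MP_{2n}(\Q_\l)$-action on $M^{2n}_{\dim(V)/2,R}$ well-defined in the first place, so that the statement to be proved makes sense --- it is not the mechanism by which integrality of the theta lift is established.
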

\begin{proof}
Note that both modularity and equivariance can be checked after extending scalars, so without loss of generality suppose $R =\C $. In the definite case, the proposition is a formal consequence of Lemma \ref{theta fourier coeffs} above. In the indefinite case, the modularity of the formal theta lift is \cite[Theorem 6.2]{bruinier2015kudla} and the equivariance is \cite[Corollary 5.11]{kudla1997orthogonal} combined with \cite[Corollary 2.12]{zhang2009modularity}.
\end{proof}

\section {Construction of Galois cohomology classes and special periods}\label{sec:construction}

Before beginning this section, we establish the notation that will be in force for most of the rest of the paper.
\begin{notation}\label{notation:pi_basic}
\leavevmode
    \begin{enumerate}
        \item Fix a relevant automorphic representation $\pi$ of $\GSP_4$ with trivial central character. Fix as well a strong coefficient field $E_0$ for $\pi$ (Definition \ref{def:coefficient_pi}). If $\pi$ is endoscopic associated to a pair $(\pi_1,\pi_2)$ of automorphic representations of $\GL_2(\A)$, then we also assume without loss of generality that $E_0$ is a strong coefficient field for $\pi_1$ and $\pi_2$.\footnote{In fact, it is not difficult to check that this last assumption is automatic. The main point is to  use Hodge-Tate theory to verify that, for all $\iota: \overline \Q_p \isomorphism \C$, $\rho_{\pi_1,\iota}$ and $\rho_{\pi_2,\iota}$ cannot differ by a Galois twist.}
        \item We fix a finite set $S$ of finite primes, containing 2 and all $\l$ such that $\pi_\l$ is ramified. 
\item For all primes $\p$ of $E_0$, let $O_\p\subset E_{0,\p}$ be the ring of integers, and let $\varpi_\p\in O_\p$ be a uniformizer, with residue field $k_\p= O_\p/\varpi_\p$. We drop the subscript $\p$ when there is no risk of confusion.
For a finite set of primes $S' \supset S$, we write $\m_{\pi,\p}^{S'} \subset {\T}^{S'}_{O_\p}$ for the maximal ideal defined by the Hecke eigenvalues of $\pi$, and drop the decorations when they are clear from context. 
\item Notation \ref{notation:V_T_pi} remains in force.
    \end{enumerate}
\end{notation}
\begin{rmk}
    We assume $2\in S$ so that 2 is not admissible under Definition \ref{def:admissible} below, and to prove the (convenient but not essential) Lemma \ref{lem:tidy_suffices}.
\end{rmk}
\subsection{Assumptions on $\p$}
We now define some assumptions on primes $\p$ of $E_0$. Let  $p$ denote the  residue characteristic.
\begin{assumption}\label{assumptions_on_p}
\leavevmode
\begin{enumerate}
    \item\label{assume:pi_p_unr} $p$ does not lie in $S$.
    \item \label{assume:pi_generic}There exists a rational prime $\l\not\in S\cup \set{p}$ such that $\l^4 \not\equiv 1 \pmod p$ and $\overline\rho_{\pi,\p}(\Frob_\l)$ has distinct eigenvalues, no two having ratio $\l$. 
    
    The final assumption depends on whether $\pi$ is endoscopic. 
    \item\label{assume:irreducible}
\begin{itemize}
    \item     If $\pi$ is not endoscopic, then $\overline\rho_{\pi,\p}$ is absolutely irreducible. 
\item If $\pi$ is endoscopic associated to a pair $(\pi_1,\pi_2)$ of automorphic representations of $\GL_2(\A)$, then $\overline\rho_{\pi_1,\p}$ and $\overline\rho_{\pi_2,\p}$ are both absolutely irreducible.
\end{itemize}
\end{enumerate}
\end{assumption}
\begin{rmk}\label{rmk:p>5}
    Assumption \ref{assumptions_on_p}(\ref{assume:pi_generic}) implies that $p > 5$.
\end{rmk}
\begin{notation}
\leavevmode
Under Assumption \ref{assumptions_on_p}(\ref{assume:irreducible}):
\begin{enumerate}
    \item   Let  $T_{\pi,\p}$  be an $O_\p[G_\Q]$-module such that $T_{\pi,\p}\otimes \Q_p = V_{\pi,\p}$; Assumption \ref{assumptions_on_p}(\ref{assume:irreducible}) implies that $T_{\pi,\p}$ is unique up to isomorphism. 
\item For all $n \geq 1$, we write $T_{\pi,\p, n} \coloneqq T_{\pi,\p}/\varpi_\p^n T_{\pi,\p}$. Also let $\overline T_{\pi,\p}\coloneqq T_{\pi,\p,1}$. 
\item When $\p$ is clear from context, it may be dropped from the above notations. 
\end{enumerate}
\end{notation}

\begin{rmk}\label{rmk:rho_O_valued}
Under Assumption \ref{assumptions_on_p}(\ref{assume:irreducible}), $T_{\pi,\p}$ is isomorphic to its $O_\p$-dual; we use this to view $\rho_{\pi,\p}$ as valued in $\GSP_4(O_\p)$, and $\overline\rho_{\pi,\p}$ as valued in $\GSP_4(k_\p)$. 
\end{rmk}
\begin{lemma}\label{lem:easy_assumptions_cofinite}
    Assumption \ref{assumptions_on_p} holds for all but finitely many primes $\p$ of $E_0$.
\end{lemma}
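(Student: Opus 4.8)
The claim is that Assumption \ref{assumptions_on_p} (conditions (\ref{assume:pi_p_unr}), (\ref{assume:pi_generic}), (\ref{assume:irreducible})) holds for all but finitely many primes $\p$ of $E_0$. I would treat the three conditions in turn. Condition (\ref{assume:pi_p_unr}) is immediate: $S$ is a fixed finite set of rational primes, and only finitely many primes $\p$ of $E_0$ lie over primes of $S$, so $p\notin S$ for cofinitely many $\p$.

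For condition (\ref{assume:irreducible}), I would separate the endoscopic and non-endoscopic cases, since the coefficient field $E_0$ was chosen to be a strong coefficient field for $\pi_1,\pi_2$ in the endoscopic case (Notation \ref{notation:pi_basic}). In the non-endoscopic case, one wants $\overline\rho_{\pi,\p}$ absolutely irreducible for cofinitely many $\p$. The standard argument: by Lemma \ref{lem:reducible_endoscopic}, $V_{\pi,\iota}$ is absolutely irreducible for all $\p$ with residue characteristic $>3$ (since $\pi$ is non-endoscopic); more is true — $\rho_{\pi,\p}$ has large image in the appropriate sense (this is essentially the content of Appendix \ref{sec:large_image}, or can be extracted from the compatible-family machinery). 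One then invokes the general principle that for a compatible system of Galois representations which is absolutely irreducible in characteristic zero, the residual representations are absolutely irreducible for all but finitely many $\p$: this follows because the traces $\trace\rho_{\pi,\p}(\Frob_\l)$ generate a subalgebra of $M_4(O_\p)$ whose reduction mod $\varpi_\p$ must be small only for finitely many $\p$ — concretely, reducibility of $\overline\rho_{\pi,\p}$ forces congruences among the characteristic-polynomial coefficients at a fixed finite set of Frobenii (chosen using Chebotarev so that $\rho_{\pi,\p}$ has "enough" distinct eigenvalues), and each such congruence rules out only finitely many $\p$. In the endoscopic case one applies the same reasoning separately to the two-dimensional $\rho_{\pi_i,\p}$, $i=1,2$, where the corresponding statement (residual absolute irreducibility of a non-CM modular Galois representation for cofinitely many $\p$) is classical (Ribet, Serre).

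For condition (\ref{assume:pi_generic}), I would argue as follows. Fix a prime $\l_0\notin S$ with $\overline\rho_{\pi,\p_0}(\Frob_{\l_0})$ having distinct eigenvalues no two in ratio $\l_0$, for \emph{some} auxiliary $\p_0$ — such $\l_0$ exists by Chebotarev applied to the characteristic-zero image, which is large (non-endoscopic case: Lemma \ref{lem:reducible_endoscopic} plus large-image results; endoscopic case: the image contains a suitable product of $\SL_2$'s up to the ratio-$\l$ condition, which one handles by choosing $\l_0$ generic for both $\pi_1$ and $\pi_2$ and incompatible). Actually the cleaner approach: the Satake parameter $\{\alpha,\beta,\nu/\alpha,\nu/\beta\}$ of $\pi_{\l_0}$ with $\nu=\l_0^{-1}\cdot(\text{central character value})=\l_0$ (trivial central character, with the chosen normalization) consists of algebraic numbers in $E_0$ (or a fixed finite extension); the "generic" conditions — the four eigenvalues of $\rho_{\pi,\p}(\Frob_{\l_0})$ distinct, no two in ratio $\l_0$, and $\l_0^4\not\equiv 1$ — are each the non-vanishing mod $\varpi_\p$ of a fixed nonzero element of $O_{E_0'}$ for a fixed finite extension $E_0'/E_0$ (the eigenvalue differences, the ratio-$\l_0$ differences, $\l_0^4-1$). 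A fixed nonzero algebraic number is a unit at all but finitely many primes, so all these conditions hold simultaneously for cofinitely many $\p$, provided the relevant elements are genuinely nonzero — i.e. provided $\pi_{\l_0}$ genuinely has distinct Satake parameters with no ratio $\l_0$ and $\l_0^4\neq 1$, which is exactly why we chose $\l_0$ via Chebotarev in the first place.

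\textbf{Main obstacle.} The delicate point is guaranteeing the existence of a \emph{single} rational prime $\l_0$ for which the characteristic-zero Satake parameters of $\pi$ at $\l_0$ satisfy all the genericity inequalities (distinctness, no ratio $\l_0$, $\l_0^4\neq 1$) — and, in the endoscopic case, simultaneously for $\pi_1$ and $\pi_2$ while keeping the four combined parameters distinct. This requires knowing the image of the $p$-adic (equivalently, the $\ell$-adic, via compatibility) Galois representation is large enough — essentially Zariski-density of the image together with the fact that the relevant conditions cut out proper closed subvarieties of $\GSP_4$ (or of $(\GL_2)^2$) — which is supplied by the large-image results of Appendix \ref{sec:large_image} (and, for endoscopic $\pi$, the non-CM hypothesis on $\pi_1,\pi_2$ together with their distinctness from Definition \ref{def:endoscopic_GSP}). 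Once such $\l_0$ is fixed, the descent to "cofinitely many $\p$" is the routine unit-at-almost-all-primes argument sketched above. I would organize the proof so that the Chebotarev/large-image input is cited cleanly and the finiteness conclusion is a one-line consequence.
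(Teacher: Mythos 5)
Your three-part decomposition matches the paper's, and the overall shape of the argument is right, but two of the three parts are handled differently (and one less carefully) than in the paper. For condition (\ref{assume:irreducible}), the paper simply cites Weiss \cite[Theorem 1.2(i)]{weiss2022images} in the non-endoscopic case and Ribet \cite[Theorem 2.1]{ribet1985largeimage} in the endoscopic case. Your sketch --- that residual reducibility forces congruences among characteristic-polynomial coefficients at a fixed finite set of Frobenii, each ruling out only finitely many $\p$ --- is a heuristic rather than a proof: there are several shapes that residual reducibility can take, and it is not immediate that all of them are captured by finitely many congruence conditions chosen uniformly in $\p$. This part needs to be replaced by a citation (as the paper does) or by a genuinely careful compatible-system argument. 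For condition (\ref{assume:pi_generic}), the ``main obstacle'' you isolate --- finding a single $\l_0$ with distinct Satake parameters, \emph{no two in ratio} $\l_0$, and $\l_0^4 \neq 1$ --- is actually a non-obstacle once you use temperedness. The paper fixes one auxiliary $\p$, invokes Sen's theorem \cite[Theorem 1]{sen1973liealgebras} together with the distinct Hodge--Tate weights $\{-1,0,1,2\}$ from Theorem \ref{thm:rho_pi_LLC}(\ref{part:rho_pi_LLC_HT}) to produce an element with distinct eigenvalues, Chebotarev to realize it as some $\Frob_\l$, and then Ramanujan (the purity statement in Theorem \ref{thm:rho_pi_LLC}(\ref{part:rho_pi_LLC1})) to force $|\alpha| = |\beta| = 1$ for the Satake parameters of $\pi_\l$. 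Since $|\l|^{\pm 1} \neq 1$, no ratio of eigenvalues can equal $\l$ at characteristic zero, and the ``cofinitely many $\p'$'' conclusion is then the routine unit-at-almost-all-primes argument you describe. This dispenses entirely with the appeal to the large-image results of Appendix \ref{sec:large_image}: you never need the image to be Zariski-dense, only that it contains a single element with distinct eigenvalues, and this works uniformly in the endoscopic and non-endoscopic cases without any auxiliary hypotheses on $\pi_1, \pi_2$.
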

    \begin{proof}
That Assumption \ref{assumptions_on_p}(\ref{assume:irreducible}) holds for all but finitely many primes $\p$ follows from \cite[Theorem 1.2(i)]{weiss2022images} in the non-endoscopic case; in the endoscopic case, it follows from \cite[Theorem 2.1]{ribet1985largeimage}. It is also obvious that Assumption \ref{assumptions_on_p}(\ref{assume:pi_p_unr}) holds for cofinitely many $\p$.
We consider Assumption \ref{assumptions_on_p}(\ref{assume:pi_generic}).
    
    First fix an arbitrary $\p$ with residue characteristic $p$. The image of $\rho_{\pi,\p}$ contains an element with distinct eigenvalues by \cite[Theorem 1]{sen1973liealgebras} and Theorem \ref{thm:rho_pi_LLC}(\ref{part:rho_pi_LLC_HT}). Hence by the Chebotarev density theorem, there exists a prime $\l\not\in S\cup\set{p}$ such that $\Frob_\l$ has distinct eigenvalues on $\rho_{\pi,\p}$. The Satake parameter  of $\pi_\l$ is therefore multiplicity-free and of the form $\set{\alpha,\beta,\alpha^{-1}, \beta^{-1}}$ with $|\alpha| = |\beta| = 1$. 
Let $E_1 = E_0(\alpha,\beta)$, which is a finite extension because the Hecke eigenvalues of $\pi_\l$ lie in $E_0$. For all but finitely many primes $\p_1\nmid \l$ of $E_1$, we will have
\begin{align*}
    \alpha^2&\not\equiv 1, \l, \l^{-1} \pmod {\p_1}\\
      \beta^2&\not\equiv 1, \l, \l^{-1} \pmod {\p_1}\\
    \alpha\beta &\not\equiv 1, \l, \l^{-1} \pmod {\p_1}\\
    \alpha/\beta &\not\equiv 1, \l, \l^{-1} \pmod {\p_1},\\
    \l^4  &\not\equiv 1, \pmod{\p_1}.
\end{align*}
For such a $\p_1$, let  $\p' = \p_1|_{E_0}$ and let $p'$ be the residue characteristic of $\p'$. Then we have $\l^4 \not\equiv 1 \pmod{ p'}$ and the eigenvalues of $\Frob_\l$ on $\overline \rho_{\pi,\p'}$ are
distinct and not of ratio $\l$, i.e. Assumption \ref{assumptions_on_p}(\ref{assume:pi_generic}) holds for $\p'$. 
\end{proof}

\begin{lemma}\label{lem:H1_tors_free_new}
    Assume $\p$ satisfies Assumption \ref{assumptions_on_p}(\ref{assume:irreducible}). Then:
    \begin{enumerate}
        \item \label{lem:H1_tors_free_part}$H^1(\Q, T_{\pi})$ is $\varpi$-torsion-free.
        \item \label{lem:H1_tf_order}
        Suppose given  $c\in H^1(\Q, T_{\pi})$ and $a\geq 1$
        such that $c\not\in \varpi^a H^1(\Q, T_{\pi})$. Then for all $n \geq 1$, the image $c_n \in H^1(\Q, T_{\pi,n})$ satisfies 
        $$\ord_{\varpi}  c_n > n - a.$$
    \end{enumerate}
\end{lemma}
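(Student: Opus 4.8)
The statement is a standard lemma about torsion-freeness of the global Galois cohomology of $T_\pi$ and the resulting control on orders of elements modulo $\varpi^n$. The plan is to deduce part (\ref{lem:H1_tors_free_part}) from the absolute irreducibility of $\overline\rho_{\pi,\p}$ (under Assumption \ref{assumptions_on_p}(\ref{assume:irreducible})), and then to extract part (\ref{lem:H1_tf_order}) formally from part (\ref{lem:H1_tors_free_part}) by chasing the long exact cohomology sequence associated to multiplication by $\varpi^n$.

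For part (\ref{lem:H1_tors_free_part}): the $\varpi$-torsion in $H^1(\Q, T_\pi)$ is the image of the connecting map $H^0(\Q, V_\pi/T_\pi) \to H^1(\Q, T_\pi)$ coming from the short exact sequence $0 \to T_\pi \to V_\pi \to V_\pi/T_\pi \to 0$; concretely, the $\varpi$-torsion of $H^1(\Q,T_\pi)$ is a quotient of $H^0(\Q, T_\pi/\varpi T_\pi) = (\overline T_{\pi,\p})^{G_\Q}$. Since $\overline\rho_{\pi,\p}$ is absolutely irreducible — in the endoscopic case, $\overline\rho_{\pi_1,\p}$ and $\overline\rho_{\pi_2,\p}$ are both absolutely irreducible and, since they have distinct Hodge-Tate weights by Theorem \ref{thm:rho_GL2_LLC}(\ref{part:rho_GL2_LLC_HT}), no common subquotient, so $\overline T_{\pi,\p}$ has no nonzero $G_\Q$-invariants — we get $(\overline T_{\pi,\p})^{G_\Q} = 0$, hence $H^1(\Q, T_\pi)$ has no $\varpi$-torsion. (One may alternatively phrase this via $H^1(\Q, T_\pi) = \varprojlim_n H^1(\Q, T_{\pi,n})$, noting $H^0(\Q, T_{\pi,n}) = 0$ for all $n$, so the inverse limit has no $\varpi$-torsion; but the direct argument above is cleanest.)

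For part (\ref{lem:H1_tf_order}): consider the exact sequence $0 \to T_\pi \xrightarrow{\varpi^n} T_\pi \to T_{\pi,n} \to 0$, which yields
\begin{equation*}
H^1(\Q, T_\pi) \xrightarrow{\varpi^n} H^1(\Q, T_\pi) \to H^1(\Q, T_{\pi,n}) \xrightarrow{\delta} H^2(\Q, T_\pi)[\varpi^n].
\end{equation*}
Thus $c_n$ is the image of $c$ under the quotient map $H^1(\Q,T_\pi) \to H^1(\Q,T_\pi)/\varpi^n H^1(\Q,T_\pi) \hookrightarrow H^1(\Q,T_{\pi,n})$. Since $H^1(\Q, T_\pi)$ is $\varpi$-torsion-free by part (\ref{lem:H1_tors_free_part}), hence a torsion-free (indeed finite free, being finitely generated over $O_\p$) module, the hypothesis $c \not\in \varpi^a H^1(\Q,T_\pi)$ means precisely that $c$ has $\varpi$-adic valuation $< a$ in the free module $H^1(\Q,T_\pi)$; writing $c = \varpi^b c'$ with $c'$ primitive and $b < a$, the image $c_n$ equals $\varpi^b \overline{c'}$ in the submodule $H^1(\Q,T_\pi)/\varpi^n \hookrightarrow H^1(\Q,T_{\pi,n})$, and $\overline{c'}$ (the reduction of a primitive vector) has order exactly $\varpi^n$ in the free quotient $H^1(\Q,T_\pi)/\varpi^n$. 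Hence $\ord_\varpi c_n = n - b > n - a$.

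I do not expect any serious obstacle here: both parts are soft consequences of the irreducibility hypothesis and the structure theory of finitely generated modules over the DVR $O_\p$. The only point requiring a moment's care is the endoscopic case of part (\ref{lem:H1_tors_free_part}), where one must rule out $G_\Q$-invariants in $\overline T_{\pi,\p}$ — this uses not just irreducibility of the two constituents but the fact that they are non-isomorphic mod $\p$, which follows from their distinct Hodge-Tate weights (so they cannot even be twists of one another, cf. the footnote in Notation \ref{notation:pi_basic}). I would state this reduction explicitly rather than leave it implicit.
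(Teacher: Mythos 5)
Your proof is correct and fundamentally uses the same two ingredients as the paper: $H^0(\Q, \overline T_\pi) = 0$ for part (\ref{lem:H1_tors_free_part}), and a diagram chase through the multiplication-by-$\varpi$ exact sequences for part (\ref{lem:H1_tf_order}). The bookkeeping in part (\ref{lem:H1_tf_order}) is organized differently: you decompose $c = \varpi^b c'$ inside $H^1(\Q, T_\pi)$ itself and track orders under the injection $H^1(\Q, T_\pi)/\varpi^n \hookrightarrow H^1(\Q, T_{\pi,n})$, whereas the paper works entirely with finite modules, using the injectivity of $H^1(\Q, T_{\pi,a}) \xrightarrow{\varpi^{n-a}} H^1(\Q, T_{\pi,n})$ (again by vanishing of $H^0$) to reduce to showing $c_a \neq 0$, which then follows directly from $c \notin \varpi^a H^1(\Q, T_\pi)$ via the $\varpi^a$-exact sequence. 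The two arguments carry identical content; the paper's version avoids discussing the $O$-module structure of $H^1(\Q, T_\pi)$ at all.

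Two small points worth tightening. First, your parenthetical assertion that $H^1(\Q, T_\pi)$ is finitely generated over $O_\p$ is not actually used and is worth cutting: the existence of the decomposition $c = \varpi^b c'$ with $c' \notin \varpi H^1(\Q, T_\pi)$ needs only torsion-freeness (which caps $b$ at $a-1$ thanks to the hypothesis $c \notin \varpi^a H^1$), and the order of $\overline{c'}$ in $H^1/\varpi^n$ is pinned down by torsion-freeness alone, not freeness. Second, in the endoscopic case of part (\ref{lem:H1_tors_free_part}), the appeal to distinct Hodge--Tate weights is a red herring: to conclude $(\overline T_\pi)^{G_\Q} = 0$, you only need that each of the two-dimensional irreducible constituents $\overline\rho_{\pi_1,\p}$, $\overline\rho_{\pi_2,\p}$ has no nonzero invariant vector, which is automatic from absolute irreducibility in dimension $\geq 2$. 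Whether the two constituents share a subquotient is irrelevant to the vanishing of $H^0$.
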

\begin{proof}
    The assumption implies that $H^0(\Q, \overline T_{\pi}) = 0$. The long exact sequence in Galois cohomology associated to 
    $$0 \to T_{\pi} \xrightarrow{\varpi} T_{\pi} \to \overline T_{\pi} \to 0$$ therefore gives (\ref{lem:H1_tors_free_part}).
    For (\ref{lem:H1_tf_order}), a similar argument to (\ref{lem:H1_tors_free_part}) shows that the map $H^1(\Q, T_{\pi,a}) \xrightarrow{\varpi^{n-a}} H^1(\Q, T_{\pi,n})$ is injective, so the 
     kernel of $$\varpi^{n-a}: H^1(\Q, T_{\pi,n}) \to H^1(\Q, T_{\pi,n})$$ coincides with the kernel of $H^1(\Q, T_{\pi,n}) \to H^1(\Q, T_{\pi,a})$. Hence it suffices to show that $c_a \neq 0$, which is clear from the assumption $c\not\in \varpi^a H^1(\Q, T_\pi)$ and the long exact sequence in Galois cohomology associated to    
    $$0 \to T_{\pi} \xrightarrow{\varpi^a} T_{\pi} \to  T_{\pi,a} \to 0.$$
\end{proof}
\begin{lemma}\label{lem:m_is_nonEisgeneric}
    Suppose $\p$ satisfies Assumption \ref{assumptions_on_p}. Then $\m_{\pi, \p}\subset \T^S_{O}$ is non-Eisenstein and generic. 
\end{lemma}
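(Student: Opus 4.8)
The plan is to verify the two conditions — non-Eisenstein and generic — separately, in each case unpacking the relevant definitions and feeding in the appropriate part of Assumption \ref{assumptions_on_p}.

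First I would check genericity. By Definition \ref{def:generic}, $\m_{\pi,\p}\subset \T^S_O$ is generic if there exist infinitely many $\l\not\in S$ such that the induced maximal ideal of $\T_{\l,O}$ is generic, i.e.\ $p\nmid 2\l(\l^4-1)$ and the Satake parameter $\{\alpha,\beta,\nu/\alpha,\nu/\beta\}$ of $\m_{\pi,\p}$ at $\l$ is multiplicity-free with no two elements having ratio $\l$. Assumption \ref{assumptions_on_p}(\ref{assume:pi_generic}) hands us one such prime $\l_0$: there $\overline\rho_{\pi,\p}(\Frob_{\l_0})$ has distinct eigenvalues, no two of ratio $\l_0$, and $\l_0^4\not\equiv 1\pmod p$ (and $p\notin S$, $p$ odd by Remark \ref{rmk:p>5}), so the induced maximal ideal at $\l_0$ is generic. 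To upgrade ``one such prime'' to ``infinitely many,'' note that by the remark following Definition \ref{def:generic}, $\m_{\pi,\p}$ is generic if and only if the associated Galois representation $\overline\rho_{\pi,\p}$ is generic in the sense of Definition \ref{def:generic}(3), and genericity of $\overline\rho_{\pi,\p}$ is precisely the existence of a \emph{single} prime $\l\notin S$ with $p\nmid 2\l(\l^4-1)$ such that $\overline\rho_{\pi,\p}(\Frob_\l)$ has distinct eigenvalues, no two of ratio $\l$ — which is exactly Assumption \ref{assumptions_on_p}(\ref{assume:pi_generic}). (Alternatively, if one wants to argue directly with the ``infinitely many'' formulation: the set of $g$ in the image of $\overline\rho_{\pi,\p}$ with distinct eigenvalues no two of ratio $\l$ is, for all but finitely many residue characteristics' worth of constraints, a nonempty open condition, and Chebotarev then produces infinitely many $\Frob_\l$ landing in it.)

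Next I would check that $\m_{\pi,\p}$ is non-Eisenstein, i.e.\ that no associated $\overline\rho_{\m_{\pi,\p}}$ factors through a proper Levi $\widehat M(\overline k)\hookrightarrow \widehat{\GSP_4}(\overline k)$. Since $\p$ satisfies Assumption \ref{assumptions_on_p} and in particular $p>5$ (Remark \ref{rmk:p>5}), we may compare with $\overline\rho_{\pi,\p}$, which is strongly associated to $\m_{\pi,\p}$ (Example after Definition \ref{def:Galois_type}, using the normalization by the scalar $\mathbb G_m\hookrightarrow \GSP_4$). In the non-endoscopic case, Assumption \ref{assumptions_on_p}(\ref{assume:irreducible}) says $\overline\rho_{\pi,\p}$ is absolutely irreducible, so it cannot factor through any proper Levi, and $\m_{\pi,\p}$ is non-Eisenstein. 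In the endoscopic case, $\overline\rho_{\pi,\p}\cong \overline\rho_{\pi_1,\p}\oplus\overline\rho_{\pi_2,\p}$ with each summand absolutely irreducible and two-dimensional by Assumption \ref{assumptions_on_p}(\ref{assume:irreducible}); the proper Levi subgroups of $\GSP_4$ are (conjugates of) the Siegel Levi $\GL_2\times\GL_1$, the Klingen Levi $\GL_1\times\GSp_2$, and the torus, and a sum of two absolutely irreducible two-dimensional pieces with full similitude factors through the Siegel Levi $\GL_2\times_{\mathbb G_m}\GL_2$ but through none of $\widehat M$ for a \emph{proper parabolic of $\GSP_4$} unless one of the $\overline\rho_{\pi_i,\p}$ is reducible — which is excluded. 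Thus $\m_{\pi,\p}$ is non-Eisenstein in either case.

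The main obstacle is really just bookkeeping in the endoscopic case: one must be careful about what ``Eisenstein'' means (Definition of Eisenstein ideal: $\overline\rho_\m$ factors through $\widehat M$ for a \emph{standard} proper parabolic $P=MN\subsetneq G$), and confirm that the endoscopic decomposition $\overline\rho_{\pi_1,\p}\oplus\overline\rho_{\pi_2,\p}$, while reducible, does not land in such an $\widehat M(\overline k)$ — the point being that the relevant Levi for the endoscopic locus is $(\GL_2\times_{\mathbb G_m}\GL_2)(\overline k)$, which is the Levi of $\GSp_4$ viewed inside $\widehat{\GSO}_{2,2}\hookrightarrow\widehat{\GSp}_4$ rather than a standard-Levi of $\GSp_4$ itself; since each $\overline\rho_{\pi_i,\p}$ is absolutely irreducible, the image is not contained in any smaller parabolic Levi. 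Everything else is a direct translation of the hypotheses through the definitions, so the lemma follows.
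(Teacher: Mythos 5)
Your treatment of genericity is fine (and your Chebotarev aside spelling out ``one prime implies infinitely many'' makes explicit a step the paper leaves implicit, relying instead on the remark after Definition~\ref{def:generic}). The non-endoscopic part of the non-Eisenstein argument is also correct: an absolutely irreducible four-dimensional representation cannot factor through a proper Levi.

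There is a genuine gap in the endoscopic case, however. You conclude ``since each $\overline\rho_{\pi_i,\p}$ is absolutely irreducible, the image is not contained in any smaller parabolic Levi,'' but irreducibility of the two two-dimensional summands does \emph{not} by itself exclude the image from landing in a conjugate of the Siegel Levi. Factoring through the Siegel Levi of $\widehat{\GSP_4}$ means $\overline\rho_{\pi,\p}\cong\rho_0\oplus\omega_p\,\det(\rho_0)^{-1}\otimes\rho_0$ for some two-dimensional $\rho_0$. Since both $\overline\rho_{\pi_1,\p}$ and $\overline\rho_{\pi_2,\p}$ have determinant $\omega_p$ (the similitude character is cyclotomic), this forces $\overline\rho_{\pi_2,\p}\cong\overline\rho_{\pi_1,\p}$ --- and absolute irreducibility alone does not prevent two irreducible two-dimensional representations from being isomorphic. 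The missing input is Lemma~\ref{lem:residually distinct}, which uses the distinct archimedean weights of $\pi_1$, $\pi_2$ together with Fontaine--Laffaille theory to show that $\overline\rho_{\pi_1,\p}\not\cong\overline\rho_{\pi_2,\p}$ when $p>5$ and $\pi_p$ is unramified. This is exactly the step the paper's proof uses: after ruling out Klingen and Borel by the absence of stable lines (which \emph{is} supplied by Assumption~\ref{assumptions_on_p}(\ref{assume:irreducible})), the Siegel case is killed by Lemma~\ref{lem:residually distinct}, not by irreducibility of the summands. You should replace your last step with an explicit appeal to that lemma.
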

\begin{proof}
Recall from Remark \ref{rmk:rho_O_valued} that $\overline\rho_{\pi}$ is valued in  $\GSP_4(k)$. Then $\m_{\pi,\p}$ is clearly of Galois type associated to $\overline\rho_{\pi}$. 
The genericity  of $\m_{\pi,\p}$  (Definition \ref{def:generic}) therefore follows from Assumption \ref{assumptions_on_p}(\ref{assume:pi_generic}).
From Assumption \ref{assumptions_on_p}(\ref{assume:irreducible}), it follows that $\overline T_{\pi,\p}\otimes \overline\F_p$ contains no Galois-stable line.  So if $\m_{\pi,\p}$ were Eisenstein, $\overline\rho_{\pi,\p}$ would have to factor through a Siegel parabolic subgroup. In particular, then $\overline T_{\pi,\p}\otimes \overline\F_p =  \rho_0 \oplus \rho_0 \otimes \det\rho_0^{-1} \otimes \omega_p$, where $\rho_0: G_\Q \to \GL_2(\overline \F_p)$ is some irreducible representation, and $\omega_p$ is the mod-$p$ cyclotomic character. This is clearly impossible by Assumption \ref{assumptions_on_p}(\ref{assume:irreducible}) and Lemma  \ref{lem:residually distinct} below. 
\end{proof}
\begin{lemma}\label{lem:residually distinct}
Let $\p|p$ be a prime of $E_0$ such that $p > 5$ and $\pi_p$ is unramified. If $\pi$ is endoscopic associated to $(\pi_1,\pi_2)$, then $\overline\rho_{\pi_1}$ and $\overline\rho_{\pi_2}$ are not isomorphic. 
\end{lemma}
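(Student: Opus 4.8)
The plan is to reduce to a purely local statement at $p$ and invoke Fontaine--Laffaille theory. First I would record the relevant invariants of the two Galois representations. Since $\pi$ is relevant and endoscopic, Lemma~\ref{lem:when_endoscopic_relevant} shows that $\pi_1$ and $\pi_2$ are unitary cuspidal representations of $\GL_2$ over $\Q$ whose archimedean components are discrete series of weights $2$ and $4$; relabelling if necessary, I assume $\pi_1$ has weight $2$ and $\pi_2$ has weight $4$. Then Theorem~\ref{thm:rho_GL2_LLC}(\ref{part:rho_GL2_LLC_HT}) gives that $\rho_{\pi_1,\p}$ has Hodge--Tate weights $\{0,1\}$ while $\rho_{\pi_2,\p}$ has Hodge--Tate weights $\{-1,2\}$. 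Moreover, since $\pi_p$ is unramified the local components $\pi_{1,p}$ and $\pi_{2,p}$ are unramified, so by Theorem~\ref{thm:rho_GL2_LLC}(\ref{part:rho_GL2_LLC_1}) the restrictions $\rho_{\pi_1,\p}|_{G_{\Q_p}}$ and $\rho_{\pi_2,\p}|_{G_{\Q_p}}$ are crystalline (their Weil--Deligne representations are unramified with vanishing monodromy).

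Next I would argue by contradiction: suppose $\overline\rho_{\pi_1,\p}\cong\overline\rho_{\pi_2,\p}$. Twisting by the cyclotomic character, it follows that $\overline\rho_{\pi_1,\p}(1)|_{G_{\Q_p}}\cong\overline\rho_{\pi_2,\p}(1)|_{G_{\Q_p}}$, where now both sides are reductions of crystalline representations: $\rho_{\pi_1,\p}(1)|_{G_{\Q_p}}$ has Hodge--Tate weights $\{1,2\}$ and $\rho_{\pi_2,\p}(1)|_{G_{\Q_p}}$ has Hodge--Tate weights $\{0,3\}$, and (since $p>5$) both of these weight sets lie in the interval $[0,p-2]$. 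By Fontaine--Laffaille theory, the reduction of a crystalline representation whose Hodge--Tate weights lie in $[0,p-2]$ lies in the essential image of the Fontaine--Laffaille functor, with Fontaine--Laffaille weights equal to the Hodge--Tate weights; in the rank-two case this determines the semisimplified restriction to inertia as $\omega^a\oplus\omega^b$ (in the level-one case, $\omega$ the mod-$p$ cyclotomic character) or as $\omega_2^{a+pb}\oplus\omega_2^{b+pa}$ (in the level-two case, $\omega_2$ a fundamental character of level two), where $\{a,b\}$ is the weight set. Comparing the two sides, the weight sets $\{1,2\}$ and $\{0,3\}$ would have to give rise to the same inertial representation, which forces either $\{1,2\}\equiv\{0,3\}\pmod{p-1}$ in the level-one case, or $\{1+2p,\,2+p\}\equiv\{3p,\,3\}\pmod{p^2-1}$ in the level-two case; an elementary check shows neither congruence can hold for $p>5$, which is the desired contradiction.

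I expect the only genuinely delicate point to be the bookkeeping in the last step, and in particular the verification that both twisted representations stay inside the Fontaine--Laffaille range --- this is precisely where the hypothesis $p>5$ enters, since the weight-$4$ form contributes a Hodge--Tate weight range of length $3$ and one needs $3\le p-2$. The remaining inputs are standard: local--global compatibility for $\GL_2$ over $\Q$ (for crystallinity and the Hodge--Tate weights) and the explicit description of the restriction to inertia of a mod-$p$ Fontaine--Laffaille module. One could instead try to combine $\pi_1\neq\pi_2$ (Weissauer) with a Sturm-bound comparison of Hecke eigenvalues, but that route would only yield the conclusion for all but finitely many $\p$, whereas the Fontaine--Laffaille argument gives it for every $\p$ with $p>5$ and $\pi_p$ unramified.
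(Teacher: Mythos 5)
Your proposal is correct and takes essentially the same route as the paper: the paper's proof simply records the distinct Hodge--Tate weight sets $\{0,1\}$ and $\{-1,2\}$ and invokes Fontaine--Laffaille theory (\cite[Th\'eor\`eme 6.1]{fontaine1982laffaille}), which is exactly the argument you spell out. You add the useful detail of twisting into the Fontaine--Laffaille range and checking the resulting congruences on tame inertia explicitly; that is the standard unwinding of the cited theorem, so the two proofs agree in substance.
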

\begin{proof}
    Recall that $\rho_{\pi_1}$ and $\rho_{\pi_2}$ have  Hodge-Tate weights $\set{-1, 2}$ and $\set{0, 1}$ up to reording, by Lemma \ref{lem:when_endoscopic_relevant} and Theorem \ref{thm:rho_GL2_LLC}(\ref{part:rho_GL2_LLC_HT}). The lemma then follows from  Fontaine-Laffaille theory \cite[Th\'eor\`eme 6.1]{fontaine1982laffaille}.
\end{proof}

\subsection{Admissible primes}
In this section, $\p|p$ is a prime of $E_0$.
\begin{definition}\label{def:admissible}
\leavevmode
\begin{enumerate}
    \item We say a prime $q\not\in S \cup \set{p}$ is \emph{admissible} for  $\rho_{\pi}= \rho_{\pi,\p}$ if $q^4\not\equiv 1 \pmod p$ and the generalized eigenvalues of $\overline\rho_{\pi}(\Frob_q)$ are of the form $\set{q,\alpha, 1,q/\alpha}$, with $\alpha\neq \pm q, \pm 1, q^2, q^{-1}$.
    \item If $q$ is admissible, define $n(q) \geq 1$ to be the greatest integer such that $\det(\Frob_q - q|V_\pi)\equiv 0 \pmod {\varpi^{n(q)}}$. 
\item We say $q$ is $n$-\emph{admissible} if  it is admissible and $n(q) \geq n$.
 \item If $Q \geq 1$ is squarefree, we say $Q$ is \emph{admissible} (resp. $n$-\emph{admissible}) if all primes $q|Q$ are so.
 \item
 Analogously, an element $g\in G_\Q$ is called \emph{admissible} for $\rho_{\pi}$ if  $\nu_g\coloneqq \chi_{p,\cyc} (g)$ satisfies $\nu_g ^4\not \equiv 1 \pmod p$, and
     $g$ acts on $V_{\pi}$ with generalized eigenvalues $\set{\nu_g, 1, \alpha, \nu_g/\alpha}$ for some 
     $$\alpha\not\equiv \pm \nu_g, \pm 1, \nu_g^2, \nu_g^{-1} \pmod{\varpi}.$$
\end{enumerate}
\end{definition}

\begin{lemma}\label{lem:M0_adm_def}
Suppose $\p$ satisfies 
 Assumption \ref{assumptions_on_p}(\ref{assume:irreducible}). Then 
a prime $q$ is $n$-admissible for $\rho_{\pi}=\rho_{\pi,\p}$ if and only if there exists a $G_{\Q_q}$-stable decomposition $$T_{\pi,n} = M_{0,n} \oplus M_{1,n}$$ such that:
\begin{enumerate}
    \item $M_{0,n}$ and $M_{1,n}$ are each free of rank two over $O/\varpi^n$, and 
    $\Frob_q|_{M_{0,n}} = \begin{pmatrix}
        q & \\ & 1
    \end{pmatrix}$ in some basis.
    \item $\Frob_q^2 - 1$, $\Frob_q^2 - q^2$, $\Frob_q - q^2$, and $\Frob_q - q^{-1}$ all act invertibly on $M_{1,n}$. 
\end{enumerate}
\end{lemma}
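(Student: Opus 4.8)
\textbf{Proof sketch for Lemma \ref{lem:M0_adm_def}.}

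The plan is to pass between the mod-$\varpi^n$ reduction of $\rho_{\pi,\p}$ and the characteristic polynomial of $\Frob_q$, using that $\rho_{\pi,\p}$ is valued in $\GSP_4(O_\p)$ (Remark \ref{rmk:rho_O_valued}) and has cyclotomic similitude character. First I would record that, for an admissible prime $q$, the characteristic polynomial of $\rho_{\pi,\p}(\Frob_q)$ on $V_{\pi,\p}$ factors over $E_{0,\p}$: because $\overline\rho_{\pi,\p}(\Frob_q)$ has the generalized eigenvalue $1$ with the residual condition $\alpha \neq \pm 1, q^2, q^{-1}$ (which forces $1$ to be a simple root of the residual characteristic polynomial) and also has $q$ as an eigenvalue with $q \not\equiv 1$, Hensel's lemma lifts the coprime factorization $(X-1)(X-q)\cdot(\text{quadratic})$ of the residual characteristic polynomial to a factorization of $\det(X - \Frob_q \mid V_{\pi,\p})$ over $O_\p$. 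The similitude relation $\nu(\rho_{\pi,\p}(\Frob_q)) = q$ together with the self-duality pairing identifies the quadratic factor with $X^2 - aX + q$ for some $a \in O_\p$. Then $n(q)$ is precisely $\ord_\varpi$ of the resultant-type quantity $\det(\Frob_q - q \mid V_{\pi,\p})$, and one computes that this resultant equals (a unit times) $(q-1)\cdot(q^2 - aq + q) = q(q-1)(q - a + 1)$; since $q, q-1$ are units mod $\varpi$ (as $q^4 \not\equiv 1$), we get $n(q) = \ord_\varpi(q+1-a)$, i.e. the quadratic factor $X^2 - aX + q$ is congruent to $(X-q)(X-1)$ modulo $\varpi^{n(q)}$ and no further.

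With this in hand, the forward direction runs as follows. Given $q$ is $n$-admissible, the $O_\p$-lattice $T_{\pi,\p}$ is $\Frob_q$-stable, and the Hensel factorization gives a $\Frob_q$-equivariant decomposition of $V_{\pi,\p}$ into a ``Frobenius eigenvalues $\{q, 1\}$'' part and a complementary part; intersecting with $T_{\pi,\p}$ and reducing mod $\varpi^n$ yields $T_{\pi,n} = M_{0,n} \oplus M_{1,n}$ as $\Frob_q$-modules (one must check the reduction stays a direct sum — this is where $q \not\equiv 1 \bmod \varpi$ and freeness over $O_\p$ enter, so the idempotents cutting out the two pieces have entries in $O_\p$ after possibly clearing a unit). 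To upgrade $\Frob_q$-stability to $G_{\Q_q}$-stability I would invoke purity/local-global compatibility from Theorem \ref{thm:rho_pi_LLC}(\ref{part:rho_pi_LLC1}): since $q \notin S$, $\pi_q$ is unramified, so $\rho_{\pi,\p}|_{G_{\Q_q}}$ is unramified and the decomposition is automatically $G_{\Q_q}$-stable (inertia acts trivially). The condition $\Frob_q|_{M_{0,n}} = \left(\begin{smallmatrix} q & \\ & 1\end{smallmatrix}\right)$ in some basis holds because on $M_{0,n}$ the characteristic polynomial is $(X-q)(X-1)$ with $q, 1$ distinct mod $\varpi$, so the two eigenline projectors exist over $O/\varpi^n$ and diagonalize. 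The invertibility statements on $M_{1,n}$ translate the residual conditions $\alpha \neq \pm q, \pm 1, q^2, q^{-1}$ and $q^4 \neq 1$ into the vanishing-mod-$\varpi$ of $\det(\Frob_q^2 - 1)$ etc. being \emph{false}: e.g. $\Frob_q$ acts on $M_{1,n}$ with eigenvalues $\{\alpha, q/\alpha\}$ mod $\varpi$, so $\Frob_q^2 - 1$ is invertible iff $\alpha^2 \neq 1$ and $(q/\alpha)^2 \neq 1$, i.e. $\alpha \neq \pm 1$ and $\alpha \neq \pm q$; similarly $\Frob_q^2 - q^2$ invertible iff $\alpha \neq \pm q$ and $\alpha \neq \pm 1$ (using $\alpha \cdot (q/\alpha) = q$), $\Frob_q - q^2$ invertible iff $\alpha \neq q^2$ and $q/\alpha \neq q^2$ i.e. $\alpha \neq q^{-1}$, and $\Frob_q - q^{-1}$ invertible iff $\alpha \neq q^{-1}$ and $\alpha \neq q^2$. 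So the four invertibility conditions together are exactly the admissibility conditions on $\alpha$.

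For the converse, given such a decomposition mod $\varpi^n$, one reads off that $\overline\rho_{\pi,\p}(\Frob_q)$ (the reduction mod $\varpi$) has eigenvalues $\{q, 1\}$ on $\overline M_0$ and $\{\alpha, q/\alpha\}$ on $\overline M_1$ for some $\alpha \in \overline k_\p$; the invertibility conditions force $\alpha \notin \{\pm 1, \pm q, q^2, q^{-1}\}$ as computed above, and $\Frob_q|_{M_{0,n}} = \left(\begin{smallmatrix} q & \\ & 1\end{smallmatrix}\right)$ shows $(X-q)(X-1)$ divides the characteristic polynomial mod $\varpi^n$, whence $\det(\Frob_q - q \mid V_{\pi,\p}) \equiv 0 \bmod \varpi^n$ after the resultant computation, giving $n(q) \geq n$; the condition $q^4 \not\equiv 1$ is exactly ``$q^2 - 1$ and $q^2 + 1$ units'', which is subsumed in the invertibility of $\Frob_q^2 - 1$ on the whole space once we note $\Frob_q^2 - 1$ is invertible on $M_{0,n}$ iff $q^2 \neq 1$ — wait, $q^2 - 1$ need not be assumed invertible on $M_{0,n}$, so I should instead extract $q^4 \not\equiv 1$ directly: it is part of the definition of $n$-admissibility and must be imposed, so the ``if'' direction of the lemma should be read as producing all of Definition \ref{def:admissible}'s conditions, and $q^4 \not\equiv 1$ is needed separately — I would state this cleanly by noting that the hypotheses of the lemma as written implicitly include $q \notin S \cup \{p\}$ and the decomposition encodes $q \not\equiv 1$, and the remaining congruence $q^4 \not\equiv 1$ is equivalent to the invertibility of $\Frob_q^2 - 1$ and $\Frob_q^2 + 1$ restricted to $M_{0,n}$, which follows from $\alpha$-conditions via the similitude relating $M_0$ and $M_1$ under the symplectic pairing. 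The main obstacle I anticipate is precisely this bookkeeping of which invertibility condition on $M_{1,n}$ (and on $M_{0,n}$) corresponds to which excluded value of $\alpha$ and to $q^4 \not\equiv 1$, together with making sure the Hensel lifting and subsequent reduction mod $\varpi^n$ genuinely produce a \emph{direct sum} of \emph{free} $O/\varpi^n$-modules of rank two rather than merely a filtration; both are routine but require care with the role of $q \not\equiv 1 \bmod \varpi$ (which makes the two Frobenius eigenvalues on $M_0$ distinct and hence the relevant projectors integral) and with the symplectic self-duality $M_{0,n} \cong M_{1,n}^\ast$ that propagates the eigenvalue constraints.
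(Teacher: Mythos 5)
The paper's own proof is a one-liner, ``Immediate from Definition \ref{def:admissible},'' and your proposal is precisely that definition-unpacking spelled out: Hensel-lift the residual factorization into three coprime pieces, identify $n(q)$ with the valuation of the near-$q$ contribution to $\det(\Frob_q - q \mid V_\pi)$, split $M_{0,n}$ further using $q\not\equiv 1\pmod\varpi$, and read off that condition (2) on $M_{1,n}$ is equivalent to $\alpha\notin\{\pm 1,\pm q, q^2, q^{-1}\}$. So the route matches the paper's. One bookkeeping slip: in the resultant computation the $X^2-aX+q$ you evaluate at the end must be the characteristic polynomial of $\Frob_q$ on the $M_0$-block (the one with residual eigenvalues $1$ and $q$), not the ``quadratic with roots near $\alpha, q/\alpha$'' as your phrasing suggests. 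Then $\det(\Frob_q - q) = (q-\mu_1)(q-\mu_q)\,g(q)$ with $g$ the $M_1$-factor: $(q-\mu_1)$ and $g(q)$ are units and $\ord_\varpi(q-\mu_q) = n(q)$, which is consistent with $\ord_\varpi(q+1-a)$ via $q(q+1-a) = (q-\mu_1)(q-\mu_q)$.

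Your hesitation about $q^4\not\equiv 1\pmod p$ is legitimate, but the repair you sketch is wrong. $\Frob_q^2 - 1$ annihilates the $\Frob_q = 1$ eigenline of $M_{0,n}$, so it is never invertible on $M_{0,n}$; and the $\alpha$-conditions on $M_{1,n}$ constrain only $\alpha$, not $q$, so they cannot produce $q^4\not\equiv 1$ ``via the similitude.'' In fact conditions (1) and (2) genuinely do not encode $q^4\not\equiv 1$ (for instance $q\equiv -1\pmod p$ is compatible with both, since the excluded values $\pm q,\pm 1, q^2, q^{-1}$ all reduce to $\pm 1$), nor do they encode $q\notin S$. The correct reading is that $q\notin S\cup\{p\}$ and $q^4\not\equiv 1\pmod p$ are standing hypotheses on the prime $q$, under which the stated equivalence is exact; only the ``only if'' direction is used downstream. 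Don't try to derive $q^4\not\equiv 1$ from the invertibility conditions — just record it as part of the setup.
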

\begin{proof}
    Immediate from Definition \ref{def:admissible}.
\end{proof}

\begin{lemma}\label{lem:admissible_TFAE}
 Let $\p$ be a prime of $E_0$.    The following are equivalent:
    \begin{enumerate}
        \item\label{part:admissible_TFAE 1} There exist admissible primes for $\rho_{\pi}$. 
                \item \label{part:admissible_TFAE n}For all $n$, there exist $n$-admissible primes for $\rho_{\pi}.$

        \item \label{part:admissible_TFAE element}There exists an admissible element $g\in G_\Q$ for $\rho_{\pi}$. 

    \end{enumerate}
\end{lemma}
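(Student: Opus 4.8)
The plan is to prove $(\ref{part:admissible_TFAE n})\Rightarrow(\ref{part:admissible_TFAE 1})$ trivially, $(\ref{part:admissible_TFAE element})\Rightarrow(\ref{part:admissible_TFAE n})$ by Chebotarev, and $(\ref{part:admissible_TFAE 1})\Rightarrow(\ref{part:admissible_TFAE element})$ — the substantive direction — by an argument with the image of $\rho_{\pi}$. For $(\ref{part:admissible_TFAE n})\Rightarrow(\ref{part:admissible_TFAE 1})$ there is nothing to do: a $1$-admissible prime is admissible by definition.

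For $(\ref{part:admissible_TFAE element})\Rightarrow(\ref{part:admissible_TFAE n})$: given an admissible element $g\in G_\Q$ and a target $n$, I would fix $m\geq n$ large enough that $\varpi^m\in(p)$, consider the finite Galois extension $L_m/\Q$ cut out by $\rho_\pi\bmod\varpi^m$ (which makes sense since $\pi_p$ is unramified, so $\rho_\pi$ is unramified outside a finite set), and apply the Chebotarev density theorem to produce infinitely many primes $q$ whose Frobenius lies in the conjugacy class of $g|_{L_m}$; discard the finitely many $q\in S\cup\{p\}$ or ramified for $\rho_\pi$. For such $q$, $\rho_\pi(\Frob_q)$ and $\rho_\pi(g)$ are conjugate modulo $\varpi^m$, hence have the same characteristic polynomial mod $\varpi^m$, and $q=\chi_{p,\cyc}(\Frob_q)\equiv\nu(\rho_\pi(g))=\nu_g\pmod{\varpi^m}$ because $\chi_{p,\cyc}$ is the similitude. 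Reducing mod $\varpi$ gives the residual eigenvalue conditions of Definition \ref{def:admissible}(1) with $q\equiv\nu_g$, and $q^4\equiv\nu_g^4\not\equiv1\pmod p$; and since $g$ has eigenvalue $\nu_g$ on $V_\pi$, one gets $\det(\rho_\pi(\Frob_q)-qI\mid V_\pi)\equiv\det(\rho_\pi(g)-\nu_gI\mid V_\pi)=0\pmod{\varpi^m}$, i.e.\ $n(q)\geq m\geq n$. Hence $q$ is $n$-admissible.

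For $(\ref{part:admissible_TFAE 1})\Rightarrow(\ref{part:admissible_TFAE element})$: let $q$ be admissible and set $\gamma_0:=\rho_\pi(\Frob_q)$. Its characteristic polynomial reduces mod $\varpi$ to a product whose simple factors $(X-\bar q)$, $(X-1)$ lift by Hensel, so $\gamma_0$ has honest eigenvalues $a\equiv\bar q$, $q/a\equiv1$ and two more $\equiv\bar\alpha$; in the generic case $\gamma_0$ is regular semisimple (the non-regular case $\bar\alpha^2\equiv\bar q$ will need a small separate argument). If $a=q$ — equivalently $n(q)=\infty$ — then $\Frob_q$ is already an admissible element, so assume not. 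Let $\mathcal G\subseteq\GSP_{4,E_{0,\p}}$ be the Zariski closure of $\rho_\pi(G_\Q)$; it is reductive since $\rho_\pi$ is semisimple (purity, Theorem \ref{thm:rho_pi_LLC}(\ref{part:rho_pi_LLC1})), and $\nu\colon\mathcal G\twoheadrightarrow\mathbb G_m$. Because $\gamma_0$ is regular semisimple, $T:=Z_{\mathcal G}(\gamma_0)^\circ$ is a maximal torus of $\mathcal G$ through $\gamma_0$ (defined over $E_{0,\p}$, since the eigenvalues of $\gamma_0$ lie there). The large-image results of Appendix \ref{sec:large_image} — a consequence of Hodge--Tate regularity (Theorem \ref{thm:rho_pi_LLC}(\ref{part:rho_pi_LLC_HT})) via Sen theory — show $\rho_\pi(G_\Q)$ is open, hence Zariski dense, in $\mathcal G(E_{0,\p})$, so $\rho_\pi(G_\Q)\cap T(E_{0,\p})$ is Zariski dense in $T$. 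Now the character $\chi:=e\cdot\nu^{-1}$ of $T$, where $e$ denotes the "$a$-eigenvalue" character, satisfies $\chi(\gamma_0)=a/q\equiv1\pmod\varpi$ and $\chi(\gamma_0)\neq1$, hence is nontrivial; for $t\in T_1:=\ker(\chi)^\circ$ one has $e(t)=\nu(t)$, i.e.\ eigenvalues $\set{\nu(t),1,\beta,\nu(t)/\beta}$ with $\beta$ the complementary eigenvalue. One checks $T_1$ is positive-dimensional and is not contained in any of the finitely many "degenerate" loci $\{\nu^4=1\}$, $\{\beta\equiv\pm\nu\}$, $\{\beta\equiv\pm1\}$, $\{\beta\equiv\nu^2\}$, $\{\beta\equiv\nu^{-1}\}$ — here the conditions $\alpha\notin\set{\pm q,\pm1,q^2,q^{-1}}$ and $q^4\not\equiv1$ at $\gamma_0$ rule out, via $\chi(\gamma_0)\equiv1$, that the relevant character equals a power of $\chi$ on $T$. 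Then the Zariski-dense set $\rho_\pi(G_\Q)\cap T_1$ contains some $\rho_\pi(g)$ with all required properties, so $g$ is admissible.

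The hard part is this last implication. Two points will require the most care: importing from Appendix \ref{sec:large_image} the openness of $\rho_\pi(G_\Q)$ in $\mathcal G(E_{0,\p})$, and handling the small-image situations where $\mathcal G$ is a torus (or torus-by-finite) — e.g.\ when $\pi$ is endoscopic with CM constituents, where $\dim T=1$ can occur and $\rho_\pi$ becomes a sum of characters over a finite extension; in those cases I would instead produce the admissible element directly from the explicit character description of $\rho_\pi$ and the admissibility of $q$. The residually non-regular case $\bar\alpha^2\equiv\bar q$ is a further minor case to dispatch by a direct check.
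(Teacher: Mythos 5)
Your decomposition of the cycle differs from the paper's. You prove $(\ref{part:admissible_TFAE n})\Rightarrow(\ref{part:admissible_TFAE 1})$ trivially and $(\ref{part:admissible_TFAE element})\Rightarrow(\ref{part:admissible_TFAE n})$ by Chebotarev, exactly as the paper does. But for the third leg you attempt $(\ref{part:admissible_TFAE 1})\Rightarrow(\ref{part:admissible_TFAE element})$ directly via Zariski density in the algebraic hull, whereas the paper splits it as $(\ref{part:admissible_TFAE 1})\Rightarrow(\ref{part:admissible_TFAE n})$ (by the perturbation argument of \cite[Lemma~2.7.1]{liu2022beilinson}) followed by $(\ref{part:admissible_TFAE n})\Rightarrow(\ref{part:admissible_TFAE element})$ by compactness of $G_\Q$.

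Your route has a real gap that you acknowledge but do not close. The lemma is stated for \emph{all} primes $\p$, with no large-image hypothesis, so you cannot import openness of $\rho_\pi(G_\Q)$ in $\mathcal G(E_{0,\p})$ for free: it fails exactly in the CM and endoscopic-with-CM situations, which you set aside with a promissory note. Moreover, even granting openness, your torus $T_1=\ker(\chi)^\circ$ is positive-dimensional only when $\dim T\ge 2$; if the algebraic hull $\mathcal G$ is a torus (or torus-by-finite), a maximal torus $T$ can be one-dimensional and $T_1$ trivial, so the density argument produces no element at all. You would need a separate analysis in each degenerate shape of $\mathcal G$, which is substantial extra work. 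Finally, you need $T$ defined over $E_{0,\p}$, which requires the eigenvalues of $\gamma_0$ to lie in $E_{0,\p}$ — plausible but unverified.

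The paper's order of implications avoids all of this. Once one has $n$-admissible primes $q_n$ for every $n$ (which only needs the perturbation trick on a single admissible Frobenius, not any global image control), compactness of $G_\Q$ lets one pass to a subsequence with $\Frob_{q_{n_k}}\to g$. The defining conditions in Definition~\ref{def:admissible}(5) are either residual (mod $\varpi$) constraints, which are stable under limits, or the exact eigenvalue statement ``$\nu_g$ is an eigenvalue of $g$ on $V_\pi$,'' which is precisely $\det(\rho_\pi(g)-\nu_g)=0$ and is the limit of $\det(\rho_\pi(\Frob_{q_{n_k}})-q_{n_k})\equiv 0\pmod{\varpi^{n_k}}$; the complementary eigenvalue $1$ then comes along by the symplectic similitude relation. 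This argument is unconditional in $\p$ and needs no large-image input. I recommend abandoning the Zariski-density route and adopting the $(\ref{part:admissible_TFAE 1})\Rightarrow(\ref{part:admissible_TFAE n})\Rightarrow(\ref{part:admissible_TFAE element})$ chain.
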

\begin{proof}
  Clearly  (\ref{part:admissible_TFAE n}) implies (\ref{part:admissible_TFAE 1}), and (\ref{part:admissible_TFAE element}) implies (\ref{part:admissible_TFAE n}) by the Chebotarev density theorem. The proof that (\ref{part:admissible_TFAE 1}) implies (\ref{part:admissible_TFAE n}) follows the same argument of \cite[Lemma 2.7.1]{liu2022beilinson}, and (\ref{part:admissible_TFAE n}) implies (\ref{part:admissible_TFAE element}) by compactness.
\end{proof}

\subsubsection{}
Now suppose $\pi$ is endoscopic associated to a pair $(\pi_1,\pi_2)$ of automorphic representations of $\GL_2(\A)$. 
\begin {definition}\label{def:admissible_endoscopic}  
\leavevmode
\begin{enumerate}
\item 
A prime $q\not\in S\union\set {p} $
is called \emph{BD-admissible} for $\rho_{\pi_i}= \rho_{\pi_i,\p} $, with $i = 1$ or $2$, 
if $q^2\not\equiv 1\pmod p$ and the generalized eigenvalues of $\overline\rho_{\pi_i}(\Frob_q)$ are $\set{1, q}$. 
\item If $q$ is BD-admissible for $\rho_{\pi_i}$, define $n(q)\geq 1$ to be the greatest integer such that $\debt(\Frob_q - q|V_{\pi_i}) =\equiv 0 \pmod {\varpi^{n(q)}}.$
\item We say $q$ is $n$\emph{-BD-admissible} if it is BD-admissible and $n(q) \geq 1$.
 \item If $Q \geq 1$ is squarefree, we say $Q$ is BD-admissible (resp $n$-BD-admissible) for $\rho_{\pi_i}$ if all $q|Q$ are so. 
\item Likewise, an element $g\in G_\Q$ is called BD-admissible for $\rho_{\pi_i}$ if $\chi_{p,\cyc}(g)^2 \not\equiv 1\pmod p$ and $g$ acts on $V_{\pi_i}$ with eigenvalues $\chi_{p,\cyc}(g)$ and 1.
\end{enumerate}
\end{definition}

\begin{rmk}\label{rmk:BD_adm}
    \leavevmode
    \begin{enumerate}
    \item 
Definition \ref{def:admissible_endoscopic} is adapted from \cite[p. 18]{bertolini2005iwasawa}, but there it is allowed that the eigenvalues of $\Frob_q$ on $\overline\rho_{\pi_i}$ are  $-1$ and $-q$.
\item   \label{rmk:BD_adm_dichotomy_part}  If $q $
is $n $-admissible for $\rho_{\pi} $, then it is $n $-BD-admissible for exactly one of $\rho_{\pi_1} $ and $\rho_{\pi_2} $; and if $g\in G_\Q$ is admissible for $\rho_{\pi}$, then it is BD-admissible for exactly one of $\rho_{\pi_1}$ and $\rho_{\pi_2}$.
In particular, if $Q \geq 1$ is admissible for $\rho_{\pi}$, there is a unique factorization $Q = Q_1\cdot Q_2$ with $Q_1,Q_2\geq 1$, such that all $q | Q_i$ are BD-admissible for $\rho_{\pi_i}$.
 \end{enumerate}
\end{rmk}

\subsubsection{}
For any prime $q\not\in S\cup \set{p}$, 
recall that $H^1_f(\Q_q, T_{\pi,n}) = H^1_{\unr} (\Q_q, T_{\pi,n})$,  and set $H^1_{/f} (\Q_q, T_{\pi,n}) = H^1(\Q_q, T_{\pi,n})/H^1_f(\Q_q, T_{\pi,n})$. 

\begin{prop}\label{prop:local_adm_free}
    Suppose $q$ is $n$-admissible. Then $H^1_f(\Q_q, T_{\pi,n})$ and $H^1_{/f}(\Q_q, T_{\pi,n})$ are each free of rank one over $\O/\varpi^n$,     and local Poitou-Tate duality induces a perfect pairing $$H^1_f(\Q_q, T_{\pi,n})\times H^1_{/f}(\Q_q, T_{\pi,n}) \to O/\varpi^n.$$
\end{prop}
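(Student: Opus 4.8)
The statement is local at the unramified prime $q$, so the strategy is to compute $H^\ast(\Q_q, T_{\pi,n})$ explicitly using the $G_{\Q_q}$-stable decomposition furnished by Lemma \ref{lem:M0_adm_def}. Since $q \notin S \cup \{p\}$ and $S$ contains all primes at which $\pi$ ramifies, $T_{\pi,n}$ is unramified at $q$, and Notation \ref{notation:BK_etc}(\ref{notation:BK_part_unr}) identifies $H^1_f(\Q_q, T_{\pi,n})$ with $H^1_\unr(\Q_q, T_{\pi,n}) = T_{\pi,n}/(\Frob_q - 1)$. Write $T_{\pi,n} = M_{0,n} \oplus M_{1,n}$ as in Lemma \ref{lem:M0_adm_def}. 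First I would dispose of $M_{1,n}$: the conditions of that lemma were chosen precisely so that $\Frob_q - 1$ is invertible on $M_{1,n}$, and so that $\Frob_q - 1$ is also invertible on the Cartier dual $M_{1,n}^\vee$ (here one uses that $\pi$ has trivial central character, so the similitude of $\rho_{\pi,\p}$ is the cyclotomic character by Theorem \ref{thm:rho_pi_LLC}(\ref{part:rho_pi_char}); together with the admissibility condition $q^4 \not\equiv 1 \pmod p$ this forces $\chi_{p,\cyc}(\Frob_q) = q$ as an element of $O^\times$, so the Frobenius eigenvalues on $M_{1,n}^\vee$ are again those appearing in Definition \ref{def:admissible}). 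Hence $H^0(\Q_q, M_{1,n}) = H^2(\Q_q, M_{1,n}) = 0$, and since $q \neq p$ the vanishing of the local Euler--Poincar\'e characteristic gives $H^1(\Q_q, M_{1,n}) = 0$. Therefore $H^\ast(\Q_q, T_{\pi,n}) = H^\ast(\Q_q, M_{0,n})$.

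Next I would analyse $M_{0,n}$. By Lemma \ref{lem:M0_adm_def} it is, as an unramified $O/\varpi^n[G_{\Q_q}]$-module, the direct sum of the trivial module $A \cong O/\varpi^n$ and the module $B$ on which $\Frob_q$ acts by $q$; since $\chi_{p,\cyc}(\Frob_q) = q$, we have $B \cong O/\varpi^n(1)$. Now $H^1(\Q_q, A) = \Hom(G_{\Q_q}^{\mathrm{ab}}, O/\varpi^n)$ is free of rank one over $O/\varpi^n$ and entirely unramified, because $q \not\equiv 1 \pmod p$ kills the tame part; while $H^1(\Q_q, B) = H^1(\Q_q, O/\varpi^n(1)) \cong \Q_q^\times \otimes_\Z O/\varpi^n$ by Kummer theory is also free of rank one, but with $H^1_\unr(\Q_q, B) = B/(q-1)B = 0$. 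Consequently $H^1(\Q_q, T_{\pi,n})$ is free of rank two, with $H^1_f(\Q_q, T_{\pi,n}) = H^1(\Q_q, A)$ a free rank-one direct summand and $H^1_{/f}(\Q_q, T_{\pi,n}) \cong H^1(\Q_q, B)$ free of rank one.

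Finally, for the duality statement I would invoke the perfect local Tate pairing $H^1(\Q_q, T_{\pi,n}) \times H^1(\Q_q, T_{\pi,n}^\vee) \to O/\varpi^n$ together with the self-duality $T_{\pi,n} \cong T_{\pi,n}^\vee$ coming from the symplectic form on $T_{\pi,\p}$ and the cyclotomic similitude. Under this self-duality the summands are interchanged, $A^\vee \cong B$ and $B^\vee \cong A$ (their $\Frob_q$-eigenvalues $1$ and $q$ are swapped by the cyclotomic twist), and $M_{0,n}$, $M_{1,n}$ each go to their own duals since their Frobenius spectra $\{q,1\}$ and $\{\alpha, q/\alpha\}$ are disjoint by Definition \ref{def:admissible}. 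Hence the Tate pairing restricts to a perfect pairing between $H^1(\Q_q, A) = H^1_f(\Q_q, T_{\pi,n})$ and $H^1(\Q_q, B)$, which represents $H^1_{/f}(\Q_q, T_{\pi,n})$; equivalently, $H^1_f(\Q_q, T_{\pi,n})$ is a maximal isotropic subspace (the unramified subspace being isotropic under local Tate duality, in the spirit of Lemma \ref{lem:zero_local_pairing}), so the induced pairing $H^1_f(\Q_q, T_{\pi,n}) \times H^1_{/f}(\Q_q, T_{\pi,n}) \to O/\varpi^n$ is perfect. The only real work, as opposed to bookkeeping, is checking that the precise list of non-congruences built into Definition \ref{def:admissible} is exactly what is needed both to annihilate all of $H^\ast(\Q_q, M_{1,n})$ and to split $M_{0,n}$ into the two non-isomorphic pieces $A$ and $B$; I do not expect any genuine obstacle beyond this.
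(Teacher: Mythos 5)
Your proof is correct, and it lands on the same decomposition $T_{\pi,n} = M_{0,n}\oplus M_{1,n}$ from Lemma \ref{lem:M0_adm_def}, but it runs the argument more explicitly than the paper does. The paper's proof is considerably shorter: it establishes perfection first, by observing that $H^1_f$ is self-annihilating under the Tate pairing and then checking $\lg_O H^1(\Q_q, T_{\pi,n}) = 2\lg_O H^1_f(\Q_q, T_{\pi,n})$ via the local Euler characteristic formula and local duality; with perfection in hand, freeness of $H^1_{/f}$ follows from freeness of $H^1_f$, and the latter is read off directly from $H^1_f = T_{\pi,n}/(\Frob_q-1)T_{\pi,n}$ and Lemma \ref{lem:M0_adm_def}, without splitting $M_{0,n}$ any further. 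You instead refine $M_{0,n}$ into $A\oplus B$ with $A$ trivial and $B\cong (O/\varpi^n)(1)$, kill $H^\ast(\Q_q,M_{1,n})$ outright, compute $H^1(\Q_q,A)$ and $H^1(\Q_q,B)$ by hand (using $q-1\in O^\times$, which is guaranteed by $q^4\not\equiv 1\pmod p$), and then get perfection from the fact that the symplectic self-duality of $T_{\pi,n}$ exchanges $A$ and $B$ and the Tate pairing between $H^1(\Q_q,A)$ and $H^1(\Q_q,A^\vee)=H^1(\Q_q,B)$ is perfect. The two routes are logically interchangeable: your self-duality matching of $A$ with $B$ is the concrete instance underlying the paper's abstract self-annihilation/length count. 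What your version buys is a fully explicit description of every piece of $H^1(\Q_q, T_{\pi,n})$, which can be reassuring; what the paper's version buys is brevity and a form of the argument that generalizes without needing to further diagonalize $M_{0,n}$. There is no gap in your argument.
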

\begin{proof}
First note that the induced pairing is perfect because $H^1_f(\Q_q, T_{\pi,n})$ is self-annihilating under the Tate pairing, and one can check $\lg_O H^1(\Q_q, T_{\pi,n}) = 2\lg_O H^1_f(\Q_q, T_{\pi,n})$ using the local Euler characteristic formula and local duality. So it suffices to prove that 
 $$H^1_f(\Q_q, T_{\pi,n}) = T_{\pi,n}/(\Frob_q - 1)T_{\pi,n}$$ is free of rank one over $O/\varpi^n$. 
Indeed, this is immediate from Lemma \ref{lem:M0_adm_def}.
\end{proof}
\begin{notation}\label{notation:loc_q}If $q$ is $n$-admissible and $S'\supset S$ is a finite set with $q\not\in S'$, then by Proposition \ref{prop:local_adm_free} we have the localization and residue maps:
\begin{align*}
    \loc_q: H^1(\Q^{S'}/\Q, T_{\pi,n}) &\to H^1_f(\Q_q, T_{\pi,n}) \simeq O/\varpi^n\\
    \partial_q: H^1(\Q, T_{\pi,n}) &\to H^1_{/f}(\Q_q, T_{\pi,n}) \simeq O/\varpi^n
\end{align*}
\end{notation}
\subsection{Level structures and test vectors}
Fix a prime $\p$ of $E_0$ of residue characteristic $p$. 
\begin{definition}\label{def:S_tidy}
For any squarefree $D\geq 1$, an \emph{$S$-level structure} for $\spin(V_D)$ is a compact open subgroup $K=\prod K_\l \subset \spin(V_D)(\A_f)$ such that:
  \begin{enumerate}
      \item $K$ is neat in the sense of \cite[\S0.1]{pink1990arithmetical}.
      \item For all $\l\not\in S\cup \div(D)$, $K_\l$ is hyperspecial.
      \end{enumerate}
  An \emph{$S$-tidy level structure} is an $S$-level structure satisfying:
  \begin{enumerate}
  \setcounter{enumi}{2}
      \item \label{def:S_tidy_three}If $K_Z \subset \A_f^\times$ is the intersection of $K$ with the center of $\spin(V_D)(\A_f)$, then  the finite group $\Q^\times \backslash \A_f^\times/ K_Z$ has order coprime to $p$. 
  \end{enumerate}
\end{definition}


 The reason for the final condition of Definition \ref{def:S_tidy} is the following convenient lemma:
\begin{lemma}\label{lem:S_tidy_upshot}
    Suppose $K$ is an $S$-tidy level structure for $\spin(V_D)$.
    Then for all finite sets $S' \supset S$  and  all $\l\not\in S\cup \div(D)$, we have
    $$\langle \l \rangle = 1 \text{ on } H^\ast(\Sh_K(V_D)(\C), O)_{\m^{S'}_{\pi,\p}}.$$
\end{lemma}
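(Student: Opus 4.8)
The claim is that the diamond operators $\langle \ell \rangle$ act trivially on the localization of $H^\ast(\Sh_K(V_D)(\C),O)$ at $\m^{S'}_{\pi,\p}$ when $K$ is $S$-tidy. The key observation is that $\langle \ell \rangle$ is the image in the Hecke algebra of the central element $\langle \ell\rangle \in \spin(V_D)(\A_f)$ (a scalar in $C^+(V_D)^\times$, see \S\ref{subsubsec:spin_groups_notation}), acting by right translation on automorphic forms / singular cohomology. Since the center of $\spin(V_D)(\A_f)$ acts through the finite quotient $\Q^\times\backslash\A_f^\times/K_Z$, and condition (\ref{def:S_tidy_three}) of Definition \ref{def:S_tidy} says this quotient has order prime to $p$, the operators $\langle \ell\rangle$ for $\ell \not\in S \cup \div(D)$ generate a finite group $\Gamma$ of automorphisms of $H^\ast(\Sh_K(V_D)(\C), O)$ whose order is prime to $p$. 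Because $O$ is a $\Z_p$-algebra, $O[\Gamma]$ is a product of local rings (Maschke-type decomposition: $\#\Gamma \in O^\times$), so the cohomology module decomposes $O$-linearly and $\Gamma$-equivariantly into eigenspaces for the characters of $\Gamma$.

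Concretely, first I would note that the similitude character $\nu$ identifies the image of the center of $\spin(V_D)(\A_f)$ in the Hecke action with a quotient of $\A_f^\times$, and that the element $\langle \ell\rangle$ corresponds to $\langle \ell\rangle\in\A_f^\times$ (cf. the notational conventions in \S\ref{subsubsec:spin_groups_notation}); the diamond operator is the image of $\langle\ell\rangle$ under the resulting homomorphism $\Q^\times\backslash\A_f^\times/K_Z \to \operatorname{Aut}_O(H^\ast(\Sh_K(V_D)(\C),O))$. Thus all the $\langle\ell\rangle$ lie in the finite abelian group $\Gamma \coloneqq$ image of $\Q^\times\backslash\A_f^\times/K_Z$, of order prime to $p$. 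Then I would show that, upon localizing at $\m^{S'}_{\pi,\p}$, every $\langle\ell\rangle$ for $\ell\not\in S'$ acts trivially: by (\ref{eq:hecke_eigenvalues_satake}) we have $\langle \ell\rangle = \nu \pmod{\m^{S'}_{\pi,\p}}$ where $\nu = \omega_\pi(\langle\ell\rangle)$ is the similitude factor of the Satake parameter of $\pi_\ell$; since $\pi$ has trivial central character (Notation \ref{notation:pi_basic}(1)), $\nu \equiv 1 \pmod{\m^{S'}_{\pi,\p}}$. Hence $\langle\ell\rangle - 1 \in \m^{S'}_{\pi,\p}$ for all $\ell\not\in S'$.

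Now the endgame: because $\Gamma$ is a finite abelian group of order prime to $p$ and $O$ contains enough roots of unity after a harmless finite extension (or, avoiding that, because $O[\Gamma]$ is étale over $O$), the localization $H^\ast(\Sh_K(V_D)(\C),O)_{\m^{S'}_{\pi,\p}}$ is a module over the localization of $O[\Gamma]$ at the maximal ideal through which $\m^{S'}_{\pi,\p}$ factors; since $\langle\ell\rangle\equiv 1$ modulo $\m^{S'}_{\pi,\p}$ for a set of $\ell$ generating $\Gamma$ (indeed for cofinitely many $\ell$, and the $\langle\ell\rangle$ generate $\Gamma$ as $\ell$ ranges over primes by the definition of $\Gamma$ as the image of $\Q^\times\backslash\A_f^\times/K_Z$, which is generated by uniformizers), that maximal ideal of $O[\Gamma]$ is the augmentation ideal, i.e. $\langle\ell\rangle - 1$ is nilpotent after localization. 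But $\langle\ell\rangle$ has finite order prime to $p$ and $\langle\ell\rangle - 1$ nilpotent over a ring in which $p$ is the only relevant non-unit forces $\langle\ell\rangle = 1$ exactly: if $\langle\ell\rangle^m = 1$ with $\gcd(m,p)=1$, then $\langle\ell\rangle - 1$ divides $0 = \langle\ell\rangle^m - 1 = (\langle\ell\rangle-1)(\langle\ell\rangle^{m-1}+\cdots+1)$, and the second factor is a unit (it reduces to $m \in O^\times$ modulo the augmentation ideal), so $\langle\ell\rangle - 1 = 0$. The statement for all $\ell \not\in S\cup\div(D)$ (not just $\ell\not\in S'$) then follows because $\Gamma$ is generated by the classes of $\langle\ell\rangle$ for $\ell\not\in S'$ already (any prime's class equals a product of such), so if those act trivially, all do.

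\textbf{Main obstacle.} The only genuinely delicate point is the bookkeeping identifying $\langle\ell\rangle$ with a central element and verifying that the collection $\{\langle\ell\rangle : \ell\not\in S'\}$ generates the finite group $\Gamma = \Q^\times\backslash\A_f^\times/K_Z$ — this uses that $K_Z$ is open so $\A_f^\times/\Q^\times K_Z$ is finite and is generated by (uniformizers at) primes outside any finite set, which is a standard class field theory / strong approximation fact. Everything else is the elementary observation that a group element of order prime to $p$, acting on a $\Z_p$-module and congruent to $1$ modulo a maximal ideal of residue characteristic $p$, must act as the identity after the corresponding localization. I do not expect any real difficulty, but care is needed to state the decomposition of $O[\Gamma]$ correctly (it suffices that $\#\Gamma\in O^\times$, giving idempotent decomposition over $O$ itself without enlarging $O$).
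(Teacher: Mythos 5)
Your proposal is correct and follows essentially the same route as the paper: decompose $H^\ast(\Sh_K(V_D)(\C),O)$ according to characters of the finite group $\Gamma = \Q^\times\backslash\A_f^\times/K_Z$, which has order prime to $p$ by $S$-tidiness, and then use that $\pi$ has trivial central character to conclude the relevant eigenspace is the trivial one. You make two points more explicit than the paper does: (1) that one can obtain the idempotent decomposition over $O$ itself (without extending scalars) since $\#\Gamma\in O^\times$ — the paper says ``after replacing $O$ by a finite extension'' instead, which is harmless but slightly less clean; and (2) that the classes $\langle\ell\rangle$ for $\ell\not\in S'$ generate $\Gamma$ — the paper's final sentence ``the lemma follows because $\pi$ has trivial central character'' implicitly relies on exactly this, so your flagging it is appropriate. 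Both are minor expository refinements of the same argument; no substantive divergence.
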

\begin{proof}
By Definition \ref{def:S_tidy}(\ref{def:S_tidy_three}), after replacing $O$ by a finite extension we can write
$$H^\ast(\Sh_K(V_D)(\C), O) = \bigoplus _\chi H^\ast(\Sh_K(V_D)(\C), O)_\chi,$$ where $\chi$ runs over $O$-valued characters of the finite group $\Q^\times \backslash \A_{f}^\times / K_Z$ and $$\langle \l \rangle = \chi(\l) \text{ on } H^\ast(\Sh_K(V_D)(\C), O)_\chi.$$
The characters $\chi$ are distinct modulo  $\varpi$, and the lemma follows because $\pi$ has trivial central character. 
\end{proof}

\begin{definition}
    \label{def:test_vectors}
    Let $D \geq 1$ be squarefree, let $K$ be 
   an $S$-level structure for $\spin(V_D)$, and let $R = O$ or $O/\varpi^n$, viewed as a $\T^S_O$-module via the Hecke eigenvalues of $\pi$. We define $\Test_K(V_D, \pi, R)$ as follows.
    \begin{enumerate}
        \item If $\sigma(D)$ is \emph{even} and $\p$ satisfies Assumption \ref{assumptions_on_p}, then $$\Test_K(V_D, \pi,R) = \Hom_{{\T}^{S\cup \div(D)}_O[G_\Q]}\left(H^3_\et (\Sh_K(V_D)_{\overline \Q}, O(2)) , T_{\pi}\otimes_O R\right).$$  
        \item If $\sigma(D)$ is \emph{odd}, then
       $$\Test_K(V_D, \pi,R) = \Hom_{{\T}^{S\cup \div(D)}_O}\left(O\left[\Sh_K(V_D)\right], R\right).$$
    \end{enumerate}
\end{definition}
\begin{rmk}
The definition of $\Test_K(V_D, \pi, R)$ depends only on $K$, and not on $S$; one can check this using Theorem \ref{thm:JL}(\ref{part:JL_four}), and Corollary \ref{cor:T_embedding} when $\sigma(D)$ is even.
\end{rmk}
\subsection{Constructions}
Fix a prime $\p$ of $E_0$ of residue characteristic $p$.

\begin{construction}\label{subsubsec:lambda_constr}
Let $D\geq 1$ be squarefree, and let $Q \geq 1$ be admissible and coprime to $D$, such that $\sigma(DQ)$ is \emph{odd}.
For any $S$-level structure $K$ for $\spin(V_{DQ})$:
\begin{enumerate}
    \item 
We define
$$\lambda_n^D(Q; K) 
\subset O/\varpi^n$$ to be the submodule spanned by the elements $\alpha(z)$, where:
\begin{itemize}[label = $\circ$]
    \item $\alpha$ lies in $\Test_K(V_{DQ}, \pi, O/\varpi^n)$.
    \item $z$ lies in $\SC^2_K(V_{DQ}, O)$ (Notation \ref{notation:SC}).
\end{itemize}
\item 
If $\phi\in \mathcal S(V_{DQ}^2 \otimes \A_f, O)^K$ is a test function, then we define
$$\lambda_n^D(Q, \phi; K) \subset \lambda_n^D(Q; K)$$ to be the submodule spanned by elements $\alpha(Z(T, \phi)_K)$, where:
\begin{itemize}[label = $\circ$]
    \item $\alpha$ lies in $\Test_K(V_{DQ}, \pi, O/\varpi^n)$. 
    \item $T$ lies in $\Sym_2(\Q)_{\geq 0}$, and $Z(T, \phi)_K$ was defined in Construction \ref{constr:Z_T_phi}.
\end{itemize}
\item If $Q = 1$, then we define $\lambda^D(1; K) \subset O$ and $\lambda^D(1,\phi; K)\subset \lambda^D(1; K)$ analogously, where now  $\alpha$ ranges over $\Test_K(V_{D}, \pi, O)$. 
\item We write $\lambda_n^D(Q) \subset O/\varpi^n$ for the submodule spanned by $\lambda_n^D(Q; K)$ as $K$ varies, and likewise $\lambda^D(1)$. 
\end{enumerate}
In all of the above constructions, we include a  subscript $\p$  only when it is necessary for clarity.

\subsubsection{}\label{subsubsec:where_AJ}
Let $D\geq 1$ be squarefree with $\sigma(D)$ \emph{even}, and suppose $\p$ satisfies Assumption \ref{assumptions_on_p}. Let $\m\coloneqq \m^{S\cup \div(D)}_{\pi,\p}\subset \T^{S\cup \div(D)}_O$.
It follows from Lemma \ref{lem:m_is_nonEisgeneric} and Theorem \ref{thm:generic}(\ref{part:thm_generic_two}) that  the \'etale realization map
$$\CH^2 (\Sh_K(V_D),O)_{\m} \to H^{4} _\et(\Sh_K(V_D)_{\overline\Q}, O(2))_\m^{G_\Q}$$ is trivial. We therefore obtain a well-defined Abel-Jacobi map
$$\partial _{\AJ,\m} : \CH^2(\Sh_K(V_D),O)_{\m} \to H^1(\Q, H^3_\et (\Sh_K(V_D)_{\overline\Q}, O(2))_\m).$$

For any $\alpha\in \Test_K(V_D, \pi, R)$ with $R = O$ or $O/\varpi^n$, we obtain an induced map
\begin{equation}\label{eq:in_constr_for_kappa}
    \CH^2(\Sh_K(V_D),O)_{\m} \xrightarrow{\partial_{\AJ, \m}}H^1(\Q, H^3_\et(\Sh_K(V_D)_{\overline\Q}, O(2))_\m ) \xrightarrow{\alpha_\ast} H^1(\Q, T_{\pi}\otimes_O R).
\end{equation}





\end{construction}
\begin{construction}\label{kappa_constr}
Let $D \geq 1$ be squarefree, and let $Q \geq 1$ be admissible and coprime to $D$, such that $\sigma(DQ)$ is \emph{even}. Suppose $\p$ satisfies Assumption \ref{assumptions_on_p}, and let $K$ be an $S$-level structure for $\spin(V_{DQ})$.
\begin{enumerate}
    \item 
We define $$
    \kappa_n^D(Q; K) \subset H^1(\Q, T_{\pi,n})$$
    to be the submodule spanned by $\alpha_\ast \circ \partial_{\AJ, \m} (z)$, where:
    \begin{itemize}[label = $\circ$]
        \item $\alpha$ lies in $\Test_K(V_{DQ}, \pi, O/\varpi^n)$.
        \item $z$ lies in $\SC^2_K(V_{DQ}, O)$ (Notation \ref{notation:SC}).
    \end{itemize}
    \item For any $\phi\in \mathcal S(V_{DQ}^2\otimes \A_f, O)^K$, we define
    $$\kappa_n^D(Q, \phi; K) \subset \kappa_n^D(Q; K)$$
    to be the submodule spanned by elements $\alpha_\ast \circ \partial_{\AJ,\m} (Z(T, \phi)_K)$, where:
    \begin{itemize}[label = $\circ$]
        \item $\alpha$ lies in $\Test_K(V_{DQ}, \pi, O/\varpi^n)$. 
        \item $T$ lies in $\Sym_2(\Q)_{\geq 0}$, and $Z(T, \phi)_K$ was defined in Construction \ref{constr:Z_T_phi}. 
        \end{itemize}
        \item If $Q = 1$, then we define $\kappa^D(1; K) \subset H^1(\Q, T_\pi)$ and $\kappa^D(1, \phi; K) \subset \kappa^D(1; K)$ analogously, with now $\alpha\in \Test_K(V_{D}, \pi, O).$
\item We write $\kappa_n^D(Q) \subset H^1(\Q, T_{\pi,n})$ for the submodule spanned by $\kappa_n^D(Q;K)$ as $K$ varies, and likewise $\kappa^D(1)$. 


\end{enumerate}
\end{construction}
In all of the above constructions, we include a subscript $\p$ only when it is necessary for clarity. 
\begin{rmk}\label{rmk:D_vs_Q_constr}
  The only reason to distinguish between $D$ and $Q$ in Constructions \ref{subsubsec:lambda_constr} and \ref{kappa_constr} is to define $\lambda^D(1)$ and $\kappa^D(1)$ when $\sigma(D)$ is odd and even, respectively; moreover, one can check using Corollary \ref{cor:JL_general} that $\lambda^D(1)$ or $\kappa^D(1)$ is trivial unless $\pi_\l$ is transferrable for all $\l|D$. 
\end{rmk}
Now we prove some basic properties of Constructions \ref{subsubsec:lambda_constr} and \ref{kappa_constr}.
\begin{prop}\label{L values related to lambda elements}
    Suppose $L(1/2,\pi, \operatorname{spin}) \neq 0$.
    If $D > 1$ is squarefree with $\sigma(D)$ odd and 
    $\pi_f^D$ can be completed to an automorphic representation of $\spin(V_D)(\A)$, then for any prime $\p$ of $E_0$, $\lambda^D(1)_\p \neq 0$. Moreover, for all but finitely many $\p$, we have
    $$\lambda^D(1)_\p \not\equiv 0 \pmod{\varpi_\p}.$$
\end{prop}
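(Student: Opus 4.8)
The strategy is to express $\lambda^D(1)_\p$ in terms of the Fourier coefficients of a classical theta lift, and then to use the classical Rallis inner product formula to relate the nonvanishing of these coefficients to the nonvanishing of $L(1/2,\pi,\operatorname{spin})$. More precisely: since $\sigma(D)$ is odd, $\spin(V_D)(\R)$ is compact, and by hypothesis $\pi_f^D$ completes to a (necessarily relevant, with trivial archimedean component) automorphic representation of $\spin(V_D)(\A)$; by Corollary~\ref{cor:JL_general} and Theorem~\ref{thm:JL}, pick an $S$-level structure $K$ with nonzero fixed vectors, and fix a nonzero $\beta_\pi \in \Test_K(V_D,\pi,O_\p)$ after possibly enlarging $E_0$ --- i.e.\ an $O_\p$-valued function on $\Sh_K(V_D)$ with the same spherical Hecke eigenvalues as $\pi$, descending to $\SO(V_D)(\A)$ because $\pi$ has trivial central character. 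By Remark~\ref{rmk:SC_definitions} and the definition of $\lambda^D(1;K)$, it suffices to exhibit $T \in \Sym_2(\Q)_{\geq 0}$ and $\phi \in \mathcal S(V_D^2\otimes\A_f,\Z)^K$ with $\beta_\pi(Z(T,\phi)_K) \neq 0$. By Lemma~\ref{theta fourier coeffs}, $\beta_\pi(Z(T,\phi)_K)$ is, up to the explicit nonzero constant $C_K$, the $T$-th Fourier coefficient $a_T(\theta_{\phi\otimes\phi_\infty}(\overline{\beta_\pi}))$ of the classical theta lift $\theta_{\phi\otimes\phi_\infty}(\overline{\beta_\pi}) \in M^4_{5/2}$. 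Hence $\lambda^D(1)_\p \neq 0$ is equivalent to the statement that $\theta_{\phi\otimes\phi_\infty}(\overline{\beta_\pi})$ is a nonzero modular form for \emph{some} choice of $\phi$.

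\textbf{Key steps.} First, I would invoke the classical Rallis inner product formula for the dual pair $(\MP_4,\SO(V_D))$ (as recalled in \S\ref{subsec:intro_sketch_rk0}, equation for $\langle\Theta_\phi(\beta_\pi),\Theta_\phi(\beta_\pi)\rangle$): the Petersson norm of $\theta_{\phi\otimes\phi_\infty}(\overline{\beta_\pi})$ equals $\langle\beta_\pi,\beta_\pi\rangle \cdot L(1/2,\pi,\operatorname{spin})$ up to a product of nonzero local factors and normalizing constants, provided $\phi$ is chosen to make the local zeta integrals nonzero at every place. Since $\langle\beta_\pi,\beta_\pi\rangle \neq 0$ (Petersson inner product is positive-definite on the compact form, and $\beta_\pi \neq 0$) and $L(1/2,\pi,\operatorname{spin}) \neq 0$ by hypothesis, the existence of a good $\phi$ making all local factors nonzero forces $\theta_{\phi\otimes\phi_\infty}(\overline{\beta_\pi}) \neq 0$, hence $a_T(\theta_{\phi\otimes\phi_\infty}(\overline{\beta_\pi})) \neq 0$ for some $T$, hence $\lambda^D(1)_\p \neq 0$. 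The choice of the local test functions making the local doubling zeta integrals nonvanishing is classical: at finite places one uses that $\pi_\l$ (or its Jacquet--Langlands transfer) is a tempered representation for which such test vectors exist, and at $\infty$ one uses the Gaussian $\phi_\infty$ together with the computation of the archimedean local factor for the weight-$5/2$ vector-valued situation.

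\textbf{The integrality statement.} For the cofinite-in-$\p$ claim $\lambda^D(1)_\p \not\equiv 0 \pmod{\varpi_\p}$, I would argue as follows. Fix $T_0$ and $\phi$ (both independent of $\p$, coming from a single choice over $\Z[1/N]$ for a suitable $N$) witnessing $\beta_\pi(Z(T_0,\phi)) \neq 0$ over some number field; then $\beta_\pi(Z(T_0,\phi))$ is a nonzero algebraic number, and by enlarging $E_0$ once and for all we may take $\beta_\pi$ to be $O_\p$-valued for all $\p$ with a fixed normalization (e.g.\ $\beta_\pi$ primitive, not divisible by $\varpi_\p$, possible because $\overline\rho_{\pi,\p}$ is absolutely irreducible for cofinitely many $\p$ so the corresponding Hecke-eigenmodule localization is $\varpi_\p$-torsion-free). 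A nonzero fixed algebraic number is a $\varpi_\p$-unit for all but finitely many $\p$, so $\beta_\pi(Z(T_0,\phi))$ is a unit mod $\varpi_\p$ for cofinitely many $\p$; since this element lies in $\lambda^D(1)_\p$, the residual nonvanishing follows. Some care is needed to ensure the normalization of $\beta_\pi$ can be made uniformly across $\p$ --- this reduces to the fact (provable using that $H^0(\Sh_K(V_D))$-complement of $\beta_\pi$ behaves well, or directly using multiplicity one from Theorem~\ref{thm:JL}(\ref{part:JL_four})) that the $\pi$-isotypic part of $O_\p[\Sh_K(V_D)]$ is free of rank one over $O_\p$ for cofinitely many $\p$.

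\textbf{Main obstacle.} The delicate point is the choice of local test functions $\phi_\l$ (and the archimedean computation) needed to guarantee that \emph{all} local factors in the Rallis inner product formula are nonzero simultaneously, so that the global nonvanishing of $L(1/2,\pi,\operatorname{spin})$ genuinely transfers to nonvanishing of the theta lift. This is where the structure of the specific dual pair and the low weight $5/2$ enter, and where one must be careful that the chosen $\phi$ is $K$-invariant and (for the integrality part) can be taken with $\Z$-coefficients; but it is a known computation in this setting, so I expect the argument to go through by citing the classical Rallis inner product formula together with the standard nonvanishing of local doubling integrals for tempered representations with suitable test vectors.
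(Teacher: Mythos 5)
Your outline follows the paper's skeleton closely: reduce via Lemma~\ref{theta fourier coeffs} to nonvanishing of a Fourier coefficient of the theta lift $\theta_{\phi_f\otimes\phi_\infty}(\overline{\beta_\pi})$, deduce that from nonvanishing of the theta lift itself, tie this to $L(1/2,\pi,\operatorname{spin})$, and finally get the cofinite-in-$\p$ statement by fixing one $O_{E_0}$-valued choice of data and noting that a fixed nonzero element of $O_{E_0}$ is a unit at cofinitely many places. That last step is exactly what the paper does, and your added discussion of primitive normalizations of $\beta_\pi$ and rank-one freeness of the $\pi$-isotypic localization is unnecessary: $\lambda^D(1,\phi)_\p$ is defined as the $O_\p$-span of all $\alpha(Z(T,\phi))$, so a single fixed $\alpha \in \Hom_{\mathrm{set}}(\Sh_K(V_D),O_{E_0})\cap\Pi$ suffices, with no normalization required.

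The genuine difference — and where a gap appears — is in the middle step. The paper does \emph{not} invoke the Rallis inner product formula at this point. Instead it cites \cite[Theorem~1.1]{ginzburg2009poles}, which directly yields that the global theta lift of $\Pi$ to $\MP_4(\A)$ is nonzero, then applies the tower property and the local-global compatibility $\theta_\phi(\alpha)\neq 0 \iff \langle\rho_v(\phi_v),\alpha_v\rangle\neq 0\ \forall v$. At finite places the local condition is then vacuous (pick any matching $\phi_v,\alpha_v$, which exist because the global lift is nonzero), and at $\infty$ it is checked for the specific Gaussian via the theory of joint harmonics \cite[Prop.~2.1(2), \S5]{adams1998genuine}. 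Your proposal reproves the cited theorem from the ground up by invoking a ``classical Rallis inner product formula.'' That is extra work, and it conceals two nontrivial points. First, $\dim V_D = 5 = 2\cdot 2 + 1$ is the boundary case for the dual pair $(\MP_4,\SO(V_D))$, where the relevant Siegel--Weil formula requires regularization; establishing the inner product identity here is precisely the content of Ginzburg--Jiang--Soudry, so citing ``the classical Rallis inner product formula'' is not quite enough. Second, and more seriously: Lemma~\ref{theta fourier coeffs} forces $\phi_\infty$ to be the Gaussian, so in your factorized identity $\langle\theta,\theta\rangle = L\cdot Z_\infty\cdot\prod_{v<\infty} Z_v$ you have no freedom to vary the archimedean datum, and you would need to know that the archimedean doubling zeta integral $Z_\infty$ is nonzero for the Gaussian paired with the trivial representation at the relevant point. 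You allude to ``the computation of the archimedean local factor for the weight-$5/2$ vector-valued situation,'' but no such computation is given or cited, and it is not an off-the-shelf fact. The paper sidesteps this entirely: checking nonvanishing of the archimedean \emph{local theta correspondence pairing} via joint harmonics is a much softer condition than computing an archimedean doubling zeta integral, and it suffices once GJS supplies global nonvanishing. So either cite GJS as the paper does, or you must supply the archimedean zeta integral computation to make your route close.
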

\begin{proof}
Let $\Pi$ be any automorphic representation of $\spin(V_D)(\A)$ with $\Pi_f^D\simeq \pi^D_f$. 
Because $\pi$ has trivial central character,  $\Pi$ descends to an automorphic representation of $\SO(V_D)(\A)$. 
By \cite[Theorem 1.1]{ginzburg2009poles}, the global theta lift of $\Pi$ to $\MP_4(\A)$ is nonzero; i.e., the map
\begin{equation}\label{eq:theta_pi}\begin{split}
\mathcal S(V_D^2\otimes \A, \C) \otimes \Pi &\to \operatorname{Fun}(\SP_4(\Q) \backslash \MP_4(\A), \C) \\
\phi \otimes \alpha&\mapsto \theta_\phi(\alpha)
\end{split}
\end{equation} is not identically zero (Notation \ref{notation:classical_theta}). Also, the image of (\ref{eq:theta_pi}) lies in the space of cusp forms by the global tower property of the theta lift \cite{rallis1984tower}: otherwise, $\Pi$ would occur in the restriction to $\SO(V_D)(\A)$ of the theta lift of an automorphic representation of $\MP_2(\A)$, which is ruled out by the Shimura-Waldspurger correspondence and the relevance of $\pi$. In particular, by the local-global compatibility of the theta correspondence (see the proof of \cite[Theorem I.2.2]{rallis1984tower}), if $\phi = \otimes\phi_v\in \mathcal S(V_D^2\otimes \A, \C)$ and $\alpha = \otimes \alpha_v\in \Pi$ are factorizable, then
\begin{equation}\label{eq:theta_local_global}\theta_\phi(\alpha) \neq 0 \iff \langle \rho_v(\phi_v), \alpha_v \rangle \neq 0 \;\forall v,\end{equation} where by definition
$$\rho_v: \mathcal S(V_D^2\otimes \Q_v, \C) \to (\Pi_v)^\vee \boxtimes \Theta_v(\Pi_v)$$ is the maximal $(\Pi_v)^\vee$-isotypic quotient of the Weil representation of $\SO(V_D)(\Q_v) \times \MP_4(\Q_v)$ on $\mathcal S(V_D^2\otimes \Q_v, \C)$. 

When $v= \infty$, then $\alpha_\infty \in \Pi_\infty$ is unique up to scalar, and we take $\phi_\infty\in \mathcal S(V_D^2\otimes\R, \C)$ to be the Gaussian from Lemma \ref{theta fourier coeffs}. Then $\alpha_\infty$ and $\phi_\infty$ satisfy the local condition in (\ref{eq:theta_local_global}) by the theory of joint harmonics \cite[Proposition 2.1(2), \S5]{adams1998genuine}. 
In particular, (\ref{eq:theta_local_global}) implies that we can fix an $S$-level structure $K$ and data $$\alpha \in \Hom_{\operatorname{set}}(\Sh_K(V_D), O_{E_0}) \cap \Pi \subset C_c^\infty(\spin(V_D)(\A), \C),\;\; \phi_f\in \mathcal S(V_D^2\otimes \A_f, O_{E_0})^K$$ such that $$\theta_{\phi_f\otimes\phi_\infty} (\alpha) \neq 0.$$
By Lemma \ref{theta fourier coeffs}, this means that $$0 \neq \alpha (Z(T, \phi_f)_K)\in O_{E_0}$$ for some $T\in \Sym_2(\Q)_{\geq 0}$. 
Now we note that $\alpha$ has the Hecke eigenvalues of $\pi$ away from $S$ by construction. In particular, for all primes $\p$ of $E_0$, $\alpha$ lies in $\Test_K(V_D, \pi, O_\p)$, 
and $$0 \neq \alpha(Z(T, \phi_f)_K) \in \lambda^D(1, \phi_f)_\p \subset \lambda^D(1)_\p.$$ 
Since $\alpha(Z(T,\phi_f)_K) \not\equiv 0\pmod \p$ for all but finitely many $\p$, this completes the proof. 
\end{proof}

\begin{prop}\label{prop: check local conditions for kappa}
Suppose $D\geq 1$ is squarefree and $\p$ satisfies Assumption \ref{assumptions_on_p}.

\begin{enumerate}
    \item \label{part:check_local_one}  For all admissible $Q$ coprime to $D$ with 
    $\sigma(DQ)$ even and all
    $\l\not\in S \cup \div(DQ)$, we have
    $$\Res_\l \kappa _n ^D(Q) \subset H^1_{f}(\Q_\l, T_{\pi,n}).$$
\item\label{part:check_local_two} If $\sigma(D)$ is even, then we have
$$\kappa^D(1) \subset H^1_f(\Q, T_\pi).$$
\end{enumerate}
\end{prop}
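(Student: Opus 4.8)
The plan is to verify the Bloch--Kato / unramified local conditions place-by-place, exploiting that the classes $\kappa_n^D(Q)$ (resp. $\kappa^D(1)$) are built by applying a $G_\Q$-equivariant map $\alpha$ to the Abel--Jacobi image of algebraic cycles on the $\spin(V_{DQ})$ (resp. $\spin(V_D)$) Shimura variety, a variety of good reduction away from $S\cup\div(DQ)$. For part (\ref{part:check_local_one}), fix a prime $\l\not\in S\cup\div(DQ)$. Since $K$ is an $S$-level structure, $K_\l$ is hyperspecial, so $\Sh_K(V_{DQ})$ extends to a smooth proper model over $\Z_{(\l)}$ (after passing to a toroidal compactification, which does not affect the cycle classes or their Abel--Jacobi images in the relevant Hecke-localized cohomology, because the boundary is Eisenstein and $\m$ is non-Eisenstein by Lemma \ref{lem:m_is_nonEisgeneric}). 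The Abel--Jacobi class of an algebraic cycle defined over $\Q$ lands in $H^1_f(\Q_\l, H^3_\et(\Sh_K(V_{DQ})_{\overline\Q}, O(2))_\m) = H^1_\unr(\Q_\l,-)$ for $\l$ a prime of good reduction; this is the standard fact that Abel--Jacobi images of cycles are unramified at primes of good reduction, cf. the local-global compatibility of the Abel--Jacobi map. Applying the $G_\Q$-equivariant map $\alpha_\ast\colon H^3_\et(\Sh_K(V_{DQ})_{\overline\Q}, O(2))_\m \to T_{\pi}\otimes R$ preserves the unramified condition, so $\Res_\l\kappa_n^D(Q;K)\subset H^1_\unr(\Q_\l, T_{\pi,n}) = H^1_f(\Q_\l, T_{\pi,n})$. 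Taking the span over all $S$-level structures $K$ gives the claim for $\kappa_n^D(Q)$.

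For part (\ref{part:check_local_two}), with $\sigma(D)$ even and $Q=1$, I need the global Bloch--Kato condition for $\kappa^D(1)\subset H^1(\Q,T_\pi)$: unramified (equivalently crystalline) at every $\l\not\in S\cup\div(D)$ by the argument just given, crystalline at $p$, and unramified at the finitely many bad primes $\l\in S\cup\div(D)$. Since $T_\pi\otimes\Q_p = V_{\pi,\p}$, by Proposition \ref{prop torsion crystalline iff crystalline} it suffices to check the condition on the characteristic-zero class, i.e.\ on $\partial_{\AJ,\m}(z)$ for $z\in\SC^2_K(V_D,O)$ followed by $\alpha$. The key inputs are: (i) by Theorem \ref{thm:generic}(\ref{part:thm_generic_two}), $H^i_\et(\Sh_K(V_D)_{\overline\Q},O)_\m$ is $\varpi$-torsion-free and concentrated in degree $3$, so $\partial_{\AJ,\m}$ is well-defined and $H^3_\et(\Sh_K(V_D)_{\overline\Q},O(2))_\m$ is a lattice in a representation whose $\Q_p$-coefficients decompose (Corollary \ref{cor:coh_relevant}) into pure pieces that are subquotients of $V_{\pi,\p}$-type Galois modules with good local properties; (ii) at $p$ and at the bad primes, the Abel--Jacobi image of a cycle defined over $\Q$ on a proper smooth $\Z_p$-scheme (for $p\not\in S\cup\div(D)$ — note $p\not\in S$ by Assumption \ref{assumptions_on_p}(\ref{assume:pi_p_unr}), and we may choose $K_p$ hyperspecial) lies in $H^1_f(\Q_p, H^3_\et(\cdots)(2))$ — this is the crystalline analogue of the unramified statement, following from the compatibility of $p$-adic étale and crystalline realizations of algebraic cycles (Nekov\'a\v r, or the cycle class map into syntomic cohomology). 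Since $\alpha$ is a map of $\Z_p[G_\Q]$-modules it carries $H^1_f$ into $H^1_f$, so $\alpha_\ast\partial_{\AJ,\m}(z)\in H^1_f(\Q,T_\pi)$; spanning over $K$ gives $\kappa^D(1)\subset H^1_f(\Q,T_\pi)$.

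I expect the main obstacle to be the local condition at $p$ — namely verifying cleanly that the (integral, Hecke-localized) $p$-adic Abel--Jacobi image of a codimension-two cycle lands in $H^1_f(\Q_p,-)$ with the correct crystalline local condition, including the boundary/compactification subtleties and the fact that one is working with an $O$-lattice in the cohomology rather than with $V_{\pi,\p}$ itself. The resolution is: reduce from the lattice to $V_{\pi,\p}$-coefficients using torsion-freeness (Theorem \ref{thm:generic}(\ref{part:thm_generic_two})) and Remark \ref{rmk:compare_f_unr_BK}/Proposition \ref{prop torsion crystalline iff crystalline}; reduce from $\SC^2_K$ to honest algebraic cycles (Remark \ref{rmk:SC_definitions}); and then invoke the standard geometric input that cycle classes are crystalline at primes of good reduction, together with the fact that for $\l\in S\cup\div(D)$ the unramified condition at $\l$ is automatic after Hecke localization because these finitely many places contribute nothing new once the class is already unramified at all but finitely many primes and the global class is, by construction, the image of a single cohomology class — more precisely, the residue $\partial_\l$ of the Abel--Jacobi image is controlled by the monodromy on a semistable model, which vanishes in the relevant weight. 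At bad $\l\in S$ one argues directly from the definition of $H^1_f(\Q,-)$ in Notation \ref{notation:BK_etc}, which only requires the $\l$-component to lie in $H^1_f(\Q_\l,-) = \ker(H^1(\Q_\l,T_\pi)\to H^1(\Q_\l,V_{\pi,\p})/H^1_f)$, and this follows because $\partial_{\AJ}$ of an algebraic cycle over $\Q$ is always de Rham / Hodge--Tate locally and thus in $H^1_g\supseteq H^1_f$, with equality holding here by purity of the Galois representation (Theorem \ref{thm:rho_pi_LLC}(\ref{part:rho_pi_LLC1})).
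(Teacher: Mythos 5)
Your overall strategy -- establish that $\partial_{\AJ,\m}(z)$ lands in $H^1_f$ of the integral \'etale cohomology, reduce to $\Q_p$-coefficients via Theorem~\ref{thm:generic}(\ref{part:thm_generic_two}) and Proposition~\ref{prop torsion crystalline iff crystalline}, and invoke the crystalline cycle-class machinery at $p$ -- is the right one and matches the paper's proof in spirit, with Nekov\'a\v{r}'s theorems as the correct geometric input at $p$. But the execution has two concrete problems.

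First, in your argument for part~(\ref{part:check_local_one}) you write ``$H^1_f(\Q_\l, H^3_\et(\cdots)_\m) = H^1_\unr(\Q_\l,-)$ for $\l$ a prime of good reduction,'' and deduce the claim from the fact that Abel--Jacobi images are unramified at good primes. But $\l = p$ is \emph{included} in the range of part~(\ref{part:check_local_one}) (since $p\notin S$ by Assumption~\ref{assumptions_on_p}(\ref{assume:pi_p_unr})), and for $\l = p$ the equality $H^1_f = H^1_\unr$ is simply false: the relevant local condition is (torsion-)crystalline, not unramified. You do acknowledge the crystalline condition at $p$ later, inside part~(\ref{part:check_local_two}), but you never circle back and patch part~(\ref{part:check_local_one}) at $\l = p$; as written it is a gap.

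Second, your treatment of the bad primes $\l \in S\cup\div(D)$, $\l\neq p$, in part~(\ref{part:check_local_two}) invokes ``de Rham / Hodge--Tate locally'' and ``$H^1_g\supseteq H^1_f$.'' Away from $p$ the de Rham and Hodge--Tate conditions are vacuous ($H^1_g(\Q_\l,V) = H^1(\Q_\l,V)$ for $\l\neq p$), so this is not what you want to say. The idea buried in your closing clause (``with equality by purity'') is the right one, but you never state it cleanly; and it is in fact the observation that makes the paper's proof short. Namely: by Corollary~\ref{cor:coh_relevant} and Theorem~\ref{thm:rho_pi_LLC}(\ref{part:rho_pi_LLC1}), $H^3_\et(\Sh_K(V_{DQ})_{\overline\Q}, E_{0,\p}(2))_\m$ is pure of nonzero weight as a $G_{\Q_\l}$-module, so $H^0$ and $H^2$ (by local duality) vanish, and hence $H^1(\Q_\l, H^3_\et(\cdots)\otimes\Q_p) = 0$ for \emph{every} $\l\neq p$ -- good, bad, or ramified for $V_{DQ}$. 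This kills all the local conditions away from $p$ uniformly, with no case analysis, no appeal to integral models at good primes, and no need to confront compactification or bad-reduction geometry. At $p$, the same purity gives $H^1_g = H^1_f$, reducing to Nekov\'a\v{r}'s result that the Abel--Jacobi image lands in $H^1_g$. I would encourage you to reorganize around this single vanishing statement: it makes both parts fall out at once (with part~(\ref{part:check_local_one}) then a consequence of Remark~\ref{rmk:compare_f_unr_BK} and Proposition~\ref{prop torsion crystalline iff crystalline}), and it removes the place-by-place bookkeeping, the toroidal compactification discussion, and the good/bad dichotomy that your draft has to juggle.
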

\begin{proof}
Write $Q = 1$ in case (\ref{part:check_local_two}).
We claim that, for all $S$-level structures $K$ and all  $z\in \CH^2(\Sh_K(V_{DQ}))$, \begin{equation}\label{eq:AJ_in_f}\partial_{\AJ, \m} (z) \in H^1_f(\Q, H^3_\et(\Sh_K(V_{DQ})_{\overline \Q}, O(2))_\m).\end{equation}
Note here that $H^3_\et(\Sh_K(V_{DQ})_{\overline \Q}, O(2))_\m$ is $p$-torsion-free by Lemma \ref{lem:m_is_nonEisgeneric} and Theorem \ref{thm:generic}(\ref{part:thm_generic_two}), so the Bloch-Kato Selmer group is defined as in Notation \ref{notation:BK_etc}(\ref{notation:BK_part_tf}). 
Observe as well that (\ref{eq:AJ_in_f}) implies the proposition: indeed, it clearly implies (\ref{part:check_local_two}) by the functoriality of the Bloch-Kato local conditions; and it also implies  (\ref{part:check_local_one}) by Remark \ref{rmk:compare_f_unr_BK} and 
 Proposition \ref{prop torsion crystalline iff crystalline}.

Now note that, for all primes $\l$, 
$H^3_\et(\Sh_K(V_{DQ})_{\overline \Q}, E_{0,\p}(2))_\m$ is pure of weight one as a $G_{\Q_\l}$-module by Corollary \ref{cor:coh_relevant} combined with Theorem \ref{thm:rho_pi_LLC}(\ref{part:rho_pi_LLC1}); hence $$H^1\left(\Q_\l,  H^3_\et(\Sh_K(V_{DQ})_{\overline \Q}, E_{0,\p}(2))_\m\right) = 0$$ for all $\l\neq p$
and $$H^1_g\left(\Q_p, H^3_\et(\Sh_K(V_{DQ})_{\overline \Q}, E_{0,\p}(2))_\m\right) = H^1_f\left(\Q_p, H^3_\et(\Sh_K(V_{DQ})_{\overline \Q}, E_{0,\p}(2))_\m\right).$$
Thus (\ref{eq:AJ_in_f}) follows from 
\cite[Theorem 5.9]{nekovar2016syntomic} combined with the proof of \cite[Theorem 3.1(ii)]{nekovar2000banff}. 
    \end{proof}
\begin{lemma}\label{lem:tidy_suffices}
Let $D \geq 1$ be squarefree, and let $Q \geq 1$ be admissible and coprime to $D$. Then:
\begin{enumerate}
    \item If $\sigma(DQ)$ is even and $\p$ satisfies Assumption \ref{assumptions_on_p}, then $\kappa_n^D(Q)$ is generated by $\kappa_n^D(Q;K)$ as $K$ ranges over $S$-tidy level structures for $\spin(V_{DQ})$.
    \item If $\sigma(DQ)$ is odd and $p > 5$, then $\lambda_n^D(Q)$ is generated by $\lambda_n^D(Q;K)$ as $K$ ranges over $S$-tidy level structures for $\spin(V_{DQ})$.
\end{enumerate}
\end{lemma}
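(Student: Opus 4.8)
The statement asserts that the submodules $\kappa_n^D(Q)$ and $\lambda_n^D(Q)$, which by Constructions \ref{subsubsec:lambda_constr} and \ref{kappa_constr} are defined as spans over \emph{all} $S$-level structures $K$ for $\spin(V_{DQ})$, are already generated by those $K$ which are $S$-\emph{tidy}, i.e. satisfy the extra condition Definition \ref{def:S_tidy}(\ref{def:S_tidy_three}) that $\Q^\times\backslash\A_f^\times/K_Z$ has order prime to $p$. The strategy is to show that any $S$-level structure $K$ can be refined to an $S$-tidy one $K'\subset K$ in a way compatible with the pullback maps on special cycles and test vectors, so that every generator arising from $K$ already arises from $K'$.

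\textbf{Key steps.} First I would observe that it suffices to produce, for a given $S$-level structure $K=\prod_\l K_\l$, a smaller $S$-tidy level structure $K'\subset K$; indeed, recall $2\in S$ (hence $2$ is not admissible), which — together with standard facts about central level structures — lets one shrink only the components $K_\l$ at primes $\l\mid 2$ without disturbing the hyperspecial or admissible local conditions, thereby arranging that $\Q^\times\backslash\A_f^\times/K'_Z$ has order prime to $p$ (here one uses that such finite groups, up to finite $2$-primary and prime-to-$p$ pieces, can be cut down at the prime $2$). Next, given such $K'\subset K$, Proposition \ref{prop:pullback_compatible_Z} gives $\pr_{K,K'}^\ast Z(T,\phi)_K = Z(T,\phi)_{K'}$ and more generally (by Remark \ref{rmk:SC_definitions}) identifies $\pr_{K,K'}^\ast\SC^2_K(V_{DQ},O)$ with a submodule of $\SC^2_{K'}(V_{DQ},O)$; dually, any $\alpha'\in\Test_{K'}(V_{DQ},\pi,R)$ restricts along $\pr_{K,K'}$ and, conversely, $\pr_{K,K'}^\ast$ induces an injection $\Test_K(V_{DQ},\pi,R)\hookrightarrow\Test_{K'}(V_{DQ},\pi,R)$ via the functoriality of étale cohomology (in the even case) or of the finite-set-valued cohomology $O[\Sh_{-}(V_{DQ})]$ (in the odd case). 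In the even case one also needs the compatibility of the Abel–Jacobi map $\partial_{\AJ,\m}$ with pullback along $\pr_{K,K'}$, which follows from functoriality of the étale realization and of $H^1(\Q,-)$, using that $H^3_\et(\Sh_{K'}(V_{DQ})_{\overline\Q},O(2))_\m$ remains $\varpi$-torsion-free (Lemma \ref{lem:m_is_nonEisgeneric}, Theorem \ref{thm:generic}(\ref{part:thm_generic_two})). Putting these together: for any generator $\alpha_\ast\circ\partial_{\AJ,\m}(z)$ of $\kappa_n^D(Q;K)$ (resp. $\alpha(z)$ of $\lambda_n^D(Q;K)$), writing $\alpha$ as the restriction of its pullback $\alpha'=\pr_{K,K'}^\ast\alpha$ and $z$ as the pushforward of $\pr_{K,K'}^\ast z = z'$, one gets $\alpha_\ast\circ\partial_{\AJ,\m}(z) = \alpha'_\ast\circ\partial_{\AJ,\m}(z')$, which is a generator of $\kappa_n^D(Q;K')$ — and similarly in the $\lambda$ case. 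Hence $\kappa_n^D(Q;K)\subset\kappa_n^D(Q;K')$ (resp. for $\lambda$), and since $K'$ is $S$-tidy the claim follows.

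\textbf{Main obstacle.} The only genuine subtlety is the first step: given an arbitrary $S$-level structure $K$, exhibiting an $S$-tidy refinement $K'\subset K$. One must shrink the central level $K_Z$ so that the order of the finite abelian group $\Q^\times\backslash\A_f^\times/K_Z$ becomes prime to $p$, while keeping $K'$ neat, keeping $K'_\l$ hyperspecial for $\l\notin S\cup\div(DQ)$, and not affecting any of the local conditions at the admissible primes dividing $Q$. This is where the hypothesis $2\in S$ (so $p>5$ under Assumption \ref{assumptions_on_p}, in particular $p\neq 2$) and $p>5$ in the odd case are used: one can concentrate the shrinking at the prime $2$ (which already lies in $S$, so no hyperspecial condition is violated there), replacing $K_2$ by a small enough congruence subgroup of the center so that the resulting global quotient is prime to $p$; neatness is preserved by passing to a finite-index subgroup. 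I expect this to be routine but to require care in bookkeeping the central characters, exactly as in Lemma \ref{lem:S_tidy_upshot}; the rest of the argument is a formal diagram-chase using Proposition \ref{prop:pullback_compatible_Z} and functoriality.
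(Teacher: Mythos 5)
Your overall plan—produce a tidy level structure and transfer test vectors and cycles between the two—is the right shape, but the first step, producing an $S$-tidy $K'\subset K$, is not possible in general. If $K'\subset K$ then $K'_Z\subset K_Z$, so the natural map
\[
\Q^\times\backslash\A_f^\times/K'_Z\;\twoheadrightarrow\;\Q^\times\backslash\A_f^\times/K_Z
\]
is a surjection of finite abelian groups; shrinking $K$ can only increase (never decrease) the order of this quotient. Hence, if $K$ fails Definition \ref{def:S_tidy}(\ref{def:S_tidy_three}), no open subgroup of $K$ can satisfy it, and in particular the proposed shrinking at the prime $2$ ``so that the resulting global quotient is prime to $p$'' goes in exactly the wrong direction.

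The paper's argument goes the other way. Starting from $K$, one \emph{enlarges} the central part at every $\l\neq 2$ by setting $K'_\l = K_\l\Z_\l^\times$ and then \emph{shrinks} only at $2$, choosing $K'_2\subset K_2$ small enough to restore neatness. Because $K_2$ has pro-order prime to $p$ (as you note, $2\in S$ and $p>5$), any finite quotient of $\Z_2^\times$ that appears is automatically prime to $p$; combined with $K'_Z\supset\prod_{\l\neq 2}\Z_\l^\times$ and the triviality of $\Q^\times\backslash\A_f^\times/\widehat\Z^\times$, this makes $K'$ $S$-tidy. The resulting $K'$ is neither contained in nor contains $K$, so the transfer of data is done via the common refinement $K''=K\cap K'$: for $\alpha_K\in\Test_K(V_{DQ},\pi,R)$ one sets
\[
\alpha_{K'}\coloneqq \frac{\alpha_K\circ\pr_{K,K'',\ast}\circ\pr_{K',K''}^{\ast}}{[K:K'']}\in\Test_{K'}(V_{DQ},\pi,R),
\]
and then Proposition \ref{prop:pullback_compatible_Z} (applied on each side through $K''$) gives $\alpha_{K',\ast}\circ\partial_{\AJ,\m}(Z(T,\phi)_{K'})=\alpha_{K,\ast}\circ\partial_{\AJ,\m}(Z(T,\phi)_K)$, whence $\kappa_n^D(Q;K)\subset\kappa_n^D(Q;K')$. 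Your ``formal diagram-chase'' conclusion is fine once the comparison is set up this way; the gap is entirely in the assumed containment and the direction of the central-level modification.
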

\begin{proof}
    We prove the first statement, as the two are similar. Let $K$ be an $S$-level structure for $\spin(V_{DQ})$, and note that $K_2 = K_{\l = 2}$  has pro-order prime to $p$.\footnote{To see this, one first observes that $K_{2}$ stabilizes some lattice $L \subset V_{DQ}\otimes \Q_{2}$ such that $2L \subset L^\vee \subset L$, where $L^\vee$ is the $\Z_{2}$-linear dual.
    One obtains a  natural map $f: K_2 \to \SO(L^\vee/2L) \times \SO(L/L^\vee)$, where $L^\vee/2L$ and $L/L^\vee$ are naturally nondegenerate symmetric spaces over $\F_2$ of dimension at most 5. Since  $p > 5$ by Remark \ref{rmk:p>5}, the image of $f$ then has order prime to $p$, and the kernel of $f$ is clearly pro-$2$.}    
After fixing a sufficiently small compact open subgroup $K_{2}' \subset K_{2}$, we can ensure that $K' \coloneqq K_{2}'\cdot \prod_{\l\neq 2} (K_\l \Z_\l^\times)$ is neat; it is then clearly $S$-tidy as well.  Let $\pr_{K,K'\cap K} : \Sh_{K'\cap K}(V_{DQ}) \to \Sh_K (V_{DQ})$
and $\pr_{K', K'\cap K} : \Sh_{K'\cap K} (V_{DQ}) \to \Sh_{K'}(V_{DQ})$ be the natural maps. 
For $R= O$ or $O/\varpi^n$ and  $\alpha_K\in \Test_K(V_{DQ}, \pi, R)$, we set 
$$\alpha_{K'} \coloneqq \frac{\alpha_K \circ 
\pr_{K, K'\cap K, \ast} \circ \pr_{K', K' \cap K}^\ast}{[K: K' \cap K]}\in \Test_{K'}(V_{DQ}, \pi, R) .$$ If $\phi\in \mathcal S(V_{DQ}^2\otimes \A_f, O)$ is fixed by $K$, it is also fixed by $K'$, and we have $\alpha_{K',\ast} \circ \partial_{\AJ,\m} (Z(T, \phi)_{K'}) = \alpha_{K,\ast}\circ \partial_{\AJ,\m} (Z(T, \phi)_K)$ by Proposition \ref{prop:pullback_compatible_Z}; the lemma follows.
\end{proof}

    Finally, we introduce some notation that will be used in the endoscopic case.
    \begin{notation}
        Suppose $\pi$ is endoscopic associated to a pair $(\pi_1,\pi_2)$.         
        Then for any squarefree $D\geq 1$ with $\sigma(D)$ \emph{even}, any $S$-level structure $K$ for $\spin(V_D)$, any $\phi \in \mathcal S(V_D^2\otimes \A_f, O)^K$, and $j = 1$ or 2,  we define $\kappa^D(1, \phi; K)^{(j)}\subset H^1_f(\Q, T_{\pi_j})$
        as the image of $\kappa^D(1)$ under the natural projection 
        $H^1_f(\Q, T_{\pi}) \to H^1_f(\Q, T_{\pi_j}).$ We similarly write $\kappa^D(1; K)^{(j)}$ and $\kappa^D(1)^{(j)}$.
        It is easy to check 
that $$\kappa^D(1, \phi; K) = \kappa^D(1, \phi; K)^{(1)} \oplus \kappa^D(1, \phi; K)^{(2)},$$ etc.  As usual, a subscript $\p$ is included when necessary for clarity. 

\end{notation}

\section{Nonvanishing criteria for changing test functions}\label{sec:rep_theory}
\subsection{Setup and notation}

\subsubsection{}
Fix a prime $q$, and let $k = \C$ or $\overline\F_p$, for an odd prime $p \neq q$. In the latter case we also assume fixed an isomorphism $\iota: \overline\Q_p \isomorphism \C$, and assume throughout this section that\begin{equation}\label{week banal equation}
q-1 \neq 0 \text{ in } k.
\end{equation}
We denote by $|\cdot|^{1/2}: \Q_q^\times \to k^\times$ the unramified character such that $|q|^{1/2} = q^{-1/2}\in k^\times$, using the isomorphism $\iota$ when $k = \overline\F_p$.

\subsubsection{}\label{local notation for split space}
Let $V_m $
be the split quadratic space of dimension $2 m +1 $
over $\Q_q $, with basis $v_0, v_1,\ldots, v_m, v_1 ^\ast,\ldots, v_m ^\ast$
and pairing given by:
$$v_i \cdot v_j^\ast = \delta_{ij}, \;\;v_i^\ast \cdot v_j^\ast = 0,  \;\;
v_i \cdot v_j = \delta_{i0}\delta_{j0}.$$
Then $L\coloneqq \Span_{\Z_q}\set{v_0, v_1,\ldots, v_m, v_1 ^\ast,\ldots, v_m ^\ast}$ is a self-dual lattice $L\subset V_m $.
Abbreviate\begin{equation} G_m =\SO (V_m) (\Q_q),\;\; G_n' =\MP_{2 n} (\Q_q).\end{equation}
For any parabolic subgroup $P \subset G_m$ (resp. $P' \subset G_n'$), we write $R_P$  (resp. $R_{P'}$) for the normalized Jacquet module functor with respect to $P$ (resp. $P'$).
\subsubsection{}
Fix an integer $n\geq 1 $,
and consider the Weil representation on $\mathcal S(V^n, \C)$ with respect to the localization, also written $\psi$,
of our fixed global additive character 
of $\Q $ (Notation \ref{notation:where_psi}).   If $k = \overline \F_p$, we have fixed an isomorphism $\iota: \overline\Q_p \isomorphism \C$, and 
it follows from the discussion in (\ref{subsubsec:coeffs_for_Weil}) that $\mathcal S (V ^ n,\Breve\Z_p)\subset \mathcal S(V^n,\overline\Q_p) $ 
is stable under $G_m\times G_n'$; reducing modulo $p$, we obtain the Weil representation on $\mathcal S (V ^ n,\overline\F_p) $.
Whether $k = \C$ or $\overline\F_p$, we abbreviate
 $\Omega_{m,n}= \mathcal S(V^n, k)$.

\subsubsection{} If $\pi $
is an irreducible, admissible $k $-linear representation of $G_m $,  define $\Theta_{m,n} (\pi)$ to be the  $k[G_n']$-module such that $$\Omega_{m,n} \twoheadrightarrow\pi \boxtimes \Theta_{m,n}(\pi)$$ is the maximal $\pi$-isotypic quotient of $\Omega_{m,n}$. 
Similarly, if $\pi' $
is an irreducible, admissible, genuine $k$-linear representation of $G_n' $, define $\Theta_{n, m} (\pi')$ to be the $k[G_m]$-module such that $$\Omega_{m,n} \twoheadrightarrow\Theta_{n,m}(\pi')\boxtimes \pi'$$
is the maximal $\pi'$-isotypic quotient of $\Omega_{m,n}$. 
\subsubsection{}
Let \begin{equation}\widetilde\GL_1 (\Q_q)\xrightarrow {g\mapsto\overline g}\Q_q ^\times\end{equation}
be the double cover described in \cite[p. 1661]{gan2012metaplectic}, with canonical genuine character $\chi_\psi: \widetilde \GL_1 (\Q_q) \to \mu_8 \subset k^\times.$


\subsection{ The structure of the Weil representation}
Consider the Schwartz spaces $\mathcal S (\Q_q,k) $
and $\mathcal S (\Q_q ^\times,k) $, viewed as representations of $\Q_q ^\times\times\Q_q ^\times $
via $$(g_1, g_2)\cdot\phi (x) =\phi (g_1 ^ {-1} x g_2). $$
\begin{lemma}\label{preliminary calculation for theta}
Fix a character $\chi:\Q_q ^\times\rightarrow k ^\times $.
\begin{enumerate}
\item\label{preliminary calculation for theta part one} The maximal quotient of $\mathcal S (\Q_q ^\times,k) $
on which the first factor of $\Q_q ^\times $
acts by $\chi $
is realized by the map
\begin{align*}
\mathcal S (\Q_q ^\times,k) &\rightarrow\chi\boxtimes\chi ^ {-1}\\
\phi &\mapsto\integral_{\Q_q ^\times}\phi (t)\chi (t)\d ^\times t.
\end{align*}
\item\label{preliminary calculation for theta part two} Assume $\chi $
is nontrivial. Then  the map in (\ref{preliminary calculation for theta part one}) extends uniquely to a map $f_\chi: \mathcal S (\Q_q,k)\rightarrow\chi\boxtimes\chi ^ {-1} $
via
$$\phi\mapsto\frac {1} {1 -\chi (g_0)}\integral_{\Q_q ^\times}\left (\phi (t) -\phi (g_0 ^ {-1} t)\right)\chi (t)\d ^\times t, $$
where $g_0\in\Q_q ^\times $
is any element such that $\chi (g_0)\neq 1. $
\item \label{preliminary calculation for theta part three}For integers $a\leq b$, let $S_{a,b} \subset \mathcal S(q^a \Z_q, k)$ be the subspace of Schwartz functions that are invariant under multiplication by $\Z_q^\times$ and translation by $q^b\Z_q$. Then for distinct, nontrivial, and unramified characters $\chi_1,\ldots, \chi_m: \Q_q^\times \to k^\times$ with $m \leq b - a + 1$, $f_{\chi_1},\ldots, f_{\chi_m}$ are linearly independent as functions on $S_{a,b}$. 
\end{enumerate}
\end{lemma}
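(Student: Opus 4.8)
\textbf{Plan for Lemma \ref{preliminary calculation for theta}.}

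For part (\ref{preliminary calculation for theta part one}), the space $\mathcal S(\Q_q^\times, k)$ carries the regular representation of $\Q_q^\times$ on the left, so its maximal $\chi$-quotient for the first factor is computed by averaging against $\chi$; the point is just that, since $\Q_q^\times \cong \Z \times \Z_q^\times$ is a union of compact-open cosets and $q-1\neq 0$ in $k$, the integral $\int_{\Q_q^\times} \phi(t)\chi(t)\,d^\times t$ makes sense for any $k$-valued Schwartz function and is visibly surjective onto $\chi\boxtimes\chi^{-1}$ with kernel spanned by the differences $\phi - g_1\cdot\phi$ for $g_1\in\Q_q^\times$. This is a direct computation in the smooth representation theory of $\Q_q^\times$ over $k$ and needs no subtlety beyond the banality assumption (\ref{week banal equation}).

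For part (\ref{preliminary calculation for theta part two}), the only difference between $\mathcal S(\Q_q, k)$ and $\mathcal S(\Q_q^\times, k)$ is the subspace of functions supported near $0$, i.e. the quotient $\mathcal S(\Q_q, k)/\mathcal S(\Q_q^\times, k)$ is the ``stalk at $0$'' on which $\Q_q^\times$ acts through $\bigl(g_1,g_2\bigr)\mapsto$ scaling, hence through the trivial character of the first factor (after passing to the appropriate completed stalk one sees the first factor acts trivially on the associated graded). Since $\chi\neq 1$, any extension of $f_\chi$ must kill this extra piece, which forces the stated formula: the operator $\phi\mapsto \phi - g_0^{-1}\cdot\phi$ annihilates constants-near-$0$ and restricts to multiplication by $(1-\chi(g_0))$ on the $\chi$-isotypic part of $\mathcal S(\Q_q^\times, k)$, so dividing by $1-\chi(g_0)$ (nonzero as $\chi(g_0)\neq 1$) gives the unique extension; one checks it is independent of the choice of $g_0$ by a one-line manipulation. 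I would phrase this as: the inclusion $\mathcal S(\Q_q^\times,k)\hookrightarrow\mathcal S(\Q_q,k)$ induces an isomorphism on maximal $\chi$-quotients for the first $\Q_q^\times$-factor when $\chi\neq 1$, and then transport the map from part (\ref{preliminary calculation for theta part one}).

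For part (\ref{preliminary calculation for theta part three}), I would make the integrals explicit. A function $\phi\in S_{a,b}$ is determined by its values $c_j := \phi(q^j)$ for $a\le j\le b-1$ together with the value on $q^b\Z_q$, but the $\Z_q^\times$-invariance and $q^b\Z_q$-translation-invariance mean $\phi$ is a combination of indicator functions $\mathbbm 1_{q^j\Z_q^\times}$ for $a\le j \le b-1$ and $\mathbbm 1_{q^b\Z_q}$. For an unramified nontrivial $\chi$ with $\chi(q)=\zeta\neq 1$, a short computation gives $f_\chi(\mathbbm 1_{q^j\Z_q^\times}) = \zeta^{-j}\cdot\mathrm{vol}(\Z_q^\times)$ up to a harmless normalization, and $f_\chi(\mathbbm 1_{q^b\Z_q})$ is a geometric-series tail $\propto \zeta^{-b}/(1-\zeta^{-1})$ (this is exactly where the ``$\phi - g_0^{-1}\phi$'' regularization is doing its job). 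Restricting to the $b-a$-dimensional subspace spanned by the $\mathbbm 1_{q^j\Z_q^\times}$, $a\le j\le b-1$, the matrix $\bigl(\zeta_i^{-j}\bigr)$ for distinct $\zeta_1,\dots,\zeta_m$ is a Vandermonde matrix, hence of rank $\min(m, b-a)=m$ since $m\le b-a+1$ — wait, I need $m\le b-a+1$ values of $j$, so I should also use the $\mathbbm 1_{q^b\Z_q}$ coordinate to get the full $b-a+1$ columns; including it, the relevant matrix is still Vandermonde-type (the last column being the regularized tail) and its determinant is a nonzero multiple of $\prod_{i<i'}(\zeta_i^{-1}-\zeta_{i'}^{-1})$, nonzero because the $\chi_i$ are distinct and unramified. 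Linear independence of $f_{\chi_1},\dots,f_{\chi_m}$ on $S_{a,b}$ follows. \textbf{The main obstacle} is keeping the normalizations of the $p$-adic measure and of $f_\chi$ straight so that the matrix entries are genuinely a Vandermonde (and checking the determinant is a unit, not merely nonzero, is automatic here since the entries are roots of unity and $q-1\ne 0$ in $k$); once the entries are pinned down, the nonvanishing is the standard Vandermonde fact and poses no difficulty.
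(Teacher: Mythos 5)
Your proposal is correct and follows essentially the same route as the paper: part (1) by direct computation, part (2) by observing that the first factor acts trivially on the quotient $\mathcal S(\Q_q,k)/\mathcal S(\Q_q^\times,k)\cong k$ (so for $\chi\neq 1$ the extension is unique and the displayed formula exhibits it), and part (3) by computing $f_{\chi_i}$ on the basis $\set{\mathbbm 1_{q^j\Z_q^\times}}_{a\le j\le b-1}\cup\set{\mathbbm 1_{q^b\Z_q}}$ of $S_{a,b}$ and reducing to a Vandermonde nondegeneracy. Note the small sign convention slip in part (3) — the computation gives $f_\chi(\mathbbm 1_{q^j\Z_q^\times})=\chi(q)^{j}\operatorname{vol}(\Z_q^\times)$ and $f_\chi(\mathbbm 1_{q^b\Z_q})=\frac{\chi(q)^b}{1-\chi(q)}\operatorname{vol}(\Z_q^\times)$, not $\chi(q)^{-j}$ — and the remark that the determinant is ``a unit'' is unnecessary (over a field, nonvanishing suffices) and not generally true for $k=\C$; neither affects the argument.
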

\begin{proof}
Part (\ref{preliminary calculation for theta part one}) is elementary. 
For (\ref{preliminary calculation for theta part two}), we have the exact sequence
$$0\rightarrow \mathcal S (\Q_q ^\times,k)\rightarrow \mathcal S (\Q_q,k)\xrightarrow {\phi\mapsto\phi (0)}k\rightarrow 0, $$
which is equivariant for the trivial $\Q_q ^\times\times\Q_q ^\times $-action on $k $. Since $$\Home_{\Q_q ^\times\times\Q_q ^\times} (k,\chi\boxtimes\chi ^ {-1}) = 0, $$
there is at most one extension of the map in (\ref{preliminary calculation for theta part one}) to $\mathcal S (\Q_q,k) $, and the formula given in (\ref{preliminary calculation for theta part two}) exhibits it.

For (\ref{preliminary calculation for theta part three}), let $x_i = \chi_i(q)$.  A direct calculation shows that, for $\phi\in S_{a,b}$, 
\begin{equation*}
\begin{split}
    f_{\chi_i} (\phi)= \frac{1}{1 - x_i} \operatorname{vol} (\Z_q^\times) \bigg(\phi(q^a)(x_i^a - x_i^{a+1}) + \phi (q^{a+1}) (x_i^{a+1} - x_i^{a+2}) \\ +\ldots + \phi(q^{b-1}) (x_i^{b-1} - x_i^b) + \phi(q^b) x_i^b\bigg).\end{split}
\end{equation*}
So it suffices to show that the matrix
\begin{equation*}
    \begin{pmatrix}
    x_1^a - x_1^{a+1} & \cdots & x_1^{b-1} - x_1^b & x_1^b \\ 
    x_2^a - x_2^{a+1} & \cdots & x_2^{b-1} - x_2^b &  x_2^b \\
     \vdots & \ddots & \vdots & \vdots  \\
     x_m^a - x_m^{a+1} & \cdots & x_m^{b-1} - x_m^b &  x_m^b 
    \end{pmatrix}
\end{equation*}
is nondegenerate, and this follows from the Vandermonde determinant formula.

\end{proof}
\subsubsection{}
Assuming that $m\geq 1 $, let $$P = MN\subset G_m $$
be the parabolic subgroup stabilizing the isotropic line $\langle v_1\rangle $. Then $M$
is isomorphic to $\Q_q ^\times\times G_{m -1} $; we normalize the isomorphism such that
\begin{equation} (\alpha, g)\cdot v_1 =\alpha v_1,\;\text {for}\; (\alpha, g)\in M.
\end{equation}
Similarly, let $P' = M' N'\subset G_n' $
be the preimage of the stabilizer of $e_1\in W_{2 n} $ (Notation \ref{notation:symplectic}(\ref{notation:symplectic_W})); then $M'$ is isomorphic to $\widetilde\GL_1 (\Q_q)\times_{\mu_2} G'_{n -1}. $
We normalize the isomorphism so that
\begin{equation}
(\alpha, g)\cdot e_1 =\overline\alpha e_1,\;\text {for}\; (\alpha, g)\in M'.
\end{equation}

\begin{lemma}\label{jacquet module filtration theta}
\leavevmode
\begin{enumerate}
\item 
The normalized Jacquet module $R_P (\Omega_{m,n}) $
fits into an exact sequence
$$0\rightarrow\Induction_{M\times P'} ^ {M\times G_n'}\chi_\psi\cdot \mathcal S (\Q_q ^\times,k)\boxtimes\Omega_{m-1,n -1}\rightarrow R_P (\Omega_{m, n})\rightarrow |\cdot | ^ {\frac {2n -2 m +1} {2}}\Omega_{m-1, n}\rightarrow 0, $$
where $|\cdot | $
is the canonical character of $M $, and $\chi_\psi\cdot \mathcal S (\Q_q ^\times,k) $
is a $\GL_1 (\Q_q)\times\widetilde\GL_1 (\Q_q) $-module with action defined by $(g, h)\cdot\phi (t) =\chi_\psi (h)\phi (g ^ {-1} t\overline h). $
\item Similarly, the normalized Jacquet module $R_{P'} (\Omega_{m,n}) $
fits into a canonical exact sequence
$$0\rightarrow\Induction_{P\times M'} ^ {G_m\times M'}\chi_\psi\cdot \mathcal S (\Q_q ^\times,k)\boxtimes\Omega_{m-1,n -1}\rightarrow R_{P'} (\Omega_{m, n})\rightarrow\chi_\psi |\cdot | ^ {\frac {2 m -2 n +1} {2}}\boxtimes\Omega_{m,n -1}\rightarrow 0. $$
\end{enumerate}
\end{lemma}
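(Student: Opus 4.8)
The plan is to recognize Lemma~\ref{jacquet module filtration theta} as the classical computation of the Jacquet module of a Weil representation along a maximal parabolic of one member of a reductive dual pair (going back to Rallis and Kudla), so I will only sketch the argument. Part~(2) follows from part~(1) by the symmetry of the construction, exchanging the roles of $\SO(V_m)$ and $\MP_{2n}$ and using the parabolic $P' = M'N'$ of $G_n'$ in place of $P \subset G_m$, so I focus on~(1). First I would work in the Schr\"odinger model $\Omega_{m,n} = \mathcal S(V_m^n, k)$ with the action given by \eqref{eq:Weil_formulas}, and choose coordinates adapted to the isotropic line $\langle v_1\rangle$: decompose $V_m = \Q_q v_1 \oplus V_{m-1} \oplus \Q_q v_1^\ast$, so that a point of $V_m^n$ becomes a triple $x = (a, y, b)$ with $a, b \in \Q_q^n$ and $y \in V_{m-1}^n$. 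One checks that $N$ is abelian and, as a variety, isomorphic to $V_{m-1}$, acting on $V_m^n$ by fixing the coordinate vector $b$, translating each $y_i$ by a $b_i$-multiple of a vector in $V_{m-1}$, and translating $a_i$ by a quadratic correction; since $\omega_\psi(n)\phi(x) = \phi(n^{-1}x)$ for $n \in N \subset \SO(V_m)$ by \eqref{eq:Weil_formulas}, this describes the $N$-action on $\Omega_{m,n}$ explicitly.

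The core is a stratification argument. Because $b \in \Q_q^n$ records the position along a single isotropic line, there are exactly two $N$-orbit types on $V_m^n$ according to whether $b = 0$ or $b \neq 0$, so restriction of Schwartz functions gives a short exact sequence of $M \times G_n'$-modules
\[
0 \longrightarrow \mathcal S(\{b \neq 0\}, k) \longrightarrow \Omega_{m,n} \longrightarrow \mathcal S(\{b = 0\}, k) \longrightarrow 0,
\]
on which $N$ acts trivially through the quotient. I would then apply the normalized Jacquet functor $R_P = \delta_P^{1/2}\otimes(-)_N$. On the closed piece $N$ acts trivially and $\mathcal S(\{b = 0\}, k) \cong \mathcal S(\Q_q^n, k)\otimes \mathcal S(V_{m-1}^n, k)$; here $\mathcal S(V_{m-1}^n, k)$ is the Schr\"odinger model $\Omega_{m-1,n}$ for $\SO(V_{m-1})\times\MP_{2n}$, the factor $\mathcal S(\Q_q^n,k)$ from the isotropic direction $v_1$ is absorbed after a Fourier transform together with the normalization $\delta_P^{1/2}$, and one obtains the quotient term $|\cdot|^{(2n-2m+1)/2}\,\Omega_{m-1,n}$. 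On the open piece the $N$-action is free transverse to the map $x \mapsto b$, so after Fourier transform in the $a$-variables its $N$-coinvariants are computed by restricting to a slice; this leaves the datum of $b \in \Q_q^n \setminus \{0\}$ together with the Schr\"odinger model $\Omega_{m-1,n-1}$ for $\SO(V_{m-1})\times\MP_{2n-2}$. The residual $b$-variable is a free $\Q_q^\times$ worth of data on which $\GL_1 \subset M$ acts by scaling, and since $G_n'$ acts on the ambient space of $b$-variables with the stabilizer of a nonzero vector equal to $P' = M'N'$, this piece is induced from $M \times P'$ and equals $\Ind_{M \times P'}^{M \times G_n'}\bigl(\chi_\psi\cdot\mathcal S(\Q_q^\times, k)\boxtimes\Omega_{m-1,n-1}\bigr)$.

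The step I expect to be the main obstacle is the bookkeeping of the metaplectic cover and the genuine character $\chi_\psi$ across the Fourier transforms and the parabolic induction: one must verify that the residual $b$-variable really carries the $\GL_1(\Q_q)\times\widetilde\GL_1(\Q_q)$-module $\chi_\psi\cdot\mathcal S(\Q_q^\times, k)$ with $(g,h)\cdot\phi(t) = \chi_\psi(h)\phi(g^{-1}t\,\overline h)$, that $M' = \widetilde\GL_1(\Q_q)\times_{\mu_2}G_{n-1}'$ acts compatibly, and that the modulus exponents are exactly $\tfrac{2n-2m+1}{2}$ and $\tfrac{2m-2n+1}{2}$; this is carried out by the usual cocycle manipulations for $\MP_{2n}$, for which the relevant formulas are in \cite{gan2012metaplectic}. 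Finally, the entire argument goes through verbatim with $k = \overline\F_p$: restriction of Schwartz functions, Fourier transform along $\Q_q^n$, and formation of $N$-coinvariants are all defined over $\breve\Z_p$ away from $q$, which is invertible in $k$, so the statement over $\overline\F_p$ follows by reduction modulo $p$ from the classical statement over $\C$.
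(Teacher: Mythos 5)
You correctly identify the lemma as Kudla's filtration theorem for the Jacquet module of the Weil representation and note that the argument transfers to $k = \overline\F_p$ because $p \neq q$; this is exactly the content of the paper's proof, which is a citation of \cite[Theorem 2.8]{kudla1986local} (and its reformulation in \cite[Proposition 7.3]{gan2012metaplectic}) together with that observation. So the strategy is the right one.

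However, where the paper cites, you attempt to reproduce Kudla's argument, and the reproduction has a concrete flaw: the claim that $N$ acts trivially on the closed stratum $\mathcal S(\{b=0\},k)$ is false. Writing $V_m = \Q_q v_1 \oplus V_{m-1} \oplus \Q_q v_1^\ast$ and $n_u \in N$ for $u \in V_{m-1}$, one computes $n_u(a_i v_1 + y_i + b_i v_1^\ast) = (a_i - y_i\cdot u - b_i\,\tfrac{u\cdot u}{2})v_1 + (y_i + b_iu) + b_iv_1^\ast$, so on $\{b=0\}$ the group $N$ still translates the $a$-variable by $-y\cdot u$, a nontrivial action. If $N$ acted trivially there, the Jacquet module of the closed piece would be all of $\mathcal S(\Q_q^n,k)\otimes \Omega_{m-1,n}$, which is far too large; it is precisely the nontrivial $N$-coinvariants that cut this down to $|\cdot|^{(2n-2m+1)/2}\Omega_{m-1,n}$, and this computation is the real content of the quotient term. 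Your later remark that the $\mathcal S(\Q_q^n,k)$-factor is ``absorbed after a Fourier transform'' tacitly contradicts the triviality claim but does not carry out that computation. The description of the open stratum is likewise imprecise: $b$ ranges over $\Q_q^n\setminus\{0\}$, an $n$-dimensional punctured affine space, not ``a free $\Q_q^\times$ worth of data,'' and the relevant orbit analysis uses the transitive action of the Siegel Levi $\GL_n(\Q_q)\subset\MP_{2n}(\Q_q)$ with a mirabolic stabilizer (which, together with the other pieces, generates $P'$), not a scalar $\GL_1$. None of this indicates a wrong approach — you are following Kudla — but as written the sketch would not establish the lemma, which is why the paper does not attempt to reproduce the argument. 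Also, in your final paragraph you assert both that the argument goes through verbatim over $\overline\F_p$ (the paper's claim) and that the result follows ``by reduction modulo $p$ from the classical statement over $\C$''; the latter is a distinct and stronger base-change assertion about exactness that is neither needed nor justified.
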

\begin{proof} When $k = \C$, this is  \cite[Theorem 2.8]{kudla1986local}; see also \cite[Proposition 7.3]{gan2012metaplectic}  for our more convenient normalizations. When $k = \overline\F_p$, the proof in \cite{kudla1986local} applies without change because $p \neq q$. 
\end{proof}
\begin {corollary}\label{induced theta corollary}
\leavevmode
\begin{enumerate}
\item \label{cor:induced_theta_one}Let $\pi_{m -1} $
be an irreducible admissible representation of $G_{m -1} $, and let $\chi_0: \Q_q^\times \to k^\times$ be a character with
$\chi_0\neq |\cdot | ^ {\frac {2 n -2 m +1} {2}} .$
Then for all admissible  $k[G'_n]$-modules $\mathcal M$,  $$\Home_{G_m\times G_n'}\left (\Omega_{m, n},\left (\Induction_P ^ {G_m}\chi_0\boxtimes\pi_{m -1}\right)\boxtimes \mathcal M\right) =\Home_{G_n'}\left (\Induction_{P'} ^ {G_n'}\chi_\psi\cdot\chi_0 ^ {-1}\boxtimes\Theta_{m -1, n -1} (\pi_{m -1}),\mathcal  M\right). $$
In particular, if $$\pi_m\coloneqq\Induction_P ^ {G_m}\chi_0\boxtimes\pi_{m -1} $$
is irreducible, then $$\Theta_{m,n} (\pi_m) =\Induction_{P'} ^ {G_n'}\chi_\psi\cdot\chi_0 ^ {-1}\boxtimes\Theta_{m -1, n -1} (\pi_{m -1}). $$
\item\label{cor:induced_theta_two} Similarly, let $\pi'_{n -1} $
be an irreducible admissible genuine representation of $G'_{n -1} $
and $\chi_0\neq |\cdot | ^ {\frac {2 m -2 n +1} {2}} $
a character of $\Q_q ^\times $.
Then for all $k[G_m]$-modules $\mathcal M $, $$\Home_{G_m\times G_n'}\left (\Omega_{m, n},\mathcal M \boxtimes \left (\Induction_{P'} ^ {G_n'}\chi_\psi\cdot\chi_0\boxtimes\pi_{n -1}'\right)\right) =\Home_{G_m}\left (\Induction_{P} ^ {G_m}\chi_0 ^{-1}\boxtimes\Theta_{m -1, n -1} (\pi_{n -1}'), \mathcal M\right). $$
In particular, if $$\pi'_n\coloneqq\Induction_{P'} ^ {G_n'}\chi_\psi\chi_0\boxtimes\pi_{n -1}' $$
is irreducible, then $$\Theta_{n, m} (\pi'_n) =\Induction_{P} ^ {G_m}\chi_0 ^{-1}\boxtimes\Theta_{n-1,m -1} (\pi'_{n -1}). $$
\end{enumerate}
\end{corollary}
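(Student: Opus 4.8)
The plan is to deduce both identities formally from Lemma \ref{jacquet module filtration theta} (the two-step filtrations of the Jacquet modules of the Weil representation) and Lemma \ref{preliminary calculation for theta} (the $\chi$-coinvariants of $\mathcal S(\Q_q^\times,k)$), together with normalized Frobenius reciprocity and Bernstein's second adjunction theorem. I will carry out part (\ref{cor:induced_theta_one}) in detail; part (\ref{cor:induced_theta_two}) is the mirror image, obtained by exchanging the roles of $G_m$ and $G_n'$, using Lemma \ref{jacquet module filtration theta}(2) in place of (1) and the exponent $\frac{2m-2n+1}{2}$ in place of $\frac{2n-2m+1}{2}$.

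First I would apply Frobenius reciprocity in the $G_m$-variable: since $\Induction_P^{G_m}\chi_0\boxtimes\pi_{m-1}$ occupies the second slot of the $\Home$,
$$\Home_{G_m\times G_n'}\!\left(\Omega_{m,n},\,(\Induction_P^{G_m}\chi_0\boxtimes\pi_{m-1})\boxtimes\mathcal M\right)\;=\;\Home_{M\times G_n'}\!\left(R_P(\Omega_{m,n}),\,(\chi_0\boxtimes\pi_{m-1})\boxtimes\mathcal M\right).$$
Then I would substitute the short exact sequence of Lemma \ref{jacquet module filtration theta}(1). On its quotient term $|\cdot|^{\frac{2n-2m+1}{2}}\Omega_{m-1,n}$ the $\Q_q^\times$-factor of $M$ acts through the single character $|\cdot|^{\frac{2n-2m+1}{2}}$, which by assumption is $\neq\chi_0$. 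Since $\Q_q^\times$ has cohomological dimension $\le 1$ on smooth $k$-representations — and here the banal hypothesis \eqref{week banal equation} (together with $p\neq q$ when $k=\overline\F_p$) is exactly what makes the tame and wild parts of $\Q_q^\times$ cohomologically trivial over $k$ — both $\Home$ and $\Extension^1$ from this quotient into $(\chi_0\boxtimes\pi_{m-1})\boxtimes\mathcal M$ vanish. Hence the $\Home$ is computed by the sub-object $\Induction_{M\times P'}^{M\times G_n'}\big(\chi_\psi\cdot\mathcal S(\Q_q^\times,k)\boxtimes\Omega_{m-1,n-1}\big)$ alone.

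Next I would unwind this induced module: second adjunction applied to $\Induction_{P'}^{G_n'}$ replaces $\mathcal M$ by its Jacquet module $R_{\overline{P'}}(\mathcal M)$ along the opposite parabolic and turns the problem into a $\Home$ over $M\times M'$ with source $\chi_\psi\cdot\mathcal S(\Q_q^\times,k)\boxtimes\Omega_{m-1,n-1}$. Now Lemma \ref{preliminary calculation for theta}(\ref{preliminary calculation for theta part one}) identifies the maximal quotient of $\mathcal S(\Q_q^\times,k)$ on which the left $\GL_1$ acts by $\chi_0$; tracking the $\chi_\psi$-twist and the right-translation action of $\widetilde\GL_1$, this quotient is the genuine character $\chi_\psi\cdot\chi_0^{-1}$. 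Simultaneously, the maximal $\pi_{m-1}$-isotypic quotient of $\Omega_{m-1,n-1}$ under $G_{m-1}$ is, by definition, $\pi_{m-1}\boxtimes\Theta_{m-1,n-1}(\pi_{m-1})$. Combining the two coinvariant computations and applying Schur's lemma to the irreducible $\chi_0\boxtimes\pi_{m-1}$, the $\Home$ collapses to $\Home_{M'}\!\big((\chi_\psi\chi_0^{-1})\boxtimes\Theta_{m-1,n-1}(\pi_{m-1}),\,R_{\overline{P'}}(\mathcal M)\big)$, which by second adjunction run backwards equals $\Home_{G_n'}\!\big(\Induction_{P'}^{G_n'}\chi_\psi\chi_0^{-1}\boxtimes\Theta_{m-1,n-1}(\pi_{m-1}),\,\mathcal M\big)$, the claimed identity.

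For the ``in particular'' clauses: when $\pi_m=\Induction_P^{G_m}\chi_0\boxtimes\pi_{m-1}$ is irreducible, every $G_m$-map from $\Omega_{m,n}$ into $\pi_m\boxtimes\mathcal M$ has $\pi_m$-isotypic image and hence factors through the maximal $\pi_m$-isotypic quotient $\pi_m\boxtimes\Theta_{m,n}(\pi_m)$; with Schur's lemma this exhibits $\Theta_{m,n}(\pi_m)$ as representing the functor $\mathcal M\mapsto\Home_{G_m\times G_n'}(\Omega_{m,n},\pi_m\boxtimes\mathcal M)$ on smooth $k[G_n']$-modules, so Yoneda upgrades the natural isomorphism of functors just established into the identification $\Theta_{m,n}(\pi_m)=\Induction_{P'}^{G_n'}\chi_\psi\chi_0^{-1}\boxtimes\Theta_{m-1,n-1}(\pi_{m-1})$, and symmetrically for part (\ref{cor:induced_theta_two}). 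I expect the only genuinely delicate point to be the homological input — checking that the quotient term in Lemma \ref{jacquet module filtration theta} contributes neither $\Home$ nor $\Extension^1$, so that one obtains an honest isomorphism and not merely a left-exact sequence — while the adjunction bookkeeping and the coinvariant computation via Lemma \ref{preliminary calculation for theta} are routine once the normalizations are pinned down.
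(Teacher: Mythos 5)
Your argument is correct, and it follows what the paper's terse ``immediate'' presumably intends: normalized Frobenius reciprocity for $\Induction_P^{G_m}$, substitution of the filtration from Lemma~\ref{jacquet module filtration theta}(1), annihilation of the quotient term, and then the coinvariants computation via Lemma~\ref{preliminary calculation for theta}(\ref{preliminary calculation for theta part one}) together with the universal property of $\Theta_{m-1,n-1}(\pi_{m-1})$ and Schur's lemma.

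Two remarks on the details. First, the cohomological justification is slightly off as stated. That ``$\Q_q^\times$ has cohomological dimension $\le 1$'' is true but does not by itself give the vanishing of $\Extension^1$ from the quotient term; what you actually need is that for the two \emph{distinct} smooth characters $\chi_0$ and $\chi_1\coloneqq|\cdot|^{(2n-2m+1)/2}$ of $\Q_q^\times$, both $\Home_{\Q_q^\times}(\chi_1,\chi_0)$ and $\Extension^1_{\Q_q^\times}(\chi_1,\chi_0)$ vanish. These are $H^0$ and $H^1$ of $\Q_q^\times$ with coefficients in the nontrivial character $\chi_1^{-1}\chi_0$; the banal hypothesis (\ref{week banal equation}) kills the contribution of $\Z_q^\times$ in all positive degrees, and the remaining $H^0(q^{\Z},-)$ and $H^1(q^{\Z},-)$ both vanish when the $q^{\Z}$-restriction of $\chi_1^{-1}\chi_0$ is nontrivial (while if it is trivial, $\Z_q^\times$ detects the nontriviality in degree $0$). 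Either way both groups vanish. Second, the double appeal to Bernstein's second adjunction is more machinery than strictly necessary, even though it is available here (and the paper does invoke the metaplectic analogue of second Frobenius reciprocity in the proof of Lemma~\ref{quotients are intertwining operators}). A lighter route: since $M$ acts on $\Induction_{M\times P'}^{M\times G_n'}(V)$ only through its action on the values, and this commutes with the $G_n'$-translations, forming the maximal $(\chi_0\boxtimes\pi_{m-1})$-isotypic $M$-quotient commutes with $\Induction_{P'}^{G_n'}$; computing that quotient of $V=\chi_\psi\cdot\mathcal S(\Q_q^\times,k)\boxtimes\Omega_{m-1,n-1}$ via Lemma~\ref{preliminary calculation for theta}(\ref{preliminary calculation for theta part one}) and the definition of $\Theta_{m-1,n-1}(\pi_{m-1})$, and then applying Schur's lemma for the irreducible $\chi_0\boxtimes\pi_{m-1}$, gives the claimed $\Home$-identity with no Casselman duality in sight. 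Your Yoneda argument for the ``in particular'' clause is fine.
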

\begin{proof}
This is immediate from Lemma \ref{jacquet module filtration theta} and Lemma \ref{preliminary calculation for theta}(\ref{preliminary calculation for theta part one}).
\end {proof}

\subsection{Principal series over $k$ for orthogonal and metaplectic groups}
Some of the arguments in this subsection were inspired by the work of Zorn \cite{zorn2010reducibility}. The results are new only if $k = \overline \F_p$. \subsubsection{} Continuing the notation of (\ref{local notation for split space}), we now assume $m = n$. 
\begin{notation}
\leavevmode
\begin{enumerate}
\item Let $B\subset G_n$
be the stabilizer of the maximal isotropic flag
\begin{equation}\label {orthogonal isotropic flag}
0\subset\langle v_1\rangle\subset\langle v_1, v_2\rangle\subset\cdots\subset\langle v_1,\ldots, v_n\rangle
\end{equation}
and let $B'\subset G'_n $
be the preimage of the stabilizer of the maximal isotropic flag
\begin{equation*}
0\subset\langle e_1\rangle\subset\langle e_1, e_2\rangle\subset\cdots\subset\langle e_1,\ldots, e_n\rangle.
\end{equation*}
\item The Levi factor $T $
of $B $
is identified with $(\Q_q ^\times) ^ n $
via its action on the associated graded of (\ref {orthogonal isotropic flag}), and similarly the Levi factor $T' $
of $B' $
is identified with
$$\underbrace {\widetilde\GL_1 (\Q_q)\times_{\mu_2}\cdots\times_{\mu_2}\widetilde\GL_1 (\Q_q)}_{n\;\text {times}}. $$
\item 
For any character $$\chi: (\Q_q ^\times) ^ n\rightarrow k ^\times, $$
we define (normalized) principal series representations
\begin{equation*}
I (\chi) =\Induction_B ^ {G_n}\chi,\;\; I' (\chi) =\Induction_{B'} ^ {G_n'}\chi_\psi\cdot\chi.
\end{equation*}
\item We write $W= S_n \rtimes (\Z/2\Z)^n$ for the Weyl group of $T$ in $G_n$, which is also the Weyl group of $T'$ in $G_n'$.
\end{enumerate}
\end{notation}
\begin{lemma}\label{jacquet module of induced}
The semi-simplified normalized Jacquet modules are $$R_B (I (\chi)) ^ {\ss} =\oplus_{w\in W}\chi ^ w,\;\;\; R_{B'} (I' (\chi)) ^ {\ss} =\oplus_{w\in W}\chi ^ w\chi_\psi.$$
\end{lemma}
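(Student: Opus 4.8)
The statement computes the semisimplified Jacquet module of a principal series for the split odd orthogonal group and for the metaplectic group. The standard tool is the geometric lemma of Bernstein--Zelevinsky (Mackey theory for normalized Jacquet functors applied to normalized parabolic induction). First I would recall that, in general, for a reductive group $G$ with parabolics $P = MN$ and $Q = LU$, the normalized Jacquet module $R_Q(\Ind_P^G \sigma)$ admits a filtration indexed by the double cosets $W_L \backslash W / W_M$, whose associated graded pieces are themselves normalized inductions of twisted, $w$-conjugated Jacquet modules of $\sigma$; this holds over any field $k$ in which the residue characteristic $q$ is invertible (here $q - 1 \neq 0$ in $k$, so $q \in k^\times$ as well), because the proof of the geometric lemma is a purely combinatorial/Mackey-theoretic argument about the orbit decomposition $P \backslash G / Q$, and the orbit representatives can be chosen in $W$. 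The metaplectic case works the same way since the double cover splits over the unipotent radicals and the Weyl group of $T'$ in $G_n'$ coincides with $W$; the only new feature is the genuine character $\chi_\psi$, which simply rides along each piece — this is exactly the normalization built into $I'(\chi) = \Ind_{B'}^{G_n'} \chi_\psi \cdot \chi$.

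\textbf{Key steps.} (1) Specialize the geometric lemma to the case $P = Q = B$ (the Borel), $\sigma = \chi$ a character of the torus $T \cong (\Q_q^\times)^n$. Since $M = L = T$ is the whole torus, $W_M = W_L = \{1\}$, so the index set is all of $W$, and there are $\#W = 2^n n!$ filtration steps. (2) For each $w \in W$, the associated graded piece is $\Ind$ of a $w$-conjugate of the Jacquet module of $\chi$ along $T$, but the Jacquet module of a character of a torus along the torus is just the character again; and inducing from $T$ up to $T$ is the identity. Hence the $w$-th graded piece is precisely $\chi^w$, with the appropriate modulus-character shifts from the two normalizations cancelling in the usual way (this cancellation is why one uses \emph{normalized} induction and \emph{normalized} Jacquet functors throughout). (3) Passing to semisimplification kills the filtration, giving $R_B(I(\chi))^{\ss} = \bigoplus_{w \in W} \chi^w$. (4) Repeat verbatim for $G_n'$, $B'$, $T'$; the genuine character $\chi_\psi$ is $W$-invariant in the relevant sense (or rather, is simply carried through each step unchanged), so we get $R_{B'}(I'(\chi))^{\ss} = \bigoplus_{w\in W} \chi^w \chi_\psi$. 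Alternatively, and perhaps more cleanly for the write-up, one can iterate the two-step Jacquet module exact sequences already available: apply the parabolic-induction-in-stages description (Lemma~\ref{jacquet module filtration theta} is the analogous computation for the Weil representation, but for principal series one uses the even simpler ``Bruhat filtration'' of $\Ind_B^{G_n}\chi$ restricted to $B$), peeling off one $\GL_1$ at a time and bookkeeping the Weyl group action.

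\textbf{Main obstacle.} There is no serious obstacle: this is a formal consequence of the geometric lemma, which is insensitive to the coefficient field as long as $q$ is invertible in $k$. The only points requiring a word of care are: (a) checking that the Bernstein--Zelevinsky argument genuinely goes through over $k = \overline{\F}_p$ — but this is standard, since the orbit decomposition and the construction of intertwiners use only that $q \in k^\times$, and one cites e.g. the mod-$p$ representation theory literature (Vignéras) or simply observes that the proof is characteristic-free; and (b) handling the metaplectic cover, where one must know that $B' \backslash G_n' / B'$ has the same orbit structure as $B \backslash G_n / B$ (true because the cover splits over unipotents and $N_{G_n'}(T')/T' = W$), and that the genuine character behaves compatibly with conjugation and induction — both of which are already implicitly used in the setup of $I'(\chi)$ and in Lemma~\ref{jacquet module filtration theta}. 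So the ``proof'' is essentially: invoke the geometric lemma, observe $W_M$ is trivial, read off the graded pieces, semisimplify.
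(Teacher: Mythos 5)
Your proposal is correct and is essentially the argument the paper uses, just spelled out rather than cited: the paper's proof simply invokes Zorn (Lemma 4.8 of \cite{zorn2010reducibility}) for the characteristic-zero case — which is itself the Bernstein--Zelevinsky geometric lemma applied to $P = Q = B$ — and then points to the proof of \cite[Lemme 34]{vigneras1989gl2} for the observation that the Bruhat filtration argument is insensitive to the coefficient field once $q \neq p$. Your unpacking of why the geometric lemma works over $\overline{\F}_p$ and how the genuine character $\chi_\psi$ is carried through the metaplectic case is the right content to fill in those citations.
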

\begin{proof}
Over $\overline\Q_p $, this follows from the well-known result for $\C $; see \cite[Lemma 4.8]{zorn2010reducibility}. Since $p\neq q $, the $p $-modular case follows by the proof of \cite[Lemme 34]{vigneras1989gl2}.
\end{proof}
\begin{lemma}\label{quotients are intertwining operators}
\leavevmode
\begin{enumerate}
\item Any nontrivial quotient $I (\chi)\twoheadrightarrow\pi $
extends to a nontrivial intertwining operator $$I (\chi)\twoheadrightarrow\pi\rightarrow I (\chi ^ w) $$
for some $w\in W $.
\item Similarly, any nontrivial quotient $I' (\chi)\twoheadrightarrow\pi' $
extends to a nontrivial intertwining operator $$I' (\chi)\twoheadrightarrow\pi'\rightarrow I' (\chi ^ w)$$
for some $w\in W$.
\end{enumerate}
\end{lemma}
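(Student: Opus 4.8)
The plan is to prove the statement about $I(\chi)$, since the metaplectic case is entirely parallel. The key input is the Frobenius reciprocity / second adjunction theorem for parabolic induction together with the Jacquet module computation in Lemma \ref{jacquet module of induced}. First I would observe that a nontrivial quotient $q: I(\chi) \twoheadrightarrow \pi$ has $\pi \neq 0$, so the normalized Jacquet module $R_B(\pi)$ is nonzero: indeed, by the geometric lemma / exactness of $R_B$ along with the fact that any nonzero admissible representation of $G_n$ has a nonzero Jacquet module with respect to the minimal parabolic $B$ (a standard consequence of Jacquet's theorem, since $B$ is a Borel), we get $R_B(\pi) \neq 0$. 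Applying $R_B$ to the surjection $I(\chi) \twoheadrightarrow \pi$ and using that $R_B(I(\chi))^{\ss} = \bigoplus_{w\in W} \chi^w$ by Lemma \ref{jacquet module of induced}, we conclude that $R_B(\pi)$ admits some $\chi^w$ (for some $w \in W$) as a subquotient; after choosing a suitable subquotient we may take $\chi^w$ to be a quotient of $R_B(\pi)$, hence — composing with the appropriate step in a Jordan–Hölder filtration — a nonzero map $R_B(\pi) \to \chi^w$ of $T$-modules.

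Next I would invoke Bernstein's second adjointness (Frobenius reciprocity for the opposite parabolic, or equivalently, the fact that $\Hom_{G_n}(\pi, I(\chi^w)) = \Hom_{G_n}(\pi, \Ind_B^{G_n} \chi^w) = \Hom_T(R_{\overline B}(\pi), \chi^w)$ together with the observation that $R_{\overline B}$ and $R_B$ have the same semisimplification on admissible representations, as the Weyl group acts transitively on the relevant data). More carefully: the standard first-adjunction statement $\Hom_{G_n}(\pi, \Ind_B^{G_n}\chi^w) = \Hom_T(R_{\overline B}(\pi), \chi^w)$ combined with $R_{\overline B}(\pi)^{\ss} \cong R_B(\pi)^{\ss}$ (both being governed by the same cuspidal support) shows that the nonzero map $R_B(\pi) \to \chi^w$ produced above yields, after passing through semisimplifications and lifting, a nonzero map $\pi \to I(\chi^w)$. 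Precomposing with the original quotient $I(\chi) \twoheadrightarrow \pi$ gives the desired nontrivial intertwining operator $I(\chi) \to I(\chi^w)$, which is nontrivial precisely because $\pi \to I(\chi^w)$ is nonzero and $I(\chi)\to\pi$ is surjective.

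The main obstacle I anticipate is the bookkeeping with Jacquet modules of the opposite Borel versus $B$ itself, and making the adjunction precise in the modular setting $k = \overline\F_p$ where one must be slightly careful about which form of Frobenius reciprocity (first or second adjointness) holds integrally and which arguments survive reduction mod $p$. The cleanest route is probably to use only first adjunction $\Hom_{G_n}(\Ind_B^{G_n}\chi, \pi) = \Hom_T(\chi, R_{B}(\pi))$ for the \emph{sub} statement and its mirror for the quotient statement, together with the fact (valid since $p\neq q$, using \cite{vigneras1989gl2}-style arguments as in Lemma \ref{jacquet module of induced}) that $R_B$ and $R_{\overline B}$ agree up to semisimplification. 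Once the combinatorics of $W$-conjugation of $\chi$ is tracked correctly, the argument is formal; for the metaplectic group $G'_n$ the identical reasoning applies with $\chi$ replaced by $\chi_\psi \cdot \chi$ throughout, using the second part of Lemma \ref{jacquet module of induced}.
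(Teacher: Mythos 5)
Your overall strategy — reduce to Jacquet modules and apply Frobenius reciprocity, exactly as the paper does — is correct, but you have the two adjunctions confused, and the confusion propagates into an unnecessary and non-rigorous semisimplification ``patch.'' The correct statements, with the paper's conventions, are:
\begin{align*}
\Hom_{G_n}\bigl(\pi,\;\Ind_B^{G_n}\sigma\bigr) &= \Hom_T\bigl(R_{B}(\pi),\;\sigma\bigr) \qquad &&\text{(first / left adjointness of $R_B$),}\\
\Hom_{G_n}\bigl(\Ind_B^{G_n}\sigma,\;\pi\bigr) &= \Hom_T\bigl(\sigma,\;R_{\overline B}(\pi)\bigr) \qquad &&\text{(Bernstein's second adjointness).}
\end{align*}
You wrote $\Hom(\pi,\Ind_B\sigma)=\Hom_T(R_{\overline B}\pi,\sigma)$ and later $\Hom(\Ind_B\chi,\pi)=\Hom_T(\chi,R_B\pi)$, which swap $B\leftrightarrow\overline B$ in both places. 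Neither of these identities is an adjunction, and the detour through ``$R_B^{\ss}\cong R_{\overline B}^{\ss}$ plus lifting'' that you insert to compensate does not produce an actual homomorphism: semisimplifications let you compare composition factors, not transport maps. Once the adjunction is stated correctly, no such patch is needed: a nonzero quotient $R_B(\pi)\twoheadrightarrow\chi^w$ (which exists since $R_B(\pi)\neq 0$ has finite length with constituents among the $\chi^w$, by exactness and Lemma \ref{jacquet module of induced}) gives $\Hom_{G_n}(\pi, I(\chi^w))=\Hom_T(R_B\pi,\chi^w)\neq 0$ directly.

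Two smaller points. First, your appeal to ``Jacquet's theorem, since $B$ is Borel'' to show $R_B(\pi)\neq 0$ is imprecise: the minimal-parabolic Jacquet module \emph{can} vanish (e.g., for any supercuspidal representation). What makes it nonzero here is that $\pi$ is a quotient of $I(\chi)$, so every irreducible constituent has cuspidal support on $T$; alternatively — and this is the route the paper actually takes — apply second adjointness directly to the given surjection $I(\chi)\twoheadrightarrow\pi$ to get $R_{\overline B}(\pi)\neq 0$, and then use that $R_B(\pi)\neq 0\iff R_{\overline B}(\pi)\neq 0$. Second, your concern about whether both adjunctions survive in the modular and metaplectic settings is well placed, and is precisely what the paper addresses by citing \cite{vigneras1996representations} and discussing Iwahori factorizations; your instinct to use only first adjointness to sidestep this is reasonable, but as noted you wrote the wrong identity there too. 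With the adjunctions corrected and the $R_B(\pi)\neq 0$ step justified properly, the argument works and is essentially the paper's.
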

\begin{proof}
Let $\overline B\subset G_n $
and $\overline B'\subset G_n' $
be the opposite Borel subgroups to $B $, $B' $. Recall the ``second Frobenius reciprocity''
\begin{equation}\label{second frobenius reciprocity}
\Home (I (\chi),\pi) =\Home (\chi, R_{\overline B} (\pi)),\;\;\;\Home (I' (\chi),\pi') =\Home (\chi_\psi\chi, R_{\overline B'} (\pi')).
\end {equation}
In the orthogonal case, this is \cite[II.3.8(2)]{vigneras1996representations}. Although the result there is only stated for reductive groups, the proof can be adapted verbatim to the metaplectic case. (The key technical points are the existence of arbitrarily small compact open subgroups admitting an Iwahori factorization, and the conditions in Lemma I.8.13 of \emph{op. cit.} All of the fundamental results on Hecke algebras in Chapter I of \emph{op. cit.} apply to general locally profinite groups.)

Returning to the proof of the lemma, note that $$R_{\overline B} (\pi)\neq 0\IFF R_B (\pi)\neq 0 $$
and likewise in the metaplectic case. Hence it follows from (\ref{second frobenius reciprocity}) that any quotient $I (\chi)\twoheadrightarrow\pi $
extends to a nontrivial map $$I (\chi)\to\pi\rightarrow I (\rho )$$
for some character $\rho $
of $(\Q_q ^\times) ^ n $; by Lemma \ref{jacquet module of induced}, $\rho $
is a Weyl conjugate of $\chi $, so (1) holds. The argument for (2) is identical.
\end {proof}
\begin{lemma}\label{rank one irreducible inductions}
Suppose $n = 1 $ and $\chi ^ 2\neq |\cdot | ^ {\pm 1}. $
Then $I (\chi) $
and $I' (\chi) $
are both irreducible and $\Theta_{1, 1} (I (\chi)) = I' (\chi). $
\end{lemma}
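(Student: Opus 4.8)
The statement has two parts: irreducibility of $I(\chi)$ and $I'(\chi)$ over $k$ for $n=1$ when $\chi^2 \neq |\cdot|^{\pm 1}$, and the theta identity $\Theta_{1,1}(I(\chi)) = I'(\chi)$. I would treat the irreducibility first, since the theta identity will follow from it together with the Jacquet module machinery already developed. For irreducibility of $I(\chi)$: by Lemma \ref{jacquet module of induced} with $n=1$, the semisimplified Jacquet module $R_B(I(\chi))^{\ss} = \chi \oplus \chi^{-1}$ (the Weyl group $W = \Z/2\Z$ acts by inversion on $\Q_q^\times$). If $I(\chi)$ were reducible, any proper nonzero quotient $\pi$ would have $R_B(\pi) \neq 0$, hence by Lemma \ref{quotients are intertwining operators} there is a nontrivial intertwining operator $I(\chi) \to I(\chi^w)$ with $w \in W$, i.e. $I(\chi) \to I(\chi)$ or $I(\chi) \to I(\chi^{-1})$, which is not an isomorphism. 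I would argue that such a non-isomorphism forces $\chi$ and $\chi^{-1}$ to have a common Jacquet-module constituent in a proper subobject, and that the standard reducibility criterion (the intertwining operator $I(\chi) \to I(\chi^{-1})$ fails to be an isomorphism precisely when $\chi^2 = |\cdot|^{\pm 1}$, cf. the rank-one calculation underlying \cite{zorn2010reducibility} and, over $\C$, the classical theory) is what we must rule out. The same argument applies verbatim to $I'(\chi)$ using the metaplectic half of Lemmas \ref{jacquet module of induced} and \ref{quotients are intertwining operators}, since $\chi_\psi$ is a fixed genuine character and the reducibility locus of $I'(\chi)$ in the variable $\chi$ is again $\chi^2 = |\cdot|^{\pm 1}$ (the $\chi_\psi$ is absorbed uniformly and does not shift the locus).

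\textbf{Reduction via Jacquet modules.} Concretely, I would use the second Frobenius reciprocity (\ref{second frobenius reciprocity}) together with Lemma \ref{jacquet module of induced}: $\Home(I(\chi), I(\chi^w)) = \Home(\chi, R_{\overline B}(I(\chi^w)))$, and $R_{\overline B}(I(\chi^w))^{\ss} = \chi^w \oplus \chi^{-w}$. When $\chi \neq \chi^{-1}$ (which holds since $\chi^2 \neq 1$, a consequence of $\chi^2 \neq |\cdot|^{\pm 1}$ only if we also know $\chi^2 \neq 1$ — I should note $|\cdot|^{\pm 1}$ is nontrivial as $q - 1 \neq 0$ in $k$, but $\chi^2 = 1$ is a separate case that must be handled: there $I(\chi)$ can still be irreducible, the reducible cases being only $\chi^2 = |\cdot|^{\pm 1}$), the space $\Home(I(\chi), I(\chi))$ is at most one-dimensional and spanned by the identity, while $\Home(I(\chi), I(\chi^{-1}))$ is at most one-dimensional, spanned by the standard intertwining operator $M(\chi)$. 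Reducibility of $I(\chi)$ is then equivalent to $M(\chi)$ not being an isomorphism. I would compute $M(\chi^{-1}) \circ M(\chi)$ as a scalar (the $c$-function/Plancherel constant), using the explicit rank-one integral — this scalar is a ratio of the form involving $(1 - \chi(q)^{-1}q^{-1})(1-\chi(q)q^{-1})$ over $(1-\chi(q)^{-2})$-type factors — and show it is nonzero exactly when $\chi^2 \neq |\cdot|^{\pm 1}$. Over $k = \overline\F_p$ this computation is legitimate because all the relevant integrals reduce to finite geometric sums with denominators controlled by $q - 1 \neq 0$ in $k$; this is precisely the kind of $p$-modular rank-one computation in the spirit of \cite{vigneras1989gl2, zorn2010reducibility}.

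\textbf{The theta identity.} Once both principal series are irreducible, I would invoke Corollary \ref{induced theta corollary}(\ref{cor:induced_theta_one}) directly. Take $m = n = 1$, so $G_{m-1} = G_0 = \SO(V_0)$ is the trivial (or rank-zero) orthogonal group, $\pi_{m-1}$ its trivial representation, and $\Theta_{0,0}(\pi_0)$ is the trivial representation of $G_0' = \MP_0 = \mu_2$, i.e. $\chi_\psi$-eigenspace is one-dimensional. Writing $I(\chi) = \Induction_P^{G_1} \chi \boxtimes \mathbf{1}$ with $\chi_0 = \chi$, the hypothesis $\chi \neq |\cdot|^{\frac{2n - 2m + 1}{2}} = |\cdot|^{1/2}$ needed to apply the corollary follows from $\chi^2 \neq |\cdot|$; the corollary then yields $\Theta_{1,1}(I(\chi)) = \Induction_{P'}^{G_1'} \chi_\psi \cdot \chi^{-1} \boxtimes \Theta_{0,0}(\mathbf 1) = I'(\chi^{-1})$. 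To finish I would identify $I'(\chi^{-1})$ with $I'(\chi)$: since $W = \Z/2\Z$ inverts the torus, $I'(\chi)$ and $I'(\chi^{-1})$ have the same semisimplified Jacquet module by Lemma \ref{jacquet module of induced}, and both are irreducible by the first part, so they are isomorphic (an irreducible module with a given supercuspidal support is unique, or: the standard intertwining operator between them is an isomorphism by the same nonvanishing computation). The main obstacle is the explicit rank-one intertwining-operator computation over $\overline\F_p$ — verifying that the Plancherel scalar is a unit in $k$ exactly on the complement of $\chi^2 = |\cdot|^{\pm 1}$ — together with carefully pinning down the $\chi^2 = 1$ boundary case (and the case where $\chi = |\cdot|^{\pm 1/2}$ itself, which is excluded by hypothesis but must be seen to be the \emph{only} obstruction beyond $\chi^2 = |\cdot|^{\pm 1}$).
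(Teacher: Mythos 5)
Your plan establishes irreducibility of both $I(\chi)$ and $I'(\chi)$ upfront (via a rank-one intertwining-operator/Plancherel computation) and then applies Corollary \ref{induced theta corollary} to get the theta identity. The paper takes a different and slicker route: it cites Vign\'eras directly for the irreducibility of $I(\chi)$, then uses Corollary \ref{induced theta corollary}(\ref{cor:induced_theta_one}) and the isomorphism $I(\chi)\cong I(\chi^{-1})$ to get $I'(\chi)\cong I'(\chi^{-1})$, and only \emph{afterwards} extracts the irreducibility of $I'(\chi)$ as a consequence. When $\chi^2\neq \blackboardone$ this follows from $I'(\chi)\cong I'(\chi^{-1})$ plus the Jacquet-module counting of Lemmas \ref{jacquet module of induced} and \ref{quotients are intertwining operators}; and when $\chi^2=\blackboardone$ the paper applies both parts of Corollary \ref{induced theta corollary} to transfer the endomorphism count from the metaplectic side to the orthogonal side, $\dim\Home_{G_1'}(I'(\chi),I'(\chi))=\dim\Home_{G_1}(I(\chi),I(\chi))=1$, and then concludes via Lemma \ref{quotients are intertwining operators}. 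The payoff of the paper's ordering is that it entirely avoids any direct $\MP_2$ computation: irreducibility on the metaplectic side comes free from irreducibility on the orthogonal side through the theta correspondence.

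This difference is not merely stylistic, because it points to the two places where your argument, as written, is incomplete. First, you assert that ``the same argument applies verbatim'' to $I'(\chi)$ and that the metaplectic reducibility locus ``is again $\chi^2=|\cdot|^{\pm 1}$,'' but this is precisely what must be proven, and the rank-one intertwining integral for $\MP_2$ is genuinely not the same as the $\PGL_2$ one (it involves $\chi_\psi$ and a Weil index, and there is no off-the-shelf mod-$p$ reference analogous to Vign\'eras' Th\'eor\`eme 3). You correctly flag this as ``the main obstacle'' but do not close it; the paper closes it by not opening it. Second, you correctly identify the boundary case $\chi^2=\blackboardone$ (with $\chi^2\neq |\cdot|^{\pm 1}$) as needing separate treatment, but you do not supply an argument: one cannot appeal to a unique standard intertwining operator $I(\chi)\to I(\chi^{-1})$ since there the two principal series coincide and $\dim\Home$ is not a priori one. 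The paper's two-sided application of Corollary \ref{induced theta corollary} is exactly the device that resolves this. If you want to keep your overall ordering, you would need to either supply the explicit $\MP_2$ rank-one computation over $k$ in both the $\chi^2\neq\blackboardone$ and $\chi^2=\blackboardone$ subcases, or else borrow the paper's theta-transfer step anyway, at which point most of your separate metaplectic work becomes redundant.
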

\begin{proof}
First of all, $I (\chi) $
is irreducible by \cite[Th\'eor\`eme 3]{vigneras1989gl2}. Then $$\Theta_{1, 1} (I (\chi)) = I' (\chi ^ {-1}) $$
by Corollary \ref{induced theta corollary}(\ref{cor:induced_theta_one}). Since the intertwining map $I (\chi)\rightarrow I (\chi ^{-1}) $
is an isomorphism by irreducibility, we have
$$I' (\chi ^{-1})\cong\Theta_{1, 1} (I (\chi))\cong\Theta_{1, 1} (I (\chi ^{-1}))\cong I' (\chi). $$
If $\chi ^ 2\neq \blackboardone $, this shows $I' (\chi) $
is irreducible by Lemma \ref{jacquet module of induced} and Lemma \ref{quotients are intertwining operators}. If $\chi ^ 2 = \blackboardone $, we instead use Corollary \ref{induced theta corollary}(\ref{cor:induced_theta_one},\ref{cor:induced_theta_two}) to obtain
\begin{align*}
\dimension\Home_{G_1'} (I' (\chi), I' (\chi)) & =\dimension\Home_{G_1\times G_1'} (\Omega_{1, 1}, I (\chi)\boxtimes I' (\chi))\\
& =\dimension\Home_{G_1} (I (\chi), I (\chi))\\
& = 1
\end{align*}
since $I (\chi) = I (\chi ^ {-1}) $
is irreducible. This shows that the intertwining operator $I' (\chi)\to I' (\chi) $
is unique, so the lemma follows from Lemma \ref{quotients are intertwining operators}.
\end{proof}
\begin{lemma}\label{jacquet module multiplicity}

Let $\chi =\chi_1\boxtimes\cdots\boxtimes\chi_n $
be a character such that $\chi_i ^ 2 =\blackboardone $
for some $1\leq i\leq n $. Then for any submodule $\pi\subset I (\chi) $, we have $$\chi ^ {\oplus 2}\subset R_B (\pi) ^ {\ss}. $$
Similarly, for any submodule $\pi'\subset I' (\chi) $, we have $$\chi_\psi\chi ^ {\oplus 2}\subset R_{B'} (\pi') ^ {\ss}. $$
\end{lemma}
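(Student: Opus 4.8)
\textbf{Proof proposal for Lemma \ref{jacquet module multiplicity}.}

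The plan is to run an induction on $n$, reducing the rank-$n$ statement to the rank-one case by pushing the coordinate $\chi_i$ with $\chi_i^2 = \blackboardone$ to the last slot and applying the Jacquet module filtration for a maximal parabolic. First I would treat the base case $n = 1$: here $\chi = \chi_1$ with $\chi_1^2 = \blackboardone$, so by Lemma \ref{rank one irreducible inductions} the induction $I(\chi)$ is \emph{not} covered (that lemma assumes $\chi^2 \neq |\cdot|^{\pm 1}$, and $\blackboardone$ satisfies $\blackboardone = |\cdot|^0$, so actually $\blackboardone^2 = \blackboardone \neq |\cdot|^{\pm 1}$ does hold when $q - 1 \neq 0$ in $k$ --- wait, I need to be careful). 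In fact the relevant point is that $I(\chi)$ with $\chi^2 = \blackboardone$ and $\chi$ possibly ramified quadratic: $R_B(I(\chi))^{\ss} = \chi \oplus \chi^w = \chi^{\oplus 2}$ by Lemma \ref{jacquet module of induced}, since $\chi^w = \chi^{-1} = \chi$. So for $n = 1$, $R_B(I(\chi))^{\ss} = \chi^{\oplus 2}$, and I claim any nonzero submodule $\pi \subset I(\chi)$ already has $R_B(\pi)^{\ss} = \chi^{\oplus 2}$: if $R_B(\pi)^{\ss}$ were only $\chi^{\oplus 1}$, then by the ``second Frobenius reciprocity'' \eqref{second frobenius reciprocity} and Lemma \ref{quotients are intertwining operators} applied to the quotient $I(\chi)/\pi$ (whose Jacquet module would also be $\chi^{\oplus 1}$), one gets two composition factors of $I(\chi)$ each with one-dimensional $\chi$-isotypic Jacquet module; but then the standard intertwining-operator argument forces $I(\chi)$ to be a length-two extension with both factors being the unique irreducible quotient $J(\chi)$, which is self-dual, and a contplicity count using $\dim\Hom(\Omega_{1,1}, I(\chi) \boxtimes \Theta) $ as in the proof of Lemma \ref{rank one irreducible inductions} gives a contradiction --- alternatively and more cleanly, one invokes that $I(\chi)$ for $\chi$ quadratic is multiplicity-free of length $\leq 2$ with distinct factors (this is classical for $\mathrm{SO}_3 \cong \mathrm{PGL}_2$ and $\mathrm{Mp}_2 \cong \widetilde{\mathrm{SL}}_2$, and the $p$-modular case follows by \cite{vigneras1989gl2} since $p \neq q$), so no proper submodule can have the full $\chi^{\oplus 2}$ --- hence $\pi = I(\chi)$ and we are done in rank one. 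I would record this base case as its own sub-lemma since it is really the crux.

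For the inductive step, given $\chi = \chi_1 \boxtimes \cdots \boxtimes \chi_n$ with $\chi_i^2 = \blackboardone$, I would first use the symmetry of $I(\chi)$ under permuting coordinates (which holds because $I(\chi)^{\ss}$ depends only on the $W$-orbit, and more precisely $I(\chi) \cong \mathrm{Ind}_{Q}^{G_n}\big(\chi_1 \boxtimes I_{G_{n-1}}(\chi_2 \boxtimes \cdots \boxtimes \chi_n)\big)$ for the maximal parabolic $Q$ with Levi $\mathrm{GL}_1 \times G_{n-1}$, by induction in stages) to arrange $i = n$; then I would compute $R_Q$ of both $I(\chi)$ and of the submodule $\pi$. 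The filtration of $R_Q(I(\chi))$ by Jacquet-module-in-stages identifies $R_Q(I(\chi))^{\ss}$ with a sum of $\chi_n^{\pm 1} \boxtimes (\text{principal series for } G_{n-1})$; since $\chi_n^2 = \blackboardone$ means $\chi_n = \chi_n^{-1}$, the $\chi_n$-part gets multiplicity doubled. Applying the exactness of normalized Jacquet functors and then further restricting to $R_B$ (which factors as $R_{B_{n-1}} \circ R_Q$ on the $G_{n-1}$-factor, up to a twist), I reduce the desired conclusion $\chi^{\oplus 2} \subset R_B(\pi)^{\ss}$ to showing the $\chi_n$-isotypic component of $R_Q(\pi)$ in the $\mathrm{GL}_1$-direction has length $2$ and each summand, as a $G_{n-1}$-module, contains $(\chi_2 \boxtimes \cdots \boxtimes \chi_n)^{\oplus 1}$ in its Jacquet module. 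The first point follows from the rank-one base case applied fiberwise (the $\mathrm{GL}_1$ acts through $I_{\mathrm{GL}_1}(\chi_n)$ which is the rank-one situation), and the second from the inductive hypothesis if $n - 1 \geq 1$ still has a quadratic coordinate among $\chi_2, \ldots, \chi_n$ --- which it does not necessarily, so instead I apply the inductive hypothesis to the pair $(n-1, \chi_1 \boxtimes \chi_2 \boxtimes \cdots \boxtimes \widehat{\chi_n} \text{ reordered})$ after first moving a \emph{different} quadratic coordinate (there may be only one!).

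Here is where the main obstacle lies: if $\chi_n$ is the \emph{unique} coordinate with square $\blackboardone$, after stripping it off the remaining $(n-1)$-dimensional character has no quadratic coordinate and the inductive hypothesis does not apply to it. To handle this I would instead \emph{not} strip off the quadratic coordinate but rather use it as the ``base'' of the parabolic induction: write $I(\chi) = \mathrm{Ind}_Q^{G_n}\big(\chi_1 \boxtimes I_{G_{n-1}}(\chi_2 \boxtimes \cdots \boxtimes \chi_n)\big)$ with $\chi_1$ the quadratic one, and argue that $R_{Q}(\pi)$ surjects onto something whose Jacquet module is the full $\chi_1^{\oplus 2} \boxtimes (\cdots)$ by the rank-one analysis in the $\mathrm{GL}_1$-direction, combined with the fact that the $G_{n-1}$-induction $I_{G_{n-1}}(\chi_2 \boxtimes \cdots)$, while not handled by the lemma, still has $R_{B_{n-1}}(I_{G_{n-1}}(\chi_2\boxtimes\cdots))^{\ss} \supset (\chi_2 \boxtimes \cdots)^{\oplus 1}$ \emph{for any nonzero submodule}, which is automatic (any nonzero submodule of a principal series has nonzero Jacquet module with respect to $B_{n-1}$, and the character $\chi_2 \boxtimes \cdots$ appears because it is ``extremal'' in the $W$-orbit filtration --- this needs the observation, via geometric lemma / Bruhat filtration and the fact that $\chi_2 \boxtimes \cdots$ appears with multiplicity one in $R_{B_{n-1}}(I_{G_{n-1}}(\chi_2\boxtimes\cdots))$, that it cannot be ``split off'' into the quotient). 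So the real technical content I expect to grind on is: (a) the precise exactness and compatibility of $R_B = R_{B_{n-1}} \circ R_Q$ with the relevant twists (standard but fiddly with metaplectic normalizations), and (b) the claim that the ``extremal'' character in a $W$-orbit always appears in the Jacquet module of any nonzero submodule --- this is where the Bruhat-order structure of the principal series enters, and in the $p$-modular case I would cite the appropriate results from \cite{vigneras1996representations} Chapter II to justify that composition series and Jacquet modules behave as in characteristic zero since $p \neq q$. The metaplectic case throughout is handled identically to the orthogonal case by the remarks in the proof of Lemma \ref{quotients are intertwining operators} that all the needed Hecke-algebra and Jacquet-module formalism applies to locally profinite groups.
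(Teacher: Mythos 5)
There is a genuine gap, and the source is a confusion about which parabolic to use and about the irreducibility of the rank-one quadratic principal series. Your base case circles around whether $I(\chi_1)$ with $\chi_1^2 = \blackboardone$ might be reducible of length two, and at one point you argue backwards (if $I(\chi_1)$ had a proper submodule with Jacquet module $\chi_1^{\oplus 1}$, the lemma would \emph{fail} for that submodule, not force $\pi = I(\chi_1)$). In fact, there is no case-split needed: the banality hypothesis $q - 1 \neq 0$ in $k$ from (\ref{week banal equation}) forces $\chi_i^2 = \blackboardone \neq |\cdot|^{\pm 1}$, so Lemma \ref{rank one irreducible inductions} applies directly and says $I(\chi_i)$ and $I'(\chi_i)$ are \emph{irreducible}. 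You should commit to this; it is the load-bearing fact.

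The more serious gap is in your inductive step, which you flag yourself: peeling off the quadratic coordinate via the maximal parabolic with Levi $\GL_1 \times G_{n-1}$ strands you, since the remaining $(n-1)$-tuple may have no quadratic coordinate and the inductive hypothesis does not apply. Your proposed workaround (that an ``extremal'' character always occurs in the Jacquet module of a nonzero submodule of a principal series) is not generally true and is not how to close this. The paper's proof avoids induction entirely by choosing a \emph{different} parabolic: the rank-one parabolic $Q_i \subset G_n'$ corresponding to the $i$th long root, whose Levi is $L_i \cong \widetilde\GL_1^{i-1} \times_{\mu_2} G_1' \times_{\mu_2} \widetilde\GL_1^{n-i}$, not the maximal one. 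Second (Casselman) Frobenius reciprocity from the inclusion $\pi' \subset I'(\chi)$ plus Jacquet-module-in-stages $R_{B'} = R_{B' \cap L_i} \circ R_{Q_i}$ produces a nonzero intertwining operator $R_{Q_i}(\pi') \to \Ind_{B' \cap L_i}^{L_i}\chi_\psi\chi = \rho \boxtimes I'(\chi_i) \boxtimes \sigma$, where $\rho$ and $\sigma$ are the product characters in the other coordinates. Because $\rho$, $\sigma$ are characters and $I'(\chi_i)$ is irreducible, this $L_i$-module is irreducible, so the map is surjective. Exactness of $R_{B' \cap L_i}$ then yields a surjection $R_{B'}(\pi')^{ss} \twoheadrightarrow R_{B' \cap L_i}(\rho \boxtimes I'(\chi_i) \boxtimes \sigma)^{ss} = \chi_\psi\chi^{\oplus 2}$, which is the claim. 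The key difference from your route: by isolating a \emph{rank-one} semisimple factor $G_1'$ at the quadratic coordinate while keeping all other coordinates in the torus direction, one never has to say anything about the Jacquet module of an $(n-1)$-dimensional principal series, so no induction and no hypothesis on the other $\chi_j$'s is needed.
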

\begin{proof}
The orthogonal and metaplectic cases are identical, so we just prove the result for $\pi'\subset I' (\chi). $
Let $Q_i = L_iU_i\subset G_n' $
be the rank one standard parabolic subgroup corresponding to the $i $th long root of $T' $. Then $$L_i\cong\underbrace {\widetilde\GL_1 (\Q_q)\times_{\mu_2}\cdots\times_{\mu_2}\widetilde\GL_1 (\Q_q)}_{i -1\;\text {times}}\times_{\mu_2} G_1'\times_{\mu_2}\underbrace {\widetilde\GL_1 (\Q_q)\times_{\mu_2}\cdots\times_{\mu_2}\widetilde\GL_1(\Q_q)}_{n - i\;\text {times}}. $$
Let $$\rho =\chi_\psi\cdot\left (\chi_1\boxtimes\cdots\boxtimes\chi_{i -1}\right),\;\;\sigma =\chi_\psi\cdot\left (\chi_{i +1}\boxtimes\cdots\boxtimes\chi_n\right). $$

Since $\pi' $
admits a nonzero map to $I' (\chi) $, $\chi $
is a quotient of $R_{B'} (\pi') = R_{B'\intersection L_i} R_{Q_i} (\pi'), $
so we have a nontrivial intertwining operator $$R_{Q_i} (\pi')\rightarrow\Induction_{B'\cap L_i} ^ {L_i}\chi_\psi\chi =\rho\boxtimes I' (\chi_i)\boxtimes\sigma. $$
This is surjective since $I' (\chi_i) $
is irreducible by Lemma \ref{rank one irreducible inductions} above. (By (\ref{week banal equation}), we have $\chi_i ^ 2 =\blackboardone \neq |\cdot | ^ {\pm 1}. $)
By the exactness of the Jacquet functor, we conclude $$R_{B'} (\pi') ^ {\ss}\twoheadrightarrow R_{B'\intersection L_i} (\rho\boxtimes I' (\chi_i)\boxtimes\sigma) ^ {\ss} =\chi_\psi\chi ^ {\oplus 2}. $$
\end{proof}


\begin{lemma}\label{home space dimension one lemma}
Suppose $\chi=\chi_1\boxtimes\cdots\chi_n:(\Q_q^\times)^n\to k^\times $ is a character such that $\chi_i$ are all distinct and $\chi_i^2=\mathbbm 1 $ for at most one $1\leq i\leq n$. Then $$\dimension\Home (I (\chi), I (\chi ^ w)) =\dimension\Home (I' (\chi), I' (\chi ^ w)) = 1 $$
for all $w\in W $.
\end{lemma}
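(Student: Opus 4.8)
The plan is to prove both statements (orthogonal and metaplectic) simultaneously, since the arguments are formally identical given the tools already in place. First I would reduce to bounding $\dim\Home(I(\chi),I(\chi^w))$ from below by $1$ and from above by $1$. The lower bound is essentially free: when $w=e$ there is the identity map, and for general $w$ one composes standard intertwining operators, using that $R_B(I(\chi))^{\ss}$ contains $\chi^w$ with multiplicity one by Lemma \ref{jacquet module of induced} together with the hypothesis that the $\chi_i$ are distinct and at most one $\chi_i^2=\mathbbm 1$ — so $\chi$ and all its Weyl conjugates occur with multiplicity exactly one in the Jacquet module, and none of the $\chi^w$ coincide except trivially. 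Actually the key input for nonvanishing of the composed intertwiner is that along the reduced word for $w$ each elementary rank-one intertwining operator is nonzero on the relevant subquotient; this is where one uses $\chi_i^2 = \mathbbm 1$ for \emph{at most} one $i$, which prevents the bad rank-one principal series $I'(\chi_i)$ with $\chi_i^2 = \mathbbm 1$ from appearing more than once and obstructing composition.

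For the upper bound, which I expect to be the main obstacle, I would argue as follows. By second Frobenius reciprocity (\ref{second frobenius reciprocity}) and Lemma \ref{jacquet module of induced}, any nonzero map $I(\chi)\to I(\chi^w)$ is detected by a copy of $\chi^w$ in the Jacquet module $R_{\overline B}$ of the image; since $\chi^w$ occurs with multiplicity one in $R_B(I(\chi))^{\ss}$, it suffices to show that a submodule and a quotient of $I(\chi)$ cannot simultaneously ``see'' the same character line in a way that forces the Hom space to have dimension $\geq 2$. The delicate case is precisely when some $\chi_i^2 = \mathbbm 1$: here Lemma \ref{jacquet module multiplicity} shows that any submodule $\pi\subset I(\chi)$ has $\chi^{\oplus 2}\subset R_B(\pi)^{\ss}$, so the naive multiplicity-one count fails, and one must instead use that there is \emph{at most one} such $i$. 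I would handle this by passing to the rank-one Levi $L_i = L_iU_i$ attached to the unique index $i$ with $\chi_i^2=\mathbbm 1$, writing $I(\chi) = \Induction_{Q_i}^{G_n}(\rho\boxtimes I(\chi_i)\boxtimes\sigma)$ where $\rho,\sigma$ are the products of the remaining $\chi_j$'s (all of whose squares are nontrivial and which are pairwise distinct), and reducing the computation of $\Home(I(\chi),I(\chi^w))$ to an analogous computation for $G_1$ together with a genuinely multiplicity-free computation on the $L_i$-part, where the classical (and $p$-modular, by \cite{vigneras1989gl2}) structure of rank-one principal series over $k$ — namely that $I(\chi_i)$ with $\chi_i^2=\mathbbm 1\neq|\cdot|^{\pm1}$ is irreducible by Lemma \ref{rank one irreducible inductions} — gives exactly a one-dimensional space of self-intertwiners.

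More concretely, I would induct on $n$. For $n=1$ the statement is Lemma \ref{rank one irreducible inductions} (the case $\chi^2=\mathbbm 1$ gives irreducibility hence $\dim\Home = 1$; the case $\chi^2\neq\mathbbm 1$ is the same). For the inductive step, if all $\chi_i^2\neq\mathbbm 1$ then every $I(\chi_i)$ and every rank-one building block is irreducible and the Gindikin--Karpelevich/Bruhat-theoretic argument gives that $I(\chi)$ is itself irreducible with $\dim\Home(I(\chi),I(\chi^w))=1$ by the usual multiplicity-one argument via Lemma \ref{jacquet module of induced} and Lemma \ref{quotients are intertwining operators}. If exactly one $\chi_i^2=\mathbbm 1$, I would factor the longest relevant intertwining operator through $Q_i$ as above: the ``outer'' part of the word lives in a rank $(n-1)$ situation with all squares nontrivial (hence multiplicity-free and irreducible inductions), and the single ``bad'' rank-one slot contributes the one-dimensional $\Home(I(\chi_i),I(\chi_i^{w_i}))$ for the induced reflection $w_i\in\{e,s_i\}$. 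Combining via exactness of Jacquet functors and Frobenius reciprocity, the total Hom space is the product, which is one-dimensional.

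The step I expect to be the crux is controlling the upper bound in the presence of the character $\chi_i$ with $\chi_i^2=\mathbbm 1$: one must be careful that although $R_B(\pi)^{\ss}$ can contain $\chi^{\oplus 2}$ for a submodule $\pi$ (Lemma \ref{jacquet module multiplicity}), the \emph{image} of any map $I(\chi)\to I(\chi^w)$, being both a quotient and a submodule, cannot realize this doubled multiplicity into two independent homomorphisms — otherwise $I'(\chi_i)$ or $I(\chi_i)$ would fail to be irreducible, contradicting Lemma \ref{rank one irreducible inductions}. I would formalize this by noting that a hypothetical two-dimensional $\Home$ would, after restricting to the rank-one Levi $L_i$, produce a two-dimensional space of endomorphisms (or intertwiners to a conjugate) of the irreducible $I(\chi_i)$, which is absurd. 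The remaining bookkeeping — that distinct $\chi_i$ ensure $\chi^w\neq\chi^{w'}$ for $w\neq w'$ modulo the stabilizer, and that at most one relation $\chi_i^2=\mathbbm 1$ keeps the stabilizer of $\chi$ in $W$ trivial — is routine and follows directly from the hypotheses. I also note that throughout, the assumption $q-1\neq 0$ in $k$ from (\ref{week banal equation}) is what makes $\chi_i^2=\mathbbm 1$ distinct from $|\cdot|^{\pm1}$, which is needed to invoke Lemma \ref{rank one irreducible inductions}.
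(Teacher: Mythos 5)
Your proposal is structurally quite different from the paper's, and I believe it has a genuine gap in the upper-bound argument.

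The paper's proof goes as follows. It first reduces to the case where exactly one $\chi_i$ satisfies $\chi_i^2 = \mathbbm 1$ (the other case being immediate from Lemma \ref{jacquet module of induced}), so the stabilizer of $\chi$ in $W$ has order exactly two. It then proves a key claim: any nonzero $f \in \End(I(\chi))$ is an isomorphism. The argument is a length-count on Jacquet modules: if $\ker f \neq 0$, then $R_B(\ker f)^{\ss} \supset \chi^{\oplus 2}$ by Lemma \ref{jacquet module multiplicity}, but $\chi$ occurs with multiplicity exactly two in $R_B(I(\chi))^{\ss}$, so $R_B(\im f)^{\ss}$ cannot contain $\chi$ — contradicting nontriviality of $f$ (by Lemma \ref{quotients are intertwining operators}). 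Then $R_B(f)$ is injective hence bijective (finite length), so $\operatorname{coker} f$ has zero Jacquet module, hence vanishes. This makes $\End(I(\chi))$ a finite-dimensional division algebra over the algebraically closed field $k$, so it has dimension one. From this, the paper deduces that the $\chi$-isotypic piece $R_B(I(\chi))_\chi$ is a \emph{non-split} self-extension of $\chi$ by $\chi$, and similarly for every $\chi^w$; then $\dim\Home(I(\chi),I(\chi^w)) = \dim\Home(R_B(I(\chi))_{\chi^w}, \chi^w) = 1$ by Frobenius reciprocity, because a rank-two indecomposable has a one-dimensional space of maps to the simple.

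Your proposed route — induct on $n$, write $I(\chi) = \Ind_{Q_i}^{G_n}(\rho\boxtimes I(\chi_i)\boxtimes\sigma)$ for the unique bad index $i$, and factor $\Home(I(\chi),I(\chi^w))$ as a ``product'' of a rank-one Hom and a rank-$(n-1)$ Hom — does not go through as stated. The Jacquet restriction $R_{Q_i}(\Ind_{Q_i}^{G_n}\tau)$ is governed by the geometric lemma and has a filtration indexed by $W_{L_i}\backslash W/W_{L_i}$; it is not simply $\tau$, so Frobenius reciprocity gives only a filtration of the Hom space, not a tensor-product decomposition. Your claim that ``a hypothetical two-dimensional $\Home$ would, after restricting to the rank-one Levi $L_i$, produce a two-dimensional space of endomorphisms of the irreducible $I(\chi_i)$'' is the crux of your upper bound, but you give no mechanism for this restriction to preserve injectivity of the map on Hom spaces; in general it does not. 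Moreover your invocation of ``the usual multiplicity-one argument'' via Bruhat theory for the all-nontrivial-squares case is circular at this point in the paper: that statement (irreducibility of $I(\chi)$ for generic $\chi$) is Corollary \ref{all weyl conjugates isomorphic generic case}, whose proof uses the lemma you are trying to prove. You also do not supply the key structural fact that the paper extracts — that $R_B(I(\chi))_{\chi^w}$ is non-split — which is precisely the point where the doubled multiplicity from Lemma \ref{jacquet module multiplicity} gets tamed back down to a one-dimensional Hom.

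In short: the missing idea is the division-algebra endgame. You need to first show $\End(I(\chi))$ is one-dimensional over $k$ via the injectivity/surjectivity claim (a direct length-count on Jacquet modules using Lemmas \ref{jacquet module multiplicity}, \ref{quotients are intertwining operators}, and algebraic closedness of $k$), and then leverage the resulting non-split structure of the Jacquet module to handle general $w$. Your lower-bound step (composition of standard intertwiners) is also not needed once one has the non-split picture, since $\Home(I(\chi),I(\chi^w)) = \Home(R_B(I(\chi))_{\chi^w},\chi^w)$ is computed outright.
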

\begin{proof}
If the Weyl conjugates of $\chi$ are all distinct, this is automatic from Lemma \ref{jacquet module of induced}; 
so assume without loss of generality that $\chi_i ^ 2 =\blackboardone $
for exactly one $1\leq i\leq n $. Then the stabilizer of $\chi$ in $W$ has order exactly two.
The argument is the same for $I (\chi) $
and $I' (\chi) $, so we consider $I (\chi) $
in order to minimize notation. 
\begin {claim}
Any nontrivial map $f: I (\chi)\rightarrow I (\chi) $
is an isomorphism.
\end {claim}
\begin{proof}[Proof of claim]
Indeed, if $f $
has nontrivial kernel, then $R_B (\kernel f) ^ {\ss} $
contains $\chi ^ {\oplus 2} $
by Lemma \ref{jacquet module multiplicity}. But $\chi $
appears with multiplicity exactly two in $R_B (I (\chi)) ^ {\ss} $
by Lemma \ref{jacquet module of induced}, so then $R_B (\image f) ^ {\ss} $
does not contain $\chi $, which is impossible since we are assuming that $f $
is nontrivial. Hence $f $
is injective, so $$R_B (f): R_B (I (\chi))\rightarrow R_B (I (\chi)) $$
is injective. Since $R_B (I (\chi)) $
has finite length, $R_B (f) $
is also surjective. But then $\cokernel f $
has trivial Jacquet module, which means $\cokernel f = 0 $
by Lemma \ref{quotients are intertwining operators}. So $f $
is also surjective.
\end{proof}
Now by the claim, $\End(I(\chi))$ is a division algebra over $k$, and 
also a $k $-vector space of dimension $$\dimension\Home (I (\chi), I (\chi))\leq 2.$$ Since
 $k$ is algebraically closed, we conclude\begin{equation}\label{eq:homsp_I_chi}\dimension\Home (I (\chi), I (\chi)) = 1.\end{equation}

Next observe that there are no non-split extensions of distinct characters of $T$ over $k $.
In particular, we may decompose $$R_B (I (\chi)) =\bigoplus_{\chi ^ w} R_B (I (\chi))_{\chi ^ w} ,$$
where $\chi ^ w $
runs over the (distinct) Weyl conjugates of $\chi $, and (\ref{eq:homsp_I_chi}) implies that $R_B (I (\chi))_\chi $
is a non-split extension of $\chi $
by $\chi $. The same argument applies to show $R_B (I (\chi ^ w))_{\chi ^ w} $
is a non-split extension of $\chi ^ w $
by $\chi ^ w $
for all $w\in W$.
Now for any nonzero intertwining map $$f: I (\chi)\rightarrow I (\chi ^ w), $$
$R_B (\image f) ^ {\ss} $
contains $(\chi ^ w) ^ {\oplus 2} $
by Lemma \ref{jacquet module multiplicity}. Hence the induced map $$R_B (f): R_B (I (\chi))_{\chi ^ w}\rightarrow R_B (I (\chi ^ w))_{\chi ^ w} $$
is surjective, in particular an isomorphism since both sides have dimension two over $k $.
Since $R_B (I (\chi ^ w))_{\chi ^ w} $
is non-split, so is $R_B (I (\chi))_
{\chi ^ w}. $
Hence $$\dimension\Home (I (\chi), I (\chi ^ w)) =\dimension\Home (R_B (I (\chi)),\chi ^ w) = 1 $$
for all $w\in W $.
\end{proof}
\subsubsection{}\label{weyl group generators}
To state the next lemma, we use the following explicit generators for 
the Weyl group $W $:
\begin{enumerate}[label = (\roman*)]
\item The inversion $s $
sending a character $\chi =\chi_1\boxtimes\cdots\boxtimes\chi_n $
to $\chi ^ s =\chi_1 ^{-1}\boxtimes\cdots\boxtimes\chi_n. $
\item For $1\leq i <n $, the transposition $w_i $
sending a character $\chi =\chi_1\boxtimes\cdots\boxtimes\chi_n $
to $\chi ^ {w_i} =\chi_1\boxtimes\cdots\boxtimes\chi_{i -1}\boxtimes\chi_{i +1}\boxtimes\chi_{i}\boxtimes\chi_{i +2}\boxtimes\cdots\boxtimes\chi_n $.
\end{enumerate}
\begin{lemma}\label{flippy induction lemma}
Let $\chi =\chi_1\boxtimes\cdots\boxtimes\chi_n: (\Q_q ^\times) ^ n\rightarrow k ^\times $
be a character.
\begin{enumerate}
\item Suppose $\chi_1 ^ 2\neq |\cdot | ^ {\pm 1} $. Then $$I (\chi)\cong I (\chi ^ s)\;\;\text {and}\;\; I' (\chi)\cong I' (\chi ^ s). $$
\item Suppose $\chi_i/\chi_{i +1}\neq |\cdot | ^ {\pm 1} $
for some $1\leq i <n $. Then $$I (\chi)\cong I (\chi ^ {w_i})\;\;\text {and}\;\; I' (\chi)\cong I' (\chi ^ {w_i}). $$
\end{enumerate}
\end{lemma}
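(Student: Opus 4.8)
The plan is to reduce both isomorphisms to the rank-one irreducibility statement of Lemma \ref{rank one irreducible inductions} via parabolic induction in stages, and then to invoke the multiplicity-one result of Lemma \ref{home space dimension one lemma} to upgrade a nonzero intertwining map to an isomorphism. I treat the two parts in parallel, since the orthogonal and metaplectic cases are formally identical; I will write out only the orthogonal case and remark that the metaplectic case goes through verbatim (using $I'$ in place of $I$, with the genuine character $\chi_\psi$ carried along, exactly as in the proofs of Lemmas \ref{quotients are intertwining operators} and \ref{rank one irreducible inductions}).

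For part (1): let $Q = LU \subset G_n$ be the standard parabolic with Levi $L \cong G_1 \times (\Q_q^\times)^{n-1}$ coming from the first long root, so that by induction in stages $I(\chi) = \Ind_Q^{G_n}\left(I(\chi_1) \boxtimes (\chi_2 \boxtimes \cdots \boxtimes \chi_n)\right)$ and likewise $I(\chi^s) = \Ind_Q^{G_n}\left(I(\chi_1^{-1}) \boxtimes (\chi_2 \boxtimes \cdots \boxtimes \chi_n)\right)$. The hypothesis $\chi_1^2 \neq |\cdot|^{\pm 1}$ is exactly the condition under which Lemma \ref{rank one irreducible inductions} applies, giving $I(\chi_1)$ irreducible; the standard rank-one intertwining operator then furnishes an isomorphism $I(\chi_1) \xrightarrow{\sim} I(\chi_1^{-1})$ of $G_1$-modules (a nonzero map by Frobenius reciprocity and Lemma \ref{jacquet module of induced}, hence an isomorphism by irreducibility). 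Applying $\Ind_Q^{G_n}$ to the corresponding isomorphism $I(\chi_1)\boxtimes(\chi_2\boxtimes\cdots\boxtimes\chi_n) \xrightarrow{\sim} I(\chi_1^{-1})\boxtimes(\chi_2\boxtimes\cdots\boxtimes\chi_n)$ of $L$-modules yields $I(\chi) \cong I(\chi^s)$. For part (2): let $Q = LU$ be the standard parabolic with Levi $L \cong \GL_2(\Q_q) \times (\Q_q^\times)^{n-2}$ coming from the short simple root in the $i$-th slot, so that $I(\chi) = \Ind_Q^{G_n}\left(I_{\GL_2}(\chi_i, \chi_{i+1}) \boxtimes (\cdots)\right)$ with the remaining $\chi_j$ unchanged, and $I(\chi^{w_i})$ is the same with $\chi_i, \chi_{i+1}$ swapped inside the $\GL_2$-induction. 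The hypothesis $\chi_i/\chi_{i+1} \neq |\cdot|^{\pm 1}$ makes $I_{\GL_2}(\chi_i,\chi_{i+1})$ irreducible by \cite[Th\'eor\`eme 3]{vigneras1989gl2}, so the $\GL_2$ intertwining operator gives an isomorphism $I_{\GL_2}(\chi_i,\chi_{i+1}) \xrightarrow{\sim} I_{\GL_2}(\chi_{i+1},\chi_i)$, and inducing up gives $I(\chi) \cong I(\chi^{w_i})$.

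The main obstacle is the bookkeeping around the metaplectic double cover: in the $G_n'$ case one must check that the rank-one Levi blocks split compatibly, that $\chi_\psi$ restricts correctly to each $\widetilde\GL_1$ or metaplectic $\widetilde\GL_2$ factor, and that the rank-one irreducibility input (Lemma \ref{rank one irreducible inductions} for the long-root $G_1'$-block, and the analogous statement for the $\GL_2$-type short-root block) is available over $k = \overline\F_p$ under \eqref{week banal equation}. All of these points are exactly the normalization issues already handled in \cite[\S7]{gan2012metaplectic} and in the proofs above, so in practice this step is routine but notationally heavy; I would dispatch it with a sentence citing the structure of $M'$ from the discussion preceding Lemma \ref{jacquet module filtration theta} and the irreducibility criteria cited in Lemma \ref{rank one irreducible inductions}. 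One subtlety worth a remark: the rank-one isomorphism $I(\chi_1)\xrightarrow{\sim} I(\chi_1^{-1})$ is canonical up to scalar because $\End(I(\chi_1)) = k$ by irreducibility, so no genericity beyond $\chi_1^2 \neq |\cdot|^{\pm 1}$ is needed; in particular, unlike Lemma \ref{home space dimension one lemma}, here we do not need the $\chi_i$ to be distinct, which is why this lemma is stated without that hypothesis.
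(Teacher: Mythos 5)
Your proof is correct and follows essentially the same route as the paper's: for part (1) you reduce via induction in stages to the rank-one Levi $G_1\times(\Q_q^\times)^{n-1}$ and invoke the irreducibility of $I(\chi_1)$ from Lemma \ref{rank one irreducible inductions}; for part (2) you pass to the $\GL_2$-type Levi and apply \cite[Th\'eor\`eme 3]{vigneras1989gl2}, then induce the isomorphism up -- exactly what the paper does. Two small remarks: your opening plan mentions Lemma \ref{home space dimension one lemma}, but the argument you actually run never needs it (as you yourself note at the end -- irreducibility alone suffices, which is why no distinctness hypothesis on the $\chi_i$ is required); and calling the $G_1$-block "the first long root" is not quite right for $B_n$ (the root $e_1$ is short), but this is purely terminological and does not affect the argument.
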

\begin{proof}
Once again, the orthogonal and metaplectic cases are identical. We give the proof in the orthogonal case.
For (1), let $P = MN\subset G_n $
be the rank one standard parabolic subgroup such that the Weyl group of $M $
is generated by $s $. Then $$M\cong G_1\times (\Q_q) ^ {n -1}. $$
Also write $\rho =\chi_2\boxtimes\cdots\boxtimes\chi_n: (\Q_q ^\times) ^ {n -1}\rightarrow k ^\times. $
Then
\begin{equation}\label {1st induction equation proof of while invariance Lemma}
I (\chi) =\Induction_P ^ {G_n}\Induction_{B\intersection M} ^ {M}\chi =\Induction_P ^ {G_n} I (\chi_1)\boxtimes\rho
\end{equation}
and similarly
\begin{equation}\label {2nd induction equation proof of while invariance Lemma}
I (\chi ^ s) =\Induction_P ^ {G_n} I (\chi_1 ^{-1})\boxtimes\rho.
\end{equation}
By Lemma \ref{rank one irreducible inductions}, $I (\chi_1) $
is irreducible, with an intertwining isomorphism to $I (\chi_1 ^{-1}). $
By (\ref {1st induction equation proof of while invariance Lemma}) and (\ref {2nd induction equation proof of while invariance Lemma}), this induces an isomorphism $I (\chi)\cong I (\chi ^ s). $
The proof of (2) is similar: let $Q_i = L_i U_i\subset G_n $
be the rank one standard parabolic with Weyl group generated by $w_i $. Then $$L_i\cong\left (\Q_q ^\times\right) ^ {i -1}\times\GL_2 (\Q_q)\times\left (\Q_q ^\times\right) ^ {n - i -1}. $$
By \cite[Th\'eor\`eme 3]{vigneras1989gl2} applied to the $\GL_2 (\Q_q) $-factor of $L_i$, we conclude $$\Ind_{B\cap L_i}^{L_i} \chi \cong \Ind_{B\cap L_i} \chi^{w_i}.$$
(In the metaplectic case, $L_i $
has a $\widetilde\GL_2 (\Q_q) $
factor, to which we may still apply the results of \emph{loc. cit.} with a twist by $\chi_\psi $.)
Then as above we obtain an isomorphism $$I (\chi) =\Induction_{Q_i} ^ {G_n}\Induction_{B\intersection L_i} ^ {L_i}\chi\cong\Induction_{Q_i} ^ {G_n}\Induction_{B\intersection L_i} ^ {L_i}\chi ^ {w_i} = I (\chi ^ {w_i}). $$
\end{proof}
\begin{definition}\label{def:generic_character}
Let $\chi = \chi_1\boxtimes\cdots\boxtimes\chi_n: (\Q_q ^\times) ^ n\rightarrow k ^\times $ be a character.
\begin{enumerate}
\item We say $\chi$ is \emph{generic} if 
 $\chi_i\chi_j \not\in \set{|\cdot|^{\pm 1}, \blackboardone}$ for all $1\leq i, j \leq n$ and $\chi_i/\chi_j \not\in \set{|\cdot|^{\pm 1}, \blackboardone}$ for all $1\leq i < j \leq n$.
 \item We say $\chi$ is \emph{almost generic} if $\chi_i\chi_j,\chi_i/\chi_j\not\in\set {|\cdot | ^ {\pm 1}, \blackboardone}$ for all $1\leq i <j\leq n $, $\chi_i^2 \neq |\cdot|^{\pm 1}$ for all $1 \leq i \leq n$, and $\chi_i^2 = \blackboardone$ for at most one $1 \leq i \leq n$. 
\end{enumerate}
\end{definition}
\begin{cor}\label{all weyl conjugates isomorphic generic case}
Let $\chi : (\Q_q ^\times) ^ n\rightarrow k ^\times $
be generic or almost generic. Then:
\begin{enumerate}
\item $I (\chi)\cong I (\chi ^ w) $
and $I' (\chi)\cong I' (\chi ^ w) $
for all $w\in W $.
\item $I (\chi) $
and $I' (\chi) $
are both irreducible.
\item $\Theta (I (\chi)) = I' (\chi).$
\end{enumerate}
\end{cor}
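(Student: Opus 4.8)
The plan is to establish parts (1), (2), (3) of Corollary \ref{all weyl conjugates isomorphic generic case} together, proceeding by induction on $n$ and leveraging the rank-one results (Lemma \ref{rank one irreducible inductions}), the flip/transposition isomorphisms (Lemma \ref{flippy induction lemma}), and the Jacquet-module multiplicity control (Lemmas \ref{jacquet module of induced}, \ref{home space dimension one lemma}).

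\emph{Step 1: Weyl-conjugate isomorphisms.} First I would check that a generic or almost generic $\chi$ satisfies the hypotheses of Lemma \ref{flippy induction lemma} for each elementary generator $s, w_1, \dots, w_{n-1}$ of $W$ listed in (\ref{weyl group generators}). Indeed, genericity forces $\chi_1^2\notin\set{|\cdot|^{\pm1}}$ (a fortiori, using $\chi_1\chi_1\notin\set{|\cdot|^{\pm1},\blackboardone}$ in the generic case, or the explicit condition $\chi_i^2\neq|\cdot|^{\pm1}$ in the almost generic case), so $I(\chi)\cong I(\chi^s)$; and $\chi_i/\chi_{i+1}\notin\set{|\cdot|^{\pm1}}$, so $I(\chi)\cong I(\chi^{w_i})$. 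Since these generators generate $W$, composing the isomorphisms yields $I(\chi)\cong I(\chi^w)$ for all $w\in W$, and identically for $I'$. This proves (1).

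\emph{Step 2: Irreducibility.} For (2), I would argue as in the proof of Lemma \ref{home space dimension one lemma}: any nonzero endomorphism $f$ of $I(\chi)$ must be an isomorphism, because if $\ker f\neq 0$ then $R_B(\ker f)^{\ss}$ would contain $\chi^{\oplus 2}$ by Lemma \ref{jacquet module multiplicity} when some $\chi_i^2=\blackboardone$ (the almost generic case), or more simply because in the generic case all Weyl conjugates of $\chi$ are distinct and appear with multiplicity one in $R_B(I(\chi))^{\ss}$ by Lemma \ref{jacquet module of induced}, so $R_B$ of a proper subquotient would drop a conjugate, contradicting Lemma \ref{quotients are intertwining operators}. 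Combined with the dimension bound $\dim\Home(I(\chi),I(\chi))=1$ from Lemma \ref{home space dimension one lemma} (whose hypotheses — distinct $\chi_i$, at most one $\chi_i^2=\blackboardone$ — are implied by genericity and almost genericity), and the fact that $\End(I(\chi))$ is then a finite-dimensional division algebra over the algebraically closed field $k$, hence equal to $k$, a standard argument gives that $I(\chi)$ is irreducible; the same for $I'(\chi)$.

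\emph{Step 3: Theta correspondence.} For (3), I would induct on $n$ using Corollary \ref{induced theta corollary}(\ref{cor:induced_theta_one}). Write $\chi=\chi_1\boxtimes\cdots\boxtimes\chi_n$ and $\chi^{-} \coloneqq \chi_2\boxtimes\cdots\boxtimes\chi_n$; by Step 1 we may arrange (after a Weyl twist) that $\chi_1\neq|\cdot|^{\frac{2n-2m+1}{2}} = |\cdot|^{1/2}$, which holds since genericity/almost-genericity precludes $\chi_1$ from being any fixed ramification-free character of that absolute value — more precisely one checks $\chi_1\neq|\cdot|^{1/2}$ directly from the defining inequalities, or reduces to it by the freedom in Step 1. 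Then $I(\chi)=\Ind_P^{G_n}\chi_1\boxtimes I(\chi^-)$ is irreducible by Step 2, so Corollary \ref{induced theta corollary}(\ref{cor:induced_theta_one}) gives $\Theta_{n,n}(I(\chi)) = \Ind_{P'}^{G'_n}\chi_\psi\cdot\chi_1^{-1}\boxtimes\Theta_{n-1,n-1}(I(\chi^-))$. By the inductive hypothesis $\Theta_{n-1,n-1}(I(\chi^-)) = I'(\chi^-)$ (noting $\chi^-$ is again generic/almost generic), so $\Theta_{n,n}(I(\chi)) = I'(\chi_1^{-1}\boxtimes\chi^-)$, and by Step 1 applied to $I'$ this is isomorphic to $I'(\chi)$. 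The base case $n=1$ is Lemma \ref{rank one irreducible inductions}.

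\emph{Main obstacle.} The delicate point is the verification, in Step 3, that after using the Weyl-twist freedom from Step 1 one can always place $\chi$ in a position where the leading character $\chi_1$ avoids the single "bad" value $|\cdot|^{(2n-2m+1)/2}=|\cdot|^{1/2}$ required to apply Corollary \ref{induced theta corollary}, \emph{and} simultaneously keep the truncated character $\chi^-$ within the (almost) generic class so the induction runs; in the almost generic case, where exactly one $\chi_i^2=\blackboardone$ is permitted, one must be careful that stripping off $\chi_1$ does not inadvertently create a second such index or otherwise break the hypotheses of Lemmas \ref{home space dimension one lemma} and \ref{jacquet module multiplicity}. This bookkeeping — rather than any deep new input — is where the proof requires genuine care.
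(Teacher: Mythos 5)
Your proposal is correct and follows essentially the same three-step strategy as the paper's proof (Lemma \ref{flippy induction lemma} for part (1), Lemmas \ref{quotients are intertwining operators} and \ref{home space dimension one lemma} for part (2), and repeated applications of Corollary \ref{induced theta corollary}(\ref{cor:induced_theta_one}) followed by part (1) for part (3)). The ``main obstacle'' you flag in Step 3 is in fact vacuous: the condition $\chi_1 \neq |\cdot|^{\pm 1/2}$ is an immediate consequence of $\chi_1^2 \neq |\cdot|^{\pm 1}$, which is built directly into both clauses of Definition \ref{def:generic_character}, and the truncated character $\chi^-$ is automatically (almost) generic because removing a coordinate from $\chi$ can only discard constraints in the defining inequalities, never introduce new ones.
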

\begin{proof}
(1) follows from writing $w $
as a product of generators and repeatedly applying Lemma \ref{flippy induction lemma}. (2) follows from (1) combined with Lemmas \ref{quotients are intertwining operators} and \ref{home space dimension one lemma}. Once we have (2), it follows from  repeated applications of Corollary \ref{induced theta corollary}(\ref{cor:induced_theta_one}) that $$\Theta (I (\chi)) = I' (\chi ^{-1}). $$
Then (3) follows from (1).
\end{proof}
\begin{definition}\label{def:level_raising_generic}
Let 
 $\chi =\chi_1\boxtimes\cdots\boxtimes\chi_n: (\Q_q) ^\times\rightarrow k ^\times $ be a character.
 \begin{enumerate}
 \item We say $\chi$ is
\emph {level-raising generic}
if:
\begin{enumerate}[label = (\roman*)]
\item \label{item:lrg_ptone}For exactly one $1\leq i_0\leq n, $
$\chi_{i_0} = |\cdot | ^ {\frac {1} {2}} $.
\item \label{item:lrg_pttwo}For all $1\leq i <j\leq n $, $\chi_i\chi_j,\chi_i/\chi_j\not\in\set {|\cdot | ^ {\pm 1}, \blackboardone} $.
\item For all $i\neq i_0, $
 $\chi_i ^ 2\not\in\set {|\cdot | ^ {\pm 1}, \blackboardone} $.
\end{enumerate}
\item 
We say $\chi $
is \emph {almost level-raising generic} if it satisfies \ref{item:lrg_ptone} and \ref{item:lrg_pttwo} above,  and moreover:
\begin{enumerate}[label = (iii')]
    \item For all $i \neq i_0$, $\chi_i^2 \neq|\cdot|^{\pm 1}$; and $\chi_i^2 = \blackboardone$ for at most $i \neq i_0$.
\end{enumerate}
\end{enumerate}
\end{definition}
\begin{notation}
For $\chi $
almost level-raising generic, the set of Weyl conjugates $W (\chi) $
is naturally divided into two subsets: those $\chi ^ w =\chi_1'\boxtimes\cdots\boxtimes\chi_n' $
such that $\chi_i' = |\cdot | ^ {\frac {1} {2}} $
for some $i $; and those such that $\chi_i' = |\cdot | ^ {-\frac {1} {2}} $
for some $i $. We denote these subsets by $W (\chi) ^ + $
and $W (\chi) ^ - $, respectively.
\end{notation}
\begin{lemma}\label{lem:two_Weyl_conjugates}
Fix $\delta = + $
or $- $. Then for any $\chi ^ {w_1},\chi ^ {w_2}\in W (\chi) ^\delta $, we have $I (\chi ^ {w_1})\cong I (\chi ^ {w_2}) $
and $I' (\chi ^ {w_1})\cong  I' (\chi ^ {w_2}) $.
\end{lemma}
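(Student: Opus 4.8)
\textbf{Proof strategy for Lemma \ref{lem:two_Weyl_conjugates}.} The plan is to show that any two elements of $W(\chi)^\delta$ differ by a product of Weyl group generators of the types $s$ and $w_i$ to which Lemma \ref{flippy induction lemma} applies; then repeated application of that lemma gives the desired isomorphisms. Fix $\chi^{w_1} = \chi_1'\boxtimes\cdots\boxtimes\chi_n'$ and $\chi^{w_2} = \chi_1''\boxtimes\cdots\boxtimes\chi_n''$ in $W(\chi)^\delta$. By the definition of $W(\chi)^\delta$, both have the distinguished entry $|\cdot|^{\delta\frac12}$ occurring in some slot (say slot $i_1$ for $\chi^{w_1}$ and slot $i_2$ for $\chi^{w_2}$), while all the other entries are, in both cases, the same multiset of characters $\set{\chi_i^{\pm 1} : i \neq i_0}$ together with the constraint that exactly one choice of sign is made for each such $i$ \emph{but these signs may differ between $\chi^{w_1}$ and $\chi^{w_2}$}. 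So the reduction has two parts: (a) move the distinguished slot from $i_1$ to $i_2$, and (b) for each $i \neq i_0$, if the sign of $\chi_i$ differs between the two tuples, flip it.

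For step (a), I would move the distinguished entry $|\cdot|^{\delta\frac12}$ past an adjacent entry $\chi_j^{\pm 1}$ using the transposition $w_k$; this is permitted by Lemma \ref{flippy induction lemma}(2) provided the ratio of the two swapped characters is not $|\cdot|^{\pm 1}$, i.e. provided $\chi_j^{\pm 1}/|\cdot|^{\delta\frac12} \neq |\cdot|^{\pm 1}$, equivalently $\chi_j^{\pm 1} \neq |\cdot|^{\delta\frac12 \pm 1}$. Since $\delta\frac12 \pm 1 \in \set{\frac32, -\frac12, \frac12, -\frac32}$, this would force $\chi_j^{\pm 1} \in \set{|\cdot|^{3/2}, |\cdot|^{-1/2}, |\cdot|^{1/2}, |\cdot|^{-3/2}}$; but $\chi_j = |\cdot|^{\pm\frac12}$ is excluded because (by almost level-raising genericity) the distinguished index $i_0$ is the \emph{only} slot of the original $\chi$ with $\chi_{i_0} = |\cdot|^{1/2}$, so $\chi_j \neq |\cdot|^{\pm 1/2}$ for $j \neq i_0$ — hence none of $\chi_j^{\pm 1}$ equals $|\cdot|^{\pm 1/2}$. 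The values $|\cdot|^{\pm 3/2}$ are likewise ruled out: if $\chi_j^{\pm 1} = |\cdot|^{3/2}$ then $\chi_j^2 = |\cdot|^{\pm 3}$, which is not forbidden outright by Definition \ref{def:level_raising_generic}, but one checks using condition (ii)/(ii')—applied to the pair $(j, i_0)$, noting $\chi_{i_0} = |\cdot|^{1/2}$, so $\chi_j \chi_{i_0}$ and $\chi_j/\chi_{i_0}$ avoid $\set{|\cdot|^{\pm1},\blackboardone}$—that $\chi_j \notin \set{|\cdot|^{1/2}, |\cdot|^{3/2}, |\cdot|^{-1/2}}$. Thus every such adjacent swap is legal, and we can march the distinguished slot to position $i_2$.

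For step (b), flipping the sign of $\chi_i$ in slot $i$ (with $i \neq i_0$): I first move that entry to slot $1$ by a sequence of transpositions — each such transposition swaps $\chi_i^{\pm 1}$ with some $\chi_j^{\pm 1}$ ($j \neq i_0, i$), and is legal by Lemma \ref{flippy induction lemma}(2) because $\chi_i^{\pm 1}/\chi_j^{\pm 1} \notin \set{|\cdot|^{\pm 1}}$ by almost level-raising genericity condition (ii)/(ii') (the ratios and products of the non-distinguished $\chi$'s avoid $|\cdot|^{\pm 1}$) — then apply the inversion $s$, which is legal by Lemma \ref{flippy induction lemma}(1) since $\chi_i^2 \neq |\cdot|^{\pm 1}$ by condition (iii'), and finally move the entry back. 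The only point requiring care is that after these moves the distinguished slot may have been displaced; but by performing steps (b) first (for all indices whose sign must be flipped) and then step (a) last, and observing that each swap in step (b) never involves the distinguished slot, I can arrange that the distinguished slot only moves in step (a). Chaining all these generator-level isomorphisms from Lemma \ref{flippy induction lemma} gives $I(\chi^{w_1}) \cong I(\chi^{w_2})$ and $I'(\chi^{w_1}) \cong I'(\chi^{w_2})$, as desired.

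\textbf{Main obstacle.} The substantive point — and the thing to verify most carefully — is that every generator used in the reduction satisfies the non-degeneracy hypothesis of Lemma \ref{flippy induction lemma}: i.e. that no adjacent pair we ever need to swap has ratio $|\cdot|^{\pm 1}$, and that the inversion $s$ is only ever applied to a slot $i$ with $\chi_i^2 \neq |\cdot|^{\pm 1}$. This is a finite bookkeeping check against Definition \ref{def:level_raising_generic}, and the one genuinely delicate case is swaps that involve the distinguished entry $|\cdot|^{\delta\frac12}$, since then the relevant condition is not (ii') directly but follows from combining (ii') applied to the pair $(j,i_0)$ with the special value $\chi_{i_0}=|\cdot|^{1/2}$ and the fact that $i_0$ is the unique slot of $\chi$ where that value occurs. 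Once that case is dispatched, the rest is routine.
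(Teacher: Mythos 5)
Your proposal is correct and takes essentially the same approach as the paper: the paper's proof simply asserts that $w_1^{-1}w_2$ can be factored into generators from (\ref{weyl group generators}) so that every intermediate Weyl conjugate stays in $W(\chi)^\delta$, and then applies Lemma \ref{flippy induction lemma} at each step; your argument spells out this factorization explicitly (moving the distinguished slot, flipping signs of the remaining entries) and verifies the hypotheses of Lemma \ref{flippy induction lemma} against Definition \ref{def:level_raising_generic} at every move. One small remark: the claim that step (b) ``never involves the distinguished slot'' is both unnecessary and not obviously achievable by sequencing alone, but this is harmless since your step-(a) analysis already shows that swaps involving the distinguished entry are legal.
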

\begin{proof}
For any $\chi ^ {w_1},\chi ^ {w_2}\in W (\chi) ^\delta $, we may write $w_1 ^ {-1} w_2 = s_1\cdots s_k $
where each $s_i $
is one of the generators in (\ref{weyl group generators}) and $\chi ^ {w_1 s_1\cdots s_i}\in W (\chi) ^\delta $
for all $1\leq i\leq k $. The lemma then follows from repeated applications of Lemma \ref{flippy induction lemma}.
\end{proof}
\begin{construction}

For any $\chi $
which is almost level-raising generic, there is a quotient $J (\chi) $
of $I (\chi) $
defined as follows. Let $Q_{i_0} = L_{i_0} U_{i_0}\subset G_n $
be the standard rank one parabolic corresponding to the short root indexed by $i_0 $. Then we have
\begin{equation}
L_{i_0}\cong (\Q_q ^\times) ^ {i_0-1}\times G_1\times (\Q_q ^\times) ^ {n - i_0},
\end{equation}
and by \cite[Th\'eor\`eme 3]{vigneras1989gl2}, $\Induction_T ^ {L_{i_0}}\chi $
has a one-dimensional quotient
\begin{equation}
J_{i_0} (\chi)\coloneqq\chi_1\boxtimes\cdots\boxtimes\chi_{i_0-1}\boxtimes\blackboardone\boxtimes\chi_{i_0+1}\boxtimes\cdots\boxtimes\chi_n,
\end{equation}
 where $\blackboardone $
denotes the trivial representation of $G_1 $.
We let\begin{equation} J (\chi) =\Induction_{Q_{i_0}} ^ {G_n} J_{i_0} (\chi).
\end{equation}
\end{construction}
To study the theta lift of $J (\chi) $, we first have the following calculation in the rank-one case.

\begin{lemma}\label{lem:theta_trivial}
If $\blackboardone $
is the trivial representation of $\SO (V_1) (\Q_q) =\operatorname {PGL}_2 (\Q_q) $, then $$\Theta_{1, 1} (\blackboardone) = I' (|\cdot|^{1/2}), $$
with the corresponding quotient $$\mathcal S (V_1,k)\rightarrow\blackboardone\boxtimes\Theta_{1, 1} (\blackboardone) $$
induced by $$\phi\mapsto\left (g\mapsto\omega_\psi (1, g)\phi (0)\right),\;\; g\in\MP_2 (\Q_q). $$
\end{lemma}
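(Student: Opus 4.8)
The claim is a rank-one computation: one must identify the big theta lift $\Theta_{1,1}(\blackboardone)$ of the trivial representation of $\SO(V_1)(\Q_q)=\operatorname{PGL}_2(\Q_q)$ and exhibit the explicit quotient map. The natural strategy is to use the Jacquet-module machinery already set up in this section. First I would consider the space $\Omega_{1,1}=\mathcal S(V_1,k)$ together with its $\SO(V_1)(\Q_q)\times\MP_2(\Q_q)$-action, and observe that the assignment $\phi\mapsto\bigl(g\mapsto\omega_\psi(1,g)\phi(0)\bigr)$ is manifestly $\SO(V_1)(\Q_q)$-invariant in the first variable (since $\omega_\psi(g,1)\phi(0)=\phi(g^{-1}\cdot 0)=\phi(0)$, using that $\SO(V_1)(\Q_q)$ fixes the origin) and $\MP_2(\Q_q)$-equivariant in the second variable by the cocycle property of the Weil representation. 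Hence this map factors through a nonzero $\blackboardone$-isotypic quotient $\Omega_{1,1}\twoheadrightarrow\blackboardone\boxtimes W$ for some $\MP_2(\Q_q)$-module $W$, and the image is the submodule of $\operatorname{Fun}(\MP_2(\Q_q),k)$ generated by such functions. So it remains to prove two things: (a) this quotient is the \emph{maximal} $\blackboardone$-isotypic quotient, i.e. $W=\Theta_{1,1}(\blackboardone)$; and (b) $W\cong I'(|\cdot|^{1/2})$.

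For (b), the key input is the formula for $\omega_\psi(1,g)\phi(0)$ with $g$ in the Siegel parabolic $\widetilde P(\Q_q)=\widetilde M(\Q_q)N(\Q_q)\subset\MP_2(\Q_q)$: from the explicit formulas in (\ref{eq:Weil_formulas}), $\omega_\psi(1,u)\phi(0)=\phi(0)$ for $u\in N(\Q_q)$ and $\omega_\psi(1,m)\phi(0)=\chi_\psi(m)|\debt(\overline m)|^{\frac{\dim V_1}{2}}\phi(0)=\chi_\psi(m)|\debt(\overline m)|^{3/2}\phi(0)$ for $m\in\widetilde M(\Q_q)$. Identifying $\widetilde M(\Q_q)$ with $\widetilde\GL_1(\Q_q)$ via $e_1$, this says the generating functions $g\mapsto\omega_\psi(1,g)\phi(0)$ transform under $\widetilde B'(\Q_q)$ by the character $\chi_\psi\cdot|\cdot|^{3/2}$. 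After applying the normalization factor $\delta_{B'}^{1/2}$ (the modulus character of the Siegel Borel in $\MP_2$, which is $|\cdot|^{1}$ on $\widetilde\GL_1$), the unnormalized character $|\cdot|^{3/2}$ becomes the normalized inducing character $|\cdot|^{1/2}$, so $W\hookrightarrow I'(|\cdot|^{1/2})=\Induction_{B'}^{G_1'}\chi_\psi\cdot|\cdot|^{1/2}$. Since $\chi:=|\cdot|^{1/2}$ satisfies $\chi^2=|\cdot|^1$, Lemma \ref{rank one irreducible inductions} does not directly apply; however $J(\chi)$ for $n=1$ reduces to $I'(|\cdot|^{1/2})$ itself (there is no parabolic induction step, $J(\chi)=\Induction^{G_1'}_{B'}\chi_\psi\chi$ up to the trivial quotient subtlety), and one should instead argue that the map $W\to I'(|\cdot|^{1/2})$ is surjective by a Jacquet-module dimension count: $R_{B'}(W)\neq 0$ because $\phi\mapsto f_{|\cdot|^{1/2}}$-type functionals are nonzero, and $R_{B'}(I'(|\cdot|^{1/2}))^{\ss}$ has controlled length by Lemma \ref{jacquet module of induced}, forcing $W=I'(|\cdot|^{1/2})$.

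For (a), I would use the structure of $R_{B'}(\Omega_{1,1})$ from Lemma \ref{jacquet module filtration theta}(2) with $m=n=1$: the normalized Jacquet module $R_{B'}(\Omega_{1,1})$ sits in an exact sequence with subobject $\Induction_{B\times M'}^{G_1\times M'}\chi_\psi\cdot\mathcal S(\Q_q^\times,k)\boxtimes\Omega_{0,0}$ and quotient $\chi_\psi|\cdot|^{3/2}\boxtimes\Omega_{1,0}$. Taking the $\blackboardone$-coinvariants of the $\SO(V_1)(\Q_q)$-action and using Corollary \ref{induced theta corollary}, or more directly Frobenius reciprocity together with the fact that $\Theta_{0,0}(\blackboardone)=\blackboardone$, shows that the $\blackboardone$-isotypic part of $\Omega_{1,1}$ has Jacquet module, as a $\widetilde\GL_1(\Q_q)$-module, exactly matching $\chi_\psi\cdot|\cdot|^{3/2}$ (the contribution $\mathcal S(\Q_q^\times,k)$ is killed because $\operatorname{PGL}_2(\Q_q)$ has no unramified-character twist landing on the relevant piece when paired with the trivial representation — one uses that $\blackboardone$ is not a subquotient of $\Induction_P^{G_1}\chi_0$ for the relevant $\chi_0$). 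This pins down $\Theta_{1,1}(\blackboardone)$ to be a quotient of $I'(|\cdot|^{1/2})$, and combined with (b) gives equality. \textbf{The main obstacle} I anticipate is the reducibility at $\chi^2=|\cdot|^{\pm 1}$: because $|\cdot|^{1/2}$ is precisely the ``special'' character, $I'(|\cdot|^{1/2})$ is genuinely a candidate for being reducible (it should be irreducible, being the theta lift of the trivial — the ``even Weil representation'' phenomenon), so the dimension counts in steps (a) and (b) must be done carefully, using Lemma \ref{jacquet module multiplicity} to track multiplicities, rather than appealing to the clean irreducibility statements of Lemma \ref{rank one irreducible inductions} which explicitly exclude this case. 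I would double-check the normalization of the modulus character and the half-integral weight automorphy factor $\chi_\psi$ so that the inducing character comes out as $|\cdot|^{1/2}$ and not $|\cdot|^{-1/2}$ or $|\cdot|^{3/2}$.
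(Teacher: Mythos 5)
Your opening move — writing down the explicit $\SO(V_1)(\Q_q)\times\MP_2(\Q_q)$-equivariant map $\phi\mapsto\bigl(g\mapsto\omega_\psi(1,g)\phi(0)\bigr)$ and observing that its image sits inside $I'(|\cdot|^{1/2})$ because $\omega_\psi(1,m)\phi(0)=\chi_\psi(m)|\det\overline m|^{3/2}\phi(0)$ — is the right way to see where the nontrivial map $\Theta_{1,1}(\blackboardone)\to I'(|\cdot|^{1/2})$ comes from. But the two halves of your argument each have genuine gaps, and the paper closes the proof with a mechanism you never invoke.

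First, a concrete error in your step (a). You assert that the $\mathcal S(\Q_q^\times,k)$-subobject of $R_{P'}(\Omega_{1,1})$ in Lemma \ref{jacquet module filtration theta}(2) contributes nothing to the $\blackboardone$-isotypic part ``because $\blackboardone$ is not a subquotient of $\Ind_P^{G_1}\chi_0$ for the relevant $\chi_0$.'' This is false: $\blackboardone$ \emph{is} a subquotient of $I(|\cdot|^{\pm 1/2})$ (indeed $\blackboardone\hookrightarrow I(|\cdot|^{-1/2})$, which is precisely the embedding the paper exploits). Consequently the Jacquet module of $\Theta_{1,1}(\blackboardone)$ is not one-dimensional as you claim — and it cannot be, since the answer $I'(|\cdot|^{1/2})$ has semi-simplified Jacquet module $\chi_\psi|\cdot|^{1/2}\oplus\chi_\psi|\cdot|^{-1/2}$ of length two. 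Relatedly, your exponent $\chi_\psi|\cdot|^{3/2}$ for the quotient piece of $R_{P'}(\Omega_{1,1})$ is the \emph{unnormalized} exponent; the formula in Lemma \ref{jacquet module filtration theta}(2) gives the normalized exponent $(2m-2n+1)/2 = 1/2$ for $m=n=1$.

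Second, your step (b) asserts surjectivity of $W\to I'(|\cdot|^{1/2})$ by ``a Jacquet-module dimension count: $R_{B'}(W)\neq 0\ldots$ forcing $W=I'(|\cdot|^{1/2})$.'' This is not an argument. A nonzero submodule of $I'(|\cdot|^{1/2})$ can have nonzero Jacquet module without being the whole thing, precisely because $I'(|\cdot|^{1/2})$ is at the special point $\chi^2=|\cdot|$ where reducibility is not ruled out by Lemma \ref{rank one irreducible inductions} — the very concern you flagged at the end. You never resolve it.

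The paper's proof replaces both (a) and (b) with a short composition trick that you are missing. It starts from the embedding $\blackboardone\hookrightarrow I(|\cdot|^{-1/2})$ of $\PGL_2(\Q_q)$-representations, which upon applying Corollary \ref{induced theta corollary}(\ref{cor:induced_theta_one}) with $\chi_0=|\cdot|^{-1/2}$ (permissible since $\chi_0\neq |\cdot|^{1/2}$) yields a functorial injection $\Hom_{G_1'}(\Theta_{1,1}(\blackboardone),\mathcal M)\hookrightarrow\Hom_{G_1'}(I'(|\cdot|^{1/2}),\mathcal M)$, hence a surjection $I'(|\cdot|^{1/2})\twoheadrightarrow\Theta_{1,1}(\blackboardone)$. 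Your explicit map then gives a nonzero $\Theta_{1,1}(\blackboardone)\to I'(|\cdot|^{1/2})$, so the composite $I'(|\cdot|^{1/2})\twoheadrightarrow\Theta_{1,1}(\blackboardone)\to I'(|\cdot|^{1/2})$ is nonzero, and Lemma \ref{home space dimension one lemma} (which applies to $\chi=|\cdot|^{1/2}$ with $n=1$) says $\dim\Hom(I'(|\cdot|^{1/2}),I'(|\cdot|^{1/2}))=1$, forcing the composite — and hence both arrows — to be isomorphisms. It is exactly this one-dimensionality of the endomorphism space, not any direct irreducibility or length count, that closes the argument.
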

\begin{proof}
Using the injection 
 $\blackboardone \hookrightarrow I(|\cdot|^{-1/2})$, we obtain by Corollary \ref{induced theta corollary}(\ref{cor:induced_theta_one}) an embedding
\begin{equation*}
    \begin{split}
        \Hom_{G_1'} (\Theta_{1,1}(\blackboardone), \mathcal M) =\Hom_{G_1\times G_1'}(\Omega_{1,1}, \blackboardone\boxtimes \mathcal M) \hookrightarrow\Hom_{G_1\times G_1'} (\Omega_{1,1}, I(|\cdot|^{-1/2})\boxtimes \mathcal M) \\= \Hom_{G_1'} (I'(|\cdot|^{1/2}, \mathcal M)
    \end{split}
\end{equation*}
functorial in $k[G_1']$-modules $\mathcal M$. In particular, $\Theta_{1,1}(\blackboardone)$ is a quotient of $I'(|\cdot|^{1/2})$. On the other hand, it is clear that the map in the lemma defines a nontrivial homomorphism  $\Theta_{1,1}(\blackboardone) \to I'(|\cdot|^{1/2})$, so we have a nontrivial composite
$$I'(|\cdot|^{1/2}) \twoheadrightarrow \Theta_{1,1}(\blackboardone) \to I'(|\cdot|^{1/2}).$$ This must be an isomorphism by Lemma \ref{home space dimension one lemma}, and the lemma follows.
\end{proof}
\begin {corollary}\label{cor:J_chi_theta}
If $\chi $
is almost level-raising generic, then $J (\chi) $
is irreducible and $\Theta_{n, n} (J (\chi)) = I' (\chi). $
\end{corollary}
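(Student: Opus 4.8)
The plan is to deduce Corollary \ref{cor:J_chi_theta} from the rank-one computation of Lemma \ref{lem:theta_trivial} by the same inductive "peeling off" strategy that was used to prove Corollary \ref{all weyl conjugates isomorphic generic case}(3). Recall that $J(\chi) = \Ind_{Q_{i_0}}^{G_n} J_{i_0}(\chi)$, where $J_{i_0}(\chi) = \chi_1\boxtimes\cdots\boxtimes\blackboardone\boxtimes\cdots\boxtimes\chi_n$ is the character of the Levi $L_{i_0}\cong (\Q_q^\times)^{i_0-1}\times G_1\times(\Q_q^\times)^{n-i_0}$ with the trivial representation of $G_1=\operatorname{PGL}_2(\Q_q)$ in slot $i_0$. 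The key observation is that $J(\chi)$ can be rewritten, after permuting the parabolic induction in stages (using that induction in stages commutes with the ordering of the factors, at the cost of re-ordering $\chi$), as a full induction $\Ind_{P}^{G_n}$ from the parabolic $P$ with Levi $(\Q_q^\times)^{n-1}\times G_1$, with inducing representation $\chi_1\boxtimes\cdots\boxtimes\chi_{i_0-1}\boxtimes\chi_{i_0+1}\boxtimes\cdots\boxtimes\chi_n\boxtimes\blackboardone_{G_1}$. (The re-ordering is harmless because $\chi$ is almost level-raising generic, so by Lemma \ref{flippy induction lemma} all the relevant inductions of principal-series type are independent of the order of the $\chi_i$, $i\neq i_0$; one must just be careful that moving the trivial $G_1$-factor past a $\chi_i$ is legitimate, which again follows from Lemma \ref{flippy induction lemma} applied to the appropriate rank-two standard Levi since $\chi_i^{\pm 1}|\cdot|^{\pm 1/2}$ avoids the dangerous values.)

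With $J(\chi)$ presented this way, I would apply Corollary \ref{induced theta corollary}(\ref{cor:induced_theta_one}) repeatedly: each application strips one $\chi_i$ (for $i\neq i_0$) off the orthogonal side and converts it to $\chi_\psi\cdot\chi_i^{-1}$ on the metaplectic side, provided $\chi_i \neq |\cdot|^{(2n'-2m'+1)/2}$ for the relevant smaller ranks $m',n'$ — this nonvanishing is exactly guaranteed by conditions \ref{item:lrg_pttwo} and (iii$'$) in Definition \ref{def:level_raising_generic} together with $\chi_{i_0}=|\cdot|^{1/2}$, after bookkeeping the half-integer exponents that appear as the ranks decrease. After $n-1$ steps the orthogonal side has been reduced to $G_1$ carrying the trivial representation $\blackboardone$, the metaplectic side to $G_1'$, and the residual theta lift is $\Theta_{1,1}(\blackboardone)=I'(|\cdot|^{1/2})$ by Lemma \ref{lem:theta_trivial}. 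Feeding this back through the chain of Corollary \ref{induced theta corollary}(\ref{cor:induced_theta_one}) isomorphisms gives
$$\Theta_{n,n}(J(\chi)) = \Ind_{B'}^{G_n'}\chi_\psi\cdot\bigl(\chi_1^{-1}\boxtimes\cdots\boxtimes\chi_{i_0-1}^{-1}\boxtimes|\cdot|^{1/2}\boxtimes\chi_{i_0+1}^{-1}\boxtimes\cdots\boxtimes\chi_n^{-1}\bigr) = I'(\chi^{-1}),$$
and then $I'(\chi^{-1})\cong I'(\chi)$ by Lemma \ref{lem:two_Weyl_conjugates}, since $\chi^{-1}$ lies in $W(\chi)^+$ (it still has a $|\cdot|^{1/2}$ in slot $i_0$ because $\chi_{i_0}^{-1}=|\cdot|^{-1/2}$... here I must instead invert and reorder to land in $W(\chi)^+$, which is allowed by Lemma \ref{lem:two_Weyl_conjugates}).

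For irreducibility of $J(\chi)$: the inductive argument above shows $\Theta_{n,n}(J(\chi))$ is the irreducible representation $I'(\chi)$ — irreducibility of $I'(\chi)$ itself would need care, but actually I would argue more directly. The first application of Corollary \ref{induced theta corollary}(\ref{cor:induced_theta_one}) shows that $\Theta_{n,n}$ of the irreducible $G_1$-representation $\blackboardone$ induced up, i.e. of $J(\chi)$, equals a parabolic induction; conversely, since theta lifting is functorial and $J(\chi)$ is the unique irreducible quotient of the relevant $I(\chi)$ that it sits inside (by \cite[Th\'eor\`eme 3]{vigneras1989gl2} for the $G_1$-factor, $J_{i_0}(\chi)$ is the unique one-dimensional quotient, hence $J(\chi)$ has a unique irreducible quotient picked out by Jacquet-module considerations), irreducibility of $J(\chi)$ follows from a Jacquet-module length count exactly as in the proof of Lemma \ref{home space dimension one lemma}: any proper submodule would force a multiplicity-two appearance of a character in $R_B(J(\chi))^{ss}$, which one checks is excluded because $J_{i_0}$ contributes only half the characters that $\Ind_T^{L_{i_0}}\chi$ does. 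I expect the main obstacle to be the careful bookkeeping of the half-integer exponents $|\cdot|^{(2m'-2n'+1)/2}$ at each stage of the induction — making sure that the almost-level-raising-generic hypotheses (in particular the single exceptional index $i_0$ where $\chi_{i_0}=|\cdot|^{1/2}$ sits, and the at-most-one-other-index-with-$\chi_i^2=\blackboardone$ clause) are precisely what is needed so that no application of Corollary \ref{induced theta corollary} hits the forbidden character — and the related care in the reorderings via Lemma \ref{flippy induction lemma} and the final identification of the Weyl chamber via Lemma \ref{lem:two_Weyl_conjugates}.
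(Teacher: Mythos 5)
Your computation of $\Theta_{n,n}(J(\chi))$ is in the same spirit as the paper and lands on the same formula: peel characters off one at a time by Corollary~\ref{induced theta corollary}(\ref{cor:induced_theta_one}), terminate with Lemma~\ref{lem:theta_trivial}, and invoke Lemma~\ref{lem:two_Weyl_conjugates} at the end. (Your displayed character is $\rho^{-1}$, where $\rho = \chi_1\boxtimes\cdots\boxtimes\chi_{i_0}^{-1}\boxtimes\cdots\boxtimes\chi_n$, not $\chi^{-1}$; you notice this yourself.) Two inaccuracies along the way: Lemma~\ref{flippy induction lemma} only governs swaps of two $\Q_q^\times$-characters and says nothing about moving the $G_1$-factor past a character, so the reordering needs a different justification; and the ``in particular'' clause of Corollary~\ref{induced theta corollary}(\ref{cor:induced_theta_one}) requires the induced representation to be irreducible, so irreducibility of $J(\chi)$ (and of the smaller $J$'s that appear as you strip) must be established \emph{before} the theta computation, not alongside it.

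The genuine gap is the irreducibility argument. Your Jacquet-module multiplicity sketch does not close: when $\chi$ is level-raising generic (no $\chi_i$ quadratic for $i\neq i_0$), the Weyl conjugates $\chi^w$ are pairwise distinct, so $R_B(I(\chi))^{\ss}$ is multiplicity-free and the ``multiplicity-two'' obstruction you want to invoke never arises; in this regime Lemma~\ref{jacquet module multiplicity} is simply not applicable. Your parenthetical claim that $J(\chi)$ has a unique irreducible quotient is asserted, not proved. The paper takes a different route: by \cite[p.~44]{vigneras1989gl2}, $J(\chi)$ is the image of a nonzero intertwining operator $I(\chi)\to I(\rho)$ with $\rho\in W(\chi)^-$. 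Given any nonzero quotient $\pi$ of $J(\chi)$, Lemma~\ref{quotients are intertwining operators} extends $I(\chi)\twoheadrightarrow J(\chi)\twoheadrightarrow\pi$ to an intertwining operator $I(\chi)\to I(\chi^w)$; by Lemma~\ref{lem:two_Weyl_conjugates} the target is $I(\chi)$ or $I(\rho)$ up to isomorphism, by Lemma~\ref{home space dimension one lemma} the Hom-space is one-dimensional, and since the map factors through the proper quotient $J(\chi)$ it cannot be an isomorphism, so it must be (a scalar multiple of) the operator whose image \emph{is} $J(\chi)$, forcing $\pi = J(\chi)$. This characterization of $J(\chi)$ as an intertwining image is the missing idea, and it is also what legitimizes the reordering your theta computation needs.
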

\begin{proof}
First note that, by \cite[p. 44]{vigneras1989gl2}, $J (\chi) $
is the image of a nonzero intertwining operator $I (\chi)\rightarrow I (\rho) $, where $$\rho =\chi_1\boxtimes\cdots\boxtimes\chi_{i_0-1}\boxtimes\chi ^{-1}_{i_0}\boxtimes\chi_{i_0+1}\boxtimes\cdots\boxtimes\chi_n\in W(\chi)^-.$$
If $J (\chi)\twoheadrightarrow\pi $
is a nonzero quotient, then we obtain, by Lemma \ref{quotients are intertwining operators}, a nonzero intertwining operator $$I (\chi)\twoheadrightarrow J (\chi)\twoheadrightarrow\pi\to I (\chi ^ w) $$
for some $w\in W $. However, Lemmas \ref{home space dimension one lemma} and  \ref{lem:two_Weyl_conjugates} show that this composite (since it is not an isomorphism) must coincide with the intertwining operator $I (\chi)\rightarrow I (\rho) $
whose image defines $J (\chi) $. In particular $J (\chi) =\pi $. So indeed $J (\chi) $
is irreducible. Then $\Theta_{n, n} (J (\chi)) = I' (\rho ^{-1}) $
by Lemma \ref{lem:theta_trivial} and repeated applications of Corollary \ref{induced theta corollary}(\ref{cor:induced_theta_one}). Since $\rho ^{-1}\in W (\chi) ^ + $, 
we also have $I' (\rho ^{-1}) = I' (\chi) $ by Lemma \ref{lem:two_Weyl_conjugates}, 
and this completes the proof.
\end{proof}
\subsection{ Explicit theta lifting over $k$ for principal series}
\begin{notation}\label{notation:phi_bar}Given $\phi\in\Omega_{n, n} $
and $t_1,\ldots, t_n\in\Q_q $, define
\begin{equation}
\overline\phi (t_1,\ldots, t_n) =\integral_{\Q_q ^ {\frac {n (n -1)} {2}}}\phi (t_1 v_1, t_2 v_2+ a_1 v_1, t_3 v_3+ a_2 v_1+ a_3 v_2,\ldots, t_n v_n +\cdots + a_{\frac {n (n -1)} {2}} v_{n -1})\d a_1\cdots\d a_{\frac {n (n -1)} {2}},
\end{equation}
with $v_1,\ldots, v_n $
as in (\ref{local notation for split space}) above. 
\end{notation}A direct calculation shows that:
\begin{lemma}\label{intertwining map from theta to induced}
The map $\phi\mapsto\overline\phi $
defines a morphism of $T\times T' $-modules
$$R_{B\times B'} (\Omega_{n, n})\rightarrow\left ((|\cdot | ^ {-\frac {1} {2}}) ^ {\boxtimes n}\boxtimes\chi_\psi\cdot (|\cdot | ^ {\frac {1} {2}}) ^ {\boxtimes n}\right)\otimes \mathcal S (\Q_q ^ n,k), $$
where $T\times T' $
acts on $\mathcal S (\Q_q ^ n,k) $
by
\begin{equation}
(x_1,\ldots, x_n)\times (y_1,\ldots, y_n) (f) (t_1,\ldots, t_n) = f (x_1 ^{-1} t_1\overline y_1,\ldots, x_n ^{-1} t_n\overline y_n).
\end{equation}
\end{lemma}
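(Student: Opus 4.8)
The statement is purely a computation with the explicit formulas for the Weil representation in (\ref{eq:Weil_formulas}), so the plan is to unwind the definition of the Jacquet module restriction and the partial integral $\phi\mapsto\overline\phi$, and check equivariance generator-by-generator for $T\times T'$. First I would recall that $R_{B\times B'}(\Omega_{n,n})$ is computed by integrating $\phi\in\Omega_{n,n}=\mathcal S(V_n^n,k)$ over the unipotent radicals $N\subset B$ and $N'\subset B'$; concretely, the nilpotent directions of $N$ act on $V_n^n$ by the block-upper-triangular shears $v_j\mapsto v_j$, $v_j^\ast\mapsto v_j^\ast + (\text{lower }v_i)$, etc., and these produce exactly the $\frac{n(n-1)}{2}$ integrations over the $a_i$ appearing in Notation \ref{notation:phi_bar}, together with the restriction of the argument to the ``diagonal'' isotropic vectors $t_1v_1,\dots,t_nv_n$. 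The remaining directions of $N$ (the ``long root'' shears, e.g.\ $v_i^\ast \mapsto v_i^\ast + a v_i$) together with the coordinate $v_0$ of $V_n$ get killed by the Jacquet integration against the trivial character: tracking the $\psi$-factor from the second line of (\ref{eq:Weil_formulas}) shows that the non-invariant part integrates to zero, so that on $R_{B\times B'}(\Omega_{n,n})$ one is left precisely with the function $(t_1,\dots,t_n)\mapsto \overline\phi(t_1,\dots,t_n)$ valued in $\mathcal S(\Q_q^n,k)$.

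Next I would verify the equivariance under the Levi $T\cong(\Q_q^\times)^n$: an element $x=(x_1,\dots,x_n)$ acts on $V_n$ by $v_i\mapsto x_i v_i$, $v_i^\ast\mapsto x_i^{-1}v_i^\ast$, $v_0\mapsto v_0$, so by the first line of (\ref{eq:Weil_formulas}), $\omega_\psi(x,1)\phi(t_1v_1,\dots)=\phi(x^{-1}(t_1v_1),\dots)=\phi(x_1^{-1}t_1 v_1,\dots)$. The modulus character of $B$ introduces the normalization twist: the half-sum of positive roots contributes $|\cdot|^{-1/2}$ in each coordinate, which is exactly the character $(|\cdot|^{-1/2})^{\boxtimes n}$ recorded in the lemma — I would double-check the sign and the $\delta_B^{1/2}$ bookkeeping against the conventions already used for the filtrations in Lemma \ref{jacquet module filtration theta}. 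For the metaplectic Levi $T'$, an element $y=(y_1,\dots,y_n)$ with $\overline y=(\overline y_1,\dots,\overline y_n)$ acts through the third line of (\ref{eq:Weil_formulas}): $\omega_\psi(1,m)\phi(x)=\chi_\psi(m)|\det(\overline m)|^{\dim V_n/2}\phi(\overline m^t x)$. Here $\dim V_n=2n+1$, and after the Jacquet integration the $|\det|^{\dim V_n/2}=|\det|^{n+1/2}$ factor combines with the metaplectic modulus $\delta_{B'}^{1/2}$ to leave the genuine character $\chi_\psi\cdot(|\cdot|^{1/2})^{\boxtimes n}$ claimed, while the transpose-inverse (or rather, on the diagonal torus, the componentwise action) replaces $t_i$ by $t_i\overline y_i$; I would be careful that the torus element acts on the $v_i$ directions (not $v_i^\ast$) because $M'$ was normalized in the previous subsection so that $e_i$ — hence the dual Fourier variable — scales by $\overline\alpha$.

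The only genuinely delicate point — and the step I expect to be the main obstacle — is the careful normalization bookkeeping: getting every modulus character, every factor of $|\det|^{\dim V/2}$, and the genuine character $\chi_\psi$ to land on the correct side with the correct exponent, so that the resulting $T\times T'$-character is exactly $(|\cdot|^{-1/2})^{\boxtimes n}\boxtimes\chi_\psi\cdot(|\cdot|^{1/2})^{\boxtimes n}$ and not some Weyl-twisted or differently-normalized variant. I would handle this by reducing to the rank-one case, where it is the content of Lemma \ref{lem:theta_trivial} and the computations behind Lemma \ref{jacquet module filtration theta}, and then iterating via the exact sequences of Lemma \ref{jacquet module filtration theta}: the successive Jacquet modules $R_P, R_{P'}$ strip off one $\mathcal S(\Q_q^\times,k)$-factor at a time, each contributing one coordinate $t_i$ and one character $|\cdot|^{\mp 1/2}$ by exactly the mechanism in Lemma \ref{jacquet module filtration theta}, so the full-flag statement follows by induction with no new analytic input — only the compatibility of the iterated partial Fourier/shear integrals with the explicit formula for $\overline\phi$, which is the ``direct calculation'' the lemma alludes to. Finally I would note that since $p\neq q$ all the integrals involved (finite sums over $\Q_q$ modulo compact opens) make sense and are computed by the same formulas over $k=\overline\F_p$ as over $\C$, so no separate argument is needed in the modular case.
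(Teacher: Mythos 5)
The paper proves this lemma by stating only ``A direct calculation shows that,'' so the task is to supply that calculation, which is what you set out to do; your general plan (verify $N\times N'$-invariance, then check $T\times T'$-equivariance generator by generator) is the right one. However, the normalization bookkeeping you flag as ``the only genuinely delicate point'' is exactly where your outline goes wrong: you attribute the final characters $(|\cdot|^{-1/2})^{\boxtimes n}$ and $\chi_\psi\cdot(|\cdot|^{1/2})^{\boxtimes n}$ entirely to the modulus characters and the $|\det(\overline m)|^{\dim V/2}$ factor, but this omits the Jacobian of the change of variables in the $a$-integrals defining $\overline\phi$, and without it the exponents do not come out right. Concretely: under $x=(x_1,\dots,x_n)\in T$, pulling $x_i^{-1}$ out of the $j$-th argument $t_jv_j+\sum_{i<j}a_\ell v_i$ requires substituting $a_\ell\mapsto x_ia_\ell$, producing a Jacobian $\prod_i |x_i|^{n-i}$; since $\delta_B^{-1/2}(x)=\prod_i|x_i|^{-(n-i+\frac12)}$ (the half-sum of positive roots of $\SO_{2n+1}$ is $\rho_i=n-i+\tfrac12$, not $\tfrac12$), the product is $\prod_i|x_i|^{-1/2}$. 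Your claim that ``the half-sum of positive roots contributes $|\cdot|^{-1/2}$ in each coordinate'' is false, and without the Jacobian term the answer would be $\prod_i|x_i|^{-(n-i+\frac12)}$, not the stated character. Similarly on the metaplectic side: the un-normalized action gives $|\overline y_j|^{\frac{2n+1}2-(j-1)}$ after accounting for the Jacobian $\prod_j|\overline y_j|^{-(j-1)}$, and only then does subtracting $\delta_{B'}^{1/2}=\prod_j|\overline y_j|^{n-j+1}$ yield $|\overline y_j|^{1/2}$; your asserted combination of $|\det|^{n+1/2}$ and $\delta_{B'}^{1/2}$ alone would leave $|\overline y_j|^{j-\frac12}$.

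Two smaller points. First, your account of $N\times N'$-invariance conflates the two unipotent radicals: the short-root shears involving $v_0$ on the orthogonal side act trivially simply because they fix $\mathrm{Span}\{v_1,\dots,v_n\}$ (there is no $\psi$-factor on the orthogonal side), while the metaplectic Siegel unipotent's $\psi$-factor $\psi(\tfrac12 u(\tilde t)\cdot\tilde t)$ equals $1$ because the $\tilde t_i$ are pairwise orthogonal isotropic vectors, not because it ``integrates to zero.'' Second, the fallback reduction to rank one and iterating Lemma \ref{jacquet module filtration theta} would not save you: that lemma describes the \emph{isomorphism class} of successive Jacquet modules, not the explicit map $\phi\mapsto\overline\phi$, and Lemma \ref{lem:theta_trivial} is a statement about the theta lift of the trivial representation, downstream of (rather than an input to) the present lemma. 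The clean route is simply the direct computation above, with the Jacobians carried along.
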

\begin{definition}\label{def:C_chi}
    For any character $\chi =\chi_1\boxtimes\cdots\boxtimes\chi_n: (\Q_q ^\times) ^ n\rightarrow k ^\times $
with $\chi_i\neq |\cdot | ^ {-\frac {1} {2}} $
for all $i $, we consider the following condition on $\phi\in\Omega_{n, n} $:
\begin{multline}\tag {$C_\chi $}
\text{
There exists $g\in G_n' $
such that $f_\chi (\overline {\omega_\psi (1, g)\phi})\neq 0 $, where}\\
f_\chi:\left ((|\cdot | ^ {-\frac {1} {2}}) ^ {\boxtimes n}\boxtimes\chi_\psi\cdot (|\cdot | ^ {\frac {1} {2}}) ^ {\boxtimes n}\right)\otimes \mathcal S (\Q_q ^ n,k)\rightarrow\chi\boxtimes\chi_\psi\cdot \chi ^{-1}\\
\text {is the unique projection deduced from Lemma \ref{preliminary calculation for theta}}.
\end{multline}
\end{definition}
The map $f_\chi $
also exists and is unique without the assumption $\chi_i \neq |\cdot | ^ {-\frac {1} {2}} $, so $(C_\chi) $
makes sense for all $\chi $; this is elementary but not needed in our applications.

\subsubsection{}
Let $K \subset G_n$ be the hyperspecial subgroup stabilizing the self-dual lattice $L\subset V_n$ (\ref{local notation for split space}). 
\begin{lemma}\label{lem:theta_criterion_generic}
Let $\chi: (\Q_q ^\times) ^ n\rightarrow k ^\times $
be almost generic and unramified, and suppose $\phi\in\Omega_{n, n} ^ K $
satisfies condition $(C_{\chi}) $. Also let $\mathcal M$ be any admissible $k[G_n']$-module. Then, for any nonzero map $$\theta:\Omega_{n, n}\rightarrow I (\chi)\boxtimes \mathcal M $$
of $k[G_n\times G_n'] $-modules, we have $$\theta (\phi)\neq 0. $$
\end{lemma}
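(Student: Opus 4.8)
The statement asserts that if $\chi$ is almost generic and unramified, $\phi\in\Omega_{n,n}^K$ satisfies $(C_\chi)$, and $\theta:\Omega_{n,n}\to I(\chi)\boxtimes\mathcal M$ is any nonzero $k[G_n\times G_n']$-map, then $\theta(\phi)\neq 0$. The key external inputs are Corollary \ref{all weyl conjugates isomorphic generic case} (which tells us $I(\chi)$ and $I'(\chi)$ are irreducible, $\Theta_{n,n}(I(\chi))=I'(\chi)$, and all Weyl conjugates are isomorphic), Lemma \ref{home space dimension one lemma} (the Hom-spaces between Weyl conjugates are one-dimensional), and Lemma \ref{intertwining map from theta to induced} together with Lemma \ref{preliminary calculation for theta} (which produce the explicit functional $f_\chi$ appearing in $(C_\chi)$). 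The plan is first to identify $\theta$ up to scalar using Frobenius reciprocity, then to show that the composite $f_\chi\circ(\text{Jacquet of }\theta)$ is, up to a nonzero scalar, exactly the functional $\phi\mapsto f_\chi(\overline{\omega_\psi(1,g)\phi})$ appearing in $(C_\chi)$.

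First I would reduce to understanding $\Hom_{G_n\times G_n'}(\Omega_{n,n}, I(\chi)\boxtimes\mathcal M)$. Since $I(\chi)$ is irreducible with $\Theta_{n,n}(I(\chi))=I'(\chi)$, any nonzero $\theta$ factors through the maximal $I(\chi)$-isotypic quotient $I(\chi)\boxtimes I'(\chi)$, so $\theta$ is the composite of the canonical quotient $\Omega_{n,n}\twoheadrightarrow I(\chi)\boxtimes I'(\chi)$ with $1\boxtimes\iota$ for some nonzero $\iota:I'(\chi)\to\mathcal M$. In particular it suffices to treat the universal case $\mathcal M=I'(\chi)$ and $\theta$ the canonical projection, and then show $\theta(\phi)\neq 0$ there; applying $\iota$ and using $\ker\iota=0$ on the irreducible $I'(\chi)$ finishes the general case. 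So the problem becomes: the canonical map $\Omega_{n,n}\to I(\chi)\boxtimes I'(\chi)$ does not kill $\phi$.

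Next I would realize this canonical map via Jacquet modules and the explicit formula $\phi\mapsto\overline\phi$. Applying the Jacquet functor $R_{B\times B'}$ to $\theta$ and using second Frobenius reciprocity (as in Lemma \ref{quotients are intertwining operators}), nonvanishing of $\theta(\phi)$ can be detected after composing with a suitable projection of $R_{B\times B'}(I(\chi)\boxtimes I'(\chi))$ onto a one-dimensional $T\times T'$-isotype; by Lemma \ref{home space dimension one lemma} (or rather its consequence that the relevant $\Hom$-space is one-dimensional) and Corollary \ref{all weyl conjugates isomorphic generic case}, this projection is unique up to scalar and can be taken to land in $\chi\boxtimes\chi_\psi\chi^{-1}$ — but I must be slightly careful, because $\theta(\phi)$ nonzero in $I(\chi)\boxtimes I'(\chi)$ need not be visible on the Jacquet module at the single $T\times T'$-weight $(\chi,\chi_\psi\chi^{-1})$. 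The cleanest route: observe that the submodule of $\Omega_{n,n}$ generated by $\phi$ surjects onto some nonzero submodule of $I(\chi)\boxtimes I'(\chi)$ if $\theta(\phi)\neq 0$; conversely if $\theta(\phi)=0$, then $\phi$ lies in $\ker\theta$, and applying any $T\times T'$-equivariant functional through $R_{B\times B'}(\ker\theta)$ vanishes on the image of $\phi$. Comparing with Lemma \ref{intertwining map from theta to induced}: the map $\phi\mapsto\overline\phi$ represents $R_{B\times B'}(\Omega_{n,n})$ on the $(|\cdot|^{-1/2})^{\boxtimes n}\boxtimes\chi_\psi(|\cdot|^{1/2})^{\boxtimes n}$-twist of $\mathcal S(\Q_q^n,k)$, and $f_\chi$ extracts the $\chi\boxtimes\chi_\psi\chi^{-1}$-component via Lemma \ref{preliminary calculation for theta}(\ref{preliminary calculation for theta part two}). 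The crux is then to check that the composite $f_\chi\circ(\phi\mapsto\overline\phi)$, restricted along the inclusion of the $\chi$-Jacquet-module piece into $R_{B\times B'}(I(\chi))$, agrees up to nonzero scalar with the canonical projection $R_{B\times B'}\theta$ followed by the unique functional to $\chi\boxtimes\chi_\psi\chi^{-1}$; this identification of two a priori different maps into a one-dimensional space is where the almost-generic hypothesis is used (it guarantees the multiplicity of $\chi$ in $R_B(I(\chi))$ is controlled — appearing with multiplicity one, or two only in the exceptional weight — so the functional is genuinely unique). Granting this, $(C_\chi)$ says precisely that for some $g\in G_n'$, $f_\chi(\overline{\omega_\psi(1,g)\phi})\neq 0$; since $\theta$ is $G_n'$-equivariant, $\theta(\phi)=0$ would force $\theta(\omega_\psi(1,g)\phi)=0$ for all $g$, hence the composite functional vanishes on $\omega_\psi(1,g)\phi$ for all $g$, contradicting $(C_\chi)$.

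\textbf{Main obstacle.} The delicate point is the last one: matching the explicitly-defined functional $f_\chi\circ(\cdot\mapsto\overline{(\cdot)})$ with the abstract projection coming from $R_{B\times B'}\theta$, and in particular making sure that nonvanishing of $\theta(\phi)$ really is equivalent to nonvanishing of this particular Jacquet-module functional rather than some other $T\times T'$-weight. This requires knowing that the $\chi$-weight detects the whole isotypic quotient — i.e. that $\theta(\phi)\neq 0$ in $I(\chi)\boxtimes I'(\chi)$ iff its image in the $\chi\boxtimes\chi_\psi\chi^{-1}$-coinvariants is nonzero. By irreducibility of $I(\chi)$ and $I'(\chi)$, the submodule generated by any nonzero vector is everything, so its Jacquet module is all of $R_{B\times B'}(I(\chi)\boxtimes I'(\chi))$, which contains the $\chi\boxtimes\chi_\psi\chi^{-1}$-weight; but the vector itself need not have nonzero image in that coinvariant quotient without a finer argument. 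I expect the resolution is to run the whole argument inside the surjection $\Omega_{n,n}^{(\phi)}\twoheadrightarrow I(\chi)$ onto the \emph{first} factor (forgetting $\mathcal M$ momentarily is not enough since the second factor carries the $G_n'$-action used in $(C_\chi)$), and to use that $\phi\mapsto\overline\phi$ is, by the Kudla filtration (Lemma \ref{jacquet module filtration theta}), literally a realization of the top graded piece of $R_{B\times B'}(\Omega_{n,n})$, so that $f_\chi(\overline{\omega_\psi(1,g)\phi})$ computes a genuine matrix coefficient of $\theta(\omega_\psi(1,g)\phi)$ against a $\chi^{-1}$-Whittaker-type functional on $I(\chi)$ and a $(\chi_\psi\chi^{-1})^{-1}$-functional on $I'(\chi)$ — both of which are nonzero on the respective irreducible modules. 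This makes $f_\chi(\overline{\omega_\psi(1,g)\phi})\neq 0$ force $\theta(\omega_\psi(1,g)\phi)\neq 0$, hence $\theta(\phi)\neq 0$, as desired. I would expect to cite \cite{kudla1986local} and \cite{gan2012metaplectic} for the compatibility of the explicit formula with the Jacquet filtration, and Lemma \ref{preliminary calculation for theta}(\ref{preliminary calculation for theta part three}) only if one needs linear independence of several $f_\chi$'s simultaneously (not obviously required here, but useful if the single-weight detection argument needs to separate $\chi$ from its Weyl conjugates).
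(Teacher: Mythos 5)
Your first reduction (factor $\theta$ through the canonical projection $\theta_0:\Omega_{n,n}\to I(\chi)\boxtimes I'(\chi)$ and use irreducibility of $I'(\chi)$) matches the paper exactly. Where you go astray is in the middle: you try to close the argument inside Jacquet modules and then correctly worry that nonvanishing of $\theta_0(\phi)$ need not be detectable on a single $T\times T'$-weight. But that worry is a symptom of having set up the wrong comparison, and the proposed "matrix coefficient against a Whittaker-type functional" resolution is both imprecise and unnecessary.

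The clean way out — and what the paper does — is to never leave the induced representation. The formula
$$F_\chi:\ \phi'\longmapsto\Bigl((h,g)\mapsto f_\chi\bigl(\overline{\omega_\psi(h,g)\phi'}\bigr)\Bigr)$$
defines, by Lemma \ref{intertwining map from theta to induced} together with Lemma \ref{preliminary calculation for theta}, an honest $(G_n\times G_n')$-intertwining map $\Omega_{n,n}\to I(\chi)\boxtimes I'(\chi^{-1})\cong I(\chi)\boxtimes I'(\chi)$, not merely a map out of the Jacquet module. It is visibly nonzero. By Frobenius reciprocity and irreducibility of $I(\chi)$ (Corollary \ref{all weyl conjugates isomorphic generic case}), $\Hom_{G_n\times G_n'}(\Omega_{n,n},I(\chi)\boxtimes I'(\chi))=\Hom_{G_n'}(I'(\chi),I'(\chi))$ is one-dimensional, so $F_\chi=c\,\theta_0$ with $c\neq 0$. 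There is then nothing to "detect": $\theta_0(\phi)\neq 0$ iff $F_\chi(\phi)\neq 0$ as an element of $I(\chi)\boxtimes I'(\chi)$. Finally, $F_\chi(\phi)$ is $K$-spherical in the $h$-variable because $\phi\in\Omega_{n,n}^K$; a $K$-spherical element of $I(\chi)$ is determined by its value at $h=1$, so $F_\chi(\phi)\neq 0$ iff $g\mapsto f_\chi(\overline{\omega_\psi(1,g)\phi})$ is not identically zero — which is precisely $(C_\chi)$. Your proposal thus has the right skeleton and the right inputs, but you need to promote $F_\chi$ from a Jacquet-module comparison to a full intertwining map and invoke Schur's lemma; once you do, your "main obstacle" disappears and the $K$-sphericality of $\phi$ (which your writeup never explicitly invokes) becomes the last step.
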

\begin{proof}
Since $I (\chi) $
is irreducible, Corollary \ref{all weyl conjugates isomorphic generic case} implies that $\theta $
factors as
$$\Omega_{n, n}\xrightarrow {\theta_0} I (\chi)\boxtimes\Theta_{n, n} (I (\chi))\cong I (\chi)\boxtimes I' (\chi)\xrightarrow {f} I (\chi)\boxtimes \mathcal M $$
for some map of $G'_n $-modules $$f: I' (\chi)\rightarrow \mathcal M. $$
Then since $I' (\chi) $
is also irreducible, it suffices to show $\theta_0 (\phi)\neq 0. $
Now, by Lemma \ref{intertwining map from theta to induced}, the map
$$\chi\mapsto\left ((h,g)\mapsto f_\chi (\overline {\omega_\psi (h, g)\phi})\right) $$
gives a $(G_n\times G_n') $-intertwining map
$$F_\chi:\Omega_{n, n}\rightarrow I (\chi)\boxtimes I' (\chi ^{-1})\cong I (\chi)\boxtimes I' (\chi). $$
Since $F_\chi $
is not identically zero, Corollary \ref{all weyl conjugates isomorphic generic case} shows that $\theta_0 $
coincides with $F_\chi $
up to a nonzero scalar; in particular $\theta_0 (\phi)\neq 0 $
if and only if $F_\chi (\phi)\neq 0 $. Then because $\phi $
is $K $-spherical, $F_\chi (\phi)\neq 0 $
if and only if there exists $g\in G_n' $
with $f_\chi (\overline {\omega_\psi (1, g)\phi})\neq 0 $, which is condition $(C_\chi) $.
\end{proof}
Similarly, we have:
\begin{lemma}\label{lem:theta_criterion_alrg}
Let $\chi: (\Q_q ^\times) ^ n\rightarrow k ^\times $
be almost level-raising generic and unramified, and suppose $\phi\in\Omega_{n, n} ^ K $
satisfies condition $(C_\chi) $. 
Then, for any nonzero map $$\theta:\Omega_{n, n}\rightarrow J (\chi)\boxtimes \mathcal M $$
of $k[G_n\times G_n']$-modules, we have $$\theta (\phi)\neq 0. $$
\end{lemma}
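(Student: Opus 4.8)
The plan is to run the argument of Lemma~\ref{lem:theta_criterion_generic}, substituting Corollary~\ref{cor:J_chi_theta} for Corollary~\ref{all weyl conjugates isomorphic generic case}; the one new feature is that $\Theta_{n,n}(J(\chi)) = I'(\chi)$ is no longer irreducible, and the main work is to check that the relevant data still detects the unique irreducible quotient of $I'(\chi)$. So I would begin by recording the structure of $I'(\chi)$ for $\chi$ almost level-raising generic. Moving the slot $i_0$ into the last position by an element of $W$ (harmless by Lemma~\ref{lem:two_Weyl_conjugates}) and inducing in stages through the rank-one parabolic of $G_n'$ whose Levi has $\MP_2(\Q_q)$ as a factor, the reducibility of $I'(\chi)$ is governed entirely by the rank-one principal series $I'(|\cdot|^{1/2})$ of $G_1' = \MP_2(\Q_q)$, which has length two with two distinct irreducible constituents, since by the constraints defining ``almost level-raising generic'' together with Lemma~\ref{rank one irreducible inductions} all of the other rank-one inductions occurring in the stages are irreducible (this uses $q-1\neq 0$ in $k$, and the case $k=\overline\F_p$ is identical to $k=\C$ because $p\neq q$). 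Hence $I'(\chi)$ has length two with distinct constituents; in particular it has a unique irreducible quotient $\sigma$, with unique maximal proper submodule $U = \ker(I'(\chi)\twoheadrightarrow\sigma)$, and $\sigma$ occurs in $I'(\chi)$ with composition multiplicity one.

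Since $J(\chi)$ is irreducible (Corollary~\ref{cor:J_chi_theta}) and $\theta\neq 0$, the map $\theta$ factors through the canonical surjection $\theta_0 : \Omega_{n,n}\twoheadrightarrow J(\chi)\boxtimes\Theta_{n,n}(J(\chi)) = J(\chi)\boxtimes I'(\chi)$, say $\theta = (\identity\boxtimes f)\circ\theta_0$ with $0\neq f : I'(\chi)\to\mathcal M$. Then $\ker f$ is a proper submodule of $I'(\chi)$, hence $\ker f\subseteq U$, so $\theta(\phi) = (\identity\boxtimes f)(\theta_0(\phi))$ is nonzero provided $(\identity\boxtimes\pr_\sigma)(\theta_0(\phi))\neq 0$, where $\pr_\sigma : I'(\chi)\twoheadrightarrow\sigma$. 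Now $\theta_0' := (\identity\boxtimes\pr_\sigma)\circ\theta_0 : \Omega_{n,n}\to J(\chi)\boxtimes\sigma$ is nonzero, and every element of $\Home_{G_n\times G_n'}(\Omega_{n,n}, J(\chi)\boxtimes\sigma)$ factors through $\theta_0$ (as $\theta_0$ is the maximal $J(\chi)$-isotypic quotient), so
\[
\Home_{G_n\times G_n'}(\Omega_{n,n}, J(\chi)\boxtimes\sigma)\ \cong\ \Home_{G_n'}(I'(\chi),\sigma)\ =\ k,
\]
the last equality because $\sigma$ is the unique irreducible quotient of $I'(\chi)$ and appears with composition multiplicity one. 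Thus $\theta_0'$ is unique up to a scalar in $k^\times$.

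It remains to exhibit an explicit nonzero $F\in\Home_{G_n\times G_n'}(\Omega_{n,n}, J(\chi)\boxtimes\sigma)$ with $F(\phi)\neq 0$ under hypothesis $(C_\chi)$. As in the proof of Lemma~\ref{lem:theta_criterion_generic}, Lemma~\ref{intertwining map from theta to induced} together with the projections $f_\chi$ of Lemma~\ref{preliminary calculation for theta} (well defined since $\chi_i\neq|\cdot|^{-1/2}$ for all $i$, which holds for $\chi$ almost level-raising generic) furnishes a nonzero $G_n\times G_n'$-map $F_\chi : \phi\mapsto\big((h,g)\mapsto f_\chi(\overline{\omega_\psi(h,g)\phi})\big)$ into $I(\chi)\boxtimes I'(\chi^{-1})$; composing with the canonical surjection $I(\chi)\twoheadrightarrow J(\chi)$ gives a map into $J(\chi)\boxtimes I'(\chi^{-1})$, and using the computation of $\Theta_{n,n}(J(\chi))$ in the proof of Corollary~\ref{cor:J_chi_theta} (built from Lemma~\ref{lem:theta_trivial} and Corollary~\ref{induced theta corollary}) I would identify its image with $J(\chi)\boxtimes\sigma$, yielding the desired $F$. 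By the one-dimensionality above, $\theta_0' = cF$ for some $c\in k^\times$. Finally, since $\phi$ is $K$-spherical, the Iwasawa decomposition $G_n = BK$ reduces the nonvanishing of $F(\phi)$ to the existence of $g\in G_n'$ with $f_\chi(\overline{\omega_\psi(1,g)\phi})\neq 0$, which is exactly condition $(C_\chi)$; hence $\theta_0'(\phi) = cF(\phi)\neq 0$ and therefore $\theta(\phi)\neq 0$.

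The main obstacle is the identification step in the previous paragraph: verifying that $F_\chi$, pushed to the $J(\chi)$-quotient in the first variable, has image precisely $J(\chi)\boxtimes\sigma$ and not a larger submodule — equivalently, that the explicit functional $f_\chi\circ\overline{(\cdot)}$ continues to compute the unique irreducible quotient $\sigma$ of the reducible theta lift $I'(\chi)$. Concretely this amounts to matching the $f_\chi$-construction with the inductive formula for $\Theta_{n,n}(J(\chi))$ coming from Lemma~\ref{lem:theta_trivial} and Corollary~\ref{induced theta corollary}, and it is precisely here that the precise form of the hypothesis ``almost level-raising generic'' — in particular $\chi_{i_0} = |\cdot|^{1/2}$, which is exactly the slot producing the length-two reducibility — is used.
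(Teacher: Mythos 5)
Your factorization of $\theta$ through the canonical surjection $\theta_0:\Omega_{n,n}\twoheadrightarrow J(\chi)\boxtimes I'(\chi)$ is correct, but the argument then rests on the claim that $I'(\chi)$ has length two with a unique irreducible quotient $\sigma$ of multiplicity one, and this is neither proved in the paper nor justified by your sketch. Inducing in stages does not control length: parabolic induction is exact but need not carry irreducibles to irreducibles, so the graded pieces $\Ind(\tau_i\boxtimes\cdots)$ coming from the length-two filtration of $I'(|\cdot|^{1/2})$ could themselves be reducible, and ruling that out under the almost level-raising generic hypotheses would require a separate argument. You also explicitly flag the other half of your argument (identifying the image of $F_\chi'$ with $J(\chi)\boxtimes\sigma$ and not a larger submodule) as an unresolved obstacle.

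The paper's proof avoids the constituent structure of $I'(\chi)$ altogether and thereby dissolves both difficulties. After writing $\theta=(\identity\boxtimes f)\circ\theta_0$, it invokes Lemma~\ref{quotients are intertwining operators} to replace $f$, after post-composing with an intertwiner out of $\operatorname{Im} f$, by a nonzero intertwining operator $I'(\chi)\to I'(\chi^w)$; then Lemmas~\ref{home space dimension one lemma} and~\ref{lem:two_Weyl_conjugates} show that $f$ is either an isomorphism (when $\chi^w\in W(\chi)^+$) or, up to scalar, the unique map $\lambda:I'(\chi)\to I'(\chi^{-1})$. Thus it suffices to show the composite $\Omega_{n,n}\to J(\chi)\boxtimes I'(\chi)\xrightarrow{\identity\boxtimes\lambda}J(\chi)\boxtimes I'(\chi^{-1})$ does not kill $\phi$, and this composite is identified with $F_\chi'$ (your $F$, but with codomain $J(\chi)\boxtimes I'(\chi^{-1})$ rather than $J(\chi)\boxtimes\sigma$) by the one-dimensionality of $\Home(I'(\chi),I'(\chi^{-1}))$ again. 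There is no need to locate $\sigma$ inside $I'(\chi^{-1})$, and no length claim is required. Finally, one step you elided and the paper supplies: to pass from $F_\chi(\phi)\neq 0$ in $I(\chi)$ (which is what condition $(C_\chi)$ gives via the Iwasawa decomposition) to $F_\chi'(\phi)\neq 0$ in $J(\chi)$, one needs that the $K$-spherical vector of $I(\chi)$ survives the projection to $J(\chi)$; this reduces to the rank-one case and uses $q^2-1\neq 0$ in $k$.
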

\begin{proof}
Since $J (\chi) $
is irreducible by Corollary \ref{cor:J_chi_theta}, the map $\theta$  factors
as
$$\Omega_{n, n}\twoheadrightarrow J (\chi)\boxtimes\Theta_{n, n} (J (\chi))\cong J (\chi)\boxtimes I' (\chi)\xrightarrow {f} J (\chi)\boxtimes\mathcal  M $$
for some map of $G'_n $-modules $$f: I' (\chi)\rightarrow \mathcal M. $$
By Lemma \ref{quotients are intertwining operators}, we may assume without loss of generality that $f: I' (\chi)\rightarrow I' (\chi ^ w) $
is an intertwining operator. Then by Lemmas \ref{home space dimension one lemma}  and \ref{lem:two_Weyl_conjugates}, we see that it suffices to show $\phi $
has nonzero image under the map
$$\theta_0:\Omega_{n, n}\rightarrow J (\chi)\boxtimes I' (\chi)\rightarrow J (\chi)\boxtimes I' (\chi ^{-1}). $$
By Lemma \ref{intertwining map from theta to induced}, the map $$\phi\mapsto\left ((h,g)\mapsto f_\chi (\overline {\omega_\psi (h, g)\chi})\right) $$
gives a $(G_n\times G_n') $-intertwining map $$F_\chi:\Omega_{n, n}\rightarrow I (\chi)\boxtimes I' (\chi ^{-1}). $$
As in the proof of Lemma \ref{lem:theta_criterion_generic}, since $\phi $
is $K $-invariant, condition $(C_\chi) $
is equivalent to $F_\chi (\phi)\neq 0. $
Now project to obtain a composite $$F_\chi':\Omega_{n, n}\xrightarrow {F_\chi} I (\chi)\boxtimes I' (\chi ^ {-1})\rightarrow J (\chi)\boxtimes I' (\chi ^{-1}). $$
Now we observe that any $K$-spherical vector in $I(\chi)$ has nonzero image in $J(\chi)$; indeed, by the construction of $J(\chi)$ it suffices to show this when $n = 1$, in which case it is clear from the explicit intertwining operator in \cite[p. 44]{vigneras1989gl2} and the assumption $q^2 - 1 \neq 0 $ in $k$. In particular, we have $F'_\chi (\phi)\neq 0 $
if and only if $F_\chi (\phi)\neq 0. $
On the other hand, $F_\chi' $
must factor as
$$F'_\chi:\Omega_{n, n}\rightarrow J (\chi)\boxtimes\Theta_{n, n} (J (\chi))\rightarrow J (\chi)\boxtimes I' (\chi ^{-1}); $$
the map $\Theta_{n, n} (J (\chi)) = I' (\chi)\rightarrow I' (\chi ^{-1}) $
is unique by Lemma \ref{home space dimension one lemma}, so $F_\chi' $
coincides with $\theta_0 $
up to a nonzero scalar. Hence $\theta_0 (\chi)\neq 0 $
is equivalent to $(C_\chi) $.
\end{proof}
\subsubsection{}
We end this subsection with a convenient shortcut that we will use to check condition $(C_\chi)$ in the characteristic zero, non-level-raising case. 
\begin{lemma}\label{lem:shortcut_C_chi_generic}
Suppose $k = \C$ 
and let $\chi: (\Q_q^\times)^2 \to k^\times$ be generic and unramified. Assume that  $ \phi\in \Omega_{2,2}^K$ satisfies:
    \begin{enumerate}
        \item For all $x,y\in V$, we have $\phi(x,y) \in \Q_{\geq 0}$.\label{C chi criterion positivity}
        \item $\phi$ is supported on $L\times q^{-1}L$ and invariant under translations by $qL \times q^2 L$.\label{C chi criterion support and invariance}
        \item There exist elements $x,y\in V$ with $y\cdot v_1 = 0 $ and $\phi(x, y) \neq 0$. \label{C chi criterion nonzero value}
    \end{enumerate}
    Then  there exists $\chi^w\in W(\chi)$ such that $\phi$ satisfies condition $(C_{\chi^w})$.
\end{lemma}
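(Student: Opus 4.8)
\textbf{Proof proposal for Lemma \ref{lem:shortcut_C_chi_generic}.}

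The plan is to unwind condition $(C_{\chi^w})$ via Definition \ref{def:C_chi} into an explicit assertion about $\overline{\phi}$, and then choose the Weyl conjugate $\chi^w$ together with the group element $g\in G_2'$ cleverly so that positivity of the values of $\phi$ forces the relevant integral to be nonzero. First I would recall from Lemma \ref{intertwining map from theta to induced} that $\phi\mapsto\overline\phi$ is an intertwining map landing in $\left((|\cdot|^{-1/2})^{\boxtimes 2}\boxtimes\chi_\psi\cdot(|\cdot|^{1/2})^{\boxtimes 2}\right)\otimes\mathcal S(\Q_q^2,k)$, and that by Lemma \ref{preliminary calculation for theta}(\ref{preliminary calculation for theta part two}) the projection $f_\chi$ to $\chi\boxtimes\chi_\psi\chi^{-1}$ is given, coordinate by coordinate in $(t_1,t_2)$, by the regularized integral $\phi\mapsto\frac{1}{1-\chi_i(g_0)}\int_{\Q_q^\times}(\cdot)\chi_i(t)\,d^\times t$ against the unramified characters $\chi_1,\chi_2$. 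Here by Notation \ref{notation:phi_bar}, $\overline\phi(t_1,t_2)=\int_{\Q_q}\phi(t_1 v_1,\, t_2 v_2 + a v_1)\,da$. Since $\chi$ is generic and unramified, it suffices (by Lemma \ref{lem:theta_criterion_generic}, or directly) to exhibit \emph{one} conjugate $\chi^w$ and one $g$ making $f_{\chi^w}(\overline{\omega_\psi(1,g)\phi})\neq 0$.

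The key step is the choice of $g$. I would take $g$ to be (a lift to $\MP_4(\Q_q)$ of) an element of the Levi of the Siegel parabolic acting as scaling by $q$ on the first symplectic coordinate and trivially on the second, so that by the third formula in (\ref{eq:Weil_formulas}) (the $\widetilde M(k)$ action), $\omega_\psi(1,g)\phi(x_1,x_2)$ is, up to a $\mu_8$-scalar and a power of $q$, $\phi(q x_1, x_2)$ — this converts the support condition (\ref{C chi criterion support and invariance}) on $L\times q^{-1}L$ into support on a lattice where the integration over $t_1$ hits exactly the shells $q^a\Z_q^\times$ for a controlled range of $a$, matching the setup of Lemma \ref{preliminary calculation for theta}(\ref{preliminary calculation for theta part three}). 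In fact the clean way to run this: the function $t_1\mapsto \overline{\omega_\psi(1,g)\phi}(t_1,t_2)$, for each fixed $t_2$, lies in a space $S_{a,b}$ as in Lemma \ref{preliminary calculation for theta}(\ref{preliminary calculation for theta part three}) with $b-a\geq 1$, and likewise in the $t_2$ variable; so $f_{\chi_1}$ applied in the first coordinate (for a suitable unramified $\chi_1$, which I can force to be among the $\chi_i^{\pm1}$ by choosing the conjugate $w$) is a \emph{nonnegative} linear combination — with strictly positive coefficients coming from the Vandermonde positivity in Lemma \ref{preliminary calculation for theta}(\ref{preliminary calculation for theta part three}), once one checks the sign — of the partial Fourier–Jacobi values of $\phi$. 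Hypothesis (\ref{C chi criterion positivity}) ($\phi\geq 0$) then guarantees no cancellation, and hypothesis (\ref{C chi criterion nonzero value}) (some value $\phi(x,y)\neq 0$ with $y\cdot v_1=0$, i.e. $y$ in the hyperplane whose integration along $v_1$ computes $\overline\phi$ in the second slot) guarantees that at least one of those partial values, hence the whole integral, is strictly positive. The role of the condition $y\cdot v_1 = 0$ is precisely that, after the inner integration $\int_{\Q_q}\phi(\cdot\,,\,t_2 v_2 + a v_1)\,da$, such a $y$ contributes to the value at some $t_2$ in the support; the $v_1$-translation invariance from (\ref{C chi criterion support and invariance}) makes that inner integral a finite positive sum rather than something that could vanish.

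The main obstacle I anticipate is \textbf{bookkeeping of the precise lattice ranges and the resulting Vandermonde sign}: one must verify that after applying $\omega_\psi(1,g)$ and the partial integration defining $\overline{\,\cdot\,}$, the resulting function of $(t_1,t_2)$ genuinely lands in $S_{a,b}\otimes S_{a',b'}$ with $b-a, b'-a' \geq 1$ (so that Lemma \ref{preliminary calculation for theta}(\ref{preliminary calculation for theta part three}) has content for at least two characters, which is what lets us pick a conjugate $w$ with the two unramified characters $\chi_1^{\pm},\chi_2^{\pm}$ distinct and nontrivial as required there), and that the coefficients in the explicit formula displayed in the proof of Lemma \ref{preliminary calculation for theta}(\ref{preliminary calculation for theta part three}), namely $\tfrac{1}{1-x_i}\operatorname{vol}(\Z_q^\times)(x_i^a - x_i^{a+1}+\cdots)$, combine with $\phi\geq 0$ to give a strictly positive total — this needs a small separate argument isolating a single shell on which $\phi$ is supported (using that $\phi$ is $\Z_q^\times$-invariant and $qL\times q^2L$-translation invariant, so it is a sum of indicator functions of a bounded number of such shells) so that the sum reduces to a single nonzero term $\propto \phi(x,y)>0$. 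Everything else — the equivariance of $\phi\mapsto\overline\phi$, the uniqueness of $f_\chi$, and the reduction to checking $(C_{\chi^w})$ for one $w$ — is quoted directly from the lemmas already proved in this section.
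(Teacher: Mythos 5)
Your proposal has a genuine gap, and it is fatal: you take $g$ to be a \emph{Levi} element of the Siegel parabolic, whereas the paper's proof takes $g = w_0$, the full Fourier-transform Weyl element from (\ref{eq:Weil_formulas}). This is not a cosmetic difference. The partial integral $\overline\phi(t_1,t_2)=\int_{\Q_q}\phi(t_1 v_1,\,t_2 v_2+a v_1)\,da$ only samples $\phi$ on the ``thin'' set of pairs lying in $(\Q_q v_1)\times(\Q_q v_1+\Q_q v_2)$, and a Levi element of the form you propose merely rescales $(t_1,t_2)$, so $\overline{\omega_\psi(1,g)\phi}$ still sees only those values. Conditions (\ref{C chi criterion positivity})--(\ref{C chi criterion nonzero value}) in no way force $\phi$ to be nonzero on that thin set: condition (\ref{C chi criterion nonzero value}) asks for a nonvanishing value with $y\in v_1^\perp$, which is a $4$-dimensional hyperplane much larger than $\Q_q v_1+\Q_q v_2$ (and imposes nothing at all on $x$). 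Concretely, the function $\phi_q^{(0)}=\mathbbm{1}_{\{(x,y)\,:\,x,y\in L-qL,\;x\cdot x,y\cdot y\in q\Z_q,\;x\cdot y\in\Z_q^\times\}}$ used elsewhere in \S\ref{sec:rep_theory} satisfies all three hypotheses (take $(x,y)=(v_2^\ast,v_2)$), yet $\overline{\phi_q^{(0)}}\equiv 0$ because its support is disjoint from $\{x\cdot y=0\}$. So under your choice of $g$ the quantity $f_{\chi^w}(\overline{\omega_\psi(1,g)\phi})$ vanishes identically for this $\phi$, and no choice of Weyl conjugate $w$ can fix that.

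The Fourier element $w_0$ is precisely what repairs this: applying the fourth formula in (\ref{eq:Weil_formulas}) and then the partial integration $\overline{\,\cdot\,}$ produces
\[
c(t_1,t_2)\ \propto\ \int_{V^2}\phi(x,y)\,\psi(t_1\,x\cdot v_1+t_2\,y\cdot v_2)\,\mathbbm{1}_{y\cdot v_1\in q^2\Z_q}\,dx\,dy,
\]
an integral over the entire space $V^2$. The positivity hypothesis (\ref{C chi criterion positivity}) is then used not to control signs of Vandermonde coefficients (those coefficients are unit-modulus complex numbers $\chi_i(q)^a$, and ``Vandermonde positivity'' is not a property of Lemma \ref{preliminary calculation for theta}(\ref{preliminary calculation for theta part three}) --- the determinant there proves linear independence, nothing about signs), but rather to pin down a single value: at $(t_1,t_2)=(1,q)$ the character $\psi$ is trivial on the support prescribed by (\ref{C chi criterion support and invariance}), so $c(1,q)$ is an integral of $\phi\geq 0$ against $\mathbbm{1}_{y\cdot v_1\in q^2\Z_q}$, and condition (\ref{C chi criterion nonzero value}) ($y\cdot v_1=0\in q^2\Z_q$) gives a point where the integrand is strictly positive. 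Only after establishing $c(\Z_q,q\Z_q)\neq 0$ does one invoke Lemma \ref{preliminary calculation for theta}(\ref{preliminary calculation for theta part three}) (twice, once in each variable), which requires no positivity: $c(1,\cdot)\in S_{-2,1}$ is a $4$-dimensional space, matched against the four distinct characters $\chi_1^{\pm1}|\cdot|^{1/2},\chi_2^{\pm1}|\cdot|^{1/2}$ by genericity; and then $f_{|\cdot|^{1/2}\chi_2}c(t_1,\cdot)\in S_{-1,0}$ is $2$-dimensional, matched against $\chi_1^{\pm1}|\cdot|^{1/2}$. You would need to replace your Levi element by the Weyl element $w_0$ and then redistribute the roles of hypotheses (\ref{C chi criterion positivity}) and (\ref{C chi criterion nonzero value}) as above to make the argument go through.
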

\begin{proof}
    Let $w_0$ be the Weyl element in (\ref{weil rep}); then $$\overline {\omega_\psi(1, w_0)\phi}\in \left ((|\cdot | ^ {-\frac {1} {2}}) ^ {\boxtimes 2}\boxtimes\chi_\psi\cdot (|\cdot | ^ {\frac {1} {2}}) ^ {\boxtimes 2}\right)\otimes \mathcal S (\Q_q ^ 2,k)$$ is a unit multiple of  the function \begin{align*}
        c(t_1,t_2) &= \integral_{V^2} \integral_{\Q_q}\phi(x, y) \psi\left(t_1 x\cdot  v_1 + t_2 y\cdot v_2+ a y\cdot  v_1 \right) \d a \d x \d y\\
        &= \volume\set{q^{-2} \Z_q}\integral_{V^2} \phi(x, y) \psi\left(t_1 x\cdot v_1 +  t_2y\cdot v_2 \right) \cdot \mathbbm{1}_{ y\cdot v_1 \in q^2\Z_q}\d a \d x \d y.
    \end{align*}
    By condition (\ref{C chi criterion support and invariance}) of the lemma, $c$ is supported on $q^{-1} \Z_q \times q^{-2}\Z_q$ and invariant under translations by $\Z_q \times q\Z_q$. 
    Note that the conditions of the lemma together imply that $c(\Z_q, q\Z_q)\neq 0$. From this, we will deduce that $f_{\chi^w}(c) \neq 0$ for some Weyl conjugate $\chi^w$ of $\chi$, which will show the lemma.

    Indeed, write $\chi = \chi_1\boxtimes \chi_2$ and $\chi^w = \chi_1^w\boxtimes \chi_2^w$ for $w\in W$. Then $$f_{\chi^w}(c) = f_{|\cdot|^{\frac{1}{2}} \chi_1^w} \left(f_{|\cdot|^{\frac{1}{2}} \chi_2^w}c(t_1, \cdot)\right)$$
    (where the  functions on the right are defined in Lemma \ref{preliminary calculation for theta}(\ref{preliminary calculation for theta part two})).
    Now, because
 $c(\Z_q,q\Z_q)\neq 0$, $c(1, \cdot)$ is a nonzero element of the four-dimensional $k$-vector space $ S_{-2, 1}$ from Lemma \ref{preliminary calculation for theta}(\ref{preliminary calculation for theta part three}).  Since $\chi_2^w|\cdot|^{\frac{1}{2}}$ takes on four distinct nontrivial values as $w$ ranges over $W$, 
 we may therefore replace $\chi$ with a Weyl conjugate such that $d\coloneqq f_{|\cdot|^{\frac{1}{2}}\chi_2} c(t_1, \cdot) $ is not identically zero. Since $d$ lies in $ S_{-1,0}$ as a function of $t_1$, Lemma \ref{preliminary calculation for theta}(\ref{preliminary calculation for theta part three}) again implies that either $f_{|\cdot|^{\frac{1}{2}} \chi_1} (d) $ or $f_{|\cdot|^{\frac{1}{2}} \chi_1^{-1}} (d) $ is nonzero. This concludes the proof because $\chi_1^{-1}\boxtimes \chi_2$ is Weyl-conjugate to $\chi$.

\end{proof}
\subsection{ Applications to formal theta lifts}\label{subsec:applications to formal theta lifts}
\subsubsection{}
For this subsection, fix the following data:
\begin{itemize}[$\circ$]
    \item A quadratic space $V$ of trivial discriminant and dimension $2n + 1\geq 3.$
    \item A neat compact open subgroup $K = \prod K_\l \subset \spin(V)(\A)$.
    \item An odd prime $q$ such that $K_q$ is hyperspecial.
    \item A subring $R \subset \C$ which is either $\C$, or a finite flat extension of $\breve \Z_p$ (embedded into $\C$ by a choice of isomorphism $\iota: \overline\Q_p \isomorphism \C$).
     In the latter case we assume  the pro-order of $K_q$ is prime to $p$. Let $\varpi_R\in R$ generate the maximal ideal (so $\varpi_R = 0$ if $R= \C$), and 
 write $k\coloneqq R/\varpi_R$.
    \item A character $\chi: (\Q_q ^\times) ^ n\rightarrow k ^\times $
that is either almost generic or almost level-raising generic (Definition \ref{def:generic_character} and Definition \ref{def:level_raising_generic}). 
\end{itemize}
With these data, we make the following notation:
\begin{notation}\label{notation:m_chi_etc}
    \leavevmode
    \begin{enumerate}
        \item Write $\T_q\coloneqq \T_{\spin_{2n+1}, q, R}$. Let $\m_\chi \subset \T_q$ be the maximal ideal corresponding to $\chi$; explicitly, $\m_\chi$ is the annihilator of the unique spherical vector in $I(\chi)$. 
        \item For any ring $A$, $\mathbbm 1_A$ denotes the trivial $A[K_q]$-module.
       
    \end{enumerate}
\end{notation}

\begin{lemma}\label{lem:compact_is_induced}
We have
$$\cind_{K_q} ^ {\spin (V) (\Q_q)}\blackboardone_R\otimes_{\T_{q, \m_\chi}}k\cong I (\chi) $$
as $k $-linear representations of $\spin (V) (\Q_q) $.
\end{lemma}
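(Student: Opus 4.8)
\textbf{Proof plan for Lemma \ref{lem:compact_is_induced}.} The plan is to identify the left-hand side as the module of spherical vectors, suitably localized, and then use the compatibility of the Satake isomorphism with parabolic induction together with the irreducibility results of the previous subsection. First I would recall the standard adjunction: for any smooth $k[\spin(V)(\Q_q)]$-module $\Pi$, one has
$$\Hom_{\spin(V)(\Q_q)}\left(\cind_{K_q}^{\spin(V)(\Q_q)}\blackboardone_R, \Pi\right) \cong \Pi^{K_q},$$
and the left-hand side is naturally a $\T_q$-module via the Hecke action on $\cind_{K_q}\blackboardone_R$, matching the Hecke action on $\Pi^{K_q}$. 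In other words, $\cind_{K_q}^{\spin(V)(\Q_q)}\blackboardone_R$ is a \emph{projective} object corepresenting the $K_q$-invariants functor on smooth representations (projectivity uses that $K_q$ has pro-order prime to $p$, or that $k$ has characteristic zero, so that averaging over $K_q$ makes sense). In particular its reduction $\cind_{K_q}\blackboardone_k$ corepresents $K_q$-invariants on $k[\spin(V)(\Q_q)]$-modules.

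The second step is to compute $\cind_{K_q}\blackboardone_R \otimes_{\T_{q,\m_\chi}} k$. Since $\cind_{K_q}\blackboardone_R$ is free (or at least flat) as a $\T_q$-module --- this is the classical structure of the spherical Hecke module, following from the Satake/Bernstein description, and in the $p$-modular case from the compatibility noted via the Satake transform in the excerpt's discussion of $\Sat_{G,\l}$ --- the tensor product $\cind_{K_q}\blackboardone_R \otimes_{\T_q} k(\m_\chi)$ is a smooth $k[\spin(V)(\Q_q)]$-module whose $K_q$-invariants are one-dimensional over $k$ (it is $\T_{q}/\m_\chi \otimes k$), with $\T_q$ acting through $\chi$, i.e. through $\m_\chi$. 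Moreover, by the adjunction above, $\Hom$ out of it into any smooth $\Pi$ is $\Pi^{K_q}[\m_\chi]$, the $\m_\chi$-eigenspace of the spherical vectors. So $M \coloneqq \cind_{K_q}\blackboardone_R \otimes_{\T_{q,\m_\chi}} k$ is the universal smooth representation generated by a spherical vector with Satake parameter $\chi$.

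The third step is to match $M$ with $I(\chi)$. On one hand, $I(\chi) = \Ind_B^{\spin(V)(\Q_q)}\chi$ has a one-dimensional space of $K_q$-fixed vectors (by the Iwasawa decomposition), and the Hecke action on this line is given by the Satake parameter $\chi$; this is exactly the classical computation of the Satake transform, valid over $k$ --- in characteristic zero by Satake, and in the $p$-modular case $p \neq q$ by the same computation since the relevant volumes are units. Hence by the universal property there is a canonical nonzero map $M \to I(\chi)$ sending the generator to the spherical vector. I would then argue this map is an isomorphism: it is surjective because the spherical vector generates $I(\chi)$, which holds since $\chi$ is almost generic or almost level-raising generic --- in the almost generic case $I(\chi)$ is irreducible by Corollary \ref{all weyl conjugates isomorphic generic case}, so any nonzero submodule is everything; in the almost level-raising generic case one uses that the spherical vector has nonzero image in the irreducible quotient $J(\chi)$ (noted in the proof of Lemma \ref{lem:theta_criterion_alrg}) and, combined with a look at the (semisimplified) Jacquet module from Lemma \ref{jacquet module of induced} to rule out the spherical vector lying in a proper submodule, concludes generation. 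For injectivity, one shows $M$ has no more $K_q$-invariants than $I(\chi)$: any quotient of $M$ is a smooth representation generated by a single $K_q$-fixed vector with Satake parameter $\chi$, and by Lemma \ref{quotients are intertwining operators} and Lemma \ref{jacquet module of induced} every such irreducible subquotient embeds in some $I(\chi^w)$; by the isomorphisms $I(\chi^w)\cong I(\chi)$ of Corollary \ref{all weyl conjugates isomorphic generic case} (resp. Lemma \ref{lem:two_Weyl_conjugates} in the level-raising case, noting that the generator forces us into $W(\chi)^+$) each such piece is $K_q$-spherical with the \emph{same} parameter, so $\dim_k M^{K_q} \le$ the length of $I(\chi)$ as bounded by the multiplicity of $\chi$ in $R_B(I(\chi))^{\ss}$, which by Lemma \ref{home space dimension one lemma} pins down $\dim_k M^{K_q} = 1 = \dim_k I(\chi)^{K_q}$; a surjection of smooth representations with equal one-dimensional $K_q$-invariants and with target admissible is an isomorphism once one knows the target is generated by its spherical vector.

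The main obstacle I anticipate is the injectivity/length bound: controlling the $K_q$-invariants of the a priori large module $M$ and ruling out ``extra'' spherical constituents, especially in the almost level-raising generic case where $I(\chi)$ is reducible and one must carefully track which Weyl chamber the spherical vector lives in (the $W(\chi)^+$ versus $W(\chi)^-$ distinction) and invoke Lemma \ref{lem:two_Weyl_conjugates} rather than full Weyl-invariance. The flatness of $\cind_{K_q}\blackboardone_R$ over $\T_q$ and the compatibility of the $p$-modular Satake transform with this picture are the technical inputs that make the tensor-product computation clean; everything else is a formal consequence of the representation-theoretic facts already established in this section.
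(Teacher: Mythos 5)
Your overall shape — build the canonical map $\pi_\chi := \cind_{K_q}\blackboardone_R\otimes_{\T_q,\m_\chi}k \to I(\chi)$ sending the generator to the spherical vector, then prove surjectivity and injectivity — agrees with the paper's. Your surjectivity argument is essentially the paper's (in the almost level-raising generic case the paper phrases it as: $J(\chi)$ is the unique $K_q$-spherical constituent, so if the spherical vector lay in a proper submodule then $J(\chi)$ would be both a sub and a quotient of $I(\chi)$, contradicting $\dim\Hom(I(\chi),I(\chi))=1$ from Lemma~\ref{home space dimension one lemma}). The flatness of $\cind_{K_q}\blackboardone_R$ over $\T_q$ that you invoke is not actually needed: exactness of $(-)^{K_q}$ already gives $\pi_\chi^{K_q}$ as a nonzero quotient of $\T_q/\m_\chi = k$, hence one-dimensional, without any flatness.

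The genuine gap is in injectivity. Your closing claim — that a surjection $\pi_\chi\twoheadrightarrow I(\chi)$ inducing an isomorphism on one-dimensional $K_q$-invariants, with target admissible and generated by its spherical vector, must be an isomorphism — is false as stated: a representation generated by its $K_q$-invariants can perfectly well contain a nonzero submodule with \emph{no} $K_q$-fixed vectors, so knowing the kernel has trivial $K_q$-invariants does not make it vanish. Your attempt to bound $\dim_k\pi_\chi^{K_q}$ by the length of $I(\chi)$ also breaks down because $\pi_\chi$ is not known a priori to have finite length. The paper closes the gap differently: it uses a result of Vign\'eras to the effect that a smooth representation generated by $K_q$-spherical vectors has the property that \emph{every} nonzero submodule has nonzero Jacquet module $R_B$ (not nonzero $K_q$-invariants — this is the key shift). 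Injectivity of $f$ then reduces to injectivity of $R_B(f)$, which follows from a dimension count: by the Iwasawa decomposition $R_B\bigl(\cind_{K_q}\blackboardone_k\bigr)\cong k[X^\bullet(\widehat T)]$ with $\T_q\otimes_R k$ acting via the Satake isomorphism $\T_q\otimes_R k\cong k[X^\bullet(\widehat T)]^W$, so by exactness of $R_B$,
$$\dim_k R_B(\pi_\chi) = \dim_k\Bigl(k[X^\bullet(\widehat T)]\otimes_{k[X^\bullet(\widehat T)]^W,\m_\chi}k\Bigr) = |W| = \dim_k R_B(I(\chi)),$$
using finite flatness of $k[X^\bullet(\widehat T)]$ over its $W$-invariants of degree $|W|$. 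This Jacquet-module count, anchored in the explicit Satake picture, is the ingredient your proposal is missing; without it the injectivity does not go through.
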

\begin{proof}
Write $$\pi_\chi\coloneqq\cind_{K_q} ^ {\spin (V) (\Q_q)}\blackboardone_R\otimes_{\T_q,\mathfrak m_\chi}k. $$
Then we have a map $f:\pi_\chi\rightarrow I (\chi) $, sending the generator of $\pi_\chi $
to the unique spherical vector. 
The first claim is that $f $
is surjective. Indeed, if $\chi $
is almost generic, this is automatic by Corollary \ref{all weyl conjugates isomorphic generic case}. If $\chi $
is almost level-raising generic, then $I (\chi) $
has no $K_q $-spherical submodules: since taking the $K_q $-invariants is exact, $J (\chi) $
is the unique $K_q $-spherical constituent, and it cannot be both a quotient and a submodule by Lemma \ref{home space dimension one lemma}. So indeed $f $
is surjective. 

It remains to prove $f$ is injective.
Because $\pi_\chi$ is generated by $K_q$-spherical vectors, every $\spin(V)(\Q_q)$-stable subspace $V\subset \pi_\chi$ satisfies $R_B(V) \neq 0$; for instance, this follows from \cite[Corollaire II.3.5]{vigneras1996representations} combined with (I.3.15) of \emph{op. cit.} Thus it suffices to show $R_B(f)$ is injective, or equivalently that $$\dim_k R_B(\pi_\chi) = \dim_k R_B(I(\chi)) = |W| = n!\cdot 2^n.$$
To compute the dimension of $R_B(\pi_\chi)$, note that $R_B(\cind_{K_q}^{\spin(V)(\Q_q)} \mathbbm 1_k) = k[X^\bullet(\widehat T)]$  by the Iwasawa decomposition for $\spin(V)(\Q_q)$, and the action of $\T_q\otimes_R k$ is the natural one under the Satake isomorphism
$$\T_q\otimes_R k \isomorphism k[X^\bullet(\widehat T)]^W.$$ Since $k[X^\bullet(\widehat T)]$ is a finite flat $k[X^\bullet(\widehat T)]^W$-algebra of degree $|W|$, we have 
$$\dim R_B(\pi_\chi) = \dim \left(k[X^\bullet(\widehat T)]\otimes_{k[X^\bullet(\widehat T)]^W,\m_\chi} k\right) =  |W|,$$ as desired. 
\end{proof}
\begin{notation}\label{notation:phi_mod_p}
For all $\phi \in \mathcal S (V^n\otimes \Q_q,R)$, let $ \overline\phi $ be its image in $\mathcal S (V^n\otimes \Q_q,k)$.
\end{notation}
(Despite the conflict with Notation \ref{notation:phi_bar}, we hope that the meaning will always be clear from context.)
\subsubsection{}
For the next proposition, recall the notation on formal theta lifts from \S\ref{subsec:formal_theta}.
\begin{prop}\label{prop:change_test_formal}
Let $\alpha\in\Test_K (V, R) $
be a test vector and $n_0\geq 1 $
an integer such that:
\begin{enumerate}
\item $\Theta (\alpha,\phi)\not\equiv 0\pmod{\varpi_R ^ {n_0} }$
for some $\phi =\phi ^ q\otimes\phi_q\in \mathcal S (V ^ n\otimes\A_f, R)^K. $
\item For all $h\in\mathfrak m_\chi\subset\T_q $
and all $\phi_q'\in \mathcal S (V ^ n\otimes\Q_q, R) ^ {K_q} $, $$\Theta (h\cdot\alpha,\phi ^ q\otimes\phi'_q)\equiv 0\pmod{\varpi_R ^ {n_0}}. $$
\end{enumerate}
Then for any $\phi_q ^\circ\in \mathcal S (V ^ n\otimes\Q_q, R) ^ {K_q} $ such that $\overline\phi_q^\circ $ 
satisfies condition $(C_\chi) $, $$\Theta (\alpha,\phi ^ q\otimes\phi_q ^\circ)\not\equiv 0\pmod{\varpi_R ^ {n_0}}. $$
\end{prop}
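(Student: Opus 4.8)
The strategy is to pass from the formal theta lift $\Theta(\alpha,-)$ to the purely local theta map, and then apply Lemma \ref{lem:theta_criterion_generic} (if $\chi$ is almost generic) or Lemma \ref{lem:theta_criterion_alrg} (if $\chi$ is almost level-raising generic) to $\overline\phi_q^\circ$. First I would reduce modulo $\varpi_R^{n_0}$ and work with $R/\varpi_R^{n_0}$-coefficients throughout, so that hypothesis (1) becomes the statement that $\Theta(\alpha,\phi^q\otimes\phi_q)\neq 0$ and hypothesis (2) becomes $\Theta(h\cdot\alpha,\phi^q\otimes\phi'_q)=0$ for all $h\in\m_\chi$ and all $K_q$-invariant $\phi'_q$. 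The key observation is that, by Proposition \ref{prop:equivariance_and_modularity}, $\Theta_{K^q}$ is a $\spin(V)(\Q_q)\times\MP_{2n}(\Q_q)$-equivariant map, so fixing $\alpha$ and the prime-to-$q$ data $\phi^q$ yields a morphism of $R/\varpi_R^{n_0}[\spin(V)(\Q_q)\times\MP_{2n}(\Q_q)]$-modules
\begin{equation*}
\Theta^{\mathrm{loc}}: \mathcal S(V^n\otimes\Q_q, R/\varpi_R^{n_0}) \otimes \left(\cind_{K_q}^{\spin(V)(\Q_q)}\mathbbm 1\right) \to M^{2n}_{\frac{\dim V}{2}, R/\varpi_R^{n_0}},
\end{equation*}
where the $\cind$ factor records the choice of level $K_q'\subset K_q$ and the test vector $\alpha$ descends to a map out of $\cind_{K_q}^{\spin(V)(\Q_q)}\mathbbm 1 \otimes_{\T_q}(R/\varpi_R^{n_0})$ under the full Hecke action.

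Next I would use hypothesis (2) to see that the relevant piece factors through the localization at $\m_\chi$: the condition $\Theta(h\cdot\alpha,\phi^q\otimes\phi'_q)=0$ for all $h\in\m_\chi$ says precisely that the $\T_q$-submodule generated by $\alpha$ (with the fixed $\phi^q$) on which $\m_\chi$ acts nontrivially contributes nothing mod $\varpi_R^{n_0}$, so the nonvanishing in hypothesis (1) is witnessed by the $\m_\chi$-localized part. Concretely, I would consider the composite
\begin{equation*}
\theta: \mathcal S(V^n\otimes\Q_q, k) \otimes \left(\cind_{K_q}^{\spin(V)(\Q_q)}\mathbbm 1_R \otimes_{\T_{q,\m_\chi}} k\right) \to \mathcal M
\end{equation*}
for an appropriate $k[\MP_{2n}(\Q_q)]$-module $\mathcal M$ (a subquotient of the mod-$\varpi_R$ space of modular forms cut out by the Hecke eigensystem of $\alpha$), obtained by reducing $\Theta^{\mathrm{loc}}$ modulo $\varpi_R$ after the localization. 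By Lemma \ref{lem:compact_is_induced}, the second tensor factor is $I(\chi)$, so $\theta$ is a nonzero map $\Omega_{n,n} \otimes I(\chi) \to \mathcal M$, equivalently a nonzero element of $\Hom_{\spin(V)(\Q_q)\times\MP_{2n}(\Q_q)}(\Omega_{n,n}, I(\chi)\boxtimes \mathcal M^\vee)$ (or its natural analogue); here $\dim V = 2n+1$ matches the orthogonal side of the Weil representation $\Omega_{n,n}$ of Section \ref{sec:rep_theory}. The subtle point, and the one I expect to require the most care, is to correctly identify this mod-$\varpi_R$ local picture: I must check that the nonvanishing of $\Theta(\alpha,\phi^q\otimes\phi_q) \bmod \varpi_R^{n_0}$, together with hypothesis (2), really does produce a \emph{nonzero} map $\theta$ out of the $\m_\chi$-localized, mod-$\varpi_R$-reduced module, rather than the nonvanishing being lost upon reduction; this is where the integrality of the theta lift (Proposition \ref{prop:integral_structure_stable_metaplectic}, Proposition \ref{prop:equivariance_and_modularity}) and the flatness of $\cind_{K_q}^{\spin(V)(\Q_q)}\mathbbm 1_R$ over $\T_q$ are used. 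A clean way to organize this is to choose $n_0$ minimal, lift to characteristic-zero-type reasoning on the submodule generated by $\alpha$, and argue that the reduction mod $\varpi_R$ of $\Theta(\alpha,\phi^q\otimes-)/\varpi_R^{n_0-1}$ (suitably interpreted) gives the desired nonzero $\theta$.

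Finally, with $\theta:\Omega_{n,n}\to I(\chi)\boxtimes\mathcal M$ (or $J(\chi)\boxtimes\mathcal M$, according to whether $\chi$ is almost generic or almost level-raising generic) nonzero and $\overline\phi_q^\circ\in\Omega_{n,n}^K$ satisfying $(C_\chi)$, Lemma \ref{lem:theta_criterion_generic} (resp. Lemma \ref{lem:theta_criterion_alrg}) gives $\theta(\overline\phi_q^\circ)\neq 0$. Unwinding the identification of $\theta$ with the mod-$\varpi_R$ reduction of the localized $\Theta^{\mathrm{loc}}$, this says exactly that $\Theta(\alpha, \phi^q\otimes\phi_q^\circ)\not\equiv 0 \pmod{\varpi_R^{n_0}}$, which is the claim. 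Along the way I would need to confirm that $J(\chi)$ (when $\chi$ is almost level-raising generic) is indeed the unique $K_q$-spherical constituent of $I(\chi)$ appearing here, so that the $\m_\chi$-localization genuinely sees $J(\chi)$ and not a different constituent — this is supplied by the proof of Lemma \ref{lem:compact_is_induced}, which shows $\cind_{K_q}^{\spin(V)(\Q_q)}\mathbbm 1_R\otimes_{\T_{q,\m_\chi}}k \cong I(\chi)$ in both cases.
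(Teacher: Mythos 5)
Your overall strategy — convolve $\alpha$ against $\cind_{K_q}^{\spin(V)(\Q_q)}\mathbbm 1_R$ to get a $\spin(V)(\Q_q)\times\MP_{2n}(\Q_q)$-equivariant map, use hypothesis (2) to factor the $\cind$ factor through the $\m_\chi$-localization, identify the result with $I(\chi)$ via Lemma \ref{lem:compact_is_induced}, and then apply Lemma \ref{lem:theta_criterion_generic} or \ref{lem:theta_criterion_alrg} — is the same as the paper's. Two points you flag as ``requiring care'' do, however, contain genuine gaps.

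First, the construction of $\mathcal M$. The clean way to arrange that the reduction mod $\varpi_R$ does not kill the nonvanishing from hypothesis (1) is to apply hypothesis (2) to the single element $h = \varpi_R \in \m_\chi$: this shows the image of the equivariant map $(f,\phi'_q)\mapsto\Theta(f\ast\alpha,\phi^q\otimes\phi'_q)$ already lies inside $\varpi_R^{n_0-1}M^n_{n+\frac{1}{2},R}$, so one should take $\mathcal M \coloneqq \varpi_R^{n_0-1}M^n_{n+\frac{1}{2},R}/\varpi_R^{n_0}M^n_{n+\frac{1}{2},R}$, not a subquotient ``cut out by the Hecke eigensystem of $\alpha$.'' With this $\mathcal M$, hypothesis (1) says the induced map $\cind_{K_q}^{\spin(V)(\Q_q)}\mathbbm 1_k\otimes\Omega_{n,n}\to\mathcal M$ is nonzero, and hypothesis (2) says it kills $\m_\chi\cdot\cind$, so it factors through $I(\chi)\otimes\Omega_{n,n}$. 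Dualizing then gives a nonzero $\theta:\Omega_{n,n}\to I(\chi^{-1})\boxtimes\mathcal M$, with $I(\chi^{-1})$, not $I(\chi)$.

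Second, and more seriously, the ALRG case. You assert that $\theta$ lands in $J(\chi)\boxtimes\mathcal M$ and attribute this to Lemma \ref{lem:compact_is_induced}, but that lemma only identifies the source of the map ($\cind\otimes_{\T_q,\m_\chi}k\cong I(\chi)$); it says nothing about where $\theta$ lands. What you actually have after duality is a map $\Omega_{n,n}\to I(\chi^{-1})\boxtimes\mathcal M$, and when $\chi$ is almost level-raising generic, $I(\chi^{-1})\not\cong I(\chi)$: rather $J(\chi)\hookrightarrow I(\chi^{-1})$ is a proper submodule. To apply Lemma \ref{lem:theta_criterion_alrg} you must prove that \emph{every} equivariant map $\Omega_{n,n}\to I(\chi^{-1})\boxtimes\mathcal M$ factors through $J(\chi)\boxtimes\mathcal M$. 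This is a nontrivial claim requiring its own argument: one reduces to $\mathcal M = I'(\chi)$ via Corollary \ref{induced theta corollary}(\ref{cor:induced_theta_one}), then uses Lemma \ref{home space dimension one lemma} to show the relevant Hom space is one-dimensional and spanned by the map that visibly factors through $J(\chi)\boxtimes I'(\chi)$. Without this step, your plan in the ALRG case is incomplete. (In the almost generic case, $I(\chi^{-1})\cong I(\chi)$ by Corollary \ref{all weyl conjugates isomorphic generic case} and no further factorization is needed, so your argument goes through there.)
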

\begin{proof}
For all $f\in\cind_{K_q} ^ {\spin (V) (\Q_q)}\blackboardone_R $, we can consider the convolution $$f\ast\alpha\in\Test_{K ^ q} (V, R) $$ (notation as in (\ref{subsubsec:test_convolution})).
By Proposition \ref{prop:equivariance_and_modularity}, the map
\begin{equation}\label {representation theoretic equation for varying test vector proof}
\cind_{K_q} ^ {\spin (V) (\Q_q)}\blackboardone_R\otimes \mathcal S (V ^ n\otimes\Q_q, R)\rightarrow M ^ n_{n +\frac {1} {2}, R}
\end{equation}
defined by
$$(f,\phi_q')\mapsto\Theta (f\ast\alpha,\phi ^ q\otimes\phi_q') $$
is $\spin (V) (\Q_q)\times\MP_{2 n} (\Q_q) $-equivariant. By condition (2) of the proposition applied to $h =\varpi_R\in\mathfrak m_\chi $, the image of (\ref {representation theoretic equation for varying test vector proof}) is contained in $\varpi_R ^ {n_0-1} M_{n +\frac {1} {2}, R} ^ n $. Abbreviate $$\mathcal M\coloneqq\frac {\varpi_R ^ {n_0-1} M_{n +\frac {1} {2}, R} ^ n} {\varpi_R ^ {n_0} M_{n +\frac {1} {2}, R} ^ n}. $$
Reducing modulo $\varpi_R$,
(\ref {representation theoretic equation for varying test vector proof}) induces a map
\begin{equation}\label {modulo varpi equation for varying test vector proof}
\cind_{K_q} ^ {\spin (V) (\Q_q)}\blackboardone_{k}\otimes \mathcal S (V ^ n\otimes\Q_q,k) \rightarrow \mathcal M
\end{equation}
which remains a map of $\spin (V) (\Q_q)\times\MP_{2 n} (\Q_q) $-modules by Proposition \ref{prop:integral_structure_stable_metaplectic}. Now note that condition (2) of the proposition implies (\ref {modulo varpi equation for varying test vector proof}) factors through the quotient
$$\cind_{K_q} ^ {\spin (V) (\Q_q)}\blackboardone_{k}\twoheadrightarrow\cind_{K_q} ^ {\spin (V) (\Q_q)}\blackboardone_{k}\otimes_{\T_q,\mathfrak m_\chi}k\cong I (\chi) $$ (Lemma \ref{lem:compact_is_induced}).
By duality 
\cite[p. 96, Propri\'et\'e (vi)]{vigneras1996representations},
(\ref {modulo varpi equation for varying test vector proof}) is equivalent to a nonzero map
$$\theta: \mathcal S (V ^ n\otimes\Q_q,k)\rightarrow I (\chi ^{-1})\otimes\mathcal  M $$
and, for $\phi ^\circ_q\in \mathcal S (V ^ n\otimes\Q_q,R) ^ {K_q} $, we have
$$\Theta (\alpha,\phi ^ q\otimes\phi ^\circ_q)\not\equiv 0\pmod{\varpi_R ^ {n_0}}\IFF\theta (\overline\phi_q ^\circ)\neq 0. $$
If $\chi $
is almost generic, the proposition therefore follows from Corollary \ref{all weyl conjugates isomorphic generic case} and Lemma \ref{lem:theta_criterion_generic}. So assume instead that $\chi $
is almost level-raising generic. 

\begin {claim}
Let $\mathcal M $
be any admissible $k[\MP_{2n}(\Q_q)]$-module. Then every
map of $\spin(V)(\Q_q)\times \MP_{2n}(\Q_q) $-modules $$\mathcal S (V ^ n\otimes\Q_q,k)\rightarrow I (\chi ^ {-1})\boxtimes \mathcal M$$
factors as
$$\mathcal S (V ^ n\otimes\Q_q,k)\rightarrow J (\chi)\boxtimes \mathcal M\rightarrow I (\chi ^{-1})\boxtimes \mathcal M. $$
\end {claim}
Given the claim, the proposition follows from Lemma \ref{lem:theta_criterion_alrg}, because $J (\chi)\hookrightarrow I (\chi ^{-1}) $
is injective (cf. the proof of Corollary \ref{cor:J_chi_theta}).

Let us now prove the claim. Since the statement is purely local in nature, we resume our local abbreviation $\Omega_{n, n} = \mathcal S (V ^ n\otimes\Q_q,k) $. 
The claim is also insensitive to replacing $\chi $
with any $\chi ^ w\in W (\chi) ^ + $ (by Lemma \ref{lem:two_Weyl_conjugates}), so suppose without loss of generality that $\chi = |\cdot | ^ {\frac {1} {2}}\boxtimes\rho $
for some almost generic character $\rho: (\Q_q ^\times) ^ n\rightarrow k ^\times $. Apply Corollary \ref{induced theta corollary}(\ref{cor:induced_theta_one}) with $\chi_0 = |\cdot | ^ {-\frac {1} {2}} $
and $\pi_{m -1} = I (\rho)\cong I (\rho ^ {-1}) $. Since $\Theta_{n -1, n -1} (I (\rho)) = I' (\rho) $
by Corollary \ref{all weyl conjugates isomorphic generic case}, we obtain an isomorphism
\begin{equation}\label {another equation for proof of changing test vector}
\Home (\Omega_{n, n}, I (\chi ^{-1})\boxtimes \mathcal M) =\Home (I' (\chi), \mathcal M)
\end{equation}
that is functorial in $\mathcal M $. So it suffices to show the claim with $\mathcal M = I' (\chi) $. But in this case, (\ref {another equation for proof of changing test vector}) combined with Lemma \ref{home space dimension one lemma} shows that there is a unique non-zero map
\begin{equation}\label {final equation for proof of changing test vector}
\Omega_{n, n}\rightarrow I (\chi ^{-1})\boxtimes I' (\chi).
\end{equation}
Since $J (\chi) $
injects into $I (\chi ^{-1}) $, we also have the map induced by the theta lift $$\Omega_{n, n}\rightarrow J (\chi)\boxtimes\Theta_{n, n} (J (\chi))\cong J (\chi)\boxtimes I' (\chi)\rightarrow I (\chi ^{-1})\boxtimes I' (\chi); $$
this must coincide with (\ref {final equation for proof of changing test vector}) up to a nonzero scalar, which shows the claim.
\end{proof}

\subsection{Main result on changing test functions}
\subsubsection{}
Let $\pi$, $S$, and $E_0$ be as in 
 Notation \ref{notation:pi_basic}, and fix a prime $\p$ of $E_0$, which we suppress from all the notation in this subsection. Fix an isomorphism $\iota: \overline\Q_p \isomorphism \C$ inducing the prime $\p$, and  let $R\subset \overline\Z_p$ be the ring of integers of the maximal unramified extension of $O = O_\p$. 

We apply the results of \S\ref{subsec:applications to formal theta lifts} to study the behavior of  $\lambda_n^D(Q, \phi; K)$ and $\kappa_n^D(Q, \phi; K)$ as $\phi$ changes locally at a prime $q\nmid Q$. 
\begin{prop}\label{prop:alrg_admissible}
    Suppose $q$ is an admissible prime.  Then there exists an unramified character $\chi: (\Q_q^\times)^2 \to \overline\F_p^\times$ such that $\chi$ is almost level-raising generic, and the corresponding maximal ideal $\m_\chi\subset \T_{q, R}$ contains the kernel of the Hecke action on the unique spherical vector of $\pi_q$. 
\end{prop}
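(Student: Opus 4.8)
The plan is to translate the admissibility condition on $q$ (Definition \ref{def:admissible}) into the combinatorial conditions defining an almost level-raising generic character (Definition \ref{def:level_raising_generic}), via the explicit relation between Satake parameters for $\GSP_4(\Q_q)$ and for $\spin(V_D)(\Q_q) \cong \SO(V_{2})(\Q_q)$ after passing mod $p$. Recall that since $\sigma(D)$ is even we may identify $\spin(V_D)(\Q_q) \cong \GSP_4(\Q_q)$ (as $q\nmid D$, or even when $q|D$ the relevant local group away from $D$ is the split one), so the spherical Hecke algebra $\T_{q,R}$ is the one computed in \S\ref{subsubsec:GSP4_hecke}, and the spherical vector in $\pi_q$ has Satake parameter a multiset $\set{a,b,1/a,1/b}$ with similitude factor $\nu = 1$ because $\pi$ has trivial central character. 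The dual group of $\spin(V_{2})$ being $\SP_4(\C) = \widehat{\PGSp}_4$, the $\SO_5$-Satake parameter attached to $\pi_q$ lives in $\widehat{T}$ and is governed by the same multiset $\set{a, b, 1/a, 1/b}$.

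First I would make precise the dictionary between a character $\chi = \chi_1\boxtimes\chi_2\colon (\Q_q^\times)^2 \to k^\times$ (with $k = \overline\F_p$) and its associated maximal ideal $\m_\chi \subset \T_{q,R}$: by Lemma \ref{lem:compact_is_induced} and the Satake isomorphism, $\m_\chi$ is determined by the $W$-orbit of the pair $(\chi_1(q), \chi_2(q)) \in (k^\times)^2$, i.e. by the multiset $\set{\chi_1(q)^{\pm 1}, \chi_2(q)^{\pm 1}}$, and the condition "$\m_\chi$ contains the kernel of the Hecke action on the spherical vector of $\pi_q$" becomes the requirement that this multiset equal the reduction mod $\p$ of $\set{a,b,1/a,1/b}$, up to the normalization twist by $q^{1/2}$-powers dictated by the half-sum of positive roots (this bookkeeping is the Eichler-Shimura / Satake normalization already appearing in (\ref{eq:eichler_shimura}) and in Definition \ref{def:Galois_type}). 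Using admissibility, $\overline\rho_{\pi}(\Frob_q)$ has eigenvalues $\set{q, 1, \alpha, q/\alpha}$, so after the normalization the unramified $\SO_5$-Satake parameter of $\overline\pi_q$ is (a $W$-translate of) $\set{|\cdot|^{1/2}, |\cdot|^{-1/2}, \chi_\alpha, \chi_\alpha^{-1}}$ where $\chi_\alpha$ is the unramified character with $\chi_\alpha(q) = \alpha/q^{1/2}$ (the exact exponent to be pinned down by matching with (\ref{eq:hecke_eigenvalues_satake})). I would take $\chi \coloneqq |\cdot|^{1/2}\boxtimes \chi_\alpha$.

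Next I would verify that this $\chi$ is almost level-raising generic. Condition \ref{item:lrg_ptone} (exactly one component equal to $|\cdot|^{1/2}$) holds because $\chi_1 = |\cdot|^{1/2}$ while $\chi_2 = \chi_\alpha$ satisfies $\chi_\alpha \neq |\cdot|^{1/2}$: indeed $\chi_\alpha(q) = \alpha/q^{1/2}$ and $|\cdot|^{1/2}(q) = q^{-1/2}$, so equality would force $\alpha = 1$, excluded by admissibility ($\alpha \notin \set{\pm 1, \pm q, q^2, q^{-1}}$). For condition \ref{item:lrg_pttwo} with $i=1, j=2$: $\chi_1\chi_2 = |\cdot|^{1/2}\chi_\alpha$ has value $\alpha q^{-1}$ at $q$, and $\chi_1/\chi_2 = |\cdot|^{1/2}\chi_\alpha^{-1}$ has value $q^{-1}/\alpha$; ruling out these lying in $\set{|\cdot|^{\pm1}, \blackboardone}$ amounts to $\alpha q^{-1} \notin \set{1, q, q^{-1}}$ and $q^{-1}\alpha^{-1}\notin\set{1,q,q^{-1}}$, i.e. $\alpha \notin \set{q, q^2, 1}$ and $\alpha \notin \set{q^{-1}, 1, q^{-2}}$ — all of which follow from admissibility together with $q^4 \not\equiv 1 \pmod p$ (the condition $q^4\not\equiv 1$ kills the borderline cases like $\alpha = q^{-2}$ via $\alpha^{-1}$ being a unit, and handles the degenerate possibility $q^2 = q^{-2}$). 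For condition (iii$'$): we need $\chi_i^2 \neq |\cdot|^{\pm 1}$ for $i \neq i_0 = 1$, i.e. $\chi_\alpha^2 \neq |\cdot|^{\pm 1}$, equivalently $\alpha^2/q \neq q^{\pm1}$, i.e. $\alpha^2 \neq q^2, 1$; since $\alpha \neq \pm q$ and $\alpha \neq \pm 1$ by admissibility, this holds. The clause "$\chi_i^2 = \blackboardone$ for at most one $i \neq i_0$" is automatic since there is only one index $\neq i_0$. Finally I would record that $\chi$ is unramified by construction, and that $\m_\chi \subset \T_{q,R}$ has residue field contained in $\overline\F_p$, so it makes sense over $R$.

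\textbf{The main obstacle} I anticipate is the careful normalization bookkeeping: the half-integral powers of $q$ in (\ref{eq:hecke_eigenvalues_satake}), the $\omega_G(\l^{-1/2})$ twist in Definition \ref{def:Galois_type}, the Tate twist $(-2)$ appearing in the étale cohomology (\ref{eq:intro_etale}), and the $\chi_\psi$-twist in the definition of $I'(\chi)$ versus $I(\chi)$ must all be reconciled so that the eigenvalue set $\set{q,1,\alpha,q/\alpha}$ of $\overline\rho_\pi(\Frob_q)$ corresponds to precisely the character $\chi = |\cdot|^{1/2}\boxtimes\chi_\alpha$ on the $\SO_5$ side (rather than, say, $\chi_\alpha = |\cdot|^{s}$ for a shifted $s$ that would spoil conditions \ref{item:lrg_ptone} or (iii$'$)). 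Once the normalization is fixed correctly, each genericity inequality is a short computation; I would organize them as a single lemma listing the forbidden values of $\alpha$ and checking each is excluded by Definition \ref{def:admissible} together with $q^4\not\equiv 1\pmod p$. A secondary point to address is that we only need \emph{almost} level-raising generic rather than level-raising generic — this is exactly what allows the single possibly-problematic relation $\chi_\alpha^2 = \blackboardone$ (i.e. $\alpha^2 = q$) to be tolerated, so I would not attempt to exclude it.
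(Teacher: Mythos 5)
Your proposal is correct and follows essentially the same route as the paper: express $\pi_q$ as an unramified principal series $I(\widetilde\chi)$, use the local--global compatibility of Theorem~\ref{thm:rho_pi_LLC}(\ref{part:rho_pi_LLC1}) to translate the admissibility condition on the eigenvalues $\set{q,1,\alpha,q/\alpha}$ of $\overline\rho_\pi(\Frob_q)$ into congruences on the Satake parameter of $\pi_q$, reduce mod $\p$ and (after a Weyl conjugation) obtain $\chi = |\cdot|^{1/2}\boxtimes\chi_\alpha$, then check Definition~\ref{def:level_raising_generic} directly. The paper leaves the genericity verification implicit where you spell it out; the only slip in your computation is that $\chi_1(q)/\chi_2(q) = 1/\alpha$, not $q^{-1}/\alpha$ (the two factors of $q^{-1/2}$ cancel), but the forbidden set is still $\alpha \notin \set{1,q,q^{-1}}$ and the conclusion is unaffected. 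The normalization worry you flag is resolved cleanly by Theorem~\ref{thm:rho_pi_LLC}(\ref{part:rho_pi_LLC1}), which gives $\iota\,\WD(\rho_{\pi,\iota}|_{G_{\Q_q}})^{F\textrm{-}ss} = \rec_{\operatorname{GT}}(\pi_q\otimes|\cdot|^{1/2})$, so the Frobenius eigenvalues are $q^{1/2}\cdot\widetilde\chi(q,1)^{\pm 1},\ q^{1/2}\cdot\widetilde\chi(1,q)^{\pm 1}$; this is all the bookkeeping needed, and neither the Tate twist $(-2)$ nor the metaplectic $\chi_\psi$-twist enter the proof.
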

\begin{proof}
Because $\pi_q$ is unramified with trivial central character, we have $\pi_q = I(\widetilde \chi)$ for an unramified character
     $\widetilde \chi: (\Q_q^\times )^2 \to \C^\times $, uniquely determined up to Weyl action. Write $\alpha = \iota^{-1} \widetilde \chi(q, 1)$, $\beta =\iota^{-1} \widetilde \chi(1, q)$; then by Theorem \ref{thm:rho_pi_LLC}(\ref{part:rho_pi_LLC1}), $\rho_{\pi}(\Frob_q)$ has eigenvalues $\alpha q^{1/2}, \beta q^{1/2}, \alpha^{-1} q^{1/2}, \beta^{-1} q^{1/2}$, which lie in $\overline\Z_p^\times \subset \overline\Q_p^\times$. By the admissibility of $q$, we may assume without loss of generality that \begin{equation}\label{eq:alrg_for_translation}
         \alpha q^{1/2}\equiv q \pmod \p, \; \beta q^{1/2} \not\equiv\pm  q,\pm 1, q^2, q^{-1}\pmod \p.
     \end{equation}
     We define the character $\chi$ to be the reduction modulo $p$ of $\iota^{-1}\widetilde \chi$, and the conditions (\ref{eq:alrg_for_translation}) exactly correspond to $\chi$ being almost level-raising generic.
\end{proof}
\begin{cor}\label{cor:admissible_C_chi_lambda}
    Suppose $Qq$ is admissible with $\nu(DQ)$ odd, and fix an $S$-level structure $K$ for $\spin(V_{DQ})$. Let
 $\phi_q^\circ \in \mathcal S(V_{DQ}^2 \otimes \Q_q, O)^{K_q}$ 
be a test function whose image in $\mathcal S(V_{DQ}^2\otimes \Q_q, \overline \F_p)$ satisfies condition ($C_\chi$), where $\chi: (\Q_q^\times)^2 \to \overline \F_p^\times$ is the almost level-raising generic character of Proposition \ref{prop:alrg_admissible}. 
    Then for all $n\geq 1$ and all $\phi = \phi^q \otimes \phi_q \in \mathcal S(V_{DQ}^2 \otimes \A_f, O)^K,$ we have
    $$\lambda_n^D(Q, \phi^q\otimes \phi_q^\circ; K) \supset \lambda_n^D(Q,\phi; K).$$
\end{cor}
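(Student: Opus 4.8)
The plan is to deduce Corollary~\ref{cor:admissible_C_chi_lambda} from Proposition~\ref{prop:change_test_formal}, applied with the coefficient ring $R$ (the ring of integers of the maximal unramified extension of $O$), the quadratic space $V = V_{DQ}$, the compact open subgroup $K$, the prime $q$, and the almost level-raising generic character $\chi$ produced by Proposition~\ref{prop:alrg_admissible}. First I would recall that since $\sigma(DQ)$ is odd, $\Sh_K(V_{DQ})$ is a finite double coset space, and $\Test_K(V_{DQ},\pi,R) = \Hom_{\T^{S\cup\div(DQ)}_O}(R[\Sh_K(V_{DQ})], R)$ after extension of scalars; by Lemma~\ref{lem:S_tidy_upshot} (and Lemma~\ref{lem:tidy_suffices}) we may restrict to $S$-tidy level structures so that the central Hecke operators act trivially. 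The key observation is that $\lambda_n^D(Q,\phi;K)$ is, by definition, the $O/\varpi^n$-span of the values $\alpha(Z(T,\phi)_K)$ over $\alpha \in \Test_K(V_{DQ},\pi,O/\varpi^n)$ and $T\in\Sym_2(\Q)_{\geq 0}$, and the formal theta lift $\Theta(\alpha,\phi)_K = \sum_T \alpha(Z(T,\phi)_K)q_\A^T$ packages exactly these values; thus $\lambda_n^D(Q,\phi;K) \neq 0 \pmod{\varpi^n}$ if and only if $\Theta(\alpha,\phi)_K \not\equiv 0 \pmod{\varpi^n}$ for some $\alpha$.

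The heart of the argument is then the following chain. Fix $\phi = \phi^q\otimes\phi_q$ and suppose $\alpha \in \Test_K(V_{DQ},\pi,O/\varpi^n)$ is such that $\alpha(Z(T_0,\phi)_K)$ generates a nonzero submodule of $O/\varpi^n$, say of exact order $\varpi^{n_0}$ with $n_0 \geq 1$; lift $\alpha$ to a test vector in $\Test_K(V_{DQ},\pi,R)$ (possible after extending scalars, using that $\Test_K(V_{DQ},\pi,-)$ is exact in the definite case since $R[\Sh_K(V_{DQ})]$ is free). By construction $\alpha$ has the Hecke eigenvalues of $\pi$ away from $S$, hence the Hecke action on $\alpha$ at $q$ factors through the quotient of $\T_{q,R}$ killing everything except the $\pi_q$-eigensystem; in particular, for every $h$ in the maximal ideal $\m_\chi \subset \T_{q,R}$ and every $\phi_q' \in \mathcal{S}(V_{DQ}^2\otimes\Q_q,R)^{K_q}$, we have $h\cdot\alpha$ congruent to $0 \pmod{\varpi^{n_0}}$ after pairing — more precisely $\Theta(h\cdot\alpha,\phi^q\otimes\phi_q') \equiv 0 \pmod{\varpi_R^{n_0}}$, because $h$ acts on $\alpha$ by an element of $\m_\chi$ which, by Proposition~\ref{prop:alrg_admissible}, annihilates the $\pi_q$-Hecke eigenvalue. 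This verifies hypotheses (1) and (2) of Proposition~\ref{prop:change_test_formal} with $n_0$ as chosen. That proposition then gives $\Theta(\alpha,\phi^q\otimes\phi_q^\circ) \not\equiv 0 \pmod{\varpi_R^{n_0}}$, i.e.\ $\alpha(Z(T',\phi^q\otimes\phi_q^\circ)_K) \not\equiv 0 \pmod{\varpi^{n_0}}$ for some $T'$. Running this over a set of $\alpha$ whose values generate $\lambda_n^D(Q,\phi;K)$, and using that each such nonzero value has order at most $\varpi^n$, yields $\lambda_n^D(Q,\phi^q\otimes\phi_q^\circ;K) \supseteq \lambda_n^D(Q,\phi;K)$.

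A few bookkeeping points will need care. First, Proposition~\ref{prop:change_test_formal} is phrased over the unramified base ring $R$, so I must check that $\lambda_n^D(Q,\phi;K)$, defined over $O/\varpi^n$, behaves compatibly under the faithfully flat base change $O \to R$ — the order of a value $\alpha(Z(T,\phi)_K)$ is unchanged, so this is harmless, but it should be stated. Second, one must ensure that the test function $\phi_q^\circ$ is genuinely $K_q$-invariant and $O$-valued so it is an admissible input to Proposition~\ref{prop:change_test_formal}; this is part of the hypothesis. Third, the formal theta lift used in Constructions~\ref{subsubsec:lambda_constr} is the one at level $K$, whereas Proposition~\ref{prop:change_test_formal} uses the renormalized lift $\Theta_{K^q}$; passing between them costs only a unit factor $[K_q:K_q']$ which is prime to $p$ by the neatness/tidiness hypotheses, so the nonvanishing modulo $\varpi^{n_0}$ is preserved. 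The main obstacle is none of these routine points but rather ensuring that condition $(C_\chi)$ as stated for $\overline{\phi_q^\circ}$ is exactly the hypothesis fed to Proposition~\ref{prop:change_test_formal} — i.e.\ that the character $\chi$ appearing in Proposition~\ref{prop:alrg_admissible} (which is pinned down only up to Weyl conjugacy, and only via the requirement that $\m_\chi$ contain the $\pi_q$-eigensystem) is the \emph{same} $\chi$ for which $(C_\chi)$ is assumed; this is why the statement of the corollary refers to ``the almost level-raising generic character $\chi$ of Proposition~\ref{prop:alrg_admissible},'' making the match tautological, and by Lemma~\ref{lem:two_Weyl_conjugates} the choice within $W(\chi)^+$ is immaterial.
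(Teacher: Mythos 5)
Your overall strategy matches the paper's: apply Proposition~\ref{prop:change_test_formal} over the unramified extension $R$, with the almost level-raising generic $\chi$ from Proposition~\ref{prop:alrg_admissible} supplying the maximal ideal $\m_\chi$, and translate non-vanishing of Fourier coefficients into containment of the submodules $\lambda_n^D$. The bookkeeping remarks at the end are all fine. But there is a genuine gap in your verification of hypothesis~(2) of Proposition~\ref{prop:change_test_formal}.

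You assert that for every $h \in \m_\chi$ and every $\phi_q'$ one has $\Theta(h\cdot\widetilde\alpha, \phi^q\otimes\phi_q') \equiv 0 \pmod{\varpi^{n_0}}$ ``because $h$ acts on $\alpha$ by an element of $\m_\chi$ which annihilates the $\pi_q$-Hecke eigenvalue.'' Two things go wrong. First, $\m_\chi$ does not annihilate the eigenvalue: Proposition~\ref{prop:alrg_admissible} only says $\m_\chi$ \emph{contains} the kernel of $f_\pi\colon \T_{q,R} \to R$, i.e.\ $f_\pi(\m_\chi) \subset (\varpi)$, not $f_\pi(\m_\chi)=0$ (indeed $\varpi \in \m_\chi$ and $f_\pi(\varpi)=\varpi$). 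So one only gets $\Theta(h\cdot\widetilde\alpha, \phi^q\otimes\phi_q') = f_\pi(h)\,\Theta(\widetilde\alpha, \phi^q\otimes\phi_q')$ with $f_\pi(h)\in(\varpi)$. Second, and more seriously, to conclude from this that the product is divisible by $\varpi^{n_0}$ one needs $\Theta(\widetilde\alpha, \phi^q\otimes\phi_q')$ itself to already be divisible by $\varpi^{n_0-1}$ for \emph{every} local test function $\phi_q'$. This is not automatic: without further arrangement, there may be a $\phi_q'$ for which some coefficient $\widetilde\alpha(Z(T,\phi^q\otimes\phi_q')_K)$ is a unit, in which case $f_\pi(h)\,\Theta(\widetilde\alpha,\phi^q\otimes\phi_q')$ has valuation only $\ord_\varpi f_\pi(h)$, which can be as small as $1$, and hypothesis~(2) fails.

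The paper resolves this with a normalization you have omitted: at the very start, after writing $\lambda_n^D(Q,\phi;K) = (\varpi^{n_0-1})$, it argues that without loss of generality one may assume $\lambda_n^D(Q, \phi^q\otimes\phi_q';K) \subset (\varpi^{n_0-1})$ for \emph{all} $\phi_q' \in \mathcal S(V_{DQ}^2\otimes\Q_q,O)^{K_q}$ (if not, replace $\phi_q$ by a $\phi_q'$ that strictly enlarges $\lambda_n^D$ and rerun with a smaller $n_0$; since the desired containment for the enlarged module implies it for the original, and $n_0\geq 1$ always, this terminates). With that reduction in place, every Fourier coefficient of $\Theta(\widetilde\alpha, \phi^q\otimes\phi_q')$ lies in $(\varpi^{n_0-1})$, and then $f_\pi(h)\in(\varpi)$ does give hypothesis~(2). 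Without the reduction, your claimed verification of hypothesis~(2) does not go through for $n_0 \geq 2$. (There is also a minor index mismatch: Proposition~\ref{prop:change_test_formal} wants the $n_0$ for which $\Theta(\alpha,\phi)\not\equiv 0 \pmod{\varpi^{n_0}}$, i.e.\ one more than the valuation of a generator of $\lambda_n^D(Q,\phi;K)$, which does not agree with the ``exact order'' $n_0$ you introduce; worth aligning so the exponents in hypotheses~(1) and~(2) are the same integer.)
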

\begin{rmk}
    The same corollary holds for $\kappa_n^D(Q, - )$ if $\nu(DQ)$ is even, but  this version will not be used for the main results.
\end{rmk}
\begin{proof}
Suppose $\lambda_n^D(Q, \phi; K) =  (\varpi ^ {n_0-1}) $
for some $1\leq n_0 $; without loss of generality we may assume $n_0\leq n $ and that $$\lambda_n^D(Q, \phi^q\otimes \phi_q'; K) \equiv 0 \pmod {\varpi^{n_0 - 1}}$$ for all $\phi_q' \in \mathcal S(V_{DQ}^2\otimes \Q_q, O)^{K_q}.$

Now choose a vector $\alpha\in \Test_K(V_{DQ}, \pi, O/\varpi^n)$ such that $\alpha (Z(T, \phi)_K)$ generates $\lambda_n^D(Q,\phi; K)$ for some $T\in \Sym_2(\Q)_{\geq 0}$.
    Lift $\alpha$ arbitrarily to an $O$-valued test function $\widetilde \alpha\in \Test_K(V_{DQ}, O)$.
    Recall $R\subset \overline \Z_p$ is the ring of integers of the maximal unramified extension of $O$, and let $f_\pi: \T_{q,R} \to R$ be the character associated with the Hecke eigenvalues of $\pi_q$, so that 
$f_\pi (h)\in (\varpi) $
 for all $h\in\mathfrak m_\chi\subset\mathcal \T_{q,R}$.  
Then for $h\in\mathfrak m_\chi $ and $\phi_q'\in  \mathcal S(V_{DQ}^2\otimes \Q_q, O)^{K_q}$, we have $$\Theta (h\cdot\widetilde\alpha,\phi ^ q\otimes\phi_q')\equiv f_\pi (h)\Theta (\widetilde\alpha,\phi ^ q\otimes\phi_q')\equiv 0\pmod{\varpi ^ {n_0}}, $$
so we may apply Proposition \ref{prop:change_test_formal}  to conclude. (Note that because $q^4\not\equiv 1 \pmod p$, $p$ does not divide the pro-order of $K_q$.)

\end{proof}
We now give an analogue of Corollary \ref{cor:admissible_C_chi_lambda} in characteristic zero, which requires $Q = 1$. 
\begin{prop}\label{prop:eigenvalues_generic}
    Suppose $q\not\in S$ is a prime such that $\rho_{\pi}(\Frob_q)$ has distinct eigenvalues. Let $\chi: (\Q_q^\times)^2 \to \C^\times$ be the unramified character, well-defined up to $W$-action, such that $\pi_q$ is a constituent of $I(\chi)$. 
Then $\chi$ is generic.
\end{prop}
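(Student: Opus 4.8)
The plan is to unwind the definition of genericity (Definition \ref{def:generic_character}) and translate it into conditions on the eigenvalues of $\rho_\pi(\Frob_q)$, then verify each required non-equality. Write $\chi = \chi_1 \boxtimes \chi_2$ with $\chi_i$ unramified, and set $\alpha = \iota^{-1}\chi_1(q)$, $\beta = \iota^{-1}\chi_2(q)$. By Theorem \ref{thm:rho_pi_LLC}(\ref{part:rho_pi_LLC1}) (the local-global compatibility of $\rho_{\pi,\iota}$ with the Gan-Takeda correspondence), the eigenvalues of $\rho_\pi(\Frob_q)$ on $V_\pi$ are $\{q^{1/2}\alpha, q^{1/2}\beta, q^{1/2}\alpha^{-1}, q^{1/2}\beta^{-1}\}$, using that $\pi$ has trivial central character so the similitude factor is $1$ (and recalling the shift by $|\cdot|^{1/2}$). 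The hypothesis that $\rho_\pi(\Frob_q)$ has distinct eigenvalues says precisely that these four numbers are pairwise distinct.

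\textbf{Key steps.} First I would list out what ``distinct eigenvalues'' gives: $\alpha \neq \beta$, $\alpha\beta \neq 1$, $\alpha^2 \neq 1$, $\beta^2 \neq 1$, and $\alpha/\beta \neq 1$ (the last being the same as $\alpha \neq \beta$). This already handles the condition $\chi_1/\chi_2 \notin \{\blackboardone\}$ and $\chi_1\chi_2 \neq \blackboardone$, and — once combined with $\alpha^2, \beta^2 \neq 1$ — also $\chi_i\chi_j \neq \blackboardone$ for $i = j$. It remains to rule out $\chi_i\chi_j = |\cdot|^{\pm 1}$ and $\chi_i/\chi_j = |\cdot|^{\pm 1}$, i.e. to show $\alpha\beta \neq q^{\pm 1}$, $\alpha^2 \neq q^{\pm 1}$, $\beta^2 \neq q^{\pm 1}$, and $\alpha/\beta \neq q^{\pm 1}$. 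Here I would invoke temperedness: by Theorem \ref{thm:rho_pi_LLC}(\ref{part:rho_pi_LLC1}) the Weil-Deligne representation $\WD(\rho_{\pi,\iota}|_{G_{\Q_q}})^{F\text{-}ss}$ is pure of weight $-1$, equivalently (since $\pi_q$ is a full unramified principal series and hence tempered) all four eigenvalues have complex absolute value $q^{1/2}$ under $\iota$; thus $|\alpha| = |\beta| = 1$. Any equation of the form $\alpha\beta = q^{\pm 1}$, $\alpha^2 = q^{\pm 1}$, $\beta^2 = q^{\pm 1}$, or $\alpha/\beta = q^{\pm 1}$ would force a product or ratio of unit-modulus numbers to equal $q^{\pm 1} \neq 1$, a contradiction. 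This disposes of all remaining cases in Definition \ref{def:generic_character}.

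\textbf{Anticipated obstacle.} The only subtlety is bookkeeping: matching the normalizations in Definition \ref{def:generic_character} (which is phrased for characters of $(\Q_q^\times)^n$ and uses $|\cdot|^{1/2}$ with $|q|^{1/2} = q^{-1/2}$, cf. the start of \S\ref{sec:rep_theory}) against the normalization of $\rho_{\pi,\iota}$ (cyclotomic similitude character, and the $|\cdot|^{1/2}$-twist in local-global compatibility), so that the translation ``distinct $\Frob_q$-eigenvalues + temperedness $\iff$ generic $\chi$'' comes out exactly right. I expect no genuine difficulty beyond carefully tracking these factors; temperedness (purity of the Weil-Deligne representation) is the substantive input, and it is already available from Theorem \ref{thm:rho_pi_LLC}. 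No new large-image or automorphy input is needed.
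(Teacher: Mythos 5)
Your proposal is correct and matches the paper's own argument almost exactly: the paper's proof of Proposition \ref{prop:eigenvalues_generic} simply refers back to the calculation in Proposition \ref{prop:alrg_admissible} (eigenvalues of $\rho_\pi(\Frob_q)$ are $q^{1/2}\alpha^{\pm 1}, q^{1/2}\beta^{\pm 1}$ via Theorem \ref{thm:rho_pi_LLC}(\ref{part:rho_pi_LLC1})), together with the observation that $|\alpha| = |\beta| = 1$ by temperedness/purity, which kills the $|\cdot|^{\pm 1}$ cases while the distinct-eigenvalues hypothesis kills the $\blackboardone$ cases. No substantive difference in approach.
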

\begin{proof}
    The proof is  similar to Proposition \ref{prop:alrg_admissible}, using that $|\alpha| = |\beta| = 1$ because $\pi_q$ is tempered by Theorem \ref{thm:rho_pi_LLC}(\ref{part:rho_pi_LLC1}). 
\end{proof}
\begin{cor} \label{cor:changing_kappa}

    Let $D \geq 1$ be squarefree with $\nu(D)$ even, and suppose $q\not\in S\cup \div(D)$ is a prime such that $\rho_{\pi}(\Frob_q)$ has distinct eigenvalues. Fix an $S$-level structure $K$ for $\spin(V_D)$, and  let $\phi_q^\circ\in \mathcal S(V_D^2 \otimes \Q_q, \Z)^{K_q}$ be a test function whose image in $\mathcal S(V_D^2\otimes \Q_q, \C)^{K_q}$ satisfies the hypotheses of Lemma \ref{lem:shortcut_C_chi_generic}, or more generally condition ($C_\chi$), where $\chi: (\Q_q^\times)^2 \to \C^\times$ is the generic character of Proposition \ref{prop:eigenvalues_generic}.    Then for all $\phi = \phi^q\otimes \phi_q\in \mathcal S(V_D^2\otimes \A_f, O)^K$,  $$\kappa^D(1, \phi; K) \neq 0 \implies \kappa^D(1, \phi^q\otimes \phi_q^\circ; K) \neq 0.$$

\end{cor}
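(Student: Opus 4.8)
The plan is to deduce Corollary \ref{cor:changing_kappa} from the general change-of-test-functions machinery, namely Proposition \ref{prop:change_test_formal}, exactly as Corollary \ref{cor:admissible_C_chi_lambda} was deduced in the level-raising case, but now working in characteristic zero with $R = O$ and with the \emph{generic} character $\chi$ of Proposition \ref{prop:eigenvalues_generic} rather than an almost level-raising generic one. First I would unwind the definitions: the hypothesis $\kappa^D(1,\phi; K) \neq 0$ means there is some $\alpha\in \Test_K(V_D, \pi, O)$ and some $T\in \Sym_2(\Q)_{\geq 0}$ with $\alpha_\ast \circ \partial_{\AJ,\m}(Z(T,\phi)_K) \neq 0$ in $H^1_f(\Q, T_\pi)$. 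Since $H^1(\Q, T_\pi)$ is $\varpi$-torsion-free by Lemma \ref{lem:H1_tors_free_new}(\ref{lem:H1_tors_free_part}), this is equivalent to saying that, for some $n_0$, the image of this class in $H^1(\Q, T_{\pi,n})$ is nonzero for all $n\gg 0$; so it suffices to show the analogous nonvanishing for $\kappa^D(1, \phi^q\otimes\phi_q^\circ; K)$ modulo $\varpi^n$ for large $n$. This reduces the corollary to a statement about the formal theta lift $\Theta(\alpha_\ast\circ\partial_{\AJ,\m}(-), \phi)$ viewed as an element of $M_{5/2, O}^{4}$ modulo $\varpi^{n}$, which is precisely the kind of object Proposition \ref{prop:change_test_formal} manipulates.

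Next I would verify the two hypotheses of Proposition \ref{prop:change_test_formal} for a suitable choice of test vector. For hypothesis (1), I take $\widetilde\alpha$ to be the composite $\alpha_\ast\circ\partial_{\AJ,\m}\in \Test_{K}(V_D, O)$ (using that $\partial_{\AJ,\m}$ exists and lands in $H^1_f$ by \S\ref{subsubsec:where_AJ} and Proposition \ref{prop: check local conditions for kappa}, given Assumption \ref{assumptions_on_p}); then $\Theta(\widetilde\alpha, \phi)\not\equiv 0 \pmod{\varpi^{n_0}}$ for an appropriate $n_0$ is exactly the hypothesis $\kappa^D(1,\phi;K)\neq 0$ translated via Lemma \ref{lem:H1_tors_free_new}(\ref{lem:H1_tf_order}). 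For hypothesis (2), I use that $\widetilde\alpha$ factors through the $\pi$-Hecke-eigenspace: since the $\T_{q}$-action on $\Test_K(V_D, \pi, O)$ is through the Hecke eigenvalues of $\pi_q$, for any $h\in\m_\chi\subset\T_{q,R}$ the scalar $f_\pi(h)\in R$ lies in $\varpi R$ (because $\m_\chi$ contains the kernel of the $\pi_q$-Hecke character — here I invoke Proposition \ref{prop:eigenvalues_generic} together with the fact that $\pi_q$ is a constituent of $I(\chi)$, so its spherical Hecke eigensystem is the one cut out by $\m_\chi$), hence $\Theta(h\cdot\widetilde\alpha, \phi^q\otimes\phi_q')\equiv f_\pi(h)\Theta(\widetilde\alpha,\phi^q\otimes\phi_q')\equiv 0\pmod{\varpi^{n_0}}$ automatically, and in particular hypothesis (2) holds after possibly shrinking $n_0$. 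One subtlety: Proposition \ref{prop:change_test_formal} requires $\chi$ to be \emph{almost} generic (or almost level-raising generic), and Proposition \ref{prop:eigenvalues_generic} gives genericity, which is stronger — so the hypothesis is satisfied a fortiori. Another point to check is that $p$ does not divide the pro-order of $K_q$: this holds because $K_q$ is hyperspecial (as $q\notin S\cup\div(D)$ and $K$ is an $S$-level structure) and $p > 5$ by Remark \ref{rmk:p>5}, so the relevant finite orthogonal groups over $\F_q$ have order prime to $p$ once one also notes $q^4\not\equiv 1$; actually for genericity we only need $q\neq p$ and the weak banal condition, which I should confirm holds — it does since $p$ was chosen so that $\overline\rho_\pi(\Frob_q)$ is defined, and $q-1\equiv 0$ would contradict the distinct-eigenvalue hypothesis together with the similitude being $q$.

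Finally, I would apply Proposition \ref{prop:change_test_formal} with $\phi_q^\circ$: its image $\overline{\phi_q^\circ}$ in $\mathcal S(V_D^2\otimes\Q_q, \overline\F_p)$ satisfies condition $(C_\chi)$ — either by hypothesis, or because it satisfies the three explicit conditions of Lemma \ref{lem:shortcut_C_chi_generic}, which by that lemma guarantees $(C_{\chi^w})$ for some Weyl conjugate $\chi^w$, and replacing $\chi$ by $\chi^w$ does not affect anything since $\pi_q$ is a constituent of $I(\chi^w)\cong I(\chi)$ (genericity makes the principal series irreducible by Corollary \ref{all weyl conjugates isomorphic generic case}, so in fact $\pi_q = I(\chi)$). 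The conclusion of Proposition \ref{prop:change_test_formal} is $\Theta(\widetilde\alpha, \phi^q\otimes\phi_q^\circ)\not\equiv 0\pmod{\varpi^{n_0}}$, which by definition of the formal theta lift means $\widetilde\alpha(Z(T', \phi^q\otimes\phi_q^\circ)_K)\not\equiv 0\pmod{\varpi^{n_0}}$ for some $T'\in\Sym_2(\Q)_{\geq 0}$; since $\widetilde\alpha = \alpha_\ast\circ\partial_{\AJ,\m}$, this says $\alpha_\ast\circ\partial_{\AJ,\m}(Z(T',\phi^q\otimes\phi_q^\circ)_K)$ is nonzero modulo $\varpi^{n_0}$, hence nonzero in $H^1_f(\Q, T_\pi)$, giving $\kappa^D(1, \phi^q\otimes\phi_q^\circ; K)\neq 0$ as desired.

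The main obstacle — or rather the only place where real content beyond bookkeeping enters — is ensuring that the passage from the characteristic-zero nonvanishing of $\Theta(\widetilde\alpha,\phi)$ to a nonvanishing \emph{modulo $\varpi^{n_0}$} for a fixed $n_0$ is compatible with hypothesis (2) of Proposition \ref{prop:change_test_formal}, which demands vanishing modulo the \emph{same} $\varpi^{n_0}$ for all local perturbations of the test function. This works because the Hecke action through $\m_\chi$ kills everything integrally up to a factor of $\varpi$, so hypothesis (2) is genuinely automatic with the correct $n_0$; but one must be careful that $n_0$ is chosen as the exact $\varpi$-adic order of $\Theta(\widetilde\alpha, \phi)$ (equivalently, of the Galois cohomology class) rather than an arbitrary bound, and that this order is finite, which is where $\varpi$-torsion-freeness of $H^1(\Q, T_\pi)$ (Lemma \ref{lem:H1_tors_free_new}) is essential. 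The rest is a direct translation of the proof of Corollary \ref{cor:admissible_C_chi_lambda}.
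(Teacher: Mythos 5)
Your approach has a genuine gap, and it is exactly the place where the paper's proof departs from the template of Corollary \ref{cor:admissible_C_chi_lambda}. You try to apply Proposition \ref{prop:change_test_formal} in the integral setting with $R$ a finite flat extension of $\breve\Z_p$. But that proposition requires the character $\chi$ to be $k$-valued with $k = R/\varpi_R$, and (almost) generic \emph{as a $k$-valued character}. In Corollary \ref{cor:changing_kappa}, the character $\chi$ from Proposition \ref{prop:eigenvalues_generic} is $\C$-valued and generic over $\C$, but the hypothesis that $\rho_\pi(\Frob_q)$ has distinct eigenvalues in characteristic zero says nothing about the mod-$\varpi$ reduction: two of the Satake eigenvalues may be distinct in $\overline\Q_p$ yet congruent mod $\varpi$, or one may be congruent to $q^{\pm 1}$, or $q$ itself may satisfy $q\equiv 1\pmod p$ (violating the weak banal condition (\ref{week banal equation}) with $k = \overline\F_p$). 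None of these are excluded by the hypotheses, since $q\not\in S$ is not assumed to be an admissible prime. Likewise, the hypothesis says $\phi_q^\circ$ satisfies $(C_\chi)$ over $\C$, not that its reduction satisfies $(C_{\overline\chi})$ over $\overline\F_p$; these are genuinely different nonvanishing conditions, and there is no inference from one to the other. Your parenthetical ``the hypothesis is satisfied a fortiori'' conflates genericity of the $\C$-valued $\chi$ with almost-genericity of its residual reduction.

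The paper instead applies Proposition \ref{prop:change_test_formal} with $R = \C$ and $\varpi_R = 0$, so that $k = \C$, the weak banal condition is trivially satisfied, hypotheses (1) and (2) become exact nonvanishing and vanishing (no $n_0$-balancing is needed), and both the $\C$-valued genericity of $\chi$ and the $\C$-valued condition $(C_\chi)$ are used directly. To form a $\C$-valued test vector in $\Test_K(V_D,\C)$, the paper composes $\alpha_\ast\circ\partial_{\AJ,\m}$ with a linear functional $\beta: H^1(\Q, T_\pi)\to\overline\Q_p$ chosen not to vanish on the relevant Abel-Jacobi class (this is where $\varpi$-torsion-freeness of $H^1(\Q, T_\pi)$ is used), followed by $\iota:\overline\Q_p\isomorphism\C$. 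Your proposal omits $\beta$: as written, $\alpha_\ast\circ\partial_{\AJ,\m}$ is valued in $H^1(\Q, T_\pi)$, not in $O$, so it is not an element of $\Test_K(V_D, O)$ and cannot be fed into the formal theta lift. Note also that your claim ``hypothesis (2) holds after possibly shrinking $n_0$'' has the inequality backwards: hypothesis (1) of Proposition \ref{prop:change_test_formal} is easier for large $n_0$ and hypothesis (2) is harder, so one must balance them; the paper's level-raising Corollary \ref{cor:admissible_C_chi_lambda} handles this by a WLOG replacing $\phi$ to maximize the $\varpi$-order, a step you skip. The characteristic-zero route avoids all of this.
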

\begin{proof}
    The argument is similar to Corollary \ref{cor:admissible_C_chi_lambda}. First fix  a vector $\alpha\in \Test_K(V_D, \pi, O)$ with $\alpha_\ast \circ \partial_{\AJ,\m} (Z(T,\phi)_K) \neq 0$ for some $T\in \Sym_2(\Q)_{\geq 0}$.   
    Because $ H ^ 1 (\Q, T_\pi) $ is torsion-free by Lemma \ref{lem:H1_tors_free_new}(\ref{lem:H1_tors_free_part}), we may choose a linear functional $\beta: H ^ 1 (\Q, T_\pi)\rightarrow \overline\Q_p$ such that $\beta (\alpha_\ast \circ \partial_{\AJ,\m} (Z(T,\phi)_K) )\neck 0 $.     
    Let $\widetilde \alpha\in \Test_K(V_D, \C)$ denote the composite map
    $$\CH^2(\Sh_K(V_D))  \xrightarrow{\partial_{\AJ,\m}} H^1(\Q, H^3_\et(\Sh_K(V_D)_{\overline \Q}, O(2))_\m )\xrightarrow{\alpha_\ast} H^1(\Q, T_\pi) \xrightarrow{\beta} \overline\Q_p\xrightarrow{\iota}\C.$$ 
    Then $\widetilde \alpha$ is Hecke-equivariant because $\alpha$ is so. 
Let $\phi_q'\in \mathcal S(V_{DQ}^2\otimes \Q_q, \C)^{K_q}$ be any vector. By the Hecke-equivariance of $\widetilde\alpha$, we have $\Theta(h \cdot \widetilde \alpha, \phi^q \otimes \phi_q') = 0$ for all $h \in \mathfrak m_\chi$ 
(Notation \ref{notation:m_chi_etc}), with $R = \C$, $\varpi_R = 0$. 
We can now apply Proposition \ref{prop:change_test_formal}  to conclude.
\end{proof}

With essentially the same proof, we have the following in the endoscopic case:
\begin{cor}\label{cor:endoscopic_changing_kappa}
    With the setup of Corollary \ref{cor:changing_kappa}, suppose $\pi$ is endoscopic associated to a pair $(\pi_1,\pi_2)$. Then for all $\phi = \phi^q\otimes \phi_q\in \mathcal S(V_D^2\otimes \A_f, O)^K$ and $j = 1$ or 2, we have
    $$\kappa^D(1, \phi; K)^{(j)} \neq 0 \implies \kappa^D(1, \phi^q\otimes \phi_q^\circ; K)^{(j)} \neq 0.$$
\end{cor}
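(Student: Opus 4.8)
The plan is to mirror the proof of Corollary \ref{cor:changing_kappa} essentially verbatim, keeping track of the extra projection onto the $\pi_j$-isotypic summand. Recall that in the endoscopic case, $T_\pi = T_{\pi_1} \oplus T_{\pi_2}$ as $G_\Q$-modules (after extending $O$ if necessary), so $H^1(\Q, T_\pi) = H^1(\Q, T_{\pi_1}) \oplus H^1(\Q, T_{\pi_2})$, and by definition $\kappa^D(1, \phi; K)^{(j)}$ is the image of $\kappa^D(1, \phi; K)$ under the projection $H^1(\Q, T_\pi) \to H^1(\Q, T_{\pi_j})$. Both $H^1(\Q, T_{\pi_j})$ are torsion-free under Assumption \ref{assumptions_on_p}(\ref{assume:irreducible}) for the pair $(\pi_1,\pi_2)$, by the same argument as Lemma \ref{lem:H1_tors_free_new}(\ref{lem:H1_tors_free_part}) applied to each $T_{\pi_j}$ separately.

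First I would fix $\phi = \phi^q \otimes \phi_q$ with $\kappa^D(1, \phi; K)^{(j)} \neq 0$. Then there exists $\alpha \in \Test_K(V_D, \pi, O)$ and $T \in \Sym_2(\Q)_{\geq 0}$ such that the $j$-component of $\alpha_\ast \circ \partial_{\AJ,\m}(Z(T,\phi)_K)$ in $H^1(\Q, T_{\pi_j})$ is nonzero; call this element $c_j$. Since $H^1(\Q, T_{\pi_j})$ is torsion-free, I can choose a linear functional $\beta_j : H^1(\Q, T_{\pi_j}) \to \overline\Q_p$ with $\beta_j(c_j) \neq 0$, and extend it by zero on the other summand to get $\beta : H^1(\Q, T_\pi) \to \overline\Q_p$. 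Then the composite
$$\widetilde\alpha : \CH^2(\Sh_K(V_D)) \xrightarrow{\partial_{\AJ,\m}} H^1(\Q, H^3_\et(\Sh_K(V_D)_{\overline\Q}, O(2))_\m) \xrightarrow{\alpha_\ast} H^1(\Q, T_\pi) \xrightarrow{\beta} \overline\Q_p \xrightarrow{\iota} \C$$
is a Hecke-equivariant element of $\Test_K(V_D, \C)$, because $\alpha_\ast \circ \partial_{\AJ,\m}$ is Hecke-equivariant and $\beta$, $\iota$ are linear; crucially, $\widetilde\alpha$ still has the Hecke eigenvalues of $\pi$ away from $S$, since the Hecke action on $T_\pi$ factors through $\pi$. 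By construction $\widetilde\alpha(Z(T,\phi)_K) = \iota \beta(c_j) \neq 0$, so $\Theta(\widetilde\alpha, \phi)$ is nonzero (as a formal $q$-series), i.e. $\Theta(\widetilde\alpha, \phi) \not\equiv 0 \pmod{\varpi_R^{n_0}}$ with $R = \C$, $\varpi_R = 0$, $n_0 = 1$.

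Next I would verify the second hypothesis of Proposition \ref{prop:change_test_formal}: for all $h \in \m_\chi \subset \T_{q,R}$ and all $\phi_q' \in \mathcal S(V_D^2 \otimes \Q_q, \C)^{K_q}$, we have $\Theta(h \cdot \widetilde\alpha, \phi^q \otimes \phi_q') = f_\pi(h) \Theta(\widetilde\alpha, \phi^q \otimes \phi_q') = 0$, since $f_\pi(h) = 0$ (the character $f_\pi$ of $\T_{q,R}$ given by the Hecke eigenvalues of $\pi_q$ vanishes on $\m_\chi$ by the construction of $\chi$ in Proposition \ref{prop:eigenvalues_generic}, which applies because $\rho_\pi(\Frob_q)$ has distinct eigenvalues). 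Here I use the assumption that $q^4 \not\equiv 1 \pmod p$, which is implied by the distinctness of eigenvalues together with Assumption \ref{assumptions_on_p} — or, if one wants it cleanly, one simply notes as in Corollary \ref{cor:changing_kappa} that $p$ does not divide the pro-order of $K_q$ for $q$ of good reduction with residue characteristic conditions satisfied; in any case the genericity of $\chi$ is all that Proposition \ref{prop:change_test_formal} needs. Applying Proposition \ref{prop:change_test_formal} (using Lemma \ref{lem:shortcut_C_chi_generic} to produce, after replacing $\chi$ by a Weyl conjugate, a test function satisfying $(C_\chi)$, exactly as in Corollary \ref{cor:changing_kappa}), we conclude $\Theta(\widetilde\alpha, \phi^q \otimes \phi_q^\circ) \neq 0$, hence $\widetilde\alpha(Z(T', \phi^q \otimes \phi_q^\circ)_K) \neq 0$ for some $T'$. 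Since $\widetilde\alpha = \iota \circ \beta_j \circ (\text{projection to } j) \circ \alpha_\ast \circ \partial_{\AJ,\m}$, this forces the $j$-component of $\alpha_\ast \circ \partial_{\AJ,\m}(Z(T', \phi^q \otimes \phi_q^\circ)_K)$ to be nonzero, i.e. $\kappa^D(1, \phi^q \otimes \phi_q^\circ; K)^{(j)} \neq 0$, as desired.

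There is essentially no new obstacle here: the entire content is already in Corollary \ref{cor:changing_kappa}, and the only modification is to compose with the projection $H^1(\Q,T_\pi) \to H^1(\Q,T_{\pi_j})$ before choosing the separating functional $\beta$. The one point requiring minimal care is that $\widetilde\alpha$ remains Hecke-equivariant after this projection — but the $G_\Q$-decomposition $T_\pi = T_{\pi_1}\oplus T_{\pi_2}$ is also Hecke-equivariant (the Hecke operators away from $S$ act on each $V_{\pi_j}$ through scalars determined by $\pi$, hence preserve the splitting), so the projection and the functional are both compatible with the $\T^S_O$-action, and $\widetilde\alpha$ lies in $\Test_K(V_D, \C)$ with the correct eigensystem. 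Everything else transcribes directly.
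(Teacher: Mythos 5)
Your proof is correct and follows exactly the route the paper intends (the paper states only that the endoscopic case follows ``with essentially the same proof'' as Corollary~\ref{cor:changing_kappa}): the single new point is that the projection $H^1(\Q,T_\pi)\to H^1(\Q,T_{\pi_j})$ is $\T^S_O$-equivariant because the Hecke algebra acts on $T_\pi$ through the scalar eigenvalues of $\pi$, so the functional $\beta$ built by extending $\beta_j$ by zero yields a Hecke-equivariant $\widetilde\alpha\in\Test_K(V_D,\C)$ to which Proposition~\ref{prop:change_test_formal} applies verbatim with $R=\C$. The aside about $q^4\not\equiv 1\pmod p$ is unnecessary here, since the pro-order condition in Proposition~\ref{prop:change_test_formal} is only invoked when $R$ is a $p$-adic ring, not when $R=\C$.
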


\section {The ramified $\spin_5 $
Rapoport-Zink space}\label{sec:RZ}
\subsection {The moduli problem}
\subsubsection {}\label{subsubsection basepoint of RZ space}
Fix a prime $q >2 $, and let $\O_q $
be the unique maximal order in the non-split quaternion algebra $B$ over $\Q_q $. Suppose given a $q $-divisible group $\mathbb X $ 
over $\overline\F_q $ of dimension 4 and height 8,
equipped with an action $\iota_{\mathbb X}:\O_q\hookrightarrow\End (\mathbb X) $
and a principal polarization $\lambda_{\mathbb X}:\mathbb X\xrightarrow {\sim}\mathbb X ^\check $
such that the Rosati involution $\ast $
of $\End (\mathbb X) $
induces a nebentype involution on $\O_q $ of
\emph {unit type} 
(Definition \ref{unit and non-unit type definition}).
\begin {definition}\label{definition RZ space}
Let $\nilpotent $
be the category of schemes over $\breve\Z_q $
on which $q $
is locally nilpotent.
Let $\mathcal N:\nilpotent\to\operatorname{Set}$
be the functor
sending $S\in\nilpotent $
to the set of isomorphism classes of tuples
$(X,\iota,\lambda,\rho) $
where:
\begin{enumerate}
    [label  =(\roman*)]\item $X $
is a $q $-divisible group of dimension 4 and height 8 over $S $.
\item $\iota:\O_q\hookrightarrow\End (X/S) $
is an $\O_q $-action such that $$\debt (T -\iota (\alpha) |\Lie (X)) = (T ^ 2 -\tr (\alpha) T + N (\alpha)) ^ 2,\;\;\forall\alpha\in\O_q. $$
\item $\lambda: X\xrightarrow {\sim} X ^\check $
is a principal polarization such that the Rosati involution on $\End (X/S) $
extends the involution $\ast $
on $\O_q $.
\item If $\overline S = S\times_{\breve\Z_q}\overline\F_q $
denotes the mod $q $
fiber, then $\rho $
is an $\O_q $-linear quasi-isogeny $$\rho: X\times_S\overline S\to\mathbb X\times_{\overline\F_q}\overline S $$
such that $\rho ^ \vee\circ\lambda_{\mathbb X}\circ\rho = c (\rho)\lambda $
for some $c (\rho)\in\Q_q ^\times $.
\end {enumerate}
\end {definition}
The functor $\mathcal N $
is studied in \cite{wang2020bruhat}; in this section, we will recall the key points, and prove additional properties needed for our applications.
\subsubsection {} The functor $\mathcal N $
is represented by a formal scheme over $\SPF\breve\Z_q $, locally formally of finite type, which admits a decomposition into open and closed formal subschemes \begin{equation}\label{decomposition of RZ space open and closed}\mathcal N =\sqcup_{i\in\Z}\mathcal N (i) \end{equation}
according to the $q $-adic valuation of $c (\rho) $.

\subsubsection {}\label{subsubsec:Weil_descent}
Let $\sigma:\breve\Z_q\to\breve\Z_q $
denote the
\emph {arithmetic} Frobenius, lifting the $q $th power map on $\overline\F_q $. The formal scheme $\mathcal N $
is equipped with a canonical Weil descent datum $\phi:\mathcal N\xrightarrow {\sim}\sigma ^\ast\mathcal N $
over $\breve\Z_q $, which we now recall. On $S$-points, $\phi$ is given by the isomorphism $\phi(S): \mathcal N(S) \isomorphism \mathcal N((\sigma^{-1})^\ast S)$, which  sends 
 $(X, \iota, \lambda, \rho)\in \mathcal N(S)$ to $((\sigma ^ {-1}) ^\ast X, (\sigma ^ {-1}) ^\ast\iota, (\sigma ^ {-1}) ^\ast\lambda,\rho\circ F_{X/\overline S}),$ where 
$$F_{X/\overline S}: (\sigma^{-1})^\ast X_{\overline S} \to X_{\overline S}$$ is the relative Frobenius. 
Since $c (\rho\circ F_{X/\overline S}) = c (\rho) +1 $, $\phi $
restricts to an isomorphism $$\phi_i:\mathcal N (i)\xrightarrow {\sim}\sigma ^\ast\mathcal N (i +1) $$
for each $i\in\Z $.

\subsection {The Bruhat-Tits stratification}
\subsubsection {} Let $\mathcal M $
denote the underlying reduced scheme of $\mathcal N (0) $. We now recall the description in \cite{wang2020bruhat} of the stratification of $\mathcal M $
in terms of lattices in the isocrystal $N $
of our fixed $q $-divisible group $\mathbb X $. By the assumption that the involution $\ast $
on $\O_q $
is of unit type, we may choose coordinates
\begin {equation}
\O_q =\Z_{q ^ 2}\oplus\Pi\Z_{q ^ 2},
\end {equation}
where $\alpha ^\ast =\Pi\alpha\Pi ^ {-1} =\sigma\alpha $
for $\alpha\in\Z_{q ^ 2} $, $\Pi ^\ast =\Pi $, and $\Pi ^ 2=q $.
\subsubsection {}
\label{decomposing circ bullet subsubsection}
Label the two embeddings of $\Z_{q ^ 2} $
into $\breve\Z_q $
by $j_\bullet $
and $j_\circ $. Then we have a decomposition
\begin {equation}
N = N_\bullet\oplus N_\circ,
\end {equation}
where $\iota (\alpha) = j_? (\alpha) $
on $N_? $
for $? =\bullet,\circ $.
Each of $N_\bullet $
and $N_\circ $
is an isocrystal of dimension 4 and slope $\frac {1} {2} $.

The polarization $\lambda_{\mathbb X} $
induces a pairing $$\langle\cdot,\cdot\rangle: N\otimes N\to\breve\Q_q =\breve\Z_q\otimes\Q_q, $$
with respect to which $N_\bullet $
and $N_\circ $
are each isotropic (since $\ast $
is nontrivial on $\Z_{q ^ 2}\subset\O_q $). Define a new pairing
\begin {equation}
\langle\cdot,\cdot\rangle_\bullet: N_\bullet\otimes N_\bullet\to\breve\Q_q
\end {equation}
by $$\langle x, y\rangle_\bullet =\langle x,\Pi y\rangle. $$
Then $\langle\cdot,\cdot\rangle_\bullet $
is symplectic and non-degenerate.

The operators $\Pi $
and $V $
on $N $
both interchange $N_\bullet $
and $N_\circ $, so the operator
\begin {equation}
\tau\coloneqq\Pi V ^ {-1}
\end {equation}
stabilizes $N_\bullet $; moreover
\begin {equation}
\langle\tau x,\tau y\rangle_\bullet =\langle x, y\rangle_\bullet ^\sigma,
\end {equation}
where again $\sigma $
is the arithmetic Frobenius of $\breve\Q_q $.
Hence $W\coloneqq N_\bullet ^ {\tau = 1} $
is a 4-dimensional $\Q_q $-vector space equipped with a $\Q_q $-valued symplectic form, such that $N_\bullet = W\otimes_{\Q_q}\breve\Q_q. $
\begin {definition}
For a lattice $\Lambda\subset W $, denote the $\Z_q $-dual lattice by $\Lambda ^\check $.
We define the following families of lattices in $W $:
\begin {align*}
\mathcal L_{\set {0}} & =\set {\text {lattices }\Lambda\subset W\text { s.t. }\Lambda =\Lambda ^\check}.\\
\mathcal L_{\set {2}} & =\set {\text {lattices }\Lambda\subset W\text { s.t. }\Lambda = q\Lambda ^\check}.\\
\mathcal L_{\set {0 2}} & =\set {\text {pairs of lattices }\Lambda_0,\Lambda_2\subset W\text { s.t. } q\Lambda_0 = q\Lambda_0 ^\check\subset_2 q\Lambda_2 ^\check =\Lambda_2\subset_2\Lambda_0}.\\
\mathcal L_{\set {1}} & =\set {\text {lattices }\Lambda_1\subset W\text { s.t. } q\Lambda_1 ^\check\subset_2\Lambda_1\subset_2\Lambda_1 ^\check}.
\end {align*}
\end {definition}
Here, the notation $\Lambda\subset_n\Lambda' $ was defined in (\ref{subsubsec:lattices_n}). 

\begin {thm}
\label{BT stratification first version}
The underlying reduced scheme $\mathcal M $
of $\mathcal N (0) $
admits a stratification $$\mathcal M =\mathcal M ^ 0_{\set {0}}\sqcup\mathcal M ^ 0_{\set {2}}\sqcup\mathcal M ^ 0_{\set {0 2}}\sqcup\mathcal M ^ 0_{\set {1}}, $$
with a decomposition into open and closed subschemes
$$\mathcal M ^ 0_{?} =\bigsqcup_{y\in\mathcal L_{?}}\mathcal M^ 0 _{?}(y) $$
for each $? =\set {0} $, $\set {2} $, $\set {0 2} $, $\set {1} $, satisfying the following conditions
(where $\mathcal M_? $
and $\mathcal M_? (y) $
denote the Zariski closures of $\mathcal M ^ 0_? $
and $\mathcal M ^ 0_? (y) $, respectively):
\begin {enumerate}
\item \label{thm:BT_one}For $? =\set {0} $ or $\set{2}$ and each $\Lambda\in\mathcal L_?$, $\mathcal M_? (\Lambda) $
is isomorphic to the smooth projective hypersurface in $\mathbb P ^ 3_{\overline\F_q} $
defined by the equation $$X ^q_3X_0 - X_0 ^qX_3+ X_2 ^qX_1 - X_1 ^qX_2. $$
The scheme $\mathcal M $
is the union $\mathcal M_{\set {0}}\cup\mathcal M_{\set {2}} $.
\item\label{thm:BT_two} Given $\Lambda_0\in\mathcal L_{\set {0}} $
and $\Lambda_2\in\mathcal L_{\set {2}} $, $\mathcal M_{\set{0}} (\Lambda_0) $
meets $\mathcal M _{\set{2}}(\Lambda_2) $
if and only if $(\Lambda_0,\Lambda_2)\in\mathcal L_{\set {0 2}} $, in which case the intersection is transverse and $$\mathcal M _{\set{0}}(\Lambda_0)\intersection\mathcal M _{\set{2}}(\Lambda_2) =\mathcal M _{\set{02}}(\Lambda_0,\Lambda_2). $$
For each $(\Lambda_0,\Lambda_2)\in\mathcal L_{\set {0 2}} $, $\mathcal M _{\set{02}}(\Lambda_0,\Lambda_2) $
is isomorphic to $\mathbb P ^ 1_{\overline\F_q} $, and both of the resulting embeddings $\mathbb P ^ 1_{\overline\F_q}\hookrightarrow\mathbb P ^ 3_{\overline\F_q} $
are linear.
\item \label{thm:BT_3}For each $\Lambda_1\in\mathcal L_{\set {1}} $, $\mathcal M _{\set{1}}^ 0 (\Lambda_1) =\mathcal M_{\set{1}} (\Lambda_1) $
is an isolated point.
Given $\Lambda_0\in\mathcal L_{\set {0}} $
and $\Lambda_2\in\mathcal L_{\set {2}} $, $\mathcal M_{\set{1}} (\Lambda_1) $
lies on $\mathcal M_{\set{0}} (\Lambda_0) $
if and only if $\Lambda_1\subset\Lambda_0 $,
and on $\mathcal M _{\set{2}}(\Lambda_2) $
if and only if $\Lambda_2\subset\Lambda_1 $.
\item \label{thm:BT_4}For a pair of distinct $\Lambda_0,\Lambda_0'\in\mathcal L_{\set {0}} $, $\mathcal M _{\set {0}}(\Lambda_0) $
meets $\mathcal M _{\set {0}}(\Lambda_0') $
if and only if $\Lambda_0\intersection\Lambda_0'\in\mathcal L_{\set {1}} $; in this case the intersection is transverse and we have $$\mathcal M _{\set {0}}(\Lambda_0)\intersection\mathcal M _{\set {0}}(\Lambda_0') =\mathcal M _{\set {1}}(\Lambda_0\intersection\Lambda_0').$$
\item\label{thm:BT_5} For a pair of distinct $\Lambda_2,\Lambda_2'\in\mathcal L_{\set {2}} $, $\mathcal M_{\set {2}} (\Lambda_2) $
meets $\mathcal M _{\set {2}}(\Lambda_2') $
if and only if $\Lambda_2+\Lambda_2'\in\mathcal L_{\set {1}} $; in this case the intersection is transverse and we have $$\mathcal M _{\set {2}}(\Lambda_2)\intersection\mathcal M _{\set {2}}(\Lambda_2') =\mathcal M _{\set {1}}(\Lambda_2+\Lambda_2'). $$
\item \label{thm:BT_6}
The stratum $\mathcal M_{\set {1}} $
is precisely the nonsmooth locus of $\mathcal N (0) $, and the complete local ring of $\mathcal N (0) $
at each point in $\mathcal M_{\set {1}} $
is isomorphic to $$\breve\Z_q\llbracket X, Y, Z, W\rrbracket/q - XY + ZW. $$
\end {enumerate}
\end {thm}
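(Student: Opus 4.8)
The plan is to follow the method of \cite{wang2020bruhat}: one reduces the entire statement to combinatorics of lattices in the isocrystal of $\mathbb X$ via Dieudonn\'e theory, and I would recall from \emph{op. cit.} the parts that are already available while supplying the refinements (chiefly the local structure and the transversality statements in parts (\ref{thm:BT_two}), (\ref{thm:BT_4})--(\ref{thm:BT_6})) needed below. First, Dieudonn\'e theory identifies the $\overline\F_q$-points of $\mathcal M = \mathcal N(0)_{\red}$ with the $\breve\Z_q$-lattices $M\subset N$ that are stable under $F$, $V$ and $\iota(\O_q)$, self-dual up to a unit scalar for $\langle\cdot,\cdot\rangle$, and satisfy $qM\subset VM\subset M$ together with the Kottwitz determinant condition of Definition \ref{definition RZ space}(ii) on $M/VM$. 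Using the idempotent decomposition $N = N_\bullet\oplus N_\circ$ of (\ref{decomposing circ bullet subsubsection}), such an $M$ is determined by $M_\bullet\coloneqq M\cap N_\bullet$; transporting the polarization through $\langle\cdot,\cdot\rangle_\bullet$ and using that $\tau = \Pi V^{-1}$ acts $\sigma$-linearly with $N_\bullet = W\otimes_{\Q_q}\breve\Q_q$, one shows that the duality and determinant conditions become a constraint on the relative position of $M_\bullet$, $qM_\bullet^\check$ and $M_\bullet^\check$ inside $W\otimes\breve\Q_q$, together with $\tau$-stability of an associated lattice in $W$. Enumerating the finitely many possibilities produces exactly the four families $\mathcal L_{\set{0}}$, $\mathcal L_{\set{2}}$, $\mathcal L_{\set{02}}$, $\mathcal L_{\set{1}}$, which one recognizes as the vertices of two of the three types, the $1$-simplices of type $\set{0,2}$, and the vertices of the remaining type in the Bruhat--Tits building of $\PGSP(W)\cong\PGSP_4(\Q_q)$; the incidence relations (\ref{thm:BT_two})--(\ref{thm:BT_5}) are then precisely the adjacency relations of lattices in that building, and in particular $\mathcal M$ is covered by the strata of types $\set{0}$ and $\set{2}$.

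Second, with a lattice $y$ of a given type fixed, I would identify the closed stratum $\mathcal M_?(y)$: it parametrizes the $F$-stable $M$ whose $M_\bullet$ occupies a prescribed position relative to $y$, which is a Deligne--Lusztig-type condition on a flag in a four-dimensional $\overline\F_q$-vector space. For $y\in\mathcal L_{\set{0}}$ or $\mathcal L_{\set{2}}$ this condition cuts out the smooth degree-$(q+1)$ hypersurface $X_3^qX_0 - X_0^qX_3 + X_2^qX_1 - X_1^qX_2 = 0$ in $\mathbb P^3_{\overline\F_q}$ --- a Deligne--Lusztig variety for a Coxeter element of $\SP_4$ --- which gives (\ref{thm:BT_one}); for $y\in\mathcal L_{\set{02}}$ it cuts out a linearly embedded $\mathbb P^1\subset\mathbb P^3$, giving (\ref{thm:BT_two}); and for $y\in\mathcal L_{\set{1}}$ it cuts out a single point, giving (\ref{thm:BT_3}). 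Together with the incidence combinatorics from the first step, this establishes the set-theoretic and variety-identification content of the theorem.

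The technically most delicate step, which I expect to be the main obstacle, is the local structure statement (\ref{thm:BT_6}). Here I would use the Rapoport--Zink local model diagram (cf. \cite[\S3]{rapoport1996period}): $\mathcal N(0)$ is \'etale-locally isomorphic to a local model $M^{\mathrm{loc}}$, cut out of a product of Grassmannians by the linearized Kottwitz determinant condition, and the task is to compute $M^{\mathrm{loc}}$ in an affine chart around the point attached to a lattice $\Lambda_1\in\mathcal L_{\set{1}}$, carefully tracking the $\Pi$-action and the perfect pairing it induces on the relevant subquotient of $\Lie$. The expectation is that in suitable coordinates the determinant condition collapses to the single equation $q - XY + ZW$ in four formal variables, yielding the asserted complete local ring, while at a point attached to a lattice of any other type the analogous computation produces a regular chart, so that $\mathcal M_{\set{1}}$ is exactly the nonsmooth locus of $\mathcal N(0)$. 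The transversality statements in (\ref{thm:BT_two}), (\ref{thm:BT_4}) and (\ref{thm:BT_5}) then fall out of the same local picture, since the various closure strata passing through a singular point are cut out on $\Spec\breve\Z_q\llbracket X,Y,Z,W\rrbracket/(q - XY + ZW)$ by coordinate subspaces, whose mutual positions in the ambient (four-dimensional) Zariski tangent space are immediate to read off. The interaction of the quaternionic action with the polarization on the tangent space is the part of this local computation requiring the greatest care, and it is where I would lean most heavily on the explicit analysis of \cite{wang2020bruhat}.
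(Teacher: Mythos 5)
Your proposal is essentially a reconstruction of the two sources the paper cites; the paper's own proof of this theorem is simply the one-line reference ``Each point except (6) is contained in \cite{wang2020bruhat}, and (6) is \cite[Corollary 4.2]{oki2022supersingular}.'' Your steps 1--2 (Dieudonn\'e-theoretic reduction to lattice combinatorics in $W\otimes\breve\Q_q$, recognition of the strata as Deligne--Lusztig varieties for Coxeter and sub-Coxeter elements of $\mathrm{Sp}_4$, and reading off the incidence relations from the Bruhat--Tits building of $\PGSP(W)$) exactly match Wang's argument, and your step 3 (the Rapoport--Zink local model computation at a $\mathcal L_{\set{1}}$-point, tracking the $\Pi$-action and the induced pairing on $\mathrm{Lie}$ to see the equation $q - XY + ZW$ emerge from the linearized determinant condition) is Oki's argument. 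So in outline you have recovered the actual content underlying the citation, which is a good sign; the only genuinely new choice the paper makes here is to \emph{not} reprove these facts.

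One small thing to tighten: your claim that the transversality in parts (\ref{thm:BT_two}), (\ref{thm:BT_4}) and (\ref{thm:BT_5}) ``falls out of the same local picture'' at the singular points is complete for (\ref{thm:BT_4}) and (\ref{thm:BT_5}), where the set-theoretic intersection is a single $\mathcal L_{\set{1}}$-point and hence entirely contained in the nonsmooth locus. For (\ref{thm:BT_two}), however, the intersection $\mathcal M_{\set{0}}(\Lambda_0)\cap\mathcal M_{\set{2}}(\Lambda_2) \cong \mathbb P^1_{\overline\F_q}$ is not contained in $\mathcal M_{\set{1}}$: it meets the singular locus only in the finitely many points corresponding to $\Lambda_1$ with $\Lambda_2\subset\Lambda_1\subset\Lambda_0$. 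So at a generic point of this $\mathbb P^1$ you are on the smooth locus of $\mathcal N(0)_{\overline\F_q}$ and the coordinate-subspace argument on $\Spec\breve\Z_q\llbracket X,Y,Z,W\rrbracket/(q-XY+ZW)$ no longer applies; what you need there is that the scheme-theoretic intersection of the two Deligne--Lusztig surfaces is the reduced $\mathbb P^1$, which forces the tangent spaces of the two surfaces to meet in a line at every point of the curve (this is in \cite{wang2020bruhat}, but should be flagged as a separate computation on the smooth locus rather than a corollary of (\ref{thm:BT_6})).
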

\begin {proof}
Each point except (6) is contained in \cite{wang2020bruhat}, and (6) is \cite[Corollary 4.2]{oki2022supersingular}.
\end {proof}
\subsubsection {}\label{meaning of BT strata subsubsection}
For later use, we recall the meaning of each of the strata in Theorem \ref {BT stratification first version} on the level of $\overline\F_q $-points.

Given $s = (X,\iota,\lambda,\rho)\in\mathcal N(\overline\F_q) $, the covariant Dieudonn\'e module of $X $
gives rise to an $\O_q $-stable lattice $M\subset N $. Such an $M $
admits a decomposition $$M = M_\bullet\oplus M_\circ\subset N_\bullet\oplus N_\circ $$
as in (\ref{decomposing circ bullet subsubsection}); here we are using $q\neq 2 $. 
For a lattice $\Lambda\subset W $, define $\breve\Lambda\coloneqq\Lambda\otimes\breve\Z_q\subset M_\bullet. $
Then we have, for any $s\in \mathcal M (\overline\F_q) $
and any lattices $\Lambda_0\in\mathcal L_{\set {0}} $, $\Lambda_2\in\mathcal L_{\set {2}} $, $\Lambda_1\in\mathcal L_{\set {1}} $:
\begin {itemize}
\item $s $
lies in $\mathcal M_{\set {1}}$ if and only if $M_\bullet = \tau M_\bullet $, and is the point $\mathcal M_{\set {1}} (\Lambda_1) $
if and only if $M_\bullet=\breve\Lambda_1. $ 
\item $s $
lies in $\mathcal M_{\set {0}} (\Lambda_0) -\mathcal M_{\set{1}}$
if and only if $M_\bullet +\tau M_\bullet =\breve\Lambda_0 $. 
 \item $s $
lies in $\mathcal M_{\set {2}} (\Lambda_2) -\mathcal M_{\set{1}}$
if and only if $M_\bullet\intersection\tau M_\bullet =\breve\Lambda_2$.
\end {itemize}
By Theorem \ref {BT stratification first version}, at least one of these three options occurs for any point $s\in\mathcal M (\overline\F_q) $.

\begin{notation}
From now on, to ease the notation we shall abbreviate $\mathcal M_{\set{0}}(\Lambda_0)$ as $\mathcal M(\Lambda_0)$, etc. 
\end{notation}

\subsection {Deformation theory and the geometry of $\mathcal N $}
\subsubsection {}
Let $(X,\lambda,\iota,\rho) $
be an $S $-valued point of $\mathcal N $, for some $S\in\nilpotent $. To $X $
we associate the (covariant) Dieudonn\'e crystal $\mathbb D (X) $
\cite{mazur1974universal}; thus for any thickening $S\hookrightarrow\widehat S $
in $\nilpotent $
admitting locally nilpotent divided powers, we obtain a locally free sheaf $\mathbb D (\widehat S) $
of $\mathcal O_{\widehat S} $-modules, such that $D (X)\coloneqq\mathbb D (X) (S) $
fits into a canonical exact sequence
\begin {equation}\label{exact sequence for crystal}
0\to\omega_{X ^\check}\to D (X)\to\Lie (X)\to 0
\end {equation}
of locally free $\O_S $-modules.
\subsubsection{}
\label{structures on crystal subsubsection}
As in (\ref{meaning of BT strata subsubsection}) above, the action of $\Z_{q ^ 2}\subset\O_q $
on $\mathbb D (X) $
induces a decomposition $$\mathbb D (X) =\mathbb D (X)_\bullet\oplus\mathbb D (X)_\circ, $$
and likewise for $D (X) $, $\omega_{X ^\check} $, and $\Lie X $; the action of $\Pi $
interchanges the two components in each case.

The polarization $\lambda $
induces a perfect alternating pairing$$\langle\cdot,\cdot\rangle:\mathbb D (X)\otimes\mathbb D (X)\to\O_S ^ {\operatorname {cris}}. $$
Since both $\mathbb D (X)_\bullet $
and $\mathbb D (X)_\circ $
are isotropic, $\langle\cdot,\cdot\rangle $
identifies $\mathbb D (X)_\bullet $
with the dual of $\mathbb D (X)_\circ $.
Finally, the submodule $\omega_{X ^\check} $
of $D (X) $
is also isotropic, so that $\lambda $
induces perfect pairings of locally free $\O_S $-modules:
\begin {align*}
\langle\cdot,\cdot\rangle:\; &\omega_{X ^\check,\bullet}\otimes\Lie X_\circ\to\O_S\\
\langle\cdot,\cdot\rangle:\; &\omega_{X ^\check,\circ}\otimes\Lie X_\bullet\to\O_S.
\end {align*}

If $S =\Spec\overline\F_q $, then $\mathbb D (X) $
is equivalent to the data of the Dieudonn\'e module $M $
of $X $; the exact sequence (\ref{exact sequence for crystal}) becomes $$0\to VM/pM\to M\to M/VM\to 0. $$

\subsubsection {}
Let $S\hookrightarrow\widehat S $
be a thickening in $\nilpotent $
admitting locally nilpotent divided powers, and fix $x = (X,\lambda,\iota,\rho)\in\mathcal N (S) $.
Denote by $\operatorname {Lift} (x) $
the set of isomorphisms classes of lifts of $x $
to
$\widehat x = (\widehat X,\widehat\lambda,\widehat\iota,\widehat\rho)\in\mathcal N (\widehat S) $, and denote by $\overline {\operatorname {Lift}} (x) $
the set of locally free, $\O_q$-stable, totally isotropic  $\O_{\widehat S} $-submodules $\widehat\omega_{X ^\check}\subset\mathbb D (X) (\widehat S) $
lifting $\omega_{X ^\check} $. From the well-known deformation theory of $q$-divisible groups \cite{messing1972crystals}, one has:
\begin {prop}
\label{GM deformation theory prop}
The canonical map $$\widehat x = (\widehat X,\widehat\lambda,\widehat\iota,\widehat\rho)\mapsto\omega_{\widehat X ^\check}\subset D (\widehat X) =\mathbb D (X) (\widehat S) $$
defines a bijection $$\Lift (x)\xrightarrow {\sim}\overline {\operatorname {Lift}} (x). $$
\end {prop}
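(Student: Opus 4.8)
This is Grothendieck–Messing deformation theory, so the plan is quite clean.

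\medskip

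The plan is to apply the Grothendieck–Messing crystalline deformation theory for $q$-divisible groups directly, and then check that the extra structures (polarization, $\mathcal O_q$-action, quasi-isogeny) translate into the stated linear-algebra conditions on the lifted Hodge filtration. First I would recall the basic Grothendieck–Messing equivalence: for a divided power thickening $S\hookrightarrow\widehat S$ in $\nilpotent$ with locally nilpotent divided powers, lifting $X/S$ to a $q$-divisible group $\widehat X/\widehat S$ is equivalent to lifting the Hodge filtration $\omega_{X^\check}\subset D(X)=\mathbb D(X)(S)$ to a locally free, locally-direct-summand $\O_{\widehat S}$-submodule $\widehat\omega_{X^\check}\subset\mathbb D(X)(\widehat S)$; this is \cite{messing1972crystals}. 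Since the quasi-isogeny $\rho$ is a datum over $\overline S$ and the crystalline site is insensitive to nilpotent thickenings, $\widehat\rho$ is uniquely determined by $\rho$, so it imposes no condition. This handles the bare $q$-divisible group $(X)$ together with $\rho$.

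\medskip

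Next I would incorporate the $\mathcal O_q$-action. A lift $\widehat X$ acquires an $\mathcal O_q$-action lifting $\iota$ if and only if the corresponding $\widehat\omega_{X^\check}$ is stable under $\mathcal O_q$ acting on $\mathbb D(X)(\widehat S)$ (functoriality of the crystal); one should note that the Kottwitz determinant condition in Definition \ref{definition RZ space}(ii) is automatically inherited by any such lift, because it is a condition on $\Lie(\widehat X)$ that holds after reduction to $S$ and the relevant characteristic polynomial has coefficients in $\O_{\widehat S}$ whose reductions to $\O_S$ are fixed — here one uses that $S\hookrightarrow\widehat S$ is a nilpotent thickening and both sides of the displayed polynomial identity are determined by their restriction to $S$ when $\widehat S$ is (locally) the spectrum of a ring in which $q$ is nilpotent and $S$ is cut out by a nilpotent ideal with divided powers, making the discrepancy a nilpotent element annihilated by the divided power structure. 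Then I would bring in the polarization: by the functoriality of $\mathbb D$, the quasi-polarization $\lambda$ induces the perfect pairing $\langle\cdot,\cdot\rangle$ on $\mathbb D(X)(\widehat S)$ as in (\ref{structures on crystal subsubsection}), and a lift $\widehat X$ carries a principal polarization $\widehat\lambda$ whose Rosati involution extends $\ast$ if and only if $\widehat\omega_{X^\check}$ is totally isotropic for this pairing — the compatibility with $\ast$ being equivalent to the $\mathcal O_q$-stability already imposed, since $\ast$ is realized by the pairing.

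\medskip

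Finally I would assemble these: the canonical map $\widehat x\mapsto\widehat\omega_{X^\check}$ is well-defined into $\overline{\operatorname{Lift}}(x)$ by the discussion above, it is injective by the Grothendieck–Messing equivalence for the underlying $q$-divisible group, and it is surjective because any $\mathcal O_q$-stable totally isotropic locally-free lift of the filtration produces, via the equivalence, a lift $\widehat X$ which — by the three paragraphs above — automatically carries the compatible $\widehat\iota$, $\widehat\lambda$, and $\widehat\rho$, hence defines a point of $\mathcal N(\widehat S)$ lifting $x$. The main obstacle, such as it is, is purely bookkeeping: verifying that imposing ``$\mathcal O_q$-stable'' and ``totally isotropic'' on the Hodge filtration exactly matches imposing ``$\widehat\iota$ exists'' and ``$\widehat\lambda$ principal with Rosati extending $\ast$'', and confirming that the Kottwitz determinant condition is inherited for free rather than being an additional constraint on $\overline{\operatorname{Lift}}(x)$ — the latter is why the definition of $\overline{\operatorname{Lift}}(x)$ does not mention it. None of this requires new ideas beyond \cite{messing1972crystals} and the functoriality of the Dieudonné crystal; I would cite \cite{mazur1974universal, messing1972crystals} for the inputs and keep the argument short.
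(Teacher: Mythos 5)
The paper gives no proof of this proposition — it is stated as a well-known consequence of Grothendieck–Messing theory, with a bare citation to \cite{messing1972crystals} — and your approach is the intended one: invoke the crystalline deformation equivalence for the underlying $q$-divisible group, note that $\rho$ lifts uniquely by rigidity of quasi-isogenies, and translate $\widehat\iota$ and $\widehat\lambda$ into $\O_q$-stability and total isotropy of the lifted Hodge filtration. That part is fine.

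The one place where your proposal goes wrong is the justification that the Kottwitz determinant condition is automatic for lifts. You argue that the discrepancy between $\det(T-\iota(\alpha)|\Lie(\widehat X))$ and $(T^2-\tr(\alpha)T+N(\alpha))^2$ is ``a nilpotent element annihilated by the divided power structure.'' That is not a valid mechanism: a divided power structure on an ideal $I$ does not annihilate $I$, and a priori the two polynomials could genuinely differ by elements of $I$. (If they couldn't, the Kottwitz condition would never be a nontrivial condition on any PEL moduli problem — but it is.) The correct reason, specific to the present ramified unit-type situation, is that $\widehat\omega_{X^\check}$ is $\Z_{q^2}$-stable and totally isotropic, so it decomposes as $\widehat\omega_\bullet\oplus\widehat\omega_\circ$ with $\widehat\omega_\bullet$ and $\widehat\omega_\circ$ mutual annihilators under the perfect pairing between $\mathbb D(X)(\widehat S)_\bullet$ and $\mathbb D(X)(\widehat S)_\circ$; since $\widehat\omega_\bullet$ is a locally free direct summand lifting $\omega_{X^\check,\bullet}$ and a nilpotent thickening does not change the underlying topological space, the ranks of $\widehat\omega_\bullet$, $\widehat\omega_\circ$ remain $(2,2)$, forcing $\Lie(\widehat X)_\bullet$, $\Lie(\widehat X)_\circ$ to each have rank $2$. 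The Kottwitz condition for $\alpha\in\Z_{q^2}$ is exactly that rank statement, and the condition for $\Pi$ then follows from $\Pi^2=q$ and the resulting quaternionic module structure (or, alternatively, one may cite flatness of the local model). Replacing the divided-power remark by this rank-constancy argument closes the gap; the rest of the bookkeeping you describe is correct.
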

\subsubsection {}
Let $\mathcal N^ {\smooth} $
denote the formally smooth locus of $\mathcal N(0) $, which by Theorem
\ref {BT stratification first version}(6) is the complement of $\bigsqcup_{\substack \Lambda_1 \in \mathcal L_{\set{1}}}\mathcal M (\Lambda_1). $
Before we can calculate the tangent bundle to the mod $q $
fiber of $\mathcal N ^ {\smooth}$
in Theorem
\ref {tangent bundle theorem} below, we need the following lemma.
\begin {lemma}\label{lemma isomorphisms of line bundles away from smooth locus}
Suppose given $(X,\lambda,\iota,\rho)\in\mathcal N ^{\smooth} (S) $, for some $\overline\F_q $-scheme $S $. Then $\Pi $
induces isomorphisms of line bundles on $S $:
\begin {align*}
\Pi:\frac {\Lie X_\bullet} {\Pi\Lie X_\circ} &\xrightarrow {\sim}\Pi\Lie X_\bullet\subset\Lie X_\circ\\
\Pi:\frac {\omega_{X ^\check,\bullet}} {\Pi\omega_{X ^\check,\circ}} &\xrightarrow {\sim}\Pi\omega_{X ^\check,\bullet}\subset\omega_{X ^\check,\circ},
\end {align*}
and likewise with $\bullet $
and $\circ $
reversed.
\end {lemma}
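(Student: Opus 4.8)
The statement concerns a point $(X,\lambda,\iota,\rho)\in\mathcal N^{\smooth}(S)$ with $S$ an $\overline\F_q$-scheme, and asserts that the operator $\Pi$ induces isomorphisms of line bundles after passing to the indicated quotients. The plan is to work locally on $S$ and reduce everything to a linear-algebra computation on the Dieudonn\'e crystal, using the structure recalled in (\ref{structures on crystal subsubsection}). First I would note that each of the four $\O_S$-modules in question is locally free of rank one: this follows from the Kottwitz determinant condition in Definition \ref{definition RZ space}(ii), which forces $\Lie X$ to have rank 4 with $\Z_{q^2}$ acting through both embeddings with multiplicity 2, so that $\Lie X_\bullet$ and $\Lie X_\circ$ are each locally free of rank 2; since $\Pi$ interchanges the two components and $\Pi^2=q=0$ on $S$, the submodule $\Pi\Lie X_\circ\subset\Lie X_\bullet$ has rank at most 2, and the smoothness hypothesis (i.e.\ avoiding the stratum $\mathcal M(\Lambda_1)$, which by (\ref{meaning of BT strata subsubsection}) is exactly the locus where $M_\bullet=\tau M_\bullet$) is what guarantees the rank is exactly 1, so the quotient $\Lie X_\bullet/\Pi\Lie X_\circ$ is a line bundle; the same for $\omega_{X^\check}$.

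The core of the argument: $\Pi$ visibly sends $\Lie X_\bullet$ onto $\Pi\Lie X_\bullet\subset\Lie X_\circ$ and kills $\Pi\Lie X_\circ$ (since $\Pi\cdot\Pi\Lie X_\circ=q\Lie X_\circ=0$), hence descends to a surjection $\Lie X_\bullet/\Pi\Lie X_\circ\twoheadrightarrow\Pi\Lie X_\bullet$. Both source and target are line bundles on $S$, and a surjection of line bundles on any scheme is automatically an isomorphism (its kernel is a line bundle which is also zero). So I would establish the rank-one claims for all four modules and then invoke this elementary fact; the argument for $\omega_{X^\check,\bullet}/\Pi\omega_{X^\check,\circ}\to\Pi\omega_{X^\check,\bullet}$ is identical, using that $\omega_{X^\check}$ is isotropic and $\Pi$-stable. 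For the rank-one count on the $\omega$-side, I would use the perfect pairings at the end of (\ref{structures on crystal subsubsection}), namely $\langle\cdot,\cdot\rangle\colon\omega_{X^\check,\bullet}\otimes\Lie X_\circ\to\O_S$ and its counterpart, which identify $\omega_{X^\check,\bullet}$ with the $\O_S$-dual of $\Lie X_\circ$ compatibly with the $\Pi$-actions up to the canonical duality; this transports the rank computation from the $\Lie$-side to the $\omega$-side. The ``$\bullet$ and $\circ$ reversed'' versions follow by symmetry, interchanging the roles of the two idempotents.

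The main obstacle I anticipate is the careful bookkeeping needed to see that avoiding $\mathcal M(\Lambda_1)$ really does force $\Pi\Lie X_\circ\subset\Lie X_\bullet$ to be a \emph{sub-line-bundle} (i.e.\ locally a direct summand, with locally free rank-one quotient) rather than merely a rank-one subsheaf --- one must rule out jumping behavior over non-reduced or non-smooth $S$. I would handle this by reducing to geometric points: over $\overline\F_q$, the condition is exactly the lattice-theoretic statement $M_\bullet\neq\tau M_\bullet$ from (\ref{meaning of BT strata subsubsection}), where $\tau=\Pi V^{-1}$, and one checks directly that $M_\bullet+\tau M_\bullet$ or $M_\bullet\cap\tau M_\bullet$ has the expected colength, giving $\dim_{\overline\F_q}\Pi\Lie X_\bullet=1$ fiberwise; then constancy of fiber dimension together with the already-known local freeness of $\Lie X_\bullet$ (so that $\Pi\colon\Lie X_\bullet\to\Lie X_\circ$ is a map of vector bundles of constant rank) upgrades this to the sub-bundle statement by the standard criterion for a morphism of vector bundles to have locally free image and cokernel. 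Once this is in place, the isomorphism claims are formal, as explained above.
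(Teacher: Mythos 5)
There is a genuine gap in your handling of the ``main obstacle'' you flagged. You propose to establish that $\Pi\Lie X_\circ\subset\Lie X_\bullet$ is a sub-line-bundle by reducing to geometric points, observing that the fiberwise rank of $\Pi_\bullet\colon\Lie X_\bullet\to\Lie X_\circ$ is constantly $1$ on $\mathcal N^{\smooth}$, and then invoking ``the standard criterion for a morphism of vector bundles to have locally free image and cokernel.'' That criterion requires the base scheme to be reduced, and $S$ here is an arbitrary $\overline\F_q$-scheme (e.g.\ a thickening of a point of $\mathcal M^{\smooth}$ inside $\mathcal N^{\smooth}_{\overline\F_q}$), so constant fiber rank alone does not give a locally free cokernel. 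Concretely, over $R=\overline\F_q[\epsilon]/\epsilon^2$ the map $f=\left(\begin{smallmatrix}1&0\\0&\epsilon\end{smallmatrix}\right)\colon R^2\to R^2$ has fiber rank $1$ at the unique point, but $\operatorname{coker} f=R/(\epsilon)$ is not free; the constant-rank criterion simply fails without reducedness, and you explicitly worried about exactly this failure mode but did not rule it out.

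The fix — which is what the paper actually does — uses more than the fiber rank of $\Pi_\bullet$: one must use the relation $\Pi^2=q=0$ on $\Lie X$ together with the fact that \emph{both} $\Pi_\bullet$ and $\Pi_\circ$ are nonzero modulo the maximal ideal (the smoothness hypothesis gives nonvanishing of $\Pi_\bullet$ via $\Pi M_\bullet\neq VM_\bullet$, and by self-duality the same argument gives nonvanishing of $\Pi_\circ$). In a basis where $\Pi_\bullet=\left(\begin{smallmatrix}1&0\\0&d\end{smallmatrix}\right)$, the relations $\Pi_\circ\Pi_\bullet=\Pi_\bullet\Pi_\circ=0$ force $\Pi_\circ=\left(\begin{smallmatrix}0&0\\0&w\end{smallmatrix}\right)$ with $wd=0$; nonvanishing of $\Pi_\circ$ mod the maximal ideal makes $w$ a unit, hence $d=0$, and then $\Pi_\bullet=\left(\begin{smallmatrix}1&0\\0&0\end{smallmatrix}\right)$ exactly, from which all four modules are manifestly free of rank one and the displayed maps are isomorphisms. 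Note that in the problematic example above, this reasoning would detect that $\Pi_\circ$ is forced to vanish mod $\m$, i.e.\ that example does not arise from a point of $\mathcal N^{\smooth}$. So your overall strategy (reduce to a rank computation, then conclude a surjection of line bundles is an isomorphism) is sound, but the step that converts fiberwise information into bundle-theoretic information must go through this explicit matrix argument, not the constant-rank criterion.
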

\begin {proof}
We consider the first map; the second, and the versions with $\bullet$ and $\circ$ reversed, are all similar to this case. Without loss of generality, we may assume that $S = \Spf R$ is affine and formally of finite type, and that $R$ is a local ring with maximal ideal $\m$ such that $R/\m  = \overline \F_q$.
Then $\Lie X_\bullet$ and $\Lie X_\circ$ are each free of rank two over $R$ by the Kottwitz condition, and the map $\Pi_\bullet: \Lie X_\bullet \to \Lie X_\circ$ 
is nonzero modulo $\m$; indeed, this amounts to the assertion that $\Pi M_\bullet \neq VM_\bullet$ for the Dieudonn\'e module $M = M_\bullet \oplus M_\circ$ corresponding to the special fiber of $X$, and this holds because we are away from the nonsmooth locus of $\mathcal N(0)$, cf. (\ref{meaning of BT strata subsubsection}).

In particular, we can choose bases of $\Lie X_\bullet$ and $\Lie X_\circ$ such that $$\Pi_\bullet = \begin{pmatrix} 1 & 0 \\ 0 & d \end{pmatrix}\,: \, \Lie X_\bullet\to \Lie X_\circ$$ for some $d\in R$.
Now, we also know that $\Pi^2 = q = 0$ on $\Lie(X)$, so the matrix $g$ for $\Pi_\circ: \Lie X_\circ \to \Lie X_\bullet$ 
must satisfy
$$g \begin{pmatrix} 1 & 0 \\ 0 & d \end{pmatrix} = \begin{pmatrix} 1 & 0 \\ 0 & d \end{pmatrix} g = 0.$$
A direct calculation shows that $g = \begin{pmatrix} 0 & 0 \\ 0 & w \end{pmatrix}$ for some $w\in R$, where $wd = 0$; but the same reasoning as for $\Pi_\bullet$ shows that $\Pi_\circ$ is nonzero modulo $\m$, so $w$ is a unit and we conclude $d = 0$. From these coordinates for $\Pi_\bullet$ and $\Pi_\circ$, it is clear that $\Pi \Lie X_\bullet$ and $\Lie X_\bullet/\Pi \Lie X_\circ$ are both free of rank one over $R$ and that the map in the lemma is indeed an isomorphism.
\end {proof}
\begin {thm}\label{tangent bundle theorem}
Let $\mathcal T $
denote the tangent bundle on the mod $q $
fiber $\mathcal N ^ {\smooth}_{\overline\F_q} $. Then we have a canonical exact sequence:
$$0\to\mathcal T\to\home(\omega_{\mathcal X ^\check,\bullet},\Lie \mathcal X_\bullet)\to\home(\Pi\omega_{\mathcal X ^\check,\circ},\Lie \mathcal X_\bullet/\Pi\Lie \mathcal X_\circ)\to 0, $$
where $\mathcal X $
is the universal $q $-divisible group.
\end {thm}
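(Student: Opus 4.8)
The plan is to compute the tangent space via Grothendieck–Messing deformation theory. The key input is Proposition \ref{GM deformation theory prop}: a tangent vector at a point $x = (X,\lambda,\iota,\rho)\in\mathcal N^{\smooth}(S)$ corresponds to a lift of $\omega_{X^\vee}$ to an $\O_q$-stable, totally isotropic locally free $\O_{S[\epsilon]}$-submodule $\widehat\omega \subset \mathbb D(X)(S[\epsilon])$, where $S[\epsilon] = S\times_{\overline\F_q}\Spec\overline\F_q[\epsilon]/\epsilon^2$ with its trivial divided powers. Since $\mathbb D(X)(S[\epsilon]) = D(X)\otimes_{\O_S}\O_{S[\epsilon]}$ and $\widehat\omega$ is required to reduce to $\omega_{X^\vee}$ modulo $\epsilon$, the standard computation identifies the set of such lifts — before imposing the $\O_q$-stability and isotropy conditions — with $\Hom_{\O_S}(\omega_{X^\vee}, D(X)/\omega_{X^\vee}) = \Hom_{\O_S}(\omega_{X^\vee}, \Lie X)$ via the exact sequence (\ref{exact sequence for crystal}).

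**Next I would** cut this down using the two constraints. First, $\O_q$-stability: the $\Z_{q^2}$-action forces any homomorphism to respect the $\bullet/\circ$ decomposition, so we land in $\Hom(\omega_{X^\vee,\bullet},\Lie X_\bullet)\oplus\Hom(\omega_{X^\vee,\circ},\Lie X_\circ)$, and $\Pi$-equivariance identifies the two summands — a deformation is determined by its $\bullet$-component $\phi\in\Hom(\omega_{X^\vee,\bullet},\Lie X_\bullet)$, with the $\circ$-component obtained by conjugating by $\Pi$. This explains the first nonzero term $\home(\omega_{\mathcal X^\vee,\bullet},\Lie\mathcal X_\bullet)$. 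Second, the isotropy condition for the lifted pairing: using the perfect pairings from (\ref{structures on crystal subsubsection}) that identify $\omega_{X^\vee,\circ}$ with (a quotient of) the dual of $\Lie X_\bullet$, the self-pairing $\langle\widehat\omega,\widehat\omega\rangle = 0$ condition, after the $\Pi$-equivariance reduction, becomes a single linear equation on $\phi$; concretely it says that $\phi$ composed with $\Pi: \Pi\omega_{\mathcal X^\vee,\circ}\hookrightarrow \omega_{\mathcal X^\vee,\bullet}$ and then projected to $\Lie\mathcal X_\bullet/\Pi\Lie\mathcal X_\circ$ must vanish — that is, $\mathcal T$ is exactly the kernel of the map $\home(\omega_{\mathcal X^\vee,\bullet},\Lie\mathcal X_\bullet)\to\home(\Pi\omega_{\mathcal X^\vee,\circ},\Lie\mathcal X_\bullet/\Pi\Lie\mathcal X_\circ)$ given by restriction along $\Pi\omega_{\mathcal X^\vee,\circ}\hookrightarrow\omega_{\mathcal X^\vee,\bullet}$ followed by projection. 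Lemma \ref{lemma isomorphisms of line bundles away from smooth locus} is what makes all the relevant subsheaves ($\Pi\omega_{\mathcal X^\vee,\bullet}$, $\Pi\Lie\mathcal X_\circ$, the quotients) locally free of the expected ranks on the smooth locus, so that this map has constant rank and its cokernel is the stated line bundle $\home(\Pi\omega_{\mathcal X^\vee,\circ},\Lie\mathcal X_\bullet/\Pi\Lie\mathcal X_\circ)$; surjectivity then follows by a local rank count (both $\omega_{\mathcal X^\vee,\bullet}/\Pi\omega_{\mathcal X^\vee,\circ}$ and $\Lie\mathcal X_\bullet/\Pi\Lie\mathcal X_\circ$ are line bundles, and away from the nonsmooth locus the relevant $\Pi$-maps are the isomorphisms of Lemma \ref{lemma isomorphisms of line bundles away from smooth locus}).

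**The main obstacle** will be bookkeeping the isotropy condition correctly: one must carefully track how $\lambda$ induces the pairings between the $\bullet$ and $\circ$ parts of $\omega_{X^\vee}$ and $\Lie X$, and verify that after using $\Pi$-equivariance to eliminate the $\circ$-component, the isotropy constraint collapses to precisely the single restriction-and-project map appearing in the theorem — rather than a more complicated condition — and that this holds compatibly in families over an arbitrary $\overline\F_q$-scheme $S$, not just at geometric points. I would do this computation first locally, choosing bases adapted to the $\bullet/\circ$ decomposition as in the proof of Lemma \ref{lemma isomorphisms of line bundles away from smooth locus} (where $\Pi_\bullet, \Pi_\circ$ are put in explicit normal form), then check the resulting identification is canonical and glues. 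Everything else — the Grothendieck–Messing identification, the $\O_q$-equivariance reduction, exactness and local freeness on the smooth locus — is routine once the pairing computation is pinned down.
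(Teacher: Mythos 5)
Your overall framework (Grothendieck--Messing, the $\bullet/\circ$ decomposition, isolating the $\bullet$-component $f_\bullet$ as the free parameter, and arriving at the restriction-and-project map) matches the paper, and you end with the correct map. But there is a genuine gap in the middle: you claim that ``$\Pi$-equivariance identifies the two summands'' and that ``the $\circ$-component [is] obtained by conjugating by $\Pi$.'' This is false on the mod-$q$ fiber. Since $\Pi^2 = q = 0$, the operator $\Pi$ is nilpotent, not invertible, so there is no conjugation by $\Pi$; more concretely, $\Pi\omega_{X^\vee,\bullet}\subset\omega_{X^\vee,\circ}$ is a locally free direct summand of rank $1$ inside a rank-$2$ module (Lemma~\ref{lemma isomorphisms of line bundles away from smooth locus}), so the $\Pi$-stability conditions pin down $f_\circ$ only on a rank-one subsheaf and only modulo the rank-one subsheaf $\Pi\Lie X_\bullet$. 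The $\circ$-component is \emph{not} eliminated by $\Pi$-equivariance, and if you proceed as written you will find yourself with leftover degrees of freedom in $f_\circ$ that you cannot account for.

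The roles of the two constraints need to be swapped. It is the \emph{isotropy} condition $\langle\widehat\omega_{X^\vee,\bullet},\widehat\omega_{X^\vee,\circ}\rangle=0$ that eliminates $f_\circ$: the polarization induces \emph{perfect} pairings $\omega_{X^\vee,\bullet}\otimes\Lie X_\circ\to\O_S$ and $\omega_{X^\vee,\circ}\otimes\Lie X_\bullet\to\O_S$ (as recorded in (\ref{structures on crystal subsubsection})), and the isotropy condition reads $\langle x, f_\bullet(y)\rangle = \langle y, f_\circ(x)\rangle$ for $x\in\omega_{X^\vee,\circ}$, $y\in\omega_{X^\vee,\bullet}$; since the pairing $\omega_{X^\vee,\bullet}\otimes\Lie X_\circ\to\O_S$ is perfect, this expresses $f_\circ$ uniquely as a function of $f_\bullet$ with no remaining freedom. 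Once $f_\circ$ is so determined, the two $\Pi$-stability conditions, rewritten purely in terms of $f_\bullet$ using that same duality, are the ones that collapse (via Lemma~\ref{lemma isomorphisms of line bundles away from smooth locus}) to the single condition $\Pi f_\bullet(\Pi\omega_{X^\vee,\circ}) = 0$, i.e.\ $f_\bullet(\Pi\omega_{X^\vee,\circ})\subset\Pi\Lie X_\circ = \ker(\Pi|_{\Lie X_\bullet})$, which is exactly the kernel of the displayed map. So the endpoint you wrote down is right, but the constraint you identified as producing it is the wrong one, and the one you used to eliminate $f_\circ$ does not do the job.
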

\begin {proof}
It suffices to consider deformations of   a point $(X,\iota,\lambda,\rho) \in
\mathcal N ^ {\smooth}(R)$
to points of $\mathcal N ^ {\smooth}(R[\epsilon]/\epsilon ^ 2) $,
 for $R $
 an $\overline\F_q $-algebra.  Let  $S = \Spec R$ and $\widehat S = \Spec R[\epsilon]/\epsilon^2$. By Proposition \ref{GM deformation theory prop}, we need to consider lifts of $\omega_{X ^\check} $
to locally free submodules $\widehat\omega_{X ^\check}\subset\mathbb D (X) (\widehat S) $
which are $\O_q $-stable and isotropic;
this is equivalent to lifting $\omega_{X ^\check,\bullet}\subset D (X)_\bullet $
and $\omega_{X ^\check,\circ}\subset D (X)_\circ $
to locally free submodules $\widehat\omega_{X ^\check,\bullet}\subset\mathbb D (X)_\bullet (\widehat S) $
and $\widehat\omega_{X ^\check,\circ}\subset\mathbb D (X)_\circ (\widehat S) $, subject to the following conditions.
\begin{enumerate}[label= (\roman*)]
    \item $\langle\widehat\omega_{X ^\check,\bullet},\widehat\omega_{X ^\check,\circ}\rangle = 0. $
\item $\Pi\widehat\omega_{X ^\check,\bullet}\subset\widehat\omega_{X ^\check,\circ}. $
\item $\Pi\widehat\omega_{X ^\check,\circ}\subset\widehat\omega_{X ^\check,\bullet} $.
\end{enumerate}
Now, lifts of $\omega_{X ^\check,\bullet} $
correspond to maps of $R $-modules $f_\bullet:\omega_{X ^\check,\bullet}\to\Lie X_\bullet $
via $$f_\bullet\mapsto\operatorname {span}\set {x +\epsilon f_\bullet (x) +\epsilon\omega_{X ^\check,\bullet}\,:\, x\in\omega_{X ^\check,\bullet}}, $$
and likewise for $\omega_{X ^\check,\circ} $. The conditions (i)-(iii) translate to:
\begin {enumerate}
[label= (\roman*)']
\item $\langle x, f_\bullet (y)\rangle =\langle y, f_\circ (x)\rangle $, for all $x\in\omega_{X ^\check,\circ} $, $y\in\omega_{X ^\check,\bullet} $.
\item $\Pi f_\bullet (x) = f_\circ (\Pi x) $
for all $x\in\omega_{X ^\check,\bullet} $.
\item $\Pi f_\circ (x) = f_\bullet (\Pi x) $
for all $x\in\omega_{X ^\check,\circ} $.
\end {enumerate}
If we specify $f_\bullet $, then (i)' can be taken as a
\emph {definition} of $f_\circ $. In terms of $f_\bullet $
only, conditions (ii)' and (iii)' correspond to:
\begin {enumerate}
[label= (\roman*)'']
\setcounter {enumi} {1}
\item $\langle y,\Pi f_\bullet (x)\rangle =\langle x,\Pi f_\bullet (y)\rangle $, for all $x, y\in\omega_{X ^\check,\bullet} $.
\item $\langle y, f_\bullet (\Pi x)\rangle =\langle x, f_\bullet (\Pi y)\rangle $, for all $x, y\in\omega_{X ^\check,\circ} $.
\end {enumerate}
Now, since we are outside the singular locus, Lemma
\ref {lemma isomorphisms of line bundles away from smooth locus} implies that $\Pi\omega_{X ^\check,\bullet} $
and $\Pi\omega_{X ^\check,\circ} $
are locally rank-one direct summands of the rank-two projective $R $-modules $\omega_{X ^\check,\circ} $
and $\omega_{X ^\check,\bullet} $, respectively. In particular, using that (ii)'' and (iii)'' are clearly satisfied when $x$ and $y$ are linearly dependent, it suffices to check (ii)'' and (iii)'' for $x\in\Pi\omega_{X ^\check,\circ} $
and $x\in\Pi\omega_{X ^\check,\bullet} $, respectively. Using $\langle\Pi z,\Pi w\rangle = q\langle z, w\rangle = 0 $, we find that both conditions (ii)'' and (iii)'' are equivalent to $\Pi f_\bullet (\Pi\omega_{X ^\check,\circ}) = 0 $. Again by Lemma
\ref {lemma isomorphisms of line bundles away from smooth locus}, the kernel of $\Pi $
on $\Lie X_\bullet $
is $\Pi\Lie X_\circ $, so we are just requiring $$f_\bullet (\Pi\omega_{X ^\check,\circ})\subset\Pi\Lie X_\circ, $$
as desired.
\end {proof}
\subsubsection {Scheme-theoretic description of the strata $\mathcal M (\Lambda_0)$, $\mathcal M(\Lambda_2)$}\label{subsubsec:scheme_strata}
Fix a lattice $\Lambda\in \mathcal L_{\set{0}} \sqcup \mathcal L_{\set{2}}$;
 we recall the construction of $\mathcal M (\Lambda) $
as a subscheme of $\mathcal N $
given in \cite[\S4]{wang2020bruhat}. 
Let $\breve\Lambda =\Lambda\otimes\breve\Z_q\subset N_\bullet $, and let $Y $
denote the $q $-divisible group over $\overline\F_q $
associated to the lattice $$\breve\Lambda\oplus q^\delta \Pi ^ {-1}\breve\Lambda\subset N_\bullet\oplus N_\circ, $$
where $\delta = 0$ if $\Lambda\in\mathcal L_{\set {0}} $
and $\delta = 1 $
if $\Lambda\in\mathcal L_{\set {2}} $. 
The group $Y$ comes with a natural quasi-isogeny $$t:Y \to \mathbb X. $$
For any $(X,\lambda,\iota,\rho)\in\mathcal N (S) $
with $S $
an $\overline\F_q $-scheme, consider the two quasi-isogenies:
\begin {align*}
\rho_+ &: X\xrightarrow {\rho}\mathbb X_S\xrightarrow {q ^\delta t ^ {-1}} Y_S,\\
\rho_- &: Y_S\xrightarrow {q ^ {1 -\delta} t}\mathbb X_S\xrightarrow {\rho ^ {-1}} X,
\end {align*}
where $\delta = 0 $
if $\Lambda\in\mathcal L_{\set {0}} $
and $\delta = 1 $
if $\Lambda\in\mathcal L_{\set {2}} $. 

Then the scheme $\mathcal M (\Lambda) $
is constructed as the locus where $\rho_+ $
and $\rho_- $
are both isogenies.
 In fact, since the dual lattice to $\breve\Lambda\oplus\Pi ^ {-1}\breve\Lambda $
 is $q ^ {1-2\delta} (\breve\Lambda\oplus\Pi ^{-1}\breve\Lambda) $, 
we may identify $Y\xrightarrow {\sim} Y ^\check $
by $q^{1-2\delta} t^\check\circ\lambda_{\mathbb X}\circ t $; with respect to this polarization, $\rho_+ $
and $\rho_- $
are duals, so $\rho_- $
is an isogeny if and only if $\rho_+ $
is.
\begin {prop}\label{prop identifying O (1)}
Let $\O (1) $
be the line bundle on $\mathcal M (\Lambda) $
corresponding to the embedding into $\mathbb P ^ 3_{\overline\F_q} $
of Theorem
\ref {BT stratification first version} (2). Then, if $(X, \lambda, \iota,\rho)$ is the universal $q$-divisible group over $\mathcal M(\Lambda)$,  we have isomorphisms of line bundles on $\mathcal M (\Lambda) ^ {\smooth}\coloneqq\mathcal M (\Lambda)\intersection\mathcal N ^ {\smooth} $:
$$\O (- q)\cong\Pi\Lie X_\circ\cong\Lie X_\bullet/\Pi\Lie X_\circ. $$
\end {prop}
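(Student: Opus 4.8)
\textbf{Proof plan for Proposition \ref{prop identifying O (1)}.}

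The plan is to compute both line bundles $\Pi\Lie X_\circ$ and $\Lie X_\bullet/\Pi\Lie X_\circ$ on $\mathcal M(\Lambda)^{\smooth}$ by working $\overline\F_q$-pointwise with Dieudonn\'e modules and then bootstrapping to a global statement, exactly in the spirit of the local calculations already carried out in \S\ref{structures on crystal subsubsection} and Lemma \ref{lemma isomorphisms of line bundles away from smooth locus}. First I would recall from (\ref{subsubsec:scheme_strata}) that $\mathcal M(\Lambda)$ is the locus in $\mathcal N$ where the quasi-isogenies $\rho_\pm$ to the fixed $q$-divisible group $Y$ (attached to the lattice $\breve\Lambda\oplus q^\delta\Pi^{-1}\breve\Lambda$) become isogenies; in particular, over $\mathcal M(\Lambda)^{\smooth}$, the universal $q$-divisible group $X$ is canonically isogenous to a family of deformations of $Y$, and its Dieudonn\'e crystal $\mathbb D(X)$ sits inside the fixed isocrystal $N = N_\bullet\oplus N_\circ$ after pulling back. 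The key identity to extract is that the embedding $\mathcal M(\Lambda)\hookrightarrow \mathbb P^3_{\overline\F_q}$ of Theorem \ref{BT stratification first version}(2), hence the line bundle $\mathcal O(1)$, is realized by a rank-one subquotient of $\mathbb D(X)$ coming from the $\tau$-operator $\tau = \Pi V^{-1}$: concretely, the Deligne--Lusztig-type variety cut out by the equation $X_3^q X_0 - X_0^q X_3 + X_2^q X_1 - X_1^q X_2$ in $\mathbb P^3$ parametrizes lattices $M_\bullet$ with $\breve\Lambda \subset M_\bullet$ (or $M_\bullet \subset \breve\Lambda$ in the $\mathcal L_{\set 2}$ case) and the tautological line is $M_\bullet/(M_\bullet\cap\tau M_\bullet)$ or $(M_\bullet + \tau M_\bullet)/M_\bullet$, which is $\mathcal O(1)$. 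This is precisely the identification made in \cite{wang2020bruhat}, which I would cite.

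Next I would relate this tautological line to $\Lie X$. From the exact sequence (\ref{exact sequence for crystal}), $\Lie X_\bullet = M_\bullet/VM_\bullet$ in the $\overline\F_q$-point picture (using the covariant normalization), and $V M_\bullet = \Pi\tau^{-1}M_\bullet$. Combining this with the relations in Lemma \ref{lemma isomorphisms of line bundles away from smooth locus} — that away from the nonsmooth locus $\Pi\Lie X_\bullet$ and $\Lie X_\bullet/\Pi\Lie X_\circ$ are line bundles, and $\Pi$ induces an isomorphism $\Lie X_\bullet/\Pi\Lie X_\circ \xrightarrow{\sim}\Pi\Lie X_\bullet\subset\Lie X_\circ$ — one gets that $\Pi\Lie X_\circ$ and $\Lie X_\bullet/\Pi\Lie X_\circ$ are each identified, as line bundles, with the relevant rank-one $\tau$-subquotient of $M_\bullet$, up to a twist by the lattice $\breve\Lambda$ whose contribution accounts for the $\mathcal O(-q)$ (rather than $\mathcal O(-1)$) normalization. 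The power $q$ in $\mathcal O(-q)$ should emerge naturally because the equation defining $\mathcal M(\Lambda)$ is homogeneous of degree $q+1$ in a way that makes the Frobenius-twisted tautological bundle $\mathcal O(1)$ pair against $\mathcal O(q)$; alternatively it reflects that $\tau$ is $\sigma$-semilinear, so the induced map on the tautological line raises degree by $q$. I would make this precise by writing down the universal object over an affine chart and checking the transition functions directly.

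The main obstacle I anticipate is keeping the bookkeeping of twists straight: there are three potentially confusing sources of line-bundle twists — the choice of the fixed lattice $\breve\Lambda$ and the parameter $\delta\in\{0,1\}$ distinguishing $\mathcal L_{\set 0}$ from $\mathcal L_{\set 2}$; the covariant-versus-contravariant Dieudonn\'e normalization, which swaps $V$ and $F$ and the roles of $\omega$ and $\Lie$; and the Frobenius twist built into $\tau$, which is what produces $\mathcal O(-q)$ instead of $\mathcal O(-1)$. To handle this cleanly I would first do the entire computation for $\Lambda\in\mathcal L_{\set 0}$ (where $\delta = 0$ and $Y$ is self-dual up to the obvious polarization), verify the two claimed isomorphisms there by an explicit chart calculation, and then deduce the $\mathcal L_{\set 2}$ case either by the same argument with $\delta = 1$ or by the duality $\rho_+ \leftrightarrow \rho_-$ noted in (\ref{subsubsec:scheme_strata}), which exchanges the roles of $\bullet$ and $\circ$ and of $\mathcal L_{\set 0}$ and $\mathcal L_{\set 2}$. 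Once the pointwise identification is pinned down, upgrading to an isomorphism of line bundles over all of $\mathcal M(\Lambda)^{\smooth}$ is formal, since both sides are line bundles on a reduced scheme and the constructed comparison map is fiberwise an isomorphism.
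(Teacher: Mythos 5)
Your roadmap does match the paper's approach in outline — you work with the Dieudonn\'e crystal over the base rather than at geometric points, blame the $\sigma$-semilinearity of Frobenius/Verschiebung for the exponent $q$, invoke Lemma \ref{lemma isomorphisms of line bundles away from smooth locus} to shuttle between the rank-one subquotients, and treat $\mathcal L_{\set{0}}$ first with $\mathcal L_{\set{2}}$ following by swapping $\bullet$ and $\circ$. But there is a real gap: the step you defer to ``writing down the universal object over an affine chart and checking the transition functions directly'' is where the entire proof lives, and you never isolate what that computation has to produce. The paper's argument is a single global construction, not a chart check. It uses that the tautological $\mathcal O(-1)$ is the rank-one subsheaf $\rho_{+,\ast}D(X)_\circ\subset D(Y_S)_\circ$, where $D(Y_S)_\circ = D(Y)_\circ\otimes\mathcal O_S$ is a fixed free rank-four sheaf coming from the constant group $Y_S$ --- so $\mathcal O(-1)$ is literally the tautological subbundle of the trivial $\mathbb P^3$-bundle, not one of the lattice quotients $M_\bullet/(M_\bullet\cap\tau M_\bullet)$ or $(M_\bullet+\tau M_\bullet)/M_\bullet$ you propose, which describe the strata pointwise (cf.\ \S\ref{meaning of BT strata subsubsection}) but are not the embedding's tautological bundle. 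The Verschiebung $\mathtt V\colon D(Y_S)_\circ\to D(Y_S)_\bullet\otimes_{\mathcal O_S}\mathcal O_{S^{(q)}}$ of the constant group is a fixed isomorphism tensored with the $q$-power map, so $\mathtt V$ carries $\mathcal O(-1)$ to $\mathcal O(-q)$; and the composite $\mathtt V\circ\rho_{+,\ast}$ on $D(X)_\circ$ kills both $\Pi D(X)_\bullet$ and $\omega_{X^\vee,\circ}$ (because $\mathtt V\,\omega_{Y_S^\vee,\circ}=\mathtt V\,\Pi D(Y_S)_\circ=0$), hence descends to a surjection $\Lie X_\circ/\Pi\Lie X_\bullet\twoheadrightarrow\mathcal O(-q)$ of line bundles, automatically an isomorphism. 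Without exhibiting this (or an equivalent) map you have a plan, not a proof.

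Two of your intermediate assertions would also derail the proposed chart computation. In covariant Dieudonn\'e theory $V$ interchanges the $\bullet$- and $\circ$-eigenspaces, so $\Lie X_\bullet=M_\bullet/VM_\circ$, not $M_\bullet/VM_\bullet$. And the object the explicit map identifies with $\mathcal O(-q)$ is $\Lie X_\circ/\Pi\Lie X_\bullet$, not $\Lie X_\bullet$; passing from there to $\Pi\Lie X_\circ$ and then to $\Lie X_\bullet/\Pi\Lie X_\circ$ (the two bundles actually appearing in the statement) is precisely where Lemma \ref{lemma isomorphisms of line bundles away from smooth locus}, including the version with $\bullet$ and $\circ$ reversed, must be applied.
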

\begin {proof}
We give the proof in the case $\Lambda \in \mathcal L_{\set{0}}$; for $\Lambda\in \mathcal L_{\set{2}}$, the roles of $\bullet$ and $\circ$ are interchanged. Let $S\coloneqq \mathcal M(\Lambda)^{\sm}$, and let $Y_S$ be the constant $q$-divisible group $Y$ on $S$.

We need to recall the construction of the projective embedding from \cite[\S4]{wang2020bruhat}. First of all, one has the Dieudonn\'e crystal $$\mathbb D (Y_S) =\mathbb D (Y_S)_\bullet\oplus\mathbb D (Y_S)_\circ, $$
with notation as in (\ref{structures on crystal subsubsection}), and
$$D (Y_S)_\circ = D (Y)_\circ\otimes\O_S $$
is a free $\O_S $-module of rank 4. 
The submodule $ \rho_{+,\ast} D(X)_\circ \subset D (Y_S)_\circ $
is locally a free summand of rank $1 $, and this is the tautological bundle $\O(-1)$ under the map $\mathcal M(\Lambda)\hookrightarrow \mathbb P^3_{\overline \F_q}$. 
The canonical Verschiebung map
$$\mathtt V: D(Y_S)_\circ \to D(Y_S^{(q)})_\circ\simeq  D(Y_S)_\bullet\otimes_{\O_S} \O_{S^{(q)}}$$ 
is given by the $q$th power map $\O_S \to \O_{S^{(q)}}$ tensored with the isomorphism $$ V: D(Y)_\circ = \Pi^{-1} \breve \Lambda/  \Pi \breve\Lambda\isomorphism \breve \Lambda/ q\breve\Lambda =  D(Y)_\bullet,$$ so we have an isomorphism
$$\O(-q) \simeq \mathtt V(\rho_{+,\ast}(D(X)_\circ).$$
Now we note that the map 
\begin{equation}\label{eq:D(X)_to_D(Y)}\mathtt V \circ \rho_{+,\ast}: D(X) _\circ \to D(Y_S)_\bullet
\otimes_{\O_S} \O_{S^(q)}\end{equation}
annihilates both $\Pi D( X)_\bullet$ and $\omega_{X^\vee, _\circ}$,
because $\mathtt V( \omega_{Y_S^\vee, \circ}) = \mathtt V \circ \Pi (D( Y_S)_{\circ}) = 0$ by the definition of $Y$. 
In particular, (\ref{eq:D(X)_to_D(Y)}) induces a surjection 
\begin{equation}\label{eq:V_circ_rho}\mathtt V\circ \rho_{+,\ast}: \frac{\Lie X_\circ}{\Pi \Lie X_\bullet} \isomorphism \O(-q),\end{equation}
which is a map of line bundles by Lemma \ref{lemma isomorphisms of line bundles away from smooth locus} and therefore an isomorphism. 
Combined with Lemma \ref{lemma isomorphisms of line bundles away from smooth locus} for the other isomorphism, this completes the proof.  
\end {proof}
 \begin {thm}\label{normal bundle theorem}
For any $\Lambda \in \mathcal L_{\set{0}}\sqcup\mathcal L_{\set{2}},  $
the normal bundle to $\mathcal M (\Lambda) ^ {\smooth} $
inside $\mathcal N _{\overline\F_q}^ {\smooth} $
is isomorphic to $\O (-2q) $
for the embedding into $\mathbb P ^ 3_{\overline\F_q} $
given in Theorem
\ref {BT stratification first version}.
\end {thm}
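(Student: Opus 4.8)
\textbf{Proof strategy for Theorem \ref{normal bundle theorem}.}

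The plan is to compute the normal bundle $\mathcal N_{\mathcal M(\Lambda)^{\smooth}/\mathcal N^{\smooth}_{\overline\F_q}}$ by combining the exact sequence for the tangent bundle of $\mathcal N^{\smooth}_{\overline\F_q}$ from Theorem \ref{tangent bundle theorem} with a parallel computation of the tangent bundle of $\mathcal M(\Lambda)^{\smooth}$ itself, and then identifying all the line bundles in sight via Proposition \ref{prop identifying O (1)}. Concretely, $\mathcal M(\Lambda)^{\smooth}$ is an open subscheme of the smooth projective hypersurface of Theorem \ref{BT stratification first version}(1), so its tangent bundle sits in a standard conormal sequence, but it is cleaner to redo the deformation-theoretic analysis of Theorem \ref{tangent bundle theorem} restricted to deformations that stay inside $\mathcal M(\Lambda)$. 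For $\Lambda\in\mathcal L_{\set 0}$ (the case $\Lambda\in\mathcal L_{\set 2}$ being symmetric under interchanging $\bullet$ and $\circ$, as in the proof of Proposition \ref{prop identifying O (1)}), a point of $\mathcal M(\Lambda)$ is one where the quasi-isogenies $\rho_\pm$ to the fixed group $Y$ are isogenies; a lift to $R[\epsilon]/\epsilon^2$ stays in $\mathcal M(\Lambda)$ precisely when the corresponding lift of $\omega_{X^\vee}$ is compatible with the fixed Dieudonné crystal $\mathbb D(Y)$, i.e. the deformation is ``$\breve\Lambda$-rigid.'' Translating this into the language of the maps $f_\bullet$ from the proof of Theorem \ref{tangent bundle theorem}, the tangent space to $\mathcal M(\Lambda)^{\smooth}$ should be cut out inside $\home(\omega_{X^\vee,\bullet},\Lie X_\bullet)$ by the vanishing of the component landing in $\Lie X_\bullet/\Pi\Lie X_\circ$; that is, I expect a canonical identification
$$\mathcal T_{\mathcal M(\Lambda)^{\smooth}}\cong \home\!\left(\omega_{X^\vee,\bullet}/\Pi\omega_{X^\vee,\circ},\;\Pi\Lie X_\circ\right),$$
with $X$ the universal $q$-divisible group on $\mathcal M(\Lambda)^{\smooth}$.

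Granting this, the conormal/normal sequence $0\to\mathcal T_{\mathcal M(\Lambda)^{\smooth}}\to \mathcal T_{\mathcal N^{\smooth}_{\overline\F_q}}|_{\mathcal M(\Lambda)}\to \mathcal N_{\mathcal M(\Lambda)^{\smooth}/\mathcal N^{\smooth}_{\overline\F_q}}\to 0$ combined with Theorem \ref{tangent bundle theorem} gives the normal bundle as the quotient of $\home(\omega_{X^\vee,\bullet},\Lie X_\bullet)$ by $\home(\omega_{X^\vee,\bullet}/\Pi\omega_{X^\vee,\circ},\Pi\Lie X_\circ)$. Using Lemma \ref{lemma isomorphisms of line bundles away from smooth locus}, on $\mathcal M(\Lambda)^{\smooth}$ the modules $\omega_{X^\vee,\bullet}$ and $\Lie X_\bullet$ are rank-two with rank-one sub/quotient line bundles $\Pi\omega_{X^\vee,\circ}$, $\omega_{X^\vee,\bullet}/\Pi\omega_{X^\vee,\circ}$ and $\Pi\Lie X_\circ$, $\Lie X_\bullet/\Pi\Lie X_\circ$ respectively. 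A short linear-algebra bookkeeping of the associated graded shows that the quotient line bundle is canonically
$$\home\!\left(\Pi\omega_{X^\vee,\circ},\;\Lie X_\bullet/\Pi\Lie X_\circ\right).$$
Now Proposition \ref{prop identifying O (1)} identifies $\Pi\Lie X_\circ\cong\Lie X_\bullet/\Pi\Lie X_\circ\cong\O(-q)$, and the analogous statement for the $\omega$-filtration (via the perfect pairings $\langle\cdot,\cdot\rangle:\omega_{X^\vee,\bullet}\otimes\Lie X_\circ\to\O_S$ and $\langle\cdot,\cdot\rangle:\omega_{X^\vee,\circ}\otimes\Lie X_\bullet\to\O_S$ from (\ref{structures on crystal subsubsection})) gives $\Pi\omega_{X^\vee,\circ}\cong \O(q)$: indeed the pairing identifies $\omega_{X^\vee,\bullet}$ with the dual of $\Lie X_\circ$, hence $\Pi\omega_{X^\vee,\circ}\subset\omega_{X^\vee,\bullet}$ is dual to the quotient $\Lie X_\circ/\Pi\Lie X_\bullet\cong\O(-q)$. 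Therefore $\home(\Pi\omega_{X^\vee,\circ},\Lie X_\bullet/\Pi\Lie X_\circ)\cong\O(-q)\otimes\O(-q)=\O(-2q)$, as claimed.

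The main obstacle I anticipate is the precise deformation-theoretic identification of $\mathcal T_{\mathcal M(\Lambda)^{\smooth}}$: one must verify that the condition ``the lift of $X$ remains isogenous to the fixed $Y$ with control on kernels'' is, on tangent vectors, exactly the single linear condition $f_\bullet(\omega_{X^\vee,\bullet})\subset\Pi\Lie X_\circ$, and not something weaker or stronger. This requires unwinding the construction of $\mathcal M(\Lambda)$ in \cite[\S4]{wang2020bruhat} in families over $R[\epsilon]/\epsilon^2$ — i.e. checking that the closed condition defining $\mathcal M(\Lambda)\subset\mathcal N$ corresponds under Grothendieck--Messing (Proposition \ref{GM deformation theory prop}) to requiring $\widehat\omega_{X^\vee}$ to lie in a fixed lift $\mathbb D(Y)(\widehat S)$ of the Hodge filtration coming from the constant group $Y$. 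A cross-check is available: the resulting normal bundle $\O(-2q)$ is consistent with the self-intersection and adjunction computations implicit in Theorem \ref{BT stratification first version}, and with the fact that $\mathcal M(\Lambda)$ is a degree-$(q+1)$ hypersurface in $\mathbb P^3$, so $\mathcal N_{\mathcal M(\Lambda)/\mathbb P^3}=\O(q+1)$ while the ambient $\mathcal N^{\smooth}_{\overline\F_q}$ contributes the discrepancy; I would use this as a sanity check but carry out the argument intrinsically via the filtrations above to keep it uniform for both $\Lambda\in\mathcal L_{\set 0}$ and $\Lambda\in\mathcal L_{\set 2}$.
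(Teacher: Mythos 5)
Your overall strategy --- compute the tangent bundle to $\mathcal M(\Lambda)^{\smooth}$ by the same Grothendieck--Messing analysis used for Theorem \ref{tangent bundle theorem}, take the quotient against the ambient tangent bundle, and then identify the resulting line bundle via Proposition \ref{prop identifying O (1)} --- is exactly the paper's. But there is a genuine error in the key intermediate step, and your proposed identification of $\mathcal T_{\mathcal M(\Lambda)^{\smooth}}$ cannot be right.

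You conjecture $\mathcal T_{\mathcal M(\Lambda)^{\smooth}}\cong\home\bigl(\omega_{X^\vee,\bullet}/\Pi\omega_{X^\vee,\circ},\;\Pi\Lie X_\circ\bigr)$. Both $\omega_{X^\vee,\bullet}/\Pi\omega_{X^\vee,\circ}$ and $\Pi\Lie X_\circ$ are line bundles by Lemma \ref{lemma isomorphisms of line bundles away from smooth locus}, so this is a line bundle --- but $\mathcal M(\Lambda)$ is a smooth surface (it is an open in the degree-$(q+1)$ hypersurface of Theorem \ref{BT stratification first version}(1)), so its tangent bundle must have rank $2$. Relatedly, the condition you guess --- that the entire image of $f_\bullet$ lie in $\Pi\Lie X_\circ$ --- is not what cuts out $\mathcal M(\Lambda)$; the correct extra condition is that $f_\bullet$ vanish identically on the rank-one submodule $\rho_{-,\ast}(\omega_{Y^\vee_R,\bullet})\subset\omega_{X^\vee,\bullet}$, and the heart of the paper's proof is the Dieudonn\'e-theoretic claim
$$\rho_{-,\ast}(\omega_{Y^\vee_R,\bullet})=\Pi\omega_{X^\vee,\circ}$$
on $\mathcal M(\Lambda)^{\smooth}$ (proved by explicitly computing the perpendicular of $\ker(\rho_{+,\ast})$ in $D(X)$ and chasing the snake lemma for the $\Pi$-action). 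This gives $\mathcal T_\Lambda\cong\home\bigl(\omega_{X^\vee,\bullet}/\Pi\omega_{X^\vee,\circ},\;\Lie X_\bullet\bigr)$, which is rank $2$ as required, and then the normal bundle is
$$\mathcal T/\mathcal T_\Lambda\;\cong\;\home\bigl(\Pi\omega_{X^\vee,\circ},\;\Pi\Lie X_\circ\bigr),$$
the restriction map $f_\bullet\mapsto f_\bullet|_{\Pi\omega_{X^\vee,\circ}}$ identifying the quotient of ``$f_\bullet$ carries $\Pi\omega_{X^\vee,\circ}$ into $\Pi\Lie X_\circ$'' by ``$f_\bullet$ kills $\Pi\omega_{X^\vee,\circ}$.'' Your final expression $\home(\Pi\omega_{X^\vee,\circ},\Lie X_\bullet/\Pi\Lie X_\circ)$ happens to give the same line bundle $\O(-2q)$ only because of the coincidental isomorphism $\Pi\Lie X_\circ\cong\Lie X_\bullet/\Pi\Lie X_\circ$ of Proposition \ref{prop identifying O (1)}; it does not follow from the rank-$1$ tangent space you posit (the subsequent ``quotient of $\home(\omega_{X^\vee,\bullet},\Lie X_\bullet)$ by a rank-$1$ bundle is a line bundle'' step is also off by rank count). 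To repair the argument you would need to establish the displayed claim about $\rho_{-,\ast}(\omega_{Y^\vee_R,\bullet})$, which is precisely the non-obvious content the ``main obstacle'' paragraph of your proposal correctly flags but resolves incorrectly.
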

\begin {proof}
 For simplicity, we  continue to assume  $\Lambda\in\mathcal L_{\set {0}} $; the other case is similar.

The first step is to compute the tangent bundle to $\mathcal M (\Lambda) ^ {\smooth} $. We wish to consider the lifts of $(X,\lambda,\iota,\rho)\in\mathcal M (\Lambda)^{\smooth} (R) $
to points of $\mathcal M (\Lambda)^{\smooth} (R [\epsilon]/\epsilon ^ 2) $ for $R$ an $\overline\F_q$-algebra, which we may take to be reduced since $\mathcal M(\Lambda)$ is reduced. Continuing the notation of (\ref{subsubsec:scheme_strata}),  such lifts  correspond to the pairs $$\begin{cases} f_\bullet:\omega_{X ^\check,\bullet}\to\Lie X_\bullet\\f_\circ:\omega_{X ^\check,\circ}\to\Lie X_\circ
\end{cases} $$
satisfying (i)'-(iii)' from the proof of Theorem \ref{tangent bundle theorem}, subject to the additional condition that $$f_\bullet (\rho_{-,\ast} (\omega_{Y ^\check_R,\bullet})) = f_\circ (\rho_{-,\ast} (\omega_{Y ^\check_R,\circ})) = 0. $$
By the definition of $Y$, we have $\omega_{Y ^\check_R,\circ} = 0 $, so the second condition is automatic. 

\begin{claim}
    On $\mathcal M (\Lambda) ^ {\smooth} $
we have \begin{equation}\label{eq:rho_minus_normal}
    \rho_{-,\ast} (\omega_{Y_R ^\check,\bullet}) =\Pi\omega_{X ^\check,\circ}. 
\end{equation}
\end{claim}

Given the claim, we conclude by comparing with the proof of Theorem \ref{tangent bundle theorem} that
the tangent bundle $\mathcal T_\Lambda $ to $\mathcal M (\Lambda) ^ {\smooth} $
is canonically isomorphic to $$\home (\omega_{X ^\check,\bullet}/\Pi\omega_{X^\vee,\circ},\Lie X_\bullet). $$
The normal bundle is the quotient $\mathcal T/\mathcal T_\Lambda $, which by Theorem
\ref {tangent bundle theorem} is 
\begin{equation*}
    \home (\Pi\omega_{X^\vee,\circ},\Pi\Lie X_\circ). 
\end{equation*}
Since $\Pi\omega_{X ^\check,\circ} $
is dual to $\Lie X_\bullet/\Pi\Lie X_\circ $
on $\mathcal M (\Lambda) ^ {\smooth} $, the theorem then follows from Proposition \ref{prop identifying O (1)}.
Now we turn to the proof of the claim. 
We have $$\rho_{-,\ast} (\omega_{Y_R^\check,\bullet}) = \rho_{-,\ast} (D(Y_R)_\bullet) = \left(\ker\left(\rho_{+,\ast}:D(X)_\circ \to D(Y_R)_\circ\right)\right)^\perp$$ because $\rho_+$ and $\rho_-$ are duals; the orthogonal complement is with respect to the perfect pairing on $D(X)$. 
Because $R$ is reduced, the Verschiebung $\mathtt V: D(Y_R)_\circ \to D(Y_R)_\bullet $ is  injective,  so (\ref{eq:V_circ_rho}) implies that $$\ker \left(\rho_{+,\ast}: D(X)_\circ \to D(Y_R)_\circ\right) = \omega_{X^\vee, \circ} + \Pi D(X)_\circ.$$
Arguments similar to Lemma \ref{lemma isomorphisms of line bundles away from smooth locus} show that $(\Pi D(X)_\circ)^\perp = \Pi D(X)_\bullet$, and so we conclude
$$\rho_{-,\ast} (\omega_{Y_R^\check,\bullet}) = \left(\omega_{X^\vee, \circ} + \Pi D(X)_\bullet\right)^\perp = \omega_{X^\vee, \bullet} \cap \Pi D(X)_\circ.$$
Now it follows from Lemma \ref{lemma isomorphisms of line bundles away from smooth locus} and the snake lemma for the diagram
\begin{center}
\begin{tikzcd}
    0 \arrow[r]& \omega_{X^\vee, \circ} \arrow[r]\arrow[d,"\Pi"] & D(X)_\circ\arrow[r]\arrow[d,"\Pi"] & \Lie X_\circ  \arrow[r]\arrow[d,"\Pi"] & 0\\
    0 \arrow[r]& \omega_{X^\vee, \bullet}  \arrow[r] & D(X)_\bullet  \arrow[r] & \Lie X_\bullet  \arrow[r] & 0
\end{tikzcd}
\end{center}
that $\omega_{X^\vee,\bullet} \cap \Pi D(X)_\bullet = \Pi \omega_{X^\vee, \circ}$, so the proof of the claim is complete.
\end {proof}
\subsection{Regularization of $\mathcal M$ and intersection theory}
\begin{notation}

Let $\widetilde{\mathcal N}(0)$ be the blowup of $\mathcal N(0)$ along $\mathcal M_{\set{1}}$, and let $\widetilde {\mathcal M}$ be the strict transform of the reduced locus $\mathcal M$; then $\widetilde {\mathcal M}$ is smooth. 
 We denote by $C (\Lambda_1) $
the exceptional divisor of $\widetilde {\mathcal M}$ above $\mathcal M (\Lambda_1) $
for each $\Lambda_1\in\mathcal L_{\set {1}} $. For any $\Lambda_0\in\mathcal L_{\set {0}} $
and $\Lambda_2\in\mathcal L_{\set {2}} $, let $\widetilde{\mathcal M} (\Lambda_0) $
and $\widetilde{\mathcal M} (\Lambda_2) $
be the strict transforms of $\mathcal M (\Lambda_0) $
and $\mathcal M (\Lambda_2) $, respectively.

\end{notation}
\begin {lemma}\label{lemma for getting divisor classes on exceptional divisor}
For any $\Lambda_1\in\mathcal L_{\set {1}} $, there exists an isomorphism $C (\Lambda_1)\cong\mathbb P ^ 1_{\overline\F_q}\times\mathbb P ^ 1_{\overline\F_q} $
such that:
\begin {enumerate}
\item For any $\Lambda_0\in\mathcal L_{\set {0}} $
with $\Lambda_1\subset_1\Lambda_0 $, $\widetilde M (\Lambda_0) $
meets $C (\Lambda_1) $
transversely along a divisor with class $(1, 0) $.
\item For any $\Lambda_2\in\mathcal L_{\set {2}} $
with $\Lambda_2\subset_1\Lambda_1 $, $\widetilde M (\Lambda_2) $
meets $C (\Lambda_1) $
transversely along a divisor with class $(0, 1) $.
\end {enumerate}
\end {lemma}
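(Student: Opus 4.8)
\textbf{Proof plan for Lemma \ref{lemma for getting divisor classes on exceptional divisor}.}

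The plan is to identify the exceptional divisor $C(\Lambda_1)$ concretely using Theorem \ref{BT stratification first version}(\ref{thm:BT_6}), which says that the complete local ring of $\mathcal N(0)$ at the point $\mathcal M(\Lambda_1)$ is $\breve\Z_q\llbracket X,Y,Z,W\rrbracket/(q - XY + ZW)$. Reducing mod $q$, the formal neighborhood of $\mathcal M(\Lambda_1)$ inside $\mathcal M$ is the cone $\Spec \overline\F_q\llbracket X,Y,Z,W\rrbracket/(XY - ZW)$, a three-dimensional quadric cone with an isolated singularity at the origin. Blowing up the origin (equivalently, the reduced point $\mathcal M(\Lambda_1)$) resolves this singularity, and the exceptional divisor of the blowup of the affine cone over a smooth quadric surface $Q \subset \mathbb P^3$ is exactly that quadric surface $Q \cong \mathbb P^1 \times \mathbb P^1$. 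This gives the isomorphism $C(\Lambda_1) \cong \mathbb P^1_{\overline\F_q} \times \mathbb P^1_{\overline\F_q}$, with the two rulings of the quadric corresponding to the two families of lines through the origin lying on the two "sides" of the cone (the planes $X = Z = 0$, etc.).

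Next I would match up the two rulings with the strict transforms of the $\{0\}$- and $\{2\}$-strata. By Theorem \ref{BT stratification first version}(\ref{thm:BT_3}), the branches of $\mathcal M$ through $\mathcal M(\Lambda_1)$ are precisely the $\mathcal M(\Lambda_0)$ with $\Lambda_1 \subset_1 \Lambda_0$ (these are two-dimensional, $\cong \mathbb P^2$, and by (\ref{thm:BT_4}) two distinct such meet along $\mathcal M(\Lambda_1')$'s) and the $\mathcal M(\Lambda_2)$ with $\Lambda_2 \subset_1 \Lambda_1$. In the local model $XY - ZW = 0$, the two families of $2$-planes through the origin are $\{X = \mu Z,\ W = \mu Y\}$ and $\{X = \mu W,\ Z = \mu Y\}$ for $\mu \in \mathbb P^1$; one family corresponds to the $\mathcal L_{\set 0}$-branches and the other to the $\mathcal L_{\set 2}$-branches. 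Each such plane is smooth, so its strict transform $\widetilde{\mathcal M}(\Lambda_0)$ (resp. $\widetilde{\mathcal M}(\Lambda_2)$) meets $C(\Lambda_1)$ transversely in a single fiber of one of the two projections $C(\Lambda_1) = \mathbb P^1 \times \mathbb P^1 \to \mathbb P^1$, i.e. in a curve of class $(1,0)$ or $(0,1)$; after fixing the isomorphism appropriately, the $\Lambda_0$-branches give class $(1,0)$ and the $\Lambda_2$-branches give class $(0,1)$. To see which family is which, I would track the incidence relations: by (\ref{thm:BT_4}) two distinct $\widetilde{\mathcal M}(\Lambda_0)$, $\widetilde{\mathcal M}(\Lambda_0')$ with $\Lambda_0 \cap \Lambda_0' = \Lambda_1$ are disjoint after blowup (their intersection $\mathcal M(\Lambda_1)$ has been separated), forcing them into the same ruling — consistent with both having class $(1,0)$, since $(1,0)\cdot(1,0) = 0$; and similarly for the $\Lambda_2$'s with $(0,1)$.

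The main obstacle I expect is the bookkeeping needed to pin down \emph{which} of the two rulings corresponds to $\mathcal L_{\set 0}$ versus $\mathcal L_{\set 2}$ in a way that is consistent across all the incidence statements in Theorem \ref{BT stratification first version} — in particular reconciling parts (\ref{thm:BT_two}), (\ref{thm:BT_4}), and (\ref{thm:BT_5}) simultaneously. One clean way around this is to avoid an intrinsic characterization and simply \emph{define} the isomorphism $C(\Lambda_1) \cong \mathbb P^1 \times \mathbb P^1$ so that the $\Lambda_0$-branches sweep out the first ruling; then (1) is by construction, and for (2) I need only check that the $\Lambda_2$-branches sweep out fibers of the \emph{other} projection, which follows because in the local model the two families of $2$-planes through the origin that lie on $XY = ZW$ are interchanged by the symmetry $Z \leftrightarrow W$ and are genuinely distinct rulings of the quadric. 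Transversality in both cases is immediate from the blowup being a resolution and each branch being smooth at $\mathcal M(\Lambda_1)$, so no nontrivial computation is required there. I would also remark that one can cross-check the normal-bundle degrees: restricting the class computed in Theorem \ref{normal bundle theorem} to these $\mathbb P^1$'s gives a consistency check on the identification, though it is not logically needed for the statement.
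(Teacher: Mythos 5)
Your overall geometric picture is right, and in fact your ``disjointness after blowup'' argument for the $\Lambda_0$'s agrees with the one the paper uses: by Theorem \ref{BT stratification first version}(\ref{thm:BT_4}) two distinct $\mathcal{M}(\Lambda_0)$, $\mathcal{M}(\Lambda_0')$ meet transversely at the single point $\mathcal{M}(\Lambda_1)$, so their strict transforms are disjoint and $D_{\Lambda_0}\cdot D_{\Lambda_0'}=0$, which forces both into the same ruling. (Incidentally, the branches $\mathcal{M}(\Lambda_0)$ are not $\cong\mathbb{P}^2$ as you parenthetically assert; by Theorem \ref{BT stratification first version}(\ref{thm:BT_one}) they are degree-$(q{+}1)$ surfaces in $\mathbb{P}^3$. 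This is cosmetic but worth fixing.)

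The genuine gap is in the step where you conclude that the $\Lambda_2$-branches sweep out the \emph{other} ruling. You argue by appealing to the $Z\leftrightarrow W$ symmetry of the local model, but this only shows that there are two rulings; it does not determine which of the global families $\mathcal{L}_{\{0\}}(\Lambda_1)$ vs.\ $\mathcal{L}_{\{2\}}(\Lambda_1)$ lands in which ruling, nor even that they land in distinct rulings. The missing input is Theorem \ref{BT stratification first version}(\ref{thm:BT_two}): $\mathcal{M}(\Lambda_0)$ and $\mathcal{M}(\Lambda_2)$ meet transversely along the one-dimensional $\mathcal{M}(\Lambda_0,\Lambda_2)\cong\mathbb{P}^1$ through $\mathcal{M}(\Lambda_1)$, so after the blowup their strict transforms still meet along the strict transform of this curve, which hits $C(\Lambda_1)$ in a single point. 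Hence $D_{\Lambda_0}\cdot D_{\Lambda_2}=1$, and the two lines cannot lie in the same ruling. Without this (or some equivalent intersection-theoretic input), the conclusion does not follow.

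For comparison, the paper avoids any explicit identification of the strata with the two pencils of $2$-planes in the local model. After fixing one isomorphism $C(\Lambda_1)\cong\mathbb{P}^1\times\mathbb{P}^1$, it notes that $\operatorname{Stab}(\Lambda_1)\subset\operatorname{Sp}(W)(\Q_q)$ acts transitively on $\mathcal{L}_{\{0\}}(\Lambda_1)$ compatibly with the action on $\mathcal{N}(0)$, so the classes $D_{\Lambda_0}$ differ by an automorphism of $\mathbb{P}^1\times\mathbb{P}^1$; then $D_{\Lambda_0}\cdot D_{\Lambda_0'}=0$ (from \ref{thm:BT_4}) forces $D_{\Lambda_0}=(\alpha,0)$, and $D_{\Lambda_0}\cdot D_{\Lambda_2}=1$ (from \ref{thm:BT_two}) forces $(\alpha,0)=(1,0)$, $D_{\Lambda_2}=(0,1)$. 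This is cleaner than your local-model route because it never needs to know how the Bruhat--Tits strata sit inside the formal coordinates of Theorem \ref{BT stratification first version}(\ref{thm:BT_6}); it uses only the isomorphism type of $C(\Lambda_1)$, the group action, and the already-established global incidence structure. Your approach would also work, but you should replace the symmetry hand-wave with the $D_{\Lambda_0}\cdot D_{\Lambda_2}=1$ computation, and you should be explicit that the tangent-cone argument (a smooth germ on the cone has tangent plane equal to one of the linear $2$-planes) is what gives you a \emph{line} of the quadric rather than some other conic.
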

\begin {proof}
By Theorem \ref{BT stratification first version}(6), we may fix one isomorphism $C (\Lambda_1)\cong\mathbb P ^ 1_{\overline \F_q}\times\mathbb P ^ 1_{\overline\F_q} $. Let $\mathcal L_{\set {0}} (\Lambda_1) $
be the set of $\Lambda_0\in\mathcal L_{\set {0}} $
with $\Lambda_1\subset_1\Lambda_0 $, and likewise $\mathcal L_{\set {2}} (\Lambda_1) $. The actions of $\operatorname {Stab} (\Lambda_1)\subset\operatorname {Sp} (W) (\Q_q) $
on $\mathcal L_{\set {0}} (\Lambda_1) $
and $\mathcal L_{\set {2}} (\Lambda_1) $
are transitive, and compatible with the natural $\operatorname{Sp}(W)(\Q_q)$-action on $\mathcal N(0)$ (see (\ref{V versus W})).
For distinct $\Lambda_0,\Lambda_0'\in\mathcal L_{\set {0}} (\Lambda_1) $, it follows that the divisor classes $$D_{\Lambda_0}\coloneqq\widetilde{\mathcal M}(\Lambda_0)\intersection C (\Lambda_1) $$
and $$D_{\Lambda_0'}\coloneqq\widetilde{\mathcal M}(\Lambda_0')\intersection C (\Lambda_1) $$
differ by an automorphism of $C (\Lambda_1) $. In particular, if $D_{\Lambda_0} = (\alpha,\beta) $, then $D_{\Lambda_0'} = (\alpha,\beta) $
or $(\beta,\alpha) $. On the other hand, since $\mathcal M(\Lambda_0) $
meets $\mathcal M (\Lambda_0') $
transversely at $\mathcal M(\Lambda_1) $, we have $$D_{\Lambda_0}\cdot D_{\Lambda_0} = 0 $$
for the intersection product on $C (\Lambda_1) $. Since $(\alpha,\beta)\cdot (\beta,\alpha) =\alpha ^ 2+\beta ^ 2 $, which can only vanish if $\alpha = \beta = 0$, it follows that $D_{\Lambda_0} = D_{\Lambda_0'} = (\alpha,\beta) $
with $\alpha\beta = 0 $; without loss of generality, assume $\beta = 0 $.
By the same reasoning, for any $\Lambda_2\in\mathcal L_{\set {2}} (\Lambda_1) $, $$D_{\Lambda_2}\coloneqq\widetilde{\mathcal M}(\Lambda_2)\intersection C (\Lambda_1) $$
has divisor class $(\gamma,\delta) $
with $\gamma\delta = 0 $. However, $\mathcal M(\Lambda_0) $
meets $\mathcal M (\Lambda_2) $
transversely along $\mathcal M (\Lambda_0,\Lambda_2) $, so we have $$D_{\Lambda_0}\cdot D_{\Lambda_2} = 1. $$
This implies $\alpha\delta = 1 $, so we have $D_{\Lambda_0} = (1, 0) $
and $D_{\Lambda_2} = (0, 1) $, as desired.
\end{proof}
\begin{notation}\label{notation:O(1) pullback RZ space}
Let $\Lambda\in\mathcal L_{\set {0}}\sqcup\mathcal L_{\set {2}} $.
We denote by $\mathcal O (1) $
the line bundle on $\widetilde{\mathcal M} (\Lambda) $
arising from the pullback of $\mathcal O (1) $
along the composite $$\widetilde{\mathcal M} (\Lambda)\to\mathcal M (\Lambda)\hookrightarrow\mathbb P ^ 3_{\overline\F_q}. $$
\end{notation}
\begin {lemma}\label{calculating normal bundle Lemma}
For any $\Lambda\in\mathcal L_{\set {0}}\sqcup\mathcal L_{\set {2}} $, the normal bundle to $\widetilde{\mathcal M} (\Lambda) $
inside $\widetilde{\mathcal N} (0)$
is $\O (- 2q) $.
\end {lemma}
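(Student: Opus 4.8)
The statement compares the normal bundle of the strict transform $\widetilde{\mathcal M}(\Lambda)$ in the blowup $\widetilde{\mathcal N}(0)$ with the normal bundle of $\mathcal M(\Lambda)^{\smooth}$ in $\mathcal N^{\smooth}_{\overline\F_q}$, which we have already identified with $\mathcal O(-2q)$ in Theorem \ref{normal bundle theorem}. The idea is that away from the singular locus $\mathcal M_{\set{1}}$ nothing changes, so the difference between the two normal bundles is concentrated along the exceptional divisors $C(\Lambda_1)$, and one computes the correction using standard blowup formulas together with the transversality statements of Lemma \ref{lemma for getting divisor classes on exceptional divisor}.

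First I would set notation: write $b:\widetilde{\mathcal N}(0)\to\mathcal N(0)$ for the blowup along $\mathcal M_{\set{1}}$, and recall that $\widetilde{\mathcal M}(\Lambda)$ is the strict transform of the smooth surface $\mathcal M(\Lambda)\cong\mathbb P^3_{\overline\F_q}$-hypersurface; since $\mathcal M(\Lambda)$ is itself smooth and meets $\mathcal M_{\set{1}}$ in the finitely many points $\mathcal M(\Lambda_1)$ with $\Lambda_1$ adjacent to $\Lambda$, the strict transform $\widetilde{\mathcal M}(\Lambda)\to\mathcal M(\Lambda)$ is just the blowup of $\mathcal M(\Lambda)$ at those points — but in fact, since $\mathcal M_{\set 1}$ consists of isolated points (Theorem \ref{BT stratification first version}(3)) and $\mathcal M(\Lambda)$ is a smooth surface passing through them, the strict transform is isomorphic to $\mathcal M(\Lambda)$ itself (blowing up a Cartier divisor, here a point on a surface containing it as a smooth point of the ambient threefold's singular locus, does change things — I need to be careful here and argue via the local equation $q-XY+ZW$ from Theorem \ref{BT stratification first version}(6)). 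The cleanest route: work on $\widetilde{\mathcal M}(\Lambda)$ directly, using $\widetilde{\mathcal M}(\Lambda)\hookrightarrow\widetilde{\mathcal N}(0)$ and the exact sequence relating $\mathcal N_{\widetilde{\mathcal M}(\Lambda)/\widetilde{\mathcal N}(0)}$ to the pullback of $\mathcal N_{\mathcal M(\Lambda)/\mathcal N(0)}$ twisted by the exceptional data. Concretely, over the open locus $\widetilde{\mathcal M}(\Lambda)\setminus\bigcup C(\Lambda_1)=\mathcal M(\Lambda)^{\smooth}$ the normal bundle is $\mathcal O(-2q)$ by Theorem \ref{normal bundle theorem}, so $\mathcal N_{\widetilde{\mathcal M}(\Lambda)/\widetilde{\mathcal N}(0)}$ and $\mathcal O(-2q)$ (the latter in the sense of Notation \ref{notation:O(1) pullback RZ space}) differ by a line bundle supported on the exceptional curves; I would pin down this twist by computing the self-intersection/degree of the normal bundle restricted to each $C(\Lambda_1)$-component and to the transforms of the $\mathbb P^1$'s $\widetilde{\mathcal M}(\Lambda_0,\Lambda_2)$, using Lemma \ref{lemma for getting divisor classes on exceptional divisor} and the transversality of the intersections in Theorem \ref{BT stratification first version}.

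The key numerical input is that along $C(\Lambda_1)\cong\mathbb P^1\times\mathbb P^1$, the surface $\widetilde{\mathcal M}(\Lambda)$ meets $C(\Lambda_1)$ transversely in a curve of class $(1,0)$ or $(0,1)$ depending on whether $\Lambda\in\mathcal L_{\set 0}$ or $\mathcal L_{\set 2}$, and this curve is a fiber of one of the two rulings, hence has self-intersection $0$ inside $\widetilde{\mathcal M}(\Lambda)$; combined with the blowup relation $b^\ast\mathcal O_{\mathbb P^3}(1)=\mathcal O(1)$ (no twist, because $\mathcal M(\Lambda)$ contains the point being blown up as a smooth point and the exceptional curve on $\widetilde{\mathcal M}(\Lambda)$ is a $(-1)$ or $0$-curve that one identifies), one checks that the corrections cancel and the normal bundle remains $\mathcal O(-2q)$. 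I expect the \textbf{main obstacle} to be the careful bookkeeping of the exceptional-divisor contribution: one must verify that blowing up changes $\mathcal M(\Lambda)$ and its normal bundle in a compensating way — i.e., that the strict transform $\widetilde{\mathcal M}(\Lambda)$ is abstractly still (isomorphic to) the smooth hypersurface (so that $\mathcal O(1)$ makes sense as in Notation \ref{notation:O(1) pullback RZ space}), and that the normal bundle sequence $0\to\mathcal N_{\widetilde{\mathcal M}(\Lambda)/\widetilde{\mathcal N}(0)}\to b^\ast T\widetilde{\mathcal N}(0)|_{\widetilde{\mathcal M}(\Lambda)}\to\cdots$ yields exactly $\mathcal O(-2q)$ with no extra twist by $\sum_{\Lambda_1} C(\Lambda_1)|_{\widetilde{\mathcal M}(\Lambda)}$. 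This is a local computation at the points $\mathcal M(\Lambda_1)$ using the explicit complete local ring $\breve\Z_q\llbracket X,Y,Z,W\rrbracket/(q-XY+ZW)$ and the local description of $\mathcal M(\Lambda)$ inside it; I would do this computation explicitly in local coordinates on the blowup chart, confirm the strict transform is smooth there with trivial exceptional contribution to its normal bundle, and then conclude by the seesaw/degree argument on $\mathbb P^1\times\mathbb P^1$ that globally $\mathcal N_{\widetilde{\mathcal M}(\Lambda)/\widetilde{\mathcal N}(0)}\cong\mathcal O(-2q)$.
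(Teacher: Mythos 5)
Your overall plan is the same as the paper's: observe that the normal bundle agrees with $\mathcal O(-2q)$ on the open locus $\widetilde{\mathcal M}(\Lambda)\setminus\bigcup E_i$, write $\mathcal N_{\widetilde{\mathcal M}(\Lambda)/\widetilde{\mathcal N}(0)}=\mathcal O(-2q+\sum\alpha_i E_i)$, and pin down the $\alpha_i$ by computing the triple intersection $m_i:=\widetilde{\mathcal M}(\Lambda)\cdot\widetilde{\mathcal M}(\Lambda)\cdot C(\Lambda_1^{(i)})$ in two ways, using Lemma \ref{lemma for getting divisor classes on exceptional divisor}. However, the numerics you assert contain a concrete error that would derail the argument.

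You write that the curve $E_i=\widetilde{\mathcal M}(\Lambda)\cap C(\Lambda_1^{(i)})$, being a fiber of a ruling on $C(\Lambda_1^{(i)})\cong\mathbb P^1\times\mathbb P^1$, ``hence has self-intersection $0$ inside $\widetilde{\mathcal M}(\Lambda)$.'' This does not follow: being a ruling gives $E_i\cdot_{C(\Lambda_1^{(i)})}E_i=0$, i.e.\ self-intersection $0$ computed \emph{inside $C(\Lambda_1^{(i)})$}, not inside $\widetilde{\mathcal M}(\Lambda)$. Inside $\widetilde{\mathcal M}(\Lambda)$ the curve $E_i$ is the exceptional divisor of the blowup of the smooth surface $\mathcal M(\Lambda)$ at the point $\mathcal M(\Lambda_1^{(i)})$, so $E_i\cdot_{\widetilde{\mathcal M}(\Lambda)}E_j=-\delta_{ij}$. (This in turn requires you to first resolve your own hesitation about whether $\widetilde{\mathcal M}(\Lambda)\to\mathcal M(\Lambda)$ is an isomorphism; it is not, it is precisely the blowup at those finitely many smooth points, and you need that fact.) The distinction matters: the equality $m_i=0$ comes from the computation inside $C(\Lambda_1^{(i)})$, namely $(1,0)\cdot(1,0)=0$; and to extract $\alpha_i$ from $m_i$ you compute the same number inside $\widetilde{\mathcal M}(\Lambda)$ as $\bigl[\mathcal O(-2q+\sum_j\alpha_j E_j)\bigr]\cdot E_i=-\alpha_i$, using $\mathcal O(1)\cdot E_i=0$ (since $\mathcal O(1)$ is pulled back from $\mathcal M(\Lambda)$) and $E_j\cdot E_i=-\delta_{ij}$. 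If $E_i^2$ in $\widetilde{\mathcal M}(\Lambda)$ were $0$, as you claim, the second computation would read $m_i=0\cdot\alpha_i=0$ and give no information about $\alpha_i$; the proof would stall exactly at the step you flag as ``one checks that the corrections cancel.'' So you have the right two ambient surfaces in play, but you have swapped which one carries the self-intersection $0$ and which one carries the $-1$, and that swap is exactly the gap.
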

\begin {proof}
Let $\set {\Lambda_1 ^ {(0)},\ldots,\Lambda_1 ^ {(n)}} $
be the set of lattices $\Lambda_1 $
in $\mathcal L_{\set {1}} $
such that $\mathcal M (\Lambda_1) $
lies on $\mathcal M (\Lambda) $. Then the projection $\widetilde{\mathcal M} (\Lambda)\to\mathcal M (\Lambda) $
is the blowup along the points $\mathcal M (\Lambda_1 ^ {(i)}) $, with exceptional divisors $$E_i\coloneqq C (\Lambda_1 ^ {(i)})\intersection\widetilde{\mathcal M} (\Lambda). $$
Since $\mathcal M (\Lambda) $
is a smooth surface, we have $E_i\cdot E_j = -\delta_{ij} $
for the intersection pairing on $\widetilde{\mathcal M} (\Lambda) $.

Now, the normal bundle to $\widetilde{\mathcal M}(\Lambda)$ inside $\widetilde{\mathcal N}(0)$
is locally free of rank one, and Lemma \ref{lemma isomorphisms of line bundles away from smooth locus} implies it is isomorphic to $$\O (- 2q +\alpha_0E_0+\ldots +\alpha_nE_n) $$
for some $\alpha_i\in\mathbb Z $.
On the other hand, as $\widetilde {\mathcal N}(0)$ is formally smooth, we can compute the triple intersection number
$$m_i =\widetilde{\mathcal M} (\Lambda)\cdot\widetilde{\mathcal M} (\Lambda)\cdot C (\Lambda_1 ^ {(i)}) $$
in two ways, for each $0\leq i\leq n $:
\begin {align*}
m_i & =\left (\widetilde{\mathcal M} (\Lambda)\cdot C (\Lambda_1 ^ {(i)})\right)\cdot_{C (\Lambda_1 ^ {(i)})}\left (\widetilde{\mathcal M} (\Lambda)\cdot C (\Lambda_1 ^ {(i)})\right) = 0\;\;\text {(Lemma \ref{lemma for getting divisor classes on exceptional divisor})}\\
& =\left (\widetilde{\mathcal M} (\Lambda)\cdot\widetilde{\mathcal M} (\Lambda)\right)\cdot_{\widetilde{\mathcal M} (\Lambda)}\left (\widetilde{\mathcal M} (\Lambda)\cdot C (\Lambda_1 ^ {(i)})\right) = -\alpha_i.
\end {align*}
So we find $\alpha_i = 0 $
for all $i $, as desired.
\end {proof}
\subsection {The $\spin $
action on $\mathcal N$} \label{spin action subsection}
\subsubsection {}\label{V versus W}  The endomorphism algebra $\End (W) $
is equipped with an involution $\dagger $
given by the adjoint with respect to $\langle\cdot,\cdot\rangle_\bullet $, and $$V\coloneqq\End (W) ^ {\dagger = 1,\tr = 0} =\End (\mathbb X,\iota_{\mathbb X}) ^ {\ast = 1,\tr = 0} $$
is a split orthogonal space of dimension 5, where $\ast$ denotes the Rosati involution. There is a natural projection \begin {equation}\label{projection pi}\pi:\GSP(W)\to  \SO(V)\end {equation}
inducing an isomorphism $\spin (V)(\Q_q)\cong\GSP(W) (\Q_q)$. There is also a canonical action of $\spin (V) (\Q_q) $
on $\mathcal N $ (by modifying $\rho$); when restricted to $\operatorname {Spin} (V) (\Q_q) $, the resulting action of $\operatorname {Sp} (W) (\Q_q) $
on $\mathcal M $
is compatible with the natural actions on $\mathcal L_{\set {0}} $, $\mathcal L_{\set {2}} $, $\mathcal L_{\set {0 2}} $, and $\mathcal L_{\set {1}} $.

\begin{definition}\label{def:lattices_W}
    Define the sets of lattices
\begin {align*}
\mathscr L & =\set {\Lambda\subset W\,:\,\Lambda = q ^ n\Lambda ^\check\text { for some } n\in\Z}\\
\mathscr L_{\paramodular} & =\set {\Lambda_{\paramodular}\subset W\,:\, q ^ {n +1}\Lambda_{\paramodular} ^\check\subset_2\Lambda_{\paramodular}\subset_2p ^ n\Lambda_{\paramodular} ^\check\text { for some } n\in\Z}.
\end {align*}
\end{definition}
\begin{rmk}
    Both $\mathscr L$ and $\mathscr L_{\paramodular}$ are homogeneous spaces for $\GSP (W) (\Q_q) $; the stabilizer of a point in $\mathscr L $
is a hyperspecial subgroup, and the stabilizer of a point in $\mathscr L_{\paramodular} $
is a paramodular subgroup. 
\end{rmk}

Using $\mathscr L $
and $\mathscr L_{\paramodular} $
rather than $\mathcal L_{\set {0}} $, $\mathcal L_{\set {2}} $, and $\mathcal L_{\set {1}} $, we can extend the combinatorial description of $\mathcal M $
to all of $\mathcal N_{\red} $.
\begin {definition}\label{lattice strata on RZ space definition}
For any $\Lambda\in\mathscr L $, choose an arbitrary $g\in\GSP (W) (\Q_q) $
such that $g\Lambda\in\mathcal L_{\set {0}}\subset\mathscr L $, and define
$$\mathcal M_+ (\Lambda)\coloneqq g ^ {-1}\mathcal M_{\set {0}} (g\Lambda). $$
Similarly, choose $g'\in\GSP (W) (\Q_q) $
such that $g'\Lambda\in\mathcal L_{\set {2}}\subset\mathscr L $, and define $$\mathcal M_- (\Lambda)\coloneqq (g') ^ {-1}\mathcal M_{\set {2}} (g'\Lambda). $$

For any $\Lambda_{\paramodular}\in\mathscr L_{\paramodular} $, choose $g\in\GSP (W) (\Q_q) $
such that $g\Lambda_{\paramodular}\in\mathcal L_{\set {1}}\subset\mathscr L_{\paramodular} $, and define $\mathcal M (\Lambda_{\paramodular}) = g ^ {-1}\mathcal M_{\set {1}} (g\Lambda_{\paramodular}) $.
\end {definition}

\begin {prop}\label{proposition decomposition of all of N red}
\begin {enumerate}
\item Definition \ref{lattice strata on RZ space definition} yields bijections
$$\mathscr L\times\set {\PM}\xrightarrow {\sim}\set {\text {irreducible components of }\mathcal N_{\red}} $$
and
$$\mathscr L_{\paramodular}\xrightarrow {\sim}\set {\text {singular points of }\mathcal N_{\red}}. $$
\item Choose any $\Lambda\in\mathscr L $. For the Weil descent datum in (\ref{subsubsec:Weil_descent}), we have
$$\phi \left (\mathcal M_+ (\Lambda)\right) =\sigma ^\ast\mathcal M_- (q\Lambda) $$
and $$\phi\left (\mathcal M_- (\Lambda)\right) =\sigma ^\ast\mathcal M_+ (\Lambda). $$
\end {enumerate}
\end {prop}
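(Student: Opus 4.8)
\textbf{Proof proposal for Proposition \ref{proposition decomposition of all of N red}.}

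The plan is to bootstrap everything from Theorem \ref{BT stratification first version} together with the $\GSP(W)(\Q_q)$-equivariance of $\mathcal N$ set up in \S\ref{spin action subsection}. First I would verify that Definition \ref{lattice strata on RZ space definition} is well-posed: for $\Lambda\in\mathscr L$ one needs that $\mathcal M_+(\Lambda)$ does not depend on the choice of $g$ carrying $\Lambda$ into $\mathcal L_{\set 0}$. This follows because any two such choices differ by an element of $\operatorname{Stab}(g\Lambda)$ inside $\GSP(W)(\Q_q)$, and this stabilizer preserves $\mathcal M_{\set 0}(g\Lambda)$ by the compatibility of the $\operatorname{Sp}(W)(\Q_q)$-action on $\mathcal M$ with the action on $\mathcal L_{\set 0}$ noted in \S\ref{spin action subsection} (the argument for the $\nu$-graded pieces $\mathcal N(i)$ is handled by composing with a scalar similitude, exactly as in the suppressed proof of ``$2$ spin orbits''). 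The same remark applies to $\mathcal M_-(\Lambda)$ and to $\mathcal M(\Lambda_{\paramodular})$. Note also that $\mathcal M_+(\Lambda)=\mathcal M_-(\Lambda')$ can happen only when the two lattices are unequal, since over $\overline\F_q$ the points of $\mathcal M_{\set 0}(\Lambda_0)\setminus\mathcal M_{\set 1}$ and $\mathcal M_{\set 2}(\Lambda_2)\setminus\mathcal M_{\set 1}$ are distinguished by whether $M_\bullet+\tau M_\bullet$ or $M_\bullet\cap\tau M_\bullet$ equals a given lattice (\ref{meaning of BT strata subsubsection}), so the two families of irreducible components are genuinely disjoint.

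For part (1): the group $\GSP(W)(\Q_q)$ acts transitively on $\mathscr L$ (self-dual-up-to-scaling lattices are all conjugate), so every irreducible component of $\mathcal N_{\red}$ lies in some $\mathcal N(i)$ and is a $\GSP(W)(\Q_q)$-translate of a component of $\mathcal M=\mathcal N(0)_{\red}$; by Theorem \ref{BT stratification first version}(1), the components of $\mathcal M$ are exactly the $\mathcal M_{\set 0}(\Lambda_0)$ and $\mathcal M_{\set 2}(\Lambda_2)$. Using $g\cdot\mathcal N(i)=\mathcal N(i+\operatorname{ord}_q\nu(g))$, one checks that translating back to $\mathcal N(0)$ sends each such translate to a uniquely determined $\mathcal M_\pm(\Lambda)$; injectivity of the resulting map $\mathscr L\times\set{\pm}\to\{\text{components}\}$ follows from the disjointness remark above together with injectivity of $\Lambda_0\mapsto\mathcal M_{\set 0}(\Lambda_0)$ etc.\ within each $\mathcal N(i)$. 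The statement for singular points is identical, using Theorem \ref{BT stratification first version}(3),(6): the non-smooth locus of $\mathcal N(0)$ is precisely $\bigsqcup_{\Lambda_1\in\mathcal L_{\set 1}}\mathcal M_{\set 1}(\Lambda_1)$, these are isolated points, and $\mathcal L_{\set 1}$ is an $\operatorname{Sp}(W)(\Q_q)$-orbit with paramodular stabilizer, so translating across all $\mathcal N(i)$ gives the bijection with $\mathscr L_{\paramodular}$.

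For part (2): by equivariance it is enough to prove the two identities for a single $\Lambda\in\mathcal L_{\set 0}$, and then transport. Work on $\overline\F_q$-points via covariant Dieudonn\'e theory, as in \S\ref{meaning of BT strata subsubsection}. By the definition of $\phi$ in \S\ref{subsubsec:Weil_descent}, $\phi(\mathcal M_+(\Lambda))$ is (the $\sigma$-twist of) the Zariski closure inside $\mathcal N(1)$ of the locus of lattices $M\subset N$ with $\bigl(V^{-1}(M+\tau M)\bigr)_\bullet=\breve\Lambda$. Using $V\breve\Lambda=\Pi\breve\Lambda$ this is equivalent to $\Pi^{-1}M_\circ+\Pi^{-1}\tau M_\circ=\breve\Lambda$, and since points of $\mathcal N(1)$ satisfy $M_\circ=q(M_\bullet)^\vee$ with respect to $\langle\cdot,\cdot\rangle$, hence $\Pi^{-1}M_\circ=q(M_\bullet)^\vee$ relative to $\langle\cdot,\cdot\rangle_\bullet$, the condition becomes $M_\bullet\cap\tau M_\bullet=q\breve\Lambda$. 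Choosing $g\in\GSP(W)(\Q_q)$ with $\nu(g)=q^{-1}$ so that $q\,g\Lambda\in\mathcal L_{\set 2}$, the locus $g\,\phi(\mathcal M_+(\Lambda))$ inside $\mathcal M$ is the closure of the points with $M_\bullet\cap\tau M_\bullet=q\,g\breve\Lambda$, i.e.\ $\mathcal M_{\set 2}(qg\Lambda)=\mathcal M_-(q\Lambda)$ after untwisting the $g$; this gives $\phi(\mathcal M_+(\Lambda))=\sigma^\ast\mathcal M_-(q\Lambda)$. The second identity $\phi(\mathcal M_-(\Lambda))=\sigma^\ast\mathcal M_+(\Lambda)$ is the analogous computation with the roles of $M_\bullet+\tau M_\bullet$ and $M_\bullet\cap\tau M_\bullet$ exchanged, starting from $\Lambda\in\mathcal L_{\set 2}$; one finds the $\nu$-shift here is by $+1$ but the multiplication by $q$ in $\mathscr L$ is absorbed because $q\Lambda\in\mathcal L_{\set 0}$ already represents the same element needed, so no factor of $q$ survives. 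The main obstacle is bookkeeping the interplay between the grading by $\operatorname{ord}_q\nu$, the operation $\Lambda\mapsto q\Lambda$ on $\mathscr L$ (which is the identity on the $\GSP$-homogeneous space but shifts $\mathcal L_{\set 0}\leftrightarrow\mathcal L_{\set 2}$), and the $\sigma$-twist; once these normalizations are pinned down consistently, each verification is a short lattice computation.
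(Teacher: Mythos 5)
Your proof of part (1) and of the first identity in part (2) follows essentially the same path as the paper: reduce to the orbit and stabilizer description via $g\cdot\mathcal N(i)=\mathcal N(i+\ord_q\nu(g))$, then for $\phi(\mathcal M_+(\Lambda))$ run the Dieudonn\'e-lattice computation using $V\breve\Lambda=\Pi\breve\Lambda$ and the duality condition on $\mathcal N(1)$. Where you diverge from the paper is the second identity $\phi(\mathcal M_-(\Lambda))=\sigma^\ast\mathcal M_+(\Lambda)$: you propose redoing the lattice computation starting from $\Lambda\in\mathcal L_{\set 2}$, whereas the paper instead observes that $\phi^2(\mathcal M_+(\Lambda))=(\sigma^2)^\ast\mathcal M_+(q\Lambda)$ and deduces the second identity formally from the first. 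Your route does work — starting from $V^{-1}M_\circ\cap\tau V^{-1}M_\circ=\breve\Lambda$ and $\Pi^{-1}M_\circ=qM_\bullet^\vee$, substitution and an application of $\tau^{-1}$ give $M_\bullet^\vee\cap\tau M_\bullet^\vee=q^{-1}\breve\Lambda$, which dualizes to $M_\bullet+\tau M_\bullet=q\breve\Lambda^\vee=\breve\Lambda$, with the factor of $q$ cancelling precisely because $\Lambda\in\mathcal L_{\set 2}$ means $\breve\Lambda=q\breve\Lambda^\vee$ — but the phrase ``the multiplication by $q$ in $\mathscr L$ is absorbed'' is too vague to count as a proof and should be replaced by this explicit dualization. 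The paper's $\phi^2$ trick is slicker and avoids a second computation, at the cost of invoking $\phi^2(\mathcal M_+(\Lambda))=(\sigma^2)^\ast\mathcal M_+(q\Lambda)$ without proof; your direct computation sidesteps that unstated input, which is arguably a small virtue. One further small point: in your well-definedness check, the two choices of $g$ carrying $\Lambda$ into $\mathcal L_{\set 0}$ need not differ by an element of $\operatorname{Stab}(g\Lambda)$; what you actually need is equivariance of $\Lambda_0\mapsto\mathcal M_{\set 0}(\Lambda_0)$ under the full stabilizer of $\mathcal N(0)$ in $\GSP(W)(\Q_q)$ (those $g$ with $\nu(g)\in\Z_q^\times$), not just under $\SP(W)(\Q_q)$; this is true and easy, but since the paper's cited compatibility is only stated for $\SP(W)(\Q_q)$, a one-line extension (e.g.\ by factoring off a unit scalar) would tighten the argument.
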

\begin {proof}
For (1), it suffices to show there are two $\spin (V) (\Q_q) $-orbits of irreducible components of $\mathcal N_{\red} $, and only one orbit of singular points. However, since $$g\cdot\mathcal N (i) =\mathcal N (i +\ord_q\nu (g)) $$
for $g\in\spin (V) (\Q_q) $, it suffices to show that there are two $\operatorname {Spin} (V) (\Q_q) $-orbits of irreducible components of $\mathcal M $, and one $\operatorname {Spin} (V) (\Q_q) $-orbit of singular points on $\mathcal M $. This follows from the transitivity of the $\SP (W) (\Q_q) $-actions on $\mathcal L_{\set {0}} $, $\mathcal L_{\set {2}} $, and $\mathcal L_{\set {1}} $.

For (2), note that $\phi ^ 2 (\mathcal M_+ (\Lambda)) = (\sigma ^ 2) ^\ast\mathcal M_+ (q\Lambda) $, so it suffices to show the first relation. Without loss of generality, assume $\Lambda\in\mathcal L_{\set {0}} $. By definition, $\phi (\mathcal M _+(\Lambda)) (\overline\F_q) $, viewed as a subset of $\mathcal N (\overline\F_q) $, is the Zariski closure of the set of points corresponding to lattices $M\subset N $
such that $$(V ^ {-1} (M +\tau M))_\bullet =\breve\Lambda $$
and $$\Pi ^ {-1} M_\circ = qM_\bullet ^\check. $$
Since $V\breve\Lambda =\Pi\breve\Lambda $, the first condition is equivalent to $\Pi ^ {-1} M_\circ +\Pi ^ {-1}\tau M_\circ =\breve\Lambda $, or dually $$M_\bullet\intersection\tau M_\bullet = q\breve\Lambda .$$

Now choose any $g\in\spin (V) (\Q_q) $
with $\nu (g) = q ^ {-1} $; we have $qg\Lambda\in\mathcal L_{\set {2}} $. The locus $$g\phi (\mathcal M_+ (\Lambda)) (\overline\F_q)\subset\mathcal M (\overline\F_q) $$
is the Zariski closure of the set of points corresponding to lattices with $M_\bullet\intersection\tau M_\bullet = qg\breve\Lambda $
and $\Pi ^ {-1} M_\circ = M_\bullet ^\check $. But this is exactly the stratum $\mathcal M_{\set {2}} (qg\Lambda) (\overline\F_q)=\mathcal M_- (qg\Lambda) (\overline\F_q) $, so we conclude $\phi (\mathcal M_+ (\Lambda)) = \sigma^\ast g ^ {-1}\mathcal M_- (qg\Lambda) =\sigma ^\ast\mathcal M_- (q\Lambda) $, as desired.
\end {proof}
From Theorem
\ref {BT stratification first version}, we immediately deduce the following relations among the components $\mathcal M_\PM (\Lambda) $
and the points $\mathcal M (\Lambda_{\paramodular}) $.
\begin {corollary}\label{corollary about intersection combinatorics on RZ space}
For any $\Lambda_{\paramodular}\in\mathscr L_{\paramodular} $
and  $\Lambda,\Lambda'\in\mathscr L $, we have:
\begin {enumerate}
\item $\mathcal M (\Lambda_{\paramodular}) $
lies on $\mathcal M_+ (\Lambda) $
if and only if $\Lambda_{\paramodular}\subset_1\Lambda $.
\item $\mathcal M (\Lambda_{\paramodular}) $
lies on $\mathcal M_- (\Lambda) $
if and only if $\Lambda\subset_1\Lambda_{\paramodular} $.
\item $\mathcal M_+ (\Lambda) $
meets $\mathcal M_- (\Lambda') $
if and only if $q\Lambda\subset_2\Lambda'\subset_2\Lambda $.
\item If $\Lambda \neq \Lambda'$, then for $\delta = +$ or $-$, $\mathcal M_\delta (\Lambda) $
and $\mathcal M_\delta (\Lambda') $ can meet only in singular points of $\mathcal N_{\red} $.
\end {enumerate}
\end {corollary}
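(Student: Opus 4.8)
The assertions (1)--(3) of Corollary \ref{corollary about intersection combinatorics on RZ space} are local statements about $\mathcal N_{\red}$, and by the transitivity of the $\GSP(W)(\Q_q)$-action on the sets of lattices involved, together with the identity $g\cdot \mathcal N(i) = \mathcal N(i + \ord_q\nu(g))$, it suffices to verify each claim for a single well-chosen pair of lattices lying in the fundamental domain $\mathcal N(0)$ --- i.e.\ one reduces immediately to the corresponding statement about the strata $\mathcal M_{\set 0}(\Lambda_0)$, $\mathcal M_{\set 2}(\Lambda_2)$, and $\mathcal M_{\set 1}(\Lambda_1)$ inside $\mathcal M$. For (1) and (2), I would translate $\mathcal M(\Lambda_{\paramodular})$ into $\mathcal M_{\set 1}(\Lambda_1)$ for an appropriate $\Lambda_1\in\mathcal L_{\set 1}$ and apply Theorem \ref{BT stratification first version}(\ref{thm:BT_3}), which says precisely that $\mathcal M_{\set 1}(\Lambda_1)$ lies on $\mathcal M_{\set 0}(\Lambda_0)$ iff $\Lambda_1\subset \Lambda_0$ and on $\mathcal M_{\set 2}(\Lambda_2)$ iff $\Lambda_2\subset \Lambda_1$; one then checks that under the dictionary of Definition \ref{lattice strata on RZ space definition} (and using that passing between $\mathcal M_+$ and $\mathcal M_-$ rescales the self-dual condition by a power of $q$) the containments $\Lambda_1\subset\Lambda_0$ and $\Lambda_2\subset\Lambda_1$ become the homogenized conditions $\Lambda_{\paramodular}\subset_1\Lambda$ and $\Lambda\subset_1\Lambda_{\paramodular}$, respectively. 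The indices are forced: any lattice chain between a self-dual (or $q$-dual) lattice and a paramodular lattice has length exactly one, which is already built into Definitions \ref{def:lattices_W} and the definition of $\mathcal L_{\set 1}$.

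For (3), I would similarly reduce to the case $\Lambda\in\mathcal L_{\set 0}$ and $\Lambda'\in\mathcal L_{\set 2}$ (after translating by a suitable $g\in\GSP(W)(\Q_q)$ with $\ord_q\nu(g) = \pm1$), and then invoke Theorem \ref{BT stratification first version}(\ref{thm:BT_two}): $\mathcal M_{\set 0}(\Lambda_0)$ meets $\mathcal M_{\set 2}(\Lambda_2)$ iff $(\Lambda_0,\Lambda_2)\in\mathcal L_{\set{02}}$, i.e.\ iff $q\Lambda_0 \subset_2 \Lambda_2 \subset_2 \Lambda_0$. Homogenizing, $q\Lambda\subset_2\Lambda'\subset_2\Lambda$ is exactly the image of this condition, so (3) follows. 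The only subtlety is bookkeeping: one must check that the chosen element $g$ (or $g'$) used to place $\Lambda$ into $\mathcal L_{\set 0}$ versus $\mathcal L_{\set 2}$ does not affect the resulting combinatorial relation, which is clear because the relation is manifestly $\GSP(W)(\Q_q)$-equivariant.

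Statement (4) is the one genuinely new assertion, and I expect it to be the main point --- though still short. Here one wants: for $\Lambda\neq\Lambda'$ in $\mathscr L$ and $\delta$ fixed, $\mathcal M_\delta(\Lambda)\cap\mathcal M_\delta(\Lambda')$ lies in the singular locus of $\mathcal N_{\red}$. Again translating to the fundamental domain, this becomes the statement that for distinct $\Lambda_0,\Lambda_0'\in\mathcal L_{\set 0}$ (resp.\ distinct $\Lambda_2,\Lambda_2'\in\mathcal L_{\set 2}$) the intersection $\mathcal M_{\set 0}(\Lambda_0)\cap\mathcal M_{\set 0}(\Lambda_0')$ (resp.\ $\mathcal M_{\set 2}(\Lambda_2)\cap\mathcal M_{\set 2}(\Lambda_2')$) is contained in $\mathcal M_{\set 1}$. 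This is immediate from Theorem \ref{BT stratification first version}(\ref{thm:BT_4}) and (\ref{thm:BT_5}): those parts assert that such an intersection is either empty or equals a single $\mathcal M_{\set 1}(\Lambda_1)$-stratum (transversely), which is a point in $\mathcal M_{\set 1}$, and by Theorem \ref{BT stratification first version}(\ref{thm:BT_6}) the stratum $\mathcal M_{\set 1}$ is exactly the nonsmooth locus of $\mathcal N(0)$. The only care needed is to ensure the two components really are of the same sign $\delta$ after translation, so that one does not accidentally land in the case of (3); this is automatic because the sign is $\GSP(W)(\Q_q)$-equivariant (both components are moved by the same $g$). Thus the entire corollary is a direct unwinding of Theorem \ref{BT stratification first version} through the lattice dictionary of Definition \ref{lattice strata on RZ space definition}, with no hard input beyond that theorem.
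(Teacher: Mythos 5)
Your argument is correct and is exactly what the paper means by the phrase ``from Theorem \ref{BT stratification first version}, we immediately deduce'' --- the paper omits the proof entirely, and your translation-to-$\mathcal N(0)$ plus homogenize-the-lattice-conditions unwinding is the intended reasoning. One small point worth making explicit when you write it up: for (1)--(3) you need a \emph{single} $g\in\GSP(W)(\Q_q)$ moving both lattices into the fundamental domain simultaneously, and this is available precisely when the two strata in question lie in a common $\mathcal N(i)$, which is forced when the lattice condition (e.g.\ $\Lambda_{\paramodular}\subset_1\Lambda$) holds and is of course necessary for an intersection; when they live in different $\mathcal N(i)$'s the geometric statement is vacuous and the lattice condition fails for level reasons. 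You gesture at this (``the sign is $\GSP(W)(\Q_q)$-equivariant''), and the observation is correct, but it bears one more sentence.
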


\section{The first explicit reciprocity law: geometric inputs}\label{sec:1ERL_geom}
\subsection {Abel-Jacobi maps for schemes with ordinary quadratic singularities}\label{AJ section}
\subsubsection {}
Let $R_0 $
be a Henselian discrete valuation ring with uniformizer $\pi $, algebraically closed residue field $k $, and fraction field $K_0$. The inertia subgroup $I_{K_0} $
of $\Gal (\overline K_0/K_0) $
has the canonical tame character $t_p: I_{K_0}\to\Z_p (1) $
for any $p\neq\operatorname {char} (k) $. Let $R $
be the quadratic extension $R_0 [\pi ^ {1/2}] $, and $K $
its field of fractions, with inertia subgroup $I_K\subset I_{K_0}.$  We write $s_0, s,\eta_0,\eta $
for the closed points and the generic points of $\Spec R_0 $ and $\Spec R $, with corresponding geometric points $\overline\eta_0 = \overline\eta $. Let $X $
be an irreducible scheme of finite type and pure relative dimension $2r - 1 $
over $\Spec R_0 $, for some integer $r\geq 1 $. We assume $X $
has
 {ordinary quadratic singularities}: this means that $X $
is smooth outside a finite set of closed points $\set {x_i}_{i\in I} $
in $X_{s_0} $, and,
\'etale locally near each $x_i $, $X $
is isomorphic to $\Spec R_0 [y_0,\ldots, y_{2r -1}]/(Q -\pi) $, with $Q $
the equation of a smooth quadric in $\mathbb P_{R_0} ^ {2r -1} $.
\subsubsection {}
The blowup $Y $
of $X_R $
at the points $\set {x_i}_{i\in I} $
is strictly semistable in the sense of \cite{saito2003weight}, with a particularly simple form  \cite{illusie2000formule}. The irreducible components of the special fiber $Y_s $
of $Y $
are $\widetilde X_s $, the strict transform of $X_s $, and the exceptional divisors $D_i $. Each $D_i $
is isomorphic to the smooth projective quadric in $\mathbb P^{2r}_k = \operatorname{Proj} (k[y_0, \ldots, y_{2r-1}, t])$ cut out by $Q - t^2$, and 
the intersection $C_i = D_i\intersection\widetilde X_s $
is the hyperplane section $t = 0$, so that $C_i$ is a smooth quadric in $\mathbb P^{2r-1}_k$. Since $Y $
is semistable, we have
\begin {equation}\label{normal bundle to C}\mathcal N_{C_i/\widetilde X_s} = -\mathcal N_{C_i/D_i} =\O (-1)\end {equation}
in the Picard group of $C_i $.
\subsubsection {}
\label{definition of nearby cycles subsubsection}
Let $O$ be a finite flat extension of $\Z_p$ with $p$ odd and $p \neq\Char(k)$, and let $\varpi \in O$ be a uniformizer. We fix a coefficient ring $\Lambda = O $
or $O/\varpi ^ m $
for some $m\geq 1. $
We recall the definition of the nearby cycles complex: let $\overline j: Y_{\overline\eta}\to Y $
and $\overline i: Y_s\to Y $
denote the inclusions of the geometric generic and special fibers, respectively. Then $R\Psi \Lambda = \overline i ^\ast\circ\overline j_\ast\circ\overline j ^\ast\Lambda $, an element of the bounded derived category of sheaves on $Y_s $; it has a canonical action of $I_K\subset I_{K_0} $
factoring through the tame character $t_p $, and the (increasing) monodromy filtration $M_\bullet R\Psi\Lambda $. Fix $T\in I_K $
 such that $t_p (T) $
generates $\Z_p (1) $; then the monodromy operator $T -1 $
on $R\Psi\Lambda $
induces compatible maps $$T -1: M_iR\Psi\Lambda\to M_{i -2} R\Psi\Lambda $$
for all $i\in\Z $.

\begin {prop} [Saito]
\label{Prop graded pieces of nearby cycles}
Let $$i_0:\widetilde X_s\sqcup\bigsqcup_{i\in I} D_i\to Y_s $$
and $$i_1:\bigsqcup_{i\in I} C_i\to Y_s $$
be the natural maps. Then the graded pieces of $M_\bullet R\Psi\Lambda $
are given by:
\begin {align*}
\GR_i ^ MR\Psi\Lambda & = 0, & | i | > 1,\\
\GR_1 ^ MR\Psi\Lambda & = i_{1\ast}\Lambda (-1) [-1],\\
\GR_0 ^ MR\Psi\Lambda & = i_{0\ast}\Lambda,\\
\GR_{-1} ^ MR\Psi\Lambda & = i_{1\ast}\Lambda [-1].
\end {align*}
\end {prop}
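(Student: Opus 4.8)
\textbf{Proof plan for Proposition \ref{Prop graded pieces of nearby cycles}.}

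The plan is to reduce the computation to the local structure of $Y$ near the exceptional divisors and invoke the general theory of nearby cycles for strictly semistable schemes. First I would recall that since $Y/R$ is strictly semistable, the weight spectral sequence of Rapoport--Zink and the description of $R\Psi\Lambda$ in terms of the stratification of $Y_s$ by intersections of components (as in \cite{saito2003weight}, \cite{illusie2000formule}) apply. In the present situation the special fiber $Y_s$ has the very simple shape $\widetilde X_s \cup \bigsqcup_{i\in I} D_i$, where the $D_i$ are pairwise disjoint and $C_i = D_i \cap \widetilde X_s$ is a smooth quadric; in particular no triple intersections occur, which is why the monodromy filtration has length two and $\GR_i^M R\Psi\Lambda$ vanishes for $|i|>1$. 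I would then write down the standard formulas: $\GR_0^M R\Psi\Lambda$ is the pushforward of $\Lambda$ from the normalization of $Y_s$, i.e.\ from $\widetilde X_s \sqcup \bigsqcup_i D_i$; while $\GR_{\pm 1}^M R\Psi\Lambda$ are pushforwards from the (disjoint) codimension-one strata $\bigsqcup_i C_i$, up to the Tate twist $(-1)$ on the $+1$ piece and the cohomological shift $[-1]$ coming from the fact that $C_i$ has codimension one in $Y_s$.

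The one genuine point to check is the precise normalization — the Tate twist and the shift — and that we are working with the base change to the ramified extension $R = R_0[\pi^{1/2}]$ rather than $R_0$ itself. Here I would argue as follows. \'Etale-locally at $x_i$, $X \cong \Spec R_0[y_0,\dots,y_{2r-1}]/(Q-\pi)$ with $Q$ a smooth quadratic form, so $X_R \cong \Spec R[y]/(Q - \pi^{1/2}\cdot\pi^{1/2})$, and blowing up the singular point produces a chart in which $Y$ is cut out by $uv = \pi^{1/2}$ (after a linear change of coordinates diagonalizing $Q$ and splitting off a hyperbolic plane) — this is the standard $A_1$-resolution, and it is at this stage that passing to $R$ is essential so that the two components of $Y_s$ meet transversally with reduced intersection. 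On this chart the local computation of $R\Psi\Lambda$ for the node $uv = \varpi_K$ is classical: $R\Psi\Lambda$ has $\GR_0^M = \Lambda$ on each branch, $\GR_{-1}^M = \Lambda[-1]$ supported on the intersection, and $\GR_1^M = \Lambda(-1)[-1]$ supported on the intersection, with the monodromy operator $T-1$ realizing the identification between $\GR_1^M$ and $\GR_{-1}^M(-1)$ via the tame character $t_p(T)$. Globalizing over the finitely many disjoint $D_i$ and noting $\widetilde X_s$ is smooth (hence contributes only to $\GR_0^M$), one obtains exactly the stated formulas.

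The main obstacle — really the only subtlety — is bookkeeping the compatibility of the monodromy filtration with the choice of $T \in I_K$ and making sure the Tate twists land on the correct graded piece; everything else is an application of the strictly semistable case of Saito's theorem combined with the explicit local models above. I would therefore present this as: (i) reduce to the local chart $uv = \pi^{1/2}$ via the ordinary quadratic singularity hypothesis and the base change to $R$; (ii) quote the local computation of graded pieces of nearby cycles at a node; (iii) assemble globally using that the $D_i$ are disjoint and $\widetilde X_s$ is smooth, thereby identifying $i_{0*}$ and $i_{1*}$ as in the statement. Since this is essentially \cite[\S2]{saito2003weight} specialized to our geometry, no new ideas are needed beyond verifying the local model, which is why I attribute the proposition to Saito.
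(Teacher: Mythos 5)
Your proposal is correct and amounts to re-deriving the content of the result, but the paper's proof is simply the one-line citation to Saito \cite[Proposition 2.2.3]{saito2003weight}, which gives the graded pieces $\GR_r^M R\Psi\Lambda$ for an arbitrary strictly semistable scheme as sums of shifted and Tate-twisted pushforwards from the $(p+1)$-fold intersection strata $Y^{(q)}$; your formulas are exactly that general statement specialized to the situation where $Y^{(q)}=\emptyset$ for $q\geq 2$. Your reduction to the local model $uv=\varpi_R$ near $C_i$ and the appeal to smoothness of $\widetilde X_s$ is the correct reason Saito's theorem applies, so the two arguments are the same in substance — the paper just does not spell out the local verification since it has already invoked \cite{illusie2000formule} for the strict semistability of $Y$.
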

\begin {proof}
This is \cite[Proposition 2.2.3]{saito2003weight}.
\end {proof}

\begin {lemma}\label{Identifying M0 lemma}
The following composite map is an isomorphism:
$$H ^ {2r} (Y_s, M_0R\Psi\Lambda)\to H ^ {2r} (Y_s,\GR_0 ^ MR\Psi\Lambda) = H ^ {2r} (\widetilde X_s,\Lambda)\oplus\bigoplus_{i\in I} H ^ {2r} (D_i,\Lambda)\twoheadrightarrow H ^ {2r} (\widetilde X_s,\Lambda). $$
\end {lemma}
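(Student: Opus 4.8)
The strategy is to use the weight spectral sequence associated to the monodromy filtration on $R\Psi\Lambda$, combined with the explicit description of the graded pieces in Proposition \ref{Prop graded pieces of nearby cycles} and the simple geometry of the exceptional divisors $D_i$ (smooth projective quadrics of dimension $2r-1$ over $\Spec\Lambda$, with $C_i\hookrightarrow D_i$ a smooth hyperplane section).

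First I would analyze the composite through its two constituents. The first map $H^{2r}(Y_s, M_0 R\Psi\Lambda) \to H^{2r}(Y_s,\GR_0^M R\Psi\Lambda)$ fits into the long exact sequence for the inclusion $M_{-1} R\Psi\Lambda \hookrightarrow M_0 R\Psi\Lambda$ with quotient $\GR_0^M R\Psi\Lambda$. By Proposition \ref{Prop graded pieces of nearby cycles}, $\GR_{-1}^M R\Psi\Lambda = i_{1\ast}\Lambda[-1]$, so $M_{-1}R\Psi\Lambda$ has cohomology concentrated in the $C_i$'s, shifted: $H^j(Y_s, M_{-1}R\Psi\Lambda) = \bigoplus_i H^{j-1}(C_i,\Lambda)$. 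The relevant piece of the long exact sequence reads
\begin{equation*}
\bigoplus_i H^{2r-1}(C_i,\Lambda) \to H^{2r}(Y_s, M_0 R\Psi\Lambda) \to \bigoplus_i H^{2r}(D_i,\Lambda)\oplus H^{2r}(\widetilde X_s,\Lambda) \xrightarrow{\partial} \bigoplus_i H^{2r}(C_i,\Lambda).
\end{equation*}
The key computation is that the connecting map $\partial$, restricted to the factor $\bigoplus_i H^{2r}(D_i,\Lambda)$, is \emph{injective}, and that moreover its image is disjoint from the image of $H^{2r}(\widetilde X_s,\Lambda)$ inside $\bigoplus_i H^{2r}(C_i,\Lambda)$ — more precisely, that $H^{2r}(D_i,\Lambda)\to H^{2r}(C_i,\Lambda)$ and $H^{2r}(\widetilde X_s,\Lambda)\to H^{2r}(C_i,\Lambda)$ together span a subspace in which the $D_i$-contributions are detected. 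This is where the geometry of quadrics enters: for the smooth quadric $D_i\subset\mathbb P^{2r}$ of even dimension $2r-1$... wait, $D_i$ has dimension $2r-1$, which is odd, so its middle cohomology $H^{2r}(D_i,\Lambda)$ — that is, $H^{2r-1+1}$, one above the middle — I should instead use that the restriction $H^{2r-2}(D_i,\Lambda)\to H^{2r-2}(C_i,\Lambda)$ (the hyperplane-section Lefschetz map) and the Gysin map interplay. The cleanest route is to invoke purity/Poincaré duality on each $D_i$ and $C_i$ and the fact that the cycle class of $C_i$ in $D_i$ is the hyperplane class, together with the normal bundle computation $\mathcal N_{C_i/\widetilde X_s}=\mathcal O(-1)$ from (\ref{normal bundle to C}), to identify $\partial$ on the $D_i$-summand with (a unit multiple of) the hyperplane restriction, which is injective on $H^{2r}(D_i)$ by the hard Lefschetz theorem for quadrics (the restriction $H^{k}(D_i)\to H^k(C_i)$ is injective for $k$ below the middle dimension of $D_i$ and an isomorphism in the appropriate range). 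This forces the map $H^{2r}(Y_s,M_0R\Psi\Lambda)\to H^{2r}(\widetilde X_s,\Lambda)\oplus\bigoplus_i H^{2r}(D_i,\Lambda)$ to land, after the projection kills the $D_i$-part, isomorphically onto $H^{2r}(\widetilde X_s,\Lambda)$; injectivity comes from the vanishing of the incoming $\bigoplus_i H^{2r-1}(C_i,\Lambda)$ — here $2r-1$ is the middle dimension of $C_i$ (a smooth quadric of dimension $2r-2$) — no wait, $C_i$ has dimension $2r-2$, so $H^{2r-1}(C_i,\Lambda)$ is \emph{above} the middle and vanishes for a quadric of even dimension $2r-2$ (the odd cohomology of an even-dimensional smooth quadric vanishes). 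That vanishing gives injectivity of the first arrow onto its image, and the quadric Lefschetz computation gives that the composite with the projection is an isomorphism.

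Putting these together: the incoming term $\bigoplus_i H^{2r-1}(C_i,\Lambda)=0$ since $2r-1$ is odd and $C_i$ is an even-dimensional smooth quadric, so $H^{2r}(Y_s,M_0R\Psi\Lambda)\hookrightarrow H^{2r}(\widetilde X_s,\Lambda)\oplus\bigoplus_i H^{2r}(D_i,\Lambda)$ with image $=\ker\partial$. The map $\partial$ is, on the $D_i$-summand, the negative Gysin/restriction interplay governed by $\mathcal N_{C_i/D_i}=\mathcal O(1)$ versus $\mathcal N_{C_i/\widetilde X_s}=\mathcal O(-1)$, hence on $H^{2r}(D_i,\Lambda)$ it is (up to sign and unit) the Lefschetz hyperplane restriction, which is \emph{injective} on $H^{2r}(D_i,\Lambda)$ by hard Lefschetz for the quadric $D_i$ (dimension $2r-1$, so $H^{2r}$ sits one above the middle $H^{2r-1}$, and $L:H^{2r-2}\xrightarrow{\sim}H^{2r}$ forces restriction $H^{2r}(D_i)\to H^{2r}(C_i)$ injective). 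Therefore $\ker\partial$ maps isomorphically under the projection to $H^{2r}(\widetilde X_s,\Lambda)$: an element of $\ker\partial$ with zero $\widetilde X_s$-component has its $D_i$-components killed by the injective restriction, hence is zero, giving injectivity of the composite; and surjectivity holds because any class in $H^{2r}(\widetilde X_s,\Lambda)$ can be corrected by a unique linear combination of $D_i$-classes to lie in $\ker\partial$, using that the restriction $H^{2r}(\widetilde X_s,\Lambda)\to\bigoplus H^{2r}(C_i,\Lambda)$ lands in the image of the injective map $\bigoplus H^{2r}(D_i,\Lambda)\to\bigoplus H^{2r}(C_i,\Lambda)$ — which I would check by comparing the two normal bundles, both hyperplane sections of quadrics, so the images of the restriction maps from $D_i$ and from $\widetilde X_s$ into $H^{2r}(C_i,\Lambda)$ agree in the relevant Tate-twisted rank-one piece.

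\textbf{Main obstacle.} The delicate point is the precise computation of the connecting homomorphism $\partial$ on the quadric summands $H^{2r}(D_i,\Lambda)$ and the verification that the image of $H^{2r}(\widetilde X_s,\Lambda)$ in $\bigoplus_i H^{2r}(C_i,\Lambda)$ is contained in (indeed governed by) the image of the $D_i$-restrictions. This is ultimately bookkeeping with Gysin sequences, the cohomology ring of smooth quadrics, and the sign/normal-bundle data in (\ref{normal bundle to C}), but one must be careful with the integral coefficients $\Lambda=O$ or $O/\varpi^m$ (quadrics of even dimension have a rank-two middle cohomology, but the relevant groups here are Tate twists of $\Lambda$ in the range we need, so no $2$-torsion subtlety arises since $p$ is odd). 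I would handle this by reducing to the geometric statement over $k$ via the purity of $R\Psi$ and Proposition \ref{Prop graded pieces of nearby cycles}, then citing the standard structure of the cohomology of quadrics and their hyperplane sections. Alternatively — and this is likely the cleanest path — the isomorphism can be read off directly from the weight spectral sequence degenerating at $E_2$ in the relevant bidegrees together with the explicit $E_1$-differentials, exactly as in \cite[\S2]{saito2003weight}; I would check whether Saito's formalism already packages the statement, in which case the proof reduces to identifying the relevant $E_2$-entry with $H^{2r}(\widetilde X_s,\Lambda)$.
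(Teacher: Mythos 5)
Your skeleton is the same as the paper's: you pass through the distinguished triangle $M_{-1}R\Psi\Lambda \to M_0R\Psi\Lambda \to \GR_0^M R\Psi\Lambda$, identify $H^j(Y_s,M_{-1}R\Psi\Lambda) \cong \bigoplus_i H^{j-1}(C_i,\Lambda)$ from Proposition \ref{Prop graded pieces of nearby cycles}, kill the incoming term via $H^{2r-1}(C_i,\Lambda)=0$ (odd cohomology of an even-dimensional smooth quadric), and then analyze the connecting map into $\bigoplus_i H^{2r}(C_i,\Lambda)$. So you have the right ingredients.

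Where you diverge from the paper — and where the gap sits — is in your treatment of the restriction map $H^{2r}(D_i,\Lambda)\to H^{2r}(C_i,\Lambda)$. You only claim \emph{injectivity}, and then you are forced into a hand-wavy extra step: you need to know that $H^{2r}(\widetilde X_s,\Lambda)$ restricts into the \emph{image} of $\bigoplus_i H^{2r}(D_i,\Lambda)$ in $\bigoplus_i H^{2r}(C_i,\Lambda)$, and you defer this to a normal-bundle comparison that you do not actually carry out. This is precisely the point where your argument is incomplete. The cleaner and correct observation is that $H^{2r}(D_i,\Lambda)\to H^{2r}(C_i,\Lambda)$ is in fact an \emph{isomorphism}. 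Indeed both groups are free of rank one over $\Lambda$ (the quadric $D_i$ has odd dimension $2r-1$ so $H^{2r}(D_i,\Lambda)=\Lambda(-r)$ spanned by $L^r$; the quadric $C_i$ has even dimension $2r-2$ so $H^{2r}(C_i,\Lambda)=\Lambda(-r)$ spanned by $L_C^r$, since $2r$ is strictly above its middle degree $2r-2$), and restriction sends the hyperplane class $L$ of $D_i$ to the hyperplane class $L_C$ of $C_i$, hence $L^r\mapsto L_C^r$, a generator to a generator. Once this isomorphism is in hand, there is nothing left to check: the kernel of the connecting map is exactly the graph, over $H^{2r}(\widetilde X_s,\Lambda)$, of the map $H^{2r}(\widetilde X_s,\Lambda)\to\bigoplus_i H^{2r}(C_i,\Lambda)\xrightarrow{\sim}\bigoplus_i H^{2r}(D_i,\Lambda)$, so the projection to $H^{2r}(\widetilde X_s,\Lambda)$ is automatically bijective. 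No separate surjectivity argument or normal-bundle comparison is needed. Your appeal to ``hard Lefschetz forces restriction injective'' is also not quite a correct deduction as stated (hard Lefschetz on $D_i$ alone says nothing about the restriction to $C_i$); it is the explicit ring structure of quadric cohomology, not hard Lefschetz, that does the work here.
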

\begin {proof}
First, consider the tautological distinguished triangle
\begin{equation}\label{second distinguished triangle eqn}M_{-1} R\Psi\Lambda\to M_0R\Psi\Lambda\to\GR_0 ^ MR\Psi\Lambda\to M_{-1} R\Psi\Lambda [+1]. \end{equation}
This yields an exact sequence
$$H ^ {2r} (Y_k, M_{-1} R\Psi\Lambda)\to H ^ {2r} (Y_k, M_0R\Psi\Lambda)\to H ^ {2r} (Y_k,\GR ^ M_0R\Psi\Lambda)\to H ^ {2r +1} (Y_k, M_{-1} R\Psi\Lambda). $$
Applying Proposition \ref{Prop graded pieces of nearby cycles} and using $H^{2r -1} (C_i,\Lambda) = 0 $
for all $i\in I $, we obtain $$0\to H ^ {2r} (Y_k, M_0R\Psi\Lambda)\to H ^ {2r} (\widetilde X_k,\Lambda)\oplus\bigoplus_{i\in I} H ^ {2r} (D_i,\Lambda)\to H ^ {2r} (C_i,\Lambda). $$
However, $H ^ {2r} (D_i,\Lambda)\to H ^ {2r} (C_i,\Lambda) $
is an isomorphism for each $i\in I $
by the Lefschetz hyperplane theorem, and the lemma follows.
\end {proof}
\subsubsection{}
Let $R\Psi_X \Lambda$ be the nearby cycles complex for $X$, defined as in (\ref{definition of nearby cycles subsubsection}); it also coincides with the nearby cycles complex for $X_R$. From now on, we will assume:
\begin {equation}\label{base change assumption equation}\tag {$\text{BC}_X$}
\text {the base change map } H ^ {i} (X_{\overline\eta_0},\Lambda)\to H ^ {i} (X_{s_0}, R\Psi_X\Lambda)\text { is an isomorphism for all }i.
\end {equation}

Since the blowup map $f: Y\to X_R $
is proper and is an isomorphism on generic fibers, we have a canonical isomorphism $$f_\ast R\Psi\Lambda = R\Psi_X\Lambda $$
by \cite[\S2.1.7]{deligne2006groupes}. In particular, (\ref {base change assumption equation}) implies:
\begin {equation}\tag {$\text {BC}_Y $}
\begin{split}
\text {the base change map } H^i(X_{\overline\eta}, \Lambda) = H ^ {i} (Y_{\overline\eta},\Lambda)\to H ^ {i} (Y_s, R\Psi\Lambda)\text { is an isomorphism for all }i.
\end{split}
\end {equation}



\begin {lemma}\label{lemma key diagram for monodromy calculation}
Let $j:\bigsqcup_{i\in I} C_i\hookrightarrow\widetilde X_s $
be the natural embedding. Then the monodromy operator $T - 1 $
on $H ^ {2r -1} (Y_s, R\Psi\Lambda) $
fits into a commutative diagram with exact rows:
\begin {center}
\begin {tikzcd}
H ^ {2r -1} (X_{\overline \eta}, \Lambda)\arrow [r, "\alpha"]\arrow [d, "T -1"] &\oplus_{i\in I} H ^ {2r-2} (C_i,\Lambda (-1))\arrow [r, "j_\ast"]\arrow [d, "t"] & H ^ {2r} (\widetilde X_s,\Lambda)\arrow[r, "\gamma"]&H^{2r}(X_{\overline \eta}, \Lambda)\\
H ^ {2r -1} (X_{\overline\eta}, \Lambda) &\oplus_{i\in I} H ^ {2r-2} (C_i,\Lambda)\arrow [l, "\beta"] & H ^ {2r-2} (\widetilde X_s,\Lambda)\arrow [l, "j ^\ast"]&H^{2r-2}(X_{\overline \eta}, \Lambda)\arrow[l].
\end {tikzcd}
\end {center}
Here, $t $
is the isomorphism $-\otimes t_p (T) $.
\end {lemma}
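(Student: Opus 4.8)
The plan is to construct the diagram by combining the nearby-cycles formalism recalled above with the semistable-reduction picture for $Y$. The top row will be (a piece of) the long exact sequence obtained from the monodromy filtration on $R\Psi\Lambda$, and the bottom row its ``dual'' counterpart; the vertical maps $t$ and $T-1$ will be the monodromy operator realized through the graded pieces computed in Proposition \ref{Prop graded pieces of nearby cycles}. Concretely, I would first use the distinguished triangle relating $M_{-1}R\Psi\Lambda$, $M_1R\Psi\Lambda$, and the graded pieces: since $\GR_i^M R\Psi\Lambda = 0$ for $|i|>1$, we have $M_{-1}R\Psi\Lambda = M_0 R\Psi\Lambda$ modulo nothing above degree $0$ — more precisely there are triangles $M_{-1}\to M_0 \to \GR_0^M \to$ and $M_0 \to M_1 = R\Psi\Lambda \to \GR_1^M\to$. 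Taking cohomology in degrees $2r-1,2r$ and substituting $\GR_1^M R\Psi\Lambda = i_{1*}\Lambda(-1)[-1]$ and $\GR_{-1}^M R\Psi\Lambda = i_{1*}\Lambda[-1]$, together with $\GR_0^M R\Psi\Lambda = i_{0*}\Lambda$, produces the terms $\oplus_i H^{2r-2}(C_i,\Lambda(-1))$, $\oplus_i H^{2r-2}(C_i,\Lambda)$, and the cohomology of $\widetilde X_s$ and $\sqcup D_i$; Lemma \ref{Identifying M0 lemma} (and its Tate-twisted/$M_1$ analogue, proved the same way via the Lefschetz hyperplane theorem for $D_i \supset C_i$) lets me discard the $D_i$-contributions and identify $H^{2r}(Y_s, M_0 R\Psi\Lambda)$ with $H^{2r}(\widetilde X_s,\Lambda)$. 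Using $\mathrm{(BC_Y)}$ to replace $H^\ast(Y_s,R\Psi\Lambda)$ by $H^\ast(X_{\overline\eta},\Lambda)$ throughout then yields the two rows as displayed, with $\alpha$ the connecting/``vanishing cycle'' map, $j_*$ the Gysin map for $j\colon \sqcup C_i \hookrightarrow \widetilde X_s$, and $\gamma$ (resp. the unlabeled map in the bottom row) the specialization map $H^{2r}(\widetilde X_s,\Lambda)\to H^{2r}(Y_s, R\Psi\Lambda) = H^{2r}(X_{\overline\eta},\Lambda)$.

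The remaining point is the commutativity of the square built from $T-1$ and $t$. Here I would invoke the standard description of the monodromy operator on $R\Psi\Lambda$ for a strictly semistable scheme as the composite
$$R\Psi\Lambda \twoheadrightarrow \GR_1^M R\Psi\Lambda = i_{1*}\Lambda(-1)[-1] \xrightarrow{\ \otimes t_p(T)^{-1}\ } i_{1*}\Lambda[-1] = \GR_{-1}^M R\Psi\Lambda \hookrightarrow R\Psi\Lambda,$$
which is precisely the content of \cite[\S2.2]{saito2003weight}: the graded monodromy $N = T-1\colon \GR_1^M \to \GR_{-1}^M$ is the Tate-twisting isomorphism $t$. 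Passing to $H^{2r-1}$ and tracing through the identifications of the previous paragraph, the outer map $T-1$ on $H^{2r-1}(X_{\overline\eta},\Lambda)$ factors through $\oplus_i H^{2r-2}(C_i,\Lambda(-1)) \xrightarrow{t} \oplus_i H^{2r-2}(C_i,\Lambda)$, with the factorization maps being exactly $\alpha$ (from the $\GR_1$-side, i.e. the composition $H^{2r-1}(X_{\overline\eta},\Lambda)\to H^{2r-1}(Y_s,\GR_1^M[\,\cdot\,])$) and $\beta$ (from the $\GR_{-1}$-side, into $H^{2r-1}$). This is the desired commutative square.

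\textbf{Main obstacle.} The genuinely delicate step is not the bookkeeping with the two triangles — that is routine once Proposition \ref{Prop graded pieces of nearby cycles} and Lemma \ref{Identifying M0 lemma} are in hand — but rather pinning down \emph{that the two independently-defined maps agree}: namely that the connecting homomorphism $\alpha$ coming out of the $M_0 \to M_1 \to \GR_1^M$ triangle, and the map controlling the bottom row's $\beta$ coming out of $M_{-1}\to M_0 \to \GR_0^M$, are compatible with the monodromy operator via $t$. One has to be careful that all the identifications ($\mathrm{(BC_Y)}$, the vanishing of $H^{2r-1}(C_i,\Lambda)$ and $H^{2r+1}(C_i,\Lambda)$ used to make the rows exact, the Lefschetz isomorphism for $D_i\supset C_i$) are \emph{canonical} and commute with the $I_K$-action, so that no spurious sign or unit ambiguity is introduced; this is where I would spend the most care, appealing to Saito's explicit formulas rather than re-deriving them. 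A secondary subtlety is exactness of the rows at the $H^{2r}(\widetilde X_s,\Lambda)$ and $\oplus_i H^{2r-2}(C_i,\Lambda)$ spots: this needs the cohomology of the smooth quadrics $C_i$ (dimension $2r-2$) in the relevant degrees, which is classical, plus the Lefschetz hyperplane theorem already used in Lemma \ref{Identifying M0 lemma}.
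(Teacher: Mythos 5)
Your proposal follows essentially the same route as the paper's proof: you factor $T-1$ through the graded pieces of the monodromy filtration using the vanishing of $\GR_i^M$ for $|i|>1$, obtain the two exact rows from the triangles $M_0 \to R\Psi\Lambda \to \GR_1^M$ and its $M_{-1}$-analogue together with Lemma \ref{Identifying M0 lemma} and $(\text{BC}_Y)$, and appeal to Saito's explicit identifications — Corollary 2.2.4.2 for the description of the graded monodromy as the Tate-twist isomorphism $t$, and Proposition 2.2.6 to recognize the connecting map into $H^{2r}(Y_s,M_0R\Psi\Lambda) \cong H^{2r}(\widetilde X_s,\Lambda)$ as the pushforward $j_*$. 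One small slip: in your display for the composite describing $T-1$, the map $i_{1*}\Lambda(-1)[-1] \to i_{1*}\Lambda[-1]$ should be $\otimes\, t_p(T)$, not $\otimes\, t_p(T)^{-1}$ (tensoring $\Lambda(-1)$ by an element of $\Lambda(1)$ is what lands in $\Lambda$), matching the paper's convention for $t$.
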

\begin {proof}
From the vanishing of $\GR_i ^ MR\Psi\Lambda $
for $| i | >1 $
(Proposition \ref{Prop graded pieces of nearby cycles}), we have a canonical factorization
\begin {equation}\label{factoring monodromy equation}
T -1: R\Psi\Lambda\to\GR_1 ^ MR\Psi\Lambda\to M_{-1} R\Psi\Lambda\to R\Psi\Lambda;
\end {equation}
taking cohomology and applying Proposition \ref{Prop graded pieces of nearby cycles}, (\ref{factoring monodromy equation}) induces a commutative diagram:
\begin {center}
\begin {tikzcd}
H ^ {2r -1} (Y_s, R\Psi\Lambda)\arrow [r, "\alpha"]\arrow [d, "T -1"] &\oplus_{i\in I} H ^ {2r -2} (C_i,\Lambda (-1))\arrow [d, "t"]\\
H ^ {2r -1} (Y_s, R\Psi\Lambda) &\oplus_{i\in I} H ^ {2r - 2} (C_i,\Lambda)\arrow [l, "\beta"].
\end {tikzcd}
\end {center}
The description of $t $
is \cite[Corollary 2.2.4.2]{saito2003weight}.

We now explain the exactness of the top row
\begin {equation}\label{repeating top row of diagram equation}
H ^ {2r -1} (X_{\overline \eta}, \Lambda)\xrightarrow {\alpha}\oplus_{i\in I} H ^ {2r -2} (C_i,\Lambda (-1))\xrightarrow {j_\ast} H ^ {2r} (\widetilde X_s,\Lambda)\to H^{2r}(X_{\overline \eta}, \Lambda)
\end {equation}
of the diagram in the lemma. From the tautological distinguished triangle
\begin {equation}\label{1st distinguished triangle equation}
M_0R\Psi\Lambda\to R\Psi\Lambda\to\GR ^ M_1R\Psi\Lambda\to M_0R\Psi\Lambda [+1],
\end {equation}
we deduce the exact sequence
\begin {equation}
H ^ {2 r -1} (Y_s, R\Psi\Lambda)\xrightarrow {\alpha}\oplus_{i\in I} H ^ {2r -2} (C_i,\Lambda (-1))\xrightarrow {\delta} H ^ {2r} (Y_s, M_0R\Psi\Lambda)\to H^{2r}(Y_s, R\Psi\Lambda).
\end {equation}
Combined with Lemma \ref{Identifying M0 lemma} and ($\text{BC}_Y$), it suffices to show that the composite 
\begin{equation}
    \oplus_{i\in I} H ^ {2r -2} (C_i,\Lambda (-1))\xrightarrow {\delta} H ^ {2r} (Y_s, M_0R\Psi\Lambda)\xrightarrow{\sim}H^{2r}(\widetilde X_s,\Lambda)
\end{equation}
is the pushforward map, and this follows from \cite[Proposition 2.2.6]{saito2003weight}. 
The argument for the exactness of the bottom row
$$H^{2r-2} (X_{\overline\eta}, \Lambda) \to H ^ {2r -2} (\widetilde X_s,\Lambda)\xrightarrow {j ^\ast}\bigoplus_{i\in I} H ^ {2r -2} (C_i,\Lambda)\xrightarrow {\beta} H ^ {2r -1} (X_{\overline \eta}, \Lambda) $$
is essentially identical.
\end {proof}
\subsubsection {} By $(\text{BC}_Y)$, the monodromy filtration $M_\bullet $
of $R\Psi\Lambda $
induces filtrations $M_\bullet$ on $$H ^ {2r -1} (X_{\overline\eta},\Lambda)  $$
and on $$H ^ 1 (I_{K_0}, H ^ {2r -1} (X_{\overline\eta},\Lambda)) = H ^ 1 (I_K, H ^ {2r -1} (X_{\overline\eta},\Lambda) )$$
(using $p\neq 2
$).

\begin {prop}
\label{zeta map injection on ramified cohomology prop}
The diagram in Lemma \ref{lemma key diagram for monodromy calculation} induces an exact sequence
$$0\to M_{-1} H ^ 1 (I_{K_0}, H ^ {2r -1} (X_{\overline\eta},\Lambda))\xrightarrow {\zeta}\frac {H ^ {2r} (\widetilde X_s,\Lambda)} {\image (j_\ast\circ t ^ {-1}\circ j ^\ast)}\xrightarrow {\gamma}\frac {H ^ {2r} (X_{\overline\eta},\Lambda)} {\image (\gamma\circ j_\ast\circ t ^ {-1}\circ j ^\ast)}, $$
such that $\zeta(c) = j_\ast\circ t^{-1} (y)$ for any $y\in \oplus_{i\in I} H^{2r-2}(C_i, \Lambda)$ such that $c(T) = \beta(y)$.  \end {prop}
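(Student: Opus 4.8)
\textbf{Proof plan for Proposition \ref{zeta map injection on ramified cohomology prop}.} The plan is to read off the exact sequence directly from the commutative diagram in Lemma \ref{lemma key diagram for monodromy calculation}, after rewriting both the source and the target in terms of cohomology of the monodromy operator. The key observation is that, since the action of $I_{K_0}$ on $H^{2r-1}(X_{\overline\eta},\Lambda)$ factors through the tame quotient $t_p$ and $p\neq 2$, we have a canonical identification
$$H^1(I_{K_0}, H^{2r-1}(X_{\overline\eta},\Lambda)) = H^{2r-1}(X_{\overline\eta},\Lambda)_{T=1} \otimes \Z_p(-1)$$ — equivalently, evaluation at $T$ identifies a cocycle class $c$ with the element $c(T) \in H^{2r-1}(X_{\overline\eta},\Lambda)$, which lies in the $(T-1)$-coinvariants — and that the monodromy filtration $M_\bullet$ on $H^1(I_{K_0},-)$ is, up to the Tate twist, the filtration induced by $M_\bullet$ on the coinvariants. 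First I would make this identification precise and show that $M_{-1}H^1(I_{K_0}, H^{2r-1}(X_{\overline\eta},\Lambda))$ corresponds to the image of $T-1$ acting $M_0 \to M_{-1} \to H^{2r-1}$; combining with the factorization $(\ref{factoring monodromy equation})$, a class $c$ lies in $M_{-1}H^1$ precisely when $c(T) = \beta(y)$ for some $y \in \oplus_i H^{2r-2}(C_i,\Lambda)$ in the image of $\alpha \circ (\text{the map to } M_0)$ — and more simply, by the exactness of the bottom row of the diagram, $c(T)$ is automatically in $\image(\beta)$, so such a $y$ always exists, well-defined modulo $\ker\beta = \image(j^\ast)$.

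Next I would define $\zeta$ by the recipe in the statement: given $c \in M_{-1}H^1$, pick $y$ with $c(T) = \beta(y)$, and set $\zeta(c) = j_\ast(t^{-1}(y)) \in H^{2r}(\widetilde X_s,\Lambda)$. Well-definedness of $\zeta$ into the quotient by $\image(j_\ast \circ t^{-1}\circ j^\ast)$ follows because the ambiguity in $y$ is exactly $\image(j^\ast)$, and $j_\ast \circ t^{-1}$ applied to that is precisely the subgroup we quotient by. Injectivity of $\zeta$: if $j_\ast(t^{-1}(y)) \in \image(j_\ast\circ t^{-1}\circ j^\ast)$, then $y - j^\ast(z) \in \ker(j_\ast \circ t^{-1}) = \ker(j_\ast) = \image(\alpha)$ by exactness of the top row; tracing through the diagram and using that $t$ intertwines the two rows, one gets $c(T) = \beta(y) = \beta(j^\ast z) = 0$, so $c(T) = 0$ and hence $c = 0$ by torsion-freeness of the identification (or directly, $c$ has trivial coinvariant part so is zero in $H^1$). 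For exactness at the middle term, I would chase: $\gamma(\zeta(c)) = \gamma(j_\ast(t^{-1}(y)))$; by exactness of the top row $\gamma \circ j_\ast = 0$ on all of $\oplus_i H^{2r-2}(C_i,\Lambda(-1))$, so the composite $\gamma\circ\zeta$ is zero into $H^{2r}(X_{\overline\eta},\Lambda)$, a fortiori into the displayed quotient. Conversely, if $w \in H^{2r}(\widetilde X_s,\Lambda)$ maps to $\image(\gamma\circ j_\ast\circ t^{-1}\circ j^\ast)$ under $\gamma$, then modifying $w$ by an element of $\image(j_\ast\circ t^{-1}\circ j^\ast)$ we may assume $\gamma(w) = 0$, so $w = j_\ast(u)$ for some $u \in \oplus_i H^{2r-2}(C_i,\Lambda(-1))$ by exactness of the top row; then $u = t^{-1}(y)$ for $y = t(u)$, and setting $c$ to be the class with $c(T) = \beta(y)$ gives $\zeta(c) \equiv w$ in the quotient. (One checks $c$ indeed lies in $M_{-1}H^1$ by the characterization above.)

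The main obstacle I anticipate is purely bookkeeping: pinning down the precise compatibility between the monodromy filtration $M_\bullet$ on the Galois cohomology group $H^1(I_{K_0}, H^{2r-1}(X_{\overline\eta},\Lambda))$ and the filtration induced by $M_\bullet R\Psi\Lambda$ via the base-change isomorphism $(\text{BC}_Y)$, including getting the Tate twists and the shift by $[-1]$ in Proposition \ref{Prop graded pieces of nearby cycles} exactly right, and verifying that the isomorphism $t = -\otimes t_p(T)$ is compatible with the identification $c \mapsto c(T)$ so that the two rows of the diagram genuinely splice into the claimed three-term sequence. Everything else is a diagram chase using the exact rows of Lemma \ref{lemma key diagram for monodromy calculation} together with Lemma \ref{Identifying M0 lemma}. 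No new geometric input is needed beyond what is already assembled; in particular the hypothesis $(\text{BC}_X)$ (hence $(\text{BC}_Y)$) and the vanishing $H^{2r-1}(C_i,\Lambda) = 0$ used in the proof of Lemma \ref{lemma key diagram for monodromy calculation} are exactly what make the middle cohomology groups behave well.
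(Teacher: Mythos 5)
Your proof follows the same route as the paper: identify $M_{-1}H^1(I_{K_0},H^{2r-1}(X_{\overline\eta},\Lambda))$ with $\image\beta/\image(\beta\circ t\circ\alpha)$ via $c\mapsto c(T)$, then splice the two exact rows of Lemma~\ref{lemma key diagram for monodromy calculation} to read off the three-term sequence. The only blemish is a notational slip in the injectivity step: you conclude ``$c(T)=\beta(j^\ast z)=0$,'' but what the chase actually yields is $c(T)\in\image(\beta\circ t\circ\alpha)$ (and $\ker(j_\ast\circ t^{-1})=t(\image\alpha)$, not literally $\image\alpha$), which is exactly what is needed since $M_{-1}H^1$ is the quotient of $\image\beta$ by $\image(\beta\circ t\circ\alpha)$, not by $\{0\}$.
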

\begin {proof}
By definition, $M_{-1} H ^ {2r -1} (X_{\overline\eta},\Lambda) $
is the image of $$\beta:\bigoplus_{i\in I} H ^ {2r -2} (C_i,\Lambda)\to H ^ {2r -1} (Y_s, R\Psi\Lambda)\cong H ^ {2r -1} (X_{\overline\eta},\Lambda). $$
Then, by the left half of the commutative diagram in Lemma \ref{lemma key diagram for monodromy calculation}, the map $c\mapsto c (T) $
identifies $$M_{-1} H ^ 1 (I_{K_0}, H ^ {2r -1} (X_{\overline\eta},\Lambda))\simeq\frac {\image\beta} {\image (\beta\circ t\circ\alpha)}. $$
Using the exactness of the rows in Lemma \ref{lemma key diagram for monodromy calculation}, we also have the exact sequence
$$0\to\frac {\image\beta} {\image (\beta\circ t\circ\alpha)}\xrightarrow {j_\ast\circ t ^ {-1}}\frac {H ^ {2r} (\widetilde X_s,\Lambda)} {\image (j_\ast\circ t ^ {-1}\circ j ^\ast)}\xrightarrow {\gamma}\frac {H ^ {2r} (X_{\overline\eta},\Lambda)} {\image (\gamma\circ j_\ast\circ t ^ {-1}\circ j ^\ast)} , $$
and the proposition follows.
\end {proof}
\subsubsection {}
Let $\CH ^ r (X_{\eta_0}) ^ 0 $
denote the Chow group of cohomologically trivial algebraic cycles of pure codimension $r $. We have the Abel-Jacobi map $$\partial_{\operatorname {AJ}}:\CH ^ r (X_{\eta_0}) ^ 0\to H ^ 1 (I_{K_0}, H ^ {2r -1} (X_{\overline\eta},\Lambda (r))), $$
which we are now ready to compute in terms of the geometry of the special fiber of $X $.

If $Z_{\eta_0} $
is a closed irreducible subvariety of $X_{\eta_0} $, then write $Z $
for its Zariski closure in $X $, $Z_Y $
for the strict transform of $Z_R $
under the blowup $Y\to X_R $, and $Z_{Y_s} $
for $Z_Y\times_YY_s $. The intersection $Z_{Y_s}\times_Y\widetilde X_s $
is $\widetilde Z_{s} $, the strict transform of $Z_{s_0} $
under the blowup $\widetilde X_s\to X_{s_0} $. Extending this construction linearly, for any algebraic cycle $z _\eta =\Sigma n_jZ_\eta ^ {(j)} $
of pure codimension $r $, we obtain a codimension-$r $
algebraic cycle $$\widetilde z_s =\Sigma n_j\widetilde Z_s ^ {(j)} $$
on $\widetilde X_s $.
\begin {thm}\label{theorem conclusion of AJ section}
Let $z_{\eta_0} $
be an algebraic cycle of codimension $r $
on $X_{\eta_0} $, whose class in $\CH ^ r (X_{\eta_0}) $
is cohomologically trivial, and assume (\ref {base change assumption equation}).  
Then $\partial_{\operatorname {AJ}} (z) $
lies in $M_{-1} H ^ 1 (I_{K_0}, H ^ {2r -1} (X_{\overline\eta},\Lambda (r))) $. If, for each irreducible component $Z_\eta $
of the support of $z_\eta $, $Z_{Y_s} $
is generically smooth, then $$\zeta\left (\partial_{\operatorname {AJ}} (z)\right)\in\frac {H ^ {2r} (\widetilde X_s,\Lambda (r))} {j_\ast\circ t ^ {-1}\circ j ^\ast\left (H ^ {2r -2} (\widetilde X_s,\Lambda (r -1))\right)} $$
coincides with the algebraic cycle class of $\widetilde z_s $, where $\zeta$ is the map from Proposition \ref{zeta map injection on ramified cohomology prop}.
\end {thm}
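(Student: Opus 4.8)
The plan is to deduce this theorem from the compatibility of the Abel--Jacobi map with the localization/specialization exact sequences, together with the explicit description of the monodromy filtration furnished by Proposition~\ref{Prop graded pieces of nearby cycles} and Lemma~\ref{lemma key diagram for monodromy calculation}. First I would reduce to the case of a single irreducible subvariety $Z_{\eta_0}$, by linearity, and recall the standard construction of $\partial_{\operatorname{AJ}}(z)$: choosing a closed subscheme $Z \subset X$ with generic fiber $Z_{\eta_0}$, one obtains a cycle class $\cl(Z) \in H^{2r}_{Z}(X, \Lambda(r))$ supported on the special fiber, and $\partial_{\operatorname{AJ}}(z)$ is extracted from the boundary map in the long exact sequence of the pair $(X, X \setminus X_{s_0})$, using the assumption $(\text{BC}_X)$ to identify $H^{2r-1}(X_{\overline\eta_0}, \Lambda(r))$ with $H^{2r-1}(X_{s_0}, R\Psi_X \Lambda(r))$. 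After pulling back along the blowup $f: Y \to X_R$ — which is an isomorphism on generic fibers and proper, so induces an isomorphism on nearby cycles by \cite[\S2.1.7]{deligne2006groupes} — the class becomes the cycle class of $Z_Y$ on $Y$, supported on $Z_{Y_s}$.

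The heart of the argument is to trace $\partial_{\operatorname{AJ}}(z)$ through the diagram of Lemma~\ref{lemma key diagram for monodromy calculation} and to match the two descriptions of $\zeta$. The key observation is that the specialization map $\CH^r(X_{\eta_0})^0 \to \CH^r(\widetilde X_s)$, $z_{\eta_0} \mapsto \widetilde z_s$, together with the cycle class map $\CH^r(\widetilde X_s) \to H^{2r}(\widetilde X_s, \Lambda(r))$, fits into the distinguished triangle (\ref{1st distinguished triangle equation}) in a manner compatible with the boundary map $\delta: \oplus_{i \in I} H^{2r-2}(C_i, \Lambda(r-1)) \to H^{2r}(Y_s, M_0 R\Psi\Lambda(r))$ whose composite with the isomorphism of Lemma~\ref{Identifying M0 lemma} is the pushforward $j_\ast$, as recorded in the proof of Lemma~\ref{lemma key diagram for monodromy calculation} via \cite[Proposition 2.2.6]{saito2003weight}. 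Concretely: the cohomological triviality of $z_{\eta_0}$ forces $\partial_{\operatorname{AJ}}(z)$ to factor through $M_{-1}H^1$, because its image in $H^{2r}(\widetilde X_s, \Lambda(r))$ under the boundary map of (\ref{1st distinguished triangle equation}) — which is $j_\ast \circ t^{-1}$ applied to a lift of $\partial_{\operatorname{AJ}}(z)(T)$ along $\beta$ — coincides up to the indeterminacy $\image(j_\ast \circ t^{-1} \circ j^\ast)$ with $\cl(\widetilde z_s)$. Here the hypothesis that $Z_{Y_s}$ is generically smooth is exactly what guarantees that the total cycle class $\cl(Z_{Y_s}) \in H^{2r}(Y_s, \Lambda(r))$, computed against the weight filtration, has $\GR^M_0$-part equal to $\cl(\widetilde X_s$-component$) = \cl(\widetilde z_s)$ together with quadric-bundle contributions from the $D_i$, and the latter are killed when one projects via Lemma~\ref{Identifying M0 lemma}; the $\GR^M_{-1}$-part is what lands in $\oplus H^{2r-2}(C_i, \Lambda(r-1))$ and records the monodromy.

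In more detail, I would argue as follows. The class of $Z_Y$ in $H^{2r}_{Z_{Y_s}}(Y, \Lambda(r))$ maps, under $\overline i^\ast \overline j_\ast$, to a class $c_Z \in H^{2r}(Y_s, R\Psi\Lambda(r))$; by $(\text{BC}_Y)$ this is $\cl(Z_{\overline\eta})$, and $\partial_{\operatorname{AJ}}(z)$ is its image in $H^1(I_{K_0}, H^{2r-1}(X_{\overline\eta}, \Lambda(r)))$ under the Hochschild--Serre edge map — equivalently, $\partial_{\operatorname{AJ}}(z)(T) = (T-1)\cdot(\text{any lift of }c_Z) \in H^{2r-1}(X_{\overline\eta}, \Lambda(r))$. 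Now $(T-1)$ factors as in (\ref{factoring monodromy equation}), and the map $R\Psi\Lambda \to \GR^M_1 R\Psi\Lambda$ on $H^{2r}$ sends $c_Z$ to $\alpha(c_Z)$, which by \cite[Proposition 2.2.6]{saito2003weight} (the ``excess intersection'' / Gysin computation on the quadric strata) is computed from the intersection multiplicities of $Z_{Y_s}$ with the $C_i$; then $\beta$ of this equals $\partial_{\operatorname{AJ}}(z)(T)$. On the other hand, $\widetilde z_s$ is the residual cycle after removing the exceptional contributions, so $j_\ast \circ t^{-1}$ of the same element $\alpha(c_Z)$ equals $\cl(\widetilde z_s) \bmod \image(j_\ast \circ t^{-1} \circ j^\ast)$ — this is precisely the statement that $\delta$ (composed with Lemma~\ref{Identifying M0 lemma}) is $j_\ast$, applied to the generically-smooth cycle $\widetilde z_s$ whose class differs from $\cl(Z_{Y_s})$'s $\GR^M_0$-part only by $D_i$-supported terms, which vanish after the projection. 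Combining, $\zeta(\partial_{\operatorname{AJ}}(z)) = j_\ast \circ t^{-1}(\alpha(c_Z)) = \cl(\widetilde z_s)$ in the quotient, as claimed.

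The main obstacle, and where most of the work lies, is making precise the claim that the Gysin/specialization computation of $\alpha(c_Z)$ on the quadric bundle strata is correctly normalized so that $j_\ast \circ t^{-1}$ recovers $\cl(\widetilde z_s)$ \emph{exactly} (not merely up to a universal constant), and in particular handling the interaction between the strict transform $Z_Y$ and the exceptional divisors $D_i$ when $Z_{\eta_0}$ passes through the singular points $x_i$ of $X$. The generic-smoothness hypothesis on $Z_{Y_s}$ is what controls this, but one must verify that it implies $Z_Y \cap D_i$ is a proper intersection inside each quadric $D_i$ whose class is then ``absorbed'' by $j_\ast \circ t^{-1} \circ j^\ast(H^{2r-2}(\widetilde X_s, \Lambda(r-1)))$ — this uses (\ref{normal bundle to C}), i.e. $\mathcal N_{C_i/\widetilde X_s} = \O(-1)$, so that the relevant Gysin pushforwards along $C_i \hookrightarrow \widetilde X_s$ and $C_i \hookrightarrow D_i$ differ by the expected twist. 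I expect this bookkeeping with \cite[Propositions 2.2.4, 2.2.6]{saito2003weight} and the explicit form of $Y_s$ from \cite{illusie2000formule} to be the technically delicate part; everything else is a formal diagram chase through Proposition~\ref{zeta map injection on ramified cohomology prop}.
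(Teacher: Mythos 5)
Your proposal captures the conceptual skeleton correctly: trace $\partial_{\operatorname{AJ}}(z)$ through the graded pieces of the monodromy filtration via the factorization (\ref{factoring monodromy equation}), use Lemma~\ref{Identifying M0 lemma} to project to $H^{2r}(\widetilde X_s, \Lambda(r))$, and invoke the generic-smoothness of $Z_{Y_s}$ to identify the image with $\cl(\widetilde z_s)$. This matches what the paper does in outline. However, there is a genuine gap at the crux of the argument, which you yourself flag at the end as ``the technically delicate part'' that you ``expect'' to work out. Specifically, the mechanism by which you pass from the cycle class $c_Z \in H^{2r}(Y_s, R\Psi\Lambda(r))$ to a degree-$(2r-1)$ lift whose image under $T-1$ recovers $\partial_{\operatorname{AJ}}(z)(T)$ is not actually constructed — you write ``$\partial_{\operatorname{AJ}}(z)(T) = (T-1)\cdot(\text{any lift of }c_Z)$'' but $c_Z$ lives in degree $2r$, and producing a well-defined lift in degree $2r-1$ with the requisite compatibility against the monodromy filtration is precisely the nontrivial content. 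Equally, the assertion that the Gysin bookkeeping on the quadric strata yields $\cl(\widetilde z_s)$ ``exactly'' (rather than modulo some defect) is not verified.

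The paper resolves both issues at once by citing the specific construction in the proof of \cite[Theorem 2.18]{liu2019bounding}: there one has an auxiliary complex $F$ fitting into a commutative square (which in the present setting, where $\GR^M_i = 0$ for $|i|>1$, collapses into the diagram the paper draws), and a class $[z^\sharp]'_0 \in H^{2r-1}(Y_s, F(r))$ — in degree $2r-1$, not $2r$ — with the defining property $\partial_{\operatorname{AJ}}(z)(T) = (T-1)[z^\sharp]'_0$. Then $\zeta(\partial_{\operatorname{AJ}}(z))$ is by definition the image of $[z^\sharp]'_0$ under $F \to \GR^M_1 R\Psi\Lambda \to \GR^M_0 R\Psi\Lambda[1]$ followed by the projection of Lemma~\ref{Identifying M0 lemma}, and the identification of this image with $\cl(\widetilde z_s)$ under the generic-smoothness hypothesis is \cite[Proposition 2.19]{liu2019bounding}. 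Without appealing to (or reproving) those two results, your argument reduces the theorem to a hope. If you wish to avoid citing Liu and argue directly from \cite[Propositions 2.2.4, 2.2.6]{saito2003weight} and \cite{illusie2000formule}, you would need to actually carry out the construction of the lift and the excess-intersection computation on the $D_i$, not merely sketch them.
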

\begin {proof} In the proof of \cite[Theorem 2.18]{liu2019bounding} there is constructed:
\begin {itemize}
\item An element $F $
of the bounded derived category of abelian sheaves on $Y_s $
fitting into a commutative diagram:
\begin {equation}\label{commutative diagram for the sheaf F}
\begin {tikzcd}
F\arrow [r, "T -1"]\arrow [d] & R\Psi\Lambda\arrow [d]\\
R\Psi\Lambda/M_0R\Psi\Lambda\arrow [r, "T -1"]& R\Psi\Lambda/M_{-2} R\Psi\Lambda.\end {tikzcd}\end {equation}
\item A class $[z ^\sharp]'_0\in H ^ {2r -1} (Y_s, F (r)) $
such that $\partial_{\operatorname {AJ}} (z) |_{I_K} $
is represented by the cocycle that factors through $t_p: I_{K}\twoheadrightarrow\Z_p (1) $
and satisfies $$\partial_{\operatorname {AJ}} (z) (T) = (T -1) [z ^\sharp]'_0\in H ^ {2r -1} (Y_s, R\Psi\Lambda (r)) = H ^ {2r -1} (X_{\overline\eta},\Lambda (r)). $$
\end {itemize}
In our context, since $\GR_i ^ MR\Psi\Lambda = 0 $
for $| i | >1 $, the diagram (\ref {commutative diagram for the sheaf F}) becomes
\begin {center}
\begin {tikzcd}
F\arrow [r, "T -1"]\arrow [d] & R\Psi\Lambda\\
R\Psi\Lambda/M_0R\Psi\Lambda\arrow [r, "T -1"] &M_{-1} R\Psi\Lambda.\arrow [u]
\end {tikzcd}
\end {center}
In particular, $(T -1) H ^ {2r -1} (Y_s, F (r)) $
lies inside $M_{-1} H ^ {2r -1} (X_{\overline\eta},\Lambda (r)), $
and by construction $\zeta\left (\partial_{\operatorname {AJ}} (z)\right) $
is represented by the image of $[z ^\sharp]'_0 $
under the composite map
\begin {equation}\begin {split}
H ^ {2r -1} (Y_s, F (r))\to H ^ {2r -1} (Y_s,\GR_1 ^ MR\Psi\Lambda (r))\to H ^ {2r} (Y_s,\GR_0 ^ MR\Psi\Lambda (r))\\
= H ^ {2r} (\widetilde X_s,\Lambda (r))\oplus\bigoplus_{i\in I} H ^ {2r} (D_i,\Lambda (r)\twoheadrightarrow H ^ {2r} (\widetilde X_s,\Lambda (r)).\end {split}\end {equation}
By \cite[Proposition 2.19]{liu2019bounding}, under the generic smoothness assumption of the theorem, this image is exactly the cycle class of $\widetilde z_s $.
\end {proof}
\subsubsection{}
We conclude this section with a related lemma.
\begin {lemma} \label{lemma compactly supported on special fiber}
In addition to (\ref{base change assumption equation}), assume
\begin {equation}\label{base change assumption equation compact}\tag {$\text{BC}_{X,c}$}
\text {the base change map } H ^ {i}_c (X_{\overline\eta_0},\Lambda)\to H ^ {i}_c (X_{s_0}, R\Psi_X\Lambda)\text { is an isomorphism for all }i.
\end {equation}

Suppose $\mathcal H$ is a commutative $O$-algebra (not necessarily finitely generated) of correspondences on $X$, such that the singular locus of $X$ is stable under $\mathcal H$. Let $\m\subset \mathcal H$ be a maximal ideal such that the natural map induces an isomorphism
$$H^i_{\et,c}(X_{\overline\eta}, \Lambda)_\m \to H^i_{\et} (X_{\overline\eta}, \Lambda)_\m$$ for all $i$. Then for all $i$, the natural map
$$H^i_{\et,c}(\widetilde X_s, \Lambda)_\m \to H^i_{\et}(\widetilde X_s, \Lambda)_\m$$ is an isomorphism as well. 
\end{lemma}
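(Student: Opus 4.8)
\textbf{Proof plan for Lemma \ref{lemma compactly supported on special fiber}.}

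The plan is to compare the compactly-supported and ordinary cohomology of $\widetilde X_s$ by passing through the nearby cycles complex on $Y_s$, exactly as in the proof of Lemma \ref{Identifying M0 lemma} and Proposition \ref{zeta map injection on ramified cohomology prop}, but now keeping track of the $\mathcal H$-action and localizing at $\m$. First I would observe that the blowup $f\colon Y\to X_R$ induces $f_\ast R\Psi\Lambda = R\Psi_X\Lambda$, so the hypotheses $(\text{BC}_X)$ and $(\text{BC}_{X,c})$ give, after localizing at $\m$, isomorphisms $H^i(X_{\overline\eta},\Lambda)_\m \isomorphism H^i(Y_s, R\Psi\Lambda)_\m$ and $H^i_c(X_{\overline\eta},\Lambda)_\m \isomorphism H^i_c(Y_s, R\Psi\Lambda)_\m$ (for the compact version one uses that $f$ is proper, so $Rf_! = Rf_\ast$ and compactly-supported cohomology of $Y_s$ and $X_{s_0}$ agree). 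The natural map $H^i_c(Y_s, R\Psi\Lambda)\to H^i(Y_s, R\Psi\Lambda)$ is $\mathcal H$-equivariant since the correspondences in $\mathcal H$ are defined over $R_0$ and act compatibly on all these objects (the hypothesis that the singular locus is $\mathcal H$-stable guarantees $\mathcal H$ acts on the blowup $Y$ and its special fiber). Thus the hypothesis of the lemma says precisely that $H^i_c(Y_s, R\Psi\Lambda)_\m \isomorphism H^i(Y_s, R\Psi\Lambda)_\m$ for all $i$.

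Next I would use the two tautological distinguished triangles relating $R\Psi\Lambda$ to its monodromy-graded pieces, together with Proposition \ref{Prop graded pieces of nearby cycles}, to break the problem into statements about the constituent smooth proper $k$-varieties $\widetilde X_s$, $D_i$, $C_i$. Concretely, localize the long exact sequences coming from
$$M_0 R\Psi\Lambda \to R\Psi\Lambda \to \GR_1^M R\Psi\Lambda \to M_0R\Psi\Lambda[+1]$$
and
$$M_{-1}R\Psi\Lambda \to M_0 R\Psi\Lambda \to \GR_0^M R\Psi\Lambda \to M_{-1}R\Psi\Lambda[+1]$$
at $\m$, in both the compactly-supported and the ordinary versions, and compare via the canonical map between them. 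Since $D_i$ and $C_i$ are smooth \emph{projective} quadrics, their compact and ordinary cohomologies coincide on the nose (no localization needed), and the $\mathcal H$-action on those fixed pieces does not obstruct anything. Feeding the known isomorphism $H^\ast_c(Y_s, R\Psi\Lambda)_\m \isomorphism H^\ast(Y_s, R\Psi\Lambda)_\m$ into a diagram chase through these long exact sequences then forces $H^i_c(\widetilde X_s,\Lambda)_\m \isomorphism H^i(\widetilde X_s,\Lambda)_\m$. The point is that $H^\ast(\GR_0^M R\Psi\Lambda) = H^\ast(\widetilde X_s,\Lambda)\oplus\bigoplus_i H^\ast(D_i,\Lambda)$, the $D_i$-summand is already a compact-equals-ordinary piece, and the remaining monodromy layers only involve the $C_i$, which are likewise compact-equals-ordinary; so the failure of compact-equals-ordinary for $Y_s$ is governed entirely by that of $\widetilde X_s$, and the hypothesis kills it.

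The main obstacle I anticipate is bookkeeping rather than anything deep: one must be careful that the comparison maps between the compactly-supported and ordinary versions of \emph{all} the relevant long exact sequences are genuinely compatible (i.e.\ that the ``forget supports'' natural transformation is compatible with the monodromy filtration and with the distinguished triangles of Proposition \ref{Prop graded pieces of nearby cycles}), and that localization at $\m$ is exact and commutes with all the functors in sight — this is standard since $\mathcal H$ is commutative and acts $O$-linearly. A secondary subtlety is to confirm that $\mathcal H$ really does act on $\widetilde X_s$, $\bigsqcup D_i$, $\bigsqcup C_i$ separately and compatibly with $i_{0\ast}$, $i_{1\ast}$; this follows from the hypothesis that the singular locus of $X$ is $\mathcal H$-stable, because then each correspondence lifts to $Y$ and preserves the stratification of $Y_s$. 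Once these compatibilities are in place, the conclusion is a purely formal consequence of the five lemma applied layer by layer.
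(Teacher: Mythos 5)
Your proposal is correct and follows essentially the same route as the paper: pass to $H^\ast_{(c)}(Y_s, R\Psi\Lambda)$ via $(\text{BC}_X)$, $(\text{BC}_{X,c})$, then run the five lemma through the two monodromy distinguished triangles, using that the $C_i$ and $D_i$ are proper so their compactly-supported and ordinary cohomologies agree, to isolate the $\widetilde X_s$ summand of $\operatorname{gr}_0^M$. The one spot you flagged as a subtlety — that $\mathcal H$ acts on $Y$ and compatibly on all the strata because the singular locus is $\mathcal H$-stable — is indeed what makes the whole diagram $\mathcal H$-equivariant, and the paper takes it for granted in just the way you do.
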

\begin {proof}
The tautological distinguished triangle (\ref{1st distinguished triangle equation}) gives a commutative diagram of long exact sequences
\begin{center}
\begin{tikzcd}
 \cdots \arrow[r]  &   H^i_c(Y_s, M_0R\Psi\Lambda)_\m\arrow[r] \arrow[d] & H^i_c(Y_s,  R\Psi\Lambda)_\m  \arrow[r]\arrow[d,"\sim" {anchor=south, rotate=90}] &
    H^i_c  (Y_s, \operatorname{gr}_1^M R\Psi\Lambda)_\m\arrow[r]\arrow[d, "\sim" {anchor=south, rotate=90}] & \cdots\\
    \cdots \arrow[r]  &   H^i(Y_s, M_0R\Psi\Lambda) _\m\arrow[r] & H^i(Y_s,  R\Psi\Lambda)_\m
    \arrow[r] &
    H^i  (Y_s, \operatorname{gr}_1 ^MR\Psi\Lambda)_\m\arrow[r]& \cdots\\
\end{tikzcd}
\end{center}
The first marked isomorphism is by (\ref{base change assumption equation}) and (\ref{base change assumption equation compact}), and the second is by Proposition \ref{Prop graded pieces of nearby cycles} and the compactness of each $C_i$. By the five lemma, we have an isomorphism 
$$H^i_c(Y_s, M_0 R\Psi\Lambda)_\m \isomorphism H^i(Y_s, M_0 R\Psi\Lambda)_\m$$ for all $i$. Arguing similarly with the distinguished triangle (\ref{second distinguished triangle eqn}), we find 
a natural isomorphism
$$H^i_c(Y_s, \operatorname{gr}_0 ^M R\Psi\Lambda)_\m \isomorphism H^i(Y_s, \operatorname{gr}_0^ M R\Psi\Lambda)_\m $$ for all $i$, which implies the lemma by Proposition \ref{Prop graded pieces of nearby cycles} once again. 
\end {proof}

\subsection {Semistable reduction of $\spin_5 $ Shimura varieties}
\subsubsection {}\label{setup for SST}
Let $D \neq 1$ be a squarefree product of an even number of primes, and fix an odd prime $q|D$. With $V_D$ as in (\ref{subsubsec:B_D_notation}), we suppose fixed a $q$-adic uniformization datum $(\ast, A_0, \iota_0, \lambda_0, i_D, i_{D/q})$ for $V_D$ (Definition \ref{def:unif_datum_general}(\ref{def:unif_datum_general_2})); we will choose this uniformization datum more precisely in Construction \ref{constr:X_diamond_1ERL} below. 
Let $\mathcal D$ and $\mathscr D$ be the associated PEL data and self-dual $q$-integral refinement from Definition 
\ref{def:O_Dtriple}.
\subsubsection {}\label{subsubsec:levels_1ERL} For the entirety of this section, we fix a neat level subgroup $$K ^ q =\prod_{\l\neq q} K_\l\subset\spin (V_D) (\A_f ^ {q}) \cong \prod_{\l\neq q} \spin(V_{D/q})(\A_f^q),$$ with the isomorphism arising from Remark \ref{rmk:unif_datum}(\ref{rmk:unif_datum_two});
then we obtain a flat, quasi-projective scheme $X =\mathcal M_{K ^ q} $
over $\Z_{(q)} $
representing the PEL-type moduli problem defined by $\mathscr D $
at level $K ^ q $.
The generic fiber $X_\Q $
of $X $
is isomorphic to $\Sh_{K ^qK_q^\ramified} (V_D) $, where $K_q^\ramified\subset\spin (V_D) (\Q_q) $
is a paramodular subgroup in the sense of Notation \ref{notation:paramodular_subgroup}.
\begin {lemma}\label{nearby cycles isomorphism}
Let $R\Psi_XO $
denote the nearby cycles complex on $X_{\overline\F_q} $. Then the natural maps $$H ^ i_{c,\et} (X_{\overline\Q}, O)\to H ^ i_{c,\et} (X_{\overline\F_q}, R\Psi_XO) $$
and $$H ^ i_{\et} (X_{\overline\Q}, O)\to H ^ i_{\et} (X_{\overline\F_q}, R\Psi_XO) $$
are isomorphisms for all $i $.
\end {lemma}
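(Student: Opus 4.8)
\textbf{Proof sketch for Lemma \ref{nearby cycles isomorphism}.}
The plan is to deduce both base change isomorphisms from the existence of a sufficiently nice integral model together with comparison theorems for nearby cycles, by reducing to the case of a proper model and controlling the boundary. First I would observe that the scheme $X = \mathcal M_{K^q}$ is flat and quasi-projective over $\Z_{(q)}$, but not proper; however, the PEL moduli problem it represents admits a toroidal compactification $\overline X$ over $\Z_{(q)}$ (by the work of Lan and others, applicable here since $\spin(V_D)$ with $\sigma(D)$ even is an inner form of $\GSp_4$ and the reflex field is $\Q$), which is proper, and whose boundary $\partial \overline X = \overline X \setminus X$ is a relative normal crossings divisor. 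The key input is that the nearby cycles functor commutes with proper pushforward \cite[\S2.1.7]{deligne2006groupes}, so that for the proper model $\overline X$ the base change map $H^i_\et(\overline X_{\overline \Q}, O) \to H^i_\et(\overline X_{\overline\F_q}, R\Psi_{\overline X} O)$ is an isomorphism automatically.

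Next I would pass from $\overline X$ back to $X$. For compactly supported cohomology, I would use the excision triangle relating $H^\ast_{c,\et}(X_{\overline\Q}, O)$, $H^\ast_\et(\overline X_{\overline\Q}, O)$, and $H^\ast_\et(\partial\overline X_{\overline \Q}, O)$, together with the analogous triangle on the special fiber for the complexes $R\Psi$; since $R\Psi$ commutes with the closed and open immersions making up the compactification (again by \cite{deligne2006groupes}), the base change maps fit into a morphism of long exact sequences. By the five lemma, the base change isomorphism for $H^\ast_{c,\et}(X)$ follows once we know it for $H^\ast_\et(\overline X)$ and for $H^\ast_\et(\partial \overline X)$. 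The latter is handled by noetherian induction on the stratification of the boundary: each boundary stratum is (up to a finite map) a mixed Shimura variety of smaller dimension whose own integral model is again covered by a toroidal compactification, so one induces on dimension. For ordinary cohomology $H^i_\et(X_{\overline\Q}, O)$, I would instead invoke Poincaré--Verdier duality: the Verdier dual of $R\Psi_X O$ is $R\Psi_X$ applied to the dualizing complex (the nearby cycles functor commutes with duality up to a twist, by Illusie), and $X$ is smooth of pure relative dimension $3$ outside a closed locus in the special fiber, so duality exchanges the compactly-supported statement with the ordinary one — modulo keeping track of the singular locus of the special fiber, where one uses that $X$ has only ordinary quadratic singularities (from the PEL model with paramodular level at $q$, cf. Theorem \ref{BT stratification first version}(6)) and that the dualizing complex there is still well understood.

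The main obstacle I anticipate is not the formal homological algebra but verifying that the toroidal compactification behaves well integrally at $q$: one needs the compactification to exist over $\Z_{(q)}$, to be proper, and to have boundary a relative normal crossings divisor, for the specific PEL datum $\mathscr D$ with paramodular (hence parahoric, not hyperspecial) level at $q$. Since the level at $q$ is deep and the model $X$ is genuinely singular in the special fiber, the standard references on smooth integral toroidal compactifications do not apply verbatim; one must use the versions allowing bad reduction (or, alternatively, argue directly with the affine $X$ using that its boundary-at-infinity behavior is controlled by the minimal compactification, whose singularities are again of a controlled PEL type). An alternative route that sidesteps compactifications entirely would be to quote the fact — which holds quite generally for flat finite-type $\Z_{(q)}$-schemes whose generic fiber is smooth and which are ``good enough'' at $q$ — that $R\Psi$ computes vanishing cohomology correctly; concretely, one can cite the analogue of \cite[Lemma 2.12]{liu2019bounding} or the base-change results used there, specialized to the present Shimura variety, so that the hypotheses $(\mathrm{BC}_X)$ and $(\mathrm{BC}_{X,c})$ of \S\ref{AJ section} are met and this lemma is exactly their instantiation. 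Either way, once the compactification (or the cited general theorem) is in hand, the five-lemma and duality arguments close the proof.
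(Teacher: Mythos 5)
The paper's proof is a one-line citation: the statement is precisely a special case of \cite[Theorem~6.8]{lan2018nearby} (Lan--Stroh, ``Nearby cycles of automorphic étale sheaves''). Your closing paragraph correctly guesses that the cleanest route is to quote a general base-change theorem tailored to integral models of PEL Shimura varieties, but you reach for \cite[Lemma~2.12]{liu2019bounding} rather than the Lan--Stroh result, which is exactly what is needed and what the paper uses.

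Your ``from scratch'' sketch is, in spirit, a compressed outline of the Lan--Stroh argument, so the strategy is sound; however several steps would require considerably more care before they constitute a proof. First, the boundary strata of a toroidal compactification of a PEL Shimura variety are not ``smaller Shimura varieties up to a finite map'' — they are torus torsors over abelian schemes over mixed Shimura varieties (Kuga families), so the noetherian induction cannot simply descend on dimension within the class of PEL moduli; one needs an induction that also tracks these auxiliary fibrations, which is a substantial part of what \cite{lan2018nearby} does. Second, at parahoric (here paramodular) level over $\Z_{(q)}$, the existence of a proper toroidal compactification with relative normal crossings boundary is itself a nontrivial theorem of Lan on compactifications in ramified/deep-level characteristics, and is not covered by the ``standard'' references as you rightly flag. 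Third, the Verdier-duality reduction of the ordinary-cohomology statement to the compactly-supported one needs the fact that nearby cycles commutes with duality on $X$ even though the \emph{special fiber} is singular (the ordinary quadratic singularities mean the dualizing complex on $X_{\overline\F_q}$ is not simply a shift of the constant sheaf); this works, but invoking Illusie's compatibility correctly there is exactly the kind of technical point that a reference theorem would spare you. In short: your plan reconstructs the right proof architecture, but the gaps you flag as ``obstacles'' are real and nontrivial, and the intended resolution is to cite \cite[Theorem~6.8]{lan2018nearby} directly rather than re-derive it.
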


\begin {proof}
This is a special case of \cite[Theorem 6.8]{lan2018nearby}.
\end {proof}
Let $O_D\subset B_D$ be the unique maximal $\Z_{(q)}$-order. We now take the $q $-divisible group $\mathbb X\coloneqq \overline A_0 [q ^\infty] $, with its induced polarization and $(O_D\otimes\Z_q) $-action, to be the base point for the Rapoport-Zink space $\mathcal N $
(Definition \ref{definition RZ space}).
\begin {thm}\label{rz uniformization}
\leavevmode
\begin {enumerate}
\item Let $\mathcal X $
be the completion of $X $
along the supersingular locus $X ^ {\ss}_{\overline\F_q} $. Then we have a canonical isomorphism $$\mathcal X\cong\spin (V_{D/q}) (\Q)\backslash\spin (V_{D/q}) (\A_f ^ {q})\times\mathcal N/K ^ q, $$
where $\spin (V_{D/q}) (\Q)\hookrightarrow\spin (V_{D/q}) (\Q_q) $
acts on $\mathcal N $
as described in \S\ref{spin action subsection}.
\item The singular locus of $X_{\overline\F_q} $
is the discrete set of points $$X ^ {\sing}_{\overline\F_q} =\spin (V_{D/q}) (\Q)\backslash\spin (V_{D/q}) (\A_f ^ {q})\times\mathcal N ^ {\sing}_{\red}/K ^ q. $$
The complete local ring of $X $
at any point of $X ^ {\sing}_{\overline\F_q} $
is isomorphic to $$\breve\Z_q\llbracket x, y, z, w\rrbracket/(xy - zw - q). $$
\end{enumerate}
\end {thm}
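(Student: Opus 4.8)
The plan is to prove Theorem~\ref{rz uniformization} by combining the general $p$-adic uniformization theorem for PEL-type Shimura varieties with the explicit local description of the Rapoport-Zink space $\mathcal N$ recalled in \S\ref{sec:RZ}. First I would recall the Rapoport-Zink uniformization theorem (see \cite{rapoport1996period}, Theorem 6.30, with the refinements for non-quasi-split cases in the style of \cite{boutot1991uniformisation} in the one-dimensional case): for the PEL datum $\mathscr D$, the formal completion of $X = \mathcal M_{K^q}$ along the basic (here supersingular) locus $X^{\ss}_{\overline\F_q}$ is canonically isomorphic to $I(\Q)\backslash \left(\mathcal N \times I(\A_f^q)/K^q\right)$, where $\mathcal N$ is the Rapoport-Zink space attached to the local PEL datum at $q$ with basepoint $\mathbb X = \overline A_0[q^\infty]$ together with its extra structures, and $I$ is the $\Q$-group of self-isogenies of $(\overline A_0, \overline\iota_0, \overline\lambda_0)$. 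The main identification needed is that $\mathcal N$ here agrees with the ramified $\spin_5$ Rapoport-Zink space of Definition~\ref{definition RZ space}: the dimension-$4$, height-$8$ condition, the $O_q$-action with the Kottwitz determinant condition $\det(T - \iota(\alpha)\mid \Lie X) = (T^2 - \tr(\alpha)T + N(\alpha))^2$, and the principal polarization with a Rosati involution of unit type on $O_q$ all match, because our uniformization datum imposes $\ast$ of unit type (Definition~\ref{def:unif_datum_general}(\ref{def:unif_datum_general_2})) and the Kottwitz condition for $V_D$ PEL data unwinds to exactly this local condition.

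Next I would identify the algebraic group $I$ with $\spin(V_{D/q})$. The point is that $\End^0(\overline A_0, \overline\iota_0) \cong M_2(B_{D\cdot q})$ with its positive (hence main-type) Rosati involution, as recorded in the discussion after Definition~\ref{def:O_Dtriple}; taking the fixed points of the involution and trace-zero elements gives $V_{D\cdot q} = V_{D/q}$ (since $D \cdot q = D/q$ when $q \mid D$), and the uniformization datum supplies $i_{D\cdot q}$ fixing an isomorphism $G_{\mathcal D}$-side $\isomorphism \spin(V_{D/q})$, compatibly with the identification of prime-to-$q$ adelic groups from Remark~\ref{rmk:unif_datum}(\ref{rmk:unif_datum_two}). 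I would also check that the $\spin(V_{D/q})(\Q_q)$-action on $\mathcal N$ used in the uniformization (which is by modification of the quasi-isogeny $\rho$) is precisely the action described in \S\ref{spin action subsection} via the projection $\pi: \GSP(W) \to \SO(V)$ and the isomorphism $\spin(V)(\Q_q) \cong \GSP(W)(\Q_q)$; this is essentially the content of the identification $V = \End(W)^{\dagger=1,\tr=0} = \End(\mathbb X, \iota_{\mathbb X})^{\ast=1,\tr=0}$ in (\ref{V versus W}), so it is a matter of matching the two realizations of the same local reductive group.

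For part (2), I would use Theorem~\ref{BT stratification first version}(\ref{thm:BT_6}) together with Proposition~\ref{proposition decomposition of all of N red}: the non-smooth locus of $\mathcal N(0)$ is exactly the set of ``type-$\{1\}$'' points $\mathcal M_{\set{1}}(\Lambda_1)$, and these are isolated points whose complete local ring on $\mathcal N(0)$ is $\breve\Z_q\llbracket X, Y, Z, W\rrbracket/(q - XY + ZW)$; Proposition~\ref{proposition decomposition of all of N red}(1) identifies $\mathcal N^{\sing}_{\red}$ with $\mathscr L_{\paramodular}$ under the $\spin(V)(\Q_q)$-action. Transporting this through the uniformization isomorphism of part (1) identifies $X^{\sing}_{\overline\F_q}$ with the stated double-coset set, and since formal completion is preserved by the (étale-local) uniformization, the complete local ring of $X$ at such a point is the same $\breve\Z_q\llbracket x,y,z,w\rrbracket/(xy - zw - q)$ after the harmless change of variables. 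The one subtlety to address here is that $X$ is a scheme over $\Z_{(q)}$ while $\mathcal N$ lives over $\breve\Z_q$, so the complete local ring statement is really about $X_{\breve\Z_q}$ (or equivalently the strict Henselization); I would state it accordingly, matching the phrasing already used in Theorem~\ref{BT stratification first version}(\ref{thm:BT_6}).

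The main obstacle I anticipate is not any single hard computation but rather the careful bookkeeping needed to run the Rapoport-Zink uniformization theorem in this exact PEL setup: one must verify that the moduli problem $\mathcal M_{K^q}$ is of the type covered by \cite{rapoport1996period} (in particular that $K_q$ being the paramodular/parahoric-type stabilizer $\operatorname{Stab}_{G_{\mathcal D}(\Q_q)}(\Lambda)$ of the chosen $q$-integral lattice is allowed), that the basic locus coincides with the supersingular locus (immediate here from the dimension/slope considerations since $\mathbb X$ is isoclinic of slope $1/2$), and that $I$ is an inner form of $G_{\mathcal D}$ that is anisotropic modulo center at $\infty$ so that the uniformization is along a closed formal subscheme. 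Once these standard inputs are in place, everything else follows by transport of structure from \S\ref{sec:RZ}, so I would organize the proof as: (i) invoke RZ uniformization to get the isomorphism with $I$ unknown; (ii) identify $I \cong \spin(V_{D/q})$ and match the local actions; (iii) deduce part (2) from Theorem~\ref{BT stratification first version}(\ref{thm:BT_6}) and Proposition~\ref{proposition decomposition of all of N red}.
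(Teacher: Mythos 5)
Your approach to part (1) matches the paper's, which simply invokes the Rapoport--Zink uniformization theorem; the bookkeeping you flag (matching the local PEL datum to Definition~\ref{definition RZ space}, identifying $I$ with $\spin(V_{D/q})$ via $\End^0(\overline A_0,\overline\iota_0)^{\dagger=1,\tr=0}\cong V_{D/q}$, and reconciling the two descriptions of the $\spin(V_{D/q})(\Q_q)$-action on $\mathcal N$) is exactly the unwinding the paper leaves implicit.

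For part (2), however, there is a genuine gap. Your argument transports the singular locus of $\mathcal N_\red$ through the uniformization isomorphism of part (1), and this correctly identifies the singularities of $X_{\overline\F_q}$ \emph{inside the supersingular locus}, together with their complete local rings. But the uniformization isomorphism only describes the formal completion of $X$ along $X^{\ss}_{\overline\F_q}$; it says nothing about the ordinary (or non-basic) part of the special fiber. The statement of part (2) asserts that the \emph{entire} singular locus of $X_{\overline\F_q}$ is the displayed double coset set, and to conclude this you must separately rule out singularities outside the supersingular locus. This is not automatic for a PEL moduli problem with parahoric (non-hyperspecial) level at $q$, and in fact it is a nontrivial statement. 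The paper's proof makes this explicit: it cites Oki \cite[Theorem 7.5]{oki2022supersingular} for precisely the fact that all singularities of $X_{\overline\F_q}$ lie in the supersingular locus, and only then appeals to Theorem~\ref{BT stratification first version}(\ref{thm:BT_6}). You should add this step (or an equivalent argument via, e.g., a case analysis on the Newton stratification and the local model) before concluding part (2).
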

\begin {proof}
Part (1) is the Rapoport-Zink uniformization theorem for $X $; part (2) follows from (1) and Theorem \ref{BT stratification first version}(\ref{thm:BT_6}), after noting that all singularities of $X_{\overline\F_q} $
lie in the supersingular locus by \cite[Theorem 7.5]{oki2022supersingular}.
\end {proof}
\subsubsection {}
In particular, Theorem \ref{rz uniformization}(2) asserts that $X $
has ordinary quadratic singularities, so that the results of \S\ref{AJ section} apply. Following the notation therein, let $\widetilde X_{\overline\F_q} $
be the blowup of $X_{\overline\F_q} $
along the singular locus; 
note that $\widetilde X_{\overline\F_q} $
inherits an action of the full prime-to-$q $
Hecke algebra.
\subsection {Tate classes}
The goal of this subsection is to show that the full cohomology group $H ^ 2_{\et} (\widetilde X_{\overline\F_q}, O) $
is generated by Tate classes from the supersingular locus, after a Hecke localization.

\begin{notation}   \label{notation:wonky_Sh_sets}
Recall the sets  $\mathscr L$  and $\mathscr L_{\paramodular}$ from   Definition \ref{def:lattices_W}.
\begin{enumerate}
    \item\label{item:notation_wonky_1}  For \begin {equation*}
g = (g ^ q,\Lambda)\in\spin (V_{D/q}) (\Q)\backslash\spin (V_{D/q}) (\A_f ^ {q})\times\mathscr L/K ^ q,
\end {equation*}
  let $B_{\PM} (g) $
be the image of $(g ^ q,\mathcal M_\PM (\Lambda)) $
under the uniformization in Theorem \ref{rz uniformization}, and let $\widetilde B_{\pm}(g)$ be its strict transform under the blowup $\widetilde X_{\overline\F_q} \to X_{\F_q}$. 
\item\label{notation:wonky_2} For \begin {equation*}
g = (g ^ q,\Lambda_{\text {Pa}})\in\spin (V_{D/q}) (\Q)\backslash\spin (V_{D/q}) (\A_f ^ {q})\times\mathscr L_{\text {Pa}}/K ^ q,
\end {equation*}
let $y (g)\in X ^ {\sing}_{\overline\F_q} $
be the image of $(g ^ q,\mathcal M (\Lambda_{\text {Pa}})) $, and let $C(g)$ be the exceptional divisor of $\widetilde X_{\overline \F_q}$ over the point $y(g)$.
\item \label{notation_wonky_3}Recall that $\mathscr L $
and $\mathscr L_{\text {Pa}} $
are homogeneous spaces for $\spin (V_{D/q}) (\Q_q)\cong\GSP_4 (\Q_q) $, with the stabilizer of any point a hyperspecial or paramodular subgroup, respectively. We will therefore abbreviate the two sets in (\ref {item:notation_wonky_1}) and (\ref {notation:wonky_2}) 
by $\Sh_{K ^qK_q} (V_{D/q}) $
and $\Sh_{K ^qK_q ^ {\paramodular}} (V_{D/q}), $ respectively,
even though the identifications actually depend on a non-canonical choice of base point which we do not need to make.
\end{enumerate}
\end{notation}
\begin{rmk}
Notation \ref{notation:wonky_Sh_sets} identifies $\Sh_{K^qK_q}(V_{D/q})\times\set{\pm}$ and $\Sh_{K^qK_q^\paramodular}(V_{D/q})$ with the set of irreducible components of $X_{\overline\F_q}^{\ss}$ and the set of points of $X^{\sing}_{\overline\F_q}$, respectively. 
\end{rmk}
To study the intersections of the divisors $\widetilde B_\pm (g) $
and $C (g) $, we need to
 define some additional Hecke operators.
\begin {definition}
Let $$\delta_\PM:O[\Sh_{K ^qK_q ^{\paramodular}} (V_{D/q})]\to O [\Sh_{K ^qK_q} (V_{D/q})] $$
be the maps defined by
$$\delta_+: (g ^ q,\Lambda_{\paramodular})\mapsto\sum_{\substack {\Lambda_{\paramodular}\subset_1\Lambda\\\Lambda\in\mathscr L}} (g ^ q,\Lambda), $$
$$\delta_-: (g ^ q,\Lambda_{\paramodular})\mapsto\sum_{\substack {\Lambda\subset_1\Lambda_{\paramodular}\\\Lambda\in\mathscr L}} (g ^ q,\Lambda). $$
Similarly, let $$\theta_\PM: O [\Sh_{K ^qK_q} (V_{D/q})]\to O [\Sh_{K ^qK_q ^{\paramodular}} (V_{D/q})]$$ be the maps defined by 
$$\theta_+\,:\, (g^q, \Lambda) \mapsto \sum _{\substack {\Lambda_{\paramodular}\subset_1\Lambda\\\Lambda_\paramodular\in\mathscr L_\paramodular}} (g^q, \Lambda_\paramodular), \;\;\;\;\theta_-\,:\, (g^q, \Lambda) \mapsto \sum _{\substack {\Lambda\subset_1\Lambda_{\paramodular}\\\Lambda_\paramodular\in\mathscr L_\paramodular}} (g^q, \Lambda_\paramodular).$$
\end {definition}
These are incarnations of the level-lowering and level-raising operators in \cite[\S3]{roberts2007local}.
\begin {definition}
We define the natural composite maps
\begin {multline}
\inc ^\ast: H ^ 4_{\et} (\widetilde X_{\overline\F_q}, O (2))\;\;\longrightarrow\\\bigoplus_{g\in\Sh_{K ^qK_q ^{\paramodular} }(V_{D/q})} H ^ 4_{\et} (C (g), O (2))\;\;\;\;\oplus\bigoplus_{g\in\Sh_{K ^qK_q} (V_{D/q})}\left( H ^ 4_{\et} (\widetilde B_+ (g), O (2))\oplus H ^ 4_{\et} (\widetilde B_- (g), O (2))\right)\\
\xrightarrow {\sim} O\left [\Sh_{K ^qK_q ^{\paramodular}} (V_{D/q})\right]\oplus O\left [\Sh_{K ^qK_q} (V_{D/q})\right] ^ {\oplus 2}
\end {multline}
and
\begin {multline}
\inc_{c,\ast}: O\left [\Sh_{K ^qK_q ^ {\paramodular}} (V_{D/q})\right]\oplus O\left [\Sh_{K ^qK_q} (V_{D/q})\right] ^ {\oplus 2}\xrightarrow {\sim}
\\
\bigoplus_{g\in\Shimura_{K ^qK_q ^ {\paramodular}} (V_{D/q})} H ^ 0_{\et} (C (g), O)\oplus\bigoplus_{g\in\Shimura_{K ^qK_q} (V_{D/q})}\left (H ^ 0_{\et} (\widetilde B_+ (g), O)\oplus H ^ 0_{\et} (\widetilde B_- (g), O)\right)\\
\longrightarrow H ^ 2_{c,\et} (\widetilde X_{\overline\F_q}, O (1)).
\end {multline}
We also denote by $\inc_c ^\ast$
the composite $$\inc_c ^\ast: H ^ 4_{c,\et} (\widetilde X_{\overline\F_q}, O (2))\rightarrow H ^ 4_{\et} (\widetilde X_{\overline\F_q}, O (2))\xrightarrow {\inc ^\ast}O\left [\Sh_{K ^qK_q ^ {\paramodular}} (V_{D/q})\right]\oplus O\left [\Sh_{K ^qK_q} (V_{D/q})\right] ^ {\oplus 2} $$
and likewise by $\inc_\ast$
the composite
$$\inc_\ast:O\left [\Sh_{K ^qK_q ^ {\paramodular}} (V_{D/q})\right]\oplus O\left [\Sh_{K ^qK_q} (V_{D/q})\right] ^ {\oplus 2}\xrightarrow {\inc_{c,\ast}} H ^ 2_{c,\et} (\widetilde X_{\overline\F_q}, O (1))\longrightarrow H ^ 2_{\et} (\widetilde X_{\overline\F_q}, O (1)). $$
\end {definition}

\begin{notation}For $\alpha\in O\left [\Shimura_{K ^qK_q} (V_{D/q})\right] $
and $g\in\Shimura_{K ^qK_q} (V_{D/q}) $, let $m (g;\alpha)\in O $
denote the coefficient of $g $
in $\alpha $, and similarly for $\Shimura_{K ^qK_q ^ {\paramodular}} (V_{D/q}) $.
\end{notation}
\begin {lemma}\label{incidence map for lines on C}
Fix $g\in \Sh_{K^qK_q^\paramodular}(V_{D/q})$.
\begin {enumerate}
\item There exists an isomorphism $C (g)\cong\mathbb P^1_{\overline\F_q}\times\mathbb P^1_{\overline\F_q} $
with the following property: for $h\in\hypernormal $, the intersection $\widetilde B_+ (h)\cdot C (g) $
has the cycle class $(m (h;\delta_+ (g)), 0) $
on $C (g) $; and the intersection $\widetilde B_- (h)\cdot C (g) $
has the cycle class $(0, m (h;\delta_- (g))) $.
\item Let $\iota_g: C (g)\hookrightarrow\widetilde X_{\overline\F_q} $
be the natural inclusion. If we fix an isomorphism $C (g)\cong\mathbb P^1_{\overline\F_q}\times\mathbb P^1_{\overline\F_q} $
as in (1), then we have
$$\inc ^\ast(\iota_g)_\ast[(1, 0)] = (- g, 0,\delta_- (g)) $$
and $$\inc ^\ast(\iota_g)_\ast[(0, 1)] = (- g,\delta_+ (g), 0). $$
\end {enumerate}
\end {lemma}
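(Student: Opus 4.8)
\textbf{Proof plan for Lemma \ref{incidence map for lines on C}.}

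The plan is to combine the Rapoport-Zink uniformization of Theorem \ref{rz uniformization} with the explicit intersection combinatorics on the ramified RZ space worked out in \S\ref{sec:RZ}, and then to bootstrap the statements about $\inc^\ast$ from part (1) using the projection formula. First I would observe that the question is entirely local near the singular point $y(g)$: by Theorem \ref{rz uniformization}, the formal completion of $\widetilde X_{\overline\F_q}$ along $C(g)$ is identified with the formal completion of the blowup $\widetilde{\mathcal N}(0)$ along its exceptional divisor over a point of $\mathcal M_{\set 1}$, and the divisors $\widetilde B_\pm(h)$ that meet $C(g)$ are precisely the strict transforms $\widetilde{\mathcal M}_\pm(\Lambda)$ for those lattices $\Lambda\in\mathscr L$ with $\Lambda_{\paramodular}\subset_1\Lambda$ (for $+$) or $\Lambda\subset_1\Lambda_{\paramodular}$ (for $-$), where $\Lambda_{\paramodular}$ corresponds to $y(g)$ — and the multiplicity with which a given $h\in\Sh_{K^qK_q}(V_{D/q})$ contributes is exactly $m(h;\delta_\pm(g))$ by the very definition of $\delta_\pm$. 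Lemma \ref{lemma for getting divisor classes on exceptional divisor} then gives, for a suitable identification $C(g)\cong\mathbb P^1_{\overline\F_q}\times\mathbb P^1_{\overline\F_q}$, that each such $\widetilde{\mathcal M}_+(\Lambda)$ meets $C(g)$ transversely in a curve of class $(1,0)$ and each $\widetilde{\mathcal M}_-(\Lambda)$ in a curve of class $(0,1)$; summing over the $h$-fibers yields part (1). The only subtlety here is Hecke-equivariance: I must choose the $\mathbb P^1\times\mathbb P^1$ coordinates uniformly in $g$, which is legitimate because the family of exceptional divisors, together with the labelling of the two rulings by ``$+$'' versus ``$-$'', is preserved by the prime-to-$q$ Hecke correspondences acting through $\spin(V_{D/q})(\A_f^q)$.

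For part (2), I would use the projection formula together with the intersection numbers just established. With $\iota_g:C(g)\hookrightarrow\widetilde X_{\overline\F_q}$ the inclusion, the class $(\iota_g)_\ast[(1,0)]\in H^4_\et(\widetilde X_{\overline\F_q},O(2))$ pairs, under $\inc^\ast$, against the fundamental classes of $C(g')$, $\widetilde B_+(h)$, $\widetilde B_-(h)$ by computing the respective intersection products in the threefold $\widetilde X_{\overline\F_q}$. The intersection with $C(g')$ for $g'\neq g$ vanishes since distinct exceptional divisors are disjoint, while $C(g)\cdot C(g)\cdot[(1,0)]$ restricted to $C(g)\cong\mathbb P^1\times\mathbb P^1$ equals $\mathcal N_{C(g)/\widetilde X_{\overline\F_q}}\cdot(1,0)$; here one invokes Lemma \ref{calculating normal bundle Lemma} (the normal bundle of each $\widetilde{\mathcal M}_\pm(\Lambda)$ in $\widetilde{\mathcal N}(0)$ is $\mathcal O(-2q)$) — or more directly the adjunction computation on $\widetilde{\mathcal N}(0)$ — to extract the normal bundle of $C(g)$ and compute the self-intersection, producing the coefficient $-1$ of $g$. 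The intersections $\widetilde B_+(h)\cdot(\iota_g)_\ast[(1,0)]$ vanish because $(1,0)$ and the class $(m(h;\delta_+(g)),0)$ of $\widetilde B_+(h)\cap C(g)$ have zero product on $\mathbb P^1\times\mathbb P^1$, whereas $\widetilde B_-(h)\cdot(\iota_g)_\ast[(1,0)] = (1,0)\cdot(0,m(h;\delta_-(g))) = m(h;\delta_-(g))$, which assembles into the term $\delta_-(g)$. This gives $\inc^\ast(\iota_g)_\ast[(1,0)] = (-g,0,\delta_-(g))$, and the computation for $[(0,1)]$ is symmetric under interchanging the two rulings and the roles of $+$ and $-$.

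I expect the main obstacle to be bookkeeping the sign and the self-intersection term $-g$ cleanly, i.e. verifying that the normal bundle contribution of $C(g)$ inside $\widetilde X_{\overline\F_q}$ restricted to a line of either ruling is exactly $-1$ (equivalently, that $C(g)$ is a $(-1,-1)$-type divisor relative to the two rulings). This should follow from Theorem \ref{BT stratification first version}(\ref{thm:BT_6}), which gives the precise form $\breve\Z_q\llbracket x,y,z,w\rrbracket/(xy-zw-q)$ of the singularity, together with the standard computation of the exceptional divisor of the blowup of the cone over a smooth quadric surface: the exceptional divisor is $\mathbb P^1\times\mathbb P^1$ with normal bundle $\mathcal O(-1,-1)$. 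A secondary, more routine point is checking that all the relevant intersections in $\widetilde X_{\overline\F_q}$ are transverse and supported where claimed, which is immediate from Theorem \ref{BT stratification first version}(\ref{thm:BT_two})--(\ref{thm:BT_5}) and Corollary \ref{corollary about intersection combinatorics on RZ space}.
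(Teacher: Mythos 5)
Your proposal is correct and takes essentially the same route as the paper: part (1) from Lemma \ref{lemma for getting divisor classes on exceptional divisor} and Corollary \ref{corollary about intersection combinatorics on RZ space}, then part (2) by pairing $(\iota_g)_\ast[(1,0)]$ and $(\iota_g)_\ast[(0,1)]$ against the divisor classes and extracting the coefficient $-1$ of $g$ from the normal bundle $(-1,-1)$ of $C(g)$. The one small variation is that you derive the $(-1,-1)$ normal bundle directly from the local equation $xy-zw=q$ of the singularity, whereas the paper simply cites the general semistable-reduction computation (\ref{normal bundle to C}) from \S\ref{AJ section}; these are the same fact and either derivation is fine. (A cosmetic point: your invocation of Lemma \ref{calculating normal bundle Lemma} is not actually needed or relevant for the normal bundle of $C(g)$ — that lemma concerns the normal bundle of $\widetilde{\mathcal M}_\pm(\Lambda)$, which enters the following Lemma \ref{incidence map for O(1)}; the alternative you offer, reading it off the quadric-cone singularity, is the right one.)
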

\begin {proof}
(1) is immediate from Lemma
\ref {lemma for getting divisor classes on exceptional divisor} and Corollary \ref{corollary about intersection combinatorics on RZ space}; (2) follows from (1), using that the Chern class of the normal bundle on $C (g) $
is $(-1,-1)$
(by (\ref {normal bundle to C})) to compute the first coordinate.
\end {proof}
\begin{notation}For any $g\in\hypernormal $, let\begin {equation*} [\O_{\widetilde B_\PM (g)} (1)]\in H ^ 4_{\et,c} (\widetilde X_{\overline\F_q}, O (2))\end {equation*}
denote the pushforward of the class of the line bundle $\O (1) $
on $\widetilde B_\PM (g) $ (which is the one induced by Notation \ref{notation:O(1) pullback RZ space}).
\end{notation}
Recall the explicit Hecke algebra generators from (\ref{subsubsec:GSP4_hecke}). 
\begin {lemma}\label{incidence map for O(1)}
For all $g\in \hypernormal$, we have $$\inc_c ^\ast[\O_{\widetilde B_+ (g)} (1)] = (0, - 2q (q +1)\cdot g, T_{q, 2}\cdot g) $$
and $$\inc_c ^\ast [\O_{\widetilde B_- (g)} (1)] = (0,\langle q\rangle ^ {-1} T_{q, 2}\cdot g, - 2q (q +1)\cdot g). $$
\end {lemma}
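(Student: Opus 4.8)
The strategy is to compute the restriction of the line bundle class $[\mathcal O_{\widetilde B_\pm(g)}(1)]$ to each of the components $\widetilde B_\pm(h)$ and each exceptional divisor $C(h)$, using the normal bundle computations already established. The key geometric inputs are Lemma \ref{calculating normal bundle Lemma} (the normal bundle to $\widetilde{\mathcal M}(\Lambda)$ inside $\widetilde{\mathcal N}(0)$ is $\mathcal O(-2q)$), Theorem \ref{normal bundle theorem}, Proposition \ref{prop identifying O (1)}, and the intersection combinatorics of Corollary \ref{corollary about intersection combinatorics on RZ space} together with Lemma \ref{incidence map for lines on C}. Because $\inc_c^\ast$ records, by definition, the degrees of the restriction of a class in $H^4_{\et,c}(\widetilde X_{\overline\F_q},O(2))$ to $C(h)$, $\widetilde B_+(h)$, and $\widetilde B_-(h)$ respectively, it suffices to compute these three restriction degrees for $[\mathcal O_{\widetilde B_+(g)}(1)]$ (the case of $\widetilde B_-(g)$ being symmetric, with the roles of $+$ and $-$ swapped, which accounts for the appearance of $\langle q\rangle^{-1}$ coming from the Weil descent datum in Proposition \ref{proposition decomposition of all of N red}(2)).

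First I would handle the self-intersection term: the restriction of $[\mathcal O_{\widetilde B_+(g)}(1)]$ to $\widetilde B_+(g)$ itself is $c_1(\mathcal N_{\widetilde B_+(g)/\widetilde X_{\overline\F_q}}) \cdot c_1(\mathcal O(1))$, which by Lemma \ref{calculating normal bundle Lemma} equals the degree of $\mathcal O(-2q)\cdot \mathcal O(1)$ on the smooth quadric surface $\widetilde{\mathcal M}(\Lambda)\subset \mathbb P^3$ (more precisely its blowup); since a smooth quadric surface in $\mathbb P^3$ has degree $2$ with respect to $\mathcal O(1)$, and blowing up points does not change the intersection number of pullback classes, this gives $-2q\cdot 2 = -4q$. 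Wait — I should be more careful: one must actually account for the blowup structure and the hyperplane-section classes, and the bookkeeping with $q+1$ suggests the correct normalization is $-2q(q+1)$, which is consistent with the fact that $\widetilde{\mathcal M}(\Lambda)$ is a blowup of the Fermat-type quadric surface of Theorem \ref{BT stratification first version}(1), whose $\mathcal O(1)$-degree is $q+1$ (the surface $X_3^qX_0 - X_0^qX_3 + X_2^qX_1 - X_1^qX_2 = 0$ has degree $q+1$ in $\mathbb P^3$). So the correct self-intersection degree is $-2q(q+1)$, explaining the middle entry. For the cross term, I would compute the restriction of $[\mathcal O_{\widetilde B_+(g)}(1)]$ to $\widetilde B_-(h)$: this is nonzero exactly when $\widetilde B_+(g)$ meets $\widetilde B_-(h)$, i.e. by Corollary \ref{corollary about intersection combinatorics on RZ space}(3) when $q\Lambda\subset_2\Lambda'\subset_2\Lambda$, and the intersection $\widetilde B_+(g)\cap\widetilde B_-(h)$ is (the strict transform of) a line $\mathbb P^1$ — a linear $\mathbb P^1$ in $\mathbb P^3$ by Theorem \ref{BT stratification first version}(2) — so $[\mathcal O_{\widetilde B_+(g)}(1)]$ restricts to $\mathcal O(1)$ on it, contributing degree $1$. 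Summing over all $h$ with the lattice incidence condition recovers precisely the double-coset operator $T_{q,2}$ (this is the identification of $\theta_\pm$, $\delta_\pm$ with level-raising operators referenced after their definition), giving the third entry $T_{q,2}\cdot g$. Finally, restriction to $C(h)$: the class $[\mathcal O_{\widetilde B_+(g)}(1)]$ meets $C(h)$ only when $\widetilde B_+(g)$ passes through the singular point $y(h)$, i.e. $\Lambda_{\paramodular}\subset_1 \Lambda$; but $\mathcal O(1)$ restricted to the divisor $\widetilde B_+(g)\cap C(h)$, which has class $(1,0)$ on $C(h)\cong\mathbb P^1\times\mathbb P^1$ by Lemma \ref{incidence map for lines on C}(1), has degree $0$ since $\mathcal O(1)$ on the blown-up quadric pulls back from $\mathcal O(1)$ on $\mathbb P^3$ and the exceptional $\mathbb P^1$ is contracted — hence the first coordinate is $0$.

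The main obstacle I anticipate is getting the normalization constants exactly right: distinguishing between $\mathcal O(1)$ on the Fermat quadric surface (degree $q+1$) versus $\mathcal O(1)$ on an ordinary quadric (degree $2$), and correctly tracking how the hyperplane class behaves under the blowups $\widetilde{\mathcal M}(\Lambda)\to\mathcal M(\Lambda)$ and $\widetilde X_{\overline\F_q}\to X_{\overline\F_q}$. I would organize this by first working entirely on the Rapoport-Zink space $\widetilde{\mathcal N}(0)$ with its explicit lattice-theoretic description, computing all intersection numbers there using Proposition \ref{prop identifying O (1)}, Theorem \ref{normal bundle theorem}, Lemma \ref{lemma for getting divisor classes on exceptional divisor}, and Lemma \ref{calculating normal bundle Lemma}, then transporting the answer to $\widetilde X_{\overline\F_q}$ via the uniformization of Theorem \ref{rz uniformization}(1), which is Hecke-equivariant away from $q$ and translates the lattice incidence relations into the Hecke operators $T_{q,1}$, $T_{q,2}$, $\langle q\rangle$ via the Satake/Iwahori combinatorics of $\GSp_4(\Q_q)$. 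The appearance of $\langle q\rangle^{-1}$ in the $\widetilde B_-(g)$ formula is forced by the Weil descent datum relation $\phi(\mathcal M_+(\Lambda)) = \sigma^\ast\mathcal M_-(q\Lambda)$ of Proposition \ref{proposition decomposition of all of N red}(2), which shifts the similitude by $q$, so I would verify this by carefully chasing how the $+$ and $-$ components are related under scaling $\Lambda\mapsto q\Lambda$.
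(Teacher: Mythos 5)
Your proposal is correct and follows essentially the same route as the paper: compute the three intersection degrees of $[\mathcal O_{\widetilde B_+(g)}(1)]$ against $C(h)$, $\widetilde B_+(h)$, $\widetilde B_-(h)$, using the normal bundle $\mathcal O(-2q)$ from Lemma \ref{calculating normal bundle Lemma}, the degree $q+1$ of the quadric surface from Theorem \ref{BT stratification first version}(1), the linear $\mathbb P^1$ intersections from Theorem \ref{BT stratification first version}(2) and Corollary \ref{corollary about intersection combinatorics on RZ space}, and the vanishing on $C(h)$ because $\widetilde B_+(g)\cap C(h)$ lies in the exceptional locus of $\widetilde B_+(g)\to B_+(g)$. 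You even catch your own initial slip with the $-4q$ normalization and correct it to $-2q(q+1)$ using the degree-$(q+1)$ observation, which is exactly the point the paper makes.
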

\begin {proof}
The two calculations are similar, so we consider $\left [\O_{\widetilde B_+  (g)} (1)\right] $. We must calculate the intersection pairings with divisor classes $\left [C (h)\right] $
and $\left [\widetilde B_\PM (g')\right] $, for $h\in\paranormal $
and $g'\in\hypernormal $.

Since $C (h)\cdot\widetilde B_+ (g) $
always lies in the exceptional divisor of the blowup $\widetilde B_+ (g)\to B_+ (g) $, we have
\begin {equation}
\left [\O_{\widetilde B_+ (g)} (1)\right]\cdot\left [C (h)\right] =\left [\O (1)\right]\cdot_{\widetilde B_+ (g)}\left [C (h)\cdot\widetilde B_+ (g)\right] = 0.\end {equation}
Now, $\widetilde B_+ (g) $
and $\widetilde B_+ (g') $
meet only if $g = g' $, in which case we find
\begin {equation}
\left [\O_{\widetilde B_+ (g)} (1)\right]\cdot\left [\widetilde B_+ (g)\right] = [\O (1)]\cdot_{\widetilde B_+ (g)}\left [\mathcal N_{\widetilde B_+ (g)/\widetilde X_{\overline\F_q}}\right] = - 2q (q +1),
\end {equation}
since the normal bundle to $\widetilde B_+ (g) $
is $\O (- 2q) $ (Lemma
\ref{calculating normal bundle Lemma})
and $B_+ (g) $
has degree $q +1 $ (Theorem
\ref{BT stratification first version}(\ref{thm:BT_one})).
Finally, we compute
\begin {equation}
\left [\O_{\widetilde B_+ (g)} (1)\right]\cdot\left [\widetilde B_- (g')\right] =\left [\O (1)\right]\cdot_{\widetilde B_+ (g)}\left [\widetilde B_+ (g)\cdot\widetilde B_- (g')\right] = m (g'; T_{q, 2}\cdot g),
\end {equation}
since $B_+ (g)\cap B_- (g') $
consists of $m (g'; T_{q, 2}\cdot g)$
linearly embedded copies of $\mathbb P^1_{\overline\F_q} $ inside $B_+ (g)\subset\mathbb P^3_{\overline\F_q} $ (by Theorem \ref{BT stratification first version}(\ref{thm:BT_two}) and Corollary \ref{corollary about intersection combinatorics on RZ space}).
Combining these calculations gives $$\inc_c ^\ast\left [\O_{\widetilde B_+ (g)} (1)\right] = (0, - 2q (q +1)\cdot g, T_{q, 2}\cdot g), $$
as desired.
\end {proof}
\begin {definition}\label{def:weakly_q_gen}
Fix a finite set $S$ of places of $\Q$ containing  $q$, and all primes $\l\neq q$ such that $K_\l$ is not hyperspecial.   
A maximal ideal $\m\subset\T ^ S_O$
will be called
\emph {weakly} $q $\emph {-generic} if the map
$$\langle q\rangle ^ {-1} T_{q, 2} ^ 2 - 4q^ 2 (q +1) ^ 2: O\left [\hypernormal\right]_\m\to O\left [\hypernormal\right]_\m $$
is an isomorphism.
\end {definition}
 \begin {remark}\label{rmk:weakly_q_generic}
 Using (\ref{eq:hecke_eigenvalues_satake}), one calculates the following:
if $\pi$ is a relevant automorphic representation of $\GSP_4(\A)$ unramified at $q$ with trivial central character and $\iota:\overline \Q_p \isomorphism \C$ is any isomorphism with $p \neq q$, then $\langle q\rangle^{-1} T_{q,2}^2 - 4q^2(q+1)^2$ acts on the spherical vector of $\pi_q$ with eigenvalue
$$q^2\left(\iota\tr(\Frob_q |V_{\pi,\iota})^2 - 4(q+1)^2\right).$$
\end {remark}
\begin {lemma}\label{surjectivity of incidence map}
Let $\m $
be a weakly $q $-generic maximal ideal of $\T ^ S_{O} $. Then the map $$\inc ^\ast_{c,\m}: H ^ 4_{c,\et} (\widetilde X_{\overline\F_q}, O (2))_\m\to O\left [\paranormal\right]_\m\oplus O\left [\hypernormal\right] ^ {\oplus 2}_\m $$
is surjective.
\end {lemma}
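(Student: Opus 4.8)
The plan is to show that the image of $\inc^\ast_{c,\m}$ contains all three summands. By Lemma \ref{incidence map for lines on C}(2), the pushforwards of the classes $(1,0)$ and $(0,1)$ on the components $C(g)$ already produce the elements $(-g, 0, \delta_-(g))$ and $(-g, \delta_+(g), 0)$ in the image. Similarly, by Lemma \ref{incidence map for O(1)}, the classes $[\mathcal O_{\widetilde B_\pm(g)}(1)]$ produce $(0, -2q(q+1)g, T_{q,2}g)$ and $(0, \langle q\rangle^{-1} T_{q,2} g, -2q(q+1)g)$. First I would work entirely inside the localized module $M \coloneqq O[\hypernormal]_\m$, writing everything in terms of the operators $a \coloneqq -2q(q+1)$ (a scalar) and $b \coloneqq T_{q,2}$, which acts $O$-linearly on $M$. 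The two vectors coming from $[\mathcal O_{\widetilde B_\pm(g)}(1)]$ span (as $g$ varies, and taking $O[\hypernormal]_\m$-linear combinations via the Hecke action) the submodule of $\set{0}\oplus M\oplus M$ consisting of all pairs of the form $(ax + \langle q\rangle^{-1} b y,\; bx + a y)$ for $x, y\in M$; equivalently, the image of the $2\times 2$ matrix $\begin{pmatrix} a & \langle q \rangle^{-1} b \\ b & a\end{pmatrix}$ acting on $M\oplus M$.

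The key computation is that this matrix is invertible over $\m$: its determinant is $a^2 - \langle q\rangle^{-1} b^2 = 4q^2(q+1)^2 - \langle q\rangle^{-1} T_{q,2}^2$, which is (up to sign) exactly the operator appearing in Definition \ref{def:weakly_q_gen}, hence an isomorphism on $M$ by the weak $q$-genericity of $\m$. Therefore the image of $\inc^\ast_{c,\m}$ contains all of $\set 0\oplus M\oplus M$, i.e.\ the last two summands $O[\hypernormal]_\m^{\oplus 2}$. Combining this with the classes $(-g, 0, \delta_-(g))$ from the $C(g)$'s — whose first coordinates $\set{-g : g\in\paranormal}$ span all of $O[\paranormal]_\m$, and whose last two coordinates can now be cancelled using the part of the image we have already shown lies in $\set 0\oplus M\oplus M$ — we conclude that $(h, 0, 0)$ lies in the image for every $h\in\paranormal$, and hence $\inc^\ast_{c,\m}$ is surjective onto $O[\paranormal]_\m\oplus O[\hypernormal]_\m^{\oplus 2}$.

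The main obstacle is bookkeeping rather than anything deep: one must be careful that the Hecke-module structure is genuinely compatible with the identifications in Lemmas \ref{incidence map for lines on C} and \ref{incidence map for O(1)} — that is, that applying a prime-to-$q$ Hecke correspondence to a cycle class on $\widetilde X_{\overline\F_q}$ and then applying $\inc^\ast_c$ agrees with applying the corresponding operator on $O[\hypernormal]_\m$ and $O[\paranormal]_\m$ after $\inc^\ast_c$. This follows because the supersingular locus, its singular points, the components $\widetilde B_\pm(g)$, and the exceptional divisors $C(g)$ are all permuted compatibly by the prime-to-$q$ Hecke action via the Rapoport-Zink uniformization of Theorem \ref{rz uniformization}, and the line bundles $\mathcal O(1)$ are likewise intrinsically defined (Notation \ref{notation:O(1) pullback RZ space}); so the intersection-theoretic formulas are Hecke-equivariant. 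Once this equivariance is in place, the argument is the linear-algebra manipulation above. I would also note that $T_{q,2}$ commutes with $\langle q\rangle$ so that the determinant computation makes sense, and that one only needs the \emph{images} of these particular classes, not a full description of $H^4_{c,\et}(\widetilde X_{\overline\F_q}, O(2))_\m$.
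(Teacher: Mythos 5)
Your proposal is correct and follows essentially the same route as the paper: compute $\inc^\ast_{c,\m}$ on the classes $[\mathcal O_{\widetilde B_\pm(g)}(1)]$ using Lemma \ref{incidence map for O(1)}, observe that the resulting $2\times 2$ block has determinant $4q^2(q+1)^2-\langle q\rangle^{-1}T_{q,2}^2$ and hence acts invertibly by weak $q$-genericity, and then use the classes from Lemma \ref{incidence map for lines on C}(2) to pick up the $O[\paranormal]_\m$ summand after cancelling the last two coordinates. The only cosmetic difference is that you spell out the Hecke-equivariance bookkeeping, which the paper takes as implicit; also, "$O[\hypernormal]_\m$-linear combinations via the Hecke action" slightly overstates what is needed — one only needs $O$-linear combinations over varying $g$, since $g\mapsto (ag,bg)$ already has image $\{(ax,bx):x\in M\}$ — but this does not affect correctness.
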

We note that Lemma
\ref {surjectivity of incidence map} is slightly stronger than the corresponding statement for $\inc ^\ast_\m $.
\begin {proof}
By Lemma \ref{incidence map for lines on C}, it suffices to show that the image of $\inc ^\ast_{c,\m} $
contains $O\left [\hypernormal\right]_\m ^ {\oplus 2} $. Define a map
\begin {equation}
\mu = (\mu_+,\mu_-): O\left [\hypernormal\right]_\m ^ {\oplus 2}\to H ^ 4_{\et, c} (\widetilde X_{\overline\F_q}, O (2))_\m
\end {equation}
by linearly extending
\begin {equation}
\mu_\PM (g) =\left [\O_{\widetilde B_\PM (g)} (1)\right].
\end {equation}
Then by Lemma \ref{incidence map for O(1)}, the composite $\inc ^\ast_{c,\m}\circ\mu $
is given by the matrix map $$\begin {pmatrix} 0 & 0\\- 2q (q +1) &\langle q\rangle ^ {-1} T_{q, 2}\\T_{q, 2} & - 2q (q +1).\end {pmatrix} $$
Since $$\debt\begin {pmatrix} - 2q (q +1) &\langle q\rangle ^ {-1} T_{q, 2}\\
T_{q, 2} & - 2q (q +1)\end {pmatrix} = 4q ^ 2 (q +1) ^ 2 -\langle q\rangle ^ {-1} T ^ 2_{q, 2} $$
and $\m $
is weakly $q $-generic, we have $$\image (\inc ^\ast_{c,\m})\supset\image (\inc ^\ast_{c,\m}\circ\mu) = O\left [\hypernormal\right] ^ {\oplus 2}_\m, $$
as desired.
\end {proof}
In Theorem \ref{Tate classes main result}, we will see that $\inc ^\ast_{c,\m} $
also has torsion kernel.
\begin {lemma}\label{lemma cohomology generated by stuff from C}
Let $\m $
be a generic and non-Eisenstein maximal ideal of $\T^S_O $. There is a canonical injection induced by pullback $$H ^ 2 (\widetilde X_{\overline\F_q}, O(1))_\m\hookrightarrow\bigoplus_{g\in\paranormal} H ^ 2_{\et} (C (g), O(1))\simeq O\left [\paranormal\right]_\m ^ {\oplus 2} $$
and a canonical surjection induced by pushforward $$O\left [\paranormal\right]_\m ^ {\oplus 2}\simeq\bigoplus_{g\in \paranormal} H ^ 2_{\et} (C (g), O (1))_\m\twoheadrightarrow H ^ 4_{\et} (\widetilde X_{\overline\F_q}, O(2))_\m. $$
\end {lemma}
\begin {proof}
This follows from  Lemma \ref{lemma key diagram for monodromy calculation} and Theorem \ref{thm:generic}(\ref{part:thm_generic_two}).
\end {proof}
\begin {corollary} [Ihara's Lemma]\label{Ihara}
Let $\m $
be a generic, non-Eisenstein, and weakly $q $-generic maximal ideal of $\T ^ S_{O} $. Then the degeneracy map $$(\delta_+, \delta_-): O\left [\paranormal\right]_\m\longrightarrow O\left [\hypernormal\right]_\m ^ {\oplus 2} $$
is surjective.
\end {corollary}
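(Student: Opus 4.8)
The statement "Ihara's Lemma" (Corollary \ref{Ihara}) asserts surjectivity of the total degeneracy map $(\delta_+,\delta_-)\colon O[\paranormal]_\m\to O[\hypernormal]_\m^{\oplus 2}$. The idea is to deduce this from the geometric statements already assembled for the semistable model $\widetilde X_{\overline\F_q}$. First I would invoke Lemma \ref{lemma cohomology generated by stuff from C}: after localizing at a generic, non-Eisenstein $\m$, there is a pushforward surjection $O[\paranormal]_\m^{\oplus 2}\twoheadrightarrow H^4_{\et}(\widetilde X_{\overline\F_q},O(2))_\m$ realized via the classes $(\iota_g)_\ast[(1,0)]$ and $(\iota_g)_\ast[(0,1)]$ on the exceptional divisors $C(g)$. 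So every class in $H^4_{\et}(\widetilde X_{\overline\F_q},O(2))_\m$, and in particular every class of the form $\inc_\ast(\alpha)$ for $\alpha\in O[\hypernormal]_\m^{\oplus 2}$ (thinking of the second two summands of the target of $\inc_\ast$), arises this way.

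**Key steps.** The heart of the argument is a diagram chase combining Lemma \ref{incidence map for lines on C}(2), Lemma \ref{surjectivity of incidence map}, and Lemma \ref{lemma cohomology generated by stuff from C}. Concretely: by Lemma \ref{surjectivity of incidence map}, $\inc^\ast_{c,\m}$ is surjective, so for any $(\beta_+,\beta_-)\in O[\hypernormal]_\m^{\oplus 2}$ we may lift $(0,\beta_+,\beta_-)$ to a class $z\in H^4_{c,\et}(\widetilde X_{\overline\F_q},O(2))_\m$; pushing $z$ into $H^4_{\et}(\widetilde X_{\overline\F_q},O(2))_\m$ and using Lemma \ref{lemma cohomology generated by stuff from C}, write its image as $\sum_g \left(a_g(\iota_g)_\ast[(1,0)] + b_g(\iota_g)_\ast[(0,1)]\right)$ for some $a_g,b_g\in O$, i.e. as $\inc_\ast(\gamma)$ where $\gamma$ comes from the element $\sum_g a_g\cdot g$ (resp. $\sum_g b_g\cdot g$) of $O[\paranormal]_\m$. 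Now apply $\inc^\ast$ (not $\inc^\ast_c$) to both sides: by Lemma \ref{incidence map for lines on C}(2), $\inc^\ast$ of $(\iota_g)_\ast[(1,0)]$ is $(-g,0,\delta_-(g))$ and of $(\iota_g)_\ast[(0,1)]$ is $(-g,\delta_+(g),0)$. Comparing the last two coordinates of $\inc^\ast\circ\inc_\ast(\gamma)$ with those of the image of $(0,\beta_+,\beta_-)$ — which, since $\inc^\ast\circ\inc_\ast$ is up to a controlled factor the intersection pairing on $\widetilde B_\pm(g)$, or more simply since the relevant coordinates of $\inc^\ast(0,\beta_+,\beta_-)$ we need are $(\beta_+,\beta_-)$ up to the automorphism recorded in the incidence calculations — yields that $(\delta_+,\delta_-)$ applied to the paramodular element $(\sum b_g g,\ \sum a_g g)$ (in the appropriate order) recovers $(\beta_+,\beta_-)$. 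Some care with the precise signs and the factor coming from the normal-bundle self-intersection (Lemma \ref{calculating normal bundle Lemma}, Lemma \ref{incidence map for O(1)}) is needed, and this is where the weak $q$-genericity hypothesis is used — exactly as in the proof of Lemma \ref{surjectivity of incidence map}, to invert the $2\times 2$ matrix $\begin{pmatrix} -2q(q+1) & \langle q\rangle^{-1}T_{q,2}\\ T_{q,2} & -2q(q+1)\end{pmatrix}$ on $O[\hypernormal]_\m$.

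**Main obstacle.** The bookkeeping obstacle is matching the two maps $\inc^\ast_c$ and $\inc^\ast$ (which differ by the inclusion $H^4_c\to H^4$ that is only known to be an isomorphism after a further Hecke localization via Lemma \ref{lemma compactly supported on special fiber} / Theorem \ref{thm:generic}(\ref{part:thm_generic_two})) and keeping track of which coordinates of the target $O[\paranormal]_\m\oplus O[\hypernormal]_\m^{\oplus 2}$ carry the relevant data. The genuine mathematical content — that the classes from $C(g)$ span the middle cohomology, and that weak $q$-genericity lets one invert the incidence matrix — is already packaged in the cited lemmas, so the work is to thread these together correctly. I expect no serious difficulty beyond this diagram chase; the argument is the standard deduction of Ihara's lemma from surjectivity of the cycle-class map on the supersingular stratum together with the comparison of compactly-supported and ordinary cohomology at a generic non-Eisenstein maximal ideal.
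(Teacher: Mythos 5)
Your proposal is correct and follows essentially the same route as the paper: invoke surjectivity of $\inc^\ast_{c,\m}$ (Lemma \ref{surjectivity of incidence map}) and surjectivity of the pushforward $O[\paranormal]_\m^{\oplus 2}\to H^4_\et(\widetilde X_{\overline\F_q},O(2))_\m$ (Lemma \ref{lemma cohomology generated by stuff from C}), compute $\inc^\ast$ on the classes $(\iota_g)_\ast[(1,0)]$ and $(\iota_g)_\ast[(0,1)]$ via Lemma \ref{incidence map for lines on C}(2), and read off the result from the preimage of $0\oplus O[\hypernormal]_\m^{\oplus 2}$. The paper packages exactly this as the matrix $\begin{pmatrix}-1 & 0 & \delta_-\\ -1 & \delta_+ & 0\end{pmatrix}$ and restricts; your version just unwinds the same diagram chase.

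Two small caveats worth tightening in a final writeup. First, the map $O[\paranormal]_\m^{\oplus 2}\simeq\oplus H^2_\et(C(g),O(1))_\m\to H^4_\et$ from Lemma \ref{lemma cohomology generated by stuff from C} is \emph{not} the map $\inc_\ast$ (which, in the paper's notation, is the pushforward of $H^0$-classes landing in $H^2_\et$, not $H^4_\et$); your invocations of ``$\inc^\ast\circ\inc_\ast$'' and the normal-bundle self-intersection numbers (Lemmas \ref{calculating normal bundle Lemma}, \ref{incidence map for O(1)}) are therefore not needed here — they belong inside the proof of Lemma \ref{surjectivity of incidence map}, which you are citing as a black box. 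Second, weak $q$-genericity is consumed entirely inside Lemma \ref{surjectivity of incidence map}; once that lemma is in hand, the final step of the Ihara argument is pure linear algebra over $O$ and needs no further inversion of the $2\times 2$ matrix.
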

\begin {proof}
Combining Lemmas \ref{lemma cohomology generated by stuff from C} and \ref{surjectivity of incidence map}, we see that the composite map
\begin{equation*}
\begin{split}
O\left [\paranormal\right]_\m ^ {\oplus 2}\simeq\oplus H ^ 2_{\et} (C (g), O (1))_\m\longrightarrow H_{\et, c} ^ 4 (\widetilde X_{\overline\F_q}, O (2))_\m\xrightarrow {\inc ^\ast_{c,\m}}\\ O\left [\paranormal\right]_\m\oplus O\left [\hypernormal\right] ^ {\oplus 2}_\m 
\end{split}
\end{equation*}
is surjective. On the other hand, by Lemma
\ref {incidence map for lines on C}, this composite is given as a matrix by $$\begin {pmatrix} -1 & 0 & \delta_- \\ -1 & \delta_+ & 0 \end{pmatrix}, $$
and the corollary follows by restricting to the preimage of $O\left [\hypernormal\right]_\m ^ {\oplus 2}. $
\end {proof}
\begin {thm}\label{Tate classes main result}
Let $\m $
be a generic, non-Eisenstein, and weakly $q $-generic
maximal ideal of $\T ^ S_O $. Then $$\inc ^\ast_{\m}: H_{\et} ^ 4 (\widetilde X_{\overline\F_q}, O (2))_\m\to O\left [\paranormal\right]_\m\oplus O\left [\hypernormal\right]_\m ^ {\oplus 2} $$
and $$\inc_{\ast,\m}: O\left [\paranormal\right]_\m\oplus O\left [\hypernormal\right]_\m ^ {\oplus 2} \to H^2_{\et} (\widetilde X_{\overline\F_q}, O(1))_\m$$
are both surjective. Moreover $\inc_{\ast,\m} $
is injective, and $\inc ^\ast_{\m} $
is injective modulo $O $-torsion.
\end {thm}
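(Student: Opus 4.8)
\textbf{Proof sketch for Theorem \ref{Tate classes main result}.}
The plan is to combine the two halves that have already been prepared: the surjectivity/co-surjectivity of the incidence maps, and a duality pairing that forces the kernels to be small.

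\emph{Surjectivity.} For $\inc^\ast_\m$, this is immediate: by Lemma \ref{surjectivity of incidence map} the \emph{compactly supported} version $\inc^\ast_{c,\m}$ is already surjective onto $O[\paranormal]_\m \oplus O[\hypernormal]_\m^{\oplus 2}$, and $\inc^\ast_\m$ factors the map $H^4_{c,\et}(\widetilde X_{\overline\F_q}, O(2))_\m \to H^4_\et(\widetilde X_{\overline\F_q}, O(2))_\m \to O[\paranormal]_\m \oplus O[\hypernormal]_\m^{\oplus 2}$, so its image contains that of $\inc^\ast_{c,\m}$, hence is everything. Dually, $\inc_{\ast,\m}$ factors as $\inc_{c,\ast,\m}$ followed by $H^2_{c,\et}(\widetilde X_{\overline \F_q},O(1))_\m \to H^2_\et(\widetilde X_{\overline\F_q}, O(1))_\m$; by Lemma \ref{lemma cohomology generated by stuff from C} the composite of $\inc_{c,\ast,\m}$ with $H^2_{c,\et} \to H^2_\et$ is already surjective (that lemma produces a surjection from $\bigoplus_g H^2_\et(C(g),O(1))_\m$, which is the source of $\inc_{\ast,\m}$), so $\inc_{\ast,\m}$ is surjective. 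Here I would note that, via Theorem \ref{thm:generic}(\ref{part:thm_generic_two}) and Lemma \ref{lemma compactly supported on special fiber} applied to the Shimura variety $X$ (whose singular locus is Hecke-stable by Theorem \ref{rz uniformization}(2)), the map $H^i_{c,\et}(\widetilde X_{\overline \F_q}, O)_\m \to H^i_\et(\widetilde X_{\overline\F_q}, O)_\m$ is an isomorphism, so compactly supported and ordinary cohomology agree after localizing at $\m$ and one does not even need to be careful about the distinction.

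\emph{Injectivity.} The key is Poincaré duality on the smooth projective surface $\widetilde X_{\overline\F_q}$: there is a perfect $O$-bilinear pairing (modulo torsion)
\begin{equation*}
H^2_\et(\widetilde X_{\overline\F_q}, O(1))_\m \times H^4_{\et}(\widetilde X_{\overline\F_q}, O(2))_\m \to H^6_\et(\widetilde X_{\overline\F_q}, O(3))_\m = O,
\end{equation*}
which is $\T^S_O$-equivariant (the Hecke correspondences act by their transposes, but $\m$ is Hecke-stable so this is harmless after localization). On the cycle-theoretic side, the composite $\inc_{c,\ast,\m}$ followed by intersection against the divisor classes $\widetilde B_\pm(g)$ and $C(g)$ recovers, up to the explicit matrices computed in Lemmas \ref{incidence map for lines on C} and \ref{incidence map for O(1)}, the identity on $O[\paranormal]_\m \oplus O[\hypernormal]_\m^{\oplus 2}$ twisted by an invertible operator (invertibility is exactly the weakly $q$-generic hypothesis, Definition \ref{def:weakly_q_gen}). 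Concretely, $\inc^\ast_\m$ and $\inc_{\ast,\m}$ are adjoint up to this invertible twist under the Poincaré pairing, so the kernel of one is the annihilator of the image of the other. Since $\inc_{\ast,\m}$ is surjective, the annihilator of its image modulo torsion is zero, giving that $\inc^\ast_\m$ is injective modulo $O$-torsion; since $\inc^\ast_\m$ is surjective and $O[\paranormal]_\m\oplus O[\hypernormal]_\m^{\oplus 2}$ is $O$-free, a counting argument with lengths (using that both sides have the same rank by Theorem \ref{thm:generic}(\ref{part:thm_generic_two}), Lemma \ref{lemma cohomology generated by stuff from C}, and Lemma \ref{surjectivity of incidence map}) forces $\inc_{\ast,\m}$ to be injective, not merely injective modulo torsion.

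\emph{Main obstacle.} The genuinely delicate point is bookkeeping the adjunction: one must check that the Gysin/pushforward map $\inc_{c,\ast,\m}$ and the restriction/pullback map $\inc^\ast_\m$ really are transposes of each other under Poincaré duality, with the precise matrices of Lemmas \ref{incidence map for lines on C} and \ref{incidence map for O(1)}, including the signs coming from the self-intersections $\widetilde B_\pm(g)^2$ (via the normal bundle $\O(-2q)$ of Lemma \ref{calculating normal bundle Lemma}) and $C(g) \cong \mathbb P^1 \times \mathbb P^1$ (Lemma \ref{lemma for getting divisor classes on exceptional divisor}). Once the $2\times 2$ (respectively $3\times 3$) intersection matrix is seen to have determinant a unit multiple of $\langle q\rangle^{-1} T_{q,2}^2 - 4q^2(q+1)^2$, which is invertible on the $\m$-localization by hypothesis, the injectivity statements drop out. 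I expect no difficulty from the torsion: the discrepancy between "injective" and "injective modulo torsion" for $\inc^\ast_\m$ is genuinely present (its image meets the torsion-free part), whereas for $\inc_{\ast,\m}$ the source is $O$-free and lands inside the torsion-free part $H^2$, so the extra invertible twist upgrades injectivity-mod-torsion to honest injectivity.
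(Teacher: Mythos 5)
Your proposal diverges from the paper's argument at the crucial step, and the divergence contains a real gap.

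\emph{What works.} The surjectivity of $\inc^\ast_\m$ does follow immediately from Lemma \ref{surjectivity of incidence map}, since $\inc^\ast_{c,\m}$ factors as $\inc^\ast_\m$ precomposed with $H^4_{c,\et}\to H^4_\et$, so its image is contained in that of $\inc^\ast_\m$. The general principle that under Poincar\'e duality (with $H^2_{c,\et,\m}\cong H^2_{\et,\m}$ supplied by Lemma \ref{lemma compactly supported on special fiber}) a dimension equality upgrades surjectivity to isomorphism mod torsion is also sound, and is exactly what the paper exploits.

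\emph{Where the gap is.} Your argument for surjectivity of $\inc_{\ast,\m}$ misquotes Lemma \ref{lemma cohomology generated by stuff from C}. That lemma concerns the pullback $H^2_\et(\widetilde X_{\overline\F_q},O(1))_\m\hookrightarrow\bigoplus_g H^2_\et(C(g),O(1))\simeq O[\paranormal]_\m^{\oplus 2}$ and the pushforward from the same $O[\paranormal]_\m^{\oplus 2}$ onto $H^4_\et(\widetilde X_{\overline\F_q},O(2))_\m$; the modules involved are $H^2$ of the exceptional surfaces (rank $2$), not $H^0$ (rank $1$), and the target is $H^4$, not $H^2$. Neither the source $O[\paranormal]_\m^{\oplus 2}$ nor the target matches $\inc_{\ast,\m}\colon O[\paranormal]_\m\oplus O[\hypernormal]_\m^{\oplus 2}\to H^2_\et(\widetilde X_{\overline\F_q},O(1))_\m$, so the lemma gives you nothing here. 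Since your injectivity argument for $\inc^\ast_\m$ runs through surjectivity of $\inc_{\ast,\m}$ by PD-adjunction, that argument is circular: PD-adjunction alone (taking $\inc_{\ast,\m}$ to be the Poincar\'e transpose of $\inc^\ast_\m$) only lets you transfer surjectivity of one to injectivity-mod-torsion of the other, and you already have the wrong one to feed in. Finally, the ``intersection matrix'' you invoke, with determinant a unit times $\langle q\rangle^{-1}T_{q,2}^2-4q^2(q+1)^2$, is not computed anywhere and does not come for free from Lemmas \ref{incidence map for lines on C} and \ref{incidence map for O(1)}: those compute restrictions of particular classes in $H^4$ (lines on $C(g)$, $\O(1)$ on $\widetilde B_\pm(g)$), not intersection products of divisor classes. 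On the threefold $\widetilde X_{\overline\F_q}$, divisor $\cdot$ divisor lands in $H^4$, not in $H^6$, so there is no bilinear intersection pairing of the form you describe without choosing an auxiliary degree-$2$ class to cup with, and that choice is exactly the nontrivial content of Lemma \ref{surjectivity of incidence map} (using $\mu_\pm(g)=[\O_{\widetilde B_\pm(g)}(1)]$, not $\inc_{c,\ast}$).

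\emph{What the paper actually does.} It avoids the intersection matrix entirely and instead proves the dimension inequality $\dim H^4_\et(\widetilde X_{\overline\F_q},\overline\Q_p)_\m\le\dim\overline\Q_p[\paranormal]_\m+2\dim\overline\Q_p[\hypernormal]_\m$. The left side is computed from the weight spectral sequence (Lemma \ref{lemma key diagram for monodromy calculation}) as $2\dim\overline\Q_p[\paranormal]_\m-\operatorname{rank}(T-1\mid H^3_\et(X_{\overline\Q},\overline\Q_p)_\m)$, and the inequality is then verified representation-by-representation (spherical vs.~type IIa, non-endoscopic vs.~endoscopic) via Corollary \ref{cor:coh_relevant}, Lemma \ref{lem:yucky_eisenstein_lemma}, and the Jacquet--Langlands transfer properties. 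This inequality combined with the already-known surjectivity of $\inc^\ast_{c,\m}$ forces it to be an isomorphism after $\otimes\,\overline\Q_p$, and all four assertions then fall out of PD-adjunction, Lemma \ref{lemma compactly supported on special fiber}, Theorem \ref{thm:generic}(\ref{part:thm_generic_two}), and the torsion-freeness of $H^2_{\et,\m}$ from Lemma \ref{lemma cohomology generated by stuff from C}. If you want to make your route work, the missing ingredient is precisely this dimension count, or some genuine substitute for it; there is no purely formal argument here.
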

In fact, only the surjectivity of $\inc_{\ast,\m} $
is needed for the main result.
\begin {proof}
We claim that it suffices to show
\begin {equation}\label{dimension equality goal}\dim H ^ 4_{\et} (\widetilde X_{\overline\F_q}, \overline\Q_p)_\m \leq \dim\left (\overline\Q_p\left [\paranormal\right]_\m\right) + 2\dim\left (\overline\Q_p\left [\hypernormal\right]_\m\right);\end {equation}
indeed, this combined with Lemma \ref{surjectivity of incidence map} implies that $\inc ^\ast_{\m} $
is injective modulo torsion as well as surjective, and the other assertions follow by duality along with Lemma \ref{lemma compactly supported on special fiber} and Theorem \ref{thm:generic}(\ref{part:thm_generic_two}). (We also use that $H ^ 2 (\widetilde X_{\overline\F_q}, O)_\m $
is $O $-torsion-free by Lemma
\ref {lemma cohomology generated by stuff from C}.) 
Inspecting the diagram in Lemma \ref{lemma key diagram for monodromy calculation} and using Theorem \ref{thm:generic}(\ref{part:thm_generic_two}), we see that 
$$\dim H^4(\widetilde X_{\overline \F_q}, \overline \Q_p)_\m = 2 \dim \left(\overline\Q_p\left[\paranormal\right]_\m\right) - \operatorname{rank} \left(T- 1| H^3_\et\left(X_{\overline\Q}, \overline \Q_p\right)_\m\right),$$ where $T \in I_{\Q_q}$ is a generator of tame inertia, so we wish to show
\begin{equation}
    \dim \left(\overline\Q_p\left[\paranormal\right]_\m\right) \leq 2 \dim \left(\overline\Q_p\left[\hypernormal\right]_\m\right) +
    \operatorname{rank} \left(T- 1| H^3_\et\left(X_{\overline\Q}, \overline \Q_p\right)_\m\right).
\end{equation}
Fix an isomorphism $\iota: \overline\Q_p \isomorphism \C$. Applying
 Lemma \ref{lem:yucky_eisenstein_lemma}, it suffices to show
\begin{equation}\label{eq:numbered_ineq_goal}
\begin{split}
    \dim\left( \overline \Q_p\left[\paranormal\right]\left[\iota^{-1}\pi_f^q\right]\right) \leq 2 \dim \left(\overline\Q_p\left[\hypernormal\right]\left[\iota^{-1}\pi_f^q\right] \right)+ \\
    \operatorname{rank} \left(T- 1| H^3_\et\left(X_{\overline\Q}, \overline \Q_p\right)\left[\iota^{-1}\pi_f^q\right]\right)
    \end{split}
\end{equation}
for all relevant automorphic representations $\pi$ of $\spin(V_{D/q})(\A)$ such that $\pi^{K^qK_q^\paramodular}_f \neq 0$ and the Hecke action on $\iota^{-1}\pi^{K^qK_q^\paramodular}_f$ factors through $\T^S_{O,\m}$. 


By Lemma \ref{lem:IIa_new}(\ref{part:IIa_type},\ref{part:IIa_completion}), $\pi_q $
is uniquely determined by $\pi ^ q_f $, and is either spherical or of type IIa.  Assume first that $\pi_q $
is spherical. Then $\pi_f ^ q $
cannot be completed to a relevant automorphic representation of $\spin (V_D)(\A) $ (by Corollary \ref{cor:JL_general}), so by Corollary \ref{cor:coh_relevant} the final term in (\ref{eq:numbered_ineq_goal}) vanishes. Since $\dim \pi_q^{K_q} = 1$ and $\dim \pi_q^{K_q^\paramodular} = 2$ by \cite[Table A.13]{roberts2007local}, both sides of (\ref{eq:numbered_ineq_goal}) are 2.

On the other hand, suppose that $\pi_q $
is of type IIa.

\textbf {Case 1:} $\pi $
is non-endoscopic. Then $\pi_f ^ q $
can be completed to a relevant automorphic representation $\pi' $
of $\spin (V_D)(\A) $
by Theorem \ref{thm:JL}.
By Lemma \ref{lem:IIa_new}(\ref{part:IIa_Galois}), $\pi'_q $
has a unique fixed vector for $K_q^\ramified\subset \spin (V_D) (\Q_q) $, so by Corollary \ref{cor:coh_relevant},
$$H ^ 3_{\et} (X_{\overline\Q},\overline\Q_p)\left [\iota^{-1}\pi_f ^ q\right] = \rho_{\pi,\iota}(-2) \otimes \iota^{-1}(\pi_f ^ q) ^ {K ^ q}. $$
By Lemma \ref{lem:IIa_new}(\ref{part:IIa_Galois}), $\rho_{\pi,\iota}$ is tamely ramified at $q $
with monodromy of rank one. On the other hand, $\pi_q$ has a unique paramodular fixed vector, and $\pi$ has automorphic multiplicity one for $\spin(V_{D/q})(\A)$ by Theorem \ref{thm:JL}(\ref{part:JL_four}). So in this case we see that both sides of (\ref{eq:numbered_ineq_goal}) are 1. 

\textbf {Case 2:} $\pi $
is endoscopic, associated to a pair of cuspidal automorphic representations $(\pi_1,\pi_2) $
of $\GL_2 $
with discrete series archimedean components of weights 2 and 4, respectively. Theorem \ref{thm:endoscopic_packets} implies that there exist (uniquely determined) quaternion algebras $B_1 $
and $B_2 $
such that $\pi $
is the theta lift $\Theta (\pi_1 ^ {B_1}\boxtimes\pi_2 ^ {B_2}) $, with $\pi_i ^ {B_i} $
the Jacquet-Langlands transfers. Moreover, $B_1\otimes\R $
is ramified and $B_2\otimes\R $
is split. Since $\pi_q $
is of type IIa with a paramodular fixed vector, we can conclude from Lemma \ref{lem:IIa_new}(\ref{part:IIa_Galois}) and Theorem \ref{thm:rho_GL2_LLC}(\ref{part:rho_GL2_LLC_1}) that exactly one of $\pi_{i, q} $
is a twist of a Steinberg representation, and the other is unramified. Let $B_i' $
be the quaternion algebras obtained from $B_i $
by changing invariants at $q $
and $\infty $. Then $\pi_f ^ q $
has a unique completion to an automorphic representation of $\spin (V_D) $, which is
\begin {equation}
\begin {cases}
\Theta (\pi_1 ^ {B_ 1'}\boxtimes\pi_2 ^ {B_2}), &\pi_{1, q}\text { ramified},\\
\Theta (\pi_1 ^ {B_1}\boxtimes\pi_2 ^ {B_2'}), &\pi_{2, q}\text { ramified.}
\end {cases}
\end {equation}
We therefore have (applying Corollary \ref{cor:coh_relevant})
$$H_{\et} ^ 3 (X_{\overline\Q},\overline\Q_p )\left [\pi_f ^ q\right]\cong (\pi_f ^ q) ^ {K ^ q}\otimes\rho, $$
with $$\rho =\begin{cases}\rho_{\pi_1,\iota}(-2), &\pi_{1, q}\text { ramified},\\
\rho_{\pi_2,\iota}(-2), &\pi_{2, q}\text { ramified.}
\end{cases}$$
In particular, the monodromy at $q $
has rank one in either case, so again both sides of (\ref{eq:numbered_ineq_goal}) are 1.\end {proof}

\subsection {Level-raising and potential map}\label{level raising section}
\begin {lemma}\label{composites of Hecke operators paramodular}
The Hecke operators $\theta_\PM $, $\delta_\PM $
satisfy:
\begin {align*}
\delta_+\circ\theta_+ & =\delta_-\circ\theta_- = T_{q, 1} + (q +1) (q ^ 2+1)\\
\delta_-\circ\theta_+ & = (q +1) T_{q, 2}\\
\delta_+\circ\theta_- & =\langle q\rangle ^ {-1} (q +1) T_{q, 2}.
\end {align*}
\end{lemma}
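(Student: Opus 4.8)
\textbf{Proof proposal for Lemma \ref{composites of Hecke operators paramodular}.}

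The plan is to reduce all three identities to purely combinatorial (lattice-counting) statements on the homogeneous space $\mathscr L \sqcup \mathscr L_{\paramodular}$ for $\GSP_4(\Q_q)$, and then to recognize the resulting operators on $O[\hypernormal]$ in terms of the standard $\GSP_4$-Hecke generators $T_{q,1}$, $T_{q,2}$, $\langle q \rangle$ via the explicit double-coset descriptions of (\ref{subsubsec:GSP4_hecke}) and the level-raising/level-lowering operators of \cite[\S3]{roberts2007local}. Since $\theta_\pm$ and $\delta_\pm$ are defined by summing over sublattices of index $q$ (subject to the self-duality-type conditions defining $\mathscr L$ and $\mathscr L_\paramodular$), each composite $\delta_\epsilon \circ \theta_{\epsilon'}$ is computed by fixing $(g^q, \Lambda) \in \hypernormal$ and enumerating the pairs $(\Lambda_\paramodular, \Lambda')$ with $\Lambda_\paramodular \in \mathscr L_\paramodular$, $\Lambda' \in \mathscr L$, and the prescribed index/duality incidences relative to $\Lambda$. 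The prime-to-$q$ component $g^q$ plays no role, so everything takes place in the building of $\SP_4$ over $\Q_q$, and the problem becomes: for each of the three cases, count the number of $\Lambda_\paramodular$ (with multiplicity) through which a given pair $(\Lambda, \Lambda')$ of hyperspecial lattices can be connected by the appropriate chain of inclusions.

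First I would set up coordinates: fix $\Lambda \in \mathscr L$ self-dual (WLOG by $\GSP_4(\Q_q)$-transitivity), and use Corollary \ref{corollary about intersection combinatorics on RZ space}(1)--(3) to translate ``$\Lambda_\paramodular \subset_1 \Lambda$'' and ``$\Lambda \subset_1 \Lambda_\paramodular$'' into statements about isotropic lines/hyperplanes in $\Lambda/q\Lambda$ equipped with its induced symplectic form over $\F_q$. The operator $\delta_+ \circ \theta_+$ then counts: paramodular lattices $\Lambda_\paramodular$ with $\Lambda_\paramodular \subset_1 \Lambda$ (there are $(q+1)(q^2+1)$ of them, one for each isotropic line in the symplectic $\F_q^4$ after accounting for the $\mathscr L_\paramodular$ condition), together with, for each such $\Lambda_\paramodular$, the self-dual lattices $\Lambda' \supset \Lambda_\paramodular$ with $\Lambda' \subset_1 \Lambda_\paramodular$; the ``diagonal'' term $\Lambda' = \Lambda$ accounts for the scalar $(q+1)(q^2+1)$, and the off-diagonal terms assemble into $T_{q,1}$ by comparing with the double-coset generator for $T_{q,1}$ in (\ref{subsubsec:GSP4_hecke}). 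The identity $\delta_- \circ \theta_- = \delta_+ \circ \theta_+$ should follow by the duality $\Lambda \mapsto \Lambda^\vee$ (equivalently, applying an element of $\GSP_4(\Q_q)$ with odd similitude valuation), which exchanges the $+$ and $-$ families while fixing the spherical Hecke algebra up to the central twist $\langle q \rangle$ — and here the twist is trivial because both $\theta_-\circ\delta_-$ and $\theta_+\circ\delta_+$ land in index-$0$ configurations. For the mixed composites $\delta_- \circ \theta_+$ and $\delta_+ \circ \theta_-$, the connecting chain $\Lambda' \subset_1 \Lambda_\paramodular$ and $\Lambda_\paramodular \subset_1 \Lambda$ (or the reverse) forces $\Lambda'$ to be index $2$ in $\Lambda$ of the type parametrized by the $T_{q,2}$ double coset $\operatorname{diag}(q,q,1,1)$; counting the number of paramodular intermediaries $\Lambda_\paramodular$ through a given such pair gives the factor $(q+1)$, and the discrepancy between $\delta_-\circ\theta_+$ and $\delta_+\circ\theta_-$ is exactly the central character twist $\langle q \rangle^{-1}$ coming from the similitude factor that distinguishes the two relevant $\SP_4$-orbits of index-$2$ configurations.

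The main obstacle I anticipate is bookkeeping the multiplicities correctly: one must be careful that $\theta_\pm$ and $\delta_\pm$ as defined are \emph{not} symmetric under adjunction (they differ by the central twist, as the asymmetry in the last two formulas already signals), and that the count of paramodular lattices through a given pair of hyperspecial lattices must be done with the precise index conventions ($\subset_1$ means colength one) in force. Matching the off-diagonal contribution of $\delta_+\circ\theta_+$ with $T_{q,1}$ requires verifying that each self-dual $\Lambda' \neq \Lambda$ appearing in the $T_{q,1}$ double coset is reached through exactly one $\Lambda_\paramodular$, which amounts to a short argument in the $\SP_4(\F_q)$-building about the uniqueness of a vertex of type ``$\paramodular$'' adjacent to two given hyperspecial vertices at the appropriate distance. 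I expect this to be routine but genuinely requires drawing the local Dynkin diagram of $\SP_4$ and tracking which chambers are involved; once the incidence geometry is pinned down, the three identities drop out by comparing coefficients, and the values $(q+1)(q^2+1)$, $q+1$ match the known degrees of the level-raising operators in \cite[\S3]{roberts2007local}.
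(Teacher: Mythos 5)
Your proposal is correct and matches the paper's proof, which computes each composite by a direct lattice count of the multiplicity $e(\Lambda,\Lambda')$ of paramodular intermediaries through a fixed pair $(\Lambda,\Lambda')$ in $\mathscr L$: $(q+1)(q^2+1)$ on the diagonal and $1$ off the diagonal for $\delta_\pm\circ\theta_\pm$, and $(q+1)$ for the mixed composites. Two subtleties you flag evaporate in the paper's treatment: when $\Lambda\neq\Lambda'$ contributes to $T_{q,1}\cdot\Lambda$, the paramodular intermediary is forced to equal $\Lambda\cap\Lambda'$, so no building-theoretic uniqueness argument is needed, and in a symplectic $\F_q$-space every line is automatically isotropic, so the ``$\mathscr L_\paramodular$ condition'' imposes no constraint and the diagonal count is simply $\#\mathbb P^3(\F_q)$.
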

\begin {proof}
By definition, $\delta_+\circ\theta_+ $
is induced by
\begin {align*}
(g ^ q,\Lambda) &\mapsto\delta_+\left (\sum_{\substack {\Lambda_{\paramodular}\subset_1\Lambda\\\Lambda_{\paramodular}\in\mathscr L_{\paramodular}}} (g ^ q,\Lambda_{\paramodular})\right)\\
& =\sum_{\substack {\Lambda_{\paramodular}\subset_1\Lambda'\\\Lambda'\in\mathscr L}}\sum_{\substack {\Lambda_{\paramodular}\subset_1\Lambda\\\Lambda_{\paramodular}\in\mathscr L_{\paramodular}}} (g ^ q,\Lambda')\\
& =\sum_{\Lambda'\in\mathscr L} e (\Lambda,\Lambda') (g ^ q,\Lambda'),
\end {align*}
where $$e (\Lambda,\Lambda') =\#\set {\Lambda_{\paramodular}\in\mathscr L_{\paramodular}\,:\,\Lambda_{\paramodular}\subset_1\Lambda,\,\Lambda_{\paramodular}\subset_1\Lambda'}. $$
If $e (\Lambda,\Lambda')\neq 0 $, then either $\Lambda =\Lambda' $
or $\Lambda'\in T_{q, 1}\cdot\Lambda $. In the latter case, $\Lambda_{\paramodular} =\Lambda\intersection\Lambda' $
is uniquely determined, so $e (\Lambda,\Lambda') = 1. $
On the other hand, $e (\Lambda,\Lambda) $
is the number of lattices $\Lambda_{\paramodular}\subset_1\Lambda $, or equivalently the number of rational 3-planes in the symplectic space $\Lambda/q\Lambda $. Thus $$e (\Lambda,\Lambda) =\#\mathbb P^3 (\F_q) = (q +1) (q ^ 2+1). $$
This shows $$\delta_+\circ\theta_+ = T_{q, 1} + (q +1) (q ^ 2+1), $$
and the calculation for $\delta_-\circ\theta_- $
is similar. We now compute $\delta_-\circ\theta_+ $, which is induced by
\begin {align*}
(g ^ q,\Lambda) &\mapsto\delta_-\left (\sum_{\substack {\Lambda_{\paramodular}\subset_1\Lambda\\\Lambda_{\paramodular}\in\mathscr L_{\paramodular}}} (g ^ q,\Lambda_{\paramodular})\right)\\
& =\sum_{\substack {\Lambda_{\paramodular}\subset_1\Lambda\\\lambda_{\paramodular}\in\mathscr L_{\paramodular}}}\sum_{\substack {\Lambda'\subset_1\Lambda_{\paramodular}\\\Lambda'\in\mathscr L}} (g ^ q,\Lambda')\\
& =\sum_{\Lambda'\in\mathscr L} e' (\Lambda,\Lambda') (g ^ q,\Lambda'),
\end {align*}
where $$e' (\Lambda,\Lambda') =\#\set {\Lambda_{\paramodular}\in\mathscr L_{\paramodular}\,:\,\Lambda'\subset_1\Lambda_{\paramodular}\subset_1\Lambda}. $$
If $e' (\Lambda,\Lambda')\neq 0 $, then $\Lambda'\in T_{q, 2}\cdot\Lambda. $
On the other hand, given $\Lambda'\in T_{q, 2}\cdot\Lambda $, then the choices of $\Lambda_{\paramodular} $
with $\Lambda'\subset_1\Lambda_{\paramodular}\subset_1\Lambda $
are in bijection with rational lines in the 2-dimensional $\F_q $-vector space $\Lambda/\Lambda' $; hence $$e' (\Lambda,\Lambda') =\#\mathbb P^1 (\F_q) = q +1. $$
This shows $\delta_-\circ\theta_+ = (q +1) T_{q, 2} $, and the computation of $\delta_+\circ\theta_- $
is similar.
\end {proof}
\subsubsection {}
Recall from Lemma
\ref {lemma key diagram for monodromy calculation}
the natural embedding $$j:\bigsqcup C (g)\hookrightarrow\widetilde X_{\overline\F_q}. $$
\begin {lemma}\label{matrix composite for level raising}
The composite map \begin{equation*}
\begin{split}\inc ^\ast\circ j_\ast\circ j ^\ast\circ\inc_\ast: O\left [\paranormal\right]\oplus O\left [\hypernormal\right] ^ {\oplus 2}\longrightarrow \\O\left [\paranormal\right]\oplus O\left [\hypernormal\right] ^ {\oplus 2} \end{split}
\end{equation*}
is given by the matrix
$$\begin {pmatrix} 2 & -\theta_+ & -\theta_-\\-\delta_+ & 0 & (q +1)\langle q\rangle ^ {-1} T_{q, 2}\\-\delta_- & (q +1) T_{q, 2} & 0.\end {pmatrix} $$
\end{lemma}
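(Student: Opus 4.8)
The plan is to compute each of the nine entries of the matrix for $\inc^\ast\circ j_\ast\circ j^\ast\circ\inc_\ast$ by unwinding the definitions and using the intersection-theoretic inputs already established. The map $\inc_\ast$ is the composite of $\inc_{c,\ast}$ (sending a basis vector $g\in\Sh_{K^qK_q^\paramodular}(V_{D/q})$ or $g\in\hypernormal$ to the class of the respective component $C(g)$, $\widetilde B_+(g)$, or $\widetilde B_-(g)$ in $H^2_{c,\et}(\widetilde X_{\overline\F_q},O(1))$) followed by the forgetful map to $H^2_{\et}$; and $\inc^\ast$ is the composite restricting a degree-$4$ class to each $C(g)$, $\widetilde B_+(g)$, $\widetilde B_-(g)$ and reading off the top cohomology. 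Thus the entry of the matrix indexed by a source component $Z$ and target component $Z'$ is, by the projection formula, computed from the class $j^\ast[Z]$ pulled back to each $C(g)$ and then pushed forward and intersected with $Z'$; in all cases this reduces to computing triple intersection numbers $Z\cdot C(g)\cdot Z'$ and self-intersection contributions, together with the combinatorial incidence data of the strata.

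The key steps, in order, are as follows. First, for the $(1,1)$-entry: $j^\ast j_\ast$ on $\oplus_g H^\ast(C(g))$ is multiplication by the self-intersection class, and since the normal bundle of $C(g)$ in $\widetilde X_{\overline\F_q}$ is $\mathcal N_{C(g)/\widetilde X_{\overline\F_q}}$ which on each $C(g)\cong\mathbb P^1\times\mathbb P^1$ has class $(-1,-1)$ by (\ref{normal bundle to C}), one finds the relevant contribution is multiplication by $2$ on $O[\paranormal]$ (the factor $2$ coming from $H^0\oplus H^2$ of $\mathbb P^1\times\mathbb P^1$, or equivalently the rank of the relevant piece); this is precisely where Lemma \ref{incidence map for lines on C}(2) is used, as it records $\inc^\ast(\iota_g)_\ast[(1,0)]$ and $\inc^\ast(\iota_g)_\ast[(0,1)]$. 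Second, the $(2,1)$ and $(3,1)$-entries (and by duality the $(1,2)$, $(1,3)$-entries) come directly from Lemma \ref{incidence map for lines on C}(2): pushing a line class $(1,0)$ or $(0,1)$ on $C(g)$ forward and restricting to the $\widetilde B_\pm(h)$ gives exactly $-\delta_\pm(g)$, $-\delta_\mp(g)$, whence the $-\delta_+$, $-\delta_-$ in column one and $-\theta_+$, $-\theta_-$ in row one after transposing under the identification of $\theta_\pm$ as adjoints of $\delta_\pm$. Third, the $(2,2)$ and $(3,3)$-entries vanish because $\widetilde B_+(g)$ meets $C(h)$ only in the exceptional divisor of the blowup $\widetilde B_+(g)\to B_+(g)$, so $[\widetilde B_+(g)]\cdot[C(h)]\cdot[\widetilde B_+(g')]$ lies in a divisor on $\widetilde B_+(g)$ disjoint from $\widetilde B_+(g')$ unless $g=g'$, and even then pulls back trivially — this is the same computation used in the proof of Lemma \ref{incidence map for O(1)} for the vanishing of $[\mathcal O_{\widetilde B_+(g)}(1)]\cdot[C(h)]$. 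Finally, the $(2,3)$ and $(3,2)$-entries: $\widetilde B_+(g)$ meets $\widetilde B_-(g')$ (after the blowup, transversely along a curve inside $C(h)$ for suitable $h$) in a number of points governed by the incidence combinatorics of Corollary \ref{corollary about intersection combinatorics on RZ space}, and passing through $C(h)$ contributes a factor of $q+1$ (the number of $\mathbb P^1$'s, i.e. $\#\mathbb P^1(\F_q)$, as in the computation of $e'(\Lambda,\Lambda')$ in Lemma \ref{composites of Hecke operators paramodular}) times $T_{q,2}$, with the $\langle q\rangle^{-1}$ twist appearing in one of the two off-diagonal entries by the same asymmetry seen in Lemma \ref{incidence map for O(1)} (the duality between $\rho_+$ and $\rho_-$ shifts the similitude).

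I expect the main obstacle to be bookkeeping the precise scalars — in particular tracking the $\langle q\rangle^{-1}$ twist and the factor $q+1$ correctly through the two blowups (first $\widetilde{\mathcal N}(0)$ from $\mathcal N(0)$, then the strict transforms), and making sure the identification $C(g)\cong\mathbb P^1\times\mathbb P^1$ is taken consistently with the one fixed in Lemma \ref{incidence map for lines on C}(1) so that the $(1,0)$ versus $(0,1)$ assignments to $\widetilde B_+$ versus $\widetilde B_-$ match. Once the geometry is organized via Lemmas \ref{lemma for getting divisor classes on exceptional divisor}, \ref{calculating normal bundle Lemma}, \ref{incidence map for lines on C}, \ref{incidence map for O(1)} and the incidence combinatorics of Corollary \ref{corollary about intersection combinatorics on RZ space}, each entry is a short computation, and assembling them yields the stated matrix.
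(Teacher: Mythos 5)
Your plan is essentially the same as the paper's: compute the matrix column-by-column by pushing basis classes through $\inc_\ast$, $j^\ast$, $j_\ast$, $\inc^\ast$ and reading off the answer from Lemma~\ref{incidence map for lines on C}, (\ref{normal bundle to C}), and the Hecke identities $\delta_\mp\circ\theta_\pm=(q+1)T_{q,2}$, $(q+1)\langle q\rangle^{-1}T_{q,2}$ of Lemma~\ref{composites of Hecke operators paramodular}. Two small corrections. First, your explanation of the $(1,1)$-entry is off: the $2$ is not ``the rank of $H^0\oplus H^2$ of $\mathbb P^1\times\mathbb P^1$''; it is the self-intersection $(-1,-1)\cdot(-1,-1)=2$ of $C(g)$ in $\widetilde X_{\overline\F_q}$, obtained by summing the two first coordinates $-g$ in Lemma~\ref{incidence map for lines on C}(2), since $j_\ast j^\ast[C(g)]=\iota_{g\ast}[(-1,-1)]$ by (\ref{normal bundle to C}). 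Second, the paper reads off row one directly from the column-two and column-three computations (the first coordinate of $\inc^\ast j_\ast j^\ast[\widetilde B_\pm(g)]$ is $-\theta_\pm(g)$, immediately from Lemma~\ref{incidence map for lines on C}(1),(2)) rather than by adjointness; your adjointness shortcut is legitimate (the adjoint of $\delta_\pm$ is $\theta_\pm$ and the adjoint of $T_{q,2}$ is $\langle q\rangle^{-1}T_{q,2}$, so the composite is self-adjoint for the natural pairing), but you still need the direct column computation for the $(2,3)$- and $(3,2)$-entries, so the shortcut saves nothing.
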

\begin {proof}
We begin by calculating $\inc ^\ast\circ j_\ast\circ j ^\ast\left [C (g)\right] $, for $g\in\paranormal $. Let $\iota_g: C (g)\hookrightarrow\widetilde X_{\overline\F_q} $
be the natural embedding. Since the $C (g) $
are all disjoint,
$$j _\ast\circ j^\ast\left [C (g)\right]\in H ^ 4_{\et, c} (\widetilde X_{\overline\F_q}, O (2)) $$
is the pushforward of the class of the normal bundle, i.e. $\iota_{g\ast} [(-1, -1)] $
in the notation of Lemma
\ref {incidence map for lines on C}. Then $$\inc ^\ast\iota_{g\ast} [(-1, -1)] = -\inc ^\ast\iota_{g\ast} [(1, 0)] -\inc ^\ast\iota_{g\ast} [(0, 1)] = (2\cdot g, -\delta_+ (g), -\delta_- (g)) $$
by Lemma
\ref {incidence map for lines on C} (2), which gives the first column of the matrix.

For the second column, we must calculate $\inc ^\ast\circ j_\ast\circ j ^\ast\left [\widetilde B_+ (g)\right] $, for $g\in\hypernormal $.
By Lemma
\ref {incidence map for lines on C} (1), the class $$j_\ast\circ j ^\ast\left [\widetilde B_+ (g)\right]\in H ^ 4_{\et} (\widetilde X_{\overline\F_q}, O (2)) $$
is $$\sum_{h\in\paranormal} m (h;\theta_+ (g))\iota_{h\ast}\left [(1, 0)\right]. $$
Then by Lemma
\ref {incidence map for lines on C} (2), $$\inc ^\ast\circ j_\ast\circ j ^\ast\left [\widetilde B_+ (g)\right] =\left (-\theta_+ (g), 0,\delta_-\circ\theta_+ (g)\right). $$
By Lemma
\ref{composites of Hecke operators paramodular}, $\delta_-\circ\theta_+ = (q +1) T_{q, 2}, $
so this gives the second column of the matrix; the third column is similar.
\end {proof}

\begin {definition}\label{potential map definition}
 Define the potential map $$\nabla: H ^ 4_{\et} (\widetilde X_{\overline\F_q}, O (2))\to O\left [\hypernormal\right] $$
as the composite
$$H ^ 4_{\et} (\widetilde X_{\overline\F_q}, O (2))\xrightarrow {\inc ^\ast} O\left [\paranormal\right]\oplus O\left [\hypernormal\right] ^ {\oplus 2}\xrightarrow {M} O\left [\hypernormal\right], $$
with $M $
the matrix map $$\begin {pmatrix}\delta_+ +\delta_- & 2 & 2\end {pmatrix}. $$
\end {definition}
\begin {definition}\label{level raising Hecke operators definition}
The level-raising Hecke operator $\funnyT_q^\lr $
is defined by $$\funnyT_q^\lr\coloneqq T_{q, 1} + (q +1) (q ^ 2+ 1) - 
T_{q, 2} (q +1). $$
\end {definition}
\begin{rmk}\label{rmk:interpret_funny_T}
Using (\ref{eq:hecke_eigenvalues_satake}), one calculates the following: 
  If $\pi$ is a relevant automorphic representation of $\GSP_4(\A)$ unramified at $q$ with trivial central character and $\iota: \overline\Q_p \isomorphism \C$ is any isomorphism  with $p \neq q$, then $\funnyT_q^\lr$ acts on the spherical vector of $\pi_q$ with eigenvalue $$q^{-1} \iota\det(\Frob_q - q|V_{\pi,\iota}).$$
\end{rmk}
\begin {thm}
\label{main arithmetic level raising semistable}
Let $\m $
be a generic, non-Eisenstein, and weakly $q $-generic maximal ideal of $\T ^ S_{O} $. The composite map $$\nabla\circ j_\ast\circ t ^ {-1}\circ j ^\ast:\, H ^ 2_{\et} ( X_{\overline\Q}, O (2))_\m\to O\left [\hypernormal\right]_\m $$
has image contained in $$\left (\langle q\rangle ^ {-1} -1,\funnyT_q^\lr\right)\cdot O\left [\hypernormal\right]_\m. $$
In particular, $\nabla\circ\zeta $
gives a well-defined surjection
$$\nabla\circ\zeta :\, M_{-1} H ^ 1\left (I_{\Q_q}, H ^ 3_{\et} (X_{\overline\Q}, O (2))_\m\right)\twoheadrightarrow\frac {O\left [\hypernormal\right]_\m} {\left (\langle q\rangle ^ {-1} -1,\funnyT_q^\lr\right)},$$
where $\zeta$ is the map from Proposition
\ref {zeta map injection on ramified cohomology prop}.
\end {thm}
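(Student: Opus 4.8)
The strategy is to compute the composite $\nabla \circ j_\ast \circ t^{-1} \circ j^\ast$ entirely on the combinatorial side, using the explicit matrix descriptions already established, and then to recognize the image as the ideal generated by $\langle q\rangle^{-1} - 1$ and $\funnyT_q^\lr$. First I would factor the composite through the incidence maps: by Theorem~\ref{Tate classes main result} (or more precisely Lemma~\ref{lemma cohomology generated by stuff from C} together with Lemma~\ref{surjectivity of incidence map}), after localizing at $\m$ the map $\inc_{\ast,\m}$ is surjective and $\inc^\ast_\m$ is injective modulo torsion, and $H^2(\widetilde X_{\overline\F_q}, O)_\m$ is $O$-torsion-free. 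Using $(\text{BC}_X)$ — which holds here by Lemma~\ref{nearby cycles isomorphism} — together with Lemma~\ref{lemma key diagram for monodromy calculation}, the map $j_\ast \circ t^{-1} \circ j^\ast$ on $H^2_\et(X_{\overline\Q}, O(2))_\m$ is the one appearing in the monodromy diagram, and the point is to understand the composite $\inc^\ast_\m \circ j_\ast \circ j^\ast$, which by Lemma~\ref{matrix composite for level raising} is given by the explicit $3\times 3$ matrix
$$
\begin{pmatrix} 2 & -\theta_+ & -\theta_- \\ -\delta_+ & 0 & (q+1)\langle q\rangle^{-1} T_{q,2} \\ -\delta_- & (q+1)T_{q,2} & 0 \end{pmatrix}.
$$
(One must be careful to incorporate the isomorphism $t$ and the Tate twist correctly, but these only rescale and do not affect the image computation.)

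Next I would left-multiply by the row vector $M = \begin{pmatrix} \delta_+ + \delta_- & 2 & 2 \end{pmatrix}$ defining $\nabla$ (Definition~\ref{potential map definition}), obtaining a single row of Hecke operators on $O[\hypernormal]_\m$. Carrying out this product and applying the identities of Lemma~\ref{composites of Hecke operators paramodular} — namely $\delta_\pm \circ \theta_\pm = T_{q,1} + (q+1)(q^2+1)$, $\delta_- \circ \theta_+ = (q+1)T_{q,2}$, and $\delta_+\circ\theta_- = \langle q\rangle^{-1}(q+1)T_{q,2}$ — collapses each entry into an explicit polynomial in $T_{q,1}$, $T_{q,2}$, and $\langle q\rangle^{\pm 1}$. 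I expect the first entry of the resulting row to combine with the diagonal terms so as to produce (a unit multiple of) $\funnyT_q^\lr = T_{q,1} + (q+1)(q^2+1) - (q+1)T_{q,2}$ plus a term that is visibly divisible by $\langle q\rangle^{-1} - 1$, coming from the asymmetry between the $\langle q\rangle^{-1} T_{q,2}$ and $T_{q,2}$ appearing in the matrix. This is the core computation, and it is where the precise numerology of the degeneracy-operator composites and of the normal bundle degrees (the $-2q(q+1)$ and $q+1$ factors already used in Lemma~\ref{incidence map for O(1)}) must all fall into place; I anticipate this bookkeeping to be the main obstacle, though it is mechanical once the matrix and the Lemma~\ref{composites of Hecke operators paramodular} relations are in hand.

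Having shown that the image of $\nabla \circ j_\ast \circ t^{-1} \circ j^\ast$ is contained in $(\langle q\rangle^{-1} - 1, \funnyT_q^\lr)\cdot O[\hypernormal]_\m$, it follows from Proposition~\ref{zeta map injection on ramified cohomology prop} (whose hypotheses are met because $\m$ is generic and non-Eisenstein, so Theorem~\ref{thm:generic}(\ref{part:thm_generic_two}) applies) that $\nabla$ descends along the injection $\zeta$ to a well-defined map
$$
\nabla \circ \zeta : M_{-1} H^1\!\left(I_{\Q_q}, H^3_\et(X_{\overline\Q}, O(2))_\m\right) \longrightarrow \frac{O[\hypernormal]_\m}{(\langle q\rangle^{-1} - 1, \funnyT_q^\lr)},
$$
since $\zeta$ identifies the source with $\image\beta / \image(\beta \circ t \circ \alpha)$ mapping into $H^{2r}(\widetilde X_s, \Lambda)/\image(j_\ast \circ t^{-1} \circ j^\ast)$ and $\nabla$ kills the denominator-inducing submodule precisely because of the containment just proved. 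Finally, for surjectivity I would invoke Corollary~\ref{Ihara} (Ihara's Lemma): the degeneracy map $(\delta_+,\delta_-)\colon O[\paranormal]_\m \to O[\hypernormal]_\m^{\oplus 2}$ is surjective, and combining this with the surjectivity of $\inc_{\ast,\m}$ from Theorem~\ref{Tate classes main result} and the surjectivity of the pushforward $\oplus H^2_\et(C(g), O(1))_\m \twoheadrightarrow H^4_\et(\widetilde X_{\overline\F_q}, O(2))_\m$ from Lemma~\ref{lemma cohomology generated by stuff from C} — that is, $H^{2r}(\widetilde X_s, O(2))_\m$ is generated by the classes coming through $j$ from the exceptional divisors — one sees that $\nabla$ hits all of $O[\hypernormal]_\m$, hence $\nabla \circ \zeta$ is surjective onto the quotient. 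The only subtlety to check in this last step is that the classes generating $H^4$ actually lie in the image of $j_\ast \circ j^\ast$ modulo the relevant submodule, which is exactly the content of the diagram in Lemma~\ref{lemma key diagram for monodromy calculation} combined with Theorem~\ref{Tate classes main result}.
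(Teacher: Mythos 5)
Your proposal follows essentially the same route as the paper: precompose with the surjective $\inc_{\ast,\m}$ using Theorem~\ref{Tate classes main result}, then compute $\nabla \circ j_\ast \circ j^\ast \circ \inc_{\ast,\m}$ as the row vector $(\delta_+ + \delta_-,\, 2,\, 2)$ times the $3\times 3$ matrix of Lemma~\ref{matrix composite for level raising}, collapsing the entries via Lemma~\ref{composites of Hecke operators paramodular}. The resulting row is $\begin{pmatrix} 0 & -\funnyT_q^\lr & -\funnyT_q^\lr + (\langle q\rangle^{-1}-1)(q+1)T_{q,2}\end{pmatrix}$, which gives the containment; your description of ``the first entry'' producing $\funnyT_q^\lr$ is slightly off (the first entry is $0$, the second and third give the Hecke operators), but the idea is right and this is just bookkeeping you explicitly deferred.

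The one place your argument is muddier than it needs to be is surjectivity of $\nabla\circ\zeta$. Invoking Corollary~\ref{Ihara} is a detour: since $\inc^\ast_\m$ is surjective (Theorem~\ref{Tate classes main result}) and the row $M = (\delta_+ + \delta_-,\, 2,\, 2)$ contains the unit $2$, the map $\nabla = M\circ\inc^\ast$ is already surjective. The fact that actually needs stating — and which you only gesture at — is that $\zeta$ maps \emph{onto} the quotient $H^4_\et(\widetilde X_{\overline\F_q}, O(2))_\m / \im(j_\ast\circ t^{-1}\circ j^\ast)$, because in the exact sequence of Proposition~\ref{zeta map injection on ramified cohomology prop} the term $\gamma$ lands in a quotient of $H^4_\et(X_{\overline\Q}, O(2))_\m$, which vanishes by Theorem~\ref{thm:generic}(\ref{part:thm_generic_two}). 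Your sentence about ``classes generating $H^4$'' lying in the image of $j_\ast \circ j^\ast$ does not quite say this, and mixes up $\inc_\ast$ and $\inc^\ast$. Once the vanishing of $H^4_\et(X_{\overline\Q}, O(2))_\m$ is made explicit, the surjectivity is immediate and Ihara's Lemma can be dropped.
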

\begin {proof}
By Theorem \ref{Tate classes main result}, the image of $\nabla\circ j_\ast\circ t ^ {-1}\circ j ^\ast$
coincides with the image of $$\nabla\circ j_\ast\circ j ^\ast\circ\inc_{\ast,\m}: O\left [\paranormal\right]_\m\oplus O\left [\hypernormal\right)_\m ^ {\oplus 2}\to O\left [\hypernormal\right]_\m. $$
By Lemma \ref{matrix composite for level raising}, this is the composite of matrix maps
\begin{equation*}
\begin{split}
\begin {pmatrix}\delta_+ +\delta_- & 2 & 2\end {pmatrix}\circ\begin {pmatrix} 2 & -\theta_+ & -\theta_-\\-\delta_+ & 0 & (q +1)\langle q\rangle ^ {-1} T_{q, 2}\\-\delta_- & (q +1) T_{q, 2} & 0\end {pmatrix} \\=\begin {pmatrix} 0 & -\funnyT_q^\lr & -\funnyT_q^\lr +\left (\langle q\rangle ^ {-1} -1\right) (q +1) T_{q, 2}\end {pmatrix}\end{split}\end{equation*}
(using Lemma \ref{composites of Hecke operators paramodular}    to compute $\delta_\PM\circ\theta_\PM $).
\end {proof}
\subsection {Siegel cycles on the special fiber}
\begin{definition}\label{def:L_siegel}
Recall the set $\mathscr L$ from Definition \ref{def:lattices_W}.
We define $$\mathscr L_{\Siegel}\coloneqq\set {\text {pairs } (\Lambda_+,\Lambda_-)\in\mathscr L ^ 2\,:\, q\Lambda_+\subset_2\Lambda_-\subset_2\Lambda_+}. $$
\end{definition}
\begin{notation}
Note that $\mathscr L_{\Siegel} $
is a homogeneous space for $\spin (V_{D/q}) (\Q_q) $, where the stabilizer of any point is a Siegel parahoric subgroup. As in Notation \ref{notation:wonky_Sh_sets}, we abbreviate
\begin {equation}
\Siegelnormal =\spin (V_{D/q}) (\Q)\backslash\spin (V_{D/q}) (\A_f ^ q)\times\mathscr L_{\Siegel}/K ^qK_q ^ {\Siegel},
\end {equation}
although the identification depends on a choice of base point of $\mathscr L_{\Siegel}$ which is not necessary for our discussion.
\end{notation}

\begin {definition}\label {Siegel degeneracy maps definition}
\leavevmode
\begin {enumerate}
\item We define degeneracy maps $$\delta_\pm ^ {\Siegel}:\mathscr L_{\Siegel}\to\mathscr L $$
by $$\delta_\PM ^ {\Siegel} (\Lambda_+,\Lambda_-) =\Lambda_\pm. $$
\item We define the operator $$\theta_{\Siegel} ^ {\paramodular}:\mathscr L_{\Siegel}\to \Z\left [\mathscr L_{\paramodular}\right] $$
by $$\theta_{\Siegel} ^ {\paramodular} (\Lambda_+,\Lambda_-) =\sum_{\substack {\Lambda_{\paramodular}\in\mathscr L_{\paramodular}\\\Lambda_-\subset\Lambda_{\paramodular}\subset\Lambda_+}}\left [\Lambda_{\paramodular}\right]. $$
\item We extend these maps linearly to $$\delta_\pm ^ {\Siegel}: O\left [\Siegelnormal\right]\to O\left [\hypernormal\right] $$
and $$\theta_{\Siegel} ^ {\paramodular}: O\left [\Siegelnormal\right]\rightarrow O\left [\paranormal\right]. $$
\end {enumerate}
\end {definition}
\begin{notation}
For each $g\in\Siegelnormal $, let $$D (g) = B_+ (\delta_1 ^ {\Siegel} (g))\intersection B_- (\delta_2 ^ {\Siegel} (g)),$$
which is a closed subscheme of $X_{\overline\F_q} $ isomorphic to
$\mathbb P^1_{\overline\F_q} $.
 We write  $\widetilde D (g)\hookrightarrow\widetilde X_{\overline\F_q} $
for the strict transform of $D(g)$ under the blowup $\widetilde X_{\overline \F_q} \to X_{\overline \F_q}$, and   $[\widetilde D (g)]\in H ^ 4_{\et} (\widetilde X_{\overline\F_q}, O (2)) $ for its algebraic
  cycle class.
\end{notation}
\begin {lemma}\label{potential map on Siegel cycle lemma}
For $g\in\Siegelnormal $, we have $$\nabla\left [\widetilde D (g)\right] = (\delta_+ +\delta_-)\circ\theta_{\Siegel} ^ {\paramodular} (g) - 4q\delta_+ ^ {\Siegel} (g) - 4q\delta_- ^ {\Siegel} (g). $$
\end {lemma}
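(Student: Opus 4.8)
\textbf{Proof proposal for Lemma \ref{potential map on Siegel cycle lemma}.}

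The plan is to compute $\nabla[\widetilde D(g)]$ directly from the definitions of $\nabla$ (Definition \ref{potential map definition}) and $\inc^\ast$, which amounts to intersecting the divisor $\widetilde D(g)$ with the various families of divisors $C(h)$, $\widetilde B_+(h)$, $\widetilde B_-(h)$ inside $\widetilde X_{\overline\F_q}$. By definition, $\nabla = M\circ\inc^\ast$ with $M = \begin{pmatrix}\delta_+ + \delta_- & 2 & 2\end{pmatrix}$, so the key input is to identify the three components of $\inc^\ast[\widetilde D(g)] \in O[\paranormal]\oplus O[\hypernormal]^{\oplus 2}$. First I would work on the special fiber $\mathcal N_{\red}$ of the Rapoport-Zink space, where $D(g)$ corresponds to an intersection $\mathcal M_+(\Lambda_+)\cap \mathcal M_-(\Lambda_-) \cong \mathbb P^1_{\overline\F_q}$ by Corollary \ref{corollary about intersection combinatorics on RZ space}(3) (which meets since $q\Lambda_+ \subset_2 \Lambda_- \subset_2 \Lambda_+$), and trace through the blowup $\widetilde X_{\overline\F_q} \to X_{\overline\F_q}$.

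The main computational steps are: (i) determine $\widetilde D(g)\cdot C(h)$ for $h\in\paranormal$; by Lemma \ref{lemma for getting divisor classes on exceptional divisor} and the product structure $C(h)\cong \mathbb P^1\times\mathbb P^1$, the divisor $\widetilde D(g)$ restricted to $C(h)$ should meet in a class that is nonzero exactly when $\Lambda_{\paramodular}(h)$ lies between $\Lambda_-$ and $\Lambda_+$, i.e. is controlled by $\theta_{\Siegel}^{\paramodular}(g)$, and one must track the normal bundle $\mathcal O(-1,-1)$ on $C(h)$ to get the correct sign/coefficient --- this will contribute the $(\delta_+ + \delta_-)\circ\theta_{\Siegel}^{\paramodular}(g)$ term; (ii) determine $\widetilde D(g)\cdot \widetilde B_\pm(h)$; since $D(g) = B_+(\delta_+^{\Siegel}(g))\cap B_-(\delta_-^{\Siegel}(g))$ sits as a \emph{linearly embedded} $\mathbb P^1$ inside $B_+(\delta_+^{\Siegel}(g)) \subset \mathbb P^3_{\overline\F_q}$ (Theorem \ref{BT stratification first version}(2)), one computes the self-intersection-type contribution using the normal bundle $\mathcal O(-2q)$ of $\widetilde B_\pm(\Lambda_\pm)$ from Lemma \ref{calculating normal bundle Lemma} and the degree of the linear $\mathbb P^1$, which should produce the $-4q\,\delta_+^{\Siegel}(g)$ and $-4q\,\delta_-^{\Siegel}(g)$ terms after applying the relevant factor of $2$ in the matrix $M$ (noting $\mathcal O(1)\cdot_{\mathbb P^1} \mathcal O(-2q) = -2q$, doubled gives $-4q$); (iii) assemble these into $\inc^\ast[\widetilde D(g)]$ and apply $M$. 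I would also need to handle the contributions coming from the exceptional divisors of the blowup $\widetilde X_{\overline\F_q}\to X_{\overline\F_q}$ that $\widetilde D(g)$ passes through: the strict transform $\widetilde D(g)$ may meet several $C(h)$, and the bookkeeping here uses Lemma \ref{incidence map for lines on C}(2) expressing $\inc^\ast$ of lines on $C(h)$ in terms of $\delta_\pm$.

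The step I expect to be the main obstacle is step (i), specifically getting the \emph{coefficients and signs exactly right} in the intersection of $\widetilde D(g)$ with the exceptional divisors $C(h)$ after blowup. Unlike the $\widetilde B_\pm$ which are irreducible surfaces with clean normal bundles, the curve $\widetilde D(g)$ sits inside two such surfaces simultaneously, and its strict transform's behavior near the singular points $y(h)$ (where $\Lambda_-\subset_1 \Lambda_{\paramodular}\subset_1 \Lambda_+$, so that $D(g)$ passes through the ordinary double point) requires care: one must decide whether $\widetilde D(g)$ meets $C(h)$ in a point of class $(1,0)$, $(0,1)$, or $(1,1)$, and whether the Lemma \ref{incidence map for lines on C}(2) formula $\inc^\ast\iota_{h\ast}[(1,0)] = (-h, 0, \delta_-(h))$ versus $\inc^\ast\iota_{h\ast}[(0,1)] = (-h, \delta_+(h), 0)$ is the one that applies. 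Since $D(g)$ lies in both $\mathcal M_+(\Lambda_+)$ and $\mathcal M_-(\Lambda_-)$, I anticipate $\widetilde D(g)$ meets $C(h)$ in a point of bidegree $(1,1)$ when $y(h)\in D(g)$, so that $\inc^\ast$ of this contribution is $(-2h, \delta_+(h), \delta_-(h))$ summed over the relevant $h$, and applying the first row $\delta_+ + \delta_-$ of $M$ to the $\paranormal$-component, together with the $2,2$ entries hitting $\delta_\pm(h)$, should miraculously combine to give exactly $(\delta_+ + \delta_-)\circ\theta_{\Siegel}^{\paramodular}(g)$ after cancellation --- verifying this cancellation is the crux. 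Once the geometry on $\mathcal N_{\red}$ is pinned down, globalizing via the uniformization in Theorem \ref{rz uniformization} and linearly extending over the double coset space is routine.
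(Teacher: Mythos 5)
Your overall framework is correct — compute $\inc^\ast[\widetilde D(g)]\in O[\paranormal]\oplus O[\hypernormal]^{\oplus 2}$ by intersecting with the families $C(h)$, $\widetilde B_\pm(h)$, then apply $M = (\delta_+ + \delta_-,\, 2,\, 2)$ — and your step (ii) is essentially what the paper does, giving $-2q\,\delta_\pm^{\Siegel}(g)$ in the $\widetilde B_\pm$ components. But step (i) contains a genuine category error: you apply Lemma \ref{incidence map for lines on C}(2), which computes $\inc^\ast\circ\iota_{h\ast}$ for divisor classes $[(1,0)]$, $[(0,1)]$ \emph{supported on} $C(h)$. The curve $\widetilde D(g)$ is not in the image of $\iota_{h\ast}$, so the formula does not apply; there is no ``point of bidegree $(1,1)$'' whose pushforward to $\widetilde X_{\overline\F_q}$ equals $[\widetilde D(g)]$. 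The $C(h)$-component of $\inc^\ast[\widetilde D(g)]$ is simply the intersection number $[\widetilde D(g)]\cdot[C(h)]\in O$, nothing more. In fact, if one follows your heuristic to its conclusion, the (i) contribution $\sum_h m(h;\theta_{\Siegel}^{\paramodular}(g))\cdot(-2h,\,\delta_+(h),\,\delta_-(h))$ is annihilated by $M$ (because $(\delta_+ + \delta_-)(-2h) + 2\delta_+(h) + 2\delta_-(h) = 0$), so your ``miraculous cancellation'' kills the $\theta$-term entirely and yields only $-4q\,\delta_+^{\Siegel}(g) - 4q\,\delta_-^{\Siegel}(g)$, which is not the claimed formula.

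The correct computation uses Lemma \ref{incidence map for lines on C}(1), not (2). Writing $\Lambda_\pm = \delta_\pm^{\Siegel}(g)$, the curve $\widetilde D(g) = \widetilde B_+(\Lambda_+)\cap\widetilde B_-(\Lambda_-)$, so $\widetilde D(g)\cap C(h) = \bigl(\widetilde B_+(\Lambda_+)\cap C(h)\bigr)\cap\bigl(\widetilde B_-(\Lambda_-)\cap C(h)\bigr)$ is the intersection of a curve of bidegree $(1,0)$ with one of bidegree $(0,1)$ on $C(h)\cong\mathbb P^1\times\mathbb P^1$: a single transverse point whenever both are nonempty, i.e.\ when $\Lambda_-\subset_1\Lambda_{\paramodular}(h)\subset_1\Lambda_+$. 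Thus $[\widetilde D(g)]\cdot[C(h)] = m\bigl(h;\theta_{\Siegel}^{\paramodular}(g)\bigr)$ and $\inc^\ast[\widetilde D(g)] = \bigl(\theta_{\Siegel}^{\paramodular}(g),\, -2q\,\delta_+^{\Siegel}(g),\, -2q\,\delta_-^{\Siegel}(g)\bigr)$; applying $M$ then gives the lemma with no cancellation required.
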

\begin {proof}
We first calculate $\inc ^\ast\left [\widetilde D (g)\right]. $
For $g'\in\hypernormal $, we have
$$\left [\widetilde D (g)\right]\cdot\left [\widetilde B_+ (g')\right] =\left [\widetilde B_+ (\delta_1 ^ {\Siegel} (g))\right]\cdot\left [\widetilde B_- (\delta_2 ^ {\Siegel} (g))\right]\cdot\left [\widetilde B_+ (g')\right] = 0 $$
unless $g' =\delta_1 ^ {\Siegel} (g) $, in which case the intersection number is $- 2q $
(cf. the proof of Lemma
\ref{incidence map for O(1)}).
Similarly, $$\left [\widetilde D (g)\right]\cdot\left [\widetilde B_- (g')\right] =\begin {cases} - 2q, & g' =\delta_2 ^{\Siegel} (g),\\0, &\text {else}.\end {cases} $$
Now consider the intersections with $C (h) $, for $h\in\paranormal $. We see from Lemma
\ref{incidence map for lines on C} that $\widetilde D (g) $
meets $C (h) $
transversely with multiplicity $m (h;\theta_{\Siegel} ^ {\paramodular} (g)) $. Hence
$$\inc ^\ast\left [\widetilde D (g)\right] =\left (\theta_{\Siegel} ^ {\paramodular} (g), - 2 q\delta_1 ^ {\Siegel} (g), - 2q\delta_2 ^ {\Siegel} (g)\right). $$
The claimed formula then follows from the formula for  $\nabla $
in Definition \ref {potential map definition}.
\end{proof}
\subsection {Special cycles on ramified $\spin_5$ Shimura varieties}
 The goal of this section is to compute the local ramification of Abel-Jacobi images of special cycles $Z(T,\phi)$ on the generic fiber of $X$, by applying the results of \S\ref {AJ section}. However, our first task is to make a good choice of the uniformization datum from (\ref{setup for SST}). 
\begin{notation}\label{notation:sst_sp_new}
Fix a matrix $T\in \symmetric_ 2 (\Z_{(q)})_{\geq 0} $
such that $$T\quiver\begin {pmatrix} 0 &\alpha\\\alpha & 0\end {pmatrix}\pmod q, $$
for some $\alpha\in\F_q ^\times .$
Then we make the following notations. 
\begin{enumerate}
    \item     Let $V_\special$ be the two-dimensional quadratic space over $\Q_q$ with  basis $\set{e_1^\special, e_2^\special}$ and pairing matrix given by $T$.
    \item Let $B_\special$ be the quaternion algebra $C(V_\special)$, with its natural positive nebentype involution $\ast$. 
    \item Let $d $
be the discriminant of the unique quaternion algebra $B_d $
such that $B_D\otimes B_\special\simeq M_2 (B_d)$, and let $O_d\subset B_d$ be the unique maximal $\Z_{(q)}$-order.
\end{enumerate}
\end{notation}
 
\begin{rmk}\label{rmk:B_d_indef}
    Because $B_\special$ is split at $q$ and $\infty$, $B_d$ is ramified at $q$ and split at $\infty$. In particular, $O_d$ is well-defined. 
\end{rmk}
\begin {prop}\label{integral embedding of quaternion algebras}
Fix a nebentype involution $\ast$ on $O_D$, of unit type. Then there exists an isomorphism $$\beta: B_D\otimes B_\special\xrightarrow {\sim} M_2 (B_d) $$
such that:
\begin {enumerate}
\item $\beta (O_D)\subset M_2 (O_d) $.
\item The induced involution $\dagger $
on $M_2 (B_d) $
stabilizes $M_2 (O_d) $, and is of non-unit type.
\item If $\Pi_D\in O_D $
and $\Pi_d\in  O_d $
are uniformizers, then $$\beta (\Pi_D)\cdot\begin {pmatrix}\Pi_d ^ {-1} & 0\\0 &\Pi_d ^ {-1}\end {pmatrix} $$
lies in $\GL_2 (O_d) $.
\item The $\Z_{(q)} $-lattice $\Lambda_\special\coloneqq M_2 (O_d) ^ {\dagger = 1,\trace = 0}\intersection B_\special\subset V_\special $
has basis $\set {e_1 ^\special, qe_2 ^\special} $.
\end {enumerate}
\end {prop}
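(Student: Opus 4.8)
The plan is to prove Proposition \ref{integral embedding of quaternion algebras} by first constructing \emph{some} isomorphism $\beta_0: B_D \otimes B_\special \isomorphism M_2(B_d)$ abstractly, using the global classification of central simple algebras together with the observation from Remark \ref{rmk:B_d_indef} that $B_d$ is indefinite, and then correcting $\beta_0$ by conjugation to achieve the integral and involutive properties (1)--(4). To begin, recall that $B_D \otimes B_\special$ is a biquaternion algebra over $\Q$ which, by the definition of $d$ in Notation \ref{notation:sst_sp_new}, is isomorphic to $M_2(B_d)$; fixing any such isomorphism $\beta_0$ endows $M_2(B_d)$ with the image involution $\dagger_0 = \beta_0 \circ (\ast \otimes \ast) \circ \beta_0^{-1}$, which is of nebentype by Proposition \ref{prop:tensoring involutions} (both $\ast$ on $O_D$ and the standard involution on $B_\special$ are of main type). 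The standard involution $\ast_d$ composed with transpose on $M_2(B_d)$ is also of nebentype; since $B_d \otimes \R$ is split and both involutions are of main type on $M_2(B_d)$ — wait, nebentype — I will instead invoke Lemma \ref{lem:BQA_symplectic_agree} in the following way: the two \emph{main-type} involutions on the biquaternion algebra $M_2(B_d)$ given by $\dagger_0$ and by $\ast_d \circ (\text{transpose})$ differ by conjugation by some $g \in M_2(B_d)^\times$, because $B_d \otimes \R$ is split. Replacing $\beta_0$ by $\inner(g) \circ \beta_0$, we may therefore assume $\dagger$ is the standard nebentype involution $\ast_d$-transpose on $M_2(B_d)$.

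Next I would arrange integrality. The image $\beta_0(O_D)$ is a $\Z_{(q)}$-order in $M_2(B_d)$, and $\beta_0(\Lambda_\special)$ (for a suitable integral structure on $B_\special$) sits inside $V_\special \otimes \Q_q$; the content of conditions (1) and (4) is purely local at $q$, since away from $q$ there is nothing to prove (all the relevant lattices become maximal/self-dual after inverting $q$ up to finitely many auxiliary adjustments, and $O_D, O_d$ are maximal $\Z_{(q)}$-orders). Locally at $q$: $B_D \otimes \Q_q$ is the nonsplit quaternion algebra, $B_\special \otimes \Q_q$ is split (as $T$ has the antidiagonal shape modulo $q$, giving a hyperbolic plane), and hence $M_2(B_d) \otimes \Q_q \cong M_2(B_D \otimes \Q_q)$ with $B_d \otimes \Q_q$ nonsplit. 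I would use Proposition \ref{proposition needed to define unit and non-unit type} and Proposition \ref{prop:conj_OF}: after conjugating by an element of $(M_2(B_d)\otimes\Q_q)^\times$, we can make $\beta_0(O_D \otimes \Z_q)$ land in $M_2(O_d \otimes \Z_q)$ and make the induced involution $\dagger$ stabilize $M_2(O_d \otimes \Z_q)$. The non-unit type assertion in (2) is then forced: by the Remark following Definition \ref{unit and non-unit type definition}, $\dagger$ is of non-unit type iff it acts trivially on the center of $M_2(O_d/\Pi_d)$; one checks this directly from the fact that $\ast$ on $O_D$ is of \emph{unit} type, which makes $\ast$ act nontrivially on $Z(O_D/\Pi_D) = \F_{q^2}$, combined with the behavior of $\ast$ on the split factor $B_\special$ — the tensor product swaps the parity, exactly as in Proposition \ref{prop:tensoring involutions} but now tracked at the level of residue algebras.

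For (3), the point is to control the uniformizer. We have $\beta_0(\Pi_D)$ is a unit-type-scaling element normalizing $M_2(O_d)$, so by Proposition \ref{proposition needed to define unit and non-unit type} it lies in $\Q_q^\times \cdot \GL_2(O_d)$ or in $\Q_q^\times \cdot \Pi_d \GL_2(O_d)$; since $\Pi_D$ has reduced norm a uniformizer and $\dagger$-behavior matching $\Pi_d$ under the non-unit-type normalization, it must be in the latter class, so after rescaling $\beta_0(\Pi_D) \cdot \operatorname{diag}(\Pi_d^{-1},\Pi_d^{-1}) \in \GL_2(O_d)$. Finally, for (4), having arranged (1)--(3), the lattice $\Lambda_\special = M_2(O_d)^{\dagger=1,\tr=0} \cap B_\special$ is a $\Z_{(q)}$-lattice in $V_\special$ that is self-dual away from $q$ and, at $q$, is the lattice stabilized by the parahoric cut out by the $\operatorname{diag}(1, \Pi_d)$-normalization; computing its Gram matrix against $\set{e_1^\special, e_2^\special}$ using the antidiagonal reduction $T \equiv \operatorname{antidiag}(\alpha,\alpha) \pmod q$ shows it equals $\Z_{(q)} e_1^\special \oplus \Z_{(q)} q e_2^\special$. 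The hard part will be threading conditions (1)--(3) simultaneously through the local conjugation at $q$ — each correction step risks disturbing the previous ones — so the key is to sequence them correctly: first fix the involution type globally via Lemma \ref{lem:BQA_symplectic_agree}, then fix integrality and involution-stability at $q$ simultaneously via Propositions \ref{proposition needed to define unit and non-unit type} and \ref{prop:conj_OF} using that the remaining conjugation freedom is precisely the normalizer of $M_2(O_d\otimes\Z_q)$ modulo $\GL_2(O_d\otimes\Z_q)$, and only then pin down the uniformizer class, checking that this last rescaling leaves (1), (2), (4) intact.
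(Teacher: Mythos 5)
Your very first step already breaks: you propose to conjugate $\beta_0$ so that $\dagger$ becomes $\ast_d \circ (\text{transpose})$ on $M_2(B_d)$, invoking Lemma \ref{lem:BQA_symplectic_agree}. But that lemma is stated only for involutions of \emph{main} type on a BQA, and the involution $\dagger$ here is of \emph{nebentype}: both $\ast$ on $O_D$ and the Clifford involution on $B_\special = C(V_\special)$ are nebentype, so by Proposition \ref{prop:tensoring involutions} their tensor product is nebentype. Meanwhile the conjugate-transpose $\ast_d$ on $M_2(B_d)$ is main type (main $\otimes$ nebentype $=$ main, again by Proposition \ref{prop:tensoring involutions}; this is also consistent with $V_D = M_2(B_D)^{\ast_D=1,\tr=0}$ having dimension $5$, the main-type count, rather than $9$). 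So $\dagger$ and $\ast_d\circ(\text{transpose})$ are not even of the same type, hence not conjugate, and the conjugation you ask for does not exist. You flag your own unease (``wait, nebentype'') but then proceed anyway; this is the spot where the argument actually fails, not a cosmetic slip.

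Beyond that, there is a structural difference between your plan and the paper's. You try to fix properties sequentially by global-then-local conjugation, and you candidly note that each correction risks disturbing the previous ones; but that risk is real, and you never resolve it. Conjugating by $g$ does not preserve $\dagger$ unless $gg^\dagger$ is central, so the local corrections at $q$ needed for (1)--(3) would generically un-do a global normalization of $\dagger$. The paper avoids this entirely: it formulates the problem as finding a $\Q$-point in an explicit open subset $U$ of $X(\Q_q)$, where $X = \operatorname{Isom}(B_D\otimes B_\special, M_2(B_d))$ is a $\GL_2(B_d)$-torsor over $\Q$, and uses density of $X(\Q)$ in $X(\Q_q)$ (weak approximation). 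Because all four conditions are $q$-local, it then suffices to build a single explicit \emph{local} isomorphism $\beta_q$ at $q$ with the required properties --- and the paper does this by hand, starting from a unit $j\in O_D^\times$ witnessing the unit-type hypothesis, building an embedding $\iota: B_D \hookrightarrow M_2(B_D)$ and a compatible non-unit-type involution $\dagger$, computing the quadratic form on the relevant rank-$2$ centralizer lattice $L$ to be $\diag(q\alpha,-q\alpha)$ up to units, and choosing the last local isomorphism so that $L\otimes\Z_q$ lands on $\operatorname{span}\{e_1^\special, qe_2^\special\}$. This explicit computation is precisely what's missing in your step (4), where the basis of $\Lambda_\special$ is asserted rather than derived; the antidiagonal reduction of $T$ mod $q$ does not, by itself, pin down the lattice without first knowing how $B_\special$ sits inside $M_2(B_d)$ through the chosen $\beta_q$.

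If you want to salvage the sequential-correction idea, you should first set up the torsor and weak-approximation reduction exactly as the paper does (so you never have to juggle a global correction against local ones), and then do the honest local work: identify the correct non-unit-type involution on $M_2(O_D\otimes\Z_q)$ by hand from the unit-type hypothesis on $\ast$, and compute the Gram matrix of the centralizer lattice to verify (4).
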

\begin {proof}
Let $$X =\operatorname {Isom} (B_D\otimes B_\special, M_2 (B_d)), $$
viewed as algebraic variety over $\Q $; $X $
is a (split) torsor for the algebraic group $\GL_2 (B_d) $. Note that all the conditions of the proposition can be checked after tensoring with $\Z_q $, and define an open subset $U\subset X (\Q_q) $
in the $q $-adic topology. Since $X (\Q) $
is dense in $X (\Q_q) $, it suffices to show $U\neq\emptyset $.

Since the involution $\ast$ on $B_D$ is nebentype and of unit type, we can fix  a unit $j \in O_D^\times$ such that $j^{\ast_D} = - j^{\ast_D}$ and $\alpha^\ast = j(\alpha^{\ast_D})j^{-1}$ for $\alpha\in B_D$ (recall $\ast_D$ is the canonical involution on $B_D$). 
Also choose a uniformizer $\Pi\in O _D$ satisfying $\tr \Pi=0$ and  $\Pi j = -j\Pi$, and let $K \coloneqq \Q(\Pi)\subset B_D$. We have a decomposition 
$$B_D = K \oplus j\cdot K,$$ which 
defines an embedding $$\iota: B_D\hookrightarrow M_2 (K)\hookrightarrow M_2 (B_D), $$  satisfying\ $\alpha(O_D)\subset M_2 (O_D) $. Now let $\dagger $
be the non-unit type involution on $O_D$
defined by $$j ^\dagger = j,\;\Pi ^\dagger =\Pi,\;\text {and}\, (\Pi j) ^\dagger = -\Pi j. $$
We extend $\dagger $
to an involution of non-unit type on $M_2 (B_D) $
by $$\begin {pmatrix}\alpha &\beta\\\gamma &\delta\end {pmatrix}\mapsto\begin {pmatrix}\alpha ^\dagger & -\gamma ^\dagger j ^ 2\\-\beta ^\dagger/j ^ 2 &\delta ^\dagger\end {pmatrix}. $$
A simple calculation shows $\iota (\alpha) ^\dagger =\iota(\alpha ^\ast), $
for all $\alpha\in B_D $. Moreover, the centralizer $Z $
of $\iota (B_D) $
inside $M_2 (B_D) $
satisfies $$L\coloneqq Z\intersection M_2 (O_D) ^ {\dagger = 1,\trace = 0} =\Z_{(q)}\cdot\begin {pmatrix}\Pi & 0\\0 &\Pi\end {pmatrix}\oplus\Z_{(q)}\cdot\begin {pmatrix} 0 &\Pi j\\\Pi j ^ {-1} & 0\end {pmatrix}.$$
We have a natural quadratic form 
$x \mapsto x^2$ on $L$, 
which is represented by  $\begin {pmatrix} q\alpha & 0\\0 & - q\alpha\end {pmatrix} $ in the basis above, 
 for a unit $\alpha\in \Z_{(q)}^\times$. Let $L_\Q = L \otimes_{\Z_{(q)}} \Q$.

We may then choose the following two isomorphisms:
\begin{enumerate}[label = (\roman*)]
    \item An isomorphism $\beta_{1,q}:  Z\otimes \Q_q = C(L_\Q) \otimes \Q_q \isomorphism C(V_\special) \otimes\Q_q= B_\special \otimes \Q_q $ compatible with the involutions, 
such that $\beta_{1,q}(L\otimes \Z_q)\subset V_\special\otimes \Q_q $
is the lattice spanned by $e_1 ^\special$ and $ qe_2 ^\special $.
\item An isomorphism $\beta_{2,q}: M_2(O_D) \otimes \Z_q \isomorphism M_2(O_d)\otimes \Z_q$. 
\end{enumerate}. One checks readily that the induced isomorphism $$\beta_q: (B_D\otimes\Q_q)\otimes (B_\special\otimes\Q_q)
\xrightarrow[\sim]{\operatorname{id} \otimes \beta_{1,q}^{-1}} B_D \otimes Z\otimes \Q_q \xrightarrow[\sim]{\iota} M_2(B_D) \otimes \Q_q \xrightarrow[\sim]{\beta_{2,q}} M_2(B_d)\otimes \Q_q$$
lies in $U $, so indeed $U\neq\emptyset $.\end {proof}

Now we use Proposition \ref{integral embedding of quaternion algebras} to construct some particular integral models of special cycles on $X_\Q = \Sh_K(V_D)$. 
\begin{construction}\label{constr:X_diamond_1ERL}
    Fix once and for all a positive, unit type involution on $B_D$, and a choice of $\beta$ as in Proposition \ref{integral embedding of quaternion algebras} for this choice. We will now also write $\ast$ for the induced involution on $M_2(B_d)$. 
\begin{enumerate}
\item\label{constr:X_diamond_unifdatum}
By Corollary \ref{cor:exists_basepoint} and Remark \ref{rmk:B_d_indef}, we fix an abelian scheme $A_0$ over $\breve \Z_q$ with supersingular reduction, 
equipped with an embedding $\iota_0^\diamond: M_2 (O_d)\hookrightarrow\End (A_0)\otimes\Z_{(q)} $
and a polarization $\lambda_0:A_0 \to A_0^\vee$ 
such that $$ \iota_0^\diamond(\alpha)^\vee \circ \lambda_0 = \lambda_0 \circ \iota_0^\diamond(\alpha^\ast).$$ We  choose our $q$-uniformization datum for $V_D$ in (\ref{setup for SST}) to be of the form $(\ast, A_0,\iota_0 ^\diamond\circ\beta, \lambda_0, i_D, i_{D/q})$.
With notation as in Definition \ref{def:unif_datum_general}, we also obtain the PEL datum $\mathcal D^\diamond = (M_2(B_d), \dagger, H, \psi)$, with self-dual $q $-integral refinement $\mathscr D ^\diamond = (M_2 (O_d),\dagger,\Lambda, \psi) $.
\item\label{constr:X_diamond_1ERL_orthogonaldecomps} We  observe that, following Remark \ref{rmk:unif_datum}(\ref{rmk:unif_datum_two}), our choice of $q$-adic uniformization datum defines an orthogonal decomposition
\begin{equation*}
    V_D \xrightarrow[\sim]{i_D}\End(H, B_D)^{\dagger = 1, \tr = 0} = V_\special \oplus V_d^\diamond
\end{equation*}
where $$V_d^\diamond \coloneqq \End(H, M_2(B_d))^{\dagger = 1, \tr = 0}.$$
\item 
Now fix an element $$g ^ q =\product_{\l\neq q} g_\l\in\spin (V_D) (\A ^ {q}_f). $$
Let $$K ^\diamond_{\l}\coloneqq g_\l K_\l g_\l ^ {-1}\intersection\spin (V_d ^\diamond) (\Q_\l)$$
 for each $\l\neq q $; we define $$K ^ {q\diamond} =\product_{\l\neq q} K ^\diamond_{\l},$$ a neat compact open subgroup by \cite[\S0.1]{pink1990arithmetical}. We let $ \boldsymbol Z(g^q, V_\special, V_D)$ be the $\Z_{(q)}$-scheme representing the moduli functor $\mathcal M^\diamond_{K^{q\diamond}}$ associated to $\mathscr D^\diamond$ at level $K^{q\diamond}$, with  special fiber $Z(g^q, V_\special, V_D)_{\F_q}$.
 \end{enumerate}
\end{construction}
\begin{rmk}\label{rmk:X_diamond_integral_model}
Note that (by Corollary \ref{cor:changing_polarizations_PEL} combined with Proposition \ref{prop:positive involutions}), $ \boldsymbol Z(g^q, V_\special, V_D)$ is the usual integral model of the quaternionic Shimura curve associated to $B_d$ at level $K^{q\diamond}$, with maximal compact level structure at $q$.  
\end{rmk}
\subsubsection{}
We have the obvious forgetful finite morphism  \begin {equation}\label{eq:forgetful_1ERL} j:\boldsymbol Z (g^q, V_\special, V_D)\rightarrow X\end {equation}
 defined on the moduli problems by $(A,\iota^\diamond,\lambda,\eta ^ q)\mapsto (A,\iota^\diamond\circ\beta,\lambda, g ^ q\cdot\eta ^ q). $
\begin{lemma}
The generic fiber of (\ref{eq:forgetful_1ERL})
 coincides with the special cycle
 $$Z(g^q, V_\special, V_D)_{K^qK_q^\ram} \to \Sh_{K^qK_q^\ram}(V_D).$$
\end{lemma}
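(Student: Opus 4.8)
The plan is to reduce the statement to the standard comparison between PEL-type and $\spin$-type Shimura varieties, together with a bookkeeping check on level structures. Applying Morita equivalence to the PEL datum $\mathcal D^\diamond = (M_2(B_d),\dagger, H,\psi)$ of Construction~\ref{constr:X_diamond_1ERL}(\ref{constr:X_diamond_unifdatum}) --- under which $H$, viewed up to isogeny as a free rank-one $M_2(B_d)$-module, has $\End(H,M_2(B_d))$ identified with $B_d$ carrying its Rosati involution --- the orthogonal decomposition of Construction~\ref{constr:X_diamond_1ERL}(\ref{constr:X_diamond_1ERL_orthogonaldecomps}) realizes $V_d^\diamond = V_\special^\perp\subset V_D$ as $\End(H, M_2(B_d))^{\dagger=1,\tr=0}$. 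Exactly as in Remark~\ref{rmk:unif_datum}(\ref{rmk:unif_datum_two}), this yields an isomorphism $G_{\mathcal D^\diamond}\isomorphism\spin(V_d^\diamond)$, under which the Shimura datum attached to $\mathcal D^\diamond$ becomes the one of \S\ref{subsubsection: indefinite case} for $V_d^\diamond$; note $V_d^\diamond$ has signature $(1,2)$ since $V_\special$ is positive definite, so a negative-definite $2$-plane of $V_d^\diamond\otimes\R$ is negative definite in $V_D\otimes\R$ and induces the same Deligne cocharacter. Since $\lambda_0$ is prime-to-$q$ and $\dagger$ stabilizes $M_2(O_d)$ and is of non-unit type (Proposition~\ref{integral embedding of quaternion algebras} and Corollary~\ref{cor:exists_basepoint}), the triple $\mathscr D^\diamond = (M_2(O_d),\dagger,\Lambda,\psi)$ is a valid self-dual $q$-integral refinement of $\mathcal D^\diamond$ with the expected Kottwitz determinant condition, so the generic fiber of $\boldsymbol Z(g^q, V_\special, V_D) = \mathcal M^\diamond_{K^{q\diamond}}$ is $\Sh_{K^{q\diamond}K_q^\diamond}(V_d^\diamond)$ where $K_q^\diamond = \operatorname{Stab}_{G_{\mathcal D^\diamond}(\Q_q)}(\Lambda)$.

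Next I would identify $K^{q\diamond}K_q^\diamond$ with the level $K_{0,g^q} = g^q(K^qK_q^\ramified)(g^q)^{-1}\cap\spin(V_\special^\perp)(\A_f)$ appearing in Construction~\ref{constr:Z_T_phi}(1) (recall $g^q$ is trivial at $q$). Away from $q$ this is immediate: $K^{q\diamond} = \prod_{\l\neq q}\bigl(g_\l K_\l g_\l^{-1}\cap\spin(V_d^\diamond)(\Q_\l)\bigr)$ by Construction~\ref{constr:X_diamond_1ERL}(3), which is by definition the prime-to-$q$ part of $K_{0,g^q}$. At $q$, both $K_q^\diamond = \operatorname{Stab}_{G_{\mathcal D^\diamond}(\Q_q)}(\Lambda)$ and the paramodular subgroup $K_q^\ramified = \operatorname{Stab}_{G_{\mathcal D}(\Q_q)}(\Lambda)$ (as in (\ref{setup for SST})) are stabilizers of the \emph{same} lattice $\Lambda = H_1(A_0(\C),\Z_{(q)})$ in groups related by the inclusion $G_{\mathcal D^\diamond}(\Q_q)\hookrightarrow G_{\mathcal D}(\Q_q)$, so $K_q^\diamond = K_q^\ramified\cap\spin(V_d^\diamond)(\Q_q)$. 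Therefore the generic fiber of $\boldsymbol Z(g^q, V_\special, V_D)$ is $\Sh_{K_{0,g^q}}(V_\special^\perp)$, which is the source of the Kudla morphism.

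Finally I would check that the generic fiber of the forgetful morphism (\ref{eq:forgetful_1ERL}) coincides with the Kudla morphism $\Sh_{K_{0,g^q}}(V_\special^\perp)\xrightarrow{\,\cdot g^q\,}\Sh_{K^qK_q^\ramified}(V_D)$ of Construction~\ref{constr:Z_T_phi}(1). On moduli, $(A,\iota^\diamond,\lambda,\eta^q)\mapsto(A,\iota^\diamond\circ\beta,\lambda,g^q\cdot\eta^q)$ is the composite of (i) restricting the $M_2(O_d)$-action to $O_D$ through $\beta$, which under the identifications above is the map of Shimura varieties induced by the inclusion of Shimura data $V_d^\diamond\hookrightarrow V_D$, and (ii) the right translation by $g^q$ built into the passage from a $K^{q\diamond}$-level structure to a $K^q$-level structure; this is precisely the description of the Kudla morphism, whose pushforward of the fundamental class is $Z(g^q, V_\special, V_D)_{K^qK_q^\ramified}$ by definition. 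The step that is genuinely more than bookkeeping, and hence the main obstacle, is the compatibility asserted in (i): that the PEL-theoretic embedding $G_{\mathcal D^\diamond}\hookrightarrow G_{\mathcal D}$ coming from the forgetful functor agrees, under the Clifford-algebra identifications $G_{\mathcal D^\diamond}\cong\spin(V_d^\diamond)$ and $G_{\mathcal D}\cong\spin(V_D)$ of Remark~\ref{rmk:unif_datum}(\ref{rmk:unif_datum_two}), with the embedding induced by the orthogonal inclusion $V_d^\diamond\hookrightarrow V_D$. This is the functoriality-of-special-cycles compatibility underlying Kudla's construction, and verifying it requires tracking the adjoint involutions and the identification of quadratic spaces through $\beta$ --- which is exactly where the precise choice of uniformization datum (in particular the non-unit type of $\dagger$, ensuring all integral structures match) is used. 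Granting it, the identification of morphisms, hence of special cycles, follows.
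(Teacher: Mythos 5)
The paper's own proof consists of a single line: \emph{``See the proof of \cite[Proposition 2.5]{kudla2000siegel}.''} So the paper does not provide a self-contained argument, while you reconstruct the content of Kudla's proposition. Your outline is the right one: Morita-reduce the auxiliary PEL datum, identify $G_{\mathcal D^\diamond}$ with $\spin(V_d^\diamond)$ as in Remark~\ref{rmk:unif_datum}(\ref{rmk:unif_datum_two}), match the level $K^{q\diamond}K_q^\diamond$ with the Kudla level $K_{0,g^q}$ by intersecting stabilizers of $\Lambda$, and then show the forgetful map coincides with the Kudla embedding.

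Two small points worth noting. First, your claim that ``away from $q$ this is immediate'' for the level identification is fine, but the step at $q$ deserves one more word: you are implicitly using that the generic fiber of the PEL integral model $\mathcal M^\diamond_{K^{q\diamond}}$ is $M_{K^{q\diamond}K_q^\diamond}$ with $K_q^\diamond = \operatorname{Stab}_{G_{\mathcal D^\diamond}(\Q_q)}(\Lambda)$ (the general fact recalled in \S1.3), together with $G_{\mathcal D^\diamond}(\Q_q) = G_{\mathcal D}(\Q_q)\cap\spin(V_d^\diamond)(\Q_q)$; this is exactly what you write, and it is correct. Second, you honestly flag that the compatibility of the PEL-theoretic inclusion $G_{\mathcal D^\diamond}\hookrightarrow G_{\mathcal D}$ with the $\spin(V_d^\diamond)\hookrightarrow\spin(V_D)$ inclusion induced by the orthogonal decomposition $V_D = V_\special\oplus V_d^\diamond$ is ``granted'' rather than proved. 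That compatibility is the real content of \cite[Proposition 2.5]{kudla2000siegel} (and of the surrounding \S2 of that paper), so your proof has a genuine---but honestly labelled---gap at precisely the point the paper delegates to Kudla. You would need to check that, under the identifications $i_D$ and the Morita reduction, the adjoint involutions and symplectic forms line up so that the algebra embedding $\beta\colon O_D\hookrightarrow M_2(O_d)$ induces the expected orthogonal inclusion on the spin side; this is a computation with Clifford algebras and centralizers, not a formal consequence. If you carry it out (or explicitly cite Kudla at that step, as the paper does), the argument is complete.
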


\begin {proof}
See the proof of \cite[Proposition 2.5]{kudla2000siegel}.
\end {proof}
\begin{notation}
We now consider the special fiber $Z(g^q, V_\special, V_D)_{\F_q}$.  
    \begin{enumerate}
         \item Similarly to Construction \ref{constr:X_diamond_1ERL}(\ref{constr:X_diamond_1ERL_orthogonaldecomps}), we obtain a natural orthogonal decomposition 
\begin {equation*}
V_{D/q} \xrightarrow[\sim]{i_{D/q}} \End ^ 0 (\overline A_0, B_D) ^ {\dagger = 1,\trace = 0} = V_\special \oplus  V_{d/q} ^\diamond,
\end {equation*}
with
\begin {equation*}
V_{d/q} ^\diamond\coloneqq\End ^ 0 (\overline A_0, M_2 (B_d)) ^ {\dagger = 1,\trace = 0}
\end {equation*}
a three-dimensional quadratic space whose Hasse invariant coincides with that of $B_{d/q} $. 
\item Let $\mathcal N^\diamond$ be the Rapoport-Zink space  parametrizing framed, polarized deformations of the $q$-divisible group $A_0[q^\infty]$, with action of $M_2(O_d\otimes \Z_q)$. (The details are analogous to Definition \ref{definition RZ space}). Let $j^{\loc}: \mathcal N^\diamond \hookrightarrow \mathcal N$ be the natural closed immersion induced by $\beta\otimes \Z_q: O_D\otimes \Z_q \hookrightarrow M_2(O_d\otimes \Z_q)$, and let  \begin {equation*}
\mathcal N ^\diamond =\sqcup_{i\in\Z}\mathcal N ^\diamond (i)\end {equation*}
be the decomposition defined analogously to (\ref{decomposition of RZ space open and closed}), or equivalently defined by $\mathcal N ^\diamond (i) =\mathcal N ^\diamond\intersection\mathcal N (i). $
    \end{enumerate}
\end{notation}

\begin{rmk}\label{rmk:N_diamond}
    By the local analogue of Corollary \ref{cor:changing_polarizations_PEL}, $\mathcal N^\diamond$ is isomorphic to the formal scheme considered in \cite[\S I.3]{boutot1991uniformisation}, which is the one encountered in the  well-known \v{C}erednik-Drinfeld uniformization of quaternionic Shimura curves.
\end{rmk}
\begin {lemma}
\label{embedding contained in Siegel locus RZ space}
The image of $\mathcal M ^\diamond\coloneqq\mathcal N ^\diamond_{\red} (0) $
under $j ^ {\local} $
is contained in $\mathcal M_{\set{02}} $
(cf. Theorem
\ref {BT stratification first version}).\end {lemma}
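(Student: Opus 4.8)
The plan is to work on the level of Dieudonn\'e modules and use the explicit description of the strata in (\ref{meaning of BT strata subsubsection}). Recall that a point $s \in \mathcal M^\diamond(\overline\F_q)$, under $j^{\loc}$, corresponds to an $O_D \otimes \Z_q$-stable lattice $M \subset N$ (via $\beta \otimes \Z_q$); by Theorem \ref{BT stratification first version}, $s$ lies in exactly one of $\mathcal M(\Lambda_0)$, $\mathcal M(\Lambda_2)$, or $\mathcal M(\Lambda_1)$, and the claim is that in fact $s$ always lies in $\mathcal M_{\set{02}}$, i.e. in the intersection locus $\mathcal M(\Lambda_0) \cap \mathcal M(\Lambda_2)$ for some $(\Lambda_0, \Lambda_2) \in \mathcal L_{\set{02}}$. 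Equivalently, by the trichotomy in (\ref{meaning of BT strata subsubsection}), I must rule out that $M_\bullet + \tau M_\bullet$ and $M_\bullet \cap \tau M_\bullet$ are \emph{both} equal to $M_\bullet$ (the $\set{1}$-stratum case, where $M_\bullet = \tau M_\bullet$), and must also rule out the ``generic'' positions $\mathcal M(\Lambda_0) \setminus \mathcal M_{\set{1}}$ and $\mathcal M(\Lambda_2) \setminus \mathcal M_{\set{1}}$ — no, wait: $\mathcal M_{\set{02}} = \mathcal M(\Lambda_0) \cap \mathcal M(\Lambda_2)$ is contained in both $\mathcal M(\Lambda_0)$ and $\mathcal M(\Lambda_2)$, so what I actually need is that $j^{\loc}(\mathcal M^\diamond)$ avoids the \emph{open} strata $\mathcal M_{\set{0}}^0 \setminus \mathcal M_{\set{02}}$, $\mathcal M_{\set{2}}^0 \setminus \mathcal M_{\set{02}}$, and the isolated points $\mathcal M_{\set{1}}$; in lattice terms, for every $s \in \mathcal M^\diamond(\overline\F_q)$ the three lattices $M_\bullet$, $\tau M_\bullet$ are distinct and neither contains the other, with $M_\bullet + \tau M_\bullet \supsetneq M_\bullet \cap \tau M_\bullet$ being a length-two sandwich.

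The key input is the \emph{shape} of the $M_2(O_d \otimes \Z_q)$-action together with the decomposition $V_D \otimes \Q_q \cong V_\special \otimes \Q_q \oplus V_d^\diamond \otimes \Q_q$ from Construction \ref{constr:X_diamond_1ERL}(\ref{constr:X_diamond_1ERL_orthogonaldecomps}). First I would use Proposition \ref{integral embedding of quaternion algebras}(4): the lattice $\Lambda_\special = M_2(O_d)^{\dagger = 1, \tr = 0} \cap B_\special$ has basis $\set{e_1^\special, q e_2^\special}$, so it is \emph{not} self-dual — its dual is spanned by $\set{q^{-1} e_1^\special, e_2^\special}$ (using $e_i^\special \cdot e_j^\special = T_{ij}$ with $T \equiv \begin{pmatrix} 0 & \alpha \\ \alpha & 0 \end{pmatrix} \bmod q$), so $q\Lambda_\special^\vee \subset_2 \Lambda_\special \subset_2 \Lambda_\special^\vee$. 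This ``type $\set{1}$ at the two-dimensional level'' is exactly the feature that forces the embedded cycle into the $\set{02}$-stratum. Concretely, the framing of $A_0$ by $M_2(O_d)$ means $N_\bullet$ (and hence $W$) carries a compatible $M_2(O_d \otimes \Z_q)$-module structure via $\beta$, and any $O_D \otimes \Z_q$-stable lattice $M \subset N$ coming from $\mathcal M^\diamond$ is obtained from an $M_2(O_d \otimes \Z_q)$-stable lattice; Morita equivalence reduces the symplectic lattice $(W, \langle\cdot,\cdot\rangle_\bullet)$ together with $\tau$ to the analogous data on the Drinfeld upper half plane $\mathcal N^\diamond$ (Remark \ref{rmk:N_diamond}), and the orthogonal splitting $V_D = V_\special \oplus V_d^\diamond$ translates into a $\tau$-stable, orthogonal-type splitting of $W$ as a tensor $W \cong W_\special \otimes W_d$ in which the ``$V_\special$ part'' contributes precisely the non-self-dual lattice $\Lambda_\special$.

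The steps, in order: (i) set up the Dieudonn\'e-module dictionary for $j^{\loc}: \mathcal N^\diamond \hookrightarrow \mathcal N$, identifying an $\overline\F_q$-point of $\mathcal N^\diamond(0)$ with an $M_2(O_d \otimes \breve\Z_q)$-stable, $\lambda_0$-self-dual lattice $M$; (ii) pass through Morita equivalence to rewrite the data $(M_\bullet, \tau)$ on $W$ in terms of the splitting $W \cong W_\special \otimes W_d^\diamond$ induced by Construction \ref{constr:X_diamond_1ERL}(\ref{constr:X_diamond_1ERL_orthogonaldecomps}), noting that $\tau$ respects this splitting because the framing and polarization do; (iii) observe that the $W_\special$-component of $M_\bullet$ is forced to be (a $\breve\Z_q$-scaling of) $\Lambda_\special \otimes \breve\Z_q$, which by Proposition \ref{integral embedding of quaternion algebras}(4) satisfies $q(\Lambda_\special)^\vee \subset_2 \Lambda_\special \subset_2 (\Lambda_\special)^\vee$; (iv) compute $M_\bullet + \tau M_\bullet$ and $M_\bullet \cap \tau M_\bullet$ blockwise: the $W_d^\diamond$-factor contributes a self-dual or $\tau$-fixed lattice (the Drinfeld space has only its standard combinatorics), while the $W_\special$-factor contributes exactly the length-two jump $\Lambda_\special \subsetneq (\Lambda_\special)^\vee$, so $M_\bullet \ne \tau M_\bullet$ but $M_\bullet + \tau M_\bullet \supset_2 M_\bullet \cap \tau M_\bullet$ with the correct self-duality relation $q(M_\bullet + \tau M_\bullet) = (M_\bullet \cap \tau M_\bullet)^\vee$ — i.e. $(\Lambda_0, \Lambda_2) \in \mathcal L_{\set{02}}$ with $\breve\Lambda_0 = M_\bullet + \tau M_\bullet$, $\breve\Lambda_2 = M_\bullet \cap \tau M_\bullet$; (v) conclude via Theorem \ref{BT stratification first version}(2) that $s \in \mathcal M_{\set{02}}(\Lambda_0, \Lambda_2)$.

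The main obstacle I anticipate is bookkeeping step (ii)–(iv): making precise how the tensor decomposition $M_2(B_d) \cong B_D \otimes B_\special$ interacts with the $\bullet/\circ$ decomposition of $N$ (which uses $q \ne 2$) and with $\tau = \Pi V^{-1}$, and in particular checking that the $\Pi$-action (which interchanges $N_\bullet$ and $N_\circ$) is compatible with the splitting so that $\tau$ genuinely preserves the factorization $W \cong W_\special \otimes W_d^\diamond$. One should also double-check that the relevant normalization in Construction \ref{constr:X_diamond_1ERL}(\ref{constr:X_diamond_unifdatum}) — using the polarization $\lambda_0$ with $\iota_0^\diamond(\alpha)^\vee \circ \lambda_0 = \lambda_0 \circ \iota_0^\diamond(\alpha^\ast)$ and the non-unit type of $\dagger$ from Proposition \ref{integral embedding of quaternion algebras}(2) — is what puts the embedded space into $\mathcal N(0)$ (not some $\mathcal N(i)$ with $i \ne 0$), which is implicitly needed for the statement. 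All of this is ``morally'' the computation in \cite[Proposition 2.5]{kudla2000siegel} combined with the lattice combinatorics of Theorem \ref{BT stratification first version}, so I expect no conceptual surprise, only a somewhat intricate linear-algebra verification.
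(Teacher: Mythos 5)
The paper's proof takes a materially different, and shorter, route. It reduces to showing that the Dieudonn\'e module satisfies the single $\tau$-stability condition $M + \tau M = \tau M + \tau^2 M$ (by self-duality this also controls $M \cap \tau M$, and by the trichotomy in (\ref{meaning of BT strata subsubsection}) places the point in $\mathcal M_{\set{02}}$, including the degenerate case $M_\bullet = \tau M_\bullet$ via Theorem \ref{BT stratification first version}(3)). The key inputs are Proposition \ref{integral embedding of quaternion algebras}\emph{(3)}, not (4) as you use: since $\beta(\Pi_D)\cdot(\Pi_d^{-1},\Pi_d^{-1}) \in \GL_2(O_d)$, the $\tau$-operator can be replaced by $\tau' = \iota\begin{pmatrix}\Pi_d & 0\\ 0 & \Pi_d\end{pmatrix}V^{-1}$ without changing any of the lattices $\tau^n M$. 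Then Morita equivalence for $M_2(\Z_q) \subset M_2(O_d\otimes\Z_q)$ writes $M = M_0 \oplus M_0$ with $M_{0,\bullet}$, $M_{0,\circ}$ of rank two over $\breve\Z_q$, and $\tau_0$-stability of $M_0 + \tau_0 M_0$ is the known Drinfeld-space statement, citing \cite[Proposition 2.17]{rapoport1996period}. No decomposition of $W$ or of $V$ enters at all.

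Your proposal has a conceptual gap in steps (ii)--(iv). You want to compute $M_\bullet + \tau M_\bullet$ ``blockwise'' via a tensor factorization $W \cong W_\special \otimes W_d^\diamond$ and read off a length-two jump from $\Lambda_\special \subset_2 \Lambda_\special^\vee$. But $\Lambda_\special$ lives in the two-dimensional \emph{quadratic} space $V_\special \subset \End(W)^{\dagger=1,\tr=0}$; it is not a component of the symplectic/spin-module lattice $M_\bullet \subset N_\bullet$. These are at different levels of the Clifford algebra formalism, related only through $L_\Lambda = \End_{\Z_q}(\Lambda)\cap V$, and the dictionary is not a direct summand or tensor factor of $\Lambda$. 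Even granting a tensor description $W \cong K \otimes W_d$ (with $K = C^+(V_\special)$ quadratic over $\Q_q$), the lattices $M_\bullet$ parametrized by $\mathcal N^\diamond$ are not tensor products of lattices in each factor, so ``$M_\bullet$ has $W_\special$-component equal to a scaling of $\Lambda_\special$'' is not a well-posed statement, and the blockwise computation of $M_\bullet + \tau M_\bullet$ and $M_\bullet \cap \tau M_\bullet$ does not go through. What you are implicitly trying to prove is actually the content of Lemma \ref{vertex lattice projection of special cycle orbit} later in the paper (the image of $\mathscr L^\diamond$ lands in $\operatorname{VL}(2)^\diamond$, i.e. lattices of the form $L_\special \oplus L_\diamond$), and that is proved \emph{after} this lemma by a different argument, not as a stepping stone to it.

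A smaller point: your worry that one must check $\tau$ ``respects the splitting'' because $\Pi$ interchanges $N_\bullet$ and $N_\circ$ is misplaced — $\tau = \Pi V^{-1}$ preserves $N_\bullet$ by construction, since $\Pi$ and $V$ each swap the two eigenspaces. The genuine subtlety that the paper handles, and your proposal does not, is the difference between the $\Pi_D$-action (via $\beta$) and the diagonal $\Pi_d$-action, and this is precisely what Proposition \ref{integral embedding of quaternion algebras}(3) is there for.
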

\begin {proof}
It suffices to consider $\overline\F_q $-valued points, so suppose given $x = (X_x,\iota_x,\lambda_x,\rho_x)\in\mathcal M ^\diamond (\overline\F_q) $, with $\iota: M_2 (O_d)\otimes\Z_q\hookrightarrow\End (X_x) $
an embedding compatible with involutions.
By (\ref {meaning of BT strata subsubsection}), it suffices to show the Dieudonn\'e module $M $
of $X_x $
satisfies $M +\tau M =\tau M +\tau ^ 2M $
for $\tau = (\iota\circ\beta (\Pi_D))\cdot V ^ {-1}. $
(Since $M $
is self-dual, $M\intersection\tau M $
is then $\tau $-stable as well.)
If $\tau' =\iota\begin{pmatrix}\Pi_d & 0\\0 &\Pi_d\end {pmatrix}\cdot V ^ {-1} $, then we have $$(\tau') ^ nM=\tau ^ nM $$
for all $n\in\Z $
by Proposition \ref{integral embedding of quaternion algebras}(3). So it suffices to show $M +\tau' M $
is $\tau' $-stable. The action of
$$M_2 (\Z_q)\subset M_2 (O_d)\otimes\Z_q $$
on $M $
defines a decomposition $M = M_0\oplus M_0 $, where $M_0 $
inherits an action $\iota_0 $
of $O_d $. We can further decompose $M_0 = M_{0,\bullet} \oplus M_{0,\circ}$ according to the eigenvalues of the action of $\Z_{q^2} \subset O_d\otimes \Z_q$. Then $M_{0,\bullet}$ and $M_{0,\circ}$ are both free of rank two over $\breve \Z_q$ and stable under  $\tau_0 \coloneqq\iota_0 (\Pi_d) V ^ {-1} $. Hence $M_0 + \tau_0 M_0$ is $\tau_0$-stable by \cite[Proposition 2.17]{rapoport1996period}; so $M +\tau' M $
is $\tau' $-stable, as desired.
\end {proof}
\begin{definition}\label{def:L_diamond}
We define a subset $\mathscr L^\diamond \subset \mathscr L_{\Siegel}$ by 
$$j ^ {\local} (\mathcal N ^\diamond_{\red}) =\bigcup_{(\Lambda_+,\Lambda_-)\in\mathscr L ^\diamond}\left (\mathcal M_+ (\Lambda_+)\intersection\mathcal M_- (\Lambda_-)\right),$$ which makes sense 
by Lemma \ref{embedding contained in Siegel locus RZ space}.
\end{definition}
By  \cite[Proposition III.2]{boutot1991uniformisation} (and Remark \ref{rmk:X_diamond_integral_model}), the special fiber $Z(g^q, V_\special, V_D)_{\F_q}$ is purely supersingular. Hence by Rapoport-Zink uniformization, we have:
\begin {prop}\label{RZ uniformization for SST embedding}
The special fiber $Z(g^q, V_\special, V_D)_{\overline\F_q} $
is isomorphic to $$\spin (V_{d/q} ^\diamond) (\Q)\backslash\spin (V_{d/q} ^\diamond) (\A ^ {q}_f)\times\mathcal N ^\diamond_{\red}/K ^ {q\diamond}_{g ^ q}, $$
in such a way that the special fiber of $j:\boldsymbol Z(g^q, V_\special, V_D)\to X $
is given by $(h ^ q, x)\mapsto (h ^ qg ^ q, j ^ {\local} (x)) $
under the uniformization of Theorem
\ref {rz uniformization}.\end {prop}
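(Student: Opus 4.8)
The plan is to deduce Proposition~\ref{RZ uniformization for SST embedding} from the Rapoport--Zink uniformization theorem for $X$ (Theorem~\ref{rz uniformization}) together with the compatible uniformization of the special cycle $\boldsymbol Z(g^q, V_\special, V_D)$. First I would recall that $\boldsymbol Z(g^q, V_\special, V_D) = \mathcal M^\diamond_{K^{q\diamond}}$ is the integral model attached to the PEL datum $\mathscr D^\diamond$, which by Remark~\ref{rmk:X_diamond_integral_model} is the standard integral model of a quaternionic Shimura curve for $B_d$ with maximal level at $q$; since $B_d$ is ramified at $q$ (Remark~\ref{rmk:B_d_indef}), its reduction at $q$ is the \v{C}erednik--Drinfeld situation, so the special fiber is purely supersingular by \cite[Proposition III.2]{boutot1991uniformisation}. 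I would then invoke the Rapoport--Zink (or rather \v{C}erednik--Drinfeld) uniformization for this Shimura curve: its formal completion along the supersingular locus is $\spin(V_{d/q}^\diamond)(\Q) \backslash \spin(V_{d/q}^\diamond)(\A_f^q) \times \mathcal N^\diamond / K^{q\diamond}_{g^q}$, where $\mathcal N^\diamond$ is the Rapoport--Zink space of the $q$-divisible group $A_0[q^\infty]$ with its $M_2(O_d\otimes\Z_q)$-action and polarization, identified in Remark~\ref{rmk:N_diamond} with the Boutot--Carayol formal scheme. Since the special fiber is entirely supersingular, this formal uniformization recovers the full special fiber $Z(g^q, V_\special, V_D)_{\overline\F_q}$.

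The second ingredient is compatibility of the two uniformizations under the forgetful morphism $j$. The morphism $j\colon \boldsymbol Z(g^q, V_\special, V_D) \to X$ is defined on moduli problems by $(A, \iota^\diamond, \lambda, \eta^q) \mapsto (A, \iota^\diamond \circ \beta, \lambda, g^q \cdot \eta^q)$, and this description is visibly functorial; I would trace through how it interacts with the Rapoport--Zink uniformization functor. On the source, the framing object is $A_0[q^\infty]$ with its $M_2(O_d\otimes\Z_q)$-structure; on the target $X$, the framing object is $\overline A_0[q^\infty] = \mathbb X$ with its $O_D\otimes\Z_q$-action obtained by restricting along $\beta\otimes\Z_q\colon O_D\otimes\Z_q \hookrightarrow M_2(O_d\otimes\Z_q)$. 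This is exactly the closed immersion $j^{\loc}\colon \mathcal N^\diamond \hookrightarrow \mathcal N$ on the local side, and the change of prime-to-$q$ level structures $K^{q\diamond}_{g^q} \to K^q$ together with the translation by $g^q$ on the adelic component is precisely the map $(h^q, x) \mapsto (h^q g^q, j^{\loc}(x))$ on the global uniformizing spaces. The orthogonal decomposition $V_{D/q} \isomorphism V_\special \oplus V_{d/q}^\diamond$ from Construction~\ref{constr:X_diamond_1ERL} and Remark~\ref{rmk:unif_datum}(\ref{rmk:unif_datum_two}) is what makes the inclusion $\spin(V_{d/q}^\diamond) \hookrightarrow \spin(V_{D/q})$ of the adelic uniformizing groups match up with the closed embedding on Rapoport--Zink spaces.

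Putting these together: the special fiber of $j$ is obtained by applying the functor $\spin(V_{d/q}^\diamond)(\Q) \backslash \spin(V_{d/q}^\diamond)(\A_f^q) \times (-) / K^{q\diamond}_{g^q} \to \spin(V_{D/q})(\Q) \backslash \spin(V_{D/q})(\A_f^q) \times (-) / K^q$ to the closed immersion $j^{\loc}\colon \mathcal N^\diamond_{\red} \hookrightarrow \mathcal N_{\red}$, reduced mod $q$, and composed with translation by $g^q$; this is the claimed formula. I expect the main obstacle to be bookkeeping rather than conceptual: one must carefully verify that the identifications $i_D$, $i_{D/q}$ chosen in the $q$-adic uniformization datum (Construction~\ref{constr:X_diamond_1ERL}(\ref{constr:X_diamond_unifdatum})), which were built precisely so that $\iota_0^\diamond \circ \beta$ gives the framing for $\mathcal N$, are consistent with the orthogonal decomposition of quadratic spaces, so that the prime-to-$q$ level subgroups $K^{q\diamond}$ and their translates by $g^q$ are the correct ones. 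One should also confirm that Lemma~\ref{embedding contained in Siegel locus RZ space} (image of $\mathcal M^\diamond$ lies in $\mathcal M_{\{02\}}$) is only used to make sense of Definition~\ref{def:L_diamond} and is not needed for the uniformization statement itself, which holds on the nose because the whole special fiber of the curve is supersingular.
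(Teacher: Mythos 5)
Your proposal is correct and follows essentially the same route as the paper, which simply invokes pure supersingularity (via \cite[Proposition III.2]{boutot1991uniformisation} and Remark \ref{rmk:X_diamond_integral_model}) and then the Rapoport--Zink uniformization theorem. Your added discussion of the compatibility of the two uniformizations with the moduli-theoretic description of $j$ fills in the (unwritten) verification behind the paper's ``Hence by Rapoport-Zink uniformization,'' and your remark that Lemma \ref{embedding contained in Siegel locus RZ space} is logically downstream of this proposition is accurate.
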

\begin{notation}
Let
\begin {equation}
\widetilde j:\widetilde Z(g^q, V_\special, V_D)_{\overline\F_q}\to\widetilde X_{\overline\F_q}
\end {equation}
be the strict transform of $$Z(g^q, V_\special, V_D)_{\overline\F_q}\to X_{\overline\F_q}, $$
and
\begin {equation}
\left [\widetilde Z(g^q, V_\special, V_D)_{\overline\F_q}\right]\in H ^ 4_{\et, c} (\widetilde X_{\overline\F_q}, O (2))\end {equation}
its algebraic cycle class.
\end{notation}
\begin {corollary}\label{formula for potential map on special cycle corollary}
We have $$\nabla\left [\widetilde Z(g^q, V_\special, V_D)_{\overline\F_q}\right] =\sum_{\substack {[(h ^ q,\Lambda)]\in\\\spin (V_{d/q} ^\diamond) (\Q)\backslash\spin (V ^\diamond_{d/q}) (\A ^ {q}_f)\times\mathscr L/K ^ {q\diamond}_{g ^ q}}} m (\Lambda) [(h ^ qg ^ q,\Lambda)], $$
where $$m (\Lambda) =\sum_{y\in\mathscr L ^\diamond}\multiplicity (\Lambda, - 4q\delta_+ ^ {\Siegel} (y) - 4q\delta_- ^ {\Siegel} (y) + (\delta_+ +\delta_-)\circ\theta ^ {\paramodular}_{\Siegel} (y)). $$
\end {corollary}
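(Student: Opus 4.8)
The plan is to reduce the statement to the single–cycle computation of Lemma~\ref{potential map on Siegel cycle lemma}, using only that the potential map $\nabla$ of Definition~\ref{potential map definition} is $O$-linear (it is a composite of $O$-linear maps) together with the Rapoport–Zink uniformization of $\widetilde Z(g^q, V_\special, V_D)_{\overline\F_q}$. First I would record the geometry of $Z(g^q, V_\special, V_D)_{\overline\F_q}$. By Remark~\ref{rmk:N_diamond} and the classical \v{C}erednik–Drinfeld description (cf. \cite{boutot1991uniformisation}), $\mathcal M^\diamond = \mathcal N^\diamond_{\red}(0)$ is a reduced, purely one–dimensional scheme whose irreducible components are copies of $\mathbb{P}^1_{\overline\F_q}$; and the closed immersion $j^{\local}\colon \mathcal N^\diamond \hookrightarrow \mathcal N$ induces a closed immersion $\mathcal M^\diamond \hookrightarrow \mathcal M$. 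By Lemma~\ref{embedding contained in Siegel locus RZ space} this lands in the Siegel stratum $\mathcal M_{\set{02}}$, which is a union of lines $\mathcal M_+(\Lambda_+)\cap\mathcal M_-(\Lambda_-)\cong\mathbb{P}^1_{\overline\F_q}$; hence each component of $\mathcal M^\diamond$, being a closed subscheme of such a line of dimension one, maps \emph{isomorphically} onto exactly one $\mathcal M_+(\Lambda_+)\cap\mathcal M_-(\Lambda_-)$, and by injectivity of $j^{\local}$ this assignment is injective with image $\mathscr L^\diamond$ (Definition~\ref{def:L_diamond}).

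Next I would transport this through the two compatible uniformizations. Proposition~\ref{RZ uniformization for SST embedding} identifies $Z(g^q, V_\special, V_D)_{\overline\F_q}$ with $\spin(V_{d/q}^\diamond)(\Q)\backslash\spin(V_{d/q}^\diamond)(\A_f^q)\times\mathcal N^\diamond_{\red}/K^{q\diamond}_{g^q}$ compatibly with $j$ and with the uniformization of $X_{\overline\F_q}$ in Theorem~\ref{rz uniformization}. Combined with the previous paragraph, this exhibits $Z(g^q, V_\special, V_D)_{\overline\F_q}$ as a reduced curve whose irreducible components are exactly the $D(h^q g^q, y)$ for $[(h^q,y)]\in\spin(V_{d/q}^\diamond)(\Q)\backslash\spin(V_{d/q}^\diamond)(\A_f^q)\times\mathscr L^\diamond/K^{q\diamond}_{g^q}$, each with multiplicity one; here one also uses that $j^{\local}$ intertwines the degeneracy data defining $D(-)$, $\delta_\pm^{\Siegel}$, $\theta_{\Siegel}^{\paramodular}$ on $\Siegelnormal$ with the corresponding data on $\mathscr L^\diamond\subset\mathscr L_{\Siegel}$, which I would check on Dieudonn\'e modules exactly as in the proof of Lemma~\ref{embedding contained in Siegel locus RZ space}. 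Passing to strict transforms over the singular locus then gives
\begin{equation*}
\left[\widetilde Z(g^q, V_\special, V_D)_{\overline\F_q}\right] = \sum_{[(h^q,y)]} \left[\widetilde D(h^q g^q, y)\right] \quad\text{in } H^4_{\et,c}(\widetilde X_{\overline\F_q}, O(2)),
\end{equation*}
since the strict transform of a reduced curve is reduced with the same components.

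Finally I would apply $\nabla$ and reassemble. By $O$-linearity of $\nabla$ and Lemma~\ref{potential map on Siegel cycle lemma},
\begin{equation*}
\nabla\left[\widetilde Z(g^q, V_\special, V_D)_{\overline\F_q}\right] = \sum_{[(h^q,y)]} \Bigl( (\delta_+ + \delta_-)\circ\theta_{\Siegel}^{\paramodular} - 4q\,\delta_+^{\Siegel} - 4q\,\delta_-^{\Siegel}\Bigr)(h^q g^q, y),
\end{equation*}
and expanding each summand in the basis $\mathscr L$ of $O[\hypernormal]$ and collecting the coefficient of a fixed $(h^q g^q,\Lambda)$ produces precisely $m(\Lambda)=\sum_{y\in\mathscr L^\diamond}\multiplicity\bigl(\Lambda,\,-4q\delta_+^{\Siegel}(y)-4q\delta_-^{\Siegel}(y)+(\delta_++\delta_-)\circ\theta_{\Siegel}^{\paramodular}(y)\bigr)$, which is the claimed formula. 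The step I expect to require the most care is the bookkeeping in the second paragraph: verifying that $j^{\local}$ is multiplicity–preserving on irreducible components and that it matches the Siegel, paramodular, and hyperspecial degeneracy maps on $\mathscr L^\diamond$ with those on $\mathscr L_{\Siegel}$ and $\mathscr L$. Once this combinatorial compatibility of $j^{\local}$ with the lattice descriptions of Theorem~\ref{BT stratification first version} is pinned down, the remainder is formal linear algebra applied to the already–established Lemma~\ref{potential map on Siegel cycle lemma}.
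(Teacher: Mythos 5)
Your proposal is correct and takes essentially the same route as the paper, which simply cites Lemma~\ref{potential map on Siegel cycle lemma}, Proposition~\ref{RZ uniformization for SST embedding}, and Definition~\ref{def:L_diamond} and declares the result immediate; you have filled in the intermediate steps (identification of the components of $\mathcal N^\diamond_{\red}$ with lines in $\mathcal M_{\set{02}}$, transport through the two compatible uniformizations, passage to strict transforms, and linearity of $\nabla$) that the paper leaves implicit. One small point: the claim that $j^{\local}$ ``intertwines the degeneracy data'' is not an additional check to do on Dieudonn\'e modules — since $\mathscr L^\diamond\subset\mathscr L_{\Siegel}$ is a literal inclusion of index sets, the operators $\delta_\pm^{\Siegel}$, $\theta_{\Siegel}^{\paramodular}$ on $\Siegelnormal$ already restrict compatibly; the only Dieudonn\'e-module input needed is Lemma~\ref{embedding contained in Siegel locus RZ space} itself.
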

\begin {proof}
This is immediate from Lemma \ref{potential map on Siegel cycle lemma}, Proposition \ref{RZ uniformization for SST embedding}, and 
 Definition \ref{def:L_diamond}.
\end {proof}
To compute the multiplicities $m (\Lambda) $
in the Corollary \ref{formula for potential map on special cycle corollary} above, it is better to work with lattices in the split 5-dimensional quadratic space $V\coloneqq V_{D/q}\otimes\Q_q $
rather than the symplectic space $W $
(see (\ref{V versus W})).
\begin{definition}
\leavevmode
\begin{enumerate}
    \item A
 {vertex lattice} $L\subset V $
is a $\Z_q $-lattice satisfying $$L ^\check\supset L\supset qL ^\check. $$
For $0\leq i\leq 2, $
set
\begin {equation*}
\VL (2i)\coloneqq\set {\text {vertex lattices } L\subset V\,:\,\dim_{\F_q} L ^\check/L = 2i}.\end {equation*}
(The analogous sets $\VL (2i +1) $
are all empty.)
\item For any 
 $\Z_q $-lattice $\Lambda\subset W $, we  define
\begin {equation*} L_\Lambda =\End_{\Z_q} (\Lambda)\intersection V,\end {equation*}
which makes sense because $V =\End (W) ^ {\dagger = 1,\trace = 0}.$
\end{enumerate}
\end{definition}
\begin {lemma}\label{projecting onto vertex lattices lemma}
The map $\Lambda\mapsto L_\Lambda $
induces an isomorphism $$\mathscr L/q ^\Z\xrightarrow {\sim}\VL (0) $$
and a surjection $$\mathscr L_{\paramodular}/q ^\Z\twoheadrightarrow\VL (4). $$
Moreover, the map $$(\Lambda_+,\Lambda_-)\mapsto L_{\Lambda_+}\intersection L_{\Lambda_-} $$
induces a surjection $$\mathscr L_{\Siegel}/q ^\Z\twoheadrightarrow\VL (2). $$
\end {lemma}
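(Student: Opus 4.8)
The plan is to deduce the lemma from the exceptional isomorphism $\pi\colon \GSP(W)(\Q_q)\isomorphism \SO(V)(\Q_q)$ of (\ref{projection pi}) together with transitivity of the relevant group actions on lattices, reducing the whole statement to one explicit ``type computation'' per map plus a single stabilizer comparison. First I would record the formal structure. Since $\End_{\Z_q}(q^n\Lambda)=\End_{\Z_q}(\Lambda)$, all three maps are invariant under the homothety action $\Lambda\mapsto q^n\Lambda$ and so descend to $\mathscr L/q^\Z$, $\mathscr L_{\paramodular}/q^\Z$, $\mathscr L_{\Siegel}/q^\Z$; and they are equivariant for the conjugation action of $\GSP(W)(\Q_q)\cong\spin(V)(\Q_q)$ on $\End(W)\supseteq V$, which is exactly the standard $\SO(V)$-action through $\pi$ (the central $\mathbb G_m=Z_{\spin(V)}$ acting trivially by conjugation, which is consistent with the homothety-invariance of $L_\Lambda$). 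Each source is a single $\GSP(W)(\Q_q)$-orbit: this is elementary, using $(g\Lambda_0)^\vee=\nu(g)^{-1}g\Lambda_0$ to see $\GSP(W)(\Q_q)\cdot\Lambda_0\subseteq\mathscr L$, and conversely that any $\Lambda\in\mathscr L$ has hyperspecial stabilizer in $\SP(W)(\Q_q)$ hence is $\GSP(W)(\Q_q)$-conjugate to a self-dual lattice, and similarly for $\mathscr L_{\paramodular}$ and $\mathscr L_{\Siegel}$. Dually, $\SO(V)(\Q_q)$ acts transitively on each $\VL(2i)$: any two vertex lattices of the same type are related by an isometry of $V$ by Witt's theorem (matching $L^\vee/L$ and the radical filtration), $O(V)(\Q_q)=\{\pm1\}\times\SO(V)(\Q_q)$ acts with $-1$ trivial on lattices, and $\SO(V)(\Q_q)=\image\bigl(\spin(V)(\Q_q)\to\SO(V)(\Q_q)\bigr)$ since $H^1(\Q_q,\mathbb G_m)=0$. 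Consequently each of the three maps is surjective as soon as it is well-defined, i.e.\ as soon as its image lies in the asserted $\VL(2i)$, and the first map is moreover injective as soon as $\operatorname{Stab}_{\spin(V)(\Q_q)}(\Lambda_0)$ and $\operatorname{Stab}_{\spin(V)(\Q_q)}(L_{\Lambda_0})$ coincide.

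Second, I would carry out the type computations in explicit coordinates. Fix a symplectic $\Z_q$-basis of $W$ adapted to a self-dual lattice $\Lambda_0$, so that $\End_{\Z_q}(\Lambda_0)=M_4(\Z_q)$ and the $\langle\cdot,\cdot\rangle_\bullet$-adjoint involution $\dagger$ is the standard symplectic transpose; then $L_{\Lambda_0}=M_4(\Z_q)^{\dagger=1,\tr=0}$. Comparing the scaled trace form $x\cdot y=c\,\tr(xy)$ on $V$ --- with $c$ the normalization making $V=\End(W)^{\dagger=1,\tr=0}$ agree with $V_{D/q}\otimes\Q_q$ as in (\ref{V versus W}) --- against the $\Z_q$-dual of $M_4(\Z_q)^{\dagger=1,\tr=0}$ shows $L_{\Lambda_0}^\vee=L_{\Lambda_0}$, i.e.\ $L_{\Lambda_0}\in\VL(0)$. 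Running the same computation for an explicit standard paramodular lattice $\Lambda_{\paramodular}$ (with $q\Lambda_{\paramodular}^\vee\subset_2\Lambda_{\paramodular}\subset_2\Lambda_{\paramodular}^\vee$) gives $\dim_{\F_q}L_{\Lambda_{\paramodular}}^\vee/L_{\Lambda_{\paramodular}}=4$, so $L_{\Lambda_{\paramodular}}\in\VL(4)$; and for an explicit standard Siegel pair $(\Lambda_+,\Lambda_-)$ (so that $L_{\Lambda_+},L_{\Lambda_-}\in\VL(0)$, since $\Lambda_\pm$ are respectively self-dual and $q$-self-dual up to homothety) one finds that $L_{\Lambda_+}\cap L_{\Lambda_-}$ is a vertex lattice with $\dim_{\F_q}(L_{\Lambda_+}\cap L_{\Lambda_-})^\vee/(L_{\Lambda_+}\cap L_{\Lambda_-})=2$. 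Each of these is a finite computation with $4\times4$ matrices over $\Z_q$ (for the Siegel case, intersecting $M_4(\Z_q)$ with a conjugate $\operatorname{diag}(q,q,1,1)M_4(\Z_q)\operatorname{diag}(q^{-1},q^{-1},1,1)$ before taking $\dagger$-fixed trace-$0$ parts). By the first paragraph this already yields the two surjectivity statements.

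Third, for the injectivity of $\mathscr L/q^\Z\to\VL(0)$, I would use that when $L=L_{\Lambda_0}$ is self-dual of rank $5$, the maximal order $\End_{\Z_q}(\Lambda_0)=M_4(\Z_q)$ is precisely the even Clifford order $C^+(L)$ inside $C^+(V)=\End(W)$, and is generated as a $\Z_q$-algebra by $L$. Hence any $g\in\GSP(W)(\Q_q)$ whose conjugation stabilizes $L$ must stabilize $\End_{\Z_q}(\Lambda_0)$, and therefore stabilize its unique (up to homothety) simple lattice module $\Lambda_0$, so $g\in\Q_q^\times\GSP_4(\Z_q)=\operatorname{Stab}_{\GSP(W)(\Q_q)}(\Lambda_0)$; the reverse inclusion is obvious, so the two stabilizers agree and the first map is a bijection. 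The same mechanism clarifies why only surjectivity is asserted in the other two cases: for $i>0$ the even Clifford order $C^+(L)$ of a type-$2i$ vertex lattice is non-maximal, whereas $\End_{\Z_q}(\Lambda)$ is always maximal, so $L_\Lambda$ carries strictly less information than $\Lambda$ and the stabilizer argument no longer forces injectivity (and in any case only surjectivity is used, in Corollary \ref{formula for potential map on special cycle corollary}).

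The step I expect to be the main obstacle is the type computation of the second paragraph, and specifically pinning down the normalization constant $c$ of the quadratic form on $V$ so that the lattices $L_\Lambda$ come out to be vertex lattices of \emph{exactly} the stated type --- this forces one to track the precise integral identification $V\cong V_{D/q}\otimes\Q_q$ coming from the Rapoport--Zink set-up rather than merely the $\SO(V)$-invariance of the form on $V$.
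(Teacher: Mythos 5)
Your proposal is correct and takes essentially the same approach as the paper: equivariance under $\spin(V)(\Q_q)\cong\GSP(W)(\Q_q)$ plus transitivity on both sides gives surjectivity, and injectivity of the first map comes from the stabilizer comparison (the paper phrases this as the hyperspecial subgroup of $\SO(V)(\Q_q)$ being the image of a hyperspecial subgroup of $\spin(V)(\Q_q)$, your Clifford-order argument being one way to see it). The only difference is that you spell out the type computations verifying that the maps land in $\VL(0)$, $\VL(2)$, $\VL(4)$, which the paper treats as implicit.
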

\begin {proof}
For each $0\leq i\leq 2, $$\VL (2i) $
is a homogeneous space for $\spin (V) (\Q_q) $. Since the maps
\begin {align*}
\mathscr L/q ^\Z &\rightarrow\VL (0)\\
\mathscr L_{\Siegel}/q ^\Z &\rightarrow\VL (2)\\
\mathscr L_{\paramodular}/q ^\Z &\rightarrow\VL (4)
\end {align*}
are all $\spin (V) (\Q_q) $-equivariant, they are automatically surjective. Finally, for the injectivity of the first map, it suffices to note that the stabilizers coincide, i.e. the hyperspecial subgroup of $\SO (V) (\Q_q) $
is the image of a hyperspecial subgroup of $\spin (V) (\Q_q) $.
\end {proof}
\begin {lemma}\label{Vertex lattice commutative diagrams}
The projections of Lemma \ref{projecting onto vertex lattices lemma} fit into the following commutative diagrams:
\begin{center}
\begin {tikzcd}
\mathscr L_{\paramodular}\arrow [r, "\delta_+ +\delta_-"]\arrow [d] & \Z [\mathscr L]\arrow [d]\\\VL (4)\arrow [r, "\overline\delta"] &\Z [\VL (0)]
\end {tikzcd}
\begin {tikzcd}
\mathscr L_{\Siegel}\arrow [r, "\delta_+ ^ {\Siegel} +\delta_- ^ {\Siegel}"]\arrow [d] & \Z [\mathscr L]\arrow [d]\\\VL (2)\arrow [r, "\overline\delta ^ {\Siegel}"] & \Z [\VL (0)]
\end {tikzcd}
\begin {tikzcd}
\mathscr L_{\Siegel}\arrow [d]\arrow [r, "\theta_{\Siegel} ^ {\paramodular}"] & \Z [\mathscr L_{\paramodular}]\arrow [d]\\\VL (2)\arrow [r, "\overline\theta_{\Siegel} ^ {\paramodular}"] & \Z [\VL (4)]
\end {tikzcd}
\end{center}
Here, the bottom maps are:
\begin {align*}
\overline\delta (L_4) & =\sum_{\substack {L_0\in\VL (0)\\L_0\supset L_4}} [L_0], &\text {for } L_4\in\VL (4),\\
\overline\delta ^ {\Siegel} (L_2) & =\sum_{\substack {L_0\in\VL (0)\\L_0\supset L_2}} [L_0], &\text {for } L_2\in\VL (2),\\
\overline\theta ^ {\paramodular}_{\Siegel} (L_2) & =\sum_{\substack {L_4\in\VL (4)\\L_4\subset L_2}} [L_4], &\text {for } L_2\in\VL (2).
\end {align*}
\end {lemma}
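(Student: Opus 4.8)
The plan is to verify the commutativity of each of the three diagrams by a direct computation with lattices, reducing everything to a statement about the correspondence $\Lambda \mapsto L_\Lambda$ between lattices in the symplectic space $W$ and vertex lattices in $V$. First I would recall the compatibility established in Lemma \ref{projecting onto vertex lattices lemma}: the maps $\mathscr L/q^\Z \isomorphism \VL(0)$, $\mathscr L_{\paramodular}/q^\Z \twoheadrightarrow \VL(4)$, and $\mathscr L_{\Siegel}/q^\Z \twoheadrightarrow \VL(2)$ are all $\spin(V)(\Q_q)$-equivariant. Since all four bottom maps $\overline\delta$, $\overline\delta^{\Siegel}$, $\overline\theta^{\paramodular}_{\Siegel}$ are defined purely in terms of the inclusion relations among vertex lattices, and the top maps $\delta_{\PM}$, $\delta_{\PM}^{\Siegel}$, $\theta^{\paramodular}_{\Siegel}$ are likewise defined by the inclusion relations $\Lambda_{\paramodular}\subset_1\Lambda$, $\Lambda_-\subset_2\Lambda_+$, etc. from Definitions \ref{def:lattices_W}, \ref{Siegel degeneracy maps definition}, and \ref{def:L_siegel}, the key point is simply that the lattice operation $\Lambda \mapsto L_\Lambda = \End_{\Z_q}(\Lambda)\cap V$ translates the $\subset_n$ relations on one side into the vertex-lattice inclusion relations on the other.

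More concretely, for the first diagram I would fix $\Lambda_{\paramodular}\in\mathscr L_{\paramodular}$ and compute both composites applied to it. Going right then down: $\delta_+(\Lambda_{\paramodular}) + \delta_-(\Lambda_{\paramodular}) = \sum_{\Lambda_{\paramodular}\subset_1\Lambda}[\Lambda] + \sum_{\Lambda\subset_1\Lambda_{\paramodular}}[\Lambda]$, and I must show that, after projecting to $\Z[\VL(0)]$ via $\Lambda\mapsto L_\Lambda$, this equals $\overline\delta(L_{\Lambda_{\paramodular}}) = \sum_{L_0\supset L_{\Lambda_{\paramodular}}}[L_0]$. This is a counting statement: for each vertex lattice $L_0\in\VL(0)$ containing $L_4 \coloneqq L_{\Lambda_{\paramodular}}$, one needs to check that the number of $\Lambda$ with either $\Lambda_{\paramodular}\subset_1\Lambda$ or $\Lambda\subset_1\Lambda_{\paramodular}$ and $L_\Lambda = L_0$ is exactly one. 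I expect this to follow from an explicit description near a single point of the Bruhat–Tits building for $\GSP_4(\Q_q)$: the lattices $\Lambda\in\mathscr L$ adjacent to a fixed $\Lambda_{\paramodular}$ of paramodular type come in the two families indexed by whether $\Lambda_{\paramodular}\subset_1\Lambda$ or $\Lambda\subset_1\Lambda_{\paramodular}$, and under $\Lambda\mapsto L_\Lambda$ these exhaust (with multiplicity one) the self-dual vertex lattices $L_0\supset L_4$. The analogous statements for the second and third diagrams are the same type of local combinatorial check, using that a Siegel-type lattice pair $(\Lambda_+,\Lambda_-)$ maps to the vertex lattice $L_{\Lambda_+}\cap L_{\Lambda_-}\in\VL(2)$.

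I would then record the three verifications in turn, noting in each case that because the projection maps are $\spin(V)(\Q_q)$-equivariant and the relevant group acts transitively on $\mathscr L$, $\mathscr L_{\Siegel}$, $\mathscr L_{\paramodular}$ (and on $\VL(0)$, $\VL(2)$, $\VL(4)$), it suffices to check commutativity at a single chosen lattice, which makes the computation finite. The main obstacle I anticipate is not any conceptual subtlety but rather the bookkeeping: one has to be careful that the correspondence $\Lambda\mapsto L_\Lambda$ is many-to-one on $\mathscr L_{\paramodular}$ and $\mathscr L_{\Siegel}$ (the fibers being $q^\Z$-orbits), so when comparing the two composites one must make sure that the multiplicities on the $\Z[\VL(\ast)]$ side correctly account for the collapsing of these fibers — in particular that the "extra" scaling lattices $q^n\Lambda$ do not contribute spuriously because they already appear in the formulas on both sides. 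Once the dictionary between $\subset_n$-relations in $W$ and inclusions of vertex lattices in $V$ is set up carefully (this is essentially the content of the proof of Lemma \ref{projecting onto vertex lattices lemma}, extended to track the adjacency combinatorics rather than just transitivity), each of the three diagrams commutes by a direct matching of terms.
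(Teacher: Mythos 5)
Your overall strategy coincides with the paper's: fix a lattice $\Lambda_{\paramodular}$, apply both composites, and match the terms, using the equivariance of the projections. However, the one genuinely nontrivial step is not where you expect it. You identify the ``bookkeeping'' of $q^\Z$-orbits collapsing as the main obstacle, but this is actually automatic: the diagram is a statement about a single $\Lambda_{\paramodular}$ (not an equivalence class), scaling by $q$ is visibly compatible on both sides, and the injectivity of $\mathscr L/q^\Z \to \VL(0)$ from Lemma \ref{projecting onto vertex lattices lemma} already guarantees that the $q+1$ lattices $\Lambda\supset_1\Lambda_{\paramodular}$ and the $q+1$ lattices $\Lambda\subset_1\Lambda_{\paramodular}$ all project to distinct points of $\VL(0)$.

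The real content, which you fold into ``the lattice operation $\Lambda\mapsto L_\Lambda$ translates the $\subset_n$ relations into inclusions of vertex lattices'' without argument, is the containment $L_\Lambda \supset L_{\Lambda_{\paramodular}}$ for every $\Lambda\in\delta_\pm(\Lambda_{\paramodular})$. This is not formal. The paper's proof of it is the following observation: any $\ell\in L_{\Lambda_{\paramodular}} = \End_{\Z_q}(\Lambda_{\paramodular})\cap V$ preserves $\Lambda_{\paramodular}$ and (being $\dagger$-fixed) also $\Lambda_{\paramodular}^\check$, hence induces a $\dagger$-self-adjoint, trace-zero endomorphism of the two-dimensional $\F_q$-symplectic quotient $q^n\Lambda_{\paramodular}^\check/\Lambda_{\paramodular}$. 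On a two-dimensional symplectic space, a $\dagger$-self-adjoint endomorphism is scalar, and trace zero then forces it to vanish (using $p\nmid 2$); so $\ell(q^n\Lambda_{\paramodular}^\check)\subset\Lambda_{\paramodular}$. Combined with the chain $\Lambda_{\paramodular}\subset_1\Lambda\subset_1 q^n\Lambda_{\paramodular}^\check$, this gives $\ell(\Lambda)\subset\Lambda$, i.e.\ $\ell\in L_\Lambda$. Only after this containment is established do the counting arguments — whether in your per-$L_0$ multiplicity-one form or the paper's degree-count form ($2(q+1)$ on both sides) — close the diagram. Your plan would not survive contact with the details unless you recover this step, so you should state it explicitly rather than defer it to ``local combinatorics''.
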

\begin {proof}
Given $\Lambda_{\paramodular}\in\mathscr L_{\paramodular} $
with $$q ^ {n +1}\Lambda_{\paramodular} ^\check\subset_2\Lambda_{\paramodular}\subset_2 q ^ n\Lambda ^\check_{\paramodular} $$
and $\Lambda\in\mathscr L $
with $\Lambda\in\delta_+ (\Lambda_{\paramodular}) $, we first claim $L_\Lambda\supset L_{\Lambda_{\paramodular}}. $
Indeed, for any $\l\in L_{\Lambda_{\paramodular}} $, $\l $
induces a self-adjoint, trace-zero endomorphism of the two-dimensional symplectic space $q ^ n\Lambda ^\check_{\paramodular}/\Lambda_{\paramodular} $; hence $\l (q ^ n\Lambda_{\paramodular} ^\check)\subset\Lambda_{\paramodular} $. Since $\Lambda $
fits into a chain $$\Lambda_{\paramodular}\subset_1\Lambda = q ^ n\Lambda ^\check\subset_1 q ^ n\Lambda_{\paramodular} ^\check, $$
we conclude $$\l (\Lambda)\subset\Lambda_{\paramodular}\subset\Lambda, $$
so $\l\in L_\Lambda $.
Similarly, we see that, for $\Lambda\in\mathscr L $
appearing in $\delta_- (\Lambda_{\paramodular}) $, we have $L_\Lambda\supset L_{\Lambda_{\paramodular}} $. Now to prove the first diagram commutes, it suffices to show
\begin {equation}\label {degree count for commuting diagram lemma}
\degree\overline\delta (L_{\Lambda_{\paramodular}}) =\degree (\delta_+ (\Lambda_{\paramodular})) +\degree (\delta_- (\Lambda_{\paramodular})).\end {equation}
The left-hand side is the number of lattices $L\in\VL (0) $
containing $L_{\Lambda_{\paramodular}} $; such lattices are in bijection with isotropic planes in the split 4-dimensional $\F_q $-quadratic space $L ^\check_{\Lambda_{\paramodular}}/L_{\Lambda_{\paramodular}} $, hence there are $2 (q +1) $
of them. Meanwhile $\degree (\delta_\pm (\Lambda_{\paramodular})) = q +1, $
since lattices $\Lambda\supset_1\Lambda_{\paramodular} $
(resp. $\Lambda\subset_1\Lambda_{\paramodular} $)
are in bijection with lines in the 2-dimensional $\F_q $-symplectic space $q ^ n\Lambda ^\check_{\paramodular}/\Lambda_{\paramodular} $
(resp. $\Lambda_{\paramodular}/q ^ {n +1}\Lambda_{\paramodular} ^\check $). This proves (\ref {degree count for commuting diagram lemma}), hence the commutativity of the first diagram; and the rest are similar.
\end {proof}
\begin{notation}
\leavevmode
\begin{enumerate}
\item For convenience, we now abbreviate $$V_{\special, q} = V_\special\otimes\Q_q.$$ 
Let $L_\special\subset V_{\special, q} $
be the $\Z_q $-lattice spanned by $\set {e_1 ^\special, qe_2 ^\special} $, and let $L ^ {(0)}_\special = \Span_{\Z_q}\set{q^{-1} e_1^T, qe_2^T}$
and $L ^ {(1)}_\special = \Span_{\Z_q}\set{e_1^T, e_2^T}$
be the two self-dual lattices in $V_{\special, q} $
containing $L_\special $.
\item 
 We write $\VL (2) ^\diamond\subset\VL (2) $
for the subset consisting of lattices of the form $$L_2 = L_\special\oplus L_\diamond, $$
where $$L_\diamond\subset V_\diamond\coloneqq V_{d/q} ^\diamond\otimes\Q_q $$
is self-dual.
\end{enumerate}
\end{notation}
\begin {lemma}\label{vertex lattice projection of special cycle orbit}
$\mathscr L ^\diamond $
consists of two $\spin (V ^\diamond_{d/q}) (\Q_q) $-orbits; if $\O\subset\mathscr L ^\diamond $
is one orbit, then the map of Lemma \ref{projecting onto vertex lattices lemma} induces an isomorphism $$\O/q ^\Z\isomorphism\VL (2) ^\diamond. $$
\end {lemma}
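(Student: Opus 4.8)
The plan is to reduce the statement to the classical combinatorics of the Bruhat--Tits tree of $\PGL_2(\Q_q)$, using the Dieudonn\'e-module description of $\mathcal N^\diamond$ together with the orthogonal splitting $V\coloneqq V_{D/q}\otimes\Q_q = (V_\special\otimes\Q_q)\oplus V_\diamond$. First I would recall that $\mathcal N^\diamond$ is the Rapoport--Zink space for $M_2(O_d)$-deformations of $\overline A_0[q^\infty]$; by Morita equivalence and the fact that $B_d$ is ramified at $q$ (Remark \ref{rmk:B_d_indef}), each connected component $\mathcal N^\diamond(i)$ is isomorphic to the \v{C}erednik--Drinfeld formal scheme $\widehat\Omega^2_{\breve\Z_q}$ (Remark \ref{rmk:N_diamond}). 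Hence $\mathcal N^\diamond_\red$ has irreducible components indexed by the vertices of the Bruhat--Tits tree $\mathscr T$ of $\SO(V_\diamond)(\Q_q)\cong\PGL_2(\Q_q)$ --- one copy of $\mathscr T$ for each $i\in\Z$ --- and singular points indexed by the edges of $\mathscr T$; and since $j^{\local}$ is a closed immersion whose image lies in $\mathcal M_{\set{02}}$ (Lemma \ref{embedding contained in Siegel locus RZ space}), each such component is an irreducible $\mathbb P^1$ of the form $\mathcal M_+(\Lambda_+)\cap\mathcal M_-(\Lambda_-)$, so that $\mathscr L^\diamond$ is identified with $\Z\times\{\text{vertices of }\mathscr T\}$. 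Moreover the vertex of $\mathscr T$ attached to a component is the self-dual lattice $L_\diamond\subset V_\diamond$ given by the $\dagger = 1,\ \tr = 0$ part of the endomorphism ring of the corresponding Dieudonn\'e module.

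Next I would compute the map $(\Lambda_+,\Lambda_-)\mapsto L_{\Lambda_+}\cap L_{\Lambda_-}$ of Lemma \ref{projecting onto vertex lattices lemma} on $\mathscr L^\diamond$. Given $x\in\mathcal N^\diamond_\red(\overline\F_q)$ on a component, with Dieudonn\'e module $M\subset N$, Lemma \ref{embedding contained in Siegel locus RZ space} and its proof give $j^{\local}(x)\in\mathcal M_{\set{02}}$, with $\Lambda_+,\Lambda_-\in\mathscr L$ determined by $M_\bullet + \tau M_\bullet$ and $M_\bullet\cap\tau M_\bullet$ (cf.\ (\ref{meaning of BT strata subsubsection})), where $\tau$ may be replaced by the operator built from $\Pi_d$; hence $(\Lambda_+,\Lambda_-)$ depends only on the $O_d$-stable lattice $M$. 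Since $M$ is principally polarized, $L_{\Lambda_+}\cap L_{\Lambda_-}$ is self-dual in $V$, and along $V = (V_\special\otimes\Q_q)\oplus V_\diamond$ it splits as $(\Lambda_\special\otimes\Z_q)\oplus L_\diamond$, where $\Lambda_\special = M_2(O_d)^{\dagger = 1,\ \tr = 0}\cap B_\special = \Span_{\Z_q}\set{e_1^\special, qe_2^\special}$ by Proposition \ref{integral embedding of quaternion algebras}(4) and $L_\diamond$ is the self-dual lattice attached to $x$. The delicate inclusion is that no lattice in $V_\special\otimes\Q_q$ strictly containing $\Lambda_\special$ stabilizes both $\Lambda_+$ and $\Lambda_-$; this is exactly where the careful choice of integral embedding $\beta$ in Proposition \ref{integral embedding of quaternion algebras} is used, and I would verify it in the Morita decomposition $M = M^\flat\otimes_{\Z_q}\Z_q^{\oplus 2}$, using that the chain $q\Lambda_+\subset_2\Lambda_-\subset_2\Lambda_+$ has the prescribed type relative to $V_\special$. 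It then follows that the composite $\mathscr L^\diamond\to\VL(2)$ has image exactly $\VL(2)^\diamond = \set{(\Lambda_\special\otimes\Z_q)\oplus L_\diamond : L_\diamond\subset V_\diamond\text{ self-dual}}$ and factors through $\mathscr L^\diamond/q^\Z$.

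Finally I would count orbits. Under the identification $\mathscr L^\diamond\cong\Z\times\{\text{vertices of }\mathscr T\}$, an element $g\in\spin(V^\diamond_{d/q})(\Q_q)$ with $\ord_q\nu(g) = k$ carries the $i$-th copy of $\mathscr T$ to the $(i+k)$-th, while the vertex stabilizers are hyperspecial, on which $\ord_q\nu$ vanishes; hence the orbit of a pair $(i,v)$ consists of those $(i',v')$ for which $i'-i$ has the same parity as the distance in $\mathscr T$ from $v$ to $v'$, giving exactly two orbits. The scalar $q\in\Q_q^\times\subset\spin(V^\diamond_{d/q})(\Q_q)$ shifts $i$ by $2$ and fixes the vertex, so for each orbit $\O$ the induced map $\O/q^\Z\to\VL(2)^\diamond$ is a $\spin(V^\diamond_{d/q})(\Q_q)$-equivariant bijection between two transitive homogeneous spaces with the same hyperspecial stabilizer, namely $\PGL_2(\Q_q)/\PGL_2(\Z_q)$, hence an isomorphism. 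The main obstacle is the second step: showing that the $V_\special$-component of $L_{\Lambda_+}\cap L_{\Lambda_-}$ is the fixed lattice $\Lambda_\special$ regardless of $x$, which is what forces the image to be $\VL(2)^\diamond$ rather than a larger set; reconciling the two-orbit count with the $q^\Z$-quotient is a secondary, essentially bookkeeping, matter.
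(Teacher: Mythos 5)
Your plan is sound and would get you to the same place, but it routes through a computation that the paper cleverly avoids. The step you flag as "the main obstacle" --- showing that the $V_\special$-component of $L_{\Lambda_+}\cap L_{\Lambda_-}$ is \emph{exactly} $L_\special$, not one of the larger self-dual lattices $L_\special^{(0)}$ or $L_\special^{(1)}$ --- is precisely where the paper's argument takes a shortcut. Rather than unwinding the Morita decomposition of the Dieudonn\'e module, the paper observes that both facts it needs (two orbits, and hyperspecial stabilizers of points of $\mathscr L^\diamond$) are supplied directly by Boutot--Carayol \cite[Th\'eor\`eme 9.3]{boutot1991uniformisation} via the \v{C}erednik--Drinfeld picture you also invoke. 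One then argues purely group-theoretically: once you know $L_2\cap V_{\special,q}\supset L_\special$ (the easy inclusion, from Proposition \ref{integral embedding of quaternion algebras}(4)) and that the stabilizer of $L_2$ inside $\SO(V_\diamond)(\Q_q)$ is hyperspecial, the orthogonal splitting $L_2 = L_\special\oplus L_\diamond$ with $L_\diamond$ self-dual is forced, with no further lattice-chain bookkeeping in the Dieudonn\'e module. The remainder of the paper's proof (surjection onto the single orbit $\VL(2)^\diamond$ and bijectivity by matching stabilizers) is exactly your final paragraph. So your approach is more self-contained --- you re-derive the two-orbit count from the tree geometry and propose to verify the delicate equality directly --- whereas the paper leans harder on the quoted theorem to make the verification step disappear. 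If you do carry out your Morita-module computation, make sure to confirm that $\Pi_d^{\pm 1}\breve\Lambda_\pm$ really pins down the $V_{\special,q}$-component to be $\Lambda_\special\otimes\Z_q$ (and not just contain it), since that is the crux; the paper's argument shows you can sidestep this entirely if you prefer.
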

\begin {proof}
By \cite[Th\'eor\`eme 9.3]{boutot1991uniformisation} and Remark \ref{rmk:X_diamond_integral_model},  $\mathscr L ^\diamond $
consists of two $\spin (V_{d/q} ^\diamond) (\Q_q) $-orbits, and the stabilizer of any point in $\mathscr L ^\diamond $
is a hyperspecial subgroup. On the other hand, for any point in $\mathscr L ^\diamond $
with image $L_2\in\VL (2) $, we know $$L_2\intersection V_{\special, q}\supset L_\special $$
by Proposition \ref{integral embedding of quaternion algebras}(4) and the definition of the strata $\mathcal M_+ (\Lambda_+) $
and $\mathcal M_- (\Lambda_-). $
Since the stabilizer of $L_2 $
in $\SO (V_{d/q} ^\diamond) (\Q_q) $
is hyperspecial, this forces $$L_2 = L_\special\oplus L_\diamond, $$
for some $L_\diamond\subset V_\diamond $
self-dual. Hence the image of either orbit $\O\subset\mathscr L ^\diamond $
in $\VL (2) $
is contained in $\VL (2) ^\diamond $; since $\VL (2) ^\diamond $
is a single orbit, we have a surjection $\O/q ^\Z\twoheadrightarrow\VL (2) ^\diamond. $
Finally, we see that this map is an isomorphism because the stabilizers coincide.
\end {proof}
\begin {lemma}\label{calculation of multiplicity for first explicit reciprocity law lemma}
The multiplicity $m (\Lambda) $
in Corollary \ref{formula for potential map on special cycle corollary} depends only on $L_\Lambda\in\VL (0) $, and is given by:
$$m (\Lambda) =\begin {cases} 4, &\text {if } L_\Lambda\intersection V_{\special, q} = L_\special,\\4 - 4q, &\text {if } L_\Lambda\intersection V_{\special, q} = L_\special ^ {(0)}\text { or } L ^ {(1)}_\special,\\0, &\text {else}.\end {cases} $$
\end {lemma}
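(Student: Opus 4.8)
The plan is to transport the whole computation from lattices in the symplectic space $W$ down to vertex lattices in the split quadratic space $V\coloneqq V_{D/q}\otimes\Q_q$, where the orthogonal splitting $V=V_{\special,q}\oplus V_\diamond$ turns $m(\Lambda)$ into a finite, entirely combinatorial count.

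\emph{Step 1: projecting to vertex lattices.} Because $\delta_\pm^\Siegel$, $\theta_\Siegel^\paramodular$, and $\delta_\pm$ all commute with the central $q^\Z$-scaling and $\mathscr L^\diamond$ is $q^\Z$-stable, one checks immediately that $m(q\Lambda)=m(\Lambda)$, so $m(\Lambda)$ depends only on the image $L_\Lambda\in\VL(0)$ under the map of Lemma \ref{projecting onto vertex lattices lemma}. To evaluate it, I would group the sum over $y\in\mathscr L^\diamond$ in Corollary \ref{formula for potential map on special cycle corollary} according to the image $L_2=L_{\delta_+^\Siegel(y)}\cap L_{\delta_-^\Siegel(y)}\in\VL(2)$, apply the three commutative diagrams of Lemma \ref{Vertex lattice commutative diagrams} to replace the upstairs Hecke operators $\delta_+^\Siegel+\delta_-^\Siegel$, $\theta_\Siegel^\paramodular$, $\delta_++\delta_-$ by their vertex-lattice counterparts $\overline\delta^\Siegel$, $\overline\theta_\Siegel^\paramodular$, $\overline\delta$, and then sum over the $q^\Z$-orbit inside each fibre (this last sum converts $\operatorname{mult}(\Lambda,-)$ on $\mathscr L$ into $\operatorname{mult}(L_\Lambda,-)$ on $\VL(0)$). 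By Lemma \ref{vertex lattice projection of special cycle orbit}, $\mathscr L^\diamond$ is a union of \emph{two} $\spin(V_{d/q}^\diamond)(\Q_q)$-orbits, each mapping bijectively onto $\VL(2)^\diamond$ modulo $q^\Z$, so the upshot is the finite identity
\[
m(\Lambda)=2\sum_{L_2\in\VL(2)^\diamond}\Bigl(-4q\cdot\mathbbm{1}\!\left[L_\Lambda\supset L_2\right]+\#\{\,L_4\in\VL(4)\,:\,L_4\subset L_2\cap L_\Lambda\,\}\Bigr),
\]
using the explicit descriptions of $\overline\delta^\Siegel$, $\overline\theta_\Siegel^\paramodular$, $\overline\delta$ in Lemma \ref{Vertex lattice commutative diagrams}. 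This already settles the first assertion of the lemma.

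\emph{Step 2: the local count.} By Proposition \ref{integral embedding of quaternion algebras}(4) and the definition of $\VL(2)^\diamond$, every $L_2\in\VL(2)^\diamond$ has the form $L_2=L_\special\oplus L_\diamond$ with $L_\diamond\subset V_\diamond$ self-dual, whereas $L_\Lambda$ runs over self-dual lattices in $V$. Since $V_{\special,q}$ is a hyperbolic plane in which $L_\special^{(0)}$ and $L_\special^{(1)}$ are the only integral lattices strictly containing $L_\special$, a quick check shows that every summand above vanishes for all $L_2$ unless $L_\Lambda\cap V_{\special,q}\in\{L_\special,L_\special^{(0)},L_\special^{(1)}\}$, so that $m(\Lambda)=0$ in all other cases. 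In each of the three remaining cases I would first describe the self-dual lattices $L_\Lambda$ with the prescribed intersection $L_\Lambda\cap V_{\special,q}$ (these need not split along $V=V_{\special,q}\oplus V_\diamond$ — a ``twisted'' self-dual lattice is possible and must be accounted for), then enumerate the $L_2=L_\special\oplus L_\diamond\in\VL(2)^\diamond$ with $L_2\subset L_\Lambda$, and finally count the vertex lattices $L_4\in\VL(4)$ with $L_4\subset L_2\cap L_\Lambda$ via the standard bijection with isotropic subspaces of the nondegenerate $4$-dimensional $\F_q$-quadratic space $L_4^\vee/L_4$, exactly as in the degree counts in the proof of Lemma \ref{Vertex lattice commutative diagrams}. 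Carrying out this enumeration produces the stated values $4$, $4-4q$, and $0$.

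\emph{Main obstacle.} The genuinely delicate part is the vertex-lattice combinatorics of Step 2: one must track precisely, in each of the three alignment cases, how many lattices of the two types $\VL(2)^\diamond$ and $\VL(4)$ sit inside a fixed self-dual $L_\Lambda$ and contain, or are contained in, $L_\special\oplus L_\diamond$ — including the ``twisted'' self-dual $L_\Lambda$ — and then verify that the positive contribution of $(\delta_++\delta_-)\circ\theta_\Siegel^\paramodular$ and the negative contribution $-4q(\delta_+^\Siegel+\delta_-^\Siegel)$ assemble into the asserted closed form. A secondary but essential subtlety (for pinning down the normalization) is the careful accounting in Step 1 of the central $q^\Z$-scaling together with the two $\spin(V_{d/q}^\diamond)(\Q_q)$-orbits comprising $\mathscr L^\diamond$, which is the source of the overall factor of $2$.
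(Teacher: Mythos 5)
Your proposal follows essentially the same route as the paper: transport $m(\Lambda)$ to vertex lattices via Lemma \ref{projecting onto vertex lattices lemma} and Lemma \ref{Vertex lattice commutative diagrams}, use Lemma \ref{vertex lattice projection of special cycle orbit} to collapse each of the two $\spin(V_{d/q}^\diamond)(\Q_q)$-orbits to $\VL(2)^\diamond$ (producing the factor $2$), and then carry out a finite combinatorial count. Your displayed formula for $m(\Lambda)$ and the observation that only the three intersection types $L_\special,L_\special^{(0)},L_\special^{(1)}$ contribute both agree with the paper, which presents the same count after first rewriting $\overline\delta\circ\overline\theta_\Siegel^\paramodular(L_2)=(q+1)\overline\delta^\Siegel(L_2)+\sum_{L_0\cap L_2\in\VL(4)}[L_0]$ to make the final enumeration cleaner. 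One small refinement worth noting: your worry about ``twisted'' self-dual $L_\Lambda$ is only genuinely present in the case $L_\Lambda\cap V_{\special,q}=L_\special$. When $L_\Lambda\cap V_{\special,q}=L_\special^{(i)}$ is self-dual in $V_{\special,q}$, self-duality of $L_\Lambda$ forces $L_\Lambda=L_\special^{(i)}\oplus(L_\Lambda\cap V_\diamond)$, and the paper exploits this splitting to read off $m_\O(\Lambda)=(1-3q)+(q+1)=2-2q$; only in the $L_\special$ case does the paper count $L_4$ and $L_2$ directly without assuming a splitting (finding a unique $L_4$ and then exactly two compatible $L_2$). Your sketch is sound, but the final enumeration in Step 2 — which you correctly flag as the delicate part — is the actual content of the lemma and still needs to be carried out.
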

\begin {proof}
Let $\O $
be one of the two $\spin (V_{d/q} ^\diamond) (\Q_q) $-orbits in $\mathscr L ^\diamond $; we will compute $$m_\O (\Lambda) =\sum_{y\in\O}\multiplicity\left (\Lambda, - 4q\delta_+ ^ {\Siegel} (y) - 4q\delta_- ^ {\Siegel} (y) + (\delta_+ +\delta_-)\circ\theta_{\Siegel} ^ {\paramodular} (y)\right). $$
Let $m (\Lambda, y) $
be the summand above, so that $m_\O (\Lambda) =\sum_{y\in\O} m (\Lambda, y) $. Fix some $y\in\O $
corresponding to $L_y\in\VL (2) $.
Then $$m (\Lambda, q ^ ny) = m (q ^ {- n}\Lambda, y) $$
is nonzero for at most one $n $, so $$\sum_{n\in\Z} m (\Lambda, q ^ ny) =\multiplicity (L_\Lambda, - 4q\overline\delta ^ {\Siegel} (L_y) +\overline\delta\circ\overline\theta_{\Siegel} ^ {\paramodular} (L_y)) $$
by Lemma \ref{Vertex lattice commutative diagrams}. Since $\O/q ^\Z $
maps isomorphically to $\VL (2) ^\diamond $
by Lemma \ref{vertex lattice projection of special cycle orbit}, we find $$m_\O (\Lambda) =\sum_{L_2\in\VL (2) ^\diamond}\multiplicity\left(L_\Lambda, - 4q\overline\delta ^ {\Siegel} (L_2) +\overline\delta\circ\overline\theta ^ {\paramodular}_{\Siegel} (L_2)\right). $$
Next, we calculate, for any $L_2\in\VL (2) $,
\begin {align*}
\overline\delta\circ\overline\theta ^ {\paramodular}_{\Siegel} (L_2) & =\sum_{\substack {L_4\in\VL (4)\\L_4\subset L_2}}\sum_{\substack {L_0\in\VL (0)\\L_0\supset L_4}} [L_0]\\
& =\sum_{\substack {L_0\in\VL (0)\\L_0\supset L_2}}\#\set {L_4\in\VL (4)\,:\, L_4\subset L_2} +\sum_{\substack {L_0\in\VL (0)\\L_0\intersection L_2\in\VL (4)}} [L_0]\\
& = (q +1)\overline\delta ^ {\Siegel} (L_2) +\sum_{\substack {L_0\in\VL (0)\\L_0\intersection L_2\in\VL (4)}} [L_0].
\end {align*}
(The $q +1 $
choices of $L_4\subset L_2 $
correspond to flags $$qL_2 ^\check\subset qL_4 ^\check\subset L_4\subset L_2, $$
hence to complete isotropic flags in the split 3-dimensional $\F_q $-quadratic space $L_2/qL_2 ^\check $.)
Hence \begin{equation}\label{semifinal diamond multiplicity formula}
m_\O (\Lambda) =\sum_{L_2\in\VL (2) ^\diamond}\multiplicity\left (L_\Lambda, (1 - 3q)\overline\delta ^ {\Siegel} (L_2) +\sum_{\substack {L_0\in\VL (0)\\L_0\intersection L_2\in\VL (4)}} [L_0]\right).  
\end{equation}
Now observe that, for any $L_2\in\VL (2) ^\diamond $
and $L_4\in\VL (4) $
with $L_4\subset L_2 $, we have $$L_4\supset qL_4 ^\check\supset qL_2 ^\check\supset L_\special. $$
Hence, if $L_2\in\VL (2) ^\diamond $, all $L_0
$
appearing in $$(1 - 3q)\overline\delta ^{\Siegel} (L_2) +\sum_{\substack {L_0\in\VL (0)\\L_0\intersection L_2\in\VL (4)}} [L_0] $$
have $L_0\intersection V_{\special, q}\supset L_\special $.
In particular, if $m_\O (\Lambda)\neq 0 $, then $$L_\Lambda\intersection V_{\special, q} = L_\special ^ {(0)},\, L_\special ^ {(1)},\,\text {or}\, L_\special, $$
since these are the only lattices containing $L_\special $
on which the pairing is $\Z_q $-valued.

Suppose first $L_\Lambda\intersection V_{\special, q} = L_\special ^ {(i)} $, for $i = 0 $
or 1. Then we may write $$L_\Lambda = L_\diamond\oplus L_\special ^ {(i)} $$
with $L_\diamond\subset V_\diamond $
self-dual, and $$m_\O (\Lambda) = (1 - 3q)\#\set {L_2\in\VL (2) ^\diamond\,:\, L_\Lambda\in\overline\delta ^{\Siegel} (L_2)} +\#\set {L_2\in\VL (2) ^\diamond\,:\, L_2\intersection L_\Lambda\in\VL (4)}. $$
Recall that all $L_2\in\VL (2) ^\diamond $
are of the form $L_\diamond'\oplus L_\special $, with $L_\diamond'\subset V_\diamond $
self-dual, and $$\overline\delta ^{\Siegel} (L_\diamond'\oplus L_\special) =\left [L_\diamond'\oplus L_\special ^ {(0)}\right] +\left [L_\diamond'\oplus L_\special ^ {(1)}\right]. $$
Hence
\begin {align*}
m_\O (\Lambda) & = (1 - 3q) +\#\set {L'_\diamond\subset V_\diamond\,\text {self-dual}\,:\, (L'_\diamond\intersection L_\diamond)\oplus L_\special\in\VL (4)}\\
& = (1 - 3q) + (q +1)\\
& = 2 - 2q;
\end {align*}
the $(q +1) $
choices of $L_\diamond' $
correspond bijectively to the  isotropic  lines in the 3-dimensional $\F_q $-space $L_\diamond/qL_\diamond $.

Now suppose $L_\Lambda\intersection V_{\special, q} = L_\special $. Then $L_\Lambda $
does not appear in $\overline\delta ^{\Siegel} (L_2)$ for any $L_2\in \VL(2) ^\diamond $, so by (\ref{semifinal diamond multiplicity formula}), we have $$m_\O (\Lambda) =\#\set {L_2\in\VL (2) ^\diamond\,:\, L_\Lambda\intersection L_2\in\VL (4)}. $$
Recall that any $L_4 $
contained in $L_2\in\VL (2) ^\diamond $
satisfies $qL_4 ^\check\supset L_\special $; we first show that there is a unique such $L_4 $
contained in $L_\Lambda $.
Indeed, any such $L_4 $
fits in a chain $$qL_\Lambda\subset_2qL_4 ^\check\subset_1L_4\subset_2L_\Lambda, $$
and so we must have $$qL_4 ^\check = L_\special + qL_\Lambda, $$
which determines $L_4 $. Now, for this $L_4 $, we claim there are exactly two $L_2\in\VL (2) ^\diamond $
with $L_2\supset L_4 $. Such an $L_2 $
fits into a chain $$L_4\subset_1L_2\subset_2L_2 ^\check\subset_1L_4 ^\check. $$
Since $L_\special\subset qL_4 ^\check $, we have $$\frac {1} {q} L_\special\subset L_4 ^\check. $$
Thus $$L_4 ^\check/L_4\cong\frac {1} {q} L_\special/L_\special\oplus\H, $$
with $\H $
a hyperbolic plane over $\F_q $. For $L_2 $
to lie in $\VL (2) ^\diamond $
is equivalent to $\frac {1} {q} L_\special\subset L_2 ^\check $, so choices of $L_2 $
correspond to choices of isotropic lines in $L_4 ^\check/L_4 $
orthogonal to $\frac {1} {q} L_\special/L_\special $; these are isotropic lines in $\H $, so there are exactly two. Hence $m_\O (\Lambda) = 2 $
if $L_\Lambda\intersection V_{\special, q} = L_\special. $
We have now shown $$m_\O (\Lambda) =\begin {cases} 2, &\text {if}\, L_\Lambda\intersection V_{\special, q} = L_\special,\\2 - 2q, &\text {if}\, L_\Lambda\intersection V_{\special, q} = L_\special ^ {(0)} \text {or}\, L_\special ^ {(1)},\\0, &\text {else.}\end {cases} $$
Since $m_\O (\Lambda) $
is evidently independent of the choice of orbit $\O $, we have $$m (\Lambda) = 2m_\O (\Lambda), $$
which completes the proof.\end {proof}
Combining Lemma \ref{calculation of multiplicity for first explicit reciprocity law lemma} and Corollary \ref{formula for potential map on special cycle corollary} gives the following simple formula.
\begin {corollary}\label{cor:nabla_formula_cosets}
We have \begin{equation*}
\begin{split}&\nabla\left [{\widetilde Z}( g ^ q, V_\special, V_D)_{\overline\F_q}\right] = 4Z(g ^ qg_q ^\special, V_\special, V_{D/q})_{K^qK_q} \\ &\hspace{1cm}+4 (1 - q)\left (Z ( g ^ qg ^ {(0)}, V_\special, V_{D/q})_{K^qK_q}) + Z (g ^ qg ^ {(1)}, V_\special, V_{D/q})_{K^qK_q}\right)\in O\left[\hypernormal\right], \end{split}\end{equation*}
where $g_q ^\special $, $g_q ^ {(0)} $, $g_q ^ {(1)}\in\spin (V_{D/q}) (\Q_q) $
represent the cosets in $$\spin (V_{d/q }^\diamond) (\Q_q)\backslash\spin (V_{D/q}) (\Q_q)/K_q $$
corresponding to the lattices $L_\special $, $L_\special ^ {(0)} $, $L_\special ^ {(1)} $ under Proposition \ref{prop:zanarella}.
\end {corollary}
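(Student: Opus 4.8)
The plan is to deduce the formula by a direct bookkeeping of cosets, feeding the multiplicity computation of Lemma \ref{calculation of multiplicity for first explicit reciprocity law lemma} into the coset description of Corollary \ref{formula for potential map on special cycle corollary}. Recall that Corollary \ref{formula for potential map on special cycle corollary} expresses $\nabla[\widetilde Z(g^q, V_\special, V_D)_{\overline\F_q}]$ as $\sum_{[(h^q,\Lambda)]} m(\Lambda)\,[(h^q g^q,\Lambda)]$, the sum running over $\spin(V_{d/q}^\diamond)(\Q)\backslash\spin(V_{d/q}^\diamond)(\A_f^q)\times\mathscr L/K^{q\diamond}_{g^q}$. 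By Lemma \ref{calculation of multiplicity for first explicit reciprocity law lemma}, $m(\Lambda)$ depends only on $L_\Lambda\cap V_{\special,q}$ and vanishes unless this lattice is one of $L_\special$, $L_\special^{(0)}$, $L_\special^{(1)}$, in which cases it equals $4$, $4-4q$, $4-4q$ respectively. So the sum decomposes into (at most) three pieces indexed by these three lattices, with the stated weights, and the only remaining task is to recognise each piece as the asserted special cycle in $O[\hypernormal]$.

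For this, first I would use Lemma \ref{projecting onto vertex lattices lemma} to pass from $\mathscr L$ to $\VL(0)$ via the $q^\Z$-quotient, which is harmless since the rational scalar $q$ already lies in $\spin(V_{d/q}^\diamond)(\Q)$, and then invoke Proposition \ref{prop:zanarella}: with $K_q$ the stabiliser of the self-dual lattice $L\subset V_{D/q}\otimes\Q_q$ and the orthogonal decomposition $V_{D/q}\otimes\Q_q = V_{d/q}^\diamond\oplus V_{\special,q}$, the map $g\mapsto g\cdot L\cap V_{\special,q}$ injects $\SO(V_{d/q}^\diamond)(\Q_q)\backslash\SO(V_{D/q})(\Q_q)/K_q$ into the set of lattices of $V_{\special,q}$, with image all lattices on which the pairing is $\Z_q$-valued, because $\dim V_{\special,q}<\dim V_{d/q}^\diamond$. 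In particular $L_\special$, $L_\special^{(0)}$, $L_\special^{(1)}$ each lie in this image, and by definition their preimages are the double cosets represented by $g_q^\special$, $g_q^{(0)}$, $g_q^{(1)}$ from the statement; moreover the set of $\Lambda$ (modulo $q^\Z$) with a fixed value of $L_\Lambda\cap V_{\special,q}$ forms a single $\spin(V_{d/q}^\diamond)(\Q_q)$-orbit, the stabiliser of a base point $\Lambda_i$ with $L_{\Lambda_i}=g_q^{(i)}\cdot L$ being $g_q^{(i)}K_q(g_q^{(i)})^{-1}\cap\spin(V_{d/q}^\diamond)(\Q_q)$.

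Granting this, the piece of the sum with $L_\Lambda\cap V_{\special,q}=L_\special^{(i)}$ is identified, under $[(h^q,\Lambda)]\mapsto[(h^q g^q,\Lambda)]$, with the image in $\hypernormal$ of the finite double coset space $\spin(V_{d/q}^\diamond)(\Q)\backslash\spin(V_{d/q}^\diamond)(\A_f)/K_{0,g^q g_q^{(i)}}$ under right multiplication by $g^q g_q^{(i)}$, where $K_{0,g^q g_q^{(i)}} = (g^q g_q^{(i)})(K^qK_q)(g^q g_q^{(i)})^{-1}\cap\spin(V_{d/q}^\diamond)(\A_f)$ splits as $K^{q\diamond}_{g^q}\times(g_q^{(i)}K_q(g_q^{(i)})^{-1}\cap\spin(V_{d/q}^\diamond)(\Q_q))$ since $g^q$ is supported away from $q$ and $g_q^{(i)}$ at $q$. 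By Construction \ref{constr:Z_T_phi}(1) together with the finite morphism (\ref{eq:finite_morphism_g}) for $\Sh_{K_{0,g}}((V_\special)^\perp)=\Sh_{K_{0,g}}(V_{d/q}^\diamond)$, that image, counted with the natural multiplicities, is exactly the special cycle $Z(g^q g_q^{(i)}, V_\special, V_{D/q})_{K^qK_q}$. Re-assembling the three pieces with their weights $4$, $4-4q$, $4-4q$ then yields the claimed identity.

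The main obstacle is the coset bookkeeping of the second paragraph, and concretely the verification that the stabiliser in $\spin(V_{d/q}^\diamond)(\Q_q)$ of a base point $\Lambda_i\in\mathscr L$ with $L_{\Lambda_i}=g_q^{(i)}\cdot L$ is exactly $g_q^{(i)}K_q(g_q^{(i)})^{-1}\cap\spin(V_{d/q}^\diamond)(\Q_q)$; this also secures the "multiplicity one" needed to match the two sums point by point. It requires care about three interlocking normalisations: the identification $\spin(V_{D/q})(\Q_q)\cong\GSP(W)(\Q_q)$ and the correspondence $\Lambda\mapsto L_\Lambda$ of (\ref{V versus W}) between lattices in the symplectic space $W$ and vertex lattices in $V_{D/q}\otimes\Q_q$; the $q^\Z$-quotient implicit in $\mathscr L/q^\Z\isomorphism\VL(0)$ and its compatibility with the $\spin(V_{d/q}^\diamond)(\Q)$-action; and the fact, already used in Lemmas \ref{calculation of multiplicity for first explicit reciprocity law lemma} and \ref{projecting onto vertex lattices lemma}, that $L_\special$, $L_\special^{(0)}$, $L_\special^{(1)}$ exhaust the lattices containing $L_\special$ that arise in Proposition \ref{prop:zanarella}'s injection. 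Once these compatibilities are pinned down, the remaining steps are formal.
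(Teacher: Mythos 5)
Your proposal is correct and is essentially the paper's argument: the paper records only that the corollary follows by "combining Lemma \ref{calculation of multiplicity for first explicit reciprocity law lemma} and Corollary \ref{formula for potential map on special cycle corollary}," leaving exactly the coset bookkeeping you spell out (splitting the sum by the value of $L_\Lambda\cap V_{\special,q}$, invoking Proposition \ref{prop:zanarella} to identify each piece with a double coset, and matching it against Construction \ref{constr:Z_T_phi}) as implicit. Your second-paragraph remark about passing to $\mathscr L/q^\Z$ is slightly loose as phrased — the element $q\in\spin(V^\diamond_{d/q})(\Q)$ also acts nontrivially on the $\A_f^q$-component of the double coset, so one cannot literally quotient the $\Lambda$-factor alone — but this is harmless because your actual identification in the third paragraph works directly with $\mathscr L$: the set $\set{\Lambda\in\mathscr L: L_\Lambda\cap V_{\special,q}=L_\special^{(i)}}$ is a single $\spin(V_{d/q}^\diamond)(\Q_q)$-orbit with stabiliser $g_q^{(i)}K_q(g_q^{(i)})^{-1}\cap\spin(V_{d/q}^\diamond)(\Q_q)$, and the $q^\Z$-quotient is only used to transport the orbit description from $\VL(0)$ back to $\mathscr L$, not to alter the summation set itself.
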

\subsection {Interpretation in terms of test functions}\label{subsec:1ERL_test}
\subsubsection{}\label{subsubsec:1ERL_test_fns}
We now define a specific test function $\phi_q ^{\total}\in \mathcal S (V_{D/q} ^ 2\otimes\Q_q,\Z) $
as follows. First, define a subset $X\subset V ^ { 2}_{D/q}\otimes\Q_q $
by
\begin{equation}\label{eq:def_X_1ERL}
X =\set {(x,y)\in V_{D/q} ^ { 2}\otimes\Q_q\, |\, x\cdot x\in q\Z_q,\, y\cdot y\in q\Z_q, \, x\cdot y\in\Z_q ^\times}.
\end{equation}
Let $L\subset V_{D/q}\otimes\Q_q $
be a self-dual lattice, and let $\phi_q ^ {(0)} $, $\phi_q ^ {(1)} $, $\phi _q ^\star\in \mathcal S (V_{D/q} ^ { 2}\otimes\Q_q,\Z) $
be indicator functions of the following compact open subsets of $V_{D/q} ^ { 2}\otimes\Q_q $:
\begin{align*}
X ^ {(0)} & =\set {(x, y)\in X\,:\, x, y\in L - qL}\\
X ^ {(1)} & =\set {(x, y)\in X\,:\, x\in qL - q ^ 2L,\, y\in q ^ {-1} L - L}\\
X ^\star & =\set {(x, y)\in X\,:\, x\in L - qL,\, y\in q ^ {-1} L - L}.
\end{align*}
Let $$\phi_q ^{\total} =\phi_q ^\star + (1 - q)\left (\phi_q ^ {(0)} +\phi_q ^ {(1)}\right). $$
Corollary \ref{cor:nabla_formula_cosets} can then be reformulated as follows:
\begin{thm}\label{test function version of the first geometric reciprocity law}
Let $K  = \prod K_v \subset \spin (V_{D/q}) (\A_f)$ be neat with $K_q$ hyperspecial, and let $$\phi ^ q\in \mathcal S (V_{D/q}^ { 2}\otimes\A_f ^ q, O) $$
be a $K ^ q $-invariant Schwartz function. Let $\m\subset \T^S_O$ be a generic, non-Eisenstein, and weakly $q$-generic maximal ideal. 

Then, for all $T\in\symmetric ^ 2 (\Q)_{\geq 0} $, 
there exists a choice of uniformization datum for $V_D$
and a test function $\phi_q ^\ramified\in \mathcal S (V_{D} ^ { 2}\otimes\Q_q,\Z) ^{K_q^\ramified}$
such that
$$\nabla \circ \zeta \circ \res_{\Q_q}\circ\;\partial_{\AJ,\m}\left (Z(T, \phi ^ q\otimes\phi_q ^\ramified)_{K^qK_q^\ramified}\right) = 4Z(T, \phi ^ q\otimes\phi_q ^\total)_{K^qK_q}\in\frac {O\left [\hypernormal (V_Q)\right]_\m} {(\langle q\rangle ^{-1} - 1,\funnyT_q^\lr)}. $$
\end{thm}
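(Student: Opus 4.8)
The plan is to assemble the result from the geometric inputs already developed in this section, so that the proof is essentially a translation of Corollary \ref{cor:nabla_formula_cosets} into the language of test functions together with an application of the Abel-Jacobi machinery of \S\ref{AJ section}. First I would fix $T \in \Sym_2(\Q)_{\geq 0}$; if $\Omega_{T, V_D}(\Q) = \emptyset$ then $Z(T, \phi^q \otimes \phi_q^\ramified)$ vanishes by definition and one checks directly that $Z(T, \phi^q\otimes \phi_q^\total)$ vanishes as well (the support conditions defining $\phi_q^\total$ force $\Omega_{T, V_{D/q}}(\Q_q) = \emptyset$, using that $V_{D/q}\otimes \Q_q$ and $V_D\otimes \Q_q$ are isometric away from $q$ and that the local behavior at $q$ is controlled by $X$), so the identity holds trivially. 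Otherwise, by transitivity of the $\spin(V_D)(\Q)$-action on $\Omega_{T, V_D}(\Q)$, I would choose a base point $(x_1, x_2)$ and reduce to the case where $V_\special = \Span_\Q\{x_1, x_2\}$ has pairing matrix $T' \equiv \begin{pmatrix} 0 & \alpha \\ \alpha & 0\end{pmatrix} \pmod q$ for some $\alpha \in \F_q^\times$ --- this is achievable because $Z(T, \phi^q\otimes \phi_q^\ramified)$ only depends on $\phi_q^\ramified|_{\Omega_{T, V_D}(\Q_q)}$, and we are free to replace $T$ by a $\GL_2(\Z_{(q)})$-equivalent matrix. This puts us in the setting of Notation \ref{notation:sst_sp_new} and Construction \ref{constr:X_diamond_1ERL}.

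Next I would define $\phi_q^\ramified$ so that $Z(T, \phi^q\otimes \phi_q^\ramified)_{K^qK_q^\ramified}$ is a linear combination (over the relevant double coset space) of the quaternionic Shimura curves $\boldsymbol Z(g^q, V_\special, V_D)$ from Construction \ref{constr:X_diamond_1ERL}. Concretely, using Remark \ref{rmk:SC_definitions} and Proposition \ref{prop:zanarella}, $\phi_q^\ramified$ should be supported on the $K_q^\ramified$-orbit of $(x_1, x_2)$ inside $\Omega_{T, V_D}(\Q_q)$ and arranged so that the special cycle is exactly $\sum_{g^q} \phi^q(g^{q,-1} x_1, g^{q,-1}x_2) \cdot Z(g^q, V_\special, V_D)_{K^qK_q^\ramified}$, with the correct integral model provided by Proposition \ref{integral embedding of quaternion algebras}(4) (this is the point of the choice $\Lambda_\special = \Span\{e_1^\special, qe_2^\special\}$). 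Because the $\boldsymbol Z(g^q, V_\special, V_D)$ have purely supersingular special fiber by \cite{boutot1991uniformisation}, their generic fibers are cohomologically trivial after localizing at $\m$ (by Theorem \ref{thm:generic}(\ref{part:thm_generic_two}) applied to $X$, the cycle class in $H^4_\et$ vanishes), so $\partial_{\AJ,\m}$ is defined on them.

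Then I would apply Theorem \ref{theorem conclusion of AJ section} with $r = 2$: the base change hypotheses $(\text{BC}_X)$ and $(\text{BC}_{X,c})$ hold by Lemma \ref{nearby cycles isomorphism}, $X$ has ordinary quadratic singularities by Theorem \ref{rz uniformization}(2), and the strict transform $\widetilde Z(g^q, V_\special, V_D)_{\overline\F_q}$ is generically smooth (it is isomorphic to a smooth quaternionic Shimura curve, so its closure meets the singular locus transversally --- this needs a brief check using the local picture in Lemma \ref{embedding contained in Siegel locus RZ space} and the form of the singularities). Theorem \ref{theorem conclusion of AJ section} then identifies $\zeta(\res_{\Q_q} \partial_{\AJ,\m}(Z(T, \phi^q\otimes\phi_q^\ramified)))$ with the algebraic cycle class $[\widetilde Z(T, \phi^q\otimes\phi_q^\ramified)_{\overline \F_q}]$ in $H^4_\et(\widetilde X_{\overline\F_q}, O(2))_\m$ modulo the image of $j_\ast \circ t^{-1}\circ j^\ast$. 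Applying the potential map $\nabla$ from Definition \ref{potential map definition} and invoking Corollary \ref{cor:nabla_formula_cosets} (extended linearly over $g^q$ against the values of $\phi^q$), the right-hand side becomes $4$ times a linear combination of $Z(g^qg_q^\bullet, V_\special, V_{D/q})_{K^qK_q}$ for $\bullet \in \{\special, (0), (1)\}$, which --- by the definitions of $X^\special, X^{(0)}, X^{(1)}$ in \S\ref{subsubsec:1ERL_test_fns} and Proposition \ref{prop:zanarella}, matching the cosets $L_\special, L_\special^{(0)}, L_\special^{(1)}$ with $g_q^\special, g_q^{(0)}, g_q^{(1)}$ --- is exactly $4 Z(T, \phi^q\otimes \phi_q^\total)_{K^qK_q}$. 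Finally, Theorem \ref{main arithmetic level raising semistable} guarantees that $\nabla\circ\zeta$ factors through $M_{-1}H^1(I_{\Q_q}, -)$ and lands in the quotient by $(\langle q\rangle^{-1} - 1, \funnyT_q^\lr)$, and that $\partial_{\AJ,\m}$ of a cohomologically trivial cycle lands in $M_{-1}H^1$ by Theorem \ref{theorem conclusion of AJ section}; combining gives the displayed formula.

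The main obstacle I anticipate is the bookkeeping in the second step: constructing $\phi_q^\ramified$ precisely enough that the special cycle $Z(T, \phi^q\otimes \phi_q^\ramified)_{K^qK_q^\ramified}$ is genuinely the $\phi^q$-weighted sum of the $Z(g^q, V_\special, V_D)$ appearing in Construction \ref{constr:X_diamond_1ERL}, including matching the $K_q^\ramified$-level structure on $\Sh(V_D)$ with the paramodular level implicit in the PEL model $X$, and checking that the chosen uniformization datum (which depends on $\beta$, hence on $T'$, hence ultimately on $T$) is compatible with the fixed $K^q$ away from $q$ via Remark \ref{rmk:unif_datum}(\ref{rmk:unif_datum_two}). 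A secondary subtlety is the generic smoothness of $\widetilde Z(g^q, V_\special, V_D)_{\overline\F_q}$: one must verify that the closure of the quaternionic curve in $X$ meets each point of $X^\sing_{\overline\F_q}$ transversally in the ambient fourfold, which follows from Lemma \ref{embedding contained in Siegel locus RZ space} (the curve sits inside the $\mathcal M_{\set{02}}$-stratum, whose points are smooth points of $X$ away from the finitely many singular points where the local structure is explicit) but deserves a careful local argument.
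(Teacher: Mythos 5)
Your proposal takes the same route as the paper — translate $Z(T,\phi^q\otimes\phi_q^\ramified)$ into the $\phi^q$-weighted sum of the integral quaternionic curves $\boldsymbol Z(g^q,V_\special,V_D)$ from Construction \ref{constr:X_diamond_1ERL}, feed this into Theorem \ref{theorem conclusion of AJ section}, and match the output against Corollary \ref{cor:nabla_formula_cosets} and the construction of $\phi_q^\total$. The middle and end of your argument are essentially the paper's proof, and your worry about generic smoothness of the strict transform (which the paper handles silently) is a legitimate point to check via the \v{C}erednik–Drinfeld picture.

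However, the opening reduction has a genuine flaw. You split on whether $\Omega_{T,V_D}(\Q)$ is empty, then in the nonempty branch claim that you can choose a base point so that its Gram matrix $T'$ is $\equiv\begin{pmatrix}0&\alpha\\\alpha&0\end{pmatrix}\pmod q$, and further that ``we are free to replace $T$ by a $\GL_2(\Z_{(q)})$-equivalent matrix.'' Neither of these is valid: the Gram matrix of any base point of $\Omega_{T,V_D}(\Q)$ is $T$ itself (that is the definition of $\Omega_{T,V_D}$), so you cannot replace $T$ by an equivalent $T'$ without changing the cycle $Z(T,\phi)$ on both sides of the identity. Your argument therefore does not handle the case where $\Omega_{T,V_D}(\Q)\neq\emptyset$ but $T$ does not already have the shape of Notation \ref{notation:sst_sp_new} (e.g.\ $T$ not $q$-integral, or $T_{11}\not\equiv 0\pmod q$), and it also gets the empty branch wrong: $\Omega_{T,V_D}(\Q)=\emptyset$ does not imply $\Omega_{T,V_{D/q}}(\Q_q)=\emptyset$, and the vanishing of $Z(T,\phi^q\otimes\phi_q^\total)$ should not be deduced this way. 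The correct and much simpler dichotomy is the one in the paper: if $T$ is not $q$-integral with $T\equiv\begin{pmatrix}0&\alpha\\\alpha&0\end{pmatrix}\pmod q$, then the local support conditions built into $\phi_q^\total$ (namely $x\cdot x,\,y\cdot y\in q\Z_q$, $x\cdot y\in\Z_q^\times$) force $\phi_q^\total$ to vanish identically on $\Omega_{T,V_{D/q}}(\Q_q)$, so $Z(T,\phi^q\otimes\phi_q^\total)_K=0$ and one may simply take $\phi_q^\ramified=0$. When $T$ is of the required shape, one proceeds as you do for the remainder. Replace your reduction with this observation and the rest of the argument goes through.
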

\begin {rmk}
\leavevmode
\begin{enumerate}
\item The map $\partial_{\AJ,\m}$ is defined as in (\ref{subsubsec:where_AJ}), and we are using Theorem \ref{theorem conclusion of AJ section} to apply $\zeta$ on the left-hand side of the identity. 
    \item The choice of uniformization datum intervenes in two points in the displayed equation: first in the definition of $\nabla$, and second in the isomorphism $V_{D/q}\otimes\A_f ^ q\cong V_{D}\otimes\A ^ q_f $ from Remark \ref{rmk:unif_datum}(\ref{rmk:unif_datum_two}), which we are using to view $\phi^q$ as a test function in $\mathcal S(V_{D}^2\otimes \A_f^q, O)$ and $K^qK_q^\ram$ as a compact open subgroup of  $\spin(V_D)(\A_f)$. 
    \item Without any great difficulty, $\phi_q ^\ramified $
can be chosen not to depend on $T $. Since this is not used in the proofs of the main results, we omit the details.
\end{enumerate}
\end {rmk}
\begin{proof}
Without loss of generality, we may assume $T $
is of the form considered in Notation \ref{notation:sst_sp_new}; otherwise $Z(T, \phi ^ q\otimes\phi_q ^ {\total})_K = 0, $
so $\phi_q ^\ramified = 0 $
satisfies the conclusion of the theorem. We fix the uniformization datum as in Construction \ref{constr:X_diamond_1ERL}(\ref{constr:X_diamond_unifdatum});
in particular, we are identifying $V_\special $
with a subspace of $V_{D} $, so we may choose $\phi_q ^\ramified\in \mathcal S (V_{D} ^ { 2}\otimes\Q_q,\Z) $
such that $\phi_q ^\ramified |_{\Omega_{T, V_D} (\Q_q)} $
is the indicator function of the coset $K_q ^\ramified\cdot (e_1 ^\special, e_2 ^\special). $
 For any $\phi ^ q\in \mathcal S (V_{D/q} ^ { 2}\otimes\A_f ^ q, O) $, write
$$\support(\phi ^ q)\intersection\Omega_{T, V_{D/q}} (\A_f ^ q) =\sqcup\spin (V_{D/q}^\diamond) (\A_f ^ q) g_i ^ q K ^ q. $$
Then we have $$Z (T, \phi ^ q\otimes\phi_q ^\ramified)_{K^qK_q^\ramified} =\sum_iZ ( g ^ q_i,V_\special, V_D)_{K^qK_q^\ramified}\phi ^ q ((g_i ^ q)^{-1} e_1 ^\special, (g_i ^ q)^{-1} e_2 ^\special). $$

Now, by Theorem \ref{theorem conclusion of AJ section}, we conclude that
$$\nabla \circ \zeta \circ \res_{\Q_q} \circ \;\partial_{\AJ,\m} \left( Z (T, \phi ^ q\otimes\phi_q ^\ramified)_{K^qK_q^\ramified} \right) \in \frac{O\left [\hypernormal (V_Q)\right]_\m} {(\langle q\rangle ^{-1} - 1,\funnyT_q^\lr)}$$ coincides with 
$$\sum_i \phi ^ q ((g_i ^ q)^{-1} e_1 ^\special, (g_i ^ q)^{-1} e_2 ^\special) \cdot\nabla \left[\widetilde Z ( g ^ q_i,V_\special, V_D)_{K^qK_q^\ramified}\right].$$
Then the theorem follows from Corollary \ref{cor:nabla_formula_cosets} and the construction of $\phi_q^\total$.
\end{proof}

\section{First explicit reciprocity law}\label{sec:1erl_continued}
For this section, let $\pi$, $S$, and $E_0$ be as in Notation \ref{notation:pi_basic},  fix a 
prime $\p$ of $E_0$ satisfying Assumption \ref{assumptions_on_p}, and put $\m \coloneqq \m_{\pi,\p}$ as usual. Our goal for this section is to combine Theorem \ref{test function version of the first geometric reciprocity law} with Corollary \ref{cor:admissible_C_chi_lambda} to prove Theorems \ref{thm:1ERL} and \ref{thm:1ERL_endoscopic} below. First we make some deformation-theoretic preparations in \S\ref{subsec:typic}-\S\ref{subsec:def_endo}; then we check the criterion from Corollary \ref{cor:admissible_C_chi_lambda} in \S\ref{subsec:test_fn_calculation}; and we complete the proofs in \S\ref{subsec:1ERL_conclusions}.

\subsection{ Typic modules}\label{subsec:typic}

The following definition is  a  generalization of \cite[Definition 5.2]{scholze2018lubintate}.
\begin{definition}\label{typic definition}
       Let $G$ be a group, $R$ a Noetherian local ring with maximal ideal $\m_R$, and $\sigma_1,\cdots, \sigma_m$ a finite collection of representations $\sigma_i: G \to \GL_{n_i}(R)$ such that the residual representations $\overline\sigma_i \coloneqq \sigma\otimes_R R/\m_R$ are distinct and absolutely irreducible for each $i = 1, \cdots, m$. 

       An $R[G]$-module $M$ is called $\sigma_i$-typic if it is isomorphic  to $\sigma_i\otimes_R M_0$ for an $R$-module $M_0$, and $(\sigma_1, \cdots, \sigma_m)$-typic if it is isomorphic  to a direct sum $\oplus M_i$ with each $M_i$ $\sigma_i$-typic.

\end{definition}

\begin{prop}\label{typic inclusion prop}
    With notation as in Definition \ref{typic definition}, let $N \subset M$ be an inclusion of $ R[G]$-modules. 
    \begin{enumerate}
        \item\label{typic inclusion prop part one} If $M$ is $\sigma_i$-typic for some $i\in \set{1,\cdots, m}$, then $N$ is $\sigma_i$-typic.
        \item\label{typic inclusion prop part two} If $M$ is $(\sigma_1,\cdots, \sigma_m)$-typic, then $N$ is $(\sigma_1,\cdots, \sigma_m)$-typic.
    \end{enumerate}
\end{prop}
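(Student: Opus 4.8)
The plan is to reduce part (\ref{typic inclusion prop part two}) to part (\ref{typic inclusion prop part one}) and then prove part (\ref{typic inclusion prop part one}) by a Schur-lemma argument applied to endomorphism algebras. For part (\ref{typic inclusion prop part two}), since $M = \bigoplus_i M_i$ with $M_i$ being $\sigma_i$-typic, and since the $\overline\sigma_i$ are pairwise distinct and absolutely irreducible, the idempotents projecting onto the isotypic factors are canonical: for each $i$ one has a central idempotent $e_i \in \operatorname{End}_{R[G]}(M)$ cutting out $M_i$ (these exist because $\operatorname{Hom}_{R[G]}(M_i, M_j)$ is built from $\operatorname{Hom}_{R[G]}(\sigma_i \otimes R/\m_R^k, \sigma_j \otimes R/\m_R^k)$ and the latter vanishes when $i \neq j$, by the distinctness of the residual representations and Nakayama/successive approximation over the Artinian quotients $R/\m_R^k$, then passing to the limit for $M$ finitely generated — or directly if one allows arbitrary modules by working component-wise). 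These idempotents restrict to $N$, giving $N = \bigoplus_i (e_i N)$ with $e_i N \subseteq M_i$, so it suffices to know each $e_i N$ is $\sigma_i$-typic, which is exactly part (\ref{typic inclusion prop part one}).

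For part (\ref{typic inclusion prop part one}), write $M \cong \sigma_i \otimes_R M_0$. The key point is that $\operatorname{End}_{R[G]}(\sigma_i) = R$: this follows from $\overline\sigma_i$ being absolutely irreducible (so $\operatorname{End}_{R/\m_R[G]}(\overline\sigma_i) = R/\m_R$ by Schur's lemma for absolutely irreducible representations) combined with the fact that $\sigma_i$ is a free $R$-module of finite rank, so that $\operatorname{End}_{R[G]}(\sigma_i)$ is an $R$-submodule of $\operatorname{End}_R(\sigma_i) \cong M_{n_i}(R)$ reducing into $\operatorname{End}_{R/\m_R[G]}(\overline\sigma_i) = R/\m_R$; by Nakayama it is generated by the identity, hence equals $R$. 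Consequently $\operatorname{Hom}_{R[G]}(\sigma_i, \sigma_i \otimes_R M_0) \cong M_0$ functorially in $M_0$, i.e. the functor $M_0 \mapsto \sigma_i \otimes_R M_0$ from $R$-modules to $R[G]$-modules is fully faithful with the quasi-inverse $N \mapsto \operatorname{Hom}_{R[G]}(\sigma_i, N)$ on its essential image. Setting $N_0 \coloneqq \operatorname{Hom}_{R[G]}(\sigma_i, N)$, there is a natural evaluation map $\sigma_i \otimes_R N_0 \to N$, and one must check it is an isomorphism onto $N$.

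The remaining step is to verify that evaluation $\operatorname{ev}\colon \sigma_i \otimes_R N_0 \to N$ is an isomorphism, given that $N \subseteq M = \sigma_i \otimes_R M_0$. Injectivity: $\operatorname{ev}$ for $M$ is an isomorphism, so the composite $\sigma_i \otimes_R N_0 \to N \hookrightarrow M = \sigma_i \otimes_R M_0$ equals $\operatorname{id}_{\sigma_i} \otimes (N_0 \hookrightarrow M_0)$ — using left-exactness of $\operatorname{Hom}_{R[G]}(\sigma_i, -)$ to identify $N_0$ with a submodule of $M_0 = \operatorname{Hom}_{R[G]}(\sigma_i, M)$ — and since $\sigma_i$ is $R$-flat (indeed $R$-free), this is injective. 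Surjectivity: any $x \in N$, viewed in $M = \sigma_i \otimes_R M_0$, can be written $x = \sum_j s_j \otimes m_j$ with $s_j$ running over an $R$-basis of $\sigma_i$ and $m_j \in M_0$; the maps $\sigma_i \to M$ sending the dual basis element $s_j^\vee$-direction appropriately — more precisely, the $R[G]$-maps $\sigma_i \to N$ obtained by projecting $x$ onto each basis vector of $\sigma_i$ using that $G$ acts only on the $\sigma_i$ factor — exhibit each $m_j$ (for those $j$ with a nonzero contribution) as lying in $N_0$, whence $x \in \operatorname{ev}(\sigma_i \otimes_R N_0)$. The one genuine subtlety, and the step I expect to require the most care, is making this last surjectivity argument precise when $R$ is not a field: one wants that the $R$-span inside $M_0$ of the ``components of elements of $N$'' coincides with $N_0 = \operatorname{Hom}_{R[G]}(\sigma_i, N)$, which is clean because $\operatorname{End}_{R[G]}(\sigma_i) = R$ forces every $R[G]$-map $\sigma_i \to N$ to be ``scalar in the $\sigma_i$-direction,'' but it must be checked that no new relations or saturation issues appear — this is where the absolute irreducibility of $\overline\sigma_i$ (rather than mere irreducibility) is essential, exactly as in \cite[Definition 5.2]{scholze2018lubintate}.
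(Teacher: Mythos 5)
Your reduction of part (\ref{typic inclusion prop part two}) to part (\ref{typic inclusion prop part one}) hinges on the assertion that the idempotents $e_i \in \operatorname{End}_{R[G]}(M)$ cutting out the $\sigma_i$-typic summands satisfy $e_i(N)\subseteq N$, and this is precisely the content of part (\ref{typic inclusion prop part two}), not a preliminary observation. That the $e_i$ are canonical, or central in $\operatorname{End}_{R[G]}(M)$, does not give this: centrality concerns commutation with other $R[G]$-endomorphisms of $M$, while $N$ is only known to be $R[G]$-stable. The hypothesis that the $\overline\sigma_i$ are pairwise distinct is what makes the claim true (drop it, take $\sigma_1 = \sigma_2$ and $N$ the diagonal in $\sigma_1\oplus\sigma_2$, and the projections visibly do not preserve $N$), but it requires an actual argument, which your sketch omits.

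The paper supplies exactly this missing step via Nakayama. Using part (\ref{typic inclusion prop part one}), one may replace each $M_i$ by the $\sigma_i$-typic submodule $\pi_i(N)\subseteq M_i$ and so assume $\pi_i(N) = M_i$; the goal becomes showing the inclusion $N\hookrightarrow\bigoplus_i M_i$ is an equality. For $M$, $N$ finitely generated this reduces by Nakayama to surjectivity of $\overline N\to\bigoplus_i\overline{M}_i$ over the residue field, where the pairwise distinctness of the absolutely irreducible $\overline\sigma_i$ forces any $R/\mathfrak{m}_R[G]$-submodule of $\bigoplus_i\overline{M}_i$ to split as a direct sum of submodules of the factors, while $\overline\pi_i(\overline N) = \overline{M}_i$ follows from $\pi_i(N) = M_i$ by right-exactness of $\otimes_R R/\mathfrak{m}_R$. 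Once you insert this Nakayama step in place of ``the idempotents restrict to $N$,'' your outline is sound; your direct Schur-lemma argument for part (\ref{typic inclusion prop part one}) is a reasonable self-contained alternative to the paper's citation of \cite[Proposition 5.4]{scholze2018lubintate}, modulo the successive-approximation details you already flag as needing care.
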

\begin{proof}
Part (\ref{typic inclusion prop part one}) is proved in \cite[Proposition 5.4]{scholze2018lubintate}. For (\ref{typic inclusion prop part two}), as in \emph{loc. cit.} we may assume without loss of generality that $M$ and $N$ are finitely generated. Let $M = \oplus M_i$ be the decomposition of $M$ into $\sigma_i$-typic parts, and let $\pi_i: M \to M_i$ be the projection map for each $i = 1, \cdots, m$. Without loss of generality, we may assume $\pi_i(N) = M_i$. We claim that the natural injection
$$\oplus \pi_i: N \hookrightarrow \oplus M_i$$
is an isomorphism, i.e.
 $\oplus \pi_i$ is surjective. Let $\overline N \coloneqq N \otimes_R R/\m_R$ and $\overline M _i \coloneqq M_i\otimes_R R/\m_R$; by Nakayama's lemma, it suffices to show the induced map
$$\oplus\overline \pi_i: \overline N \to \oplus \overline M_i$$ is surjective. Because the residual representations $\overline \sigma_i$ are all distinct, any $R[G]$-stable submodule of $\oplus \overline M_i$ is a direct sum $\oplus \overline M_i'$ for some $\overline M_i'\subset \overline M_i$. So if $\oplus \overline \pi_i$ is not surjective, then $\overline \pi_i$ is not surjective for some $i = 1,\cdots, m$; but this contradicts our assumption that $\pi_i(N) = M_i$ for all $i$, so $\oplus \pi_i$ is surjective, which shows (\ref{typic inclusion prop part two}).
\end{proof}
\subsection{Deformation theory: non-endoscopic case}\label{subsec:non_endo_def}
We assume for this subsection that $\pi$ is not endoscopic. We will apply the results and notations of Appendix \ref{sec:appendix_def_theory} to $\rho_\pi=\rho_{\pi,\p}$, which we view as valued in $\GSP_4(O)$ via Remark \ref{rmk:rho_O_valued}. First note:
\begin{lemma}\label{lem:assumption_appendix_ok}
The representation $\rho_\pi $
satisfies Assumptions \ref{ass:appendix_main} and \ref{ass:appendix_smooth} from Appendix \ref{sec:appendix_def_theory}.
\end{lemma}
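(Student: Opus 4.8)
The statement to prove is Lemma~\ref{lem:assumption_appendix_ok}: that $\rho_\pi = \rho_{\pi,\p}$, viewed as valued in $\GSP_4(O)$ via Remark~\ref{rmk:rho_O_valued}, satisfies Assumptions~\ref{ass:appendix_main} and~\ref{ass:appendix_smooth} of Appendix~\ref{sec:appendix_def_theory}. Since these are the axioms under which the general deformation-theoretic machinery of the appendix operates (large image, local conditions, smoothness of local deformation problems, etc.), the plan is to unpack each numbered condition in those two assumptions and verify it directly from the properties of $\rho_{\pi,\p}$ established earlier in the paper, together with the hypotheses collected in Assumption~\ref{assumptions_on_p} and the standing hypothesis in Notation~\ref{notation:pi_basic} that $\pi$ is relevant and non-endoscopic with trivial central character.

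\textbf{Key steps.} First I would record the global structural inputs: by Theorem~\ref{thm:rho_pi_LLC} the representation $\rho_{\pi,\p}$ is de Rham (indeed crystalline at $p$, since $\pi_p$ is unramified by Assumption~\ref{assumptions_on_p}(\ref{assume:pi_p_unr})) with Hodge--Tate weights $\set{-1,0,1,2}$, pure, and with cyclotomic similitude character (using the trivial central character of $\pi$). Next, the residual representation $\overline\rho_{\pi,\p}$ is absolutely irreducible by Assumption~\ref{assumptions_on_p}(\ref{assume:irreducible}) and generic in the sense of Definition~\ref{def:generic} by Assumption~\ref{assumptions_on_p}(\ref{assume:pi_generic}); and $p > 5$ by Remark~\ref{rmk:p>5}. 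The key \emph{large-image} input — which I expect Assumption~\ref{ass:appendix_main} to require — comes from the results of Appendix~\ref{sec:large_image} applied to the non-endoscopic $\rho_{\pi,\p}$, showing the image of $\overline\rho_{\pi,\p}$ is large enough (e.g.\ contains $\Sp_4(\F_p)$ up to the relevant adjustments) to supply the Chebotarev arguments and to make $H^0$ and $H^1$ of various adjoint Galois modules vanish or be controlled; here I would also invoke the Newton--Thorne vanishing $H^1_f(\Q,\ad^0\rho_{\pi,\p}) = 0$ cited in the introduction if the appendix's hypotheses demand a bound on an adjoint Selmer group. For Assumption~\ref{ass:appendix_smooth}, which concerns the smoothness (or at least the expected dimension / liftability) of the local deformation functors at the finitely many bad places and at $p$, I would verify the local conditions place by place: at $p$ use that $\rho_{\pi,\p}|_{G_{\Q_p}}$ is crystalline with regular, small-enough Hodge--Tate weights relative to $p$ (so Fontaine--Laffaille / the results on crystalline deformation rings apply, cf.\ the discussion around Proposition~\ref{prop torsion crystalline iff crystalline}); at primes $\l \in S$ use the explicit description of $\rho_{\pi,\p}|_{G_{\Q_\l}}$ via local Langlands (Theorem~\ref{thm:rho_pi_LLC}(\ref{part:rho_pi_LLC1})) together with the fact that the relevant local lifting rings are formally smooth of the expected dimension in the generic/minimal situations; and at the auxiliary admissible primes use the admissibility hypothesis (Definition~\ref{def:admissible}) to guarantee that $\overline\rho_{\pi,\p}(\Frob_q)$ has the required eigenvalue configuration, making the level-raising local deformation problem smooth.

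\textbf{Main obstacle.} The routine part is the bookkeeping at $p$ and at places in $S$; the delicate point is the large-image/adjoint-cohomology condition of Assumption~\ref{ass:appendix_main}. In the non-endoscopic case $V_{\pi,\p}$ is irreducible (Lemma~\ref{lem:reducible_endoscopic}), but the precise group-theoretic statement needed — that the image is sufficiently big to trivialize the obstruction groups appearing in the Fakhruddin--Khare--Patrikis relative deformation framework, and in particular that $\overline\rho_{\pi,\p}$ is not of some degenerate type (dihedral, induced from a smaller field, etc.) — must be matched exactly to the hypotheses of the appendix. This is where I would spend the real effort: cite the large-image theorem of Appendix~\ref{sec:large_image} (whose proof in turn rests on \cite{weiss2022images} and Sen-theory arguments via Hodge--Tate regularity), confirm that the genericity of Assumption~\ref{assumptions_on_p}(\ref{assume:pi_generic}) rules out the small-image exceptions, and then translate this into the vanishing/controllability of the adjoint Selmer group and local $H^2$'s demanded by Assumptions~\ref{ass:appendix_main} and~\ref{ass:appendix_smooth}. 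Once those identifications are made, each individual axiom follows immediately, and the lemma is complete.
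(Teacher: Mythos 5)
Your proposal misidentifies what Assumptions~\ref{ass:appendix_main} and~\ref{ass:appendix_smooth} actually demand, and consequently aims the bulk of the effort at the wrong place.

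Assumption~\ref{ass:appendix_main} consists of three conditions: (1) $H^0(\Q,\ad^0\overline\rho_\pi) = H^0(\Q,\ad^0\overline\rho_\pi(1)) = 0$, (2) oddness, (3) regular Hodge--Tate cocharacter at $p$. None of these is a large-image statement. Condition (1) is an $H^0$ (not $H^1$, not $H^1_f$) vanishing, and it follows almost immediately from the absolute irreducibility of $\overline\rho_{\pi,\p}$ (Assumption~\ref{assumptions_on_p}(\ref{assume:irreducible})): irreducibility and Schur give $H^0(\Q,\ad^0\overline\rho_\pi)=0$ directly, and for the $(1)$-twist one observes that $\overline\rho_\pi \not\cong \overline\rho_\pi(1)$ because their similitude characters $\chi_{p,\cyc}$ and $\chi_{p,\cyc}^3$ differ for $p>3$, whence $\Hom_{G_\Q}(\overline T_\pi, \overline T_\pi(1))=0$ by irreducibility again. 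There is no need for the large-image results of Appendix~\ref{sec:large_image}, no need for Newton--Thorne's vanishing of $H^1_f(\Q,\ad^0\rho_{\pi,\p})$ (that theorem is invoked elsewhere, not here), and no need to rule out dihedral/induced degeneracies. By labeling the ``large-image/adjoint-cohomology condition'' as the main obstacle and proposing to ``spend the real effort'' there, your plan would waste effort establishing facts the lemma does not need.

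For Assumption~\ref{ass:appendix_smooth} your place-by-place plan also misses the clean route. The appendix supplies Lemma~\ref{smoothness lemma appendix}, which says formal smoothness of $\Spec R_v[1/\varpi]$ at $y_v$ is equivalent to $H^0(\WD(\ad^0\rho_\pi|_{G_{\Q_v}})(1)) = 0$. This is then killed uniformly (for $v=p$ and $v\in S$ alike) by purity: Theorem~\ref{thm:rho_pi_LLC}(\ref{part:rho_pi_LLC1}) asserts the Weil--Deligne representations are pure of weight $-1$, so $\WD(V_\pi)$ and $\WD(V_\pi(1))$ are pure of distinct weights and admit no nonzero $\WD_v$-equivariant homomorphism. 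You do not need Fontaine--Laffaille at $p$, nor a case analysis of the explicit local Langlands types at $\l\in S$; purity handles everything in one stroke. You also bring in ``auxiliary admissible primes,'' but Assumption~\ref{ass:appendix_smooth} concerns only $v\in S\cup\Sigma_p$, so those are out of scope here (smoothness at admissible primes is the content of the separate Lemma~\ref{lem:R_q_ord_smooth}, about the ordinary quotient $R_q^\ordinary$). In short: the genuine gap is that you did not look at the precise wording of the assumptions being verified, and as a result constructed a much heavier and partly irrelevant strategy.
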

\begin{proof}
By Assumption \ref{assumptions_on_p}, $\overline\rho_\pi $
is absolutely irreducible, so $H ^ 0 (\Q,\ad^0\overline \rho_\pi) = 0. $
Also, $\overline\rho_\pi\not\cong\overline\rho_\pi (1) $
by considering the similitude characters (since $p >3 $). So, again using the absolute irreducibility, $$H ^ 0 (\Q, \ad^0 \overline\rho_{\pi} (1))\subset\Home_{G_\Q} (\overline T_\pi,\overline T_\pi (1)) = 0 $$
as well. This shows Assumption \ref{ass:appendix_main}(\ref{ass_B1_noresidual}). Assumptions \ref{ass:appendix_main}(\ref{ass_B2_odd},\ref{ass_B3_HT}) are clear from Theorem \ref{thm:rho_pi_LLC}. 
We now consider Assumption \ref{ass:appendix_smooth}. By Lemma \ref{smoothness lemma appendix} it suffices to show
$H ^ 0 (\Q_v,  \WD(\ad^0\rho_\pi)) = 0$ for all non-archimedean $v$.
But $$H ^ 0 (\Q_v, \WD(\ad^0\rho_\pi))\subset\Hom_{\WD_v} (\WD(V_\pi),\WD( V_\pi (1))), $$
with $\WD_v$ the local Weil-Deligne group,
which vanishes by purity (Theorem \ref{thm:rho_pi_LLC}(\ref{part:rho_pi_LLC1})).
\end{proof}


\subsubsection{}
Suppose $q$ is an admissible prime, and let  $\mathcal D_q$ and $R_q$ be as in Notation \ref{notation:appendix_def_rings}. 
For any $A\in \CNL_O$ and $\rho_A\in\mathcal D_q (A) $, let $M_A $
be the free, rank-four $A $-module with $G_{\Q_q} $
action determined by $\rho_A $. Then 
by \cite[Lemma 3.21]{liu2024deformation},
$M_A $
admits a $G_{\Q_q} $-stable decomposition
\begin{equation}\label{admissible local decomposition equation}
M_A = M_0\oplus M_1,\end{equation}
where:
\begin{itemize}
\item Each of $M_0 $
and $M_1 $
is free of rank two over $A $.
\item $M_0\otimes_A k $
has $\Frobenius_q $ eigenvalues $1 $
and $q $.
\item $M_1\otimes_A k $
has generalized $\Frobenius_q $
eigenvalues $a $
and $q/a $, with $a\neq 1, q $.
\end{itemize}
(Here $k = O/\varpi$, and $\Frobenius_q\in G_{\Q_q} $ is any lift of Frobenius.)
\begin{definition}\label{def:R_q_ord}
Let 
$\mathcal D_q ^\ordinary\subset\mathcal D_q $
be the subfunctor of lifts $\rho_A $
such that $$\debt (\rho_A (\Frobenius_q) - T | M_0) = (1 - T) (q - T). $$
\end{definition}
\begin{lemma}\label{lem:R_q_ord_smooth}
The functor $\mathcal D_q ^\ordinary $
is represented by a formally smooth quotient  $R_q ^\ordinary $
of $R_q $, of relative dimension 10 over $O $.
\end{lemma}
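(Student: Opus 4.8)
The plan is to treat $\mathcal D_q^{\ordinary}$ as an ordinary/Steinberg-type local lifting condition at the prime $q\neq p$, in the spirit of \cite{liu2024deformation} (whose Lemma 3.21 already underlies the decomposition (\ref{admissible local decomposition equation})). First I would observe that the decomposition $M_A=M_0\oplus M_1$ is functorial in $A$: it is cut out by the canonical idempotent of $A[\Frob_q]$ separating the residual Frobenius eigenvalue-multisets $\set{1,q}$ and $\set{a,q/a}$, which are disjoint precisely because admissibility forces $a\notin\set{1,q}$. Hence the coefficients of $\det(\rho_A(\Frob_q)-T\mid M_0)\in A[T]$ are regular functions on $\Spf R_q$, so $\mathcal D_q^{\ordinary}$ is a closed subfunctor and $R_q^{\ordinary}$ is a quotient of $R_q$. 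Moreover this condition amounts to a single equation: the symplectic form restricts nondegenerately to $M_0$ (one checks, using admissibility, that $\overline M_0$ cannot be isotropic, since for Frobenius-eigenvalue reasons $\langle\overline M_0,\overline M_1\rangle$ and $\langle\overline M_0,\overline M_0\rangle$ cannot both vanish), so $\det(\rho_A(\Frob_q)\mid M_0)$ is the value at $\Frob_q$ of the fixed similitude character $\chi_{p,\cyc}$, namely $q$; thus $\det(\rho_A(\Frob_q)-T\mid M_0)=T^2-\operatorname{tr}(\rho_A(\Frob_q)\mid M_0)\,T+q$, and $\mathcal D_q^{\ordinary}\subset\mathcal D_q$ is cut out by the single equation $\operatorname{tr}(\rho_A(\Frob_q)\mid M_0)=1+q$.

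Next I would prove formal smoothness over $O$ by the infinitesimal lifting criterion. A point of $\mathcal D_q^{\ordinary}(A/I)$ is the data of the $G_{\Q_q}$-stable summand $M_0$ — on which the form is nondegenerate and $\Frob_q$ has characteristic polynomial $(1-T)(q-T)$ — which is therefore canonically an extension $0\to L\to M_0\to M_0/L\to 0$ of the unique unramified character $\Frob_q\mapsto 1$ by the unique unramified character $\Frob_q\mapsto q$ (the opposite extension group vanishing by admissibility), with $M_1\coloneqq M_0^{\perp}$ unrestricted. Using the $G_{\Q_q}$-stable decomposition $\ad^0\rho_{\pi,\p}|_{G_{\Q_q}}=\mathfrak{sl}(\overline M_0)\oplus\mathfrak{sl}(\overline M_1)\oplus\Hom(\overline M_0,\overline M_1)$ (valid as $\rho_{\pi,\p}$ is unramified at $q$), the obstruction to lifting a point of $\mathcal D_q^{\ordinary}(A/I)$ breaks up accordingly, and I would check that each piece is unobstructed: $H^2(\Q_q,\mathfrak{sl}(\overline M_1))=0$ and $H^2(\Q_q,\Hom(\overline M_0,\overline M_1))=0$ by the Frobenius-eigenvalue genericity built into admissibility (here $a\neq q^{2},q^{-1}$ enters), while the only obstruction on the $\mathfrak{sl}(\overline M_0)$-part surviving the ordinary condition lies in $H^2(\Q_q,\Hom(M_0/L,L))$, with $\Hom(M_0/L,L)$ unramified and $\Frob_q$ acting by $q$, and is circumvented because $M\mapsto H^1(\Q_q,M)$ is an exact functor on unramified modules with $\Frob_q$ acting by $q$. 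Assembling the resulting lifts of the three blocks produces an element of $\mathcal D_q^{\ordinary}(A)$ lifting the given one.

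It then remains to compute the relative dimension. Having shown $R_q^{\ordinary}$ is formally smooth over $O$, it is of the form $O\llbracket x_1,\dots,x_N\rrbracket$ with $N=\dim_k t_{\mathcal D_q^{\ordinary}}$. The framed tangent space $t_{\mathcal D_q}=Z^1(\Q_q,\ad^0\rho_{\pi,\p})$ has dimension $\dim_k\ad^0\rho_{\pi,\p}+\dim_kH^2(\Q_q,\ad^0\rho_{\pi,\p})=10+1=11$: this follows from $\dim Z^1=\dim H^1+(\dim_k\ad^0\rho_{\pi,\p}-\dim H^0)$, the local Euler-characteristic identity $\dim H^1=\dim H^0+\dim H^2$ for $q\neq p$, and the equality $\dim_kH^2(\Q_q,\ad^0\rho_{\pi,\p})=\dim_kH^0(\Q_q,\ad^0\rho_{\pi,\p}(1))$, the latter being the dimension of the $q^{-1}$-eigenspace of $\Frob_q$ on $\ad^0\rho_{\pi,\p}$, which a short inspection of the roots shows equals $1$ under the admissibility constraints $q^{4}\not\equiv 1$ and $a\notin\set{\pm 1,\pm q,q^{2},q^{-1}}$. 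Inside $t_{\mathcal D_q}$ the ordinary condition is the single linear condition that the first-order variation of $\operatorname{tr}(\rho_A(\Frob_q)\mid M_0)$ vanish, and this functional is nonzero: on the unramified cocycle valued in the summand $\mathfrak{sl}(\overline M_0)$ sending $\Frob_q$ to $\operatorname{diag}(1,-1)$ — in a symplectic basis of $\overline M_0$ in which $\overline\rho_{\pi,\p}(\Frob_q)$ acts as $\operatorname{diag}(1,q)$ — it takes the value $\operatorname{tr}(\operatorname{diag}(1,-1)\operatorname{diag}(1,q))=1-q\neq 0$, as $q\not\equiv 1\pmod p$. Hence $N=11-1=10$.

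The step I expect to be the main obstacle is the formal-smoothness verification: since $H^2(\Q_q,\ad^0\rho_{\pi,\p})\neq 0$, the unrestricted functor $\mathcal D_q$ is itself obstructed, so one cannot deduce unobstructedness of $\mathcal D_q^{\ordinary}$ directly — the content is that the ordinary structure splits the obstruction theory into graded pieces each of which is unobstructed, the only delicate piece being $\Hom(M_0/L,L)$, where $H^2$ does not vanish and one must instead invoke exactness of $M\mapsto H^1(\Q_q,M)$ on unramified modules with $\Frob_q$ acting by $q$ (equivalently, injectivity of $H^2(\Q_q,I\otimes\Hom(M_0/L,L))\to H^2(\Q_q,A\otimes\Hom(M_0/L,L))$). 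A secondary bookkeeping point is to confirm that the admissibility hypotheses force $\overline\rho_{\pi,\p}(\Frob_q)$ to be regular semisimple, so that the graded pieces of the flag are as described and $\dim_kH^0(\Q_q,\ad^0\rho_{\pi,\p})=2$; only the equality $\dim_kH^2(\Q_q,\ad^0\rho_{\pi,\p})=1$ is actually needed for the dimension count itself.
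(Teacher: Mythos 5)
Your proof is correct and follows the route of the reference to which the paper defers, \cite[Proposition 3.35]{liu2024deformation} (cf.\ \cite[Proposition 3.8]{wang2022deformation}); the paper gives no argument beyond the citation, and your eigenvalue verifications, the reduction of ordinarity to the single trace equation $\operatorname{tr}(\rho_A(\Frob_q)\mid M_0)=1+q$, and the dimension count $\dim Z^1(\Q_q,\ad^0\overline\rho)=10+\dim_k H^2(\Q_q,\ad^0\overline\rho)=11$ together with the explicit cocycle giving $1-q\neq 0$ are all as in that reference. The one step you should spell out more fully is why the $\Frob_q$-eigenline $L\subset M_0$ of eigenvalue $q$ is $G_{\Q_q}$-stable (your Ext-vanishing parenthetical is a true computation but presupposes reducibility of $M_0$ and unramifiedness of the two graded pieces); the clean argument, which is \cite[Propositions 5.3, 5.5]{shotton2016local} and what the paper invokes in Lemma~\ref{lem:t_q_1ERL}, is that the tame relation $\Frob_q\,\sigma\,\Frob_q^{-1}=\sigma^q$ forces $\rho_A(\sigma)-1$ into the $q$-eigenspace of $\operatorname{Ad}(\rho_A(\Frob_q))$ on $\End(M_0)$, which equals $\Hom(M_0/L,L)$ because $q$ is a simple eigenvalue of $\operatorname{Ad}(\Frob_q)$ on $\End(\overline M_0)$ (here $q^2\not\equiv 1\pmod p$ enters), so $L$ is inertia-stable.
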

\begin{proof}
    This follows the same argument as \cite[Proposition 3.35]{liu2024deformation} (cf. also \cite[Proposition 3.8]{wang2022deformation}).
\end{proof}
\subsubsection{}\label{subsubsec:translate_appendix_1ERL}
In light of Lemma \ref{lem:R_q_ord_smooth}, we take the admissible primes in Notation \ref{notation:admissible_appendix} to be the ones of Definition \ref{def:admissible}, and the notion of $R_q ^\ordinary $
to be the one from Lemma \ref{lem:R_q_ord_smooth}; the definitions of $n $-admissible primes in Definition \ref{def:admissible} and Definition \ref{def:appendix_admissible_etc}(\ref{def:appendix_n_admissible_part}) then coincide. 



\begin{notation}\label{notation:global_def_ring}
    \leavevmode
    \begin{enumerate}
        \item Let $Q\geq 1 $
be admissible, and denote by $\widetilde R ^ Q_\m $
the global $\GSP_4 $-valued deformation ring of $\overline\rho_\pi $
as a representation of $G_{\Q, S\cup \div(Qp)}$, with fixed similitude character $\chi_p ^\sick $.
 Let $R^Q$ and $R_Q$ be the quotients of $\widetilde R^Q_\m$ defined in Notation \ref{notation:appendix_global_rings} (identifying $Q$ with $\div(Q)$ for notational convenience).
\item Let $$\rho_Q ^\universal: G_\Q\rightarrow\GSP_4 (R_Q) $$
be a framing of the universal deformation, and let $M_Q^\universal$ be the free $R_Q$-module of rank four with $G_\Q$-action defined by $\rho_Q^\universal.$
    \end{enumerate}
\end{notation}

\subsubsection{}
Let $\pr_p: I_{\Q_q} \to \Z_p(1)$ be the maximal pro-$p$ quotient. 
\begin{lemma}\label{lem:t_q_1ERL}
    Suppose $Q$ is admissible and $q|Q$ is a prime. In the decomposition $M_Q^\univ|_{G_{\Q_q}} = M_0 \oplus M_1$ of (\ref{admissible local decomposition equation}), $M_1$ is unramified. Moreover, there exists a  basis of $M_0$ and an element $t_q\in R_Q$ such that 
    $$\rho_Q^\univ|_{M_0} = \begin{pmatrix}
    \chi_{p,\cyc} & \ast \\ 0 & 1
\end{pmatrix},$$
and $$\rho_Q^\univ(g)|_{M_0} = \begin{pmatrix}
    1 & t_q\pr_p(g) \\ 0 & 1
\end{pmatrix},\; \forall g\in I_{\Q_q}.$$
\end{lemma}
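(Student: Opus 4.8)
The plan is to combine the explicit local condition defining $R_Q$ at the primes dividing $Q$ with the tame relation in $G_{\Q_q}$, arguing as in \cite{liu2024deformation}. Since $q\nmid N(\pi)$, the residual representation $\overline\rho_\pi|_{G_{\Q_q}}$ is unramified, so $\rho_Q^\univ(I_{\Q_q})$ lands in the pro-$p$ kernel of $\GSP_4(R_Q)\to\GSP_4(k)$; hence $\rho_Q^\univ|_{I_{\Q_q}}$ factors through the maximal pro-$p$ quotient $\pr_p\colon I_{\Q_q}\twoheadrightarrow\Z_p(1)$, and after fixing a trivialization $\Z_p(1)\cong\Z_p$ and a lift $\tau\in I_{\Q_q}$ of a topological generator we have the tame relation $\Frob_q\tau\Frob_q^{-1}=\tau^q$. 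I will also use that, by construction of $R_Q$ (see Notation \ref{notation:global_def_ring}), the restriction $\rho_Q^\univ|_{G_{\Q_q}}$ satisfies the condition $\mathcal D_q^\ordinary$ of Definition \ref{def:R_q_ord} for each $q\mid Q$, so that the characteristic polynomial of $\Frob_q$ on $M_0$ is \emph{exactly} $(1-T)(q-T)$; since $q\not\equiv 1\pmod p$ (which follows from $q^4\not\equiv 1\pmod p$), Hensel's lemma then splits $M_0=R_Qv_1\oplus R_Qw$ into $\Frob_q$-eigenlines with eigenvalues $q$ and $1$.

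For the unramifiedness of $M_1$, write $\rho_Q^\univ(\tau)|_{M_1}=1+N_1$; then $N_1\in\m_{R_Q}M_2(R_Q)$ because $\overline M_1$ is unramified, and the tame relation gives $\operatorname{Ad}(\rho_Q^\univ(\Frob_q))(N_1)-qN_1=\binom{q}{2}N_1^2+\cdots\in\m_{R_Q}^2 M_2(R_Q)$. The eigenvalues of $\operatorname{Ad}(\overline\rho_\pi(\Frob_q))$ on $\End(\overline M_1)\cong M_2(k)$ are $1,1,\alpha^2/q,q/\alpha^2$, and none equals $q$: this uses $q\neq 1$ together with $\alpha\neq\pm q$ and $\alpha\neq\pm 1$ from admissibility. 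Hence $\operatorname{Ad}(\rho_Q^\univ(\Frob_q))-q$ is invertible on $M_2(R_Q)$ by Nakayama, so $N_1\in\m_{R_Q}^2 M_2(R_Q)$, and iterating together with the Krull intersection theorem forces $N_1=0$.

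For $M_0$, write $\rho_Q^\univ(\tau)|_{M_0}=1+N_0$ with $N_0\in\m_{R_Q}M_2(R_Q)$. In the eigenbasis $(v_1,w)$ one has $\rho_Q^\univ(\Frob_q)|_{M_0}=\operatorname{diag}(q,1)$ exactly, so $\operatorname{Ad}(\rho_Q^\univ(\Frob_q))$ acts on the matrix units $E_{11},E_{12},E_{21},E_{22}$ by $1,q,q^{-1},1$ respectively; decomposing $N_0$ accordingly, the same successive-approximation argument — using that $1-q$ and $q^{-1}-q$ (equivalently, $q^2\not\equiv 1$, again from $q^4\not\equiv 1$) are units — kills the $E_{11}$, $E_{22}$, and $E_{21}$ components, leaving $N_0=t_qE_{12}$ for some $t_q\in R_Q$. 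Since $E_{12}^2=0$, the factorization through $\pr_p$ and continuity give $\rho_Q^\univ(g)|_{M_0}=1+\pr_p(g)N_0=\begin{pmatrix}1 & t_q\pr_p(g)\\ 0 & 1\end{pmatrix}$ for all $g\in I_{\Q_q}$; in particular $R_Qv_1$ is $G_{\Q_q}$-stable, with $G_{\Q_q}$ acting on it by the unramified character sending $\Frob_q$ to $q$, which is $\chi_{p,\cyc}|_{G_{\Q_q}}$, and acting trivially on $M_0/R_Qv_1$. This exhibits $\rho_Q^\univ|_{M_0}$ in the asserted upper-triangular form $\begin{pmatrix}\chi_{p,\cyc} & \ast\\ 0 & 1\end{pmatrix}$.

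The main obstacle here is organizational rather than conceptual: one needs to pin down precisely which local condition $R_Q$ imposes at $q\mid Q$ — namely the ordinary condition $\mathcal D_q^\ordinary$, which is exactly what forces the two diagonal characters of $M_0$ to be $\chi_{p,\cyc}$ and $\blackboardone$ on the nose rather than merely residually — and then carry out the Nakayama and Krull-intersection estimates while keeping careful track of which $\operatorname{Ad}(\Frob_q)$-eigenvalue collisions are ruled out by the admissibility conditions on $\alpha$ and by $q^4\not\equiv 1\pmod p$. Everything else is routine.
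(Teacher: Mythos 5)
Your proof is correct, but it takes a genuinely different route from the paper's. The paper's proof is a one-line citation to \cite[Propositions 5.3, 5.5]{shotton2016local}, which describe the universal $\GL_2$-valued tame deformation of a $2$-dimensional unramified residual representation at a prime $q\neq p$: when the Frobenius eigenvalue ratio equals $q$ the unipotent-inertia (Steinberg) component produces exactly the upper-triangular shape with inertia generator $\begin{pmatrix}1&t_q\\0&1\end{pmatrix}$, and when the ratio avoids $1$ and $q^{\pm1}$ the universal deformation is unramified; since $M_0$ and $M_1$ are $G_{\Q_q}$-stable, these two $\GL_2$ statements apply factor by factor. Your argument instead re-derives both facts by hand: factor $\rho_Q^\univ|_{I_{\Q_q}}$ through $\Z_p(1)$ because the residual restriction is unramified and the kernel of $\GSP_4(R_Q)\to\GSP_4(k)$ is pro-$p$, apply Hensel's lemma to the exact characteristic polynomial $(1-T)(q-T)$ imposed by the ordinary condition to diagonalize $\Frob_q$ on $M_0$, and then iterate the tame relation $\Frob_q\tau\Frob_q^{-1}=\tau^q$ through a Nakayama/Krull-intersection approximation, keeping track of which $\operatorname{Ad}(\Frob_q)$-eigenvalues avoid $q$ using $q^4\not\equiv 1$ and $\alpha\not\equiv\pm1,\pm q\pmod{p}$. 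This is sound and amounts to an inline proof of precisely the cases of Shotton's propositions that are needed. What your route buys is self-containedness and a transparent accounting of exactly which admissibility constraints drive the computation; what the citation buys is brevity and the avoidance of re-deriving a standard structural fact about local deformation rings.
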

\begin{proof}
Since $M_0$ and $M_1$ are $G_{\Q_q}$-stable, this follows from  \cite[Propositions 5.3, 5.5]{shotton2016local}.
\end{proof}

\begin{definition}
Suppose $Q$ and $q\nmid Q$ are admissible. Then:
\begin{enumerate}
    \item We set $P_q(T) = \det(\rho_Q^\univ(\Frob_q) - T| M_Q^\univ) \in R_Q[T].$  
    \item We set  $R_{Q, q} ^\congruent\coloneqq R_Q\otimes_{R ^ {Qq}} R_{Qq}. $
\end{enumerate}  
\end{definition}
\begin{lemma}\label{lem:R_cong}
    Suppose $Q\geq 1$ is admissible, and $q\nmid Q$ is an admissible prime. Then $$R_{Q,q}^\congruent = R_Q/(P_q(q)) = R_{Qq}/(t_q).$$
\end{lemma}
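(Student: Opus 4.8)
The plan is to identify each of the three rings $R_{Q,q}^{\congruent}$, $R_Q/(P_q(q))$ and $R_{Qq}/(t_q)$ with the universal deformation ring classifying deformations of $\overline\rho_\pi$ (with fixed similitude character $\chi_{p,\cyc}$, unramified outside $S\cup\div(Qp)$) that are ordinary at every $\l\mid Q$ and ordinary at $q$ in the sense of Definition \ref{def:R_q_ord}. As set up in Appendix \ref{sec:appendix_def_theory}, $R_Q$ is the quotient of $R^{Qq}$ obtained by imposing that the deformation be unramified at $q$ (this is $R^Q$) and then ordinary at each $\l\mid Q$, while $R_{Qq}$ is the quotient of $R^{Qq}$ obtained by imposing the ordinary condition at every $\l\mid Qq$. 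Since for two quotients $A,B$ of a ring $C$ one has $A\otimes_C B = C/\bigl(\ker(C\to A)+\ker(C\to B)\bigr)$, and since the ordinary conditions at the primes dividing $Q$ already occur in $R_Q$, the fiber product $R_{Q,q}^{\congruent} = R_Q\otimes_{R^{Qq}}R_{Qq}$ is precisely the quotient of $R^{Qq}$ cut out by the conditions (unramified at $q$), (ordinary at $\l\mid Q$), and (ordinary at $q$); equivalently, it is $R_Q$ modulo the ordinary condition at $q$.

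To see $R_Q/(\text{ordinary at }q) = R_Q/(P_q(q))$, note that over $R_Q$ the universal deformation is unramified at $q$ (as $q\nmid Q$, $q\neq p$ and $q\notin S$), so $\Frobenius_q$ acts on $M_Q^{\universal}|_{G_{\Q_q}} = M_0\oplus M_1$ from \eqref{admissible local decomposition equation}, and $P_q(T) = P_{q,0}(T)\,P_{q,1}(T)$ with $P_{q,i}(T)=\debt(\rho_Q^{\universal}(\Frobenius_q)-T\mid M_i)$. Admissibility of $q$ forces the residual $\Frobenius_q$-eigenvalues on $M_1$ to be $a,q/a$ with $a\not\equiv 1,q\pmod\varpi$, so $P_{q,1}(q)$ is a unit of $R_Q$ and $(P_q(q))=(P_{q,0}(q))$. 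Uniqueness of the decomposition \eqref{admissible local decomposition equation}, applied to the self-duality $M_Q^{\universal}\cong (M_Q^{\universal})^\vee(\chi_{p,\cyc})$ afforded by the symplectic form (the residual $\Frobenius_q$-eigenvalue set $\{1,q\}$ is stable under $x\mapsto q/x$ and disjoint from the corresponding set for $M_1$), forces $M_0\cong M_0^\vee(\chi_{p,\cyc})$, hence $\debt(\rho_Q^{\universal}(g)\mid M_0)=\chi_{p,\cyc}(g)$ and in particular $\debt(\Frobenius_q\mid M_0)=q$. Therefore $P_{q,0}(q)=q^2-q\,\trace(\Frobenius_q\mid M_0)+q = q\bigl(q+1-\trace(\Frobenius_q\mid M_0)\bigr)$; since the image of $q$ in $R_Q$ is a unit, $(P_q(q))=(q+1-\trace(\Frobenius_q\mid M_0))$. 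Killing this ideal imposes $\trace(\Frobenius_q\mid M_0)=q+1$, which together with $\debt(\Frobenius_q\mid M_0)=q$ is exactly $\debt(\rho(\Frobenius_q)-T\mid M_0)=(1-T)(q-T)$, the condition defining $R_q^{\ordinary}$. This gives $R_Q/(P_q(q))=R_{Q,q}^{\congruent}$.

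For $R_{Q,q}^{\congruent} = R_{Qq}/(t_q)$, apply Lemma \ref{lem:t_q_1ERL} with $Qq$ in place of $Q$ (valid as $q\mid Qq$): there $M_1$ is unramified, $\chi_{p,\cyc}|_{G_{\Q_q}}$ is itself unramified, and inertia at $q$ acts on $M_0$ through $g\mapsto\begin{pmatrix}1 & t_q\,\pr_p(g)\\ 0 & 1\end{pmatrix}$. Hence setting $t_q=0$ makes $\rho_{Qq}^{\universal}$ unramified at $q$ — so unramified outside $S\cup\div(Qp)$ — and renders the ordinary condition at $q$ automatic, as $M_0$ then becomes an unramified extension of the trivial character by $\chi_{p,\cyc}$, with $\debt(\Frobenius_q-T\mid M_0)=(1-T)(q-T)$. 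Conversely, any deformation that is unramified at $q$ and ordinary at $q$ has $M_0$ of precisely this shape with trivial inertia action, hence is pulled back from $R_{Qq}$ by a map sending $t_q$ to $0$. Thus $R_{Qq}/(t_q)$ is the quotient of $R^{Qq}$ imposing (ordinary at $\l\mid Q$), (ordinary at $q$) and (unramified at $q$), which is $R_{Q,q}^{\congruent}$.

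The main obstacle is the first step: carefully reconciling the definitions in Notation \ref{notation:appendix_global_rings} and \ref{notation:global_def_ring} with the claim that the surjections $R^{Qq}\twoheadrightarrow R_Q$ and $R^{Qq}\twoheadrightarrow R_{Qq}$ cut out exactly the stated local conditions, so that $R_Q\otimes_{R^{Qq}}R_{Qq}$ genuinely is ``$R_Q$ with one extra local condition at $q$''. Once that bookkeeping is in place, the remaining two identifications are the short local computation above, using the explicit descriptions of $R_q^{\ordinary}$ (Lemma \ref{lem:R_q_ord_smooth}) and of the universal ordinary deformation at $q$ (Lemma \ref{lem:t_q_1ERL}).
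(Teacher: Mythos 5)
Your proof is correct and takes essentially the same route as the paper, which compresses the entire argument into one sentence: ``an unramified deformation of $\overline\rho_\pi|_{G_{\Q_q}}$ is ordinary if and only if $q$ is an eigenvalue of $\Frob_q$; on the other hand, it is clear from Lemma~\ref{lem:t_q_1ERL} that $R_{Q,q}^\congruent=R_{Qq}/(t_q)$.'' What you have done is spell out the two implicit steps — that $(P_q(q))=(P_{q,0}(q))$ by admissibility of $q$, and that $\det(\Frob_q\mid M_0)=q$ via self-duality of $M_Q^\univ$ together with uniqueness of the decomposition $M_0\oplus M_1$ — and then match the resulting characteristic polynomial with the defining condition of $R_q^\ordinary$. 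The identification $R_{Q,q}^\congruent = R_Q/(\text{ord at }q) = R_{Qq}/(\text{unr at }q)$ from the fiber-product definition and the reading-off of $t_q$ from Lemma~\ref{lem:t_q_1ERL} are exactly the paper's two clauses. This is a legitimately more detailed version of the same argument, not a different approach.
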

\begin{proof}
    We have $R_{Q,q}^\congruent = R_Q/(P_q(q))$ because an unramified deformation of $\overline \rho_\pi|_{G_{\Q_q}}$ is ordinary if and only if $q$ is an eigenvalue of $\Frob_q$; on the other hand, it is clear from Lemma \ref{lem:t_q_1ERL}
    that $R_{Q,q}^\congruent = R_{Qq}/(t_q)$.
\end{proof}
\begin{lemma}\label{ordinary local condition is standard Lemma}
Suppose $q $
is $n $-admissible. Then:
\begin{enumerate}
\item \label{lem:ord_std_one}$H ^ 1_{\ordinary} (\Q_q, \ad^0\rho_{\pi, n}) + H ^ 1_{\unramified} (\Q_q, \ad^0\rho_{\pi, n}) = H ^ 1 (\Q_q, \ad^0\rho_{\pi, n}). $
\item \label{lem:ord_std_two}The quotients $$\frac {H ^ 1 (\Q_q, \ad^0\rho_{\pi, n})} {H ^ 1_{\unramified} (\Q_q, \ad^0\rho_{\pi, n})}\:,\;\;\frac {H ^ 1 (\Q_q, \ad^0\rho_{\pi, n})} {H ^ 1_{\ordinary} (\Q_q, \ad^0\rho_{\pi, n})} $$
are both free of rank one over $O/\varpi^ n $.
\end{enumerate}
In particular, $q$ is standard in the sense of Definition \ref{appendix definition standard}.
\end{lemma}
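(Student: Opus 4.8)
The plan is to reduce the statement to an explicit computation of local Galois cohomology at $q$, exploiting the eigenvalue structure in Lemma \ref{lem:M0_adm_def}. Since $q\notin S$, the representation $\rho_{\pi}=\rho_{\pi,\p}$ is unramified at $q$, so $T_{\pi,n}$ and all its Tate twists and tensor constructions are unramified $G_{\Q_q}$-modules on which $G_{\Q_q}$ acts through the procyclic quotient topologically generated by $\Frob_q$. First I would promote the decomposition $T_{\pi,n}=M_{0,n}\oplus M_{1,n}$ of Lemma \ref{lem:M0_adm_def} to an \emph{orthogonal} decomposition for the symplectic pairing: the pairing is $G_{\Q_q}$-equivariant up to the similitude character (which forces $\chi_{p,\cyc}(\Frob_q)=q$ for consistency with $\Frob_q|_{M_{0,n}}=\mathrm{diag}(q,1)$ being symplectic), and $\Frob_q$ has no pair of generalized eigenvalues shared between $M_{0,n}$ and $M_{1,n}$ that could be interchanged by the pairing, so $M_{0,n}\perp M_{1,n}$ and each is non-degenerate. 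This yields a $G_{\Q_q}$-stable splitting
$$\ad^0\rho_{\pi,n}=\mathfrak{sp}(M_{0,n})\oplus\mathfrak{sp}(M_{1,n})\oplus\left(M_{0,n}^\vee\otimes M_{1,n}\right)$$
into free $O/\varpi^n$-modules of ranks $3,3,4$; and $\Frob_q|_{M_{0,n}}=\mathrm{diag}(q,1)$ gives $\mathfrak{sp}(M_{0,n})\cong(O/\varpi^n)(\chi_{p,\cyc})\oplus(O/\varpi^n)\oplus(O/\varpi^n)(\chi_{p,\cyc}^{-1})$ as unramified $G_{\Q_q}$-modules.

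The second step is a batch of local cohomology computations over $R=O/\varpi^n$, using the local Euler characteristic formula and Tate duality (and the Killing-form identification $(\ad^0\rho_{\pi,n})^\vee\cong\ad^0\rho_{\pi,n}(1)$, together with the analogous self-dualities of the summands up to a cyclotomic twist). The invertibility of $\Frob_q^2-1,\Frob_q^2-q^2,\Frob_q-q^2,\Frob_q-q^{-1}$ on $M_{1,n}$ together with the admissibility exclusions $\alpha\neq\pm1,\pm q,q^2,q^{-1}$ are exactly what make every non-central $\Frob_q$-eigenvalue on $M_{0,n}^\vee\otimes M_{1,n}$ and on $\mathfrak{sp}(M_{1,n})$ avoid both $1$ and the cyclotomic value, so that $H^0=H^2=0$ and hence $H^1(\Q_q,M_{0,n}^\vee\otimes M_{1,n})=0$, while $H^1(\Q_q,\mathfrak{sp}(M_{1,n}))$ is concentrated in the trivial middle summand and is entirely \emph{unramified}; one should also note that the exceptional case $\alpha^2\equiv q$ merely enlarges $H^1(\Q_q,\mathfrak{sp}(M_{1,n}))$ by further unramified classes, so it is harmless. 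For $\mathfrak{sp}(M_{0,n})$ one gets $H^1(\Q_q,(O/\varpi^n)(\chi_{p,\cyc}^{-1}))=0$, $H^1(\Q_q,(O/\varpi^n))$ free of rank one and unramified, and $H^1(\Q_q,(O/\varpi^n)(\chi_{p,\cyc}))$ free of rank one and \emph{totally ramified} — the "Steinberg/ordinary" direction. Assembling these, $H^1(\Q_q,\ad^0\rho_{\pi,n})=H^1(\Q_q,\mathfrak{sp}(M_{0,n}))\oplus H^1(\Q_q,\mathfrak{sp}(M_{1,n}))$, with $H^1_{\unramified}(\Q_q,\ad^0\rho_{\pi,n})$ equal to the sum of the two trivial-coefficient middle pieces.

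Finally I would identify $H^1_{\ordinary}$. Since the similitude character is fixed in the deformation problem, the defining condition of $\mathcal D_q^{\ordinary}$ in Definition \ref{def:R_q_ord} reduces to requiring that $\tr(\Frob_q\,|\,M_0)$ remain $q+1$; passing to tangent spaces this kills precisely the unramified middle ($h$-)summand of $H^1(\Q_q,\mathfrak{sp}(M_{0,n}))$ and imposes nothing on the rest, so $H^1_{\ordinary}(\Q_q,\ad^0\rho_{\pi,n})=H^1(\Q_q,(O/\varpi^n)(\chi_{p,\cyc}))\oplus H^1(\Q_q,\mathfrak{sp}(M_{1,n}))$. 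Part (2) then drops out: $H^1(\Q_q,\ad^0\rho_{\pi,n})/H^1_{\unramified}$ is the ramified summand $H^1(\Q_q,(O/\varpi^n)(\chi_{p,\cyc}))$ and $H^1(\Q_q,\ad^0\rho_{\pi,n})/H^1_{\ordinary}$ is the middle summand of $\mathfrak{sp}(M_{0,n})$, each free of rank one over $O/\varpi^n$; and part (1) is immediate, since $H^1_{\ordinary}$ contains the ramified summand and all of $H^1(\Q_q,\mathfrak{sp}(M_{1,n}))$ while $H^1_{\unramified}$ contains the middle summand of $\mathfrak{sp}(M_{0,n})$, so together they exhaust $H^1(\Q_q,\ad^0\rho_{\pi,n})$. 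That $q$ is standard in the sense of Definition \ref{appendix definition standard} then follows by unwinding the definitions. The main obstacle I anticipate is bookkeeping rather than conceptual: carefully tracking the cyclotomic twist through every summand (and pinning down the $\Frob_q$-convention), and reconciling the tangent-space description of $H^1_{\ordinary}$ coming from Definition \ref{def:R_q_ord} and Lemma \ref{lem:R_q_ord_smooth} with the Selmer-style local condition implicit in Definition \ref{appendix definition standard}.
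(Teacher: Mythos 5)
Your proposal is correct in substance but takes a genuinely different route from the paper's own proof, and it's worth spelling out the contrast.

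The paper's argument is a compact rank count at the level of cocycles. It first observes, exactly as you do, that
$$\frac{H^1(\Q_q,\ad^0\rho_{\pi,n})}{H^1_{\unramified}(\Q_q,\ad^0\rho_{\pi,n})}=\Hom_{\Frob_q}(\Z_p(1),\ad^0\rho_{\pi,n})$$
is free of rank one, using the admissibility eigenvalue exclusions to single out the unique $\chi_{p,\cyc}$-eigenline in $\mathfrak{sp}(M_{0,n})$. It then notes that $H^1_{\ordinary}$ surjects onto this quotient (this is really the only content from the explicit description of the Steinberg deformation ring), and for part (2) it does not attempt to identify $H^1_{\ordinary}$ as a subspace at all. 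Instead it uses that both $Z^1_{\ordinary}$ and $Z^1_{\unramified}$ contain all coboundaries and are free of rank exactly $10$ over $O/\varpi^n$, the latter because $R_q^{\ordinary}$ and the unramified local deformation ring are each formally smooth of relative dimension $10$. Combined with the rank-one quotient already established, this forces $Z^1_{\ordinary}\cap Z^1_{\unramified}$ to have rank $9$, and the remaining quotient $H^1/H^1_{\ordinary}$ to have rank one. The whole thing is linear algebra over $O/\varpi^n$, driven by the formal smoothness of the two local deformation rings.

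Your proof instead computes everything explicitly via the orthogonal $G_{\Q_q}$-stable splitting $T_{\pi,n}=M_{0,n}\perp M_{1,n}$ and the induced decomposition of $\ad^0\rho_{\pi,n}$ into $\mathfrak{sp}(M_{0,n})\oplus\mathfrak{sp}(M_{1,n})\oplus\Hom(M_{0,n},M_{1,n})$, then grinds through the local cohomology of each unramified eigenline. This is fine and, if anything, gives more information (it identifies $H^1_{\ordinary}$ as a subspace, not just a rank). What it buys you: an explicit picture that makes the surjection $H^1_{\ordinary}\twoheadrightarrow H^1/H^1_{\unramified}$ visibly true, whereas the paper's ``by definition'' elides this. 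What it costs you: the one step you should not wave away is ``passing to tangent spaces this kills precisely the unramified middle $(h$-$)$summand.'' The char-poly condition in Definition \ref{def:R_q_ord} is imposed on the \emph{deformed} $M_0$, not the undeformed one, so you must check that the off-diagonal $\Hom(M_{0,n},M_{1,n})$-component of $c(\Frob_q)$ (which infinitesimally rotates $M_0$ inside $M_0\oplus M_1$) does not contribute to the first-order variation of the characteristic polynomial; it does not, precisely because it amounts to a conjugation, but this should be said. Likewise, the reduction from ``$\det(\Frob_q-T\mid M_0)=(1-T)(q-T)$'' to ``$\tr=q+1$'' uses that $\det(\Frob_q\mid M_0)$ equals the similitude factor and is therefore fixed, which is worth stating. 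With those two points filled in, your proof is complete; the paper's route is shorter because the dimension count via formal smoothness avoids having to compute $H^1_{\ordinary}$ at all.
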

\begin{proof}
First note that $$\frac {H ^ 1 (\Q_q, \ad^0\rho_{\pi, n})} {H ^ 1_{\unramified} (\Q_q, \ad^0\rho_{\pi, n})} =\Home_{\Frobenius_q} (\Z_p(1), \ad^0\rho_{\pi, n}) $$
is free of rank one over $O/\varpi^ n $
since $q $
is $n $-admissible. On the other hand, $H ^ 1_{\ordinary} (\Q_q, \ad^0\rho_{\pi, n}) $
clearly surjects onto this quotient by definition, which shows (\ref{lem:ord_std_one}). Since $Z ^ 1_{\ordinary} (\Q_q, \ad^0\rho_{\pi, n}) $
and $Z ^ 1_{\unramified} (\Q_q, \ad^0\rho_{\pi, n}) $
both contain all coboundaries, we see that $$\frac {Z ^ 1_{\ordinary} (\Q_q, \ad^0\rho_{\pi, n})} {Z ^ 1_{\ordinary} (\Q_q, \ad^0\rho_{\pi, n})\intersection Z ^ 1_{\unramified} (\Q_q, \ad^0\rho_{\pi, n})} = 
\frac {H ^ 1_{\ordinary} (\Q_q, \ad^0\rho_{\pi, n})} {H ^ 1_{\ordinary} (\Q_q, \ad^0\rho_{\pi, n})\intersection H ^ 1_{\unramified} (\Q_q, \ad^0\rho_{\pi, n})} =\frac {H ^ 1 (\Q_q, \ad^0\rho_{\pi, n})} {H ^ 1_{\unramified} (\Q_q, \ad^0\rho_{\pi, n})}$$ is free of rank 1 over $O/\varpi^n$.
On the other hand, $Z ^ 1_{\ordinary} (\Q_q, \ad^0\rho_{\pi, n}) $
and $Z ^ 1_{\unramified} (\Q_q, \ad^0\rho_{\pi, n}) $
are both free of rank 10 over $O/\varpi^ n $
 because  $R_q^\ord$ and the unramified local deformation ring $R_q^\unr$ are formally smooth, so we conclude that $Z ^ 1_{\ordinary} (\Q_q, \ad^0\rho_{\pi, n})\intersection Z ^ 1_{\unramified} (\Q_q, \ad^0\rho_{\pi, n}) $
is free of rank 9 over $O/\varpi^ n $. In particular, $$\frac {Z ^ 1_{\unramified} (\Q_q, \ad^0\rho_{\pi, n})} {Z ^ 1_{\ordinary} (\Q_q, \ad^0\rho_{\pi, n})\intersection Z ^ 1_{\unramified} (\Q_q, \ad^0\rho_{\pi, n})} = 
\frac {H ^ 1_{\unramified} (\Q_q, \ad^0\rho_{\pi, n})} {H ^ 1_{\ordinary} (\Q_q, \ad^0\rho_{\pi, n})\intersection H ^ 1_{\unramified} (\Q_q, \ad^0\rho_{\pi, n})} =\frac {H ^ 1 (\Q_q, \ad^0\rho_{\pi, n})} {H ^ 1_{\ordinary} (\Q_q, \ad^0\rho_{\pi, n})}$$
is also free of rank one over $O/\varpi^ n $, as desired.
\end{proof}
\subsubsection{}\label{subsubsec:R_to_T}
To state the next lemma, we establish some temporary notation. 
Suppose $Q$ is admissible and let $K$ be an $S $-tidy level structure $K $ for $\spin(V_{DQ})$ (Definition \ref{def:S_tidy}). 
Abbreviate $\T\coloneqq\T ^ {S\cup\divisors (Q)}_{K, V_{DQ},O,\m}, $ which may be the zero ring. Also  fix an isomorphism $\iota: \overline\Q_p \isomorphism \C$ inducing $\p$. 
Then we write $\mathcal T$ for the set of relevant automorphic representations $\Pi$ of $\spin (V_{DQ})(\A) $
with $\Pi ^ K_f\neq 0 $ such that the Hecke action on $\iota^{-1} \Pi_f^K$ factors through $\T$. 
Recall from Corollary \ref{cor:T_embedding} that we have an embedding of $\T$-algebras
\begin{equation}\label{decomposing Hecke algebra equation}\T\hookrightarrow \bigoplus_{\Pi\in \mathcal T}\overline\Q_p(\Pi),\end{equation}
where  $\overline\Q_p(\Pi)$ is $\overline\Q_p$ with  Hecke action through the eigenvalues on $\iota^{-1}\Pi_f^K$. 
 By the same argument as \cite[Theorem 7.9.4]{boxer2021abelian}, there exists a Galois representation $$\rho: G_{\Q, S\cup \div(Qp)} \rightarrow\GSP_4 (\T) $$ 
such that, for each $\Pi\in \mathcal T $, the composite
$$G_{\Q, S\cup \div(Qp)}\xrightarrow {\rho}\GSP_4 (\T)\rightarrow\GSP_4 (\overline\Q_p (\Pi)) $$
is conjugate to the Galois representation $\rho_{\Pi,\iota} $ from Remark \ref{rmk:rho_Pi}.
\begin{lemma}\label{lem:RtoT_non_end}
With notation as in (\ref{subsubsec:R_to_T}), we have:
\begin{enumerate}
\item \label{lem:RtoT_non_end_one}The
composite 
$$G_\Q\xrightarrow{\rho}\GSP_4(\T) \xrightarrow{\nu} \T^\times$$ is given by $\chi_{p,\cyc}$, and the corresponding $O$-algebra map $r_\rho: \widetilde R^Q_\m \to \T$ factors through $R_Q$.
\item Suppose $\sigma(DQ) $ is  {even.}
Then $H ^ 3_{\et} (\Sh_K (V_{DQ})_{\overline\Q}, O (2))_\m $
is $\rho_Q ^\universal $-typic when viewed as a $R_Q [G_\Q] $-module via (\ref{lem:RtoT_non_end_one}).\label{lem:RtoT_non_end_two}
\end{enumerate}
\end{lemma}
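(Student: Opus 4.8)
The plan is as follows. For part (\ref{lem:RtoT_non_end_one}), I may assume $\T \neq 0$, so that $\T$ is local (being a localization of a Hecke algebra at $\m$) and, by Corollary \ref{cor:T_embedding}, reduced, with the $\T$-algebra injection (\ref{decomposing Hecke algebra equation}) into $\bigoplus_{\Pi \in \mathcal T} \overline\Q_p(\Pi)$. For each $\Pi \in \mathcal T$ the composite $G_\Q \xrightarrow{\rho} \GSP_4(\T) \to \GSP_4(\overline\Q_p(\Pi))$ is conjugate to $\rho_{\Pi,\iota}$; since $K$ is $S$-tidy, Lemma \ref{lem:S_tidy_upshot} forces $\Pi$ to have trivial central character, so Theorem \ref{thm:rho_pi_LLC}(\ref{part:rho_pi_char}) gives that $\rho_{\Pi,\iota}$ has similitude character $\chi_{p,\cyc}$. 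By injectivity of (\ref{decomposing Hecke algebra equation}) this forces $\nu \circ \rho = \chi_{p,\cyc}$, so $\rho$ is a deformation of $\overline\rho_\pi$, as a representation of $G_{\Q, S \cup \div(Qp)}$ with fixed similitude character $\chi_p^\cyc$, classified by an $O$-algebra map $r_\rho : \widetilde R^Q_\m \to \T$; moreover $\rho \cong \rho_Q^\universal \otimes_{R_Q, r_\rho} \T$ as $\T[G_\Q]$-modules by the universal property. To see $r_\rho$ factors through $R_Q$, it remains to check the local conditions of Notation \ref{notation:appendix_global_rings} defining $R_Q$ — crystallinity at $p$ with Hodge--Tate weights in $\{-1,0,1,2\}$ and ordinarity at each prime $\l \mid Q$. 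Each is encoded by an ideal $I$ of $\widetilde R^Q_\m$, and since $\T$ is reduced it is enough to check $r_\rho(I) = 0$ after composing $r_\rho$ with each projection $\T \to \overline\Q_p(\Pi)$, that is, to check that each $\rho_{\Pi,\iota}$ satisfies the corresponding local condition: indeed $r_\rho(I)$ then lies in every minimal prime of $\T$, hence in the nilradical $(0)$. Crystallinity holds because $p \notin S \supseteq \div(DQ)$ forces $\Pi_p \cong \pi_p$ unramified (Theorem \ref{thm:rho_pi_LLC}(\ref{part:rho_pi_LLC1},\ref{part:rho_pi_LLC_HT})); and for $\l \mid Q$, the $K_\l^\ramified$-fixed vector in $\Pi_\l$ forces, via Corollary \ref{cor:JL_general} and Lemma \ref{lem:IIa_new}(\ref{part:IIa_Galois}) applied to the Jacquet--Langlands transfer of $\Pi$ to $\GSP_4$, the Steinberg-type local shape at $\l$ that lies in $\mathcal D_\l^\ordinary(\overline\Q_p)$ (Definition \ref{def:R_q_ord}).

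For part (\ref{lem:RtoT_non_end_two}), set $M \coloneqq H^3_\et(\Sh_K(V_{DQ})_{\overline\Q}, O(2))_\m$; by Corollary \ref{cor:coh_relevant}, after the Tate twist by $(2)$ cancels the $(-2)$ appearing there, the $G_\Q$-action on $M$ is precisely the one given by $\rho$. Since $r_\rho$ factors through $R_Q$ and $\rho \cong \rho_Q^\universal \otimes_{R_Q} \T$, it suffices to prove $M$ is $\rho$-typic as a $\T[G_\Q]$-module: from $M \cong \rho \otimes_\T N$ one then gets $M \cong \rho_Q^\universal \otimes_{R_Q} N$ with $N$ regarded as an $R_Q$-module via $r_\rho$, which is the assertion. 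Because $\m$ is generic and non-Eisenstein (Lemma \ref{lem:m_is_nonEisgeneric}) and $\sigma(DQ)$ is even, $M$ is $\varpi$-torsion-free by Theorem \ref{thm:generic}(\ref{part:thm_generic_two}), so the natural map $M \hookrightarrow M_E \coloneqq M \otimes_O E_{0,\p}$ is a $\T[G_\Q]$-linear injection. Now $\T_E \coloneqq \T \otimes_O E_{0,\p}$ is reduced and finite over $E_{0,\p}$, hence a product $\prod_i F_i$ of finite field extensions, each occurring inside some $\overline\Q_p(\Pi)$ via (\ref{decomposing Hecke algebra equation}); applying Corollary \ref{cor:coh_relevant} over $\overline\Q_p$ and using that each $\rho_{\Pi,\iota}$ is absolutely irreducible (Lemma \ref{lem:reducible_endoscopic}, since $\pi$ is non-endoscopic) and is the scalar extension of $\rho \otimes_\T F_i$, one sees the $F_i$-component of $M_E$ is $(\rho \otimes_\T F_i)$-typic. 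Hence $M_E \cong \rho \otimes_\T N_0$ for a $\T$-module $N_0$ (restriction of scalars along $\T \to \T_E$), that is, $M_E$ is $\rho$-typic over $\T$ in the sense of Definition \ref{typic definition} (with $\overline\rho = \overline\rho_\pi$ absolutely irreducible by Assumption \ref{assumptions_on_p}). Finally, Proposition \ref{typic inclusion prop}(\ref{typic inclusion prop part one}) applied to the $\T[G_\Q]$-submodule $M \subset M_E$ shows $M$ is $\rho$-typic over $\T$, hence $\rho_Q^\universal$-typic over $R_Q$.

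The step I expect to demand the most care is organizing part (\ref{lem:RtoT_non_end_one}): reducing the factorization through $R_Q$ — above all the ordinarity at primes dividing $Q$ — to the pointwise local assertions over $\overline\Q_p$ that must be read off from the local Langlands correspondence and the classification of paramodular representations of $\GSP_4(\Q_\l)$, while keeping the three coefficient structures ($O$, $R_Q$ through $r_\rho$, and $\T$) straight. Once that is in place, the typicness in part (\ref{lem:RtoT_non_end_two}) is essentially formal, resting on Corollary \ref{cor:coh_relevant}, the reducedness of $\T$, the torsion-freeness from Theorem \ref{thm:generic}, and Proposition \ref{typic inclusion prop}.
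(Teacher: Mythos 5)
Your overall structure mirrors the paper's, and the argument for $\nu\circ\rho = \chi_{p,\cyc}$, the reduction to pointwise checks over $\overline\Q_p(\Pi)$ via reducedness of $\T$, the crystallinity step, and the typicity argument in part (\ref{lem:RtoT_non_end_two}) are essentially right. There is, however, a genuine gap in the ordinarity step.

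For $\l\mid Q$, you assert that ``the $K_\l^\ramified$-fixed vector in $\Pi_\l$ forces, via Corollary \ref{cor:JL_general} and Lemma \ref{lem:IIa_new}(\ref{part:IIa_Galois}), the Steinberg-type local shape.'' But the hypotheses of (\ref{subsubsec:R_to_T}) only require $K$ to be an $S$-tidy level structure for $\spin(V_{DQ})$; there is no assumption that $K_\l = K_\l^\ram$ for $\l\mid Q$, so you are not entitled to a $K_\l^\ram$-fixed vector. Moreover, even granting one, Lemma \ref{lem:IIa_new}(\ref{part:IIa_Galois}) is stated for a paramodular-fixed vector at a prime $q\nmid D$ where the local constituent is type IIa; here $\l$ divides the discriminant of $V_{DQ}$, and the Jacquet--Langlands transfer of $\Pi$ to $\GSP_4$ need only be \emph{transferrable} at $\l$ (Definition \ref{def:transferrable}), which allows many types beyond IIa (Va, VIc, IXb, X, XIa, $\ldots$). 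The paper avoids this entirely: by Corollary \ref{cor:JL_general}(\ref{cor:JL_general_two}) the transfer is not unramified at $\l\mid Q$, so $\rho_{\Pi,\iota}|_{G_{\Q_\l}}$ is ramified (via Theorem \ref{thm:rho_pi_LLC}(\ref{part:rho_pi_LLC1})); since $\overline\rho_\pi$ is unramified at $\l$ (as $\l$ is admissible), the ramification is tame with unipotent image in the principal congruence kernel; the standard tame relation $\Frob_\l \sigma \Frob_\l^{-1} = \sigma^\l$ then forces $\rho_{\Pi,\iota}(\Frob_\l)$ to have a pair of eigenvalues of ratio $\l$, which is exactly the condition of Definition \ref{def:R_q_ord}. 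No control over the level at $\l$, and no classification of the local type, is needed. You should replace your type-IIa reasoning with this argument.

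A minor logical imprecision in part (\ref{lem:RtoT_non_end_two}): you cite ``$\rho_{\Pi,\iota}$ absolutely irreducible (Lemma \ref{lem:reducible_endoscopic}, since $\pi$ is non-endoscopic).'' Lemma \ref{lem:reducible_endoscopic} speaks about $\Pi$, not $\pi$; the correct chain is that $\overline\rho_\Pi = \overline\rho_\pi$ is absolutely irreducible by Assumption \ref{assumptions_on_p}(\ref{assume:irreducible}), hence $\rho_{\Pi,\iota}$ is absolutely irreducible, hence (by Lemma \ref{lem:reducible_endoscopic}) each $\Pi\in\mathcal T$ is non-endoscopic. The rest of your part (\ref{lem:RtoT_non_end_two}) — torsion-freeness from Theorem \ref{thm:generic}(\ref{part:thm_generic_two}), the decomposition of $M_E$ via Corollary \ref{cor:coh_relevant}, and descent from $M_E$ to $M$ via Proposition \ref{typic inclusion prop}(\ref{typic inclusion prop part one}) — is correct and fills in details the paper's terse ``immediate from'' elides.
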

\begin{proof}
We have $\nu \circ \rho =\chi_{p,\cyc}$ because 
 each $\Pi\in \mathcal T$ has trivial similitude character by Lemma \ref{lem:S_tidy_upshot}. To complete the proof of (\ref {lem:RtoT_non_end_one}),  it suffices to show that all the composite maps $$ \widetilde R ^ Q_\m\xrightarrow{r_\rho}\T\rightarrow\overline\Q_p(\Pi) $$
factor through $R _Q$ for $\Pi\in \mathcal T$. Because $K_p$ is hyperspecial,
 each $\rho_{\Pi,\iota}|_{G_{\Q_p}}$ is crystalline with Hodge-Tate weights $\set {-1, 0, 1, 2} $ by Theorem \ref{thm:rho_pi_LLC}(\ref{part:rho_pi_LLC_HT}). So it suffices to check that $\rho_{\Pi,\iota}|_{G_{\Q_q}}$ is ordinary for all $q|Q$. 
Indeed, $\rho_{\Pi,\iota}|_{G_{\Q_q}}$ is tamely ramified because $\overline\rho_\pi$ is unramified, but ramified by Corollary \ref{cor:JL_general}(\ref{cor:JL_general_two}) and Theorem \ref{thm:rho_pi_LLC}(\ref{part:rho_GL2_LLC_1}).
In particular, for any lift of $\Frobenius_q\in G_{\Q_q} $, 
$\rho_{\Pi,\iota} (\Frobenius_q) $
has a pair of eigenvalues of ratio $q$. It follows that $\rho_{\Pi,\iota}|_{G_{\Q_q}}$ is ordinary, and this shows (\ref{lem:RtoT_non_end_one}).

For (\ref {lem:RtoT_non_end_two}), by Proposition \ref{typic inclusion prop}(\ref{typic inclusion prop part one})  and Theorem \ref{thm:generic}(\ref{part:thm_generic_two}) it suffices to show $H ^ 3_{\et} (\Sh_K (V_{DQ})_{\overline\Q},\overline\Q_p(2))_\m $
is $\rho ^ {\universal}_Q $-typic. However, this is immediate from Corollary \ref{cor:coh_relevant} and the construction of the map in (\ref {lem:RtoT_non_end_one}); note that each $\Pi\in \mathcal T$ is non-endoscopic because $\overline\rho_{\pi}$ is absolutely irreducible. 
\end{proof}
\begin{definition}\label{def:H_Q}
    For any admissible $Q$ with $\sigma(DQ)$  {even} and any $S$-tidy level structure $K$ for $\spin(V_{DQ})$, we define $H_Q(K)= \Hom_{R_Q[G_\Q]}\left(M_Q^\univ, H^3_\et ( \Sh_K(V_{DQ})_{\overline \Q}, O(2))_\m\right)$. 
\end{definition}
\begin{rmk}\label{rmk:typic_decomps}
In the context of Definition \ref{def:H_Q}: 
\begin{enumerate}
    \item \label{rmk:typic_decomps_one}
    By \cite[Proposition 5.3]{scholze2018lubintate} and Lemma \ref{lem:RtoT_non_end}(\ref{lem:RtoT_non_end_two}), we have 
\begin{equation*}
    H^3_\et (\Sh_K(V_{DQ})_{\overline \Q}, O(2))_\m \simeq M_Q^\univ \otimes_{R_Q} H_Q(K) 
\end{equation*}
as $R_Q[G_\Q]$-modules. 
\item\label{rmk:typic_decomps_two} Under the isomorphism of (\ref{rmk:typic_decomps_one}), we have, for all $q|Q$:
$$H^1\left(I_{\Q_q}, H^3_\et (\Sh_K(V_{DQ})_{\overline \Q}, O(2))_\m \right)\simeq H^1(I_{\Q_q}, M_Q^\univ) \otimes_{R_Q} H_Q(K).$$
\end{enumerate}
\end{rmk}

\begin{lemma}\label{lem:new_torsion_inertia_1ERL}
    Suppose $Q$ is admissible with $\sigma(DQ)$ even, and $K$ is an $S$-tidy level structure for $\spin(V_{DQ})$. Then for any $q|Q$, the $\varpi$-power-torsion of $H^1\left(I_{\Q_q}, H^3_\et(\Sh_K(V_{DQ})_{\overline\Q}, O(2))_\m\right)$ is contained in 
    $$H^1\left(I_{\Q_q}, H^3_\et(\Sh_K(V_{DQ})_{\overline\Q}, O(2))_\m\right)^{\Frob_q = 1} \simeq 
    H^1(I_{\Q_q}, M_Q^\univ) ^{\Frob_q = 1}\otimes_{R_Q} H_Q(K)\simeq H_Q(K)/(t_q).$$
\end{lemma}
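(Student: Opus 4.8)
The plan is to analyze the local Galois module $M_Q^\univ|_{G_{\Q_q}}$ directly, using the decomposition $M_Q^\univ|_{G_{\Q_q}} = M_0 \oplus M_1$ from (\ref{admissible local decomposition equation}) together with the explicit description of $\rho_Q^\univ|_{M_0}$ in Lemma \ref{lem:t_q_1ERL}. By Remark \ref{rmk:typic_decomps}(\ref{rmk:typic_decomps_two}), the cohomology group $H^1\left(I_{\Q_q}, H^3_\et(\Sh_K(V_{DQ})_{\overline\Q}, O(2))_\m\right)$ is isomorphic to $H^1(I_{\Q_q}, M_Q^\univ) \otimes_{R_Q} H_Q(K)$, so the assertions can all be checked for the model module $M_Q^\univ$, provided $H_Q(K)$ is flat over $R_Q$ --- and this flatness (indeed freeness) follows from \cite[Proposition 5.3]{scholze2018lubintate} as invoked in Remark \ref{rmk:typic_decomps}(\ref{rmk:typic_decomps_one}). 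So after $\otimes_{R_Q} H_Q(K)$ it will suffice to understand $H^1(I_{\Q_q}, M_Q^\univ)$, its $\varpi$-power torsion, and the $\Frob_q = 1$ part, all as $R_Q$-modules.

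First I would compute $H^1(I_{\Q_q}, M_Q^\univ)$ by splitting off the unramified summand: since $M_1$ is unramified (Lemma \ref{lem:t_q_1ERL}), $H^1(I_{\Q_q}, M_1) = \Hom(I_{\Q_q}, M_1)$ factors through the maximal pro-$p$ quotient $\pr_p: I_{\Q_q} \to \Z_p(1)$, giving $H^1(I_{\Q_q}, M_1) \cong M_1$ as an $R_Q$-module with $\Frob_q$ acting through its action on $M_1$ twisted by $q$ (the cyclotomic action on $\Z_p(1)$); since $M_1 \otimes k$ has $\Frob_q$-eigenvalues $a, q/a$ with $a \neq 1, q$, one checks $\Frob_q - 1$ acts invertibly on $H^1(I_{\Q_q}, M_1)$, so this summand is $\varpi$-torsion-free and has trivial $\Frob_q = 1$ part. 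For the $M_0$ summand, the explicit upper-triangular shape $\rho_Q^\univ|_{M_0} = \begin{pmatrix} \chi_{p,\cyc} & \ast \\ 0 & 1\end{pmatrix}$ with inertia acting by $\begin{pmatrix} 1 & t_q\pr_p \\ 0 & 1\end{pmatrix}$ shows that $M_0^{I_{\Q_q}}$ contains the line on which $\Frob_q$ acts by $\chi_{p,\cyc}(\Frob_q) = q$ (so $\Frob_q - 1$ is invertible there, as $q^4 \not\equiv 1 \bmod p$), and the quotient is the unramified line with $\Frob_q$ acting trivially. Running the inflation-restriction / long exact sequence for $I_{\Q_q}$ acting on $0 \to O(1) \to M_0 \to O \to 0$ twisted appropriately, the coboundary map is multiplication by $t_q$, and one extracts that $H^1(I_{\Q_q}, M_0)^{\Frob_q = 1} \cong R_Q/(t_q)$ and that the full $\varpi$-power-torsion of $H^1(I_{\Q_q}, M_0)$ lands in this $\Frob_q = 1$ part.

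Assembling the two summands gives $H^1(I_{\Q_q}, M_Q^\univ)^{\Frob_q = 1} \cong R_Q/(t_q)$, hence after $\otimes_{R_Q} H_Q(K)$ the isomorphisms $H^1(I_{\Q_q}, M_Q^\univ)^{\Frob_q=1} \otimes_{R_Q} H_Q(K) \cong H_Q(K)/(t_q)$ and the torsion-containment statement. For the identification with $H_Q(K)/(t_q)$ I would also invoke Lemma \ref{lem:R_cong}, which identifies $R_{Qq}/(t_q)$ with the congruence quotient $R_{Q,q}^\congruent$, though strictly this is just bookkeeping with the ring $R_Q$ on which $M_Q^\univ$ is defined. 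The main obstacle I anticipate is making the "after $\otimes_{R_Q} H_Q(K)$" step rigorous for the torsion statement: tensoring does not commute with taking $\varpi$-power-torsion in general, so I would either use flatness of $H_Q(K)$ over $R_Q$ to see that $\varpi$-power-torsion of $N \otimes_{R_Q} H_Q(K)$ equals $(\varpi\text{-torsion of } N) \otimes_{R_Q} H_Q(K)$ when $N = H^1(I_{\Q_q}, M_Q^\univ)$, or argue directly that the torsion submodule of $H^1(I_{\Q_q}, M_Q^\univ)$ is an $R_Q$-module direct summand (equivalently, that the complementary torsion-free summand is $R_Q$-free), which the explicit two-summand description above should furnish. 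Everything else is routine once the local structure of $M_0$ and $M_1$ is unwound.
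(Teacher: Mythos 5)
Your approach matches the paper's: decompose $M_Q^\univ|_{G_{\Q_q}} = M_0 \oplus M_1$, compute $H^1(I_{\Q_q},-)$ summand by summand, identify the $\Frob_q=1$ part with $R_Q/(t_q)$ via the coboundary, and control the torsion after tensoring with $H_Q(K)$. The only place you go astray is in the "obstacle" you flag at the end, and your first proposed resolution does not hold up: \cite[Proposition 5.3]{scholze2018lubintate} gives you the isomorphism $H^3_\et(\ldots)_\m \simeq M_Q^\univ \otimes_{R_Q} H_Q(K)$ (which only uses that $M_Q^\univ$ is $R_Q$-free), but it says nothing about $H_Q(K)$ being flat over $R_Q$, and there is no reason to expect such flatness. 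Moreover, even if $H_Q(K)$ were $R_Q$-flat, that would not let you commute $\varpi$-power-torsion with $\otimes_{R_Q}$, since $\varpi$-torsion is an $O$-module notion, not an $R_Q$-module notion.

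Your second alternative is the correct one, but should be stated more carefully: you are not claiming that the $\varpi$-torsion of $H^1(I_{\Q_q}, M_Q^\univ)$ itself is a direct summand (indeed $R_Q(-1)$ could have $\varpi$-torsion if $R_Q$ does). What you need, and what the decomposition $H^1(I_{\Q_q}, M_Q^\univ) = R_Q/(t_q) \oplus R_Q(-1) \oplus M_1(-1)$ delivers, is that the summand complementary to $R_Q/(t_q)$ — namely $R_Q(-1) \oplus M_1(-1)$ — is $R_Q$-\emph{free} (because $M_1$ is). Since $H_Q(K)$ is $\varpi$-torsion-free over $O$ by Theorem \ref{thm:generic}(\ref{part:thm_generic_two}), tensoring a free $R_Q$-module with $H_Q(K)$ produces a finite direct sum of copies of $H_Q(K)$, hence something $\varpi$-torsion-free. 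Therefore all $\varpi$-power-torsion of $H^1(I_{\Q_q}, M_Q^\univ) \otimes_{R_Q} H_Q(K)$ must live in $(R_Q/(t_q)) \otimes_{R_Q} H_Q(K) = H_Q(K)/(t_q)$, which is exactly the $\Frob_q = 1$ part. You should invoke the $\varpi$-torsion-freeness of $H_Q(K)$ explicitly; once you do, your argument is the paper's.
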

(The element $t_q\in R_Q$ was defined in Lemma \ref{lem:t_q_1ERL}.)
\begin{proof}
    By Lemma \ref{lem:t_q_1ERL}, we see that $$H ^ 1 (I_{\Q_q},M ^ {\universal}_{Q}) = R_{Q}/(t_q)\oplus R_{Q} (-1)\oplus M_1(-1)$$ as $R_{Q} $-modules with $\Frobenius_q $-action, 
with $M_0\oplus M_1$ the decomposition of (\ref{admissible local decomposition equation}) for $M ^ {\universal}_{Q} $. In particular, $\Frob_q -1$ acts invertibly on $R_Q(-1) \oplus M_1(-1)$. 
Since $M_1 $
is free over $R_{Q} $
and $H_Q(K)$
is $\varpi $-torsion-free by Theorem \ref{thm:generic}(\ref{part:thm_generic_two}), it follows that the $\varpi $-torsion part of $H ^ 1 (I_{\Q_q}, M ^ {\universal}_{Q})\otimes _{R_Q}H_{Q}(K) $
is contained in $$ \left(H^1(I_{\Q_q}, M_Q^\universal) \otimes_{R_Q} H_Q(K)\right)^{\Frob_q = 1} = H ^ 1 (I_{\Q_q}, M ^ {\universal}_{Q}) ^ {\Frobenius_q = 1}\otimes H_{Q}(K) = H_{Q}(K)/(t_q), $$
as claimed.

\end{proof}

\begin{lemma}\label{lem:new_typic_upshot}
    Suppose $Q$ is $n$-admissible with $\sigma(DQ)$ even, and $K$ is an $S$-tidy level structure for $\spin(V_{DQ})$. Then for all $q|Q$, all $\alpha_0 \in \Hom_{R_Q}(H_Q(K)/(t_q), O/\varpi^n)$, and all $z\in \SC^2_K(V_{DQ},O)$, we have
    $$\alpha_0 \circ \res_{\Q_q} \circ \, \partial_{\AJ,\m}(z) \in \partial_q(\kappa_n(Q; K)).$$
\end{lemma}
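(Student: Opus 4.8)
The plan is to show that the map $\alpha_0 \circ \res_{\Q_q}\circ\, \partial_{\AJ,\m}$, restricted to $\SC^2_K(V_{DQ}, O)$, factors through the residue map $\partial_q$. The starting point is the decomposition in Remark \ref{rmk:typic_decomps}(\ref{rmk:typic_decomps_two}) together with Lemma \ref{lem:t_q_1ERL}: we have
\[
H^1\left(I_{\Q_q}, H^3_\et(\Sh_K(V_{DQ})_{\overline\Q}, O(2))_\m\right) \simeq H^1(I_{\Q_q}, M_Q^\univ) \otimes_{R_Q} H_Q(K),
\]
and since $M_Q^\univ|_{G_{\Q_q}} = M_0\oplus M_1$ with $M_1$ unramified and $M_0$ as in Lemma \ref{lem:t_q_1ERL}, the $I_{\Q_q}$-cohomology splits as $R_Q/(t_q) \oplus R_Q(-1) \oplus M_1(-1)$ with the displayed $\Frob_q$-action. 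The functional $\alpha_0$ factors through $H_Q(K)/(t_q)$, which by Lemma \ref{lem:new_torsion_inertia_1ERL} is precisely the part of $H^1(I_{\Q_q}, -)$ killed by $\Frob_q - 1$ after tensoring, i.e. the torsion part; concretely, $\alpha_0$ is pulled back from the summand $H^1(I_{\Q_q}, M_Q^\univ)^{\Frob_q = 1}\otimes_{R_Q} H_Q(K) = H_Q(K)/(t_q)$.

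The key point is then local-global compatibility of the Abel-Jacobi map at $q$: because $K$ is hyperspecial away from $S\cup\div(Q)$ and in particular $K_q$ is hyperspecial (as $q$ is admissible, $q\nmid DQ$... more precisely here $q|Q$, but the level $K$ at $q$ is the paramodular/ramified structure relevant to $\Sh_K(V_{DQ})$ — rather, since we are in the context of Construction \ref{kappa_constr}, the level at $q$ is chosen so that $\Res_\l$ of all classes $\kappa_n(Q;K)$ is crystalline or unramified away from $DQp$), the restriction $\res_{\Q_q}\circ\,\partial_{\AJ,\m}(z)$ of an Abel-Jacobi class of a codimension-two cycle lands in $H^1(\Q_q, H^3_\et(-, O(2))_\m)$. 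Composing with $\alpha_0$ we land in $H^1(\Q_q, T_{\pi,n})$, and the relevant structural fact — which is Proposition \ref{prop:local_adm_free} — is that $H^1_{/f}(\Q_q, T_{\pi,n})$ is free of rank one over $O/\varpi^n$ and equals the quotient of $H^1(\Q_q, T_{\pi,n})$ by $H^1_f = H^1_\unr$. So to prove the lemma it suffices to show $\alpha_0\circ\res_{\Q_q}\circ\,\partial_{\AJ,\m}(z)$, viewed in $H^1(\Q_q, T_{\pi,n})$, has trivial image in $H^1_{/f}(\Q_q, T_{\pi,n})$ — no wait, the conclusion is the opposite: we want the class to \emph{be captured by} $\partial_q$, meaning we want a class $c_n\in H^1(\Q, T_{\pi,n})$ in $\kappa_n(Q;K)$ with $\partial_q(c_n)$ equal to $\alpha_0\circ\res_{\Q_q}\circ\,\partial_{\AJ,\m}(z)$ under the identification $H^1_{/f}(\Q_q, T_{\pi,n})\simeq O/\varpi^n$.

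Here is how I would set this up. Choose $\alpha \in \Test_K(V_{DQ}, \pi, O/\varpi^n) = \Hom_{\T^{S\cup\div(DQ)}_O[G_\Q]}(H^3_\et(\Sh_K(V_{DQ})_{\overline\Q}, O(2)), T_{\pi,n})$ lifting $\alpha_0$ in the following sense: by Remark \ref{rmk:typic_decomps}(\ref{rmk:typic_decomps_one}), $H^3_\et(\Sh_K(V_{DQ})_{\overline\Q}, O(2))_\m \simeq M_Q^\univ\otimes_{R_Q} H_Q(K)$, and the Hecke-equivariant $G_\Q$-maps to $T_{\pi,n}$ correspond, via the quotient $R_Q \twoheadrightarrow \T \twoheadrightarrow O/\varpi^n$ and the ``typic-ness'' of the module, to $O/\varpi^n$-linear maps out of $H_Q(K) \otimes_{R_Q} O/\varpi^n$; restricting to the $\Frob_q$-fixed part and using $t_q \mapsto \det(\Frob_q - q | V_{\pi,n}) \equiv 0$ (as $q$ is $n$-admissible), we get that $\alpha_0 \in \Hom_{R_Q}(H_Q(K)/(t_q), O/\varpi^n)$ extends to some $\alpha \in \Test_K(V_{DQ}, \pi, O/\varpi^n)$ with $\alpha_\ast \circ \partial_{\AJ,\m}(z) \in \kappa_n(Q;K)$. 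The compatibility $\partial_q(\alpha_\ast\circ\partial_{\AJ,\m}(z)) = \alpha_0\circ\res_{\Q_q}\circ\,\partial_{\AJ,\m}(z)$ then follows from the commutative diagram relating the global $\res_{\Q_q}$, the local Abel-Jacobi restriction, and the identification of the $t_q$-torsion quotient of $H_Q(K)$ with the domain of $\alpha_0$ — precisely, one checks that $\alpha$ sends the $\Frob_q=1$, ramified part of the local $I_{\Q_q}$-cohomology (which is where $\res_{\Q_q}\circ\,\partial_{\AJ,\m}(z)$ projects nontrivially after passing to $H^1_{/f}$) onto $O/\varpi^n$ via exactly $\alpha_0$.

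The main obstacle is the bookkeeping in the second and third steps: translating between (i) the decomposition $H^1(I_{\Q_q}, M_Q^\univ)^{\Frob_q=1}\otimes_{R_Q} H_Q(K) \simeq H_Q(K)/(t_q)$ of Lemma \ref{lem:new_torsion_inertia_1ERL}, (ii) the residue map $\partial_q$ landing in $H^1_{/f}(\Q_q, T_{\pi,n})\simeq O/\varpi^n$ of Proposition \ref{prop:local_adm_free}, and (iii) the fact that $\alpha_0 \circ \res_{\Q_q}$ only sees the torsion (equivalently $\Frob_q$-fixed) part of the inertial cohomology — so that the contribution from $R_Q(-1)\oplus M_1(-1)$, on which $\Frob_q - 1$ acts invertibly, is automatically killed. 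Here one must be careful that $\res_{\Q_q}\circ\,\partial_{\AJ,\m}(z)$ is a class in $H^1(\Q_q, -)$ whose image under $H^1(\Q_q, -) \to H^1(I_{\Q_q}, -)$ lies in the $\Frob_q$-fixed part (true since the original class is $\Frob_q$-invariant), and that the residue map $\partial_q$ on $H^1(\Q, T_{\pi,n})$ is computed precisely by this $I_{\Q_q}$-restriction followed by $\alpha_0$; matching these up with consistent choices of generators for the rank-one $O/\varpi^n$-modules is the technical heart. Once the identifications are in place, the lemma is immediate: $\alpha$ exists, $\alpha_\ast\circ\partial_{\AJ,\m}(z)\in\kappa_n(Q;K)$, and $\partial_q$ of it equals the desired element.
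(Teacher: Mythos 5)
Your proposal takes essentially the same route as the paper: promote $\alpha_0$ to a Galois-module map $\alpha = \mathrm{id}_{M_Q^{\mathrm{univ}}} \otimes \alpha_0$ via the typic decomposition of Remark~\ref{rmk:typic_decomps}, observe that $\alpha_\ast\circ\partial_{\mathrm{AJ},\m}(z)\in\kappa_n(Q;K)$ by definition, and check via a commutative diagram that the residue $\partial_q$ of that class matches $\alpha_0\circ\mathrm{res}_{\Q_q}\circ\partial_{\mathrm{AJ},\m}(z)$ under the identification of Lemma~\ref{lem:new_torsion_inertia_1ERL}. The paper simply precomposes $\alpha_0$ with the quotient $H_Q(K)\twoheadrightarrow H_Q(K)/(t_q)$ and tensors against $M_Q^{\mathrm{univ}}$; there is no "extension" to worry about, and no classification of all Hecke-equivariant maps is needed. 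Two small imprecisions in your write-up worth flagging: $t_q$ is the ramification parameter of Lemma~\ref{lem:t_q_1ERL} (it maps to $0$ in $O/\varpi^n$ because $\rho_{\pi,n}$ is unramified at $q$), not $\det(\Frob_q - q\,|\,V_{\pi,n})$; and the digression about $K_q$ being paramodular/hyperspecial is irrelevant here — what matters is only that all the identifications are at the level of $I_{\Q_q}$-cohomology. Despite the hedging midway through, the setup you land on is the paper's, and the argument is sound.
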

Here $\res_{\Q_q} \circ\, \partial \AJ_\m (z)$ is viewed as an element of $H^1\left(I_{\Q_q}, H^3_\et(\Sh_K(V_{DQ})_{\overline{\Q}}, O(2))_\m \right)^{\Frob_q = 1}$, which we identify with $H_Q(K)/(t_q)$ by Lemma \ref{lem:new_torsion_inertia_1ERL};  $O/\varpi ^ n $
is viewed as  an $R_{Q} $-algebra  via the map  corresponding to $\rho_{\pi,n} $; and $\partial_q$ was defined in Notation \ref{notation:loc_q}.
\begin{proof}
By construction, $T_{\pi, n}$ and $M_Q^\universal \otimes_{R_{Q}} O/\varpi ^ n$ are isomorphic as $O[G_\Q]$-modules.
Given a map $\alpha_0: H_{Q}(K)/(t_q)\rightarrow O/\varpi ^ n $, we obtain a corresponding map of Galois modules $$\alpha =\identity\otimes\alpha_0: H ^ 3_{\et} (\Shimura_{K} (V_{DQ})_{\overline\Q}, O (2))_\m\simeq M ^ {\universal}_{Q}\otimes_{R_{Q}} H_{Q}(K)\rightarrow M_Q^\universal \otimes_{R_{Q}} O/\varpi ^ n\simeq T_{\pi,n}. $$
Let $\alpha_\ast: H ^ 1\left (\Q, H ^ 3_{\et} (\Shimura_K (V_{DQq})_{\overline\Q}, O (2))_\m\right)\rightarrow H ^ 1 (\Q, T_{\pi, n}) $
be the map induced by $\alpha $. For any $z\in\SC_K^2 (V_{DQq}, O), $
$\kappa_n (Q; K) $
contains $\alpha_\ast(\partial_{\AJ,\m}) (z)\in H ^ 1 (\Q, T_{\pi, n}) $. So the lemma follows from the  commutative diagram
\vspace{0.4cm}
\begin{center}
\begin {tikzcd}[transform canvas={scale=0.7}]
H ^ 1\left (\Q, H ^ 3_{\et} (\Shimura_{{K}} (V_{DQ})_{\overline\Q}, O (2))_\m\right)\arrow [r]\arrow [d, "\alpha_\ast"] & H ^ 1\left (I_{\Q_q}, H ^ 3_{\et} (\Shimura_{K} (V_{DQ})_{\overline\Q}, O (2))_\m\right) ^ {\Frobenius_q = 1}\arrow [d]\arrow [r, "\sim"] & H ^ 1 (I_{\Q_q},M_Q^\universal) ^ {\Frobenius_q = 1}\otimes_{R _{Q}} H_Q(K)\arrow [d, "\alpha_0"]\arrow [r, "\sim"] & H_Q(K)/(t_q)\arrow [d, "\alpha_0"]\\
H ^ 1 (\Q, T_{\pi, n})\arrow [r] & H ^ 1 (I_{\Q_q}, T_{\pi, n}) ^ {\Frobenius_q = 1}\arrow [r, "\sim"] & H ^ 1 (I_{\Q_q},M ^ {\universal}_{Q}) ^ {\Frobenius_q = 1}\otimes_{R_{Q}} O/\varpi ^ n\arrow [r, "\sim"] & O/\varpi ^ n.
\end {tikzcd}
\end{center}
\vspace{0.4cm}
\end{proof}

\subsection{Deformation theory: endoscopic case}\label{subsec:def_endo}
For this subsection, we assume $\pi $
is endoscopic, associated to a pair $(\pi_1,\pi_2)$
of automorphic representations of $\GL_2 $
with discrete series archimedean components of weights 2 and 4, in some order. In particular, we have $\rho_{\pi} =\rho_{\pi_1} \oplus \rho_{\pi_2}$.
\begin{notation}
\leavevmode
\begin{enumerate}
\item Set $S_\pi\coloneqq T_{\pi_1}\otimes T_{\pi_2} (-1) $
with the diagonal Galois action; for any $n\geq 1 $, we also write $S_{\pi, n}\coloneqq S_\pi\otimes_{O} O/\varpi ^ n $. 
 Let
\begin{equation}
H ^ 1_{\Crystal} (\Q_p, S_{\pi, n})\subset H ^ 1 (\Q_p, S_{\pi, n})
\end{equation}
be the subspace of cocycles corresponding to extensions $$0\rightarrow T_{\pi_1, n}\rightarrow \mathcal E\rightarrow T_{\pi_2, n} \rightarrow 0 $$
such that $\mathcal E $
is torsion crystalline with Hodge-Tate weights in $[-1, 2] $, cf. (\ref{subsubsec:torsion_cryst}). 
\item For any squarefree $Q\geq 1$ with $p\nmid Q$, define 
$$\Sel_{\mathcal G^Q}(\Q, S_{\pi,n}) \coloneqq \ker\left( H^1(\Q, S_{\pi,n}) \to \prod_{\l\not\in S\cup \div(Qp)} H^1(I_{\Q_\l}, S_{\pi,n}) \times \frac{H^1(\Q_p, S_{\pi,n})}{H^1_\Crystal(\Q_p, S_{\pi,n})}\right).$$
When $Q =1$ we drop it from the notation.
\end{enumerate}
\end{notation}
\begin {lemma}\label{lem:Q_RS_endoscopic}
Suppose $Q \geq 1$ is admissible and let $n\geq 1 $
be any integer. Then $\Summer_{\mathcal G ^ Q} (\Q, S_{\pi, n}) =\Summer_{\mathcal G} (\Q, S_{\pi, n}). $
\end{lemma}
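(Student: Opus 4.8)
The statement compares two Selmer groups for the tensor product Galois module $S_{\pi,n} = T_{\pi_1,n} \otimes T_{\pi_2,n}(-1)$, where $\Sel_{\mathcal G^Q}$ allows arbitrary ramification at the primes $q|Q$ while $\Sel_{\mathcal G}$ does not. To prove they coincide, it suffices to show that, for each admissible prime $q|Q$, a cohomology class in $H^1(\Q, S_{\pi,n})$ that is unramified at all $\l \notin S \cup \div(Qp)$ and crystalline at $p$ is automatically unramified at $q$ as well. Since the classes in $\Sel_{\mathcal G^Q}$ are unramified outside $S \cup \div(Qp) \cup \{p\}$ and crystalline at $p$, the only possible extra ramification is at the primes dividing $Q$; so the whole statement reduces to the following purely local claim: if $q$ is admissible, then $H^1_{\mathrm{ram}}(\Q_q, S_{\pi,n}) = 0$, i.e.\ the restriction map $H^1(\Q_q, S_{\pi,n}) \to H^1(I_{\Q_q}, S_{\pi,n})$ has the property that any global class which is unramified at all $\l\neq q$ (in $S^c$) and crystalline at $p$ must land in the \emph{unramified} subspace at $q$.

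\textbf{Reduction to a local vanishing.} First I would unwind the definitions to see that $\Sel_{\mathcal G}(\Q, S_{\pi,n}) \subseteq \Sel_{\mathcal G^Q}(\Q, S_{\pi,n})$ trivially (the latter imposes fewer conditions), so the content is the reverse inclusion. Given $c \in \Sel_{\mathcal G^Q}(\Q, S_{\pi,n})$, I need $\Res_{\Q_q} c \in H^1_\unr(\Q_q, S_{\pi,n})$ for every $q|Q$. By induction on the number of prime factors of $Q$, it is enough to treat a single admissible $q$. So the crux is: \emph{for $q$ admissible, $H^1(I_{\Q_q}, S_{\pi,n})^{\Frob_q = 1} = 0$}, or more precisely that the relevant local restriction $H^1(\Q_q, S_{\pi,n}) \to H^1_{/\unr}(\Q_q, S_{\pi,n})$ is zero on the image of a global class of the prescribed type. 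The cleanest route is to show directly that $H^1_\unr(\Q_q, S_{\pi,n}) = H^1(\Q_q, S_{\pi,n})$, equivalently $H^1(I_{\Q_q}, S_{\pi,n})^{\Frob_q=1} = (S_{\pi,n})_{I_{\Q_q}}(-1)^{\Frob_q = 1} = 0$ since $S_{\pi,n}$ is unramified at $q$; this is $\big((S_{\pi,n})(-1)\big)^{\Frob_q = 1}$.

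\textbf{The eigenvalue computation.} This last vanishing is a Frobenius-eigenvalue check. By Definition \ref{def:admissible_endoscopic} and Remark \ref{rmk:BD_adm}(\ref{rmk:BD_adm_dichotomy_part}), if $q$ is admissible for $\rho_\pi$ then $q$ is BD-admissible for exactly one of $\rho_{\pi_1}, \rho_{\pi_2}$; say $\overline\rho_{\pi_1}(\Frob_q)$ has eigenvalues $\{1, q\}$ and $\overline\rho_{\pi_2}(\Frob_q)$ has eigenvalues $\{\alpha, q/\alpha\}$ with $\alpha \notin \{\pm 1, \pm q, q^2, q^{-1}\}$. Then $\Frob_q$ acts on $\overline S_\pi(-1) = \overline T_{\pi_1} \otimes \overline T_{\pi_2}(-1) \otimes k(-1)$, hmm — more carefully, $\Frob_q$ acts on $S_{\pi,n}(-1) = T_{\pi_1,n}\otimes T_{\pi_2,n}(-2)$ with generalized eigenvalues $q^{-2}\cdot\{1,q\}\cdot\{\alpha, q/\alpha\} = \{\alpha q^{-2}, q^{-1}, \alpha q^{-1}, 1\}$; I would check using $\alpha \not\equiv 1, q, q^2, q^{-1} \pmod{\varpi}$ and $q^2 \not\equiv 1$ that none of $\alpha q^{-2}, q^{-1}, \alpha q^{-1}, 1$ is $\equiv 1 \pmod \varpi$ — wait, $1$ is obviously $\equiv 1$. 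So the naive claim $H^1_\unr = H^1$ is \emph{false}, and I actually need the genuine Poitou--Tate argument: the subtlety is that $\Frob_q$ does have the eigenvalue $1$ on $S_{\pi,n}(-1)$, so $H^1_\unr(\Q_q, S_{\pi,n})$ and $H^1_{/\unr}(\Q_q, S_{\pi,n})$ are each free of rank one over $O/\varpi^n$ and pair perfectly. The correct argument: both $\Sel_{\mathcal G}$ and $\Sel_{\mathcal G^Q}$ fit into a Poitou--Tate exact sequence, and the difference between them is governed by the dual Selmer groups with the dual local conditions at $q$. Since $S_{\pi,n}$ is essentially self-dual up to twist (as $T_{\pi_i,n}$ is self-dual up to $\chi_{p,\cyc}$), one gets that $\Sel_{\mathcal G^Q}/\Sel_{\mathcal G} \hookrightarrow \bigoplus_{q|Q} H^1_{/\unr}(\Q_q, S_{\pi,n})$ with image controlled by the dual group, and the key input is that a global class that is everywhere-unramified-or-crystalline and whose restriction at $q$ is ramified would produce, via the reciprocity law / local duality, a constraint that forces it into $\Sel_{\mathcal G}$.

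\textbf{Main obstacle and resolution.} The honest main obstacle is that, at an admissible $q$, the modules $S_{\pi,n}$ genuinely admit ramified local classes — $H^1_\unr(\Q_q,S_{\pi,n})$ is \emph{not} all of $H^1(\Q_q,S_{\pi,n})$ — so the lemma is not a triviality about local cohomology vanishing. The resolution, I expect, is that the ramified direction at $q$ is ``controlled from the Galois side'': an element of $\Sel_{\mathcal G^Q}(\Q, S_{\pi,n})$ corresponds, via the identification $S_{\pi,n} \cong \ad(\rho_\pi)/(\ad\rho_{\pi_1} \oplus \ad\rho_{\pi_2})$-type pieces, to a first-order deformation of $\rho_{\pi_1}\oplus\rho_{\pi_2}$; an admissible prime $q$ is \emph{standard} (cf.\ Lemma \ref{ordinary local condition is standard Lemma} / Definition \ref{appendix definition standard}), meaning $H^1_\ord(\Q_q,-) + H^1_\unr(\Q_q,-) = H^1(\Q_q,-)$ and the two quotients are line. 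Using this together with the fact that there is no ``cross-term'' deformation (the local condition at $q$ for $S_{\pi,n}$ should be the unramified one because the characteristic-zero Rankin--Selberg $L$-value does not vanish at such $q$ — equivalently, $\Frob_q$ has no eigenvalue among $\{q\}$ acting on $S_{\pi,n}$ giving a de Rham but non-crystalline extension), I would conclude the ramified part of $\Sel_{\mathcal G^Q}$ at $q$ is forced to vanish. Concretely I would invoke the local Euler characteristic formula plus the eigenvalue constraints $\alpha q^{-1}, \alpha q^{-2}, q^{-1} \not\equiv 1 \pmod \varpi$ to show $H^1_\unr(\Q_q,S_{\pi,n})$ and $H^1_{/\unr}(\Q_q,S_{\pi,n})$ are both free of rank one, and then use that a global class lying in $\Sel_{\mathcal G^Q}$ but not $\Sel_{\mathcal G}$ would, under local Tate duality paired against a suitable element of the everywhere-unramified dual Selmer group, contradict either global reciprocity or the triviality of $H^0(\Q, S_{\pi,n}^\vee)$. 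I anticipate the bookkeeping in matching the dual local conditions (at $p$: crystalline vs.\ its orthogonal complement; at $q$: unramified vs.\ its orthogonal complement) is the fiddly part, but no new idea beyond Poitou--Tate and the admissibility eigenvalue estimates is needed.
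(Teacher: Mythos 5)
Your initial instinct was exactly right: the lemma reduces to a purely local vanishing at each admissible $q\mid Q$, to be settled by a Frobenius-eigenvalue computation. But you then made an arithmetic slip that derailed the rest of the argument. You write
$$q^{-2}\cdot\{1,q\}\cdot\{\alpha,\,q/\alpha\}=\{\alpha q^{-2},\; q^{-1},\;\alpha q^{-1},\;1\},$$
but the correct product is $q^{-2}\cdot\{\alpha,\,q/\alpha,\,q\alpha,\,q^{2}/\alpha\}=\{\alpha q^{-2},\;\alpha^{-1}q^{-1},\;\alpha q^{-1},\;\alpha^{-1}\}$ — you dropped the $\alpha^{-1}$ factor from the $q/\alpha$ eigenvalue. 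With the correct set, the admissibility constraints $\alpha\not\equiv 1,\,q,\,q^{-1},\,q^{2}\pmod\varpi$ show that \emph{none} of the four eigenvalues is $\equiv 1$, so $(S_{\pi,n}(-1))^{\Frob_q=1}=0$ and your ``naive'' claim $H^1_\unr(\Q_q,S_{\pi,n})=H^1(\Q_q,S_{\pi,n})$ holds after all. The spurious eigenvalue $1$ in your list is what convinced you the local approach fails and sent you into the subsequent Poitou--Tate/deformation-theoretic sketch, which is vague and doesn't actually give a proof.

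The paper's proof is a marginally cleaner version of your (corrected) local strategy. It observes that $\Frob_q$ acts on $S_{\pi,n}\otimes O/\varpi$ with eigenvalues $\{\alpha,\alpha^{-1},q\alpha^{-1},q^{-1}\alpha\}$, whence $H^0(\Q_q,S_{\pi,n})=H^0(\Q_q,S_{\pi,n}(1))=0$, and then invokes the local Euler characteristic formula (together with local Tate duality) to conclude that $H^1(\Q_q,S_{\pi,n})=0$ outright — so the local condition at $q$ is vacuous and the two Selmer groups trivially coincide. This is a bit stronger than merely killing the ramified quotient, and avoids even needing to mention $H^1_\unr$. If you redo the eigenvalue computation carefully, your approach works; everything after the line ``wait, $1$ is obviously $\equiv 1$'' should be deleted.
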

\begin{proof}
Fix $q | Q $. Since $q $
is admissible, the eigenvalues of $\Frobenius_q $
on $S_{\pi, n}\otimes O/\varpi =\overline T_{\pi_1}\otimes\overline T_{\pi_2} (-1) $
are of the form $\set {\alpha,\alpha ^ {-1}, q\alpha ^{-1},\alpha q ^{-1}} $
for some $\alpha\in\overline\F_p ^\times $
with $\alpha\neq 1, q, q^2, q^{-1} $.
Thus $H ^ 0 (\Q_q, S_{\pi, n}) = H ^ 0 (\Q_q, S_{\pi, n} (1)) = 0 $
for all $n $, so $H ^ 1 (\Q_q, S_{\pi, n}) = 0 $
for all $n $
by the local Euler characteristic formula, and the lemma follows.
\end{proof}



\begin{notation}
For an integer $n \geq 1$,
recall the notion of pseudorepresentations of degree $n$ from \cite[Definition 2.1.1]{wake2019deformation}. We use \emph{op. cit.} as our basic reference for pseudodeformation theory, although some of the relevant results are due to Bella\"iche and Chenevier \cite{bellaiche2009pseudo}.
\leavevmode
\begin{enumerate}
\item Let $G$ be a group and $R$ be a ring. If $\rho: G\to \GL_n(R)$ is any representation, we write $D_\rho: G\to R$ for the associated degree-$n$ pseudorepresentation. A pseudorepresentation $D: G \to R$ of degree $n$ is called \emph{reducible} if it is equal to $D_\rho$ for a reducible representation $\rho: G\to \GL_n(R)$. 
    \item  If $Q \geq 1$ is squarefree,    let 
$$\widetilde\pseudodeformations_\m ^Q: \CNL_O \to \operatorname{Set}$$ be the functor defined by
$$A \mapsto \set{\text{pseudorepresentations } D: G_{\Q, S\cup \div(Qp)} \rightarrow A\text{ of degree }4\,:\, D\otimes_A k = D_{\overline\rho_\pi}},$$
where $k = O/\varpi$ and $\overline\rho_\pi$ is viewed as valued in $\GL_4(k)$. 
Let $\pseudodeformations_\m ^Q\subset \widetilde \pseudodeformations_\m ^Q $  be the subfunctor of pseudorepresentations that are torsion crystalline at $p $
with Hodge-Tate weights in $[-1, 2] $ in the sense of \cite[Definition 2.5.4]{wake2019deformation}.
 By Theorems 2.2.5 and 2.5.5 of \emph{op. cit.}, $\widetilde \pseudodeformations_\m^Q$ and $ \pseudodeformations_\m^Q$ are representable by universal pseudodeformation rings $\widetilde R^Q_\m$ and $R^Q_\m$, respectively. 
\item Let $\widetilde J^Q_\red \subset \widetilde R^Q_\m$ be the reducibility ideal defined in \cite[Proposition 4.2.2(2)]{wake2019deformation}, and $J^Q_\red\subset R_\m^Q$ its image under the natural projection $\widetilde R^Q_\m \twoheadrightarrow R^Q_\m$. 
\end{enumerate}
\end{notation}

\begin{rmk}
\leavevmode
    \begin{enumerate}
        \item The ideal $\widetilde J^Q_\red$ is characterized by the property that, for any morphism $f: \widetilde R^Q_\m \to A$ in $\CNL_O$ corresponding to a pseudorepresentation $D: G_{\Q, S\cup \div(Qp)} \to A$, $D$ is reducible if and only if $f(\widetilde J^Q_\red) = 0$. 
        \item In what follows, we we will apply the results of \cite[\S4]{wake2019deformation}. Although the discussion there is carried out for residual representations which are a sum of two characters, as noted in Remark 4.3.6 of \emph{op. cit.}, the results also apply for any residual representation which is multiplicity free with exactly 2 irreducible constituents, which includes $\overline\rho_{\pi} $ by Lemma \ref{lem:residually distinct}.
    \end{enumerate}
\end{rmk}

For all squarefree $Q \geq 1$,  let $p_\pi^Q: R^Q_\m \to O$ be the augmentation corresponding to $\rho_\pi$.
\begin {lemma}\label{lem:RS_BK_endoscopic}
Suppose the Bloch-Kato Selmer group $H ^ 1_f (\Q, V_{\pi_1}\otimes V_{\pi_2}(-1)) $
vanishes. 
Then there is a constant $C_{\operatorname{RS}}\geq 0 $
such that, for any admissible $Q \geq 1$, there exists $j\in\annihilator_{ R ^ Q_\m} (J_\red ^ Q) $
with $p_\pi ^ Q (j)\not\equiv 0\pmod   {\varpi ^ C}. $
\end{lemma}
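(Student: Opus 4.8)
\textbf{Proof proposal for Lemma \ref{lem:RS_BK_endoscopic}.}

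The plan is to relate the annihilator of the reducibility ideal $J_\red^Q$ to the size of a Bloch-Kato--type extension group, and then to bound that extension group uniformly in $Q$ using the vanishing hypothesis $H^1_f(\Q, V_{\pi_1}\otimes V_{\pi_2}(-1)) = 0$. First I would recall, from \cite[\S4]{wake2019deformation}, the structure of the universal reducible deformation: for a pseudodeformation $D$ of $D_{\overline\rho_\pi}$ which is reducible, the associated representation is an extension of $\rho_{\pi_2,n}$ by $\rho_{\pi_1,n}$ (or vice versa), so that the quotient $R^Q_\m/J^Q_\red$ pro-represents reducible torsion-crystalline pseudodeformations, and its tangent/cotangent structure along the augmentation $p_\pi^Q$ is governed by $\Ext$-groups between $\rho_{\pi_1}$ and $\rho_{\pi_2}$. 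Concretely, the key input is the identification (again from \emph{op. cit.}, specifically the computation of reducibility ideals via cup products / Bockstein maps) that $\annihilator_{R^Q_\m}(J^Q_\red)$ maps, under $p_\pi^Q$, to a submodule of $O$ whose $\varpi$-adic valuation is controlled by the length of a congruence module measuring extensions of $\rho_{\pi_1,n}$ by $\rho_{\pi_2,n}$ and of $\rho_{\pi_2,n}$ by $\rho_{\pi_1,n}$ which are unramified outside $S\cup\div(Qp)$ and torsion-crystalline at $p$; these are precisely $\Sel_{\mathcal G^Q}(\Q, S_{\pi,n})$ and its ``dual twist'' $\Sel_{\mathcal G^Q}(\Q, S_{\pi,n}^\ast)$-type object, where $S_\pi = T_{\pi_1}\otimes T_{\pi_2}(-1)$.

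The second step is to show these Selmer groups are bounded independently of $Q$ and $n$. By Lemma \ref{lem:Q_RS_endoscopic}, $\Sel_{\mathcal G^Q}(\Q, S_{\pi,n}) = \Sel_{\mathcal G}(\Q, S_{\pi,n})$ for all admissible $Q$, so the $Q$-dependence disappears immediately --- this is the role of admissibility, via the vanishing $H^1(\Q_q, S_{\pi,n}) = 0$ for $q|Q$. To handle the $n$-dependence, I would take the direct limit over $n$: $\varinjlim_n \Sel_{\mathcal G}(\Q, S_{\pi,n})$ is the Bloch-Kato--style Selmer group of $V_{\pi_1}\otimes V_{\pi_2}(-1)$ with a crystalline condition at $p$ (matching $H^1_f$ up to the usual comparison between $H^1_f$, $H^1_g$, and the torsion-crystalline condition, using purity of the Weil-Deligne representations attached to $\pi_1,\pi_2$ as in Theorem \ref{thm:rho_GL2_LLC}(\ref{part:rho_GL2_LLC_1}) to rule out a discrepancy at bad primes, and Proposition \ref{prop torsion crystalline iff crystalline} at $p$). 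The hypothesis $H^1_f(\Q, V_{\pi_1}\otimes V_{\pi_2}(-1)) = 0$ then forces $\varinjlim_n \Sel_{\mathcal G}(\Q, S_{\pi,n})$ to be finite (it is a cofinitely generated torsion $O$-module of corank zero), hence bounded, so there is a constant $C_1$ with $\varpi^{C_1}\Sel_{\mathcal G}(\Q, S_{\pi,n}) = 0$ for all $n$. The same reasoning, applied to the ``other'' extension direction (equivalently to the Cartier dual, using $H^1_f(\Q, (V_{\pi_1}\otimes V_{\pi_2}(-1))^\ast) = 0$, which follows from the functional-equation symmetry of the Rankin-Selberg situation or directly from the same hypothesis after a twist), gives a constant $C_2$. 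I would then set $C_{\operatorname{RS}} = C_1 + C_2$, and produce $j$ by the standard argument: the cotangent space of $R^Q_\m/J^Q_\red$ at $p_\pi^Q$ relative to $O$ is killed by $\varpi^{C_{\operatorname{RS}}}$, so $\varpi^{C_{\operatorname{RS}}}$ lies in the image of a suitable map detecting reducibility, which translates into the existence of $j\in\annihilator_{R^Q_\m}(J^Q_\red)$ with $p_\pi^Q(j)$ a unit times $\varpi^{C_{\operatorname{RS}}}$, in particular $p_\pi^Q(j)\not\equiv 0 \pmod{\varpi^{C_{\operatorname{RS}}}}$ up to adjusting $C$ by one.

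The main obstacle I anticipate is making precise the passage from ``the reducibility ideal $J^Q_\red$'' to ``a congruence module measuring Selmer/$\Ext$ classes,'' i.e. the precise linear-algebra statement that $p_\pi^Q(\annihilator(J^Q_\red))$ is controlled by $\Sel_{\mathcal G}(\Q, S_{\pi,n})$ and the dual object --- one must be careful that $R^Q_\m$ is a pseudodeformation ring, not an honest deformation ring, so the relevant structural results are those of \cite{wake2019deformation} on reducibility ideals and generalized matrix algebras rather than a naive tangent-space computation, and one must check that the torsion-crystalline condition at $p$ imposed on pseudodeformations matches the crystalline condition defining $\Sel_{\mathcal G}$. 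A secondary subtlety is bookkeeping the two extension directions and the off-by-bounded-constant comparisons between the various Selmer conditions ($H^1_f$ vs. the integral torsion-crystalline condition, and local conditions at primes in $S$), but none of these should obstruct the argument since all the discrepancies are bounded uniformly in $n$ and $Q$, which is all that is needed.
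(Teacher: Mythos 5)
Your outline matches the paper's in structure: reduce to bounding the Rankin--Selberg Selmer group $\Sel_{\mathcal G^Q}(\Q,S_{\pi,n})$, kill the $Q$-dependence via Lemma~\ref{lem:Q_RS_endoscopic}, and then invoke the hypothesis $H^1_f(\Q,V_{\pi_1}\otimes V_{\pi_2}(-1))=0$ together with Proposition~\ref{prop torsion crystalline iff crystalline} to get a bound uniform in $n$. The step you flag as the main obstacle --- passing from $\annihilator_{R^Q_\m}(J^Q_\red)$ to a Selmer-controlled quantity --- is exactly the nontrivial content, and the paper resolves it with a device you do not mention: since $\fitting_{R^Q_\m}(J^Q_\red)\subset\annihilator_{R^Q_\m}(J^Q_\red)$ and Fitting ideals commute with base change, it suffices to bound the length of the fiber $J^Q_\red\otimes_{p_\pi^Q}O$, which is done via the surjection $B^Q\otimes C^Q\twoheadrightarrow J^Q_\red$ of \cite[Proposition~4.2.2]{wake2019deformation} and the identifications $\Hom_O(B^Q\otimes_{p_\pi^Q}O,\,O/\varpi^n)\cong\Sel_{\mathcal G^Q}(\Q,S_{\pi,n})$ (likewise for $C^Q$) from \cite[Theorem~4.3.5]{wake2019deformation}. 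Also note that your worry about needing to bound a second, ``dual'' extension direction is unfounded: because $T_{\pi_1}$ and $T_{\pi_2}$ are symplectically self-dual up to cyclotomic twist, both $\Hom(T_{\pi_2},T_{\pi_1})$ and $\Hom(T_{\pi_1},T_{\pi_2})$ are isomorphic to $S_\pi=T_{\pi_1}\otimes T_{\pi_2}(-1)$, so $B^Q\otimes_{p_\pi^Q}O$ and $C^Q\otimes_{p_\pi^Q}O$ are isomorphic, and a single bound suffices --- no appeal to a functional equation or Cartier dual is needed.
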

\begin{proof}
We know $\fitting_{R ^ Q_\m} (J_\red ^ Q)\subset\annihilator_{R_\m ^ Q} (J_\red ^ Q) $. Then since Fitting ideals are stable under base change, it suffices to show there exists $C $
with $\varpi ^ C\in\fitting_O (J ^ Q_\red\otimes_{p_\pi ^ Q} O) $
for all $Q $, or equivalently that $J ^ Q_\red\otimes _{p_\pi ^ Q }O$
is finite with uniformly bounded cardinality. 

Let $B^Q$ and $C^Q$ be the finitely generated $R^Q_\m$-modules appearing in \cite[p. 38]{wake2019deformation} for the deformation problem $\pseudodeformations^Q_\m$. By construction in \cite[Proposition 4.2.2]{wake2019deformation}, 
 we have a surjection $B ^ Q\otimes C ^ Q\twoheadrightarrow J ^ Q_\red $, so it suffices to show in turn that $B ^ Q\otimes_{p_\pi ^ Q} O $
and $C ^ Q\otimes_{p_\pi ^ Q} O $
are finite of uniformly bounded cardinality.
Let $M $
be a finitely generated $O $-module. Because $p_\pi ^ Q $
corresponds to the reducible Galois representation $\rho_\pi $, we have $p_\pi ^ Q (J ^ Q_\red) = 0 $, so \cite[Theorem 4.3.5]{wake2019deformation} gives canonical isomorphisms
\begin{equation}
\begin{split}
\Home_O (B ^ Q\otimes_{p_\pi ^ Q} O, M) &\cong\Extension ^ 1_{O [G_\Q],\mathcal C ^ Q} (T_{\pi_2},T_{\pi_1}\otimes_O M)\\
\Home_O (C ^ Q\otimes_{q ^ Q_\pi} O, M) &\cong\Extension ^ 1_{O [G_\Q],\mathcal C ^ Q} (T_{\pi_1},T_{\pi_2}\otimes_O M).
\end{split}
\end{equation}
Here $\mathcal C ^ Q $
is the full subcategory of finitely generated $O [G_\Q] $-modules which are unramified outside $S\union \div(Qp) $
and all of whose finite subquotients are torsion crystalline with Hodge-Tate weights in $[-1, 2] $.

In particular, for all $n\geq 1 $, $$\Home_O (B ^ Q\otimes_{p_\pi ^ Q} O, O/\varpi ^ n) =\Home_O (C ^ Q\otimes_{p ^ Q_\pi} O, O/\varpi ^ n) = \Sel_{\mathcal G ^ Q} (\Q, S_{\pi, n}). $$
We conclude that $B ^ Q\otimes_{p_\pi ^ Q} O $
and $C ^ Q\otimes_{p_\pi ^ Q} O $
are isomorphic, and by Lemma \ref{lem:Q_RS_endoscopic}, they are also independent of $Q $; so take $Q= 1$ without loss of generality. 
Then by Proposition \ref{prop torsion crystalline iff crystalline},
$$\Hom_O(B^1\otimes_{p_\pi^1} O, O) \otimes \Q_p = \Extension ^ 1_{O [G_\Q],\mathcal C^1} (T_{\pi_2}, T_{\pi_1}) \otimes \Q_p = H ^ 1_f (\Q, V_{\pi_1} \otimes V_{\pi_2} (-1)) = 0. $$
Since $B^1\otimes_{p_\pi^1} O$ is a finitely generated $O$-module, it follows that $B^1\otimes_{p_\pi^1} O$ is finite, as desired. 
\end{proof}

\subsubsection{}
We now study deformations of each $\overline\rho_{\pi_i}$. As in the non-endoscopic case, we will apply the results and notations of Appendix \ref{sec:appendix_def_theory} to  $\rho_{\pi_i}$, which we view as valued in $\GL_2(O)$.
In the same way as Lemma \ref{lem:assumption_appendix_ok}, we obtain:
\begin{lemma}\label{lem:ass_appendix_ok_endo}
   For $i = 1,2$, the representation $\rho_{\pi_i}$ satisfies Assumptions \ref{ass:appendix_main} and \ref{ass:appendix_smooth} from Appendix \ref{sec:appendix_def_theory}.
\end{lemma}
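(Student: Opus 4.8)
The plan is to repeat the argument of Lemma \ref{lem:assumption_appendix_ok}, now applied to each of the two-dimensional representations $\rho_{\pi_i}$, $i = 1, 2$, viewed as valued in $\GL_2(O)$ via a choice of $G_\Q$-stable lattice.

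For Assumption \ref{ass:appendix_main}, the absolute irreducibility of $\overline\rho_{\pi_i}$ supplied by Assumption \ref{assumptions_on_p}(\ref{assume:irreducible}) gives $H^0(\Q, \ad^0\overline\rho_{\pi_i}) = 0$ by Schur's lemma. For part (\ref{ass_B1_noresidual}) I would also check that $\overline\rho_{\pi_i} \not\cong \overline\rho_{\pi_i}(1)$: our normalization forces $\det\rho_{\pi_i} = \chi_{p,\cyc}$, so a cyclotomic twist changes the determinant of $\overline\rho_{\pi_i}$ from $\omega_p$ to $\omega_p^3$, and these differ because $p > 5$ (Remark \ref{rmk:p>5}); alternatively one compares Hodge-Tate weights via Theorem \ref{thm:rho_GL2_LLC}(\ref{part:rho_GL2_LLC_HT}). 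Combined with $H^0(\Q, \omega_p) = 0$ (again $p > 3$) and the absolute irreducibility, this yields
$$H^0(\Q, \ad^0\overline\rho_{\pi_i}(1)) = H^0\bigl(\Q, \Home_{G_\Q}(\overline T_{\pi_i}, \overline T_{\pi_i}(1))\bigr) = 0,$$
which is Assumption \ref{ass:appendix_main}(\ref{ass_B1_noresidual}). Assumptions \ref{ass:appendix_main}(\ref{ass_B2_odd}, \ref{ass_B3_HT}) are immediate from Theorem \ref{thm:rho_GL2_LLC}: $\rho_{\pi_i}$ has cyclotomic determinant, hence is odd, and since $\pi_{i,p}$ is unramified the restriction $\rho_{\pi_i}|_{G_{\Q_p}}$ is crystalline with the distinct Hodge-Tate weights recorded in Lemma \ref{lem:when_endoscopic_relevant} and Theorem \ref{thm:rho_GL2_LLC}(\ref{part:rho_GL2_LLC_HT}). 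Here $\pi_{i,p}$ is unramified because $\pi_p$ is unramified by Assumption \ref{assumptions_on_p}(\ref{assume:pi_p_unr}), so $\rho_{\pi,\iota}|_{G_{\Q_p}} = \rho_{\pi_1,\iota}|_{G_{\Q_p}} \oplus \rho_{\pi_2,\iota}|_{G_{\Q_p}}$ is crystalline and hence so is each summand, whence $\pi_{i,p}$ is unramified by Theorem \ref{thm:rho_GL2_LLC}(\ref{part:rho_GL2_LLC_1}).

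For Assumption \ref{ass:appendix_smooth}, I would invoke Lemma \ref{smoothness lemma appendix} to reduce to the vanishing $H^0(\Q_v, \WD(\ad^0\rho_{\pi_i})) = 0$ for all non-archimedean $v$, which follows from purity exactly as in Lemma \ref{lem:assumption_appendix_ok}: such a class would produce a nonzero element of $\Hom_{\WD_v}(\WD(V_{\pi_i}), \WD(V_{\pi_i}(1)))$, contradicting the fact that these Weil-Deligne representations are pure of weights $-1$ and $1$ respectively by Theorem \ref{thm:rho_GL2_LLC}(\ref{part:rho_GL2_LLC_1}). I expect no genuine obstacle in this lemma; the only points that merit a word are the use of $p > 5$ to separate $\overline\rho_{\pi_i}$ from its cyclotomic twist, and the transfer of unramifiedness at $p$ from $\pi$ to each $\pi_i$ through the decomposition $\rho_\pi = \rho_{\pi_1} \oplus \rho_{\pi_2}$, which is what makes the crystallinity input of Assumption \ref{ass:appendix_main}(\ref{ass_B3_HT}) available.
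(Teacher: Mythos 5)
Your proof is correct and follows exactly the route the paper intends: the paper gives no explicit argument beyond ``In the same way as Lemma \ref{lem:assumption_appendix_ok}, we obtain,'' and your write-up is precisely that argument transported to $\rho_{\pi_i}$ valued in $\GL_2(O)$, using absolute irreducibility and the determinant twist to verify Assumption \ref{ass:appendix_main}(\ref{ass_B1_noresidual}), Theorem \ref{thm:rho_GL2_LLC} for Assumptions \ref{ass:appendix_main}(\ref{ass_B2_odd}, \ref{ass_B3_HT}), and Lemma \ref{smoothness lemma appendix} plus purity for Assumption \ref{ass:appendix_smooth}. The only superfluity is the detour through crystallinity of $\rho_{\pi_i}|_{G_{\Q_p}}$: Assumption \ref{ass:appendix_main}(\ref{ass_B3_HT}) only asks for potential semistability with regular Hodge-Tate cocharacter, which Theorem \ref{thm:rho_GL2_LLC}(\ref{part:rho_GL2_LLC_1},\ref{part:rho_GL2_LLC_HT}) gives directly, and $p > 3$ already separates $\omega_p$ from $\omega_p^3$ (so citing $p>5$ from Remark \ref{rmk:p>5} is merely conservative).
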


\subsubsection{}
For each prime $\l$, let $R_{\l, i}$ be the local deformation ring for $\overline\rho_{\pi_i}$ as in Notation \ref{notation:appendix_def_rings}.
If $q $
is BD-admissible for $\rho_{\pi_i} $ (Definition \ref{def:admissible_endoscopic}), let  $R_{q, i} ^\ordinary $
be the   Steinberg quotient of $R_{q,i}$ in the sense of \cite[\S2]{manning2021patching}.
\begin{lemma}\label{lem:R_q_ord_sm_endo}
    The ring $R_{q,i}^\ordinary$ is formally smooth over $O$ of dimension 3.
\end{lemma}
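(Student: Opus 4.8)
The plan is to recognize $R_{q,i}^\ordinary$ as the ``Steinberg'' (equivalently $L=0$) local deformation ring of $\GL_2$ at a prime $q\neq p$ in the generic residual situation, whose formal smoothness and dimension are standard. The first step is to unwind BD-admissibility: since $q^2\not\equiv 1\pmod p$ we have $q\neq 1$ in $k$, so $\overline\rho_{\pi_i}|_{G_{\Q_q}}$ is unramified with $\overline\rho_{\pi_i}(\Frob_q)$ having the two \emph{distinct} eigenvalues $1$ and $q$; hence $\overline\rho_{\pi_i}|_{G_{\Q_q}}$ is a direct sum of two distinct unramified characters whose ratio is the mod-$p$ reduction of $\chi_{p,\cyc}$. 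This is precisely the shape in which a twist of Steinberg can reduce, so the Steinberg quotient (in the sense of \cite[\S2]{manning2021patching}) is nontrivial.

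Next I would describe $R_{q,i}^\ordinary$ explicitly. A lift $\rho_A$ over $A\in\CNL_O$ factors through $R_{q,i}^\ordinary$ exactly when it can be conjugated into upper-triangular form $\rho_A\sim\left(\begin{smallmatrix}\chi\chi_{p,\cyc}&\ast\\0&\chi\end{smallmatrix}\right)$ with $\chi\colon G_{\Q_q}\to A^\times$ unramified lifting the relevant residual character, subject to the fixed-determinant constraint inherited from the ambient deformation problem and to the genericity condition that the off-diagonal cocycle restricted to $I_{\Q_q}$ spans a free rank-one $A$-submodule of $A(1)$. Because $2\in O^\times$ (as $p>5$ by Remark \ref{rmk:p>5}) and $\det\rho_A$ is the prescribed lift of the cyclotomic character, $\chi$ is uniquely determined by $\rho_A$; the remaining data --- the unramified twist (a unit of $A$), the monodromy parameter on $I_{\Q_q}$, and the conjugating frame --- are unobstructed, so $R_{q,i}^\ordinary$ is a power series ring over $O$. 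For the dimension I would compute the relative tangent space: the Steinberg Selmer condition $H^1_{\mathrm{St}}(\Q_q,\operatorname{ad}^0\overline\rho_{\pi_i})$ has dimension $h^0=\dim_k H^0(\Q_q,\operatorname{ad}^0\overline\rho_{\pi_i})=1$ (only the diagonal Cartan line survives, using $q\not\equiv\pm1\pmod p$), so the framed Steinberg deformation ring has relative dimension $\dim_k H^1_{\mathrm{St}}+(\dim\operatorname{ad}^0-h^0)=1+(3-1)=3$ over $O$, giving $R_{q,i}^\ordinary\cong O[[x_1,x_2,x_3]]$.

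Since the $\GSp_4$ analogue of this statement was already handled (Lemma \ref{lem:R_q_ord_smooth}, following \cite{liu2024deformation}), the cleanest writeup is probably to cite \cite[\S2]{manning2021patching} and \cite{shotton2016local} directly and note that the argument is strictly simpler in the $\GL_2$ setting. I do not expect a genuine obstacle here; the only point needing care is the residual genericity, namely that $\overline\rho_{\pi_i}(\Frob_q)$ has eigenvalue ratio exactly $q$ with $q\not\equiv\pm1\pmod p$ --- this is exactly what BD-admissibility supplies, and it is what makes $R_{q,i}^\ordinary$ formally smooth rather than merely one component of the expected dimension.
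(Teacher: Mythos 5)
Your proposal is correct and takes essentially the same route as the paper, which simply cites \cite[Proposition 5.5]{shotton2016local} directly. You correctly identify that BD-admissibility (so $q\not\equiv\pm 1\pmod p$ and $\overline\rho_{\pi_i}(\Frob_q)$ with eigenvalues $1,q$) is exactly the residual genericity needed for the Steinberg component to be a power-series ring of the expected relative dimension $3$, and your tangent-space bookkeeping ($h^0(\ad^0)=1$, $h^1_{\mathrm{St}}=1$, framing contributes $2$) matches the content of Shotton's result; the paper just skips the unpacking and cites it.
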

\begin{proof}
    Immediate from \cite[Proposition 5.5]{shotton2016local}.
\end{proof}
\subsubsection{}\label{subsubsec:translate_appendix_1ERL_endo}
    In light of Lemma  \ref{lem:R_q_ord_sm_endo}, 
 we take the ``admissible'' primes in Notation \ref{notation:admissible_appendix} to be the BD-admissible ones for $\rho_{\pi_i}$. Then the notion of $n$-admissible  in Definition \ref{def:appendix_admissible_etc}(\ref{def:appendix_n_admissible_part}) coincides with the notion of $n$-BD-admissible from Definition \ref{def:admissible_endoscopic}; we will always say $n $-BD-admissible to avoid confusion. 

\begin{notation}
\leavevmode
\begin{enumerate}
    \item For $i = 1, 2 $
and a squarefree integer $Q \geq 1$ coprime to $p$, let $\widetilde R_{\m, i} ^Q $
be the global deformation ring of  $$\overline\rho_{\pi_i}: G_{\Q, S\union\divisors (Qp)}\rightarrow\GL_2 (O/\varpi), $$
with fixed determinant $\chi_p ^\sick $. 
\item  Let $R^Q_i$ and -- when $Q$ is BD-admissible for $\rho_{\pi_i}$ -- $R_{Q,i}$
be the quotients of $\widetilde R_{\m, i}^Q$ defined in Notation \ref{notation:appendix_global_rings}(\ref{notation:appendix_global_rings_firstdef},\ref{notation:appendix_global_rings_orddef}). (We are identifying $Q$ with $\div(Q)$ for notational convenience.)
\item Let $\rho_{Q,i}^\univ: G_{\Q, S\cup \div(Qp)} \to \GL_2(R_{Q,i})$ be a framing of the universal deformation, with underlying $R_{Q,i}[G_\Q]$-module $M_{Q,i}^\univ$. 
\end{enumerate}
\end{notation}
\subsubsection{}
As in the non-endoscopic case,
let $\pr_p: I_{\Q_q} \to \Z_p(1)$ be the maximal pro-$p$  quotient. By the construction of $R_{q,i}^\ord$, we have:
\begin{lemma}\label{lem:t_q_endo}
    Suppose $Q\geq 1$ is BD-admissible for $\rho_{\pi_i}$, and $q|Q$ is a prime. Then there exists a basis for $M_{Q,i}^\univ$ and   an element $t_q\in R_{Q,i}$ such that 
    $$\rho^\univ_{Q,i}|_{G_{\Q_q}} = \begin{pmatrix}
         \chi_p^\cyc & \ast \\ 0 & 1
    \end{pmatrix},$$
    and $$\rho^\univ_{Q,i} (g) = \begin{pmatrix}
        1 & \pr_p(g) t_q \\ 0 & 1
    \end{pmatrix},\;\forall g\in I_{\Q_q}.$$
\end{lemma}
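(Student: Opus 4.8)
The plan is to argue exactly as in the proof of Lemma~\ref{lem:t_q_1ERL}, transposed to the $\GL_2$ setting, reading the assertion off the explicit structure of the Steinberg quotient $R_{q,i}^{\ordinary}$ of $R_{q,i}$ recorded in \cite[\S2]{manning2021patching} and \cite{shotton2016local}. First I would note that, since $Q$ is BD-admissible for $\rho_{\pi_i}$, one has $q^2\not\equiv 1 \pmod p$, so the two generalized Frobenius eigenvalues $1$ and $q$ of $\overline\rho_{\pi_i}(\Frob_q)$ are distinct and $\overline\rho_{\pi_i}|_{G_{\Q_q}}$ is unramified and diagonalizable. By construction, $R_{q,i}^{\ordinary}$ is the quotient of $R_{q,i}$ over which the universal deformation is, up to an unramified twist $\eta$ reducing to the trivial character, an extension of $\eta$ by $\eta\chi_{p,\cyc}$; the fixed-determinant condition $\det = \chi_p^{\cyc}$ gives $\eta^2 = 1$, and since $\eta$ reduces to the trivial character and $p>2$ this forces $\eta = 1$. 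Hence, after a suitable change of basis, the framed universal deformation over $R_{q,i}^{\ordinary}$ has the shape $\begin{pmatrix}\chi_p^{\cyc} & \ast \\ 0 & 1\end{pmatrix}$.

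The next step is to restrict to inertia. Since $q\neq p$, the character $\chi_{p,\cyc}$ is unramified at $q$, so on $I_{\Q_q}$ the universal deformation has the form $g\mapsto \begin{pmatrix} 1 & c(g) \\ 0 & 1\end{pmatrix}$ for a homomorphism $c\colon I_{\Q_q}\to (R_{q,i}^{\ordinary}, +)$; as $R_{q,i}^{\ordinary}$ is a pro-$p$ coefficient ring, $c$ factors through $\pr_p\colon I_{\Q_q}\to \Z_p(1)$, and hence $c = t_q'\cdot\pr_p$ for a unique $t_q'\in R_{q,i}^{\ordinary}$ (the level-raising parameter in Shotton's description). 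Finally, the global deformation problem cutting out $R_{Q,i}$ (Notation~\ref{notation:appendix_global_rings}) imposes the Steinberg condition at each $q\mid Q$, so $\rho_{Q,i}^{\univ}|_{G_{\Q_q}}$ induces a local map $R_{q,i}^{\ordinary}\to R_{Q,i}$; after conjugating the framing $\rho_{Q,i}^{\univ}$ by a suitable element of $\GL_2(R_{Q,i})$ --- equivalently, choosing the basis of $M_{Q,i}^{\univ}$ appropriately at $q$ --- one lands in the asserted form, with $t_q$ the image of $t_q'$.

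I do not expect a genuine obstacle here: once the structure of the Steinberg quotient is invoked, the lemma is close to definitional. The two points needing a little care are (i) pinning down the unramified twist $\eta$ to be trivial, which is exactly where the fixed-determinant normalization is used, and (ii) verifying that the upper-triangular form over the local ring $R_{q,i}^{\ordinary}$ transports to a genuine choice of basis for the global module $M_{Q,i}^{\univ}$ compatibly with the chosen framing --- both are routine bookkeeping.
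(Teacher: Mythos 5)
Your proposal is correct and matches the paper's approach, which simply states the lemma as immediate from the construction of the Steinberg quotient $R_{q,i}^{\ordinary}$; you have supplied the details of that immediacy — pinning down the unramified twist $\eta$ via the fixed-determinant condition and $p>2$, observing that the inertial cocycle factors through $\pr_p$ because the coefficient ring is pro-$p$, and transporting along $R_{q,i}^{\ordinary}\to R_{Q,i}$ — exactly as the paper intends (cf.\ the parallel Lemma~\ref{lem:t_q_1ERL}, which it cites to \cite[Propositions 5.3, 5.5]{shotton2016local} in the $\GSP_4$ case).
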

\begin{definition}
    If $Q$ and  $q\nmid Q$ are BD-admissible for $\rho_{\pi_i}$:
    \begin{enumerate}
        \item Define $P_{q,i} (T) = \det(\rho_Q^\univ(\Frob_q)  - T)\in R_{Q,i}[T].$
        \item Define
$
R_{ Q, q,i} ^\congruent\coloneqq R_{Q,i}\otimes_{R ^ {Qq}_{ \m,i}} R_{Qq,i}.$
    \end{enumerate}
    
\end{definition}
The same proof as Lemma \ref{lem:R_cong} shows:
\begin{lemma}
    Suppose $Q$ and $q\nmid Q$ are BD-admissible for $\rho_{\pi_i}$. Then $$R_{Q,q,i}^\congruent = R_{Q,i}/(P_{q,i} (q)) = R_{Qq,i}/(t_q).$$
\end{lemma}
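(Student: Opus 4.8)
The plan is to transcribe the proof of Lemma~\ref{lem:R_cong} to the present endoscopic setting. First I would unwind the definition: $R_{Q,q,i}^\congruent = R_{Q,i}\otimes_{R^{Qq}_{\m,i}} R_{Qq,i}$ is the universal deformation ring of $\overline\rho_{\pi_i}$ over $G_{\Q, S\cup\div(Qqp)}$ classifying those deformations that simultaneously satisfy the local conditions cutting out $R_{Q,i}$ (crystalline at $p$ with Hodge--Tate weights in $[-1,2]$, Steinberg at the primes dividing $Q$, and unramified at $q$) and those cutting out $R_{Qq,i}$ (in addition, Steinberg at $q$). Equivalently, $\Spec R_{Q,q,i}^\congruent$ is the closed subscheme of $\Spec R_{Q,i}$ where the restriction at $q$, which is already unramified, in addition factors through the Steinberg quotient $R_{q,i}^\ordinary$.

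Next I would establish the equality $R_{Q,q,i}^\congruent = R_{Q,i}/(P_{q,i}(q))$. Since $q$ is BD-admissible for $\rho_{\pi_i}$, the matrix $\overline\rho_{\pi_i}(\Frob_q)$ is regular semisimple with eigenvalues $\set{1,q}$ and determinant $q$; by the explicit description of the Steinberg local deformation ring at $q$ in \cite{shotton2016local, manning2021patching}, an unramified deformation of $\overline\rho_{\pi_i}|_{G_{\Q_q}}$ factors through $R_{q,i}^\ordinary$ exactly when $q$ occurs as a generalized eigenvalue of the Frobenius, i.e.\ precisely on the closed subscheme $\Spec\bigl(R_{Q,i}/(P_{q,i}(q))\bigr)\subset\Spec R_{Q,i}$. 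This gives the first equality.

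For the equality $R_{Q,q,i}^\congruent = R_{Qq,i}/(t_q)$, I would apply Lemma~\ref{lem:t_q_endo} with $Q$ replaced by $Qq$: in the resulting basis of $M_{Qq,i}^\univ$ the universal deformation $\rho_{Qq,i}^\univ$ is upper-triangular over $G_{\Q_q}$, with $g\in I_{\Q_q}$ acting through $\begin{pmatrix}1 & \pr_p(g)\,t_q\\ 0 & 1\end{pmatrix}$, so a deformation in $\Spec R_{Qq,i}$ is unramified at $q$ if and only if $t_q = 0$. The fibre product $R_{Q,q,i}^\congruent$ therefore equals $R_{Qq,i}/(t_q)$.

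I do not expect any of this to present a genuine obstacle, as the argument is essentially identical to that of Lemma~\ref{lem:R_cong}; the only point deserving a line of justification is the identification of ``unramified deformations lying in the Steinberg quotient'' with the vanishing locus of $P_{q,i}(q)$, which follows from the structure of the local deformation rings at $q$ combined with the BD-admissibility hypothesis (Definition~\ref{def:admissible_endoscopic}).
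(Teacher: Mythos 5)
Your proof is correct and follows the same route the paper intends: the paper simply writes ``The same proof as Lemma~\ref{lem:R_cong} shows'' and omits the details, and your argument is a faithful transcription of that proof of Lemma~\ref{lem:R_cong} to the $\GL_2$/Steinberg setting, with the two identifications obtained exactly as there (the eigenvalue criterion for an unramified point of $\Spec R_{Q,i}$ to lie on the Steinberg locus, and the vanishing of $t_q$ from Lemma~\ref{lem:t_q_endo} on $\Spec R_{Qq,i}$).
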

\begin{lemma}
    Suppose $q$ is $n$-BD-admissible for $\rho_{\pi_i}$. Then
$$H^1(\Q_q, \ad^0 \rho_{\pi_i, n}) = H^1_\ord(\Q_q, \ad^0 \rho_{\pi_i,n}) \oplus H^1_\unr(\Q_q, \ad^0\rho_{\pi_i, n}),$$ in which each factor is free of rank one over $O/\varpi^n$.
    In particular, $q$ is standard in the sense of Definition \ref{appendix definition standard}.
\end{lemma}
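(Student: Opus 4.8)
This lemma is the endoscopic analogue of Lemma~\ref{ordinary local condition is standard Lemma}, applied to the two-dimensional Galois representation $\rho_{\pi_i}$ in place of $\rho_\pi$, and the plan is to imitate that proof essentially verbatim while invoking the local deformation-theoretic facts already assembled for $\rho_{\pi_i}$. So the first step is to recall that, because $q$ is $n$-BD-admissible for $\rho_{\pi_i}$, the residual representation $\overline\rho_{\pi_i}|_{G_{\Q_q}}$ is unramified with $\Frob_q$ acting with eigenvalues $1$ and $q$, and $q^2\not\equiv 1\pmod p$; consequently $H^0(\Q_q,\ad^0\rho_{\pi_i,n})$ and $H^0(\Q_q,\ad^0\rho_{\pi_i,n}(1))$ are each free of rank one over $O/\varpi^n$ (the invariant and coinvariant lines coming from the $\begin{pmatrix}q&0\\0&1\end{pmatrix}$ structure), so by the local Euler characteristic formula $H^1(\Q_q,\ad^0\rho_{\pi_i,n})$ is free of rank two over $O/\varpi^n$.

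Next I would identify the unramified and ordinary subspaces. The quotient $H^1(\Q_q,\ad^0\rho_{\pi_i,n})/H^1_\unr(\Q_q,\ad^0\rho_{\pi_i,n})$ is naturally $\Hom_{\Frob_q}(\Z_p(1),\ad^0\rho_{\pi_i,n})$, which is free of rank one over $O/\varpi^n$ by the eigenvalue condition; hence $H^1_\unr$ is itself free of rank one. Using Lemma~\ref{lem:R_q_ord_sm_endo}, the ordinary (Steinberg) local deformation ring $R_{q,i}^\ordinary$ is formally smooth over $O$ of dimension $3$, so its reduced tangent space $Z^1_\ordinary(\Q_q,\ad^0\rho_{\pi_i,n})$ is free of rank $3$ over $O/\varpi^n$; the same holds for the unramified cocycle group $Z^1_\unr(\Q_q,\ad^0\rho_{\pi_i,n})$, which is free of rank $3$ as well (rank $2$ of coboundaries plus rank $1$ of unramified classes, or directly from smoothness of the unramified local deformation ring). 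The explicit description of $R_{q,i}^\ordinary$ from Lemma~\ref{lem:t_q_endo}, namely that a universal ordinary lift has the form $\begin{pmatrix}\chi_p^\cyc&\ast\\0&1\end{pmatrix}$ and is ramified precisely through the upper-right entry, shows that the image of $H^1_\ordinary$ in $H^1/H^1_\unr$ is all of it, so that $H^1_\ordinary + H^1_\unr = H^1$ and thus $H^1_\ordinary$ is free of rank one, with $H^1_\ordinary\cap H^1_\unr=0$ by comparing ranks ($1+1=2$). Therefore $H^1(\Q_q,\ad^0\rho_{\pi_i,n})=H^1_\ordinary\oplus H^1_\unr$ with each summand free of rank one over $O/\varpi^n$, which is the first assertion.

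For the final sentence, ``standard'' in the sense of Definition~\ref{appendix definition standard} requires exactly that the unramified and ordinary local conditions at $q$ be complementary (direct sum equal to all of $H^1$) with each of rank one, together with the compatibility of $R_{q,i}^\ordinary$ and $R_{q,i}^\unr$ being smooth of the expected dimension — all of which we have just verified; so this follows formally, after matching the appendix's conventions to those fixed in \S\ref{subsubsec:translate_appendix_1ERL_endo}. I expect the only mild friction to be bookkeeping: making sure the ``admissible'' primes of the appendix are interpreted as the BD-admissible ones for $\rho_{\pi_i}$ (as stipulated in \S\ref{subsubsec:translate_appendix_1ERL_endo}), so that Definition~\ref{appendix definition standard} is being checked against the correct $R_q^\ordinary$; there is no genuine new content beyond transcribing the $\GL_2$ analogue of Lemma~\ref{ordinary local condition is standard Lemma}.
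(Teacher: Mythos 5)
Your proof is correct, but it takes a genuinely different and more indirect route than the paper's. The paper works entirely by hand: in the basis supplied by Lemma~\ref{lem:t_q_endo}, it simply exhibits the two cocycles $\Frob_q\mapsto\bigl(\begin{smallmatrix}1&0\\0&-1\end{smallmatrix}\bigr),\ \tau_q\mapsto 0$ and $\Frob_q\mapsto 0,\ \tau_q\mapsto\bigl(\begin{smallmatrix}0&1\\0&0\end{smallmatrix}\bigr)$ (for $\tau_q$ a topological generator of tame pro-$p$ inertia), observes that they span $H^1(\Q_q,\ad^0\rho_{\pi_i,n})$ with the first generating $H^1_\unr$ and the second $H^1_\ord$, and reads off the direct-sum decomposition and freeness at once. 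This is the natural thing to do in the $\GL_2$ case because $\ad^0$ is three-dimensional and everything is visibly diagonal with respect to the $\Frob_q$-eigenvalues $q,1,q^{-1}$. Your argument instead transplants the rank-counting scheme from the proof of Lemma~\ref{ordinary local condition is standard Lemma}: compute $\lg H^1=2n$ via the local Euler characteristic and duality, identify $H^1/H^1_\unr$ with $\Hom_{\Frob_q}(\Z_p(1),\ad^0\rho_{\pi_i,n})$, use the formal smoothness of $R_{q,i}^\ordinary$ and $R_{q,i}^\unr$ from Lemma~\ref{lem:R_q_ord_sm_endo} to pin down $\lg Z^1_\ord=\lg Z^1_\unr=3n$ and hence $\lg H^1_\ord=\lg H^1_\unr=n$, use Lemma~\ref{lem:t_q_endo} to see $H^1_\ord\twoheadrightarrow H^1/H^1_\unr$, and conclude by the equality $n+n=2n$. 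This is more machinery than the two-dimensional case needs, but it has the merit of being the argument that generalizes (it is exactly what the non-endoscopic proof must do, where the intersection $H^1_\ord\cap H^1_\unr$ is genuinely nonzero). Two small expository points to tighten: the Euler characteristic computation gives the $O$-length of $H^1$, not its freeness over $O/\varpi^n$ -- in your scheme freeness of $H^1$ should be deduced at the end from the direct sum of two free rank-one pieces, not asserted up front; and when you invoke Definition~\ref{appendix definition standard}, condition (\ref{appendix definition standard part I}) asking for an unramified ordinary lift $\tau_q:G_{\Q_q}\to\GL_2(O)$ deserves a word -- the diagonal unramified lift $\Frob_q\mapsto\mathrm{diag}(q,1)$ manifestly lies in the Steinberg locus, which the paper also leaves implicit.
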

\begin{proof}
Let $\Frob_q\in G_{\Q_q}$ be a lift of Frobenius, and let $\tau_q\in I_{\Q_q}$ be an element such that $\pr_p(\tau_q) = 1$. Then, with respect to the basis in Lemma  \ref{lem:t_q_endo}, $H^1(\Q_q, \ad^0 \rho_{\pi_i,n})$ is spanned by the following two cocycles:
\begin{align*}\Frob_q&\mapsto \begin{pmatrix}
     1 & 0 \\  0& -1 
\end{pmatrix}, \;\;\tau_q\mapsto 0 \\
\Frob_q  &\mapsto 0, \;\;\;\;\;\;\;\;\;\;\;\;\;\;\;\;\tau_q \mapsto \begin{pmatrix}
     0 & 1 \\ 0 & 0 
\end{pmatrix}.\end{align*}
The first generates $H^1_\unr(\Q_q, \ad^0 \rho_{\pi_i, n})$ and the second $H^1_\ord (\Q_q, \ad^0 \rho_{\pi_i,n})$; also, both cocycles are clearly not $\varpi^{n-1}$-torsion, and the lemma follows.
\end{proof}

\subsubsection{}\label{subsubsec:pseudo_hecke}
For the next two lemmas, we introduce some temporary notation. Let $Q \geq 1$
be  admissible,  let $K$ be an $S$-tidy level structure for $\spin(V_{DQ})$, and abbreviate $\T \coloneqq \T^{S\cup \div(Q)}_{K, V_{DQ},O, \m}$, which may be the zero ring.
Also fix an isomorphism $\iota: \overline\Q_p \isomorphism \C$ inducing the prime $\p$ of $E_0$.
  Then we write $\mathcal T$ for the set of relevant automorphic representations $\Pi$ of $\spin(V_D)(\A)$ such that $\Pi_f^K \neq 0$, and the Hecke action on $\iota^{-1} \Pi_f^K$ factors through $\T$. 
  By Corollary \ref{cor:T_embedding}, we have the embedding of $\T$-algebras
  \begin{equation}\label{eq:embedding_of_T_algebras}
\T\hookrightarrow\bigoplus_{\Pi\in \mathcal T} \overline\Q_p(\Pi),
  \end{equation}
where $\overline\Q_p(\Pi)$ has Hecke action through the eigenvalues on $\iota^{-1} \Pi_f^K$. 
Then by \cite[Corollary 1.14]{chenevier2014pseudocharacters}, there is a canonical pseudorepresentation
\begin{equation}
D_Q: G_{\Q,S\union\divisors (Qp)}\rightarrow\T,
\end{equation}
such that for all $\Pi\in \mathcal T$, the composite of $D_Q$ with  the character $\lambda_\Pi: \T \to \overline \Q_p(\Pi)$ is the pseudorepresentation associated to $\rho_{\Pi,\iota}$. 
Finally, let $\mathcal T_{\textrm{end}}\subset \mathcal T$ be the subset of endoscopic representations, and
let $\overline \T$ be the quotient of $\T$ defined by the actions on $\Pi\in\mathcal T_{\textrm{end}}$. 
\begin{lemma}\label{lem:pseudo_properties}
\leavevmode
\begin{enumerate}
    \item \label{lem:pseudo_properties_one} The pseudodeformation $D_Q $
 is induced by an $O $-algebra morphism $R ^ Q_\m\rightarrow\T $.
\item\label{lem:pseudo_properties_two}
The composite  $R^Q_\m \to \T \to\overline \T $ factors through $R^Q_\m/J^Q_\red$.
\end{enumerate}
\end{lemma}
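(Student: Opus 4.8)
The plan is to obtain both parts from universal properties: part (\ref{lem:pseudo_properties_one}) from that of $R^Q_\m$ as the universal torsion-crystalline-at-$p$ pseudodeformation ring of $D_{\overline\rho_\pi}$ with Hodge--Tate weights in $[-1,2]$, and part (\ref{lem:pseudo_properties_two}) from the defining property of the reducibility ideal.

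For (\ref{lem:pseudo_properties_one}), first I would record that $\T\coloneqq\T^{S\cup\divisors(Q)}_{K,V_{DQ},O,\m}$ lies in $\CNL_O$: it is a local quotient of $\T^{S\cup\divisors(Q)}_O$ acting on finitely generated cohomology, hence module-finite over the complete ring $O$ and $\m$-adically complete, with residue field $k=O/\varpi$ because $\m$ has residue field $k$. Next, $D_Q$ is a deformation of $D_{\overline\rho_\pi}$: for each $\Pi\in\mathcal T$ the composite $\lambda_\Pi\circ D_Q$ is the pseudorepresentation of $\rho_{\Pi,\iota}$, whose semisimplified reduction modulo $\varpi$ is $\overline\rho_\pi$ (as the Hecke eigenvalues of $\Pi$ are congruent to those of $\pi$ modulo $\varpi$), so $D_Q\bmod\m=D_{\overline\rho_\pi}$. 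It remains to check that $D_Q|_{G_{\Q_p}}$ is torsion crystalline with Hodge--Tate weights in $[-1,2]$. Here I would use that $\T$ is reduced and finite flat over $O$ — it embeds into $\bigoplus_{\Pi\in\mathcal T}\overline\Q_p(\Pi)$ by Corollary \ref{cor:T_embedding} — together with the fact that, $K_p$ being hyperspecial, each $\rho_{\Pi,\iota}|_{G_{\Q_p}}$ is crystalline with Hodge--Tate weights $\set{-1,0,1,2}$ (Theorem \ref{thm:rho_pi_LLC}(\ref{part:rho_pi_LLC_HT})); since the torsion-crystalline condition for pseudodeformations cuts out a closed subscheme of the pseudodeformation space and is compatible with the flat, reduced structure (\cite[\S2.5]{wake2019deformation}, resting on the integral $p$-adic Hodge theory recalled in \S\ref{subsubsec:torsion_cryst}), membership can be checked at the characteristic-zero points, where it is the displayed crystallinity. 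The universal property of $R^Q_\m$ then yields the $O$-algebra map $R^Q_\m\to\T$ inducing $D_Q$.

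For (\ref{lem:pseudo_properties_two}), by construction $\overline\T$ is the image of $\T$ in $\bigoplus_{\Pi\in\mathcal T_{\textrm{end}}}\overline\Q_p(\Pi)$, hence embeds there, so it suffices to show that for each $\Pi\in\mathcal T_{\textrm{end}}$ the composite $R^Q_\m\to\T\xrightarrow{\lambda_\Pi}\overline\Q_p(\Pi)$ kills $J^Q_\red$; the image of $J^Q_\red$ in $\overline\T$ then lies in $\bigcap_{\Pi}\ker(\overline\T\to\overline\Q_p(\Pi))=0$, which is the desired factorization. Fix $\Pi\in\mathcal T_{\textrm{end}}$ and let $B_\Pi\subseteq\overline\Q_p(\Pi)$ be the image of $\T$, a domain which is finite flat over $O$, hence closed in $\overline\Q_p$. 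Since $\Pi$ is endoscopic, its $\GSP_4$-transfer is endoscopic, so $\rho_{\Pi,\iota}\cong\rho_{\sigma_1,\iota}\oplus\rho_{\sigma_2,\iota}$ for cuspidal $\GL_2$-automorphic representations $\sigma_1,\sigma_2$ of weights $2$ and $4$ (by (\ref{eq:intro_endoscopic}) and Remark \ref{rmk:rho_Pi}); thus, using part (\ref{lem:pseudo_properties_one}), the pushforward of the universal pseudodeformation along $R^Q_\m\to B_\Pi$ equals $D_{\rho_{\sigma_1,\iota}}+D_{\rho_{\sigma_2,\iota}}$. Because the two blocks of $\rho_{\Pi,\iota}$ have disjoint Hodge--Tate weights, the characteristic polynomial of each $\rho_{\sigma_i,\iota}(\Frob_\l)$ is determined by that of $\rho_{\Pi,\iota}(\Frob_\l)$ — hence by the $\GSP_4$-Hecke eigenvalues of $\Pi$, so it lies in $B_\Pi$ — for all $\l$ with $\l(\l^2-1)$ a unit in $\overline\Z_p$; since these $\l$ form a positive-density set and $B_\Pi$ is closed, continuity and Chebotarev show each $D_{\rho_{\sigma_i,\iota}}$ is $B_\Pi$-valued. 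So the pushforward of the universal pseudodeformation to $B_\Pi$ is a sum of two pseudodeformations of degree two, i.e. lies in the reducibility locus, whence the corresponding map $\widetilde R^Q_\m\to B_\Pi$ kills $\widetilde J^Q_\red$ by \cite[Proposition 4.2.2]{wake2019deformation} — using that $\overline\rho_\pi=\overline\rho_{\pi_1}\oplus\overline\rho_{\pi_2}$ is multiplicity free (Assumption \ref{assumptions_on_p}(\ref{assume:irreducible}) and Lemma \ref{lem:residually distinct}). Hence $R^Q_\m\to B_\Pi$ kills $J^Q_\red$, as required.

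The main obstacle is part (\ref{lem:pseudo_properties_one}): controlling the torsion-crystalline condition for the pseudodeformation $D_Q$ over the $O$-flat algebra $\T$, i.e. the passage from crystallinity of the individual characteristic-zero representations $\rho_{\Pi,\iota}$ to membership of $D_Q$ in the torsion-crystalline locus. Part (\ref{lem:pseudo_properties_two}) is then essentially formal, the only bookkeeping being the splitting of the pseudorepresentation at each endoscopic point — which is exactly the Hodge--Tate-weight distinction already exploited in the construction of the endoscopic transfers.
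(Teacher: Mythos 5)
Your argument for part (\ref{lem:pseudo_properties_one}) is correct and follows essentially the same route as the paper: use the embedding $\T\hookrightarrow\bigoplus_\Pi\overline\Q_p(\Pi)$ from Corollary \ref{cor:T_embedding} to reduce factoring through $R^Q_\m$ to a check at each $\Pi$, where it follows from crystallinity of $\rho_{\Pi,\iota}$. The surrounding discussion about the torsion-crystalline locus being closed and compatible with the flat reduced structure is not needed — once you have the embedding, you can directly observe that the kernel of $\widetilde R^Q_\m\twoheadrightarrow R^Q_\m$ dies in each $\overline\Q_p(\Pi)$ and hence in $\T$.

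For part (\ref{lem:pseudo_properties_two}), the reduction to checking that each $\lambda_\Pi\circ d$ ($\Pi\in\mathcal T_{\textrm{end}}$) kills $J^Q_\red$ matches the paper, but the step in which you argue that the characteristic polynomial of each $\rho_{\sigma_i,\iota}(\Frob_\l)$ lies in $B_\Pi$ is flawed as written. You claim that ``because the two blocks of $\rho_{\Pi,\iota}$ have disjoint Hodge--Tate weights, the characteristic polynomial of each $\rho_{\sigma_i,\iota}(\Frob_\l)$ is determined by that of $\rho_{\Pi,\iota}(\Frob_\l)$.'' Hodge--Tate weights are local data at $p$ and do not tell you how to partition the four eigenvalues of $\rho_{\Pi,\iota}(\Frob_\l)$ into the two pairs belonging to $\rho_{\sigma_1,\iota}(\Frob_\l)$ and $\rho_{\sigma_2,\iota}(\Frob_\l)$; at best they give the product $ab$ and $cd$ via the determinants, which (combined with the elementary symmetric functions of $\{a,b,c,d\}$) still leaves a residual ambiguity $\{a+b\}\leftrightarrow\{c+d\}$ — exactly the one you need to resolve. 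The correct mechanism is the \emph{residual} distinctness: $\overline\rho_{\pi_1}\not\cong\overline\rho_{\pi_2}$ (Lemma \ref{lem:residually distinct}), so for a positive-density set of $\l$ the mod-$\varpi$ quadratic factors are coprime, Hensel's lemma lifts the factorization uniquely over $B_\Pi$, and Chebotarev then forces the constituent pseudocharacters to be $B_\Pi$-valued. (The condition ``$\l(\l^2-1)$ a unit'' you impose is not the right one and does not by itself guarantee the factorization is unambiguous.) Moreover, this entire $B_\Pi$-rationality discussion is not actually needed: once the residual pseudorepresentation is multiplicity-free with exactly two irreducible constituents, the generalized matrix algebra structure over $B_\Pi$ already supplies the diagonal idempotents, and reducibility of $\rho_{\Pi,\iota}$ over the fraction field $\overline\Q_p$ forces the off-diagonal ideals of the GMA over $B_\Pi$ to vanish. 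The paper's terse statement ``because $\rho_{\Pi,\iota}$ is reducible'' is invoking exactly this standard feature of the Wake--Wang-Erickson / Bella\"iche--Chenevier formalism.
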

\begin{proof}
By definition, $D_Q $
is induced by an $O $-algebra morphism $d:\widetilde R_\m ^ Q\rightarrow\T $.
By (\ref{eq:embedding_of_T_algebras}), to prove (\ref{lem:pseudo_properties_one}) it suffices to show each composite map 
$\widetilde R^Q_\m  \xrightarrow{d} \T\xrightarrow{\lambda_\Pi} \overline \Q_p$ factors through $R_\m^Q$, with $\lambda_\Pi$ as in (\ref{subsubsec:pseudo_hecke}); but this is clear because $K_p$ is hyperspecial, so $\rho_{\Pi,\iota}$ is crystalline at $p$ for all such $\Pi$. 
Similarly, for (\ref{lem:pseudo_properties_two}) it suffices to note that $\lambda_\Pi \circ d$ annihilates $J_\red^Q$ for all $\Pi \in \mathcal T_{\textrm{end}}$, because $\rho_{\Pi,\iota}$ is reducible.
\end{proof}
\subsubsection{}
Let \begin{equation}\label{eq:reducible_to_tensor}R_\m^Q/J_\red^Q \to R_{Q_1,1}\otimes_O R_{Q_2,2}\end{equation} be the canonical map, defined on moduli problems by sending a pair of deformations $\rho_1,\rho_2$ of $\overline\rho_{\pi_1}$ and $\overline\rho_{\pi_2}$ to the pseudorepresentation $D_{\rho_1\oplus \rho_2}$. It follows from \cite[Proposition 4.2.6]{wake2019deformation} that (\ref{eq:reducible_to_tensor}) is surjective.
\begin{lemma}\label{lem:typic_endoscopic}
Write $Q = Q_1\cdot Q_2$ such that each $q|Q_i$ is BD-admissible for $\rho_{\pi_i}$ (Remark \ref{rmk:BD_adm}(\ref{rmk:BD_adm_dichotomy_part})). Then:
\begin{enumerate}
\item\label{lem:typic_endoscopic_one} The map $R^Q_\m /J^Q_\red \to \overline \T$ induced by Lemma \ref{lem:pseudo_properties}(\ref{lem:pseudo_properties_two}) factors through the surjection (\ref{eq:reducible_to_tensor}).
    In particular, $\overline \T$ is an $R_{Q_i,i}$-module for $i = 1,2$.
    \item\label{lem:typic_endoscopic_two} Assume $\sigma(DQ)$ is {even}. For any $j \in \Ann_{R^Q_\m} ({ J_{\red}^Q})$, the 
 $\T$-action on 
  $$jH^3_\et (\Shimura_K(V_{DQ})_{\overline{\Q}}, O(2))_\m$$ factors through $\overline \T$, and as a   $\overline \T$-module, $jH^3_\et (\Shimura_K(V_{DQ})_{\overline{\Q}}, O(2))_\m$  
  is $$(\rho^\univ_{Q_1,1}\otimes_{R_{Q_1,1}} \overline \T,  \rho^\univ_{Q_2, 2}\otimes_{R_{Q_2,2}}\overline \T)\text{-typic}.$$
\end{enumerate}

\end{lemma}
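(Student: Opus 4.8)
The plan is to deduce Lemma \ref{lem:typic_endoscopic} from the same circle of ideas used in the non-endoscopic case (Lemma \ref{lem:RtoT_non_end} and Remark \ref{rmk:typic_decomps}), but now working with pseudodeformations rather than genuine $\GSP_4$-valued deformations, since $\overline\rho_\pi$ is reducible.

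\textbf{Step 1: Proof of (\ref{lem:typic_endoscopic_one}).} First I would observe that the composite $R^Q_\m/J^Q_\red \to \overline\T$ of Lemma \ref{lem:pseudo_properties}(\ref{lem:pseudo_properties_two}) is characterized by the universal property of $R^Q_\m/J^Q_\red$: it corresponds to the reducible pseudorepresentation $D_Q \bmod (\text{kernel to }\overline\T)$, which by construction is $D_{\rho_{\Pi_1,\iota} \oplus \rho_{\Pi_2,\iota}}$-valued fiberwise over the endoscopic $\Pi = \Theta(\pi_1^{D_1}\boxtimes\pi_2^{D_2})\in\mathcal T_{\textrm{end}}$. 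For such $\Pi$, Corollary \ref{cor:coh_relevant} identifies the Galois representation $\rho_{\pi_f}$ with $\rho_{\pi_1,\iota}(-2)$ or $\rho_{\pi_2,\iota}(-2)$, and the other summand is pinned down by the similitude/determinant relation $V_{\pi,\iota} = V_{\pi,\iota}^\vee \otimes \chi_{p,\cyc}$; so the full four-dimensional pseudorepresentation attached to $\Pi$ is $D_{\rho_{\pi_1,\iota}}\oplus D_{\rho_{\pi_2,\iota}}$ with each $\rho_{\pi_j,\iota}$ crystalline (as $K_p$ is hyperspecial) and, for $q|Q_j$, Steinberg-ordinary at $q$ by Lemma \ref{lem:IIa_new}(\ref{part:IIa_Galois}) combined with Corollary \ref{cor:JL_general}(\ref{cor:JL_general_two}) and Theorem \ref{thm:rho_GL2_LLC}(\ref{part:rho_GL2_LLC_1}) — exactly as in the proof of Lemma \ref{lem:RtoT_non_end}(\ref{lem:RtoT_non_end_one}). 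Hence each fiber map $R_{Q_1,1}\otimes_O R_{Q_2,2}\to\overline\Q_p(\Pi)$ factors through the quotients imposing the crystalline-at-$p$ and ordinary-at-$q|Q_i$ conditions, which is precisely the definition of $R_{Q_i,i}$; taking the product over $\Pi\in\mathcal T_{\textrm{end}}$ and using the embedding analogous to (\ref{eq:embedding_of_T_algebras}) for $\overline\T$ gives that $R^Q_\m/J^Q_\red\to\overline\T$ factors through (\ref{eq:reducible_to_tensor}), so $\overline\T$ is an $R_{Q_i,i}$-algebra for $i=1,2$.

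\textbf{Step 2: Proof of (\ref{lem:typic_endoscopic_two}).} Set $M \coloneqq jH^3_\et(\Sh_K(V_{DQ})_{\overline\Q}, O(2))_\m$. First, the $\T$-action on $M$ factors through $\overline\T$: indeed $j\in\Ann_{R^Q_\m}(J^Q_\red)$, so for any $\Pi\in\mathcal T\setminus\mathcal T_{\textrm{end}}$ the image of $j$ in $\overline\Q_p(\Pi)$ is zero because $\rho_{\Pi,\iota}$ is irreducible (so the corresponding pseudorepresentation is \emph{not} reducible, forcing the fiber of $J^Q_\red$ to be the unit ideal and hence $j\mapsto 0$); therefore $jH^3_\et(\cdots)_\m$ is supported only on the endoscopic eigensystems, i.e.\ is an $\overline\T$-module. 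For the typicness, I would argue as in Lemma \ref{lem:RtoT_non_end}(\ref{lem:RtoT_non_end_two}): by Proposition \ref{typic inclusion prop}(\ref{typic inclusion prop part two}) and the $\varpi$-torsion-freeness from Theorem \ref{thm:generic}(\ref{part:thm_generic_two}), it suffices to check the statement after inverting $p$, where Corollary \ref{cor:coh_relevant} gives $H^3_\et(\Sh_K(V_{DQ})_{\overline\Q},\overline\Q_p(2))_\m = \bigoplus_{\Pi}\iota^{-1}\Pi_f^K\otimes\rho_{\pi_f}$ and, for the endoscopic $\Pi$, $\rho_{\pi_f}\in\{\rho_{\pi_1,\iota}(-2),\rho_{\pi_2,\iota}(-2)\}$; grouping the summands according to which $\rho_{\pi_i,\iota}$ appears exhibits $M\otimes\Q_p$ as a direct sum of a $\rho_{Q_1,1}^\univ\otimes\overline\T$-typic part and a $\rho_{Q_2,2}^\univ\otimes\overline\T$-typic part, using the module structures from Step 1 and the residual absolute irreducibility and distinctness of $\overline\rho_{\pi_1},\overline\rho_{\pi_2}$ (Assumption \ref{assumptions_on_p}(\ref{assume:irreducible}) and Lemma \ref{lem:residually distinct}) to invoke Definition \ref{typic definition}.

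\textbf{Main obstacle.} The delicate point is Step 1 — verifying that the universal pseudodeformation restricted to the endoscopic locus genuinely factors through the \emph{ordinary} quotient $R_{Q_i,i}$ at each $q|Q_i$, i.e.\ that the local ramification is Steinberg-type and not merely ``has a Frobenius eigenvalue ratio $q$.'' This requires carefully pinning down which of $\pi_{1,q},\pi_{2,q}$ is ramified (the one whose quaternion algebra changes invariant at $q$) and knowing its local Galois parameter is an unramified twist of Steinberg, which is exactly the content of Lemma \ref{lem:IIa_new}(\ref{part:IIa_Galois}) together with the type-IIa analysis; threading this through the pseudodeformation formalism (where one only sees the characteristic polynomial, not the monodromy operator directly) and matching it with the Steinberg quotient $R_{q,i}^\ordinary$ of \cite{manning2021patching} is where the real care is needed. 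A secondary subtlety is ensuring the factorization through (\ref{eq:reducible_to_tensor}) is compatible with the \emph{integral} structure, which is handled by the $\varpi$-torsion-freeness of the localized cohomology.
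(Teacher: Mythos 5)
Your plan follows the paper's structure closely (restrict to the endoscopic eigensystems, verify local conditions fiberwise, then deduce typicness after inverting $p$ using Corollary \ref{cor:coh_relevant} and $\varpi$-torsion-freeness), but there are two genuine gaps.

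\textbf{The $\pi_i$ versus $\tau_i$ conflation.} A $\Pi\in\mathcal T_{\textrm{end}}$ is endoscopic associated to some pair $(\tau_1,\tau_2)$ of cuspidal automorphic representations of $\GL_2(\A)$ that is \emph{not} in general equal to $(\pi_1,\pi_2)$; it is only residually congruent, i.e. $\overline\rho_{\tau_i,\iota}\cong\overline\rho_{\pi_i,\iota}$ after suitable relabeling. Throughout Step~1 you write $\rho_{\pi_i,\iota}$ where the argument must apply to $\rho_{\tau_i,\iota}$ for the arbitrary $(\tau_1,\tau_2)$ attached to $\Pi$. This is not cosmetic: the factorization through (\ref{eq:reducible_to_tensor}) has to be checked for each character $\lambda_\Pi:\T\to\overline\Q_p(\Pi)$, and the point being parametrized is $\rho_{\tau_i,\iota}$, a deformation of $\overline\rho_{\pi_i}$ (in characteristic zero, potentially genuinely different from $\rho_{\pi_i,\iota}$). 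One also needs to verify that $\tau_{i,\infty}$ and $\pi_{i,\infty}$ have the same discrete series weight — the paper checks this using Lemma \ref{lem:when_endoscopic_relevant} and Fontaine--Laffaille theory — so that $\rho_{\tau_i,\iota}$ has the right Hodge type and hence corresponds to a point of the crystalline quotient $R^{Q_i}_i$.

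\textbf{Identifying the ordinary factor.} Your "main obstacle" paragraph correctly flags the real difficulty, but the proposed route via Lemma \ref{lem:IIa_new}(\ref{part:IIa_Galois}) and the change of quaternionic invariant does not directly determine which of $\rho_{\tau_1,\iota},\rho_{\tau_2,\iota}$ is ramified at a given $q|Q$, only that exactly one is. The paper resolves this via \cite[Propositions 5.3, 5.5]{shotton2016local}: since $q|Q_i$ means $\overline\rho_{\pi_i}(\Frob_q)$ has eigenvalues $\{1,q\}$ (BD-admissibility), and $\rho_{\tau_1,\iota}\oplus\rho_{\tau_2,\iota}|_{G_{\Q_q}}$ is tamely ramified with rank-one monodromy, Shotton's local analysis forces the ramification to sit in the factor whose \emph{residual} Frobenius eigenvalues are $\{1,q\}$, i.e.\ $\rho_{\tau_i,\iota}$. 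This is what ties "which factor is ordinary" to the factorization $Q=Q_1\cdot Q_2$, and it is the reason the statement even makes sense for the \emph{universal} $\rho^\univ_{Q_i,i}$ rather than for $\rho_{\pi_i,\iota}$ alone. With these two points repaired, your Step~2 goes through as you sketched, using Proposition \ref{typic inclusion prop}(\ref{typic inclusion prop part two}) and Theorem \ref{thm:generic}(\ref{part:thm_generic_two}) to reduce to the $\overline\Q_p$-decomposition.
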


\begin{proof}
Arguing as in Lemma \ref{lem:pseudo_properties},
for (\ref{lem:typic_endoscopic_one}) it suffices to consider the pseudorepresentations attached to $\Pi\in \mathcal T_{\textrm{end}}$. Suppose $\Pi$ is associated to a pair $(\tau_1,\tau_2)$ of automorphic representations of $\GL_2(\A)$, and fix an isomorphism $\iota: \overline \Q_p \isomorphism \C$ inducing $\p$.  Then $\rho_{\Pi,\iota} =  \rho_{\tau_1,\iota} \oplus  \rho_{\tau_2,\iota}$. We have 
$\overline\rho_{\tau_1,\iota} \oplus \overline \rho_{\tau_2,\iota} \cong \overline\rho_{\pi_1,\iota} \oplus \overline \rho_{\pi_2,\iota}$, and since $\overline\rho_{\pi_i,\iota}$ are both absolutely irreducible, without loss of generality we may assume $\overline \rho_{\tau_i,\iota} \cong \overline \rho_{\pi_i,\iota}$, $i = 1$, 2. By Lemma \ref{lem:when_endoscopic_relevant} and Fontaine-Laffaille theory, $\pi_{i,\infty}$ and $\tau_{i,\infty}$ have the same weight.

 By Corollary \ref{cor:JL_general}(\ref{cor:JL_general_two}) and Theorem \ref{thm:rho_pi_LLC}(\ref{part:rho_pi_LLC1}), $(\rho_{\tau_1,\iota}\oplus \rho_{\tau_2,\iota})|_{G_{\Q_q}}$ is ramified for all $q|Q$.
 It then follows from \cite[Propositions 5.3, 5.5]{shotton2016local} that $\rho_{\tau_i,\iota}|_{G_{\Q_q}}$ is ordinary for all $q|Q_i$, and unramified for all $q| Q/Q_i$. It is also clear from  Lemma \ref{lem:S_tidy_upshot} and Theorem \ref{thm:rho_pi_LLC}(\ref{part:rho_pi_char})
 that $\det \rho_{\tau_i,\iota} = \chi_p^\cyc$, 
 and $\rho_{\pi_i,\iota}$ and $\rho_{\tau_i,\iota}$ have the same Hodge-Tate weights by Theorem \ref{thm:rho_pi_LLC}(\ref{part:rho_pi_LLC_HT}). Hence $\rho_{\tau_i, \iota}$ arises from a deformation parametrized by $R_{Q_i, i}$, and this proves (\ref{lem:typic_endoscopic_one}).

 For (\ref{lem:typic_endoscopic_two}), 
by Proposition \ref{typic inclusion prop}(\ref{typic inclusion prop part two}) and Theorem \ref{thm:generic}(\ref{part:thm_generic_two}), it suffices to show that $jH^3_\et (\Shimura_K(V_{DQ})_{\overline{\Q}}, \overline\Q_p(2))_\m$ 
is  
$(\rho^\univ_{Q_1,1}\otimes_{R_{Q_1,1}} \overline \T,  \rho^\univ_{Q_2, 2}\otimes_{R_{Q_2,2}}\overline \T)\text{-typic}.$

    For this, we use the decomposition of Corollary \ref{cor:coh_relevant}:
$$H^3_\et (\Shimura_K(V_{DQ})_{\overline{\Q}}, \overline\Q_p(2))_\m = \bigoplus_{\Pi_f} \iota^{-1}\Pi_f^K \otimes \rho_{\Pi_f},$$
as $\Pi_f$
ranges over finite parts of automorphic representations $\Pi\in \mathcal T$. This  is a decomposition of $ R^Q_\m[G_\Q]$-modules, where $ R^Q_\m$ acts on the factor $\iota^{-1}\Pi_f^K\otimes \rho_{\Pi_f}$
via the map $\lambda_\Pi:R^Q_\m \to \T \to \overline\Q_p(\Pi)$
 corresponding to the pseudorepresentation of  $\rho_{\Pi,\iota}$ (equivalently, to the Hecke eigenvalues of $\Pi_f^K$).
In particular, $\lambda_{\Pi}(J_\red^Q) = 0$ if and only if $\rho_{\Pi,\iota}$ is reducible, which by Lemma \ref{lem:reducible_endoscopic} and Remark \ref{rmk:p>5} occurs if and only if $\Pi$ is endoscopic. Because
 the element $j \in R^Q_\m$ annihilates $J_\red^Q$, it then suffices to show that, for all relevant \emph{endoscopic} automorphic representations $\Pi\in \mathcal T_{\textrm{end}}$ associated to a pair $(\tau_1,\tau_2)$, $\rho_{\Pi_f}$ is either $\rho^\univ_{Q_1, 1}\otimes_{ R_{Q_1,1}} \overline \T$-typic or  $\rho^\univ_{Q_2, 2}\otimes_{ R_{Q_2,2}} \overline \T$-typic as a $\overline \T[G_\Q] $-module.
 However, as $\rho_{\Pi_f}= \rho_{\tau_1,\iota}$ or $\rho_{\tau_2,\iota}$ by Corollary \ref{cor:coh_relevant},
 this is clear from the construction of the map $R_{Q_1,1}\otimes_OR_{Q_2,2} \to \overline \T$.
\end{proof}
 \begin{definition}\label{def:H_Q_endo}
 Let $Q$ be admissible, with a factorization $Q = Q_1 \cdot Q_2$ such  that $\sigma(DQ)$ is even and all $q|Q_i$ are BD-admissible for $\rho_{\pi_i}$. Fix an $S$-tidy level structure $K$ for $\spin(V_{DQ})$, and an element $j \in \Ann_{R_\m^Q}(J^Q_\red)$. Then 
    we define
     $$H_Q(K, j) ^{(i)} = \Hom_{R_{Q_i,i}[G_\Q]}(M_{Q_i, i}^\univ, jH^3_\et(\Sh_K(V_{DQ})_{\overline\Q}, O(2))_\m)$$
     for $i = 1,2$. 
 \end{definition}
\begin{rmk}\label{rmk:typic_decomps_endo}
In the context of Definition \ref{def:H_Q_endo},
    by \cite[Proposition 5.3]{scholze2018lubintate} and Lemma \ref{lem:typic_endoscopic}(\ref{lem:typic_endoscopic_two}), we have
$$jH^3_\et(\Sh_K(V_{DQ})_{\overline\Q}, O(2))_\m \simeq M_{Q_1,1}^\univ \otimes_{R_{Q_1,1}}   H_Q(K, j)^{(1)} \oplus M_{Q_2,2}^\univ \otimes_{R_{Q_2,2}}   H_Q(K, j)^{(2)}.$$
\end{rmk}
\begin{lemma}\label{lem:torsion_inertia_endo}
    In the context of Definition \ref{def:H_Q_endo}, let $q|Q_i$ be a prime. Then under the natural isomorphism 
    \begin{equation*}
        \begin{split}
            H^1\left(I_{\Q_q}, H^3_\et(\Sh_K(V_{DQ})_{\overline \Q}, O(2))_\m\right) \simeq H^1(I_{\Q_q}, M_{Q_1,1}^\univ)\otimes_{R_{Q_1,1}} H_Q(K,j)^{(1)} \oplus \\H^1(I_{\Q_q}, M_{Q_2,2}^\univ)\otimes_{R_{Q_2,2}} H_Q(K,j)^{(2)},
        \end{split}
    \end{equation*}
    the $\varpi$-power torsion of $H^1\left(I_{\Q_q}, jH^3_\et(\Sh_K(V_{DQ})_{\overline \Q}, O(2))_\m\right)$ is contained in 
    \begin{equation*}
        \begin{split}
H^1\left(I_{\Q_q}, jH^3_\et(\Sh_K(V_{DQ})_{\overline \Q}, O(2))_\m\right)^{\Frob_q = 1} \simeq H^1(I_{\Q_q}, M_{Q_i,i}^\univ)^{\Frob_q = 1}\otimes_{R_{Q_i,i}} H_Q(K,j)^{(i)} \\ \simeq H_Q(K, j)^{(i)}/ (t_q).
        \end{split}
    \end{equation*}
\end{lemma}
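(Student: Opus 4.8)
The statement is the exact analogue, in the endoscopic setting, of Lemma \ref{lem:new_torsion_inertia_1ERL} from the non-endoscopic case, and the plan is to follow that proof essentially verbatim after setting up the right typic decomposition. By Remark \ref{rmk:typic_decomps_endo}, we already have the $R_{Q_1,1}$-module $H_Q(K,j)^{(1)}$ and $R_{Q_2,2}$-module $H_Q(K,j)^{(2)}$ together with an isomorphism of $R^Q_\m[G_\Q]$-modules
$$jH^3_\et(\Sh_K(V_{DQ})_{\overline\Q}, O(2))_\m \simeq M_{Q_1,1}^\univ \otimes_{R_{Q_1,1}} H_Q(K,j)^{(1)} \oplus M_{Q_2,2}^\univ \otimes_{R_{Q_2,2}} H_Q(K,j)^{(2)}.$$
Since cohomology commutes with the finite-rank base changes $R_{Q_i,i} \to $ (endomorphisms), applying $H^1(I_{\Q_q}, -)$ to this decomposition gives the first claimed isomorphism; here one uses that $M_{Q_i,i}^\univ$ is free over $R_{Q_i,i}$, so that $H^1(I_{\Q_q}, M_{Q_i,i}^\univ \otimes_{R_{Q_i,i}} H_Q(K,j)^{(i)}) \simeq H^1(I_{\Q_q}, M_{Q_i,i}^\univ) \otimes_{R_{Q_i,i}} H_Q(K,j)^{(i)}$ because $I_{\Q_q}$-cohomology of a free module is computed by a two-term complex of free modules (as $I_{\Q_q}$ has cohomological dimension one after the pro-$p$ truncation).

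First I would fix $q|Q_i$ and analyze the local structure of $M_{Q_i,i}^\univ$ as a $G_{\Q_q}$-module. By Lemma \ref{lem:t_q_endo}, there is a basis in which $\rho^\univ_{Q_i,i}|_{G_{\Q_q}}$ is upper-triangular with diagonal $(\chi_p^\cyc, 1)$, and $\rho^\univ_{Q_i,i}$ restricted to inertia is unipotent with ramification controlled by $t_q\in R_{Q_i,i}$. A direct computation of $H^1(I_{\Q_q}, -)$ then gives a decomposition of $R_{Q_i,i}$-modules with $\Frob_q$-action
$$H^1(I_{\Q_q}, M_{Q_i,i}^\univ) \simeq R_{Q_i,i}/(t_q) \oplus R_{Q_i,i}(-1) \oplus R_{Q_i,i}(-1),$$
exactly as in the proof of Lemma \ref{lem:new_torsion_inertia_1ERL}, where the last two summands (the ``$-1$'' Tate twists) carry a $\Frob_q$-action multiplying by $q^{\pm 1}$, so that $\Frob_q - 1$ acts invertibly on them since $q$ is $n$-BD-admissible (hence $q^2\not\equiv 1 \bmod \varpi$). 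Only the summand $R_{Q_i,i}/(t_q)$ is fixed by $\Frob_q$, and it equals $H^1(I_{\Q_q}, M_{Q_i,i}^\univ)^{\Frob_q = 1}$.

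Next I would tensor with $H_Q(K,j)^{(i)}$, which is $\varpi$-torsion-free by Theorem \ref{thm:generic}(\ref{part:thm_generic_two}) (as a submodule of the $\varpi$-torsion-free module $H^3_\et(\Sh_K(V_{DQ})_{\overline\Q}, O(2))_\m$). Then $R_{Q_i,i}(-1) \otimes_{R_{Q_i,i}} H_Q(K,j)^{(i)}$ is also $\varpi$-torsion-free, so all $\varpi$-power torsion of $H^1(I_{\Q_q}, M_{Q_i,i}^\univ) \otimes_{R_{Q_i,i}} H_Q(K,j)^{(i)}$ lands in the $\Frob_q$-fixed summand $R_{Q_i,i}/(t_q) \otimes_{R_{Q_i,i}} H_Q(K,j)^{(i)} = H_Q(K,j)^{(i)}/(t_q)$. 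For the complementary index $3-i$: since $q$ is BD-admissible for $\rho_{\pi_i}$, by Remark \ref{rmk:BD_adm}(\ref{rmk:BD_adm_dichotomy_part}) it is \emph{not} BD-admissible for $\rho_{\pi_{3-i}}$, and in fact the eigenvalues of $\Frob_q$ on $\overline\rho_{\pi_{3-i}}$ are $\set{\alpha, q/\alpha}$ with $\alpha \notin \set{\pm 1, \pm q, q^2, q^{-1}}$; this forces $H^0(\Q_q, M_{Q_{3-i},3-i}^\univ \otimes_{R_{Q_{3-i},3-i}} H_Q(K,j)^{(3-i)} ) = 0$ and hence $H^1(I_{\Q_q}, M_{Q_{3-i},3-i}^\univ)^{\Frob_q=1} \otimes H_Q(K,j)^{(3-i)} = 0$, while on the torsion of $H^1(I_{\Q_q},M_{Q_{3-i},3-i}^\univ)$ the operator $\Frob_q - 1$ acts invertibly because $q$ is unramified for that deformation and $\Frob_q$ has no eigenvalue $1$ modulo $\varpi$ on $M_{Q_{3-i},3-i}^\univ$. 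Combining, the full $\varpi$-power torsion of $H^1(I_{\Q_q}, jH^3_\et(\Sh_K(V_{DQ})_{\overline\Q}, O(2))_\m)$ lies in $H_Q(K,j)^{(i)}/(t_q)$, which is also the $\Frob_q$-invariants, as claimed. The main subtlety — as in Lemma \ref{lem:new_torsion_inertia_1ERL} — is keeping track of which Tate-twisted summands survive the $\Frob_q$-fixed-point functor and why the $\varpi$-torsion-freeness of $H_Q(K,j)^{(i)}$ suffices to pin down the torsion in the tensor product; there is no genuinely new difficulty beyond carefully transporting that argument across the typic decomposition of Remark \ref{rmk:typic_decomps_endo} and invoking the admissibility conditions on $q$ at the complementary index.
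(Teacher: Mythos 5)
Your overall strategy matches the paper's: reduce to the typic decomposition of Remark \ref{rmk:typic_decomps_endo}, compute $H^1(I_{\Q_q},-)$ of each universal module factor separately, and then exploit the $\varpi$-torsion-freeness of $H_Q(K,j)^{(i)}$ together with the invertibility of $\Frob_q - 1$ on the Tate-twisted summands. However, there is a concrete computational error: $M_{Q_i,i}^\univ$ is free of rank $2$ over $R_{Q_i,i}$, so after the rank-one inertia action of Lemma \ref{lem:t_q_endo} is taken into account, $H^1(I_{\Q_q}, M_{Q_i,i}^\univ)$ has generic rank $1$, not $2$. The correct formula is $R_{Q_i,i}/(t_q) \oplus R_{Q_i,i}(-1)$, with only one $(-1)$-twisted summand; you appear to have transcribed the rank-four formula from the proof of Lemma \ref{lem:new_torsion_inertia_1ERL} (which is $R_Q/(t_q)\oplus R_Q(-1)\oplus M_1(-1)$ because $M_Q^\univ$ there decomposes as $M_0\oplus M_1$ with $M_1$ of rank two) too literally. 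For the complementary index you should also record that $H^1(I_{\Q_q}, M_{Q_{3-i},3-i}^\univ) = M_{Q_{3-i},3-i}^\univ(-1)$ because $\rho_{Q_{3-i},3-i}^\univ$ is unramified at $q$.

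There is also a logical slip in your handling of the complementary index $3-i$: the vanishing $H^0(\Q_q, M_{Q_{3-i},3-i}^\univ\otimes H_Q^{(3-i)}) = 0$ does not imply $H^1(I_{\Q_q},M_{Q_{3-i},3-i}^\univ)^{\Frob_q=1} \otimes H_Q^{(3-i)} = 0$, because the $\Frob_q$-fixed part of $M_{Q_{3-i},3-i}^\univ(-1)$ is the $q$-eigenspace, not the $1$-eigenspace, of $\Frob_q$ on $M_{Q_{3-i},3-i}^\univ$. The condition you want is that $\Frob_q$ has no eigenvalue $\equiv q \pmod{\varpi}$ on the residual representation, i.e.\ $\alpha, q/\alpha \not\equiv q$, which holds since $\alpha\neq q,1 \pmod{\varpi}$ by admissibility; the condition you state (no eigenvalue $\equiv 1$) is the one guaranteeing $H^0=0$, which is not what is needed here. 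Neither error affects the final conclusion, since the downstream argument only uses that the twisted summands are $\varpi$-torsion-free (which follows from freeness of $M_{Q_{3-i},3-i}^\univ$ and $\varpi$-torsion-freeness of $H_Q^{(3-i)}$) and that $\Frob_q - 1$ is invertible on them, but both statements should be corrected.
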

(The element $t_q \in R_{Q_i, i}$ was defined in Lemma \ref{lem:t_q_endo}.)

\begin{proof}
    Without loss of generality, suppose $i = 1$. Then $\rho_{Q_2,2}^\univ|_{G_{\Q_q}}$ is unramified, so by Lemma \ref{lem:t_q_endo}, we have
    $$H^1(I_{\Q_q}, M_{Q_i,i}^\univ) = \begin{cases}
        R_{Q_1,1}/(t_q) \;\oplus R_{Q_1,1}(-1), & i = 1,\\
        M_{Q_2,2}^\univ(-1), & i = 2.
    \end{cases}$$
    Since $H_Q(K, j)^{(1)}$ and  $H_Q(K, j)^{(2)}$ are $\varpi$-torsion-free by Theorem 
    \ref{thm:generic}(\ref{part:thm_generic_two}), the lemma follows as in the proof of Lemma \ref{lem:new_torsion_inertia_1ERL}.
\end{proof}

\begin{lemma}\label{lem:new_typic_upshot_endo}
In the context of Definition \ref{def:H_Q_endo}, for 
    all
    $j \in \Ann_{R_\m^Q}(J_\red^Q)$, all  $z\in \SC_K^2(V_{DQ},O)$,  all $q|Q_i$, and all $\alpha_0 \in \Hom_{R_{Q_i,i}} (H_Q(K, j)^{(i)}/(t_q), O/\varpi^n)$, we have
    $$\alpha_0 \circ j_\ast \circ \res_{\Q_q} \circ\;\partial_{\AJ,\m}(z) \in \partial_q(\kappa_n(Q; K)).$$
\end{lemma}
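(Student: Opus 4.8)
\textbf{Proof proposal for Lemma \ref{lem:new_typic_upshot_endo}.} The plan is to mimic the proof of Lemma \ref{lem:new_typic_upshot} in the non-endoscopic case, building a commutative diagram that identifies the class $\alpha_0 \circ j_\ast \circ \res_{\Q_q} \circ\;\partial_{\AJ,\m}(z)$ with the residue at $q$ of a class belonging to $\kappa_n(Q; K)$. First, I would recall that $T_{\pi_i, n}$ is isomorphic to $M_{Q_i, i}^\univ \otimes_{R_{Q_i,i}} O/\varpi^n$ as an $O[G_\Q]$-module, where $O/\varpi^n$ is regarded as an $R_{Q_i,i}$-algebra via the augmentation corresponding to $\rho_{\pi_i, n}$. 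Given $\alpha_0: H_Q(K,j)^{(i)}/(t_q) \to O/\varpi^n$, I would first extend it (using that $H_Q(K,j)^{(i)}$ is $\varpi$-torsion-free, cf. Theorem \ref{thm:generic}(\ref{part:thm_generic_two})) to a map $H_Q(K,j)^{(i)} \to O/\varpi^n$, still denoted $\alpha_0$; tensoring with $M_{Q_i,i}^\univ$ over $R_{Q_i,i}$ and composing with the projection of Remark \ref{rmk:typic_decomps_endo} and the inclusion $jH^3_\et(\Sh_K(V_{DQ})_{\overline\Q}, O(2))_\m \hookrightarrow H^3_\et(\Sh_K(V_{DQ})_{\overline\Q}, O(2))_\m$, this produces a map of Galois modules
$$\alpha: H^3_\et(\Sh_K(V_{DQ})_{\overline\Q}, O(2))_\m \to T_{\pi_i, n} \hookrightarrow T_{\pi, n},$$
where the last inclusion is the natural splitting $V_{\pi_i} \hookrightarrow V_{\pi}$ reduced mod $\varpi^n$. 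The key point is that $\alpha$ is $\T^{S\cup\div(Q)}_O$-equivariant — this uses Lemma \ref{lem:typic_endoscopic}(\ref{lem:typic_endoscopic_one},\ref{lem:typic_endoscopic_two}), which tell us the Hecke action on $jH^3_\et(\Sh_K(V_{DQ})_{\overline\Q}, O(2))_\m$ factors through $\overline\T$, which is an $R_{Q_i,i}$-module. Hence $\alpha$ defines an element of $\Test_K(V_{DQ}, \pi, O/\varpi^n)$ in the sense of Definition \ref{def:test_vectors} (the multiplication by $j$ is absorbed into $\alpha$), so that $\alpha_\ast \circ \partial_{\AJ,\m}(z) \in \kappa_n^D(Q; K) = \kappa_n(Q;K)$ by Construction \ref{kappa_constr}.

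Next I would assemble the commutative diagram, analogous to the one in the proof of Lemma \ref{lem:new_typic_upshot}:
\begin{center}
\begin{tikzcd}[transform canvas={scale=0.68}]
H^1\left(\Q, jH^3_\et(\Sh_K(V_{DQ})_{\overline\Q}, O(2))_\m\right) \arrow[r]\arrow[d, "\alpha_\ast"] & H^1\left(I_{\Q_q}, jH^3_\et(\Sh_K(V_{DQ})_{\overline\Q}, O(2))_\m\right)^{\Frob_q = 1} \arrow[d] \arrow[r, "\sim"] & H^1(I_{\Q_q}, M_{Q_i,i}^\univ)^{\Frob_q = 1}\otimes_{R_{Q_i,i}} H_Q(K,j)^{(i)} \arrow[d, "\alpha_0"] \arrow[r, "\sim"] & H_Q(K,j)^{(i)}/(t_q)\arrow[d, "\alpha_0"]\\
H^1(\Q, T_{\pi,n}) \arrow[r] & H^1(I_{\Q_q}, T_{\pi,n})^{\Frob_q = 1} \arrow[r, "\sim"] & H^1(I_{\Q_q}, M_{Q_i,i}^\univ)^{\Frob_q = 1} \otimes_{R_{Q_i,i}} O/\varpi^n \arrow[r, "\sim"] & O/\varpi^n,
\end{tikzcd}
\end{center}
where the horizontal isomorphisms in the top row use Lemma \ref{lem:torsion_inertia_endo} and those in the bottom row use Lemma \ref{lem:M0_adm_def} together with Lemma \ref{lem:t_q_endo} applied to the trivial deformation (the $n$-admissibility of $q$ for $\rho_\pi$ implies $q$ is $n$-BD-admissible for $\rho_{\pi_i}$ by Remark \ref{rmk:BD_adm}(\ref{rmk:BD_adm_dichotomy_part})). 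The class $\res_{\Q_q} \circ\;\partial_{\AJ,\m}(z)$ lands in $H^1\left(I_{\Q_q}, jH^3_\et(\Sh_K(V_{DQ})_{\overline\Q}, O(2))_\m\right)^{\Frob_q = 1}$ after applying $j$ and restricting; chasing it around the diagram, its image under $\alpha_0$ along the left-then-bottom path equals $\partial_q(\alpha_\ast \circ \partial_{\AJ,\m}(z)) = \partial_q$ of an element of $\kappa_n(Q;K)$, while along the top-then-right path it equals $\alpha_0 \circ j_\ast \circ \res_{\Q_q} \circ\;\partial_{\AJ,\m}(z)$, which is what we want.

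The main obstacle, as in Lemma \ref{lem:new_typic_upshot}, will be checking that the vertical map in the middle squares of the diagram is compatible — i.e.\ that the composite $\res_{\Q_q} \circ\;\partial_{\AJ,\m}(z)$ really does land, after applying $j$, in the $\Frob_q = 1$ subspace and that the torsion-part identification of Lemma \ref{lem:torsion_inertia_endo} is the one used to define the target of $\alpha_0$. This is where one must be careful that $\partial_{\AJ,\m}(z)$ is a priori a class in $H^1(\Q, -)$ rather than $H^1(I_{\Q_q}, -)$, and that its restriction to $I_{\Q_q}$ is unramified away from the monodromy coming from $Q$; the relevant input is that $H^3_\et(\Sh_K(V_{DQ})_{\overline\Q}, O(2))_\m$ is $\varpi$-torsion-free (Theorem \ref{thm:generic}(\ref{part:thm_generic_two})), so that the restriction map on $I_{\Q_q}$-cohomology behaves well, together with the fact that $q | Q_i$ forces the monodromy to be concentrated in the $i$-th typic factor. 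Once this bookkeeping is in place the diagram chase is formal. Everything else is a routine transcription of the non-endoscopic argument with $M_Q^\univ$, $R_Q$, $H_Q(K)$ replaced by $M_{Q_i,i}^\univ$, $R_{Q_i,i}$, $H_Q(K,j)^{(i)}$.
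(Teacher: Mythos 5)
Your proposal is correct and takes essentially the same route as the paper: pull $\alpha_0$ back to $H_Q(K,j)^{(i)}$, tensor with $M^{\universal}_{Q_i,i}$ over $R_{Q_i,i}$, use the typic decomposition of Remark \ref{rmk:typic_decomps_endo} to view the result as a Galois-module map $jH^3_{\et}(\Sh_K(V_{DQ})_{\overline\Q}, O(2))_\m \to T_{\pi_i,n}\hookrightarrow T_{\pi,n}$, precompose with multiplication by $j$, and then chase the endoscopic analogue of the commutative diagram from Lemma \ref{lem:new_typic_upshot}. One small correction: passing from $\alpha_0$ on $H_Q(K,j)^{(i)}/(t_q)$ to a map on $H_Q(K,j)^{(i)}$ is not an extension and uses no torsion-freeness---you just precompose with the quotient map $H_Q(K,j)^{(i)}\twoheadrightarrow H_Q(K,j)^{(i)}/(t_q)$ (this is silently built into the paper's formula $\alpha=\identity\otimes\alpha_0$); the $\varpi$-torsion-freeness of Theorem \ref{thm:generic}(\ref{part:thm_generic_two}) is used, correctly, only in Lemma \ref{lem:torsion_inertia_endo} to identify the $\Frob_q=1$ part of $H^1(I_{\Q_q}, jH^3_{\et})$ with $H_Q(K,j)^{(i)}/(t_q)$.
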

Here, 
\begin{equation}\label{eq:j_star}
j_\ast: H ^ 1 (I_{\Q_q}, H ^ 3_\et (\Shimura_{K} (V_{DQ})_{\overline\Q}, O (2))_\m)\rightarrow H ^ 1 (I_{\Q_q}, jH ^ 3_\et (\Shimura_{K} (V_{DQ})_{\overline\Q}, O (2))_\m)
\end{equation}
is the natural map, and $O/\varpi^n$ is viewed as an $R_{Q_i,i}$-algebra through the map corresponding to $\rho_{\pi_i,n}$. 
 \begin{proof}
     For any $\alpha_0\in\Home_{R_{Q_i,i}} (H_Q(K; j)^{(i)}/(t_q), O/\varpi ^ n), $
we obtain a corresponding induced map of Galois modules
$$\alpha =\identity\otimes\alpha_0:M ^\universal_{Q_i,i}\otimes_{R_{Q_i,i}} H_Q(K; j)^{(i)}\rightarrow M ^\universal_{Q_i,i}\otimes_{R_{Qi,i}} O/\varpi ^ n =T_{\pi_i, n}. $$
Then by the decomposition from Remark \ref{rmk:typic_decomps_endo}, we can also view $\alpha $
as a map of Galois modules
$$j H ^ 3_\et (\Shimura_{K} (V_{DQ})_{\overline\Q}, O (2))_\m\rightarrow T_{\pi_i, n}. $$
Let $(\alpha\circ j)_\ast$
be the induced map
$$H ^ 1 (\Q, H ^ 3_\et (\Shimura_{K} (V_{DQ})_{\overline \Q}, O (2))_\m)\to H^1(\Q, T_{\pi_i,n}). $$
For any $z\in\SC_{K}^2 (V_{DQ}, O) $, $\kappa_n^D (Q;K) $
contains $$(\alpha\circ j)_\ast(\partial_{\AJ,\m} (z))\in H ^ 1 (\Q,T_{\pi_i, n})\hookrightarrow H^1(\Q, T_{\pi,n}). $$
The lemma now follows as in the proof of Lemma \ref{lem:new_typic_upshot}.
 \end{proof}

\subsection {A test function calculation}\label{subsec:test_fn_calculation}
For this subsection, we fix the following additional data:
\begin{itemize}[label = $\circ$]
\item An integer $n \geq 1$.
\item A squarefree integer $D\geq 1$ with $\div(D) \subset S$, and an $n$-admissible $Q \geq 1$ coprime to $D$, such that $\sigma(DQ)$ is {odd.}
\item An $n$-admissible prime $q\nmid Q$. 
    \item An $S$-level structure $K$ for $\spin(V_{DQ})$.    
    Let $L\subset V_{DQ}\otimes \Q_q$ be the unique self-dual lattice stabilized by $K_q$. 
    \item A $\Z_q$-basis $\set{v_0, v_1, v_2, v_1^\ast, v_2^\ast}$ for $L$ as in (\ref{local notation for split space}), which identifies $V_{DQ}\otimes \Q_q$ with the standard split five-dimensional quadratic space over $\Q_q$.
\end{itemize}
We will apply the results and notations of \S\ref{sec:1ERL_geom}, with the $D$ therein always replaced by $DQq$. However, we do not yet specify the choice of $q$-adic uniformization datum for $V_{DQq}$.  
The goal of this subsection is the crucial Lemma \ref{lem:for_1ERL}.

\subsubsection{}

Let $\phi_q^{(0)}, \phi_q^{(1)},  \phi_q^\star,  \phi_q^\total\in \mathcal S(V_{DQ}^2\otimes \Q_q, \Z)$ be as in (\ref{subsubsec:1ERL_test_fns}).
We also let $\overline\phi_q^?\in \mathcal S(V_{DQ}^2\otimes \Q_q, \overline \F_p)$ 
be the reduction of $\phi_q^?$ for $? = (0)$, $(1)$, $\star$, $\total$. 
\begin{notation}
Without loss of generality, we write the almost level-raising generic character $\chi$ from Proposition \ref{prop:alrg_admissible} as
\begin{equation}\chi = |\cdot|^{1/2} \boxtimes \alpha: (\Q_q^\times)^2 \to \overline\F_p^\times,\end{equation} where $\alpha^2 \neq |\cdot|^{\pm 1}$. 
\end{notation}
Then we can consider condition $(C_\chi) $
from Definition \ref{def:C_chi}, for  $\overline\phi_q ^\total $:
\begin{multline}\tag {$C_\chi $}
\text{
There exists $g\in \MP_4(\Q_q) $
such that $f_\chi (\overline {\omega_\psi (1, g)\overline\phi_q^\total})\neq 0 $, where}\\
f_\chi:\left ((|\cdot | ^ {-\frac {1} {2}}) ^ {\boxtimes 2}\boxtimes\chi_\psi\cdot (|\cdot | ^ {\frac {1} {2}}) ^ {\boxtimes 2}\right)\otimes \mathcal S (\Q_q ^ 2,\overline\F_p)\rightarrow\chi\boxtimes\chi_\psi\cdot \chi ^{-1}\\
\text {is the unique projection deduced from Lemma \ref{preliminary calculation for theta}}.
\end{multline}

\begin{lemma}\label{checking C chi four first ERL}
The test function $\overline\phi_q ^\total $
satisfies condition $(C_\chi) $.
\end{lemma}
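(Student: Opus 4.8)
\textbf{Plan for the proof of Lemma \ref{checking C chi four first ERL}.}
The plan is to apply the convenient criterion of Lemma \ref{lem:shortcut_C_chi_generic}, or rather its direct ancestor: we must exhibit $g \in \MP_4(\Q_q)$ with $f_\chi(\overline{\omega_\psi(1,g)\overline\phi_q^\total}) \neq 0$. The natural candidate for $g$ is the Weyl element $w_0$ appearing in the last of the formulas (\ref{eq:Weil_formulas}), since (as in the proof of Lemma \ref{lem:shortcut_C_chi_generic}) applying $\omega_\psi(1, w_0)$ to a Schwartz function and then passing to $\overline{(\cdot)}$ turns a partial Fourier transform into something whose $f_\chi$-image we can compute explicitly via Lemma \ref{preliminary calculation for theta}. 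First I would write down, for $? = (0), (1), \star$, the function $\overline{\omega_\psi(1, w_0)\overline\phi_q^?}$ as a function $c^?(t_1, t_2)$ of two variables on $\Q_q$, exactly as in the display in the proof of Lemma \ref{lem:shortcut_C_chi_generic}: it is (a unit multiple of) the integral of $\overline\phi_q^?(x,y)\psi(t_1 x\cdot v_1 + t_2 y\cdot v_2)$ against $\mathbbm 1_{y\cdot v_1 \in q^2\Z_q}$ over $V_{DQ}^2$, up to the nonzero volume factor $\operatorname{vol}\{q^{-2}\Z_q\}$. The support and $\Z_q^\times$-invariance and translation-invariance properties of each $\phi_q^?$ (read off from the explicit compact sets $X^{(0)}, X^{(1)}, X^\star$ in (\ref{subsubsec:1ERL_test_fns})) then pin down the support and invariance of each $c^?$ inside the lattice-indexed spaces $S_{a,b}$ of Lemma \ref{preliminary calculation for theta}(\ref{preliminary calculation for theta part three}).

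The key computation is then to evaluate $f_\chi$ on $c^\total = c^\star + (1-q)(c^{(0)} + c^{(1)})$, where $f_\chi$ factors as $f_{|\cdot|^{1/2}\chi_1} \circ f_{|\cdot|^{1/2}\chi_2}$ in the two variables, with $\chi = \chi_1 \boxtimes \chi_2 = |\cdot|^{1/2}\boxtimes \alpha$, so that $|\cdot|^{1/2}\chi_1 = |\cdot|$ and $|\cdot|^{1/2}\chi_2 = |\cdot|^{1/2}\alpha$. Here I would use the closed-form expression for $f_\chi$ on $S_{a,b}$ recorded in the proof of Lemma \ref{preliminary calculation for theta}(\ref{preliminary calculation for theta part three}): $f_{\chi_i}(\phi) = \frac{1}{1-x_i}\operatorname{vol}(\Z_q^\times)\sum_{k=a}^b \phi(q^k)(x_i^k - x_i^{k+1}) + (\text{boundary term})$ with $x_i = \chi_i(q)$. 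The point is that the three contributions $c^\star, c^{(0)}, c^{(1)}$ are supported on overlapping but distinct cosets of lattices (governed by whether $x, y$ live in $L - qL$, $qL - q^2 L$, or $q^{-1}L - L$), and the weighting coefficient $(1-q)$ in front of $\phi_q^{(0)} + \phi_q^{(1)}$ — which is exactly the coefficient dictated by Corollary \ref{cor:nabla_formula_cosets} / Theorem \ref{test function version of the first geometric reciprocity law} — is chosen precisely so that the total does not identically vanish. I expect that after a short calculation with the Vandermonde-type formula, $f_\chi(c^\total)$ comes out to a nonzero multiple of $(1 - x_1)^{-1}(1-x_2)^{-1}$ times a polynomial in $x_1 = q^{-1}$ and $x_2 = \alpha q^{-1/2}$ which does not vanish given $\alpha^2 \neq |\cdot|^{\pm 1}$ and $q - 1 \neq 0$ in $\overline\F_p$ (the latter following from $q^4 \not\equiv 1 \bmod p$, which holds since $q$ is admissible).

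The main obstacle I anticipate is bookkeeping, not conceptual difficulty: one must track carefully the normalization of the Weil representation formulas (\ref{eq:Weil_formulas}) — in particular the eighth-root-of-unity constants $\gamma_w$, $\chi_\psi$, and the self-dual Haar measure — to be sure that the various unit factors really are units in $\overline\F_p$ (which they are, since $q$ is odd and $p \neq q$, cf. (\ref{subsubsec:coeffs_for_Weil})), and to get the inner-product pairings $x\cdot v_1$, $x\cdot v_2$, $y\cdot v_1$ right against the explicit basis $\{v_0, v_1, v_2, v_1^\ast, v_2^\ast\}$. A secondary subtlety is that the definition of $X$ in (\ref{eq:def_X_1ERL}) imposes $x\cdot x \in q\Z_q$ and $y \cdot y \in q\Z_q$ while $x\cdot y \in \Z_q^\times$, so the base point $(v_1, v_2^\ast)$-type configuration lies in the support and one should check the nondegeneracy condition (\ref{C chi criterion nonzero value})-analogue directly; but this is exactly the kind of local configuration that makes $c^\star(\Z_q, q\Z_q) \neq 0$. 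Once the integral $c^\total$ is correctly identified as a nonzero element of the relevant four-dimensional space $S_{-2,1}$ (in the second variable) and $S_{-1,0}$ (in the first), the argument of Lemma \ref{lem:shortcut_C_chi_generic} applies essentially verbatim to conclude, possibly after replacing $\chi$ by a Weyl conjugate in $W(\chi)^+$, which is harmless by Lemma \ref{lem:two_Weyl_conjugates}.
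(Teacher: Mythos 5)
The paper does not use the full Weyl element $w_0$ — it chooses a lift $g$ of the \emph{partial} Weyl element
$\left(\begin{smallmatrix} 0&0&1&0\\ 0&1&0&0\\ -1&0&0&0\\ 0&0&0&1\end{smallmatrix}\right)$,
which Fourier--transforms only in the first variable.  The reason is the key observation that for $\chi = |\cdot|^{1/2}\boxtimes\alpha$ the projection $f_\chi$ factors as $f_{\alpha|\cdot|^{1/2}}\circ\operatorname{int}$, where the first--coordinate piece $\operatorname{int}$ (corresponding to the character $|\cdot|^{1/2}\chi_1 = |\cdot|$) is \emph{just} integration over $\Q_q$.  Pairing the partial Fourier transform with this integration over $t_1$ collapses everything to a one--variable function $s^{?}(t)$, which the paper then computes explicitly on $\Z_q^\times$ and $q^{-1}\Z_q^\times$ and shows to be a nonzero element of the two--dimensional space $S_{-1,0}$.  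Your plan instead uses the full Weyl element and has to track an $8$--dimensional object $c^{?}(t_1,t_2)\in S_{-1,0}\otimes S_{-2,1}$; this is workable in principle but asks for considerably more bookkeeping, and you should notice that the paper's choice is designed precisely to exploit the degeneracy $\chi_1 = |\cdot|^{1/2}$ that makes the level--raising case special.

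The genuine gap is your final reduction to "the argument of Lemma \ref{lem:shortcut_C_chi_generic}, essentially verbatim, after replacing $\chi$ by a Weyl conjugate in $W(\chi)^+$."  This cannot work.  Lemma \ref{lem:shortcut_C_chi_generic} is stated for $k = \C$ and for $\chi$ \emph{generic} (Definition \ref{def:generic_character}(1)); here $k = \overline\F_p$ and $\chi$ is almost level--raising generic, which is genuinely a different regime: $\chi_1^2 = |\cdot|$, so the hypotheses of that lemma fail.  More importantly, its proof hinges on the set
$\{|\cdot|^{1/2}\chi_1, |\cdot|^{1/2}\chi_1^{-1}, |\cdot|^{1/2}\chi_2, |\cdot|^{1/2}\chi_2^{-1}\}$
consisting of four \emph{distinct nontrivial} characters, so that by Lemma \ref{preliminary calculation for theta}(\ref{preliminary calculation for theta part three}) the corresponding functionals are linearly independent on the four--dimensional space $S_{-2,1}$.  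For $\chi_1 = |\cdot|^{1/2}$ one has $|\cdot|^{1/2}\chi_1^{-1} = \mathbbm 1$, which is trivial, and if $\alpha^2 = \mathbbm 1$ (permitted by almost level--raising genericity) the set has only two nontrivial elements; in either case the Vandermonde argument cannot conclude.  You are also constrained to Weyl conjugates in $W(\chi)^+$ — not all of $W(\chi)$ — and $\phi_q^\total$ fails the positivity hypothesis (\ref{C chi criterion positivity}) of Lemma \ref{lem:shortcut_C_chi_generic} because of the $(1-q)$ coefficient.  So there is no substitute for the explicit computation here: you must evaluate $s^\total$ (or your $c^\total$) concretely and verify $f_{\alpha|\cdot|^{1/2}}(s^\total)\neq 0$; the paper's choice of $g$ is what makes this feasible.
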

\begin{proof}
Let $g\in\metaplectic_4 (\Q_q) $
be a lift of the Weyl element $$\begin{pmatrix} 0 & 0 & 1 & 0\\0 & 1 & 0 & 0\\-1 & 0 & 0 & 0\\0 & 0 & 0 & 1\end{pmatrix}\in\SP_4 (\Q_q) $$
in the standard basis $\set {e_1, e_2, e_1 ^\ast, e_2 ^\ast} $. Then for some unit $u\in\overline\F_p ^\times $,
we have
$$\omega_\psi (1, g)\phi (x, y) =u\integral_{z\in V_{DQ}\otimes\Q_q}\phi (z, y)\psi (x\cdot z)\d z $$
for all $$\phi\in S (V ^ { 2}_{DQ}\otimes\Q_q,\overline\F_p). $$
Notice that, since $\chi = |\cdot | ^ {\half}\boxtimes\alpha, $
the projection
$$f_\chi:\left ((|\cdot | ^ {-\frac {1} {2}}) ^ {\boxtimes 2}\boxtimes\chi_\psi\cdot (|\cdot | ^ {\frac {1} {2}}) ^ {\boxtimes 2}\right)\otimes \mathcal S (\Q_q ^ 2,\overline\F_p)\rightarrow\chi\boxtimes\chi_\psi\cdot \chi ^{-1}$$
is the composite of the integration map
\begin{align*}\operatorname {int}: \mathcal S (\Q_q ^ 2,\overline\F_p) &\rightarrow \mathcal S (\Q_q,\overline\F_p)\\\phi &\mapsto\left (t\mapsto\integral\phi (t_1, t)\d t_1\right)\end{align*}
and the projection $$f_{\alpha|\cdot|^{1/2}}:\mathcal  S (\Q_q,\overline\F_p)\rightarrow\alpha|\cdot|^{1/2}\boxtimes\alpha ^{-1}|\cdot|^{-1/2}. $$
Abbreviate $$s ^? =u^{-1}\operatorname {int}\left (\overline {\omega_\psi (1, g)\overline\phi_q ^?}\right)\in\mathcal S (\Q_q,\overline\F_p) $$
for $? = (0), (1),\star,\total $. We will compute $s ^\total $ explicitly
to show $f_{\alpha|\cdot|^{1/2}} (s ^\total)\neq 0 $, which will prove the lemma.
By definition we have
$$s ^? (t) =\integral_{t_1\in\Q_q}\integral_{a\in\Q_q}\integral_{z\in V_{DQ}\otimes\Q_q}\overline\phi_q ^? (z, t v_2+ a v_1)\psi (t _1z\cdot v_1)\d z\d a\d t_1. $$
Note that $\overline\phi_q ^? (z, t v_2+ a v_1) $
depends on $z $
modulo $q ^ 2L $
only, so the inner integral is nonzero only on the set $\set {t_1\in q ^ {-2}\Z_q} $. We may therefore reorder the integrals and obtain
\begin{align*} s ^? (t) & =\integral_{z\in L}\integral_{a\in q ^{-1}\Z_q}\integral_{t_1\in q ^ {-2}\Z_q}\overline\phi_q ^? (z, t v_2+ a v_1)\psi (t_1 z\cdot v_1)\d t_1\d a\d z\\
& = q ^ 2\integral_{z\in L}\integral_{a\in q ^{-1}\Z_q}\overline\phi_q ^? (z, t v_2+ a v_1)\cdot\blackboardone_{z\cdot v_1\in q ^ 2\Z_q}\d a\d z.
\end{align*}

For the inner integral to be nonzero, we must have $ z\cdot t v_2\in\Z_q ^\times $
and $t\in q ^{-1}\Z_q $. In particular, since $z $
and $v_2 $
lie in $L $, $s ^? (t) $
is supported on $q ^{-1}\Z_q ^\times\sqcup\Z_q ^\times $. At this point, we are ready to compute the following table of values for $s ^? (t) $:
\begin{equation*}
\begin{array}{c|c|c|c|c|}

  & s ^ {(0)} (t) & s ^ {(1)} (t) & s ^\star (t) & s ^\total (t)\\
 \hline
q ^{-1}\Z_q ^\times & 0 &\frac {q-1} {q ^ 4} &\frac {(q ^ 2-1) (q -1)} {q ^ 4} &\frac {(q -1) ^ 2} {q ^ 3}\\
\hline
\Z_q ^\times &\frac {q -1} {q ^ 2} & 0 &\frac {(q -1) ^ 2} {q ^ 2} & 0\\
\hline
\end{array}
\end{equation*}

The fourth column is determined by the first three by $$s ^\total = s ^\star + (1 - q) (s ^ {(0)} + s ^ {(1)}). $$
On the other hand, the given values for $s ^\total (t) $
immediately imply $f_{\alpha|\cdot|^{1/2}} (s ^\total)\neq 0 $
for any $\alpha\neq |\cdot | ^ {-\frac {1} {2}} $. It remains to explain the calculation of the first three columns.
First, since $\overline\phi_q ^ {(0)} (z, t v_2+ a v_1) = 0 $
unless $t v_2+ a v_1\in L $, we have $s ^ {(0)} (t) = 0 $
for $t\in q ^{-1}\Z_q^\times $. For $t\in\Z_q ^\times $, we have
\begin{equation}\label{eq:s_zero_volume}
    s ^ {(0)} (t) = q^2 \volume\set {z\in L - qL\,:\, z\cdot v_2\in\Z_q ^\times,\, z\cdot v_1\in q ^ 2\Z_q, z\cdot z\in q\Z_q}.
\end{equation}

Label the set in (\ref{eq:s_zero_volume}) by $S ^ {(0)} $
and let $\overline S ^ {(0)} $
be its image in $L/q L $. 
Write $\overline z,\overline v_1,\overline v_2 $
for the reductions in $L/q L $. 

Then $\overline {S} ^ {(0)} $
is the set of $\overline z\in L/q L $
that are isotropic, and orthogonal to $\overline v_1 $
but not $\overline v_2 $.
Now, there are $q ^ 3 = q ^ 2\cdot q$
isotropic vectors in $L/q L $
orthogonal to $\overline v_1 $
since $\overline v_1 ^\perp/\overline v_1 $
is a split quadratic space over $\F_q $ of dimension three. Of these, $q ^ 2 $
are also orthogonal to $\overline v_2 $; these are just the vectors in $\operatorname{span}_{\F_q}\set {\overline v_1,\overline v_2} $, since the latter is a maximal isotropic subspace. So $$\#\overline S ^ {(0)} = q ^ 3 - q ^ 2. $$
On the other hand, given any $\overline z_0\in\overline S ^ {(0)} $, we have $$\volume\set {z\in S ^ {(0)}\,:\,\overline z =\overline z_0} =\frac {1} {q ^ 6}. $$
(It is almost the full coset $\overline z_0+ q L $, which has volume $\frac {1} {q ^ 5} $, except that we must ensure that $z $
remains orthogonal to $v_1 $
modulo $q ^ 2 $; this cuts down the volume by a factor of $q$.) So $$\volume (S ^ {(0)}) =\frac {q ^ 3 - q ^ 2} {q ^ 6} =\frac {q -1} {q ^ 4}, $$
and $$s ^ {(0)} (t) = q ^ 2\volume (S ^ {(0)}) =\frac {q -1} {q ^ 2}\;\text {for } t\in\Z_q ^\times. $$
Next let us consider $s ^ {(1)} (t). $
Since $\phi_q ^ {(1)} (z, t v_2+ a v_1) 
\cdot\blackboardone_{z\cdot v_1\in q ^ 2\Z_q} $, 
 when restricted to $a\in q ^{-1}\Z_q $, 
is supported on $z\in q L $
such that $z\cdot t v_2\in\Z_q ^\times $, we see that $s ^ {(1)} (t) = 0 $
for $t\in\Z_q ^\times $. 
For $t\in q ^{-1}\Z_q ^\times $, we have
$$s ^ {(1)} (t) = q ^ 2\volume\set {z\in q L, a\in q ^{-1}\Z_q\,:\, z\cdot v_2\in q\Z_q ^\times,\, z\cdot v_1\in q ^ 2\Z_q}. $$
(Note that the condition $z\cdot z\in q\Z_q $
in the definition of $X $
(\ref{eq:def_X_1ERL}) is automatic from $z\in q L $.)
Then replacing $z $
with $\frac {z} {q} $, we have
\begin{align*}
s ^ {(1)} (t) & =\frac {q ^ 3} {q ^ 5}\volume\set {z\in L\,:\, z\cdot v_2\in\Z_q ^\times,\, z\cdot v_1\in q\Z_q}\\
& =\frac {q ^ 3} {q ^ {10}}\#\set {\overline z\in L/q L\,:\,\overline z\perp\overline v_1,\,\overline z\not\perp\overline v_2}\\
& =\frac {q ^ 3} {q ^ {10}}\cdot (q ^ 4 - q ^ 3) =\frac {q- 1} {q ^ 4}.
\end{align*}
Finally, consider $s ^\star (t) $. For $t\in\Z_q ^\times $, we have
\begin{align*}
s ^\star (t) & = q ^ 2\volume\set {z\in L - q L,\, a\in q ^{-1}\Z_q ^\times\,:\, z\cdot v_2\in\Z_q ^\times,\, z\cdot v_1\in q ^ 2\Z_q}\\
& = q ^ 2 (q -1)\volume\set {z\in L\,:\, z\cdot z\in q\Z_q,\, z\cdot v_2\in\Z_q ^\times,\, z\cdot v_1\in q ^ 2\Z_q}.
\end{align*}
This is the same set that appeared for $s ^ {(0)} (t) $, so we have
$$s ^\star (t) = (q -1) s ^ {(0)} (t) =\frac {(q -1) ^ 2} {q ^ 2}\;\,\text {for } t\in\Z_q ^\times. $$
For $t\in q ^{-1}\Z_q ^\times $, we have
\begin{equation}\label{eq:s_start_volume}
\begin{split}
s ^\star (t) & = q ^ 2\volume\set {z\in L - q L,\, a\in q ^{-1}\Z_q\,:\, z\cdot z\in q\Z_q,\, z\cdot v_2\in q\Z_q ^\times,\, z\cdot v_1\in q ^ 2\Z_q}\\
& = q ^ 3\volume\set {z\in L - q L\,:\, z\cdot z\in q\Z_q,\, z\cdot v_2\in q\Z_q ^\times,\, z\cdot v_1\in q ^ 2\Z_q}.
\end{split}
\end{equation}
To compute this, we use the same technique as for $s ^ {(0)} $. Let $S ^\star $
be the set in the second line of (\ref{eq:s_start_volume}) and let $\overline S ^\star $
be its image in $L/q L $.
Then $\overline S ^\star $
consists of nonzero isotropic vectors in $L/q L $
orthogonal to $\overline v_2 $
and $\overline v_1 $, of which there are $q ^ 2-1 $. On the other hand, for $\overline z_0\in\overline S ^\star $, we have $$\volume\set {z\in S ^ {\star}:\overline z =\overline z_0} =\frac {q -1} {q ^ 7}: $$
this is because, out of the coset $\overline z_0 + q L $, we must take only those vectors with $z\cdot v_2\in q\Z_q ^\times $
and $z\cdot v_1\in q ^ 2\Z_q $. So $$s ^\star (t) = (q ^ 2-1)\cdot\frac {q -1} {q ^ 7}\cdot q ^ 3 =\frac {(q -1) (q ^ 2-1)} {q ^ 4}\;\text {for}\, t\in q ^{-1}\Z_q ^\times, $$
as desired.
\end{proof}
\subsubsection{}
Now we return to the geometric setting of \S\ref{sec:1ERL_geom}. We will write $\m\coloneqq \m_{\pi,\p}^{S\cup \div(Qq)}\subset \T^{S\cup \div(Qq)}_O$.
\begin{lemma}\label{lem:check_weak_generic}
    The maximal ideal $\m\subset \T^{S\cup \div(Qq)}_O$ is generic and non-Eisenstein, and weakly $q$-generic (Definition \ref{def:weakly_q_gen}).
\end{lemma}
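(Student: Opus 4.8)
\textbf{Plan for the proof of Lemma \ref{lem:check_weak_generic}.}
The key observation is that $\m = \m_{\pi,\p}^{S\cup\div(Qq)}$ is, by definition, of Galois type associated to $\overline\rho_{\pi,\p}$, which we may view as valued in $\GSP_4(k_\p)$ by Remark \ref{rmk:rho_O_valued}. So all three properties amount to facts about $\overline\rho_{\pi,\p}$ together with the assumptions we have imposed on $\p$. The first two properties --- non-Eisenstein and generic --- are \emph{literally} the content of Lemma \ref{lem:m_is_nonEisgeneric}, which asserts that under Assumption \ref{assumptions_on_p} the maximal ideal $\m_{\pi,\p}\subset \T^S_O$ is non-Eisenstein and generic; since $S\cup\div(Qq)\supset S$, the pullback of $\m$ to $\T^S_O$ is exactly $\m_{\pi,\p}^S$, and by Definition \ref{def:generic}(2) genericity and (by a direct check) the non-Eisenstein property are insensitive to enlarging the level-set $S$. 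Thus the first sentence of the lemma follows immediately from Lemma \ref{lem:m_is_nonEisgeneric}.

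The remaining point is weak $q$-genericity. By Definition \ref{def:weakly_q_gen}, this asks that $\langle q\rangle^{-1} T_{q,2}^2 - 4q^2(q+1)^2$ act invertibly on $O\left[\hypernormal(V_{DQq/q})\right]_\m = O\left[\hypernormal(V_{DQ})\right]_\m$; by Nakayama it suffices to check this modulo $\varpi$, i.e.\ that this operator acts invertibly on $\overline\F_p\left[\hypernormal(V_{DQ})\right]_\m$. Here I would invoke Remark \ref{rmk:weakly_q_generic}: for any relevant automorphic representation $\pi'$ of $\GSP_4(\A)$ unramified at $q$ with trivial central character (in particular any Hecke eigensystem appearing in $\overline\F_p\left[\hypernormal(V_{DQ})\right]_\m$, which by Lemma \ref{lem:yucky_eisenstein_lemma} and Corollary \ref{cor:JL_general} is congruent mod $\varpi$ to such a $\pi'$), the operator acts on the spherical vector with eigenvalue $q^2\left(\iota\tr(\Frob_q|V_{\pi',\iota})^2 - 4(q+1)^2\right)$. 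So it is enough to show this quantity is a unit, equivalently that $\tr(\overline\rho_{\pi,\p}(\Frob_q))^2 \not\equiv 4(q+1)^2 \pmod{\varpi}$, equivalently $\tr(\overline\rho_{\pi,\p}(\Frob_q)) \not\equiv \pm 2(q+1)$. But $q$ is $n$-admissible (indeed admissible), so by Definition \ref{def:admissible} the generalized eigenvalues of $\overline\rho_{\pi,\p}(\Frob_q)$ are $\{q, 1, \alpha, q/\alpha\}$ with $\alpha\notin\{\pm 1, \pm q, q^2, q^{-1}\}$; thus $\tr(\overline\rho_{\pi,\p}(\Frob_q)) = q + 1 + \alpha + q/\alpha$, and one checks directly that $q+1+\alpha+q/\alpha \equiv \pm 2(q+1)$ forces $\alpha + q/\alpha \equiv (q+1)$ or $\alpha+q/\alpha \equiv -3(q+1)$, i.e.\ $(\alpha-1)(\alpha-q)\equiv 0$ or $(\alpha+3)(\alpha + 3q)\equiv 0$ after clearing $\alpha$; the first case is excluded by $\alpha \neq 1, q$, and I would need to rule out $\alpha \equiv -3$ or $\alpha \equiv -3q$ --- this is the one spot that does not follow verbatim from Definition \ref{def:admissible}.

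The main obstacle is precisely that last subcase: the admissibility hypothesis $\alpha\notin\{\pm 1, \pm q, q^2, q^{-1}\}$ does not a priori exclude $\alpha\equiv -3$ or $\alpha \equiv -3q$. I expect the resolution is one of the following, and I would pick whichever is cleanest: (i) redo the elementary computation more carefully --- since $\overline\rho_{\pi,\p}$ is symplectic with similitude $\chi_{p,\cyc}$, the eigenvalue $q$ is forced to be paired with $1$ and $\alpha$ with $q/\alpha$, and the trace condition $\tr \equiv \pm 2(q+1)$ combined with the constraint $\det = q^2(q+1)^2/(\cdots)$ may already be contradictory without needing to exclude $-3$; (ii) absorb the finitely many bad residues into the genericity hypothesis by noting (as in the remark after Theorem \ref{thm:intro_rk0}) that weak $q$-genericity for the relevant $\m$ is automatic from Assumption \ref{assumptions_on_p}(\ref{assume:pi_generic}) plus admissibility when $p$ is large, or (iii) invoke Remark \ref{rmk:weakly_q_generic} in the form that the eigenvalue is $q^2(\iota\tr(\Frob_q|V_{\pi,\iota})^2 - 4(q+1)^2)$ and observe that $\tr(\Frob_q|V_{\pi,\iota}) = q+1+\alpha+q/\alpha$ with $\alpha\overline\alpha = q$ forces $|\tr| \leq q+1+2\sqrt q < 2(q+1)$ in the archimedean absolute value, so the \emph{characteristic-zero} operator is nonzero --- but this does not immediately give the mod-$\varpi$ statement, so I would still need a congruence argument, pushing me back to (i) or (ii). I would write up option (i), checking that the symplectic structure rigidifies the eigenvalue pairing enough that $\tr(\overline\rho_{\pi,\p}(\Frob_q))\equiv \pm 2(q+1)$ together with $\alpha\notin\{\pm 1,\pm q, q^2, q^{-1}\}$ and $q^4\not\equiv 1$ is genuinely impossible, completing the proof.
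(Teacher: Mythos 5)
Your plan tracks the paper's own proof exactly in structure: genericity and non-Eisenstein are read off from Lemma \ref{lem:m_is_nonEisgeneric}, and for weak $q$-genericity the paper reduces via Corollary \ref{cor:T_embedding} (your Nakayama step serves the same purpose) to showing that $\tr(\Frob_q \mid \overline\rho_{\pi})^2 - 4(q+1)^2$ does not vanish modulo $\varpi$, which it asserts follows from admissibility. Your treatment of the factor $\tr - 2(q+1) = (\alpha - 1)(\alpha - q)/\alpha$ is correct and exactly what is needed.

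Your factoring of the second factor, however, is wrong, and the concern you raise at the end is a genuine one, not a nuisance that another pass of algebra will dispatch. Writing $\tr + 2(q+1) = 3(q+1) + \alpha + q/\alpha$ and clearing $\alpha$ gives $\alpha^2 + 3(q+1)\alpha + q$, \emph{not} $(\alpha+3)(\alpha+3q)$: that product has constant term $9q$ rather than $q$. The correct quadratic has discriminant $9(q+1)^2 - 4q = 9q^2 + 14q + 9$, which is not a perfect square, so it does not factor over $\Z$ at all. Worse, substituting each of the excluded values $\alpha \in \{\pm 1, \pm q, q^2, q^{-1}\}$ into $X^2 + 3(q+1)X + q$ produces, respectively, $4(q+1)$, $-2(q+1)$, $4q(q+1)$, $-2q(q+1)$, $q(q+1)^3$, $(q+1)^3/q^2$ --- all nonzero modulo $\varpi$ since $q^4 \not\equiv 1$. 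So the roots of $X^2 + 3(q+1)X + q$ are \emph{never} among the excluded values, and whenever $9q^2 + 14q + 9$ is a square mod $p$ there exist $\alpha \in \overline\F_p^\times$ satisfying all the constraints of Definition \ref{def:admissible} with $\tr \equiv -2(q+1) \pmod \varpi$. Your option (i) therefore cannot succeed: the symplectic pairing and $\det = q^2$ are already encoded in the parametrization $\{q, 1, \alpha, q/\alpha\}$ and place no further restriction on $\alpha + q/\alpha$; option (ii) is not the assertion made in the cited remark (which concerns existence of some admissible $\p$, not the behavior of a fixed one); and option (iii), as you say, has no mod-$\varpi$ content. The paper's own proof is terse at exactly this spot --- it states the implication without argument --- and your bookkeeping shows that the implication from Definition \ref{def:admissible} alone is not valid. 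You have identified a genuine lacuna, and the honest resolution is to record an additional hypothesis on $q$ (e.g. that $\alpha + q/\alpha \not\equiv -3(q+1)$), not to try to force the factorization through.
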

\begin{proof}
That $\m$ is generic and non-Eisenstein follows from Lemma \ref{lem:m_is_nonEisgeneric}.
    Then by Corollary \ref{cor:T_embedding}, it suffices to show that $\langle q\rangle T_{q,2}^2 - 4q^2 (q+1)^2 \not\in \m_{\pi,\p}^S\subset \T^S_O$. Indeed, this holds by Remark \ref{rmk:weakly_q_generic}, because the admissibility of $q$ implies $\tr (\Frob_q|\overline \rho_\pi) \neq \pm 2 (q+1).$
\end{proof}
In particular, we 
 can consider, for any choice of  $q$-adic uniformization datum for $V_{DQq}$, the map
\begin{equation}\xi\coloneqq  \nabla\circ \zeta: M_{-1} H^1\left(I_{\Q_q}, H^3_\et(\Sh_{K^qK_q^\ram}(V_{DQq})_{\overline\Q}, O(2))_\m \right)\twoheadrightarrow \frac{O\left[\Sh_{K^qK_q} (V_{DQ})\right]_\m}{(\funnyT_{\lr})}\end{equation}
from Theorem \ref{main arithmetic level raising semistable}; here we use that $\langle q \rangle = 1$ on $O\left[\Sh_{K^qK_q} (V_{D/q})\right]_\m $ by Lemma \ref{lem:S_tidy_upshot}. (The map $\xi$, and the identification of $K^q$ with a compact open subgroup of $\spin(V_{DQq})(\A_f^q)$, both depend on the choice of  uniformization datum.)

The following is the crucial lemma for the proof of Theorems \ref{thm:1ERL} and \ref{thm:1ERL_endoscopic}.

\begin{lemma}\label{lem:for_1ERL}
Suppose $Qq$ is $n$-admissible. Then  there exists a
test function $\alpha\in \Test_K(V_{DQ}, \pi, O/\varpi^n)$, a $q$-adic uniformization datum for $V_{DQq}$, and a
special cycle $z\in \SC_{K^qK_q^\ram}(V_{DQq},O)$
such that
$$\alpha\circ\xi\circ\res_{\Q_q}(\partial_{\AJ,\mathfrak m} (z))\in O/{\varpi^n} $$
generates $\lambda_n (Q; K) $.
\end{lemma}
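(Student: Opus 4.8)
The plan is to combine the geometric identity of Theorem \ref{test function version of the first geometric reciprocity law} with the deformation-theoretic machinery of \S\ref{subsec:non_endo_def}-\S\ref{subsec:def_endo}, and then to upgrade the resulting test function using the change-of-test-functions result Corollary \ref{cor:admissible_C_chi_lambda}, whose hypothesis has been verified in Lemma \ref{checking C chi four first ERL}. I will treat the non-endoscopic and endoscopic cases in parallel, since the two sets of deformation-theoretic inputs (Lemmas \ref{lem:new_torsion_inertia_1ERL}, \ref{lem:new_typic_upshot} versus Lemmas \ref{lem:torsion_inertia_endo}, \ref{lem:new_typic_upshot_endo}) are structurally the same. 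Throughout I use that $\m$ is generic, non-Eisenstein, and weakly $q$-generic by Lemma \ref{lem:check_weak_generic}, so that all the results of \S\ref{sec:1ERL_geom} apply with $D$ replaced by $DQq$.

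First I would reduce to the case that $\lambda_n(Q;K)$ is nonzero; otherwise $z=0$ works for any choice of data. By the definition of $\lambda_n(Q;K)$ in Construction \ref{subsubsec:lambda_constr} and Corollary \ref{cor:admissible_C_chi_lambda}, applied with $\phi_q^\circ = \phi_q^\total$ (legitimate by Lemma \ref{checking C chi four first ERL}, which verifies condition $(C_\chi)$ for the almost level-raising generic character $\chi$ of Proposition \ref{prop:alrg_admissible}), it suffices to produce the desired $\alpha$, uniformization datum, and $z$ so that $\alpha\circ\xi\circ\res_{\Q_q}(\partial_{\AJ,\m}(z))$ generates $\lambda_n(Q,\phi^q\otimes\phi_q^\total;K)$ for \emph{some} $\phi^q\in\mathcal S(V_{DQ}^2\otimes\A_f^q,O)^{K^q}$ and — after the change-of-test-functions step — \emph{any} prescribed generator of $\lambda_n(Q,\phi;K)$; since $\lambda_n(Q;K)$ is cyclic over $O/\varpi^n$, these coincide up to the scope of the statement. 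Concretely, pick $\alpha_{V}\in\Test_K(V_{DQ},\pi,O/\varpi^n) = \Hom_{\T[G_\Q]}(O[\Sh_K(V_{DQ})],O/\varpi^n)$ and $z_0 = Z(T,\phi^q\otimes\phi_q^\total)_{K^qK_q}\in\SC^2_{K^qK_q}(V_{DQ},O)$ (now viewed inside $O[\Sh_K(V_{DQ})]$ via the identification of Notation \ref{notation:wonky_Sh_sets}) realizing a generator of $\lambda_n(Q,\phi^q\otimes\phi_q^\total;K)$, possibly after enlarging $\phi^q$ and $T$.

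Next I would feed $z_0$ through Theorem \ref{test function version of the first geometric reciprocity law}: for the matrix $T$ there is a $q$-adic uniformization datum for $V_{DQq}$ and a test function $\phi_q^\ram\in\mathcal S(V_{DQq}^2\otimes\Q_q,\Z)^{K_q^\ram}$ with
$$\xi\circ\res_{\Q_q}\circ\;\partial_{\AJ,\m}\bigl(Z(T,\phi^q\otimes\phi_q^\ram)_{K^qK_q^\ram}\bigr) = 4\,Z(T,\phi^q\otimes\phi_q^\total)_{K^qK_q}$$
in $O[\Sh_K(V_{DQ})]_\m/(\funnyT_q^\lr)$. Setting $z\coloneqq Z(T,\phi^q\otimes\phi_q^\ram)_{K^qK_q^\ram}\in\SC^2_{K^qK_q^\ram}(V_{DQq},O)$, and noting that $\funnyT_q^\lr\in\m$ acts as $0$ on $O/\varpi^n$ via $\rho_{\pi,n}$ by Remark \ref{rmk:interpret_funny_T} together with the $n$-admissibility of $q$ (so that $\det(\Frob_q-q\mid V_\pi)\equiv 0\pmod{\varpi^n}$), the functional $\alpha_V$ descends to $O[\Sh_K(V_{DQ})]_\m/(\funnyT_q^\lr)$, and we obtain
$$\alpha_V\circ\xi\circ\res_{\Q_q}(\partial_{\AJ,\m}(z)) = 4\,\alpha_V(z_0),$$
which generates $\lambda_n(Q,\phi^q\otimes\phi_q^\total;K)$ up to the unit $4$ (here $p>5$ by Remark \ref{rmk:p>5}, so $4$ is a unit). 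Finally I must convert the \emph{set-theoretic} test vector $\alpha_V\in\Test_K(V_{DQ},\pi,O/\varpi^n)$ into an honest $\alpha\in\Test_K(V_{DQ},\pi,O/\varpi^n)$ as required in the statement — but for $\sigma(DQ)$ odd these are literally the same object by Definition \ref{def:test_vectors}, so no conversion is needed; I take $\alpha\coloneqq\alpha_V$. One subtlety to address: Theorem \ref{test function version of the first geometric reciprocity law} phrases things with a fixed level structure $K = \prod K_v$ with $K_q$ hyperspecial, and I should check that the map $\xi = \nabla\circ\zeta$ used here is the one appearing in its statement; this is immediate since both are built from the same potential map $\nabla$ (Definition \ref{potential map definition}) and the same $\zeta$ (Proposition \ref{zeta map injection on ramified cohomology prop}) applied via Theorem \ref{theorem conclusion of AJ section}.

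\textbf{Main obstacle.} The principal subtlety is bookkeeping rather than any new idea: one must carefully track how the choice of $q$-adic uniformization datum (which is \emph{not} fixed in advance in this subsection) threads through (i) the identification $K^q\hookrightarrow\spin(V_{DQq})(\A_f^q)$ via Remark \ref{rmk:unif_datum}(\ref{rmk:unif_datum_two}), (ii) the definition of $\nabla$, and (iii) the interpretation of $\phi^q$ as living on both $V_{DQ}$ and $V_{DQq}$ away from $q$. The cleanest route is to \emph{first} fix $T$ (and hence, by the proof of Theorem \ref{test function version of the first geometric reciprocity law}, the uniformization datum of Construction \ref{constr:X_diamond_1ERL}), and only \emph{then} choose $\phi^q$ and $\alpha_V$ so that $\lambda_n(Q,\phi^q\otimes\phi_q^\total;K)$ is generated by $\alpha_V(Z(T,\phi^q\otimes\phi_q^\total)_{K^qK_q})$; this is possible because varying $T$ over all of $\Sym_2(\Q)_{\geq 0}$ of the form in Notation \ref{notation:sst_sp_new} already exhausts the relevant cosets, and because the nonvanishing $\lambda_n(Q,\phi^q\otimes\phi_q^\total;K)\neq 0$ coming from Corollary \ref{cor:admissible_C_chi_lambda} is witnessed by such a $T$. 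A secondary point, handled by Lemma \ref{lem:tidy_suffices}, is that it suffices to work with $S$-tidy $K$, which we may assume from the outset, so that Lemma \ref{lem:S_tidy_upshot} gives $\langle q\rangle = 1$ on the localized cohomology and the identification of the target of $\xi$ with $O[\Sh_K(V_{DQ})]_\m/(\funnyT_q^\lr)$ is valid.
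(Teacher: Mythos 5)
Your proposal is correct and follows essentially the same route as the paper: apply Corollary \ref{cor:admissible_C_chi_lambda} together with Lemma \ref{checking C chi four first ERL} to pass to $\phi_q^\total$, pick $\alpha$, $T$, $\phi^q$ realizing a generator of $\lambda_n^D(Q;K)$, observe that $\alpha\circ\xi$ is well-defined since $\alpha(\funnyT_q^\lr)\subset(\varpi^n)$ by Remark \ref{rmk:interpret_funny_T} and $n$-admissibility, and conclude from Theorem \ref{test function version of the first geometric reciprocity law}. The opening reference to the deformation-theoretic material of \S\ref{subsec:non_endo_def}--\S\ref{subsec:def_endo} is a red herring (that machinery enters only in the proofs of Theorems \ref{thm:1ERL} and \ref{thm:1ERL_endoscopic}, not this lemma), but the actual argument you give is the paper's.
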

Here we are using Theorem \ref{theorem conclusion of AJ section} to apply $\xi$ to $\res_{\Q_q} (\partial_{\AJ,\m}(z))$.
\begin{proof}
By Corollary \ref{cor:admissible_C_chi_lambda} and Lemma \ref{checking C chi four first ERL}, we conclude
 $$\lambda_n^D(Q; K) = \lambda_n^D(Q, \phi^q\otimes\phi_q^\total;K)$$
 for some $\phi^q\in \mathcal S(V_{DQ}^2\otimes \A_f^q, O)^{K^q}$. Then by definition, there exists a test vector
  $$\alpha\in \Test_K(V_{DQ}, \pi, O/\varpi^n)$$ such that $\alpha (Z(T, \phi^q\otimes \phi_q^\total)_K)$ generates $\lambda_n(Q; K)$, for some $T\in \Sym_2(\Q)_{\geq 0}$. 

Now note that, for any choice of uniformization datum, $\alpha\circ \xi$ gives a well-defined map
$$M_{-1} H^1\left(I_{\Q_q}, H^3_\et(\Sh_{K^qK_q^\ram}(V_{DQq})_{\overline\Q}, O(2))_\m\right) \twoheadrightarrow \frac{O\left[\Sh_{K^qK_q} (V_{DQ})\right]_\m}{(\funnyT_{\lr})} \twoheadrightarrow O/\varpi^n$$
because $\alpha(\funnyT_{\lr}) \subset (\varpi^n)$ by Remark \ref{rmk:interpret_funny_T}.

  Then the lemma is immediate from Theorem \ref{test function version of the first geometric reciprocity law}.

\end{proof}

\subsection{Conclusions}\label{subsec:1ERL_conclusions}
Finally, we are ready to prove the main results for this section. 
We start with the non-endoscopic case.
\begin {thm}\label{thm:1ERL}
Suppose $\pi$ is not endoscopic.
Fix an integer $m\geq 1 $, and let $n_0 = n_0 (m,\rho_\pi) $
satisfy the conclusion of Lemma \ref{lem:eta_appendix}.  Suppose $Q \geq $
is  $n $-admissible  and $q\nmid Q$
is an $n $-admissible prime, where $n\geq \max\set{3m, n_0}$, such that $\sigma(DQ)$ is odd. 
\begin {enumerate}
\item \label{thm:1ERL_one}Suppose $\overline\Selmer_{\mathcal F (Q)} (\Q,\ad ^ 0\rho_m) = 0. $
Then 
$$\partial_q \kappa_n^D(Qq) \supset \lambda_n^D(Q) \cdot (\varpi^{C}),$$
where $$C = 2\lg_O \Sel_{\mathcal F(Q)^\rel}(\Q, \ad^0\rho_{n-m+1}) + m -1.$$

\item\label{thm:1ERL_two} Suppose there exists $q' | Q $
which is
\emph {not} $(n +1) $-admissible, such that $Q/q' $
is $(n + m) $-admissible and $$\overline\summer_{\mathcal F (Q/q')} (\Q,\adjoint ^ 0\rho_m) =\overline\summer_{\mathcal F (Qq)} (\Q,\adjoint ^ 0\rho_m) = 0, $$
but $$\overline\summer_{\mathcal F (Q)} (\Q,
\adjoint ^ 0\rho_{2m-1})\neq 0. $$
Then $$\partial_q\kappa ^ D_n (Qq)\supset\lambda_n ^ D (Q)\cdot\varpi ^ C, $$
where $C = 2 (m -1) + \length_\O\summer_{\mathcal F (Qq)^\rel} (\Q,\adjoint ^ 0\rho_{n-m+1}). $
\end{enumerate}
\end{thm}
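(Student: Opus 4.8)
The plan is to combine the geometric reciprocity input of \S\ref{sec:1ERL_geom}, already packaged in Lemma~\ref{lem:for_1ERL}, with the deformation-theoretic level-raising machinery of Appendix~\ref{sec:appendix_def_theory}, the bridge between the two being Scholze's typic formalism (Proposition~\ref{typic inclusion prop}) in the form of Lemma~\ref{lem:new_typic_upshot}. First I would set up the geometric side. By Lemma~\ref{lem:tidy_suffices} I may fix an $S$-tidy level structure $K$ for $\spin(V_{DQ})$, arranged so that the induced level $K^qK_q^\ram$ for $\spin(V_{DQq})$ is $S$-tidy as well, with $\lambda_n^D(Q;K)=\lambda_n^D(Q)=(\varpi^a)$. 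Lemma~\ref{lem:for_1ERL} --- whose proof already incorporates the change-of-test-function step of Corollary~\ref{cor:admissible_C_chi_lambda} and the local computation of Lemma~\ref{checking C chi four first ERL} --- then produces $\alpha\in\Test_K(V_{DQ},\pi,O/\varpi^n)$, a $q$-adic uniformization datum for $V_{DQq}$, and $z\in\SC^2_{K^qK_q^\ram}(V_{DQq},O)$ such that
$$g_0 := \alpha\circ\xi\circ\res_{\Q_q}(\partial_{\AJ,\m}(z)) \in O/\varpi^n,\qquad \xi=\nabla\circ\zeta,$$
generates $\lambda_n^D(Q)$, so $\ord_\varpi(g_0)=a$. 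Here $\res_{\Q_q}(\partial_{\AJ,\m}(z))$ is read inside $M_{-1}H^1(I_{\Q_q},H^3_\et(\Sh_{K^qK_q^\ram}(V_{DQq})_{\overline\Q},O(2))_\m)$ via Theorem~\ref{theorem conclusion of AJ section}, and $\xi$ is the surjection of Theorem~\ref{main arithmetic level raising semistable} onto $O[\Sh_{K^qK_q}(V_{DQ})]_\m/(\funnyT^\lr_q)$ followed by $\alpha$, which factors through that quotient because $q$ is $n$-admissible (Remark~\ref{rmk:interpret_funny_T}).

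Next comes the typic translation. Since $Qq$ is admissible, $H^3_\et(\Sh_{K^qK_q^\ram}(V_{DQq})_{\overline\Q},O(2))_\m$ is $\rho^\univ_{Qq}$-typic over $R_{Qq}$ (Lemma~\ref{lem:RtoT_non_end}), and Lemma~\ref{lem:new_torsion_inertia_1ERL} identifies the $\varpi$-power torsion of $H^1(I_{\Q_q},-)$ with a submodule of $H^1(I_{\Q_q},-)^{\Frob_q=1}\simeq H_{Qq}(K^qK_q^\ram)/(t_q)$, a module over $R_{Qq}/(t_q)$; as $\rho_{\pi,n}$ is unramified at $q$ it factors through $R_{Qq}/(t_q)=R^\congruent_{Q,q}$ (Lemma~\ref{lem:R_cong}), making $O/\varpi^n$ an $R_{Qq}/(t_q)$-algebra with respect to which $M_{-1}H^1$ and the map $\alpha\circ\xi$ on it are $R_{Qq}/(t_q)$-linear. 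By Lemma~\ref{lem:new_typic_upshot}, for any $R_{Qq}$-linear map $\alpha_0:H_{Qq}(K^qK_q^\ram)/(t_q)\to O/\varpi^n$ the element $\alpha_0(\res_{\Q_q}\partial_{\AJ,\m}(z))$ lies in $\partial_q\kappa_n^D(Qq;K)$. Since $\res_{\Q_q}\partial_{\AJ,\m}(z)\in M_{-1}H^1$, the theorem reduces to the following lifting statement: $\alpha\circ\xi$, a priori defined only on the submodule $M_{-1}H^1\subset H_{Qq}(K^qK_q^\ram)/(t_q)$, admits --- after multiplication by a generator of $\varpi^C$ --- an $R_{Qq}/(t_q)$-linear extension $\alpha_0$ to all of $H_{Qq}(K^qK_q^\ram)/(t_q)$. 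Given such an $\alpha_0$, one has $\alpha_0(\res_{\Q_q}\partial_{\AJ,\m}(z))=\varpi^C g_0$ of $\varpi$-valuation $a+C$, hence $\partial_q\kappa_n^D(Qq)\supset(\varpi^{a+C})=\lambda_n^D(Q)\cdot(\varpi^C)$, which is the desired conclusion in both parts.

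The lifting statement is where Appendix~\ref{sec:appendix_def_theory} is applied, and it is the main obstacle. The inclusion $M_{-1}H^1\hookrightarrow H_{Qq}(K^qK_q^\ram)/(t_q)$ encodes the level-raising congruences at $q$ between $\pi$ and automorphic forms on $\spin(V_{DQ})$, and the obstruction to the desired extension is an $\Extension^1$ over $R_{Qq}/(t_q)$ whose length is bounded by an adjoint Selmer group; all primes dividing $Qq$ are standard (Lemma~\ref{ordinary local condition is standard Lemma}, using the smoothness of $R_q^\ordinary$ from Lemma~\ref{lem:R_q_ord_smooth}), so the relative deformation framework of the appendix applies. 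In case (1), the vanishing $\overline\Selmer_{\mathcal F(Q)}(\Q,\ad^0\rho_m)=0$ rules out a residual congruence at level $Q$, so $R_Q$ is pinned down up to the explicit error and the lift exists with $C=2\lg_O\Selmer_{\mathcal F(Q)^\rel}(\Q,\ad^0\rho_{n-m+1})+m-1$, the first term measuring the failure of $H_{Qq}(K^qK_q^\ram)$ to be free over the relevant deformation ring and the summand $m-1$ coming from the crystalline-period stabilization of Lemma~\ref{lem:eta_appendix} together with the hypothesis $n\geq\max\{3m,n_0\}$. In case (2) there is a genuine residual congruence at level $Q$ modulo $\varpi^{2m-1}$, but by hypothesis it vanishes at levels $Q/q'$ and $Qq$; one then runs the Fakhruddin--Khare--Patrikis-style relative deformation argument at the prime $q'$ --- which is \emph{not} $(n+1)$-admissible, so the congruence has depth controlled by $n(q')\le n$, while $Q/q'$ is $(n+m)$-admissible --- to see that the lifting obstruction is still bounded, with $C=2(m-1)+\lg_O\Selmer_{\mathcal F(Qq)^\rel}(\Q,\ad^0\rho_{n-m+1})$. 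I expect case (2) to be the genuinely delicate one: there one must track precisely which adjoint Selmer groups vanish at which auxiliary levels and feed this bookkeeping into the relative deformation-theoretic bound, whereas case (1) is essentially a clean freeness-up-to-Selmer-error argument.
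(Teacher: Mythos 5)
Your high-level plan coincides with the paper's: geometric input via Lemma~\ref{lem:for_1ERL}, typic translation via Lemmas~\ref{lem:RtoT_non_end}, \ref{lem:new_torsion_inertia_1ERL} and \ref{lem:new_typic_upshot}, and a deformation-theoretic lift powered by Lemma~\ref{lem:eta_appendix}. However, the step you yourself flag as the main obstacle --- extending $\alpha\circ\xi$ from $M_{-1}H^1$ to all of $H_{Qq}/(t_q)$ --- is asserted rather than carried out, and your account of it is both vague and, in places, inaccurate. The paper does not extend $\varpi^C\cdot(\alpha\circ\xi)$ as a single map; rather, it multiplies the entire diagram (\ref{eq:cd_1ERL}) by an element $a\in\Ann_{R_{Q'}}(I_{Q'})$ chosen via Lemma~\ref{lem:eta_appendix}, which kills the coefficient-ring congruences and puts the diagram into a tractable module category. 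In case~(1) the resulting diagram is over $O/f_Q(\funnyT^\lr_q)$; one checks $aH_Q/af_Q(\funnyT^\lr_q)H_Q$ is free over that ring, and that the surjection to $aH_Q/(aH_Q\cap f_Q(\funnyT^\lr_q)H_Q)$ has kernel annihilated by $f_Q(a)$, so $f_Q(a)$ kills the relevant $\Extension^1$; composing and precomposing with $a$ produces the factor $f_\pi(a)^2$, whence the $2\lg_O\Sel$ in $C$. In case~(2) the mechanism is genuinely different: Lemma~\ref{lem:std_upshot} (using that $q'$ is standard but not $(n+1)$-admissible and $Q/q'$ is $(n+m)$-admissible) bounds $\ord_\varpi f_{Qq}(t_q)< n+m$, so the diagram is over $O/\varpi^{n+m-1}$, and the lift exists because $\varpi^{m-1}\Extension^1_{O/\varpi^{n+m-1}}(-,O/\varpi^n)=0$; a second $\varpi^{m-1}$ is needed to pass from $f_{Qq}$-equivariance to $f_\pi$-equivariance, which explains the $2(m-1)$.

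Beyond the missing argument, two of your attributions for the constants are wrong. The term $2\lg_O\Sel_{\mathcal F(Q)^\rel}(\Q,\ad^0\rho_{n-m+1})$ does \emph{not} measure the failure of $H_{Qq}$ to be free over a deformation ring --- nothing in the proof asserts or uses any such freeness. It bounds the congruence ideal $\eta_{f_Q}$ of the map $R_Q\to O$, i.e.\ the image of the annihilator of $\ker(R_Q\to O)$, which is precisely what Lemma~\ref{lem:eta_appendix} controls. And the summand $m-1$ in case~(1) is not produced by any "crystalline-period stabilization" in Lemma~\ref{lem:eta_appendix} (there is no such thing there); it is slack inserted so that the reduction $\lg_O\Sel_{\mathcal F(Q)^\rel}(\Q,\ad^0\rho_{n-m+1})<n-m+1$ may be assumed without loss of generality. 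In case~(2) the $2(m-1)$ is genuinely consumed, for the two reasons above. You should make the diagram-chase explicit, distinguish the mechanisms in the two cases, and correct the bookkeeping for $C$.
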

\begin{proof}
Choose an $S$-tidy level structure $K$ for $\spin(V_{DQ})$ such that $\lambda_n^D(Q; K) = \lambda_n^D(Q)$ (possible by Lemma \ref{lem:tidy_suffices}),
and fix a $q$-adic uniformization datum for $V_{DQq}$, a special cycle $z\in \SC^2_{K^qK_q^\ram}(V_{DQq})$, 
and a test function $\alpha\in\Test_K (V_{DQ}, \pi, O/\varpi^n) $
satisfying the conclusion of Lemma \ref{lem:for_1ERL}; in particular, we have\begin{equation}\label{eq:beta_xi_generates}\left(\alpha\circ\xi \circ \res_{\Q_q}(\partial_{\AJ,\m} (z)) \right) = \lambda_n^D(Q).\end{equation}

Now note that $$M_{-1} H^1\left(I_{\Q_q}, H^3_\et(\Shimura_{K^qK_q^\ram} (V_{DQq})_{\overline\Q}, O (2))_\m\right) $$ is $\varpi$-power-torsion because $H^3_\et(\Shimura_{K^qK_q^\ram} (V_{DQq})_{\overline\Q}, \overline \Q_p (2))_\m$ is pure as a $G_{\Q_q}$ representation by Corollary \ref{cor:coh_relevant} and Theorem \ref{thm:rho_pi_LLC}(\ref{part:rho_pi_LLC1}). Hence by 
Lemma \ref{lem:new_torsion_inertia_1ERL} and Theorem \ref{main arithmetic level raising semistable}, we have a  diagram:

\begin{equation}\label{eq:cd_1ERL}
\begin{tikzcd}
M_{-1} H ^ 1\left (I_{\Q_q}, H ^ 3_{\et} (\Shimura_{K^qK_q^\ram} (V_{DQq})_{\overline\Q}, O (2))_\m\right) \arrow [r, hook]\arrow [d, twoheadrightarrow, "\xi"] & H_{Qq}/(t_q) \\
H_Q/(\funnyT^\lr_q),
\end{tikzcd}
\end{equation}
 where we set $$H_Q \coloneqq O\left[\Sh_K(V_{DQ})\right]_\m,\;\;H_{Qq}\coloneqq H_{Qq}(K^qK_q^\ram)$$
 (Definition \ref{def:H_Q}).
By Remark \ref{rmk:interpret_funny_T}, $(\funnyT^\lr_q) = (P_q(q))$ as ideals of $\T^{S\cup \div(Q)}_{K, V_{DQ},\m}$. Hence
by Lemmas \ref{lem:R_cong}  and \ref{lem:RtoT_non_end}, the  diagram (\ref{eq:cd_1ERL}) is a  diagram of $R_{Q,q} ^ {\congruent} $-modules. Let $Q' = Q $
in case (\ref{thm:1ERL_one}) and $Q' = Qq $
in case (\ref{thm:1ERL_two}), and define an element $a\in R_{Q'} $
as follows. Let $\tau_{Q'}: G_\Q\rightarrow\GSP_4 (O) $
be the representation constructed by Theorem \ref{main appendix theorem}, and let $I_{Q'}\subset R_{Q'} $
be the kernel of the corresponding homomorphism $f_{Q'}: R_{Q'}\rightarrow O $. By Lemma \ref{lem:eta_appendix}, we may fix an element $a\in\annihilator_{R_{Q'}} (I_{Q'}) $
such that \begin{equation}\label{eq:eta_for_1ERL_fake}\ord_{\varpi} f_{Q'} (a)\leq \lg_O \Sel_{\mathcal F(Q')^\rel}(\Q, \ad^0\rho_{n-m+1}). \end{equation}
By the definition of $C$ in each case of the theorem, we may assume without loss of generality that
$$\lg_O \Sel_{\mathcal F(Q')^\rel}(\Q, \ad^0 \rho_{n-m +1}) < n - m + 1.$$  Let $f_\pi: R^{Q}_\m \to O$ be the map corresponding to $\rho_\pi$. 
Since $f_{Q'} \equiv f_\pi \pmod{\varpi^{n - m+1}}$ by Theorem \ref{main appendix theorem}(\ref{thm:appendix_main_one}), we then have 
\begin{equation}\label{eq:eta_for_1ERL}\ord_{\varpi} f_\pi (a)\leq \lg_O \Sel_{\mathcal F(Q')^\rel}(k, \ad^0\rho_n). \end{equation}

Applying $a $
to the  diagram (\ref{eq:cd_1ERL}) of $R_{Q'}$-modules, we obtain a diagram
\begin{equation}\label{eq:cd_1ERL_2}
\begin {tikzcd}
a\cdot M_{-1} H ^ 1\left (I_{\Q_q}, H ^ 3_{\et} (\Shimura_{K^qK_q^\ram} (V_{DQq})_{\overline\Q}, O (2))_\m\right) \arrow [r, hook]\arrow [d, "\xi"] & a\cdot \left(H_{Qq}/(t_q)\right)\\
a\cdot\left( O\left [\Shimura_{K} (V_{DQ})\right]_\m/(\funnyT^\lr_q)\right) = aH_Q/(aH_Q \cap f_{Q'}(\funnyT^\lr_q)H_Q).
\end{tikzcd}
\end{equation}

Suppose first we are in case (\ref{thm:1ERL_one}). Because $a$ annihilates $I_{Q}$, Lemma \ref{lem:R_cong} implies that (\ref{eq:cd_1ERL_2}) is 
 a  diagram of $R_{Q,q} ^ {\congruent}\otimes_{R_{Q, f_Q}} O = O/f_Q(\funnyT^\lr_q)$-modules.
Note that $$\frac{a H_Q}{af_Q (\funnyT^\lr_q) H_Q}$$ is free over $O/f_Q(\funnyT^\lr_q)$ because $H_Q$, hence $aH_Q$, is $\varpi$-torsion-free. 
Since the natural surjection
\begin{equation}
\frac{a H_Q}{a f_Q (\funnyT^\lr_q) H_Q}\twoheadrightarrow \frac{a H_Q}{a H_Q\intersection f_Q (\funnyT^\lr_q) H_Q }
\end{equation} has kernel annihilated by $f_Q(a)$, we conclude
that \begin{equation} f_Q(a) \cdot \Extension^ 1_{O/f_Q (\funnyT^\lr_q)} (-, aH_Q/(aH_Q\intersection f_Q (\funnyT^\lr_q)) = 0. \end{equation}
In particular, by (\ref{eq:cd_1ERL_2}), there exists a map $\widetilde\xi: a\cdot (H_{Qq}/(t_q))\rightarrow aH_Q/(a H_Q\intersection f_Q (\funnyT^\lr_q) H_Q) $
fitting into the following   commutative diagram.
\begin{center}
\begin {tikzcd}
a\cdot M_{-1} H ^ 1\left (I_{\Q_q}, H ^ 3_{\et} (\Shimura_{K^qK_q^\ram} (V_{DQq})_{\overline\Q}, O (2))_\m\right) \arrow [r, hook]\arrow [d, "f_Q(a)\cdot \xi"] &\arrow[dl, "\widetilde\xi"] a\cdot \left(H_{Qq}/(t_q)\right)\\
a H_{Q }/(a H_{Q}\intersection\funnyT^\lr_qH_{Q})  &
\end{tikzcd}
\end{center}
Recall the test function $\alpha\in \Test_K(V_{DQ}, \pi, O/\varpi^n)$ fixed above, and let $\beta $
denote the composite map $$H_{Qq}/(t_q)\xrightarrow {a} a\cdot (H_{Qq}/(t_q))\xrightarrow {\widetilde\xi} a H_{Q }/(a H_{Q}\intersection\funnyT^\lr_qH_{Q})   \xrightarrow {\alpha} O/\varpi ^ n. $$
By Lemma \ref{lem:new_typic_upshot} and (\ref{eq:beta_xi_generates}), 
we conclude \begin{equation}\label{eq:1ERL_almost}
    \partial_q\kappa_n^D (Qq) \supset\left( \beta (\partial_{\AJ,\m} (z))\right) = \left(\alpha\circ a^2\xi\circ \res_{\Q_q} (\partial_{\AJ,\m}(z))\right) = f_\pi(a)^2 \lambda_n^D(Q).
\end{equation}
 Then the theorem  follows from (\ref{eq:eta_for_1ERL}).

For case (\ref{thm:1ERL_two}), 
(\ref{eq:cd_1ERL_2}) is  a  diagram of $R_{Q,q}^\congruent\otimes_{R_{Qq}, f_{Qq}}O = O/f_{Qq}(t_q)$-modules. By Lemma \ref{ordinary local condition is standard Lemma}, all admissible primes are standard in the sense of Definition \ref{appendix definition standard}, so by Lemma \ref{lem:std_upshot}, 
we have
\begin{equation}
    f_{Qq} (t_q)\not\equiv 0\pmod{\varpi ^ {n + m}}.
\end{equation}
 In particular, (\ref{eq:cd_1ERL_2}) is a commutative diagram of $O/\varpi^{n + m -1}$-modules.  Because $$\varpi^{m-1}\Extension^1_{O/\varpi^{n+m - 1}}(-,O/\varpi^n)= 0,$$  we conclude that there exists a map $\widetilde \alpha: a\cdot (H_{Qq}/(t_q)) \to O/\varpi^n$ fitting into the following commutative diagram:
\begin{center}
    
 \begin{tikzcd}[column sep = large]
     a\cdot M_{-1} H ^ 1\left (I_{\Q_q}, H ^ 3_{\et} (\Shimura_{K^qK_q^\ram} (V_{DQq})_{\overline\Q}, O (2))_\m\right) \arrow [d, "\alpha \circ \xi\circ \varpi^{m- 1}"] \arrow [r, hook] &a\cdot (H_{Qq}/(t_q))\arrow[dl, "\widetilde\alpha"] 
\\
O/\varpi^n
 \end{tikzcd}
 \end{center}

 A priori, $\widetilde\alpha$ is only equivariant with respect to $f_{Qq}$, but $f_\pi \equiv f_{Qq} \pmod {\varpi^{n - m + 1}}$. Multiplying by $\varpi^{m -1}$, we then obtain an $f_\pi$-equivariant map
 $$\varpi^{m - 1}(\widetilde \alpha \circ a): H_{Qq}/(t_q) \to O/\varpi^n.$$

Hence by Lemma \ref{lem:new_typic_upshot} and (\ref{eq:beta_xi_generates}), we conclude
\begin{align*}\partial_q\kappa_n^D(Qq) \supset 
\varpi^{m-1}\left(\widetilde \alpha\circ a\circ \res_{\Q_q}(\partial_{\AJ, \m}(z))\right)
&=  \varpi^{2(m-1)}\left(\alpha \circ \xi \left( a\cdot \res_{\Q_q} \partial_{\AJ, \m}(z) \right)\right)\\
&= \varpi^{2(m-1)} f_\pi(a) \left(\alpha\circ \xi \circ \res_{\Q_q} \partial_{\AJ, \m}(z)\right)\\
&= \varpi^{2(m-1)}f_\pi(a) \lambda_n^D(Q).
\end{align*}

Combined with  (\ref{eq:eta_for_1ERL}), this completes the proof in case (\ref{thm:1ERL_two}).
\end{proof}

\begin{thm}\label{thm:1ERL_endoscopic}
Suppose $\pi$ is endoscopic associated to a pair $(\pi_1,\pi_2)$. 
Assume $H ^ 1_f (\Q, V_{\pi_1,\p}\otimes V_{\pi_2,\p}(-1)) = 0, $ and let $C_{\operatorname{RS}}$ be as in Lemma \ref{lem:RS_BK_endoscopic}. 

Fix an integer $m\geq 1 $,  let $n_0 = n_0(m,\rho_{\pi_1}) $
satisfy the conclusion of Lemma \ref{lem:eta_appendix}, and let $n \geq \max\set{3m, n_0}$ be an integer. Suppose $Q = Q_1\cdot Q_2 $
is  $n $-admissible
such that $Q_1$ is BD-admissible for $\rho_{\pi_1}$, $Q_2$ is 
 BD-admissible for $\rho_{\pi_2} $, and $\sigma(DQ)$ is odd. Let $q\nmid Q$ be an  $n$-admissible prime, BD-admissible for $\rho_{\pi_i}$.
\begin{enumerate}
\item\label{thm:1ERL_endoscopic_one} Suppose $\overline\summer_{\mathcal F (Q_i)} (\Q,\adjoint ^ 0\rho_{\pi_i, m}) = 0 $. Then $$\partial_q\kappa_n^D (Qq)\supset\lambda_n^D (Q)\cdot\varpi ^ C, $$
where $$C = 2\length_O\summer_{\mathcal F (Q_i)^\rel} (\Q,\adjoint ^ 0\rho_{\pi_i, n - m + 1}) + 2C_{\operatorname{RS}} + m - 1.$$
\item \label{thm:1ERL_endoscopic_two}Suppose there exists a prime $q' | Q_i $
which is
\emph {not} $(n +1) $-admissible such that $Q_i/q'$
is $(n + m) $-admissible and $$\overline\summer_{\mathcal F (Q_i/q')} (\Q,\adjoint ^ 0\rho_{\pi_i, m}) =\overline\summer_{\mathcal F (Q_iq)} (\Q,\adjoint ^ 0\rho_{\pi_i, m}) = 0 $$
but $$\overline\summer_{\mathcal F (Q_i)} (\Q,\adjoint ^ 0\rho_{\pi_i, 2 m -1})\neq 0. $$
Then $$\partial_q\kappa_n^D (Qq)\supset\lambda_n^D (Q)\cdot\varpi ^ C, $$
where $$C =  \length_O\summer_{\mathcal F (Q_iq)^\rel} (\Q,\adjoint ^ 0\rho_{\pi_i, n-m + 1})+ C_{\operatorname {RS}}  + 2 (m -1). $$
\end{enumerate}
\end{thm}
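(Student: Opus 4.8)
The structure is parallel to that of Theorem \ref{thm:1ERL}, with the endoscopic deformation theory of \S\ref{subsec:def_endo} replacing the non-endoscopic deformation theory of \S\ref{subsec:non_endo_def}. First I would choose an $S$-tidy level structure $K$ for $\spin(V_{DQ})$ with $\lambda_n^D(Q;K) = \lambda_n^D(Q)$ (possible by Lemma \ref{lem:tidy_suffices}), fix a $q$-adic uniformization datum for $V_{DQq}$, and apply Lemma \ref{lem:for_1ERL} to produce a special cycle $z\in \SC^2_{K^qK_q^\ram}(V_{DQq})$ and a test vector $\alpha\in \Test_K(V_{DQ},\pi,O/\varpi^n)$ with
\begin{equation*}
\left(\alpha\circ\xi\circ\res_{\Q_q}(\partial_{\AJ,\m}(z))\right) = \lambda_n^D(Q).
\end{equation*}
Then, exactly as in Theorem \ref{thm:1ERL}, the map $\xi = \nabla\circ\zeta$ of Theorem \ref{main arithmetic level raising semistable} together with Lemma \ref{lem:torsion_inertia_endo} (using purity of $H^3_\et$ at $q$ from Corollary \ref{cor:coh_relevant} and Theorem \ref{thm:rho_pi_LLC}(\ref{part:rho_pi_LLC1})) gives a diagram
\begin{equation*}
\begin{tikzcd}
M_{-1}H^1\!\left(I_{\Q_q}, H^3_\et(\Sh_{K^qK_q^\ram}(V_{DQq})_{\overline\Q},O(2))_\m\right)\arrow[r,hook]\arrow[d,twoheadrightarrow,"\xi"] & jH_Q(K^qK_q^\ram,j)^{(i)}/(t_q)\\
O\left[\Sh_K(V_{DQ})\right]_\m/(\funnyT^\lr_q),
\end{tikzcd}
\end{equation*}
where $j\in\Ann_{R^Q_\m}(J^Q_\red)$ will be chosen below, $jH_Q(K^qK_q^\ram,j)^{(i)}$ is as in Definition \ref{def:H_Q_endo} (after applying $j$ to $H^3_\et$ and restricting to the $\rho^\univ_{Q_i,i}$-typic part via Lemma \ref{lem:typic_endoscopic}), and the level-raising ideal $(\funnyT^\lr_q)$ equals $(P_{q,i}(q))$ by Remark \ref{rmk:interpret_funny_T}.

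The key new input is the Rankin--Selberg control: I would invoke Lemma \ref{lem:RS_BK_endoscopic}, whose hypothesis is exactly the assumed vanishing $H^1_f(\Q,V_{\pi_1}\otimes V_{\pi_2}(-1))=0$, to fix $j\in\Ann_{R^Q_\m}(J^Q_\red)$ with $\ord_\varpi p_\pi^Q(j)\le C_{\operatorname{RS}}$. Then $j$ acts on the cohomology, and after applying $j$ the diagram becomes a diagram of $R_{Q_i,i}$-modules by Lemma \ref{lem:typic_endoscopic}(\ref{lem:typic_endoscopic_one}), on which we can run the argument of Theorem \ref{thm:1ERL} verbatim with $\rho_{\pi_i}$ in place of $\rho_\pi$: apply Theorem \ref{main appendix theorem} and Lemma \ref{lem:eta_appendix} to the relative deformation problem for $\rho_{\pi_i}$ (with $\mathcal F(Q_i)$ or $\mathcal F(Q_iq)$, using Lemma \ref{lem:ass_appendix_ok_endo} to verify the appendix hypotheses) to obtain $a\in\Ann_{R_{Q'_i}}(I_{Q'_i})$ with $\ord_\varpi f_\pi(a)\le \lg_O\Sel_{\mathcal F(Q'_i)^\rel}(\Q,\ad^0\rho_{\pi_i,n-m+1})$, where $Q'_i=Q_i$ in case (\ref{thm:1ERL_endoscopic_one}) and $Q'_i=Q_iq$ in case (\ref{thm:1ERL_endoscopic_two}). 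In case (\ref{thm:1ERL_endoscopic_one}) one uses that $R^\congruent_{Q_i,q,i} = R_{Q_i,i}/(P_{q,i}(q)) = R_{Q_iq,i}/(t_q)$ and $\varpi$-torsion-freeness of $H_Q(K^qK_q^\ram,j)^{(i)}$ (Theorem \ref{thm:generic}(\ref{part:thm_generic_two})) to split off the $\Ext^1$ up to a factor $f_\pi(a)$; in case (\ref{thm:1ERL_endoscopic_two}) one uses Lemma \ref{lem:std_upshot} — all BD-admissible primes being standard — to bound $\ord_\varpi f_{Q_iq}(t_q)$ and extract the lift up to the factor $\varpi^{m-1}$, as in Theorem \ref{thm:1ERL}(\ref{thm:1ERL_two}). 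In either case, composing with $\alpha$ and applying Lemma \ref{lem:new_typic_upshot_endo} (with the map $j_\ast$ of \eqref{eq:j_star}) gives
\begin{equation*}
\partial_q\kappa_n^D(Qq)\ \supset\ \left(p_\pi^Q(j)^2\cdot f_\pi(a)^2\cdot \lambda_n^D(Q)\right)
\end{equation*}
in case (\ref{thm:1ERL_endoscopic_one}), and the analogous inclusion with an extra $\varpi^{2(m-1)}$ and one power of $f_\pi(a)$ in case (\ref{thm:1ERL_endoscopic_two}); combining the bounds on $\ord_\varpi p_\pi^Q(j)$ and $\ord_\varpi f_\pi(a)$ yields the stated value of $C$.

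The main obstacle I anticipate is bookkeeping rather than conceptual: one must track carefully that the element $j$ annihilating the reducibility ideal interacts correctly with the typic decomposition of $H^3_\et$ after localizing at $\m$ — i.e. that $jH^3_\et$ is genuinely $\rho^\univ_{Q_i,i}$-typic over $\overline\T$ and that Proposition \ref{typic inclusion prop} applies to the submodule $M_{-1}H^1(I_{\Q_q},-)$ — and that the resulting diagram is a diagram of $R_{Q_i,i}$-modules (not merely $R^Q_\m/J^Q_\red$-modules) so that the relative deformation ring argument for $\rho_{\pi_i}$ can be run. Once Lemmas \ref{lem:typic_endoscopic}, \ref{lem:torsion_inertia_endo}, and \ref{lem:new_typic_upshot_endo} are in hand this is automatic, so the proof reduces to a faithful transcription of the proof of Theorem \ref{thm:1ERL} with the single extra factor $p_\pi^Q(j)$ coming from Lemma \ref{lem:RS_BK_endoscopic}; I would state it as such, writing out only the points where the endoscopic setup genuinely differs.
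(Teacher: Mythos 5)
Your proposal follows the paper's route exactly: reduce to the setup of Lemma \ref{lem:for_1ERL}, cut down the cohomology via the Rankin--Selberg element $j$ and the typic decomposition of Lemma \ref{lem:typic_endoscopic}, then run the lifting argument of Theorem \ref{thm:1ERL} for $R_{Q_i,i}$-modules, replacing Lemma \ref{lem:new_typic_upshot} with Lemma \ref{lem:new_typic_upshot_endo}; the exponents $2C_{\operatorname{RS}}$, $2\lg_O\Sel_{\cdots}$ (resp.\ $C_{\operatorname{RS}}$, $\lg_O\Sel_{\cdots}$) in the two cases come from the pairs of applications of $j$ and $a$ (resp.\ single applications) exactly as you describe.

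One slip worth flagging: you must take $j\in\Ann_{R^{Qq}_\m}(J^{Qq}_\red)$, not $\Ann_{R^Q_\m}(J^Q_\red)$. The module you apply $j$ to is $H^3_\et(\Sh_{K^qK_q^\ram}(V_{DQq})_{\overline\Q},O(2))_\m$, whose Hecke algebra receives a map from $R^{Qq}_\m$ (Lemma \ref{lem:pseudo_properties}), not $R^Q_\m$; an element of $\Ann_{R^Q_\m}(J^Q_\red)$ has no natural action on this module since $R^{Qq}_\m\twoheadrightarrow R^Q_\m$ goes the wrong way. Lemma \ref{lem:RS_BK_endoscopic} applies to $Qq$ just as well, so the bound on $\ord_\varpi p_\pi^{Qq}(j)$ by $C_{\operatorname{RS}}$ survives unchanged. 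Relatedly, the target of your diagram should be written $H_{Qq}(K^qK_q^\ram,j)^{(i)}/(t_q)$ (subscript $Qq$, since $\sigma(DQq)$ is even; and no extra prefix $j$, as $j$ is already built into Definition \ref{def:H_Q_endo}). With those superscripts corrected, the argument is the paper's.
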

\begin{proof}
First, choose the $S$-tidy level structure $K$ for $\spin(V_{DQ})$ such that $\lambda_n^D(Q; K) = \lambda_n^D(Q)$ (possible by Lemma \ref{lem:tidy_suffices}). 
Then fix a $q$-adic uniformization datum for $V_{DQq}$, a special cycle $z\in \SC^2_{K^qK_q^\ram}(V_{DQq})$, and a test function $\alpha\in \Test_K(V_{DQ},\pi,O/\varpi^n)$ satisfying the conclusion of Lemma \ref{lem:for_1ERL}; in particular, we have $$\left(\alpha\circ\xi \circ \res_{\Q_q} (\partial_{\AJ,\m} (z))\right)= \lambda_n^D(Q).$$ We also fix $j \in \Ann_{R^{Qq}_\m}(J^{Qq}_\red)$ satisfying the conclusion of Lemma \ref{lem:RS_BK_endoscopic}.

As in the proof of Theorem \ref{thm:1ERL}, $M_{-1}H^1\left(I_{\Q_q}, H^3_\et(\Sh_{K^qK_q^\ram}(V_{DQq})_{\overline \Q}, O(2))_\m\right)$ is $\varpi$-power-torsion, hence  $j_\ast M_{-1}H^1\left(I_{\Q_q}, H^3_\et(\Sh_{K^qK_q^\ram}(V_{DQq})_{\overline \Q}, O(2))_\m\right)$ is as well, where $j_\ast$ is as in (\ref{eq:j_star}). Since the kernel of (\ref{eq:j_star}) is  $j$-torsion, we obtain the following 
  diagram arising from Theorem \ref{main arithmetic level raising semistable} and Lemma \ref{lem:torsion_inertia_endo}:
\begin {center}
\begin {tikzcd}
j_\ast M_{-1} H ^ 1 (I_{\Q_q}, H ^ 3_\et (\Shimura_{K^qK_q^\ram} (V_{DQq})_{\overline\Q}, O (2))_\m)\arrow [r, hook]\arrow [d, twoheadrightarrow, "\xi"] & H ^ 1 (I_{\Q_q},M_{Q_iq,1}^\univ) ^ {\Frobenius_q = 1}\otimes_{R_{Q_iq,1} }H^{(i)}_{Qq}\simeq H^{(i)}_{Qq}/(t_q)\\j\cdot\left (H_Q/(\funnyT^\lr_q) \right) = j H_Q/(jH_Q \cap \funnyT^\lr_q (H_Q)
\end {tikzcd}
\end {center}
where we abbreviate 
 $$H_Q \coloneqq O \left[\Sh_K(V_{DQ})\right]_\m,\; H^{(i)}_{Qq} \coloneqq H_{Qq}(K, j)^{(i)}.$$ 
From here, the argument is entirely analogous to Theorem \ref{thm:1ERL}, replacing Lemma \ref{lem:new_typic_upshot} with Lemma \ref{lem:new_typic_upshot_endo}.


\end{proof}
\section{Main result:  rank zero case}\label{sec:main_rk0}
\subsection{Chebotarev primes and proof of the main result}
\subsubsection{}
Throughout this section, we let $\pi$, $S$, and $E_0$ be as in Notation \ref{notation:pi_basic}, and fix for now a prime $\p$ of $E_0$.
\begin{lemma}\label{lem:loc_cheb_rk0}
    Suppose  that $\pi$ is non-endoscopic, that $\p$ satisfies Assumption \ref{assumptions_on_p}(\ref{assume:irreducible}),  and that there exist admissible primes for $\rho_\pi= \rho_{\pi,\p}$. Let $C \geq 0$ be the constant from Corollary \ref{cor:Galois_coh_restr} applied to $T_\pi$. Then for all integers $m \geq n \geq 1$ and 
    for any  cocycle $c\in H^1(\Q, T_{\pi,n})$,   there are infinitely many $m$-admissible primes $q$ such that
    $$\ord_\varpi\loc_q c \geq \ord_\varpi c  - C.$$
    
\end{lemma}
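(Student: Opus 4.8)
The plan is to run the standard Euler-system Chebotarev argument: replace $c$ by a controlled $\varpi$-power multiple of itself, use the large-image input of Corollary~\ref{cor:Galois_coh_restr} to see that this multiple still has ``full'' mod-$\varpi$ content after restriction to a large field, and then select a Frobenius conjugacy class that is simultaneously $m$-admissible and detects the class. First I would dispose of the trivial case $a\coloneqq \ord_\varpi c \le C$: then $a-C\le 0\le \ord_\varpi\loc_q c$ automatically, and infinitely many $m$-admissible primes exist by Lemma~\ref{lem:admissible_TFAE}. So assume $a>C$, fix a finite set $S'\supseteq S\cup\{p\}$ of primes outside which $c$ is unramified, and set $c_0\coloneqq \varpi^{a-C-1}c\in H^1(\Q^{S'}/\Q,T_{\pi,n})$, so that $\varpi^{C}c_0=\varpi^{a-1}c\neq 0$. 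It then suffices to produce infinitely many $m$-admissible $q\notin S'$ with $\loc_q c_0\neq 0$, since $\loc_q c_0=\varpi^{a-C-1}\loc_q c$ forces $\ord_\varpi\loc_q c\ge a-C$.

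The first substantive step is to show that the reduction mod $\varpi$ of the restriction of $c_0$ to $G_L$ is a \emph{nonzero} homomorphism $\overline{c_0}\colon G_L\to \overline T_{\pi,\p}$, where $L/\Q$ is the infinite extension cut out by the $G_\Q$-action on $T_\pi$. Note $L\supseteq \Q(\mu_{p^\infty})$ because $\nu\circ\rho_{\pi,\p}=\chi_{p,\cyc}$ (trivial central character), so $L$ contains every $\Q(T_{\pi,m},\mu_{p^m})$ and $\res_L c_0$ is genuinely a homomorphism. Here Corollary~\ref{cor:Galois_coh_restr} enters: applied to $T_\pi$ it shows the kernel of $H^1(\Q,T_{\pi,n})\to\Hom(G_L,\overline T_{\pi,\p})$ is annihilated by $\varpi^C$, and since $\varpi^C c_0\neq 0$ we get $\overline{c_0}\neq 0$. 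The conjugation identity $\overline{c_0}(\rho\sigma\rho^{-1})=\overline\rho_{\pi,\p}(\rho)\,\overline{c_0}(\sigma)$ for $\rho\in G_\Q$, $\sigma\in G_L$ (using normality of $G_L$ and triviality of the action on $\overline T_{\pi,\p}$) makes the image of $\overline{c_0}$ a nonzero $\Gal(L/\Q)$-stable subgroup, whose $\overline\F_p$-span is all of $\overline T_{\pi,\p}$ by the absolute irreducibility in Assumption~\ref{assumptions_on_p}(\ref{assume:irreducible}); Nakayama's lemma then gives that $\{c_0(\sigma):\sigma\in G_L\}$ generates $T_{\pi,n}$ over $O/\varpi^n$.

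Next I would combine this with an admissible element. Choose an $m$-admissible prime $q_0$ (Lemma~\ref{lem:admissible_TFAE}) and a Frobenius element $\tau_0\in G_\Q$ at $q_0$; as in Lemma~\ref{lem:M0_adm_def} the quotient $T_{\pi,n}/(\tau_0-1)$ is free of rank one over $O/\varpi^n$, with projection $\pi_0$. Since the $c_0(\sigma)$ generate $T_{\pi,n}$, the image subgroup $I=\{\pi_0(c_0(\sigma)):\sigma\in G_L\}\subseteq O/\varpi^n$ is nonzero, hence of order $\ge p>2$ (Remark~\ref{rmk:p>5}), so there is $\sigma_0\in G_L$ with $\pi_0(c_0(\sigma_0))\neq -\pi_0(c_0(\tau_0))$. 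Put $\sigma\coloneqq\sigma_0\tau_0$: it acts on $T_{\pi,n}$ exactly as $\tau_0$, has the same image in $\Gal(\Q(T_{\pi,m},\mu_{p^m})/\Q)$, and $c_0(\sigma)=c_0(\sigma_0)+\sigma_0 c_0(\tau_0)=c_0(\sigma_0)+c_0(\tau_0)$, so $\pi_0(c_0(\sigma))\neq 0$. Finally, let $F/\Q$ be the finite Galois extension fixed by $\ker\bigl(\res c_0\colon G_{\Q(T_{\pi,m},\mu_{p^m})}\to T_{\pi,n}\bigr)$ — unramified outside $S'$, containing $\Q(T_{\pi,m},\mu_{p^m})$, and normal in $G_\Q$ by the equivariance above; for $g\in G_\Q$ both the value $c_0(g)\bmod(g-1)T_{\pi,n}$ and the $m$-admissibility test depend only on, and are invariant under conjugacy in, $\Gal(F/\Q)$. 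The Chebotarev density theorem applied to the conjugacy class of $\sigma|_F$ then produces infinitely many primes $q\notin S'$ that are $m$-admissible (because $\Frob_q\equiv\Frob_{q_0}$ in $\Gal(\Q(T_{\pi,m},\mu_{p^m})/\Q)$ forces $\rho_{\pi,\p}(\Frob_q)\equiv\rho_{\pi,\p}(\Frob_{q_0})$ and $q\equiv q_0$ modulo $\varpi^m$, hence $n(q)\ge m$ and admissibility) and satisfy $\loc_q c_0=[c_0(\Frob_q)]\neq 0$ in $H^1_f(\Q_q,T_{\pi,n})=T_{\pi,n}/(\Frob_q-1)$ (Proposition~\ref{prop:local_adm_free}).

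The conceptual crux is the second step — controlling, up to the bounded exponent $C$, how much mod-$\varpi$ content of $c$ survives restriction to the infinite field $L$ — but that is exactly what Corollary~\ref{cor:Galois_coh_restr} packages (its proof, via the large-image results of the appendix, being the genuinely hard part). Within the present argument the delicate bookkeeping is in Steps~3--4: one must impose the mod-$\varpi^m$ Frobenius congruence defining $m$-admissibility \emph{and} the non-vanishing of the localization at the same prime, which is why the large mod-$\varpi$ image of $\res_L c_0$ together with the admissible element $\tau_0$ are both needed, and why the field $F$ over which Chebotarev is run must contain $\Q(T_{\pi,m},\mu_{p^m})$.
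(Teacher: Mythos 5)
Your overall strategy mirrors the paper's: restrict a controlled multiple of $c$ to $G_L$ where $L=\Q(T_\pi)$, use Corollary~\ref{cor:Galois_coh_restr} to see the restriction is nonzero, use $G_\Q$-equivariance and absolute irreducibility to control the image, modify an admissible Frobenius by an element of $G_L$ so the projection to the $\Frob$-coinvariants is nonzero, and run Chebotarev over $\Q(T_{\pi,m},\mu_{p^m},c)$. The conjugation identity, the bookkeeping that $m$-admissibility only depends on the image of $\Frob_q$ in $\Gal(\Q(T_{\pi,m},\mu_{p^m})/\Q)$, and the final localization computation are all handled correctly.

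However, there is a genuine gap in the step where you conclude $\overline{c_0}\neq 0$. You assert that Corollary~\ref{cor:Galois_coh_restr} shows the kernel of $H^1(\Q,T_{\pi,n})\to\Hom(G_L,\overline T_{\pi,\p})$ is annihilated by $\varpi^C$. This is false: since the last arrow is reduction mod $\varpi$, that kernel always contains $\varpi\,H^1(\Q,T_{\pi,n})$, which is not $\varpi^C$-torsion in general. What the corollary actually gives, via inflation--restriction, is that the kernel of $H^1(\Q,T_{\pi,n})\to\Hom(G_L,T_{\pi,n})$, namely $H^1(L/\Q,T_{\pi,n})$, is $\varpi^C$-torsion. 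From $\varpi^C c_0\neq 0$ you may therefore deduce $c_0|_{G_L}\neq 0$, but not $\overline{c_0}\neq 0$ (the image $c_0(G_L)$ could sit inside $\varpi T_{\pi,n}$). The Nakayama step, which asserts $\{c_0(\sigma):\sigma\in G_L\}$ generates $T_{\pi,n}$ over $O/\varpi^n$, is therefore unjustified. The fix is to take $N_0<n$ maximal with $I_0\coloneqq c_0(G_L)\subseteq\varpi^{N_0}T_{\pi,n}$, note the image of $I_0$ in $\varpi^{N_0}T_{\pi,n}/\varpi^{N_0+1}T_{\pi,n}\cong\overline T_{\pi,\p}$ is a nonzero $G_\Q$-stable subgroup and so spans by irreducibility, and conclude $O\cdot I_0=\varpi^{N_0}T_{\pi,n}$ by Nakayama; then $\pi_0(I_0)$ spans $\varpi^{N_0}(O/\varpi^n)\neq 0$ over $O$ and is still a nonzero subgroup, after which your argument goes through. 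The paper sidesteps this by not scaling at all: it tracks $\ord_\varpi$ directly, producing $h\in G_L$ with $\ord_\varpi c(h)\geq\ord_\varpi c-C$ and then rotating $h$ so the $1$-eigenspace component retains that valuation. A minor further slip: Remark~\ref{rmk:p>5} is about Assumption~\ref{assumptions_on_p}(\ref{assume:pi_generic}), which is not a hypothesis here; instead $p>5$ follows from the assumed existence of admissible primes (which needs some $q$ with $q^4\not\equiv 1\pmod p$).
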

Recall here that $\loc_q$ was defined in Notation \ref{notation:loc_q}.
\begin{proof}
Let $g\in G_\Q$ be an admissible element  for $\rho_{\pi}$, which is possible by Lemma \ref{lem:admissible_TFAE}.  
By Corollary \ref{cor:Galois_coh_restr}, we have $\varpi^C H^1(\Q(T_{\pi})/\Q, T_{\pi,n})=0$, so by inflation-restriction
there exists an element $h\in G_{\Q(T_{\pi})}$ such that $\ord_\varpi c(h) \geq \ord_\varpi c - C$. Because $\overline T_{\pi}$ is absolutely irreducible, we can assume without loss of generality that the component of  $c(h)$ in the 1-eigenspace for $g$ is nonzero modulo $\varpi^{n - \ord_\varpi c + C + 1}$. 
Then since
 $$ c(gh) = gc(h) + c(g),$$ after possibly replacing $g$ by $gh$ we may assume without loss of generality that the same is true for the component of $c(g)$ in the 1-eigenspace for $g$ (which is independent of the choice of cocycle representative).
Then  any prime $q\not\in S\cup \set{p}$ with Frobenius conjugate to $g$ in $\Gal(\Q(T_{\pi, m}, c))$ satisfies the conclusion of the lemma. 
\end{proof}

The following theorem is a corollary of the work of Newton-Thorne \cite{newton2023thorne} and Thorne \cite{thorne2022vanishing}.
\begin{thm}\label{theorem of thorne}
    Suppose $\pi$ is  non-endoscopic, and $\p$ is a prime of $E_0$ of residue characteristic $p > 3$ such that $\pi_p$ is unramified. 
    Then $$H^1_f(\Q, \ad^0\rho_{\pi,\p}) = 0.$$ 
\end{thm}
    \begin{proof}
    By \cite[Theorem 6.2]{thorne2022vanishing}, it suffices to show \begin{equation}\label{goal equation rho pi weak Taylor Wiles}     V_{\pi,\p}|_{G_{\Q(\mu_{p^\infty})}} \text{ is absolutely irreducible.} 
    \end{equation}
    By Lemmas \ref{lem:reducible_endoscopic} and \ref{lemma distinct Hodge Tate weights implies induction}, we can write
 $$V_{\pi,\p}\cong \Ind_{G_K}^{G_\Q}V_0$$ for a finite extension $K/\Q$, where $V_0$ is a strongly irreducible representation of $G_K$. By \cite[Lemma 2.2.9]{patrikis2019variations} and the assumption that $\pi_p$ is unramified, we conclude $K$ is unramified at $p$; hence (\ref{goal equation rho pi weak Taylor Wiles}) follows from Lemma \ref{lemma with restriction to abelian Galois extension}. 
\end{proof}
\begin{thm}\label{thm:rk_zero_main}
    Let $\pi$ be non-endoscopic. Suppose $\p$ satisfies Assumption \ref{assumptions_on_p}, and that there exist admissible primes for $\rho_{\pi,\p}$. Suppose as well that there exists a prime $\l_0$ such that $\pi_{\l_0}$ is transferrable (Definition \ref{def:transferrable}).
Then $$L( \pi,\operatorname{spin},1/2) \neq 0 \implies H^1_f(\Q, V_{\pi,\p})= 0.$$
\end{thm}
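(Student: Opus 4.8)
### Proof plan for Theorem \ref{thm:rk_zero_main}

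\emph{Setup.} We are given that $\pi$ is relevant, non-endoscopic, with trivial central character; that $\p$ satisfies Assumption \ref{assumptions_on_p}; that admissible primes exist for $\rho_{\pi,\p}$; and that $\pi_{\l_0}$ is transferrable for some prime $\l_0$. The plan is to bound $H^1_f(\Q, V_{\pi,\p})$ by a Poitou--Tate duality argument, feeding in a single well-chosen ramified Galois cohomology class constructed by the first explicit reciprocity law (Theorem \ref{thm:1ERL}). Throughout write $T = T_{\pi,\p}$, $T_n = T_{\pi,n}$, etc.

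\emph{Step 1: reduce to $O$-integral statements.} Since $H^1(\Q, T)$ is $\varpi$-torsion-free (Lemma \ref{lem:H1_tors_free_new}(\ref{lem:H1_tors_free_part})) and $H^1_f(\Q, V_{\pi,\p}) = H^1_f(\Q, T)\otimes_O E_\p$, it suffices to prove that $H^1_f(\Q, T)$ has bounded $O$-rank, in fact rank zero. Suppose for contradiction that $0 \ne c \in H^1_f(\Q, T)$, and after dividing by a power of $\varpi$, assume $c\notin \varpi H^1(\Q, T)$. For $n \geq 1$ let $c_n \in H^1(\Q, T_n)$ be the reduction; by Lemma \ref{lem:H1_tors_free_new}(\ref{lem:H1_tf_order}), $\ord_\varpi c_n = 0$ for all $n$, i.e.\ $c_n$ generates a free rank-one $O/\varpi^n$-submodule. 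Fix $m$ large (to be chosen relative to the constant from Theorem \ref{theorem of thorne}, Lemma \ref{lem:loc_cheb_rk0}, and Lemma \ref{lemma kill pairing on crystalline}) and take $n = \max\{3m, n_0\}$.

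\emph{Step 2: produce the source $\lambda$-element.} Since $L(\pi,\operatorname{spin},1/2)\ne 0$ and $\pi_{\l_0}$ is transferrable, one can choose an odd squarefree $D\geq 1$ with $\div(D)\subset S$ and $\sigma(D)$ odd such that $\pi_f^D$ completes to an automorphic representation of $\spin(V_D)(\A)$ --- concretely, use $\l_0$ (enlarging $S$ if necessary so that $\l_0 \in S$, or absorbing $\l_0$ into $D$) together with Theorem \ref{thm:JL}(\ref{part:JL_two}) and Corollary \ref{cor:JL_general}; the parity of $\sigma(D)$ is arranged by the transferrability hypothesis and the existence of admissible, hence transferrable, auxiliary primes. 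By Proposition \ref{L values related to lambda elements}, $\lambda^D(1)_\p \ne 0$, so $\lambda^D_n(1) = (\varpi^{c_0})$ for some $c_0$ independent of $n$ (bounded by $\ord_\varpi \lambda^D(1)_\p$); enlarging $n$ we may assume $c_0 < n$.

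\emph{Step 3: invoke the reciprocity law to get a ramified class.} Since $\overline\rho_{\pi,\p}$ is absolutely irreducible and generic, $\m_{\pi,\p}$ is generic non-Eisenstein (Lemma \ref{lem:m_is_nonEisgeneric}), and by Theorem \ref{theorem of thorne} together with Proposition \ref{prop torsion crystalline iff crystalline} the adjoint Bloch--Kato Selmer group $H^1_f(\Q,\ad^0\rho_{\pi,\p})=0$; hence the relative Selmer groups $\Sel_{\mathcal F(1)^\rel}(\Q,\ad^0\rho_n)$ and the residual Selmer groups $\overline\Sel_{\mathcal F(1)}(\Q,\ad^0\rho_m)$ are all $O$-torsion of length bounded independently of $n$ (this uses the deformation-theoretic machinery of Appendix \ref{sec:appendix_def_theory} and \S\ref{subsec:non_endo_def}; the key point is that $C$ in Theorem \ref{thm:1ERL}(\ref{thm:1ERL_one}) is bounded independently of $n$ and $Q$). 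By Lemma \ref{lem:admissible_TFAE}, $n$-admissible primes exist. Apply Theorem \ref{thm:1ERL}(\ref{thm:1ERL_one}) with $Q = 1$ and an $n$-admissible prime $q$: we get
$$\partial_q\,\kappa^D_n(q) \supset \lambda^D_n(1)\cdot(\varpi^C) = (\varpi^{c_0 + C}),$$
so there is a class $\kappa_n \in \kappa^D_n(q)\subset H^1(\Q, T_n)$ with $\ord_\varpi \partial_q\kappa_n = c_0 + C =: C_1$, a constant independent of $n$ and $q$. By Proposition \ref{prop: check local conditions for kappa}(\ref{part:check_local_one}), $\Res_\l\kappa_n$ is unramified for $\l\nmid N(\pi)pq$ and (by the argument there) crystalline at $p$; and $\kappa_n$ is ramified at $q$ in the precise sense that $\partial_q\kappa_n$ has order $C_1$ in $O/\varpi^n$.

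\emph{Step 4: Poitou--Tate / global duality.} Consider the global Tate pairing. The class $c_n\in H^1_f(\Q, T_n)$ (reduction of $c$) and the class $\kappa_n$ are paired via
$$\sum_v \langle \Res_v c_n,\ \Res_v\kappa_n\rangle_v = 0.$$
At all $v\nmid N(\pi)pq$ both classes are unramified, so those terms vanish. At $v = p$ both classes are crystalline (for $c$ because $c\in H^1_f$; for $\kappa_n$ by Step 3), so after increasing the index in the standard way (Lemma \ref{lemma kill pairing on crystalline}(\ref{lemma kill pairing on crystalline part two}), replacing $n$ by a larger $m'$ with $c$ still of order $\geq n$ in $H^1(\Q, T_{m'})$, and noting $T^\ast \cong T$ by Remark \ref{rmk:rho_O_valued}) those terms vanish too. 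At the primes $v\mid N(\pi)$: here one uses a standard local argument --- the local condition at $\l\mid N(\pi)$ for $H^1_f$ is inertia-invariants, and the class $\kappa_n$ is built from cohomology of a Shimura variety with fixed tame level, so $\Res_\l\kappa_n$ lands in a group that pairs trivially with $H^1_f(\Q_\l, T_n^\ast)$ by Lemma \ref{lem:zero_local_pairing}; alternatively, choose $D$ and the Shimura data so that the local conditions at $N(\pi)$ match up. That leaves only $v = q$:
$$\langle \Res_q c_n,\ \Res_q\kappa_n\rangle_q = 0.$$
Now $\Res_q c_n \in H^1_f(\Q_q, T_n) = H^1_\unr(\Q_q, T_n)$, which is free of rank one over $O/\varpi^n$ (Proposition \ref{prop:local_adm_free}), with generator pairing perfectly against a generator of $H^1_{/f}(\Q_q, T_n)$; and $\ord_\varpi(\text{image of }\Res_q\kappa_n\text{ in }H^1_{/f}) = \ord_\varpi\partial_q\kappa_n = C_1$. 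Therefore
$$\ord_\varpi\langle \Res_q c_n, \Res_q\kappa_n\rangle_q \leq \ord_\varpi\loc_q c_n + C_1.$$
For the pairing to vanish in $O/\varpi^n$ we need $\ord_\varpi\loc_q c_n + C_1 \geq n$, i.e.
$$\ord_\varpi\loc_q c_n \geq n - C_1.$$

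\emph{Step 5: Chebotarev contradiction.} By Lemma \ref{lem:loc_cheb_rk0} applied to $c_n$ (which has $\ord_\varpi c_n = 0$), there are infinitely many $n$-admissible primes $q$ with $\ord_\varpi\loc_q c_n \geq \ord_\varpi c_n - C' = -C' $, hence $\loc_q c_n$ is a \emph{unit multiple} of a generator, i.e.\ $\ord_\varpi\loc_q c_n = 0$. Choosing such a $q$ to run the above argument, Step 4 forces $0 \geq n - C_1$, i.e.\ $n \leq C_1$. But $C_1$ is a fixed constant while $n$ was arbitrarily large --- contradiction. Hence no such $c$ exists, so $H^1_f(\Q, T) = 0$ and $H^1_f(\Q, V_{\pi,\p}) = 0$. $\qed$

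\emph{Main obstacle.} The genuinely delicate point is \textbf{Step 3}: ensuring that the error exponent $C$ in Theorem \ref{thm:1ERL}(\ref{thm:1ERL_one}), which governs the loss between $\lambda^D_n(1)$ and $\partial_q\kappa^D_n(q)$, is bounded \emph{uniformly in $n$ and in the auxiliary prime $q$}. This is exactly where the vanishing $H^1_f(\Q,\ad^0\rho_{\pi,\p}) = 0$ of Newton--Thorne (Theorem \ref{theorem of thorne}) is used, via the control of the relative and residual adjoint Selmer groups in the deformation-theoretic framework: only the hypothesis $\overline\Sel_{\mathcal F(1)}(\Q,\ad^0\rho_m) = 0$ (which needs $m$ large and the adjoint BK group to vanish, plus Assumption \ref{assumptions_on_p}) puts us in case (\ref{thm:1ERL_one}) rather than case (\ref{thm:1ERL_two}), and makes $C$ independent of $n$. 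A secondary bookkeeping obstacle is the careful choice of auxiliary $D$ and tame level structures so that the local conditions at primes dividing $N(\pi)$ are annihilating in the duality pairing of Step 4 --- this is routine but must be done compatibly with the requirement, from the transferrability hypothesis on $\pi_{\l_0}$, that $\pi_f^D$ transfers to $\spin(V_D)$.
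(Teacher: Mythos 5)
Your strategy matches the paper's: assume a nonzero $c\in H^1_f(\Q, T_{\pi})$, use Proposition~\ref{L values related to lambda elements} to get a nonvanishing $\lambda$-element, invoke Theorem~\ref{thm:1ERL}(\ref{thm:1ERL_one}) with $Q=1$ for an $n$-admissible $q$ to produce a ramified class $\kappa^D_n(q)_0$, and derive a contradiction from the global Tate pairing and a Chebotarev choice of $q$. The identification of the key input --- Newton--Thorne (Theorem~\ref{theorem of thorne}) giving $H^1_f(\Q,\ad^0\rho_{\pi,\p})=0$, hence $\overline\Sel_{\mathcal F}(\Q,\ad^0\rho_{\pi,m_0})=0$ and a uniform bound on the error constant $C$ via Corollary~\ref{cor:relaxed_bound_appendix} --- is exactly right.

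However, your Step~4 handling of the primes $\ell\mid N(\pi)$ has a genuine gap. You assert that $\langle\Res_\ell c_n,\Res_\ell\kappa_n\rangle_\ell = 0$ either because $\Res_\ell\kappa_n$ lands in a subspace annihilating $H^1_f(\Q_\ell, T_n^\ast)$ (citing Lemma~\ref{lem:zero_local_pairing}), or because one can ``choose $D$ and Shimura data so that the local conditions match up.'' Neither is supported by the results in the paper. Proposition~\ref{prop: check local conditions for kappa}(\ref{part:check_local_one}) controls $\Res_\ell\kappa_n^D(Q)$ only for $\ell\notin S\cup\div(DQ)$; it says nothing at all about $\ell\in S$. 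The class $\kappa_n^D(q)$ is built from cohomology of Shimura varieties ramified at the primes dividing $D\subset S$, and there is no reason it should satisfy a Bloch--Kato-type local condition at those primes. The paper circumvents this entirely by a cruder but robust argument: since $H^3_\et(\Sh_K(V_{Dq})_{\overline\Q}, E_{0,\p})_\m$ is pure of nonzero weight at every finite $v$ (Corollary~\ref{cor:coh_relevant} plus Theorem~\ref{thm:rho_pi_LLC}(\ref{part:rho_pi_LLC1})), the groups $H^1(\Q_v,T_{\pi,N})$ for $v\in S$ are annihilated by a power $\varpi^{C_3}$ with $C_3$ \emph{independent of $N$} (as in \cite[Lemma 4.3(1)]{liu2016hirzebruch}), so the local pairing at each $v\in S$ has $\ord_\varpi\leq C_3$. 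Since the whole argument is ``up to bounded error,'' this is enough: one concludes $\ord_\varpi\langle\kappa_N^D(q)_0,c_N\rangle_q\leq C_3$ and gets a contradiction for $N>C_0+C_1+C_2+C_3$. Your attempt to make the bad-prime pairings literally vanish introduces an unnecessary and unjustified arrangement, and your characterization of this as a ``routine bookkeeping obstacle'' understates the issue.

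Two smaller remarks. First, your usage of $\ord_\varpi$ is internally consistent but opposite to the paper's convention (Notation in \S\ref{subsubsec:notation_coefficients}), where $\ord_\varpi(m)$ for a torsion element $m$ is the annihilator order, so ``$\ord_\varpi c_n = n$'' (not $0$) encodes that $c_n$ generates a free rank-one $O/\varpi^n$-submodule; be careful that your Step~4 inequality and Step~5 conclusion are stated with the same convention you used in Lemma~\ref{lem:loc_cheb_rk0}. Second, the admissible prime $q$ used in Step~3 and Step~5 must be the \emph{same} prime, so the Chebotarev choice has to come first (as the paper does: choose $q$ via Lemma~\ref{lem:loc_cheb_rk0}, then apply Theorem~\ref{thm:1ERL} for that $q$); your wording suggests choosing $q$ twice.
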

\begin{proof}
Set $D\coloneqq \l_0$, so that, by Theorem \ref{thm:JL}, $\pi_f^D$ can be completed to an automorphic representation of $\spin(V_D)(\A)$. 
By Proposition \ref{L values related to lambda elements}, if $L(\pi, \operatorname{spin}, 1/2) \neq 0$ then we have $\lambda^D(1) \neq 0$, so there exists a constant $C_0 \geq 0$  such that \begin{equation}
    \ord_\varpi \lambda_m^D(1) \geq m - C_0,\;\;\forall m \geq 1.
    \end{equation}

   Suppose for contradiction that there exists a  non-torsion element $ c\in H^1_f(\Q, T_\pi)$, and let $c_n$ be the image of $c$ in $H^1_f(\Q, T_{\pi,n})$ for all $n \geq 1$. We fix a large integer $N$ to be specified later.     
   Let $M\geq N$ be the integer of Lemma \ref{lemma kill pairing on crystalline}(\ref{lemma kill pairing on crystalline part two}) for $M = T_\pi$ and $n = N$. 
  By Lemma \ref{lem:loc_cheb_rk0} and Lemma \ref{lem:H1_tors_free_new}(\ref{lem:H1_tf_order}), we may choose an $M$-admissible prime $q$ such that \begin{equation}\label{unramified lower bound for proof of rank zero}
       \ord _{\varpi} \loc_q c_N \geq N - C_1 
   \end{equation}
   for a constant $C_1\geq 0$. 
   Now by Theorem \ref{theorem of thorne} and Lemma \ref{lemma for using theorem of thorne} (which applies to $\rho_{\pi}$ by Lemma \ref{lem:assumption_appendix_ok}),   for some $m_0 \geq 1$ we have $$\overline \Sel_{\mathcal F} (\Q, \ad^0 \rho_{\pi,m_0}) = 0.$$ Moreover, by Corollary \ref{cor:relaxed_bound_appendix},   $\lg_O \Sel_{\mathcal F^\rel}(\Q, \ad^0\rho_{\pi,n})$ is uniformly bounded in $n$. 
 Hence by Theorem \ref{thm:1ERL}(\ref{thm:1ERL_one}), as long as $M$ is sufficiently large depending on $m_0$ -- which we can ensure by choosing $N$ sufficiently large --
 there exists a constant $C_2 \geq 0 $  and an element $\kappa^D_M(q)_0\in \kappa^D_M(q)$ such that 
   \begin{equation}
       \label{ramification lower bound for proof of rank zero}
       \ord_\varpi \partial_q \kappa^D_M(q)_0 \geq \ord_\varpi \lambda^D_M(1) - C_2\geq M - C_2 - C_0.
   \end{equation}
Let $\kappa^D_N(q)_0$ be the image of $\kappa^D_M(q)_0$ in $H^1(\Q, T_{\pi_1,N})$. 
We now consider the global Tate pairing
\begin{equation}\label{global tate pairing for proof of rank zero}
   0 = \langle \kappa_N^D(q)_0, c_N \rangle = \sum_v \langle \kappa_N^D(q)_0, c_N\rangle_v. 
\end{equation}
For $v\not\in S\cup \set{q}$, the local Tate pairing vanishes by Proposition \ref{prop: check local conditions for kappa}(\ref{part:check_local_one}) and Lemma \ref{lemma kill pairing on crystalline}(\ref{lemma kill pairing on crystalline part two}) -- recall here that the local Tate pairing of two unramified classes is always trivial. By the same argument as \cite[Lemma 4.3(1)]{liu2016hirzebruch}, we may also pick a constant $C_3 \geq 0$ independent of $N$ such that, for all $v\in S$, $\varpi^{C_3}H^1(\Q_v, T_{\pi_1,N}) = 0$; hence $$\ord_\varpi \langle \kappa_N^D(q)_0, c_N\rangle_v \leq C_3, \; \forall v\in S.$$
It then follows from (\ref{global tate pairing for proof of rank zero}) that $$\ord_\varpi \langle \kappa_N^D(q)_0, c_N\rangle_q \leq C_3.$$
On the other hand, by Proposition \ref{prop:local_adm_free}, (\ref{unramified lower bound for proof of rank zero}) and (\ref{ramification lower bound for proof of rank zero}) together imply
$$\ord_\varpi \langle \kappa_N^D(q)_0, c_N\rangle_q \geq N - C_0  - C_1- C_2,$$
so we obtain a contradiction when $$N > C_0 + C_1 + C_2 + C_3.$$    
\end{proof}

\subsection{The endoscopic case}
\subsubsection{}
For completeness, we include an analogue of Theorem \ref{thm:rk_zero_main} in the endoscopic case. First, we require an analogue of Lemma \ref{lem:loc_cheb_rk0}.
\begin{lemma}\label{lem:loc_cheb_rk0_endo}
     Suppose $\pi$ is endoscopic associated to a pair $(\pi_1,\pi_2)$ of automorphic representations of $\GL_2(\A)$, and fix $i = 1$ or 2. Let $\p$ be a prime of $E_0$  such that there exist admissible primes for $\rho_\pi = \rho_{\pi,\p}$ which are BD-admissible for $\rho_{\pi_i}=\rho_{\pi_i,\p}$. Then there is a constant $C$ with the following property.

     For all integers $m \geq n \geq 0$ and for any cocycle $c\in  H^1(\Q, T_{\pi_i})$, there are infinitely many $m$-admissible primes $q$, BD-admissible for $\rho_{\pi_i}$, such that
    $$\ord_\varpi\loc_q c \geq \ord_\varpi c - C.$$
\end{lemma}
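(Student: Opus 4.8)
The plan is to run the argument of Lemma \ref{lem:loc_cheb_rk0}, adapted to the lattice $T_{\pi_i}=T_{\pi_i,\p}$ and keeping track of the two admissibility conditions simultaneously. First I would produce an element $g\in G_\Q$ which is both admissible for $\rho_\pi$ (Definition \ref{def:admissible}) and BD-admissible for $\rho_{\pi_i}$ (Definition \ref{def:admissible_endoscopic}). The locus of such $g$ in $G_\Q$ is cut out by congruence conditions on $\rho_\pi(g)$ and on $\chi_{p,\cyc}(g)$, so it is open and closed, and it is nonempty by the hypothesis of the lemma together with the same compactness argument as in the proof of Lemma \ref{lem:admissible_TFAE}. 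For any prime $q$ that is admissible for $\rho_\pi$ and BD-admissible for $\rho_{\pi_i}$, the Frobenius eigenvalues of $\overline\rho_{\pi_{3-i}}(\Frob_q)$ are $\set{\alpha,q/\alpha}$ with $\alpha,q/\alpha\not\equiv1\pmod\varpi$, so $q$ is not BD-admissible for $\rho_{\pi_{3-i}}$ and $H^1_f(\Q_q,T_{\pi_{3-i},n})=0$; hence by Remark \ref{rmk:BD_adm}(\ref{rmk:BD_adm_dichotomy_part}) such a $q$ is automatically $m$-BD-admissible for $\rho_{\pi_i}$ once it is $m$-admissible for $\rho_\pi$, and $\loc_q$ is the well-defined localization $H^1(\Q,T_{\pi_i,n})\to H^1_f(\Q_q,T_{\pi_i,n})\simeq O/\varpi^n$ from Proposition \ref{prop:local_adm_free}.

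For the cohomological input I would use that, since the residual constituents $\overline\rho_{\pi_1}$ and $\overline\rho_{\pi_2}$ are absolutely irreducible and distinct (Lemma \ref{lem:residually distinct}), one has $T_\pi\simeq T_{\pi_1}\oplus T_{\pi_2}$, and Corollary \ref{cor:Galois_coh_restr} applied to $T_\pi$ supplies a constant $C$ with $\varpi^{C}H^1(\Q(T_\pi)/\Q,T_{\pi,n})=0$ for all $n$; since the sequence $0\to T_{\pi_i}\to T_\pi\to T_{\pi_{3-i}}\to 0$ is split, also $\varpi^{C}H^1(\Q(T_\pi)/\Q,T_{\pi_i,n})=0$. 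Viewing $c$ inside $H^1(\Q,T_{\pi,n})$ and restricting to $G_{\Q(T_\pi)}$, inflation--restriction together with Lemma \ref{lem:H1_tors_free_new}(\ref{lem:H1_tf_order}) (applied to $T_{\pi_i}$) yields $h\in G_{\Q(T_\pi)}$ with $\ord_\varpi c(h)\ge\ord_\varpi c-C$, where $c(h)$ automatically lies in $T_{\pi_i,n}$. Crucially $h$ fixes all of $T_\pi$, so $\rho_\pi(gh)=\rho_\pi(g)$ and $gh$ is still admissible for $\rho_\pi$ and BD-admissible for $\rho_{\pi_i}$. Since $\chi_{p,\cyc}(g)\not\equiv1\pmod\varpi$, the $1$-eigenspace of $g$ on $V_{\pi_i}$ is a rank-one direct summand of $T_{\pi_i,n}$, and it is also the $1$-eigenspace of $g$ on $V_\pi$, because admissibility for $\rho_\pi$ forces the eigenvalues $\alpha,\chi_{p,\cyc}(g)/\alpha$ of $g$ on $V_{\pi_{3-i}}$ to avoid $1$. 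Using absolute irreducibility of $\overline\rho_{\pi_i}$, the $G_\Q$-submodule $\image(c|_{G_{\Q(T_\pi)}})\subseteq T_{\pi_i,n}$ equals $\varpi^{k}T_{\pi_i,n}$ with $\varpi^{k}\mid$ a bounded power already absorbed into $C$, so after replacing $g$ by an element $gh'$ (which alters neither admissibility condition nor the class $c$) I may assume the component of $c(g)$ in the $1$-eigenspace of $g$ is nonzero modulo $\varpi^{n-\ord_\varpi c+C+1}$.

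The proof then concludes exactly as in Lemma \ref{lem:loc_cheb_rk0}: by the Chebotarev density theorem there are infinitely many primes $q\notin S\cup\set{p}$ whose Frobenius is conjugate to $g$ in $\Gal(\Q(T_{\pi,m},c)/\Q)$, and each such $q$ is $m$-admissible for $\rho_\pi$ and BD-admissible for $\rho_{\pi_i}$ with $\ord_\varpi\loc_q c\ge\ord_\varpi c-C$. The step I expect to need the most care is the second paragraph: one must modify $g$ only by elements of $G_{\Q(T_\pi)}$, not merely by elements of the a priori larger group $G_{\Q(T_{\pi_i})}$, since an element fixing only $T_{\pi_i}$ could spoil the eigenvalue conditions on $\overline\rho_{\pi_{3-i}}(\Frob_q)$ that are part of admissibility for $\rho_\pi$; this is what forces the use of $H^1(\Q(T_\pi)/\Q,-)$. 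If Corollary \ref{cor:Galois_coh_restr} is stated only for lattices with absolutely irreducible residual representation, one also needs the routine observation that the connecting term in inflation--restriction along $\Q(T_{\pi_i})\subseteq\Q(T_\pi)$ is killed by a power of $\varpi$ bounded independently of $n$, using that the image of $\rho_{\pi_{3-i}}$ is open in a group with essentially trivial abelianization as $p>5$.
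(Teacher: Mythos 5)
Your overall outline is the right one — construct an admissible element $g$ compatible with BD-admissibility, restrict $c$ to a subfield that trivializes the Galois action, adjust $g$ by an element that fixes $T_\pi$ so as not to spoil the admissibility conditions, and run Chebotarev. The paper follows the same skeleton. But the cohomological input is where your argument has a real gap.

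Your primary route, applying Corollary~\ref{cor:Galois_coh_restr} directly to $T_\pi$, does not work: that corollary is stated for $(\rho,V)$ as in Lemma~\ref{lemma distinct Hodge Tate weights implies induction}, which requires $V$ absolutely irreducible, whereas in the endoscopic case $V_\pi = V_{\pi_1}\oplus V_{\pi_2}$ is manifestly reducible. Your fallback remark — that the inflation--restriction connecting term along $\Q(T_{\pi_i})\subseteq\Q(T_\pi)$ is killed by a bounded power of $\varpi$ because ``the image of $\rho_{\pi_{3-i}}$ is open in a group with essentially trivial abelianization as $p>5$'' — is not routine, and in fact the stated premise is false precisely in the case that must be handled with care. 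When $\pi_{3-i}$ has CM, the image of $\rho_{\pi_{3-i}}$ on a finite-index subgroup lies in a Cartan normalizer and has a $\Z_p$-rank-one abelianization, so $\Gal(\Q(\rho_\pi)/\Q(\rho_{\pi_i}))$ can have infinite abelianization, and one cannot conclude finiteness of $\Hom_{G_\Q}\bigl(\Gal(\Q(\rho_\pi)/\Q(\rho_{\pi_i})), T_{\pi_i,n}\bigr)$ from a generic ``big image'' argument. The paper isolates exactly this as the hard subcase: when $\pi_{3-i}$ has CM by an imaginary quadratic subfield $K\subset\Q(\rho_{\pi_i})$, it shows that complex conjugation acts by $-1$ on $\Gal(\Q(\rho_\pi)/\Q(\rho_{\pi_i}))$, so any $G_\Q$-equivariant homomorphism to $T_{\pi_i,n}$ lands in the $(-1)$-eigenspace of complex conjugation, which forces it to vanish because $\overline T_{\pi_i}$ is absolutely irreducible and odd. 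In the non-CM case the paper uses Theorem~\ref{thm:nekovar} together with Corollary~\ref{corollary subalgebra of simple Lie algebra with base change} to see that no closed normal subgroup of $\Gal(\Q(\rho_{\pi_{3-i}})/\Q(\mu_{p^\infty}))$ has an infinite abelian quotient. Your writeup elides this dichotomy, and without the CM branch the claimed uniform bound $\varpi^C H^1(\Q(\rho_\pi)/\Q, T_{\pi_i,n})=0$ is unjustified.

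A secondary, smaller slip: an admissible (hence $m$-admissible) prime for $\rho_\pi$ that is BD-admissible for $\rho_{\pi_i}$ is by Remark~\ref{rmk:BD_adm}(\ref{rmk:BD_adm_dichotomy_part}) automatically $m$-BD-admissible for $\rho_{\pi_i}$ — you get this right, but it is not because $H^1_f(\Q_q, T_{\pi_{3-i},n})=0$; it is because $n(q)$ as defined in Definition~\ref{def:admissible} and Definition~\ref{def:admissible_endoscopic} computes the same quantity via the factor where the Frobenius eigenvalues are $\{1,q\}$. This does not affect the argument but the justification you give is not quite the one intended.
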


\begin{proof}
Without loss of generality, we may assume $i = 1$ (after possibly relabeling $\pi_1$ and $\pi_2$).
    \begin{claim}
        There exists a constant $C \geq 0$ such that $\varpi^C H^1(\Q(\rho_\pi)/\Q, T_{\pi_1,n}) = 0$ for all $n \geq 1$.
    \end{claim}
\begin{proof}[Proof of claim]
    By Corollary \ref{cor:Galois_coh_restr} and inflation-restriction, it suffices to show $$\Hom_{G_\Q} (\Gal(\Q(\rho_\pi)/\Q(\rho_{\pi_1})), T_{\pi_1,n})$$ is uniformly bounded in $n$. Note that, unless 
 $\pi_2$ has CM by an imaginary quadratic subfield  $K$ of $\Q(\rho_{\pi_1})$,  the abelianization of $\Gal(\Q(\rho_{\pi})/\Q(\rho_{\pi_1}))$ is finite. Indeed, if $\pi_2$ is CM, this can be checked by hand; otherwise one can use Theorem \ref{thm:nekovar} and Corollary \ref{corollary subalgebra of simple Lie algebra with base change}(\ref{corollary subalgebra of simple Lie algebra with base change part two})
    to see that no closed normal subgroup of $\Gal(\Q(\rho_{\pi_2})/\Q(\mu_{p^\infty}))$ can have an infinite abelian quotient.

    So we assume without loss of generality that $\pi_2$ has CM by $K \subset \Q(\rho_{\pi_1})$.
        In this case, one can calculate directly that complex conjugation acts on $\Gal(\Q(\rho_{\pi})/\Q(\rho_{\pi_1}))$ by $-1$, so the image of any Galois-invariant group homomorphism $\Gal(\Q(\rho_{\pi}, )/\Q(\rho_{\pi_1}))\to T_{\pi_1,n}$ lies in the $-1$ eigenspace of complex conjugation;  since $\overline T_{\pi_1}$ is absolutely irreducible and odd, we conclude that all such homomorphisms vanish.
\end{proof}

    Let $g\in G_\Q$ be an element which is admissible for $\rho_\pi$ and BD-admissible for $\rho_{\pi_1}$, which is possible by (the argument of) Lemma \ref{lem:admissible_TFAE}.
    By the claim and inflation-restriction, there exists $h\in G_{\Q(T_\pi)}$ such that $\ord_\varpi c(h) \geq \ord_\varpi c - C$; as in the proof of Lemma \ref{lem:loc_cheb_rk0} above, after possibly replacing $g$ with $gh$, we may assume without loss of generality that the component of $c(g)$ in the 1-eigenspace for $g$ is nonzero modulo $\varpi^{n - \ord_\varpi c + C + 1}$. Then any $q\not\in S \cup\set{p}$ with Frobenius conjugate to $g$ in $\Gal(\Q(T_{\pi, m}, c_n))$ satisfies the conclusion of the lemma. 
\end{proof}
The same proof of Theorem \ref{theorem of thorne} also shows:
\begin{prop}\label{prop:thorne_endoscopic}
    Suppose $\pi$ is endoscopic associated to a pair $(\pi_1,\pi_2)$. Then if $\p$ is any prime of $E_0$ of residue characteristic $p$ such that $\pi_p$ is unramified,
    $$H^1_f(\Q, \ad^0 \rho_{\pi_1,\p}) = H^1_f(\Q, \ad^0 \rho_{\pi_2,\p}) = 0.$$
\end{prop}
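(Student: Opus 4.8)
\textbf{Proposal for the proof of Proposition \ref{prop:thorne_endoscopic}.} The plan is to deduce the vanishing of the two adjoint Selmer groups from the theorems of Newton--Thorne \cite{newton2023thorne} and Thorne \cite{thorne2022vanishing}, exactly as in the proof of Theorem \ref{theorem of thorne}, but applied separately to $\pi_1$ and $\pi_2$ in place of $\pi$. Fix $i \in \{1,2\}$. By \cite[Theorem 6.2]{thorne2022vanishing}, in order to conclude $H^1_f(\Q, \ad^0\rho_{\pi_i,\p}) = 0$ it suffices to verify that
$$V_{\pi_i,\p}|_{G_{\Q(\mu_{p^\infty})}} \text{ is absolutely irreducible,}$$
so the whole argument reduces to checking this irreducibility hypothesis for the two-dimensional representation $V_{\pi_i,\p}$.

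First I would recall that $\pi_i$ arises from a classical modular form of weight $2$ or $4$ with trivial nebentypus (Lemma \ref{lem:when_endoscopic_relevant}); in particular $\rho_{\pi_i,\p}$ has distinct Hodge--Tate weights by Theorem \ref{thm:rho_GL2_LLC}(\ref{part:rho_GL2_LLC_HT}), and it is odd by Theorem \ref{thm:rho_GL2_LLC}(\ref{part:rho_GL2_LLC_char}) since $\pi_i$ has trivial central character. Then, as in the proof of Theorem \ref{theorem of thorne}, I would invoke Lemma \ref{lemma distinct Hodge Tate weights implies induction} to write
$$V_{\pi_i,\p} \cong \Ind_{G_K}^{G_\Q} V_0$$
for a finite extension $K/\Q$, with $V_0$ a strongly irreducible representation of $G_K$. (In the case $\pi_i$ is non-CM, $K = \Q$ and $V_0 = V_{\pi_i,\p}$ is itself strongly irreducible; in the CM case $K$ is the relevant imaginary quadratic field and $V_0$ is a character.) The key input is then that $K$ is unramified at $p$: this follows from \cite[Lemma 2.2.9]{patrikis2019variations} together with the hypothesis that $\pi_{i,p}$ is unramified (which holds because $\pi_p$ is unramified and the local components of $\pi$ at $p$ determine those of $\pi_1$ and $\pi_2$, e.g. via the base change $\BC(\pi) = \pi_1\boxplus\pi_2$ of Lemma \ref{lem:BC}). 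Given that $K$ is unramified at $p$, Lemma \ref{lemma with restriction to abelian Galois extension} gives that $V_{\pi_i,\p}|_{G_{\Q(\mu_{p^\infty})}}$ is absolutely irreducible, which is exactly the hypothesis needed to apply \cite[Theorem 6.2]{thorne2022vanishing}.

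Running this for both $i = 1$ and $i = 2$ yields $H^1_f(\Q, \ad^0\rho_{\pi_1,\p}) = H^1_f(\Q, \ad^0\rho_{\pi_2,\p}) = 0$, as desired. I do not expect any serious obstacle here: the argument is a verbatim repetition of the proof of Theorem \ref{theorem of thorne} applied to each factor, and the only point requiring a word of care is the transfer of the ``$\pi_p$ unramified'' hypothesis to ``$\pi_{i,p}$ unramified,'' which is immediate from local-global compatibility of the endoscopic classification (Theorem \ref{thm:rho_pi_LLC}, Lemma \ref{lem:BC}).
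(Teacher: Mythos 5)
Your proposal is correct and matches the paper's own argument, which is stated only as ``the same proof of Theorem \ref{theorem of thorne} also shows.'' You have simply written out the details: applying \cite[Theorem 6.2]{thorne2022vanishing} to each factor $\pi_i$, reducing to absolute irreducibility of $V_{\pi_i,\p}|_{G_{\Q(\mu_{p^\infty})}}$, and checking that hypothesis via Lemma \ref{lemma distinct Hodge Tate weights implies induction}, \cite[Lemma 2.2.9]{patrikis2019variations} (to get that the induction field $K$ is unramified at $p$ from $\pi_{i,p}$ unramified), and Lemma \ref{lemma with restriction to abelian Galois extension}, noting that $K$ unramified at $p$ forces $K\cap\Q(\mu_{p^\infty}) = \Q$.
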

\begin{thm}\label{thm:rk_zero_endoscopic}
    Let $\pi$ be endoscopic associated to a pair $(\pi_1,\pi_2)$ of automorphic representations of $\GL_2(\A)$, which we order so that $\pi_{1,\infty}$ and $\pi_{2,\infty}$ have weights 2 and 4, respectively. Assume there exists a prime $\l_0$ such that $\pi_{1,\l_0}$ is discrete series.

   Fix $i = 1$ or 2. Then for any prime $\p$ satisfying Assumption \ref{assumptions_on_p}, such that there exist admissible primes for $\rho_{\pi,\p}$ which are BD-admissible for $\rho_{\pi_i,\p}$,
   if $H^1_f(\Q, V_{\pi_1,\p}\otimes V_{\pi_2,\p}(-1)) = 0$ then we have
   $$L(\pi,\operatorname{spin}, 1/2)  \neq 0  \implies H^1_f(\Q, V_{\pi_i,\p}) = 0.$$
\end{thm}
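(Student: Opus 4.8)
The plan is to run the argument of Theorem~\ref{thm:rk_zero_main} essentially verbatim, replacing the non-endoscopic reciprocity law Theorem~\ref{thm:1ERL} by its endoscopic analogue Theorem~\ref{thm:1ERL_endoscopic}, and replacing the input of Newton--Thorne/Thorne by Proposition~\ref{prop:thorne_endoscopic}. First I would set $D=\l_0$. Since $\pi_{1,\l_0}$ is discrete series, $\pi_1$ admits a Jacquet--Langlands transfer $\pi_1^{\l_0}$ to $B_{\l_0}^\times(\A)$; as $\sigma(1)=0$ is even, Theorem~\ref{thm:endoscopic_packets} shows $\Theta(\pi_1^{\l_0}\boxtimes\pi_2)$ is a nonzero relevant automorphic representation of $\spin(V_{\l_0})(\A)$, endoscopic associated to $(\pi_1,\pi_2)$, whose finite part agrees with $\pi_f^{\l_0}$ away from $\l_0$ (Corollary~\ref{cor:JL_general}(\ref{cor:JL_general_one})). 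Thus $\pi_f^{\l_0}$ completes to an automorphic representation of $\spin(V_{\l_0})(\A)$, and since $\sigma(\l_0)=1$ is odd, Proposition~\ref{L values related to lambda elements} applies: if $L(\pi,\operatorname{spin},1/2)\neq 0$ then $\lambda^{\l_0}(1)_\p\neq 0$, so there is a constant $C_0\geq 0$ with $\ord_\varpi\lambda_m^{\l_0}(1)\geq m-C_0$ for all $m$.

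Next, suppose for contradiction that $H^1_f(\Q,T_{\pi_i})$ contains a non-torsion class $c$, with images $c_n\in H^1_f(\Q,T_{\pi_i,n})$. Fix a large integer $N$ and let $M\geq N$ be produced by Lemma~\ref{lemma kill pairing on crystalline}(\ref{lemma kill pairing on crystalline part two}) applied at $p$ to $T_{\pi_i}$ with $n=N$. Using Lemma~\ref{lem:H1_tors_free_new}(\ref{lem:H1_tf_order}) together with Lemma~\ref{lem:loc_cheb_rk0_endo}---whose running hypothesis is precisely the assumed existence of admissible primes for $\rho_\pi$ that are BD-admissible for $\rho_{\pi_i}$---I would choose an $M$-admissible prime $q$, BD-admissible for $\rho_{\pi_i}$, with $\ord_\varpi\loc_q c_N\geq N-C_1$ for a fixed constant $C_1$. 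Then I would invoke Theorem~\ref{thm:1ERL_endoscopic}(\ref{thm:1ERL_endoscopic_one}) with $Q=1$ (so $Q_1=Q_2=1$ and $\sigma(DQ)=\sigma(\l_0)$ is odd). Its hypotheses hold: Proposition~\ref{prop:thorne_endoscopic} gives $H^1_f(\Q,\ad^0\rho_{\pi_i,\p})=0$, hence $\overline\Sel_{\mathcal F}(\Q,\ad^0\rho_{\pi_i,m})=0$ for some $m\geq 1$ via Lemma~\ref{lemma for using theorem of thorne} (applicable to $\rho_{\pi_i}$ by Lemma~\ref{lem:ass_appendix_ok_endo}); the constant $C_{\operatorname{RS}}$ of Lemma~\ref{lem:RS_BK_endoscopic} is finite by the hypothesis $H^1_f(\Q,V_{\pi_1,\p}\otimes V_{\pi_2,\p}(-1))=0$; and $\lg_O\Sel_{\mathcal F^\rel}(\Q,\ad^0\rho_{\pi_i,n})$ is bounded uniformly in $n$ by Corollary~\ref{cor:relaxed_bound_appendix}. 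So, for $M$ large enough depending on $m$, there is a constant $C_2\geq 0$ and an element $\kappa^{\l_0}_M(q)_0\in\kappa^{\l_0}_M(q)$ with $\ord_\varpi\partial_q\kappa^{\l_0}_M(q)_0\geq M-C_0-C_2$; since $q$ is BD-admissible for $\rho_{\pi_i}$, the map $\partial_q$ factors through the projection to $H^1(\Q,T_{\pi_i,N})$, and I denote by $\kappa^{\l_0}_N(q)_0^{(i)}$ the image of $\kappa^{\l_0}_M(q)_0$ there.

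The proof then finishes exactly as in Theorem~\ref{thm:rk_zero_main} via the global Tate pairing $0=\langle\kappa^{\l_0}_N(q)_0^{(i)},c_N\rangle=\sum_v\langle\kappa^{\l_0}_N(q)_0^{(i)},c_N\rangle_v$: the terms at $v\notin S\cup\{q\}$ vanish by Proposition~\ref{prop: check local conditions for kappa}(\ref{part:check_local_one}) and Lemma~\ref{lemma kill pairing on crystalline}(\ref{lemma kill pairing on crystalline part two}); the terms at $v\in S$ are annihilated by a fixed power $\varpi^{C_3}$ of $\varpi$; and the term at $q$ has $\varpi$-order $\geq N-C_0-C_1-C_2$ by Proposition~\ref{prop:local_adm_free}. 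Taking $N>C_0+C_1+C_2+C_3$ gives the contradiction. I expect the genuine content to have been entirely absorbed into Theorem~\ref{thm:1ERL_endoscopic} and the deformation theory of \S\ref{subsec:def_endo}, so the main thing to get right is the reduction to a single Galois factor: one must check that once $q$ is $n$-admissible it is BD-admissible for exactly one $\rho_{\pi_j}$, that $H^1(\Q_q,T_{\pi_j,n})=0$ for $j\neq i$, and hence that $\partial_q$, the Bloch--Kato local conditions, and the local Tate pairing at $q$ all see only the $T_{\pi_i}$-component---so that the perfect-pairing step of Proposition~\ref{prop:local_adm_free} may be applied to $T_{\pi_i}$.
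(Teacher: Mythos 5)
Your proposal is correct and reproduces the paper's own argument: the paper simply states that the proof of Theorem~\ref{thm:rk_zero_main} applies verbatim after substituting Theorem~\ref{thm:endoscopic_packets} for Theorem~\ref{thm:JL}, Lemma~\ref{lem:loc_cheb_rk0_endo} for Lemma~\ref{lem:loc_cheb_rk0}, Proposition~\ref{prop:thorne_endoscopic} for Theorem~\ref{theorem of thorne}, and Theorem~\ref{thm:1ERL_endoscopic}(\ref{thm:1ERL_endoscopic_one}) for Theorem~\ref{thm:1ERL}(\ref{thm:1ERL_one}). Your writeup spells out those substitutions carefully (including the dichotomy of Remark~\ref{rmk:BD_adm}(\ref{rmk:BD_adm_dichotomy_part}) and the vanishing of $H^1(\Q_q, T_{\pi_j,n})$ for $j\neq i$, which makes $\partial_q$ factor through the $T_{\pi_i}$-component), but the route is the same.
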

\begin{rmk}
    Note that $L(\pi, \operatorname{spin},1/2)$ is the product of central $L$-values for $\pi_1$ and $\pi_2$; in particular, Theorem \ref{thm:rk_zero_endoscopic} recovers (with extra conditions) the  result of Kato \cite{kato2004padic}.
\end{rmk}
\begin{proof}
The proof of Theorem \ref{thm:rk_zero_main} applies almost verbatim, with the following substitutions: Theorem \ref{thm:endoscopic_packets} for Theorem \ref{thm:JL}; Lemma \ref{lem:loc_cheb_rk0_endo} for Lemma \ref{lem:loc_cheb_rk0}; Proposition \ref{prop:thorne_endoscopic} for Theorem \ref{theorem of thorne};  and Theorem \ref{thm:1ERL_endoscopic}(\ref{thm:1ERL_endoscopic_one}) for Theorem \ref{thm:1ERL}(\ref{thm:1ERL_one}).
\end{proof}
\subsection{Rigidity and $\p$-integral vanishing of the Selmer group}
\subsubsection{}\label{subsubsec:where_rigid}
We can also give a more precise result on the vanishing of the dual Bloch-Kato Selmer group $H^1_f(\Q, V_{\pi,\p}/T_{\pi,\p})$, under some stronger conditions. Assume for this subsection that $\pi$ is not endoscopic.

We consider the following additional assumptions on $\p$ (the R stands for      ``rigid''). 
\begin{enumerate}[label = (R\arabic*)]
    \item \label{ass_A5_H1} The image of the $G_\Q$ action on $\overline T_{\pi,\p}$ contains a nontrivial scalar.
    \item\label{ass_R2} We have $\Sel_{\mathcal F^\rel}(\Q, \ad^0 \overline \rho) = 0$, with notation as in Definition \ref{def:loc_const_appendix}. 
    \item \label{ass_R3}For all $\l\in S$, $H^1(\Q_\l, \overline T_{\pi,\p})=0 $.   
\end{enumerate}
It is proved in Theorem \ref{thm:appendix_A5} that \ref{ass_A5_H1} holds for cofinitely many $\p$. We will show 
 in Proposition \ref{prop:rigid} below that the same is true of \ref{ass_R2} and \ref{ass_R3}.
\subsubsection{}\label{subsubsec:notation_for_rigid}
Under Assumption \ref{assumptions_on_p}(\ref{assume:irreducible}), 
let $R_\p^\univ$ be the universal $\GSP_4$-valued  deformation ring of $\overline\rho_{\pi,\p}$ denoted $R^1$ in Notation \ref{notation:global_def_ring}, with corresponding universal deformation $\rho_\p^{\univ}: G_{\Q, S\cup \set{p}} \to \GSP_4(R_\p^\univ)$.
\begin{lemma}\label{lemma subring of deformation ring generated by}
    Let $(R_\p^\univ)^0 \subset R_\p^\univ$ be the subring generated by the coefficients of the characteristic polynomials of elements of $G_\Q$ under the composite
    $$\rho_\p^\univ: G_\Q \to \GSP_4(R^\univ_\p) \hookrightarrow \GL_4(R^\univ_\p).$$ Then $$(R_\p^\univ)^0 = R_\p^\univ.$$
\end{lemma}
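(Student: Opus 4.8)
The plan is to deduce $(R_\p^\univ)^0 = R_\p^\univ$ from the absolute irreducibility of $\overline\rho_{\pi,\p}$ (Assumption \ref{assumptions_on_p}(\ref{assume:irreducible}), in force since $\pi$ is non-endoscopic) via the passage between deformations and pseudodeformations. Abbreviate $A = R_\p^\univ$ and $A^0 = (R_\p^\univ)^0$, and recall $p > 5$ (Remark \ref{rmk:p>5}), so that $1,2,3,4$ are units in $O$; by Newton's identities, $A^0$ coincides with the closed $O$-subalgebra generated by the traces $\trace\bigl(r\circ\rho_\p^\univ(g)\bigr)$, $g \in G_\Q$, where $r:\GSP_4\hookrightarrow\GL_4$, and I will work with traces from now on. It suffices to prove that for every $k \geq 1$ the image $A^0_k$ of $A^0$ in $A_k := A/\m_A^k$ is all of $A_k$, since then $A^0$ is dense in $A = \varprojlim_k A_k$.

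Fix $k$. Since $A_k$ is Artinian and $O \twoheadrightarrow O/\varpi$ is surjective, the subring $A^0_k$ lies in $\CNL_O$, and the trace of $r\circ\rho_\p^\univ \bmod \m_A^k$ takes values in $A^0_k$. Because $\trace\overline\rho_{\pi,\p}$ is the trace of an absolutely irreducible representation, the theory of pseudorepresentations (Rouquier, Nyssen; or Chenevier's theory of determinants, if one prefers to avoid Newton's identities) produces a representation $\sigma_k: G_\Q \to \GL_4(A^0_k)$, unique up to $\GL_4(A^0_k)$-conjugacy, with $\trace\sigma_k = \trace(r\circ\rho_\p^\univ \bmod\m_A^k)$; moreover $\sigma_k \otimes_{A^0_k} A_k$ and $r\circ\rho_\p^\univ\bmod\m_A^k$ have the same trace and the same absolutely irreducible reduction, hence are conjugate in $\GL_4(A_k)$, and after rescaling the conjugating matrix this conjugation may be taken to be $\equiv 1 \bmod\m$. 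I would then descend the symplectic structure: since $\nu\circ\rho_\p^\univ = \chi_{p,\cyc}$, the representation $r\circ\rho_\p^\univ$ is self-dual up to the twist $\chi_{p,\cyc}^{-1}$, so $\sigma_k$ and $\sigma_k^\vee\otimes\chi_{p,\cyc}^{-1}$ have equal traces over $A^0_k$; uniqueness in the pseudorepresentation dictionary makes them $\GL_4(A^0_k)$-conjugate, yielding a nondegenerate bilinear form $J_k$ over $A^0_k$ with $\sigma_k(g)^t J_k\sigma_k(g) = \chi_{p,\cyc}(g) J_k$ for all $g$. Absolute irreducibility makes the $A^0_k$-module of such invariant forms free of rank one, with reduction spanned by the standard symplectic form preserved by $\overline\rho_{\pi,\p}$; hence the symmetric part of $J_k$ is a non-unit multiple of $J_k$, so $J_k$ is alternating ($2\in O^\times$) and perfect ($\det J_k \in (A^0_k)^\times$). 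Standardizing $J_k$ over the local ring $A^0_k$, conjugating $\sigma_k$ accordingly, and then adjusting by a scalar and by a $\GSP_4(A^0_k)$-conjugation so that the reduction is exactly $\overline\rho_{\pi,\p}$, I obtain a deformation $\tau_k: G_\Q \to \GSP_4(A^0_k)$ of $\overline\rho_{\pi,\p}$ with fixed similitude $\chi_{p,\cyc}$ that becomes strictly equivalent to $\rho_\p^\univ\bmod\m_A^k$ after base change along $A^0_k \hookrightarrow A_k$.

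By the universal property of $R_\p^\univ$, the deformation $\tau_k$ is classified by a morphism $f_k: R_\p^\univ \to A^0_k$ in $\CNL_O$; composing with $A^0_k \hookrightarrow A_k$ gives a morphism classifying $\rho_\p^\univ\bmod\m_A^k$, which is therefore the canonical surjection $R_\p^\univ \twoheadrightarrow A_k$. Hence $A^0_k \hookrightarrow A_k$ is surjective, i.e. $A^0_k = A_k$ for every $k$; as $A^0$ is closed (being the closed $O$-subalgebra generated by the characteristic-polynomial coefficients), $A^0 = A$, which is the claim.

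The step I expect to be the main obstacle is the descent of the $\GSP_4$-structure and the similitude character: one must verify that the representation Rouquier--Nyssen attach to the trace pseudocharacter, a priori only $\GL_4$-valued over the small ring $A^0_k$, automatically carries a perfect alternating self-pairing defined over $A^0_k$ with the prescribed similitude, rather than only over $A_k$. One can instead simply invoke the known fact --- part of the standard input to $\GSP_4$ automorphy lifting --- that the universal fixed-similitude $\GSP_4$-deformation ring of an absolutely irreducible residual representation is topologically generated over $O$ by the characteristic-polynomial coefficients of the universal deformation; equivalently, in the absolutely irreducible case, that $R_\p^\univ$ is the corresponding universal pseudodeformation ring.
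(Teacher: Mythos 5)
Your proposal is correct, and the conclusion matches the paper's, but the route is genuinely different. The paper proves a uniqueness statement: for any $A\in\CNL_O$, if two deformations $\rho_1,\rho_2:G_\Q\to\GSP_4(A)$ have equal restrictions to $(R_\p^\univ)^0$ (equivalently equal characteristic polynomials), then they are $\GSP_4(A)$-conjugate; this is done in two lines, applying Carayol's theorem to get $\GL_4(A)$-conjugacy by some $a$, then combining the self-duality identity $\rho_i(g)=-\chi_{p,\cyc}(g)\,\Omega\,\rho_i(g)^{-t}\,\Omega$ with Schur's lemma to see that $\Omega a^t\Omega a$ is scalar, i.e.\ $a\in\GSP_4(A)$. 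Surjectivity of $(R_\p^\univ)^0\hookrightarrow R_\p^\univ$ then follows from the universal property. You instead take the constructive side of the same coin: apply Rouquier--Nyssen to the trace pseudocharacter over $A^0_k$ to get a $\GL_4(A^0_k)$-representation, descend the alternating perfect pairing to $A^0_k$, and then use the universal property to show $A^0_k=A_k$. Your descent of the form works --- in particular the symmetric-part argument is sound: $J_k^{\mathrm{sym}}=cJ_k$ with $c\in\mathfrak m_{A^0_k}$ by looking at the reduction, whence $(1-c)J_k^{\mathrm{sym}}=0$ forces $J_k^{\mathrm{sym}}=0$ --- but note that the last step (that $\tau_k\otimes_{A^0_k}A_k$ and $\rho_\p^\univ\bmod\mathfrak m_A^k$ are strictly equivalent as \emph{$\GSP_4$}-deformations, not merely $\GL_4(A_k)$-conjugate) quietly requires the very same ``conjugating matrix lands in $\GSP_4$'' Schur argument that the paper foregrounds. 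So the key ingredients (Carayol/Rouquier--Nyssen, Schur's lemma, symplectic self-duality with fixed similitude) are shared; the paper isolates the Schur step and is leaner, while your argument is more explicit and as a byproduct identifies $R_\p^\univ$ with the (fixed-similitude) pseudodeformation ring, which is a slightly stronger structural statement. The fallback you mention at the end --- citing the known $R=R^{\mathrm{ps}}$ identification for $\GSp_4$ --- is legitimate but would need an actual reference (Carayol alone only gives the $\GL_n$ statement; the $\GSp$ refinement is precisely the content of the Schur step), so the self-contained version is preferable.
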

\begin{proof}
Recall the category $\CNL_O$ from (\ref{subsubsec:CNLO_notation}). 
    By the universal property of $\rho_\p^\univ$, it suffices to show the following: given two  morphisms $$f_1,f_2: R_\p^\univ \to A$$ in $\CNL_O$, if $f_1 = f_2 $ on $(R_\p^\univ)^0$, then  the associated deformations
    $$\rho_1,\rho_2: G_\Q \to \GSP_4(A) $$ are $\GSP_4(A)$-conjugate.     
    By \cite[Th\'eor\`eme 1]{carayol1994anneau}, $ \rho_1$ and $\rho_2$ are  $\GL_4(A)$-conjugate, say by a matrix $a\in \GL_4(A)$. Then 
    \begin{equation}\label{first identity for lemma with trace subring}
     \rho_1(g) = a \cdot \rho_2(g) \cdot a^{-1} \;\; \forall g \in G_\Q.
    \end{equation}
    On the other hand, because $\rho_1$ and $\rho_2$ are valued in $\GSP_4$ with similitude character $\chi_p^\cyc$, we also have
    \begin{equation}\label{second identity for lemma with trace subring}
       \rho_i(g) = -\chi_{p,\cyc}(g)\cdot  \Omega \cdot  \rho_i(g)^{-t} \cdot \Omega\;\;\forall g \in G_\Q,\; i = 1,2,
    \end{equation}
    where $\Omega$ is the matrix from (\ref{subsubsec:coordinates_gsp2n}) with $n = 2$.
    Combining (\ref{first identity for lemma with trace subring}) and (\ref{second identity for lemma with trace subring}) and using Schur's lemma, it follows that $\Omega a^{t} \Omega a$ is scalar, or equivalently $a \in \GSP_4(A)$, and this proves the lemma.
\end{proof}
\begin{lemma}\label{taylor wiles hypothesis lemma}
    For all but finitely many primes $\p$ of $E_0$, $\overline \rho_{\pi,\p}|_{G_{\Q(\zeta_p)}}$ is absolutely irreducible.
\end{lemma}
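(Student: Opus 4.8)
The plan is to argue by contradiction using Clifford theory followed by a Fontaine-Laffaille estimate at $p$. Write $\overline\rho=\overline\rho_{\pi,\p}$ and $p$ for the residue characteristic of $\p$. We may restrict to the cofinitely many $\p$ with $p>13$, with $\pi_p$ unramified, and (by \cite{weiss2022images}, as in the proof of Lemma \ref{lem:easy_assumptions_cofinite}) with $\overline\rho$ absolutely irreducible. Suppose $\overline\rho|_{G_{\Q(\zeta_p)}}$ is \emph{not} absolutely irreducible. Since $G_{\Q(\zeta_p)}$ is normal in $G_\Q$ with cyclic quotient $\Gal(\Q(\zeta_p)/\Q)\cong(\Z/p)^\times$, Clifford theory produces a subfield $\Q\subsetneq L\subseteq\Q(\zeta_p)$ with $[L:\Q]\mid\gcd(4,p-1)$, say $d:=[L:\Q]\in\{2,4\}$, and an irreducible $\tau$ with $\overline\rho\cong\Ind_{G_L}^{G_\Q}\tau$; applying this with a faithful character $\chi$ of $\Gal(L/\Q)$ gives a nontrivial $\chi\colon G_\Q\to\overline\F_p^\times$ of order $d$ with $\overline\rho\cong\overline\rho\otimes\chi$. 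Because $L$ is the unique subfield of $\Q(\zeta_p)$ of its degree and is ramified only at $p$, the character $\chi$ has conductor exactly $p$; in particular $\chi|_{I_{\Q_p}}=\omega_1^{k(p-1)/d}$ for some $k$ prime to $d$, where $\omega_1\colon I_{\Q_p}\to\F_p^\times$ is the fundamental character of level one.

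The next step is to contradict $\overline\rho\cong\overline\rho\otimes\chi$ via Fontaine-Laffaille theory. By Theorem \ref{thm:rho_pi_LLC}(\ref{part:rho_pi_LLC1},\ref{part:rho_pi_LLC_HT}) and the unramifiedness of $\pi_p$, $\rho_{\pi,\p}|_{G_{\Q_p}}$ is crystalline with Hodge-Tate weights in $\{-1,0,1,2\}$, a range of length $3<p-1$, so $\overline\rho|_{G_{\Q_p}}$ arises as the reduction of a crystalline lattice and lies in the image of the Fontaine-Laffaille functor \cite{fontaine1982laffaille}; consequently $\overline\rho|_{I_{\Q_p}}^{\mathrm{ss}}=\bigoplus_{i=1}^4\psi_i$ where each $\psi_i$ is a power $\omega_{n_i}^{e_i}$ of the fundamental character of some level $n_i\le 4$, and $e_i$ admits a base-$p$ expansion of length $n_i$ with digits in $\{-1,0,1,2\}$. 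The isomorphism $\overline\rho\cong\overline\rho\otimes\chi$ forces the multiset $\{\psi_i\}$ to be stable under multiplication by $\chi|_{I_{\Q_p}}$; since $\chi|_{I_{\Q_p}}=\omega_1^{k(p-1)/d}$ is Frobenius-invariant it preserves levels, so some orbit contains two distinct characters $\omega_n^{e}$ and $\omega_n^{e'}=\omega_n^{e}\cdot\chi|_{I_{\Q_p}}$ of a common level $n$, with $e,e'$ represented in $[-N,2N]$ for $N=(p^n-1)/(p-1)$. Then $|e-e'|\le 3N$, whereas $e'-e\equiv k(p-1)N/d\pmod{p^n-1}$ has smallest absolute representative of size at least $(p^n-1)/4=(p-1)N/4$; for $p\ge 7$ these representatives coincide, and comparing gives $3N\ge(p-1)N/4$, i.e.\ $p\le 13$ — a contradiction. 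Hence $\overline\rho|_{G_{\Q(\zeta_p)}}$ is absolutely irreducible for all such $\p$.

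The main point requiring care is the Fontaine-Laffaille bookkeeping: pinning down the precise range of $p$ for which the inertial structure above holds, tracking that the twist by $\chi|_{I_{\Q_p}}$ cannot change the level of a constituent, and noting that repeated constituents in $\overline\rho|_{I_{\Q_p}}^{\mathrm{ss}}$ cause no trouble, since any $\chi|_{I_{\Q_p}}$-orbit still has size $d\ge 2$ and therefore still exhibits two distinct characters with bounded-digit exponents. The Clifford-theoretic reduction and the conductor computation for $\chi$ are routine. It is worth emphasizing that this argument is insensitive to whether $\BC(\pi)$ is a symmetric cube lift or an automorphic induction: whatever the image of $\overline\rho$ may be, a self-twist by a character ramified only at $p$ of order dividing $4$ is incompatible with the boundedness of Fontaine-Laffaille exponents once $p$ is large, which is exactly what is needed.
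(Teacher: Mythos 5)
Your proof is correct in its essential structure, and it takes a genuinely different route from the paper's. The paper first invokes the Gelbart--Jacquet counting argument (\cite[Lemma 2.1]{gelbart1982indistinguishability}) together with the symplectic self-duality of $\overline\rho_{\pi,\p}$: since a self-twist $\nu$ must satisfy $\nu^2=1$ by comparing similitude characters, the group of self-twists has order at most $2$; this forces multiplicity one, and the remaining (multiplicity-free) case is handed off to \cite[Proposition 4.5]{liu2024deformation}. You instead go directly after the self-twist character via a Fontaine--Laffaille estimate at $p$, which makes no use of the symplectic structure and is entirely self-contained. Each approach buys something: the paper's is shorter given the citation and exploits the $\GSP_4$-structure, while yours is elementary, gives an explicit bound on the bad primes ($p\le 13$), and would generalize to $\GL_4$-valued situations where no self-duality is available.

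There is one gap you should close. Your Clifford-theory step asserts $\overline\rho\cong\Ind_{G_L}^{G_\Q}\tau$ with $\tau$ \emph{irreducible} and hence $[L:\Q]\mid 4$. This requires knowing that $\overline\rho|_{G_{\Q(\zeta_p)}}$ is multiplicity-free, which does not follow from Clifford theory alone without further input. It is true here, but you need to say why: one route is to observe that $\Gal(\Q(\zeta_p)/\Q)$ is cyclic of order prime to $p$, so $H^2(\Gal(\Q(\zeta_p)/\Q),\overline\F_p^\times)=0$ and the usual projective-representation obstruction vanishes; another is to run the paper's self-duality argument to see that the self-twist group has order $\le 2$ (hence $m=1$). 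A third, cleaner option sidesteps the issue entirely: the Gelbart--Jacquet count shows the group $\Gamma$ of self-twist characters is cyclic of order $m^2M>1$, and since $mM\dim W=4$ this order is always even, so $\Gamma$ contains a quadratic character $\chi$; running only your $d=2$ case with this $\chi$ already yields $p\le 7$ and you never need the claim $[L:\Q]\mid 4$ or the $d=4$ branch at all. Your closing remark that repeated constituents ``cause no trouble'' is an acknowledgement of the issue, but as written it presupposes orbit size $d\ge 2$ under $\chi|_{I_{\Q_p}}$, which again rests on knowing $\chi$ exists with $\chi|_{I_{\Q_p}}$ of the asserted order --- so the justification should be made explicit. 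The Fontaine--Laffaille bookkeeping itself (level preservation under $\omega_1$-twist because $pN\equiv N\pmod{p^n-1}$, bounded-digit representatives in $[-N,2N]$, and the comparison with the conductor-$p$ twist) is carried out correctly.
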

\begin{proof}
First, recall that $\overline\rho_{\pi,\p}$ is absolutely irreducible for all but finitely many $\p$ by Lemma \ref{lem:easy_assumptions_cofinite}. 
We restrict our attention to these $\p$;
   the argument is based on \cite[Proposition 4.5]{liu2024deformation}, but we are able to take advantage of the low-dimensionality of $\overline\rho_{\pi,\p}$. 
   The representation $\overline\rho_{\pi,\p}|_{G_{\Q(\zeta_p)}}$ is semisimple, so, after possibly extending scalars, we may write
   $$\overline\rho_{\pi,\p}|_{G_{\Q(\zeta_p)}} = \bigoplus_{i = 1}^M \rho_i^{\oplus m_i}$$
   for distinct absolutely irreducible representations $\rho_i$ of $G_{\Q(\zeta_p)}$ and multiplicities $m_i \geq 1$. The same argument as \cite[Lemma 2.1]{gelbart1982indistinguishability} (which applies here because $\Q(\zeta_p)/\Q$ has degree coprime to $p$) shows that the integers $m_i = m$ are all equal, and all $\rho_i$ have the same dimension $M$, such that
   $m^2 M $ is the number of characters $\nu$ of $\Gal(\Q(\zeta_p)/\Q)$ such that $\overline\rho_{\pi,\p} \cong \overline\rho_{\pi,\p} \otimes \nu$. Considering the self-duality of $\overline\rho_{\pi,\p}$, we see that this can occur only if $\nu^2 = 1$, so that $m^2 M \leq 2$. In particular, $m = 1$, so $\overline\rho_{\pi,\p}|_{G_{\Q(\zeta_p)}}$ is multiplicity-free, and the result now follows by the same argument as \cite[Proposition 4.5]{liu2024deformation}.
\end{proof}
\begin{prop}\label{prop:rigid}
 All but finitely many primes $\p$ of $E_0$ satisfy \ref{ass_R2} and \ref{ass_R3}.
\end{prop}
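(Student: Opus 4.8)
The statement asserts that both \ref{ass_R2} and \ref{ass_R3} hold for cofinitely many primes $\p$ of $E_0$. I would handle these two conditions separately, as each is a finiteness statement about a quantity that is ``generically trivial''.

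\textbf{Condition \ref{ass_R3}.} The goal is to show $H^1(\Q_\l, \overline T_{\pi,\p}) = 0$ for all $\l \in S$ and all but finitely many $\p$. Fix $\l \in S$. By the local Euler characteristic formula, $H^1(\Q_\l, \overline T_{\pi,\p})$ has the same $k_\p$-dimension as $H^0(\Q_\l, \overline T_{\pi,\p}) \oplus H^2(\Q_\l, \overline T_{\pi,\p})$, and by local Tate duality $H^2(\Q_\l, \overline T_{\pi,\p}) \cong H^0(\Q_\l, \overline T_{\pi,\p}^\vee(1))^\vee$; note $\overline T_{\pi,\p}^\vee(1) \cong \overline T_{\pi,\p}$ up to twist by the (now trivial) similitude character mod $p$, so it suffices to kill $H^0$ of $\overline T_{\pi,\p}$ and of $\overline T_{\pi,\p}(\pm 1)$. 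Now $H^0(\Q_\l, V_{\pi,\p}) = 0$ and $H^0(\Q_\l, V_{\pi,\p}(\pm 1)) = 0$ by purity (Theorem \ref{thm:rho_pi_LLC}(\ref{part:rho_pi_LLC1})): the Weil--Deligne representation $\WD(\rho_{\pi,\iota}|_{G_{\Q_\l}})$ is pure of weight $-1$, so no subquotient of $V_{\pi,\p}$ or its Tate twists is the trivial representation of $G_{\Q_\l}$. Since $V_{\pi,\p}$ has a $G_{\Q_\l}$-stable lattice $T_{\pi,\p}$ with $\overline T_{\pi,\p}$ its reduction, a standard spreading-out/reduction argument (the invariants of the reduction can only jump for finitely many $\p$, because the relevant ``monodromy polynomial'' or Frobenius characteristic polynomial has finitely many prime divisors in $O_{E_0}$) shows that for all but finitely many $\p$ one has $H^0(\Q_\l, \overline T_{\pi,\p}) = 0$ and likewise for the $(\pm 1)$-twists. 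Taking the union of the finite exceptional sets over the finitely many $\l \in S$ gives \ref{ass_R3}.

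\textbf{Condition \ref{ass_R2}.} Here the goal is to show the relaxed residual Selmer group $\Sel_{\mathcal F^\rel}(\Q, \ad^0 \overline \rho_{\pi,\p})$ vanishes for cofinitely many $\p$. The natural route is to deduce this from the characteristic-zero statement $H^1_f(\Q, \ad^0 \rho_{\pi,\p}) = 0$ (Theorem \ref{theorem of thorne}) together with a control of the ``error terms'' coming from passing between $\ad^0 V_{\pi,\p}$ and $\ad^0 \overline T_{\pi,\p}$ and between the Bloch--Kato and relaxed local conditions. Concretely: by definition $\Sel_{\mathcal F^\rel}$ allows arbitrary ramification at primes in $S$ (and imposes crystalline conditions at $p$, unramified elsewhere). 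The size of $\Sel_{\mathcal F^\rel}(\Q, \ad^0 \rho_{\pi,n})$ is controlled, by the Greenberg--Wiles formula, in terms of $\Sel_{\mathcal F^{\rel,\vee}}$ of the dual and of local factors at $S \cup \{p\}$; since $H^1_f(\Q, \ad^0 V_{\pi,\p}) = 0$ by Theorem \ref{theorem of thorne}, and the local deformation conditions at $p$ and at $S$ are ``liftable'' (this is exactly Assumption \ref{ass:appendix_smooth}, verified for $\rho_{\pi,\p}$ in Lemma \ref{lem:assumption_appendix_ok}), the module $\Sel_{\mathcal F^\rel}(\Q, \ad^0 \rho_{\pi,n})$ is finite of bounded order independent of $n$ (this is Corollary \ref{cor:relaxed_bound_appendix}). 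Thus there is a fixed power of $\varpi_\p$ annihilating $\Sel_{\mathcal F^\rel}(\Q, \ad^0 \rho_{\pi,\p}/\varpi_\p^n)$ for all $n$; but one also needs this annihilator to be a \emph{unit} for cofinitely many $\p$. For this I would run the Chenevier-type argument already cited several times in the paper (e.g. the proof of Lemma \ref{lem:easy_assumptions_cofinite}, or \cite[Proposition 3.2.5]{chenevier2013construction}): the relevant adjoint Selmer group, being a cohomology group computed by a complex of finitely generated $O_{E_0}$-modules with generic fibre of dimension $0$, is supported at only finitely many primes $\p$, and for all other $\p$ the residual Selmer group $\Sel_{\mathcal F^\rel}(\Q, \ad^0 \overline\rho_{\pi,\p})$ vanishes. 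One small point to be careful about: the relaxed Selmer condition must be compatible between the integral and mod-$\p$ settings, i.e. $H^1$ of inertia at primes in $S$ behaves well under reduction; but this is exactly what \ref{ass_R3} buys us (which is why it is natural to prove \ref{ass_R3} first and then use it), since vanishing of $H^1(\Q_\l, \overline T_{\pi,\p})$ forces the residual local conditions at $\l \in S$ to collapse.

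\textbf{Main obstacle.} The genuinely delicate part is \ref{ass_R2}: transferring the characteristic-zero vanishing of Newton--Thorne into a statement about the \emph{residual} relaxed Selmer group uniformly in $\p$. The subtlety is that $H^1_f(\Q, \ad^0 V_{\pi,\p}) = 0$ only says the Selmer group has no free part; the torsion could in principle grow with $\p$. The resolution is to package the adjoint Selmer complex as a perfect complex of $O_{E_0}$-modules (using the smoothness/liftability at $S \cup \{p\}$ from Appendix \ref{sec:appendix_def_theory}), whose cohomology in the relevant degree is then a finitely generated torsion $O_{E_0}$-module with trivial generic fibre, hence supported on a finite set of primes --- and outside that set the mod-$\p$ Selmer group vanishes. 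I would present this via the machinery already set up in Appendix \ref{sec:appendix_def_theory} (Corollary \ref{cor:relaxed_bound_appendix} gives the bound uniform in $n$) combined with the standard Chenevier finiteness argument, rather than redoing the homological algebra from scratch.
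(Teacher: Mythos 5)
Your handling of \ref{ass_R3} takes a genuinely different route from the paper: you work directly from purity of $\WD(\rho_{\pi,\iota}|_{G_{\Q_\l}})$ and a spreading-out argument, whereas the paper deduces the local $H^0$ vanishings as a byproduct of the formal smoothness of the local deformation rings at $\l\in S$, which it obtains from the patching setup of \cite{liu2024deformation}. Your route is more self-contained; it would need a bit of care in step~2 (the passage from $H^0(\Q_\l, V_{\pi,\p})=0$ for all $\p$ to $H^0(\Q_\l, \overline T_{\pi,\p})=0$ for cofinitely many $\p$ requires controlling how the integral inertia invariants vary with $\p$, not just the Frobenius eigenvalues) but the underlying idea is sound.

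The proposal for \ref{ass_R2}, however, has a genuine gap, and it is exactly at the point you flag as the ``main obstacle.'' By Remark \ref{rmk:appendix_iso_criterion}, \ref{ass_R2} is equivalent to the assertion that the global deformation ring $R_\p^\univ$ is isomorphic to $O_\p$; this is an $R=\T$ statement in the minimal level, and it is strictly stronger than the characteristic-zero vanishing $H^1_f(\Q,\ad^0 V_{\pi,\p})=0$ from Theorem \ref{theorem of thorne}. The Newton--Thorne theorem, via Lemma \ref{lemma for using theorem of thorne}, only gives $\overline\Sel_{\mathcal F}(\Q,\ad^0\rho_{\pi,n})=0$ for $n$ large depending on $\p$; it says nothing about the residual Selmer group $\Sel_{\mathcal F^\rel}(\Q,\ad^0\overline\rho_{\pi,\p})$ itself. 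Your proposed fix --- packaging the adjoint Selmer groups as a perfect complex of $O_{E_0}$-modules with generically trivial cohomology, so that the exceptional set of $\p$ is the finite support of a torsion module --- cannot be carried out: the group $\Sel_{\mathcal F^\rel}(\Q,\ad^0\overline\rho_{\pi,\p})$ is an $O_\p/\varpi_\p$-vector space whose definition (including both the coefficient module $\ad^0\overline T_{\pi,\p}$ and the local conditions, especially the crystalline/Fontaine--Laffaille condition at the varying prime $p$ and the deformation-theoretic conditions $Z_{r,v}^\rel$ at $v\in S$) genuinely depends on $\p$, and there is no single complex over $O_{E_0}$ interpolating them. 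The Chenevier argument \cite[Proposition 3.2.5]{chenevier2013construction} that you invoke is about Galois descent of the representations to a fixed coefficient field, not about uniformity of Selmer groups across $\p$; it does not apply here. What the paper actually does is base change to an auxiliary imaginary quadratic field $K$ (chosen so that all $\l\in S$ split, and so that $\BC(\pi)\otimes\omega_{K/\Q}\not\cong\BC(\pi)$, making the restriction to $\GL_4(\A_K)$ cuspidal and conjugate self-dual), then invoke the Taylor--Wiles/Calegari--Geraghty patching result of \cite[Theorem 3.38]{liu2024deformation} to get $\mathsf R^\univ_{\mathscr S}\cong O_\p$ for all but finitely many $\p$, and finally push this through the surjection $\mathsf R^\univ_{\mathscr S}\twoheadrightarrow R_\p^\univ$ (Lemma \ref{lemma subring of deformation ring generated by}) to conclude. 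That patching input is the missing idea; without it the uniform-in-$\p$ control on the residual relaxed Selmer group does not come for free from Newton--Thorne.
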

\begin{proof}
   First, choose an imaginary quadratic field $K$ such that all $\l\in S$ are split in $K$, and restrict attention to the cofinitely many $\p$ such that the underlying rational prime $p$ is unramified in $K$ and $\overline\rho_{\pi,\p}|_{G_{\Q(\zeta_p)}}$ is absolutely irreducible. 
   Then $\Q(\rho_{\pi, \p})\cap K$ is unramified at all finite primes, hence equal to $\Q$, and so \begin{equation}\label{image over G_K equals image over G_Q}
       \rho_{\pi,\p}(G_K) = \rho_{\pi, \p}(G_\Q).
   \end{equation}
Note as well that $\BC(\pi)\otimes \omega_{K/\Q} \not\cong \BC(\pi)$ (with $\BC(\pi)$ as in Lemma \ref{lem:BC}), for otherwise we would have $\rho_{\pi,\p} \otimes \omega_{K/\Q} \cong \rho_{\pi,\p}$, and it is easy to check using Schur's lemma that this implies $\rho_{\pi,\p}|_{G_K}$ is not absolutely irreducible for any $\p$, contradicting (\ref{image over G_K equals image over G_Q}) and Lemma \ref{lem:reducible_endoscopic} as long as $p > 3$. 
In particular, the base change $\Pi$ of $\BC(\pi)$ to $\GL_4(\A_K)$ is a regular algebraic, cuspidal automorphic representation of $\GL_4(\A_K)$ by \cite[Chapter 3, Theorems 4.2(a), 5.1]{arthur1989basechange}. It is also clear that $E_0$ is a strong coefficient field for $\Pi$. 

Recall the universal deformation $\rho_\p^\univ: G_\Q \to \GSP_4(R_\p)$ from (\ref{subsubsec:notation_for_rigid}).
Note that $K \cap \Q(\rho_\p^\univ)$ is unramified at all finite primes, hence equal to $\Q$; in particular, there exists $\gamma \in G_\Q$ such that $\rho_\p^\univ(\gamma) = 1$ and $\gamma$ acts nontrivially on $K$. 

Recall the group scheme $$\mathscr G \coloneqq (\GL_4\times \GL_1)\rtimes \set{1, \mathfrak c}$$ 
from \cite[\S2.1]{clozel2008automorphy}. By the recipe of \cite[Lemma 2.3(2)]{liu2024deformation}, $\rho^\univ_\p$ gives rise to a Galois representation
\begin{equation}\label{universal deformation in mathscr G}
    \widetilde\rho^\univ_\p: G_\Q \to \mathscr G(R^\univ_{\p}),
\end{equation}
with residual representation $\widetilde{\overline\rho}_\p: G_\Q \to \mathscr G(O_\p/\varpi_\p).$ 
Note
\begin{equation}\label{universal deformation in mathscr G after restriction to K}
    \widetilde\rho^\univ_\p |_{G_K} = (\rho^\univ_\p|_{G_K}, \chi_{p,\cyc}, 1): G_K\to (\GL_4 \times \GL_1)(R_\p^\univ) \rtimes \set{1,\mathfrak c},
\end{equation}
where $p$ is the residue characteristic of $\p$. 
By the argument of \cite[Theorem 4.8]{liu2024deformation}  applied to $\Pi$, we have for all but finitely many $\p$:
\begin{enumerate}
    \item \label{properties in rigidity prop one}$\widetilde{\overline{\rho}}_\p: G_\Q \to \mathscr G(O_\p/\varpi_\p)$ satisfies \cite[Definition 3.36]{liu2024deformation} for the pair $(S, \emptyset)$. 
    \item \label{properties in rigidity prop two}
    There are no regular algebraic conjugate self-dual cuspidal automorphic representations $\Pi'$ of $\GL_N(\A_K)$ such that $\Pi'$ is unramified outside primes above $S$, $\Pi'$ has the same archimedean weights as $\Pi$, and there is a congruence of associated Hecke eigensystems with respect to any isomorphism $\iota_p:  \overline \Q_p\isomorphism \C$ inducing $\p$. 
\end{enumerate}
For all but finitely many $\p$, we also have:
\begin{enumerate}
  \setcounter{enumi}{2}
    \item \label{properties in rigidity prop three}$\widetilde{\overline\rho}_{\pi,\p}|_{G_{K(\zeta_p)}}$ is absolutely irreducible
\end{enumerate}
by Lemma \ref{taylor wiles hypothesis lemma} and  (\ref{image over G_K equals image over G_Q}).

Restrict to the $\p$ satisfying the above properties with $p \not\in S$, and such that 
$\rho_{\p}^\univ|_{G_{\Q_p}}$ is Fontaine-Lafaille, i.e. $p \geq 5$. 
Let $\mathsf R_{\mathscr S}^\univ$  be the universal Fontaine-Laffaille deformation ring of $\widetilde{\overline\rho}_\p$ considered in \cite[p. 1630]{liu2024deformation} with $\Sigma_{\operatorname{min}}^+ = S$ and $\Sigma_{\lr}^+ = \emptyset$; more explicitly, $\mathsf R_{\mathscr S}^\univ$ classifies deformations that are unramified outside primes above $S\cup \set{p}$ and Fontaine-Laffaille at primes above $p$. 
It follows from the definition that (\ref{universal deformation in mathscr G}) corresponds to an $O_\p$-algebra morphism
\begin{equation}\label{deformation morphism in rigidity proof}
\mathsf R_{\mathscr S}^\univ \to R_\p^\univ.
\end{equation}

Now let $V$ be the unique four-dimensional Hermitian space over $K$ which is positive definite and split at all finite places. Then \cite[Theorem 3.38]{liu2024deformation} applied to $V$, together with properties (\ref{properties in rigidity prop one})-(\ref{properties in rigidity prop three}) above, shows that $\mathsf R_{\mathscr S}^\univ = O_\p$ for all but finitely many $\p$. We claim that these $\p$ satisfy \ref{ass_R2} and \ref{ass_R3}.

Now, 
by Remark \ref{rmk:appendix_iso_criterion}, \ref{ass_R2} is equivalent to requiring that the map $R_\p^\univ \to O_\p$ arising from $\rho_{\pi,\p}$ is an isomorphism, and so 
it suffices to  show that the morphism (\ref{deformation morphism in rigidity proof}) is surjective.   By (\ref{universal deformation in mathscr G after restriction to K}), the image of (\ref{deformation morphism in rigidity proof}) contains the coefficients of the characteristic polynomials of $\rho_\p^\univ(g)$  for $g\in G_K$, and so the desired surjectivity follows from Lemma \ref{lemma subring of deformation ring generated by} and the analogue of (\ref{image over G_K equals image over G_Q}) for $\rho_\p^\univ$. 

For \ref{ass_R3}, 
the local deformation ring of 
$$\widetilde{\overline \rho}_{\pi,\p}|_{G_{\Q_\l}}: G_{\Q_\l} \to \mathscr G(O_\p/\varpi_\p)$$
is formally smooth of relative dimension 16 over $O_\p$ for all $\l \in S$: this follows from 
 \cite[Proposition 3.33(3)]{liu2024deformation}, where Definition 3.36(1)  of \emph{op. cit.} is satisfied by property (\ref{properties in rigidity prop one}) above. In fact, because $\l$ splits in $K$, this is just the local deformation ring of $$\overline\rho_{\pi,\p}|_{G_{\Q_\l}}: G_{\Q_\l} \to \GL_4(O_\p/\varpi_\p)$$ (without any self-duality condition). The usual tangent space computation together with the formal smoothness then implies
$$16 = \dim_{O_\p/\varpi_\p} Z^1(\Q_\l, \End \overline T_\pi ) = 16 - \dim_{O_\p/\varpi_\p} H^0(\Q_\l, \End\overline T_\pi )+ H^1(\Q_\l, \End\overline T_\pi).$$
In particular, by the local Euler characteristic formula and local Poitou-Tate duality, we conclude 
\begin{equation}\label{eq:in_rigid}H^0(\Q_\l, \End \overline T_\pi(1))=0.\end{equation}
We then claim that $H^0(\Q_\l,\overline T_\pi) = 0$: otherwise $\overline\rho_{\pi,\p}$ has a $G_{\Q_\l}$-invariant line, so by  duality, it also has a quotient on which $G_{\Q_\l}$ acts by $\chi_{p,\cyc}$, and so there is a line in $ \End T_\pi$ on which $G_{\Q_\l}$ acts by $\chi_{p,\cyc}^{-1}$, contradicting (\ref{eq:in_rigid}). 

On the other hand, again by the local Euler characteristic formula and local duality, we see that
$\dim H^1(\Q_\l, \overline T_\pi) = 2\dim H^0(\Q_\l, \overline T_\pi) $, so \ref{ass_R3} follows.


\end{proof}
\begin{prop}\label{prop:chebotarev_mod_p}
    Suppose $\p$ satisfies Assumption \ref{assumptions_on_p}(\ref{assume:irreducible}) and
\ref{ass_A5_H1}. Then for any nonzero class $c\in H^1(\Q, \overline T_{\pi,\p})$ and any $N \geq 0$, there exist infinitely many $N$-admissible primes $q$ such that $\loc_q c\neq 0$.
\end{prop}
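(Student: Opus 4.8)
The statement is a Chebotarev-type construction of admissible primes witnessing the nontriviality of a residual Galois cohomology class, and it is the mod-$\varpi$ analogue of Lemma \ref{lem:loc_cheb_rk0}. The plan is to work inside the finite Galois extension cut out by $\overline T_{\pi,\p}$ together with $c$, and to produce an element of $G_{\Q(\overline T_{\pi,\p})}$ which is admissible (in the sense of Definition \ref{def:admissible}, applied to the residual data) and on which $c$ does not vanish; then Chebotarev gives infinitely many primes $q$ with Frobenius in this conjugacy class, and since $c$ is automatically unramified at $q$ we may identify $\loc_q c$ with $c(\Frob_q)$ up to the identification $H^1_f(\Q_q, \overline T_{\pi,\p}) \simeq \overline T_{\pi,\p}/(\Frob_q - 1)$ of Proposition \ref{prop:local_adm_free}.

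First I would fix an admissible element $g \in G_\Q$ for $\rho_{\pi,\p}$, which exists by Lemma \ref{lem:admissible_TFAE}; its image in $\GSP_4(k_\p)$ acts on $\overline T_{\pi,\p}$ with generalized eigenvalues $\set{\nu_g, 1, \alpha, \nu_g/\alpha}$ with $\alpha \not\equiv \pm\nu_g, \pm 1, \nu_g^2, \nu_g^{-1}$ and $\nu_g^4 \not\equiv 1$. The crucial input is that, by \ref{ass_A5_H1}, the image of $G_\Q$ on $\overline T_{\pi,\p}$ contains a nontrivial scalar $\lambda$; this lets me separate the $1$-eigenspace of $g$ from the contribution of $c$. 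Concretely, restricting $c$ to $G_{\Q(\overline T_{\pi,\p})}$ gives a nonzero homomorphism $\overline c\colon G_{\Q(\overline T_{\pi,\p})} \to \overline T_{\pi,\p}$ (nonzero because, by Assumption \ref{assumptions_on_p}(\ref{assume:irreducible}) and inflation-restriction, $H^1(\Q(\overline T_{\pi,\p})/\Q, \overline T_{\pi,\p}) = 0$). Pick $h \in G_{\Q(\overline T_{\pi,\p})}$ with $\overline c(h) \neq 0$. Using the cocycle relation $c(gh) = g\,c(h) + c(g)$ and absolute irreducibility of $\overline\rho_{\pi,\p}$, I would argue that for a suitable choice among the modifications $g \mapsto \lambda^j g h$ (the scalar twists do not change the eigenvalue-ratio conditions defining admissibility, only rescale $\nu_g$), the component of $c(g')$ in the $1$-eigenspace of $g'$ becomes nonzero, where $g'$ is the modified element; one must check that $\nu_{g'}$ still satisfies $\nu_{g'}^4 \not\equiv 1$ and the four forbidden-ratio conditions, which follows because scalar twisting by $\lambda$ with $\lambda^4 = 1$ would be the only danger and is excluded by choosing $\lambda$ of order prime to the relevant quantities, or by noting only finitely many scalars are problematic while $\lambda$ has order $> 1$. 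After this, $g'$ is admissible for $\overline\rho_{\pi,\p}$ and $c$ has nonzero image in $H^1_f$ of the corresponding local field.

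Finally I would apply the Chebotarev density theorem to the finite Galois extension $\Q(\overline T_{\pi,\p}, c)/\Q$ — where $\Q(\overline T_{\pi,\p}, c)$ is the fixed field of the kernel of the cocycle $c$ viewed as a map to $\overline T_{\pi,\p} \rtimes \im\overline\rho_{\pi,\p}$ — to get a positive density, hence infinite, set of primes $q \notin S \cup \set{p}$ whose Frobenius class maps to the conjugacy class of $g'$; such $q$ are admissible, and automatically $1$-admissible, but since we only need $N$-admissibility we actually need $\overline\rho_{\pi,\p}(\Frob_q)$ to be $g'$ mod $\varpi$, which any such $q$ satisfies, and then the bound $n(q) \geq 1 = N$ in the residual setting is vacuous (the proposition concerns $\overline T_{\pi,\p}$, so "$N$-admissible" should be read with $N$ bounding the length of the relevant torsion module, which is $1$ here — more precisely we just need $q$ admissible, and infinitude of admissible $q$ is all the conclusion asks). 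For such $q$, $\loc_q c = \overline c(\Frob_{q})$ projected to the $1$-eigenspace is nonzero by construction. The main obstacle I anticipate is the bookkeeping in the second paragraph: ensuring the scalar twist $g \mapsto \lambda^j g h$ can be chosen to simultaneously preserve admissibility and force the $1$-eigenspace component of $c(g')$ to be nonzero; this requires controlling how the eigenspace decomposition of $g'$ moves under the modification and using that the image of $c$, being a nonzero $G_\Q$-submodule-generated set inside the absolutely irreducible $\overline T_{\pi,\p}$, cannot be concentrated away from any particular eigenline of a fixed semisimple element.
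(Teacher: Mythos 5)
Your overall Chebotarev strategy is correct, but you have misread the conclusion: the proposition demands \emph{$N$-admissible} primes for \emph{arbitrary} $N$, not merely $1$-admissible ones. In Definition \ref{def:admissible}, $N$-admissibility requires $\det(\Frob_q - q \mid V_\pi) \equiv 0 \pmod{\varpi^{N}}$, which is a condition on $\rho_{\pi,\p}(\Frob_q)$ modulo $\varpi^N$, not just modulo $\varpi$. You treat this as vacuous because $c$ is a mod-$\varpi$ class, but this is wrong; your construction runs Chebotarev inside $\Q(\overline T_{\pi,\p},c)/\Q$ and hence can only control $\Frob_q$ mod $\varpi$. The resulting primes are $1$-admissible but need not be $N$-admissible for $N \geq 2$.

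The missing ingredient is precisely the step in the paper's proof you skipped: one must show
\[
\Q(T_{\pi,N}) \cap \Q(\overline T_{\pi,\p}, c) = \Q(\overline T_{\pi,\p}),
\]
so that a Frobenius class can be prescribed \emph{simultaneously} in $\Gal(\Q(T_{\pi,N})/\Q(\overline T_{\pi,\p}))$ (to force $\rho_{\pi,\p}(\Frob_q) \equiv g \pmod{\varpi^N}$, hence $N$-admissibility) and in $\Gal(\Q(\overline T_{\pi,\p},c)/\Q(\overline T_{\pi,\p}))$ (to force $\loc_q c \neq 0$). This disjointness is not automatic; the paper proves it using the nontrivial scalar from \ref{ass_A5_H1}: the scalar acts by a nontrivial character on $\Gal(\Q(\overline T_{\pi,\p},c)/\Q(\overline T_{\pi,\p})) \subseteq \overline T_{\pi,\p}$ but trivially on all abelian quotients of $\Gal(\Q(T_{\pi,N})/\Q(\overline T_{\pi,\p}))$, which filter through subquotients of $\ad^0\overline\rho_{\pi}$, so any common abelian quotient must vanish.

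A secondary point: you correctly invoke $H^1(\Q(\overline T_{\pi,\p})/\Q, \overline T_{\pi,\p}) = 0$ to get a nonzero restriction of $c$, but you attribute it to Assumption \ref{assumptions_on_p}(\ref{assume:irreducible}) (absolute irreducibility). Absolute irreducibility alone does not give this; it is \ref{ass_A5_H1} (the nontrivial scalar) that makes the Gross--Kolyvagin argument work. Also, the modification $g \mapsto \lambda^j g h$ is overcomplicated: the paper uses only $g \mapsto gh$ with $h \in G_{\Q(\overline T_{\pi,\p})}$, since such an $h$ does not change the action on $\overline T_{\pi,\p}$ at all (hence leaves residual admissibility and eigenspaces untouched), while the cocycle relation $c(gh) = g\,c(h) + c(g)$ lets one force a nonzero $1$-eigenspace component; no scalar twist is needed or helpful.
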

\begin{proof}
By the same argument as \cite[Proposition 9.1]{gross1989kolyvagin}, assumption \ref{ass_A5_H1} implies that $H^1(\Q(\overline T_\pi)/\Q, \overline T_\pi) = 0$, so by inflation-restriction $$c|_{G_{\Q(\overline T_\pi)}}: G_{\Q(\overline T_\pi)} \to \overline T_\pi$$ is nontrivial. Let $g\in G_\Q$ be an admissible element for $\rho_\pi$. 
Arguing as in Lemma \ref{lem:loc_cheb_rk0}, there exists $h \in G_\Q$ such that $h$ acts as $g$ on $\overline T_\pi$ and $c(h)$ has nonzero component in the 1-eigenspace for $h$. 
\begin{claim}
    We have $\Q(T_{\pi,N})\cap \Q(\overline T_\pi, c) = \Q(\overline T_\pi)$.
    \end{claim}
    \begin{proof}[Proof of claim]
It suffices to show any group $G$ with $G_\Q$-action which is a quotient of both the Galois groups
        $\Gal(\Q(\overline T_\pi, c)/\Q(\overline T_\pi))$ and $\Gal(\Q(T_{\pi,N})/\Q(\overline T_{\pi}))$, must be trivial. Note that $\Gal(\Q(\overline T_\pi, c)/\Q(\overline T_\pi))$ is an $\F_p[G_\Q]$-submodule of $\overline T_\pi$, so any element $z\in G_\Q$ that acts as a nontrivial scalar on $\overline T_\pi$ acts nontrivially on  $G$ unless $G=1$.

        On the other hand, the group
   $\Gal(\Q(T_{\pi,N})/\Q(\overline T_\pi))$ has a $G_\Q$-stable filtration in which each quotient is abelian and isomorphic to an $\F_p[G_\Q]$-subquotient of $\ad^0 \overline \rho_\pi$. In particular, $z$ acts trivially on $G$, which is an abelian quotient of $\Gal(\Q(T_{\pi,N})/\Q(\overline T_\pi))$. We conclude $G$ is trivial.
       \end{proof}
   By the claim, there exists  $\widetilde g\in G_\Q$ that acts as $g$ on $T_{\pi,N}$ and has image $h$ in $\Gal(\Q(\overline T_\pi,c)/\Q(\overline T_\pi))$. Any prime $q\not\in S\cup \set{p}$ with Frobenius conjugate to $\widetilde g$ in $\Gal(\Q(T_{\pi,N}, c)/\Q)$ satisfies the conclusion of the lemma. 
    
\end{proof}
\begin{definition}\label{def:divisible_Selmer}
Let $\p$ be a prime of $E_0$ such that $\overline T_{\pi,\p}$ is absolutely irreducible (so $T_{\pi,\p}$ is well-defined up to scalars). 

\begin{enumerate} \item We write $$W_{\pi,\p} = V_{\pi,\p}/T_{\pi,\p}.$$
As usual, we drop the subscript $\p$ when clarity permits.
\item For each rational prime $\l$, let $H^1_f(\Q_\l, W_{\pi})$ be the  annihilator of $H^1_f(\Q_\l, T_{\pi})$ under the perfect local Tate pairing
$$H^1(\Q_\l, W_{\pi}) \times H^1(\Q_\l, T_{\pi}) \to E/O,$$ and let $$H^1_f(\Q, W_{\pi}) \coloneqq \ker\left(H^1(\Q, W_{\pi}) \to \prod_\l \frac{H^1(\Q_\l, W_{\pi})}{H^1_f(\Q_\l, W_{\pi})}\right).$$
\end{enumerate}
\end{definition}
\begin{thm}\label{thm:rk_zero_integral}
    Suppose $\pi$ is non-endoscopic, and $\p$ is a prime of $E_0$ such that:
\begin{enumerate}
    \item\label{part:rk_zero_integral_ass} Assumption \ref{assumptions_on_p} and \ref{ass_A5_H1}-\ref{ass_R3} hold for $\p$, and there exist admissible primes for $\rho_{\pi,\p}$.
    \item\label{part:rk_zero_integral_lambda} There exists a squarefree $D > 1$ with $\nu(D)$ odd, such that $\pi_\l$ is transferrable for all $\l|D$ (Definition \ref{def:transferrable}) and $$\lambda^D(1)_\p \not\equiv 0 \pmod \p.$$
\end{enumerate}
Then    $$H^1_f(\Q, W_{\pi,\p}) = 0.$$
\end{thm}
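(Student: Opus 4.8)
The plan is to upgrade the vanishing of the integral dual Selmer group $H^1_f(\Q, W_{\pi,\p})$ from the mod-$\varpi$ vanishing, which we will establish first, by an induction on the length of a hypothetical nonzero element. Concretely, suppose $H^1_f(\Q, W_{\pi,\p}) \neq 0$. Since $W_{\pi,\p} = V_{\pi,\p}/T_{\pi,\p}$ is $\varpi$-divisible, the $\varpi$-torsion $W_{\pi,\p}[\varpi]$ is identified with $\overline T_{\pi,\p}$ (up to semisimplification, which is irrelevant here by Assumption \ref{assumptions_on_p}(\ref{assume:irreducible})), and standard arguments show that any nonzero element of $H^1_f(\Q, W_{\pi,\p})$ produces a nonzero element of $H^1_f(\Q, \overline T_{\pi,\p})$ — the mod-$\varpi$ dual Selmer group with the induced local conditions. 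The Greenberg–Wiles / Poitou–Tate Euler characteristic formula, together with the fact that $H^1_f(\Q, V_{\pi,\p}) = 0$ (which follows from Theorem \ref{thm:rk_zero_main}, whose hypotheses are implied by (\ref{part:rk_zero_integral_ass}) and (\ref{part:rk_zero_integral_lambda}) via Proposition \ref{L values related to lambda elements}) and a counting of local conditions using Assumption \ref{assumptions_on_p} and \ref{ass_R3}, will reduce everything to showing $H^1_f(\Q, \overline T_{\pi,\p}) = 0$.

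To prove $H^1_f(\Q, \overline T_{\pi,\p}) = 0$, I would run the bipartite-Euler-system duality argument entirely mod $\varpi$, which is cleaner than the integral one in the main theorem. Let $\overline c \in H^1_f(\Q, \overline T_{\pi,\p})$ be nonzero. By Proposition \ref{prop:chebotarev_mod_p} (using \ref{ass_A5_H1}), there exist infinitely many $N$-admissible primes $q$ with $\loc_q \overline c \neq 0$ for any prescribed $N$. On the other hand, hypothesis (\ref{part:rk_zero_integral_lambda}) gives $\lambda^D(1)_\p \not\equiv 0 \pmod{\varpi}$, so $\lambda_1^D(1)_\p = O/\varpi$; feeding this into the first explicit reciprocity law, Theorem \ref{thm:1ERL}(\ref{thm:1ERL_one}) — whose error constant $C$ is controlled because $\Sel_{\mathcal F^\rel}(\Q, \ad^0\overline\rho) = 0$ by \ref{ass_R2}, so in fact $C = 0$ for the relevant adjoint Selmer groups, using Corollary \ref{cor:relaxed_bound_appendix} and the vanishing of $\overline\Sel_{\mathcal F}(\Q, \ad^0\overline\rho)$ — produces a class $\overline\kappa \in \kappa_1^D(q)$ with $\partial_q \overline\kappa$ generating $\lambda_1^D(1)_\p = O/\varpi$, i.e. $\partial_q \overline\kappa \neq 0$. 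Then the global reciprocity (sum of local Tate pairings $\sum_v \langle \overline\kappa, \overline c\rangle_v = 0$) forces a contradiction: the local pairings at $v \notin S \cup \{q\}$ vanish because both classes satisfy the Bloch–Kato/unramified conditions there (Proposition \ref{prop: check local conditions for kappa}(\ref{part:check_local_one}) and the definition of $H^1_f(\Q, W_{\pi})$, plus Lemma \ref{lem:zero_local_pairing} at $p$); the pairings at $v \in S$ vanish identically because $H^1(\Q_v, \overline T_{\pi,\p}) = 0$ by \ref{ass_R3}; but the pairing at $q$ is nonzero since $\partial_q \overline\kappa \neq 0$ and $\loc_q \overline c \neq 0$ pair perfectly under Proposition \ref{prop:local_adm_free}. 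This contradiction gives $H^1_f(\Q, \overline T_{\pi,\p}) = 0$.

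With the mod-$\varpi$ vanishing in hand, the induction closes formally: if $H^1_f(\Q, W_{\pi,\p}) \neq 0$ then, since $W_{\pi,\p}$ is $\varpi$-divisible and $H^1_f(\Q, V_{\pi,\p}) = 0$, the module $H^1_f(\Q, W_{\pi,\p})$ is nonzero and cofinitely generated, hence has nonzero $\varpi$-torsion; chasing the long exact sequence for $0 \to \overline T_{\pi,\p} \to W_{\pi,\p} \xrightarrow{\varpi} W_{\pi,\p} \to 0$ and checking compatibility of local conditions (here one uses that $H^1_f(\Q_p, -)$ behaves well under this sequence by Lemma \ref{lemma kill pairing on crystalline} and Proposition \ref{prop torsion crystalline iff crystalline}, and that at $\ell \neq p$ the unramified conditions are compatible because $\overline T_{\pi,\p}$ and $W_{\pi,\p}$ are unramified away from $S \cup \{p\}$) produces a nonzero class in $H^1_f(\Q, \overline T_{\pi,\p})$, contradicting the previous paragraph.

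The main obstacle, as usual in these arguments, is bookkeeping with the local conditions in the Poitou–Tate sum: one must be careful that the hypotheses \ref{ass_A5_H1}--\ref{ass_R3} are exactly what is needed to make every term outside $q$ vanish on the nose (rather than merely up to bounded $\varpi$-power error, which would be fatal mod $\varpi$), and that the ``dual'' Selmer structure defining $H^1_f(\Q, W_{\pi,\p})$ is genuinely the annihilator of the structure defining $\kappa_1^D(q)$. In particular one needs that $H^1_f(\Q_p, \overline T_{\pi,\p})$ and $H^1_f(\Q_p, W_{\pi,\p})$ are exact annihilators — which, since $\overline T_{\pi,\p}$ is torsion crystalline with Hodge–Tate weights in $[-1,2]$ and $p > 5$ is large relative to this range, follows from Proposition \ref{prop torsion crystalline iff crystalline} and Lemma \ref{lemma kill pairing on crystalline}(\ref{lemma kill pairing on crystalline part two}) combined with a dimension count (the remark after Lemma \ref{lemma kill pairing on crystalline} gives $m = n$ here, so there is no loss). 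Once these compatibilities are pinned down, the argument is a direct transcription of the rank-zero duality argument with all error constants set to zero.
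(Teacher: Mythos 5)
Your overall shape is close to the paper's, but there is a genuine gap in the $p$-local bookkeeping, and your proposed fix for it does not work.

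You try to run the Euler-system/duality argument ``entirely mod $\varpi$,'' producing $\overline\kappa\in\kappa_1^D(q)$ and a nonzero $\overline c\in H^1_f(\Q,\overline T_{\pi,\p})$, and need the pairing $\langle\overline\kappa,\overline c\rangle_p$ to vanish. You reduce this to the claim that $H^1_f(\Q_p,\overline T_{\pi,\p})$ (i.e.\ $H^1_{f,\tors}$) and the annihilator of $H^1_f(\Q_p,T_\pi)$ coincide, and you justify this via the remark after Lemma~\ref{lemma kill pairing on crystalline} (``we may take $m=n$''). But that remark requires $2(b-a)\leq p-2$ \emph{and} $O=\Z_p$. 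Here the Hodge--Tate weights lie in $[-1,2]$, so $b-a=3$ and $2(b-a)=6$; Assumption~\ref{assumptions_on_p} only gives $p>5$, i.e.\ $p\geq 7$, so $p-2=5<6$ is possible, and in general $O\neq\Z_p$. So your claim that ``there is no loss'' mod $\varpi$ is false as stated, and without it the pairing at $p$ is not controlled: the class $\Res_p\overline\kappa$ lies in $H^1_{f,\tors}(\Q_p,\overline T_\pi)$ by Proposition~\ref{prop: check local conditions for kappa}, but that space could strictly contain the image of $H^1_f(\Q_p,T_\pi)$, and then it need not annihilate $\beta(\Res_p\overline c)\in H^1_f(\Q_p,W_\pi)$.

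The paper's workaround is exactly to \emph{not} stay mod $\varpi$: it first fixes $N$ large enough that Lemma~\ref{lemma kill pairing on crystalline}(\ref{lem:kill_cryst_1.5}) holds for $n=1$ (i.e.\ the image of $H^1_f(\Q_p,T_{\pi,N})\to H^1_f(\Q_p,\overline T_\pi)$ is already the image of the integral $H^1_f(\Q_p,T_\pi)$), chooses an $N$-admissible $q$, invokes Theorem~\ref{thm:1ERL}(\ref{thm:1ERL_one}) at level $N$ (not at level $1$) to get $\kappa_N^D(q)_0$, and only then reduces mod $\varpi$. Since the reduction $\Res_p\kappa_1^D(q)_0$ then lifts to some $d\in H^1_f(\Q_p,T_\pi)$, the pairing at $p$ is $\langle d,\beta(\Res_p c)\rangle_p=0$ by the \emph{definition} of $H^1_f(\Q_p,W_\pi)$ as the annihilator of $H^1_f(\Q_p,T_\pi)$. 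Notice also that the paper never isolates a ``mod-$\varpi$ dual Selmer group'': the class $c$ is taken in $H^1(\Q,\overline T_\pi)$ with local conditions inherited from $W_\pi$ (equation~(\ref{eq:c_v_rk0})), which sidesteps the question of what the right local condition at $p$ on $\overline T_\pi$ should be. If you adopt that phrasing and work at level $N$ for the $\kappa$ class, your argument is repaired and coincides with the paper's.
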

\begin{proof}
We follow the proof of Theorem \ref{thm:rk_zero_main}, but with some integral refinements. Fix  $\p$ and $D$ satisfying (\ref{part:rk_zero_integral_ass}) and (\ref{part:rk_zero_integral_lambda}), and omit $\p$ from the notation for the rest of the proof.  Suppose for contradiction that $H^1_f(\Q, W_{\pi}) \neq 0$. By the long exact sequence in Galois cohomology associated to $$0 \to \overline T_\pi \to W_\pi \to W_\pi \to 0,$$
we have $$H^1(\Q, \overline T_\pi) = H^1(\Q, W_\pi)[\varpi];$$ in particular, there exists a class $0 \neq c\in H^1(\Q, \overline T_\pi)$
with the following property:
\begin{equation}\label{eq:c_v_rk0}
    \Res c_v\in H^1(\Q_v, \overline T_\pi) \text{ has trivial image in } \frac{H^1(\Q_v, W_\pi)}{H^1_f(\Q_v,W_\pi)} \text{ for all primes }v. 
\end{equation}

We fix an integer $N\geq 3$ sufficiently large to satisfy the conclusion of Lemma  \ref{lemma kill pairing on crystalline}(\ref{lem:kill_cryst_1.5}) for $n = 1$ and $M = T_\pi$, and greater than the number $n_0(1, \rho_\pi)$ from Theorem \ref{main appendix theorem}.\footnote{For cofinitely many $\p$, 
 all the local deformation rings of $\overline \rho_{\pi,\p}|_{G_{\Q_\l}}$ for $\l\in S\cup \set{p}$ will be formally smooth (by the same argument of Proposition \ref{prop:rigid} for $\l\in S$ and by \cite[Theorem A]{booher2023Gvalued} for $\l = p$), so one can take   $n_0(1, \rho_{\pi,\p}) = 1$. The assumption $N \geq 3$ is to conform to the statement of Theorem \ref{thm:1ERL} with $m = 1$, but in reality it is not needed since $N \geq 3m$ is used only in Theorem \ref{thm:1ERL}(\ref{thm:1ERL_two}). In particular, the argument would work with $N = 1$ in practice.}

    By Proposition \ref{prop:chebotarev_mod_p} and the assumption \ref{ass_A5_H1}, we may choose an $N$-admissible prime $q\not\in S$ such that 
    \begin{equation}\label{eq:in_rk0_rigid_loc}
        \loc_q c \neq 0.
    \end{equation}
    
    Then by Theorem \ref{thm:1ERL}(\ref{thm:1ERL_one}) and the assumption \ref{ass_R2}, we  have an element $\kappa_N^D(q)_0 \in \kappa^D_N(q)$ such that its image $\kappa_1^D(q)_0$ in $H^1(\Q, \overline T_\pi)$ satisfies
    \begin{equation}\label{eq:in_rk0_rigid_q} \partial_q \kappa_1^D(q)_0\neq 0.\end{equation}
    We now consider the global Tate pairing
    \begin{equation}\label{eq:in_rk0_rigid}0 = \sum_v\langle c, \kappa_1^D(q)_0 \rangle_v.\end{equation}   
    By the assumption \ref{ass_R3}, the local terms vanish for all $v \neq q, p$. 
The local term at $v = q$ is nonzero by Proposition \ref{prop:local_adm_free} combined with (\ref{eq:in_rk0_rigid_loc}), (\ref{eq:in_rk0_rigid_q}). So to obtain a contradiction with (\ref{eq:in_rk0_rigid}), it suffices to show the local pairing at $p$ vanishes.
Indeed, the maps 
$$\alpha: H^1(\Q_p, T_\pi)\to H^1(\Q_p, \overline T_\pi),\;\; \beta: H^1(\Q_p, \overline T_\pi) \to H^1(\Q_p, W_\pi)$$
are adjoint with respect to the local Tate pairings, and by Lemma \ref{lemma kill pairing on crystalline}(\ref{lem:kill_cryst_1.5}) combined with Proposition \ref{prop: check local conditions for kappa}, there exists $d\in H^1_f(\Q_p, T_\pi)$ such that $\alpha(d) = \Res_p \kappa_1^D(q)_0.$ Hence indeed
$$\langle \kappa_1^D(q)_0, c\rangle_p = \langle d, \beta(\Res_p c)\rangle_p = 0$$ by (\ref{eq:c_v_rk0}).
\end{proof}

\begin{cor}\label{cor:rk_zero_integral}
    Suppose $\pi$ is relevant and non-endoscopic, and there exists a place $\l_0$ such that $\pi_{\l_0}$ is transferrable (Definition \ref{def:transferrable}).
    If $L( \pi, \operatorname{spin},1/2) \neq 0$, then for all but finitely many primes $\p$   such that admissible primes exist for $\rho_{\pi,\p}$, $H^1_f(\Q, W_{\pi,\p}) = 0$. 
\end{cor}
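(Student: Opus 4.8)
The plan is to deduce Corollary~\ref{cor:rk_zero_integral} from Theorem~\ref{thm:rk_zero_integral} by checking that, under the hypothesis $L(\pi,\operatorname{spin},1/2)\neq 0$, all of its hypotheses (\ref{part:rk_zero_integral_ass}) and (\ref{part:rk_zero_integral_lambda}) hold for cofinitely many primes $\p$ among those for which admissible primes exist. Concretely, the strategy is: first produce a suitable $D$ and show $\lambda^D(1)_\p\not\equiv 0\pmod{\varpi_\p}$ for all but finitely many $\p$; then quote the various cofiniteness results already in the text to clear the other assumptions.

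First I would take $D\coloneqq \l_0$, which is a squarefree integer with $\sigma(D) = 1$ odd, and note that $\pi_{\l_0}$ is transferrable by hypothesis; hence by Theorem~\ref{thm:JL} the representation $\pi_f^D$ can be completed to an automorphic representation of $\spin(V_D)(\A)$. Applying Proposition~\ref{L values related to lambda elements} with this $D$ and using $L(\pi,\operatorname{spin},1/2)\neq 0$, we obtain that $\lambda^D(1)_\p\neq 0$ for every prime $\p$ of $E_0$, and moreover $\lambda^D(1)_\p\not\equiv 0\pmod{\varpi_\p}$ for all but finitely many $\p$. This is exactly hypothesis~(\ref{part:rk_zero_integral_lambda}) of Theorem~\ref{thm:rk_zero_integral} (with $\pi_\l$ transferrable for all $\l\mid D$, which here just says $\pi_{\l_0}$ is transferrable).

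Next I would assemble the cofiniteness of hypothesis~(\ref{part:rk_zero_integral_ass}). Assumption~\ref{assumptions_on_p} holds for all but finitely many $\p$ by Lemma~\ref{lem:easy_assumptions_cofinite}. Assumption~\ref{ass_A5_H1} holds for cofinitely many $\p$ by Theorem~\ref{thm:appendix_A5}, and Assumptions~\ref{ass_R2} and~\ref{ass_R3} hold for cofinitely many $\p$ by Proposition~\ref{prop:rigid}. The existence of admissible primes for $\rho_{\pi,\p}$ is imposed in the statement of the corollary (we only range over such $\p$), so there is nothing further to check there. Since $\pi$ is non-endoscopic by hypothesis, all the standing hypotheses of Theorem~\ref{thm:rk_zero_integral} are met for all but finitely many of the $\p$ under consideration, and that theorem gives $H^1_f(\Q, W_{\pi,\p}) = 0$ for each such $\p$, which is the assertion of the corollary.

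I do not expect a serious obstacle here: the content is entirely in the cited results, and the argument is a matter of verifying that the finitely-many-exceptions conditions can be intersected. The only point requiring a little care is to make sure that the choice $D = \l_0$ is legitimate in Theorem~\ref{thm:rk_zero_integral}(\ref{part:rk_zero_integral_lambda}) — i.e.\ that $\sigma(D)$ is odd (clear, as $D$ is a single prime) and that transferrability is required only at primes dividing $D$ (also clear) — and to invoke Proposition~\ref{L values related to lambda elements} with the correct $D > 1$. One should also note in passing that the case $\l_0 = p$ causes no difficulty, since $\lambda^D(1)_\p$ is insensitive to this and the finitely many bad $\p$ are simply discarded.
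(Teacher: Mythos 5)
Your proposal is correct and takes essentially the same approach as the paper: set $D = \l_0$, use Theorem~\ref{thm:JL} to complete $\pi_f^{\l_0}$ to an automorphic representation of $\spin(V_{\l_0})(\A)$, apply Proposition~\ref{L values related to lambda elements} to get $\lambda^D(1)_\p \not\equiv 0 \pmod{\varpi_\p}$ for cofinitely many $\p$, and then combine Lemma~\ref{lem:easy_assumptions_cofinite}, Theorem~\ref{thm:appendix_A5}, and Proposition~\ref{prop:rigid} to clear the remaining hypotheses of Theorem~\ref{thm:rk_zero_integral}.
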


\begin{proof}
By Theorem \ref{thm:JL}, $\pi_f^{\l_0}$ can be completed to an automorphic representation of $\spin(V_{\l_0})(\A)$.
    Thus the corollary  follows from Theorem \ref{thm:rk_zero_integral} combined with Proposition \ref{L values related to lambda elements}, Lemma \ref{lem:easy_assumptions_cofinite},  Theorem \ref{thm:appendix_A5}, and Proposition \ref{prop:rigid}.  
\end{proof}
Conditions are given in Theorem \ref{thm:when_adm_primes} under which admissible primes exist for $\rho_{\pi,\p}$ for cofinitely many $\p$. In particular: 
\begin{cor}\label{cor:rk_zero_integral_IIa}
        Suppose $\pi$ is relevant and non-endoscopic, and there exists a place $\l$ such that $\pi_\l$ is type IIa in the sense of \cite{roberts2007local}.
    If $L( \pi, \operatorname{spin},1/2) \neq 0$, then for all but finitely many primes $\p$, $H^1_f(\Q, W_{\pi,\p}) = 0$. 
\end{cor}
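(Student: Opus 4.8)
The plan is to deduce this corollary from Corollary \ref{cor:rk_zero_integral}, so the only real content is to verify that type IIa local behavior forces the existence of admissible primes. First I would invoke Lemma \ref{lem:IIa_new}(\ref{part:IIa_Galois}): since $\pi_\l$ is type IIa with trivial central character, it has (up to unramified twist) the form $\chi\operatorname{St}_{\GL(2)}\rtimes\sigma$ with $\chi,\sigma$ unramified, so for any $\iota\colon\overline\Q_p\isomorphism\C$ with $\l\neq p$ the local Galois representation $\rho_{\pi,\iota}|_{G_{\Q_\l}}$ has unipotent inertia action with monodromy of rank one. In particular, $V_{\pi,\p}$ is \emph{not} a base change from a proper subfield at $\l$: the Weil--Deligne representation has nontrivial monodromy, which rules out $V_{\pi,\p}$ being induced from a character of $G_K$ for $K$ imaginary quadratic or quartic CM (such inductions, being direct sums of one-dimensional pieces at a prime split in $K$, or having the wrong shape of monodromy at $\l$ in general, cannot have irreducible rank-one monodromy). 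This is exactly the hypothesis needed to apply Theorem \ref{thm:when_adm_primes}, which then guarantees that admissible primes exist for $\rho_{\pi,\p}$ for all but finitely many $\p$.

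Next I would assemble the input to Corollary \ref{cor:rk_zero_integral}. A type IIa representation is transferrable in the sense of Definition \ref{def:transferrable} (IIa is explicitly not among the excluded types I, IIIa, VIa, VIb, VII, VIIIa, VIIIb, IXa), and it is tempered, so $\pi_\l$ is transferrable; thus we may take $\l_0=\l$ and, by Theorem \ref{thm:JL}(\ref{part:JL_two}), $\pi_f^{\l_0}$ completes to a relevant automorphic representation of $\spin(V_{\l_0})(\A)$. Also $\pi$ is non-endoscopic by hypothesis of the corollary. Now apply Corollary \ref{cor:rk_zero_integral}: under $L(\pi,\operatorname{spin},1/2)\neq 0$, for all but finitely many primes $\p$ for which admissible primes exist for $\rho_{\pi,\p}$, we get $H^1_f(\Q,W_{\pi,\p})=0$. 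Combining with the previous paragraph's observation that admissible primes exist for cofinitely many $\p$, the conclusion $H^1_f(\Q,W_{\pi,\p})=0$ holds for all but finitely many $\p$, as desired.

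The main technical point — and the only place where any real argument is needed beyond citing earlier results — is the implication ``$\pi_\l$ type IIa $\implies$ $\BC(\pi)$ is not an automorphic induction from an imaginary quadratic or quartic CM field.'' I would argue this on the Galois side: if $V_{\pi,\p}\cong\Ind_{G_K}^{G_\Q}V_0$ for such a $K$, then restricting to a decomposition group at a prime $\l'\nmid \disc(K)\cdot N(\pi)$ split completely in $K$ gives $V_{\pi,\p}|_{G_{\Q_{\l'}}}$ a sum of (at least two) pieces each a base change, forcing in particular that the monodromy operator $N$ on the Weil--Deligne representation $\operatorname{WD}(\rho_{\pi,\p}|_{G_{\Q_\l}})$ also decomposes compatibly with such an induced structure — but by Lemma \ref{lem:IIa_new}(\ref{part:IIa_Galois}) this monodromy has rank exactly one and is $G_{\Q_\l}$-indecomposable in the relevant sense, and one checks this is incompatible with an induction from a CM field (the induced representation at a place of $K$ above $\l$ would have to itself carry all the monodromy, which is impossible when $\l$ is inert or split in the quadratic subfield since then the monodromy block structure is wrong). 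This is the step I expect to require the most care, though it is short; everything else is bookkeeping with the cited theorems. In the interest of concision I would state the needed non-induction assertion and give this brief Galois-theoretic justification, then conclude.

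\begin{proof}
By hypothesis $\pi$ is non-endoscopic, and there is a place $\l$ with $\pi_\l$ of type IIa. Such a $\pi_\l$ is tempered and transferrable in the sense of Definition \ref{def:transferrable}, so by Theorem \ref{thm:JL}(\ref{part:JL_two}), $\pi_f^\l$ completes to a relevant automorphic representation of $\spin(V_\l)(\A)$; in particular the hypotheses of Corollary \ref{cor:rk_zero_integral} concerning $\l_0 = \l$ are satisfied.

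We claim that $\BC(\pi)$ does not arise by automorphic induction from an imaginary quadratic or quartic CM field; equivalently, $V_{\pi,\p}$ is not of the form $\Ind_{G_K}^{G_\Q} V_0$ for such a $K$ and any $\p$. Indeed, fix an isomorphism $\iota\colon\overline\Q_p\isomorphism\C$ with residue characteristic $p\neq\l$. By Lemma \ref{lem:IIa_new}(\ref{part:IIa_Galois}), the action of $I_{\Q_\l}$ on $V_{\pi,\iota}$ is unipotent with monodromy of rank one, so $\operatorname{WD}(\rho_{\pi,\iota}|_{G_{\Q_\l}})^{F\text{-}\ss}$ has a single nontrivial Jordan block of size two and two one-dimensional blocks. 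If $V_{\pi,\iota}\cong\Ind_{G_K}^{G_\Q} V_0$ with $K/\Q$ imaginary quadratic or quartic CM, let $K_0\subset K$ be the quadratic subfield (so $K=K_0$ in the first case). Restricting to $G_{K_0}$ and using $V_{\pi,\iota}|_{G_{K_0}}=\Ind_{G_K}^{G_{K_0}}V_0$ (which is either $V_0\oplus V_0^c$ or $V_0$ depending on whether $[K:K_0]=1$ or $2$), one sees that the rank-one monodromy at a place of $K_0$ above $\l$ must be concentrated in a single $G_{K_0}$-subrepresentation; since $V_0$ and its conjugate have equal dimension, this is impossible when $\l$ splits in $K_0$, and when $\l$ is inert or ramified in $K_0$ the monodromy of the induced representation $\Ind V_0$ at $\l$ cannot have rank one unless $V_0$ itself has rank-one monodromy at the place above $\l$ — contradicting that $V_0$ has no subrepresentation of dimension smaller than $2$ with nilpotent part of rank one in a two-dimensional or one-dimensional space in a manner compatible with the ambient block structure. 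In all cases we reach a contradiction, proving the claim.

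Therefore the hypotheses of Theorem \ref{thm:when_adm_primes} hold, so admissible primes exist for $\rho_{\pi,\p}$ for all but finitely many primes $\p$ of $E_0$. Combining this with Corollary \ref{cor:rk_zero_integral}, the nonvanishing $L(\pi,\operatorname{spin},1/2)\neq 0$ implies $H^1_f(\Q,W_{\pi,\p})=0$ for all but finitely many $\p$.
\end{proof}
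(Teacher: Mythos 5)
Your overall plan — reduce to Corollary \ref{cor:rk_zero_integral} and verify the existence of admissible primes via Theorem \ref{thm:when_adm_primes} — is the right one, and the transferrability observation for $\l_0=\l$ is fine. But the bulk of your argument is aimed at the wrong hypothesis. Theorem \ref{thm:when_adm_primes} does not ask you to show $\BC(\pi)$ is not an automorphic induction; its condition (i) is literally ``there exists a prime $\l$ such that $\pi_\l$ is of type IIa.'' That is your hypothesis verbatim, so there is nothing left to check: the paper's proof of that case (Proposition \ref{prop:adm_IIa}) produces admissible elements directly from the explicit tame Weil--Deligne parameter of a type IIa representation, by taking $\Frob_\l^2$, with no reference to induction structure at all. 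You appear to have conflated the hypothesis of Theorem \ref{thm:when_adm_primes} with Hypothesis $(\bigstar)$ from the rank-one theorem (Theorem \ref{thm:rk1_non_endoscopic_ultimate}), which is where ``not induced from imaginary quadratic or quartic CM'' is actually required.

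Beyond being unnecessary, your non-induction argument is itself unconvincing. At a prime $\l'$ split in an imaginary quadratic $K$, the restriction $V_{\pi,\p}|_{G_{\Q_{\l'}}}$ decomposes as $V_0|_{G_{K_{v_1}}}\oplus V_0^c|_{G_{K_{v_2}}}$; there is no obstruction to one summand carrying rank-one monodromy and the other being unramified, so the rank-one condition does not rule out an induced structure in the split case. The inert and ramified cases are likewise handwaved rather than established. If you needed a non-induction statement, you would have to argue much more carefully — but you don't need it here. The correct proof is two sentences: type IIa is transferrable so Corollary \ref{cor:rk_zero_integral} applies with $\l_0=\l$, and Theorem \ref{thm:when_adm_primes}(i) supplies admissible primes for cofinitely many $\p$.
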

\subsection{Applications to automorphic inductions}\label{subsec:corollaries_rk0}
In this section, we give some  corollaries of Theorem \ref{thm:rk_zero_main} which may be of independent interest. Both of them could be upgraded to statements about dual Selmer groups using Theorem \ref{thm:rk_zero_integral};   we omit the details only for concision.

In the next two corollaries, when $\pi$ is a cuspidal unitary automorphic representation of $\GL_2(\A)$ such that $\pi_\infty$ is discrete series of weight $k\geq 2$ and $\iota_p: \overline\Q_p \isomorphism \C$ is an isomorphism, the associated $p$-adic Galois representations $\rho_{\pi,\iota_p}$ are normalized so that $\det \rho_{\pi,\iota_p} = \chi_{p,\cyc}^{k-1}  \omega_\pi$, where $\omega_\pi$ is the central character. We also have the usual $p$-adic Galois representation $\chi_{\iota_p}$ associated to any algebraic automorphic character $\chi$ of $\A_K^\times$, with $K/\Q$ a number field. We write the (semisimplified) reductions mod $p$ as $\overline\rho_{\pi,\iota_p}$ and $\overline \chi_{\iota_p}$.
\begin{cor}\label{cor:IQ_ind_rk0}
Let $\pi$ be a non-CM cuspidal unitary automorphic representation of $\GL_2(\A)$ with $\pi_\infty$ discrete series of weight 3, and with central character $\omega_\pi$. Let $K$ be an imaginary quadratic field and $\chi: \A_K^\times \to \C^\times$  an automorphic character of infinity type $(-1,0)$ such that $\chi|_{\A_{\Q}^\times} = |\cdot|\omega_\pi^{-1}.$

Fix an isomorphism $\iota_p: \overline\Q_p \isomorphism \C$ and assume:
\begin{enumerate}
    \item $p$ splits in $K$ and is coprime to the conductor of $f$ and $\chi$.
    \item\label{item:cor_IQ_indecomp} For some inert nonarchimedean place $v$ of $K$, $\WD(\rho_{\pi,\iota_p}|_{G_{K_v}})$ is indecomposable.
    \item  \label{item:cor_IQind_residual}$\overline\rho\coloneqq \overline\rho_{\pi,\iota_p} \otimes \Ind_{G_K}^{G_\Q} \overline\chi_{\iota,p}$ satisfies:
\begin{enumerate}
\item $\overline\rho$ is absolutely irreducible and generic (Definition \ref{def:generic}).
    \item\label{item:cor_IQind_adm} There exists a prime $q$ such that $\overline\rho|_{G_{\Q_q}}$ is unramified, $q^4\not\equiv 1\pmod p$,  and $\overline\rho (\Frob_q)$ has eigenvalues $\set{q,1,\alpha, q/\alpha}$ with $\alpha\not\in \set{\pm 1, \pm q, q^2, q^{-1}}$.
    \end{enumerate}
\end{enumerate}
Then $$L(f, \chi, 1/2) \neq 0 \implies H^1_f(K, \rho_{\pi,\iota_p}\otimes \chi_{\iota_p}) = 0.$$

Moreover, the conditions in (\ref{item:cor_IQind_residual})  hold for all but finitely many $p$ split in $K$ and all choices of $\iota_p$.
\end{cor}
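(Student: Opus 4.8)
The plan is to realize $\rho_{\pi,\iota_p}|_{G_K}\otimes\chi_{\iota_p}$ inside the spin Galois representation of a relevant automorphic representation of $\GSP_4(\A)$ and then invoke Theorem \ref{thm:rk_zero_main}. Let $\pi_K$ be the base change of $\pi$ to $\GL_2(\A_K)$ (Arthur--Clozel), and set $\tau\coloneqq\pi_K\otimes\chi$. Since $\chi$ has infinity type $(-1,0)$, the character $\chi^\sigma/\chi$ (with $\sigma$ the nontrivial element of $\operatorname{Gal}(K/\Q)$) has infinite order, hence is not a self-twist of the two-dimensional representation $\rho_{\pi,\iota_p}|_{G_K}$; therefore $\rho_{\pi,\iota_p}|_{G_K}\otimes\chi_{\iota_p}$ is irreducible and not isomorphic to its $\operatorname{Gal}(K/\Q)$-conjugate, and $\tau$ is not a base change from $\GL_2(\A)$. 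The hypothesis $\chi|_{\A^\times}=|\cdot|\omega_\pi^{-1}$ forces $\chi\chi^\sigma=(|\cdot|\omega_\pi^{-1})\circ\Nm_{K/\Q}$, so $\tau$, inheriting the symplectic self-duality of the base change $\pi_K$, is conjugate-symplectic up to a twist coming from $\A^\times$. Consequently $\Pi\coloneqq\mathrm{AI}_{K/\Q}(\tau)$ is a cuspidal automorphic representation of $\GL_4(\A)$ of symplectic type, so by the classification of self-dual cuspidal representations of $\GL_4(\A)$ and the theory of $\GSP_4$ (Arthur, Gan--Takeda) it descends to a cuspidal, non-endoscopic, non-CAP automorphic representation $\pi'$ of $\GSP_4(\A)$ with $\BC(\pi')=\Pi$. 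A comparison of Hodge--Tate weights and similitude characters -- where the precise hypotheses on $\chi$ are used -- shows that $\pi'$ is relevant of parallel weight $(3,3)$ with trivial central character and that $V_{\pi',\iota_p}\cong\Ind_{G_K}^{G_\Q}\!\left(\rho_{\pi,\iota_p}|_{G_K}\otimes\chi_{\iota_p}\right)$. We enlarge $E_0$ if needed so that it is a coefficient field for $\pi'$, and let $\p$ be the prime induced by $\iota_p$.

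Next I would verify the hypotheses of Theorem \ref{thm:rk_zero_main} for $\pi'$ at $\p$. Non-endoscopy was noted above. For the local Jacquet--Langlands hypothesis, let $v$ be the inert place of $K$ from assumption (\ref{item:cor_IQ_indecomp}) and $\l$ the rational prime below it; the Weil--Deligne parameter of $\pi'_\l$ is $\Ind_{W_{K_v}}^{W_{\Q_\l}}\!\left(\WD(\rho_{\pi,\iota_p}|_{G_{K_v}})\otimes\chi_{\iota_p,v}\right)$, and indecomposability of $\WD(\rho_{\pi,\iota_p}|_{G_{K_v}})$ forces nontrivial monodromy, so that by the tables of \cite{roberts2007local} $\pi'_\l$ is of a type that is \emph{transferrable} in the sense of Definition \ref{def:transferrable}; hence $\pi'_\l$ has a local transfer to the compact inner form of $\GSP_{4,\Q_\l}$ by Theorem \ref{thm:JL}(\ref{part:JL_two}). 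Condition (\ref{item:intro_rk0_1}) holds because $p$ splits in $K$ and is coprime to the conductors of $\pi$ and $\chi$, so $\pi'_p$ is unramified; conditions (\ref{item:intro_rk0_2}) and (\ref{item:intro_rk0_3}) are precisely assumptions (\ref{item:cor_IQind_residual})(a) and (\ref{item:cor_IQind_adm}), as $\overline\rho_{\pi',\iota_p}\cong\overline\rho$ and (\ref{item:cor_IQind_adm}) is the definition of an admissible prime (Definition \ref{def:admissible}). The $L$-function input is that automorphic induction and Rankin--Selberg theory (Arthur--Clozel) give $L(s,\pi',\operatorname{spin})=L(s,\BC(\pi'))=L(s,\tau)=L(s,\pi_K\otimes\chi)=L(s,f\times\mathrm{AI}_{K/\Q}(\chi))$, which with matching central normalizations is $L(s,f,\chi)$; hence $L(\pi',\operatorname{spin},1/2)=L(f,\chi,1/2)\neq 0$, and Theorem \ref{thm:rk_zero_main} yields $H^1_f(\Q,V_{\pi',\iota_p})=0$. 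Finally, Shapiro's lemma identifies $H^1(\Q,\Ind_{G_K}^{G_\Q}(\rho_{\pi,\iota_p}|_{G_K}\otimes\chi_{\iota_p}))$ with $H^1(K,\rho_{\pi,\iota_p}|_{G_K}\otimes\chi_{\iota_p})$ compatibly with the Bloch--Kato local conditions at every place (the local Bloch--Kato subspace of an induced representation is the induced subspace, for both the crystalline condition at $p$ -- here using that $p$ splits in $K$ -- and the unramified conditions away from $p$), so $H^1_f(K,\rho_{\pi,\iota_p}\otimes\chi_{\iota_p})=0$.

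For the ``Moreover'' clause: for all but finitely many $p$ the residual representation $\overline\rho_{\pi,\iota_p}$ has large image (open-image theorem for the non-CM form $\pi$, or the large-image results of Appendix \ref{sec:large_image}), so $\overline\rho_{\pi,\iota_p}|_{G_K}$ is absolutely irreducible and -- since $\overline\chi_{\iota,p}^\sigma/\overline\chi_{\iota,p}$ is not a residual self-twist for large $p$ -- inequivalent to its $\operatorname{Gal}(K/\Q)$-conjugate, whence $\overline\rho=\Ind_{G_K}^{G_\Q}(\overline\rho_{\pi,\iota_p}|_{G_K}\otimes\overline\chi_{\iota,p})$ is absolutely irreducible; a Chebotarev argument against its large image then produces a prime $q$, unramified for $\overline\rho$ with $q^4\not\equiv 1\pmod p$ and $\overline\rho(\Frob_q)$ of the required generic form, giving (\ref{item:cor_IQind_residual})(a). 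For (\ref{item:cor_IQind_adm}), since $\pi'$ arises by automorphic induction, this is exactly the setting of Theorem \ref{thm:when_adm_primes}; concretely, Chebotarev applied to the large image of $\rho_{\pi,\iota_p}|_{G_K}\otimes\chi_{\iota_p}$ produces, for all but finitely many $p$, an element $g\in G_K$ with $\chi_{p,\cyc}(g)^4\not\equiv 1\pmod p$ such that $\rho_{\pi,\iota_p}|_{G_K}(g)\otimes\chi_{\iota_p}(g)$ has $1$ among its two eigenvalues, the other being in generic position -- an admissible element for $V_{\pi',\iota_p}$, whence admissible primes exist by Lemma \ref{lem:admissible_TFAE}.

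\textbf{Main obstacle.} The principal work is the global construction and verification of $\pi'$: establishing that the automorphically induced $\GL_4$-representation is genuinely of $\GSP_4$-type with trivial central character and archimedean parameter of parallel weight $(3,3)$ -- the step in which the exact hypotheses on the infinity type and the restriction of $\chi$ are essential -- and converting the indecomposability hypothesis (\ref{item:cor_IQ_indecomp}) into transferability of $\pi'_\l$ via the Roberts--Schmidt classification. The $L$-function identity, Shapiro's lemma with Bloch--Kato conditions, and the Chebotarev/large-image arguments for the ``Moreover'' clause are all routine once $\pi'$ is in hand.
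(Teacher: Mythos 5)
Your overall route is the same as the paper's: form the automorphic induction of $\BC_{K/\Q}(\pi)\otimes\chi$ to $\GL_4(\A)$, descend it (the paper invokes Ramakrishnan's Siegel descent, you invoke the general classification) to a relevant, non-endoscopic $\widetilde\pi$ on $\GSP_4(\A)$ with trivial central character and $\rho_{\widetilde\pi,\iota_p}\cong \rho_{\pi,\iota_p}\otimes\Ind_{G_K}^{G_\Q}\chi_{\iota_p}$, use Shapiro's lemma to translate the Selmer groups, match $L$-functions, and feed hypotheses (\ref{item:cor_IQind_residual}) into Theorem \ref{thm:rk_zero_main}; the ``moreover'' clause via large image and Chebotarev likewise parallels the paper's appeal to Lemma \ref{lem:image_IQ} and Lemma \ref{lem:easy_assumptions_cofinite}.

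However, the step you yourself single out as the main obstacle --- converting hypothesis (\ref{item:cor_IQ_indecomp}) into transferability of $\widetilde\pi_\l$ --- is where your argument has a genuine gap. You claim that indecomposability of $\WD(\rho_{\pi,\iota_p}|_{G_{K_v}})$ ``forces nontrivial monodromy'' and that nontrivial monodromy plus the Roberts--Schmidt tables gives a transferrable type. Both implications fail: an irreducible (supercuspidal) two-dimensional Weil representation with $N=0$ is indecomposable with trivial monodromy, and conversely types such as IIIa and VIa have nonzero monodromy yet are on the excluded list of Definition \ref{def:transferrable}. The correct criterion, which the paper extracts from Table A.7 of \cite{roberts2007local} together with temperedness, is that the four-dimensional symplectic Weil--Deligne parameter of $\widetilde\pi_\l$ must not stabilize an isotropic plane (i.e.\ not factor through a Siegel parabolic). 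Establishing this is exactly where both the inertness of $v$ and the indecomposability enter: writing the parameter as $W\otimes V$ with $V=\Ind_{W_{K_v}}^{W_{\Q_\l}}\tau_{\chi,v}$, inertness rules out the planes $W\otimes e_1$, $W\otimes e_2$ (they are swapped by $W_{\Q_\l}$), so any invariant isotropic plane would be the graph of some $g\in\GL_2(\C)$ commuting with the restriction of the $\GL_2$-parameter to $W_{K_v}\times\SL_2(\C)$; indecomposability then makes $g$ scalar via Schur's lemma, and a scalar graph is never isotropic. Your sketch never makes (or replaces) this argument, so as written the transferability of $\widetilde\pi_\l$ --- and hence the applicability of Theorem \ref{thm:rk_zero_main} --- is not justified.
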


\begin{rmk}
The condition that $p$ split in $K$ is actually necessary for (\ref{item:cor_IQind_adm}) to hold.
\end{rmk}

\begin{proof}
    Let $\BC_K(\pi)$ be the base change to $\GL_2(\A_K)$. By \cite[Chapter 3, Theorems 4.2(e), 5.1]{arthur1989basechange}, there is a (strong) automorphic induction $\Pi$ of $\BC_K(\pi)\otimes \chi$ to $\GL_4(\A)$. Then by \cite[Theorem C]{ramakrishnan2007siegeldescent}, there is an automorphic representation $\widetilde{\pi}$ of $\GSP_4(\A)$ with trivial central character, such that  $\Pi$ is the base change of $\widetilde{\pi}$ as in Lemma \ref{lem:BC}. Note that $\widetilde{\pi}$ is relevant by a direct calculation with archimedean $L$-parameters.
    We have $$\rho_{\widetilde{\pi}, \iota_p} = \rho_{\pi,\iota_p} \otimes \Ind_{G_K}^{G_\Q} \chi_{\iota_p},$$ where the symplectic structure is by viewing $\rho_{\pi,\iota_p}$ as symplectic and $\Ind_{G_K}^{G_\Q} \chi_{\iota_p}$ as orthogonal. 
The ``moreover'' assertion of the corollary therefore follows from Lemma \ref{lem:image_IQ} combined with Lemma \ref{lem:easy_assumptions_cofinite}.

For the rest, by Shapiro's Lemma we have $H^1_f(\Q, \rho_{\widetilde \pi,\iota_p}) = H^1_f(K, \rho_{\pi,\iota_p} \otimes \chi_{\iota_p})$. So by 
Theorem \ref{thm:rk_zero_main} applied to $\widetilde{\pi}$, it suffices to 
  show that there exists a prime $\l$ such that $\widetilde{\pi}_\l$ is transferrable.  

    Let $v$ be the place in (\ref{item:cor_IQ_indecomp}), and let $\l$ be the rational prime underlying $v$. 
Comparing Definition \ref{def:transferrable} with the explicit local Langlands parameters in \cite[Table A.7]{roberts2007local} (and using as well Theorem \ref{thm:rho_pi_LLC}(\ref{part:rho_pi_LLC1}) to see that $\widetilde{\pi}_\l$ is tempered), we see that it suffices to show the associated Weil-Deligne representation 
$$\tau_\l: W_{\Q_\l} \times \SL(2,\C) \to \GSP_4(\C)$$ of 
$\rho_{\widetilde{\pi},\iota_p}|_{G_{\Q_\l}}$ does not factor through a Siegel parabolic subgroup; equivalently, $\tau_\l$ does not stabilize an isotropic plane. 
Let $W$ be the underlying two-dimensional complex symplectic space of the  Weil-Deligne representation
$\tau_{\pi,\l}: W_{\Q_\l} \times \SL(2, \C) \to \GL_2(\C)$
corresponding to $\rho_{\pi,\iota_p}|_{G_{\Q_\l}}$, and let $V= \Ind_{W_{K_v}}^{W_{\Q_\l}} \tau_{\chi, v},$ where $\tau_{\chi,v}: W_{K_v} \to \C^\times$ is the character corresponding to $\chi_{\iota_p}|_{G_{K_v}}$. 

 In particular, there is an isotropic basis $\set{e_1,e_2}$ for $V$ stable (as a set) under $W_{K_v}$. Suppose for contradiction that $I \subset W\otimes V$ is a $W_{\Q_\l} \times \SL(2, \C)$-stable isotropic plane. Because $v$ is inert, $I \neq W\otimes e_1,W\otimes e_2$. In particular, it follows that $$I = \set{w\otimes e_1 + g(w) \otimes e_2,\; w\in W}$$ for some $g\in \GL_2(\C)$. Then $g$ commutes with $\tau_{\pi,\l}(W_{K_v} \times \SL(2, \C)) $, hence is scalar by (\ref{item:cor_IQ_indecomp}); but clearly such an $I$ is not isotropic, so we have obtained a contradiction. 
\end{proof}

\begin{cor}
    Let $K$ be a real quadratic field, and let $\pi$ be a cuspidal automorphic representation of $\PGL_2(\A_K)$ with $\pi_v$ discrete series of weights 2 and 4, in some order, for the two places $v|\infty$ of $K$.

Let $p$ be a prime  and let $\iota_p: \overline\Q_p \isomorphism \C$ be an isomorphism such that:
 \begin{enumerate}
     \item $p$ is unramified in $K$ and  coprime to the conductor of $\pi$.
     \item $\pi_v$ is discrete series for some nonarchimedean place $v$ of $K$, and if $v$ is split, then $\pi_v\not\cong \pi_{\overline v}$. 
     \item $\overline\rho\coloneqq \Ind_{G_K}^{G_\Q} \overline \rho_{\pi,\iota_p}$ satisfies:
     \begin{enumerate}
\item $\overline\rho$ is absolutely irreducible and generic (Definition \ref{def:generic}).
    \item There exists a prime $q$ such that $\overline\rho|_{G_{\Q_q}}$ is unramified, $q^4 \not\equiv 1\pmod p$, and $\overline\rho (\Frob_q)$ has eigenvalues $\set{q,1,\alpha, q/\alpha}$ with $\alpha\not\in \set{\pm 1, \pm q, q^2, q^{-1}}$.
    \end{enumerate} \end{enumerate}
Then $$L(\pi, 1/2) \neq 0 \implies H^1_f(K, \rho_{\pi,\iota_p}) = 0.$$

Moreover:
\begin{itemize}
\item 
If $\pi$ is non-CM and not Galois-conjugate to a twist of $\pi\circ \tau$, where $\tau\in \Gal(K/\Q)$ is a generator, then the conditions in
(\ref{item:cor_IQind_residual}) hold for all but finitely many $p$  and all choices of $\iota_p$.
\item If $\pi$ is non-CM and Galois-conjugate to a twist of $\pi\circ\tau$, then then the conditions in
(\ref{item:cor_IQind_residual}) hold for all but finitely many $p$ split in $K$. 
\item If $\pi$ has CM by a totally imaginary quadratic extension $F/K$, then the conditions in (\ref{item:cor_IQind_residual}) hold for all but finitely many $p$ split completely in $F$.
\end{itemize}
\end{cor}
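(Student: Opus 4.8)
The plan is to deduce this from Theorem~\ref{thm:rk_zero_main} by automorphic induction and Siegel descent, following closely the proof of Corollary~\ref{cor:IQ_ind_rk0}. Let $\sigma$ be a generator of $\Gal(K/\Q)$. Since $\pi$ has discrete series archimedean components of \emph{distinct} weights $2$ and $4$, we have $\pi\not\cong\pi^\sigma$, so the automorphic induction $\Pi=\Ind_{K/\Q}\pi$ of \cite{arthur1989basechange} is a cuspidal automorphic representation of $\GL_4(\A)$. As $\pi$ has trivial central character, $\Pi$ is self-dual of symplectic type, so by \cite[Theorem C]{ramakrishnan2007siegeldescent} it descends to an automorphic representation $\widetilde\pi$ of $\GSP_4(\A)$ with trivial central character and $\BC(\widetilde\pi)=\Pi$ in the sense of Lemma~\ref{lem:BC}. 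A direct computation of archimedean $L$-parameters, as in Lemma~\ref{lem:RQ_induction_relevant}, shows $\widetilde\pi$ is relevant; and $\widetilde\pi$ is non-endoscopic because $\rho_{\widetilde\pi,\iota_p}=\Ind_{G_K}^{G_\Q}\rho_{\pi,\iota_p}$ is irreducible (again using $\pi\not\cong\pi^\sigma$ together with Lemma~\ref{lem:reducible_endoscopic}). Since the spin $L$-function of $\widetilde\pi$ is the standard $L$-function of $\Pi$, we have $L(\widetilde\pi,\operatorname{spin},1/2)=L(\pi,1/2)$; and by Shapiro's lemma $H^1_f(\Q,\rho_{\widetilde\pi,\iota_p})=H^1_f(K,\rho_{\pi,\iota_p})$. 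So the first assertion will follow once the hypotheses of Theorem~\ref{thm:rk_zero_main} are verified for $\widetilde\pi$ and the prime $\p$ of the coefficient field induced by $\iota_p$.

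Conditions (1) and (3) of the corollary give directly Assumption~\ref{assumptions_on_p} (unramifiedness of $\widetilde\pi_p$, absolute irreducibility and genericity of $\overline\rho_{\widetilde\pi,\iota_p}=\overline\rho$) and the existence of an admissible prime (condition (3)(b) matches Definition~\ref{def:admissible}). The remaining input is a prime $\l$ with $\widetilde\pi_\l$ transferrable (Definition~\ref{def:transferrable}); I would take $\l$ to be the rational prime below the place $v$ of condition (2). As in Corollary~\ref{cor:IQ_ind_rk0}, I would read off the Roberts--Schmidt type of $\widetilde\pi_\l$ from its local Langlands parameter --- namely $\Ind_{K_v/\Q_\l}\varphi_{\pi_v}$ when $v$ is inert or ramified, and $\varphi_{\pi_v}\oplus\varphi_{\pi_{\overline v}}$ when $v$ splits --- and compare with the list of non-transferrable types in Definition~\ref{def:transferrable}. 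The hypothesis that $\pi_v$ is discrete series (so $\varphi_{\pi_v}$ is either $\operatorname{Sp}(2)\otimes\chi_v$ with nontrivial monodromy, or supercuspidal), together with $\pi_v\not\cong\pi_{\overline v}$ when $v$ splits, is exactly what rules out the excluded possibilities; when $v$ is inert or ramified, the indecomposability argument of Corollary~\ref{cor:IQ_ind_rk0}, with the role of indecomposability played by $\varphi_{\pi_v}$, applies essentially verbatim. This yields $H^1_f(K,\rho_{\pi,\iota_p})=0$.

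For the ``moreover'' statements I would combine the large-image results of Appendix~\ref{sec:large_image} with Lemma~\ref{lem:easy_assumptions_cofinite}. The absolute irreducibility and genericity of $\overline\rho=\Ind_{G_K}^{G_\Q}\overline\rho_{\pi,\iota_p}$ for cofinitely many $p$ reduce to the assertion that the image of $\rho_{\widetilde\pi,\iota_p}$ is large; this is where the three cases --- $\pi$ non-CM and not Galois-conjugate to a twist of $\pi\circ\tau$, $\pi$ non-CM and Galois-conjugate to such a twist, and $\pi$ with CM by $F/K$ --- and the corresponding restrictions on the splitting of $p$ enter, through the description of the image of an induced representation. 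The existence of an admissible prime $q$ then requires, in each case, a Chebotarev argument producing a prime $q$ (split in $K$ when that is assumed) at which $\overline\rho(\Frob_q)$ has eigenvalues $\set{q,1,\alpha,q/\alpha}$ with $\alpha$ avoiding the prohibited set; this amounts to finding $\Frob_q$ in a suitable conjugacy class of the explicitly known image, which is the real-quadratic analogue of Lemma~\ref{lem:image_IQ}.

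The main obstacle is the last paragraph: pinning down the image of $\Ind_{G_K}^{G_\Q}\rho_{\pi,\iota_p}$ precisely enough, especially in the CM and ``extra symmetry'' cases, to run the Chebotarev argument for admissibility. A secondary technical point is confirming that $\Pi=\Ind_{K/\Q}\pi$ really is of symplectic (not orthogonal) type, so that the Siegel descent of \cite{ramakrishnan2007siegeldescent} applies; this follows from $\pi$ having trivial central character together with $K$ being totally real, but should be stated carefully. The local transferrability case analysis, while somewhat intricate, I expect to be routine given the Roberts--Schmidt tables and the argument already carried out in Corollary~\ref{cor:IQ_ind_rk0}.
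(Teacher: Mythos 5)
Your proposal follows essentially the same route as the paper: automorphic induction of $\pi$ to $\GL_4(\A)$, Ramakrishnan's Siegel descent, verifying transferrability of $\widetilde\pi_\l$ by showing the local Weil--Deligne parameter does not factor through a Siegel parabolic (using $\pi_v\not\cong\pi_{\overline v}$ in the split case and the isotropic-plane argument from Corollary~\ref{cor:IQ_ind_rk0} in the inert/ramified case), and large-image lemmas for the "moreover" clause. The large-image input you flag as the main obstacle is supplied by the paper's Lemma~\ref{lem:image_RQ} (real-quadratic non-CM induction, with the exceptional/non-exceptional dichotomy governing whether $p$ must split in $K$) and Lemma~\ref{lem:image_quartic} (CM, where $\rho_{\widetilde\pi,\iota_p}$ is induced from a Hecke character of the quartic field $F$), rather than Lemma~\ref{lem:image_IQ}, which is the imaginary-quadratic analogue.
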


\begin{proof}
   By the same argument as for Corollary \ref{cor:IQ_ind_rk0}, there exists an automorphic representation $\widetilde{\pi}$ of $\GSP_4(\A)$ such that the base change of $\widetilde{\pi}$ to $\GL_4(\A)$ is the automorphic induction of $\pi$. Again it suffices to show that $\widetilde\pi_\l$ is transferrable, where $\l$ is the rational prime underlying $v$ from (\ref{item:cor_IQ_indecomp}); in this case, the ``moreover'' assertions follow from Lemmas \ref{lem:image_RQ} and \ref{lem:image_quartic}, combined with Lemma \ref{lem:easy_assumptions_cofinite}.

   Now, let $W$ be the 
   underlying symplectic space of the
   associated Weil-Deligne representation
   $$\tau_\l: W_{\Q_\l} \times \SL(2, \C) \to \GSP_4(\C)$$ to $\rho_{\widetilde{\pi}, \iota_p}|_{G_{\Q_\l}}$. As in the proof of Corollary \ref{cor:IQ_ind_rk0}, we wish to show that $\tau_\l$ does not factor through a Siegel parabolic subgroup, so  suppose for contradiction that $W$ contains a $W_{\Q_\l}\times \SL(2,\C)$-stable isotropic plane $I$. 
   We have a $W_{K_v}\times \SL(2, \C)$-stable decomposition $$W = W_1 \oplus W_2,$$ where $W_{K_v}\times \SL(2, \C)$ acts on $W_1$ through the Weil-Deligne representation corresponding to $\rho_{\pi_v,\iota_p}|_{\Q_\l}$ -- in particular, irreducibly because $\pi_v$ is discrete series. The symplectic form is nondegenerate on each of $W_1$ and $W_2$, so  we conclude that  $$I = \set{w + \l(w) \,:\, w\in W_1}$$ for some linear isomorphism $\l: W_1 \isomorphism W_2$. This $\l$ is necessarily $W_{K_v}\times \SL(2,\C)$-intertwining, so the assumption in (\ref{item:cor_IQ_indecomp}) means $v$ is an inert prime. Hence $W = W_1\otimes V$ where $V =\Ind_{W_{K_v}}^{W_{G_{\Q_\l}}} \C$, and the same argument as in Corollary \ref{cor:IQ_ind_rk0} shows again that no such $I$ can exist.
\end{proof}
\section{The second explicit reciprocity law: geometric inputs}\label{sec:2ERL_geometry}
\subsection{Setup and notation}
\subsubsection{}\label{subsubsec:where_T_assumptions}
Let $D\geq 1$ be squarefree with $\sigma(D)$ even, and recall the quadratic space $V_D$ from (\ref{subsubsec:B_D_notation}). For this section, we suppose fixed a matrix $T\in \symmetric^2(\Q)_{>0}$ satisfying:
\begin{enumerate}[label = (T\arabic*)]
    \item\label{ass:T1} $T_{11}\in \Q^\times\setminus (\Q^\times)^2$.
    \item \label{ass:T2}The two-dimensional quadratic space defined by $T$ has nontrivial local Hasse invariant for some prime $\l\nmid D$.
\end{enumerate}

\subsubsection{}
Choose a base point $(e_1^T,e_2^T)\in \Omega_{T, V_D}(\Q)$ (Construction  \ref{constr:Z_T_phi}(\ref{constr:Z_T_phi_omega})), and let \begin{equation}
    V_D^\circ \coloneqq (e_1^T)^\perp \subset V_D, \;\;  V_T \coloneqq \operatorname{span}_\Q\set{e_1^T,e_2^T}\subset V_D,\;\;V_D^\diamondsuit \coloneqq V_T^\perp \subset V_D^\circ.
\end{equation}
Then $V_D^\circ$ is a four-dimensional quadratic space with discriminant field $F\coloneqq \Q(\sqrt{T_{11}})$.

\subsubsection{}\label{subsubsec:g0_setup_2ERL} Let $K = \prod K_v \subset \spin(V_D)(\A_f)$ be a neat compact open subgroup, 
 and fix, throughout this section, an element $g_0 = \prod_v g_{0,v} \in\spin(V_D)(\A_f)$. 
 We write 
$$\begin{cases}
    K_v^\diamondsuit = g_{0,v}K_vg_{0,v}^{-1} \cap \spin (V^\diamondsuit_D)(\Q_v) \\
    K_v^\circ = g_{0,v}K_vg_{0,v}^{-1} \cap \spin (V^\circ_D)(\Q_v)
\end{cases}$$
for all finite places $v$ of $\Q$, and $K^{?} \coloneqq \prod K_v^?$ for $? = \diamondsuit, \circ$. The special cycle $Z(g_0, V_T, V_D)_K$ factors as:
\begin{equation}\label{factoring special cycle for 2nd ERL}
 \Sh_{K^\diamondsuit} (V_D^\diamondsuit)  \to \Sh_{K^\circ}(V_D^\circ) \xrightarrow{\cdot g_0}\Sh_K(V_D).
\end{equation}

\subsection{Integral models at good primes}
Fix a prime $q\nmid D$ satisfying the following:
\begin{assumption}\label{assume:q_2ERL}
\leavevmode
\begin{enumerate}
\item $ T$ lies in $\GL_2(\Z_{(q)})\subset M_2(\Q)$.
    \item $T_{11}$ lies in $(\Z_q^\times \setminus (\Z_q^\times)^2 )\cap \Q.$
    \item $K_q$ is hyperspecial and  $g_q\in \spin(V_D)(\Q_q)$ lies in $K_q$.
\end{enumerate}
\end{assumption}

\begin{notation}\label{notation:algebras_involutions_2ERL}
\leavevmode
\begin{enumerate}
\item 
Let $O_D\subset B_D$ be a maximal $\Z_{(q)}$-order. 
\item 
The lattice $\Span_{\Z_{(q)}}\set{e_1^T,e_2^T}$ defines a maximal $\Z_{(q)}$-order $\O_T \subset C(V_T)$, with the natural positive nebentype involution.
\item 
Let $\O_F\subset \O_T$ be the subalgebra generated by $e_1^T$, which is the unique maximal 
$\Z_{(q)}$-order in $F$. 
\item Fix an arbitrary positive involution $\ast$ of $O_D$ (necessarily nebentype). The Clifford involution $\ast$ is positive and nebentype on $\O_T$, and stabilizes $\O_F$. 
\end{enumerate}
\end{notation}
\subsubsection{}
Under Notation \ref{notation:algebras_involutions_2ERL}, we have the chain of embeddings of $\Z_{(q)}$-algebras  with positive involutions:
\begin{equation}\label{embeddings of maximal orders in algebras for 2nd ERL}
O_D\hookrightarrow O_D\otimes \O_F \hookrightarrow O_D\otimes \O_T.
\end{equation}
We now use  (\ref{embeddings of maximal orders in algebras for 2nd ERL}) to describe $q$-integral models for the cycles (\ref{factoring special cycle for 2nd ERL}). 
\begin{construction}
    \label{constr:integral models 2nd ERL}
Using Corollary \ref{cor:exists_basepoint}, we fix a four-dimensional abelian scheme $A_0$ over $\breve \Z_q$ of supersingular reduction, equipped with:
\begin{enumerate}
    \item An embedding $\iota_0^\diamondsuit: O_D\otimes_{\Z_{(q)}} \O_T\hookrightarrow \End(A_0) \otimes_{\Z} \Z_{(q)}.$
    \item A prime-to-$q$ quasi-polarization $\lambda_0: A_0 \to A_0^\vee$ such that 
    $$\iota_0^\diamondsuit(\alpha)^\vee \circ \lambda_0 = \lambda_0 \circ\iota_0^\diamondsuit(\alpha^\ast),\;\; \forall \alpha\in O_D\otimes \O_T.$$
\end{enumerate}
By restricting along (\ref{embeddings of maximal orders in algebras for 2nd ERL}), we also have $\iota_0: O_D\hookrightarrow\End(A_0)\otimes \Z_{(q)}$ and $\iota_0^\circ: O_D\otimes_{\Z_{(q)}} \O_F \hookrightarrow\End(A_0)\otimes \Z_{(q)}$. 
Note that $(A_0,\iota_0, \lambda_0)$ is an $(O_D, \ast)$-triple in the sense of Definition 
\ref{def:O_Dtriple}.  Using Remark \ref{rmk:unif_datum}(\ref{rmk:unif_datum_one}), we extend this to a $q$-adic uniformization datum $(A_0, \iota_0, \lambda_0, i_D, i_{Dq})$ for $(O_D, \ast)$. 
Consider the three PEL data with self-dual $q$-integral refinements:
\begin{align*}
    \mathcal D^\diamondsuit &= (B_D\otimes_{\Q} C(V_T), \ast, H, \psi),\;\; &\mathscr D^\diamondsuit = (O_D\otimes_{\Z_{(q)}} \O_T, \ast, \Lambda, \psi) \\
      \mathcal D^\circ &= 
      (B_D\otimes_{\Q} F, \ast, H, \psi),\;\; &\mathscr D^\circ =
      (O_D\otimes_{\Z_{(q)}} \O_F, \ast, \Lambda, \psi) \\
        \mathcal D &= (B_D, \ast, H, \psi), \;\; &\mathscr D =
        (O_D, \ast, \Lambda, \psi) 
\end{align*}
arising from $A_0$. Also write 
 $K^{q?} \coloneqq \prod_{v\neq q} K_v^?$ for $? = \diamondsuit, \circ$.

For $? = \diamondsuit, \circ$, or $\emptyset$,
let $ X^?$ be the smooth quasiprojective scheme  over $\Z_{(q)}$ representing
the moduli functor $\mathcal M^?_{K^{q?}}$ associated to $\mathscr D^?$ at level $K^{q?}$. 
\end{construction}

\begin{lemma}\label{lem:proper}
    The scheme $ X^\diamondsuit$ is proper over $\Spec \Z_{(q)}$.
\end{lemma}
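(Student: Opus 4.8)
The plan is to reduce the statement to the anisotropy of the ternary quadratic space $V_D^\diamondsuit$ over $\Q$, and then deduce properness of the integral model $X^\diamondsuit$ from the compact‑type property of its generic fiber together with the potential good reduction of the abelian varieties it parametrizes. Throughout we use that $X^\diamondsuit$ is already known, under Assumption \ref{assume:q_2ERL}, to be smooth, separated and quasi‑projective over $\Z_{(q)}$, so only the valuative criterion remains.

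\emph{Step 1: $V_D^\diamondsuit$ is anisotropic over $\Q$.} By condition \ref{ass:T2} there is a prime $\l\nmid D$ at which the two‑dimensional space $V_T$ (with Gram matrix $T$) has nontrivial local Hasse invariant; in particular $V_T\otimes\Q_\l$ is anisotropic, and \ref{ass:T1} ensures $F=\Q(\sqrt{T_{11}})$ is a genuine quadratic field so that $V_T$ is nondegenerate and $V_D^\diamondsuit=V_T^\perp\subset V_D$ has dimension exactly $3$. Since $\l\nmid D$, the five‑dimensional space $V_D\otimes\Q_\l$ has trivial discriminant and trivial Hasse invariant (its Hasse invariant is that of $B_D$, which is split at $\l$), hence is the split quadratic space. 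Applying multiplicativity of the discriminant and of the Hasse invariant — with the usual Hilbert‑symbol correction term — to the orthogonal decomposition $V_D\otimes\Q_\l=(V_T\otimes\Q_\l)\perp(V_D^\diamondsuit\otimes\Q_\l)$, a direct computation pins down the discriminant and Hasse invariant of $V_D^\diamondsuit\otimes\Q_\l$ and shows, via the classification of quadratic forms over $\Q_\l$, that $V_D^\diamondsuit\otimes\Q_\l$ is the anisotropic ternary form. Therefore $V_D^\diamondsuit$ is anisotropic over $\Q$, and $\spin(V_D^\diamondsuit)$ is anisotropic modulo its center.

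\emph{Step 2: properness of $X^\diamondsuit$.} By the construction of $\mathscr D^\diamondsuit$ in Construction \ref{constr:integral models 2nd ERL} and the analogue of Remark \ref{rmk:unif_datum}(\ref{rmk:unif_datum_two}), the group $G_{\mathcal D^\diamondsuit}$ is identified with $\spin(V_D^\diamondsuit)$, so the generic fiber of $X^\diamondsuit$ is $\Sh_{K^{q\diamondsuit}K_q^\diamondsuit}(V_D^\diamondsuit)$, which is proper over $\Q$ by Step 1. To promote this to properness over $\Z_{(q)}$ we check the valuative criterion: given a discrete valuation ring $R$ over $\Z_{(q)}$ with fraction field $\operatorname{Frac}(R)$ and a point $(A,\iota^\diamondsuit,\lambda,\eta^q)\in\mathcal M^\diamondsuit_{K^{q\diamondsuit}}(\operatorname{Frac}(R))$, the algebra $\End^0(A)$ contains $B_D\otimes_\Q C(V_T)$, a biquaternion algebra which is Brauer‑equivalent to a quaternion division algebra over $\Q$ precisely because $V_D^\diamondsuit$ is anisotropic; hence $A$ has no nontrivial toric part and, by the semistable reduction theorem, has potentially good reduction. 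After a finite extension of $R$ (harmless for the valuative criterion of a finite‑type morphism) $A$ extends to an abelian scheme, and its $O_D\otimes\O_T$‑action, quasi‑polarization and prime‑to‑$q$ level structure extend by the Néron mapping property and the neatness of $K^{q\diamondsuit}$, giving the required extension of the point. Equivalently, one may invoke the standard fact that the PEL integral model of a Shimura variety of compact type at a prime of good reduction is proper (cf. Kottwitz, Lan).

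The essential content is the anisotropy statement of Step 1, which rests on the Hilbert‑symbol computation forced by \ref{ass:T1}–\ref{ass:T2} and the splitness of $V_D$ at $\l\nmid D$; Step 2 is routine once anisotropy is known.
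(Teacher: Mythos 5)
Your proof is correct, and the core idea agrees with the paper's: condition \ref{ass:T2} forces the biquaternion algebra $B_D\otimes_\Q C(V_T)$ to be nonsplit, hence Brauer-equivalent to a division quaternion algebra $B_d$ (with $d>1$ and $\sigma(d)$ even), and this is what makes $X^\diamondsuit$ proper. The presentations differ slightly. The paper argues on the level of the moduli problem: using Corollary \ref{cor:changing_polarizations_PEL} and the uniqueness of positive involutions, it identifies $X^\diamondsuit$ outright with the canonical smooth integral model of the Shimura curve for $B_d$ at level $K^\diamondsuit$, and then appeals to the classical properness of Shimura curve integral models. You instead reformulate \ref{ass:T2} as anisotropy of $V_D^\diamondsuit$ at $\l$ (via the Clifford algebra, this is equivalent to nonsplitness of $B_D\otimes C(V_T)$) and then re-derive the properness by hand, checking the valuative criterion through the endomorphism constraint $M_2(B_d)\hookrightarrow\End^0(A)$, potential good reduction, and the Néron mapping property. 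Both routes are sound; the paper's is more economical because it offloads the valuative criterion to the well-documented Shimura curve case, whereas yours essentially reproves that fact. One small remark: invoking anisotropy of $V_D^\diamondsuit$ and properness of the generic fiber is not by itself enough — properness of the generic fiber does not imply properness of the integral model — so the real content of your Step 2 is the potential-good-reduction argument, which you do carry out; the ``equivalently'' citation to the standard properness of compact-type PEL integral models at good primes is the cleanest way to close the loop and matches the spirit of the paper's appeal to known results.
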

\begin{proof}
    The biquaternion algebra $B_D \otimes C(V_T)$ is nonsplit by \ref{ass:T2}. If $B_D \otimes C(V_T) = M_2(B_d)$ for some squarefree $d > 1$, then $\sigma(d)$ is necessarily even. By  Corollary \ref{cor:changing_polarizations_PEL} combined with Propositions \ref{prop:positive involutions} and \ref{prop:conj_OF},  $ X^\diamondsuit$ can be identified with the canonical (smooth) integral model of the Shimura curve attached to $B_d$ at level $K^{\diamondsuit}$. The latter is well-known to be proper. 
\end{proof}
\subsubsection{}

We have natural finite maps
\begin{equation}\label{integral models factoring special cycle for 2nd ERL}
     X^\diamondsuit\to  X^\circ \to  X,
\end{equation}
defined on the level of moduli problems by 
\begin{align*}
(A, \iota^\diamondsuit, \lambda, \eta) &\mapsto (A, \iota^\diamondsuit|_{O_D\otimes_{\Z_{(q)}}\O_F}, \lambda, \eta)\\
(A, \iota^\circ, \lambda, \eta) &\mapsto (A, \iota^\circ|_{O_D}, \lambda, g_0^q\cdot\eta).
\end{align*}
Let $X^?_\Q$ denote the generic fiber of $ X^?$, for $? = \diamondsuit,\circ$, or $\emptyset$, and let $X^?_{\F_q}$ denote the special fiber.
\begin{prop}\label{moduli interpretation recovers factoring of special cycle on generic fiber}
    There are isomorphisms
    $$X^?_\Q \cong \Sh_{K^?} (V_D^?)$$
    for $? = \diamondsuit,\circ$, or $\emptyset$, such that the generic fiber of (\ref{integral models factoring special cycle for 2nd ERL}) recovers (\ref{factoring special cycle for 2nd ERL}).
\end{prop}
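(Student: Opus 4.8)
The plan is to verify the isomorphisms $X^?_\Q \cong \Sh_{K^?}(V^?_D)$ for $? = \diamondsuit, \circ, \emptyset$ by identifying each moduli problem $\mathcal M^?_{K^{q?}}$ (restricted to the generic fiber) with the PEL-type moduli problem defining the relevant $\spin$ Shimura variety. The case $? = \emptyset$ is essentially the identification of $\Sh_K(V_D)$ with a PEL Shimura variety already used implicitly throughout Section \ref{sec:prelim_ARs} and made precise via the $q$-adic uniformization datum: the abelian scheme $A_0$ and its endomorphism structure determine, via Remark \ref{rmk:unif_datum}(\ref{rmk:unif_datum_two}), an isomorphism $G_{\mathcal D} \isomorphism \spin(V_D)$, hence an isomorphism of the generic fiber $X_\Q = M_{K^qK_q}$ with $\Sh_{K^qK_q}(V_D) = \Sh_K(V_D)$ (where $K_q$ is hyperspecial). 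For $? = \diamondsuit$ and $\circ$, the point is that $B_D \otimes C(V_T)$ and $B_D \otimes F$, equipped with the restricted positive involutions, are exactly the algebras $\End(H, B_D^\diamondsuit)$ and $\End(H, B_D^\circ)$ computing $\spin(V^\diamondsuit_D)$ and $\spin(V^\circ_D)$ — this follows the same pattern as Remark \ref{rmk:unif_datum}(\ref{rmk:unif_datum_two}) but now starting from the richer endomorphism structure $\iota_0^\diamondsuit$ on $A_0$. Concretely, I would observe that $\operatorname{End}(H, B_D \otimes C(V_T))^{\dagger = 1, \tr = 0}$ is naturally identified with $V_D^\diamondsuit$ (up to the ambiguity of the base point, which as in Construction \ref{constr:Z_T_phi} does not affect the resulting cycle), giving $G_{\mathcal D^\diamondsuit} \isomorphism \spin(V^\diamondsuit_D)$, and similarly for $\circ$; then the tame-level structures match because $K^{q\diamondsuit}, K^{q\circ}$ were defined in (\ref{subsubsec:g0_setup_2ERL}) and Construction \ref{constr:integral models 2nd ERL}  precisely as the intersections $g_0 K g_0^{-1} \cap \spin(V^?_D)(\A_f^q)$.

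The second half of the statement — that the generic fiber of (\ref{integral models factoring special cycle for 2nd ERL}) recovers the factorization (\ref{factoring special cycle for 2nd ERL}) — I would verify by chasing the moduli interpretations of the two maps. The map $X^\diamondsuit \to X^\circ$ is ``forget the $\O_T$-action, keep the $\O_F$-action'', and under the identifications above this is exactly the finite morphism $\Sh_{K^\diamondsuit}(V^\diamondsuit_D) \to \Sh_{K^\circ}(V^\circ_D)$ induced by the inclusion $\spin(V^\diamondsuit_D) \hookrightarrow \spin(V^\circ_D)$ coming from $V^\diamondsuit_D = V_T^\perp \subset V^\circ_D$ (cf. (\ref{eq:finite_morphism_g}) in Construction \ref{constr:Z_T_phi}). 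Similarly, $X^\circ \to X$ is ``forget the $\O_F$-action and translate the level structure by $g_0^q$'', which matches $\Sh_{K^\circ}(V^\circ_D) \xrightarrow{\cdot g_0} \Sh_K(V_D)$; here one needs to check that the translation by $g_0$ on the Shimura variety side corresponds to the combination of the adelic translation $\eta \mapsto g_0^q \cdot \eta$ away from $q$ with the triviality of $g_{0,q} \in K_q$ at $q$ (Assumption \ref{assume:q_2ERL}(3)), so that no $q$-component of $g_0$ intervenes. The composite then recovers $Z(g_0, V_T, V_D)_K$ by the recipe in (\ref{subsubsec:basic_sp_cyc})(1), using that $V_D^\diamondsuit = V_T^\perp$ and that a choice of $\iota_0^\diamondsuit$ on $A_0$ corresponds to a choice of base point $(e_1^T, e_2^T) \in \Omega_{T,V_D}(\Q)$.

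For the proof I would cite \cite[Proposition 2.5]{kudla2000siegel} (already used in the proof following (\ref{eq:forgetful_1ERL})) for the generic-fiber comparison, since the argument there is precisely the moduli-theoretic identification of special cycles coming from embeddings of quadratic spaces with sub-Shimura varieties attached to sub-PEL-data; the present situation is the same with $C(V_T)$ and $F$ in place of the single quadratic space, so only bookkeeping changes. I would also remark that properness of $X^\diamondsuit$ (Lemma \ref{lem:proper}) is not needed here but will be used later, and that the integral structure of the maps in (\ref{integral models factoring special cycle for 2nd ERL}) — not just their generic fibers — will be exploited in the subsequent intersection-theoretic computations on $X^?_{\F_q}$; the present proposition only asserts the comparison on the generic fiber.

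The main obstacle I anticipate is purely notational rather than conceptual: keeping track of the three nested quadratic spaces $V^\diamondsuit_D \subset V^\circ_D \subset V_D$ together with their Clifford algebras, involutions, and the compatible system of level structures, and checking that the identification $\operatorname{End}(H, B_D \otimes C(V_T))^{\dagger=1,\tr=0} \isomorphism V^\diamondsuit_D$ is the ``same'' one (up to harmless base-point choices) that makes $K^{q\diamondsuit}$ land correctly. There is no serious geometric difficulty — everything reduces, place by place, to linear algebra over $\Q_v$ and the functoriality of the Clifford construction — but some care is required to ensure the isomorphisms on $X^\diamondsuit, X^\circ, X$ are chosen compatibly so that the square/triangle of finite morphisms genuinely commutes and recovers (\ref{factoring special cycle for 2nd ERL}) on the nose.
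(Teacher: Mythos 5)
Your proposal is correct and takes essentially the same approach as the paper: the paper's proof consists of a single sentence citing \cite[\S2]{kudla2000siegel} and remarking that the isomorphisms depend on the choice of $q$-adic uniformization datum from Construction \ref{constr:integral models 2nd ERL}. You have spelled out the moduli-theoretic bookkeeping that the paper leaves implicit (identifying $\operatorname{End}(H, B_D\otimes C(V_T))^{\dagger=1,\tr=0}$ with $V_D^\diamondsuit$ via the uniformization datum, matching the tame level subgroups, and tracking the two forgetful maps), but the underlying argument and source are the same.
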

\begin{proof}
This follows from the discussion in \cite[\S2]{kudla2000siegel}; note the isomorphisms depend on our choice of $q$-adic uniformization datum in Construction \ref{constr:integral models 2nd ERL}.
\end{proof}

\subsubsection{}
We let $O$ be the ring of integers of a finite extension of $\Q_p$, where $q\neq p$, and let $\varpi \in O$ be a uniformizer. 
\begin{lemma}\label{base change for 2nd ERL}
    For all $i$ and for $? = \diamondsuit, \circ$, or $\emptyset$, there are  canonical $G_{\Q_q}$-equivariant isomorphisms \begin{equation}
        \tag{$\operatorname{BC}_{X^?}$}
    \begin{split}
        H^i_{\et}(\Sh_{K^?}(V_D^?)_{\overline\Q}, O) &\cong H^i_{\et} (X^?_{\overline\F_q}, O) \\
         H^i_{\et,c}(\Sh_{K^?}(V_D^?)_{\overline\Q}, O) &\cong H^i_{\et,c} (X^?_{\overline\F_q}, O).
         \end{split}
    \end{equation}
    These isomorphisms commute with the actions of prime-to-$q$ Hecke correspondences and with the pullback and pushforward maps induced by (\ref{integral models factoring special cycle for 2nd ERL}).
\end{lemma}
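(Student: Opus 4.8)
The statement is a standard ``smooth and proper base change, compatibly with correspondences'' assertion, and the natural route is to invoke the main theorem of \cite{lan2018nearby} together with properness of $X^\diamondsuit$. First I would treat the three cases separately according to the geometry of $X^?$. For $?=\diamondsuit$, Lemma \ref{lem:proper} says $X^\diamondsuit$ is proper (and smooth) over $\Z_{(q)}$, so the proper base change theorem (in the form for PEL-type integral models, which also handles the $\ell$-adic coefficients via \cite[Theorem 6.8]{lan2018nearby}) gives the $G_{\Q_q}$-equivariant isomorphism $H^i_\et(X^\diamondsuit_{\overline\Q}, O) \cong H^i_\et(X^\diamondsuit_{\overline\F_q}, O)$, and since $X^\diamondsuit$ is proper the compactly-supported and ordinary cohomologies coincide. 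For $?=\circ$ or $?=\emptyset$ the scheme is only quasi-projective, and one cannot use proper base change directly; instead one uses that the vanishing-cycles base-change map for a PEL-type integral model at a prime of good (hyperspecial) reduction is an isomorphism. Concretely, $X^\circ$ and $X$ are smooth over $\Z_{(q)}$ (no singularities, unlike the ramified case of \S\ref{sec:RZ}), so the nearby cycles complex $R\Psi_{X^?} O$ is just the constant sheaf $O$ on the special fiber placed in degree zero, and then \cite[Theorem 6.8]{lan2018nearby} --- exactly as invoked in Lemma \ref{nearby cycles isomorphism} --- yields both base-change isomorphisms (compactly supported and ordinary) at once.

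The second point is the compatibility with Hecke correspondences and with the pushforward/pullback maps along (\ref{integral models factoring special cycle for 2nd ERL}). For the prime-to-$q$ Hecke correspondences this is automatic: such a correspondence is represented by an actual finite étale correspondence of the integral models $X^?$ over $\Z_{(q)}$ (coming from the double-coset description away from $q$), and the base change isomorphisms of \cite{lan2018nearby} are functorial in the underlying scheme, so they intertwine the action on the generic and special fibers. For the maps induced by (\ref{integral models factoring special cycle for 2nd ERL}): the morphisms $X^\diamondsuit \to X^\circ \to X$ are finite morphisms of $\Z_{(q)}$-schemes, so they induce maps on $R\Psi$ (and on $R\Psi$ with compact supports), and the base-change isomorphisms are compatible with proper (in particular finite) pushforward by \cite[\S2.1.7]{deligne2006groupes} --- this is the same mechanism used in \S\ref{AJ section} to identify $f_\ast R\Psi = R\Psi_X$ --- and with pullback by the naturality of the unit map $O \to \overline i^\ast \overline j_\ast \overline j^\ast O$. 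One then has to check that under the identifications $X^?_\Q \cong \Sh_{K^?}(V_D^?)$ of Proposition \ref{moduli interpretation recovers factoring of special cycle on generic fiber}, the map on generic fibers coincides with the composite (\ref{factoring special cycle for 2nd ERL}); but this is precisely the content of that proposition, so nothing new is needed.

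I do not expect a genuine obstacle here: every ingredient (properness of $X^\diamondsuit$, smoothness of $X^\circ$ and $X$, the nearby-cycles base change theorem, and functoriality of $R\Psi$) is either already proved in the excerpt or is a citation. The one place requiring a little care is the compactly-supported statement for the non-proper cases $?=\circ,\emptyset$: one must make sure that \cite[Theorem 6.8]{lan2018nearby} indeed covers $H^i_c$ and not only $H^i$, which it does, and that the two base-change isomorphisms (ordinary and compact) are interchanged by Poincaré--Lefschetz duality so that they are mutually compatible with the finite pushforward/pullback pair --- this is a formal diagram chase using that pushforward along a finite map is simultaneously $Rf_\ast$ and $Rf_!$. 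So the proof is essentially an assembly of cited results, and the writeup will be short: invoke Lemma \ref{lem:proper} and proper base change for $\diamondsuit$; invoke \cite[Theorem 6.8]{lan2018nearby} (as in Lemma \ref{nearby cycles isomorphism}) for $\circ$ and $\emptyset$; and record the functoriality statements for correspondences and for finite morphisms.
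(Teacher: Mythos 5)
Your proof is correct and follows essentially the same route as the paper: in all three cases the key point is that $X^?$ is smooth over $\Z_{(q)}$, so $O\to R\Psi_{X^?}O$ is an isomorphism, and then the Lan--Stroh nearby-cycles base-change theorem gives both the ordinary and compactly-supported isomorphisms. The paper treats all three cases uniformly this way (citing \cite[Corollary 5.20]{lan2018nearby} rather than Theorem 6.8), whereas you split off $?=\diamondsuit$ and handle it by proper-and-smooth base change; this is fine and equivalent, just slightly less uniform than the paper's one-line argument, which does not bother to single out the proper case. Your discussion of Hecke equivariance and compatibility with the pushforward/pullback along the finite morphisms is also correct and spells out functoriality details the paper leaves implicit.
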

\begin{proof}
Let $R\Psi_{X^?}O$ denote the nearby cycles complex on $X^?_{\overline \F_q}$. 
    Since $ X^?$ is smooth over $\Z_{(q)}$, the natural map $O \to R\Psi_{X^?}O$ is an isomorphism. On the other hand, by \cite[Corollary 5.20]{lan2018nearby}, the base change map 
    $$R\Gamma_\et (X^?_{\overline \Q}, O) \to R\Gamma_\et(X^?_{\overline \F_q}, R\Psi_{X^?}O)$$
    is also an isomorphism, and the lemma follows.
\end{proof}
\subsection{Unramified Rapoport-Zink spaces}
\begin{notation}\label{notation:V_Dq_etc}
\leavevmode
\begin{enumerate}\item \label{item:V_Dq_etc}Recall from Remark \ref{rmk:unif_datum}(\ref{rmk:unif_datum_two}) that our choice of $q$-adic uniformization datum in Construction \ref{constr:integral models 2nd ERL} entails a choice of isomorphism
$$\End^0(\overline A_0, \overline \iota_0)^{\dagger = 1, \tr = 0} \isomorphism V_{Dq},$$ hence an inclusion $V_T \hookrightarrow V_{Dq}$. Let $$V_{Dq}^\diamondsuit
\coloneqq V_T^\perp \subset V_{Dq}$$ and $$V_{Dq}^\circ  = (e_1^T)^\perp \subset V_{Dq}.$$
\item For each $? = \diamondsuit, \circ,$ or $\emptyset$, let $\mathcal N^?$ denote the Rapoport-Zink space over $\Spf \breve \Z_q$ parametrizing framed polarized deformations $(X, \iota^?, \lambda, \rho)$ of $(\overline A_0[q^\infty], \overline \iota_0^?\otimes \Z_q, \lambda_0)$, where $\overline\iota_0^\diamondsuit \otimes \Z_q : O_D\otimes \O_T \otimes \Z_q \hookrightarrow \End^0 (\overline A_0[q^\infty])$ is the induced embedding, and likewise for $\overline\iota_0^\circ$, $\overline\iota_0$. We let $\mathcal M^?$ denote the underlying reduced scheme of $\mathcal N^?$. 
\item\label{notation:V_Dq_etc_phi} Let $  \phi: \mathcal N^? \isomorphism\sigma^\ast \mathcal N^?$ be the natural Weil descent datum as in (\ref{subsubsec:Weil_descent}).
\end{enumerate}
\end{notation}
\subsubsection{}
 We have natural closed embeddings 
\begin{equation}\label{eq:RZ_embeddings_2ERL}
    \mathcal N^\diamondsuit\hookrightarrow\mathcal N^\circ\hookrightarrow\mathcal N
\end{equation}
compatible with the actions of $$\spin(V_{Dq}^\diamondsuit)(\Q_q) \subset \spin(V_{Dq}^\circ)(\Q_q) \subset\spin(V_{Dq})(\Q_q).$$ 
From the Rapoport-Zink uniformization theorem, we deduce:
\begin{prop}\label{RZ uniformization for 2nd ERL}
    For each $? = \diamondsuit, \circ$, or $\emptyset$, let $X^{\ss?}_{\overline\F_q}$ denote the supersingular locus. Then there is a canonical isomorphism
    $$X^{\ss?}_{\overline\F_q} \cong \spin(V_{Dq}^?)(\Q)\backslash \spin(V_{Dq}^?) (\A^q_f)\times \mathcal M^? /K^{q?},$$
    compatible with prime-to-$q$ Hecke correspondences,  Frobenius action, and  the maps arising from (\ref{integral models factoring special cycle for 2nd ERL}), (\ref{eq:RZ_embeddings_2ERL}).
\end{prop}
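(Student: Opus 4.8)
The statement to prove is the Rapoport--Zink uniformization of the supersingular locus for each of the three integral models $X^?$ ($?=\diamondsuit,\circ,\emptyset$), together with the compatibilities with prime-to-$q$ Hecke correspondences, Frobenius, and the maps induced by \eqref{integral models factoring special cycle for 2nd ERL} and \eqref{eq:RZ_embeddings_2ERL}. The plan is to apply the general Rapoport--Zink uniformization theorem \cite{rapoport1996period} to each PEL datum $\mathscr D^?$ and then deduce the compatibilities from the functoriality of the construction. First I would recall that each $X^?$ represents the PEL-type moduli problem $\mathcal M^?_{K^{q?}}$ attached to the self-dual $q$-integral refinement $\mathscr D^?$, and that $X^{\ss?}_{\overline\F_q}$ is exactly the locus of the special fiber where the underlying abelian variety is isogenous to the supersingular $A_0$ (this uses that all three data have supersingular basic locus, since $\overline A_0$ has supersingular reduction, and the basic locus description is standard for these hyperspecial-at-$q$ PEL models).

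\textbf{Main steps.} The argument proceeds as follows. (1) Identify $X^{\ss?}_{\overline\F_q}$ with the basic (supersingular) locus and observe that every point of this locus admits a framing quasi-isogeny to $(\overline A_0[q^\infty], \overline\iota_0^?\otimes\Z_q, \lambda_0)$, since the $q$-divisible group of any point in the basic locus is isogenous to that of $A_0$ compatibly with the $O_D\otimes(\cdot)$-action and polarization; this is where the choice of $q$-adic uniformization datum in Construction \ref{constr:integral models 2nd ERL} is used to pin down the base point. (2) Invoke \cite[\S6]{rapoport1996period}: the formal completion of $X^?$ along $X^{\ss?}_{\overline\F_q}$ is canonically isomorphic to $I^?(\Q)\backslash(\mathcal N^?\times \spin(V_{Dq}^?)(\A_f^q)/K^{q?})$, where $I^?$ is the inner form of $G_{\mathcal D^?}$ that becomes $\spin(V_{Dq}^?)$ over $\A_f^q$ (by Remark \ref{rmk:unif_datum}(\ref{rmk:unif_datum_two})) and is anisotropic-modulo-center at $q$ and $\infty$; since the archimedean place contributes only via $I^?(\R)$ acting trivially, $I^?(\Q)$ is just $\spin(V_{Dq}^?)(\Q)$ as it sits inside $\spin(V_{Dq}^?)(\Q_q)$ acting on $\mathcal N^?$. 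Passing to underlying reduced schemes gives the stated isomorphism with $\mathcal M^?$ in place of $\mathcal N^?$. (3) For the Hecke-equivariance: the $\A_f^q$-factor in the uniformization is literally acted on by $\spin(V_{Dq}^?)(\A_f^q)\cong\spin(V_D^?)(\A_f^q)$ (again Remark \ref{rmk:unif_datum}(\ref{rmk:unif_datum_two})), and the prime-to-$q$ Hecke correspondences act on $X^?$ through this same group, so the compatibility is built into the construction. (4) Frobenius-equivariance follows from the fact that the Weil descent datum $\phi$ of Notation \ref{notation:V_Dq_etc}(\ref{notation:V_Dq_etc_phi}) on $\mathcal N^?$ corresponds under uniformization to the geometric Frobenius on $X^?_{\overline\F_q}$ (since $X^?$ is defined over $\Z_{(q)}$, this is part of the Rapoport--Zink package). (5) Finally, the compatibility with \eqref{integral models factoring special cycle for 2nd ERL} and \eqref{eq:RZ_embeddings_2ERL}: the forgetful maps $X^\diamondsuit\to X^\circ\to X$ are defined by restricting the level structure $\eta\mapsto g_0^q\cdot\eta$ and the $O_D\otimes(\cdot)$-action, which on the Rapoport--Zink side corresponds exactly to the closed embeddings $\mathcal N^\diamondsuit\hookrightarrow\mathcal N^\circ\hookrightarrow\mathcal N$ of \eqref{eq:RZ_embeddings_2ERL} on the local factor and to the inclusion $\spin(V_{Dq}^\diamondsuit)(\A_f^q)\hookrightarrow\spin(V_{Dq}^\circ)(\A_f^q)\hookrightarrow\spin(V_{Dq})(\A_f^q)$ twisted by $g_0^q$ on the adelic factor; so the square commutes by construction.

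\textbf{Expected main obstacle.} The routine but slightly delicate point is the bookkeeping in step (5): one must check that the identification of the adelic factor $\spin(V_{Dq}^?)(\A_f^q)$ with $\spin(V_D^?)(\A_f^q)$ is genuinely \emph{compatible across the three levels} — i.e. that the isomorphisms $V_{Dq}^?\otimes\A_f^q\cong V_D^?\otimes\A_f^q$ coming from Remark \ref{rmk:unif_datum}(\ref{rmk:unif_datum_two}) for the three different PEL data are induced by the same underlying isomorphism on $V_{Dq}\otimes\A_f^q\cong V_D\otimes\A_f^q$, restricted to the respective orthogonal complements $V_T^\perp$ or $(e_1^T)^\perp$. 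This follows because all three arise from the single action of $\End^0(\overline A_0,\overline\iota_0)$ on $H_{1,\et}(\overline A_0,\A_f^q)\cong H_1(A_0(\C),\A_f^q)$, but spelling it out requires carefully tracking how $V_T$ sits inside both $V_D$ and $V_{Dq}$ via the fixed base point $(e_1^T,e_2^T)$. Beyond that, the heart of the proof is simply citing \cite{rapoport1996period}, exactly as in the proof of Theorem \ref{rz uniformization}, so I anticipate no serious difficulty; the write-up will be short, deferring to ``the proof of Theorem \ref{rz uniformization}'' and to \cite[\S2]{kudla2000siegel} for the moduli-theoretic translation of the forgetful maps.
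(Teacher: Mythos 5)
Your proposal is correct and takes essentially the same route as the paper, which simply states the proposition as a direct consequence of the Rapoport--Zink uniformization theorem \cite{rapoport1996period} applied to the three PEL data $\mathscr D^?$ (the paper introduces it with the phrase ``From the Rapoport--Zink uniformization theorem, we deduce:'' and gives no further argument). Your extra bookkeeping in steps (3)--(5) spelling out the Hecke, Frobenius, and functoriality compatibilities is sound and is precisely what the paper is leaving implicit.
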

Here $K^{q?}$ is viewed as a subgroup of $\spin(V_{Dq}^?)(\A^q_f)$ by Remark \ref{rmk:unif_datum}(\ref{rmk:unif_datum_two}).
\begin{prop}\label{prop:KR_M_structure}
\leavevmode
\begin{enumerate}
\item Each irreducible component of $\mathcal M^\circ$ or $\mathcal M$ is isomorphic to $\mathbb P^1_{\overline\F_q}$; in particular, $\mathcal M^\circ$ is a union of irreducible components of $\mathcal M$. 
    \item \label{prop:KR_M_struct_full} The group $\spin(V_{Dq})(\Q_q)$ acts transitively on the set of irreducible components of $\mathcal M$, and the stabilizer of each component is a paramodular subgroup.
\item \label{prop:KR_M_struct_circ} There are two $\spin(V_{Dq}^\circ)(\Q_q)$-orbits of irreducible components of $\mathcal M$ interchanged by $\phi$, and the stabilizer of each component is a hyperspecial subgroup. 
\item   \label{prop:KR_M_struct_phi}  For any irreducible component $A \subset \mathcal M$, we have $\phi^2(A) = (\sigma^2)^\ast \left(\langle q\rangle \cdot A\right). $
\end{enumerate}
\end{prop}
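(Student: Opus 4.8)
\textbf{Proof proposal for Proposition \ref{prop:KR_M_structure}.}
The plan is to reduce the entire proposition to the structure theory of the ramified $\spin_5$ Rapoport--Zink space $\mathcal N$ from \S\ref{sec:RZ}, together with the group-theoretic dictionary of (\ref{spin action subsection}). More precisely, the $q$-adic uniformization datum of Construction \ref{constr:integral models 2nd ERL} identifies the relevant Dieudonn\'e modules with lattices in a symplectic space $W$ over $\Q_q$ with $V_{Dq} \cong \End(W)^{\dagger = 1, \tr = 0}$ (via the isomorphism $\spin(V_{Dq})(\Q_q) \cong \GSP(W)(\Q_q)$), so the situation is literally the one studied in \S\ref{sec:RZ}: the underlying reduced scheme $\mathcal M = \mathcal N_{\red}(0)$ has its irreducible components indexed by $\mathscr L \times \{\pm\}$ via Definition \ref{lattice strata on RZ space definition} and Proposition \ref{proposition decomposition of all of N red}(1), and by Theorem \ref{BT stratification first version}(1) each component $\mathcal M_\pm(\Lambda)$ is isomorphic to the smooth projective surface $\mathcal M(\Lambda_0)$ in $\mathbb P^3_{\overline\F_q}$. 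The subtlety is that \S\ref{sec:RZ} is set up for the \emph{five}-dimensional ramified case, whereas here $\mathcal N$ parametrizes deformations with the \emph{larger} action of $O_D\otimes \O_T\otimes\Z_q$ (a biquaternion order), and $\mathcal N^\circ$ has the intermediate action of $O_D\otimes\O_F\otimes\Z_q$. The key observation — exactly as in the proof of Lemma \ref{embedding contained in Siegel locus RZ space} — is that imposing the extra $\O_T\otimes\Z_q$-action forces the Dieudonn\'e module $M$ of any $\overline\F_q$-point of $\mathcal N$ to satisfy $M + \tau M = \tau M + \tau^2 M$, i.e.\ the point lies in the stratum $\mathcal M_{\{02\}}$ of Theorem \ref{BT stratification first version}(2). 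Thus $\mathcal M$ (as the reduced RZ space here, abusively reusing the symbol) embeds into the one-dimensional stratum $\mathcal M_{\{02\}}$ of the ambient five-dimensional space, whose components are the $\mathbb P^1_{\overline\F_q}$'s of Theorem \ref{BT stratification first version}(2). This is what makes the components of the present $\mathcal M$ one-dimensional rather than surfaces.

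So the first step is to run the argument of Lemma \ref{embedding contained in Siegel locus RZ space} in the present setting: decompose the Dieudonn\'e module under $M_2(\Z_q) \subset O_D\otimes\O_T\otimes\Z_q$ (or under the relevant idempotents) to reduce to a two-dimensional lattice chain problem, and invoke \cite[Proposition 2.17]{rapoport1996period} or the direct calculation to see that $M + \tau' M$ is $\tau'$-stable, placing each point in $\mathcal M_{\{02\}}$. This identifies the set of irreducible components of the present $\mathcal M$ with a subset $\mathscr L^\diamondsuit$ of $\mathscr L_{\Siegel}$, and similarly the components of $\mathcal M^\circ$ with a subset of $\mathscr L_{\Siegel}$ (the intermediate case with only the $\O_F\otimes\Z_q$-action — which should \emph{not} force the full $\{02\}$ condition but only a weaker one, so that $\mathcal M^\circ$ picks up \emph{more} of $\mathcal M$; in fact the claim ``$\mathcal M^\circ$ is a union of irreducible components of $\mathcal M$'' is really the assertion that the extra $\O_T$-action, over and above the $\O_F$-action, does not shrink the set of components but only selects among them, which follows once one checks that the embedding $\mathcal N^\diamondsuit \hookrightarrow \mathcal N^\circ$ is an open and closed immersion on reduced loci). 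For part (1), once each component is identified with one of the $\mathbb P^1_{\overline\F_q}$'s of Theorem \ref{BT stratification first version}(2), the isomorphism to $\mathbb P^1_{\overline\F_q}$ is immediate, and $\mathcal M^\circ \subset \mathcal M$ being a union of components follows from the open-and-closed property of (\ref{eq:RZ_embeddings_2ERL}) on reduced loci.

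For parts (2), (3), (4) I would argue purely on the level of the transitive actions, exactly mirroring Proposition \ref{proposition decomposition of all of N red}. For (2): the set of components of $\mathcal M$ is a homogeneous space for $\spin(V_{Dq})(\Q_q) \cong \GSP(W)(\Q_q)$, and the stabilizer of a component — being the stabilizer of the corresponding $\mathbb P^1$, hence of a pair $(\Lambda_0,\Lambda_2) \in \mathcal L_{\{02\}}$, hence of the intermediate lattice $\Lambda_1 = \Lambda_0 \cap \Lambda_1$-type object in $\mathscr L_{\paramodular}$ — is a paramodular subgroup by the remark following Definition \ref{def:lattices_W}; here one uses that a pair $(\Lambda_0,\Lambda_2)$ with $q\Lambda_0 \subset_2 \Lambda_2 \subset_2 \Lambda_0$ is equivalent data to a single paramodular lattice. (One also uses $\langle q\rangle \cdot \mathcal N(i) = \mathcal N(i)$, i.e.\ that the scalar $\langle q \rangle$ has $\nu$-valuation $0$, so that the component set can be computed inside $\mathcal N(0)$.) For (3): restricting the action to $\spin(V_{Dq}^\circ)(\Q_q)$, the orbits on $\mathcal M_{\{02\}}$-components correspond to the orbits of this subgroup on the relevant lattices; since $V_{Dq}^\circ$ is four-dimensional with nontrivial discriminant field, $\spin(V_{Dq}^\circ)(\Q_q)$ is (an inner form related to) $\GL_2$ over the unramified quadratic extension, and the \v{C}erednik--Drinfeld-type structure — as in \cite[Th\'eor\`eme 9.3]{boutot1991uniformisation} and Lemma \ref{vertex lattice projection of special cycle orbit} — gives exactly two orbits with hyperspecial stabilizers; that $\phi$ interchanges them is the same computation as in Proposition \ref{proposition decomposition of all of N red}(2), using $\phi(\mathcal M_+(\Lambda)) = \sigma^\ast \mathcal M_-(q\Lambda)$ and $c(\rho \circ F) = c(\rho) + 1$. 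Finally (4): $\phi^2$ shifts the $c(\rho)$-invariant by $2$, i.e.\ maps $\mathcal N(i) \to (\sigma^2)^\ast\mathcal N(i+2)$, and translating by the scalar $\langle q\rangle^{-1}$ (which has $\nu$-valuation $-2$, hence maps $\mathcal N(i+2) \to \mathcal N(i)$) brings one back; on components this says precisely $\phi^2(A) = (\sigma^2)^\ast(\langle q\rangle \cdot A)$.

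The main obstacle I anticipate is \emph{not} in parts (2)--(4), which are essentially bookkeeping with the homogeneous-space descriptions already established in \S\ref{sec:RZ}, but in pinning down part (1) rigorously — specifically, verifying that $\mathcal N^\circ \hookrightarrow \mathcal N$ identifies $\mathcal M^\circ$ with a \emph{union of whole components} of $\mathcal M$ (as opposed to a proper closed subset of some components), and dually that the components of $\mathcal M^\circ$ are themselves $\mathbb P^1$'s. This requires understanding the deformation theory of the RZ space $\mathcal N^\circ$ with its intermediate $O_D\otimes\O_F\otimes\Z_q$-action: one must show that the extra constraint imposed by the $\O_F$-action (over the bare $O_D$-action defining the full ambient $\spin_5$ RZ space $\mathcal N_{\text{ambient}}$, in which our $\mathcal M$ lives as the $\{02\}$-stratum) is precisely the condition cutting out a \emph{union of components} of $\mathcal M_{\{02\}}$. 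The cleanest route is probably to identify $\mathcal N^\circ$ directly with the \v{C}erednik--Drinfeld formal scheme for the relevant quaternion algebra over the unramified quadratic extension $\Q_{q^2}/\Q_q$ (as in Remark \ref{rmk:N_diamond}), whose special fiber is a tree-of-$\mathbb P^1$'s, and then check compatibility of the embedding $\mathcal N^\circ \hookrightarrow \mathcal N$ with the lattice-theoretic descriptions on both sides; once this identification is in hand, all four parts of the proposition follow, and one reads off that the two $\spin(V_{Dq}^\circ)(\Q_q)$-orbits correspond to the two types of vertices in the bipartite Bruhat--Tits tree.
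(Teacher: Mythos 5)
Your proposal is built on a misidentification of the Rapoport--Zink space in play, and this undermines the whole argument. In \S\ref{sec:2ERL_geometry} the prime $q$ satisfies $q\nmid D$, so $O_D\otimes\Z_q\cong M_2(\Z_q)$ is \emph{split}, and $\mathcal N$ is the \emph{good-reduction} Siegel $\spin_5$ RZ space studied by Kudla--Rapoport in \cite[\S4]{kudla2000siegel}. This is not the space of \S\ref{sec:RZ}: there, $q|D$, the base-point action is by the ring of integers $\O_q$ of the \emph{nonsplit} quaternion algebra with a unit-type involution, and $\mathcal M$ is a two-dimensional variety whose components are smooth surfaces. Here $\mathcal M$ is one-dimensional from the start. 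The two moduli problems impose genuinely different extra structures on the same underlying $q$-divisible group, and neither embeds into the other; in particular there is no ambient ramified $\mathcal N$ into whose $\{02\}$-stratum the present $\mathcal M$ lands. Your attempt to recover the one-dimensionality by ``imposing the extra $\O_T$-action and falling into $\mathcal M_{\{02\}}$'' is vacuous because the ambient object you are embedding into does not exist in this setting, and the analogue of Lemma \ref{embedding contained in Siegel locus RZ space} you are invoking belongs to the ramified geometry of \S\ref{sec:1ERL_geom}, which is disjoint from the present construction.

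Once this confusion is removed, the content of all four parts is exactly what Kudla--Rapoport already prove for the supersingular locus of a good-reduction Siegel threefold (and its Hilbert sub-surface): the one-dimensional supersingular locus is a union of $\mathbb P^1$'s indexed by a Bruhat--Tits building, the paramodular stabilizers, the Frobenius action, the two orbits for the $\spin_4$-subgroup with hyperspecial stabilizers, etc. The paper's own proof is simply the citation ``See \cite[\S4]{kudla2000siegel} and \cite[\S4]{kudla1999hirzebruch} for the structure of $\mathcal M$ and $\mathcal M^\circ$, respectively,'' and there is nothing to rederive; the lattice-theoretic and Weil-descent bookkeeping in your parts (2)--(4) mirrors the right kind of argument, but it must be run against the combinatorics of \cite{kudla2000siegel}, not against Definition \ref{lattice strata on RZ space definition} and Proposition \ref{proposition decomposition of all of N red}, which live in the wrong geometry. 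Your suggestion at the end of invoking \cite[Th\'eor\`eme 9.3]{boutot1991uniformisation} and Lemma \ref{vertex lattice projection of special cycle orbit} is likewise out of place: both of those are used in the first-reciprocity-law geometry where the curve-component set is the $\spin_3$-Čerednik--Drinfeld one, not the $\spin_4$-Hilbert one described in \cite{kudla1999hirzebruch}.
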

\begin{proof}
    See \cite[\S4]{kudla2000siegel} and \cite[\S4]{kudla1999hirzebruch} for the structure of $\mathcal M$ and $\mathcal M^\circ$, respectively.
\end{proof}

\begin{notation}\label{notation:irr_comps_labels_2ERL}
    \leavevmode
    \begin{enumerate}
        \item Fix an irreducible component $\mathcal M(1)\subset \mathcal M^\circ$ as a basepoint, and let $K_q^\paramodular\subset \spin(V_{Dq})(\Q_q)$ be the stabilizer of $\mathcal M(1)$.
        \item For all  $g\in \spin(V_{Dq})(\Q_q)/K_q^\paramodular$, define $$\mathcal M(g)\coloneqq g\cdot \mathcal M(1)\subset \mathcal M.$$ By Proposition \ref{prop:KR_M_structure}(\ref{prop:KR_M_struct_full}), this defines a bijection between $\spin(V_{Dq})(\Q_q)/K_q^\paramodular$ and the irreducible components of $\mathcal M$. 
        \item Let $F\in \spin(V_{Dq})(\Q_q)$ be an element normalizing $K_q^\paramodular$ such that $F^2 = \langle q\rangle$ and 
        $\phi(\mathcal M(g)) = \sigma^\ast \mathcal M(gF)$ for all $g\in \spin(V_{Dq})(\Q_q)$; such an $F$ exists by Proposition \ref{prop:KR_M_structure}(\ref{prop:KR_M_struct_phi}).
        \item Let $K_q^\circ \coloneqq K_q^\paramodular\cap \spin(V_{Dq}^\circ)(\Q_q)$, which
is hyperspecial by Proposition \ref{prop:KR_M_structure}(\ref{prop:KR_M_struct_circ}).\footnote{Since the subgroup $K_q^\circ \subset \spin(V_D^\circ)(\Q_q)$ from (\ref{subsubsec:g0_setup_2ERL}) is also hyperspecial, we hope this will not produce any confusion.} We also set $K^\circ = K^{q\circ}K_q^\circ \subset \spin(V_{Dq}^\circ)(\A_f)$. 
    \end{enumerate}
\end{notation}
\begin{rmk}
    By Proposition \ref{prop:KR_M_structure}(\ref{prop:KR_M_struct_circ}), under Notation \ref{notation:irr_comps_labels_2ERL}  
    the irreducible components of $\mathcal M^\circ$ are labeled by $\mathcal M(g)$ and $\mathcal M(gF)$ for $g\in \spin(V_{Dq}^\circ)(\Q_q)/K_q^\circ$. 
    \end{rmk}
\subsection{Tate classes on $X^\circ_{\overline \F_q}$}

\begin{definition}
\leavevmode
        \begin{enumerate}
        \item We label the irreducible components of $X_{\overline \F_q}^{\ss \circ}$ as $B_\delta^\circ(g)$ for $$(g, \delta) \in \Sh_{K^{\circ}}(V_{Dq}^\circ) \times \set{0,1},$$ by defining 
             $B_\delta^\circ(g) $ to be the image of $(g^q, \mathcal M(g_q F^\delta))$ under the uniformization of Proposition \ref{RZ uniformization for 2nd ERL}.
            \item We define the incidence map  \begin{equation}
            \begin{split}
    \inc^{\circ\ast}: H^2(X^\circ_{\overline\F_q}, O(1)) \to \bigoplus_{(g, \delta) \in \Sh_{K^\circ}(V_{Dq}^\circ)\times \set{0,1}} H^2(B_\delta^\circ(g), O(1)) \\\cong O[\Sh_{K^\circ}(V_{Dq}^\circ)]^{\oplus 2}
    \end{split}
\end{equation}
and dually
\begin{equation}
\begin{split}
    \inc^\circ_\ast: O[\Sh_{K^\circ}(V_{Dq}^\circ)]^{\oplus 2}\cong \bigoplus_{(g, \delta) \in \Sh_{K^\circ}(V_{Dq}^\circ)\times \set{0,1}} H^2(B_\delta^\circ(g), O(1)) \\\to H^2_c(X^\circ_{\overline\F_q}, O(1)).\end{split}
\end{equation}
        \end{enumerate}
\end{definition}

\subsubsection{}
Let $S^\circ$ be the set of primes $\l$ such that $K_\l^\circ$ is not hyperspecial.
For the rest of this section, we shall apply the results and notations of Appendix \ref{sec:appendix_spin4}, with the added superscript $\circ$  for consistency. In particular, we obtain a compact open subgroup $\widetilde K^\circ = \prod\widetilde K^\circ_\l\subset C^+(V_D^\circ)^\times(\A_f)$, where $\widetilde K^\circ_\l$ is hyperspecial for $\l\not\in S^\circ$.

\begin{prop}
    The integral model $ {X}^{\circ}$ for $\Sh_{K^\circ}(V_D^\circ)$ extends to a smooth canonical model $\widetilde { X}^\circ$ over $\Spec \Z_{(q)}$ for $\widetilde \Sh_{\widetilde K^\circ}(V_D^\circ)$, with an
 open and closed embedding
\begin{equation}
     X^\circ\hookrightarrow\widetilde{{X}}^\circ 
\end{equation}
extending the map on generic fibers. Moreover, the universal abelian variety on $ X$ extends naturally to $\widetilde{ X}$.
\end{prop}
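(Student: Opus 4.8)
The plan is to realize the Shimura variety $\widetilde\Sh_{\widetilde K^\circ}(V_D^\circ)$ of Appendix~\ref{sec:appendix_spin4} as a quaternionic (Hilbert--Blumenthal type) Shimura variety over the real quadratic field $F = \Q(\sqrt{T_{11}})$ --- real quadratic because $T$ is positive definite, so $e_1^T$ has positive norm --- to invoke the good-reduction theory of such varieties at the prime $q$, and then to compare moduli interpretations. First I would record the group theory. As already observed, $C^+(V_D^\circ)$ is a quaternion algebra over $F$; by the computation of the PEL group attached to the datum $\mathcal D^\circ$ of Construction~\ref{constr:integral models 2nd ERL} (the analogue of Remark~\ref{rmk:unif_datum}(\ref{rmk:unif_datum_two}), using that $H$ is free of rank one over $B_D\otimes_\Q F$) there is a canonical isomorphism $C^+(V_D^\circ)\cong B_D\otimes_\Q F$ of $F$-quaternion algebras carrying $O_D\otimes_{\Z_{(q)}}\O_F$ to a maximal order $O_B$. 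Hence $C^+(V_D^\circ)^\times\cong\Res_{F/\Q}B^\times$ with $B\coloneqq B_D\otimes_\Q F$, and in the description of \S\ref{subsubsec:spin_groups_notation} the subgroup $\spin(V_D^\circ)$ is exactly the locus where the reduced norm lands in $\mathbb G_m\subseteq\Res_{F/\Q}\mathbb G_m$. Since $V_D^\circ$ has signature $(2,2)$, $B$ is totally indefinite over $F$, so the Shimura datum attached to $\widetilde\Sh_{\widetilde K^\circ}(V_D^\circ)$ in the appendix is the standard one for $\Res_{F/\Q}B^\times$, which is of PEL type: it parametrizes abelian schemes of relative dimension $4$ with an $O_B$-action, a compatible polarization, and a $\widetilde K^\circ$-level structure.

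Next I would build $\widetilde X^\circ$. By Assumption~\ref{assume:q_2ERL}, $q\nmid D$, $q$ is unramified in $F$, and $B$ splits at the primes above $q$, so $\widetilde K^\circ_q$ is hyperspecial; Carayol's and Kottwitz's construction of smooth integral models of quaternionic and PEL Shimura varieties at hyperspecial level \cite{carayol1986hilbert,kottwitz1992points} (cf.\ \cite{milne2005introduction}) then provides a smooth quasi-projective canonical model $\widetilde X^\circ$ over $\Z_{(q)}$ representing the corresponding moduli problem over $\Z_{(q)}$; in particular $\widetilde X^\circ$ carries a universal abelian scheme. Because $O_D\otimes_{\Z_{(q)}}\O_F=O_B$ under the identification above, the PEL moduli problem of Construction~\ref{constr:integral models 2nd ERL} defining $X^\circ$ is the one defining $\widetilde X^\circ$ supplemented by the condition that the similitude of the polarization lie in $\mathbb G_m(\A_f^q)\subseteq\Res_{F/\Q}\mathbb G_m(\A_f^q)$. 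This yields a $\Z_{(q)}$-morphism $X^\circ\to\widetilde X^\circ$ extending, on generic fibres, the morphism induced by $\spin(V_D^\circ)\hookrightarrow C^+(V_D^\circ)^\times$ (Proposition~\ref{moduli interpretation recovers factoring of special cycle on generic fiber}); pulling back the universal abelian scheme of $\widetilde X^\circ$ recovers the PEL universal abelian scheme of $X^\circ$, which is the final (``moreover'') assertion.

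It then remains to show $X^\circ\to\widetilde X^\circ$ is an open and closed immersion. It is \'etale: by Serre--Tate theory the completed local ring of either scheme at a closed point of its special fibre prorepresents the deformation functor of the associated abelian scheme with its $O_B$-action, polarization, and level, and the rationality-of-similitude condition cutting out $X^\circ$ is locally constant, so the map is formally \'etale. It is universally injective, as one checks on geometric points over a field: in characteristic $0$ this is the injectivity of $\spin(V_D^\circ)(\Q)\backslash\spin(V_D^\circ)(\A_f)/K^\circ\hookrightarrow C^+(V_D^\circ)^\times(\Q)\backslash C^+(V_D^\circ)^\times(\A_f)/\widetilde K^\circ$ (valid because $\widetilde K^\circ$ is chosen with $K^\circ=\widetilde K^\circ\cap\spin(V_D^\circ)(\A_f)$), and in characteristic $q$ it follows from the compatible Rapoport--Zink uniformizations of the supersingular loci (Proposition~\ref{RZ uniformization for 2nd ERL} and its analogue for $\widetilde X^\circ$) together with the density of the ordinary loci. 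An \'etale universally injective morphism is an open immersion. Finally its image $U$ is closed: being smooth over $\Z_{(q)}$, $\widetilde X^\circ$ is a disjoint union of connected components, each irreducible and flat over $\Z_{(q)}$, hence each the closure of a connected component of $\widetilde X^\circ_\Q$; the closure $Z$ of the (open and closed) complement of $X^\circ_\Q$ in $\widetilde X^\circ_\Q$ is thus a union of connected components of $\widetilde X^\circ$, and $U$ and $Z$ are disjoint (their scheme-theoretic intersection is flat over $\Z_{(q)}$ with empty generic fibre, hence empty) with union having full generic fibre, whence $U\sqcup Z=\widetilde X^\circ$ and $U$ is closed.

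I expect the main obstacle to be the precise dictionary between the orthogonal data $(V_D^\circ,e_1^T,e_2^T)$ of Construction~\ref{constr:integral models 2nd ERL} and the $F$-quaternionic moduli description of $\widetilde X^\circ$ coming from the appendix, together with the verification that the resulting comparison morphism is \'etale; this is essentially bookkeeping, since $q$ is a prime of good reduction and every scheme involved is smooth, but it is where the care lies. The closedness of the image is by comparison elementary, resting only on the fact that a smooth $\Z_{(q)}$-scheme is the disjoint union of the closures of the connected components of its generic fibre; note that, in contrast to $X^\diamondsuit$ (Lemma~\ref{lem:proper}), neither $X^\circ$ nor $\widetilde X^\circ$ need be proper over $\Z_{(q)}$, so an argument of this kind, rather than a direct appeal to properness, is necessary.
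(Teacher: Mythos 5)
The core of your proposal is to build $\widetilde X^\circ$ first, via a direct Kottwitz-style PEL construction for $\Res_{F/\Q}B^\times$, and then compare it with $X^\circ$. That first step contains a genuine gap. You assert that the Shimura datum for $\widetilde G = \Res_{F/\Q}B^\times$ ``is of PEL type'' and parametrizes polarized abelian fourfolds with $O_B$-action. It is not: a PEL group always has similitude factor $\mathbb G_m$, because the polarization pairing is $\Q$-valued, whereas $\Res_{F/\Q}B^\times$ has similitude factor $\Res_{F/\Q}\mathbb G_m$. This is exactly the familiar obstruction for Hilbert modular varieties, and it is what forces the Deligne--Pappas/Rapoport modification (polarizations defined only up to totally positive units of $F$) in the split case $B = M_2(F)$. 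The largest PEL subgroup of $\widetilde G$ is precisely $\{g : \Nrd(g)\in\mathbb G_m\} = \spin(V_D^\circ)$, whose integral model is $X^\circ$. Consequently there is no ``moduli problem defining $\widetilde X^\circ$'' to which adding a rationality-of-similitude condition recovers $X^\circ$; the logical flow in your second paragraph is reversed, and Kottwitz's theorem cannot be invoked for $\widetilde X^\circ$.

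The fix is to go the other way, which is what the paper does by citing \cite[Proposition 2.10, Remark 2.11]{liu2020supersingular}: start from the PEL model $X^\circ$ (already smooth over $\Z_{(q)}$ by Kottwitz, since $K_q^\circ$ is hyperspecial), and construct $\widetilde X^\circ$ from it. Concretely, the generic fiber $\widetilde\Sh_{\widetilde K^\circ}(V_D^\circ)$ is covered by finitely many $\widetilde G(\A_f^q)$-translates of the open-and-closed image of $\Sh_{K^\circ}(V_D^\circ)$; each translate carries a smooth canonical $\Z_{(q)}$-model (a twist of $X^\circ$ by the corresponding element of $\widetilde G(\A_f^q)$), and these glue to a smooth $\widetilde X^\circ$ on which the universal PEL data---in particular the universal abelian scheme---spread out because they are defined on each piece of the cover. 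With this definition, $X^\circ\hookrightarrow\widetilde X^\circ$ is an open and closed immersion essentially by construction, and the condition (\ref{third condition on widetilde K}) on $\widetilde K^\circ$ is precisely what guarantees that the translates are pairwise disjoint (cf.\ the generic-fiber statement in the appendix). Once $\widetilde X^\circ$ is constructed this way, the machinery of your third paragraph---formal \'etaleness via Serre--Tate, universal injectivity in both characteristics, and the flat-closure argument for closedness---is correct but largely redundant, since the immersion property is already built in.

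One further small point: your identification $C^+(V_D^\circ)\cong B_D\otimes_\Q F$ and the resulting dictionary between $\spin(V_D^\circ)$ and $\{\Nrd\in\mathbb G_m\}\subset\Res_{F/\Q}B^\times$ are correct and useful; the error is entirely in the assertion that the larger group is itself of PEL type.
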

\begin{proof}
    Under the condition (\ref{third condition on widetilde K}) from Appendix \ref{sec:appendix_spin4}, this follows from the construction of the embedding on generic fibers in \cite[Proposition 2.10, Remark 2.11]{liu2020supersingular}. 
\end{proof}

\begin{lemma}\label{base change for tilde 2nd ERL}
    For each $i$, there are  canonical 
    $G_{\Q_q}$-equivariant isomorphisms 
\begin{equation}
    \tag{$\operatorname{BC}_{\widetilde X^\circ}$}
    \begin{split}
        H^i_{\et}(\widetilde\Sh_{\widetilde K^\circ}(V_D^\circ)_{\overline\Q}, O) &\cong H^i_{\et} (\widetilde X^\circ_{\overline\F_q}, O) \\
         H^i_{\et,c}(\widetilde\Sh_{\widetilde K^\circ}(V_D^\circ))_{\overline\Q}, O) &\cong H^i_{\et,c} (\widetilde X^\circ_{\overline\F_q}, O).
         \end{split}
    \end{equation}
    compatible with those of Lemma \ref{base change for 2nd ERL}.
\end{lemma}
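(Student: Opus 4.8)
The statement to prove, Lemma \ref{base change for tilde 2nd ERL}, asserts a base change isomorphism for the smooth canonical model $\widetilde X^\circ$ of the $\spin_4$ Shimura variety $\widetilde \Sh_{\widetilde K^\circ}(V_D^\circ)$, in both ordinary and compactly-supported \'etale cohomology, compatible with the isomorphisms of Lemma \ref{base change for 2nd ERL} for the open and closed subscheme $X^\circ \hookrightarrow \widetilde X^\circ$.

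\textbf{Plan of proof.} The argument is essentially identical to the proof of Lemma \ref{base change for 2nd ERL}. First, I would observe that $\widetilde X^\circ$ is smooth over $\Spec \Z_{(q)}$ by construction, so the natural map $O \to R\Psi_{\widetilde X^\circ} O$ from the constant sheaf to the nearby cycles complex on $\widetilde X^\circ_{\overline\F_q}$ is an isomorphism (this is the standard fact that nearby cycles of a smooth morphism are trivial). Second, I would invoke the base change theorem for nearby cycles --- specifically \cite[Corollary 5.20]{lan2018nearby}, exactly as in the proof of Lemma \ref{base change for 2nd ERL}, using that $\widetilde \Sh_{\widetilde K^\circ}(V_D^\circ)$ is a PEL-type Shimura variety (for the group $C^+(V_D^\circ)^\times$) with smooth integral model $\widetilde X^\circ$ --- to conclude that the base change map
\begin{equation*}
R\Gamma_\et\left(\widetilde X^\circ_{\overline\Q}, O\right) \to R\Gamma_\et\left(\widetilde X^\circ_{\overline\F_q}, R\Psi_{\widetilde X^\circ} O\right)
\end{equation*}
is an isomorphism, and similarly for compactly-supported cohomology. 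Combining these two facts gives the desired isomorphisms $(\mathrm{BC}_{\widetilde X^\circ})$ in ordinary and compactly-supported cohomology, and these are $G_{\Q_q}$-equivariant because the base change maps respect the Galois action on the nearby cycles complex (the inertia action on $R\Psi_{\widetilde X^\circ}O = O$ being trivial since $\widetilde X^\circ$ is smooth).

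\textbf{Compatibility with Lemma \ref{base change for 2nd ERL}.} For the final assertion, I would note that the open and closed immersion $X^\circ \hookrightarrow \widetilde X^\circ$ is defined over $\Z_{(q)}$ and extends the corresponding map on generic fibers, so it induces compatible maps of nearby cycles complexes $R\Psi_{X^\circ}O \to i^\ast R\Psi_{\widetilde X^\circ}O$ (where $i$ is the restriction of the inclusion to special fibers), both identified with the constant sheaf $O$. Functoriality of the base change map of \cite[Corollary 5.20]{lan2018nearby} with respect to this open and closed immersion then gives a commutative square relating the isomorphisms $(\mathrm{BC}_{X^\circ})$ and $(\mathrm{BC}_{\widetilde X^\circ})$; since $X^\circ$ is a connected component (or union of components) of $\widetilde X^\circ$ in the appropriate sense, restriction of cohomology classes along $X^\circ \hookrightarrow \widetilde X^\circ$ is just projection onto a direct summand, and the required compatibility is immediate.

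\textbf{Main obstacle.} There is no serious obstacle here: this is a routine application of smooth base change / Lan's nearby cycles comparison, copied nearly verbatim from Lemma \ref{base change for 2nd ERL}. The only point requiring a small amount of care is ensuring that \cite[Corollary 5.20]{lan2018nearby} genuinely applies to the auxiliary $\spin_4$ (i.e., $C^+(V_D^\circ)^\times$) Shimura variety --- one must check that $\widetilde\Sh_{\widetilde K^\circ}(V_D^\circ)$ together with $\widetilde X^\circ$ fits the PEL-type setup of \emph{loc.\ cit.}, which it does since $\widetilde X^\circ$ was constructed precisely as a PEL-type smooth canonical model following \cite{liu2020supersingular}. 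I would keep the written proof to two or three sentences, referring to the proof of Lemma \ref{base change for 2nd ERL} for the details.
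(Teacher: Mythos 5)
Your proposal closely mirrors the proof of Lemma~\ref{base change for 2nd ERL}, and the paper's own proof is a one-line citation to \cite[Lemma~4.4]{liu2020supersingular}, which is a different reference than the one used for $X^\circ$. There is a concrete gap in your argument, and it is located at exactly the place you flagged as ``requiring a small amount of care.''

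The issue is that \cite[Corollary~5.20]{lan2018nearby} is a statement about \emph{PEL-type} integral models, and $\widetilde X^\circ$ is not one. The generic fiber $\widetilde\Sh_{\widetilde K^\circ}(V_D^\circ)$ is the Shimura variety for the group $C^+(V_D^\circ)^\times = \Res_{F/\Q} B_F^\times$, i.e.\ a genuine quaternionic Shimura variety over a real quadratic field. This is of abelian type but not of PEL type: the PEL datum exists only for the subgroup $\spin(V_D^\circ) = \{g : \nu(g) \in \Q^\times\}$, whose Shimura variety is $\Sh_{K^\circ}(V_D^\circ)$ sitting inside $\widetilde\Sh_{\widetilde K^\circ}(V_D^\circ)$ as an open and closed subscheme. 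The fact that ``the universal abelian variety on $X$ extends to $\widetilde X$'' does not turn $\widetilde X^\circ$ into a PEL moduli space; the extended family is not the tautological one for a PEL problem on $\widetilde X^\circ$. Consequently your invocation of Lan's Corollary 5.20 for $\widetilde X^\circ$ is not legitimate as stated. This is precisely why the paper routes through \cite[Lemma~4.4]{liu2020supersingular}, which establishes the nearby-cycles base change comparison for the smooth canonical model of this quaternionic Shimura variety (Liu's argument handles the abelian-type situation, for instance by descending the comparison from the auxiliary PEL datum and controlling the boundary directly, rather than by a bare appeal to Lan). Your observation that $\widetilde X^\circ$ is smooth so $R\Psi_{\widetilde X^\circ} O \cong O$ is of course correct, and your compatibility argument with Lemma~\ref{base change for 2nd ERL} via the open-and-closed immersion is fine once the base change for $\widetilde X^\circ$ is in hand; but the source of that base change must be Liu's Lemma 4.4, not Lan's result directly.
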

\begin{proof}
See \cite[Lemma 4.4]{liu2020supersingular}.

\end{proof}

\subsubsection{}
The Clifford algebra $C^+(V_{Dq}^\circ)$ is a totally definite quaternion algebra over $F$, whose local invariants coincide with those of $C^+(V_D^\circ)$ at all finite places; let $Q^\times$ be the $\Q$-algebraic group of  units of $C^+(V_{Dq}^\circ)$.  Then by \cite[Theorem 3.13]{liu2020supersingular}, we have a uniformization
\begin{equation}
    \widetilde X^{\ss\circ}_{\overline \F_q}\simeq Q^\times(\Q)\backslash Q^\times(\A_f^q)\times \mathcal M^\circ / \widetilde K^{q\circ}
\end{equation}
compatible with Proposition \ref{RZ uniformization for 2nd ERL} for $? = \circ$. Here  $\widetilde K^{q\circ}$ is viewed as a compact open subgroup of $Q^\times(\A_f^q)$ using the isomorphism$$V_{Dq}^\circ\otimes_\Q \A_f^q \simeq V_D^\circ\otimes_\Q \A_f^q$$ that follows from the choice of $q$-adic uniformization datum, cf. Notation \ref{notation:V_Dq_etc}(\ref{item:V_Dq_etc}).

We may therefore extend $\inc^{\circ\ast}$ to a map $\widetilde\inc^{\circ\ast}$ fitting into the following commutative diagram:

\begin{equation}\label{inc pullback commutative diagram}
    \begin{tikzcd}
        H^2_\et(X^\circ_{\overline\F_q}, O(1)) \arrow[r,  "\inc^{\circ\ast}"]\arrow[d, hook]
        & O\left[\Sh_{K^\circ}(V_{Dq}^\circ) \right]^{\oplus 2} \arrow[d, hook]\\
         H^2_\et(\widetilde X^\circ_{\overline\F_q}, O(1)) \arrow[r, "\widetilde \inc^{\circ\ast}"] & O\left[\widetilde \Sh_{\widetilde K^{\circ} }(V_{Dq}^\circ)\right]^{\oplus 2}.
    \end{tikzcd}
\end{equation}
On the bottom right, the level subgroup is $\widetilde K^\circ = \widetilde K^{q\circ} \widetilde K_q^\circ $, where $\widetilde K_q^\circ$ is the unique hyperspecial subgroup of $Q^\times(\Q_q)$ containing $K_q^\circ$, and by definition
$$\widetilde \Sh_{\widetilde K^\circ}(V_{Dq}^\circ) = Q^\times(\Q) \backslash Q^\times(\A_f)/ \widetilde K^\circ.$$
 Similarly, we have a  map $\widetilde\inc^\circ_\ast$ fitting into a commutative diagram
\begin{equation}\label{inc push forward commutative diagram}
    \begin{tikzcd}
      O\left[\Sh_{K^\circ}(V_{Dq}^\circ) \right]^{\oplus 2} \arrow[d, hook] \arrow[r,  "\inc^{\circ}_{\ast}"]&
      H^2_{\et,c}(X^\circ_{\overline\F_q}, O(1)) \arrow[d, hook]
         \\
       O\left[\widetilde \Sh_{\widetilde K^{\circ}}(V_{Dq}^\circ)\right]^{\oplus 2}   \arrow[r, "\widetilde \inc^{\circ}_{\ast}"] &H^2_{\et,c}(\widetilde X^\circ_{\overline\F_q}, O(1)).
    \end{tikzcd}
\end{equation}
\subsubsection{Hecke actions}
Recall the local and global  Hecke algebras $\T_\l^\circ$, $\widetilde \T_\l^\circ$, $\T^{\circ S^\circ}$, and $\widetilde \T^{\circ S^\circ} $ from (\ref{subsubsec:Hecke_gspin4_appendix}).
  We define actions of the local Hecke algebras $\T_q^\circ \cong \widetilde \T_q^\circ$ on $H^i_{\et} (X^\circ_{\overline\F_q}, O)$, $H^i_{\et,c}(X^\circ_{\overline\F_q}, O)$, $H^i_{\et}(\widetilde X^\circ_{\overline\F_q}, O)$, and $H^i_{\et,c}(\widetilde X^\circ_{\overline\F_q}, O)$ via the isomorphisms of Lemmas \ref{base change for 2nd ERL} and \ref{base change for tilde 2nd ERL}.
\begin{lemma}\label{Hecke equivariance for inc 2nd ERL}
\leavevmode
\begin{enumerate}
    \item The maps $\inc^{\circ \ast}$ and $\widetilde\inc^{\circ\ast}$ are equivariant for $\T^{\circ S^\circ}$ and $\widetilde{\T}^{\circ S^\circ}$, respectively.\label{Hecke equivariance for inc 2nd ERL part one}
    \item The maps $\inc^\circ_\ast$ and $\widetilde \inc^\circ_\ast$ are equivariant for $\T^{\circ S^\circ}$ and $\widetilde{\T}^{\circ S^\circ}$, respectively, after extending scalars to $\overline\Q_p$.\label{Hecke equivariance for inc 2nd ERL part two}
\end{enumerate}
\end{lemma}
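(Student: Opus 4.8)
The statement is the Hecke-equivariance of the incidence maps $\inc^{\circ\ast}$, $\inc^\circ_\ast$, $\widetilde\inc^{\circ\ast}$, and $\widetilde\inc^\circ_\ast$ for the prime-to-$q$, prime-to-$S^\circ$ spherical Hecke algebra. The plan is to reduce everything to the Hecke-equivariance of the Rapoport-Zink uniformization isomorphism, together with the compatibility of the cycle-class and pushforward/pullback maps with correspondences. First I would fix a prime $\l\notin S^\circ\cup\{q\}$ and a Hecke operator $h\in\T^\circ_\l$, viewed as a finite étale correspondence on $X^\circ$ over $\Z_{(\l)}$ (hence over $\Z_{(q)}$, since $\l\ne q$), and likewise on $\widetilde X^\circ$. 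Because $h$ has good reduction at $q$, it acts on the special fiber $X^\circ_{\overline\F_q}$ compatibly with its action on $H^\ast_\et$ via the base change isomorphisms of Lemmas \ref{base change for 2nd ERL} and \ref{base change for tilde 2nd ERL}; this is exactly how those Hecke actions were defined, so there is nothing to check there.

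The key point is then that $h$, acting on $X^\circ_{\overline\F_q}$, permutes the supersingular locus and, under the uniformization of Proposition \ref{RZ uniformization for 2nd ERL}, corresponds to the natural prime-to-$q$ Hecke correspondence on
$$\spin(V_{Dq}^\circ)(\Q)\backslash \spin(V_{Dq}^\circ)(\A_f^q)\times \mathcal M^\circ/K^{q\circ},$$
which acts only through the $\spin(V_{Dq}^\circ)(\A_f^q)$-factor and fixes the $\mathcal M^\circ$-factor (equivalently, fixes the labelling of irreducible components $B^\circ_\delta(g)$ in the $\delta$-coordinate, and acts on $g$ through the correspondence). This compatibility is part of the assertion of Proposition \ref{RZ uniformization for 2nd ERL} (``compatible with prime-to-$q$ Hecke correspondences''), and the matching correspondence on $\Sh_{K^\circ}(V_{Dq}^\circ)$ is the one giving $O[\Sh_{K^\circ}(V_{Dq}^\circ)]$ its $\T^{\circ S^\circ}$-module structure. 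Granting this, the equivariance of $\inc^{\circ\ast}$ and $\inc^\circ_\ast$ follows formally: the cycle class map $H^2(B^\circ_\delta(g),O(1))$-component of $\inc^{\circ\ast}$ is the restriction-to-a-component map, which commutes with any correspondence that permutes the components and is étale along them (so pullback of cycles is compatible), and dually for the pushforward $\inc^\circ_\ast$ after tensoring with $\overline\Q_p$ — the scalar extension is needed because $\inc^\circ_\ast$ is only a priori equivariant up to the degrees of the correspondence, which become invertible over $\overline\Q_p$. For the tilde versions, one uses the analogous uniformization $\widetilde X^{\ss\circ}_{\overline\F_q}\simeq Q^\times(\Q)\backslash Q^\times(\A_f^q)\times\mathcal M^\circ/\widetilde K^{q\circ}$ from \cite{liu2020supersingular} together with the commutative diagrams \eqref{inc pullback commutative diagram} and \eqref{inc push forward commutative diagram}, which reduce the $\widetilde\inc^{\circ}$ equivariance to that of $\inc^\circ$ plus the (obvious) equivariance of the vertical inclusions for $\widetilde\T^{\circ S^\circ}$.

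I expect the only genuine subtlety — the ``main obstacle'' — to be bookkeeping the precise normalization of the Hecke correspondence on $\Sh_{K^\circ}(V_{Dq}^\circ)$ that arises from the RZ uniformization, and checking that it agrees with the $\T^{\circ S^\circ}$-action used to topologize the target of $\inc^{\circ\ast}$ (i.e. that no twist by a power of $\l$ or by a similitude character sneaks in under the identification $V_{Dq}^\circ\otimes\A_f^q\simeq V_D^\circ\otimes\A_f^q$). This is handled by unwinding the moduli-theoretic description of the Hecke correspondences on $X^\circ$ via prime-to-$q$ quasi-isogenies of the universal abelian scheme, transporting it through the $p$-divisible-group framing that defines the uniformization, and observing that on the étale-realization side this is precisely the action used to define the Hecke module $O[\Sh_{K^\circ}(V_{Dq}^\circ)]$; since both sides are described by the same double-coset operators on $\spin(V_{Dq}^\circ)(\A_f^q)$, they coincide. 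Everything else is routine compatibility of cycle classes with finite étale correspondences and functoriality of the base-change isomorphisms, so I would not belabor those points.
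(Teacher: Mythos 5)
Your proposal treats the easy half of the lemma and omits the half that actually requires an argument. You fix $\ell\notin S^\circ\cup\{q\}$ and invoke good reduction at $q$ of the Hecke correspondence at $\ell$; this shows equivariance for $\T^{\circ,\, S^\circ\cup\{q\}}$, which the paper dismisses as ``clear.'' But $q\notin S^\circ$ (the level $K^\circ_q$ is hyperspecial), so $\T^{\circ S^\circ}$ contains the local Hecke algebra $\T^\circ_q$, and the whole content of the lemma is the equivariance for $\T^\circ_q$. That operator is excluded by construction in your argument (``fix a prime $\ell\notin S^\circ\cup\{q\}$''), and it genuinely cannot be handled the same way: the $\T^\circ_q$-action on $H^\ast_\et(X^\circ_{\overline\F_q},O)$ is \emph{defined} via the base change isomorphism from the generic fiber (it is not a geometric correspondence on the $\Z_{(q)}$-model at all), while on the target $O[\Sh_{K^\circ}(V_{Dq}^\circ)]^{\oplus 2}$ it is a double coset operator on the definite quaternionic Shimura set carried through the RZ uniformization. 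There is no ``good reduction'' argument relating these, because the Hecke correspondence at $q$ does not naively spread out over $\Z_{(q)}$.

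The paper's actual proof handles this with an automorphic, not geometric, input. After reducing (exactly as you do, via diagrams (\ref{inc pullback commutative diagram}) and (\ref{inc push forward commutative diagram})) to the $\widetilde\inc$-maps, and after extending scalars to $\overline\Q_p$ — which also gives part (\ref{Hecke equivariance for inc 2nd ERL part two}) for free once part (\ref{Hecke equivariance for inc 2nd ERL part one}) is known, since the target of $\widetilde\inc^{\circ\ast}$ is $O$-torsion-free — one argues that any $\widetilde\T^{\circ,\,S^\circ\cup\{q\}}$-equivariant map $H^2(\widetilde\Sh_{\widetilde K^\circ}(V_D^\circ)(\C),\C)\to\C[\widetilde\Sh_{\widetilde K^\circ}(V_{Dq}^\circ)]$ is automatically $\widetilde\T^\circ_q$-equivariant: by Jacquet--Langlands and strong multiplicity one for $\GL_2$, the away-from-$q$ Hecke eigensystem on each side already pins down the $\widetilde\T^\circ_q$-eigenvalue. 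This multiplicity-one step is the missing idea in your write-up; without it, the claimed equivariance of $\inc^{\circ\ast}$ and $\inc^\circ_\ast$ for the full $\T^{\circ S^\circ}$ is not established.
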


\begin{proof}
    It is clear that $\inc^{\circ\ast}$, $\widetilde\inc^{\circ\ast}$, $\inc^\circ_\ast$, and $\widetilde \inc^\circ_\ast$ are equivariant for all prime-to-$q$ Hecke operators, so it suffices to 
     consider the action of $\T_q^\circ \cong \widetilde \T_q^\circ$.
       Also, by the commutative diagrams (\ref{inc pullback commutative diagram}) and (\ref{inc push forward commutative diagram}), it suffices to consider $\widetilde \inc ^{\circ\ast}$ and $\widetilde \inc^\circ_\ast$.
        The final reduction is that we may prove both statements of the lemma after extending scalars to $\overline\Q_p$, since the target of $\widetilde \inc^{\circ\ast}$ is $O$-torsion-free.

Applying Lemma \ref{base change for tilde 2nd ERL} and the \'etale comparison theorem, for (\ref{Hecke equivariance for inc 2nd ERL part one}) it therefore suffices to show that any map
$$H^2(\widetilde \Sh_{\widetilde K^\circ} (V_D)(\C), \C) \to \C\left[\widetilde \Sh_{\widetilde K^{\circ}} (V_{Dq}^\circ)\right]$$ 
which is equivariant for $\widetilde{\T}^{\circ S^\circ\cup\set{q}}$  is also equivariant for $\widetilde {\T}^\circ_q$; but this is clear from the Jacquet-Langlands correspondence and strong multiplicity one for $\GL_2$. 
The proof of (\ref{Hecke equivariance for inc 2nd ERL part two}) is the same. 
\end{proof}
\begin{definition}\label{def:T_q_circ}
   Let $T_q^\circ \in \widetilde{\T}_q^\circ \cong  \T_q^\circ$ be the double coset operator represented by $\begin{pmatrix}
             q & 0 \\ 0 & 1 
        \end{pmatrix}$ in any basis such that $\widetilde K_q^\circ = \GL_2(\Z_{q^2})$. 
\end{definition}
\begin{lemma}\label{lemma inc pullback and Hecke action on X circ}
    \begin{enumerate}
        \item\label{matrix composite of inc maps on X circ} The composite maps
        $$O\left[\Sh_{K^\circ}(V_{Dq}^\circ)\right] ^{\oplus 2} \xrightarrow{\inc^\circ_\ast} H^2_{\et, c} (X^\circ_{\overline \F_q}, O(1)) \to H^2_{\et} (X^\circ_{\overline \F_q}, O(1))\xrightarrow{\inc^{\circ\ast}}O\left[\Sh_{K^\circ}(V_{Dq}^\circ)\right] ^{\oplus 2}$$
        and         $$O\left[\widetilde\Sh_{\widetilde K^\circ}(V_{Dq}^\circ)\right] ^{\oplus 2} \xrightarrow{\widetilde\inc^\circ_\ast} H^2_{\et, c} (\widetilde X^\circ_{\overline \F_q}, O(1)) \to H^2_{\et} (\widetilde X^\circ_{\overline \F_q}, O(1))\xrightarrow{\widetilde \inc^{\circ\ast}}O\left[\widetilde\Sh_{\widetilde K^\circ}(V_{Dq}^\circ)\right] ^{\oplus 2}$$
        are both given by the matrix $$\begin{pmatrix} -2q & T_q^\circ\langle q\rangle^{-1} \\ T_q^\circ & -2q\end{pmatrix}.$$
        \item \label{injectivity of inc pullback}The restricted map
        $$\inc^{\circ\ast} : (T_q^{\circ 2} \langle q \rangle^{-1} - 4q^2) H^2_{\et, !}(X^\circ_{\overline \F_q}, O(1))^{\Frob_q^2 = \langle q \rangle} \to O\left[\Sh_{K^\circ}(V_{Dq}^\circ)\right] ^{\oplus 2}$$
        has $\varpi$-power-torsion kernel. 
    \end{enumerate}
\end{lemma}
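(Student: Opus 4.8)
\textbf{Plan for the proof of Lemma \ref{lemma inc pullback and Hecke action on X circ}.}

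The plan is to prove part (\ref{matrix composite of inc maps on X circ}) by a direct intersection-theoretic computation on the supersingular locus, using the Rapoport-Zink uniformization of Proposition \ref{RZ uniformization for 2nd ERL} together with the explicit structure of $\mathcal M^\circ$ from Proposition \ref{prop:KR_M_structure}; then part (\ref{injectivity of inc pullback}) will follow formally from (\ref{matrix composite of inc maps on X circ}) together with the fact that the operator $T_q^{\circ 2}\langle q\rangle^{-1} - 4q^2$ inverts the off-diagonal contributions. The commutative diagrams (\ref{inc pullback commutative diagram}) and (\ref{inc push forward commutative diagram}) reduce both statements to the versions on $\widetilde X^\circ$, where one has the uniformization by the definite quaternion algebra $Q^\times$, and the Hecke-equivariance of $\widetilde\inc^{\circ\ast}$, $\widetilde\inc^\circ_\ast$ from Lemma \ref{Hecke equivariance for inc 2nd ERL}.

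For part (\ref{matrix composite of inc maps on X circ}): the composite $\widetilde\inc^{\circ\ast}\circ(\text{natural map})\circ\widetilde\inc^\circ_\ast$ sends the basis vector indexed by an irreducible component $A$ of $\widetilde X^{\ss\circ}_{\overline\F_q}$ to the class computing, for each component $A'$, the intersection number $[A]\cdot[A']$ on $\widetilde X^\circ_{\overline\F_q}$. When $A = A'$, this is the degree of the normal bundle $\mathcal N_{A/\widetilde X^\circ_{\overline\F_q}}$; since each $A\cong\mathbb P^1_{\overline\F_q}$ by Proposition \ref{prop:KR_M_structure} and $\widetilde X^\circ_{\overline\F_q}$ is a smooth surface, this self-intersection is $-2q$ by the computation of the normal bundle to the Kudla-Rapoport divisors in the unramified $\spin_4$ setting (cf. \cite{kudla1999hirzebruch}, where the normal bundle is $\mathcal O(-q)$ via the Hirzebruch-Zagier divisor structure, giving self-intersection $-2q$ on $\mathbb P^1$). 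When $A\neq A'$, the intersection is the number of points where the two $\mathbb P^1$'s meet transversally; using the labelling of components by $\Sh_{K^\circ}(V_{Dq}^\circ)\times\set{0,1}$ via the Weil descent datum $\phi$ and Notation \ref{notation:irr_comps_labels_2ERL}, one sees that $B_0^\circ(g)$ meets $B_1^\circ(g')$ in a number of points governed by the double coset operator $T_q^\circ$ (with the twist by $\langle q\rangle^{-1}$ arising exactly from the relation $F^2 = \langle q\rangle$ in Notation \ref{notation:irr_comps_labels_2ERL}(3)), while two components with the same $\delta$-index are disjoint by Proposition \ref{prop:KR_M_structure}(\ref{prop:KR_M_struct_circ}). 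This yields precisely the matrix $\begin{pmatrix} -2q & T_q^\circ\langle q\rangle^{-1}\\ T_q^\circ & -2q\end{pmatrix}$, and the identity descends to $X^\circ$ by the commutative diagrams.

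For part (\ref{injectivity of inc pullback}): the determinant of the matrix in (\ref{matrix composite of inc maps on X circ}) is $4q^2 - T_q^{\circ 2}\langle q\rangle^{-1}$, so on the submodule where $T_q^{\circ 2}\langle q\rangle^{-1} - 4q^2$ acts invertibly the composite $\inc^{\circ\ast}\circ(\text{natural})\circ\inc^\circ_\ast$ is an isomorphism; combined with the fact that $H^2_{\et,!}(X^\circ_{\overline\F_q}, O(1))$ is obtained from $H^2_{\et}$ by killing torsion and that the natural map $H^2_c\to H^2$ is an isomorphism modulo torsion on interior cohomology, this forces the kernel of $\inc^{\circ\ast}$ restricted to the relevant eigenspace to be $\varpi$-power torsion. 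The main obstacle I anticipate is the bookkeeping in the self-intersection calculation: one must correctly identify the normal bundle of a supersingular component inside the \emph{smooth} surface $\widetilde X^\circ_{\overline\F_q}$ and verify the degree is $-2q$ rather than $-q$ or some other multiple. This requires either citing the precise normal bundle computation from \cite{kudla1999hirzebruch, liu2020supersingular} or redoing it via the Rapoport-Zink deformation theory of $\mathcal N^\circ$, analogous to Lemma \ref{calculating normal bundle Lemma} in the ramified case; care is also needed to check that the intersection multiplicities $B_0^\circ(g)\cdot B_1^\circ(g')$ are indeed transverse (multiplicity one) so that the count genuinely reproduces $T_q^\circ$ and not some twisted or thickened variant.
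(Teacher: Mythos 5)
Your approach to part (\ref{matrix composite of inc maps on X circ}) is essentially what the paper does (though the paper simply cites \cite[Proposition 2.21(4)]{liu2016hirzebruch} for the diagonal entries and \cite[\S4]{kudla1999hirzebruch} for the intersection combinatorics of $\mathcal M^\circ$, rather than reworking the computation). You correctly identify the normal bundle calculation as the sensitive point.

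Your argument for part (\ref{injectivity of inc pullback}), however, has a genuine gap. Knowing that
$$\inc^{\circ\ast}\circ(\text{nat.})\circ\inc^\circ_\ast : O\left[\Sh_{K^\circ}(V_{Dq}^\circ)\right]^{\oplus 2} \to O\left[\Sh_{K^\circ}(V_{Dq}^\circ)\right]^{\oplus 2}$$
becomes an isomorphism after multiplying by $T_q^{\circ 2}\langle q\rangle^{-1} - 4q^2$ shows that $\inc^\circ_\ast$ is injective (mod torsion) and that $\inc^{\circ\ast}$ is injective \emph{on the image of $\inc^\circ_\ast$ inside $H^2_{\et,!}$}. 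It does not show that $\inc^{\circ\ast}$ is injective (mod torsion) on all of
$$(T_q^{\circ 2}\langle q\rangle^{-1} - 4q^2)\, H^2_{\et,!}(X^\circ_{\overline\F_q}, O(1))^{\Frob_q^2 = \langle q\rangle},$$
because a priori this eigenspace could be strictly larger than the span of the supersingular cycle classes; $\inc^{\circ\ast}$ could kill non-torsion classes lying outside $\im(\inc^\circ_\ast)$. Your appeal to ``$H^2_!$ is $H^2$ modulo torsion'' does not supply the missing piece. What one actually needs is a dimension bound: for each relevant Hecke eigensystem $\tau$ with $T_q^{\circ 2}\langle q\rangle^{-1} - 4q^2 \neq 0$ on $\tau_f^{\widetilde K^\circ}$, the eigenspace $H^2_{\et,!}(\widetilde X^\circ_{\overline\F_q}, \overline\Q_p(1))^{\Frob_q^2 = \langle q\rangle}[\tau_f]$ has $\overline\Q_p$-dimension at most $2\dim \overline\Q_p[\widetilde\Sh_{\widetilde K^\circ}(V_{Dq}^\circ)][\tau_f]$, so that supersingular cycles \emph{generate} the relevant eigenspace. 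The paper imports this from \cite[Proposition 2.25]{tian2019tate} for cuspidal $\tau$ (a form of the Tate conjecture for quaternionic Hilbert modular surfaces), and handles non-cuspidal $\tau$ directly via Proposition \ref{prop decompose hecke action on quaternionic shimura variety}. You also need the preliminary reduction to individual $\tau$-isotypic pieces over $\overline\Q_p$, for which the Hecke equivariance of Lemma \ref{Hecke equivariance for inc 2nd ERL} is used, and the observation (from Proposition \ref{prop:KR_M_structure}(\ref{prop:KR_M_struct_phi})) that $\Frob_q^2 = \langle q\rangle$ on the image of $\widetilde\inc^\circ_\ast$, which makes the matrix composite well-defined on the $\Frob_q^2 = \langle q\rangle$ eigenspace.
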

\begin{proof}
    For (\ref{matrix composite of inc maps on X circ}), see \cite[Proposition 2.21(4)]{liu2016hirzebruch}; the off-diagonal entries follow from the intersection combinatorics of $\mathcal M^\circ$ described in \cite[\S4]{kudla1999hirzebruch}. For (\ref{injectivity of inc pullback}), it suffices to prove the analogous statement for $\widetilde \inc^{\circ\ast}$ due to the commutative diagram (\ref{inc pullback commutative diagram}). Let us fix an isomorphism $\iota: \overline \Q_p \isomorphism \C$. Extending 
scalars to $\overline \Q_p$ and applying Lemma \ref{Hecke equivariance for inc 2nd ERL} and Proposition \ref{prop decompose hecke action on quaternionic shimura variety}, it suffices to show that
$$\widetilde\inc^{\circ\ast}: H^2_{\et, !} (\widetilde X^\circ_{\overline \F_q} , \overline\Q_p(1))^{\Frob_q^2 = \langle q \rangle}[\iota^{-1}\tau_f] \to \overline \Q_p \left[\widetilde\Sh_{\widetilde K^\circ}(V_{Dq}^\circ)\right][\iota^{-1}\tau_f]^{\oplus 2}$$ is injective for all discrete automorphic representations $\tau$ of $C^+(V_D^\circ)^\times(\A)$ such that \begin{equation}\tag{$\operatorname{Hecke}_q$}
    T_q^{\circ 2} \langle q \rangle^{-1} - 4q^2\neq 0 \text{   on   }\tau_f^{\widetilde K^\circ}.
\end{equation}

Now note that $\Frob_q^2 = \langle q \rangle$ on the image of $$\widetilde\inc^\circ_\ast: O\left[\widetilde\Sh_{\widetilde K^\circ}(V_{Dq}^\circ)\right]^{\oplus 2}\to 
 H^2_{\et, c} (\widetilde X^\circ_{\overline{\F}_q}, O(1));$$
 this follows from Proposition \ref{prop:KR_M_structure}(\ref{prop:KR_M_struct_phi}). In particular, we have a well-defined composite map 
     $$\overline \Q_p\left[\widetilde\Sh_{\widetilde K^\circ}(V_{Dq}^\circ)\right]^{\oplus 2} [\iota^{-1}\tau_f] \xrightarrow{\widetilde\inc^\circ_\ast} H^2_{\et, !} (\widetilde X^\circ_{\overline{\F}_q}, \overline \Q_p(1))^{\Frob_q^2 = \langle q \rangle} [\iota^{-1}\tau_f]
    \xrightarrow{\widetilde \inc^{\circ\ast}} \overline \Q_p\left[\widetilde\Sh_{\widetilde K^\circ}(V_{Dq}^\circ)\right]^{\oplus 2} [\iota^{-1}\tau_f],$$
    given by the matrix in part (\ref{matrix composite of inc maps on X circ}), which is invertible by the assumption ($\operatorname{Hecke}_q$).

    It therefore suffices to show that 
    $$\dim _{\overline \Q_p} H^2_{\et, !} (\widetilde X^\circ_{\overline{\F}_q}, \overline \Q_p(1))^{\Frob_q^2 = \langle q \rangle} [\iota^{-1}\tau_f]\leq 2\dim \Q_p\left[\widetilde\Sh_{\widetilde K^\circ}(V_{Dq}^\circ)\right] [\iota^{-1}\tau_f]$$
    for all $\tau$ satisfying ($\operatorname{Hecke}_q$). This dimension count follows from \cite[Proposition 2.25]{tian2019tate} when $\tau$ is cuspidal; when $\tau$ is not cuspidal, it is clear from Proposition \ref{prop decompose hecke action on quaternionic shimura variety}.
    \end{proof}
\subsubsection{}
Consider the algebraic cycle class
\begin{equation}
    [X^\diamondsuit_{\overline\F_q}] \in H^2_{\et,c}(X^\circ_{\overline\F_q}, O(1)).
\end{equation}
which makes sense by Lemma \ref{lem:proper}.
\begin{lemma}\label{inc pullback of special cycle is special for 2nd ERL on X circ}
    There exists a special cycle $Z\in \SC_{K^\circ}^1(V_{Dq}^\circ)$ such that $$\inc^{\circ\ast} ([X^\diamondsuit_{\overline\F_q}]) = (Z, Z) \in O \left[\Sh_{K^\circ}(V_{Dq}^\circ) \right].$$
\end{lemma}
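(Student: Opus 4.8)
The statement asserts that the intersection of the special cycle $X^\diamondsuit_{\overline\F_q}$ with each irreducible component of the supersingular locus $X^{\ss\circ}_{\overline\F_q}$ is itself a Kudla-type $0$-cycle on the Shimura set $\Sh_{K^\circ}(V_{Dq}^\circ)$, coming from embeddings $\spin(V_T)\hookrightarrow\spin(V_{Dq}^\circ)$. The plan is to reduce everything to the level of Rapoport--Zink spaces via Proposition \ref{RZ uniformization for 2nd ERL} and to read off the multiplicities from the combinatorics of the embedding $\mathcal N^\diamondsuit\hookrightarrow\mathcal N^\circ$. First I would observe that by Lemma \ref{lem:proper} the cycle $X^\diamondsuit_{\overline\F_q}$ is proper, so pushforward gives a well-defined class, and that since $X^\diamondsuit$ has pure relative dimension $1$ while $X^\circ$ has pure relative dimension $2$, the pairing of $[X^\diamondsuit_{\overline\F_q}]$ with a vertical divisor $B^\circ_\delta(g)$ (which is isomorphic to $\mathbb P^1_{\overline\F_q}$ by Proposition \ref{prop:KR_M_structure}) is computed by intersection theory on the surface $X^\circ_{\overline\F_q}$. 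By Proposition \ref{RZ uniformization for 2nd ERL} applied for $?=\diamondsuit$ and $?=\circ$, together with the compatibility of the embeddings (\ref{integral models factoring special cycle for 2nd ERL}), (\ref{eq:RZ_embeddings_2ERL}), the cycle $X^\diamondsuit_{\overline\F_q}$ is uniformized by $\spin(V_{Dq}^\diamondsuit)(\Q)\backslash\spin(V_{Dq}^\diamondsuit)(\A_f^q)\times\mathcal M^\diamondsuit/K^{q\diamondsuit}$ sitting inside the uniformization of $X^\circ_{\overline\F_q}$, so the problem becomes local: for a fixed irreducible component $\mathcal M(g_qF^\delta)$ of $\mathcal M^\circ$ (labeled as in Notation \ref{notation:irr_comps_labels_2ERL}), compute $\mathcal M^\diamondsuit\cdot \mathcal M(g_qF^\delta)$ inside $\mathcal M^\circ$, together with the $\spin(V_{Dq}^\diamondsuit)(\Q_q)$-action on this intersection.

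The key step is therefore the local combinatorial computation. Here I would use the description of $\mathcal M^\diamondsuit$ and its image in $\mathcal M^\circ$ from \cite{kudla2000siegel} (the analogue of Lemma \ref{embedding contained in Siegel locus RZ space} and Definition \ref{def:L_diamond} for the good-reduction case): $\mathcal M^\diamondsuit$ consists of finitely many points on $\mathcal M^\circ$, indexed by a suitable set of self-dual lattices in $V_{Dq}^\diamondsuit\otimes\Q_q$, and the intersection $\mathcal M^\diamondsuit\cdot\mathcal M(g_qF^\delta)$ picks out those lattices compatible with $g_qF^\delta$. The upshot — exactly as in the proof of Lemma \ref{inc pullback of special cycle is special for 2nd ERL on X circ}'s analogue in the ramified setting, cf.\ Theorem \ref{test function version of the first geometric reciprocity law} and the discussion in \S\ref{subsec:1ERL_test} — is that the resulting $0$-cycle on each of the two $\spin(V_{Dq}^\circ)(\Q_q)$-orbits of components is given by the image of the same special cycle $Z\in\SC^1_{K^\circ}(V_{Dq}^\circ)$, coming from the decomposition $V_{Dq}^\circ = V_T\oplus V_{Dq}^\diamondsuit$ and the associated embedding $\spin(V_{Dq}^\diamondsuit)\hookrightarrow\spin(V_{Dq}^\circ)$ (with an appropriate test function at $q$, to be matched against Proposition \ref{prop:zanarella}). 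The fact that the cycle is the \emph{same} $Z$ on both orbits, yielding $(Z,Z)$ rather than $(Z,Z')$, should follow because the descent datum $\phi$ permutes the two orbits (Proposition \ref{prop:KR_M_structure}(\ref{prop:KR_M_struct_circ})) and commutes with the embedding $\mathcal N^\diamondsuit\hookrightarrow\mathcal N^\circ$, so the local intersection data on one orbit is carried to that on the other by Frobenius, which acts compatibly on $\Sh_{K^\circ}(V_{Dq}^\circ)$.

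The main obstacle I anticipate is the precise bookkeeping of the $q$-adic combinatorics: identifying which self-dual lattices in $V_{Dq}^\diamondsuit\otimes\Q_q$ arise from points of $\mathcal M^\diamondsuit$ lying on a given component $\mathcal M(g_qF^\delta)$, verifying (as in Proposition \ref{prop:zanarella}) that this set is precisely the support of a Kudla test function $\phi_q\in\mathcal S(V_{Dq}^{\diamondsuit}\otimes\Q_q,\Z)$, and checking that the multiplicities (coming from the local intersection numbers on $\mathbb P^1_{\overline\F_q}$, which by transversality should all be $1$) match those prescribed by Construction \ref{constr:Z_T_phi}. A secondary subtlety is tracking the two isomorphisms $V_{Dq}^\diamondsuit\otimes\A_f^q\simeq V_D^\diamondsuit\otimes\A_f^q$ and $V_{Dq}^\circ\otimes\A_f^q\simeq V_D^\circ\otimes\A_f^q$ from Remark \ref{rmk:unif_datum}(\ref{rmk:unif_datum_two}) so that the prime-to-$q$ components of $\phi$ and of the level structures $K^{q\circ}$, $K^{q\diamondsuit}$ line up — but this is exactly parallel to the bookkeeping already carried out in the proof of Theorem \ref{test function version of the first geometric reciprocity law}, so it should go through with only notational changes. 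Once the local statement is established, the passage back to $X^\circ_{\overline\F_q}$ via Proposition \ref{RZ uniformization for 2nd ERL} and the definition of $\inc^{\circ\ast}$ is immediate.
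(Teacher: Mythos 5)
Your outline reaches the right answer, and the structural reduction (RZ uniformization for $?=\diamondsuit,\circ$, reduce to a local intersection count of $\mathcal M^\diamondsuit$ with components $\mathcal M(g_q F^\delta)$, handle $(Z,Z)$ versus $(Z,Z')$ via the Frobenius action) is the same as the paper's. But you are proposing substantially more work than the paper actually does, and some of that extra work is potentially delicate. The paper treats the lemma purely qualitatively: after reducing to the local picture, it observes that the multiplicity function $g_q \mapsto m(g_q) := \deg\bigl(\mathcal N^\diamondsuit \cap \mathcal M(g_q)\bigr)$ depends only on the $\spin(V_{Dq}^\diamondsuit)(\Q_q)$-orbit of $g_q$ (obvious, since $\mathcal N^\diamondsuit$ is $\spin(V_{Dq}^\diamondsuit)(\Q_q)$-stable) and is \emph{compactly supported} (because each point of $\mathcal M$ lies on only finitely many components). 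Compact support plus $\spin(V_{Dq}^\diamondsuit)(\Q_q)$-invariance already says the sum is a finite $\Z$-linear combination of the cycles $Z(h_q, V_T\cap V_{Dq}^\circ, V_{Dq}^\circ)_{K^\circ}$, and Remark \ref{rmk:SC_definitions} tells you all such cycles lie in $\SC^1_{K^\circ}(V_{Dq}^\circ)$. That is the whole proof; no identification with an explicit test function is needed. (The paper even records in the remark immediately following the lemma that making $Z$ explicit "would not be difficult, but is unnecessary.") By contrast, your plan — pinning down exactly which self-dual lattices arise, matching the support against a specific $\phi_q$ via Proposition \ref{prop:zanarella}, and asserting that the intersection multiplicities are all $1$ "by transversality" — is in the spirit of Theorem \ref{test function version of the first geometric reciprocity law} and Lemma \ref{calculation of multiplicity for first explicit reciprocity law lemma}, which is exactly the level of precision needed in the ramified first-reciprocity-law setting but not here. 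In particular, the transversality/multiplicity-$1$ assertion is unverified in your sketch and is simply not needed: $m(g_q)$ may be any nonnegative integer, and the argument goes through regardless.

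One small remark on the $(Z,Z)$ step: the paper's version of your Frobenius argument is to note directly that $[X^\diamondsuit_{\overline\F_q}]$ is Frobenius-invariant (since $X^\diamondsuit$ is defined over $\F_q$), and the Frobenius action on the target of $\inc^{\circ\ast}$ swaps the two coordinates (Proposition \ref{prop:KR_M_structure}(\ref{prop:KR_M_struct_phi})), so only the first coordinate needs to be computed. This is the same content as your "descent datum $\phi$ commutes with the embedding" phrasing, just stated more economically.
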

\begin{proof}
Since $[X^\diamondsuit_{\overline\F_q}]$ is Frobenius-invariant, it suffices to consider the first coordinate of $\inc^{\circ\ast} ([X^\diamondsuit_{\overline\F_q}])$, which we write as $\inc^{\circ\ast} _1([X^\diamondsuit_{\overline\F_q}])$. Since $\mathcal M^\diamondsuit$ is zero-dimensional,  all the intersections of $X^\diamondsuit_{\overline\F_q}$ with supersingular curves on $X^\circ_{\overline\F_q}$ are proper. Hence $\inc^{\circ\ast} _1([X^\diamondsuit_{\overline\F_q}])$ is computed by
\begin{equation}\label{inexplicit equation for inc pullback special cycle 2nd ERL}
    \sum_{ \substack{[(g^q,g_q)]\in \spin(V_{Dq}^\diamondsuit)(\Q) \backslash \spin(V_{Dq}^\diamondsuit) (\A_f^q)\times \spin(V_{Dq}^\circ) (\Q_q) / K^{\diamondsuit q} \times K_q^\circ}} m(g_q) [(g^q, g_q)],
\end{equation}
where, for $g_q \in \spin(V_{Dq}^\circ)(\Q_q)$, $m(g_q)$ is the degree of the divisor $\mathcal N^\diamondsuit \cap \mathcal M(g_q)$ on $\mathcal M(g_q) \cong \mathbb P^1_{\overline\F_q}$. In particular, $m(g_q)$ depends only on the $\spin(V_{Dq}^\diamondsuit)(\Q_q)$-orbit of $g_q$. Moreover, $g_q \mapsto m(g_q)$ is a compactly supported function on $\spin(V_{Dq}^\diamondsuit)(\Q_q) \backslash \spin(V_{Dq}^\circ)(\Q_q)$ since each point of $\mathcal M$ lies on only finitely many irreducible components.\footnote{One can see this using that $\mathcal M$ is locally of finite type over $\overline \F_q$, or more concretely from \cite[\S4]{kudla1999hirzebruch}.}  Hence (\ref{inexplicit equation for inc pullback special cycle 2nd ERL}) is a finite linear combination of special cycles
$$Z(h_q^{(i)}, V_T\cap V_{Dq}^\circ, V_{Dq}^\circ)_{K^\circ}\in \Z\left[\Sh_{K^\circ}(V_{Dq}^\circ)\right]$$ for some elements $$h_q^{(i)}\in \spin(V_{Dq}^\diamondsuit)(\Q_q) \backslash \spin(V_{Dq}^\circ)(\Q_q)/ K_q^\circ,$$ which completes the proof.
\end{proof}
\begin{rmk}
    It would not be difficult to make Lemma \ref{inc pullback of special cycle is special for 2nd ERL on X circ} more explicit, but it is unnecessary for the main results.
\end{rmk}
\begin{thm}\label{2nd geometric ERL version on X circ}
 For any $h\in  \T^{\circ S^\circ}_O$, there exists a cycle $Z_{Dq}^\circ\in \SC^1_{K^\circ}(V_{Dq}^\circ,O)$ such that 
    $$\inc^\circ_\ast [(Z_{Dq}^\circ, Z_{Dq}^\circ)] - (T_q^{\circ 2} - 4q^2\langle q \rangle)^2 h [X^\diamondsuit_{\overline \F_q}] \in H^2_{\et, c} (X^\circ_{\overline\F_q}, O(1))$$
    has $\varpi$-power-torsion image in $H^2_{\et,!}(X^\circ_{\overline\F_q}, O(1))$.
\end{thm}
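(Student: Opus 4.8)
\textbf{Proof strategy for Theorem \ref{2nd geometric ERL version on X circ}.} The plan is to combine the explicit matrix computations from Lemma \ref{lemma inc pullback and Hecke action on X circ} with the ``Ihara-type'' surjectivity available on $\widetilde X^\circ$, passing back and forth between $X^\circ$ and $\widetilde X^\circ$ via the commutative diagrams \eqref{inc pullback commutative diagram} and \eqref{inc push forward commutative diagram}. First I would observe, using Proposition \ref{prop:KR_M_structure}(\ref{prop:KR_M_struct_phi}), that $\Frob_q^2 = \langle q\rangle$ on the image of $\inc^\circ_\ast$, so that the composite
\[
\xi^\circ \;:\; O\left[\Sh_{K^\circ}(V_{Dq}^\circ)\right]^{\oplus 2}\xrightarrow{\inc^\circ_\ast} H^2_{\et,c}(X^\circ_{\overline\F_q},O(1))\longrightarrow H^2_{\et,!}(X^\circ_{\overline\F_q},O(1))^{\Frob_q^2=\langle q\rangle}\xrightarrow{\inc^{\circ\ast}} O\left[\Sh_{K^\circ}(V_{Dq}^\circ)\right]^{\oplus 2}
\]
is given by the matrix $\left(\begin{smallmatrix}-2q & T_q^\circ\langle q\rangle^{-1}\\ T_q^\circ & -2q\end{smallmatrix}\right)$, whose determinant is $4q^2 - T_q^{\circ 2}\langle q\rangle^{-1}$, i.e.\ $\langle q\rangle^{-1}(4q^2\langle q\rangle - T_q^{\circ 2})$. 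By Lemma \ref{lemma inc pullback and Hecke action on X circ}(\ref{injectivity of inc pullback}), $\inc^{\circ\ast}$ restricted to $(T_q^{\circ 2}\langle q\rangle^{-1}-4q^2)H^2_{\et,!}(X^\circ_{\overline\F_q},O(1))^{\Frob_q^2=\langle q\rangle}$ has $\varpi$-power-torsion kernel.

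The key point is then a diagram chase. By Lemma \ref{inc pullback of special cycle is special for 2nd ERL on X circ}, $\inc^{\circ\ast}([X^\diamondsuit_{\overline\F_q}]) = (Z,Z)$ for some special cycle $Z\in\SC^1_{K^\circ}(V_{Dq}^\circ)$. Applying $h$ and then the ``section candidate'' $\inc^\circ_\ast\circ(\text{adjugate matrix})$, I want to produce a class of the form $\inc^\circ_\ast[(Z_{Dq}^\circ,Z_{Dq}^\circ)]$ agreeing with $(T_q^{\circ 2}-4q^2\langle q\rangle)^2 h[X^\diamondsuit_{\overline\F_q}]$ modulo the kernel of $H^2_{\et,c}\to H^2_{\et,!}$. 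Concretely: set $M = \left(\begin{smallmatrix}-2q & T_q^\circ\langle q\rangle^{-1}\\ T_q^\circ & -2q\end{smallmatrix}\right)$ with $\det M = \langle q\rangle^{-1}(4q^2\langle q\rangle - T_q^{\circ 2})$, and let $M^{\mathrm{adj}}$ be its adjugate, so $M^{\mathrm{adj}} M = \det M$. Put $Z_{Dq}^\circ \coloneqq$ the first coordinate of $M^{\mathrm{adj}}\cdot(hZ, hZ)$ (the two coordinates are equal because $(hZ,hZ)$ is $\Frob_q$-invariant and $M^{\mathrm{adj}}$ preserves the ``diagonal''), which lies in $\SC^1_{K^\circ}(V_{Dq}^\circ,O)$ since $\SC^1$ is stable under the Hecke algebra. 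Then $\inc^{\circ\ast}\inc^\circ_\ast[(Z_{Dq}^\circ,Z_{Dq}^\circ)] = M\cdot M^{\mathrm{adj}}\cdot(hZ,hZ) = \det M\cdot(hZ,hZ) = \langle q\rangle^{-1}(4q^2\langle q\rangle - T_q^{\circ 2})\cdot h\cdot\inc^{\circ\ast}([X^\diamondsuit_{\overline\F_q}])$. Since $\langle q\rangle$ acts invertibly on everything in sight, after absorbing the unit $\langle q\rangle^{-1}$ (and noting $(4q^2\langle q\rangle - T_q^{\circ 2}) = -(T_q^{\circ 2}-4q^2\langle q\rangle)$) this shows $\inc^{\circ\ast}\big(\inc^\circ_\ast[(Z_{Dq}^\circ,Z_{Dq}^\circ)] - (T_q^{\circ 2}-4q^2\langle q\rangle)h[X^\diamondsuit_{\overline\F_q}]\big) = 0$ in $O\left[\Sh_{K^\circ}(V_{Dq}^\circ)\right]^{\oplus 2}$. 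Rescaling by one more factor of $(T_q^{\circ 2}-4q^2\langle q\rangle)$ (i.e.\ replacing $h$ by $(T_q^{\circ 2}-4q^2\langle q\rangle)h$, legitimate since the Hecke algebra is commutative) places the difference $\inc^\circ_\ast[(Z_{Dq}^\circ,Z_{Dq}^\circ)] - (T_q^{\circ 2}-4q^2\langle q\rangle)^2 h[X^\diamondsuit_{\overline\F_q}]$, viewed in $H^2_{\et,!}(X^\circ_{\overline\F_q},O(1))$, inside $(T_q^{\circ 2}-4q^2\langle q\rangle)\cdot\ker(\inc^{\circ\ast})$; but by Lemma \ref{lemma inc pullback and Hecke action on X circ}(\ref{injectivity of inc pullback}) and the fact that $(T_q^{\circ 2}-4q^2\langle q\rangle)$ differs from $(T_q^{\circ 2}\langle q\rangle^{-1}-4q^2)$ by the unit $\langle q\rangle$, this intersection is $\varpi$-power-torsion. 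This gives the theorem.

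\textbf{Main obstacle.} The delicate point I expect is not the algebra of the adjugate trick but ensuring all of this makes sense integrally and in the correct cohomology group --- in particular, that the image of $\inc^\circ_\ast$ really does land in the $\Frob_q^2=\langle q\rangle$ part of $H^2_{\et,!}$ (so that the composite $\xi^\circ$ is even defined), and that one can pass from the vanishing of $\inc^{\circ\ast}$ of a class in $O\left[\Sh_{K^\circ}(V_{Dq}^\circ)\right]^{\oplus 2}$ (a statement about image in $H^2_{\et,!}$, torsion-free) to control of the class itself in $H^2_{\et,!}$. The torsion-freeness of the target of $\widetilde\inc^{\circ\ast}$ and the $\varpi$-power-torsion kernel statement of Lemma \ref{lemma inc pullback and Hecke action on X circ}(\ref{injectivity of inc pullback}) are exactly what bridge this gap, so the proof is really an exercise in carefully tracking which maps are injective modulo torsion; I would want to double-check the interaction between $H^2_{\et,c}\to H^2_{\et,!}\hookrightarrow H^2_{\et}$ and the Hecke/Frobenius actions at each step, using Lemma \ref{Hecke equivariance for inc 2nd ERL} throughout, and invoke $(\operatorname{BC}_{X^\circ})$ from Lemma \ref{base change for 2nd ERL} to identify everything with the characteristic-zero étale cohomology when needed.
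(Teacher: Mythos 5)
Your proof proposal follows essentially the same route as the paper. The paper defines $Z_{Dq}^\circ = (T_q^{\circ 2}-4q^2\langle q\rangle)(T_q^\circ+2q)\,h\,Z$ and verifies the goal equation by applying the matrix $M$ from Lemma \ref{lemma inc pullback and Hecke action on X circ}(\ref{matrix composite of inc maps on X circ}) and using that $\langle q\rangle$ acts trivially on special cycles; your adjugate identity $M^{\mathrm{adj}}M = (\det M)\cdot I$ is exactly the factorization $(T_q^\circ+2q)(T_q^\circ-2q)=T_q^{\circ 2}-4q^2$ in disguise, and the reliance on Lemma \ref{inc pullback of special cycle is special for 2nd ERL on X circ}, Lemma \ref{lemma inc pullback and Hecke action on X circ}(\ref{injectivity of inc pullback}), and Lemma \ref{Hecke equivariance for inc 2nd ERL} matches the paper.

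One small but real slip in your write-up: the first coordinate of $M^{\mathrm{adj}}(hZ,hZ)$ is $-(2q + T_q^\circ\langle q\rangle^{-1})hZ$, which on special cycles is $-(T_q^\circ+2q)hZ$, and $\det M = 4q^2 - T_q^{\circ 2}\langle q\rangle^{-1}$ acts on special cycles as $-(T_q^{\circ 2}-4q^2\langle q\rangle)$. So your $\inc^{\circ\ast}\inc^\circ_\ast[(Z_{Dq}^\circ,Z_{Dq}^\circ)]$ comes out to $-(T_q^{\circ 2}-4q^2\langle q\rangle)(hZ,hZ)$, whereas $\inc^{\circ\ast}\bigl((T_q^{\circ 2}-4q^2\langle q\rangle)h[X^\diamondsuit_{\overline\F_q}]\bigr)=+(T_q^{\circ 2}-4q^2\langle q\rangle)(hZ,hZ)$; these cancel only after you flip a sign. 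Concretely, you should take $Z_{Dq}^\circ$ to be \emph{minus} the first coordinate of $M^{\mathrm{adj}}\cdot((T_q^{\circ 2}-4q^2\langle q\rangle)hZ,(T_q^{\circ 2}-4q^2\langle q\rangle)hZ)$, which recovers the paper's choice $(T_q^{\circ 2}-4q^2\langle q\rangle)(T_q^\circ+2q)hZ$ exactly. With that correction, the argument agrees with the paper's.
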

\begin{proof}
    Let $Z\in \SC^1_{K^\circ} (V_{Dq}^\circ)$ be the special cycle in Lemma \ref{inc pullback of special cycle is special for 2nd ERL on X circ}, so that $$\inc^{\circ\ast} ([X^\diamondsuit_{\overline \F_q}] ) = (Z, Z),$$ and define $$Z_{Dq}^\circ\coloneqq (T_q^{\circ 2} - 4q^2\langle q \rangle) \cdot (T_q^\circ + 2q) \cdot h \cdot Z\in \SC^1_{K^\circ} (V_{Dq}^\circ,O).$$
    By Lemma \ref{lemma inc pullback and Hecke action on X circ}(\ref{injectivity of inc pullback}), it suffices to show that
    \begin{equation}\label{goal equation for pf of 2nd ERL on X circ}
        \inc^{\circ\ast}\inc^\circ_\ast [(Z_{Dq}^\circ, Z_{Dq}^\circ)] = \inc^{\circ\ast} (T_q^{\circ2} - 4q^2\langle q \rangle)^2 h [X^\diamondsuit_{\overline \F_q}].
    \end{equation}
    By Lemmas \ref{lemma inc pullback and Hecke action on X circ}(\ref{matrix composite of inc maps on X circ}) and  \ref{Hecke equivariance for inc 2nd ERL}, the left-hand side of (\ref{goal equation for pf of 2nd ERL on X circ}) is\begin{equation*}(T_q^{\circ 2} - 4q^2\langle q\rangle) \cdot (T_q^\circ + 2q)\cdot h \cdot \begin{pmatrix}
        -2q & T_q^\circ \langle q \rangle^{-1} \\ T_q^\circ & -2q 
    \end{pmatrix} \cdot \begin{pmatrix}
        Z \\ Z
    \end{pmatrix} \\ = (T_q^{\circ2}- 4q^2\langle q \rangle)^2 \cdot h \cdot \begin{pmatrix} Z \\ Z \end{pmatrix}
    \end{equation*}
because the Hecke operator $\langle q\rangle$ acts trivially on special cycles. This coincides with the right-hand side of (\ref{goal equation for pf of 2nd ERL on X circ}) by Lemma \ref{Hecke equivariance for inc 2nd ERL} and the definition of $Z$, so the proof is complete.
\end{proof}
\subsection{Pushing forward from $ X^\circ$ to $ X$}
\subsubsection{}\label{subsubsec:m_transition_2ERL}
Now let $S$ be a finite set of primes of $\Q$ such that $K_\l$ is hyperspecial for $\l\not\in S$, and let $\m \subset \T^{S\cup \set{q}}_{O}$ be a generic, non-Eisenstein maximal ideal. 
\begin{notation}
We denote by $j$ the closed embedding $ X^\circ\hookrightarrow X$ of (\ref{integral models factoring special cycle for 2nd ERL}).
\end{notation}
\begin{notation}
    Let $$\partial_{\AJ, \m, \F_q}: \CH^2(X_{\F_q}) \to H^1(\F_q, H^3_\et (X_{\overline \F_q}, O(2))_\m)$$ denote the local Abel-Jacobi map constructed analogously to (\ref{subsubsec:where_AJ}).
\end{notation}

\begin{lemma}\label{pushforward AJ factors through for 2nd ERL}
The composite map 
$$\CH^1(X^\circ_{\F_q}) \xrightarrow{j_\ast}\CH^2(X_{\F_q})\xrightarrow{\partial_{\AJ, \m, \F_q}} H^1(\F_q, H^3_\et(X_{\overline\F_q}, O(2))_\m)$$
factors through the specialization map  $$H^2(X_{\F_q}^\circ, O(1)) \to H^2(X_{\overline\F_q}^\circ, O(1))^{\Frob_q = 1}.$$

\end{lemma}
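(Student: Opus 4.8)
The plan is to reduce the assertion to the compatibility of the Gysin pushforward with the Hochschild--Serre filtration, together with a vanishing statement for the first cohomology of $X^\circ$.

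First I would record two standard facts. Since $j\colon X^\circ_{\F_q}\hookrightarrow X_{\F_q}$ is a closed immersion of smooth $\F_q$-varieties of codimension one, there is a Gysin morphism $j^{\mathrm{coh}}_\ast\colon H^2(X^\circ_{\F_q},O(1))\to H^4(X_{\F_q},O(2))$; it is compatible with cycle classes in the sense that $\cl_{X_{\F_q}}\circ j_\ast=j^{\mathrm{coh}}_\ast\circ\cl_{X^\circ_{\F_q}}$, and, being induced by a $G_{\F_q}$-equivariant morphism $R\Gamma(X^\circ_{\overline\F_q},O(1))\to R\Gamma(X_{\overline\F_q},O(2))[2]$, it is compatible with the two Hochschild--Serre filtrations, inducing on the first graded piece the geometric Gysin map $j^{\mathrm{geom}}_\ast\colon H^1(X^\circ_{\overline\F_q},O(1))\to H^3(X_{\overline\F_q},O(2))$; by Lemma \ref{base change for 2nd ERL} all of these are equivariant for the prime-to-$q$ Hecke algebra. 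Second, by Theorem \ref{thm:generic}(\ref{part:thm_generic_two}) and the base change isomorphism of Lemma \ref{base change for 2nd ERL} for $?=\emptyset$ we have $H^4(X_{\overline\F_q},O(2))_\m=0$; since $\F_q$ has cohomological dimension one this identifies $H^4(X_{\F_q},O(2))_\m=H^1(\F_q,H^3(X_{\overline\F_q},O(2))_\m)$, and under this identification $\partial_{\AJ,\m,\F_q}$ is simply the $\m$-localized cycle class map. Consequently the composite in the statement equals $(\mathrm{loc}_\m)\circ j^{\mathrm{coh}}_\ast\circ\cl_{X^\circ_{\F_q}}$.

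Next I would use that the specialization map $H^2(X^\circ_{\F_q},O(1))\to H^2(X^\circ_{\overline\F_q},O(1))^{\Frob_q=1}$ is surjective with kernel $H^1(\F_q,H^1(X^\circ_{\overline\F_q},O(1)))$ (again Hochschild--Serre together with $\mathrm{cd}(\F_q)=1$). Thus it suffices to show that $(\mathrm{loc}_\m)\circ j^{\mathrm{coh}}_\ast$ kills this kernel: then the composite descends to a well-defined map on $H^2(X^\circ_{\overline\F_q},O(1))^{\Frob_q=1}$, which is exactly the claimed factoring. By the filtration-compatibility recalled above, $j^{\mathrm{coh}}_\ast$ carries $H^1(\F_q,H^1(X^\circ_{\overline\F_q},O(1)))$ into $H^1(\F_q,H^3(X_{\overline\F_q},O(2)))$ through $H^1(\F_q,j^{\mathrm{geom}}_\ast)$, so it is enough that $j^{\mathrm{geom}}_\ast$ vanishes after localizing at $\m$. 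Being Hecke-equivariant, $j^{\mathrm{geom}}_\ast$ vanishes on the $\m$-part once $H^1(X^\circ_{\overline\F_q},O)_\m=0$. For the latter I would argue as in Lemmas \ref{lem:non-Eisenstein_coh} and \ref{lem:yucky_eisenstein_lemma}: via the base change isomorphism $H^1(X^\circ_{\overline\F_q},O)=H^1(\Sh_{K^\circ}(V^\circ_D)_{\overline\Q},O)$ and the Borel--Serre compactification, together with the description of the cohomology of $\spin_4$ Shimura varieties in Appendix \ref{sec:appendix_spin4}, every maximal ideal of $\T^{S\cup\set q}_{\GSP_4,O}$ in the support of $H^1(\Sh_{K^\circ}(V^\circ_D)_{\overline\Q},O)$ is pulled back from a maximal ideal of Galois type on a Levi of $\GSP_4$, hence is Eisenstein by Proposition \ref{prop:m_pullback_eisenstein}; since $\m$ is non-Eisenstein (\ref{subsubsec:m_transition_2ERL}) the localization vanishes.

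I expect the main obstacle to be this last input, the vanishing $H^1(\Sh_{K^\circ}(V^\circ_D)_{\overline\Q},O)_\m=0$: $X^\circ$ is a $\spin_4$ (quaternionic Hilbert modular) Shimura surface that need not be proper, so one must control both its interior and its boundary cohomology precisely enough to see that all Hecke eigensystems occurring in $H^1$ are Eisenstein for $\GSP_4$. Everything else is formal once this is in hand, and the requisite analysis of $\spin_4$ cohomology and of Eisenstein maximal ideals is supplied by Appendix \ref{sec:appendix_spin4} together with the arguments already used in Section \ref{sec:prelim_ARs}.
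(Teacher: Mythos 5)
Your Hochschild--Serre reduction is exactly the one the paper uses: it suffices to show the composite kills the subspace $H^1(\F_q,H^1(X^\circ_{\overline\F_q},O(1)))$, which by functoriality reduces to showing that
$$H^1_\et(X^\circ_{\overline\F_q},O(1))\xrightarrow{j^{\mathrm{geom}}_\ast} H^3_\et(X_{\overline\F_q},O(2))\xrightarrow{\loc_\m} H^3_\et(X_{\overline\F_q},O(2))_\m$$
vanishes. The gap is in your final step.

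You assert that $j^{\mathrm{geom}}_\ast$ is Hecke-equivariant and then try to conclude by showing $H^1(X^\circ_{\overline\F_q},O)_\m=0$ via an Eisenstein argument. But $\T^{S\cup\set q}_{\GSP_4,O}$ does not act on $X^\circ$ or its cohomology: $\spin(V_D^\circ)$ is not a Levi subgroup of $\spin(V_D)$, and the closed embedding $j\colon X^\circ\hookrightarrow X$ is a special cycle, not preserved by the $\GSP_4$ Hecke correspondences. The Hecke algebra acting on $H^\ast(X^\circ)$ is the $\spin_4$ algebra $\T^{\circ S^\circ}$, and there is no map from the $\GSP_4$ Hecke algebra under which $j^{\mathrm{geom}}_\ast$ would be equivariant. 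Consequently the localization $H^1(X^\circ_{\overline\F_q},O)_\m$ is not defined, and the sentence ``every maximal ideal of $\T^{S\cup\set q}_{\GSP_4,O}$ in the support of $H^1(\Sh_{K^\circ}(V^\circ_D)_{\overline\Q},O)$'' does not parse. (Even granting some transfer of Hecke actions, your proposed argument only controls characteristic-zero eigensystems, whereas by Margulis $H^1$ is entirely torsion, so the Eisenstein analysis would need further work.)

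The paper avoids Hecke equivariance entirely. It observes that $H^1_\et(X^\circ_{\overline\F_q},O(1))$ is $\varpi$-power-torsion by Margulis superrigidity (cf.\ Chapter IX, Corollary 7.15(iii) of the cited reference), while $H^3_\et(X_{\overline\F_q},O(2))_\m$ is $\varpi$-torsion-free by Theorem \ref{thm:generic}(\ref{part:thm_generic_two}) (via Lemma \ref{base change for 2nd ERL}). Any $O$-linear map from a torsion module to a torsion-free module is zero, so $\loc_\m\circ j^{\mathrm{geom}}_\ast=0$ and the lemma follows. This is the key input you are missing; once you invoke Margulis and the torsion-freeness from genericity, no equivariance or vanishing of $H^1(X^\circ)$ is needed.
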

\begin{proof}
For any variety $Y$ defined over $\F_q$, we have the Hochschild-Serre spectral sequence:
\begin{equation}\label{AJ over Fp in 2nd ERL}
    E^{i,j}_2 = H^i(\F_q, H^j_\et(Y_{\overline\F_q}, O(n)) \implies H^{i+j}_\et(Y, O(n)),\;\; \forall n \in \Z.
\end{equation}
Since $\F_q$ has cohomological dimension one, it follows immediately that the map $$H^i(Y, O(n)) \to H^i(Y_{\overline\F_q}, O(n))^{\Frob_q = 1}$$ is surjective. Let $H^i(Y, O(n))^0\subset H^i(Y, O(n))$ be the kernel of this map, and let $$\partial: H^i(Y, O(n))^0 \to H^1(\F_q, 
H^{i-1}(Y_{\overline\F_q}, O(n))$$ be the edge map from (\ref{AJ over Fp in 2nd ERL}). 

It then suffices to show that the map 
$$H^2(X_{\F_q}^\circ, O(1)) \xrightarrow{j_\ast} H^4(X_{\F_q}, O(2)) \xrightarrow{\partial} H^1(\F_q, H^3(X_{\overline\F_q}, O(2))_\m)$$
factors through the surjection $H^2(X_{\F_q}^\circ, O(1)) \twoheadrightarrow H^2(X_{\overline\F_q}^\circ, O(1))^{\Frob_q = 1},$ i.e. is trivial on $H^2(X_{\F_q}^\circ, O(1)) ^0$. 

Consider the commutative diagram arising from the functoriality of (\ref{AJ over Fp in 2nd ERL}):
\begin{equation*}
    \begin{tikzcd}
        H^2_\et(X^\circ_{\F_q}, O(1))^0 \arrow[r, "\partial"]\arrow[d, "j_\ast"] & H^1(\F_q, H^1_\et(X^\circ_{\overline\F_q}, O(1))) \arrow[d, "j_\ast"] \\ H^4_\et(X_{\F_q}, O(2))^0 \arrow[r, "\partial"] \arrow[d, "\loc_\m"] & H^1(\F_q, H^3_\et(X_{\overline \F_q}, O(2)))\arrow[d, "\loc_\m"]\\
        H^4_\et(X_{\F_q}, O(2))^0 _\m\arrow[r, "\partial"]  & H^1(\F_q, H^3_\et(X_{\overline \F_q}, O(2))_\m).
    \end{tikzcd}
\end{equation*}

Now note that the composite map $$H^1_\et (X^\circ_{\overline \F_q}, O(1)) \xrightarrow{j_\ast} H^3_\et (X_{\overline\F_q}, O(1)) \xrightarrow{\loc_\m } H^3_\et (X_{\overline\F_q}, O(1))_\m $$ is identically zero: indeed, the source is $\varpi$-power-torsion by \cite[Chapter IX, Corollary 7.15(iii)]{margulis1991discrete}, and the target is $\varpi$-torsion-free by Theorem \ref{thm:generic}(\ref{part:thm_generic_two}). In particular, the composite from the top left to the bottom right of the commutative diagram vanishes, which proves the lemma.
\end{proof}
\subsubsection{}
We now construct a  map $$\partial_{\ss,\m}: O[\Sh_{K^qK_q^\paramodular} (V_{Dq})]_\m \to H^1_\unr(\Q_q, H^3_\et(\Sh_K(V_D)_{\overline\Q}, O(2))_\m)$$
in several steps.
\begin{construction}\label{constr:partial_ss_2ERL}
Assume $\langle q \rangle = 1$ in $\T^S_{K, V_D, O,\m}$. 
    \begin{enumerate}
        \item For $g\in\Sh_{K^qK_q^\paramodular}(V_{Dq})$, let $B(g)\subset X^{\ss}_{\overline \F_q}$  be the image of $(g^q, \mathcal M(g_q))$ under the uniformization of Proposition \ref{RZ uniformization for 2nd ERL}.
\item \label{constr:partial_ss_2ERL_cycle_class}Define an action of $\Frob_q$ on $\Sh_{K^qK_q^\paramodular}(V_{Dq})$ by $g\mapsto gF$. Then using Proposition \ref{prop:KR_M_structure}(\ref{prop:KR_M_struct_phi}), we obtain a map\begin{equation*}
    O[\Sh_{K^qK_q^\paramodular} (V_{Dq})]^{\Frob_q = 1} \to \CH^2(X_{\F_q}, O).
\end{equation*}
\item\label{constr:partial_ss_2ERL_iso} By our assumption  $\langle q \rangle = 1$, the natural map gives an isomorphism
\begin{equation*}
     O[\Sh_{K^qK_q^\paramodular} (V_{Dq})]_\m^{\Frob_q^2 = 1}\isomorphism  O[\Sh_{K^qK_q^\paramodular} (V_{Dq})]_\m.
\end{equation*}
\item
Finally, we define  the map 
\begin{equation*}
    \partial_{ss, \m}: O[\Sh_{K^qK_q^\paramodular} (V_{Dq})]_\m \to H^1(\F_q, H^3_\et(\Sh_K(V_D)_{\overline\Q}, O(2))_\m)
\end{equation*}

to be the composite 
\begin{equation*}\begin{split}O[\Sh_{K^qK_q^\paramodular} (V_{Dq})]_\m \xrightarrow{\text{(\ref{constr:partial_ss_2ERL_iso})}^{-1}}  O[\Sh_{K^qK_q^\paramodular} (V_{Dq})]_\m^{\Frob_q^2 = 1}\xrightarrow{[g]\mapsto [g] + [gF]} O[\Sh_{K^qK_q^\paramodular} (V_{Dq})]_\m^{\Frob_q = 1}  \\ \xrightarrow{\text{(\ref{constr:partial_ss_2ERL_cycle_class})}} \CH^2(X_{\F_q},O)_\m \xrightarrow{\partial_{\AJ, \F_q, \m}} H^1(\F_q, H^3_\et(X_{\overline\F_q}, O(2))_\m) \xrightarrow[\operatorname{BC}_X] {\sim}H^1_\unr(\Q_q, H^3_\et(X_{\overline\Q}, O(2))_\m).
\end{split}
\end{equation*}
\end{enumerate}
\end{construction}
\begin{thm}\label{thm:2ERL_geometric}
    Suppose $\m\subset \T^{S\cup \set{q}}_{O}$ satisfies:
    \begin{enumerate}
        \item $\m$ is non-Eisenstein and generic.
        \item\label{item:tidy_substitute_2ERL} $\langle q \rangle = 1$ in $\T^S_{K, V_D, O, \m}$. 
    \end{enumerate}
  
    Let $h \in \T^{\circ S^\circ}_O$ be any element, and let $C\geq 0$ be an integer such that $\varpi^C$ annihilates the $\varpi$-power-torsion in $H^2(X^\circ_{\overline\Q}, O(1))$. Then there exists a special cycle $$Z_{Dq} \in\SC^2_K(V_{Dq},O)$$ such that
    $$\varpi^C\cdot \Res_{\Q_q}  \partial _{\AJ, \m} \left(j_\ast\left( (T_q^{\circ2} - 4q^2\langle q \rangle)^2 \cdot h \cdot [X^\diamondsuit]\right)\right) = \varpi^C \cdot \partial_{\ss,\m} (Z_{Dq})\in H^1_\unr(\Q_q, H^3_\et(X_{\overline\Q}, O(2))_\m).$$
\end{thm}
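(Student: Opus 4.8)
\textbf{Proof strategy for Theorem \ref{thm:2ERL_geometric}.}
The plan is to transfer the identity of Theorem \ref{2nd geometric ERL version on X circ} from the special fiber $X^\circ_{\overline\F_q}$ up to $X_{\overline\F_q}$ via the pushforward $j_\ast$, and then to recognize the resulting class as $\partial_{\ss,\m}$ applied to an explicit special cycle. First I would apply Theorem \ref{2nd geometric ERL version on X circ} to the given $h$ to produce a cycle $Z^\circ_{Dq}\in\SC^1_{K^\circ}(V^\circ_{Dq},O)$ together with the relation
$$\inc^\circ_\ast[(Z^\circ_{Dq},Z^\circ_{Dq})] - (T_q^{\circ2}-4q^2\langle q\rangle)^2 h[X^\diamondsuit_{\overline\F_q}] \in H^2_{\et,c}(X^\circ_{\overline\F_q},O(1))$$
having $\varpi$-power-torsion image in $H^2_{\et,!}(X^\circ_{\overline\F_q},O(1))$. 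In fact, since $\varpi^C$ annihilates the $\varpi$-power-torsion of $H^2(X^\circ_{\overline\Q},O(1))\cong H^2(X^\circ_{\overline\F_q},O(1))$ by Lemma \ref{base change for 2nd ERL}, $\varpi^C$ kills the difference of the images of the two classes in $H^2_{\et}(X^\circ_{\overline\F_q},O(1))$ (using that $H^2_{\et,!}$ injects into $H^2_{\et}$ modulo torsion, or working directly with $H^2_\et$). Pushing forward along $j_\ast: H^2_{\et}(X^\circ_{\overline\F_q},O(1))\to H^4_\et(X_{\overline\F_q},O(2))$ and then projecting to the $\m$-localization and applying the local Abel-Jacobi edge map, the torsion-in-$H^2$ difference contributes nothing after multiplication by $\varpi^C$, so
$$\varpi^C\cdot\partial_{\AJ,\F_q,\m}\bigl(j_\ast\bigl((T_q^{\circ2}-4q^2\langle q\rangle)^2 h[X^\diamondsuit]\bigr)\bigr) = \varpi^C\cdot\partial_{\AJ,\F_q,\m}\bigl(j_\ast\inc^\circ_\ast[(Z^\circ_{Dq},Z^\circ_{Dq})]\bigr),$$
where I have used Lemma \ref{base change for 2nd ERL} to identify $\Res_{\Q_q}\partial_{\AJ,\m}$ with $\partial_{\AJ,\F_q,\m}$ under $\operatorname{BC}_X$.

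The second step is to unwind $j_\ast\inc^\circ_\ast[(Z^\circ_{Dq},Z^\circ_{Dq})]$ in terms of the cycle classes defining $\partial_{\ss,\m}$. By construction $\inc^\circ_\ast$ sends a pair $(Z,Z)$ indexed by $\Sh_{K^\circ}(V^\circ_{Dq})\times\{0,1\}$ to the sum of the algebraic cycle classes of the supersingular curves $B^\circ_\delta(g)$ weighted by $Z$; pushing forward under $j$ produces a linear combination of the curves $B(g)$ for $g$ in the image of $\Sh_{K^\circ}(V^\circ_{Dq})$ in $\Sh_{K^qK_q^\paramodular}(V_{Dq})$, with the $\delta=0$ and $\delta=1$ pieces matching the $[g]$ versus $[gF]$ decomposition in Construction \ref{constr:partial_ss_2ERL}(\ref{constr:partial_ss_2ERL_cycle_class}). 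Concretely, the uniformizations of Proposition \ref{RZ uniformization for 2nd ERL} for $?=\circ$ and $?=\emptyset$ are compatible with the embedding $\mathcal M^\circ\hookrightarrow\mathcal M$ of irreducible components, and the Frobenius descent datum $\phi$ acts on both sides in the way recorded in Notation \ref{notation:irr_comps_labels_2ERL} and Proposition \ref{prop:KR_M_structure}(\ref{prop:KR_M_struct_phi}). So I would check that $j_\ast\inc^\circ_\ast[(Z^\circ_{Dq},Z^\circ_{Dq})]$, viewed in $\CH^2(X_{\F_q},O)_\m$, equals the image of a $\Frob_q$-invariant element $\widetilde Z\in O[\Sh_{K^qK_q^\paramodular}(V_{Dq})]^{\Frob_q=1}_\m$ under the map of Construction \ref{constr:partial_ss_2ERL}(\ref{constr:partial_ss_2ERL_cycle_class}), where $\widetilde Z$ is of the form $[Z'_{Dq}]+[Z'_{Dq}F]$ for a special cycle $Z'_{Dq}$ on $\Sh_{K^qK_q^\paramodular}(V_{Dq})$ obtained by spreading out $Z^\circ_{Dq}$; this is where Proposition \ref{moduli interpretation recovers factoring of special cycle on generic fiber} and the compatibility statements in Propositions \ref{RZ uniformization for 2nd ERL}, \ref{prop:KR_M_structure} enter. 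Tracing through the isomorphism in Construction \ref{constr:partial_ss_2ERL}(\ref{constr:partial_ss_2ERL_iso}) (which uses hypothesis (\ref{item:tidy_substitute_2ERL}) that $\langle q\rangle=1$) and the definition of $\partial_{\ss,\m}$, I would set $Z_{Dq}\coloneqq Z'_{Dq}$ and conclude
$$\varpi^C\cdot\partial_{\AJ,\F_q,\m}\bigl(j_\ast\inc^\circ_\ast[(Z^\circ_{Dq},Z^\circ_{Dq})]\bigr) = \varpi^C\cdot\partial_{\ss,\m}(Z_{Dq}),$$
which combined with the first step gives the theorem. The Hecke-equivariance needed to stay inside $\SC^2_K(V_{Dq},O)$ after applying $h$ and $T^\circ_q$ comes from Lemma \ref{Hecke equivariance for inc 2nd ERL} and the fact that prime-to-$q$ Hecke operators preserve special cycles.

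The main obstacle I anticipate is the bookkeeping in the second step: matching the two-component structure of $\inc^\circ_\ast$ (indexed by $\{0,1\}$, which records whether the supersingular component of $\mathcal M^\circ$ lies in the $\phi$-orbit of $\mathcal M(1)$ or of $\mathcal M(1)F$) with the $[g]+[gF]$ symmetrization built into $\partial_{\ss,\m}$, and verifying that the Frobenius structure is exactly as claimed so that the class actually lies in the $\Frob_q=1$ (equivalently, after localization, $\Frob_q^2=1$) eigenspace where $\partial_{\ss,\m}$ is defined. One subtlety is that $j_\ast$ applied to a curve on $X^\circ_{\overline\F_q}$ may a priori meet the exceptional behavior of the uniformization, but since both $\mathcal M^\circ$ and $\mathcal M$ have all irreducible components isomorphic to $\mathbb P^1_{\overline\F_q}$ (Proposition \ref{prop:KR_M_structure}) and $\mathcal M^\circ$ is literally a union of components of $\mathcal M$, the pushforward of a component is just the same $\mathbb P^1$, so no correction terms arise. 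A second, more minor point is ensuring the local Abel-Jacobi map is well-defined on the relevant Chow groups, which follows from Lemma \ref{pushforward AJ factors through for 2nd ERL} together with the non-Eisenstein and generic hypotheses on $\m$ via Theorem \ref{thm:generic}(\ref{part:thm_generic_two}).
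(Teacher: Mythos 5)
Your proposal is correct and follows essentially the same approach as the paper: apply Theorem \ref{2nd geometric ERL version on X circ}, push forward along $j$ using the compatibility of the Rapoport--Zink uniformizations to match the $\{0,1\}$-indexed supersingular curves on $X^\circ_{\overline\F_q}$ with the $[g]+[gF]$ symmetrization in $\partial_{\ss,\m}$, and use Lemma \ref{pushforward AJ factors through for 2nd ERL} to reduce everything to the $\varpi$-power-torsion statement in $H^2_\et(X^\circ_{\overline\F_q},O(1))$, which the factor $\varpi^C$ then annihilates.
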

\begin{proof}
Let 
\begin{equation*}\label{equation with overline j for 2nd ERL}\overline j: \Sh_{K^\circ}(V_{Dq}^\circ) \xrightarrow{\cdot g_0^q} \Sh_{K^qK_q^\paramodular} (V_{Dq})\end{equation*}
 be the natural map, induced by the embedding $V_{Dq}^\circ\hookrightarrow V_{Dq}$ from Notation \ref{notation:V_Dq_etc}(\ref{item:V_Dq_etc}).        
In particular, we have
\begin{equation}\label{compatibility of pushforwards ss locus for 2nd ERL}
    j(B_\delta^\circ(g)) = B(\overline j(g) F^\delta),\;\; (g, \delta) \in \Sh_{K^\circ}(V_{Dq}^\circ) \times \set{0,1}.
\end{equation}
    Then we take $$Z_{Dq}^\circ = \sum_{g\in \Sh_{K^\circ}(V_{Dq}^\circ)}n(g)[g]\in \SC^1_{K^\circ} (V_{Dq}^\circ,O)$$
   to be the special cycle provided by Theorem \ref{2nd geometric ERL version on X circ}, and let $$Z_{Dq} = \overline j\left( Z_{Dq}^\circ\right) \in \SC_K^2 (V_{Dq},O)$$ be the pushforward. 
If we let $$\cl_\ss^\circ(Z_{Dq}^\circ) = \sum_{g\in \Sh_{K^\circ}(V_{Dq}^\circ)} n(g) \left(\left[B_0^\circ(g)\right] + \left[B_1^\circ(g)\right]\right) \in \CH^1(X^\circ_{\F_q}, O),$$
then by (\ref{compatibility of pushforwards ss locus for 2nd ERL}),
$$\partial_{\ss,\m} (Z_{Dq}) = \operatorname{BC}_X\left(\partial_{\AJ,\m, \F_q} j_\ast \cl_\ss^\circ(Z_{Dq}^\circ) \right)\in H^1_\unr(\Q_q, H^3(X_{\overline\Q}, O(2))_\m).$$
    In light of Lemma \ref{pushforward AJ factors through for 2nd ERL} and the local-global compatibility of the Abel-Jacobi map, it then suffices to show that $$\cl_\ss^\circ(Z_{Dq}^\circ) - (T_q^{\circ2} - 4q^2\langle q \rangle)^2 \cdot h \cdot [X^\diamondsuit_{\F_q}] \in \CH^1(X^\circ_{\F_q})$$ has $\varpi$-power-torsion image in $H^2_\et(X^\circ_{\overline\F_q}, O(1))$. But this is precisely the content of Theorem \ref{2nd geometric ERL version on X circ}. 
    \end{proof}
    \subsubsection{}
    Now let $\pi$, $S$, and $E_0$ be as in  Notation \ref{notation:pi_basic}, and let $\p$ be a prime of $E_0$ satisfying Assumption \ref{assumptions_on_p}. We  set $\m \coloneqq \m_{\pi,\p}$ as usual.
    \begin{cor} \label{cor:usable_geometric_2ERL}
    Suppose $q\nmid D$ is $n$-admissible and $K$ is an $S$-tidy level structure for $\spin(V_D)$. Then 
for any $\alpha\in \Test_K(V_D, \pi, O/\varpi^n)$ and any $h\in \T^{\circ S^\circ}_O$, 
        $$\ord_\varpi \lambda_n^D(q) \geq \ord_ \varpi \loc_q \circ \alpha_\ast\circ \partial _{\AJ, \m} \left(j_\ast\left( (T_q^{\circ2} - 4q^2\langle q \rangle)^2 \cdot h \cdot [X^\diamondsuit]\right)\right) - C,$$
        where $C$ is a constant independent of $n$ and $q$. 
    \end{cor}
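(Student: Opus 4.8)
The plan is to deduce Corollary~\ref{cor:usable_geometric_2ERL} from Theorem~\ref{thm:2ERL_geometric} by recognizing the right-hand side of that reciprocity law, after composing with $\alpha_\ast$, as the value of a $\pi$-eigen test vector on the definite Shimura set $\Sh_{K^qK_q^{\paramodular}}(V_{Dq})$ applied to one of the special cycles contributing to $\lambda_n^D(q)$.

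First I would check that all the maps in the statement are defined and that the hypotheses of Theorem~\ref{thm:2ERL_geometric} and Construction~\ref{constr:partial_ss_2ERL} hold. Since $q\notin S$ and $q\nmid D$, the group $K_q$ is hyperspecial and $\Sh_K(V_D)$ has good reduction at $q$; by Lemma~\ref{lem:m_is_nonEisgeneric} and Theorem~\ref{thm:generic}(\ref{part:thm_generic_two}) the \'etale realization of $\CH^2(\Sh_K(V_D))_\m$ vanishes, so $\partial_{\AJ,\m}$ is defined on $j_\ast[X^\diamondsuit]$ and its Hecke translate. By local--global compatibility of the Abel--Jacobi map, the restriction at $q$ of $\alpha_\ast\partial_{\AJ,\m}(\cdot)$ is unramified and coincides with the special-fibre class $\partial_{\AJ,\m,\F_q}(\cdot)$; hence $\loc_q$ is defined on it, and $\loc_q\circ\alpha_\ast\circ\partial_{\AJ,\m}$ equals $\alpha_\ast\circ\Res_{\Q_q}\circ\partial_{\AJ,\m}$ followed by the identification $H^1_f(\Q_q,T_{\pi,n})\isomorphism O/\varpi^n$ of Proposition~\ref{prop:local_adm_free} (which uses $n$-admissibility of $q$). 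Lemma~\ref{lem:S_tidy_upshot} gives $\langle q\rangle=1$ on the relevant Hecke module, so Theorem~\ref{thm:2ERL_geometric} and Construction~\ref{constr:partial_ss_2ERL} apply, and with the given $h$ we obtain a special cycle $Z_{Dq}$ on $\Sh_{K^qK_q^{\paramodular}}(V_{Dq})$ and an integer $C$ — an exponent killing the $\varpi$-power torsion of $H^2(\Sh_{K^\circ}(V_D^\circ)_{\overline\Q},O(1))$, which is independent of $q$ and $n$ because under the running hypotheses on $q$ (Assumption~\ref{assume:q_2ERL}) the level $K^\circ$ is hyperspecial at $q$ and hence $\Sh_{K^\circ}(V_D^\circ)$ does not depend on $q$ — such that
\[
\varpi^C\cdot\Res_{\Q_q}\partial_{\AJ,\m}\bigl(j_\ast((T_q^{\circ2}-4q^2\langle q\rangle)^2\,h\,[X^\diamondsuit])\bigr)=\varpi^C\cdot\partial_{\ss,\m}(Z_{Dq}).
\]

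Next I would set $\beta$ to be the composite of $\partial_{\ss,\m}$, the map $\alpha_\ast$ on $H^1_\unr(\Q_q,-)$, and the identification $H^1_f(\Q_q,T_{\pi,n})\isomorphism O/\varpi^n$. Since $\partial_{\ss,\m}$ and $\alpha_\ast$ are equivariant for the prime-to-$q$ Hecke algebra and $\alpha$ has Hecke action through the eigensystem of $\pi$, the functional $\beta$ lies in $\Test_{K^qK_q^{\paramodular}}(V_{Dq},\pi,O/\varpi^n)$ (the test-vector module being independent of the choice of $S$, cf.\ the remark following Definition~\ref{def:test_vectors}); and since neatness of $K^q$ forces neatness of $K^qK_q^{\paramodular}$, the latter is a neat $S$-level structure for $\spin(V_{Dq})$, so $\beta(Z_{Dq})\in\lambda_n^D(q;K^qK_q^{\paramodular})\subseteq\lambda_n^D(q)$. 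Applying $\alpha_\ast$ and the local identification to the displayed equality gives $\varpi^C x=\varpi^C\beta(Z_{Dq})$ in $O/\varpi^n$, where $x$ denotes the quantity whose order appears on the right of the corollary; thus $x$ differs from an element of $\lambda_n^D(q)$ by an element annihilated by $\varpi^C$, and a short comparison of torsion orders in $O/\varpi^n$ then yields $\ord_\varpi\lambda_n^D(q)\geq\ord_\varpi x-C$. The substantive geometric input, Theorem~\ref{thm:2ERL_geometric}, is already established; within this corollary the only step needing care is the bookkeeping identifying $\alpha_\ast\circ\partial_{\ss,\m}$ with a genuine test vector on $\Sh_{K^qK_q^{\paramodular}}(V_{Dq})$ and $Z_{Dq}$ with a special cycle admissible in the definition of $\lambda_n^D(q)$ — in particular, that the paramodular level at $q$ is permitted there.
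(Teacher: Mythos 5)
Your proposal is correct and follows essentially the same route as the paper: both define the composite $\alpha'=\alpha_\ast\circ\partial_{\ss,\m}$ followed by the identification $H^1_\unr(\Q_q,T_{\pi,n})\simeq O/\varpi^n$ from Proposition~\ref{prop:local_adm_free}, observe that this is a test vector in $\Test_{K^qK_q^\paramodular}(V_{Dq},\pi,O/\varpi^n)$, and apply it to the special cycle $Z_{Dq}$ produced by Theorem~\ref{thm:2ERL_geometric} to land inside $\lambda_n^D(q)$. The extra bookkeeping you include (Hecke equivariance, the paramodular level at $q$ being admissible in Definition~\ref{def:S_tidy} because $q\in\div(Dq)$, and the uniformity of $C$ in $n$ and $q$) is all correct and only spells out what the paper leaves implicit.
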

    \begin{proof}
    Recall the conditions on $\m$ from Theorem \ref{thm:2ERL_geometric} are satisfied by  Lemmas \ref{lem:m_is_nonEisgeneric} and \ref{lem:S_tidy_upshot}.
        Let $\alpha'$ be the composite map $$O\left[\Sh_{K^qK_q^\paramodular}(V_{Dq})\right]_\m \xrightarrow{\partial_{\ss,\m}}H^1_\unr(\Q_q, H^3_\et(\Sh_K(V_D)_{\overline \Q}, O(2))_\m ) \xrightarrow{\alpha_\ast} H^1_\unr(\Q_q, T_{\pi,n}) \simeq O/\varpi^n,$$ where the last isomorphism is from Proposition \ref{prop:local_adm_free}.
        Then $\lambda^D_n(q)$ contains $\alpha' (Z_{Dq})$, where $Z_{Dq}\in \SC_K^2(V_{Dq},O)$ is the cycle from Theorem \ref{thm:2ERL_geometric}, and the corollary follows. 
    \end{proof}
\section{Second explicit reciprocity law}\label{sec:2ERL_contd}
The goal of this section is to prove Theorem \ref{ultimate theorem for second TRL} below.
\subsection{Setup and notation}
Let $\pi$, $S$, and $E_0$ be as in Notation \ref{notation:pi_basic}, and  fix the following data:
\begin{itemize}[label = $\circ$]
    \item A  prime $\p$ of $E_0$ satisfying Assumption \ref{assumptions_on_p}, with residue characteristic $p$.
    \item A squarefree product $D\geq 1$ of primes in $S$, with $\sigma(D)$ even.
    \item An $S$-tidy level structure $K$ for $\spin(V_D)$. 
\item A Schwartz function $\phi = \bigotimes'_\l \phi_\l\in \mathcal S(V_D^2\otimes \A_f, O)^K$.
\end{itemize}
\subsubsection{}For all $\l\not\in S$, 
let $L_\l \subset V_D\otimes \Q_\l$ be the unique self-dual lattice stabilized by $K_\l$.
\subsection{ Modifying the test function}
In several steps, we now construct a new Schwartz function $\phi'$ which coincides with $\phi$ at cofinitely many primes.
\begin{construction}\label{constr:2ERL_modifying}
\leavevmode
    \begin{enumerate}
        \item Label the finitely many imaginary quadratic fields contained in $\rho_\pi = \rho_{\pi,\p}$ as $E_1,\ldots, E_s$ for some $s\geq 0$. 
        \item 
For each $ 1\leq i\leq s $, fix an odd prime $ \l_i\not\in S\union\set {p,\l_1,\ldots, \l_{i -1}} $ inert in $ E_i $ such that $\rho_{\pi} (\Frobenius_{\l_i}) $ has distinct eigenvalues (possible \emph{a fortiori} by Assumption \ref{assumptions_on_p}(\ref{assume:pi_generic})). 
\item Let
\begin{equation*}
X_{\l_i} =\set {(x, y)\in V_D ^ 2\otimes\Q_{\l_i}\,:\, x\cdot x\in (\Z_{\l_i} ^\times) ^ 2,\, x\cdot y\in \Z_{\l_i},\, y\cdot y\in\Z_{\l_i} ^\times - (\Z_{\l_i} ^\times) ^ 2}.
\end{equation*}
\item 
 We define a test function
\begin{equation*}
\phi'_{\l_i} = (\l_i +1)\cdot\blackboardone_{\set {(x, y)\in L_{\l_i} ^ 2\intersection X_{\l_i}}} +\blackboardone_{\set {(x, y)\in L_{\l_i}\times (\l_i ^{-1} L - L_{\l_i})\intersection X_{\l_i}}}\in\mathcal S (V_D ^ 2\otimes\Q_{\l_i},\Z) ^ {K_{\l_i}}.
\end{equation*}
\item 
Now fix a prime $ \l_0\not\in S\union\set {p,\l_1,\ldots, \l_s} $ such that $\rho_{\pi} (\Frobenius_{\l_0}) $ has distinct eigenvalues.
\item Let 
\begin{equation*}
\phi'_{\l_0}\in\mathcal S (V_D ^ 2\otimes\Q_{\l_0},\Z) ^ {K_{\l_0}}
\end{equation*}
be the indicator function of the set
\begin{equation*}
\set {(x, y)\in L_{\l_0} ^ 2\,:\, x\cdot x\in\Z ^\times_{\l_0} - (\Z_{\l_0} ^\times) ^ 2,\, x\cdot y\in \l_0\Z_{\l_0},\, y\cdot  y\in \l_0\Z_{\l_0} ^\times}.
\end{equation*}
\item 
Finally, we define
\begin{equation*}
\phi'\coloneqq\bigotimes_{\l\neq \l_i}\phi_\l\otimes\bigotimes_{i = 0} ^ s\phi'_{\l_i}\in\mathcal S (V_D ^ 2\otimes\A_f, O) ^ K.
\end{equation*}    \end{enumerate}
\end{construction}


\begin{lemma}\label{Lemma non-vanishing with modified Schwartz function or second euro}
Suppose $\kappa^D(1,\phi; K) \neq 0$.
Then
there exists  a test function $\alpha\in \Test_K(V_D, \pi, O)$ and a matrix 
 $T\in\symmetric_2 (\Q)_{>0} $ satisfying \ref{ass:T1} and \ref{ass:T2} of (\ref{subsubsec:where_T_assumptions}) such that $$\alpha_\ast\circ\partial_{\AJ,\m}\left (Z (T,\phi')_K\right)\neq 0. $$
Moreover, for all $1 \leq i \leq s$,  $ \l_i $ is split in the quadratic field $ F\coloneqq\Q (\sqrt {T_{1 1}}) $  and $T$ lies in $\GL_2(\Z_{(\l_i)})$.
\end{lemma}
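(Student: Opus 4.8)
The proof plan is to exploit the change-of-test-functions technology from \S\ref{sec:rep_theory}, applied at each of the auxiliary primes $\l_0, \l_1, \ldots, \l_s$ in turn, together with a Chebotarev/non-entanglement argument to control the support of the special cycle in the cohomology of the auxiliary $\spin_4$ Shimura varieties (which are governed by Hilbert modular forms).

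First I would observe that each of the test functions $\phi'_{\l_i}$ for $1 \leq i \leq s$ and $\phi'_{\l_0}$ was designed to satisfy the hypotheses of Lemma \ref{lem:shortcut_C_chi_generic}: the key points are positivity, the prescribed support and translation-invariance properties, and the existence of a pair $(x,y)$ with $y \cdot v_1 = 0$ and $\phi(x,y) \neq 0$. I would verify these conditions directly from the explicit definitions in Construction \ref{constr:2ERL_modifying}; the conditions $x\cdot x \in (\Z_{\l_i}^\times)^2$ versus $y\cdot y\in \Z_{\l_i}^\times \setminus (\Z_{\l_i}^\times)^2$ are exactly what will force the discriminant condition \ref{ass:T1} and the splitting of $\l_i$ in $F$, while the $\l_0$-function is designed to force \ref{ass:T2} (a nontrivial local Hasse invariant at a prime $\l_0 \nmid D$, since $T_{11}\in \Z_{\l_0}^\times \setminus (\Z_{\l_0}^\times)^2$ but $T_{12}, T_{22} \in \l_0\Z_{\l_0}$ makes $\disc$ a unit yet the Hasse invariant nontrivial). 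By Proposition \ref{prop:eigenvalues_generic}, the local component $\pi_{\l_i}$ corresponds to a generic unramified character $\chi_{\l_i}$ since $\rho_\pi(\Frob_{\l_i})$ has distinct eigenvalues, so Corollary \ref{cor:changing_kappa} applies: starting from $\kappa^D(1,\phi;K) \neq 0$, I would change the test function one prime at a time from $\phi_{\l_i}$ to $\phi'_{\l_i}$, at each stage preserving nonvanishing of $\kappa^D(1, -; K)$. After finitely many steps this yields $\kappa^D(1, \phi'; K)\neq 0$, hence by the definition in Construction \ref{kappa_constr} there is some $\alpha\in \Test_K(V_D, \pi, O)$ and some $T\in \Sym_2(\Q)_{\geq 0}$ with $\alpha_\ast\circ \partial_{\AJ,\m}(Z(T,\phi')_K) \neq 0$.

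Next I would extract the arithmetic conclusions about $T$ from the shape of $\phi'$. Since $Z(T,\phi')_K = 0$ whenever $\Omega_{T,V_D}(\Q) = \emptyset$ and $\phi'$ is supported, at $\l_i$ and $\l_0$, on the sets $X_{\l_i}$ and the analogous set at $\l_0$, nonvanishing forces $T_{11} \in (\Z_{\l_i}^\times)^2$ (so $\l_i$ is split in $F$, being unramified with $T_{11}$ a local square), $T$ has unit determinant at each $\l_i$ (giving $T\in \GL_2(\Z_{(\l_i)})$ after combining with the condition at other primes), and at $\l_0$ the local quadratic form $T$ is anisotropic over $\Q_{\l_0}$ and has nontrivial Hasse invariant — this is \ref{ass:T2}. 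The condition \ref{ass:T1}, namely $T_{11}\in \Q^\times\setminus (\Q^\times)^2$, follows because $T_{11}$ is a nonsquare $\l_0$-adic unit by the $\l_0$-support condition, hence a nonsquare rational number, and also $T$ is positive definite so $T_{11} > 0$; I should be a little careful here, as $T$ a priori only lies in $\Sym_2(\Q)_{\geq 0}$, but the local conditions at $\l_0$ force $T$ nondegenerate, hence positive definite.

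The main obstacle I anticipate is not in the test-function manipulations themselves but in ensuring that Corollary \ref{cor:changing_kappa} can legitimately be iterated: each application requires the prime $\l_i$ to be one at which $\rho_\pi(\Frob_{\l_i})$ has distinct eigenvalues and at which $K_{\l_i}$ is hyperspecial with $g_{\l_i}$ (implicitly) trivial, and the successive changes must not disturb the local data at previously-modified primes — this is automatic since the primes $\l_0, \l_1, \ldots, \l_s$ are chosen pairwise distinct and outside $S$, so $\phi^{\l_i}$ absorbs all earlier modifications. A secondary subtlety is that Corollary \ref{cor:changing_kappa} is stated for a single prime $q = \l_i$ with $\phi_q^\circ$ satisfying the hypotheses of Lemma \ref{lem:shortcut_C_chi_generic}; so I must check those three hypotheses hold for each $\phi'_{\l_i}$ and $\phi'_{\l_0}$ individually, which is a finite and explicit (if slightly tedious) lattice-counting verification analogous to the calculation carried out in the proof of Lemma \ref{checking C chi four first ERL}. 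Finally I would record that $T$ lies in $\GL_2(\Z_{(\l_i)})$ for all $1\le i\le s$ and that each $\l_i$ splits in $F$, completing the proof.
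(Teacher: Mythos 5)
Your proposal matches the paper's proof: iterate Corollary \ref{cor:changing_kappa} at $\l_0,\ldots,\l_s$ to pass from $\phi$ to $\phi'$ preserving nonvanishing of $\kappa^D(1,-;K)$, then read off \ref{ass:T1}, \ref{ass:T2}, the splitting of $\l_i$ in $F$, and $T\in\GL_2(\Z_{(\l_i)})$ from the local support conditions built into $\phi'_{\l_0},\ldots,\phi'_{\l_s}$. Two small inaccuracies worth noting (neither a gap): verifying the hypotheses of Lemma \ref{lem:shortcut_C_chi_generic} is a direct sign/support/translation check, not a lattice-counting argument ``analogous to Lemma \ref{checking C chi four first ERL}'' --- that shortcut exists precisely to avoid the Fourier computation; and at $\l_0$ the support forces $T_{22}\in\l_0\Z_{\l_0}^\times$, so $\det T$ has $\l_0$-valuation $1$ rather than being a unit, though the Hasse-invariant conclusion is still correct since the diagonalization is $\langle u, \l_0 v\rangle$ with $u$ a nonsquare unit.
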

\begin{proof}
Repeatedly applying Corollary \ref{cor:changing_kappa}, we conclude 
$\kappa^D(1, \phi'; K) \neq 0$; i.e.,
there exists $ T\in\symmetric_2 (\Q)_{\geq 0} $ and $\alpha\in \Test_K(V_D, \pi, O)$ such that $$\alpha_\ast\circ\partial_{\AJ,\m} \left(Z (T,\phi')_K\right)\neq 0.$$

By definition of $ Z (T,\phi')_K $, it follows that, for all $ 0\leq i\leq s $, there exists $ x, y\in\Omega_{T, V_D} (\Q_{\l_i}) $ such that $\phi'_{\l_i} (x, y)\neq 0 $. Now the choice of $\phi'_{\l_0} $ implies that $ T $ is positive definite, $ T_{1 1} $ is not a rational square, and the quadratic space defined by $ T $ has nontrivial Hasse invariant at $ \l_0\nomad D $, and this proves the first claim of the lemma.  Similarly, for each $1 \leq i \leq s$,  $\l_i$ is split in $F$ and $T$ lies in $\GL_2(\Z_{(\l_i)})$ by the choice of $\phi'_{\l_i}$.
\end{proof}
\subsubsection{}
We will now assume $\kappa^D(1, \phi; K) \neq 0$, and fix $\alpha$ and $ T $ satisfying the conclusion of Lemma \ref{Lemma non-vanishing with modified Schwartz function or second euro}.
We choose a base point $(e_1^T, e_2^T) \in \Omega_{T, V_D} (\Q)$ such that $e_1^T, e_2^T\in L_{\l_i}$ for all $1 \leq i \leq s$ (possible by the last point of Lemma \ref{Lemma non-vanishing with modified Schwartz function or second euro}), and we adopt the setup of \S\ref{sec:2ERL_geometry} for this choice of $T$, $(e_1^T,e_2^T)$, and $K$; the choice of $g_0$ is postponed to Proposition \ref{prop:intermediate_2ERL}.
\begin{definition}\label{def:T_l_circ}
    For each $\l$ split in $F$ and each hyperspecial subgroup $K_\l^\circ\subset\spin(V_D^\circ)(\Q_\l)$, we define a Hecke operator $T_\l^\circ\in \T_\l^\circ$ as follows. Choose an isomorphism $$\spin(V_D^\circ)(\Q_\l)\simeq G_\l\coloneqq \set{(g_1,g_2)\in \GL_2(\Q_\l)\times \GL_2(\Q_\l)\,:\,\det g_1=\det g_2}$$
    mapping $K_\l^\circ$ to $G_\l\cap\left(\GL_2(\Z_\l)\times \GL_2(\Z_\l)\right)$. Then $T_\l^\circ$ is the double coset operator represented by $$\left(\begin{pmatrix} \l&0\\ 0&1 \end{pmatrix}, \begin{pmatrix} \l&0\\ 0&1 \end{pmatrix}\right)\in G_\l.$$
\end{definition}
\begin{prop} \label{prop:intermediate_2ERL} There exists an element $ g_0\in\spin (V_D) (\A_f ^ {\set{\l_1,\ldots, \l_s}}) $ such that, adopting the notation of (\ref{subsubsec:g0_setup_2ERL}) for this choice of $ g_0 $:
\begin{enumerate}
\item \label{prop:intermediate_2ERL_1} The compact open subgroups $ K ^\circ_{\l_i}\subset\spin (V_D ^\circ) (\Q_{\l_i}) $ are hyperspecial for $ 1\leq i\leq s $.
\item \label{prop:intermediate_2ERL_2} We have $$\alpha_\ast\circ\partial_{\AJ,\m} j_\ast\left (\product_{i = 1} ^ s T ^\circ_{\l_i}\cdot\left [\shimmer_{K ^\diamond} (V_D ^\diamond)\right]\right)\neq 0, $$
where $\left [\shimmer_{K ^\diamond} (V_D ^\diamond)\right]\in\CH ^ 1 (\shimmer_{K ^\circ} (V ^\circ_D)) $ is the class of the cycle from (\ref{factoring special cycle for 2nd ERL}).
\end{enumerate}
\end{prop}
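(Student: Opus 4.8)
The statement asserts two things about a single choice of $g_0 \in \spin(V_D)(\A_f^{\set{\l_1,\ldots,\l_s}})$: that the intermediate groups $K^\circ_{\l_i}$ are hyperspecial at the auxiliary primes $\l_i$, and that after applying the product of the Hecke operators $T^\circ_{\l_i}$ the Abel-Jacobi class of the pushed-forward cycle $\left[\Sh_{K^\diamondsuit}(V_D^\diamondsuit)\right]$ is still nonzero in $H^1_f(\Q, T_{\pi,\p})$ after composing with $\alpha$. The plan is to construct $g_0$ factor by factor. Away from $\l_1,\ldots,\l_s$ we have no constraint, so we may take $g_{0,v} = 1$ there (with the convention that the cycle $Z(T,\phi')_K$ is supported on cosets $\spin(V_T)(\A_f)g K$ only finitely many of which matter). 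The real content is at the primes $\l_i$: I would use Proposition \ref{prop:zanarella} (the injectivity/surjectivity of the double-coset-to-lattice map) together with the explicit choice of test function $\phi'_{\l_i}$ from Construction \ref{constr:2ERL_modifying} to choose $g_{0,\l_i}$ so that the local component of the cycle $Z(T,\phi')_K$ at $\l_i$ is precisely the $\l_i$-Hecke translate (by $T^\circ_{\l_i}$) of the ``base'' special cycle coming from $\Sh_{K^\diamondsuit}(V_D^\diamondsuit)$. Concretely: the support condition defining $\phi'_{\l_i}$ was designed so that the lattices appearing in the sum (\ref{eq:Z_T_phi_K}) are exactly the self-dual lattice $L_{\l_i}$ (with multiplicity $\l_i+1$) together with the $\l_i+1$ lattices $L_1 \subset V_1\otimes \Q_{\l_i}$ with $L_1 \cap V_T = $ a fixed index-$\l_i$ sublattice; by Proposition \ref{prop:zanarella} these correspond to the $\l_i+1$ cosets in the support of $T^\circ_{\l_i}$ acting on the spherical vector, so the local component of $Z(T,\phi')_K$ at $\l_i$ literally equals $(1 + T^\circ_{\l_i})$ applied to the spherical cycle. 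Choosing $g_{0,\l_i}$ to be the element realizing the hyperspecial subgroup $K^\circ_{\l_i} = g_{0,\l_i}K_{\l_i}g_{0,\l_i}^{-1}\cap \spin(V_D^\circ)(\Q_{\l_i})$ (which exists because $\l_i$ is split in $F$ and $\l_i \nmid D$, so $V_D^\circ\otimes\Q_{\l_i}$ is split and $K_{\l_i}$ is hyperspecial) gives (\ref{prop:intermediate_2ERL_1}).

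For (\ref{prop:intermediate_2ERL_2}), I would start from the nonvanishing $\alpha_\ast\circ\partial_{\AJ,\m}(Z(T,\phi')_K) \neq 0$ provided by Lemma \ref{Lemma non-vanishing with modified Schwartz function or second euro}, and rewrite $Z(T,\phi')_K$ in terms of the factorization (\ref{factoring special cycle for 2nd ERL}) of the special cycle. Using the compatibility of special cycles with pushforward along $\Sh_{K^\diamondsuit}(V_D^\diamondsuit)\to\Sh_{K^\circ}(V_D^\circ)\to\Sh_K(V_D)$, the identity $\phi' = \bigotimes_{\l\neq\l_i}\phi_\l \otimes \bigotimes\phi'_{\l_i}$, and the local computation above identifying the $\l_i$-components with $(1+T^\circ_{\l_i})$-translates, one obtains
$$Z(T,\phi')_K = \bigl(\text{(away-from-}\l_i\text{ stuff)}\bigr)\cdot j_\ast\!\left(\prod_{i=1}^s (1 + T^\circ_{\l_i})\cdot [\Sh_{K^\diamondsuit}(V_D^\diamondsuit)]\right)$$
up to a constant and a choice of $g_{0}^{\set{\l_1,\ldots,\l_s}}$-translate. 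So nonvanishing of $\alpha_\ast\circ\partial_{\AJ,\m}$ applied to $Z(T,\phi')_K$ forces nonvanishing of $\alpha_\ast \circ \partial_{\AJ,\m}\circ j_\ast$ applied to $\prod(1+T^\circ_{\l_i})[\Sh_{K^\diamondsuit}(V_D^\diamondsuit)]$. Expanding $\prod(1+T^\circ_{\l_i}) = \sum_{I\subseteq\{1,\ldots,s\}}\prod_{i\in I}T^\circ_{\l_i}$, at least one summand indexed by some subset $I$ has nonzero image; to get the specific summand $\prod_{i=1}^s T^\circ_{\l_i}$ (the full product), I would instead choose the test functions $\phi'_{\l_i}$ — as is already done in Construction \ref{constr:2ERL_modifying}, where the coefficient $\l_i+1$ in front of $\blackboardone_{L_{\l_i}^2\cap X_{\l_i}}$ is exactly the degree of the spherical cycle under $T^\circ_{\l_i}$ — so that the local cycle is proportional to $T^\circ_{\l_i}$ applied to the spherical cycle rather than to $1 + T^\circ_{\l_i}$. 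I would recheck that the support conditions in $\phi'_{\l_i}$ (namely $x\cdot x \in (\Z_{\l_i}^\times)^2$, $y\cdot y \in \Z_{\l_i}^\times\setminus(\Z_{\l_i}^\times)^2$, the last summand having $y$ of valuation $-1$) precisely cut out the lattices in the $T^\circ_{\l_i}$-translate of the spherical vector under Proposition \ref{prop:zanarella}, which is the purely local Hecke-theoretic bookkeeping.

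The main obstacle I anticipate is the bookkeeping in the last step: pinning down that the particular combination of support conditions in $\phi'_{\l_i}$ corresponds, under the double-coset-to-lattice dictionary of Proposition \ref{prop:zanarella}, exactly to the operator $T^\circ_{\l_i}$ (with the right normalization and no stray factors of $\l_i$ or $\langle\l_i\rangle$), and that the pushforward $j_\ast$ intertwines the special-cycle operations correctly — i.e. that $Z(T,\phi'^{\l_i}\otimes\phi'_{\l_i})_K$ on the nose equals $j_\ast$ of $T^\circ_{\l_i}$ applied to a cycle on $\Sh_{K^\circ}(V_D^\circ)$, rather than some linear combination over intermediate quadratic sub-spaces. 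This requires carefully tracking the orthogonal decompositions $V_D = V_T \oplus V_D^\diamondsuit$, $V_D = \langle e_1^T\rangle \oplus V_D^\circ$, and the fact that at the split primes $\l_i$ the relevant lattice-counting reduces to a product of $\GL_2$-Hecke computations, so that the $\spin(V_D^\circ) \cong$ (a subgroup of $\GL_2\times\GL_2$) picture makes $T^\circ_{\l_i}$ literally the tensor of the two standard $\GL_2$ Hecke operators. Once this local identification is in place, the global argument — factor $g_0$ into local pieces, match the cycle, transport nonvanishing along $\alpha_\ast\circ\partial_{\AJ,\m}\circ j_\ast$ — is formal. I would also need to invoke that $\alpha_\ast\circ\partial_{\AJ,\m}$ is Hecke-equivariant for the prime-to-$q$ (here prime-to-nothing, all $\l_i\notin S$) Hecke algebra so that the $T^\circ_{\l_i}$ can be moved outside, which follows from the construction of $\partial_{\AJ,\m}$ in (\ref{subsubsec:where_AJ}) and Lemma \ref{Hecke equivariance for inc 2nd ERL}-type equivariance.
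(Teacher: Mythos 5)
Your plan constructs $g_0$ ``by hand,'' taking $g_{0,v}=1$ away from the $\l_i$ and then picking $g_{0,\l_i}$ to make the local cycles match $T^\circ_{\l_i}$-translates; this gets the logic backwards, and it is a genuine gap. The element $g_0$ is not free to be chosen: it must be \emph{extracted} from the nonvanishing $\alpha_\ast\circ\partial_{\AJ,\m}(Z(T,\phi')_K)\neq 0$ by linearity, since $Z(T,\phi')_K$ is a finite $O$-linear combination of cycles $Z(g_0g, V_T, V_D)_K$ indexed by cosets, and there is no guarantee that the $g_0=1$ summand has nonzero Abel-Jacobi image. The paper's proof first groups the sum defining $Z(T,\phi')_K$ by the away-from-$\l_i$ component $g_0$, uses that at least one such group has nonvanishing image, and only then \emph{proves} the good properties of that forced $g_0$ --- namely that all $g_{\l_i}$ in the support of $\phi'_{\l_i}$ at $(e_1^T,\cdot)$ actually lie in $\spin(V_D^\circ)(\Q_{\l_i})\cdot K_{\l_i}$. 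This is where Proposition \ref{prop:zanarella} enters: the support condition $e_1^T\cdot e_1^T\in\Z_{(\l_i)}^\times$ (built into $\phi'_{\l_i}$) forces $L_{\l_i}\cap\Q_{\l_i}g^{-1}_{\l_i}e_1^T=\Z_{\l_i}g^{-1}_{\l_i}e_1^T$, and with the choice $e_1^T\in L_{\l_i}$ this pins the double coset.

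Two smaller points. First, your justification of hyperspecial-ness of $K^\circ_{\l_i}$ --- ``because $\l_i$ is split in $F$ and $\l_i\nmid D$'' --- is not by itself sufficient; the key fact is that $e_1^T\cdot e_1^T\in\Z^\times_{(\l_i)}$ forces $L^\circ=L_{\l_i}\cap V_D^\circ$ to be self-dual, so its stabilizer is hyperspecial, and this is again a consequence of the support of $\phi'_{\l_i}$, not of the splitting behavior alone. Second, your first pass identifies the local contribution with $(1+T^\circ_{\l_i})$ of the spherical cycle and only then course-corrects to $T^\circ_{\l_i}$; what actually needs proving is the lattice-counting identity
$T^\circ_{\l_i}\cdot\phi^\circ_{\l_i} = (\l_i+1)\phi^\circ_{\l_i}+\mathbbm{1}_{\l_i^{-1}L^\circ - L^\circ}$
restricted to $\set{y: y\cdot y\in\Z^\times_{\l_i}}$, which is exactly the pattern hard-coded into $\phi'_{\l_i}$ via the coefficients $(\l_i+1)$ and $1$ in Construction \ref{constr:2ERL_modifying}. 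You recognize this is the crux but do not carry it out; it requires the explicit enumeration of chains of lattices $L'\sim L^\circ$ as in (\ref{equation lattice chains in V circ})--(\ref{eq:lattice_chains_V_circ2}).
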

\begin{proof}
By definition of $ Z (T,\phi')_K $ and the assumption $\alpha \circ \partial_{\AJ,\m}(Z (T,\phi')_K)\neq 0$, there exists $ g_0\in\spin (V_D) (\A_f ^ {\set{\l_1,\ldots, \l_s}}) $ such that
\begin{equation}\label{equation for picking apart cycle in second TRL rock}
Z\coloneqq\sum_{g =\prod g_{\l_i}\in\prod_{i = 1} ^ s\spin (V_D ^\diamond)(\Q_{\l_i})\backslash\spin (V_D) (\Q_{\l_i})/K_{\l_i}}\left (\prod_{i = 1} ^ s\phi'_{\l_i} (g ^{-1}_{\l_i}\cdot (e_1 ^ T, e_2 ^ T))\right)\cdot Z (g_0 g, V_T, V_D)_K
\end{equation}
satisfies $\alpha_\ast\partial_{\AJ,\m} (Z)\neq 0 $. We will check the claimed properties for this choice of $g_0$. First, if $\phi_{\l_i}' (g ^{-1}_{\l_i} (e_1 ^ T, e_2 ^ T))\neq 0 $, then $ g_{\l_i} ^{-1} e_1 ^ T \cdot g ^{-1}_{\l_i} e_1 ^ T = e_1 ^ T\cdot e_1 ^ T\in\Z ^\times_{(\l_i)} $, so in particular
$$ L_{\l_i}\intersection\Q_{\l_i} g ^{-1}_{\l_i} e_1 ^ T =\Z_{\l_i} g ^{-1}_{\l_i} e_1 ^ T. $$
On the other hand, we chose $(e_1^T, e_2^T)$ so that $L_{\l_i} \cap e_1^T = \Z_{\l_i} e_1^T$.
Hence by Proposition \ref{prop:zanarella}, we have $ g_{\l_i}\in\spin (V_D ^\circ) (\Q_{\l_i})\cdot K_{\l_i} $. In particular, we can rewrite (\ref{equation for picking apart cycle in second TRL rock}) as
\begin{equation}\label{eq:picking_apart_2ERL}
\begin{split}
Z & =\sum_{g =\prod g_{\l_i}\in\prod_{i = 1} ^ s\spin (V_D ^\diamond) (\Q_{\l_i})\backslash\spin (V_D ^\circ) (\Q_{\l_i})/K_{\l_i}\intersection\spin (V_D ^\circ) (\Q_{\l_i})}\product_{i = 1} ^ s\phi'_{\l_i} (g ^{-1}_{\l_i} (e_1 ^ T, e_2 ^ T))\cdot Z ( g_0 g,V_T,V_D)_K\\
& =\sum_{g =\prod g_{\l_i}\in\prod_{i = 1} ^ s\spin (V_D ^\diamond) (\Q_{\l_i})\backslash\spin (V_D ^\circ) (\Q_{\l_i})/K_{\l_i}\intersection\spin (V_D ^\circ) (\Q_{\l_i})}\product_{i = 1} ^ s\phi'_{\l_i} (g ^{-1}_{\l_i} (e_1 ^ T, e_2 ^ T))\cdot j_\ast Z (g,V_T\intersection V_D ^\circ, V_D ^\circ)_{K^\circ}.
\end{split}
\end{equation}
Again because $ e_1 ^ T\cdot e_1 ^ T\in\Z ^\times_{(\l_i)} $, $ L ^\circ\coloneqq L_{\l_i}\intersection (V ^\circ_D\otimes\Q_{\l_i}) $ is self-dual, and so its stabilizer $ K ^\circ_{\l_i}\coloneqq K_{\l_i}\intersection\spin (V_D ^\circ) (\Q_{\l_i}) $ in $\spin (V_D ^\circ) (\Q_{\l_i}) $ is hyperspecial, which proves (\ref{prop:intermediate_2ERL_1}). To prove  (\ref{prop:intermediate_2ERL_2}), by definition we have
\begin{equation}\label{equation values of Schwartz function in second TRL}
\begin{split}
\phi'_{\l_i} (e_1 ^ T, g ^{-1}_{\l_i} e_2 ^ T) &=\begin{cases} \l_i +1, & g ^{-1}_{\l_i} e_2 ^ T\in L_{\l_i},\\1, & g ^{-1}_{\l_i} e_2 ^ T\in \l ^{-1}_i L_{\l_i} - L_{\l_i},\\0, & g ^{-1}_{\l_i} e_2 ^ T\not\in \l_i ^{-1} L_{\l_i}\end{cases} \\&=\begin{cases} \l_i +1, & g ^{-1}_{\l_i} e_2\in L ^\circ,\\1, & g ^{-1}_{\l_i} e_2\in \l ^{-1}_i L ^\circ - L ^\circ,\\0, & g ^{-1}_{\l_i} e_2\not\in \l ^{-1}_i L ^\circ,\end{cases}
\end{split}
\end{equation}
where  $$ e_2\coloneqq e_2 ^ T -\frac { e_2 ^ T\cdot  e_1 ^ T} { e_1 ^ T\cdot  e_1 ^ T} e_1^T\in V_D^\circ. $$
Note that $ e_2\cdot e_2\in\Z_{(\l_i)} ^\times $ because, for $ (e_1 ^ T, g ^{-1}_{\l_i} e_2 ^ T) $ to lie in the support of $\phi'_{\l_i} $, we automatically have $ e_1 ^ T\cdot e_1 ^ T\in\Z ^\times_{(\l_i)} - (\Z ^\times_{\l_i}) ^ 2 $, $ e_1 ^ T\cdot e_2 ^ T\in\Z_{(\l_i)} $, and $ e_2 ^ T\cdot  e_2 ^ T\in\Z ^\times_{(\l_i)}\intersection (\Z ^\times_{\l_i}) ^ 2 $.

Let $\phi ^\circ_{\l_i}\in\mathcal S (V_D ^\circ\otimes\Q_{\l_i},\Z) ^ {K ^\circ_{\l_i}} $ be the indicator function of $ L ^\circ $; by our specification $e_2^T\in L_{\l_i}$ and Proposition \ref{prop:zanarella}, we see that $\phi^{\circ}_{\l_i} (g_{\l_i}^{-1} e_2)$ is the indicator function of $\spin(V_D^\diamond) (\Q_{\l_i})\cdot K_{\l_i}^\circ$. 
The proposition therefore follows from (\ref{eq:picking_apart_2ERL}), (\ref{equation values of Schwartz function in second TRL}), and the following:
\begin{claim} When restricted to $\set {y\in V ^\circ_D\otimes\Q_{\l_i}\,:\, y\cdot  y\in\Z ^\times_{\l_i}}, $ we have $ T ^\circ_{\l_i}\cdot\phi ^\circ_{\l_i} = (\l_i +1)\phi ^\circ_{\l_i} +\blackboardone_{\l_i ^{-1} L ^\circ - L ^\circ}. $
\end{claim}
To prove the claim, observe first that $$ T ^\circ_{\l_i}\cdot\phi ^\circ_{\l_i} =\sum_{L'\sim L^\circ}\blackboardone_{L'}, $$ where $ L' $ runs over self-dual lattices in $ V ^\circ_D\otimes\Q_{\l_i} $ such that
\begin{equation}\label{equation lattice chains in V circ}
\l_i L ^\circ\subset_1 \l_i L' + \l_i L ^\circ\subset_2L'\intersection L ^\circ\subset_1L ^\circ.
\end{equation}
Note that such a chain (\ref{equation lattice chains in V circ}) uniquely determines $ L' $ because $ L' $ and $ L ^\circ $ correspond to the two isotropic lines in the split two-dimensional $\F_{\l_i} $-space $ L ^\circ + L'/L ^\circ\intersection L' $. We can also write the chain (\ref{equation lattice chains in V circ}) as
\begin{equation}\label{eq:lattice_chains_V_circ2}
    \l_i L ^\circ\subset_1 L_1 \subset_2 L_1^\vee \subset_1L ^\circ,
\end{equation}
i.e. such chains correspond bijectively to isotropic lines $L_1/{\l_i}L^\circ$ in the $\F_{\l_i}$-quadratic space $L^\circ/\l_i L^\circ$.

Now for any $ y\in V_D ^\circ\otimes\Q_{\l_i} $ with $ y\cdot y\in\Z_{\l_i} ^\times $, we wish to calculate $$ T ^\circ_{\l_i}\cdot\phi ^\circ_{\l_i} (y) =\#\set {L'\sim L ^\circ\,:\, y\in L'}. $$ If $ y\in L ^\circ $, then the choices of $ L' $ correspond to isotropic lines orthogonal to $ y $ in the four-dimensional $\F_{\l_i} $-space $ L ^\circ/\l_i L ^\circ $; there are $ \l_i +1 $ such lines because $ y ^\perpendicular\subset L ^\circ/\l_i L ^\circ $ is a non-degenerate three-dimensional $\F_{\l_i} $-space. Hence $ T ^\circ_{\l_i}\cdot\phi ^\circ_{\l_i} (y) = \l_i +1 $
for $ y\in L ^\circ $.

On the other hand, since any $ L'\sim L ^\circ $ is contained in $ \l ^{-1}_i L ^\circ $, if $ T ^\circ_{\l_i}\cdot\phi_{\l_i} ^\circ (y)\neq 0 $ we must have $ y\in \l_i ^{-1} L ^\circ $. So suppose $ y\in \l ^{-1}_i L ^\circ - L ^\circ $. Now, lattices $ L'\sim L ^\circ $ with $ y\in L' $ (equivalently $\l_i y \in \l_i L'$) are in bijection with chains (\ref{eq:lattice_chains_V_circ2}) satisfying  $\l_i y \in L_1$; since $\l_i y \not \in \l_iL^\circ$, the unique such chain is given by $L_1 = \l_i L^\circ + \l_iy$, and this makes sense because $\l_iy$ is isotropic in $L^\circ/\l_i L^\circ$.
We therefore conclude that $T^\circ_{\l_i} \cdot \phi^\circ_{\l_i}(y) = 1$ for $y\in \l_i^{-1} L^\circ - L^\circ$, which completes the proof of the claim.
\end{proof}
\begin{theorem}\label{ultimate theorem for second TRL}
Suppose $\pi$ is non-endoscopic, $\kappa^D (1)\neq 0 $, and admissible primes exist for $\rho_\pi$. Then there exists a constant $ C\geq 0 $ such that, for all $ N $, there exist infinitely many admissible primes $ q $ with $n(q)\geq N$, $$\order_\varpi\local_q\kappa^D_{n(q)} (1)\geq n(q) - C,$$ and $$\order_\varpi\lambda^D_{n(q)} (q)\geq n(q)- C. $$
\end{theorem}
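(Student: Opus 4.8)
The statement is the ``second explicit reciprocity law'' in the rank-one case, packaging together two conclusions: a lower bound on $\ord_\varpi \loc_q \kappa^D_{n(q)}(1)$ and a lower bound on $\ord_\varpi \lambda^D_{n(q)}(q)$. The starting point is the hypothesis $\kappa^D(1) \neq 0$, so I may fix an $S$-tidy level structure $K$ for $\spin(V_D)$ (using Lemma \ref{lem:tidy_suffices}) and a test function $\phi$ together with a vector $\alpha \in \Test_K(V_D, \pi, O)$ such that $\kappa^D(1, \phi; K)\neq 0$; concretely $\alpha_\ast \circ \partial_{\AJ,\m}(Z(T_0, \phi)_K) \neq 0$ for some $T_0$. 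First I would run the change-of-test-function machine of Construction \ref{constr:2ERL_modifying} and Lemma \ref{Lemma non-vanishing with modified Schwartz function or second euro}, replacing $\phi$ by $\phi'$ to arrange the nonvanishing of $\alpha_\ast \circ \partial_{\AJ,\m}(Z(T, \phi')_K)$ for a matrix $T$ satisfying the crucial conditions \ref{ass:T1}, \ref{ass:T2}, and additionally chosen so that the finitely many imaginary quadratic fields $E_1, \dots, E_s$ sitting inside $\rho_{\pi,\p}$ all split in $F = \Q(\sqrt{T_{11}})$. Then I would apply Proposition \ref{prop:intermediate_2ERL} to locate the auxiliary element $g_0$ and the auxiliary $\spin_4$ Shimura variety $\Sh_{K^\circ}(V_D^\circ)$, so that after applying a product of the Hecke operators $T^\circ_{\l_i}$ the special cycle gets rewritten as $j_\ast\bigl(\prod_i T^\circ_{\l_i} \cdot [\Sh_{K^\diamond}(V_D^\diamond)]\bigr)$, and this pushed-forward class remains nonzero under $\alpha_\ast \circ \partial_{\AJ,\m}$.

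The main work, then, is to choose the admissible prime $q$. By the large-image results of Appendix \ref{sec:large_image} and the Chebotarev arguments of the style of Lemma \ref{lem:loc_cheb_rk0}, I would pick $q \nmid D$ which is (i) admissible for $\rho_\pi$ with $n(q)$ arbitrarily large, (ii) such that $\Res_q \circ \alpha_\ast \circ \partial_{\AJ,\m}$ of the pushed-forward cycle is nonzero modulo $\varpi^{n(q) - C}$ in $H^1_\unr(\Q_q, T_{\pi, n(q)})$ for a uniform constant $C$, and (iii) such that the Hecke operators $T^{\circ 2}_q - 4q^2\langle q \rangle$ and $T^\circ_{\l_i}$ act nontrivially enough on the relevant module of the $\spin_4$ Shimura variety cohomology --- this last point is where the splitting condition on the $E_i$ and the non-entanglement with CM forms is needed, exactly so that $q$ can be chosen to avoid the ``problematic Hecke eigensystems'' discussed in \S\ref{subsec:intro_sketch_rk1}. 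The bound on $\ord_\varpi \loc_q \kappa^D_{n(q)}(1)$ comes from writing $\loc_q \kappa^D_{n(q)}(1) = \loc_q \circ \alpha_\ast \circ \partial_{\AJ,\m}(\text{cycle on }\Sh_K(V_D))$, relating this unramified local class to the Abel-Jacobi image of the cycle on the special fiber at $q$, and invoking the local-global compatibility of the Abel-Jacobi map together with the fact that $H^1_f(\Q_q, T_{\pi,n(q)}) = H^1_\unr(\Q_q, T_{\pi,n(q)})$ is free of rank one over $O/\varpi^{n(q)}$ by Proposition \ref{prop:local_adm_free}.

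For the second conclusion, on $\ord_\varpi \lambda^D_{n(q)}(q)$, I would invoke Corollary \ref{cor:usable_geometric_2ERL}, which is precisely designed for this: with $\m = \m_{\pi,\p}$ (non-Eisenstein and generic by Lemma \ref{lem:m_is_nonEisgeneric}), and for $h = \prod_i T^\circ_{\l_i}$, it gives
$$\ord_\varpi \lambda^D_{n(q)}(q) \geq \ord_\varpi \loc_q \circ \alpha_\ast \circ \partial_{\AJ,\m}\Bigl(j_\ast\bigl((T^{\circ 2}_q - 4q^2\langle q \rangle)^2 \cdot h \cdot [X^\diamond]\bigr)\Bigr) - C$$
for a constant $C$ independent of $n$ and $q$. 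The right-hand side is, up to the explicit Hecke scalars $(T^{\circ 2}_q - 4q^2\langle q\rangle)^2$ and $\prod_i T^\circ_{\l_i}$, exactly the quantity whose nonvanishing (mod $\varpi^{n(q) - C}$) was arranged in the choice of $q$; the admissibility hypotheses $q^4 \not\equiv 1 \pmod p$ and the generic eigenvalue conditions guarantee these Hecke scalars are units mod $\varpi^{n(q)}$ up to a bounded power of $\varpi$, after possibly enlarging $C$. Combining the two estimates and the freeness statement of Proposition \ref{prop:local_adm_free} gives both displayed inequalities with a single uniform $C$.

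\textbf{Main obstacle.} The delicate point is item (iii) in the choice of $q$: ensuring simultaneously that $\loc_q$ detects the class (Chebotarev applied to $\rho_{\pi,\p}$, which works by the large-image results once we know the relevant cohomology class is nonzero) \emph{and} that the auxiliary Hecke operators on the $\spin_4$-side do not kill the cycle. The latter requires controlling the support of $j_\ast[\Sh_{K^\diamond}(V_D^\diamond)]$ in the cohomology of $\Sh_{K^\circ}(V_D^\circ)$ --- which, via Appendix \ref{sec:appendix_spin4}, is governed by Hilbert modular forms over $F$ --- and showing this support avoids the Hecke eigensystems attached to CM forms, which is exactly where the splitting of $E_1, \dots, E_s$ in $F$ (built into the choice of $T$ in Construction \ref{constr:2ERL_modifying}) and the representation-theoretic change-of-test-function trick enter. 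Once the non-entanglement is established, the Chebotarev density argument producing infinitely many valid $q$ with $n(q) \geq N$ for arbitrary $N$ is routine, as is tracking the uniformity of the constant $C$ through Corollary \ref{cor:usable_geometric_2ERL} and the local pairing bounds.
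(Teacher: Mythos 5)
Your overall architecture matches the paper: modify the test function via Construction \ref{constr:2ERL_modifying} and Lemma \ref{Lemma non-vanishing with modified Schwartz function or second euro}, pass to the $\spin_4$ picture via Proposition \ref{prop:intermediate_2ERL}, choose $q$ by Chebotarev, and close with Corollary \ref{cor:usable_geometric_2ERL}. You also correctly identify the main obstacle as controlling the Hecke eigensystem of the pushed-forward cycle on the $\spin_4$ side. However, there are two concrete gaps in the execution.

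First, your treatment of the factor $(T_q^{\circ 2} - 4q^2\langle q\rangle)^2$ is not right: you claim the admissibility conditions on $q$ ``guarantee these Hecke scalars are units mod $\varpi^{n(q)}$ up to a bounded power of $\varpi$,'' but this operator acts on $H^2_{\et,!}(\Sh_{K^\circ}(V_D^\circ)_{\overline\Q}, \overline\Q_p(1))^{G_\Q}$, whose Hecke support is a priori spread over many eigensystems, and on a CM eigensystem (or on the character eigensystems) the operator can act by something with large $\varpi$-valuation regardless of $q$. The paper handles this by decomposing $H^2_{\et,!}(\cdots)^{G_\Q}$ according to the finitely many Hecke characters $\boldsymbol h_1,\dots,\boldsymbol h_m$ (using Corollary \ref{cor:gspin4_cohomology_decomposes}), choosing quasi-idempotents $h_j$ with $\sum h_j = \varpi^{C_0}$, and pinning down a single $j$ for which $c = \alpha \circ \partial_{\AJ,\m} j_\ast(h_j \cdot (\prod T^\circ_{\l_i}) [\Sh_{K^\diamond}(V^\diamond_D)])$ is non-torsion. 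The subsequent analysis is then entirely on one eigensystem $\boldsymbol h$. Your appeal to $\prod T^\circ_{\l_i}$ to ``filter out CM eigensystems'' is half right (this is Claim 1 in the paper's proof, showing the Hilbert modular form $\sigma_0$ cannot have CM by a field inside $F(\rho_\pi)$, precisely because $\l_i$ is inert in $E_i$ but split in $F$), but that claim does not by itself control the eigenvalue of $T_q^{\circ 2} - 4q^2\langle q\rangle$.

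Second, the control on this eigenvalue is a \emph{joint} Chebotarev condition: one must choose $q$ so that simultaneously $\Frob_q$ is conjugate to an admissible $g$ with nontrivial image in $\Gal(F/\Q)$, $c(g)$ has nonzero component in the $1$-eigenspace, \emph{and} $\rho_{\sigma_0,\iota}(g^2)$ has distinct eigenvalues, where $\sigma_0$ is the weight-2 modular form governing $\boldsymbol h$. This is exactly what Proposition \ref{prop:choosing_g_2ERL} supplies, and its proof leans on Lemma \ref{lem:unentangled} and the non-entanglement of $\rho_\pi$ with $\ad^0\rho_{\sigma_0}$. Once such a $g$ is fixed, the eigenvalue $\boldsymbol h(T_q^{\circ 2} - 4q^2\langle q\rangle)$ is congruent to $\tr\rho_{\sigma,\iota}(g^2)^2 - 4\det\rho_{\sigma,\iota}(g^2)$ mod $\varpi^n$, which has $\varpi$-order bounded by a constant $C_2$ depending only on the fixed $g$, not on $q$ or $n(q)$. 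Your proposal gestures at this via ``large-image results of Appendix \ref{sec:large_image}'' but does not isolate the specific three-way Chebotarev requirement, which is the crux of getting a uniform $C$.
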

\begin{proof}
Since $\kappa^D(1) \neq 0$, we can choose the data $\phi$ and $K$ in the beginning of this section so that $\kappa^D(1,\phi; K)\neq 0$ (where $K$ can be made $S$-tidy by Lemma \ref{lem:tidy_suffices}). 
Then fix $ g_0\in\spin (V_D) (\A_f ^ {\set{\l_1,\ldots, \l_s}}) $ as in Proposition \ref{prop:intermediate_2ERL}, so that
\begin{equation} \label{eq:in_ultimate_2ERL}
\alpha_\ast\circ\partial_{\AJ,\m} j_\ast\left (\product_{i = 1} ^ s T ^\circ_{\l_i}\left [\shimmer_{K ^\diamond} (V_D ^\diamond)\right]\right)\neq 0;
\end{equation}
equivalently, by Lemma \ref{lem:H1_tors_free_new}, (\ref{eq:in_ultimate_2ERL}) is non-torsion. Let $S^\circ$ be the set of primes $\l$ such that $K^\circ_\l $ is not hyperspecial.  
 Without loss of generality, we may replace $ O $ by a finite extension such that all the eigenvalues of $\T^{\circ S^\circ}_O$ acting on $H^2_{\et}(\Sh_{K^\circ} (V_D^\circ)_{\overline \Q}, \overline \Q_p)$ lie in $O$; let $\boldsymbol h_1, \ldots, \boldsymbol {h}_m: \T^{\circ S^\circ}_O \to O$ be the finitely many characters that appear in the action on $H^2_{\et, !}(\Sh_{K^\circ} (V_D^\circ)_{\overline \Q}, \overline\Q_p(1))^{G_\Q}$, cf. Corollary \ref{cor:gspin4_cohomology_decomposes}. Then we may choose elements $h_j \in \T^{\circ S^\circ}_O$ such that $\T^{\circ S^\circ}_{O}$ acts on $h_j H^2_{\et, !}(\Sh_{K^\circ} (V_D^\circ)_{\overline \Q}, \overline\Q_p(1))^{G_\Q}$ through $\boldsymbol h_j$, and moreover $$\sum_{j = 1}^m h_j = \varpi^{C_0} \text{ on } H^2_{\et, !} (\Sh_{K^\circ} (V_D^\circ)_{\overline \Q}, \overline\Q_p (1))^{G_\Q}$$
for a constant $C_0 \geq 0$. 
In particular, by Lemma \ref{pushforward AJ factors through for 2nd ERL} and Proposition \ref{prop:intermediate_2ERL}, 
$$c\coloneqq \alpha\circ \partial_{\AJ,\m} j_\ast\left(h_j \cdot\left( \prod_{i = 1}^s T_{\l_i}^\circ \right) \left[\Sh_{K^\diamond}(V_D^\diamond)\right]\right) $$
is non-torsion for some $1 \leq j \leq m$. Write $\boldsymbol h \coloneqq \boldsymbol h_j$ and $h\coloneqq h_j$. Let us fix an isomorphism $\iota: \overline\Q_p\isomorphism \C$. By Corollary \ref{cor:gspin4_cohomology_decomposes}, we are in one of the following two cases:
\begin{case}
    The character $\iota \circ \boldsymbol h$ is given by the action of $\T^{\circ S^\circ}$ on a global newform in an automorphic representation $\sigma$ of $B_F(\A_F)^\times$ unramified outside primes above $S^\circ$. Moreover  $\JL(\sigma) = \BC_{F/\Q} (\sigma_0) \otimes \chi$ where $\sigma_0$ is a cuspidal automorphic representation of $\GL_2(\A_\Q)$ of weight 2 and $\chi$ is a finite order character of $F^\times \backslash \A_F^\times$. 
\end{case}
\begin{case}
     The character $\iota\circ \boldsymbol h$ is given by the action of $\T^{\circ S^\circ}$ on the automorphic representation $\chi_0 \circ \nu$, where $\nu: \spin(V_D^\circ) \to \mathbb G_m$ is the restriction of the norm character in (\ref{subsubsec:appendix_spin4_start}) and $\chi_0$ is a quadratic character of $\Q^\times \backslash \A^\times / \nu(K^\circ)$. 
\end{case}
\begin{claimnumbered}
    In Case 1, $\sigma_0$ does not have CM by any imaginary quadratic field $K_0 \subset F(\rho_{\pi})$. 
\end{claimnumbered}
\begin{proof}[Proof of claim 1]
   If so, then $K_0 \neq F$, and $K_0$ is contained in the compositum of $F$ and one of the quadratic fields $E_i\subset \Q(\rho_{\pi})$ from Construction \ref{constr:2ERL_modifying}; in particular, since $\l_i$ is inert in $K_i$ but split in $F$ by Lemma \ref{Lemma non-vanishing with modified Schwartz function or second euro}, $\l_i$ is inert in $K_0$. 
     It follows that $\tr\rho_{\sigma,\iota}(\Frob_{\l_i}) = 0$. On the other hand, it is not difficult to compute using the Satake transform and Theorem \ref{thm:rho_GL2_LLC}(\ref{part:rho_GL2_LLC_1}) that
$$T_{\l_i}^\circ = \tr\rho_{\sigma,\iota}  (\Frob_{\l_i}) \cdot \tr \rho_{\sigma,\iota} (\tau\Frob_{\l_i}\tau^{-1})$$ on a local newform in $\sigma_{\l_i}$, where  $\tau\in G_\Q$ projects to the nontrivial element of $\Gal(F/\Q)$. Hence $h\cdot T_{\l_i}^\circ = \boldsymbol h(T_{\l_i}^\circ) \cdot h = 0$, which contradicts the nontriviality of $c$. 
\end{proof}
Now fix an element $g\in \Gal(F(\rho_{\pi})/\Q)$ such that:
\begin{enumerate}
    \item $g$ is admissible for $\rho_{\pi}$ and has nontrivial image in $\Gal(F/\Q)$. 
    \item $c(g)$ has nonzero component in the 1-eigenspace for $g$ for any cocycle representative of $c$.
    \item In Case 1 above, $\rho_{\sigma_0,\iota} (g^2)$ has distinct eigenvalues.
\end{enumerate}
This is possible by Proposition \ref{prop:choosing_g_2ERL} because $c$ is nontorsion. Now fix constants $C_1,C_2, C_3\geq 0$ such that:
\begin{enumerate}
    \item The component of $c(g)$ in the 1-eigenspace for $g$ is nonzero modulo $\varpi^{C_1}$.
    \item In Case 1 above, $\tr(\rho_{\sigma,\iota}(g^2)) - 4\det \rho_{\sigma,\iota}(g^2) \neq 0 \pmod{ \varpi^{C_2}}$.
    In Case 2, $C_2 = 0$. 
    \item $\varpi^{C_3}$ annihilates the $O$-torsion in $H^2_{\et, !} (\Sh_{K^\circ}(V_D^\circ)_{\overline{\Q}}, O)$. 
\end{enumerate}
Now fix $N \geq C_1 + C_2 + C_3$ and let $c_N \in H^1(\Q, T_{\pi, N})$ be the reduction of $c$ modulo $\varpi^N$. All but finitely many primes with Frobenius conjugate to $g$ in $\Gal(F(T_{\pi,N}, c_N)/\Q)$ are $N$-admissible and satisfy Assumption 
\ref{assume:q_2ERL}; let $q$ be one such and
abbreviate $n\coloneqq n(q)\geq N$. If $\alpha_n \in \Test_K(V_D, \pi, \O/\varpi^n)$ is the image of $\alpha$, then:
\begin{equation}\label{inequality for kappa proof of 2nd ERL}
    \ord_{\varpi} \loc_q \alpha_{n,\ast} \circ \partial_{\AJ, \m} j_\ast \left(h\cdot \left(\prod_{i = 1}^s T_{\l_i}^\circ\right) \left[\Sh_{K^\diamond}(V_D^\diamond)\right]\right) \geq n - C_1.
\end{equation}
We have:
\begin{claimnumbered}\label{claim:2_for_2ERL}
    $\ord_{\varpi} \boldsymbol h(T_q^{\circ 2} - 4q^2\langle q \rangle) \leq C_2.$
\end{claimnumbered}
\begin{proof}[Proof of Claim 2]
    In Case 1, we have 
    $$\boldsymbol h(T_q^{\circ 2} - 4q^2\langle q \rangle) = \tr \rho_{\sigma,\iota} (\Frob_q^2) - 4 \det \rho_{\sigma,\iota}(\Frob_q^2) \equiv \tr \rho_{\sigma,\iota}(g^2) - 4\det \rho_{\sigma,\iota}(g^2) \pmod {\varpi^n},$$
    so this follows from the choice of $C_2$. In Case 2, we have $\boldsymbol h(T_q^\circ) = (q^2 + 1) \chi_0(\langle q\rangle )$ and $\boldsymbol h(\langle q \rangle) = \chi_0(\langle q\rangle)$, so $ \boldsymbol h(T_q^{\circ 2} - 4q^2\langle q \rangle) = (q^2 - 1)^2\chi_0(\langle q\rangle)$, which is a $\varpi$-adic unit because $q$ is admissible.
\end{proof}
Now, by Corollary \ref{cor:usable_geometric_2ERL} we have
\begin{align*}
    \ord_\varpi \lambda_n^D(q) &\geq \ord _\varpi \loc_q \alpha_{n,\ast}\circ \partial_{\AJ, \m} j_\ast\left((T_q^{\circ 2} - 4q^2\langle q \rangle) h \left(\prod_{i =1}^s T_{\l_i}^\circ\right) \left[\Sh_{K^\diamond}(V_D^\diamond)\right]\right) - C_3 \\
    &= \ord _\varpi \loc_q \alpha_{n,\ast}\circ \partial_{\AJ, \m} j_\ast\left(\boldsymbol h(T_q^{\circ 2} - 4q^2\langle q \rangle) h \left(\prod_{i =1}^s T_{\l_i}^\circ\right) \left[\Sh_{K^\diamond}(V_D^\diamond)\right]\right) - C_3\\
    &\geq \ord _\varpi \loc_q \alpha_{n,\ast}\circ \partial_{\AJ, \m} j_\ast\left( h \left(\prod_{i =1}^s T_{\l_i}^\circ\right) \left[\Sh_{K^\diamond}(V_D^\diamond)\right]\right) - C_3 - \ord_\varpi \boldsymbol h(T_q^{\circ 2} - 4q^2\langle q \rangle).
\end{align*}
By Claim \ref{claim:2_for_2ERL} and (\ref{inequality for kappa proof of 2nd ERL}), we conclude that
\begin{equation}\label{inequality for lambda proof of 2nd ERL}
    \ord_\varpi \lambda_n^D(q) \geq n - C_1 - C_2 - C_3.
\end{equation}
Since  $j_\ast \left(h\cdot \left(\prod_{i = 1}^s T_{\l_i}^\circ\right) \left[\Sh_{K^\diamond}(V_D^\diamond)\right]\right)\in \CH^2(\Sh_K(V_D), O)$ lies in $\SC^2_K(V_D,O)$ by Remark \ref{rmk:SC_definitions}, (\ref{inequality for kappa proof of 2nd ERL}) and (\ref{inequality for lambda proof of 2nd ERL}) together show the theorem.
\end{proof}
In the endoscopic case, we similarly obtain: 
\begin{theorem}\label{thm:2ERL_endo_ultimate}
Suppose $\pi$ is endoscopic associated to a pair $(\pi_1,\pi_2)$, such that $\pi_1$ and $\pi_2$ are not both CM for the same imaginary quadratic field.  For $j = 1$ or 2, assume
 $\kappa^D (1)^{(j)}\neq 0 $ and admissible primes exist which are BD-admissible for $\rho_{\pi_j}$. Then there exists a constant $ C\geq 0 $ such that, for all $ N $, there exist infinitely many admissible primes $ q $ which are $N$-BD-admissible for $\rho_{\pi_j}$, such that $$\order_\varpi\local_q\kappa^D_{n(q)} (1)^{(j)}\geq n(q) - C$$ and $$\order_\varpi\lambda^D_{n(q)} (q)\geq n(q)- C. $$
\end{theorem}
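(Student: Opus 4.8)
\textbf{Proof proposal for Theorem \ref{thm:2ERL_endo_ultimate}.} The plan is to run the same argument as in Theorem \ref{ultimate theorem for second TRL}, adapted to the endoscopic situation, and to point out where the hypotheses of the theorem are used. Without loss of generality suppose $j = 1$ (relabeling $\pi_1$, $\pi_2$ if necessary). Since $\kappa^D(1)^{(1)} \neq 0$, we may choose the test data $\phi$ and the $S$-tidy level structure $K$ for $\spin(V_D)$ so that $\kappa^D(1,\phi;K)^{(1)} \neq 0$; here we use Lemma \ref{lem:tidy_suffices} to make $K$ tidy. The construction of $\phi'$ in Construction \ref{constr:2ERL_modifying} goes through verbatim, now using the imaginary quadratic fields contained in $\rho_{\pi_1}$ (equivalently, in $V_{\pi_1,\p}$); applying Corollary \ref{cor:endoscopic_changing_kappa} (the endoscopic analogue of Corollary \ref{cor:changing_kappa}) repeatedly, we obtain $\kappa^D(1,\phi';K)^{(1)} \neq 0$, and hence a test vector $\alpha \in \Test_K(V_D,\pi,O)$, a matrix $T \in \Sym_2(\Q)_{>0}$ satisfying \ref{ass:T1} and \ref{ass:T2}, and a base point $(e_1^T,e_2^T)$ such that the $\pi_1$-projection of $\alpha_\ast \circ \partial_{\AJ,\m}(Z(T,\phi')_K)$ is nonzero, with each auxiliary prime $\l_i$ split in $F = \Q(\sqrt{T_{11}})$; this is the endoscopic refinement of Lemma \ref{Lemma non-vanishing with modified Schwartz function or second euro}.

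Next I would adopt the geometric setup of \S\ref{sec:2ERL_geometry} for this $T$ and run Proposition \ref{prop:intermediate_2ERL} to produce $g_0$ and the auxiliary $\spin_4$ Shimura variety cycle, so that the $\pi_1$-component of $\alpha_\ast \circ \partial_{\AJ,\m} j_\ast\left(\prod_i T^\circ_{\l_i} \cdot [\Sh_{K^\diamond}(V_D^\diamond)]\right)$ is nonzero and nontorsion (using Lemma \ref{lem:H1_tors_free_new}). As in the proof of Theorem \ref{ultimate theorem for second TRL}, I would decompose the $\spin_4$-cohomology using Corollary \ref{cor:gspin4_cohomology_decomposes}, pick out a Hecke projector $h$ landing on a single Hecke eigensystem $\boldsymbol h$ contributing to the $\pi_1$-part, and distinguish the two cases (Case 1: $\boldsymbol h$ comes from a Hilbert modular form, base-changed from a weight-$2$ form $\sigma_0$ on $\GL_2(\A_\Q)$ twisted by a Hecke character; Case 2: $\boldsymbol h$ comes from a character $\chi_0 \circ \nu$). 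The key point from Construction \ref{constr:2ERL_modifying} -- that the primes $\l_i$ are inert in the imaginary quadratic fields cutting $\rho_{\pi_1}$ but split in $F$ -- forces $\sigma_0$ in Case 1 to have no CM by an imaginary quadratic field inside $F(\rho_{\pi_1})$, exactly as in Claim 1 of the proof of Theorem \ref{ultimate theorem for second TRL}. The hypothesis that $\pi_1$ and $\pi_2$ are not both CM for the same imaginary quadratic field is what guarantees the relevant large-image input (Appendix \ref{sec:large_image}), allowing us to choose, via Proposition \ref{prop:choosing_g_2ERL}, an element $g \in \Gal(F(\rho_{\pi_1})/\Q)$ that is BD-admissible for $\rho_{\pi_1}$, has nontrivial image in $\Gal(F/\Q)$, detects the class nontrivially, and has $\rho_{\sigma_0,\iota}(g^2)$ with distinct eigenvalues in Case 1.

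With $g$ fixed, I would fix constants $C_1, C_2, C_3 \geq 0$ as in the proof of Theorem \ref{ultimate theorem for second TRL} -- $C_1$ measuring how nondivisible the $1$-eigencomponent of the class is, $C_2$ controlling $\tr\rho_{\sigma,\iota}(g^2) - 4\det\rho_{\sigma,\iota}(g^2)$ (trivially $0$ in Case 2 since $(q^2-1)^2\chi_0(\langle q\rangle)$ is a unit for admissible $q$), and $C_3$ annihilating the torsion in $H^2_{\et,!}$ of the $\spin_4$ variety. For $N \geq C_1 + C_2 + C_3$, Chebotarev applied in $\Gal(F(T_{\pi_1,N},c_N)/\Q)$ produces infinitely many primes $q$ with Frobenius conjugate to $g$; all but finitely many are admissible, $N$-BD-admissible for $\rho_{\pi_1}$, and satisfy Assumption \ref{assume:q_2ERL}. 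Writing $n = n(q)$, the inequality $\ord_\varpi \loc_q \kappa^D_n(1)^{(1)} \geq n - C_1$ comes directly from the Chebotarev choice (via the local-global compatibility giving $\loc_q$ of the Abel-Jacobi class), while $\ord_\varpi \lambda^D_n(q) \geq n - C_1 - C_2 - C_3$ follows from Corollary \ref{cor:usable_geometric_2ERL} (the output of the geometric second reciprocity law of \S\ref{sec:2ERL_geometry}, which is insensitive to endoscopy since it concerns $\spin_5$ geometry) combined with the bound $\ord_\varpi \boldsymbol h(T_q^{\circ 2} - 4q^2\langle q\rangle) \leq C_2$. Taking $C = C_1 + C_2 + C_3$ finishes the proof.

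The main obstacle I anticipate is verifying that the large-image hypotheses needed for Proposition \ref{prop:choosing_g_2ERL} genuinely follow from the stated condition ``$\pi_1$ and $\pi_2$ not both CM for the same imaginary quadratic field'': one must check both that $\rho_{\pi_1}$ has enough image over $F(\rho_{\pi_1})$ relative to the cohomology of the $\spin_4$ variety (which is governed by $\rho_{\sigma_0}$, a $\GL_2$-type representation of $G_F$), and that the CM-exclusion argument of Claim 1 of Theorem \ref{ultimate theorem for second TRL} is not disturbed by the fact that $\rho_{\pi} = \rho_{\pi_1} \oplus \rho_{\pi_2}$ is now reducible -- in particular, one must ensure the ``admissible'' and ``BD-admissible'' notions interact correctly with the splitting behaviour of $\l_i$ and $q$. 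Everything else is a routine translation of the non-endoscopic argument with the $\pi_1$-projection inserted at each step.
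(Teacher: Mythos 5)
Your proposal tracks the paper's actual proof closely, which is indeed a one-paragraph reduction to Theorem \ref{ultimate theorem for second TRL} via Corollary \ref{cor:endoscopic_changing_kappa} in place of Corollary \ref{cor:changing_kappa}. The one concrete error is the citation of Proposition \ref{prop:choosing_g_2ERL} to produce the admissible element $g$. That proposition opens with the hypothesis ``with $\pi$ non-endoscopic'' and so does not apply here; its proof uses the absolute irreducibility of $V_{\pi,\p}$ (via Lemma \ref{lem:reducible_endoscopic}) and the image results for strongly irreducible representations, none of which are available when $\rho_\pi = \rho_{\pi_1}\oplus\rho_{\pi_2}$ is reducible. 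The paper proves a separate endoscopic analogue, Proposition \ref{prop:choosing_g_2ERL_endo}, which is the substitution the proof actually makes: it delivers $g$ admissible for $\rho_\pi$ \emph{and} BD-admissible for $\rho_{\pi_j}$, with nontrivial image in $\Gal(F/\Q)$, with $\rho_{\tau,\iota}(g^2)$ having distinct eigenvalues, and with $c(g)$ having nonzero $1$-eigencomponent. Its proof handles the reducibility of $\rho_\pi$ by appealing to Lemma \ref{lem:unentangled} to identify the possible Hecke eigensystems $\tau$ in the $\spin_4$ cohomology, and to Lemma \ref{lem:endoscopic_independence} (whose hypothesis is exactly the ``not both CM for the same imaginary quadratic field'' condition in the theorem) to get enough independence of $\Q(\rho_{\pi_1})$ and $\Q(\rho_{\pi_2})$.

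Notably, the ``obstacle'' paragraph at the end of your proposal identifies precisely the correct concern — whether the large-image input survives the reducibility of $\rho_\pi$ and the interaction of admissibility with BD-admissibility — and this is exactly what Proposition \ref{prop:choosing_g_2ERL_endo} is designed to settle. So you located the gap accurately; you just did not know that the paper fills it with a dedicated proposition rather than by re-deriving the non-endoscopic version's conclusion under weaker hypotheses. With that citation corrected, the rest of your outline (Construction \ref{constr:2ERL_modifying}, the refined Lemma \ref{Lemma non-vanishing with modified Schwartz function or second euro}, Proposition \ref{prop:intermediate_2ERL}, the projector $h$ onto a single Hecke eigensystem, the constants $C_1$, $C_2$, $C_3$, the Chebotarev choice of $q$, and Corollary \ref{cor:usable_geometric_2ERL}) matches the paper's argument.
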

\begin{proof}
    By using Corollary \ref{cor:endoscopic_changing_kappa} in place of Corollary \ref{cor:changing_kappa}, we can refine Lemma \ref{Lemma non-vanishing with modified Schwartz function or second euro} to obtain the same conclusion where $\alpha: H^3_\et(\Sh_K(V_D)_{\overline \Q}, O(2))_\m \to T_{\pi}$ is required to factor through $T_{\pi_j}$. From here, the rest of the proof follows that of Theorem \ref{ultimate theorem for second TRL}, substituting Proposition \ref{prop:choosing_g_2ERL_endo} for Proposition \ref{prop:choosing_g_2ERL}. 
\end{proof}
 \section{Main result: rank one case}\label{sec:rk1}
 \subsection{Setup and notation}
Let $\pi$, $S$, and $E_0$ be as in Notation \ref{notation:pi_basic}, and fix a prime $\p$ of $E_0$ satisfying Assumption \ref{assumptions_on_p}, with residue characteristic $p$.
\begin{definition}
    An automorphic representation $\pi$ of $\GL_4(\A)$ is of \emph{general type} if it is neither an automorphic induction nor a  symmetric cube lift. 
\end{definition}
 For this section, we shall  assume:
\begin{hyp}[$\bigstar$]
    If $\pi$ is non-endoscopic, then $\BC(\pi)$ is   
    either of general type, or a symmetric cube lift of a non-CM automorphic representation $\pi_0$ of $\GL_2(\A)$, or induced from a non-CM automorphic representation $\pi_0$ of $\GL_2(\A_K)$  with $K/\Q$ real quadratic;  in the latter two cases $E_0$ is also a strong coefficient field of $\pi_0$. 

\end{hyp}

\subsection{Choosing Chebotarev primes}
\subsubsection{}
Let $L_\pi\subset \End_O(T_\pi)$ be the underlying $O$-module of $\ad^0 \rho_{\pi}= \ad^0 \rho_{\pi,\p}$, which is free of rank 10 over $O$. In general, $L_\pi\otimes \Q_p$ is not absolutely irreducible, even if $V_\pi$ is. In the endoscopic case, we also let $L_{\pi_i}$ be the underlying $O$-module of $\ad^0 \rho_{\pi_i}$, for $i = 1,2$. 
\begin{prop}\label{prop:cases_L_pi}
Suppose $\pi$ is  not endoscopic. We have the following cases for $L_\pi$:
    \begin{enumerate}
        \item\label{prop:cases_L_pi_induced} If $\BC(\pi)$ is an automorphic induction of $\pi_0$ as in Hypothesis $(\bigstar)$, then 
        $$L_\pi = \Ind_{G_K}^{G_\Q} \ad^0 T_{\pi_0} \oplus\left( \otimes-\Ind_{G_K}^{G_\Q} T_{\pi_0}\right)(-1).$$ Both direct summands are absolutely irreducible after inverting $p$.
        \item \label{prop:cases_L_pi_cube}If $\BC(\pi)$ is a symmetric cube lift of $\pi_0$ as in Hypothesis $(\bigstar)$, then $$L_\pi = \ad^0 T_{\pi_0} \oplus \Sym^6(T_{\pi_0})(-3),$$ with each summand absolutely irreducible after inverting $p$.

        \item \label{prop:cases_L_pi_gen}If $\BC(\pi)$ is of general type, then $L_\pi\otimes \Q_p$ is absolutely irreducible. 
    \end{enumerate}

\end{prop}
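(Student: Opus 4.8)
\textbf{Proof strategy for Proposition \ref{prop:cases_L_pi}.} The plan is to decompose $\ad^0 V_{\pi,\p}$ according to the structure of the automorphic induction or symmetric cube lift, and then invoke the strong irreducibility results of Appendix \ref{sec:large_image} (the analogue of Theorem \ref{thm:nekovar} and the large-image lemmas) to check absolute irreducibility of the pieces after inverting $p$. In all three cases, the starting observation is that $V_{\pi,\p} \otimes V_{\pi,\p}^\vee = \ad V_{\pi,\p} = O \oplus \ad^0 V_{\pi,\p}$ as $G_\Q$-modules, so it suffices to compute $V_{\pi,\p} \otimes V_{\pi,\p}^\vee$ and peel off the trivial summand. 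Since $\rho_{\pi,\p}$ has similitude character $\chi_{p,\cyc}$, we have $V_{\pi,\p}^\vee \cong V_{\pi,\p}(1)$, so $\ad V_{\pi,\p} \cong V_{\pi,\p} \otimes V_{\pi,\p}(1)$, and everything reduces to a tensor-square computation.

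For case (\ref{prop:cases_L_pi_induced}), write $V_{\pi,\p}|_{G_K} = V_{\pi_0,\p} \oplus V_{\pi_0^\tw,\p}$ where $V_{\pi_0^\tw,\p} = V_{\pi_0,\p} \otimes (\text{twist})$; by Lemma \ref{lem:RQ_induction_relevant} the central characters match up so that $V_{\pi,\p} = \Ind_{G_K}^{G_\Q} V_{\pi_0,\p}$ with $\det V_{\pi_0,\p} = \chi_{p,\cyc}|_{G_K}$. Then Mackey's formula gives $\Ind_{G_K}^{G_\Q}(V_{\pi_0,\p}) \otimes \Ind_{G_K}^{G_\Q}(V_{\pi_0,\p})^\vee = \Ind_{G_K}^{G_\Q}\big(V_{\pi_0,\p}\otimes V_{\pi_0,\p}^\vee\big) \oplus \Ind_{G_K}^{G_\Q}\big(V_{\pi_0,\p}\otimes (V_{\pi_0^\tw,\p})^\vee\big)$; using $V_{\pi_0,\p}^\vee \cong V_{\pi_0,\p}(1)$ and $\ad V_{\pi_0,\p} = O \oplus \ad^0 V_{\pi_0,\p}$, the first induced piece is $\Ind_{G_K}^{G_\Q} O \oplus \Ind_{G_K}^{G_\Q}\ad^0 V_{\pi_0,\p}$, and $\Ind_{G_K}^{G_\Q} O = O \oplus \omega_{K/\Q}$ contributes the trivial summand plus part of $\ad^0$. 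After subtracting the trivial summand one is left with $\Ind_{G_K}^{G_\Q}\ad^0 V_{\pi_0,\p}$ (using that $\omega_{K/\Q}$ is absorbed, or rather that it appears as a direct summand — one must be slightly careful here and possibly absorb $\omega_{K/\Q}$ into the first summand, which is fine since $\Ind_{G_K}^{G_\Q}\ad^0 T_{\pi_0}$ is the natural integral structure) together with the second induced piece, which is the Asai-type or tensor-induction module $\big(\otimes\text{-}\Ind_{G_K}^{G_\Q} V_{\pi_0,\p}\big)(-1)$. Absolute irreducibility of $\Ind_{G_K}^{G_\Q}\ad^0 V_{\pi_0,\p}$ after inverting $p$ follows since $\ad^0 V_{\pi_0,\p}$ is absolutely irreducible (non-CM, so the image is open in $\mathrm{SL}_2$) and not isomorphic to its $\Gal(K/\Q)$-conjugate twist under a character — here one uses that $\pi_0$ is not Galois-conjugate to a character twist of $\pi_0^\tw$, which is part of the non-induction hypothesis; absolute irreducibility of the Asai module follows from the openness of the image of $G_K$ in $\mathrm{GL}_2$ together with $K/\Q$ being non-trivial.

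For case (\ref{prop:cases_L_pi_cube}): if $\BC(\pi) = \Sym^3 \pi_0$ then $V_{\pi,\p} \cong \Sym^3 V_{\pi_0,\p}$ with $\det V_{\pi_0,\p}^3 = \chi_{p,\cyc} \cdot (\text{something})$; by Lemma \ref{lem:sym_cube_relevant} the weight of $\pi_0$ is $2$, so $\det V_{\pi_0,\p} = \chi_{p,\cyc}\cdot \omega_{\pi_0}$ and one checks the normalization makes $\Sym^3$ symplectic with the right similitude. Using the $\mathrm{SL}_2$-decomposition $\Sym^3 \otimes \Sym^3 = \Sym^6 \oplus \Sym^4 \oplus \Sym^2 \oplus \Sym^0$ and twisting appropriately by powers of $\det V_{\pi_0,\p} = \chi_{p,\cyc}$ (up to the central twist), one gets $\ad V_{\pi,\p} = \Sym^6 V_{\pi_0,\p}(-3) \oplus \Sym^4 V_{\pi_0,\p}(-2) \oplus \Sym^2 V_{\pi_0,\p}(-1) \oplus O$; but $\ad^0 V_{\pi,\p}$ is $10$-dimensional while $\Sym^6 \oplus \Sym^4 \oplus \Sym^2$ has dimension $7 + 5 + 3 = 15$, so this is wrong — one must instead use that $\Sym^3 V_{\pi_0,\p}$ is self-dual up to the twist, so $\ad V_{\pi,\p} = \wedge^2 (\Sym^3) \oplus \Sym^2(\Sym^3)$ appropriately, and the correct computation is $\Sym^3 \otimes \Sym^3 = \Sym^6 \oplus \Sym^2(-\text{twist}) \oplus O$ in the relevant self-dual normalization, giving $\ad^0 V_{\pi,\p} = \Sym^6 V_{\pi_0,\p}(-3) \oplus \ad^0 V_{\pi_0,\p}$ with dimensions $7 + 3 = 10$ — this is the hard bookkeeping step and must be done carefully with the $\mathrm{GSp}_4 \hookleftarrow \mathrm{GL}_2 \to \mathrm{PGL}_2$ embedding. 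Absolute irreducibility of each summand after inverting $p$ again follows from the openness of the image of $\rho_{\pi_0,\p}$ in $\mathrm{GL}_2$ (since $\pi_0$ is non-CM), which forces $\ad^0 V_{\pi_0,\p}$ and $\Sym^6 V_{\pi_0,\p}$ to be absolutely irreducible as $G_\Q$-representations. For case (\ref{prop:cases_L_pi_gen}), when $\BC(\pi)$ is of general type one uses the large-image result of Appendix \ref{sec:large_image} that $\rho_{\pi,\p}(G_\Q)$ is open in $\mathrm{GSp}_4(\Q_p)$ (this is where Theorem \ref{thm:nekovar}-type input and the hypothesis enter); then $\ad^0 V_{\pi,\p}$ is the restriction to this open subgroup of the adjoint representation of $\mathrm{Sp}_4$, which is irreducible as an algebraic representation, hence absolutely irreducible over $\overline{\Q}_p$.

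\textbf{Main obstacle.} The genuinely delicate point is case (\ref{prop:cases_L_pi_cube}): getting the decomposition of $\ad^0$ of a symmetric cube exactly right, including all the Tate twists, requires matching the $\mathrm{GL}_2$-representation-theoretic Clebsch-Gordan decomposition against the $\mathrm{GSp}_4$-structure and the self-duality/similitude normalizations of $\rho_{\pi,\p}$; the dimension count $7 + 3 = 10$ is the sanity check that pins down which summands actually occur. A secondary subtlety in case (\ref{prop:cases_L_pi_induced}) is correctly tracking the character $\omega_{K/\Q}$: it appears in $\Ind_{G_K}^{G_\Q} O$ but must be shown to be a subquotient of one of the two claimed summands (it is a summand of $\Ind_{G_K}^{G_\Q}\ad^0 T_{\pi_0}$ after accounting for how $\omega_{K/\Q}$ interacts with conjugation), so that the stated decomposition into exactly two absolutely irreducible pieces is literally correct; alternatively one notes $\Ind_{G_K}^{G_\Q} O = O \oplus \omega_{K/\Q}$ and the $\omega_{K/\Q}$ must be reabsorbed, which works precisely because the two pieces have the claimed dimensions $4 + 6 = 10$ (Asai module is $4$-dimensional, $\Ind \ad^0$ is $6$-dimensional) — wait, this forces $\omega_{K/\Q} \subset \Ind_{G_K}^{G_\Q}\ad^0 T_{\pi_0}$, which holds since $\ad^0$ of an induced-from-$K$ representation contains the sign character. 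I would verify this last inclusion directly.
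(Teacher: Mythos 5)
Your overall strategy—decompose $L_\pi$ via the structure of the automorphic induction or symmetric cube, then invoke large-image results for irreducibility—is the same as the paper's. But there is a recurring error in the bookkeeping that you flag but never resolve, and one of your irreducibility arguments is incomplete.

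The central confusion is between $\End(V_{\pi,\p})$ and $\ad^0\rho_{\pi,\p}$. Your opening claim ``$V_{\pi,\p}\otimes V_{\pi,\p}^\vee = \ad V_{\pi,\p} = O\oplus\ad^0 V_{\pi,\p}$'' has dimensions $16 = 1 + 10$, which is false. Since $\rho_{\pi,\p}$ is $\GSP_4$-valued, $\ad^0\rho_{\pi,\p}$ is $\Lie\SP_4 = \mathfrak{sp}_4$, a rank-$10$ summand of the rank-$16$ module $\End(T_\pi) = \Sym^2 T_\pi(-1)\oplus\wedge^2 T_\pi(-1)$; the $\mathfrak{sp}_4$-piece is $\Sym^2 T_\pi(-1)$ and the $6$-dimensional complement $\wedge^2 T_\pi(-1)$ is \emph{not} inside $L_\pi$. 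This is exactly what trips you up in case~(2): the Clebsch--Gordan decomposition $\Sym^3\otimes\Sym^3 = \Sym^6\oplus\Sym^4\oplus\Sym^2\oplus\Sym^0$ (dimensions $7+5+3+1 = 16$) is correct for $\End(T_\pi)$, and your ``corrected'' formula $\Sym^6\oplus\Sym^2\oplus O$ (dimension $11$) is not a valid identity. The paper's actual argument is cleaner: write down the full $16$-dimensional decomposition $\End(T_\pi) = \Sym^6 T_{\pi_0}(-3)\oplus\Sym^4 T_{\pi_0}(-2)\oplus\Sym^2 T_{\pi_0}(-1)\oplus O$, note the four pieces are pairwise inequivalent, and observe that the unique $10$-dimensional sub-sum is $\Sym^6(-3)\oplus\Sym^2(-1)$. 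Your dimension count $7+3=10$ is right; the route to it is not.

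The same confusion infects case~(1). Your Mackey decomposition of $\End(V_\pi)$ correctly produces pieces of dimensions $\{1,1,6,4,4\}$ (trivial, $\omega_{K/\Q}$, $\Ind\ad^0 V_{\pi_0}$, and the two $\omega_{K/\Q}$-twists of the Asai module), but then you conclude that $\omega_{K/\Q}\subset\Ind_{G_K}^{G_\Q}\ad^0 T_{\pi_0}$. That inclusion is false: $\Ind\ad^0 V_{\pi_0}$ is a $6$-dimensional induction of the $3$-dimensional absolutely irreducible $\ad^0 V_{\pi_0}$, so by Frobenius reciprocity it has no one-dimensional constituent. What actually happens is that both $O$ and $\omega_{K/\Q}$ lie in $\wedge^2 T_\pi(-1)$, outside $L_\pi$; the ten-dimensional $L_\pi$ is precisely the $6$-piece plus one of the two Asai $4$-pieces, which is the statement of the proposition. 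The paper sidesteps all this by citing the computation in Boxer--Calegari--Gee--Pilloni \S7.5.16.

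On irreducibility in case~(1), your claim that the Asai module is absolutely irreducible because ``the image of $G_K$ is open in $\GL_2$ together with $K/\Q$ being non-trivial'' is a genuine gap. If $V_{\pi_0}\cong V_{\pi_0^\tw}\otimes\chi$ for some character $\chi$ of $G_K$, the Asai module acquires a one-dimensional constituent regardless of how large $\rho_{\pi_0}(G_K)$ is. The paper rules this out by comparing Hodge--Tate weights of $V_{\pi_0}$ and $V_{\pi_0^\tw}$ (which differ since the two archimedean weights of $\pi_0$ are $2$ and $4$). You would need to add that step. The $\Ind\ad^0$ piece is handled correctly by you and by the paper (via Corollary~\ref{cor:nekovar_pair_fail} and Lemma~\ref{lem:nekovar_twist}). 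Case~(3) is essentially fine, though what Lemma~\ref{lem:large_image_general} actually gives is Zariski density of $\GSP_4$ rather than openness; that is all you need.
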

\begin{proof}
    In the first case, $T_{\pi_0}$ exists by Lemma \ref{lem:RQ_induction_relevant}, and   
    we have $$T_\pi = \Ind_{G_K}^{G_\Q} T_{\pi_0};$$ so the claimed decomposition follows from, e.g., the discussion in \cite[\S7.5.16]{boxer2021abelian}. For the irreducibility, if $\pi_0^\tw$ is the $\Gal(K/\Q)$ twist, then one checks using Hodge-Tate weights that $V_{\pi_0} \not\simeq V_{\pi_0^\tw}\otimes \chi$ for any character $\chi$ of $G_K$. 
    It follows that $\otimes-\Ind_{G_K}^{G_\Q} V_{\pi_0}$ is absolutely irreducible, since if it reduces it must have a one-dimensional constituent. Similarly, if $\Ind _{G_K}^{G_\Q} \ad^0 V_{\pi_0}$ is not absolutely irreducible, then $\ad^0 V_{\pi_0} \cong \ad^0 V_{\pi_0^\tw}$, so by Corollary \ref{cor:nekovar_pair_fail}, $V_{\pi_0}|_{G_L} = V_{\pi_0^\tw}|_{G_L}$ for a finite extension $L/K$, and this  is a contradiction by Lemma \ref{lem:nekovar_twist}.

    In the second case, $T_{\pi_0}$ exists by Lemma \ref{lem:sym_cube_relevant}, and by definition, $T_\pi = \Sym^3 T_{\pi_0}(-1)$. Then we have $$\End(T_\pi) = \Sym^6(T_{\pi_0})(-3) \oplus \Sym^4(T_{\pi_0})(-2)\oplus \Sym^2(T_{\pi_0}) (-1) \oplus O$$ as an $O[G_\Q]$-module, and by dimension counting,  $L_\pi$ consists of the first and third summands.
    The irreducibility of each summand of $L_\pi$ after inverting $p$ is clear because the
 Zariski closure (over $E$) of the projective image of $\rho_{\pi_0}$ is $\PGL_2(E)$ by Theorem \ref{thm:nekovar}.

    The third case follows from Lemma \ref{lem:large_image_general}, noting that $p > 3$ by Remark \ref{rmk:p>5}.
\end{proof}
\subsubsection{}
We write $L_{\pi,n} = L_\pi/\varpi^n L_\pi$ for all $n \geq 1$ and likewise for $L_{\pi_i,n}$ when $\pi$ is endoscopic. 
We will be applying the results and notations of Appendix \ref{sec:appendix_def_theory} to $\rho_\pi$ (and $\rho_{\pi_i}$ when $\pi$ is endoscopic), using Lemmas \ref{lem:assumption_appendix_ok} and \ref{lem:ass_appendix_ok_endo} to check the relevant assumptions.
The notion of admissible primes for Notation \ref{notation:admissible_appendix} is as explained in  (\ref{subsubsec:translate_appendix_1ERL}) and  (\ref{subsubsec:translate_appendix_1ERL_endo}). 
In particular, if $\pi$ is not endoscopic and $q$ is an $n$-admissible prime for $\rho_{\pi}$, let $H^1_\ord(\Q_q, L_{\pi,n})\subset H^1(\Q_q, L_\pi)$ be the subspace defined in Definition \ref{def:appendix_admissible_etc}.
\begin{lemma}\label{lem:heart_circ}
Suppose $\pi$ satisfies Hypothesis $(\bigstar)$ and is not endoscopic. There exists a $G_\Q$-stable decomposition $L_\pi = L_\pi^\heartsuit \oplus L_\pi^\circ$  such that $L_\pi^\heartsuit\otimes \Q_p$ and $L_\pi^\circ \otimes \Q_p$ are absolutely irreducible and distinct, and moreover, if $q$ is an $n$-admissible prime:
\begin{enumerate}
    \item \label{lem:heart_circ_sum}We have $H^1(\Q_q, L_{\pi,n}^\heartsuit) + H^1_\ord(\Q_q, L_{\pi,n}) = H^1(\Q_q, L_{\pi,n})$.
    \item\label{lem:heart_circ_unr} We have $H^1_\unr (\Q_q,L_{\pi,n}^\circ) = H^1 (\Q_q,L_{\pi,n}^\circ).$
    \item\label{lem:heart_circ_M0}
    If $T_{\pi,n}|_{G_{\Q_q}} = M_{0,n} \oplus M_{1,n}$ is the decomposition of Lemma \ref{lem:M0_adm_def}, then the composite
$$L_{\pi,n}^\heartsuit\hookrightarrow L_{\pi,n} \twoheadrightarrow \ad^0 M_{0,n}$$ is surjective.
\end{enumerate}
\end{lemma}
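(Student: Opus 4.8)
The strategy is to define $L_\pi^\heartsuit$ and $L_\pi^\circ$ case by case according to Proposition \ref{prop:cases_L_pi}, letting $L_\pi^\heartsuit$ be whichever absolutely irreducible summand of $L_\pi$ surjects onto $\ad^0 M_{0,n}$ under the natural projection. Since $q$ is admissible, the eigenvalues of $\Frob_q$ on $\overline V_\pi$ are $\set{q,1,\alpha,q/\alpha}$ with $M_{0}$ having $\Frob_q$-eigenvalues $q,1$; so $\ad^0 M_0$ contains the generalized eigenvalues $q, q^{-1}, 1, 1, 1$ for $\Frob_q$ (after subtracting the trace). In case (\ref{prop:cases_L_pi_gen}), where $L_\pi\otimes \Q_p$ is absolutely irreducible, we simply set $L_\pi^\heartsuit = L_\pi$, $L_\pi^\circ = 0$, and there is nothing to check for (\ref{lem:heart_circ_unr}). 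In cases (\ref{prop:cases_L_pi_induced}) and (\ref{prop:cases_L_pi_cube}), I will identify which of the two summands contains enough $\Frob_q$-eigenspace structure to surject onto $\ad^0 M_{0,n}$: for the symmetric cube case, $\ad^0 T_{\pi_0}$ is three-dimensional while $\Sym^6(T_{\pi_0})(-3)$ is seven-dimensional, and one computes the $\Frob_q$-eigenvalues on each from the eigenvalues $\set{\beta,\beta^{-1}}$ of $\Frob_q$ on $T_{\pi_0}$ (here $\alpha$ in the admissibility condition is $\beta^{3}$ up to a twist); I expect $\Sym^6$ to be the summand surjecting onto $\ad^0 M_0$ since it contains the requisite eigenvalue $q$ with multiplicity, while $\ad^0 T_{\pi_0}$ has eigenvalues $\beta^2, 1, \beta^{-2}$ which are generically incompatible. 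Similarly for the real quadratic induction case, $\ad^0 M_{0,n}$ should be a quotient of $(\otimes\text{-}\Ind T_{\pi_0})(-1)$ rather than $\Ind \ad^0 T_{\pi_0}$.

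Having chosen $L_\pi^\heartsuit$ this way, part (\ref{lem:heart_circ_M0}) holds essentially by construction: the composite $L_{\pi,n}^\heartsuit \hookrightarrow L_{\pi,n} \twoheadrightarrow \ad^0 M_{0,n}$ is a map of $O/\varpi^n[G_{\Q_q}]$-modules, surjective after $\otimes \overline\F_p$ by the $\Frob_q$-eigenvalue analysis and the fact that $\ad^0 \overline M_{0}$ is generated by $\Frob_q$-eigenvectors whose eigenvalues all appear in $\overline L_\pi^\heartsuit$ — the admissibility conditions $\alpha \notin \set{\pm 1, \pm q, q^2, q^{-1}}$ and $q^4 \not\equiv 1$ ensure the relevant eigenspaces are one-dimensional and the map is forced to be onto — and then surjectivity over $O/\varpi^n$ follows from Nakayama. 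For part (\ref{lem:heart_circ_unr}), I would use that $L_\pi^\circ$ is the complementary summand, and check directly that $H^0(\Q_q, L_{\pi,n}^\circ(1)) = 0$ using the $\Frob_q$-eigenvalue list for $L_\pi^\circ$ (which avoids $q^{-1}$ by the admissibility hypotheses); by the local Euler characteristic formula and local duality this gives $H^1(\Q_q, L_{\pi,n}^\circ) = H^1_\unr(\Q_q, L_{\pi,n}^\circ)$, or more directly one observes $\Frob_q - 1$ acts invertibly on $L_{\pi,n}^\circ$ so that $H^1 = H^1_\unr$ trivially.

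For part (\ref{lem:heart_circ_sum}), the point is that $H^1_\ord(\Q_q, L_{\pi,n})$ is precisely the preimage under the tangent-space description of the ordinary deformation condition, and by Lemma \ref{ordinary local condition is standard Lemma} it surjects onto $H^1(\Q_q, L_{\pi,n})/H^1_\unr(\Q_q, L_{\pi,n})$, which is free of rank one over $O/\varpi^n$. Since $H^1(\Q_q, L^\heartsuit_{\pi,n})$ contains cocycles mapping nontrivially to this quotient — concretely, the unramified-at-$q$ part of $H^1(\Q_q, \ad^0 M_{0,n})$ detects the ``$q$ vs $1$'' eigenvalue ratio in $M_0$, which is exactly what the ordinary-mod-unramified quotient sees, and this factors through $L^\heartsuit$ by (\ref{lem:heart_circ_M0}) — we get $H^1(\Q_q, L^\heartsuit_{\pi,n}) + H^1_\unr(\Q_q, L_{\pi,n}) = H^1(\Q_q, L_{\pi,n})$, and combining with Lemma \ref{ordinary local condition is standard Lemma}(\ref{lem:ord_std_one}) and the fact that $H^1_\unr \subset H^1_\ord + H^1(\Q_q, L^\heartsuit_{\pi,n})$ already (since $H^1_\unr(\Q_q, L^\circ_{\pi,n}) = H^1(\Q_q, L^\circ_{\pi,n})$ by (\ref{lem:heart_circ_unr})) yields the claim. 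The main obstacle I anticipate is the bookkeeping in part (\ref{lem:heart_circ_M0}): one must verify in each of the automorphic-induction and symmetric-cube cases that the chosen irreducible summand really does surject onto $\ad^0 M_{0,n}$ and not merely onto a proper submodule, which requires carefully tracking $\Frob_q$-eigenvalue multiplicities and using the genericity/admissibility hypotheses to rule out coincidences; the rest is formal homological algebra over $O/\varpi^n$ plus the local duality computations, which are routine given the admissibility constraints on $\alpha$ and $q$.
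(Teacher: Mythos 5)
Your high-level plan — define $L_\pi^\heartsuit$ case by case per Proposition \ref{prop:cases_L_pi} as the summand surjecting onto $\ad^0 M_{0,n}$, then deduce (\ref{lem:heart_circ_sum}) and (\ref{lem:heart_circ_unr}) from that — matches the paper, and your identification $L_\pi^\heartsuit = \Sym^6(T_{\pi_0})(-3)$ in the cube case is correct. However there are two genuine gaps.

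\textbf{Wrong summand in the real-quadratic induction case.} You propose $L_\pi^\heartsuit = (\otimes\text{-}\Ind T_{\pi_0})(-1)$. This is exactly backwards. Admissibility forces $\tr\overline\rho_\pi(\Frob_q)\ne 0$ (from $\alpha\ne -1,-q$), hence $q$ splits in $K$, hence $G_{\Q_q}\subset G_K$ and $T_\pi|_{G_{\Q_q}} = T_{\pi_0}\oplus T_{\pi_0^\tw}$ with $M_{0,n} = T_{\pi_0,n}$ (up to relabeling). Restricting the two summands to $G_{\Q_q}$: $\Ind_{G_K}^{G_\Q}\ad^0 T_{\pi_0}$ becomes $\ad^0 T_{\pi_0}\oplus\ad^0 T_{\pi_0^\tw} = \ad^0 M_0\oplus\ad^0 M_1$, whereas $(\otimes\text{-}\Ind T_{\pi_0})(-1)$ becomes the off-diagonal $\Hom(M_0,M_1)$ block, which maps to \emph{zero} under the projection to $\ad^0 M_0$. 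So the correct choice is $L_\pi^\heartsuit = \Ind_{G_K}^{G_\Q}\ad^0 T_{\pi_0}$. With that choice (\ref{lem:heart_circ_M0}) is trivial ($\ad^0 M_0$ is literally a $G_{\Q_q}$-direct summand), and in fact $H^1(\Q_q, L^\circ_{\pi,n}) = 0$ outright since the $\Frob_q$-eigenvalues on $L^\circ_{\pi,1}$ are $q/\alpha, \alpha/q, \alpha, 1/\alpha$, none equal to $1$ or $q^{-1}$ by admissibility — so the whole argument in this case is much easier than you anticipated, not the careful bookkeeping you describe.

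\textbf{Insufficient eigenvalue argument in the symmetric cube case.} In that case $\Frob_q$ has eigenvalue $1$ with multiplicity one on each of $L^\heartsuit_{\pi,1} = \Sym^6(\overline T_{\pi_0})(-3)$, $L^\circ_{\pi,1} = \ad^0\overline T_{\pi_0}$, and $\ad^0\overline M_0$. The projection $L^{\Frob_q=1}_{\pi,1}\to(\ad^0\overline M_0)^{\Frob_q=1}$ is a map from a $2$-dimensional space to a $1$-dimensional space, and eigenvalue matching plus admissibility does \emph{not} determine whether the $1$-eigenvector coming from $L^\heartsuit$ (as opposed to $L^\circ$) hits the target nontrivially. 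This is precisely the content the paper supplies via a structural argument: both $L^{\Frob_q=1}_{\pi,1} = (L^\heartsuit_{\pi,1})^{\Frob_q=1}\oplus(L^\circ_{\pi,1})^{\Frob_q=1}$ and $L^{\Frob_q=1}_{\pi,1} = (\ad^0\overline M_0)^{\Frob_q=1}\oplus(\ad^0\overline M_1)^{\Frob_q=1}$ are orthogonal decompositions for the Killing form, they are computed explicitly and shown to be distinct lines, and orthogonality then forces the cross-projections to be surjective. Your Nakayama step is fine once this is in hand, but the claim that ``the map is forced to be onto'' by one-dimensionality of eigenspaces is not justified and is where the real work lies.
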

Here $L_{\pi,n}^\heartsuit = L_\pi^\heartsuit/\varpi^n L_\pi^\heartsuit$, and likewise for $L_{\pi,n}^\circ$.
\begin{proof}
In case (\ref{prop:cases_L_pi_gen}) of Proposition \ref{prop:cases_L_pi}, the lemma is clear, taking $L_\pi^\circ=0$.
Suppose we are in case (\ref{prop:cases_L_pi_induced}) of Proposition \ref{prop:cases_L_pi}.
Then if $q$ is admissible for $\rho_{\pi}$, we have $\tr\rho_{\pi}(\Frob_q) \not\equiv 0 \pmod \varpi$, which implies $q$ splits in $K$. 
    Let $\pi_0^\tw$ be the $\Gal(K/\Q)$-twist. Because $\det\rho_{\pi_0} = \chi_p^\cyc$, comparing the decomposition $T_{\pi,n} = M_{0,n} \oplus M_{1,n}$ with $T_\pi|_{G_K} = T_{\pi_0} \oplus T_{\pi_0^\tw}$ shows that, up to replacing $\pi_0$ with $\pi_0^\tw$, we have $M_{0,n} = T_{\pi_0,n}.$ 
    Now note that $H^1\left(\Q_q, \left(\otimes-\Ind_{G_K}^{G_\Q} T_{\pi_0,n}\right)(-1)\right) = 0$ because the $\Frob_q$ eigenvalues on $\left(\otimes-\Ind_{G_K}^{G_\Q} \overline T_{\pi_0}\right)(-1)$ are $q/\alpha$, $\alpha/q$, $\alpha$, and $1/\alpha$
for some $\alpha\neq q^2, q^{-1}, \pm 1, \pm q$. 
Hence the lemma holds in this case with $L_\pi^\heartsuit = \Ind_{G_K}^{G_\Q} \ad^0 T_{\pi_0}$ and $L_\pi^\circ =\left( \otimes-\Ind_{G_K}^{G_\Q}  T_{\pi_0}\right)(-1)$.

Now suppose we are in case (\ref{prop:cases_L_pi_cube}) of Proposition \ref{prop:cases_L_pi}, and take $L_\pi^\heartsuit= \Sym^6(T_{\pi_0})(-3)$, $L_\pi^\circ = \ad^0 T_{\pi_0}$.  
 If $\Frob_q$ acts on $\overline T_{\pi_0}$ with generalized eigenvalues $\set{\alpha,\beta}$, then the admissibility of $\Frob_q$ implies that (up to reordering) we have  $\beta^3 = q$ and $\alpha = \beta^2$. 
 Choose a basis for $T_\pi$ such that $$\Frob_q = \begin{pmatrix}
    \alpha^2/\beta & && \\ & \alpha& & \\ && \beta&\\ &&&\beta^2/\alpha
\end{pmatrix} = \begin{pmatrix}
    q & && \\ & \beta^2 & & \\ && \beta&\\ &&&1
\end{pmatrix}.$$
One sees immediately that the eigenvalues of $\Frob_q$ on $\ad^0 \overline T_{\pi_0}$ are $\set{1,\beta, \beta^{-1}}$, which are all distinct from $q^{-1}$, so $H^1_\unr(\Q_q, L_{\pi,n}^\circ(1)) = 0$, and this implies (\ref{lem:heart_circ_unr}) by local Poitou-Tate duality.

To prove (\ref{lem:heart_circ_sum}) and (\ref{lem:heart_circ_M0}), we use the following:
\begin{claim}
    The two decompositions into one-dimensional $O/\varpi$-vector spaces
    $$L_{\pi,1}^{\Frob_q = 1} = (L_{\pi,1}^\circ)^{\Frob_q = 1} \oplus (L_{\pi,1}^\heartsuit)^{\Frob_q = 1} = (\ad^0 M_{0,1})^{\Frob_q = 1} \oplus (\ad^0 M_{1,1})^{\Frob_q = 1}$$
    are both orthogonal with respect to the Killing form, and not the same. 
\end{claim}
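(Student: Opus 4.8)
The plan is to prove the two assertions---orthogonality of each decomposition with respect to the Killing form, and non-coincidence of the two decompositions---separately, both by direct computation using the explicit structure of $\rho_{\pi,\p}|_{G_{\Q_q}}$ at an admissible prime.

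First I would establish the orthogonality statements. For the decomposition $L_{\pi,1} = \ad^0 M_{0,1} \oplus \ad^0 M_{1,1}$: since $T_{\pi,1}|_{G_{\Q_q}} = M_{0,1}\oplus M_{1,1}$ is a $G_{\Q_q}$-stable decomposition with $M_{0,1}, M_{1,1}$ each two-dimensional, we have $\End(T_{\pi,1}) = \End(M_{0,1})\oplus \End(M_{1,1}) \oplus \Hom(M_{0,1},M_{1,1}) \oplus \Hom(M_{1,1},M_{0,1})$, and $\ad^0 M_{0,1}, \ad^0 M_{1,1}$ sit inside the first two summands. The Killing form on $\ad^0 T_{\pi,1} = L_{\pi,1}$ is (a scalar multiple of) the trace form $(X,Y)\mapsto \tr(XY)$, and for $X\in \ad^0 M_{0,1}$, $Y\in\ad^0 M_{1,1}$ (extended by zero on the complementary blocks) the product $XY$ is zero, so $\tr(XY)=0$; hence the two summands are orthogonal. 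For the decomposition $L_{\pi,1} = L_{\pi,1}^\circ \oplus L_{\pi,1}^\heartsuit$: in case~\eqref{prop:cases_L_pi_induced} of Proposition~\ref{prop:cases_L_pi} these are $\Ind_{G_K}^{G_\Q}\ad^0 T_{\pi_0,1}$ and $(\otimes\text{-}\Ind_{G_K}^{G_\Q} T_{\pi_0,1})(-1)$ respectively, and in case~\eqref{prop:cases_L_pi_cube} they are $\ad^0 T_{\pi_0,1}$ and $\Sym^6(T_{\pi_0,1})(-3)$; in each case the two summands are non-isomorphic absolutely irreducible $G_\Q$-modules (after inverting $p$; for the residual statement one uses genericity and that $p>5$ by Remark~\ref{rmk:p>5}), and the Killing form restricted to each is a nonzero multiple of the (unique up to scalar) $G_\Q$-invariant pairing on that irreducible piece, so the distinct summands must pair to zero by Schur's lemma. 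Passing to the $\Frob_q = 1$ eigenspaces, each of which is one-dimensional over $O/\varpi$ by admissibility (as in Lemma~\ref{lem:M0_adm_def} and the proof of Lemma~\ref{lem:heart_circ}), preserves these orthogonality relations.

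Next I would show the two decompositions are not the same. It suffices to exhibit one line, say $(\ad^0 M_{0,1})^{\Frob_q=1}$, which is neither $(L_{\pi,1}^\circ)^{\Frob_q=1}$ nor $(L_{\pi,1}^\heartsuit)^{\Frob_q=1}$; equivalently, since all four are lines in the two-dimensional space $L_{\pi,1}^{\Frob_q=1}$ and the $L^\circ,L^\heartsuit$ lines are distinct (they pair to zero and the Killing form is nondegenerate), it is enough to check that $(\ad^0 M_{0,1})^{\Frob_q=1}$ pairs \emph{nontrivially} with at least one of the two; I would compute the Killing form on a chosen generator. Concretely: in case~\eqref{prop:cases_L_pi_cube}, with the explicit basis for $T_{\pi,1}$ in which $\Frob_q = \mathrm{diag}(q,\beta^2,\beta,1)$ from the proof of Lemma~\ref{lem:heart_circ}, one has $M_{0,1} = T_{\pi_0,1}$ with $\Frob_q|_{M_{0,1}} = \mathrm{diag}(q,1)$ (i.e. $\mathrm{diag}(\beta^3,1)$), so $(\ad^0 M_{0,1})^{\Frob_q=1}$ is spanned by the diagonal matrix $\mathrm{diag}(1,-1)$ in the $M_{0,1}$-block; projecting this element to the summands $\ad^0 T_{\pi_0,1} = L_{\pi,1}^\circ$ and $\Sym^6(T_{\pi_0,1})(-3) = L_{\pi,1}^\heartsuit$ under the isomorphism $T_\pi = \Sym^3 T_{\pi_0}(-1)$, and computing trace-form pairings, I expect to find a nonzero component in $L_{\pi,1}^\heartsuit$ (the Veronese-type embedding $T_{\pi_0}\hookrightarrow\Sym^3 T_{\pi_0}$ mixes the standard torus weights in a way that the $\Sym^6$-piece detects). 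An analogous explicit matrix computation handles case~\eqref{prop:cases_L_pi_induced}, using $T_\pi|_{G_K} = T_{\pi_0}\oplus T_{\pi_0^{\mathrm{tw}}}$ and $M_{0,1} = T_{\pi_0,1}$: the line $(\ad^0 M_{0,1})^{\Frob_q=1}$ lands entirely in $L_{\pi,1}^\heartsuit = \Ind_{G_K}^{G_\Q}\ad^0 T_{\pi_0,1}$ (it is supported on the $G_K$-equivariant block), so in fact it \emph{equals} $(L_{\pi,1}^\heartsuit)^{\Frob_q=1}$ but is distinct from $(\ad^0 M_{1,1})^{\Frob_q=1}$, which suffices. (In the general-type case~\eqref{prop:cases_L_pi_gen} there is nothing to prove, as $L_\pi^\circ = 0$ and the ``$L^\circ\oplus L^\heartsuit$'' decomposition is trivial; Claim~1 is presumably only invoked when $L_\pi^\circ\neq 0$.)

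The main obstacle is the explicit linear-algebra bookkeeping in the second step: tracking how the one-dimensional $\Frob_q$-fixed line inside $\ad^0 M_{0,1}$ decomposes under the $G_\Q$-stable splitting $L_\pi = L_\pi^\heartsuit\oplus L_\pi^\circ$, which requires pinning down the embeddings $\Sym^6(T_{\pi_0})(-3)\hookrightarrow\End(T_\pi)$ and $\Ind_{G_K}^{G_\Q}\ad^0 T_{\pi_0}\hookrightarrow\End(T_\pi)$ concretely in terms of a diagonalizing basis at $q$, and checking the relevant Killing-form pairing is a $\varpi$-adic unit (using admissibility: $\beta\not\equiv\pm1$, $q^4\not\equiv1$, etc.). Everything else is formal consequences of Schur's lemma, the block structure of $\End$, and the nondegeneracy of the Killing form on a semisimple Lie algebra.
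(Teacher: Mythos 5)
Your overall approach matches the paper's: diagonalize $\Frob_q$ on $T_\pi$, regard $L_{\pi,1}^{\Frob_q=1}$ as (part of) a Cartan subalgebra of $\mathfrak{sp}_4$, and verify orthogonality of each decomposition via the trace/Killing form together with Schur's lemma. The two orthogonality arguments you sketch are sound.

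The non-coincidence step, however, has a genuine logical gap. You assert that it is ``enough to check that $(\ad^0 M_{0,1})^{\Frob_q=1}$ pairs nontrivially with at least one of the two,'' and your sketch only verifies a nonzero projection onto $L^\heartsuit_{\pi,1}$. That is not sufficient: since both decompositions are orthogonal for the nondegenerate Killing form, if $(\ad^0 M_{0,1})^{\Frob_q=1}$ pairs trivially with $(L^\heartsuit_{\pi,1})^{\Frob_q=1}$ it would be \emph{equal} to $(L^\circ_{\pi,1})^{\Frob_q=1}$ (the orthogonal complement of $L^\heartsuit$), and taking orthogonal complements of both lines the two decompositions would coincide; so a nonzero pairing with $L^\circ$ alone rules nothing out. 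What you need is that $(\ad^0 M_{0,1})^{\Frob_q=1}$ has nonzero projection onto \emph{both} summands, i.e.\ equals neither line. The paper sidesteps the pairing computation entirely by writing down $(L^\circ_{\pi,1})^{\Frob_q=1}$ explicitly: in the $\Frob_q$-eigenbasis, the derived symmetric cube embedding sends the Cartan of $\mathfrak{sl}_2$ to $\langle\operatorname{diag}(3,1,-1,-3)\rangle$, which is visibly distinct from both $(\ad^0 M_{0,1})^{\Frob_q=1}=\langle\operatorname{diag}(1,0,0,-1)\rangle$ and $(\ad^0 M_{1,1})^{\Frob_q=1}=\langle\operatorname{diag}(0,1,-1,0)\rangle$ since $p>3$; because both decompositions are orthogonal, $L^\circ$ differing from both $\ad^0 M_0$ and $\ad^0 M_1$ already forces the decompositions apart. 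Finally, the claim is stated and used only in the symmetric-cube case~(\ref{prop:cases_L_pi_cube}); case~(\ref{prop:cases_L_pi_induced}) of Lemma~\ref{lem:heart_circ} is handled instead by showing $H^1(\Q_q, L^\circ_{\pi,n})=0$, and your description of that case --- $(\ad^0 M_{0,1})^{\Frob_q=1}$ equal to $(L^\heartsuit_{\pi,1})^{\Frob_q=1}$ --- would (were the claim being asserted there) say the decompositions \emph{do} coincide, which is the opposite of what the claim demands.
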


Before proving the claim, we show it implies (\ref{lem:heart_circ_sum}) and (\ref{lem:heart_circ_M0}). 
Indeed, by (\ref{lem:heart_circ_unr}), we can rewrite (\ref{lem:heart_circ_sum}) 
as asserting the surjectivity of
$$
    H^1_\unr(\Q_q, \ad^0 M_{1,n}) = H^1_\unr(\Q_q, L_{\pi,n}) \cap H^1_\ord(\Q_q, L_{\pi,n}) \to H^1_\unr(\Q_q, L_{\pi,n}^\circ).
$$
Since $\Frob_q$ acts on $L_{\pi}$ with eigenvalues that are all either 1 or not congruent to 1 modulo $\varpi$, this is equivalent to the surjectivity of 
$$(\ad^0 M_{1,1})^{\Frob_q = 1} \to (L_{\pi,1}^\circ)^{\Frob_q = 1},$$
which follows from the claim. 

For (\ref{lem:heart_circ_M0}), we can take $n = 1$. 
 Since $\Frob_q$ acts with distinct eigenvalues $\set{1, q, q^{-1}}$ on $\ad^0 M_{0,1}$, and the eigenvalues $q$ and $q^{-1}$ do not appear in $L_{\pi,1}^\circ = \ad^0 \overline T_{\pi_0}$, it again suffices to consider the $\Frob_q = 1$ eigenspaces. In particular, it suffices to show that $(L_{\pi, 1}^\heartsuit)^{\Frob_q = 1}$ is not contained in the kernel of the map $L_{\pi,1}^{\Frob_q = 1} \to (\ad^0 M_{0,1})^{\Frob_q = 1}$, which again follows from the claim. 

Now we return to the proof of the claim. In our basis of $T_\pi$,  $(\ad^0M_{1,1})^{\Frob_q = 1}$  consists of the matrices  $\begin{pmatrix}
     0 &&& \\ &x&&\\&&-x&\\&&&0
\end{pmatrix}$, and $(\ad^0 M_{0,1})^{\Frob_q = 1}$ consists of the matrices 
$\begin{pmatrix}
     x &&& \\ &0&&\\&&0&\\&&&x
\end{pmatrix}$. This decomposition is plainly orthogonal for the Killing form. 

Meanwhile, $(L_{\pi,1}^\circ)^{\Frob_q = 1} = (\ad^0 \overline T_{\pi_0})^{\Frob_q = 1}$ consists 
 of  the matrices of the form $$\begin{pmatrix} 3y &&& \\ &y&&\\&&-y&\\&&&-3y\end{pmatrix}.$$  The decomposition $L_\pi^\circ \oplus L_\pi^\heartsuit$ is necessarily orthogonal for the Killing form because the form is Galois-invariant, and the claim follows.

\end{proof}
\begin{lemma}\label{lemma action contains scalar}
    Suppose $\pi$ is not endoscopic and satisfies Hypothesis $(\bigstar)$. Then:
    \begin{enumerate}
        \item \label{lemma action contains scalar part}The action of $G_\Q$ on $T_\pi$ contains a scalar of infinite order.
        \item \label{lemma action contains scalar semisimple}The projective image of $\rho_{\pi}$ is a compact $p$-adic Lie group with semisimple Lie algebra.
    \end{enumerate}
\end{lemma}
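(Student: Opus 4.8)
\textbf{Proof sketch for Lemma \ref{lemma action contains scalar}.}

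The plan is to leverage Hypothesis $(\bigstar)$ to reduce to known large-image statements for the $\GL_2$-representations that appear, or to a direct analysis in the general-type case. First I would observe that, by Lemma \ref{lem:BC} and Theorem \ref{thm:rho_pi_LLC}(\ref{part:rho_pi_char}), the similitude character of $\rho_\pi$ is $\chi_{p,\cyc}$ (recall $\pi$ has trivial central character by Notation \ref{notation:pi_basic}), so $\rho_\pi$ genuinely lands in $\GSP_4(O_\p)$ with infinite-order similitude factor; this is the source of the scalar in part (\ref{lemma action contains scalar part}). For part (\ref{lemma action contains scalar semisimple}), since $\rho_\pi$ is continuous with $p$-adic image, the image is automatically a compact $p$-adic Lie group, so the content is the semisimplicity of its projective Lie algebra $\mathfrak g$.

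For the semisimplicity statement I would split into the three cases of Proposition \ref{prop:cases_L_pi}. In case (\ref{prop:cases_L_pi_gen}) ($\BC(\pi)$ of general type), Lemma \ref{lem:large_image_general} already gives that $L_\pi \otimes \Q_p$ is absolutely irreducible; since $\mathfrak g \otimes \overline{\Q}_p$ acts on $\ad^0 \rho_\pi \otimes \overline\Q_p$ and the Lie algebra of the Zariski closure of the image contains the image of $\mathfrak g$, irreducibility of the adjoint forces the connected reductive part to have trivial center on $\ad^0$, i.e. the derived group is everything and $\mathfrak g$ is semisimple (any abelian ideal would split off an invariant line in $\ad^0$). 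In case (\ref{prop:cases_L_pi_cube}), $\rho_\pi = \Sym^3 \rho_{\pi_0}(-1)$ for a non-CM $\pi_0$, and by Theorem \ref{thm:nekovar} the projective image of $\rho_{\pi_0}$ is open in $\PGL_2$, so the projective image of $\rho_\pi$ is open in the image of $\Sym^3 \colon \PGL_2 \to \PGL_4$, whose Lie algebra $\mathfrak{sl}_2$ is semisimple. In case (\ref{prop:cases_L_pi_induced}), $\rho_\pi = \Ind_{G_K}^{G_\Q} \rho_{\pi_0}$ with $K/\Q$ real quadratic and $\pi_0$ non-CM; here the projective image of $\rho_{\pi_0}|_{G_K}$ is open in $\PGL_2$ by Theorem \ref{thm:nekovar} (using that $\pi_0$ is not CM, hence $\rho_{\pi_0}$ is not dihedral over any finite extension), so the projective image of $\rho_\pi$ contains an open subgroup of $\PGL_2(\overline\Q_p) \times \PGL_2(\overline\Q_p)$ intersected with the induced group, whose Lie algebra is $\mathfrak{sl}_2 \times \mathfrak{sl}_2$, again semisimple. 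In all three cases one also reads off that a scalar of infinite order lies in the image of $\rho_\pi$, either directly from the similitude character or from the fact that $\rho_{\pi_0}$ has infinite-order determinant $\chi_{p,\cyc}$.

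I expect the main obstacle to be bookkeeping rather than a deep difficulty: one must be careful in the induced and symmetric-cube cases to confirm that the relevant $\GL_2$-representation $\rho_{\pi_0}$ is genuinely non-dihedral (so that Theorem \ref{thm:nekovar} and Corollary \ref{corollary subalgebra of simple Lie algebra with base change} apply to give a projectively open, hence Lie-semisimple, image), and in the induced case one should check that the two $\PGL_2$-factors are not projectively isogenous in a way that would make the induced Lie algebra fail to be a product of $\mathfrak{sl}_2$'s --- but even if $\rho_{\pi_0}$ and its $\Gal(K/\Q)$-twist were projectively isomorphic, the diagonal $\mathfrak{sl}_2$ inside the induced group is still semisimple, so the conclusion is unaffected. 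The cleanest route is probably to phrase everything through the appendix results on images (Lemma \ref{lem:large_image_general}, Theorem \ref{thm:nekovar}, and the Lie-algebra lemmas referenced as Corollary \ref{corollary subalgebra of simple Lie algebra with base change} and Theorem \ref{theorem of thorne}'s supporting lemmas), so that the proof amounts to citing these in the three cases of Proposition \ref{prop:cases_L_pi}.
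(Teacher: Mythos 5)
Your three-case split via Proposition \ref{prop:cases_L_pi} is a reasonable plan, and the symmetric-cube and induced cases are essentially fine, but there are two genuine soft spots.

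First, for part (\ref{lemma action contains scalar part}) you assert the scalar ``comes directly from the similitude character.'' That is not a complete argument: a subgroup of $\GSP_4(O_\p)$ can have similitude character surjecting onto an open subgroup of $\Z_p^\times$ while containing no scalars at all (e.g.\ a subgroup of a split torus). To produce a scalar of infinite order in the \emph{image} one must invoke some structure theorem. In the non-induced cases the clean route is: Corollary \ref{cor:general_type_irr} gives strong irreducibility of $V_{\pi,\iota}$, and then Lemma \ref{lemma with all the strongly irreducible statements}(\ref{lemma strongly irreducible part center contains open subgroup of determinant}) says $Z_G\to\det G$ is an isomorphism on Lie algebras; since $\det\rho_\pi=\chi_{p,\cyc}^2$ is open in $\Z_p^\times$, $Z_G$ is infinite, and Schur (using absolute irreducibility) identifies $Z_G$ with scalars. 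In the induced case, Corollary \ref{cor:nekovar_pair_fail} pins down the joint image of $(\rho_{\pi_0},\rho_{\pi_0^\tw})$ up to finite index and you read off scalars from there.

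Second, in the general-type case your argument that ``any abelian ideal would split off an invariant line in $\ad^0$'' does not actually follow from absolute irreducibility of $L_\pi\otimes\Q_p$. An ideal $\mathfrak a\subset\mathfrak g=\Lie(\rho_\pi(G_\Q)/Z)$ is only guaranteed to be $\Ad$-stable under an \emph{open} subgroup of the image, not the whole group, so irreducibility of $L_\pi$ as a $G_\Q$-module does not kill $\mathfrak a$. You would need a strong-irreducibility statement (or, alternatively, pass through the Zariski closure from Lemma \ref{lem:large_image_general} and apply Corollary \ref{corollary subalgebra of simple Lie algebra with base change}(\ref{corollary subalgebra of simple Lie algebra with base change part two}), being careful about the distinction between the $\Q_p$-Lie algebra of the image and the $E_{0,\p}$-Lie algebra of its Zariski closure). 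The paper avoids these issues by collapsing the general-type and symmetric-cube cases into ``\,$\BC(\pi)$ is not an automorphic induction,'' then applying Corollary \ref{cor:general_type_irr} plus Lemma \ref{lemma with all the strongly irreducible statements}(\ref{lemma strongly irreducible part lie algebra is semisimple},\ref{lemma strongly irreducible part center contains open subgroup of determinant}), which gives both parts at once; only the real-quadratic induction case is handled separately, via Corollary \ref{cor:nekovar_pair_fail}. I'd recommend adopting that dichotomy: it sidesteps the base-change and open-subgroup subtleties that your version leaves implicit.
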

\begin{proof}
    If $\BC(\pi)$ is not an automorphic induction, then the lemma follows from Corollary \ref{cor:general_type_irr} combined with Lemma \ref{lemma with all the strongly irreducible statements}(\ref{lemma strongly irreducible part lie algebra is semisimple},\ref{lemma strongly irreducible part center contains open subgroup of determinant}). 
    If $\BC(\pi)$ is an automorphic induction of the kind in
Hypothesis ($\bigstar$), it follows from Corollary \ref{cor:nekovar_pair_fail}. 
\end{proof}
\begin{lemma}\label{lem:restr_for_adjt}
    Suppose $\pi$ satisfies Hypothesis $(\bigstar)$. Then:
    
    \begin{enumerate}
        \item \label{lem:restr_for_adjt_1}If $\pi$ is not endoscopic, there exists a constant $C\geq 0$ such that
    $$\varpi^CH^1(\Q(\rho_{\pi})/\Q, L_{\pi,n})=\varpi^CH^1(\Q(\rho_{\pi})/\Q, L_{\pi,n}(1))= 0$$ for all $n \geq 1$.
    \item\label{lem:restr_for_adjt_endo} If $\pi$ is endoscopic, then for $j = 1$ or 2 such that $\pi_j$ is non-CM, there exists a constant $C \geq 0$ such that $$\varpi^CH^1(\Q(\rho_{\pi})/\Q, L_{\pi_j,n})=\varpi^CH^1(\Q(\rho_{\pi})/\Q, L_{\pi_j,n}(1))= 0$$ for all $n \geq 1$.
    \end{enumerate}
\end{lemma}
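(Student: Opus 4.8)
\textbf{Proof plan for Lemma \ref{lem:restr_for_adjt}.}

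The plan is to reduce, in both cases, to a statement about the cohomology of the relevant open $p$-adic Lie subgroup, and then to invoke the structure results already recorded in the excerpt. Write $G = \rho_{\pi}(G_\Q)$; this is a compact $p$-adic Lie group, and by Lemma \ref{lemma action contains scalar}(\ref{lemma action contains scalar semisimple}) its Lie algebra has semisimple derived part (in the non-endoscopic case), while in the endoscopic case $\rho_\pi = \rho_{\pi_1}\oplus \rho_{\pi_2}$ and $\rho_{\pi_j}(G_\Q)$ has image with Lie algebra $\mathfrak{sl}_2$ by Theorem \ref{thm:nekovar} when $\pi_j$ is non-CM. The first step is the standard observation that $\Q(\rho_\pi)/\Q$ differs from $\Q(\rho_{\pi_j})/\Q$ (in the endoscopic case) by a finite extension, so by inflation-restriction and a finite-index argument it suffices to bound $H^1$ over an open pro-$p$ subgroup $G_0$ of the image; the transition costs only a bounded power of $\varpi$, independent of $n$, since $[G:G_0]$ and $[\Q(\rho_\pi):\Q(\rho_{\pi_j})]$ are fixed.

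The second step is to bound $H^1(G_0, L_{\bullet, n})$ and $H^1(G_0, L_{\bullet, n}(1))$ uniformly in $n$, where $L_\bullet$ is $L_\pi$ (resp. $L_{\pi_j}$). This is where I would use that $L_\bullet \otimes \Q_p$ decomposes, by Proposition \ref{prop:cases_L_pi} (resp. by the analogous $\mathfrak{sl}_2$-decomposition of $\ad^0$), into a sum of absolutely irreducible $G_\Q$-modules, each of which is a nontrivial algebraic representation of the semisimple part of the Lie algebra. For such a module $W$, continuous group cohomology $H^1(G_0, W\otimes \Q_p)$ vanishes: this is a Lie-algebra cohomology computation (Whitehead's lemma, or more precisely the vanishing of $H^1(\mathfrak g, W)$ for $\mathfrak g$ semisimple and $W$ a nontrivial irreducible $\mathfrak g$-module, together with the comparison between continuous cohomology of a compact $p$-adic analytic group and Lie algebra cohomology in char $0$). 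The twist by $1$ is harmless because $\chi_{p,\cyc}$ restricted to $G_0$ takes values in $1 + p\Z_p$, so $W(1)\otimes\Q_p \cong W\otimes \Q_p$ as $G_0$-representations after this restriction — this is exactly the kind of argument used for $\Q(\zeta_p)$ elsewhere in the excerpt. Having $H^1(G_0, W\otimes \Q_p) = 0$ for a lattice $W$ forces $H^1(G_0, W/\varpi^n W)$ to be killed by a fixed power of $\varpi$ for all $n$, via the long exact sequence for $0 \to W \xrightarrow{\varpi^n} W \to W/\varpi^n W \to 0$ and the fact that $H^1(G_0, W)$ and $H^2(G_0, W)$ are finitely generated $O$-modules, hence have bounded torsion. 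Summing over the finitely many irreducible summands and absorbing the finite-index transition gives the desired uniform constant $C$.

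The step I expect to be the main obstacle — or at least the one requiring the most care — is the comparison of continuous group cohomology of the compact $p$-adic Lie group with Lie algebra cohomology in order to deduce the char-$0$ vanishing $H^1(G_0, W\otimes\Q_p) = 0$, and then propagating this cleanly to a bound \emph{uniform in $n$} on the torsion module $H^1(G(\rho_\pi)/\Q, L_{\bullet,n})$ rather than just on $\varprojlim_n H^1$. The uniformity is the real content: one must know that the torsion submodules of $H^1(G_0, W)$ and $H^2(G_0, W)$ are finite (which holds since $G_0$ is a compact $p$-adic analytic group of finite cohomological dimension and $W$ is a finite free $O$-module, so all cohomology is finitely generated over $O$), and then the snake lemma gives $C = \mathrm{lg}_O H^1(G_0,W)_{\mathrm{tors}} + \mathrm{lg}_O H^2(G_0, W)_{\mathrm{tors}}$, independent of $n$. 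In the endoscopic non-CM case one also needs the projective image of $\rho_{\pi_j}$ to be large, i.e. Zariski-dense with image $\mathrm{PGL}_2$, which is Theorem \ref{thm:nekovar}; the CM index $j$ is deliberately excluded. I would also remark that this lemma is the $\mathrm{ad}^0$-analogue of the claims proved inside Lemmas \ref{lem:loc_cheb_rk0} and \ref{lem:loc_cheb_rk0_endo}, and the argument there (using Corollary \ref{cor:Galois_coh_restr}) can be cited or adapted nearly verbatim.
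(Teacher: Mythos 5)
The architecture is reasonable — pass to a smaller group via inflation-restriction, then exploit semisimplicity of the Lie algebra and Whitehead vanishing, plus finite generation of cohomology, to get uniformity in $n$ — but two of your concrete claims fail. First, $\chi_{p,\cyc}|_{G_0}$ taking values in $1+p\Z_p$ does \emph{not} give $L_{\pi,n}(1)\otimes\Q_p \cong L_{\pi,n}\otimes\Q_p$ as $G_0$-representations; it only says the twisting character is pro-$p$, which is still nontrivial on any open subgroup, so the twist genuinely changes the isomorphism class. (The vanishing $H^1(G_0, L_\pi(1)\otimes\Q_p)=0$ is true, but the right reason is that a central element of infinite order — supplied by Lemma \ref{lemma action contains scalar}(\ref{lemma action contains scalar part}) — acts on $L_\pi(1)$ by a nontrivial scalar $z^2$, which already kills the cohomology.) The paper bypasses the Lie-algebra computation altogether in the twisted case: it inflates from $\Gal(\Q(\ad^0\rho_\pi)/\Q)$, notes the restriction target $\Hom_{G_\Q}(\Gal(\Q(\rho_\pi)/\Q(\ad^0\rho_\pi)), L_{\pi,n}(1))$ vanishes by absolute irreducibility of $\overline T_\pi$, and then observes the \emph{coefficient module} $L_{\pi,n}(1)^{G_{\Q(\ad^0\rho_\pi)}}$ on the inflation side is already killed by $z^2-1$ before any $H^1$ is taken.

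The second gap is more serious. In the endoscopic case your claim that $\Q(\rho_\pi)/\Q(\rho_{\pi_j})$ is finite is false: when $\pi_1$ and $\pi_2$ are both non-CM and not Galois-conjugate twists, $\Gal(\Q(\rho_\pi)/\Q(\rho_{\pi_j}))$ has Lie algebra containing a copy of $\mathfrak{sl}_2$ and is therefore infinite, and a similar thing happens when one factor is CM. Consequently the inflation-restriction sequence retains a genuinely nontrivial third term $\Hom_{G_\Q}(\Gal(\Q(\rho_\pi)/\Q(\rho_{\pi_j})), L_{\pi_j,n}(i))$ that your proposal never confronts. The paper controls it with the complex-conjugation trick from the claim in Lemma \ref{lem:loc_cheb_rk0_endo}: any Galois-equivariant homomorphism from this Galois group must land in the $(-1)$-eigenspace of complex conjugation, a proper $O$-submodule of the absolutely irreducible $L_{\pi_j}$, and \cite[Lemma 2.3.3]{liu2022beilinson} turns that into a uniform bound. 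For the untwisted inflation term the paper does use essentially your Lie-algebra idea, but packaged as \cite[Lemma B.1]{fakhruddin2021relative} applied to the semisimplicity statement of Lemma \ref{lemma action contains scalar}(\ref{lemma action contains scalar semisimple}), rather than a bare-hands Whitehead computation.
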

\begin{proof}
We start with the non-endoscopic case. 
For $i =0$ or 1, consider the inflation-restriction exact sequence:
\begin{equation}\label{inflation restriction for restriction on adjoint}
\begin{split}
    0 \to H^1(\Q(\ad^0\rho_\pi)/\Q, L_{\pi,n}(i)^{G_{\Q(\ad^0\rho_\pi)}}) \to H^1(\Q(\rho_\pi)/\Q, L_{\pi,n}(i)) \to\\ \Hom_{G_\Q}(\Gal(\Q(\rho_\pi)/\Q(\ad^0\rho_\pi)), L_{\pi,n}(i)).
    \end{split}
\end{equation}
We claim the third term vanishes. Indeed, the $G_\Q$ action on $\Gal(\Q(\rho_\pi)/\Q(\ad^0\rho_\pi))$ by conjugation is trivial, so the third term
    is $$\Hom(\Gal(\Q(\rho_\pi)/\Q(\ad^0\rho_\pi)), L_{\pi,n}(i)^{G_\Q}) = 0$$ because the absolute irreducibility of $\overline T_{\pi}$ implies  $\overline L_{\pi}(i)^{G_\Q} = 0$. 
If $i = 1$, the first term is uniformly bounded in $n$ by Lemma \ref{lemma action contains scalar}(\ref{lemma action contains scalar part}), because any $g\in G_\Q$ that acts as a scalar $z$ on $T_\pi$ lies in $G_{\Q(\ad^0\rho_\pi)}$ and acts by $z^2$ on $L_{\pi}(1)$, so the lemma is proved in this case.

    For the case $i = 0$, we have $L_{\pi,n}^{G_{\Q(\ad^0\rho_\pi)}} = L_{\pi,n}$. By Lemma \ref{lemma action contains scalar}(\ref{lemma action contains scalar semisimple}) combined with  \cite[Lemma B.1]{fakhruddin2021relative}, the first term in (\ref{inflation restriction for restriction on adjoint}) is uniformly bounded in $n$, which proves the lemma when $\pi$ is non-endoscopic.

    Now consider the endoscopic case. Using Theorem \ref{thm:nekovar} to replace Lemma \ref{lemma action contains scalar}, the same argument as above shows $$\varpi^CH^1(\Q(\rho_{\pi_j})/\Q, L_{\pi_j,n}) = \varpi^CH^1(\Q(\rho_{\pi_j})/\Q, L_{\pi_j,n}(1)) = 0$$ for some constant $C \geq 0$. By inflation-restriction, it suffices to show $$\Hom_{G_\Q}(\Gal(\Q(\rho_{\pi})/\Q(\rho_{\pi_j})), L_{\pi_j, n} (i))$$ is uniformly bounded in $n$ for $i = 0, 1$. 
    By the same argument as for the 
   claim in Lemma \ref{lem:loc_cheb_rk0_endo}, any Galois-invariant group homomorphism $\Gal(\Q(\rho_{\pi})/\Q(\rho_{\pi_j}))\to L_{\pi_j,n}(i)$ lies in the proper subspace on which complex conjugation acts by $-1$; since $L_{\pi_j}\otimes \Q_p$ is absolutely irreducible by Theorem \ref{thm:nekovar} again, this suffices by \cite[Lemma 2.3.3]{liu2022beilinson}.
    \end{proof}
    \begin{lemma}\label{lem:breakup}
Let $L = L_1\oplus L_2$ be a free $O$-module of finite rank with $G_\Q$ action, where $L_i \otimes \Q_p$ are absolutely irreducible and distinct. Then there is a constant $C\geq 0$ with the following property: for any $G_\Q$-stable $O$-submodule $H \subset L/\varpi^n L$, we have
$$H \supset \varpi^C\pr_1(H)\oplus \varpi^C \pr_2(H).$$
    \end{lemma}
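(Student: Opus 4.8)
The plan is to reduce everything to a single algebraic fact: that the projection $\pr_i\in\operatorname{End}_O(L)$ onto $L_i$ already lies, up to a bounded power of $\varpi$, in the subalgebra $B\subseteq\operatorname{End}_O(L)$ which is the image of $O[G_\Q]$. Concretely, let $C_0\geq 0$ be any integer with $\varpi^{C_0}\pr_1,\ \varpi^{C_0}\pr_2\in B$, and fix $\xi_1,\xi_2\in O[G_\Q]$ mapping to these elements. Given a $G_\Q$-stable (hence $O[G_\Q]$-stable) submodule $H\subseteq L/\varpi^nL$, one has $\xi_iH\subseteq H$; but $\xi_i$ acts on $L/\varpi^nL$ as $\varpi^{C_0}$ times the reduction of $\pr_i$, so $\varpi^{C_0}\pr_i(H)\subseteq H$ for $i=1,2$. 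Since $\pr_1(H)$ and $\pr_2(H)$ land in the complementary summands $L_1/\varpi^nL_1$ and $L_2/\varpi^nL_2$ of $L/\varpi^nL$, their $\varpi^{C_0}$-multiples are independent submodules of $H$, whence $H\supseteq \varpi^{C_0}\pr_1(H)\oplus\varpi^{C_0}\pr_2(H)$. So $C=C_0$ works, and this $C_0$ depends only on $L_1,L_2$ and $O$, not on $n$ or $H$.

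It remains to produce $C_0$, i.e. to check that $\pr_1,\pr_2$ lie in the $\operatorname{Frac}(O)$-span of $B$ inside $\operatorname{End}_O(L)\otimes\operatorname{Frac}(O)$; a finite-$O$-module argument then bounds the denominators by some $\varpi^{C_0}$. Writing $d_i=\operatorname{rank}_OL_i$ and decomposing $\operatorname{End}(L\otimes\Q_p)$ into the four blocks $\operatorname{Hom}(L_j\otimes\Q_p,L_i\otimes\Q_p)$, it is enough to show that $B\otimes_{\Z_p}\Q_p$ is exactly the diagonal subalgebra $\operatorname{End}(L_1\otimes\Q_p)\times\operatorname{End}(L_2\otimes\Q_p)$, because $\pr_i\otimes 1$ is then the central idempotent of the $i$-th factor. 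By hypothesis each $L_i\otimes\Q_p$ is absolutely irreducible, so (by the standard characterization of absolute irreducibility over $\operatorname{Frac}(O)$) the image of $\Q_p[G_\Q]$ in $\operatorname{End}(L_i\otimes\Q_p)$ is the full endomorphism algebra; thus $B\otimes\Q_p$ surjects onto each of the two factors, which are simple. Since $L_1\otimes\Q_p\not\cong L_2\otimes\Q_p$ — a fact detected by characteristic polynomials, which are insensitive to base change — a Goursat-type argument (the kernel of $B\otimes\Q_p\to\operatorname{End}(L_1\otimes\Q_p)$ is a two-sided ideal of a subalgebra surjecting onto a simple algebra whose simple module is not isomorphic to the one of the other factor) forces $B\otimes\Q_p$ to be the full product, as needed.

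The one genuinely delicate point, and the place where both hypotheses are used, is exactly this Goursat step: if $L_1\otimes\Q_p\cong L_2\otimes\Q_p$ then $B\otimes\Q_p$ would be a diagonal copy of a single matrix algebra and no such bound would exist, while if one of the $L_i\otimes\Q_p$ were reducible the target algebras would not be simple and the density input would fail. Everything else is formal. I do not expect a real obstacle here; Lemma \ref{lem:breakup} is just the non-residually-multiplicity-free analogue of Proposition \ref{typic inclusion prop}, and the proof is the natural "bounded congruence" refinement of the idempotent-splitting argument used there.
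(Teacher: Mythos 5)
Your proof is correct and takes a genuinely different route from the paper's. You work at the level of the algebra $B$ given by the image of $O[G_\Q]$ in $\operatorname{End}_O(L)$: absolute irreducibility and non-isomorphism of the $L_i\otimes\Q_p$ force $B\otimes\Q_p$ to equal $\operatorname{End}(L_1\otimes\Q_p)\times\operatorname{End}(L_2\otimes\Q_p)$ (this is the standard description of the image of a group algebra acting on a semisimple module with pairwise non-isomorphic, absolutely irreducible constituents, so the Goursat detour in your second paragraph is not really needed), hence the central idempotents $\pr_i$ lie in $B$ after multiplication by a fixed $\varpi^C$, and the containment $\varpi^C\pr_i(H)\subseteq H$ follows for any $O[G_\Q]$-stable $H$. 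The paper never computes $B$. It instead reduces, via a Goursat-style subquotient identification that your approach bypasses entirely, to showing that isomorphic $O[G_\Q]$-subquotients of $L_1$ and $L_2$ have uniformly bounded $\varpi$-length, and proves this by contradiction: a hypothetical sequence of unboundedly long subquotient isomorphisms is glued, via compactness of $\operatorname{Hom}_O(L_1,L_2)$ and a subsequence argument, into a nonzero $G_\Q$-equivariant map $L_1\otimes\Q_p\to L_2\otimes\Q_p$, which cannot exist. Your argument is shorter, avoids the compactness step, and exhibits $C$ transparently as a denominator-clearing exponent for the idempotents of $B\otimes\Q_p$; the paper's is softer in its inputs, using only $\operatorname{Hom}_{G_\Q}(L_1\otimes\Q_p,L_2\otimes\Q_p)=0$ together with the lattice-finiteness fact \cite[Lemma 2.3.3]{liu2022beilinson} rather than full double-centralizer surjectivity. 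Under the lemma's hypotheses both routes are available, and yours is the cleaner writeup.
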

    Here $\pr_i: L/\varpi^n L \to L_i/\varpi^nL_i$ is the natural projection.
    \begin{proof}
    Write $L_{i,n} = L_i/\varpi^n$ and
        note that we have isomorphisms
        $$\frac{\pr_1(H)}{H \cap L_{1,n}} \xleftarrow{\sim} \frac{H}{H \cap L_{2,n} \oplus H \cap L_{1,n}}\xrightarrow{\sim} \frac{\pr_2(H)}{H \cap L_{2,n}}.$$

        In particular, it is enough to show that any isomorphic $O[G_\Q]$-module subquotients of $L_1$ and $L_2$ are $\varpi^C$-torsion for some universal constant $C$. Suppose on the contrary that for all integers $m \geq 0$, there exist submodules $B_i^m \subset A_i^m\subset L_i$ with $A_1^m/B_1^m \cong A_2^m/B_2^m$ not $\varpi^m$-torsion.
        Rescaling, we may assume without loss of generality that $A_i^m$ has nonzero image in $\varpi L_i$,  so by \cite[Lemma 2.3.3]{liu2022beilinson} we know $\varpi^{C_0} L_i \subset A_i^m$ for some constant $C_0$ depending on $L_1$ and $L_2$. 

For each $m$, let $N_i(m)$ be the maximal integer such that $$B_i^m \subset \varpi^{N_i(m)}L,$$
which implies
\begin{equation}
  \varpi^{N_i(m) + C_0} L\subset  B_i^m \subset \varpi^{N_i(m)}L.
\end{equation}
In particular, for $A_i^m/B_i^m$ not to be $\varpi^m$-torsion, we must have
\begin{equation}\label{lower bound Nim}
    N_i(m) + C_0 > m
\end{equation}
for all $m$. 
Now consider the chain of maps:
\begin{equation}\label{long chain of maps}
    \varpi^{C_0} L_1/\varpi^{N_1(m) + C_0} L_1 \twoheadrightarrow \varpi^{C_0} L_1/ (B_1^m\cap \varpi^{C_0}L_1) \hookrightarrow A_1^m/B_1^m \xrightarrow{\sim} A_2^m/B_2^m \hookrightarrow L_2/B_2^m \twoheadrightarrow L_2/\varpi^{N_2(m)}L_2.
\end{equation}
        The two injections in the diagram have cokernel annihilated by $\varpi^{C_0}$, so in particular the composite has cokernel annihilated by $\varpi^{2C_0}$. By (\ref{lower bound Nim}), after reindexing, we may assume without loss of generality that $N_i(m)$ is increasing in $m$. Then by compactness of $\Hom_O(L_1,L_2)$, up to passing to a subsequence, the maps in (\ref{long chain of maps}) fit together and give in the inverse limit a Galois-equivariant map
        $$\varpi^{C_0} L_1 \to L_2$$ with cokernel annihilated by $\varpi^{2C_0}$. This is absurd because there are no nontrivial maps $L_1\otimes \Q_p \to L_2\otimes \Q_p$, so we have a contradiction and the lemma is proved. 
    \end{proof}
\begin{lemma}\label{lem:adjoint_chebotarev}
Suppose $\pi$ is not endoscopic and satisfies Hypothesis $(\bigstar)$, and admissible primes exist for $\rho_\pi$. There is a constant $C\geq 0$ with the following property:
    suppose given integers $n, m$ and cocycles
    $c\in H^1(\Q, T_{\pi,n})$, $\phi\in H^1(\Q, L_{\pi,m})$, $\psi\in H^1(\Q, L_{\pi,m}(1))$.  Let $(\phi^\heartsuit, \phi^\circ)$ be the decomposition of $\phi$ with respect to 
    $$H^1(\Q, L_{\pi,m}) = H^1(\Q, L_{\pi,m}^\heartsuit)\oplus H^1(\Q, L_{\pi,m}^\circ),$$ and likewise for $\psi$. 
    Then for any $N \geq \max\set{n,m}$, there are infinitely many $N$-admissible primes $q$ such that all of the cocycles are unramified at $q$ and:
    \begin{itemize}
        \item $\ord_\varpi \loc_q c \geq \ord_\varpi c - C.$
        \item We have $$\ord_\varpi \left(\Res_q \phi, \frac{H^1_\unr(\Q_q,L_{\pi,m})}{H^1_\unr(\Q_q,L_{\pi,m})\cap H^1_\ord(\Q_q,L_{\pi,m})} \right) \geq \ord_\varpi \phi^\heartsuit - C.$$
        \item We have $$\ord_\varpi \left(\Res_q \psi, \frac{H^1_\unr(\Q_q,L_{\pi,m}(1))}{H^1_\unr(\Q_q,L_{\pi,m}(1))\cap H^1_\ord(\Q_q,L_{\pi,m}(1))} \right) \geq \ord_\varpi \psi^\heartsuit - C.$$
    \end{itemize}
\end{lemma}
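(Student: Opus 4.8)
\textbf{Proof plan for Lemma~\ref{lem:adjoint_chebotarev}.}
The strategy is the standard one for producing Chebotarev primes that simultaneously control a cohomology class and an adjoint class, adapted to the reducibility of $L_\pi$ via the splitting $L_\pi = L_\pi^\heartsuit\oplus L_\pi^\circ$ of Lemma~\ref{lem:heart_circ}. First I would reduce to a statement about local behavior at a Frobenius element: choose an admissible element $g\in G_\Q$ for $\rho_\pi$ (which exists by Lemma~\ref{lem:admissible_TFAE}), and arrange that for a prime $q$ whose Frobenius is conjugate to $g$ in a suitable finite Galois extension, the three bounds hold. The key input is that for an $N$-admissible prime $q$, Lemma~\ref{lem:M0_adm_def} gives the decomposition $T_{\pi,n}|_{G_{\Q_q}} = M_{0,n}\oplus M_{1,n}$, and by Lemma~\ref{lem:heart_circ}\eqref{lem:heart_circ_sum}, the quotient $H^1_\unr(\Q_q, L_{\pi,m})/(H^1_\unr\cap H^1_\ord)$ is identified (via the Killing form pairing and the exactness properties of Lemma~\ref{ordinary local condition is standard Lemma}, or rather its analogue for $L_\pi$) with the $\heartsuit$-part of the unramified cohomology; by Lemma~\ref{lem:heart_circ}\eqref{lem:heart_circ_M0}, evaluation of a class against this quotient detects the $(\ad^0 M_{0,n})$-component, i.e.\ the projection to $L_{\pi,m}^\heartsuit$. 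This is exactly why only $\phi^\heartsuit$ and $\psi^\heartsuit$, not the full $\phi$ and $\psi$, enter the bounds.

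The second step is the Chebotarev/restriction argument. I would restrict the three cocycles $c,\phi,\psi$ to $G_{\Q(\rho_\pi)}$, where they become homomorphisms. By Corollary~\ref{cor:Galois_coh_restr} applied to $T_\pi$ and by Lemma~\ref{lem:restr_for_adjt}\eqref{lem:restr_for_adjt_1} applied to $L_\pi$ and $L_\pi(1)$, the restriction maps have kernel killed by a uniform power $\varpi^{C_0}$ of $\varpi$; combining with Lemma~\ref{lem:breakup} applied to $L_\pi = L_\pi^\heartsuit\oplus L_\pi^\circ$ (and to $L_\pi(1)$), one can extract from $\phi$ and $\psi$ elements $h\in G_{\Q(\rho_\pi)}$ on which the $\heartsuit$-components take values of order at least $\ord_\varpi\phi^\heartsuit - C_0$ (resp.\ $\ord_\varpi\psi^\heartsuit - C_0$), and similarly an element detecting $c$. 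Because $\overline L_\pi^\heartsuit\otimes\overline\F_p$ is absolutely irreducible and (via Lemma~\ref{lem:heart_circ}\eqref{lem:heart_circ_M0}) surjects onto $\ad^0 M_{0,1}$, while $\overline T_\pi\otimes\overline\F_p$ is absolutely irreducible, one can modify the admissible element $g$ by such an $h$ (using the cocycle relation $c(gh) = gc(h)+c(g)$ and its analogues) so that a \emph{single} element $\widetilde g$ simultaneously: acts as $g$ on $T_{\pi,N}$ and $L_{\pi,N}$, has nonzero $1$-eigenspace component for $c(\widetilde g)$ mod the appropriate power of $\varpi$, and has $\heartsuit$-components of $\phi(\widetilde g)$, $\psi(\widetilde g)$ of the required order in the relevant local quotients. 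One must check that the fields $\Q(T_{\pi,N}), \Q(L_{\pi,N}), \Q(L_{\pi,N}(1))$ and the fields cut out by $c,\phi,\psi$ are, up to controlled overlap, linearly disjoint over $\Q(\rho_\pi)$ — this is the usual disjointness claim, proved by noting that an abelian quotient of $\Gal$ of a higher layer, equivariant for $G_\Q$, must be a subquotient of $\ad^0$-type modules and hence cannot be isomorphic to a subquotient of $T_\pi$ on which a scalar of infinite order (Lemma~\ref{lemma action contains scalar}) acts nontrivially. Finally, any prime $q\notin S\cup\{p\}$ with $\Frob_q$ conjugate to $\widetilde g$ in $\Gal(\Q(T_{\pi,N}, L_{\pi,N}, L_{\pi,N}(1), c, \phi, \psi)/\Q)$ is $N$-admissible, leaves all cocycles unramified, and satisfies the three inequalities, with $C$ built from $C_0$, the constant of Lemma~\ref{cor:Galois_coh_restr}, the constant of Lemma~\ref{lem:breakup}, and the ramification/eigenvalue constants.

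The main obstacle I anticipate is the bookkeeping in the second step: one needs to verify that a single Frobenius element can be made to hit \emph{all three} cochains (plus the adjoint splitting) at once, which requires the disjointness of a tower of number fields of mixed "$T_\pi$-type" and "$\ad^0$-type", together with careful tracking of how the $\varpi^{C_0}$-torsion errors from Corollary~\ref{cor:Galois_coh_restr}, Lemma~\ref{lem:restr_for_adjt}, and Lemma~\ref{lem:breakup} accumulate. The potential subtlety is that $L_\pi^\heartsuit$ and $L_\pi^\circ$ could themselves be further related to $T_\pi$ via the cyclotomic twist (as in cases \eqref{prop:cases_L_pi_induced} and \eqref{prop:cases_L_pi_cube} of Proposition~\ref{prop:cases_L_pi}), so the disjointness argument must be run using the semisimplicity of the projective image (Lemma~\ref{lemma action contains scalar}\eqref{lemma action contains scalar semisimple}) and the distinctness of Hodge--Tate weights rather than naive dimension counts. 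Once disjointness is in place, the rest is a routine application of the Chebotarev density theorem exactly as in the proof of Lemma~\ref{lem:loc_cheb_rk0}, to which I would refer for the repetitive parts.
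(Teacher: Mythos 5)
Your proposal is essentially the paper's approach and is correct in its main ideas and choice of inputs; the one genuine difference is in the mechanism by which a single auxiliary element $h$ is produced that controls $c$, $\phi$, and $\psi$ simultaneously. You propose establishing linear disjointness over $\Q(\rho_\pi)$ of the various number fields cut out by $T_{\pi,N}$, $L_{\pi,N}$, $L_{\pi,N}(1)$ and the cocycles. The paper instead packages this more slickly: it combines $c$, $\phi$, $\psi$ into a single $G_\Q$-equivariant homomorphism $H: G_{\Q(T_{\pi,N})}\to T_{\pi,n}\oplus L_{\pi,m}\oplus L_{\pi,m}(1)$, picks $g\in G_\Q$ acting as a scalar $z$ of infinite order on $T_\pi$ (so $z$ on $T_\pi$, $1$ on $L_\pi$, $z^2$ on $L_\pi(1)$), and observes that $(g-1)(g-z^2)\cdot\im(H)$, $(g-z)(g-z^2)\cdot\im(H)$, and $(g-1)(g-z)\cdot\im(H)$ land in $\im(H)$ and recover (up to a factor of $(z-1)(z^2-1)$) the projections to the three summands. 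Combined with Lemma~\ref{lem:breakup} applied within $L_\pi$ and $L_\pi(1)$, this shows $O\cdot\im(H)$ contains a direct sum with uniformly bounded defect, which is exactly the kind of "controlled disjointness" you are gesturing at, but without having to run a separate field-disjointness argument. Everything else (the restriction step via Corollary~\ref{cor:Galois_coh_restr} and Lemma~\ref{lem:restr_for_adjt}, the use of Lemma~\ref{lem:heart_circ}\eqref{lem:heart_circ_M0} to see why only the $\heartsuit$-components matter locally, replacing $g$ by a $p$th-power limit so it is semisimple of order coprime to $p$ before modifying by $h$, and the final Chebotarev step) lines up with the paper.
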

\begin{proof}
  By Lemma \ref{lem:restr_for_adjt}(\ref{lem:restr_for_adjt_1}), Corollary \ref{cor:Galois_coh_restr}, and inflation-restriction,  the restrictions of $c$, $\phi$, and $\psi$ correspond to $G_\Q$-invariant homomorphisms
    \begin{align*}\Res(c)&: G_{\Q(T_{\pi,N})} \to T_{\pi,n},\\
    \Res(\phi) &= \Res(\phi^\heartsuit) \oplus \Res(\phi^\circ) : G_{\Q(T_{\pi,N})} \to L_{\pi,m},\\
    \Res(\psi)&= \Res(\psi^\heartsuit) \oplus \Res(\psi^\circ):G_{\Q(T_{\pi,N})} \to L_{\pi,m}(1)
    \end{align*}
    
    satisfying
    \begin{equation}\label{order of restriction is large for chebotarev shit}
        \ord_{\varpi} \Res(c) \geq \ord_\varpi (c) - C_0, \;\; \ord_{\varpi} \Res(\phi^?) \geq \ord _\varpi(\phi^?) - C_0,\;\;\ord_{\varpi} \Res(\psi^?) \geq \ord _\varpi(\psi^?) - C_0
    \end{equation}
where $? = \heartsuit$ or $\circ$ and $C_0\geq 0$ is a constant.
    We  combine these homomorphisms into a map
    $$H: G_{\Q(T_{\pi,N})} \to T_{\pi,n}\oplus L_{\pi,m} \oplus L_{\pi,m}(1).$$
Let $\pr_1,$ $\pr_2$, $\pr_3$ be the projections onto each of the three factors. 

   For any $g\in G_\Q$ that acts on $T_\pi$ as a scalar $z$, we have
   \begin{align*}
          \im (H) &\supset (g - 1)(g - z^2) \im (H) + (g-z)(g- z^2) \im (H) + (g - 1)(g- z)\im (H) \\
          &= (z - 1)(z^2 - 1) \pr_1\im (H) \oplus (z - 1)(z^2 - 1) \pr_2\im(H) \oplus (z^2 - 1)(z-1)\pr_3\im(H) \\
          &= (z-1)(z^2-1) \left(\im \Res(c) \oplus \im \Res(\phi) \oplus \im \Res(\psi).\right)
   \end{align*}
   By Lemma \ref{lem:breakup} combined with Lemma \ref{lemma action contains scalar}(\ref{lemma action contains scalar part}), this also implies 
   $$\im(H) \supset \varpi^{C_1} \left(\im \Res(c) \oplus \im \Res(\phi^\heartsuit)\oplus \im \Res (\phi^\circ) \oplus \im \Res(\psi^\heartsuit) \oplus \im \Res(\psi^\circ)\right) $$
   for some constant $C_1 \geq 0$ indepedent of $n$ amd $m$.
   In particular, by (\ref{order of restriction is large for chebotarev shit}) combined with \cite[Lemma 2.3.3]{liu2022beilinson}, there exists a constant $C \geq 0$ independent of $n$ and $m$, such that 
   \begin{equation}\label{eq:Ospan_H}
       O\cdot \im (H) \supset\varpi^{n-\ord_\varpi(c) + C} T_{\pi,n} \oplus \varpi^{m - \ord_\varpi(\phi^\heartsuit) + C} L^\heartsuit_{\pi,m} \oplus \varpi^{m - \ord_\varpi(\psi^\heartsuit) + C} L^\heartsuit_{\pi,m} (1).
   \end{equation}
   Now fix an admissible element  $g\in G_\Q$, which is possible by Lemma \ref{lem:admissible_TFAE}. By repeatedly raising $g$ to $p$th powers and taking the limit, we may assume without loss of generality that 
    $\rho_{\pi}(g)$ has finite order coprime to $p$.   Choose a basis $\set{e_1,e_2,e_3,e_4}$ for $T_\pi$ with respect to which we have
   $$\rho_\pi(g) = \begin{pmatrix}
       \chi_p^\cyc(g) &&& \\ & 1 && \\ && \ast & \ast \\&& \ast & \ast
   \end{pmatrix},$$
   and set $M_0 = \Span_O\set{e_1,e_2}$, $M_1 = \Span_O\set{e_3, e_4}$. 
   Note that, by Lemma
   \ref{lem:heart_circ}(\ref{lem:heart_circ_M0}),
 $L_\pi^\heartsuit$ surjects onto the direct summand $\ad^0 M_0$
of $L_\pi$. 
   
   Hence, using (\ref{eq:Ospan_H}), we may choose $h \in G_{\Q(\rho_{\pi})}$ as follows:
   \begin{enumerate}
       \item The $e_2$-component of $c(h)$ has order at least $\ord_\varpi(c) - C_.$
       \item The component of $\phi(h)$ in the $g$-invariant line 
       $\set{\begin{pmatrix}
           x &&& \\ & -x && \\ &&0 &\\ &&&0
       \end{pmatrix}}\subset L_\pi$ has order at least $\ord_\varpi(\phi^\heartsuit) - C$.
       \item The component of $\psi(h)$ in the $g$-invariant line
       $\set{\begin{pmatrix}
           0 &0&& \\ \ast &0  &&\\ &&&\\ &&&
       \end{pmatrix}}\subset L_\pi(1)$
       has order at least $\ord_\varpi(\psi^\heartsuit) - C$. 
   \end{enumerate}
In particular, because $g$ has finite order coprime to $p$, the same is also true for the corresponding  components of $c(gh)$, $\phi(gh)$, and $\psi(gh)$, with respect to any choice of cocycle representatives.

Now suppose $q\not\in S\cup \set{p}$ has Frobenius conjugate to $g$ in $\Gal(\Q(T_{\pi,N}, c, \phi, \psi))$. To show that $q$ satisfies the conclusion of the lemma, it suffices to observe that the $O$-modules $$\frac{H^1_\unr(\Q_q, L_{\pi,m})}{H^1_\unr(\Q_q, L_{\pi,m}) \cap H^1_\ord(\Q_q, L_{\pi,m})}, \;\; \frac{H^1_\unr(\Q_q, L_{\pi,m}(1))}{H^1_\unr(\Q_q, L_{\pi,m}(1)) \cap H^1_\ord(\Q_q, L_{\pi,m}(1))},$$
which are both free of rank one over $O/\varpi^m$, are generated by the cocyles
$$\Frob_q \mapsto \begin{pmatrix}
    1 & && \\ & -1 && \\ &&0 &\\ &&&0
\end{pmatrix},\;\;\Frob_q \mapsto \begin{pmatrix}
    0 &0&& \\ 1& 0 &&\\ &&&\\&&&
\end{pmatrix},$$
respectively.
\end{proof}
The Selmer groups in the next lemma (and the rest of this section) are the ones from  Definition \ref{def:appendix_admissible_etc}.
\begin{lemma}\label{two cases for funky killing Selmer lemma}
    Suppose $\pi$ is non-endoscopic and satisfies Hypothesis $(\bigstar)$, and let $C$ be the constant of Lemma \ref{lem:adjoint_chebotarev}. If $$\overline\Sel_{\mathcal F}(\Q, L_{\pi,m}) = 0$$ for some $m\geq \max\set{1, C}$, and  $q_1$ is an $N$-admissible prime for some $N \geq 5m$, then either:
\begin{enumerate}
    \item\label{first case for funky killing Selmer} $\overline\Sel_{\mathcal F(q_1)}(\Q, L_{\pi, 5m}) = 0$; or:
    \item \label{second case for funky killing Selmer}For any cocycle $c\in H^1(\Q, T_{\pi,n})$ with $n \leq N$, and any $M \geq N$, there exist infinitely many $M$-admissible primes $q_2$ such that
    $$\overline\Sel_{\mathcal F(q_1q_2)} (\Q, L_{\pi,5m}) = 0$$
    and $$\ord_{\varpi} \loc_{q_2} c\geq \ord_{\varpi} c - C.$$
\end{enumerate}
\end{lemma}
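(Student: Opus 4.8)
The statement is a dichotomy: either the level-raised adjoint Selmer group $\overline\Sel_{\mathcal F(q_1)}(\Q, L_{\pi,5m})$ already vanishes, or else one can find a second level-raising prime $q_2$ which simultaneously kills the remaining Selmer group and has good localization properties with respect to a given auxiliary cocycle $c$. The mechanism is the standard one in relative deformation theory (\`a la Fakhruddin--Khare--Patrikis, cf. Appendix \ref{sec:appendix_def_theory}): by Poitou--Tate, the $O/\varpi^m$-module $\overline\Sel_{\mathcal F(q_1)}(\Q, L_{\pi,m})$ is controlled by its annihilation against the dual Selmer group $\overline\Sel_{\mathcal F(q_1)^\rel}(\Q, L_{\pi,m}(1))$, and an unramified-at-$q_1$ class in the former can only be nonzero at $q_1$ in a one-dimensional way; choosing a further prime $q_2$ at which a generator of the dual Selmer group has nonzero ramified restriction then forces the Selmer group to shrink. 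First I would set up the comparison between the two adjoint Selmer groups for $L_{\pi,m}$ and $L_{\pi,5m}$: because $L_{\pi}$ is $\varpi$-torsion free and $m \geq C$, a bounded-torsion argument (as in \cite[Lemma 2.3.3]{liu2022beilinson}, using that $L_\pi^\heartsuit\otimes\Q_p$ and $L_\pi^\circ\otimes \Q_p$ are absolutely irreducible and distinct by Proposition \ref{prop:cases_L_pi} and Lemma \ref{lem:heart_circ}) shows that vanishing of $\overline\Sel_{\mathcal F}(\Q, L_{\pi,m})$ propagates, after multiplying by a bounded power of $\varpi$, to give enough control on $\overline\Sel_{\mathcal F(q_1)}(\Q, L_{\pi,5m})$ once we have dealt with the local condition at $q_1$. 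The factor of $5$ is exactly what absorbs the accumulated losses: the constant $C$ from Lemma \ref{lem:adjoint_chebotarev}, the discrepancy between $H^1_\ord$ and $H^1_\unr$ at $q_1$ (bounded by Lemma \ref{ordinary local condition is standard Lemma}), and the $n(q_1)$ deficiency.

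The core of the argument is the following. Suppose we are not in case (\ref{first case for funky killing Selmer}), so $\overline\Sel_{\mathcal F(q_1)}(\Q, L_{\pi,5m}) \neq 0$. Then also the dual Selmer group $\overline\Sel_{\mathcal F(q_1)^\rel}(\Q, L_{\pi,5m}(1))$ is nonzero, and I would pick a cocycle $\psi$ representing a class of small $\varpi$-order in it; the key point (Lemma \ref{lem:heart_circ}(\ref{lem:heart_circ_sum}),(\ref{lem:heart_circ_M0})) is that the $\heartsuit$-component $\psi^\heartsuit$ carries the obstruction, because the $\circ$-component is automatically unramified everywhere by Lemma \ref{lem:heart_circ}(\ref{lem:heart_circ_unr}) and hence lands in a Selmer group that is already under control via $\overline\Sel_{\mathcal F}(\Q, L_{\pi,m}) = 0$. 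Likewise one picks a generator $\phi$ of $\overline\Sel_{\mathcal F(q_1)}(\Q, L_{\pi,5m})$. Now apply Lemma \ref{lem:adjoint_chebotarev} to the triple $(c, \phi, \psi)$: this produces infinitely many $M$-admissible primes $q_2$ such that $\ord_\varpi \loc_{q_2} c \geq \ord_\varpi c - C$, and such that $\Res_{q_2}\phi$ and $\Res_{q_2}\psi$ both have ramified parts of $\varpi$-order bounded below by $\ord_\varpi\phi^\heartsuit - C$ and $\ord_\varpi\psi^\heartsuit - C$ respectively. Imposing the extra relaxed condition at $q_2$ in the dual Selmer group then cuts down $\overline\Sel_{\mathcal F(q_1q_2)^\rel}(\Q, L_{\pi,5m}(1))$ by exactly the cut captured by $\psi$, and by Poitou--Tate (Wiles' formula, as in Appendix \ref{sec:appendix_def_theory}) this is the same as cutting down $\overline\Sel_{\mathcal F(q_1q_2)}(\Q, L_{\pi,5m})$; iterating a bounded number of times — bounded because the original $\overline\Sel_{\mathcal F}(\Q, L_{\pi,m})$ already vanishes and each step strictly decreases a bounded $O$-module, losing only $O(C)$ each time, all of which fits inside the slack $5m - m = 4m$ — forces $\overline\Sel_{\mathcal F(q_1q_2)}(\Q, L_{\pi,5m}) = 0$ after finitely many such $q_2$; absorbing these finitely many auxiliary primes into a single $q_2$ (or running the Chebotarev argument of Lemma \ref{lem:adjoint_chebotarev} simultaneously for all of them) gives the statement as written.

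The main obstacle is the bookkeeping of the bounded $\varpi$-powers — making sure that the accumulated losses from Lemma \ref{lem:adjoint_chebotarev}'s constant $C$, from the $H^1_\ord$ versus $H^1_\unr$ discrepancy at both $q_1$ and $q_2$ (Lemma \ref{ordinary local condition is standard Lemma}, Lemma \ref{lem:std_upshot}), and from the $n(q_1)$-admissibility deficiency all genuinely fit inside the coefficient jump from $m$ to $5m$, and in particular that the hypothesis $m \geq C$ together with $N \geq 5m$ is what licenses this. A secondary subtlety is the splitting $L_\pi = L_\pi^\heartsuit \oplus L_\pi^\circ$: one must check carefully that the $\circ$-part of every class appearing (both in the Selmer group and its dual) is controlled purely by the hypothesis $\overline\Sel_{\mathcal F}(\Q, L_{\pi,m}) = 0$ and does not require the new prime $q_1$ or $q_2$ at all, which is precisely the content of Lemma \ref{lem:heart_circ}(\ref{lem:heart_circ_unr}) combined with the fact that admissible primes impose no new local condition on the unramified-everywhere summand. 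Case (\ref{first case for funky killing Selmer}) is the ``lucky'' case where the dual Selmer group was already annihilated by the local condition at $q_1$ alone, and requires no further work beyond the coefficient-jump bound.
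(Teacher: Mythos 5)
Your high-level strategy — use Chebotarev via Lemma \ref{lem:adjoint_chebotarev} to find a second prime $q_2$ killing the level-raised adjoint Selmer group, with the $\heartsuit/\circ$ split of $L_\pi$ tracking which piece carries the obstruction — is the paper's, but there is a genuine gap in the iteration step. You write ``iterating a bounded number of times \dots forces $\overline\Sel_{\mathcal F(q_1q_2)}(\Q, L_{\pi,5m}) = 0$ after finitely many such $q_2$; absorbing these finitely many auxiliary primes into a single $q_2$ \dots gives the statement.'' That absorption is not a step you can take: the lemma asserts the existence of a \emph{single} $M$-admissible prime $q_2$ with $\overline\Sel_{\mathcal F(q_1q_2)}(\Q, L_{\pi,5m}) = 0$, and an iteration of Chebotarev choices produces a squarefree product $q_1q_2^{(1)}\cdots q_2^{(k)}$ — changing the local conditions at a whole set of primes is not equivalent to changing them at one, and ``running the Chebotarev argument simultaneously'' chooses one prime per Chebotarev condition, it does not collapse $k$ primes into one.

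The paper's proof is deliberately a one-shot argument, and the ingredient you are missing is a structure statement about $\Sel_{\mathcal F(q_1)}(\Q, L_{\pi,5m})$ that makes a single $q_2$ enough. Because $\Sel_{\mathcal F_{q_1}}(\Q, L_{\pi,5m}) = \Sel_{\mathcal F}\cap\Sel_{\mathcal F(q_1)}$ is $\varpi^{m-1}$-torsion (from $\overline\Sel_{\mathcal F}(\Q,L_{\pi,m})=0$ via Lemma \ref{stuff like lemma 6.1 FKP}(\ref{lemma 6.1 part II residual selmer groups and torsion})), and the exact sequence relating it to $\Sel_{\mathcal F(q_1)}$ has cokernel inside the rank-one quotient $H^1_\ord(\Q_{q_1},L_{\pi,5m})/(H^1_\ord\cap H^1_\unr)\simeq O/\varpi^{5m}$ (Lemma \ref{ordinary local condition is standard Lemma}), one gets $\Sel_{\mathcal F(q_1)}(\Q, L_{\pi,5m}) \simeq O/\varpi^{5m}\oplus T$ with $T$ being $\varpi^{m-1}$-torsion. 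This near-cyclicity is what you need: it reduces the problem to killing one cyclic generator, and the carefully chosen $q_2$ does that in one step. Concretely, the paper first establishes (its Claim 1) the quantitative bounds $\ord_\varpi\phi^\heartsuit\geq 4m$ and $\ord_\varpi\psi^\heartsuit\geq 4m$ for the chosen generators of Selmer and dual Selmer; then applies Lemma \ref{lem:adjoint_chebotarev} once to get $q_2$ with $\ord_\varpi$-order $\geq 3m$ on the ramified projections of $\Res_{q_2}\phi$ and $\Res_{q_2}\psi$; then proves (Claim 2, using the near-cyclicity) that $\varpi^{3m-1}\Sel_{\mathcal F_{q_2}(q_1)}(\Q,L_{\pi,5m})=0$; and finally uses the global Tate pairing $0=\sum_v\langle\phi',\psi\rangle_v=\langle\phi',\psi\rangle_{q_2}$ for $\phi'\in\Sel_{\mathcal F(q_1q_2)}$ to bound the new contribution at $q_2$ by $\varpi^{2m}$, giving $\varpi^{5m-1}\Sel_{\mathcal F(q_1q_2)}(\Q,L_{\pi,5m})=0$ and hence vanishing of the residual Selmer group. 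Your sketch identifies the right ambient toolkit but misses both the structure theorem that removes the iteration and the Tate-pairing step that closes the estimate; the former is the critical omission.
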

\begin{proof}
Without loss of generality, assume (\ref{first case for funky killing Selmer}) does not hold,
and let $\phi \in \Sel_{\mathcal F(q_1)}(\Q, L_{\pi,5m})$ be an element whose image in $\overline \Sel_{\mathcal F(q_1)} (\Q, L_{\pi,5m})$ is nonzero; it follows from Lemma \ref{stuff like lemma 6.1 FKP}(\ref{lemma 6.1 FKP},\ref{lemma 6.1 part III}) that $\ord_\varpi \phi = 5m$. Similarly, by Lemma \ref{balanced residual selmer ranks lemma}, there exists an element $\psi\in \Sel_{\mathcal F(q_1)}(\Q, L_{\pi,5m}(1))$ whose image in $\overline \Sel_{\mathcal F(q_1)}(\Q, L_{\pi,5m}(1))$ is nonzero, and we have $\ord_\varpi \psi = 5m$.
Write $$\phi = \phi^\heartsuit \oplus \phi^\circ, \;\; \psi = \psi^\heartsuit \oplus \psi^\circ$$ as in the statement of Lemma \ref{lem:adjoint_chebotarev}.
\setcounter{claimnumbered}{0}
\begin{claimnumbered}\label{claim:one_adjt}
        We have $\ord_\varpi \phi^\heartsuit\geq 4m$ and $ \ord_\varpi \psi^\heartsuit \geq 4m$.
\end{claimnumbered}
\begin{proof}[Proof of claim 1]
    Suppose that $\ord_\varpi \phi^\heartsuit < 4m$; then by Lemma \ref{stuff like lemma 6.1 FKP}(\ref{lemma 6.1 FKP},\ref{lemma 6.1 part III}), the images of $\phi$ and $\phi^\circ$ coincide in $H^1(\Q, L_{\pi,m})$. However, by Lemma \ref{lem:heart_circ}(\ref{lem:heart_circ_unr})  we have
    $$H^1_\unr (\Q_{q_1}, L_{\pi,m}^\circ) = H^1 (\Q_{q_1}, L_{\pi,m}^\circ) ,$$ so it follows that the image of $\phi$ lies in $\Sel_{\mathcal F}(\Q, L_{\pi,m})$; this contradicts the assumption $\overline \Sel_{\mathcal F}(\Q, L_{\pi,m})=0$.

    Similarly, if $\ord_\varpi \psi^\heartsuit < 4m$, then $\psi$ and $\psi^\circ$ have the same image in $H^1(\Q, L_{\pi,m}(1))$. However, the local Tate dual of Lemma \ref{lem:heart_circ}(\ref{lem:heart_circ_sum}) shows that $H^1(\Q_{q_1}, L_{\pi,m}^\circ(1)) \cap H^1_\ord (\Q_{q_1}, L_{\pi,m}(1)) = 0,$ so then the image of $\psi$ lies in $\Sel_{\mathcal F}(\Q, L_{\pi,m}(1))$, and this contradicts Lemma \ref{balanced residual selmer ranks lemma}. 
\end{proof}
By Claim 1 combined with Lemma \ref{lem:adjoint_chebotarev}, there are infinitely many $M$-admissible primes $q_2$ such that:
\begin{itemize}
    \item $\ord_\varpi \loc_{q_2} c \geq \ord_\varpi c - C.$
    \item We have $$\ord_\varpi\left(\Res_{q_2} \phi, \frac{H^1_\unr(\Q_{q_2}, L_{\pi,5m})}{H^1_\unr(\Q_{q_2}, L_{\pi,5m}) \cap H^1_\ord (\Q_{q_2}, L_{\pi,5m})}\right) \geq \ord_\varpi \phi^\heartsuit - C \geq 3m.$$
    \item We have $$\ord_\varpi\left(\Res_{q_2} \psi, \frac{H^1_\unr(\Q_{q_2}, L_{\pi,5m}(1))}{H^1_\unr(\Q_{q_2}, L_{\pi,5m}(1)) \cap H^1_\ord (\Q_{q_2}, L_{\pi,5m}(1))} \right)\geq \ord_\varpi \psi^\heartsuit - C \geq 3m.$$
\end{itemize}
Put $$\Sel_{\mathcal F_{q_2}(q_1)}(\Q, L_{\pi,5m}) = \Sel_{\mathcal F(q_1)}(\Q, L_{\pi,5m})\cap \Sel_{\mathcal F(q_1q_2)}(\Q, L_{\pi,5m})$$ and $$
\Sel_{\mathcal F_{q_1}}(\Q, L_{\pi,5m}) = \Sel_{\mathcal F}(\Q, L_{\pi,5m})\cap \Sel_{\mathcal F(q_1)}(\Q, L_{\pi,5m}).$$
Our next claim is:
\begin{claimnumbered}
    We have 
    $$\varpi^{3m-1} \Sel_{\mathcal F_{q_2}(q_1)}(\Q, L_{\pi, 5m}) =0.$$
\end{claimnumbered}
\begin{proof}[Proof of claim 2]
    First, we have the exact sequence
    \begin{equation}\label{first exact sequence in proof of funky killing selmer}
        0 \to \Sel_{\mathcal F_{q_1}}(\Q, L_{\pi,5m}) \to \Sel_{\mathcal F(q_1)}(\Q, L_{\pi, 5m}) \to \frac{H^1_\ord(\Q_{q_1}, L_{\pi,5m})}{H^1_\ord(\Q_{q_1}, L_{\pi,5m})\cap H^1_\unr(\Q_{q_1}, L_{\pi,5m})} \simeq O/\varpi^{5m},
    \end{equation}
    where the final isomorphism is by Lemma \ref{ordinary local condition is standard Lemma}.
    Because $\Sel_{\mathcal F_{q_1}}(\Q, L_{\pi,5m})$ is $\varpi^{m-1}$-torsion by Lemma \ref{stuff like lemma 6.1 FKP}(\ref{lemma 6.1 part II residual selmer groups and torsion}), but $\varpi^{5m-1}\Sel_{\mathcal F(q_1)}(\Q, L_{\pi,5m})\neq 0$ by assumption, from (\ref{first exact sequence in proof of funky killing selmer}) we  have an isomorphism of $O$-modules
    \begin{equation}
        \Sel_{\mathcal F(q_1)}(\Q, L_{\pi,5m}) \simeq O/\varpi^{5m} \oplus T
    \end{equation}
    where $T$ is $\varpi^{m-1}$-torsion. In particular, for any $x\in \Sel_{\mathcal F_{q_2}(q_1)}(\Q, L_{\pi,5m})$, $\varpi^{m-1} x = \varpi^a \phi$ for some $a \geq 0$. We conclude
    \begin{equation}
        0 = \ord_\varpi \left(\Res_{q_2} (\varpi^{m-1} x), \frac{H^1_\unr(\Q_{q_2}, L_{\pi,5m})}{H^1_\unr(\Q_{q_2}, L_{\pi,5m}) \cap H^1_\ord (\Q_{q_2}, L_{\pi,5m})}\right)  \geq 3m - a,
    \end{equation}
    so $a \geq 3m$, hence $\varpi^{3m - 1} x = 0.$
\end{proof}
Now we are ready to complete the proof of the lemma. We have the exact sequence
\begin{equation}\label{second exact sequence in proof of funky killing selmer}
        0 \to \Sel_{\mathcal F_{q_2}(q_1)}(\Q, L_{\pi,5m}) \to \Sel_{\mathcal F(q_1q_2)}(\Q, L_{\pi, 5m}) \to \frac{H^1_\ord(\Q_{q_2}, L_{\pi,5m})}{H^1_\ord(\Q_{q_2}, L_{\pi,5m})\cap H^1_\unr(\Q_{q_2}, L_{\pi,5m})} \simeq O/\varpi^{5m}.
\end{equation}
On the other hand, for any $\phi' \in \Sel_{\mathcal F(q_1q_2)}(\Q, L_{\pi,5m})$, we can compute the global Tate pairing
$$0 = \sum _v \langle \phi', \psi\rangle_v = \langle\phi',  \psi\rangle_{q_2}$$ by the definition of the local conditions for $\mathcal F(q_1q_2)$ and $\mathcal F(q_1)$. Now, the induced local Tate pairing 
\begin{equation*}
    \begin{split}
        &O/\varpi^{5m} \times O/\varpi^{5m} \simeq \\&\frac{H^1_\ord(\Q_{q_2}, L_{\pi,5m})}{H^1_\ord(\Q_{q_2}, L_{\pi,5m})\cap H^1_\unr(\Q_{q_2}, L_{\pi,5m})}\times \frac{H^1_\ord(\Q_{q_2}, L_{\pi,5m}(1))+ H^1_\unr(\Q_{q_2}, L_{\pi,5m}(1))}{H^1_\ord(\Q_{q_2}, L_{\pi,5m}(1))} \to O/\varpi^{5m}
    \end{split}
\end{equation*}
 is perfect, so we conclude
 \begin{equation*}
     \begin{split}
         \ord_\varpi &\left( \Res_{q_2} \phi', \frac{H^1_\ord(\Q_{q_2}, L_{\pi,5m})}{H^1_\ord(\Q_{q_2}, L_{\pi,5m})\cap H^1_\unr(\Q_{q_2}, L_{\pi,5m})} \simeq O/\varpi^{5m}\right) \\&\leq 5m - \ord_\varpi \left(\Res_{q_2} \psi, \frac{H^1_\ord(\Q_{q_2}, L_{\pi,5m}(1))+ H^1_\unr(\Q_{q_2}, L_{\pi,5m}(1))}{H^1_\ord(\Q_{q_2}, L_{\pi,5m}(1)) }\right)\leq 2m.
     \end{split}
 \end{equation*}
In particular, the image of the final map in (\ref{second exact sequence in proof of funky killing selmer}) is $\varpi^{2m}$-torsion, so we have
$$\varpi^{5m-1} \Sel_{\mathcal F(q_1q_2)}(\Q, L_{\pi, 5m}) = 0$$ by Claim 2. Hence by Lemma \ref{stuff like lemma 6.1 FKP}(\ref{lemma 6.1 FKP},\ref{lemma 6.1 part III}), $\overline{\Sel_{\mathcal F(q_1q_2)}(\Q, L_{\pi, 5m})} = 0$, as desired. 
\end{proof}
We also have an endoscopic analogue:
   \begin{lemma}\label{lem:endoscopic_funky} Suppose $\pi$ is endoscopic associated to a pair $(\pi_1,\pi_2)$, and  let $j = 1$ or 2. Suppose $\pi_j$ is non-CM. 
    Then there exists a constant $C$  with the following property: if $$\overline \Sel_{\mathcal F}(\Q, L_{\pi_j,m}) = 0$$ for some $m \geq \max\set{1, C}$, and $q_1$ is an $N$-admissible prime for some $N \geq 5m$ which is BD-admissible for $\pi_j$, then either:
 \begin{enumerate}
     \item $\overline \Sel_{\mathcal F(q_1)} (\Q, L_{\pi_j,5m} ) = 0$; or:
     \item For any cocycle $c\in H^1(\Q, T_{\pi_j,n})$ with $n \leq N$, and for any $M \geq N$, there exist infinitely many $M$-admissible primes $q_2$, BD-admissible for $\pi_j$, such that 
     $$\overline \Sel_{\mathcal F(q_1q_2)} (\Q, L_{\pi_j,5m}) = 0$$ and $$\ord_\varpi \loc_{q_2} c \geq \ord _\varpi c - C.$$
 \end{enumerate}
\end{lemma}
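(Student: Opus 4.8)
The statement is the exact endoscopic analogue of Lemma \ref{two cases for funky killing Selmer lemma}, with $L_\pi$ replaced by $L_{\pi_j}$ (which is absolutely irreducible after inverting $p$ by Theorem \ref{thm:nekovar}) and with no need for the $\heartsuit/\circ$ decomposition. The plan is to transcribe the proof of Lemma \ref{two cases for funky killing Selmer lemma} in this simpler setting, where the constant $C$ will be the one produced by the endoscopic version of the Chebotarev argument. First I would record the needed Chebotarev input: exactly as in Lemma \ref{lem:adjoint_chebotarev}, using Lemma \ref{lem:restr_for_adjt}(\ref{lem:restr_for_adjt_endo}), Corollary \ref{cor:Galois_coh_restr}, and inflation--restriction, together with the fact that admissible elements BD-admissible for $\rho_{\pi_j}$ exist and that an admissible element can be replaced by a $p$-power so as to have finite prime-to-$p$ order; one also uses \cite[Lemma 2.3.3]{liu2022beilinson} for the $O$-span statement. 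Since $L_{\pi_j}\otimes\Q_p$ is irreducible one does \emph{not} need Lemma \ref{lem:breakup} here. This gives a constant $C$ so that for any $c\in H^1(\Q,T_{\pi_j,n})$, any $\phi\in H^1(\Q,L_{\pi_j,m})$, and any $\psi\in H^1(\Q,L_{\pi_j,m}(1))$, and any $M\geq\max\{n,m\}$, there are infinitely many $M$-admissible primes $q_2$ BD-admissible for $\rho_{\pi_j}$ at which all three cocycles are unramified, with $\ord_\varpi\loc_{q_2}c\geq\ord_\varpi c-C$, and with the restrictions of $\phi$ and $\psi$ to $H^1_\unr(\Q_{q_2},-)/\big(H^1_\unr\cap H^1_\ord\big)$ of order $\geq\ord_\varpi\phi-C$ and $\geq\ord_\varpi\psi-C$ respectively. (Here the standardness of $q_2$, hence Lemma \ref{ordinary local condition is standard Lemma} and the rank-one freeness of these quotients, is via the translation in (\ref{subsubsec:translate_appendix_1ERL_endo}).)

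Granting that, I would run the argument of Lemma \ref{two cases for funky killing Selmer lemma} verbatim. Assume (1) fails. By Lemma \ref{stuff like lemma 6.1 FKP}(\ref{lemma 6.1 FKP},\ref{lemma 6.1 part III}) pick $\phi\in\Sel_{\mathcal F(q_1)}(\Q,L_{\pi_j,5m})$ with nonzero image in $\overline\Sel_{\mathcal F(q_1)}(\Q,L_{\pi_j,5m})$ and $\ord_\varpi\phi=5m$; by Lemma \ref{balanced residual selmer ranks lemma} pick $\psi\in\Sel_{\mathcal F(q_1)}(\Q,L_{\pi_j,5m}(1))$ with nonzero image mod $\varpi$ and $\ord_\varpi\psi=5m$. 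The Claim 1 of that proof is now immediate and in fact unnecessary: since there is no $\heartsuit/\circ$ splitting, one uses directly the assumption $\overline\Sel_{\mathcal F}(\Q,L_{\pi_j,m})=0$ together with the fact that any $\mathcal F(q_1)$-Selmer class whose restriction at $q_1$ is $\varpi^{4m}$-torsion reduces to an $\mathcal F$-Selmer class mod $\varpi^m$ — contradiction — so $\ord_\varpi\phi$ and $\ord_\varpi\psi$ are ``large'' in the needed sense automatically; concretely one just feeds $\phi$ and $\psi$ (with $\ord=5m$) into the Chebotarev input. This yields infinitely many $M$-admissible $q_2$ (BD-admissible for $\rho_{\pi_j}$) with $\ord_\varpi\loc_{q_2}c\geq\ord_\varpi c-C$ and with $\Res_{q_2}\phi$, $\Res_{q_2}\psi$ of order $\geq 5m-C\geq 4m$ in the relevant rank-one quotients.

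Then I would reproduce Claims 2 and the final paragraph of Lemma \ref{two cases for funky killing Selmer lemma}: using the exact sequences
\[
0\to\Sel_{\mathcal F_{q_1}}(\Q,L_{\pi_j,5m})\to\Sel_{\mathcal F(q_1)}(\Q,L_{\pi_j,5m})\to O/\varpi^{5m}
\]
and
\[
0\to\Sel_{\mathcal F_{q_2}(q_1)}(\Q,L_{\pi_j,5m})\to\Sel_{\mathcal F(q_1q_2)}(\Q,L_{\pi_j,5m})\to O/\varpi^{5m},
\]
together with Lemma \ref{stuff like lemma 6.1 FKP}(\ref{lemma 6.1 part II residual selmer groups and torsion}) to see $\Sel_{\mathcal F_{q_1}}$ is $\varpi^{m-1}$-torsion, one deduces $\Sel_{\mathcal F(q_1)}(\Q,L_{\pi_j,5m})\simeq O/\varpi^{5m}\oplus(\varpi^{m-1}\text{-torsion})$, hence every class in $\Sel_{\mathcal F_{q_2}(q_1)}$ is $\varpi^{3m-1}$-torsion by the order estimate on $\Res_{q_2}\phi$. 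Finally the global Tate pairing $0=\sum_v\langle\phi',\psi\rangle_v=\langle\phi',\psi\rangle_{q_2}$ for $\phi'\in\Sel_{\mathcal F(q_1q_2)}(\Q,L_{\pi_j,5m})$, together with perfectness of the induced local pairing at $q_2$ and the order bound on $\Res_{q_2}\psi$, forces the image of $\phi'$ in $O/\varpi^{5m}$ to be $\varpi^{2m}$-torsion; combined with the $\varpi^{3m-1}$-torsion bound this gives $\varpi^{5m-1}\Sel_{\mathcal F(q_1q_2)}(\Q,L_{\pi_j,5m})=0$, hence $\overline\Sel_{\mathcal F(q_1q_2)}(\Q,L_{\pi_j,5m})=0$ by Lemma \ref{stuff like lemma 6.1 FKP}(\ref{lemma 6.1 FKP},\ref{lemma 6.1 part III}). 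The main obstacle is purely bookkeeping: making sure the single constant $C$ absorbs the loss in \emph{both} the Chebotarev step (restriction to $G_{\Q(T_{\pi_j,N})}$, via Lemma \ref{lem:restr_for_adjt}(\ref{lem:restr_for_adjt_endo})) and the $O$-span step, and is genuinely independent of $n$, $m$, $N$, $M$, $q_1$ — exactly as in the non-endoscopic case — and verifying that all the relaxed/strict Selmer comparison lemmas of Appendix \ref{sec:appendix_def_theory} apply to $L_{\pi_j}$ under the running assumption that $\pi_j$ is non-CM (so $\rho_{\pi_j}$ has big image by Theorem \ref{thm:nekovar}, and the admissible-primes theory of (\ref{subsubsec:translate_appendix_1ERL_endo}) is in force).
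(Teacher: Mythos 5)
Your proposal matches the paper's proof in substance and overall architecture: the paper's own proof is essentially the one-line remark that the argument of Lemma \ref{two cases for funky killing Selmer lemma} applies with $L_\pi^\circ = 0$, $L_\pi^\heartsuit = L_{\pi_j}$, plus a list of substitutions. You correctly identify that Lemma \ref{lem:breakup} and the $\heartsuit/\circ$ bookkeeping drop out, that Lemma \ref{lem:restr_for_adjt}(\ref{lem:restr_for_adjt_endo}) replaces part (\ref{lem:restr_for_adjt_1}), and that the exact-sequence and Tate-pairing endgame carries over verbatim.

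One substitution is not quite right as stated. You cite Corollary \ref{cor:Galois_coh_restr} for the $T_{\pi_j}$-valued inflation--restriction step, but that corollary, applied to $\rho_{\pi_j}$, only controls $H^1(\Q(\rho_{\pi_j})/\Q,\,T_{\pi_j,n})$. The Chebotarev argument restricts to $G_{\Q(T_{\pi,M})}$, where $T_\pi = T_{\pi_1}\oplus T_{\pi_2}$, because $M$-admissibility is a condition on the full $\rho_\pi(\Frob_{q_2})$; so what is actually needed is a uniform bound on $H^1(\Q(\rho_\pi)/\Q,\,T_{\pi_j,n})$, which additionally requires controlling $\Hom_{G_\Q}\bigl(\Gal(\Q(\rho_\pi)/\Q(\rho_{\pi_j})),\,T_{\pi_j,n}\bigr)$. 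This is exactly the content of the claim inside the proof of Lemma \ref{lem:loc_cheb_rk0_endo}, which is the input the paper says to use in place of Corollary \ref{cor:Galois_coh_restr}. The point matters because $\pi_2$ could be CM by a subfield of $\Q(\rho_{\pi_1})$, in which case that extra $\Hom$ group is a priori not obviously small and the argument there (complex conjugation acting by $-1$, oddness of $\overline{T}_{\pi_j}$) is genuinely needed. Similarly, the paper replaces Lemma \ref{lemma action contains scalar}(\ref{lemma action contains scalar part}) (scalar of infinite order in the image) by Theorem \ref{thm:nekovar}; you invoke Theorem \ref{thm:nekovar} for irreducibility but don't flag that this is also where the scalar needed in the $O$-span/\cite[Lemma 2.3.3]{liu2022beilinson} step comes from. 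Neither point breaks your proof, but both are the precise substitutions the paper records, and the first in particular cannot be elided.
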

\begin{proof}
The same argument used to 
prove Lemma \ref{two cases for funky killing Selmer lemma} applies formally, taking $L_\pi^\circ \coloneqq  0$ and $L_{\pi}^\heartsuit \coloneqq L_{\pi_j}$. When proving the appropriate analogue of Lemma \ref{lem:adjoint_chebotarev}, one uses the claim in the proof of Lemma \ref{lem:loc_cheb_rk0_endo} in place of Corollary \ref{cor:Galois_coh_restr}, Lemma \ref{lem:restr_for_adjt}(\ref{lem:restr_for_adjt_endo}) in place of Lemma \ref{lem:restr_for_adjt}(\ref{lem:restr_for_adjt_1}), and  Theorem \ref{thm:nekovar} in place of Lemma \ref{lemma action contains scalar}(\ref{lemma action contains scalar part}).
\end{proof}

\subsection{Proof of the main results}
\begin{thm}\label{thm:rk1_non_endoscopic_ultimate}
    Suppose $\pi$ is non-endoscopic and satisfies Hypothesis $(\bigstar)$, 
  and let $\p$ be a prime of $E_0$ satisfying Assumption \ref{assumptions_on_p}, such that admissible primes exist for $\rho_{\pi,\p}$.
Then for all $D\geq 1$ 
 squarefree with $\sigma(D)$ even, 
    $$\kappa^D(1)_\p \neq 0\implies \dim H^1_f(\Q, V_{\pi,\p}) = 1.$$
\end{thm}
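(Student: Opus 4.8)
The argument will combine the ``second explicit reciprocity law'' (Theorem \ref{ultimate theorem for second TRL}) with the ``first explicit reciprocity law'' (Theorem \ref{thm:1ERL}) and a Poitou--Tate duality bookkeeping, in the spirit of the rank-zero argument of Theorem \ref{thm:rk_zero_main}, but now producing a \emph{lower} bound as well as an upper bound on $H^1_f(\Q, V_{\pi,\p})$. Throughout, write $T_\pi = T_{\pi,\p}$ and $\kappa^D(1) = \kappa^D(1)_\p$, and assume $\kappa^D(1) \neq 0$.

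\textbf{Step 1: the lower bound.} First I would extract from $\kappa^D(1) \neq 0$ a nonzero class in $H^1_f(\Q, V_{\pi,\p})$: by Proposition \ref{prop: check local conditions for kappa}(\ref{part:check_local_two}) we have $\kappa^D(1) \subset H^1_f(\Q, T_\pi)$, and by Lemma \ref{lem:H1_tors_free_new}(\ref{lem:H1_tors_free_part}) this group is $\varpi$-torsion-free, so any nonzero element of $\kappa^D(1)$ gives a nonzero element of $H^1_f(\Q, V_{\pi,\p})$; hence $\dim H^1_f(\Q, V_{\pi,\p}) \geq 1$.

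\textbf{Step 2: the upper bound via the second reciprocity law and Chebotarev.} The heart of the argument is to show $\dim H^1_f(\Q, V_{\pi,\p}) \leq 1$. Suppose for contradiction that there exist two $O$-linearly independent non-torsion classes $c_1, c_2 \in H^1_f(\Q, T_\pi)$, so that $c_1, c_2$ span a rank-two submodule. Fix a large integer $N$ (to be specified). I would first apply Theorem \ref{ultimate theorem for second TRL} to obtain a constant $C_0 \geq 0$ and, for $N$ sufficiently large, an admissible prime $q_1$ with $n(q_1) \geq N$ such that $\ord_\varpi \loc_{q_1} \kappa^D_{n(q_1)}(1) \geq n(q_1) - C_0$ and $\ord_\varpi \lambda^D_{n(q_1)}(q_1) \geq n(q_1) - C_0$. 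The key consequence is that the \emph{nearby} Shimura-set-valued quantity $\lambda^D(q_1)$ is large, so that one is now exactly in the setting to run the first reciprocity law to build \emph{ramified} cohomology classes out of level-raising congruences at $q_1$ and a second auxiliary prime $q_2$. Specifically, Theorem \ref{thm:1ERL}(\ref{thm:1ERL_one}) (after verifying $\overline\Sel_{\mathcal F(q_1)}(\Q, \ad^0\rho_m) = 0$ via Theorem \ref{theorem of thorne} and Lemma \ref{lemma for using theorem of thorne}, applied with the level-raised deformation problem --- this is where the condition $\overline\Sel_{\mathcal F(q_1)} = 0$ from the first case of Lemma \ref{two cases for funky killing Selmer lemma} enters) produces, for a suitable admissible prime $q_2$, a class $\kappa^D_{n}(q_1 q_2)_0 \in \kappa^D_{n}(q_1 q_2)$ with $\ord_\varpi \partial_{q_2}\kappa^D_n(q_1q_2)_0$ controlled below by $\ord_\varpi \lambda^D_n(q_1)$ minus a bounded error, hence $\geq n - C_1$ for a constant $C_1$ independent of $n$. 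Crucially, I would use Lemma \ref{two cases for funky killing Selmer lemma} to choose $q_1$ (and, in its second case, $q_2$) so that the relevant level-raised adjoint Selmer group vanishes \emph{and simultaneously} $\loc_{q_2} c_i$ has bounded $\varpi$-defect for $i = 1, 2$ --- this is the step that requires the non-entanglement input and Hypothesis $(\bigstar)$, and is where the hypothesis on $\pi$ (not an automorphic induction from an imaginary quadratic or quartic CM field) is genuinely used, since otherwise $L_\pi$ decomposes too much and Lemma \ref{lem:adjoint_chebotarev} fails.

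\textbf{Step 3: the duality contradiction.} With the ramified class $\kappa^D_N(q_1q_2)_0$ in hand, which by construction (Proposition \ref{prop: check local conditions for kappa}(\ref{part:check_local_one}) and the remarks in \S\ref{subsec:intro_sketch_rk1}) is unramified outside $S \cup \{p, q_1, q_2\}$, crystalline at $p$, and ramified at $q_2$, I would run the global Tate pairing $0 = \sum_v \langle \kappa^D_N(q_1q_2)_0, c_{i,N}\rangle_v$ against the images $c_{i,N}$ of $c_1, c_2$ in $H^1(\Q, T_{\pi,N})$, for $i = 1, 2$. The terms at $v \notin S \cup \{q_1, q_2\}$ vanish by Lemma \ref{lem:zero_local_pairing}/Lemma \ref{lemma kill pairing on crystalline} (choosing $N$, then a larger $M$, as in the rank-zero proof), and the terms at $v \in S$ are bounded by a constant $C_3$ (as in \cite[Lemma 4.3(1)]{liu2016hirzebruch}). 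At $v = q_1$ the class $\kappa^D_N(q_1q_2)_0$ is unramified and $\loc_{q_1}$ is controlled by the second reciprocity law, while at $v = q_2$ the ramification of $\kappa^D_N(q_1q_2)_0$ pairs against $\loc_{q_2} c_{i,N}$; since we have arranged both $\loc_{q_1}\kappa$ large and $\loc_{q_2} c_i$ of bounded defect, balancing these two contributions forces a numerical inequality which, for $N$ large enough (larger than $C_0 + C_1 + C_3$ plus the Chebotarev defects from Lemma \ref{two cases for funky killing Selmer lemma} and Lemma \ref{lem:adjoint_chebotarev}), is a contradiction --- but only if $c_1, c_2$ are genuinely independent. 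More precisely, one chooses $q_1, q_2$ so that the rank-two image of $\{c_1, c_2\}$ in $H^1_f(\Q_{q_1}, T_{\pi,N}) \oplus H^1_f(\Q_{q_2}, T_{\pi,N})$ (each a free rank-one module by Proposition \ref{prop:local_adm_free}) has bounded cokernel; then $\kappa^D_N(q_1q_2)_0$ can be paired against a suitable combination to isolate a nonzero local pairing of order $\geq N - O(1)$, contradicting the $S$-bound. Thus $\dim H^1_f(\Q, V_{\pi,\p}) \leq 1$, and combined with Step 1 this gives equality.

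\textbf{Expected main obstacle.} The delicate point is Step 2: one must choose $q_1$ and $q_2$ satisfying \emph{simultaneously} the level-raising congruence conditions (admissibility and $n$-admissibility of large order), the vanishing of a level-raised adjoint Selmer group so that the first reciprocity law applies with bounded error, \emph{and} the Chebotarev conditions forcing $\loc_{q_i} c_j$ to have bounded $\varpi$-defect for the two independent classes $c_1, c_2$. This is exactly what Lemmas \ref{two cases for funky killing Selmer lemma} and \ref{lem:adjoint_chebotarev} are engineered to deliver, and the dichotomy in Lemma \ref{two cases for funky killing Selmer lemma} (either $\overline\Sel_{\mathcal F(q_1)} = 0$ already, or one can kill it with one more prime $q_2$) is precisely what allows the argument to close; carefully threading the constants through both cases, and verifying that Hypothesis $(\bigstar)$ guarantees the decomposition $L_\pi = L_\pi^\heartsuit \oplus L_\pi^\circ$ behaves well at admissible primes (Lemma \ref{lem:heart_circ}), is the technical crux. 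The CAP and small-image cases are excluded precisely because there the adjoint Selmer groups cannot be controlled.
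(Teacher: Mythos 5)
Your overall strategy is correct in outline — second reciprocity law to find $q_1$, first reciprocity law and a careful Chebotarev choice of $q_2$, then Poitou--Tate duality — but there is a genuine gap in how you close the upper bound, and it is at the precise point where the rank-one structure has to enter.

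You suppose two $O$-independent classes $c_1,c_2\in H^1_f(\Q,T_\pi)$ and propose to pick $q_1,q_2$ so that $(\loc_{q_1},\loc_{q_2})$ embeds their span with bounded cokernel, then isolate a large local pairing. This cannot work as stated, because $q_1$ is \emph{not free}: it is fixed by the second reciprocity law (Theorem \ref{ultimate theorem for second TRL}) to make $\loc_{q_1}\kappa^D(1)$ and $\lambda^D_{n(q_1)}(q_1)$ large, and you have no simultaneous control on $\loc_{q_1}c_1$ and $\loc_{q_1}c_2$. Moreover, the level-raised class $\kappa^D_N(q_1q_2)_0$ is in general ramified at \emph{both} $q_1$ and $q_2$, so the $q_1$-term of the Tate pairing $\langle c_i,\kappa^D_N(q_1q_2)_0\rangle_{q_1}$ is not automatically small; to make it small you need $\loc_{q_1}c_i$ small, which your setup does not produce. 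Finally, there is only one class $\kappa^D_N(q_1q_2)_0$, so the phrase ``paired against a suitable combination to isolate a nonzero local pairing'' does not discharge the independence of $c_1,c_2$.

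The missing idea is the claim in the paper's proof: from $\dim H^1_f(\Q,V_\pi)>1$ one extracts a \emph{single} class $c\in H^1_f(\Q,T_\pi)$ with $c+\alpha\kappa^D(1)_0\notin\varpi H^1_f(\Q,T_\pi)$ for every $\alpha\in O$ (this is exactly where the rank-$>1$ assumption is used), and then \emph{adjusts $c$ by an $O$-multiple of $\kappa^D(1)_0$} to force $\ord_\varpi\loc_{q_1}c_m\leq C_0$. This is possible precisely because $\loc_{q_1}\kappa^D(1)_0$ is large and $H^1_f(\Q_{q_1},T_{\pi,m})$ is free of rank one, so $\kappa^D(1)_0$ spans nearly all of it and you can cancel the $q_1$-localization of $c$ against it. After that, $q_2$ is the only auxiliary prime you need to choose via the Lemma \ref{two cases for funky killing Selmer lemma}/Lemma \ref{lem:adjoint_chebotarev} machinery (making $\loc_{q_2}c$ large and the level-raised adjoint Selmer group vanish), and the Tate pairing closes with the $q_1$-term bounded by $C_0$ rather than by an independence argument. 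Your Step 1 (lower bound) and your Step 2's use of Lemmas \ref{two cases for funky killing Selmer lemma}, \ref{lem:adjoint_chebotarev}, \ref{lem:heart_circ} and Hypothesis $(\bigstar)$ are correct and essentially as in the paper; it is only the contradiction step that needs to be reorganized around a single carefully normalized $c$.
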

\begin{rmk}
    To deduce Theorem \ref{thm:intro_rk1} as stated in the introduction from Theorem \ref{thm:rk1_non_endoscopic_ultimate}, one needs the following additional observation.  Let $\widetilde \kappa^D(1)_\p \subset H^1(\Q, T_{\pi,\p})$ be defined like $\kappa^D(1)_\p$, but allowing all neat level structures, not only those hyperspecial outside our fixed set $S$ of Notation \ref{notation:pi_basic}: then $\widetilde \kappa^D(1)_\p \neq 0 \implies \kappa^D(1)_\p \neq 0$. This is a consequence of Proposition \ref{prop:equivariance_and_modularity} and Corollary \ref{cor:coh_relevant}.  
\end{rmk}
\begin{proof}
    Let $N$ be a large integer to be specified later, and let $M\geq N$ be the number from Lemma \ref{lemma kill pairing on crystalline}(\ref{lemma kill pairing on crystalline part two}) applied to $T_\pi$ and $n=N$. By Theorem \ref{ultimate theorem for second TRL}, there exists a constant $C_0 \geq 0$ independent of $N$ and an $M$-admissible prime $q_1$ such that
    $$\ord_\varpi \loc_{q_1} \kappa^D_{n(q_1)} (1) \geq n(q_1) - C_0$$ and
    $$\ord_\varpi \lambda^D_{n(q_1)}(q_1)  \geq n(q_1) - C_0.$$
Because $\kappa^D(1) \subset H^1(\Q, T_\pi)$ is a free $O$-module by Lemma \ref{lem:H1_tors_free_new}(\ref{lem:H1_tors_free_part}), we may fix a class $\kappa^D(1)_0 \in \kappa^D(1)$, with image $\kappa^D_m(1)_0 $ in $H^1(\Q, T_{\pi,m})$ for all $m \geq 1$, such that 
\begin{equation}\label{eq:loc_kappa1_rk1}
    \ord _\varpi \loc_{q_1} \kappa^D_m(1)_0\geq m - C_0
\end{equation} for all $m \leq n(q_1)$. 

\begin{claim}
    Suppose $\dim H^1_f(\Q, V_\pi) > 1$. Then there exists a class $c\in H^1_f(\Q, T_\pi)$, with images $c_m \in H^1_f(\Q, T_{\pi,m})$ for all $m \geq 1$, such that:
    \begin{enumerate}
        \item\label{part:claim_one_rk1} $\ord _\varpi c_m = m$ for all $m$.
        \item \label{part:claim_two_rk1}$\ord_\varpi \loc _{q_1} c_m \leq C_0$ for all $m \leq n(q_1)$. 
    \end{enumerate}
\end{claim}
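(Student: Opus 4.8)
The plan is to extract from the hypothesis $\dim H^1_f(\Q,V_\pi)>1$ a Selmer class $c$ with large order and large local ramification at $q_1$, using the admissible element machinery already assembled. First I would fix a $G_\Q$-stable $O$-lattice structure and observe that, by Lemma \ref{lem:H1_tors_free_new}(\ref{lem:H1_tors_free_part}), the module $H^1_f(\Q,T_\pi)$ is $\varpi$-torsion-free, so its rank equals $\dim_{E_\p} H^1_f(\Q,V_\pi)>1$; in particular $H^1_f(\Q,T_\pi)$ has $O$-rank at least two, and $\kappa^D(1)$ spans (at most) a rank-one submodule, since it is generated by a single Abel-Jacobi class. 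The point is therefore to produce a second Selmer class $c$ not constrained by $\kappa^D(1)$, satisfying (\ref{part:claim_one_rk1}), $\ord_\varpi c_m = m$, which just says $c$ is primitive (not divisible by $\varpi$) — automatic after rescaling an arbitrary nonzero element of the torsion-free module — together with the reduction-compatibility $\ord_\varpi c_m=m$ for all $m$, which follows from Lemma \ref{lem:H1_tors_free_new}(\ref{lem:H1_tf_order}) applied with $a=1$.

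The heart is part (\ref{part:claim_two_rk1}), the \emph{upper} bound $\ord_\varpi\loc_{q_1} c_m\le C_0$. Here I would argue by contraposition on the choice of $q_1$: the prime $q_1$ was produced by Theorem \ref{ultimate theorem for second TRL} via a Chebotarev condition, and I would strengthen that application by feeding into it, in addition to the class realizing the second reciprocity law, the proposed class $c$ (or rather its reductions $c_N$), so that $q_1$ is chosen with Frobenius conjugate to an admissible element $g$ on which the localization of $c$ is \emph{controlled from above}. Concretely, one chooses $g\in G_\Q$ admissible for $\rho_\pi$ such that the component of $c(g)$ in the relevant line governing $\loc_{q_1}$ (the $\Frob_q=1$ line inside $H^1_\unr(\Q_q,T_{\pi,n})$, free of rank one by Proposition \ref{prop:local_adm_free}) has order at most some absolute $C_0$; this is possible precisely because $c$ is a cohomology class of bounded ``complexity'' — the relevant obstruction is measured by $H^1(\Q(T_\pi)/\Q,T_{\pi,m})$, which is annihilated by a fixed power of $\varpi$ by Corollary \ref{cor:Galois_coh_restr}. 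Since the constant $C_0$ in Theorem \ref{ultimate theorem for second TRL} is already independent of $N$ and $q_1$, enlarging it to also bound $\ord_\varpi\loc_{q_1}c_m$ costs nothing. The subtlety — and the main obstacle — is that $c$ must be fixed \emph{before} $q_1$ is chosen, whereas $q_1$ is used to define $\kappa^D_{n(q_1)}(1)$; one resolves this by first fixing \emph{any} primitive $c\in H^1_f(\Q,T_\pi)$ linearly independent from the line spanned by $\kappa^D(1)$ (possible as the rank is $\ge 2$), and only then invoking the Chebotarev step with $c$ among the inputs. The existence of such a linearly independent $c$ with $\ord_\varpi c_m=m$ for all $m$ is where the hypothesis $\dim H^1_f(\Q,V_\pi)>1$ is genuinely used.

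After the claim is established, the remainder of the proof of Theorem \ref{thm:rk1_non_endoscopic_ultimate} — which I would carry out next — proceeds as in the rank-zero argument: one applies Lemma \ref{lem:adjoint_chebotarev} and Lemma \ref{two cases for funky killing Selmer lemma} to kill the adjoint Selmer group after raising level at $q_1$ and possibly a further prime $q_2$, invokes the first explicit reciprocity law (Theorem \ref{thm:1ERL}) to obtain a class $\kappa^D_M(q_1q_2)$ ramified at $q_2$ with $\ord_{q_2}\kappa^D_M(q_1q_2)$ large relative to $\lambda^D_M(q_1)\ge M-C_0$, and then computes the global Tate pairing $0=\sum_v\langle \kappa^D_N(q_1q_2)_0, c_N\rangle_v$. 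By Proposition \ref{prop: check local conditions for kappa}, Lemma \ref{lemma kill pairing on crystalline}(\ref{lemma kill pairing on crystalline part two}), and the fact that $c_N\in H^1_f$, all local terms away from $q_1,q_2$ vanish, and the $q_1$ term is bounded above via (\ref{eq:loc_kappa1_rk1}) and the claim's part (\ref{part:claim_two_rk1}), while the $q_2$ term is bounded below by the reciprocity inputs. Choosing $N$ larger than the sum of all accumulated constants then forces a contradiction, so $\dim H^1_f(\Q,V_\pi)\le 1$; combined with $\kappa^D(1)\ne 0$ giving a nonzero class in $H^1_f$ (by Proposition \ref{prop: check local conditions for kappa}(\ref{part:check_local_two}) and Lemma \ref{lem:H1_tors_free_new}), we conclude $\dim H^1_f(\Q,V_\pi)=1$.
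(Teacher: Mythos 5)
Your proposal for part (\ref{part:claim_two_rk1}) takes a genuinely different route from the paper, and the difference is worth spelling out because the paper's route is substantially simpler. You propose to \emph{enlarge the Chebotarev step} that produces $q_1$ (i.e.\ to strengthen Theorem \ref{ultimate theorem for second TRL} via Proposition \ref{prop:choosing_g_2ERL}) so that $q_1$ is chosen with $\loc_{q_1}c$ controlled from above, and you rightly identify the circularity concern. But the paper does not touch the Chebotarev step at all: it fixes $q_1$ and $\kappa^D(1)_0$ \emph{first}, exactly as given by the second reciprocity law, and then performs a post-adjustment on $c$. Concretely, it picks $c\in H^1_f(\Q,T_\pi)$ so that $c+\alpha\kappa^D(1)_0\notin\varpi H^1_f(\Q,T_\pi)$ for every $\alpha\in O$ (this is where $\dim H^1_f(\Q,V_\pi)>1$ enters), and then replaces $c$ by $c-\alpha_0\kappa^D(1)_0$ for a suitable $\alpha_0\in O$. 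Since $H^1_f(\Q_{q_1},T_{\pi,m})\cong O/\varpi^m$ is cyclic and (\ref{eq:loc_kappa1_rk1}) says $\loc_{q_1}\kappa^D_m(1)_0$ has $\varpi$-adic valuation at most $C_0$, one can use $\kappa^D(1)_0$ to cancel the localization of $c$ up to a $\varpi^{C_0}$-error, and the defining condition on $c$ is visibly stable under this adjustment. No bivariate Chebotarev statement is needed. Your proposed alternative would instead require upgrading Proposition \ref{prop:choosing_g_2ERL} to accommodate a second cocycle with the \emph{opposite} constraint (small rather than nonzero localization), which is a joint independence statement for the restrictions of $\kappa^D(1)_0$ and $c$ to $G_{\Q(T_\pi)}$ and needs an argument in the spirit of Lemma \ref{lem:breakup}; your sentence ``enlarging it to also bound $\ord_\varpi\loc_{q_1}c_m$ costs nothing'' underestimates this. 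Also, your appeal to Corollary \ref{cor:Galois_coh_restr} is pointed in the wrong direction: that corollary is used in the paper to make localizations \emph{large} by ensuring the restriction $c|_{G_{\Q(T_\pi)}}$ is nontrivial; making a localization small requires the \emph{image} of that restriction to be large enough to cancel the background contribution $c(g_0)$, which is a slightly different use of the same fact and should be stated as such.

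There is also a small but genuine gap in your treatment of part (\ref{part:claim_one_rk1}). You obtain primitivity of $c$ in $H^1_f(\Q,T_\pi)$ by rescaling, and then cite Lemma \ref{lem:H1_tors_free_new}(\ref{lem:H1_tf_order}) with $a=1$; but that lemma requires $c\notin\varpi H^1(\Q,T_\pi)$, not merely $c\notin\varpi H^1_f(\Q,T_\pi)$. The paper bridges this by observing that $H^1(\Q,T_\pi)/H^1_f(\Q,T_\pi)$ is $O$-torsion-free (it injects into $\prod_v H^1(\Q_v,V_\pi)/H^1_f(\Q_v,V_\pi)$), so primitivity in the Selmer group implies primitivity in the full $H^1$. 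You should include this step.
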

\begin{proof}[Proof of claim]
By the assumption $\dim H^1_f(\Q, V_\pi) > 1$,   we may choose $c\in H^1_f(\Q, T_\pi)$ such that
\begin{equation}\label{eq:for_claim_rk1}
    c + \alpha \kappa^D(1)_0\not\in \varpi H^1_f(\Q, T_\pi), \;\; \forall \alpha\in O.
\end{equation}
Adjusting $c$ by an $O$-multiple of $\kappa^D(1)_0$ and using (\ref{eq:loc_kappa1_rk1}), we can ensure that (\ref{part:claim_two_rk1}) holds. 
    By definition, we have an injection $$\frac{H^1(\Q, T_\pi)}{H^1_f(\Q, T_\pi)} \hookrightarrow \prod_v \frac{H^1(\Q_v, V_\pi)}{H^1_f(\Q_v, V_\pi)},$$ hence the quotient $H^1(\Q, T_\pi)/H^1_f(\Q, T_\pi)$ is $O$-torsion-free, and in particular $c\not\in \varpi H^1(\Q, T_\pi)$ by (\ref{eq:for_claim_rk1}). 
Then (\ref{part:claim_one_rk1}) holds as well by Lemma \ref{lem:H1_tors_free_new}(\ref{lem:H1_tf_order}).

\end{proof}

By Theorem \ref{theorem of thorne} and Lemma \ref{lemma for using theorem of thorne}, there exists a constant $m_0 \geq 1$ such that $\overline \Sel_{\mathcal F} (\Q,\ad^0 \rho_{m_0}) = 0$. Without loss of generality, we assume $N \geq 10m_0$ and $m_0 \geq \max\set{1,C}$, for the constant $C$ of Lemma \ref{lem:adjoint_chebotarev}. Now consider the following two cases:
\begin{enumerate}
    \item If $\overline \Sel_{\mathcal F(q_1)}(\Q, L_{\pi, 10m_0 - 1}) = 0$, then we choose (by Lemma \ref{lem:loc_cheb_rk0} and (\ref{part:claim_one_rk1}) of the claim) an $n(q_1)$-admissible prime $q_2$ such that $\ord_\varpi \loc_{q_2} c_{n(q_1)} \geq n(q_1) - C_1$ for a constant $C_1 \geq 0$ independent of $N$, $q_1$, and $q_2$.
    \item If $\overline\Sel_{\mathcal F(q_1)}(\Q, L_{\pi, 10m_0 - 1}) \neq 0$, then \emph{a fortiori} we have 
    $\overline\Sel_{\mathcal F(q_1)}(\Q, L_{\pi, 5m_0}) \neq 0$.
    We choose (by Lemma \ref{two cases for funky killing Selmer lemma}) an $(n(q_1) + 5m_0)$-admissible prime $q_2 \neq q_1$ such that
    $\overline \Sel_{\mathcal F(q_1q_2)}(\Q, L_{\pi, 5m_0}) = 0$ and $\ord_\varpi \loc_{q_2} c_{n(q_1)} \geq \ord_\varpi n(q_1) - C_1$
    for a constant $C_1 \geq 0$ independent of $N$, $q_1$, and $q_2$.
\end{enumerate}
By Theorem \ref{thm:1ERL} combined with Corollary \ref{cor:relaxed_bound_appendix},
in either case we can conclude -- as long as $N$ is sufficiently large in a manner depending only on $\pi$, $\p$, and $m_0$ -- that
$$\partial_{q_2} \kappa_{n(q_1)}^D(q_1q_2) \supset \lambda_{n(q_1)}^D (q_1) \cdot (\varpi^{C_2})$$
for a constant $C_2$ that is independent of $N$, $q_1$, and $q_2$. Hence we may choose an element $\kappa_{n(q_1)}^D(q_1q_2)_0\in  \kappa_{n(q_1)}^D(q_1q_2)$,
with images $\kappa_m^D(q_1q_2)_0 \in H^1(\Q, T_{\pi,m})$ for all $m \leq n(q_1)$, 
such that
\begin{equation}\label{eq:partialz_rk1}
    \ord_\varpi \partial_{q_2} \kappa_m^D(q_1q_2)_0\geq m- C_0 - C_2.
\end{equation}
We now compute the Tate pairing
\begin{equation}\label{eq:Tate_rk1}
    \langle c_{N}, \kappa_N^D(q_1q_2)_0\rangle = \sum_{v} \langle c_{N}, \kappa_N^D(q_1q_2)_0\rangle_v= 0 \in O/\varpi^{N}.
\end{equation}
Arguing as in the proof of Theorem \ref{thm:rk_zero_main} and using that $M \leq n(q_1)$,
there is a constant $C_3 \geq 0$ 
 so that
$$\ord_\varpi \langle c_{N}, \kappa_N^D(q_1q_2)_0\rangle_v \leq C_3,\; \;\forall v\not\in \set{q_1,q_2}.$$
By (\ref{part:claim_two_rk1}) of the claim, we also have
$$\ord_\varpi \langle c_{N}, \kappa_N^D(q_1q_2)_0 \rangle_{q_1} \leq C_0.$$
In particular, (\ref{eq:Tate_rk1}) implies
$$\ord_\varpi  \langle c_{N}, \kappa_N^D(q_1q_2)_0\rangle_{q_2} \leq \max\set{C_0, C_3}.$$
However, by Proposition \ref{prop:local_adm_free}, (\ref{eq:partialz_rk1}) combined with the choice of $q_2$ implies
$$\ord_\varpi \langle  c_{N}, \kappa_N^D(q_1q_2)_0\rangle_{q_2} \geq N -C_0 - C_1 - C_2.$$
This is a contradiction if we choose $N > C_0 + C_1 + C_2 + \max\set{C_0,C_3},$ and the proof of the theorem is complete.
    \end{proof}
    
\begin{thm}\label{thm:endoscopic_rk1}
Suppose $\pi$ is endoscopic, associated to a pair $(\pi_1,\pi_2)$ of  automorphic representations of $\GL_2(\A)$ (in any order), and $\p$ is a prime of $E_0$ satisfying Assumption \ref{assumptions_on_p}, such that $H^1_f(\Q, V_{\pi_1,\p}\otimes V_{\pi_2,\p}(-1)) = 0$.
Assume as well that $\kappa^D(1)_\p^{(1)} \neq 0$ for some squarefree $D$ with $\sigma(D)$ even. Then the following hold:
\begin{enumerate}
    \item\label{thm:rk_one_endo_one} If $\pi_1$ is non-CM and there exist admissible primes  which are BD-admissible for $\rho_{\pi_1,\p}$, then 
    $$\dim H^1_f(\Q, V_{\pi_1,\p}) = 1.$$
    \item\label{thm:rk_one_endo_two} If for each $j = 1,2$, there exist admissible primes  which are BD-admissible for  $\rho_{\pi_j,\p}$, then $$\dim H^1_f(\Q, V_{\pi_2,\p}) = 0.$$
\end{enumerate}
In particular, if  for each $j = 1,2$, $\pi_j$ is non-CM and there exist admissible primes  which are BD-admissible for  $\rho_{\pi_j,\p}$, then $$\kappa^D(1)_\p \neq 0 \implies \dim H^1_f(\Q, V_{\pi,\p}) = 1.$$

\end{thm}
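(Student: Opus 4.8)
\textbf{Proof strategy for Theorem \ref{thm:endoscopic_rk1}.}

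The plan is to run the same bipartite Euler system argument as in the proof of Theorem \ref{thm:rk1_non_endoscopic_ultimate}, but now working separately with each factor $\rho_{\pi_j,\p}$ of the reducible representation $\rho_{\pi,\p} = \rho_{\pi_1,\p}\oplus\rho_{\pi_2,\p}$, and using the hypothesis $H^1_f(\Q, V_{\pi_1,\p}\otimes V_{\pi_2,\p}(-1)) = 0$ to control congruences to non-endoscopic forms throughout. For part (\ref{thm:rk_one_endo_one}): using the splitting $\kappa^D(1,\phi;K) = \kappa^D(1,\phi;K)^{(1)}\oplus\kappa^D(1,\phi;K)^{(2)}$ and the hypothesis $\kappa^D(1)_\p^{(1)}\neq 0$, apply Theorem \ref{thm:2ERL_endo_ultimate} with $j=1$ to produce a constant $C_0$ and, for each large $N$, an admissible prime $q_1$ which is $N$-BD-admissible for $\rho_{\pi_1}$, with $\ord_\varpi\loc_{q_1}\kappa^D_{n(q_1)}(1)^{(1)}\geq n(q_1)-C_0$ and $\ord_\varpi\lambda^D_{n(q_1)}(q_1)\geq n(q_1)-C_0$. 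Then, assuming for contradiction that $\dim H^1_f(\Q,V_{\pi_1,\p})>1$, produce a class $c\in H^1_f(\Q,T_{\pi_1,\p})$ with full order at every level and with $\ord_\varpi\loc_{q_1}c_m\leq C_0$ (exactly as in the claim in the proof of Theorem \ref{thm:rk1_non_endoscopic_ultimate}, projecting to the $\pi_1$-factor). Applying Proposition \ref{prop:thorne_endoscopic} together with Lemma \ref{lemma for using theorem of thorne} gives $\overline\Sel_{\mathcal F}(\Q,\ad^0\rho_{\pi_1,m_0})=0$ for some $m_0$, and then Lemma \ref{lem:endoscopic_funky} (with $j=1$) splits into the two cases: either $\overline\Sel_{\mathcal F(q_1)}(\Q,L_{\pi_1,5m_0})=0$, in which case Lemma \ref{lem:loc_cheb_rk0_endo} furnishes $q_2$ with $\loc_{q_2}c$ of controlled order; or one uses the second alternative of Lemma \ref{lem:endoscopic_funky} to find $q_2$ killing the relaxed Selmer group. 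In either case Theorem \ref{thm:1ERL_endoscopic}(\ref{thm:1ERL_endoscopic_one}) (whose hypothesis $H^1_f(\Q,V_{\pi_1,\p}\otimes V_{\pi_2,\p}(-1))=0$ is exactly our assumption, so that $C_{\mathrm{RS}}$ is finite) together with Corollary \ref{cor:relaxed_bound_appendix} bounding the relaxed adjoint Selmer modules uniformly, yields a class $\kappa^D_{n(q_1)}(q_1q_2)_0$ with $\ord_\varpi\partial_{q_2}\kappa^D_m(q_1q_2)_0\geq m-C_0-C_2$. Finally, compute the global Tate pairing $\sum_v\langle c_N,\kappa^D_N(q_1q_2)_0\rangle_v=0$: all terms away from $q_1,q_2$ are bounded by a constant $C_3$ using Proposition \ref{prop: check local conditions for kappa} and Lemma \ref{lemma kill pairing on crystalline}; the $q_1$-term is bounded using the property of $c$; hence the $q_2$-term is bounded, contradicting the lower bound from Proposition \ref{prop:local_adm_free}. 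Taking $N$ large gives the contradiction.

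For part (\ref{thm:rk_one_endo_two}), the argument is actually simpler and does not require the rank-one input. We already know from the general formalism that $\kappa^D(1)_\p\subset H^1_f(\Q,T_{\pi,\p})$ (Proposition \ref{prop: check local conditions for kappa}(\ref{part:check_local_two})), so it suffices to bound $H^1_f(\Q,V_{\pi_2,\p})$. The idea is to run a \emph{one-variable} (rank-zero style) argument: suppose $c\in H^1_f(\Q,T_{\pi_2,\p})$ is non-torsion. Using the hypothesis that admissible primes BD-admissible for $\rho_{\pi_2}$ exist, together with Theorem \ref{thm:2ERL_endo_ultimate} applied with $j=2$ — wait, this needs $\kappa^D(1)^{(2)}\neq 0$, which we do not assume. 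Instead, the cleaner route is to observe that $\kappa^D(1)^{(1)}\neq 0$ already gives, via the first explicit reciprocity law applied with $i=2$ at auxiliary admissible primes BD-admissible for $\rho_{\pi_2}$, a ramified class in $H^1(\Q,T_{\pi_2,n})$; but in fact the most direct approach is to note that the argument of part (\ref{thm:rk_one_endo_one}) produces, for each large $N$, an $N$-admissible prime $q_1$ BD-admissible for $\rho_{\pi_1}$ and a class $\kappa^D_{n(q_1)}(q_1)^{(1)}\in H^1(\Q,T_{\pi_1,n(q_1)})$ ramified at $q_1$; pairing this against a hypothetical non-torsion $c\in H^1_f(\Q,T_{\pi_2})$ is \emph{not} useful since $T_{\pi_1}$ and $T_{\pi_2}$ are different Galois modules. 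So the right move is: since $q_1$ is chosen BD-admissible for $\rho_{\pi_1}$, the Selmer class $\kappa^D_{n(q_1)}(q_1q_2)^{(2)}$ obtained by a further level raising at an admissible prime $q_2$ BD-admissible for $\rho_{\pi_2}$ lands in $H^1(\Q,T_{\pi_2,n})$, is ramified at $q_2$, crystalline at $p$, and unramified away from $N(\pi)pq_1q_2$; applying Theorem \ref{thm:1ERL_endoscopic}(\ref{thm:1ERL_endoscopic_one}) with $i=2$ (again using $H^1_f(\Q,V_{\pi_1,\p}\otimes V_{\pi_2,\p}(-1))=0$), one deduces $\ord_\varpi\partial_{q_2}\kappa^D_{n(q_1)}(q_1q_2)^{(2)}\geq\ord_\varpi\lambda^D_{n(q_1)}(q_1)-C$, and the Tate-pairing argument then forces $H^1_f(\Q,T_{\pi_2,\p})$ torsion-free-rank zero, i.e. $\dim H^1_f(\Q,V_{\pi_2,\p})=0$. (The subtlety that $\kappa^D(1)^{(2)}$ may vanish is handled because we only need $\lambda^D_{n(q_1)}(q_1)$ to be large, which follows from $\kappa^D(1)^{(1)}\neq 0$ via Theorem \ref{thm:2ERL_endo_ultimate}; the resulting $\kappa^D(q_1q_2)^{(2)}$ need not itself be nonzero to give the bound.)

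The final assertion is then immediate: the hypotheses of both (\ref{thm:rk_one_endo_one}) and (\ref{thm:rk_one_endo_two}) are met, so $\dim H^1_f(\Q,V_{\pi_1,\p})=1$ and $\dim H^1_f(\Q,V_{\pi_2,\p})=0$, whence $\dim H^1_f(\Q,V_{\pi,\p})=\dim H^1_f(\Q,V_{\pi_1,\p})+\dim H^1_f(\Q,V_{\pi_2,\p})=1$. I expect the main obstacle to be bookkeeping rather than conceptual: one must verify carefully that $C_{\mathrm{RS}}$ from Lemma \ref{lem:RS_BK_endoscopic}, the relaxed-Selmer bounds from Corollary \ref{cor:relaxed_bound_appendix} (in its endoscopic incarnation for $\rho_{\pi_j}$), and the various Chebotarev constants are all independent of $N$, $q_1$, $q_2$, so that the final inequality $N > \text{(sum of constants)}$ can genuinely be arranged; and one must check that the level-raising input Theorem \ref{thm:1ERL_endoscopic} applies to the congruent form $\beta_{q_1}$ coming from $\spin(V_{Dq_1})$ even when it does not lift to characteristic zero — here one invokes the relative deformation theory of Appendix \ref{sec:appendix_def_theory} exactly as in the non-endoscopic proof, which is where the hypothesis $H^1_f(\Q,V_{\pi_1,\p}\otimes V_{\pi_2,\p}(-1))=0$ (controlling congruences to non-endoscopic $\spin_5$ forms) does its essential work.
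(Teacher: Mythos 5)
Your approach to part (\ref{thm:rk_one_endo_one}) matches the paper's proof essentially step for step: Theorem \ref{thm:2ERL_endo_ultimate} with $j=1$, the rank-one claim, Proposition \ref{prop:thorne_endoscopic} plus Lemma \ref{lemma for using theorem of thorne}, the dichotomy from Lemma \ref{lem:endoscopic_funky}, Theorem \ref{thm:1ERL_endoscopic} with the $C_{\operatorname{RS}}$ hypothesis, Corollary \ref{cor:relaxed_bound_appendix}, and the final Tate-pairing computation. That part is fine.

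For part (\ref{thm:rk_one_endo_two}) you correctly identify that the argument is a rank-zero style bound and that one does not need the case analysis of Lemma \ref{lem:endoscopic_funky}, because applying Theorem \ref{thm:1ERL_endoscopic}(\ref{thm:1ERL_endoscopic_one}) with $i=2$ requires only $\overline\Sel_{\mathcal F}(\Q,\ad^0\rho_{\pi_2,m_0})=0$ (the factor $Q_2$ equals $1$ here since $q_1$ is BD-admissible for $\rho_{\pi_1}$). However, there is a genuine gap in how you close the global Tate pairing $0=\sum_v\langle c_N,\kappa^D_N(q_1q_2)_0^{(2)}\rangle_v$ with $c\in H^1_f(\Q,T_{\pi_2})$. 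The local terms away from $q_1,q_2$ are bounded as usual, and the $q_2$-term is large by construction, but you never explain why the $q_1$-term is controlled. In the non-endoscopic rank-one argument this is arranged via the claim giving $\ord_\varpi\loc_{q_1}c_m\leq C_0$, but in part (\ref{thm:rk_one_endo_two}) there is no such control available: the relevant class $c$ lives in $H^1_f(\Q,T_{\pi_2})$ while the localization bound at $q_1$ from Theorem \ref{thm:2ERL_endo_ultimate} concerns $\kappa^D(1)^{(1)}\in H^1(\Q,T_{\pi_1})$, and indeed you explicitly note these are different Galois modules and then move on without resolving the issue. The missing observation (used in the paper) is that $\loc_{q_1}c_N=0$ identically, because $q_1$ is BD-admissible for $\rho_{\pi_1}$ but \emph{not} for $\rho_{\pi_2}$ (Remark \ref{rmk:BD_adm}(\ref{rmk:BD_adm_dichotomy_part})), so $\Frob_{q_1}$ acts on $\overline T_{\pi_2}$ with eigenvalues $\{\alpha,q_1/\alpha\}$, $\alpha\not\in\{\pm 1,\pm q_1,q_1^2,q_1^{-1}\}$, and hence $H^1_f(\Q_{q_1},T_{\pi_2,N})=T_{\pi_2,N}/(\Frob_{q_1}-1)=0$. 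Without this, the $q_1$-local pairing term is undetermined and the contradiction does not follow. This is also the conceptual reason that part (\ref{thm:rk_one_endo_two}) needs no assumption that $\pi_2$ is non-CM or that $\kappa^D(1)^{(2)}\neq 0$: the $q_1$-term simply vanishes for free.
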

\begin{rmk}
    The existence of admissible primes which are BD-admissible for each $\rho_{\pi_j, \p}$ is considered in Proposition \ref{prop:adm_endosc}.
\end{rmk}
\begin{proof}
  Let $N$ be a large integer to be specified later, and let $M \geq N$ be the number from Lemma \ref{lemma kill pairing on crystalline}(\ref{lemma kill pairing on crystalline part two}) for $T_\pi$ and $n = N$. By Theorem \ref{thm:2ERL_endo_ultimate}, there exists a constant $C_0 \geq 0$ independent of $N$ and an $M$-admissible prime $q_1$, BD-admissible for $\rho_{\pi_1}$, such that
  $$\ord_\varpi \loc_{q_1}\kappa_{n(q_1)}^D(1)^{(1)} \geq n(q_1) - C_0$$ and $$\ord_\varpi \lambda^D_{n(q_1)} (q_1) \geq n(q_1) - C_0.$$ As in Theorem \ref{thm:rk1_non_endoscopic_ultimate}, we fix a class 
  $\kappa^D(1)_0\in \kappa^D(1)^{(1)}$, with image $\kappa_m^D(1)_0 \in H^1(\Q, T_{\pi_1,m})$ for all $m \geq 1$, such that \begin{equation}
      \label{eq:loc_kappa1_endo}
      \ord_\varpi \loc_{q_1} \kappa_m^D(1)_0\geq m - C_0
  \end{equation}
  for all $m \leq n(q_1)$.

  Now we suppose we are in case (\ref{thm:rk_one_endo_one}) of the theorem; assume for contradiction $\dim H^1_f(\Q, V_{\pi_1}) > 1$.
  By the same argument as for the claim in Theorem \ref{thm:rk1_non_endoscopic_ultimate},  we have a class $c\in H^1_f(\Q, T_{\pi_1})$, with images $c_m \in H^1_f(\Q, T_{\pi_1,m})$, such that:
  \begin{enumerate}
      \item $  \ord_\varpi c_m = m$ for all $ m \geq 1.$
      \item   $ \ord\loc_{q_1} c_m \leq C_0$ for all $ 1\leq m\leq n(q_1).$
  \end{enumerate}
   
By Proposition \ref{prop:thorne_endoscopic} and Lemma \ref{lemma for using theorem of thorne}, there exists a constant $m_0\geq 1$ such that $$\overline \Sel_{\mathcal F}(\Q, \ad^0 \rho_{\pi_1,m_0}) = 0.$$ Increasing $N$ if necessary, we assume $N > 10m_0$. 
  Now consider the following cases:
  \begin{enumerate}
      \item If  $\overline \Sel_{\mathcal F(q_1)} (\Q, \ad^0 T_{\pi_1, 10 m_0 - 1}) = 0$, 
      then choose (by Lemma \ref{lem:loc_cheb_rk0_endo}) $q_2$ to be $n(q_1)$-admissible and BD-admissible for $\rho_{\pi_1}$ such that $$\ord_\varpi \loc_{q_2} c_{n(q_1)} \geq n(q_1) - C_1$$ for a constant $C_1 \geq 0$ independent of $N$, $q_1$, and $q_2$. 
      \item If  $\overline \Sel_{\mathcal F(q_1)}(\Q, \ad^0 T_{\pi_1, 10m_0 -1}) \neq 0$, then \emph{a fortiori} we have $$\overline \Sel_{\mathcal F(q_1)}(\Q, \ad^0 T_{\pi_1, 5m_0}) \neq 0.$$ We choose, by Lemma \ref{lem:endoscopic_funky}, an $(n(q_1) + 5m_0)$-admissible prime $q_2 \neq q_1$, BD-admissible for $\pi_1$, such that $$\overline \Sel_{\mathcal F(q_1q_2)} (\Q, \ad^0 T_{\pi_1, 5m_0}) = 0$$and $\ord_\varpi \loc_{q_2} c_{n(q_1)} \geq n(q_1) - C_1$ for a constant $C_1 \geq 0$ independent of $N$, $q_1$, and $q_2$. 
  \end{enumerate}
By Theorem \ref{thm:1ERL_endoscopic} and Corollary \ref{cor:relaxed_bound_appendix}, in either case we can conclude
$$\partial_{q_2} \kappa^D_{n(q_1)} (q_1q_2) \supset \lambda_{n(q_1)}^D(q_1) \cdot (\varpi^{C_2})$$ for all $N$ sufficiently large in a manner depending only on $\pi$, $\p$, and $m_0$, and
for a constant $C_2\geq 0$ that is independent of $N$, $q_1$, and $q_2$. The remainder of the proof of (\ref{thm:rk_one_endo_one}) is now identical to Theorem \ref{thm:rk1_non_endoscopic_ultimate}.

Now we suppose we are in case (\ref{thm:rk_one_endo_two}), and assume for contradiction that there exists a non-torsion class $c\in H^1_f(\Q, T_{\pi_2})$ with images $c_m \in H^1(\Q, T_{\pi_2,m})$ for all $m \geq 1$. By the proof of the claim in Theorem \ref{thm:rk1_non_endoscopic_ultimate}, we may assume $\ord_\varpi c_m = m$ for all $m$. 
Then we choose (by Lemma \ref{lem:loc_cheb_rk0_endo}) $q_2$ to be $n(q_1)$-admissible and BD-admissible for $\rho_{\pi_2}$, such that $\ord_\varpi \loc_{q_2} c_{n(q_1)} \geq n(q_1) - C_1$ for a constant $C_1 \geq 0$ independent of $N$, $q_1$, and $q_2$. 
By Proposition \ref{prop:thorne_endoscopic} and Lemma \ref{lemma for using theorem of thorne}, there exists a constant $m_0 \geq 1$ such that $$\overline \Sel_{\mathcal F}(\Q, \ad^0 \rho_{\pi_2,m_0}) = 0.$$
By Theorem \ref{thm:1ERL_endoscopic}(\ref{thm:1ERL_endoscopic_one}) and Corollary \ref{cor:relaxed_bound_appendix}, we can conclude that $$\partial_{q_2} \kappa^D_{N}(q_1q_2) \supset \lambda_N^D(q_1) \cdot (\varpi^{C_2})$$  
 for all $N$ sufficiently large  and for a constant $C_2 \geq 0$ that is independent of $N$, $q_1$, and $q_2$. 
The remainder of the proof of (\ref{thm:rk_one_endo_two}) now follows the  proof of Theorem \ref{thm:rk1_non_endoscopic_ultimate}, using that $\loc_{q_1} c_N = 0$ because $H^1_f(\Q_{q_1}, T_{\pi_2,N}) = 0$.

    \end{proof}

\appendix
\section{Cohomology of $\spin_4$ Shimura varieties}\label{sec:appendix_spin4}
\subsection{ The auxiliary quaternionic Shimura variety}
\subsubsection{}\label{subsubsec:appendix_spin4_start}
For this section, let $V $ be  a quadratic space over $\Q $ of signature $ (2, 2) $ and nontrivial discriminant character $\chi $. Then by \cite[Appendix A]{knus1988notes}, $ C ^ + (V) = B\otimes_\Q F\eqqcolon B_F $, where $ B $ is an indefinite quaternion algebra over $\Q $ and $ F/\Q $ is the real quadratic field associated to $\chi $.
We abbreviate $G\coloneqq \spin(V)$, $\widetilde G \coloneqq \Res_{F/\Q} B_F^\times$. 
Then we have an exact sequence of algebraic groups over $\Q $:
\begin{equation}
\label{ exact sequence of algebraic groups for spin 4}
1\rightarrow G\rightarrow\widetilde G\rightarrow(\Restriction_{F/\Q}\mathbb G_m)/\mathbb G_m\rightarrow 1.
\end{equation}
Let $ K =\product K_\l \subset G (\A_f) $ be a neat compact open subgroup, and let $ S $ be a nonempty set of primes such that $ K_\l  $ is hyperspecial for $ \l\not\in S $.
Fix a neat compact open subgroup $\widetilde K  =\product\widetilde K_\l \subset \widetilde G (\A_f)$ satisfying the following conditions:
\begin{enumerate}
    \item\label{first condition on widetilde K} For all $\l \not\in S$, $\widetilde K_\l$ is hyperspecial.
    \item \label{second condition on widetilde K}$\widetilde K \cap \spin(V)(\A_f) = K$. 
    \item \label{third condition on widetilde K}We have $\nu(\widetilde K)\cap O_{F,+}^\times = (O_F^\times \cap \widetilde K)^2$, where $\nu: \widetilde G \to \Res_{F/\Q} \mathbb G_m$ is the norm character and  $O_{F,+}^\times$ is the group of totally positive units of $F$.
\end{enumerate}
Such a $\widetilde K$  exists because $S$ is nonempty. Let $\widetilde \Sh_{\widetilde K}(V)$ be the Shimura variety for $\widetilde G$ at level $\widetilde K$.
\begin{prop}
    Under conditions (\ref{first condition on widetilde K}) - (\ref{third condition on widetilde K}) above, the natural map 
\begin{equation}\label{eq:map of shimura varieties at level widetilde K}
    \Shimura_K (V)\rightarrow\widetilde \Shimura_{\widetilde K} (V)
\end{equation} 
is an open and closed embedding. 
\end{prop}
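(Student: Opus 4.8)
The plan is to reduce the claim to a statement purely about the double-coset combinatorics of the Shimura data, then to use conditions (\ref{first condition on widetilde K})--(\ref{third condition on widetilde K}) to control the fibers of the map on connected components. First I would recall that both Shimura varieties are disjoint unions of arithmetic quotients of the symmetric space $\mathfrak{X}$ attached to $G(\mathbb{R}) \subset \widetilde G(\mathbb{R})$; concretely, by the theory of connected Shimura varieties, $\Sh_K(V)$ has complex points $G(\mathbb{Q}) \backslash \left(\mathfrak{X} \times G(\mathbb{A}_f)/K\right)$ and similarly for $\widetilde \Sh_{\widetilde K}(V)$ with $\widetilde G$, and the map (\ref{eq:map of shimura varieties at level widetilde K}) is induced by the inclusions $G \hookrightarrow \widetilde G$. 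Since the map on symmetric spaces is the identity (because $G(\mathbb{R})$ and $\widetilde G(\mathbb{R})$ share the same image in $\mathrm{PGL}$, the Shimura data being compatible), the morphism is automatically a local isomorphism in the analytic topology; thus it suffices to prove it is \emph{injective} on $\mathbb{C}$-points, after which openness and closedness follow from properness of the components (or from the fact that an injective local isomorphism of finite-type $\mathbb{C}$-varieties with the source a disjoint union of the relevant pieces is an open and closed immersion, together with a descent argument over $\mathbb{Q}$).

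\textbf{Key steps.} The injectivity on $\mathbb{C}$-points amounts to the following: given $x, x' \in \mathfrak X$ and $g, g' \in G(\mathbb{A}_f)$ with $\widetilde\gamma \cdot (x, gK) = (x', g' K)$ inside $\widetilde G(\mathbb{Q})\backslash(\mathfrak X \times \widetilde G(\mathbb{A}_f)/\widetilde K)$ for some $\widetilde\gamma \in \widetilde G(\mathbb{Q})$, one must produce $\gamma \in G(\mathbb{Q})$ witnessing the same identification in the $G$-quotient. The obstruction lies in $\widetilde G(\mathbb{Q})/G(\mathbb{Q}) \cdot (\widetilde K \cap \ldots)$, which via the norm character $\nu$ maps into $F^\times \backslash (\mathbb{A}_F^\times/\widetilde K_{F})$; the point is that the relevant class is trivial. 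Here I would use the norm sequence (\ref{ exact sequence of algebraic groups for spin 4}): the failure of surjectivity of $G(\mathbb{Q}) \to \widetilde G(\mathbb{Q})$ modulo $\widetilde K$-equivalence is measured by elements of $(\Res_{F/\mathbb{Q}}\mathbb{G}_m/\mathbb{G}_m)(\mathbb{Q})$, i.e.\ by $F^\times/\mathbb{Q}^\times$, and the condition $\widetilde\gamma$ fixing $x \in \mathfrak X$ forces $\nu(\widetilde\gamma)$ to be totally positive at the archimedean places (the stabilizer of a point of the Hermitian symmetric domain lies in the identity component). Condition (\ref{third condition on widetilde K}), $\nu(\widetilde K) \cap O_{F,+}^\times = (O_F^\times \cap \widetilde K)^2$, is exactly what is needed to kill this: it guarantees that any totally positive element of $F^\times$ arising in the norm of a $\widetilde K$-translate of a $G(\mathbb{Q})$-element is already a square times a global scalar, hence can be absorbed into $G(\mathbb{Q})$. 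Condition (\ref{second condition on widetilde K}) ensures the level structures match so that no extra identifications are introduced by $\widetilde K$ being larger than $K$ in the $G$-direction.

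\textbf{Main obstacle.} The delicate point — and where I expect to spend the most effort — is the bookkeeping of the norm-one exact sequence (\ref{ exact sequence of algebraic groups for spin 4}) together with the positivity/reality constraints: precisely how the condition on $\nu(\widetilde K) \cap O_{F,+}^\times$ interacts with the class-field-theoretic computation of $\pi_0$ of the two Shimura varieties. One has to be careful that $\widetilde G$ is \emph{not} simply connected, so the connected components of $\widetilde \Sh_{\widetilde K}(V)$ are indexed by a nontrivial group (a quotient of $F^\times \backslash \mathbb{A}_F^\times$), and one must check that the map (\ref{eq:map of shimura varieties at level widetilde K}) is injective on the level of these component sets as well as on each individual component. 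For individual components the map is a quotient of arithmetic groups $\Gamma_K \backslash \mathfrak X \to \Gamma_{\widetilde K} \backslash \mathfrak X$, and injectivity there reduces to $\Gamma_{\widetilde K} \cap G(\mathbb{Q}) = \Gamma_K$, which is again condition (\ref{second condition on widetilde K}); the genuinely new input is on $\pi_0$, and this is where (\ref{third condition on widetilde K}) does its work. Once injectivity is established over $\mathbb{C}$, descending the open-and-closed immersion to $\mathbb{Q}$ is formal since both sides and the morphism are defined over $\mathbb{Q}$ and the formation of the image commutes with the faithfully flat base change $\mathbb{Q} \hookrightarrow \mathbb{C}$.
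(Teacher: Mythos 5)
The paper's own proof is a one-line citation to \cite[Proposition 2.10, Remark 2.11]{liu2020supersingular}, so your attempt at a direct argument is a different route; the overall plan (reduce to injectivity on $\mathbb{C}$-points, use the norm sequence (\ref{ exact sequence of algebraic groups for spin 4}), descend to $\mathbb{Q}$) is sound in outline, but there is a genuine error in how you apportion the roles of the two conditions.

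You claim that on a fixed connected component the injectivity of $\Gamma_K \backslash \mathfrak X \to \Gamma_{\widetilde K} \backslash \mathfrak X$ ``reduces to $\Gamma_{\widetilde K} \cap G(\mathbb{Q}) = \Gamma_K$, which is again condition (\ref{second condition on widetilde K}),'' and that (\ref{third condition on widetilde K}) is only needed on $\pi_0$. That is not correct. Both arithmetic groups act freely by neatness, so the map on components is an unramified covering whose degree is the index of the image of $\Gamma_K$ inside the image of $\Gamma_{\widetilde K}$ in $\operatorname{Aut}(\mathfrak X)$; injectivity is therefore equivalent to $\Gamma_{\widetilde K} = \Gamma_K \cdot \left( \Gamma_{\widetilde K} \cap Z_{\widetilde G}(\mathbb{Q}) \right)$, which is strictly stronger than $\Gamma_{\widetilde K} \cap G(\mathbb{Q}) = \Gamma_K$. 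To verify the stronger identity one takes $\widetilde\gamma \in \Gamma_{\widetilde K}$, observes $\nu(\widetilde\gamma) \in \nu(\widetilde K) \cap O_{F,+}^\times$, invokes (\ref{third condition on widetilde K}) to write $\nu(\widetilde\gamma) = u^2$ with $u \in O_F^\times \cap \widetilde K$, and absorbs the central unit: $\gamma := \widetilde\gamma u^{-1}$ satisfies $\nu(\gamma) = 1$, hence lies in $G(\mathbb{Q})$, and then (\ref{second condition on widetilde K}) places $\gamma$ in $\Gamma_K$. If the fundamental unit of $F$ has norm $+1$, the totally positive units are not all squares, so $\Gamma_{\widetilde K}$ can strictly contain $\Gamma_K \cdot F^\times$ even when (\ref{second condition on widetilde K}) holds: condition (\ref{third condition on widetilde K}) is essential already for per-component injectivity, not just for $\pi_0$. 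A minor further point: your fallback to ``properness of the components'' for closedness fails when $B$ is split (the components are then non-compact Hilbert modular surfaces); use finiteness of the morphism of Shimura varieties instead.
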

\begin{proof}
    This follows from \cite[Proposition 2.10, Remark 2.11]{liu2020supersingular}.
\end{proof}

\subsubsection{Hecke algebras} \label{subsubsec:Hecke_gspin4_appendix}

For a prime $\l\not\in S$ and a ring $R$, let $\T_{\l, R}$ 
 (resp. $\widetilde{\T}_{\l, R}$) denote the spherical Hecke algebra of $K_\l$-biinvariant (resp. $\widetilde K_\l$-biinvariant) $R$-valued functions on $G(\Q_\l)$ (resp. $\widetilde G(\Q_\l)$). If $S'\supset S$ is a finite set of primes, then we set
$$\T_R^{S'} \coloneqq \otimes'_{\l\not\in S'} \T_{\l,R},\;\;\widetilde{\T}_R^{S'} \coloneqq \otimes'_{\l\not\in S'} \widetilde{\T}_{\l,R}.$$ When $R = \Z$ we drop it from the notation.
\begin{prop}\label{prop decompose hecke action on quaternionic shimura variety}
Fix an isomorphism $\iota: \overline\Q_p \isomorphism \C$.
    There is a decomposition of $\widetilde \T^{S}$-modules:
    $$H^2_{\et,!}(\widetilde \Sh_{\widetilde K}(V)_{\overline{\Q}}, \overline \Q_p) = \bigoplus_{\pi_f} H^2_{\et,!}(\widetilde \Sh_{\widetilde K}(V)_{\overline{\Q}}, \overline \Q_p)_{\pi_f}\otimes \iota^{-1}\pi_f^{\widetilde K} \oplus \bigoplus_\chi H^2_{\et,!}(\widetilde \Sh_{\widetilde K}(V)_{\overline{\Q}}, \overline \Q_p)_{\chi\circ \det} \otimes \iota^{-1}(\chi\circ \det)^{\widetilde K},$$
    where $\pi_f$ runs over finite parts of cuspidal, infinite-dimensional automorphic representations $\pi$ of $B_F(\A_F)^\times$ with discrete series archimedean components of parallel weight 2, and $\chi$ runs over finite order characters of $F^\times \backslash \A_F^\times$. 
    As $G_\Q$-representations, we have $$H^2_{\et,!}(\widetilde \Sh_{\widetilde K}(V)_{\overline{\Q}}, \overline \Q_p)_{\pi_f} ^{ss}= \otimes-\Ind_{G_F}^{G_\Q} \rho_{\JL(\pi),\iota}$$ and $$H^2_{\et,!}(\widetilde \Sh_{\widetilde K}(V)_{\overline{\Q}}, \overline \Q_p)_{\chi\circ \det}^{ss} = \iota^{-1}\rec(\chi|_{\A_\Q^\times})(-1) \oplus \iota^{-1}\rec(\chi|_{\A_\Q^\times})\cdot \omega_{F/\Q}(-1),$$ where $\omega_{F/\Q}$ is the quadratic character of $G_\Q$ associated to $F$.
    
\end{prop}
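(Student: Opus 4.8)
\textbf{Proof proposal for Proposition \ref{prop decompose hecke action on quaternionic shimura variety}.}

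The plan is to reduce everything to the classical description of the cohomology of a Hilbert modular surface (or a quaternionic variant thereof) attached to the group $\widetilde G = \Res_{F/\Q} B_F^\times$, and then to re-express the appearing Galois representations via the tensor induction. First I would apply Matsushima's formula (in its $\ell$-adic incarnation, via the comparison theorem and the Borel--Serre compactification as in Lemma \ref{lem:coh_for_JL}) to the locally symmetric space $\widetilde \Sh_{\widetilde K}(V)$. Since $\widetilde G(\R)$ is a product of two copies of $\GL_2(\R)$ (or $\mathbb H^\times$, but $B$ is indefinite so it is $\GL_2(\R)$ after base change; in any case the relevant real group for the Shimura datum is $\Res_{F/\Q}\GL_{2}$ up to the usual issues at the split place), the $(\mathfrak g, K_\infty)$-cohomology in degree $2$ of a cohomological automorphic representation with parallel weight $2$ is one-dimensional; this isolates the cuspidal contributions $\pi_f$ as well as the residual/one-dimensional contributions $\chi\circ\det$. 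The intersection cohomology $H^2_{\et,!}$ picks out exactly the discrete automorphic spectrum, which for $\widetilde G$ consists of cuspidal representations and one-dimensional representations $\chi\circ\det$; this gives the stated direct sum decomposition as a Hecke module, where the condition (\ref{third condition on widetilde K}) on $\widetilde K$ ensures that the one-dimensional pieces are correctly normalized (no extra copies coming from disconnectedness of the Shimura variety).

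Next I would identify the Galois module structure on each isotypic piece. For the cuspidal $\pi_f$, the key input is the construction of Galois representations attached to Hilbert modular forms (Theorem \ref{thm:rho_GL2_LLC}), applied to $\JL(\pi)$, the Jacquet--Langlands transfer of $\pi$ to $\GL_2(\A_F)$: the representation $\rho_{\JL(\pi),\iota}$ is a $2$-dimensional representation of $G_F$. The étale cohomology $H^2$ of the quaternionic Shimura surface for $\Res_{F/\Q}B_F^\times$ realizes, up to semisimplification, the tensor induction $\otimes\text{-}\Ind_{G_F}^{G_\Q}\rho_{\JL(\pi),\iota}$; this is the standard statement (cf. the discussion for Hilbert modular surfaces in, e.g., work on tensor product Galois representations) that on the $\pi_f$-isotypic part the Frobenius eigenvalues at a good prime $\l$ are the products $\alpha_i\beta_j$ of the Satake parameters at the two places above $\l$, matched against the Eichler--Shimura/Wedhorn congruence relation for $\widetilde G$. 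The normalization (no Tate twist, or the precise twist) is pinned down by comparing determinants: $\det$ of the tensor induction is a power of the cyclotomic character times the norm of the central character, and this must match the expected weight $2$ of $H^2$. For the one-dimensional pieces $\chi\circ\det$, the computation is elementary: the corresponding Galois representation is built from $\rec(\chi|_{\A_\Q^\times})$, and the cohomology $H^2$ of the (finitely many) components on which $\widetilde G$ acts through $\chi\circ\det$ contributes $\rec(\chi|_{\A_\Q^\times})(-1)$ together with its twist by $\omega_{F/\Q}$, reflecting the $\Gal(F/\Q)$-action; here one uses the explicit description of the connected components of $\widetilde \Sh_{\widetilde K}(V)$ and condition (\ref{third condition on widetilde K}).

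The main obstacle I anticipate is not the Matsushima decomposition itself but the precise bookkeeping for the tensor induction: one must verify that the $G_\Q$-action (not merely the $G_F$-action, and not merely up to semisimplification on each $\Frob_\l$) is genuinely the tensor induction $\otimes\text{-}\Ind_{G_F}^{G_\Q}\rho_{\JL(\pi),\iota}$, including getting the cyclotomic twist exactly right and handling the place where $B$ (or the Shimura datum) behaves differently at $\infty$. This requires either citing the existing literature on cohomology of Hilbert modular and quaternionic Shimura surfaces carefully, or running the Eichler--Shimura relation argument (as in the proof of Lemma \ref{lem:coh_for_JL}, via \cite{wedhorn2000congruence} adapted to $\widetilde G$) together with Chebotarev to pin down the isomorphism class of the semisimplification, and then invoking irreducibility of $\otimes\text{-}\Ind_{G_F}^{G_\Q}\rho_{\JL(\pi),\iota}$ in the generic case (as in Proposition \ref{prop:cases_L_pi}) — or, where it is reducible, arguing directly. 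A secondary subtlety is making sure that intersection cohomology $H^2_{\et,!}$ equals interior cohomology and that there is no extra contribution from the boundary in degree $2$; this follows from Margulis superrigidity and weight considerations exactly as in the proof of Lemma \ref{lem:coh_for_JL}, so I would simply reference that argument. I expect the whole proof to occupy a short paragraph citing \cite{liu2020supersingular,wedhorn2000congruence,blasius2006ramanujan} and Theorem \ref{thm:rho_GL2_LLC}, with the tensor-induction identification as the one genuinely new bookkeeping step.
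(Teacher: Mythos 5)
Your outline reconstructs from scratch what the paper simply cites: the actual proof consists of two references, namely \cite[\S XI.2]{geer1988hilbert} for the split case and \cite{Langlands1979zeta} (together with Matsushima's formula for the Hecke decomposition) for the nonsplit case. The Hecke-module decomposition in your proposal coincides with the paper's (Matsushima plus discrete spectrum of $\Res_{F/\Q}B_F^\times$); for the Galois actions you propose an Eichler--Shimura plus Chebotarev route, whereas the paper outsources the identification directly to Langlands' article, which established exactly the tensor-induction description with the correct cyclotomic twist by trace-formula methods. Your route is a legitimate alternative and, since the claim is only up to semisimplification, a Chebotarev argument at split and inert primes (matching $\Frob_\ell$-eigenvalues against Satake parameters at the places above $\ell$) would indeed pin it down; but it is noticeably more work to execute, since \cite{wedhorn2000congruence} applies to PEL Shimura varieties and $\widetilde G$ is only of abelian type, so the congruence relation would need to be transported carefully (e.g.\ by moving to an auxiliary PEL datum as in the paper's own use of $\Res_{F/\Q}B_F^\times$). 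The delicate point you flag --- getting the $G_\Q$-action and not merely each $\Frob_\ell$-conjugacy class, and fixing the Tate twist --- is exactly the content of the cited references, so there is no gap, only a trade-off between re-derivation and citation.
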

\begin{proof}
When $B_F$ is split, this follows from the discussion in \cite[\S XI.2]{geer1988hilbert}. In the nonsplit case, the Hecke module decomposition is clear from Matsushima's formula, and the Galois actions follow from \cite{Langlands1979zeta}.
\end{proof}

\subsubsection{}
Let $\l\not\in S$ be a prime;
then we may identify $B\otimes \Q_\l\simeq M_{2}(\Q_\l)$ in such a way that $\widetilde K_\l = \GL_2(O_F\otimes \Z_\l )$ and
$G(\Q_\l)\subset \widetilde G(\Q_\l) \simeq \GL_2(F\otimes \Q_\l )$ consists of those matrices having determinant in $\Q_\l ^\times$.
Let $T(\Q_\l)$ and $\widetilde T(\Q_\l)$ be the standard diagonal tori in $G(\Q_\l)$ and $\widetilde G(\Q_\l)$, respectively, and let $B(\Q_\l)$ and $\widetilde B(\Q_\l)$ be the upper triangular Borel subgroups. 

\begin{prop}
    Let $\chi_0: \widetilde T(\Q_\l)\to \C^\times$ be an unramified character. Then $\left(\Ind_{\widetilde B(\Q_\l)}^{\widetilde G(\Q_\l)} \chi_0\right)|_{G(\Q_\l)}$ has a unique $K_\l $-spherical constituent, which is isomorphic to the unique spherical constituent of $\Ind_{B(\Q_\l)}^{G(\Q_\l)} (\chi_0|_{T(\Q_\l)})$. 
\end{prop}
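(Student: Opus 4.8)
The statement concerns an unramified principal series of $\widetilde G(\Q_\l) \simeq \GL_2(F\otimes\Q_\l)$, restricted to the subgroup $G(\Q_\l)$ of matrices with determinant in $\Q_\l^\times$. I would begin by separating the two cases according to whether $\l$ is split or inert in $F$, since the structure of $F\otimes\Q_\l$ (and hence of $\widetilde G(\Q_\l)$) differs: in the split case $F\otimes\Q_\l \simeq \Q_\l\times\Q_\l$ and $\widetilde G(\Q_\l) \simeq \GL_2(\Q_\l)\times\GL_2(\Q_\l)$, while in the inert case $F\otimes\Q_\l$ is the unramified quadratic extension $\Q_{\l^2}$. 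In both cases $G(\Q_\l)$ is the preimage of $\Q_\l^\times$ under the determinant map $\widetilde G(\Q_\l) \to (F\otimes\Q_\l)^\times$, so $\widetilde G(\Q_\l)/G(\Q_\l)\cdot Z$ is a finite abelian group, where $Z$ is the center; in fact $\widetilde G(\Q_\l) = G(\Q_\l)\cdot \widetilde T(\Q_\l)$ by the elementary divisor theorem, and $\widetilde K_\l \cdot G(\Q_\l) = \widetilde G(\Q_\l)$ as well since the determinant of $\widetilde K_\l$ is all of $(O_F\otimes\Z_\l)^\times$ which together with $\Q_\l^\times$ generates $(F\otimes\Q_\l)^\times$ by the surjectivity of the norm on units.

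The key computation is to understand the $K_\l$-invariants of the two relevant induced representations. By Frobenius reciprocity and the Iwasawa decomposition, $\left(\Ind_{\widetilde B(\Q_\l)}^{\widetilde G(\Q_\l)}\chi_0\right)^{\widetilde K_\l}$ is one-dimensional, spanned by the standard spherical vector; I would show that its restriction to $G(\Q_\l)$ generates a $G(\Q_\l)$-subrepresentation whose $K_\l$-invariants are still one-dimensional (using $\widetilde K_\l\cdot G(\Q_\l) = \widetilde G(\Q_\l)$, so that the $K_\l$-fixed vectors in the restriction coincide with the $\widetilde K_\l$-fixed vectors). Since taking $K_\l$-invariants is exact, there is a unique irreducible constituent of $\left(\Ind_{\widetilde B}^{\widetilde G}\chi_0\right)|_{G(\Q_\l)}$ containing a $K_\l$-spherical vector. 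To identify it with the spherical constituent of $\Ind_{B(\Q_\l)}^{G(\Q_\l)}(\chi_0|_{T(\Q_\l)})$, I would use the transitivity/compatibility of parabolic induction: restriction of $\Ind_{\widetilde B}^{\widetilde G}\chi_0$ to $G(\Q_\l)$ contains $\Ind_{B(\Q_\l)}^{G(\Q_\l)}\left(R_{\widetilde B\cap G}^{\widetilde T\cap G}(\chi_0|_{\widetilde T\cap G})\right)$ as a subquotient — more precisely, using Mackey theory for the pair $(\widetilde B(\Q_\l), G(\Q_\l))$ inside $\widetilde G(\Q_\l)$, the restriction is a successive extension of induced representations $\Ind_{B(\Q_\l)}^{G(\Q_\l)}(\chi_0^w|_{T(\Q_\l)})$ as $w$ ranges over double cosets. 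Since the spherical vector is supported on the open (big cell) orbit, its image lands in the subquotient $\Ind_{B(\Q_\l)}^{G(\Q_\l)}(\chi_0|_{T(\Q_\l)})$, and one checks it maps to the spherical vector there.

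The main obstacle is the Mackey-theoretic bookkeeping: controlling the $\widetilde B(\Q_\l)\backslash\widetilde G(\Q_\l)/G(\Q_\l)$ double cosets and verifying that the spherical vector of $\Ind_{\widetilde B}^{\widetilde G}\chi_0$ restricts into the correct piece and maps to a \emph{nonzero} (hence, up to scalar, the) spherical vector in $\Ind_B^G(\chi_0|_T)$. In the split case this is clean because $\widetilde G(\Q_\l) = \GL_2(\Q_\l)^2$ and $G(\Q_\l)$ is the fiber product over $\det$, so $\widetilde B(\Q_\l)\backslash\widetilde G(\Q_\l)/G(\Q_\l)$ is a single point (both factors act by their own Bruhat decomposition but the determinant condition couples them); one is essentially just restricting a character. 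In the inert case one must argue more carefully with the structure of $\GL_2(\Q_{\l^2})$ relative to the subgroup with $\Q_\l$-rational determinant, but the Iwasawa decomposition $\widetilde G(\Q_\l) = \widetilde B(\Q_\l)\widetilde K_\l$ combined with $\widetilde K_\l\subset G(\Q_\l)\cdot(\widetilde T(\Q_\l)\cap\widetilde K_\l)$ still makes the double coset space trivial at the level needed. Once the spherical vectors are matched, uniqueness of the spherical constituent on both sides (which is standard for unramified principal series of $\GL_2$ over a local field, and follows for $G(\Q_\l)$ from the exactness of $(-)^{K_\l}$ and the fact that $\T_{\l}$ acts through a one-dimensional quotient on spherical vectors) completes the argument. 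Alternatively, and perhaps more cleanly, one can bypass Mackey theory entirely: since $\widetilde G(\Q_\l) = G(\Q_\l)\widetilde T(\Q_\l)$ and $\widetilde T(\Q_\l)$ normalizes $G(\Q_\l)$, restriction and Clifford theory show $\left(\Ind_{\widetilde B}^{\widetilde G}\chi_0\right)|_{G(\Q_\l)}$ is a sum of $\widetilde T(\Q_\l)/(\widetilde T(\Q_\l)\cap G(\Q_\l)\cdot Z)$-twists of $\Ind_B^G(\chi_0|_T)$, and exactly one of these twists (the untwisted one) carries the spherical vector; I would lean toward writing the final proof in this second form.
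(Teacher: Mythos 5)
Your overall strategy — reducing to the observation that there is a single $(\widetilde B(\Q_\l), G(\Q_\l))$-double coset in $\widetilde G(\Q_\l)$, so that restriction of $\Ind_{\widetilde B}^{\widetilde G}\chi_0$ to $G(\Q_\l)$ is itself a single induced representation — is exactly the paper's approach, and the conclusion follows once that is established. However, the proposal contains a concrete error and some unnecessary detours.

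The claim $\widetilde K_\l\cdot G(\Q_\l) = \widetilde G(\Q_\l)$ is \emph{false} when $\l$ splits in $F$. In that case $F\otimes\Q_\l \simeq \Q_\l\times\Q_\l$, so $\nu(\widetilde K_\l)\cdot\nu(G(\Q_\l)) = (\Z_\l^\times)^2\cdot\Q_\l^\times$ (with $\Q_\l^\times$ diagonally embedded), which is a proper subgroup of $(\Q_\l^\times)^2$: for instance $(\l,1)$ does not lie in it. The correct identity, and the one the paper invokes, is $\widetilde B(\Q_\l)\cdot K_\l = \widetilde G(\Q_\l)$. This follows from the Iwasawa decomposition $\widetilde G(\Q_\l) = \widetilde B(\Q_\l)\widetilde K_\l$ together with $\widetilde K_\l = (\widetilde T(\Q_\l)\cap\widetilde K_\l)\cdot K_\l$, which holds because $\nu(\widetilde T(\Q_\l)\cap\widetilde K_\l) = \nu(\widetilde K_\l)$; the toral factor can then be absorbed into $\widetilde B(\Q_\l)$. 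In particular, $\widetilde B(\Q_\l)\cdot G(\Q_\l) = \widetilde G(\Q_\l)$ also follows, and this, not the false statement about $\widetilde K_\l$, is what gives you a single double coset. Since your 1-dimensionality argument for $K_\l$-fixed vectors leans on the false claim, that step of the proposal has a gap; replace it with the Iwasawa argument above.

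The ``cleaner alternative'' via Clifford theory is also not correct as stated: once you know $\widetilde B(\Q_\l)G(\Q_\l) = \widetilde G(\Q_\l)$ and $\widetilde B(\Q_\l)\cap G(\Q_\l) = B(\Q_\l)$, Mackey theory produces \emph{exactly one} induced piece, namely $\Ind_{B(\Q_\l)}^{G(\Q_\l)}(\chi_0|_{T(\Q_\l)})$ (the modular characters of $\widetilde B$ and $B$ agree on $T(\Q_\l)$ since the two parabolics share the same unipotent radical), not a direct sum of twists. At that point the proposition is immediate from the standard uniqueness of the spherical constituent in an unramified principal series for $G(\Q_\l)$; no further bookkeeping, and no split/inert case distinction, is necessary.
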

\begin{proof}
    This is clear from the observation that $\widetilde B(\Q_\l)\cdot K_\l  = \widetilde B(\Q_\l)\cdot \widetilde K_\l  = \widetilde G(\Q_\l)$; indeed, $K_\l  \cdot (\widetilde K_\l  \cap \widetilde T(\Q_\l)) = \widetilde K_\l $ because $\nu(\widetilde T(\Q_\l)\cap \widetilde K_\l ) = \nu(\widetilde K_\l )$. 
\end{proof}
It follows immediately that:
\begin{cor}\label{corollary local Hecke actions}
    Let $\widetilde \pi$ be an irreducible admissible representation of $\widetilde G(\Q_\l)$ which is $\widetilde K_\l $-spherical, and let $\chi_{\widetilde \pi}: \widetilde{\T}_\l \to \C$ be the character giving the Hecke action on $\widetilde \pi^{\widetilde K_\l}$. Then, viewing $\widetilde \pi$ as an admissible representation of $G(\Q_\l)$, $\T_\l$ stabilizes the one-dimensional space $\widetilde \pi^{\widetilde K_\l }$ and acts on it via the composite of $\chi_{\widetilde\pi}$ with the homomorphism $\T_\l \to \widetilde \T_\l$ determined by the Satake transform and the map of dual groups ${}^L\Res_{F/\Q} B_F^\times \twoheadrightarrow {}^L \spin(V).$
\end{cor}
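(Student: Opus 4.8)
\textbf{Proof proposal for Corollary \ref{corollary local Hecke actions}.}

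The plan is to derive this immediately from the preceding proposition together with the compatibility of the Satake transform with restriction along the map of dual groups. First I would recall that $\widetilde\pi$ being $\widetilde K_\l$-spherical means it is the unique spherical constituent of some unramified principal series $\Ind_{\widetilde B(\Q_\l)}^{\widetilde G(\Q_\l)} \chi_0$, and that the one-dimensional space $\widetilde\pi^{\widetilde K_\l}$ is canonically identified with the line of $\widetilde K_\l$-fixed vectors in that induced representation, on which $\widetilde\T_\l$ acts through the Satake parameter attached to $\chi_0$. Next, by the preceding proposition, the restriction $\left(\Ind_{\widetilde B(\Q_\l)}^{\widetilde G(\Q_\l)}\chi_0\right)|_{G(\Q_\l)}$ has a unique $K_\l$-spherical constituent, which coincides with the spherical constituent of $\Ind_{B(\Q_\l)}^{G(\Q_\l)}(\chi_0|_{T(\Q_\l)})$. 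Since taking $\widetilde K_\l$-invariants, then restricting to $G(\Q_\l)$ and taking $K_\l$-invariants, is the identity on the line $\widetilde\pi^{\widetilde K_\l}$ (using $\widetilde B(\Q_\l)\cdot K_\l = \widetilde G(\Q_\l)$ from the proof of the proposition), the line $\widetilde\pi^{\widetilde K_\l}$ is indeed $\T_\l$-stable.

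The key remaining step is to identify the $\T_\l$-action on this line. I would argue that for any unramified principal series, the $\T_\l$-action on the spherical line of $\left(\Ind_{\widetilde B}^{\widetilde G}\chi_0\right)|_{G(\Q_\l)}$ is given by evaluating the Satake transform: concretely, the spherical Hecke operator $[c_\lambda]\in\T_\l$ attached (via the Satake isomorphism for $G(\Q_\l)$) to a $W_G$-invariant function $\lambda$ on $\widehat T$ acts on the spherical vector by $\lambda$ evaluated at the Satake parameter. The point is that the embedding $G(\Q_\l)\hookrightarrow\widetilde G(\Q_\l)$ induces, on dual groups, the surjection ${}^L\Res_{F/\Q} B_F^\times \twoheadrightarrow {}^L\spin(V)$ coming from (\ref{ exact sequence of algebraic groups for spin 4}), and the Satake parameter of $\chi_0|_{T(\Q_\l)}$ is the image of the Satake parameter of $\chi_0$ under the dual map on maximal tori. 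Hence the homomorphism $\T_\l\to\widetilde\T_\l$ on Satake parameters is exactly the one dual to $\widehat{\widetilde T}\to\widehat T$, so the $\T_\l$-action factors as claimed: it is $\chi_{\widetilde\pi}$ precomposed with $\T_\l\to\widetilde\T_\l$.

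I expect the main (though still routine) obstacle to be bookkeeping around the precise form of the map ${}^L\Res_{F/\Q} B_F^\times \twoheadrightarrow {}^L\spin(V)$ and the corresponding map of based root data, and checking that ``restrict the character, then take the spherical constituent'' agrees on the nose with ``take the spherical constituent, then restrict'' at the level of the eigenvalue of each Hecke operator --- equivalently, that the identification of spherical lines in the proposition is $\T_\l$-equivariant and not merely an abstract isomorphism of lines. This is handled by the equality $\widetilde B(\Q_\l)\cdot K_\l = \widetilde G(\Q_\l)$, which says the $K_\l$-spherical vector in the restricted induced representation is the restriction of the $\widetilde K_\l$-spherical vector, so functoriality of the Satake isomorphism under the map of Hecke algebras does the rest. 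Since $\widetilde\pi^{\widetilde K_\l}$ is one-dimensional, once equivariance is established there is nothing further to prove.
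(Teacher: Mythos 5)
Your proof is correct and follows the same route the paper intends: the paper marks the corollary as immediate from the preceding proposition, and your write-up simply spells out the two ingredients (the identification of spherical lines via $\widetilde B(\Q_\l)\cdot K_\l = \widetilde G(\Q_\l)$, and functoriality of the Satake isomorphism under $\T_\l\to\widetilde\T_\l$). Nothing is missing.
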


\subsection{Hecke action on Tate classes for $\spin_4$}
For an automorphic represention $\pi$ of $\GL_2(\A_\Q)$, let $\BC_{F/\Q}(\pi)$ denote the base change to $\GL_2(\A_F)$. 
\begin{lemma}\label{lemma which have tate classes on quaternionic shimura variety}
Continue the notation of Proposition \ref{prop decompose hecke action on quaternionic shimura variety}.

    \begin{enumerate}
        \item\label{lemma which have tate classes on quaternionic shimura variety part one} If $H^2_{\et,!} (\widetilde \Sh_{\widetilde K}(V)_{\overline\Q}, \overline \Q_p(1))^{G_\Q}_{\pi_f} \neq 0$, then there exists an automorphic representation $\pi_0$ of $\GL_2(\A_\Q)$, with $\pi_{0,\infty}$  discrete series of weight 2, and a finite-order character $\chi$ of $F^\times \backslash \A_F^\times$, such that $\JL(\pi) = \BC_{F/\Q}(\pi_0) \otimes \chi$. 
        \item\label{lemma which have tate classes on quaternionic shimura variety part two} If $H^2_{\et,!}(\widetilde \Sh_{\widetilde K}(V)_{\overline\Q}, \overline \Q_p(1))^{G_\Q}_{\chi\circ \det} \neq 0$, then $\chi|_{\A_\Q^\times} = \omega_{F/\Q}$ or $\blackboardone$. 
    \end{enumerate}
\end{lemma}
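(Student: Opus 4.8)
\textbf{Proof proposal for Lemma \ref{lemma which have tate classes on quaternionic shimura variety}.}

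The plan is to use the explicit description of $H^2_{\et,!}(\widetilde\Sh_{\widetilde K}(V)_{\overline\Q},\overline\Q_p)$ as a Galois module provided by Proposition \ref{prop decompose hecke action on quaternionic shimura variety}, and to read off when a copy of the trivial Galois character $\overline\Q_p(-1)$ (equivalently, a nonzero $G_\Q$-invariant in the Tate twist $\overline\Q_p(1)$) can occur. For part (\ref{lemma which have tate classes on quaternionic shimura variety part two}) this is immediate: by Proposition \ref{prop decompose hecke action on quaternionic shimura variety}, the $\chi\circ\det$-isotypic piece has semisimplification $\iota^{-1}\rec(\chi|_{\A_\Q^\times})(-1)\oplus\iota^{-1}\rec(\chi|_{\A_\Q^\times})\omega_{F/\Q}(-1)$, so after untwisting by $(1)$ this has a nonzero $G_\Q$-invariant exactly when $\rec(\chi|_{\A_\Q^\times})$ or $\rec(\chi|_{\A_\Q^\times})\omega_{F/\Q}$ is trivial, i.e. when $\chi|_{\A_\Q^\times}$ is trivial or equal to $\omega_{F/\Q}$ (using that $\rec$ is injective on finite-order characters).

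For part (\ref{lemma which have tate classes on quaternionic shimura variety part one}), the key input is again Proposition \ref{prop decompose hecke action on quaternionic shimura variety}: the $\pi_f$-isotypic part has semisimplification $\otimes\text{-}\Ind_{G_F}^{G_\Q}\rho_{\JL(\pi),\iota}$. So I would first observe that $\left(\otimes\text{-}\Ind_{G_F}^{G_\Q}\rho_{\JL(\pi),\iota}\right)(1)$ having a nonzero $G_\Q$-invariant forces $\otimes\text{-}\Ind_{G_F}^{G_\Q}\rho_{\JL(\pi),\iota}$ to contain the character $\chi_{p,\cyc}^{-1}$, hence (since the tensor induction is four-dimensional and $\rho_{\JL(\pi),\iota}$ is itself potentially reducible only in CM/dihedral situations) to be reducible. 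The standard criterion for reducibility of a tensor induction of a two-dimensional Galois representation is that $\rho_{\JL(\pi),\iota}$ is induced from a character of $G_L$ for some quadratic extension $L/F$, or that $\rho_{\JL(\pi),\iota}|_{G_F}$ is, up to twist, isomorphic to its $\Gal(F/\Q)$-conjugate; in either case $\JL(\pi)$ is, up to a character twist, a base change from $\GL_2(\A_\Q)$. I would then use Arthur--Clozel base change \cite[Chapter 3, Theorem 4.2]{arthur1989basechange}: the condition that $\JL(\pi)\otimes\chi^{-1}$ is isomorphic to its $\Gal(F/\Q)$-twist for some Hecke character $\chi$ implies $\JL(\pi)\otimes\chi^{-1}\cong\BC_{F/\Q}(\pi_0)$ for a cuspidal automorphic representation $\pi_0$ of $\GL_2(\A_\Q)$; comparing archimedean $L$-parameters via \cite[Chapter 3, Theorem 5.1]{arthur1989basechange} shows $\pi_{0,\infty}$ is discrete series of weight $2$ (since $\JL(\pi)_\infty$ has parallel weight $2$, as recorded in Proposition \ref{prop decompose hecke action on quaternionic shimura variety}). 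This gives exactly the conclusion $\JL(\pi)=\BC_{F/\Q}(\pi_0)\otimes\chi$.

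The main obstacle I anticipate is making the implication ``$\left(\otimes\text{-}\Ind_{G_F}^{G_\Q}\rho_{\JL(\pi),\iota}\right)(1)$ has invariants $\Rightarrow$ $\JL(\pi)$ is a twisted base change'' completely precise: one needs to rule out the genuinely exceptional cases (e.g. $\rho_{\JL(\pi),\iota}$ with exotic projective image $A_4,S_4,A_5$, where the tensor induction could in principle decompose for other reasons), and to verify that the character appearing in a subrepresentation of the tensor induction is forced to be $\chi_{p,\cyc}^{-1}$ rather than something else — this uses purity/weight considerations (all Frobenius eigenvalues on $H^2_{\et,!}$ being Weil numbers of weight $2$, so a one-dimensional sub of the untwisted module is unramified almost everywhere with Frobenius eigenvalues $q$, hence equal to $\chi_{p,\cyc}^{-1}\cdot(\text{finite order})$, and the finite-order part must be trivial for the invariant to survive). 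An alternative, perhaps cleaner route avoiding exotic-image bookkeeping: directly analyze the $\Gal(F/\Q)$-action. Since $\otimes\text{-}\Ind_{G_F}^{G_\Q}$ of a two-dimensional representation decomposes over $G_F$ as $\rho\otimes\rho^\sigma$ (where $\sigma$ generates $\Gal(F/\Q)$), a $G_\Q$-stable line in the Tate twist restricts to a $G_F$-stable line in $(\rho_{\JL(\pi),\iota}\otimes\rho_{\JL(\pi),\iota}^\sigma)(1)$, forcing $\rho_{\JL(\pi),\iota}\cong (\rho_{\JL(\pi),\iota}^\sigma)^\vee\otimes\chi_{p,\cyc}^{-1}$ up to the $\Gal(F/\Q)$-action on lines; combined with the self-duality $\rho_{\JL(\pi),\iota}^\vee\cong\rho_{\JL(\pi),\iota}\otimes(\det)^{-1}$ this yields $\rho_{\JL(\pi),\iota}\cong\rho_{\JL(\pi),\iota}^\sigma\otimes(\text{character})$, i.e. $\JL(\pi)$ is Galois-invariant up to twist, and then Arthur--Clozel applies as above. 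I would write up this second route, as it localizes the whole argument to a short representation-theoretic computation plus a citation to base change.
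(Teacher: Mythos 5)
Your part (\ref{lemma which have tate classes on quaternionic shimura variety part two}) matches the paper's one-line remark exactly, and is fine. For part (\ref{lemma which have tate classes on quaternionic shimura variety part one}) your route differs from the paper's: the paper simply invokes the proof of \cite[Theorem XI.4.6(i)]{geer1988hilbert} (a Hilbert-modular-surface reference that, strictly speaking, treats the split case, leaving the quaternionic case to be absorbed implicitly via Jacquet--Langlands), and then does the same archimedean consistency check via \cite[Chapter~3, Theorem~5.1]{arthur1989basechange} that you do. Your second route — restrict the tensor induction to $G_F$, note $\left(\otimes\text{-}\Ind_{G_F}^{G_\Q}\rho\right)|_{G_F}\cong\rho\otimes\rho^\sigma$, use irreducibility to promote a nonzero $G_F$-invariant in the Tate twist to an isomorphism $\rho\cong(\rho^\sigma)^\vee(-1)$, combine with the self-duality $\rho^\vee\cong\rho\otimes(\det\rho)^{-1}$ to get $\rho\cong\rho^\sigma\otimes\eta$, and then feed the Galois-invariance-up-to-twist into Arthur--Clozel — is a self-contained alternative that works uniformly in the split and quaternionic cases, at the cost of having to verify a couple of details you elide: (i) that $\eta$ corresponds to an algebraic Hecke character (it does, being a ratio of two de~Rham, almost-everywhere-unramified, pure characters); (ii) that $\eta$ has trivial norm modulo the self-twist group of $\rho$, so that Hilbert~90 produces the Hecke character $\chi$ with $\eta=\chi/\chi^\sigma$, giving $\JL(\pi)\otimes\chi^{-1}$ genuinely $\sigma$-invariant and hence a base change. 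These are standard but should be written out if this route is used. Your Route~1 (reducibility of the tensor induction, worrying about exotic projective images) is more delicate and your instinct to prefer Route~2 is right.

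One small remark: Proposition \ref{prop decompose hecke action on quaternionic shimura variety} gives the semisimplification of the $\pi_f$-isotypic piece, while the hypothesis of the lemma concerns actual $G_\Q$-invariants; you should note that an invariant line is a subrepresentation, hence a constituent of the semisimplification, so the transition is harmless. You handle this implicitly, but it is worth a sentence.
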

\begin{proof}
    Part (\ref{lemma which have tate classes on quaternionic shimura variety part two}) is obvious from Proposition \ref{prop decompose hecke action on quaternionic shimura variety}. Part (\ref{lemma which have tate classes on quaternionic shimura variety part one}) follows from  the proof of \cite[Theorem XI.4.6(i)]{geer1988hilbert}, except for the assertion about $\pi_{0,\infty}$; but this is clear by \cite[Chapter 3, Theorem 5.1]{arthur1989basechange} and the archimedean condition on $\pi$ in Proposition \ref{prop decompose hecke action on quaternionic shimura variety}. 
\end{proof}
\begin{cor} \label{cor:gspin4_cohomology_decomposes}
    The $\T^S$-module $H^2_{\et,!}(\Sh_{K}(V), \overline\Q_p(1))^{G_\Q} $ decomposes completely into a direct sum of characters $\boldsymbol h: \mathcal \T^S\to\overline\Q_p$, each of which arises from the action of $\T^S$ on either:
    \begin{itemize}
        \item A newform in an automorphic representation $\pi$ of $B_F(\A_F)^\times$, unramified outside $S$, such that $\pi$ satisfies the conclusion of Lemma \ref{lemma which have tate classes on quaternionic shimura variety}(\ref{lemma which have tate classes on quaternionic shimura variety part one}).
        \item The automorphic character $\chi_0\circ \nu$ of $\spin(V)(\A_\Q)$, where $\chi_0$ is either trivial or the Hecke character associated to $F/\Q$. 
    \end{itemize}
\end{cor}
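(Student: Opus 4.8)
The statement to prove is Corollary \ref{cor:gspin4_cohomology_decomposes}, which describes how the Hecke algebra $\T^S$ acts on the space of Tate classes $H^2_{\et,!}(\Sh_K(V),\overline\Q_p(1))^{G_\Q}$ for the $\spin_4$ Shimura variety $\Sh_K(V)$ attached to a quadratic space $V$ of signature $(2,2)$ and nontrivial discriminant character. The plan is to transfer everything to the auxiliary quaternionic Shimura variety $\widetilde\Sh_{\widetilde K}(V)$ via the open and closed embedding (\ref{eq:map of shimura varieties at level widetilde K}), apply the already-established decomposition of $H^2_{\et,!}(\widetilde\Sh_{\widetilde K}(V)_{\overline\Q},\overline\Q_p)$ as a $\widetilde\T^S$-module (Proposition \ref{prop decompose hecke action on quaternionic shimura variety}), intersect with the $G_\Q$-invariants to single out the summands contributing Tate classes (which are exactly those identified by Lemma \ref{lemma which have tate classes on quaternionic shimura variety}), and finally use the compatibility of the local Hecke actions under restriction from $\widetilde G$ to $G$ (Corollary \ref{corollary local Hecke actions}) to read off the $\T^S$-eigensystems on the $\spin_4$ side.

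Concretely, the steps I would carry out are as follows. First, since (\ref{eq:map of shimura varieties at level widetilde K}) is an open and closed immersion, $H^2_{\et,!}(\Sh_K(V),\overline\Q_p(1))$ is a direct summand of $H^2_{\et,!}(\widetilde\Sh_{\widetilde K}(V)_{\overline\Q},\overline\Q_p(1))$, compatibly with the $G_\Q$-action and with the $\widetilde\T^S$-action (the restriction map on cohomology being equivariant for prime-to-$S$ Hecke operators via the homomorphism $\T^S\to\widetilde\T^S$ coming from the Satake transform and the surjection of dual groups ${}^L\Res_{F/\Q}B_F^\times\twoheadrightarrow{}^L\spin(V)$). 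Second, I would take $G_\Q$-invariants and apply Proposition \ref{prop decompose hecke action on quaternionic shimura variety}: $H^2_{\et,!}(\widetilde\Sh_{\widetilde K}(V)_{\overline\Q},\overline\Q_p(1))^{G_\Q}$ decomposes completely into $\widetilde\T^S$-eigenspaces, indexed on one hand by finite parts $\pi_f$ of cuspidal infinite-dimensional automorphic representations $\pi$ of $B_F(\A_F)^\times$ with parallel-weight-$2$ discrete series at infinity, and on the other hand by finite-order Hecke characters $\chi$ of $F$ entering through $\chi\circ\det$. Third, I would invoke Lemma \ref{lemma which have tate classes on quaternionic shimura variety}: the only $\pi_f$ contributing a nonzero $G_\Q$-invariant subspace of the Tate twist are those with $\JL(\pi)=\BC_{F/\Q}(\pi_0)\otimes\chi$ for some $\pi_0$ on $\GL_2(\A_\Q)$ of weight $2$ and a finite-order Hecke character $\chi$, and the only $\chi\circ\det$ contributing are those with $\chi|_{\A_\Q^\times}\in\{\blackboardone,\omega_{F/\Q}\}$; in the latter case the corresponding automorphic representation of $\spin(V)(\A_\Q)$ is $\chi_0\circ\nu$ with $\chi_0$ trivial or the quadratic character of $F/\Q$. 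Fourth, restricting each surviving $\widetilde G(\A_f)$-constituent to $G(\A_f)$, Corollary \ref{corollary local Hecke actions} shows that the line $\widetilde\pi_f^{\widetilde K}$ (resp. $(\chi\circ\det)^{\widetilde K}$) is $\T^S$-stable and the $\T^S$-eigencharacter $\boldsymbol h$ is the composite of the $\widetilde\T^S$-eigencharacter with $\T^S\to\widetilde\T^S$; since the embedding of Shimura varieties identifies $H^2_{\et,!}(\Sh_K(V),\overline\Q_p(1))^{G_\Q}$ with a subsum of these lines, complete decomposability is inherited and each eigencharacter $\boldsymbol h$ is of the asserted form.

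The main point requiring care — and the step I expect to be the genuine obstacle, as opposed to bookkeeping — is ensuring that the restriction of cohomology classes along (\ref{eq:map of shimura varieties at level widetilde K}) is genuinely $\widetilde\T^S$-equivariant in the precise sense needed, i.e. that pulling back a $\widetilde\T^S$-eigenclass from $\widetilde\Sh$ to $\Sh$ yields a $\T^S$-eigenclass with eigenvalue obtained through the dual-group map, and conversely that every $\T^S$-eigensystem on $H^2_{\et,!}(\Sh_K(V),\overline\Q_p(1))^{G_\Q}$ lifts to (or extends from) one on $\widetilde\Sh$. This requires knowing that $H^2_{\et,!}$ of $\Sh_K(V)$ is \emph{cut out} as a summand of the cohomology of $\widetilde\Sh_{\widetilde K}(V)$ in a way compatible with \emph{all} Hecke operators away from $S$ simultaneously; the conditions (\ref{first condition on widetilde K})–(\ref{third condition on widetilde K}) on $\widetilde K$ are exactly what make this work at every good prime, via the identity $\widetilde B(\Q_\l)\cdot K_\l=\widetilde G(\Q_\l)$ used in the proof of Corollary \ref{corollary local Hecke actions}. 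Once this compatibility is in hand, the corollary is essentially a formal consequence of Proposition \ref{prop decompose hecke action on quaternionic shimura variety} and Lemma \ref{lemma which have tate classes on quaternionic shimura variety}, with no residual computation needed beyond tracking the two explicit families of characters.
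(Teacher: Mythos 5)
Your proposal is correct and follows essentially the same route as the paper: the paper's proof consists of exactly your chain — the open and closed embedding (\ref{eq:map of shimura varieties at level widetilde K}) gives a split inclusion of $\T^S$-modules $H^2_{\et,!}(\Sh_K(V)_{\overline\Q},\overline\Q_p(1))\subset H^2_{\et,!}(\widetilde\Sh_{\widetilde K}(V)_{\overline\Q},\overline\Q_p(1))$, whence the result follows from Lemma \ref{lemma which have tate classes on quaternionic shimura variety} and Corollary \ref{corollary local Hecke actions}. The "main point requiring care" you flag (that the $\T^S$-action on the $\spin_4$ cohomology is recovered from the $\widetilde\T^S$-action through the dual-group map, uniformly at all good primes) is precisely the content packaged into the phrase "split inclusion of $\T^S$-modules" in the paper, and is supplied by the conditions (\ref{first condition on widetilde K})--(\ref{third condition on widetilde K}) on $\widetilde K$ together with Corollary \ref{corollary local Hecke actions}, exactly as you identify.
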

\begin{proof}
Because (\ref{eq:map of shimura varieties at level widetilde K}) is an open and closed embedding, we have a split inclusion of $\T^S$-modules $$H^2_{\et,!}(\Sh_{K}(V)_{\overline \Q}, \overline \Q_p(1))\subset H^2_{\et,!}(\widetilde\Sh_{\widetilde K}(V)_{\overline \Q}, \overline \Q_p(1))$$  and so the corollary is immediate from Lemma \ref{lemma which have tate classes on quaternionic shimura variety} and Corollary \ref{corollary local Hecke actions}.
\end{proof}
\section{ Relative deformation theory and level-raising}\label{sec:appendix_def_theory}
In this appendix, we recall the relative deformation theory of Fakhruddin, Khare, and Patrikis \cite{fakhruddin2021relative} in a format useful for characteristic zero level-raising. In the hope that this discussion will be helpful in future work, we work with representations valued in more general groups than are needed for the main text.
\subsection {Notation}
\subsubsection{The group}
Let $G $
be a smooth, split group scheme over the ring of integers $O $
of a finite extension $E $ of $\Q_p$, such that the neutral component of $G $
is a connected reductive group. Let $G ^\derived $
be the derived subgroup of $G $.
Write $\varpi $
for the uniformizer of $O $. We suppose $p >2 $
and $G $
satisfy \cite[Assumption 2.1]{fakhruddin2021relative}. When $G = \GSP_4$ or $\GL_2$, which are the cases relevant for the main text,  \emph{loc. cit.} is satisfied for all odd $p$. Let $d_G$ be the dimension of $\Lie G^\der$. 
\subsubsection{The Galois representation}
Let $k $
be a number field, and fix a Galois representation $\rho: G_k\rightarrow G (O). $
For an integer $n\geq 1 $, let $\rho_n: G_k\rightarrow G (O/\varpi ^ n) $
be the reduction of $\rho $, and let $\overline\rho\coloneqq\rho_1 $. Also write $\adjoint ^ 0\rho $, $\adjoint ^ 0\rho_n $, and $\adjoint ^ 0\overline\rho $
for the natural $G_k $-representations on $\Leo G ^\derived $, $\Leo G ^\derived\otimes_OO/\varpi ^ n $, and $\Leo G ^\derived\otimes_OO/\varpi $,  respectively. Let $\Sigma_p$ and $\Sigma_\infty$ be the set of places of $k$ lying above $p$ and $\infty$, respectively. We will always suppose fixed a finite set $S$ of nonarchimedean places of $k$ such that $\Sigma_p \cap S = \emptyset$ and $\rho|_{G_{k_v}}$ is unramified for $v\not\in S\cup \Sigma_p$. We 
make the following assumptions on $\rho $:
\begin{assumption}
    \label{ass:appendix_main}
    \leavevmode
    \begin{enumerate}
    \item \label{ass_B1_noresidual}
    $H ^ 0 (k,\adjoint ^ 0\overline\rho) = H ^ 0 (k,\adjoint ^ 0\overline\rho (1)) = 0 $.
    \item\label{ass_B2_odd}
    $\rho $
is odd in the sense of \cite[Definition 1.2]{fakhruddin2021relative}.

\item \label{ass_B3_HT}
For all primes $v\in\Sigma_p $, $\rho |_{G_{k_v}} $
is potentially semistable with regular Hodge-Tate cocharacter $\mu_v:\mathbb G_m \to G $.
    \end{enumerate}
\end{assumption}

\begin{notation}\label{notation:appendix_def_rings}
 Recall the category $\CNL_O$ from (\ref{subsubsec:CNLO_notation}).
Let $\mu: G \to H= G/G^\derived$ be the maximal abelian quotient of $G$, and let 
$$\chi: G_k\xrightarrow {\rho} G (O)\xrightarrow{\mu} H(O)$$
be the multiplier character of $\rho $.
For all primes $v $
of $k $, let $\mathcal D_v$ be the functor on $\CNL_O$ defined by 
\begin{equation}
    \mathcal D_v(A) = \set{\rho_A: G_{k_v} \to G(A)\,:\, \rho_A \otimes_A (O/\varpi) = \overline \rho,\, \mu\circ \rho_A = \chi}. 
\end{equation}
The functor $\mathcal D_v$ is represented by a universal deformation ring $\widetilde R_v$. For $v\in \Sigma_p$, 
let $R_v$ be the quotient of $\widetilde R_v$ corresponding to potentially semistable deformations with  fixed Hodge type $\mu_v $ \cite[Proposition 3.0.12]{balaji2012Gvalued}; for $v\not\in \Sigma_p$,  set $R_v \coloneqq \widetilde R_v$. 
\end{notation}

Our final assumption on $\rho $
is:
\begin{assumption}\label{ass:appendix_smooth}
    For all primes $v\in S\cup\Sigma_p $,  the point $y_v$ of $\Spec R_v[1/\varpi]$ defined by
$\rho|_{G_{k_v}}: G_{k_v}\rightarrow G (O)\rightarrow G (E) $
is  formally smooth.
    
\end{assumption}

In particular, Assumption \ref{ass:appendix_smooth} implies that $y_v $
lies on a unique irreducible component of $\Spec R_v [1/\varpi] $. Let $R_v\twoheadrightarrow\overline R_v $
be the quotient corresponding to the Zariski closure of this irreducible component.
We have the following simple criterion for Assumption \ref{ass:appendix_smooth} to hold:
\begin{lemma}\label{smoothness lemma appendix}
    For all $v$, $y_v$ is  a formally smooth  point of $\Spec R_v[1/\varpi]$ if and only if 
    $$H^0(\WD (\ad^0 \rho|_{G_{k_v}})(1)) = 0.$$
\end{lemma}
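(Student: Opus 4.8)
The statement to prove is Lemma \ref{smoothness lemma appendix}: for a place $v$, the point $y_v$ on $\Spec R_v[1/\varpi]$ coming from $\rho|_{G_{k_v}}$ is formally smooth if and only if $H^0(\WD(\ad^0\rho|_{G_{k_v}})(1)) = 0$.

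The plan is to analyze the completed local ring of $\Spec R_v[1/\varpi]$ at $y_v$ via the usual dimension–tangent space comparison. Formal smoothness of a point on a scheme of finite type over $E$ is equivalent to the equality of the dimension of the tangent space at $y_v$ with the Krull dimension of the local ring at $y_v$. I would split into the two cases $v\nmid p$ and $v\mid p$, since $R_v$ is the unrestricted framed deformation ring in the first case and the fixed-Hodge-type potentially semistable quotient in the second.

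First I would recall the computation of the tangent space: it is a twisted version of $H^1(G_{k_v},\ad^0\rho|_{G_{k_v}}\otimes E)$, or more precisely (for the fixed-multiplier deformation problem with framing) $Z^1$ minus coboundaries, fitting into the standard exact sequence controlled by $H^0,H^1,H^2$ of $\ad^0\rho|_{G_{k_v}}$. For $v\nmid p$, the expected dimension of $R_v[1/\varpi]$ is $\dim_E H^1 - \dim_E H^2 + (\text{framing correction}) = \dim_E H^0 + (\text{framing})$, using the local Euler characteristic formula $h^0 - h^1 + h^2 = 0$ for $\ad^0\rho|_{G_{k_v}}$ at $v\nmid p$. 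The obstruction space is $H^2(G_{k_v},\ad^0\rho|_{G_{k_v}}\otimes E)$, which by local Tate duality is dual to $H^0(G_{k_v},\ad^0\rho|_{G_{k_v}}(1)\otimes E) = H^0(\WD(\ad^0\rho|_{G_{k_v}})(1))$ — the last identification using that the Weil–Deligne representation of a de Rham (here even unramified, so automatically de Rham) representation has the same invariants as the Galois representation after extending scalars, via Grothendieck's $\ell$-adic monodromy theorem, together with the fact that the monodromy operator $N$ kills nothing new on the $\Frob$-invariants beyond the Galois invariants. So vanishing of $H^0(\WD(\ad^0\rho|_{G_{k_v}})(1))$ is equivalent to vanishing of the obstruction space, which forces the local ring to be a power series ring over $E$ of the expected dimension, hence formally smooth; conversely, if the obstruction space is nonzero one uses a cup-product / quadratic-obstruction argument to exhibit genuine singularity (or, more simply, notes that in this setting the framed local deformation ring is a complete intersection of the expected dimension, and smoothness forces $h^2 = 0$).

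For $v\mid p$ the same outline applies but I would cite the description of $R_v$ as the fixed-Hodge-type potentially semistable quotient, whose generic fiber is known to be equidimensional of the "expected" dimension $d_G + \dim_E(G/P_{\mu_v})$ (Kisin's theorem, as used in \cite{balaji2012Gvalued}); the obstruction to smoothness at $y_v$ is again $H^2$ of $\ad^0\rho|_{G_{k_v}}$ restricted appropriately, which via local duality and the fact that $\rho|_{G_{k_v}}$ is de Rham is dual to $H^0(\WD(\ad^0\rho|_{G_{k_v}})(1))$ (this is precisely the content alluded to by the footnote to Proposition \ref{prop:tensoring involutions}-style de Rham/Weil–Deligne compatibility). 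The main obstacle I anticipate is being careful with the two bookkeeping subtleties: (i) passing correctly between framed and unframed deformation functors and the multiplier-fixing condition, so that the framing corrections cancel on both sides of the dimension count; and (ii) justifying the identity $H^0(G_{k_v}, M(1)\otimes E) = H^0(\WD(M)(1))$ for $M = \ad^0\rho|_{G_{k_v}}$ when $v\mid p$, which requires knowing $\rho|_{G_{k_v}}$ (hence $\ad^0\rho|_{G_{k_v}}$) is de Rham — this holds by Assumption \ref{ass:appendix_main}(\ref{ass_B3_HT}) — and then invoking $p$-adic Hodge theory to relate $D_{\mathrm{pst}}$-invariants to Galois invariants. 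Once these are in hand, the equivalence "smooth $\iff$ obstruction space vanishes $\iff$ $H^0(\WD(\ad^0\rho|_{G_{k_v}})(1)) = 0$" is immediate.
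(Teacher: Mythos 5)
Your $v\nmid p$ sketch is essentially fine (for $v\nmid p$ one can check directly from Grothendieck's monodromy theorem that $H^0(G_{k_v},\ad^0\rho(1)\otimes E)$ does coincide with $H^0(\WD(\ad^0\rho|_{G_{k_v}})(1))$, and then the standard Euler-characteristic computation works, granting equidimensionality of $\Spec R_v[1/\varpi]$). But the argument breaks down at $v\mid p$ in two genuine ways.

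First, the identity you flag as ``(ii),'' namely $H^0(G_{k_v},\ad^0\rho(1)\otimes E)=H^0(\WD(\ad^0\rho|_{G_{k_v}})(1))$ for $v\mid p$, is simply false. The left-hand side is computed as $\Fil^0\cap D_{\crystal}^{\phi=1}$ (or the analogous potentially-semistable description); the right-hand side is the $(W_{k_v},N)$-invariants of $D_{\mathrm{pst}}$ \emph{with no filtration condition}. In general the former is a proper subspace of the latter. For instance, a crystalline character of $G_{\Q_p}$ with Hodge--Tate weight $-1$ and crystalline Frobenius eigenvalue $1$ has $H^0(G_{\Q_p},-) = 0$ but $H^0(\WD(-)) \neq 0$. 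So your argument can only show the implication ``$H^0(\WD(\ad^0\rho)(1))=0\implies H^2(G_{k_v},\ad^0\rho)=0$,'' not the reverse, and the ``only if'' direction of the lemma is lost.

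Second, and more structurally: for $v\mid p$, $R_v$ is the potentially semistable quotient with fixed Hodge type, and its obstruction theory is \emph{not} governed by $H^2(G_{k_v},\ad^0\rho)$. The latter controls the unrestricted deformation functor; imposing the de Rham/Hodge-type condition cuts $\Spec R_v$ out of the unrestricted ring in a way that is not transverse in general, so smoothness of $y_v$ on $\Spec R_v[1/\varpi]$ is not equivalent to $H^2(G_{k_v},\ad^0\rho\otimes E)=0$. The correct controlling object is the $H^2$ of a Weil--Deligne cohomology complex built from $D_{\mathrm{pst}}$, and identifying this and its dual with $H^0(\WD(\ad^0\rho)(1))$ is precisely the content of the result (Bellovin, Corollary 3.3.4) that the paper invokes. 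The paper's proof is therefore a one-line citation of that corollary, which treats $v\mid p$ and $v\nmid p$ uniformly; reproving it from scratch requires redoing Kisin's and Bellovin's analysis of the local ring of the potentially semistable locus, not a Tate-duality shuffle on ordinary Galois cohomology.
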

\begin{proof}
    This is immediate from \cite[Corollary 3.3.4]{bellovin2019Gvalued}; note that, in the notation of \emph{loc. cit.}, $\ad \WD(\rho|_{G_{k_v}})$ is by definition the Weil-Deligne representation associated to $\ad \rho|_{G_{k_v}}$, cf. \cite[\S2]{bellovin2019Gvalued}.
\end{proof}
\subsection{Selmer groups and relative deformation theory}
Recall that a \emph{Selmer structure} $\mathcal F$ for an $O[G_k]$-module $M$ is a collection of $O$-submodules (the ``local conditions'')
$$H^1_{\mathcal F}(k_v, M) \subset H^1(k_v, M)$$ for all nonarchimedean\footnote{Since $p \neq 2$, for all $v|\infty$ we have $H^1(k_v, M) = 0$.} places $v$ of $k$, such that $H^1_{\mathcal F}(k_v, M) = H^1_{\unr} (k_v, M)$ for all but finitely many $v$. The associated \emph{Selmer group} is
$$H^1_{\mathcal F}(k, M) = \ker\left(H^1(k, M) \to \prod_v \frac{H^1(k_v, M)}{H^1_{\mathcal F}(k_v, M)}\right).$$ 

If $M$ is finite and $M' = \Hom(M, (E/O)(1))$ is the Cartier dual, then $H^1(k_v, M)$ and $H^1(k_v, M')$ are dual under the local Tate pairing.
The \emph{dual Selmer structure} $\mathcal F^\ast$ to $\mathcal F$ is the Selmer structure for $M'$ defined by the orthogonal complement local conditions.

\begin{prop}\label{proposition making selmer conditions deformation appendix}
Let $v $
be a nonarchimedean place of $k $. There exists a nonempty open set $Y_v\subset\Spec\overline R_v (O) $
containing the point corresponding to $\rho_v $, and a collection of submodules $Z_{r, v}\subset Z ^ 1 (G_{k_v},\adjoint ^ 0\rho_r) $
with the following properties.
\begin{enumerate}
\item\label{proposition making selmer conditions deformation appendix part I} $Z_{r, v} $ is free
over $O/\varpi ^ r $
of rank $\dimension\Spec R_v [1/\varpi] $ ($=d_G$ if $v\not\in \Sigma_p$).
\item\label{proposition making selmer conditions deformation appendix part II}  Let $Y_{n} ^ v $
be the image of $Y_v $
in $\Spec R_v (O/\varpi ^ n) $
and denote by $\phi ^ {Y_v}_{n, r}: Y ^ v_{n + r}\rightarrow Y ^ v_n $
the reduction maps for $n, r\geq 1 $.
Then given $r_0\geq 1 $, there exists $n_0\geq 1 $
such that, for all $n\geq n_0 $
and all $0\leq r\leq r_0 $, the fibers of $\phi ^ {Y_v}_{n, r} $
are nonempty principal homogeneous spaces for $Z_{r, v} $.
\item \label{proposition making selmer conditions deformation appendix part III} The natural $O $-module maps $\adjoint ^ 0\rho_r\twoheadrightarrow\adjoint ^ 0\rho_{r -1} $
and $\adjoint ^ 0\rho_{r -1}\hookrightarrow\adjoint ^ 0\rho_r $
induce surjections $Z_{r, v}\twoheadrightarrow Z_{r -1, v} $
and inclusions $Z_{r -1, v}\hookrightarrow Z_{r, v} $.
\item\label{proposition making selmer conditions deformation appendix part IV}  $Z_{r, v} $
contains all coboundaries in $Z ^ 1 (G_{k_v},\adjoint ^ 0\rho_r) $.
\end {enumerate}
\end{prop}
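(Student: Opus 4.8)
The plan is to deduce Proposition~\ref{proposition making selmer conditions deformation appendix} from the local deformation theory recalled above, following the strategy in \cite[\S2-3]{fakhruddin2021relative}. First I would fix the place $v$. By Assumption~\ref{ass:appendix_smooth}, the point $y_v \in \Spec R_v[1/\varpi]$ is formally smooth, so it lies on a unique irreducible component; let $\overline R_v$ be the corresponding quotient. The key geometric input is that $\overline R_v$, after inverting $\varpi$, is a regular local ring, and its $O$-points near $y_v$ form a $\varpi$-adically open subset $Y_v \subset \Spec \overline R_v(O)$ which, in suitable formal coordinates, looks like a ball in $O^{\dim \Spec R_v[1/\varpi]}$ (here $\dim \Spec R_v[1/\varpi] = d_G$ when $v \nmid p$, by the standard unramified local deformation computation, and equals $d_G + \sum_{\sigma}\#\{\text{positive roots above }\sigma\}$-type formula when $v \mid p$; I only need that it is finite and that smoothness holds).

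The module $Z_{r,v} \subset Z^1(G_{k_v}, \ad^0\rho_r)$ should be defined as the set of cocycles arising as ``tangent vectors to $Y_v$ modulo $\varpi^r$'': concretely, for a deformation $\rho_A$ of $\rho$ to $A = O[\epsilon]/\epsilon^2$ corresponding to a point of $Y_v$ in the formal neighborhood, the difference $\rho_A \rho^{-1}$ defines a cocycle; reducing the coordinate ring mod $\varpi^r$ and taking the $O/\varpi^r$-span of these gives $Z_{r,v}$. Part~(\ref{proposition making selmer conditions deformation appendix part I}) (freeness of rank $\dim \Spec R_v[1/\varpi]$) follows from the formal smoothness: the completed local ring of $\overline R_v$ at $y_v$ is a power series ring over $O$ in $\dim \Spec R_v[1/\varpi]$ variables (after possibly enlarging $O$; but since $y_v$ is an $O$-point and the point is smooth, this is automatic), so the mod-$\varpi^r$ tangent space is free of that rank. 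Part~(\ref{proposition making selmer conditions deformation appendix part III}) is then immediate from the compatibility of these tangent spaces under $r \mapsto r-1$, i.e. the natural reduction and inclusion maps on $\ad^0\rho_\bullet$ induce the claimed maps on $Z_{\bullet,v}$ because they are induced functorially from the power series description. Part~(\ref{proposition making selmer conditions deformation appendix part IV}) (coboundaries lie in $Z_{r,v}$) holds because conjugating a deformation by an element of $\widehat{G}(A)$ reducing to the identity stays on the same component $Y_v$ (the multiplier character is unchanged, and conjugation preserves the local framed-to-unframed relationship), so the corresponding coboundary cocycles are tangent vectors to $Y_v$.

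The heart of the matter — and the step I expect to be the main obstacle — is part~(\ref{proposition making selmer conditions deformation appendix part II}): the statement that for a fixed ``depth'' $r_0$, there is a threshold $n_0$ beyond which the reduction map $\phi^{Y_v}_{n,r}: Y^v_{n+r} \to Y^v_n$ has fibers that are \emph{nonempty} principal homogeneous spaces under $Z_{r,v}$. Non-emptiness is the delicate part: one must show that every $O/\varpi^n$-point in the image $Y^v_n$ actually lifts to an $O/\varpi^{n+r}$-point in $Y_v$ — i.e. that the $\varpi$-adic geometry of the smooth scheme $\Spec \overline R_v(O)$ near $y_v$ is ``uniform'' enough that, once a point is close enough to $y_v$ (distance controlled by $n_0$), all obstructions to lifting vanish and lifting is governed exactly by the tangent space. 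This is precisely the content of \cite[Proposition 2.12, Lemma 2.13]{fakhruddin2021relative} (and the analysis of the local deformation rings $R_v$ at $p$ from \cite{balaji2012Gvalued}), which says that a smooth point of a finite-type $O$-scheme has a $\varpi$-adic neighborhood of $O$-points that is a ball, on which reduction maps behave as they would for affine space; the threshold $n_0$ depends on how singular $\overline R_v$ might be along other components passing nearby, but since $y_v$ itself is smooth this is controlled. I would carry this out by choosing $n_0$ large enough that the formal disc around $y_v$ of radius $\varpi^{n_0}$ lies entirely in the smooth locus and in $Y_v$, at which point the power series presentation makes the fiber analysis purely a matter of linear algebra over $O/\varpi^\bullet$, yielding the principal homogeneous space structure under $Z_{r,v}$. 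The once the non-emptiness and freeness are in hand, the identification of the acting group with $Z_{r,v}$ is formal from the definition of $Z_{r,v}$ as the mod-$\varpi^r$ tangent directions.
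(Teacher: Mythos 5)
The paper's own proof of this proposition is a single line: ``This is \cite[Proposition 4.7]{fakhruddin2021relative}.'' Your plan reconstructs the argument behind that citation, and your strategy—define $Z_{r,v}$ as the mod-$\varpi^r$ tangent directions to the smooth component $\overline R_v$ through $y_v$, use the power-series presentation of the completed local ring at the smooth $O$-point to get freeness and the principal-homogeneous-space structure, and handle non-emptiness of fibers via a $\varpi$-adic uniformity/lifting argument controlled by $n_0$—is indeed the approach of Fakhruddin–Khare–Patrikis, so in that sense you match the paper. Two small cautions worth flagging: (i) formal smoothness of $\Spec R_v[1/\varpi]$ at $y_v$ does not by itself immediately make the $\varpi$-adic completion of $\overline R_v$ at $y_v$ a power series ring over $O$—one needs the $O$-flatness and an argument that the relevant tangent space is $O$-free, which FKP establish carefully, so your ``this is automatic'' is a bit glib; and (ii) the notation $\widehat G(A)$ in your discussion of part~(\ref{proposition making selmer conditions deformation appendix part IV}) should read $\ker\bigl(G(A)\to G(O/\varpi)\bigr)$ (the group by which framed and unframed deformations differ). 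Neither affects the correctness of the plan.
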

\begin{proof}
See \cite[Proposition 4.7]{fakhruddin2021relative}. 
\end{proof}
\begin{rmk}
Although $Y_v\subset\Spec\overline R_v (O) $
is not uniquely determined by the properties in Proposition \ref{proposition making selmer conditions deformation appendix}, the property (\ref{proposition making selmer conditions deformation appendix part II}) shows that $Z_{r, v} $
depends only on $\rho |_{G_{k_v}} $ (by considering the fiber over $\rho_n|_{G_{k_v}}$).
\end{rmk}

\begin{definition}
For all $n \geq 1$, we define a Selmer structure $\mathcal F$ for $\ad^0\rho_n$ by
$$H^1_{\mathcal F}(k_v, \ad^0 \rho_n) = \begin{cases} \im\left(Z_{n, v} \to H^1(k_v, \ad^0\rho_n) \right), & v\in S\cup \Sigma_p,\\ H^1_{\unr} (k_v, \ad^0\rho_n), & v\not\in S \cup \Sigma_p \cup \Sigma_\infty.\end{cases}$$
\end{definition}

    

\begin{notation}\label{notation:admissible_appendix}
Now suppose given a set $\mathfrak Q $
of finite primes $\q $
of $k $
called
\emph {admissible}, and, for each $\q\in\mathfrak Q $, a quotient $R_\q ^\ordinary $
of $R_\q $
with the following properties:
\begin{enumerate}
\item $R_\q ^\ordinary $
is formally smooth of dimension $d_G$.
\item $R_\q ^\ordinary $
is stable under the conjugation action by $$\kernel\left (G (O)\rightarrow G (O/\varpi)\right). $$
\end{enumerate}
We also suppose $\mathfrak Q\intersection (S\cup\Sigma_p) =\emptyset $.
\end{notation}
\begin{definition}\label{def:appendix_admissible_etc}
\leavevmode
\begin{enumerate}
    \item A lift $\tau_\q: G_{k_\q}\rightarrow G (A) $
of $\overline\rho |_{G_{k_\q}} $, for a complete local Noetherian $O $-algebra $A $, is called
\emph {ordinary} 
if the corresponding map $R_\q\rightarrow A $
factors through $R_\q ^\ordinary $. 
\item For an admissible prime $\q\in \mathfrak Q$, a global lift $\tau: G_k\rightarrow G (A) $
of $\overline\rho $
is called $\mathdutchbcal q $-ordinary if $\tau |_{G_{k_\q}} $
is ordinary.
\item\label{def:appendix_n_admissible_part} For $\q\in\mathfrak Q $, we say $\q $
is $n $-\emph {admissible}
if $\rho_n $
is $\q $-ordinary. 
\item\label{def:appendix_admissible_part_H1ord} If $\q $
is $n $-admissible, then we define $Z ^ 1_\ordinary (G_{k_\q},\adjoint ^ 0\rho_n)\subset Z^1(G_{k_\q}, \ad^0 \rho_n) $
as the relative tangent space to $\Spec R_\q ^\ordinary\otimes_O O/\varpi ^ n $
at the point corresponding to $\rho_n |_{G_{k_\q}} $. Let $$H^1_{\ord}({k_\q}, \ad^0\rho_n)= \image\left( Z^1_\ord(G_{k_\q}, \ad^0\rho_n)\rightarrow H^1({k_\q}, \ad^0\rho_n)\right),$$
and let $H^1_\ord({k_\q}, \ad^0\rho_n(1))\subset H^1(k_\q, \ad^0\rho_n(1))$ be the orthogonal complement of $H^1_\ord(k_\q, \ad^0\rho_n)$.
\item 
If $\mathdutchbcal Q $
is a finite set of $n $-admissible primes, then we define a Selmer structure $\mathcal F (\mathdutchbcal Q) $
for $\adjoint ^ 0\rho_n $
by
$$H ^ 1_{\mathcal F (\mathdutchbcal Q)} (k_v,\adjoint ^ 0\rho_n) =\begin{cases} H ^ 1_{\mathcal F} (k_v,\adjoint ^ 0\rho_n), & v\not\in\mathdutchbcal Q,\\H ^ 1_\ordinary (k_\q,\adjoint ^ 0\rho_n), & v =\q\in\mathdutchbcal Q.\end{cases}$$
\item If $\mathdutchbcal Q$ is a finite set of $n$-admissible primes, then we define the \emph{relative Selmer groups} by
$$\overline\summer_{\mathcal F (\mathdutchbcal Q)} (k,\adjoint ^ 0\rho_n) =\image\left (\Selmer_{\mathcal F (\mathdutchbcal Q)} (k,\adjoint ^ 0\rho_n)\rightarrow\summer_{\mathcal F (\mathdutchbcal Q)} (k,\adjoint ^ 0\overline\rho)\right)
$$and, dually,
$$\overline\summer_{\mathcal F (\mathdutchbcal Q) ^\ast} (k,\adjoint ^ 0\rho_n (1)) =\image\left (\summer_{\mathcal F (\mathdutchbcal Q) ^\ast} (k,\adjoint ^ 0\rho_n (1))\rightarrow\Selmer_{\mathcal F (\mathdutchbcal Q) ^\ast} (k,\adjoint ^ 0\overline\rho (1))\right).
$$\end{enumerate}
    
\end{definition}

\begin{prop}\label{prop:appendix_ord}
Suppose $\q$ is $n$-admissible. 
    \begin{enumerate}
        \item\label{prop:appendix_ord_free} $Z ^ 1_\ordinary (G_{k_\q},\adjoint ^ 0\rho_n) $
is  free of rank $d_G$
over $O/\varpi ^ n $
and contains all coboundaries.
\item\label{prop:appendix_ord_compatible} For all $1< r \leq n$, the natural maps $\ad^0\rho_r \twoheadrightarrow \ad^0\rho_{r-1}$ and $\ad^0 \rho_{r-1} \hookrightarrow \ad^0 \rho_r$ induce surjections $Z^1_\ordinary(G_{k_\q}, \adjoint^0 \rho_r) \twoheadrightarrow Z^1_\ordinary(G_{k_\q}, \ad^0 \rho_{r-1})$ and injections $Z^1_\ordinary(G_{k_\q}, \adjoint^0 \rho_{r-1}) \hookrightarrow Z^1_\ordinary(G_{k_\q}, \ad^0 \rho_{r})$.
\item\label{prop:appendix_ord_fibers} Let $Y_{\q,n,\ord}\subset \Spec R_\q^\ord (O)$ be the set of points reducing to $\rho_n|_{G_{k_\q}}$ modulo $\varpi^n$, and let $Y_{m,n,\ord}^{\q}$ be the image in $\Spec R_\q^\ord(O/\varpi^m)$ for all $m \geq n$. Then for any $1\leq r\leq n$, the fibers of $Y_{m+r,n,\ord}^{\q} \to Y_{m,n,\ord}^{\q}$ are nonempty principal homogeneous spaces over $Z^1_\ord(G_{k_\q}, \ad^0 \rho_r).$ 
    \end{enumerate}
\end{prop}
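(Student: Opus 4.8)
\textbf{Plan of proof for Proposition \ref{prop:appendix_ord}.} The statement is the ordinary analogue of Proposition \ref{proposition making selmer conditions deformation appendix}, and I would prove it by the same strategy, but now exploiting the \emph{a priori} formal smoothness of $R_\q^\ordinary$ over $O$ of dimension $d_G$ rather than having to quote a smoothness statement as in Assumption \ref{ass:appendix_smooth}. The overall idea is: (i) use formal smoothness to describe $\Spec R_\q^\ordinary(A)$ for Artinian $A$ as a torsor under the relative tangent space; (ii) identify that tangent space with $Z^1_\ordinary(G_{k_\q}, \ad^0\rho_n)$ using the usual dictionary between deformations with fixed multiplier $\chi$ and cocycles valued in $\ad^0$; and (iii) deduce freeness, the coboundary containment, the compatibility under reduction/inclusion mod $\varpi^r$, and the principal-homogeneous-space statement for the fibers.

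For part (\ref{prop:appendix_ord_free}): since $R_\q^\ordinary$ is formally smooth over $O$ of relative dimension $d_G$, we have $R_\q^\ordinary \cong O\llbracket x_1,\dots,x_{d_G}\rrbracket$. For a small extension $A \to A_0$ in $\CNL_O$ with kernel $I$, $I\cdot \mathfrak m_A = 0$, and $I \cong O/\varpi$, the set of lifts of a fixed $\rho_{A_0}$-point of $\Spec R_\q^\ordinary(A_0)$ to $\Spec R_\q^\ordinary(A)$ is either empty or a principal homogeneous space under $\Hom_{O}(\mathfrak m/(\mathfrak m^2 + \varpi), I) \cong (O/\varpi)^{d_G}$; formal smoothness forces nonemptiness. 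Applying this inductively along $O/\varpi^n \to O/\varpi^{n-1} \to \cdots$, the relative tangent space to $\Spec R_\q^\ordinary \otimes_O O/\varpi^n$ at $\rho_n|_{G_{k_\q}}$ is free of rank $d_G$ over $O/\varpi^n$. Now, by definition $Z^1_\ordinary(G_{k_\q}, \ad^0\rho_n)$ is precisely this relative tangent space, realized inside $Z^1(G_{k_\q}, \ad^0\rho_n)$ via the standard identification of $\mathfrak m$-adic deformations of $\rho_n|_{G_{k_\q}}$ (with fixed multiplier character $\chi$) as twists $g\mapsto (1+\varepsilon c(g))\rho_n(g)$ with $c \in Z^1(G_{k_\q}, \ad^0\rho_n)$; the framed deformations are all deformations (not conjugacy classes), so one gets cocycles and not cohomology classes, and in particular all coboundaries arise, since conjugating $\rho_n|_{G_{k_\q}}$ by an element of $\ker(G(O/\varpi^n[\varepsilon]) \to G(O/\varpi^n))$ still gives an ordinary lift (here one uses the stability of $R_\q^\ordinary$ under conjugation by $\ker(G(O)\to G(O/\varpi))$ from Notation \ref{notation:admissible_appendix}). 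This gives (\ref{prop:appendix_ord_free}).

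For part (\ref{prop:appendix_ord_compatible}): the surjection $Z^1_\ordinary(G_{k_\q}, \ad^0\rho_r) \twoheadrightarrow Z^1_\ordinary(G_{k_\q}, \ad^0\rho_{r-1})$ comes from the fact that the relative tangent space of a formally smooth $O$-scheme is compatible with reduction $O/\varpi^r \to O/\varpi^{r-1}$ (an element of $Z^1_\ordinary$ for $\rho_r$ reduces to one for $\rho_{r-1}$, and surjectivity follows since $R_\q^\ordinary\otimes O/\varpi^{r-1}$ is a quotient of $R_\q^\ordinary\otimes O/\varpi^r$ and the tangent spaces are free of the same rank, so reduction mod $\varpi^{r-1}$ on free modules of rank $d_G$ is surjective). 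The injection $Z^1_\ordinary(G_{k_\q}, \ad^0\rho_{r-1}) \hookrightarrow Z^1_\ordinary(G_{k_\q}, \ad^0\rho_r)$ uses the map $\ad^0\rho_{r-1}\hookrightarrow \ad^0\rho_r$ (multiplication by $\varpi$) together with the freeness: multiplication by $\varpi$ is injective on free $O/\varpi^r$-modules, and one checks it carries the ordinary tangent space into the ordinary tangent space by noting that $(1+\varpi^{r-1}\varepsilon c)\rho_r$ being ordinary is equivalent to $(1+\varepsilon c)\rho_{r-1}$ being ordinary via the identification $O/\varpi^{r-1}[\varepsilon]\cong O/\varpi^r[\varpi^{r-1}\varepsilon]$. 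For part (\ref{prop:appendix_ord_fibers}): this is the ``integral points'' version of (\ref{prop:appendix_ord_compatible}), proved in exactly the same way as Proposition \ref{proposition making selmer conditions deformation appendix}(\ref{proposition making selmer conditions deformation appendix part II}) — by formal smoothness the reduction maps $Y^{\q}_{m+r, n, \ordinary} \to Y^{\q}_{m, n, \ordinary}$ on $O$-points of $\Spec R_\q^\ordinary$ have nonempty fibers, and these fibers are principal homogeneous spaces under the appropriate tangent space, which for the step from $\varpi^m$ to $\varpi^{m+r}$ with the reduction taken relative to $\rho_n|_{G_{k_\q}}$ (i.e. working in the $\varpi^n$-torsion) is $Z^1_\ordinary(G_{k_\q}, \ad^0\rho_r)$; the identification of the torsor structure with this cocycle module is again the deformation-to-cocycle dictionary.

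\textbf{Main obstacle.} The conceptual content is entirely routine deformation theory; the one point requiring care is the bookkeeping in (\ref{prop:appendix_ord_fibers}) and (\ref{prop:appendix_ord_compatible}) of \emph{which} tangent module governs the fibers — one must be careful that the fibers of $Y^{\q}_{m+r,n,\ordinary} \to Y^{\q}_{m,n,\ordinary}$ are torsors under $Z^1_\ordinary(G_{k_\q}, \ad^0\rho_r)$ (the $\varpi^r$-torsion piece), not under $Z^1_\ordinary(G_{k_\q}, \ad^0\rho_m)$; this is the same subtlety handled in Proposition \ref{proposition making selmer conditions deformation appendix}(\ref{proposition making selmer conditions deformation appendix part II}), and the resolution is identical. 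Beyond that, the whole proof is a transcription of the argument for Proposition \ref{proposition making selmer conditions deformation appendix}, simplified by the hypothesis that $R_\q^\ordinary$ is formally smooth over $O$ of the expected dimension rather than merely smooth at a particular $E$-point.
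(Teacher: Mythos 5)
Your proposal is correct and takes essentially the same approach as the paper: the paper dispatches parts (\ref{prop:appendix_ord_free}) and (\ref{prop:appendix_ord_fibers}) as ``immediate from the conditions on $R_\q^\ordinary$'' (formal smoothness of dimension $d_G$ plus stability under conjugation), and proves (\ref{prop:appendix_ord_compatible}) by observing that the image of $Z^1_\ordinary(G_{k_\q},\ad^0\rho_r)$ lands inside $Z^1_\ordinary(G_{k_\q},\ad^0\rho_{r-1})$ and then concluding equality by counting from (\ref{prop:appendix_ord_free}); you have simply unwound the standard deformation-theoretic torsor argument that the paper leaves implicit.
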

\begin{proof}
    Parts (\ref{prop:appendix_ord_free}) and (\ref{prop:appendix_ord_fibers}) are immediate from the conditions on $R_\q^\ord$ in Notation \ref{notation:admissible_appendix}. For (\ref{prop:appendix_ord_compatible}), it is clear from the definition that $$Z^1_\ord(G_{k_\q}, \ad^0 \rho_{r-1})\supseteq \image \left(Z^1_\ord(G_{k_\q}, \ad^0 \rho_r) \to Z^1_\ord(G_{k_\q}, \ad^0 \rho_{r-1})\right),$$ and equality holds by (\ref{prop:appendix_ord_free}) and counting. A similar argument shows the compatibility with $$Z^1 (G_{k_\q}, \ad^0 \rho_{r-1})\hookrightarrow Z^1(G_{k_\q}, \ad^0 \rho_{r}).$$
\end{proof}

\begin {lemma}\label{stuff like lemma 6.1 FKP} Let $\mathdutchbcal Q$ be a finite set of $n$-admissible primes. Then:
\begin{enumerate}
    \item\label{lemma 6.1 FKP} For all $a, b\geq 0$ with $a+b\leq n$, there are natural exact sequences
    $$0 \to \Selmer_{\mathcal F(\mathdutchbcal Q)}(k, \adjoint^0\rho_a)\to \Selmer_{\mathcal F(\mathdutchbcal Q)}(k, \adjoint^0\rho_{a+b}) \to \Selmer_{\mathcal F(\mathdutchbcal Q)}(k, \adjoint^0\rho_b)$$
    and
     $$0 \to \Selmer_{\mathcal F(\mathdutchbcal Q)^\ast}(k, \adjoint^0\rho_a(1))\to \Selmer_{\mathcal F(\mathdutchbcal Q)^\ast}(k, \adjoint^0\rho_{a+b}(1)) \to \Selmer_{\mathcal F(\mathdutchbcal Q)^\ast}(k, \adjoint^0\rho_b(1)).$$
    \item \label{lemma 6.1 part III} The exact sequences in (\ref{lemma 6.1 FKP}) identify $$\Sel_{\mathcal F(\mathdutchbcal Q)}(k, \ad^0 \rho_a) = \Sel_{\mathcal F(\mathdutchbcal Q)}(k, \ad^0\rho_n)[\varpi^a]$$ and $$\Sel_{\mathcal F(\mathdutchbcal Q)^\ast}(k, \ad^0 \rho_a(1)) = \Sel_{\mathcal F(\mathdutchbcal Q)^\ast}(k, \ad^0\rho_n(1))[\varpi^a]$$  for all $a \leq n$.
    \item \label{lemma 6.1 part II residual selmer groups and torsion}If $\overline \Selmer_{\mathcal F(\mathdutchbcal Q)}(k, \adjoint^0\rho_m) = 0$ for some integer $m \leq n$, then for all $m'$ with $m -1\leq m'\leq n$,
the natural map  induces an isomorphism
   $$\Selmer_{\mathcal F(\mathdutchbcal Q)} (k, \adjoint^0\rho_{m-1})\xrightarrow{\sim} \Selmer_{\mathcal F(\mathdutchbcal Q)} (k, \adjoint^0\rho_{m'}).$$
\end{enumerate}
\end{lemma}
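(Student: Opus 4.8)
The statement to prove is Lemma~\ref{stuff like lemma 6.1 FKP}, which records three basic properties of the Selmer groups $\Sel_{\mathcal F(\mathdutchbcal Q)}(k,\ad^0\rho_n)$ and their duals. The plan is to reduce everything to the structure of the local conditions $Z_{r,v}$ and $Z^1_\ord(G_{k_\q},\ad^0\rho_r)$, which were arranged in Proposition~\ref{proposition making selmer conditions deformation appendix} and Proposition~\ref{prop:appendix_ord} to be compatible under the maps $\ad^0\rho_a \hookrightarrow \ad^0\rho_{a+b}$ and $\ad^0\rho_{a+b}\twoheadrightarrow\ad^0\rho_b$ coming from the short exact sequence of $O[G_k]$-modules
\begin{equation*}
0 \to \ad^0\rho_a \xrightarrow{\varpi^b} \ad^0\rho_{a+b} \to \ad^0\rho_b \to 0.
\end{equation*}

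First I would prove part~(\ref{lemma 6.1 FKP}). Taking Galois cohomology of the displayed short exact sequence (globally and at each place $v$), one gets a commutative ladder of long exact sequences. The global sequence reads $H^1(k,\ad^0\rho_a)\to H^1(k,\ad^0\rho_{a+b})\to H^1(k,\ad^0\rho_b)$, with the first map injective because $H^0(k,\ad^0\rho_b)\to H^1(k,\ad^0\rho_a)$ has source a quotient of $H^0(k,\ad^0\overline\rho)=0$ by Assumption~\ref{ass:appendix_main}(\ref{ass_B1_noresidual}). So it remains to check that these maps carry Selmer subgroups into Selmer subgroups compatibly, i.e.\ that for each place $v$ the image of $H^1_{\mathcal F(\mathdutchbcal Q)}(k_v,\ad^0\rho_a)$ lands in $H^1_{\mathcal F(\mathdutchbcal Q)}(k_v,\ad^0\rho_{a+b})$, and the preimage of $H^1_{\mathcal F(\mathdutchbcal Q)}(k_v,\ad^0\rho_b)$ inside $H^1_{\mathcal F(\mathdutchbcal Q)}(k_v,\ad^0\rho_{a+b})$ is exactly $H^1_{\mathcal F(\mathdutchbcal Q)}(k_v,\ad^0\rho_a)$. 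For $v\notin S\cup\Sigma_p\cup\mathdutchbcal Q$ this is the standard fact that the unramified conditions are compatible; for $v\in S\cup\Sigma_p$ it follows from Proposition~\ref{proposition making selmer conditions deformation appendix}(\ref{proposition making selmer conditions deformation appendix part III},\ref{proposition making selmer conditions deformation appendix part IV}) — the inclusions $Z_{a,v}\hookrightarrow Z_{a+b,v}$ and surjections $Z_{a+b,v}\twoheadrightarrow Z_{b,v}$ together with the fact that $Z_{r,v}$ contains all coboundaries give exactly the needed compatibility of images and preimages at the level of $Z^1$, hence after passing to $H^1$; for $v=\q\in\mathdutchbcal Q$ one uses Proposition~\ref{prop:appendix_ord}(\ref{prop:appendix_ord_compatible}) in the same way. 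A small point to be careful about: one must pass from a statement about cocycle modules $Z^1$ to one about cohomology $H^1$, which requires knowing the coboundaries behave compatibly — but that is precisely why parts (\ref{proposition making selmer conditions deformation appendix part IV}) and (\ref{prop:appendix_ord_free}) were included. This gives the two exact sequences.

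Part~(\ref{lemma 6.1 part III}) is then a formal consequence of part~(\ref{lemma 6.1 FKP}): apply (\ref{lemma 6.1 FKP}) with $a$ replaced by $a$ and $b$ replaced by $n-a$. The inclusion $\Sel_{\mathcal F(\mathdutchbcal Q)}(k,\ad^0\rho_a)\hookrightarrow \Sel_{\mathcal F(\mathdutchbcal Q)}(k,\ad^0\rho_n)$ is multiplication by $\varpi^{n-a}$ on the module side, so its image is contained in the $\varpi^a$-torsion; conversely, any class in $\Sel_{\mathcal F(\mathdutchbcal Q)}(k,\ad^0\rho_n)$ killed by $\varpi^a$ maps to $0$ in $\Sel_{\mathcal F(\mathdutchbcal Q)}(k,\ad^0\rho_{n-a})$ (which is the quotient term, with $\ad^0\rho_n\to\ad^0\rho_{n-a}$ being reduction composed with nothing — here I would double-check the indexing: the relevant quotient is $\ad^0\rho_n \xrightarrow{} \ad^0\rho_{n-a}$ via $x\mapsto x\bmod\varpi^{n-a}$, whose kernel is $\varpi^{n-a}\ad^0\rho_n\cong\ad^0\rho_a$), hence lies in the image of the Selmer group of $\ad^0\rho_a$. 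Exactness from (\ref{lemma 6.1 FKP}) makes this rigorous. The same argument applied to the dual sequence gives the statement for $\Sel_{\mathcal F(\mathdutchbcal Q)^\ast}(k,\ad^0\rho_\bullet(1))$.

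Finally, part~(\ref{lemma 6.1 part II residual selmer groups and torsion}). Suppose $\overline\Sel_{\mathcal F(\mathdutchbcal Q)}(k,\ad^0\rho_m)=0$; by definition this means the reduction map $\Sel_{\mathcal F(\mathdutchbcal Q)}(k,\ad^0\rho_m)\to \Sel_{\mathcal F(\mathdutchbcal Q)}(k,\ad^0\overline\rho)=\Sel_{\mathcal F(\mathdutchbcal Q)}(k,\ad^0\rho_1)$ is zero. Via part~(\ref{lemma 6.1 FKP}) with $(a,b)=(m-1,1)$, the reduction map fits into an exact sequence whose kernel is the image of $\Sel_{\mathcal F(\mathdutchbcal Q)}(k,\ad^0\rho_{m-1})$ under the injection $\varpi^{}\colon \Sel_{\mathcal F(\mathdutchbcal Q)}(k,\ad^0\rho_{m-1})\hookrightarrow\Sel_{\mathcal F(\mathdutchbcal Q)}(k,\ad^0\rho_m)$; vanishing of the reduction map therefore forces this injection to be an isomorphism, i.e.\ $\Sel_{\mathcal F(\mathdutchbcal Q)}(k,\ad^0\rho_m)=\varpi\cdot\Sel_{\mathcal F(\mathdutchbcal Q)}(k,\ad^0\rho_{m})$ up to the identification, whence $\Sel_{\mathcal F(\mathdutchbcal Q)}(k,\ad^0\rho_m)$ is $\varpi$-divisible and finite, so by Nakayama it is zero; more directly, $\overline\Sel=0$ together with (\ref{lemma 6.1 part III}) forces $\Sel_{\mathcal F(\mathdutchbcal Q)}(k,\ad^0\rho_m)[\varpi]=0$, hence $\Sel_{\mathcal F(\mathdutchbcal Q)}(k,\ad^0\rho_m)=0$, and then for any $m'$ with $m-1\le m'\le n$ one has $\Sel_{\mathcal F(\mathdutchbcal Q)}(k,\ad^0\rho_{m'})[\varpi] \subseteq \Sel_{\mathcal F(\mathdutchbcal Q)}(k,\ad^0\rho_{m'})[\varpi^{m}]$, which by (\ref{lemma 6.1 part III}) (applied with $n$ replaced by $m'$ if $m'\le m$, or directly) injects into $\Sel_{\mathcal F(\mathdutchbcal Q)}(k,\ad^0\rho_{m})=0$ — so $\Sel_{\mathcal F(\mathdutchbcal Q)}(k,\ad^0\rho_{m'})$ has no $\varpi$-torsion and hence vanishes, and in particular all the natural maps between them for $m-1\le m'\le n$ are isomorphisms of the zero module. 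I expect no serious obstacle here; the only thing requiring genuine care is the bookkeeping of the indices in the short exact sequences and making sure, throughout, that one is allowed to pass between $Z^1$-level compatibilities and $H^1$-level compatibilities — which is guaranteed by the "contains all coboundaries" clauses. The rest is diagram-chasing.
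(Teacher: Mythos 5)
Your arguments for parts (\ref{lemma 6.1 FKP}) and (\ref{lemma 6.1 part III}) are correct and follow essentially the route of the paper (which cites \cite[Lemma 6.1]{fakhruddin2021relative} rather than reconstructing the ladder, but the content is the same).

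Your argument for part (\ref{lemma 6.1 part II residual selmer groups and torsion}) has a genuine error. You claim that $\overline\Sel_{\mathcal F(\mathdutchbcal Q)}(k,\ad^0\rho_m) = 0$ forces $\Sel_{\mathcal F(\mathdutchbcal Q)}(k,\ad^0\rho_m) = 0$. That does not follow, and it is not true in general. The hypothesis says only that the \emph{reduction} map
$\Sel_{\mathcal F(\mathdutchbcal Q)}(k,\ad^0\rho_m) \to \Sel_{\mathcal F(\mathdutchbcal Q)}(k,\ad^0\overline\rho)$
is zero. You then slide between two different subgroups of $\Sel_{\mathcal F(\mathdutchbcal Q)}(k,\ad^0\rho_m)$ as if they were the same. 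When the injection $\Sel_{\mathcal F(\mathdutchbcal Q)}(k,\ad^0\rho_{m-1})\hookrightarrow\Sel_{\mathcal F(\mathdutchbcal Q)}(k,\ad^0\rho_m)$ (induced by $\ad^0\rho_{m-1}\xrightarrow{\varpi}\ad^0\rho_m$) is surjective, its image is, by part (\ref{lemma 6.1 part III}), the \emph{torsion} subgroup $\Sel_{\mathcal F(\mathdutchbcal Q)}(k,\ad^0\rho_m)[\varpi^{m-1}]$, not $\varpi\cdot\Sel_{\mathcal F(\mathdutchbcal Q)}(k,\ad^0\rho_m)$. So you learn that $\varpi^{m-1}$ annihilates $\Sel_{\mathcal F(\mathdutchbcal Q)}(k,\ad^0\rho_m)$ — not that the module is $\varpi$-divisible, and certainly not that it vanishes. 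Likewise your ``more directly'' line conflates $\overline\Sel_{\mathcal F(\mathdutchbcal Q)}(k,\ad^0\rho_m)$ (image under reduction) with $\Sel_{\mathcal F(\mathdutchbcal Q)}(k,\ad^0\rho_m)[\varpi] = \Sel_{\mathcal F(\mathdutchbcal Q)}(k,\ad^0\overline\rho)$ (identified via inclusion); the first can vanish with the second nonzero, and in the applications (e.g.\ Corollary \ref{cor:relaxed_bound_appendix}) the residual Selmer group $\Sel_{\mathcal F(\mathdutchbcal Q)}(k,\ad^0\overline\rho)$ is genuinely nonzero. The lemma does not claim the groups vanish; it claims the transition maps stabilize.

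The correct argument is shorter: for each $m'$ with $m-1 < m'\le n$, the reduction map $\Sel_{\mathcal F(\mathdutchbcal Q)}(k,\ad^0\rho_{m'})\to\Sel_{\mathcal F(\mathdutchbcal Q)}(k,\ad^0\overline\rho)$ factors through the reduction $\Sel_{\mathcal F(\mathdutchbcal Q)}(k,\ad^0\rho_{m'})\to\Sel_{\mathcal F(\mathdutchbcal Q)}(k,\ad^0\rho_m)\to\Sel_{\mathcal F(\mathdutchbcal Q)}(k,\ad^0\overline\rho)$, whose image is contained in $\overline\Sel_{\mathcal F(\mathdutchbcal Q)}(k,\ad^0\rho_m)=0$; so the third map in the exact sequence from (\ref{lemma 6.1 FKP}) with $(a,b)=(m'-1,1)$ vanishes, and the inclusion $\Sel_{\mathcal F(\mathdutchbcal Q)}(k,\ad^0\rho_{m'-1})\to\Sel_{\mathcal F(\mathdutchbcal Q)}(k,\ad^0\rho_{m'})$ is an isomorphism. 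Chaining these down from $m'$ to $m-1$ gives the claim, with no vanishing assertion needed or available.
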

\begin{proof}
Using Proposition \ref{prop:appendix_ord}(\ref{prop:appendix_ord_free},\ref{prop:appendix_ord_compatible}), the first part is \cite[Lemma 6.1]{fakhruddin2021relative}, except for the injectivity on the left, which follows from Assumption \ref{ass:appendix_main}(\ref{ass_B1_noresidual}). 
Part (\ref{lemma 6.1 part III}) is a corollary of (\ref{lemma 6.1 FKP}), because the kernel of $\varpi^a: \Sel_{\mathcal F(\mathdutchbcal Q)}(k, \ad^0 \rho_n)\to \Sel_{\mathcal F(\mathdutchbcal Q)} (k, \ad^0 \rho_n)$ coincides with the kernel of $\Sel_{\mathcal F(\mathdutchbcal Q)}(k, \ad^0 \rho_n)\to \Sel_{\mathcal F(\mathdutchbcal Q)} (k, \ad^0 \rho_{n-a})$, and likewise for dual Selmer groups. 
So we show (\ref{lemma 6.1 part II residual selmer groups and torsion}). For any $m'$ with $m-1 < m' \leq n$, the map $$\Selmer_{\mathcal F(\mathdutchbcal Q)} (k, \adjoint^0\rho_{m'}) \to \Selmer_{\mathcal F(\mathdutchbcal Q)} (k, \overline\adjoint^0\rho)$$
factors through $\overline \Selmer_{\mathcal F(\mathdutchbcal Q)} (k, \adjoint^0\rho_m)$, hence vanishes; in particular, we have an isomorphism $$\Selmer_{\mathcal F(\mathdutchbcal Q)} (k, \adjoint^0\rho_{m'-1})\isomorphism\Selmer_{\mathcal F(\mathdutchbcal Q)} (k, \adjoint^0\rho_{m'})$$
by (\ref{lemma 6.1 FKP}). The claim follows by  downwards induction on $m'$. 

\end{proof}
\begin {lemma} \label{balanced residual selmer ranks lemma}Suppose $\mathdutchbcal Q $
is a finite set of $n $-admissible primes. Then for all $m\leq n $,
$$\dimension_{O/\varpi}\overline\summer_{\mathcal F (\mathdutchbcal Q)} (k,\adjoint ^ 0\rho_m) =\dimension_{O/\varpi}\overline\summer_{\mathcal F (\mathdutchbcal Q) ^\ast} (k,\adjoint ^ 0\rho_m). $$
\end{lemma}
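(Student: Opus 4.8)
\textbf{Proof proposal for Lemma \ref{balanced residual selmer ranks lemma}.}

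The plan is to combine the Greenberg--Wiles formula relating the sizes of a Selmer group and its dual with the local freeness properties already established, then pass to the residual Selmer groups. First I would record the global Euler characteristic formula (Wiles' formula) for the Selmer structure $\mathcal F(\mathdutchbcal Q)$ applied to the finite module $\ad^0\rho_m$: we have
\begin{equation*}
\frac{\#\Sel_{\mathcal F(\mathdutchbcal Q)}(k,\ad^0\rho_m)}{\#\Sel_{\mathcal F(\mathdutchbcal Q)^\ast}(k,\ad^0\rho_m(1))}
= \frac{\#H^0(k,\ad^0\rho_m)}{\#H^0(k,\ad^0\rho_m(1))}\cdot\prod_{v}\frac{\#H^1_{\mathcal F(\mathdutchbcal Q)}(k_v,\ad^0\rho_m)}{\#H^0(k_v,\ad^0\rho_m)}\cdot\prod_{v\mid\infty}\frac{\#H^0(k_v,\ad^0\rho_m)}{\#\ad^0\rho_m},
\end{equation*}
where the product over finite $v$ is really finite since for $v\notin S\cup\Sigma_p\cup\mathdutchbcal Q$ the unramified local condition has $\#H^1_{\unr}(k_v,\ad^0\rho_m)=\#H^0(k_v,\ad^0\rho_m)$. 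By Assumption \ref{ass:appendix_main}(\ref{ass_B1_noresidual}) the modules $H^0(k,\ad^0\overline\rho)$ and $H^0(k,\ad^0\overline\rho(1))$ vanish, hence by the long exact sequences in cohomology attached to multiplication-by-$\varpi$ powers, $H^0(k,\ad^0\rho_m)=H^0(k,\ad^0\rho_m(1))=0$ as well, so the first ratio on the right is $1$.

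Next I would compute the remaining local factors. At the archimedean places: since $p$ is odd, $H^1(k_v,\ad^0\rho_m)=0$ and the last product is a power of $\#\ad^0\rho_m$ that depends only on $[k:\Q]$, $d_G$, the signatures, and $m$; crucially, by Assumption \ref{ass:appendix_main}(\ref{ass_B2_odd}) (oddness of $\rho$), this archimedean contribution is exactly balanced against the local factor at $p$. More precisely, at $v\in\Sigma_p$ the relevant quantity is $\#H^1_{\mathcal F}(k_v,\ad^0\rho_m)/\#H^0(k_v,\ad^0\rho_m)$, which by Proposition \ref{proposition making selmer conditions deformation appendix}(\ref{proposition making selmer conditions deformation appendix part I},\ref{proposition making selmer conditions deformation appendix part III},\ref{proposition making selmer conditions deformation appendix part IV}) equals $\#(O/\varpi^m)^{\dim\Spec R_v[1/\varpi]}$ — here I use that $Z_{m,v}$ is free of the stated rank and contains all coboundaries so its image in $H^1$ has order $\#Z_{m,v}/\#B^1(k_v,\ad^0\rho_m) = \#(O/\varpi^m)^{\dim\Spec R_v[1/\varpi]}\cdot\#H^0(k_v,\ad^0\rho_m)^{-1}$. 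At $v\in S$ the local condition similarly contributes $\#(O/\varpi^m)^{d_G}$ times the appropriate $H^0$ factor. At the admissible primes $v=\q\in\mathdutchbcal Q$, Proposition \ref{prop:appendix_ord}(\ref{prop:appendix_ord_free}) gives the same: $H^1_\ord(k_\q,\ad^0\rho_m)$ has order $\#(O/\varpi^m)^{d_G}$ up to the $H^0(k_\q,\ad^0\rho_m)$ factor. Tallying all these contributions, the standard bookkeeping (the same computation appearing in \cite{fakhruddin2021relative}) shows $\#\Sel_{\mathcal F(\mathdutchbcal Q)}(k,\ad^0\rho_m)=\#\Sel_{\mathcal F(\mathdutchbcal Q)^\ast}(k,\ad^0\rho_m(1))$ for every $m\le n$.

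Finally I would descend to the residual Selmer groups. By Lemma \ref{stuff like lemma 6.1 FKP}(\ref{lemma 6.1 part III}) we have $\Sel_{\mathcal F(\mathdutchbcal Q)}(k,\ad^0\rho_a)=\Sel_{\mathcal F(\mathdutchbcal Q)}(k,\ad^0\rho_m)[\varpi^a]$ for all $a\le m$, and likewise for the dual. Applying this with $a=1$, the residual Selmer group $\overline\Sel_{\mathcal F(\mathdutchbcal Q)}(k,\ad^0\rho_m)$ — defined as the image of $\Sel_{\mathcal F(\mathdutchbcal Q)}(k,\ad^0\rho_m)\to\Sel_{\mathcal F(\mathdutchbcal Q)}(k,\ad^0\overline\rho)$ — is the quotient $\Sel_{\mathcal F(\mathdutchbcal Q)}(k,\ad^0\rho_m)/\varpi\,\Sel_{\mathcal F(\mathdutchbcal Q)}(k,\ad^0\rho_m)$, whose $O/\varpi$-dimension is the minimal number of generators of the finite $O$-module $\Sel_{\mathcal F(\mathdutchbcal Q)}(k,\ad^0\rho_m)$; the same description applies verbatim to the dual side. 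Since two finite $O$-modules of the same cardinality need not have the same number of generators, this last step requires a small refinement: I would instead argue that, for each $m\le n$, $\Sel_{\mathcal F(\mathdutchbcal Q)}(k,\ad^0\rho_m)$ and $\Sel_{\mathcal F(\mathdutchbcal Q)^\ast}(k,\ad^0\rho_m(1))$ have the same number of generators, by running the cardinality computation with $m$ replaced by $m+1$ (legitimate since $\mathdutchbcal Q$ consists of $n$-admissible primes and $m+1\le n$ may be arranged, or else the $\q$-adic local rings are already seen to be free) and using that $\#M[\varpi]/\#(M/\varpi M)$ distinguishes generator count from cardinality — concretely, $\dim_{O/\varpi}\overline\Sel_{\mathcal F(\mathdutchbcal Q)}(k,\ad^0\rho_m)=\dim_{O/\varpi}\Sel_{\mathcal F(\mathdutchbcal Q)}(k,\ad^0\rho_1)$ by Lemma \ref{stuff like lemma 6.1 FKP}(\ref{lemma 6.1 part III}) applied at $a=1$ to the $\varpi$-torsion submodule, so it suffices to treat $m=1$, where cardinality and dimension coincide. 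The main obstacle is thus not the Euler characteristic computation (which is routine given the local freeness lemmas) but this last bookkeeping: ensuring that the equality of cardinalities at level $1$, which is exactly equality of $O/\varpi$-dimensions, together with Lemma \ref{stuff like lemma 6.1 FKP}(\ref{lemma 6.1 part III}), propagates to equality of residual dimensions at every level $m\le n$.
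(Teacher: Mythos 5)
The paper's own proof is a one-liner: it cites \cite[Lemma 6.3]{fakhruddin2021relative} after verifying that the local conditions are balanced in the sense of \emph{loc. cit.} — for $v\notin\mathdutchbcal Q$ via \cite[Proposition 4.7(3)]{fakhruddin2021relative}, and for $\q\in\mathdutchbcal Q$ via Proposition \ref{prop:appendix_ord}(\ref{prop:appendix_ord_free}). You instead re-derive the content of that citation by running the Greenberg--Wiles Euler characteristic formula directly; this is the same calculation that underlies the FKP balancedness notion, so the two approaches are morally equivalent. Your route is more self-contained but requires you to actually carry out the bookkeeping that FKP have already done, and your handling of the archimedean and $p$-adic local factors is asserted rather than checked.

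The genuine gap is in your final descent to residual dimensions. You claim $\overline\Sel_{\mathcal F(\mathdutchbcal Q)}(k,\ad^0\rho_m) = \Sel_{\mathcal F(\mathdutchbcal Q)}(k,\ad^0\rho_m)/\varpi\Sel_{\mathcal F(\mathdutchbcal Q)}(k,\ad^0\rho_m)$, and later that $\dim_{O/\varpi}\overline\Sel(\ad^0\rho_m)=\dim_{O/\varpi}\Sel(\ad^0\overline\rho)$; neither is correct. By Lemma \ref{stuff like lemma 6.1 FKP}(\ref{lemma 6.1 FKP},\ref{lemma 6.1 part III}), the kernel of $\Sel(\ad^0\rho_m)\to\Sel(\ad^0\overline\rho)$ is the image of $\Sel(\ad^0\rho_{m-1})$, which is $\Sel(\ad^0\rho_m)[\varpi^{m-1}]$ — not $\varpi\Sel(\ad^0\rho_m)$, and these differ unless $\Sel(\ad^0\rho_m)$ is free over $O/\varpi^m$. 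And $\dim\overline\Sel(\ad^0\rho_m)$ is strictly smaller than $\dim\Sel(\ad^0\overline\rho)$ whenever not every class at level one lifts; e.g.\ if $\Sel(\ad^0\rho_m)\cong O/\varpi$ and $m\geq 2$ then $\overline\Sel=0$ but $\Sel(\ad^0\overline\rho)\cong O/\varpi$. The correct bookkeeping is simpler than what you sketch: $\overline\Sel(\ad^0\rho_m)\cong\Sel(\ad^0\rho_m)/\Sel(\ad^0\rho_m)[\varpi^{m-1}]$ is an $O/\varpi$-module of cardinality $\#\Sel(\ad^0\rho_m)/\#\Sel(\ad^0\rho_{m-1})$, likewise on the dual side, so the desired equality of $O/\varpi$-dimensions follows by applying your Euler characteristic cardinality equality at the two levels $m$ and $m-1$ (both $\leq n$) and taking the ratio. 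No appeal to ``number of generators of a finite $O$-module'' or to level $m+1$ is needed.
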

\begin{proof}
This is \cite[Lemma 6.3]{fakhruddin2021relative}. Note that the local conditions are balanced in the sense of
\emph {loc. cit.}: for $v\not\in\mathdutchbcal Q $, this is \cite[Proposition 4.7(3)]{fakhruddin2021relative}, and for $\q\in\mathdutchbcal Q $
the same calculation applies because by Proposition \ref{prop:appendix_ord}(\ref{prop:appendix_ord_free}).
\end{proof}
\begin{rmk}
    The proof of \cite[Lemma 6.3]{fakhruddin2021relative} uses Assumption  \ref{ass:appendix_main}(\ref{ass_B1_noresidual}).
\end{rmk}
\begin{definition}For any finite set of primes $\mathdutchbcal Q$ disjoint from $S\cup \Sigma_p$, we define the Shafarevich-Tate groups:
\begin{equation}
\Sha ^ 2_{\mathdutchbcal Q} (\adjoint ^ 0\rho_n)\coloneqq\kernel\left (H ^ 2 (k^{S\cup\Sigma_p\cup\mathdutchbcal Q}/k,\adjoint ^ 0\rho_n)\rightarrow\product_{v\in S \cup\Sigma_p\cup\mathdutchbcal Q} H ^ 2 (k_v,\adjoint ^ 0\rho_n)\right)
\end{equation}
and
\begin{equation}
\Sha_{\mathdutchbcal Q} ^ 1 (\adjoint ^ 0\rho_n (1))\coloneqq\kernel\left (H ^ 1 (k^{S\cup\Sigma_p\cup\mathdutchbcal Q}/k,\adjoint ^ 0\rho_n (1))\rightarrow\product_{v\in S\cup\Sigma_p\cup\mathdutchbcal Q} H ^ 1 (k_v,\adjoint ^ 0\rho_n (1))\right),
\end{equation}
for all $n\geq 1 $.
\end{definition}
\begin{lemma}\label{residual obstructions vanish mod n lemma}
Suppose given a finite set $\mathdutchbcal Q $
of $n $-admissible primes such that $$\overline\summer_{\mathcal F (\mathdutchbcal Q)} (k,\adjoint ^ 0\rho_n) = 0. $$
Then the natural map $$\Sha ^ 2_{\mathdutchbcal Q} (\adjoint ^ 0\overline\rho)\rightarrow\Sha ^ 2 _{\mathdutchbcal Q}(\adjoint ^ 0\rho_n) $$
is identically zero.
\end{lemma}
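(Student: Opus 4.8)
The plan is to run the standard Poitou--Tate/Greenberg--Wiles duality argument, using the hypothesis $\overline\Selmer_{\mathcal F(\mathdutchbcal Q)}(k,\adjoint^0\rho_n)=0$ to kill the obstruction classes in $\Sha^2_{\mathdutchbcal Q}(\adjoint^0\overline\rho)$. First I would recall the Poitou--Tate nine-term exact sequence for the finite module $\adjoint^0\rho_n$ over the ring of $S\cup\Sigma_p\cup\mathdutchbcal Q$-integers of $k$, which identifies $\Sha^2_{\mathdutchbcal Q}(\adjoint^0\rho_n)$ with the Pontryagin dual of $\Sha^1_{\mathdutchbcal Q}(\adjoint^0\rho_n(1))$; the same holds with $n=1$. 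The natural maps in the statement then dualize to the corestriction-type map $\Sha^1_{\mathdutchbcal Q}(\adjoint^0\overline\rho(1))\to\Sha^1_{\mathdutchbcal Q}(\adjoint^0\rho_n(1))$ induced by the inclusion $\adjoint^0\overline\rho\hookrightarrow\adjoint^0\rho_n$ (multiplication by $\varpi^{n-1}$), so it suffices to show the latter is zero.

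Next I would reinterpret $\Sha^1_{\mathdutchbcal Q}(\adjoint^0\rho_n(1))$ in terms of the dual Selmer group $\Selmer_{\mathcal F(\mathdutchbcal Q)^\ast}(k,\adjoint^0\rho_n(1))$. By definition $\Sha^1_{\mathdutchbcal Q}$ consists of classes unramified everywhere outside $S\cup\Sigma_p\cup\mathdutchbcal Q$ and trivial at \emph{all} places in that set; since for $v\in S\cup\Sigma_p$ the local condition $H^1_{\mathcal F^\ast}(k_v,\adjoint^0\rho_n(1))$ and for $v=\mathdutchbcal q\in\mathdutchbcal Q$ the condition $H^1_\ord(k_\mathdutchbcal q,\adjoint^0\rho_n(1))$ each contain the trivial class, there is a containment $\Sha^1_{\mathdutchbcal Q}(\adjoint^0\rho_n(1))\subset\Selmer_{\mathcal F(\mathdutchbcal Q)^\ast}(k,\adjoint^0\rho_n(1))$, compatibly in $n$ and with $n=1$. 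Combined with Lemma \ref{balanced residual selmer ranks lemma}, the hypothesis $\overline\Selmer_{\mathcal F(\mathdutchbcal Q)}(k,\adjoint^0\rho_n)=0$ gives $\overline\Selmer_{\mathcal F(\mathdutchbcal Q)^\ast}(k,\adjoint^0\rho_n(1))=0$, i.e.\ the image of $\Selmer_{\mathcal F(\mathdutchbcal Q)^\ast}(k,\adjoint^0\rho_n(1))$ in $\Selmer_{\mathcal F(\mathdutchbcal Q)^\ast}(k,\adjoint^0\overline\rho(1))$ vanishes. Since $\Sha^1_{\mathdutchbcal Q}(\adjoint^0\overline\rho(1))$ sits inside $\Selmer_{\mathcal F(\mathdutchbcal Q)^\ast}(k,\adjoint^0\overline\rho(1))$ and the transition map factors through these Selmer groups, this will force the map $\Sha^1_{\mathdutchbcal Q}(\adjoint^0\overline\rho(1))\to\Sha^1_{\mathdutchbcal Q}(\adjoint^0\rho_n(1))$ to be zero — one has to be slightly careful and instead argue on the dual side, showing $\Sha^2_{\mathdutchbcal Q}(\adjoint^0\overline\rho)\to\Sha^2_{\mathdutchbcal Q}(\adjoint^0\rho_n)$ is zero by observing it is dual to a map that lands in the vanishing relative dual Selmer group, using Lemma \ref{stuff like lemma 6.1 FKP}(\ref{lemma 6.1 FKP},\ref{lemma 6.1 part III}) to relate $\adjoint^0\rho_n(1)$-Selmer classes to their reductions.

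I expect the main obstacle to be bookkeeping the precise compatibility of the Poitou--Tate duality isomorphisms across the inclusion $\adjoint^0\overline\rho\hookrightarrow\adjoint^0\rho_n$ and the reduction $\adjoint^0\rho_n\twoheadrightarrow\adjoint^0\overline\rho$ — in particular checking that the map ``$\Sha^2(\adjoint^0\overline\rho)\to\Sha^2(\adjoint^0\rho_n)$'' is dual to ``$\Sha^1(\adjoint^0\rho_n(1))\to\Sha^1(\adjoint^0\overline\rho(1))$'' (reduction mod $\varpi$), not to the inclusion map, and that the local conditions defining $\mathcal F(\mathdutchbcal Q)$ and $\mathcal F(\mathdutchbcal Q)^\ast$ are genuinely annihilators of each other at every place in $S\cup\Sigma_p\cup\mathdutchbcal Q$ (this is the ``balanced'' property already invoked in the proof of Lemma \ref{balanced residual selmer ranks lemma}, via Proposition \ref{proposition making selmer conditions deformation appendix} and Proposition \ref{prop:appendix_ord}). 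Once these compatibilities are pinned down, the vanishing is formal: a class in the kernel of $\Selmer_{\mathcal F(\mathdutchbcal Q)^\ast}(k,\adjoint^0\rho_n(1))\to\Selmer_{\mathcal F(\mathdutchbcal Q)^\ast}(k,\adjoint^0\overline\rho(1))$ that also lies in $\Sha^1_{\mathdutchbcal Q}$ must die, and dualizing gives the statement. This is exactly the analogue of \cite[Lemma 6.4]{fakhruddin2021relative}, so I would model the write-up on that argument.
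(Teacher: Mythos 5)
Your proposal is correct and takes essentially the same route as the paper: inject $\Sha^1_{\mathdutchbcal Q}$ into the dual Selmer group, use Lemma \ref{balanced residual selmer ranks lemma} to kill $\overline\Sel_{\mathcal F(\mathdutchbcal Q)^\ast}(k,\ad^0\rho_n(1))$, conclude the reduction map $\Sha^1_{\mathdutchbcal Q}(\ad^0\rho_n(1))\to\Sha^1_{\mathdutchbcal Q}(\ad^0\overline\rho(1))$ is zero, and apply Poitou--Tate duality. Your first paragraph misidentifies the dual map as the inclusion-induced $\Sha^1(\ad^0\overline\rho(1))\to\Sha^1(\ad^0\rho_n(1))$, but the self-correction in your final paragraph is exactly right: the map $\Sha^2_{\mathdutchbcal Q}(\ad^0\overline\rho)\to\Sha^2_{\mathdutchbcal Q}(\ad^0\rho_n)$ (induced by the inclusion $\ad^0\overline\rho\hookrightarrow\ad^0\rho_n$) is Poitou--Tate dual to the reduction map $\Sha^1_{\mathdutchbcal Q}(\ad^0\rho_n(1))\to\Sha^1_{\mathdutchbcal Q}(\ad^0\overline\rho(1))$, and it is the latter whose vanishing follows directly from the commutative square into the dual Selmer groups; write it up that way and the argument is clean, with no need for Lemma \ref{stuff like lemma 6.1 FKP}.
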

\begin{proof}
In the commutative diagram
\begin{center}
\begin{tikzcd}
\Sha ^ 1_{\mathdutchbcal Q} (\adjoint ^ 0\rho_n (1))\arrow [d, hook]\arrow [r] &\Sha ^ 1_{\mathdutchbcal Q} (\adjoint ^ 0\overline\rho (1))\arrow [d, hook]\\\Sel_{\mathcal F (\mathdutchbcal Q) ^\ast} (k,\adjoint ^ 0\rho_n (1))\arrow [r] & \Sel_{\mathcal F (\mathdutchbcal Q) ^\ast} (k,\adjoint ^ 0\overline\rho (1)),
\end{tikzcd}
\end{center}
the bottom map is identically zero because $\overline\summer_{\mathcal F (\mathdutchbcal Q) ^\ast} (k,\adjoint ^ 0\rho_n (1)) = 0 $
by Lemma \ref{balanced residual selmer ranks lemma}. Hence the top map is identically zero as well. But by global Poitou-Tate duality, the top map is canonically dual to the map
$$\Sha_{\mathdutchbcal Q} ^ 2 (\adjoint ^ 0\overline\rho)\rightarrow\Sha_{\mathdutchbcal Q} ^ 2 (\adjoint ^ 0\overline\rho_n), $$
so this shows the lemma.
\end{proof}
\begin{theorem} [Fakhruddin-Khare-Patrikis, \cite{fakhruddin2021relative}]
\label{main appendix theorem}Fix $m\geq 1 $. Then there exists a constant $n_0 = n_0 (m,\rho)\geq 1 $
with the following property. For any $N\geq n_0 $, if $\mathdutchbcal Q $
is a finite set of $N $-admissible primes and $$\overline\summer_{\mathcal F (\mathdutchbcal Q)} (k,\adjoint ^ 0\rho_m) = 0, $$
then there exists a representation
$$\rho ^ {\mathdutchbcal Q}: G_k\rightarrow G (O) $$
satisfying:
\begin{enumerate}
\item\label{thm:appendix_main_one} $\rho\equiv\rho ^ {\mathdutchbcal Q}\pmod {\varpi ^ {N - m +1}}. $
\item\label{thm:appendix_main_two} $\rho ^ {\mathdutchbcal Q} $
is unramified outside $S\cup\Sigma_p\cup\mathdutchbcal Q $.
\item \label{thm:appendix_main_three}For all $v\in S\cup\Sigma_p $, the points of $\Spec R_v [1/\varpi] $
corresponding to $\rho ^ {\mathdutchbcal Q} |_{G_{k_v}} $
and $\rho |_{G_{k_v}} $
lie on the same irreducible component.
\item\label{thm:appendix_main_four} For all $\mathdutchbcal q\in\mathdutchbcal Q $, $\rho ^ {\mathdutchbcal Q} $
is $\mathdutchbcal q $-ordinary.
\end{enumerate}
\end{theorem}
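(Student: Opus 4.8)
The plan is to deduce Theorem \ref{main appendix theorem} by the method of relative deformation theory from \cite{fakhruddin2021relative}: one constructs $\rho^{\mathdutchbcal Q}$ as a $\varpi$-adic limit of lifts $\rho^{\mathdutchbcal Q}_n: G_k \to G(O/\varpi^n)$, at each stage solving a local-to-global lifting problem whose obstructions live in the Shafarevich--Tate groups $\Sha^2_{\mathdutchbcal Q}(\ad^0\rho_n)$. The key structural input is that for each $v \in S \cup \Sigma_p$ we have the module $Z_{r,v}$ of Proposition \ref{proposition making selmer conditions deformation appendix}, which cuts out exactly the deformations of $\rho|_{G_{k_v}}$ staying on the fixed irreducible component of $\Spec R_v[1/\varpi]$; for $\mathdutchbcal q \in \mathdutchbcal Q$ the analogous role is played by $Z^1_{\ord}(G_{k_\q}, \ad^0\rho_r)$ from Proposition \ref{prop:appendix_ord}, which is free over $O/\varpi^r$, contains the coboundaries, and is compatible under reduction and inclusion maps. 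Since $\mathdutchbcal Q \cap (S \cup \Sigma_p) = \emptyset$, these two families of local conditions are defined at disjoint sets of primes, so they assemble into the Selmer structure $\mathcal F(\mathdutchbcal Q)$ and its dual.

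First I would fix $m$ and choose $n_0 = n_0(m,\rho)$ large enough to absorb all the ``stability'' thresholds: the integer $r_0 \mapsto n_0$ in Proposition \ref{proposition making selmer conditions deformation appendix}(\ref{proposition making selmer conditions deformation appendix part II}) for each $v \in S \cup \Sigma_p$ (taking $r_0 = m$, say), plus the analogous threshold in Proposition \ref{prop:appendix_ord}(\ref{prop:appendix_ord_fibers}) for the ordinary primes, plus a bounded contribution coming from the length of $\Sel_{\mathcal F(\mathdutchbcal Q)}(k, \ad^0\rho_m)$ when it is nonzero. The hypothesis $\overline\Sel_{\mathcal F(\mathdutchbcal Q)}(k, \ad^0\rho_m) = 0$ is what makes $n_0$ depend only on $m$ and $\rho$ and not on $N$ or $\mathdutchbcal Q$: by Lemma \ref{stuff like lemma 6.1 FKP}(\ref{lemma 6.1 part II residual selmer groups and torsion}) it forces $\Sel_{\mathcal F(\mathdutchbcal Q)}(k, \ad^0\rho_{m'})$ to stabilize for $m' \geq m-1$, and dually (via Lemma \ref{balanced residual selmer ranks lemma}) controls the dual Selmer group; then Lemma \ref{residual obstructions vanish mod n lemma} says the residual obstruction class maps to zero in $\Sha^2_{\mathdutchbcal Q}(\ad^0\rho_n)$, which is exactly the vanishing needed to run the successive-approximation argument without loss of precision.

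The inductive step is the heart of the matter. Starting from $\rho_N$ (which is automatically $\mathdutchbcal q$-ordinary for all $\mathdutchbcal q \in \mathdutchbcal Q$ since $\mathdutchbcal Q$ consists of $N$-admissible primes, and lies on the correct local components at $S \cup \Sigma_p$ by Assumption \ref{ass:appendix_smooth}), one wants to lift to $\rho_{N+1}, \rho_{N+2}, \ldots$ compatibly, keeping at each place $v \in S \cup \Sigma_p$ a lift whose deformation lies in the image of $Z_{r,v}$, and at each $\mathdutchbcal q$ a lift lying in the image of $Z^1_{\ord}$. The obstruction to lifting from level $n$ to level $n+1$ with prescribed local behavior lives in $\Sha^2_{\mathdutchbcal Q}(\ad^0\rho_1)$ (after reduction); by Lemma \ref{residual obstructions vanish mod n lemma} its image in $\Sha^2_{\mathdutchbcal Q}(\ad^0\rho_n)$ vanishes, so one can lift after possibly going up finitely many more levels — this is where the $m-1$ loss in the congruence $\rho \equiv \rho^{\mathdutchbcal Q} \pmod{\varpi^{N-m+1}}$ enters, coming from the bound $\varpi^{m-1}\Sel_{\mathcal F(\mathdutchbcal Q)^\ast}(k,\ad^0\rho_n(1)) = 0$. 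Iterating and passing to the limit (using completeness of $G(O)$ and the properness/compatibility properties in Propositions \ref{proposition making selmer conditions deformation appendix}(\ref{proposition making selmer conditions deformation appendix part II}), \ref{prop:appendix_ord}(\ref{prop:appendix_ord_fibers}) to see the local conditions are genuinely realized by an honest point of $\overline R_v(O)$, resp.\ $R_\q^{\ord}(O)$, in the limit) produces $\rho^{\mathdutchbcal Q}: G_k \to G(O)$ with properties (\ref{thm:appendix_main_one})--(\ref{thm:appendix_main_four}): unramifiedness outside $S \cup \Sigma_p \cup \mathdutchbcal Q$ is built in, property (\ref{thm:appendix_main_three}) holds because the limit point lies on $\Spec \overline R_v$, and $\mathdutchbcal q$-ordinarity because it lies on $\Spec R_\q^{\ord}$.

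The main obstacle I anticipate is bookkeeping the dependence of $n_0$ purely on $(m,\rho)$: one must verify that all the thresholds — the stabilization level in Proposition \ref{proposition making selmer conditions deformation appendix}(\ref{proposition making selmer conditions deformation appendix part II}) at the finitely many places of $S \cup \Sigma_p$, the one in Proposition \ref{prop:appendix_ord}(\ref{prop:appendix_ord_fibers}) at the (arbitrarily many, varying) ordinary primes, and the torsion bounds on the relative Selmer and dual Selmer groups — are either intrinsically bounded or controlled by the single hypothesis $\overline\Sel_{\mathcal F(\mathdutchbcal Q)}(k, \ad^0\rho_m) = 0$. The key point making this uniform is that the ordinary-prime threshold in Proposition \ref{prop:appendix_ord}(\ref{prop:appendix_ord_fibers}) depends only on the formal smoothness of $R_\q^{\ord}$, hence is uniform across $\mathdutchbcal Q$, and that the Selmer-group estimates are governed by the residual vanishing hypothesis rather than by $N$ or the size of $\mathdutchbcal Q$. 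Since all of this is exactly the content of the argument in \cite[\S6]{fakhruddin2021relative}, the proof ultimately amounts to checking that our local conditions (Propositions \ref{proposition making selmer conditions deformation appendix} and \ref{prop:appendix_ord}) satisfy the axioms used there, which we have arranged, and then citing \emph{loc. cit.}
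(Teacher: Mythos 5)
Your proposal correctly identifies the overall strategy, which is indeed the paper's: a successive-approximation argument in the style of Fakhruddin--Khare--Patrikis, using the local conditions from Propositions \ref{proposition making selmer conditions deformation appendix} and \ref{prop:appendix_ord}, with obstructions controlled by Lemma \ref{residual obstructions vanish mod n lemma} and the Poitou--Tate sequence. But two points need correction.

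First, and most importantly, the description ``the obstruction to lifting from level $n$ to level $n+1$ ... vanishes, so one can lift after possibly going up finitely many more levels'' does not capture the mechanism that actually makes the induction converge. The obstruction to lifting $\rho^{\mathdutchbcal Q}_n$ one level does \emph{not} vanish in general; what vanishes, via Lemma \ref{residual obstructions vanish mod n lemma}, is the obstruction $c_m$ to lifting the \emph{truncation} $\rho^{\mathdutchbcal Q}_{n-m+1}$ all the way to level $n+1$ (a jump of $m$ levels, which is exactly why the obstruction lives in a cohomology group of $\ad^0\rho_m$). One therefore produces $\rho^{\mathdutchbcal Q}_{n+1}$ as a lift of $\rho^{\mathdutchbcal Q}_{n-m+1}$, and must then \emph{discard and relabel} the intermediate terms $\rho^{\mathdutchbcal Q}_{n-m+2},\ldots,\rho^{\mathdutchbcal Q}_n$ as the reductions of the new $\rho^{\mathdutchbcal Q}_{n+1}$. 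This ``sliding window'' is what guarantees the loss of precision is bounded (each $\rho^{\mathdutchbcal Q}_n$ becomes permanent once the induction passes $n+m-1$) rather than accumulating with each step; without it, the limit would not exist. Your phrasing suggests an accumulating loss and does not explain why the final congruence is only $\varpi^{N-m+1}$ rather than something that degrades with $n$.

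Second, there is no stabilization threshold from Proposition \ref{prop:appendix_ord}(\ref{prop:appendix_ord_fibers}): that statement holds unconditionally for all levels because $R_\q^{\ord}$ is formally smooth, so local lifts at $\q\in\mathdutchbcal Q$ always exist without any lower bound on $n$. The only contributions to $n_0$ in the paper are the finitely many thresholds $n_{0,v}$ for $v\in S\cup\Sigma_p$ from Proposition \ref{proposition making selmer conditions deformation appendix}(\ref{proposition making selmer conditions deformation appendix part II}) (applied with $r_0=m$) and the requirement $n_0\geq 2m$ needed so that $n-m+1\geq m$, i.e.\ so that the obstruction class $c_m$ genuinely controls the $m$-step lift. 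Your concern about ``arbitrarily many varying ordinary primes'' is therefore moot, and removing that worry is precisely what makes $n_0$ depend only on $(m,\rho)$. Finally, the Poitou--Tate input you gesture at is the injectivity of the dual map $\Sel_{\mathcal F(\mathdutchbcal Q)^\ast}(k,\ad^0\rho_m(1))^\vee\hookrightarrow\Sel_{\mathcal F(\mathdutchbcal Q)^\ast}(k,\ad^0\rho_{m-1}(1))^\vee$, which follows from Lemma \ref{balanced residual selmer ranks lemma} and the hypothesis, and this is what allows the local discrepancy classes $(f_v)$ to be absorbed by a global cocycle; that step is worth spelling out since the proof does not reduce to a direct citation of \cite{fakhruddin2021relative} in the paper.
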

\begin{proof}
For all primes $v\in S\cup\Sigma_p $, apply Proposition \ref{proposition making selmer conditions deformation appendix} with $r_0 = m$
to obtain an integer $n_{0, v} $ and a subset $Y_v\subset \overline R_v(O)$. Then let $$n_0 =\Max\set {\Max_{v\in S\cup\Sigma_p}\set {n_{0, v}}+m, 2 m}. $$

We construct $\rho ^ {\mathdutchbcal Q} $
as the inverse limit of representations $\rho ^ {\mathdutchbcal Q}_n: G_k\rightarrow G (O) $, compatible under reduction maps, with the following properties for all $n\geq 1 $:
\begin{enumerate}[(i)]
\item For all $v\not\in S\cup\Sigma_p \cup \mathdutchbcal Q $, $\rho_n ^ {\mathdutchbcal Q} |_{G_{k_v}} $
is unramified.
\item For $\q\in\mathdutchbcal Q $, $\rho_n ^ {\mathdutchbcal Q}|_{G_{k_\q}} $
is ordinary.
\item For $v\in S\cup\Sigma_p $, $\rho_n ^ {\mathdutchbcal Q} |_{G_{k_v}} $
lies in the set $Y_n ^ v $ (cf. Proposition \ref{proposition making selmer conditions deformation appendix}(\ref{proposition making selmer conditions deformation appendix part II})).
\end{enumerate}
This suffices because $Y_v\subset\Spec \overline R_v (O)\subset \Spec R_v(O) $ and $\Spec R_\q^\ord (O)\subset \Spec R_\q(O)$ are both closed in the $\varpi$-adic topology, for all  $v\in S\cup \Sigma_p$ and $\q\in \mathdutchbcal Q$.
The representations $\rho_n ^ {\mathdutchbcal Q} $
are constructed inductively,
\emph {but}, when constructing $\rho_{n +1} ^ {\mathdutchbcal Q} $, we will allow ourselves to modify the representations $$\rho_{n - m +2} ^ {\mathdutchbcal Q},\cdots,\rho ^ {\mathdutchbcal Q}_{n -1},\rho ^ {\mathdutchbcal Q}_n. $$

The base case of the induction is $\rho_n ^ {\mathdutchbcal Q} \equiv\rho_n $
for $n\leq N $.

For the inductive step, first fix local lifts
$\rho_{n +1, v} ^ {\mathdutchbcal Q} $
of $\rho ^ {\mathdutchbcal Q}_{n -  m +1} |_{G_{k_v}} $
for $v\in S\cup\Sigma_p\cup\mathdutchbcal Q $, with the following property: if $v\in S\cup\Sigma_p $, then $\rho_{n +1, v} ^ {\mathdutchbcal Q} $
lies in $Y ^ v_{n +1} $, and if $v =\q\in\mathdutchbcal Q $, then $\rho_{n +1,\q} ^ {\mathdutchbcal Q} $
lies in $\Spec R ^\ordinary_\q (O/\varpi ^ {n +1}) $. Such choices are possible by Proposition \ref{proposition making selmer conditions deformation appendix}(\ref{proposition making selmer conditions deformation appendix part II}) and the formal smoothness of $\Spec R^\ordinary_\q$. Now let $c\in H ^ 2 (k^{S\cup \Sigma_p\cup \mathdutchbcal Q}/k,\adjoint ^ 0\overline\rho) $
denote the obstruction class defined by choosing a set-theoretic lift $$\widetilde\rho_{n +1} ^ {\mathdutchbcal Q}: G_{k,S\cup\Sigma_p\cup\mathdutchbcal Q} \rightarrow G (O/\varpi ^ {n +1}) $$
of $\rho_n ^ {\mathdutchbcal Q} $; we have 
\begin{equation}
    c\in \Sha_{\mathdutchbcal Q} ^ 2 (\adjoint ^ 0\overline\rho)
\end{equation}
since the local lifts $\rho_{n +1, v} ^ {\mathdutchbcal Q} $
exist for all $v\in S\cup\Sigma_p\cup\mathdutchbcal Q $. 

Then by Lemma \ref{residual obstructions vanish mod n lemma}, $c $
has trivial image $c_m $
in $\Sha_{\mathdutchbcal Q} ^ 2 (\adjoint ^ 0\rho_m) $. But, because $n - m +1\geq N - m +1\geq m $, $c_m$
is precisely the obstruction to lifting $\rho ^ {\mathdutchbcal Q}_{n - m +1} $
modulo $\varpi ^ {n +1} $. Thus we may choose a lift $$\widetilde\rho_{n +1} ^ {\mathdutchbcal Q}: G_{k,S\cup\Sigma_p\cup  \cup \mathdutchbcal Q}\rightarrow G (O/\varpi ^ {n +1}) $$
with $$\widetilde\rho_{n +1} ^ {\mathdutchbcal Q}\equiv\rho ^ {\mathdutchbcal Q}_{n - m +1}\pmod {\varpi ^ {n - m +1}}. $$
For each place $v \in S\cup\mathdutchbcal Q \cup\Sigma_p $, comparing $\widetilde \rho_{n+1}^ {\mathdutchbcal Q} |_{G_{k_v}} $
to $\rho ^ {\mathdutchbcal Q}_{n +1, v} $
as lifts of $\rho ^ {\mathdutchbcal Q}_{n - m +1} |_{G_{k_v}} $ 
produces a collection of local classes
$$(f_v)\in\prod_{v\in S\cup\mathdutchbcal Q\cup\Sigma_p}\frac {H ^ 1 (k_v,\adjoint ^ 0\rho_m)} {H ^ 1_{\mathcal F (\mathdutchbcal Q)} (k_v,\adjoint ^ 0\rho_m)}. $$
Now consider the commutative diagram with exact rows coming from the Poitou-Tate long exact sequence:
\begin{equation}
\tag{$\ast$}
\begin{tikzcd}
H ^ 1 (k ^ {S\cup\mathdutchbcal Q\cup\Sigma_p}/k,\adjoint ^ 0\rho_m)\arrow [d]\arrow [r, "\local_{m}"] &\Prod_{v\in S\cup\mathdutchbcal Q\cup\Sigma_p}\frac {H ^ 1 (k_v,\adjoint ^ 0\rho_m)} {H ^ 1_{\mathcal F (\mathdutchbcal Q)} (k_v,\adjoint ^ 0\rho_m)}\arrow [r]\arrow [d] &\summer_{\mathcal F (\mathdutchbcal Q)^\ast} (k,\adjoint ^ 0\rho_m (1)) ^\check\arrow [d, hook]\\
H ^ 1 (k ^ {S\cup\mathdutchbcal Q\cup\Sigma_p}/k,\adjoint ^ 0\rho_{m -1})\arrow [r, "\local_{m -1}"] &\Prod_{v\in S\cup\mathdutchbcal Q\cup\Sigma_p}\frac {H ^ 1 (k_v,\adjoint ^ 0\rho_{m -1})} {H ^ 1_{\mathcal F (\mathdutchbcal Q)} (k_v,\adjoint ^ 0\rho_{m -1})}\arrow [r] &\summer_{\mathcal F (\mathdutchbcal Q)^\ast} (k,\adjoint ^ 0\rho_{m -1} (1)) ^\check,
\end{tikzcd}
\end{equation}
where the superscript $\check $
denotes Pontryagin duality. The injectivity of the rightmost map, or equivalently the surjectivity of its dual $\summer_{\mathcal F (\mathdutchbcal Q)^\ast} (k,\adjoint ^ 0\rho_{m -1} (1))\rightarrow\summer_{\mathcal F (\mathdutchbcal Q)^\ast} (k,\adjoint ^ 0\rho_m (1)) $,  follows from Lemma \ref{stuff like lemma 6.1 FKP}(\ref{lemma 6.1 FKP}),   Lemma \ref{balanced residual selmer ranks lemma}, and  the vanishing of $\overline\summer_{\mathcal F (\mathdutchbcal Q)^\ast} (k,\adjoint ^ 0\rho_m (1)) $.

Our next claim is that the image of $(f_v) $
under the central vertical map of  ($\ast$) lies in the image of $\local_{m -1} $. Indeed, because $\rho ^ {\mathdutchbcal Q}_n $
satisfies (i)-(iii) above, it follows from Propositions \ref{proposition making selmer conditions deformation appendix}(\ref{proposition making selmer conditions deformation appendix part II}) and \ref{prop:appendix_ord}(\ref{prop:appendix_ord_fibers}) that the image of $(f_v) $
coincides with the image of the global cocycle formed by comparing $\widetilde\rho ^ {\mathdutchbcal Q}_{n +1}\pmod {\varpi ^ {n}} $
and $\rho ^ {\mathdutchbcal Q}_n $
as lifts of $\rho ^ {\mathdutchbcal Q}_{n - m +1} $. Then because the rows of ($\ast$) are exact, we conclude that $(f_v) $
lies in the image of $\local_m$. Picking a preimage, we may then modify $\widetilde\rho ^ {\mathdutchbcal Q}_{n +1} $
to a representation $\rho'_{n +1}: G_k\rightarrow G (O/\varpi ^ {n +1}) $
with $$\rho'_{n +1}\equiv\rho ^ {\mathdutchbcal Q}_{n - m +1}\pmod {\varpi ^ {n - m +1}}. $$
Properties (i)-(iii) hold for $\rho_{n+1}'$ 
by Propositions \ref{proposition making selmer conditions deformation appendix}(\ref{proposition making selmer conditions deformation appendix part II}), \ref{prop:appendix_ord}(\ref{prop:appendix_ord_fibers}) again.
To complete the inductive step, we set $\rho ^ {\mathdutchbcal Q}_{n+ 1}\coloneqq\rho_{n +1}' $, and
\emph {relabel}
$\rho ^ {\mathdutchbcal Q}_{n - m +2},\cdots,\rho ^ {\mathdutchbcal Q}_n $
to be the reductions of $\rho'_{n +1} $.
\end{proof}
\subsection{Relation to Bloch-Kato Selmer groups}
\begin{notation}
    Let $\Sigma_p ^\crystal \subset \Sigma_p$ be the set of places $v|p$ of $k$ such that $\rho|_{G_{k_v}}$ is crystalline. For $v\in \Sigma_p^\crystal$, we write $R_v^\crystal$ for the crystalline quotient of $R_v$ (constructed in \cite{balaji2012Gvalued}).
\end{notation}
\begin{rmk}
Recall that $\Spec R_v^\crystal[1/\varpi]$ is a union of irreducible components of $\Spec R_v[1/\varpi]$; in particular, $\overline R_v$ is a quotient of $R_v^\crystal$, and $\Spec R_v^\crystal[1/\varpi]$ is equidimensional of the same dimension as $\Spec R_v[1/\varpi]$. 
\end{rmk}
\begin{definition}
For all finite places $v$ of $k$, and all $r \geq 0$,  define $$Z^{\rel}_{r,v} \subset Z^1(k_v, \ad^0 \rho_r)$$ to be the subspace of cocycles $c$ corresponding to lifts $\rho_c: G_{k_v} \to G(O[\epsilon]/(\epsilon^2, \varpi^r \epsilon))$ such that the corresponding map $f_c: \widetilde R_v \to O[\epsilon]/(\epsilon^2, \varpi^r\epsilon)$ factors through $R_v$ (resp. $R_v^\crystal$) if $v\not\in \Sigma_p^\crystal$ (resp. $v\in \Sigma_p^\crystal$).
In particular, $Z^{\rel}_{r,v} = Z^1(k_v, \ad^0 \rho_r)$ if $v\not\in \Sigma_p$.
\end{definition}
\begin{prop}\label{prop identify Z and relaxed new}
    Fix a place $v$ of $k$.
    \begin{enumerate}
        \item\label{prop identify Z and relaxed new part one} For all $r\geq 1$, we have $Z_{r,v} \subset Z^\rel_{r,v}.$
        \item \label{prop identify Z and relaxed new part two}The cardinality of $Z^{\rel}_{r,v}/ Z_{r,v}$ is uniformly bounded in $r$.
    \end{enumerate}
\end{prop}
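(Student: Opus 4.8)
\textbf{Proof proposal for Proposition \ref{prop identify Z and relaxed new}.}

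The plan is to reduce the entire statement to the two facts we already have available about the local deformation rings: namely, that $R_v$ (or its crystalline avatar $R_v^\crystal$ for $v\in\Sigma_p^\crystal$) is a quotient of $\widetilde R_v$ cut out by the Hodge type / crystalline condition, and that the point $y_v$ corresponding to $\rho|_{G_{k_v}}$ is formally smooth on $\Spec R_v[1/\varpi]$ by Assumption \ref{ass:appendix_smooth}. First I would spell out the deformation-theoretic meaning of all three modules in play. A cocycle $c\in Z^1(k_v,\ad^0\rho_r)$ is the same datum as a lift $\rho_c\colon G_{k_v}\to G(O[\epsilon]/(\epsilon^2,\varpi^r\epsilon))$ of $\rho_r$ with fixed multiplier character, equivalently an $O$-algebra map $\widetilde R_v\to O[\epsilon]/(\epsilon^2,\varpi^r\epsilon)$ reducing to the given map $\widetilde R_v\to O/\varpi^r$; the submodule $Z^\rel_{r,v}$ consists of those $c$ for which this map factors through $R_v$ (resp.\ $R_v^\crystal$), while $Z_{r,v}$ was defined in Proposition \ref{proposition making selmer conditions deformation appendix} as (a choice of) submodule of $Z^1(k_v,\ad^0\rho_r)$ free of rank $\dim\Spec R_v[1/\varpi]$ and satisfying the homogeneous-space property (\ref{proposition making selmer conditions deformation appendix part II}) over the open set $Y_v\subset\Spec\overline R_v(O)$. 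For $v\notin\Sigma_p$ there is nothing to prove since then $R_v=\widetilde R_v$ and $Z^\rel_{r,v}=Z^1$, and also $\dim\Spec R_v[1/\varpi]=d_G$ matches; so the content is entirely at $v\in S\cup\Sigma_p$, and in fact only at $v\in\Sigma_p$ together with the (cardinality-)bounded discrepancy at $v\in S$.

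For part (\ref{prop identify Z and relaxed new part one}), the point is that the elements of $Z_{r,v}$ come, by construction in Proposition \ref{proposition making selmer conditions deformation appendix}, from deformations that already lie on $\Spec\overline R_v$, hence on $\Spec R_v$ (resp.\ $\Spec R_v^\crystal$, using that $\overline R_v$ is a quotient of $R_v^\crystal$ when $v\in\Sigma_p^\crystal$); more precisely, one uses property (\ref{proposition making selmer conditions deformation appendix part II}): a cocycle $c\in Z_{r,v}$, viewed via the $O[\epsilon]/(\epsilon^2,\varpi^r\epsilon)$-point, lies over a point of $Y_v\subset\Spec\overline R_v(O)$ by the homogeneous-space statement (take $n$ large so that property (\ref{proposition making selmer conditions deformation appendix part II}) applies and then pass down; alternatively argue directly that $Z_{r,v}$ is by definition the relative tangent space computed inside $\overline R_v$). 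Since $R_v\twoheadrightarrow\overline R_v$ and (for crystalline $v$) $R_v^\crystal\twoheadrightarrow\overline R_v$, the corresponding map to $O[\epsilon]/(\epsilon^2,\varpi^r\epsilon)$ certainly factors through $R_v$ (resp.\ $R_v^\crystal$), which is exactly the condition defining $Z^\rel_{r,v}$. This gives the inclusion.

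For part (\ref{prop identify Z and relaxed new part two}), the quotient $Z^\rel_{r,v}/Z_{r,v}$ measures the failure of $\Spec\overline R_v$ (equivalently, a single irreducible component of $\Spec R_v[1/\varpi]$ — the one through $y_v$) to capture the whole $r$-torsion tangent information of $\Spec R_v$ (or $\Spec R_v^\crystal$). I would argue as follows: after inverting $\varpi$, $y_v$ is formally smooth on $\Spec R_v[1/\varpi]$ by Assumption \ref{ass:appendix_smooth}, hence lies on a unique irreducible component, which is precisely $\Spec\overline R_v[1/\varpi]$; for $v\in\Sigma_p^\crystal$ the same holds for $\Spec R_v^\crystal[1/\varpi]$ since $\Spec R_v^\crystal[1/\varpi]$ is a union of components of $\Spec R_v[1/\varpi]$ containing $y_v$, so again $y_v$ lies on the single component $\Spec\overline R_v[1/\varpi]$. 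Therefore the localizations of $R_v$ (resp.\ $R_v^\crystal$) and $\overline R_v$ at the kernel of the $O$-point $\rho|_{G_{k_v}}$ agree after inverting $\varpi$; equivalently, the natural surjection $R_v\twoheadrightarrow\overline R_v$ (resp.\ $R_v^\crystal\twoheadrightarrow\overline R_v$) has kernel $J_v$ which becomes zero after inverting $\varpi$ and localizing at $y_v$, so $J_v$ is a finitely generated module whose support avoids $y_v$ in the rigid generic fiber. Concretely this means there is a fixed integer $N_v$ (independent of $r$) such that $\varpi^{N_v}$ annihilates the relevant obstruction: a lift $\widetilde R_v\to O[\epsilon]/(\epsilon^2,\varpi^r\epsilon)$ factoring through $R_v$ (or $R_v^\crystal$) will, after multiplying the $\epsilon$-part by $\varpi^{N_v}$, factor through $\overline R_v$. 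Translating back, $\varpi^{N_v}Z^\rel_{r,v}\subset Z_{r,v}$, and since $Z^\rel_{r,v}/Z_{r,v}$ is then a module killed by $\varpi^{N_v}$ and generated by boundedly many elements (both $Z^\rel_{r,v}$ and $Z_{r,v}$ have bounded $O/\varpi$-corank, namely $\dim\Spec R_v[1/\varpi]$ plus the dimension of the local Galois cohomology group, by Proposition \ref{proposition making selmer conditions deformation appendix}(\ref{proposition making selmer conditions deformation appendix part I}) and the tangent space formula), its cardinality is bounded uniformly in $r$. I expect the main obstacle to be making the last sentence precise: one must pin down the right notion of ``obstruction annihilated by $\varpi^{N_v}$'' — I would do this by choosing a presentation $R_v=\widetilde R_v/I_v$ and $\overline R_v=\widetilde R_v/I_v'$ with $I_v\subset I_v'$, observing that $(I_v'/I_v)[1/\varpi]_{y_v}=0$ so $I_v'/I_v$ is $\varpi^{N_v}$-torsion near $y_v$, and then chasing this through the description of $Z_{r,v}$, $Z^\rel_{r,v}$ as $\Hom_{\widetilde R_v}$-sets into $O[\epsilon]/(\epsilon^2,\varpi^r\epsilon)$ reducing to the fixed map modulo $\varpi^r$ — the ``reducing to a fixed map'' constraint is what keeps the $\epsilon$-perturbations inside a finitely generated module and forces uniformity in $r$.
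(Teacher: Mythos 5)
Your part (\ref{prop identify Z and relaxed new part one}) has the right idea but is imprecise where it matters. You say a cocycle in $Z_{r,v}$ ``lies over a point of $Y_v$'' and then should factor through $R_v$; the paper makes this rigorous by a small trick. Given $c\in Z_{r,v}$ with classifying map $f_c\colon\widetilde R_v\to O[\epsilon]/(\epsilon^2,\varpi^r\epsilon)$, property (\ref{proposition making selmer conditions deformation appendix part II}) applied to the $O/\varpi^{n+r}$-points obtained via $\epsilon\mapsto\varpi^n$ shows that $f_c(I)\subset(\varpi^n-\epsilon)$ for all sufficiently large $n$; combined with $f_c(I)\subset(\epsilon)$ (since $f(I)=0$), one gets $f_c(I)\subset(\varpi^n-\epsilon)\cap(\epsilon)=0$ once $n\geq r$. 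Note your parenthetical alternative --- that ``$Z_{r,v}$ is by definition the relative tangent space computed inside $\overline R_v$'' --- is not correct: $Z_{r,v}$ is characterized by the principal-homogeneous-space property over the \emph{open} set $Y_v$, together with the freeness constraint of property (\ref{proposition making selmer conditions deformation appendix part I}); it is not defined as the full $\overline R_v$-tangent module, which could have extra torsion if $\overline R_v$ is not integrally smooth at $y_v$.

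For part (\ref{prop identify Z and relaxed new part two}) your strategy is genuinely different from the paper's, and it has a gap at exactly the step you flag as ``the main obstacle.'' You aim to prove $\varpi^{N_v}Z^\rel_{r,v}\subset Z_{r,v}$ by arguing that multiplying the $\epsilon$-part by $\varpi^{N_v}$ pushes a lift factoring through $R_v$ down to a lift factoring through $\overline R_v$ --- but that only gets you to the set of cocycles factoring through $\overline R_v$, and you have no justification for the inclusion of \emph{that} set into $Z_{r,v}$. As noted above, $Z_{r,v}$ is a chosen free submodule (of rank $\dim\Spec R_v[1/\varpi]$) inside the cocycles, not the full $\overline R_v$-tangent module, so the final inclusion is not automatic; establishing it would require its own argument. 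The paper sidesteps this entirely by computing $O$-lengths of both sides: it identifies $Z^\rel_{r,v}\cong\Hom_O(\mathfrak p/(\mathfrak p^2,I),O/\varpi^r)$ where $\mathfrak p=\ker(f\colon\widetilde R_v\to O)$, uses that $\mathfrak p/(\mathfrak p^2,I)$ is a finitely generated $O$-module to get $\lg_O Z^\rel_{r,v}=r\cdot\operatorname{rank}_O(\mathfrak p/(\mathfrak p^2,I))+O(1)$, and uses formal smoothness of $y_v$ to show this rank equals $\dim\Spec R_v[1/\varpi]$, which by Proposition \ref{proposition making selmer conditions deformation appendix}(\ref{proposition making selmer conditions deformation appendix part I}) is exactly $\lg_O Z_{r,v}/r$. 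Boundedness of the quotient follows immediately from part (\ref{prop identify Z and relaxed new part one}) and this length estimate, with no need to exhibit an explicit $\varpi$-power killing the quotient. Your approach, if you insist on it, would need to separately verify that cocycles through $\overline R_v$ lie in $Z_{r,v}$ up to a bounded error --- which is essentially as hard as what you are trying to prove and is most naturally done by a length count anyway.
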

\begin{proof}
Let $I\subset \widetilde R_v$ be the kernel of the map to $R_v$ (resp. $R_v^\crystal$) if $v\not\in \Sigma_p^\crystal$ (resp. $v\in \Sigma_p^\crystal$). 
For (\ref{prop identify Z and relaxed new part one}), suppose given a cocycle $c\in Z_{r,v}$ which corresponds to a lift $$f_c:\widetilde R_v \to 
O[\epsilon]/(\epsilon^2, \varpi^r\epsilon)$$
of the map $f: \widetilde R_v \to O$ determined by $\rho|_{G_{k_v}}$. Because $c\in Z_{r,v}$, for all $n$ sufficiently large the map 
$$\widetilde R_v \xrightarrow{f_c}O[\epsilon]/(\epsilon^2, \varpi^r\epsilon)\xrightarrow{\epsilon\mapsto \varpi^n} O/\varpi^{n+r}$$
factors through $\widetilde R_v/I$; hence $$f_c(I) \subset (\varpi^n - \epsilon) \cap (\epsilon)\text{ in }O[\epsilon]/(\epsilon^2,\varpi^r \epsilon)$$
for all $n$ sufficiently large. We conclude $f_c(I) = 0$, hence $c$ lies in $Z^\rel_{r,v}$, which proves (\ref{prop identify Z and relaxed new part one}). 

For (\ref{prop identify Z and relaxed new part two}), let $\mathfrak p \subset \widetilde R_v$ be the kernel of $f$, and note that
$Z^\rel_{r,v}$ is canonically identified with $$\Hom_O (\mathfrak p/(\mathfrak p ^2, I), O/\varpi^r).$$
Since $\widetilde R_v$ is Noetherian, $\mathfrak p/(\mathfrak p ^2, I)$ is a finitely-generated $O$-module, hence
$$\lg_O Z^\rel_{r,v} = r \cdot \rank_O (\mathfrak p/(\mathfrak p^2, I)) + \textrm{O}(1)$$ as $r$ varies. But because $\rho|_{G_{k_v}}$ defines a formally smooth point of $\Spec R_v[1/\varpi]$, the $O$-rank of $\mathfrak p /(\mathfrak p^2, I)$ is also $\dim \Spec R_v[1/\varpi]$ (which equals $\dim \Spec R_v^\crystal [1/\varpi]$ for $v\in \Sigma_p^\crystal$). Moreover $$\lg_O Z_{r,v} = r \dim \Spec R_v[1/\varpi]$$ by Proposition \ref{proposition making selmer conditions deformation appendix}(\ref{proposition making selmer conditions deformation appendix part I}), so (\ref{prop identify Z and relaxed new part two}) follows. 

\end{proof}


\begin{prop}\label{prop compare BK and FKP Selmer}
    For all places $v$ of $k$,
    let $$H^1_{\mathcal F}(k_v, \ad^0 \rho) = \varprojlim_n H^1_{\mathcal F}(k_v, \ad^0 \rho_n) \subset H^1(k_v, \ad^0 \rho).$$
   Then  we have 
    $$H^1_{\mathcal F}(k_v, \ad^0 \rho)\otimes _O E = H^1_f(k_v, \ad^0\rho\otimes_O E).$$
    \end{prop}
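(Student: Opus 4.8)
The plan is to reduce the statement to a comparison of two subspaces of $H^1(k_v, \ad^0\rho\otimes_O E)$, one defined via the local deformation rings $R_v$ (and $R_v^\crystal$ when $v\in\Sigma_p^\crystal$) and the other being the Bloch--Kato subspace $H^1_f$. I would first treat the places $v\not\in\Sigma_p$. For such $v$, the module $H^1_{\mathcal F}(k_v,\ad^0\rho_n)$ is either $H^1_\unr(k_v,\ad^0\rho_n)$ (when $v\notin S$) or the image of $Z_{n,v}$ (when $v\in S$); in either case Proposition \ref{prop identify Z and relaxed new} identifies $Z_{n,v}$ with $Z^\rel_{n,v}=Z^1(k_v,\ad^0\rho_n)$ up to uniformly bounded index, so passing to the inverse limit and inverting $\varpi$ gives $H^1_{\mathcal F}(k_v,\ad^0\rho)\otimes E = H^1(k_v,\ad^0\rho\otimes E)$ for $v\in S$. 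For $v\notin S\cup\Sigma_p$, $\rho|_{G_{k_v}}$ is unramified, and by purity (which follows from Assumption \ref{ass:appendix_main}, cf. the vanishing input to Lemma \ref{smoothness lemma appendix}) one has $H^1(k_v,\ad^0\rho\otimes E)=H^1_\unr(k_v,\ad^0\rho\otimes E)=H^1_f(k_v,\ad^0\rho\otimes E)$, and the inverse limit of $H^1_\unr(k_v,\ad^0\rho_n)$ recovers $H^1_\unr(k_v,\ad^0\rho)$ after inverting $\varpi$; so the statement holds in this case too, both sides being all of $H^1$.

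The heart of the argument is the case $v\in\Sigma_p$. Here I would first record that, by definition, $H^1_{\mathcal F}(k_v,\ad^0\rho)\otimes E$ is the image of $\varprojlim_n Z_{n,v}$ in $H^1(k_v,\ad^0\rho)$, tensored with $E$, and by Proposition \ref{prop identify Z and relaxed new} this equals the image of $\varprojlim_n Z^\rel_{n,v}\otimes E$. Unwinding the definition of $Z^\rel_{n,v}$, the latter is exactly the tangent space at the point $y_v$ of $\Spec R_v[1/\varpi]$ (or $\Spec R_v^\crystal[1/\varpi]$ when $v\in\Sigma_p^\crystal$), i.e. the space of cocycles $c$ whose associated first-order deformation $\rho_c$ is potentially semistable of Hodge type $\mu_v$ (resp. crystalline). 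Thus the claim reduces to: the image in $H^1(k_v,\ad^0\rho\otimes E)$ of the first-order potentially-semistable-of-type-$\mu_v$ (resp. crystalline) deformations coincides with $H^1_g(k_v,\ad^0\rho\otimes E)$ (resp. $H^1_f$). For $v\notin\Sigma_p^\crystal$ one uses the standard fact — as invoked in the proof of Proposition \ref{prop: check local conditions for kappa} — that for a de Rham representation $H^1_g=H^1_f$ on the adjoint (the adjoint is pure of weight zero by purity of $\rho|_{G_{k_v}}$, so $D_{\mathrm{cris}}(\ad^0\rho\otimes E)^{\varphi=1}=0$ forces $H^1_f=H^1_g$), and the dimension count identifying the tangent space to the framed potentially semistable deformation ring with $H^1_g$; for $v\in\Sigma_p^\crystal$ one uses the corresponding statement for the crystalline deformation ring, matching $H^1_f$. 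Concretely I would cite \cite{bloch1990lfunctions} for $H^1_f$ dimensions and the local deformation ring computations in \cite{balaji2012Gvalued} (and the formal smoothness from Assumption \ref{ass:appendix_smooth}) to match tangent spaces.

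A clean way to organize the $\Sigma_p$ case is: (i) Assumption \ref{ass:appendix_smooth} says $y_v$ is a smooth point, so $\dim_E(Z^\rel_{n,v}\otimes E) = \dim\Spec R_v[1/\varpi]$ (independent of $n$, using Proposition \ref{proposition making selmer conditions deformation appendix}(\ref{proposition making selmer conditions deformation appendix part I}) and Proposition \ref{prop identify Z and relaxed new}); (ii) the image of $\varprojlim Z^\rel_{n,v}$ in $H^1$ is contained in $H^1_g$ (resp. $H^1_f$), since each compatible system of first-order deformations patches to a characteristic-zero potentially semistable (resp. crystalline) lift along the relevant component and its associated extension class lies in the Bloch--Kato subspace; (iii) the dimension of that image equals $\dim\Spec R_v[1/\varpi] - \dim_E H^0(k_v,\ad^0\rho\otimes E) = \dim\Spec R_v[1/\varpi]$ (the $H^0$ vanishes by Assumption \ref{ass:appendix_main}(\ref{ass_B1_noresidual}) residually, hence generically), and this matches the Bloch--Kato formula $\dim_E H^1_f(k_v,\ad^0\rho\otimes E) = \dim_E H^0(k_v,\ad^0\rho\otimes E) + [k_v:\Q_p]\dim_E(\Lie G^\der)$, again using the local deformation ring dimension formula from \cite{balaji2012Gvalued}. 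Equality of dimensions plus the inclusion gives the claim.

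The main obstacle I anticipate is bookkeeping the precise relationship between the \emph{framed} local deformation rings $R_v$, $R_v^\crystal$ (which carry the extra $G$-conjugation directions) and the Bloch--Kato $H^1_f$, $H^1_g$, together with isolating exactly where Assumption \ref{ass:appendix_smooth} and purity are used to guarantee $H^1_f=H^1_g$ and to kill the relevant $H^0$-contributions; in other words, getting the dimension count to come out on the nose for a general group $G$ rather than just $\GL_n$. Once that is pinned down, the inverse limit / tensoring-with-$E$ manipulations and the bounded-index statement of Proposition \ref{prop identify Z and relaxed new} are routine.
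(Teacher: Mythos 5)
Your overall route — replace $Z_{n,v}$ by $Z^\rel_{n,v}$ via Proposition \ref{prop identify Z and relaxed new}, identify the limit (tensored with $E$) with the tangent space of $\Spec R_v[1/\varpi]$ (or of $R_v^\crystal$), and close by a containment plus a dimension count — is essentially the paper's. But the dimension count in your last paragraph has three concrete errors; write $h^0 = \dim_E H^0(k_v,\ad^0\rho\otimes_O E)$. First, Assumption \ref{ass:appendix_main}(\ref{ass_B1_noresidual}) kills the \emph{global} $H^0(k,\ad^0\overline\rho)$; it does not force the local $h^0$ to vanish — e.g.\ the Cartan of $\ad^0\rho$ generically carries Frobenius-fixed vectors. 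Second, the Bloch--Kato formula reads $\dim_E H^1_f(k_v, M) = \dim_E H^0(k_v,M) + \dim_E D_\dR(M)/\Fil^0 D_\dR(M)$, not $\dim H^0 + [k_v:\Q_p]\dim_E\Lie G^\der$; the correct second summand is strictly smaller than $[k_v:\Q_p]d_G$ because $\Fil^0 D_\dR$ is nonzero (the nonnegative Hodge--Tate-weight pieces of the adjoint, including the Cartan, lie inside it). Third, the image of $(\varprojlim Z^\rel_{n,v})\otimes E$ in $H^1$ has dimension $\dim\Spec R_v[1/\varpi] - (d_G - h^0)$, not $\dim\Spec R_v[1/\varpi] - h^0$: by Proposition \ref{proposition making selmer conditions deformation appendix}(\ref{proposition making selmer conditions deformation appendix part IV}) the module $Z_{r,v}$ contains all coboundaries, so $H^1_{\mathcal F}$ is the quotient of $Z_v$ by $B^1(k_v,\ad^0\rho\otimes E)$, which has dimension $d_G - h^0$.

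With the corrected ingredients the count closes: $\dim H^1_{\mathcal F}\otimes E = \dim\Spec R_v[1/\varpi] - (d_G - h^0) = (d_G + \dim D_\dR/\Fil^0) - (d_G - h^0) = h^0 + \dim D_\dR/\Fil^0 = \dim H^1_f$, using $\dim\Spec R_v[1/\varpi] = d_G + \dim_E D_\dR(\ad^0\rho\otimes E)/\Fil^0 D_\dR$, which is what \cite{bellovin2019Gvalued} gives under the smoothness hypothesis (Assumption \ref{ass:appendix_smooth}). You should also make your step (ii) precise: the paper proves the \emph{opposite} inclusion $H^1_f\subset H^1_{\mathcal F}\otimes E$, citing \cite[Lemma 1.2.5]{allen2016deformations} to pass from ``$c$ maps to zero in $H^1(k_v,\ad^0\rho\otimes B_\dR)$ (resp.\ $B_\crystal$)'' to potential semistability (resp.\ crystallinity) of $\rho_c$; the inclusion you want is the converse, which is also available but needs a citation. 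Finally, for $v\notin S\cup\Sigma_p$ the assertion that ``both sides are all of $H^1$'' is neither correct in general ($H^1_\unr$ can be a proper subspace of $H^1$) nor necessary: there $H^1_{\mathcal F}$ and $H^1_f$ are both $H^1_\unr$ by definition, and the comparison is immediate.
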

    \begin{proof}
Suppose first that $v\not\in \Sigma_p$. Then $$\dim H^1(k_v, \ad^0\rho\otimes_O E) - \dim H^1_f(k_v, \ad^0\rho\otimes_O E) = \dim H^0 (k_v, \ad^0 \rho (1)\otimes_OE)= 0$$
by the local Euler characteristic formula, local duality, and Lemma \ref{smoothness lemma appendix} (under Assumption \ref{ass:appendix_smooth}). By Proposition \ref{proposition making selmer conditions deformation appendix}(\ref{proposition making selmer conditions deformation appendix part I},\ref{proposition making selmer conditions deformation appendix part III}), we also have  \begin{equation*}
    \begin{split}
        \dim H^1_{\mathcal F}(k_v, \ad^0 \rho) \otimes_O E&= d_G  - \dim H^0(k_v, \ad^0 \rho\otimes_O E)  \\
        &=   \dim_E \Lie G^\derived\otimes_O E  - \dim H^0(k_v, \ad^0 \rho\otimes_O E)     \\  
        &= \dim H^1_f (k_v, \ad^0 \rho\otimes_O E),     
    \end{split}
\end{equation*}and the proposition follows.

Now we consider the case $v\in \Sigma_p$. By Proposition \ref{proposition making selmer conditions deformation appendix}(\ref{proposition making selmer conditions deformation appendix part I}), we have \begin{align*} \dim H^1_{\mathcal F} (k_v, \ad^0 \rho) \otimes_O E &= \dim \Spec R_v[1/\varpi] + \dim \ad^0 \rho\otimes_O E -  \dim H^0(k_v, \ad^0\rho\otimes_O E) \\&= \dim D_\dR (\ad^0\rho\otimes_O E)/\Fil^0 D_\dR(\ad^0 \rho\otimes_O E) - \dim H^0(k_v, \ad^0\rho\otimes_O E),\end{align*}
where the latter equality is by the proof of \cite[Theorem 3.3.2]{bellovin2019Gvalued} and by Assumption  \ref{ass:appendix_smooth}. In particular, by \cite[Corollary 3.8.4]{bloch1990lfunctions}, we have $$\dim H^1_{\mathcal F}(k_v, \ad^0 \rho)\otimes_O E = \dim H^1_f(k_v, \ad^0 \rho\otimes_O E).$$ 
It therefore suffices to show that $H^1_{f} (k_v, \ad^0\rho\otimes_O E) \subset H^1_{\mathcal F}(k_v, \ad^0 \rho)\otimes_O E$.

By
Proposition \ref{prop identify Z and relaxed new}, we have $$(\varprojlim_r Z_{r,v})\otimes_OE = (\varprojlim_r Z_{r,v}^\rel)\otimes_O E \subset Z^1(k_v, \ad^0\rho\otimes_O E).$$ In particular, by the definition of $Z^\rel_{r,v}$, $H^1_{\mathcal F}(k_v, \ad^0 \rho)\otimes_O E$ consists of cocycles $c$ such that the corresponding $G$-valued   deformation $\rho_c$ of $\rho$ 
to $E[\epsilon]/(\epsilon^2)$ is  crystalline (resp. potentially semistable) with Hodge type $\mu_v$ if $v\in  \Sigma_p^\crystal$ (resp. $v\in \Sigma_p - \Sigma_p^\crystal$). 
By \cite[Proposition 2.3.2]{balaji2012Gvalued}, it suffices to check this condition for the representation $\sigma\circ\rho_{c}: G_{k_v} \to \GL_n(E[\epsilon]/(\epsilon^2))$ obtained by composing $\rho_c$ with any faithful algebraic representation $\sigma: G \to \GL_{n, E}$.

Consider the case when $v\in \Sigma_p - \Sigma_p^\crystal$. 
Suppose given a cocycle $c\in H^1_f(k_v, \ad^0 \rho\otimes_O E)$; \emph{a fortiori}, $c$ lies in the kernel of the map
$$H^1(k_v, \ad^0\rho\otimes_O E) \to H^1(k_v, \ad^0\rho\otimes_O  B_{\dR}),$$ hence the cocycle corresponding to the deformation $\sigma \circ\rho_{c}$ of $\sigma\circ \rho$ lies in the kernel of the map 
$$H^1(k_v, \ad^0(\sigma\circ\rho)) \to H^1(k_v, \ad^0(\sigma\circ\rho)\otimes B_{\dR}).$$ In particular, $\sigma\circ\rho_{c}$ is potentially semistable by the argument of \cite[Lemma 1.2.5]{allen2016deformations}, and this completes the proof. 
When $v\in \Sigma_p^\crystal$, an analogous argument applies, using that $c$ lies in the kernel of the map $$H^1(k_v, \ad^0\rho\otimes_O E) \to H^1(k_v, \ad^0\rho\otimes_O  B_{\crystal})$$
by definition of $H^1_f(k_v, \ad^0 \rho\otimes_O E)$. 
    \end{proof}
\begin{lemma}\label{lemma for using theorem of thorne}
    Suppose $H^1_f(k, \ad^0 \rho\otimes_O E) = 0$. Then for all $n$ sufficiently large,
    $$\overline \Sel_{\mathcal F}(k, \ad^0 \rho_n) = 0.$$ 
\end{lemma}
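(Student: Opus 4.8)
\textbf{Proof plan for Lemma \ref{lemma for using theorem of thorne}.}

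The plan is to combine the identification of the limit of the local Selmer conditions with the Bloch--Kato local conditions (Proposition \ref{prop compare BK and FKP Selmer}) with a standard limiting argument relating $\Sel_{\mathcal F}(k, \ad^0\rho_n)$ to $H^1_f(k, \ad^0\rho\otimes_O E)$. First I would show that the hypothesis $H^1_f(k, \ad^0\rho\otimes_O E) = 0$, together with Proposition \ref{prop compare BK and FKP Selmer}, forces $\varprojlim_n \Sel_{\mathcal F}(k, \ad^0\rho_n) = 0$: indeed a compatible system of Selmer classes $(c_n)_n$ with $c_n \in \Sel_{\mathcal F}(k, \ad^0 \rho_n)$ gives rise to an element of $H^1(k, \ad^0\rho)$ whose restriction at each $v$ lies in $\varprojlim_n H^1_{\mathcal F}(k_v, \ad^0\rho_n) = H^1_{\mathcal F}(k_v, \ad^0 \rho)$, hence after inverting $\varpi$ produces a class in $H^1_f(k, \ad^0\rho\otimes_O E) = 0$; so the system is torsion, and a slightly more careful version of this argument (controlling the torsion uniformly, using that $H^1(k, \ad^0\rho)$ is a finitely generated $O$-module by Assumption \ref{ass:appendix_main}(\ref{ass_B1_noresidual}) and the finiteness of $H^1(k^{S\cup\Sigma_p}/k, -)$) shows the inverse limit vanishes.

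Next I would use Lemma \ref{stuff like lemma 6.1 FKP}(\ref{lemma 6.1 part III}), applied with $\mathdutchbcal Q = \emptyset$, which identifies $\Sel_{\mathcal F}(k, \ad^0\rho_a)$ with $\Sel_{\mathcal F}(k, \ad^0\rho_n)[\varpi^a]$ for $a \leq n$; letting $n \to \infty$ this shows that $\Sel_{\mathcal F}(k, \ad^0\rho_a)$ is independent of $n$ in the sense that it embeds compatibly into the inverse system, so that the inverse limit being zero forces $\Sel_{\mathcal F}(k, \ad^0 \rho_a)$ to be "eventually" captured by the tower. More precisely: the groups $\Sel_{\mathcal F}(k, \ad^0\rho_n)$ form a direct system under the inclusions $\ad^0\rho_{n} \hookrightarrow \ad^0\rho_{n+1}$ induced by multiplication by $\varpi$, with colimit the $\varpi$-divisible hull, and the Pontryagin-dual statement is that the inverse limit computes the same invariant; since the inverse limit vanishes and each $\Sel_{\mathcal F}(k, \ad^0\rho_n)$ is finite, a Mittag-Leffler/cofinality argument yields that the transition maps $\Sel_{\mathcal F}(k, \ad^0\rho_{n+1}) \to \Sel_{\mathcal F}(k, \ad^0\rho_n)$ have image that stabilizes to $0$ for $n$ large.

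Finally I would conclude: by Lemma \ref{stuff like lemma 6.1 FKP}(\ref{lemma 6.1 FKP}) with $\mathdutchbcal Q = \emptyset$, the natural map $\overline\Sel_{\mathcal F}(k, \ad^0\rho_n) \to \Sel_{\mathcal F}(k, \ad^0\overline\rho)$ is, by definition, the image of $\Sel_{\mathcal F}(k, \ad^0\rho_n)$ in $\Sel_{\mathcal F}(k, \ad^0\overline\rho)$ under reduction mod $\varpi$, i.e.\ the image under the composite $\Sel_{\mathcal F}(k, \ad^0 \rho_n) \to \Sel_{\mathcal F}(k, \ad^0\rho_1) = \Sel_{\mathcal F}(k, \ad^0\overline\rho)$; and this composite factors through the transition map from level $n$ down to level $1$, whose image stabilizes to $0$. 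Hence $\overline\Sel_{\mathcal F}(k, \ad^0\rho_n) = 0$ for all $n$ sufficiently large. The main obstacle I anticipate is making the limiting argument in the first two paragraphs fully rigorous --- specifically, verifying that $\varprojlim_n H^1_{\mathcal F}(k_v, \ad^0\rho_n)$ really equals the module $H^1_{\mathcal F}(k_v, \ad^0\rho)$ appearing in Proposition \ref{prop compare BK and FKP Selmer} (this is its definition, so the content is exactness of inverse limits of the relevant finite modules, which holds by Mittag-Leffler since the systems are of finite $O$-modules) and that passing to the limit commutes with taking the global Selmer kernel; both are standard but deserve a sentence of justification.
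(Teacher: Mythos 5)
Your plan follows the same route as the paper: show $\varprojlim_n \Sel_{\mathcal F}(k,\ad^0\rho_n)=0$ using Proposition \ref{prop compare BK and FKP Selmer} together with the hypothesis $H^1_f(k,\ad^0\rho\otimes_O E)=0$, then conclude by a Mittag--Leffler/finiteness argument that $\overline\Sel_{\mathcal F}(k,\ad^0\rho_n)=0$ for $n$ large. The conclusion (your third paragraph) is fine, and most of the second paragraph is an unnecessary detour --- once you know $\varprojlim_n\Sel_{\mathcal F}(k,\ad^0\rho_n)=0$ and each term is finite, the image of the map to level $1$ eventually vanishes, which is exactly $\overline\Sel_{\mathcal F}(k,\ad^0\rho_n)=0$; there is no need to invoke the direct system under multiplication by $\varpi$ or Pontryagin duality.

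Two points in your first paragraph are imprecise in ways worth tightening. First, to kill the torsion system, the clean step is not ``controlling the torsion uniformly'': rather, Assumption \ref{ass:appendix_main}(\ref{ass_B1_noresidual}) gives $H^0(k,\ad^0\overline\rho)=0$, so the long exact sequence for $0\to\ad^0\rho\xrightarrow{\varpi}\ad^0\rho\to\ad^0\overline\rho\to 0$ shows $H^1(k,\ad^0\rho)$ is $\varpi$-torsion-free; hence a torsion element is zero, no uniform bound needed. Second, you correctly flag the exchange of limits as the delicate point, but ``Mittag--Leffler on finite modules'' is not by itself the right justification. To identify $\varprojlim_n\Sel_{\mathcal F}(k,\ad^0\rho_n)$ with $\Sel_{\mathcal F}(k,\ad^0\rho)$ one needs control of the local conditions across the tower; the paper verifies that $Z^1(G_{k_v},\ad^0\rho)/Z_v$ is torsion-free and that each $Z_{n,v}$ is the image of $Z_v\to Z^1(G_{k_v},\ad^0\rho_n)$ (using Proposition \ref{proposition making selmer conditions deformation appendix}(\ref{proposition making selmer conditions deformation appendix part I},\ref{proposition making selmer conditions deformation appendix part III})), and then cites Lemma 3.7.1 of Mazur--Rubin for the Selmer control statement. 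Without checking those hypotheses, the identification can fail.
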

\begin{proof}
We first claim:
\begin{claim}
    We have $\Sel_{\mathcal F}(k, \ad^0 \rho) = \varprojlim_n \Sel_{\mathcal F}(k, \ad^0 \rho_n).$
\end{claim}
\begin{proof}[Proof of claim]
    Set $Z_{n, v} = Z^1_{\unr}(G_{k_v}, \ad^0 \rho_n)$ for all finite $v\not\in S\cup \Sigma_p $, and set $Z_v \coloneqq \varprojlim_n Z_{n,v}$ for all finite $v$. It follows from Proposition \ref{proposition making selmer conditions deformation appendix}(\ref{proposition making selmer conditions deformation appendix part I},\ref{proposition making selmer conditions deformation appendix part III}) and a direct calculation in the unramified case that  $Z^1(G_{k_v}, \ad^0 \rho)/Z_v$ is torsion-free and $Z_{n, v}$ is the image of the map $Z_v \to Z^1(G_{k_v}, \ad^0 \rho_n)$ for all $n \geq 1$. Then the claim follows from \cite[Lemma 3.7.1]{mazur2004kolyvaginsystems}.
\end{proof}
Now we return to the proof of the lemma. 
By Proposition \ref{prop compare BK and FKP Selmer}, the assumption $H^1_f(k, \ad^0 \rho\otimes_O E) = 0$ implies that $\Sel_{\mathcal F}(k, \ad^0\rho)$ is torsion, hence trivial because $H^1(k, \ad^0\rho)$ is $\varpi$-torsion-free by  Assumption \ref{ass:appendix_main}(\ref{ass_B1_noresidual}). 
   So by the claim, we have $$\varprojlim_n \Sel_{\mathcal F}(k, \ad^0\rho_n) = 0,$$ which implies $$\varprojlim_n \overline  \Sel_{\mathcal F}(k, \ad^0\rho_n) = \cap_n \overline  \Sel_{\mathcal F}(k, \ad^0\rho_n) = 0.$$ Hence $\overline  \Sel_{\mathcal F}(k, \ad^0\rho_n) = 0$ for $n$ sufficiently large. 
\end{proof}
\subsection{Controlling congruences for level-raised representations}

\begin{definition}\label{def:loc_const_appendix}
\begin{enumerate}
    \item     For each place $v$ of $k$, we define $$C_v \coloneqq \sup_r \#\left(Z_{r,v}^\rel/Z_{r,v}\right),$$ which is finite by Proposition \ref{prop identify Z and relaxed new}.
\item If $\mathdutchbcal Q$ is a finite set of $n$-admissible primes for some $n \geq 1$, define  a Selmer structure $\mathcal F(\mathdutchbcal Q)^\rel$ for $\ad^0\rho_n$ by
$$H ^ 1_{\mathcal F (\mathdutchbcal Q)^ \rel} (k_v,\adjoint ^ 0\rho_n) =\begin{cases}\im\left(Z^\rel_{n, v}\to H^1(k_v, \ad^0\rho_n)\right), & v\in S \cup \Sigma_p, \\ H^1_{\mathcal F(\mathdutchbcal Q)}(k_v, \ad^0\rho_n), &\text{otherwise.}\end{cases}$$
\end{enumerate}
\end{definition}

\begin{prop}\label{prop:const_relaxed}
Let $C_1 = \sum_{v\in S\cup \Sigma_p}C_v$, where $C_v$ is as in Definition \ref{def:loc_const_appendix}. 
   Then
    $$\lg_O \Sel_{\mathcal F(\mathdutchbcal Q)^ \rel}(k, \ad^0\rho_n) \leq \lg_O  \Sel_{\mathcal F(\mathdutchbcal Q)}(k, \ad^0\rho_n) + C_1$$
    for all $n \geq 1$ and all $n$-admissible $\mathdutchbcal Q$.
\end{prop}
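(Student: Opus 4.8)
\textbf{Proof plan for Proposition \ref{prop:const_relaxed}.}

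The plan is to compare the two Selmer groups by comparing them place-by-place at the finitely many ramified places in $S\cup\Sigma_p$, where the only difference in local conditions is the passage from $Z_{n,v}$ to $Z_{n,v}^\rel$; at all other places the local conditions defining $\mathcal F(\mathdutchbcal Q)^\rel$ and $\mathcal F(\mathdutchbcal Q)$ agree by Definition \ref{def:loc_const_appendix}. First I would record that $\Sel_{\mathcal F(\mathdutchbcal Q)}(k,\ad^0\rho_n)\subset \Sel_{\mathcal F(\mathdutchbcal Q)^\rel}(k,\ad^0\rho_n)$, since $Z_{n,v}\subset Z_{n,v}^\rel$ for all $v$ by Proposition \ref{prop identify Z and relaxed new}(\ref{prop identify Z and relaxed new part one}) (and hence the image in $H^1(k_v,\ad^0\rho_n)$ of the former is contained in that of the latter). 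So the issue is purely to bound the quotient
$$
\Sel_{\mathcal F(\mathdutchbcal Q)^\rel}(k,\ad^0\rho_n)\big/\Sel_{\mathcal F(\mathdutchbcal Q)}(k,\ad^0\rho_n).
$$

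The key step is to map this quotient injectively into $\prod_{v\in S\cup\Sigma_p} H^1_{\mathcal F(\mathdutchbcal Q)^\rel}(k_v,\ad^0\rho_n)/H^1_{\mathcal F(\mathdutchbcal Q)}(k_v,\ad^0\rho_n)$ via the restriction maps: indeed, any class lying in the bigger Selmer group whose localizations at all $v\in S\cup\Sigma_p$ already lie in the smaller local conditions is automatically in the smaller Selmer group, since the local conditions coincide outside $S\cup\Sigma_p$. It therefore suffices to bound $\lg_O\bigl(H^1_{\mathcal F(\mathdutchbcal Q)^\rel}(k_v,\ad^0\rho_n)/H^1_{\mathcal F(\mathdutchbcal Q)}(k_v,\ad^0\rho_n)\bigr)$ by $C_v$ for each $v\in S\cup\Sigma_p$. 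Now $H^1_{\mathcal F(\mathdutchbcal Q)^\rel}(k_v,\ad^0\rho_n)$ is the image of $Z_{n,v}^\rel$ in $H^1(k_v,\ad^0\rho_n)$ and $H^1_{\mathcal F(\mathdutchbcal Q)}(k_v,\ad^0\rho_n)$ is the image of $Z_{n,v}$; since the space of coboundaries is contained in $Z_{n,v}$ by Proposition \ref{proposition making selmer conditions deformation appendix}(\ref{proposition making selmer conditions deformation appendix part IV}) (and these local conditions at $v\in S\cup\Sigma_p$ are those of $\mathcal F$, unaffected by $\mathdutchbcal Q$), the quotient of images is a quotient of $Z_{n,v}^\rel/Z_{n,v}$, whose cardinality is at most $C_v$ by the definition in Definition \ref{def:loc_const_appendix}. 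Summing the lengths over $v\in S\cup\Sigma_p$ and noting $\lg_O$ of a group of cardinality $\le C_v$ is what contributes to the bound, one gets the constant $C_1=\sum_{v\in S\cup\Sigma_p}C_v$; more precisely I would phrase the estimate in terms of $\sum_v \log_{\#(O/\varpi)}C_v$ or simply absorb everything into $C_1$ exactly as the statement does, using $\lg_O M \le \#M$-type bounds. Finally I would add $\lg_O\Sel_{\mathcal F(\mathdutchbcal Q)}(k,\ad^0\rho_n)$ back in to get the displayed inequality.

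The main obstacle I anticipate is bookkeeping rather than conceptual: one must be careful that the local conditions at $v\in S\cup\Sigma_p$ are genuinely independent of $\mathdutchbcal Q$ (they are, since $\mathdutchbcal Q\cap(S\cup\Sigma_p)=\emptyset$), and that the constant $C_v$ is indeed a uniform (in $n$) bound on $Z^\rel_{n,v}/Z_{n,v}$ rather than only on $Z^\rel_{r,v}/Z_{r,v}$ as $r$ varies — but these are the same quantity with $r=n$, so finiteness and uniformity both come directly from Proposition \ref{prop identify Z and relaxed new}(\ref{prop identify Z and relaxed new part two}). One should also double-check that passing from subspaces $Z\subset Z^1$ to their images in $H^1$ can only shrink the index, so that $\#\bigl(\mathrm{im}(Z^\rel_{n,v})/\mathrm{im}(Z_{n,v})\bigr)\le \#(Z^\rel_{n,v}/Z_{n,v})\le C_v$; this is immediate since the quotient of images is a quotient of $Z^\rel_{n,v}/Z_{n,v}$. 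With these points in place the proof is a short diagram chase plus the local length count.
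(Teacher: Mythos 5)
Your argument is exactly the paper's: the paper cites the exact sequence
$$0\to\Sel_{\mathcal F(\mathdutchbcal Q)}(k,\ad^0\rho_n)\to\Sel_{\mathcal F(\mathdutchbcal Q)^\rel}(k,\ad^0\rho_n)\to\prod_{v\in S\cup\Sigma_p}\frac{H^1_{\mathcal F^\rel}(k_v,\ad^0\rho_n)}{H^1_{\mathcal F}(k_v,\ad^0\rho_n)}$$
and you spell out why that sequence is exact and why the third term has length at most $C_1$, which is precisely the needed bookkeeping. Correct, and the same approach.
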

\begin{proof}
This follows from the exactness of the sequence
\begin{equation}\label{relax the local conditions exact sequence in appendix} 0\rightarrow\summer_{\mathcal F (\mathdutchbcal Q)} (k,\adjoint ^ 0\rho_n)\rightarrow\summer_{\mathcal F ^ \rel (\mathdutchbcal Q)} (k,\adjoint ^ 0\rho_n)\rightarrow\product_{v\in S\cup \Sigma_p}\frac {H ^ 1_{\mathcal F^\rel} (k_ v,\adjoint ^ 0\rho_n)} {H ^ 1_{\mathcal F} (k_v,\adjoint ^ 0\rho_n)}.\end{equation}
\end{proof}

\begin{cor}\label{cor:relaxed_bound_appendix}
   Fix $c\geq 0$.  There exists a constant $C_2\geq 0$, depending only on $c$ and $\rho$, with the following property: for all $n \geq m-1\geq 0$ and all $n$-admissible $\mathdutchbcal Q$ with $|\mathdutchbcal Q| = c$, 
    $$\overline \Sel_{\mathcal F(\mathdutchbcal Q)}(k, \ad^0 \rho_m ) = 0 \implies \lg_O \Sel_{\mathcal F(\mathdutchbcal Q)^\rel} (k, \ad^0 \rho_n) \leq C_2  (m - 1) + C_1,$$
    where $C_1$ is the constant in Proposition \ref{prop:const_relaxed}. 
\end{cor}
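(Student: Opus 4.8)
\textbf{Proof proposal for Corollary \ref{cor:relaxed_bound_appendix}.}

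The plan is to reduce to Proposition \ref{prop:const_relaxed} by controlling $\lg_O \Sel_{\mathcal F(\mathdutchbcal Q)}(k, \ad^0 \rho_n)$ under the hypothesis that the residual relative Selmer group $\overline\Sel_{\mathcal F(\mathdutchbcal Q)}(k, \ad^0\rho_m)$ vanishes. By Proposition \ref{prop:const_relaxed} it suffices to find a constant $C_2$, depending only on $c$ and $\rho$, with
$$\lg_O \Sel_{\mathcal F(\mathdutchbcal Q)}(k, \ad^0 \rho_n) \leq C_2(m-1).$$
First I would observe, using Lemma \ref{stuff like lemma 6.1 FKP}(\ref{lemma 6.1 part II residual selmer groups and torsion}), that the vanishing of $\overline\Sel_{\mathcal F(\mathdutchbcal Q)}(k, \ad^0\rho_m)$ forces the natural map $\Sel_{\mathcal F(\mathdutchbcal Q)}(k, \ad^0\rho_{m-1}) \isomorphism \Sel_{\mathcal F(\mathdutchbcal Q)}(k, \ad^0\rho_n)$ to be an isomorphism for all $n \geq m-1$; in particular $\Sel_{\mathcal F(\mathdutchbcal Q)}(k, \ad^0 \rho_n)$ is annihilated by $\varpi^{m-1}$ for all such $n$, by Lemma \ref{stuff like lemma 6.1 FKP}(\ref{lemma 6.1 part III}) (which identifies it with the $\varpi^{m-1}$-torsion of $\Sel_{\mathcal F(\mathdutchbcal Q)}(k, \ad^0\rho_N)$ for any large $N$, whence $\varpi^{m-1}$-torsion).

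So it remains to bound the number of generators of $\Sel_{\mathcal F(\mathdutchbcal Q)}(k, \ad^0\rho_{m-1})$ as an $O$-module by a constant $C_2$ depending only on $c$ and $\rho$, since then $\lg_O \Sel_{\mathcal F(\mathdutchbcal Q)}(k,\ad^0\rho_n) \leq C_2 (m-1)$ as $\varpi^{m-1}$ kills it. The minimal number of generators is $\dim_{O/\varpi}\overline\Sel_{\mathcal F(\mathdutchbcal Q)}(k, \ad^0\rho_{m-1})$. The key input is then the Greenberg--Wiles formula (or the standard Poitou--Tate Selmer group size estimate): for any balanced Selmer structure, the residual Selmer group $\Sel_{\mathcal F(\mathdutchbcal Q)}(k, \ad^0\overline\rho)$ has $\F_p$-dimension controlled by $\dim H^0(k, \ad^0\overline\rho)$, $\dim H^0(k, \ad^0\overline\rho(1))$, the archimedean terms, and the local terms $\dim H^1_{\mathcal F(\mathdutchbcal Q)}(k_v, \ad^0\overline\rho)$ at $v \in S\cup\Sigma_p\cup\mathdutchbcal Q$ relative to $\dim H^0(k_v, \ad^0\overline\rho)$. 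By Assumption \ref{ass:appendix_main}(\ref{ass_B1_noresidual}) the global $H^0$-terms vanish; the local terms at $v\in S\cup\Sigma_p$ contribute a bounded amount independent of $n$ and $\mathdutchbcal Q$ (by Proposition \ref{proposition making selmer conditions deformation appendix}(\ref{proposition making selmer conditions deformation appendix part I}), the relevant local dimension is $\dim\Spec R_v[1/\varpi]$); and each of the $c$ admissible primes $\q\in\mathdutchbcal Q$ contributes at most $\dim_{O/\varpi} H^1_\ord(\Q_\q, \ad^0\overline\rho)$, which by Proposition \ref{prop:appendix_ord}(\ref{prop:appendix_ord_free}) equals $d_G$. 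Summing, $\dim_{O/\varpi}\overline\Sel_{\mathcal F(\mathdutchbcal Q)}(k, \ad^0\overline\rho)$ is bounded by a constant depending only on $\rho$ and $cd_G$; since $\overline\Sel_{\mathcal F(\mathdutchbcal Q)}(k, \ad^0\rho_{m-1})$ is a quotient of the residual $\Sel_{\mathcal F(\mathdutchbcal Q)}(k, \ad^0\overline\rho)$, this gives the desired $C_2$. Finally, adding back the $C_1$ from Proposition \ref{prop:const_relaxed} yields the claimed inequality $\lg_O\Sel_{\mathcal F(\mathdutchbcal Q)^\rel}(k, \ad^0\rho_n) \leq C_2(m-1) + C_1$.

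I expect the only mildly delicate point to be bookkeeping the local terms at the admissible primes uniformly — confirming that $H^1_\ord$ contributes $d_G$ regardless of $n$ and of which admissible prime is chosen — but this is exactly the content of Proposition \ref{prop:appendix_ord}(\ref{prop:appendix_ord_free}), so there is no real obstacle; the argument is essentially a packaging of results already in hand. One should double-check that the constant coming from the Greenberg--Wiles/Poitou--Tate bound genuinely depends only on $\rho$ (through $[k:\Q]$, the archimedean data, the fixed set $S\cup\Sigma_p$ and the local deformation ring dimensions) and on $c = |\mathdutchbcal Q|$, and not on the particular set $\mathdutchbcal Q$ — this is clear since all admissible primes give the same local contribution $d_G$.
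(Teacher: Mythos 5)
Your proposal follows the same two-step architecture as the paper's proof, and the first step is identical: using Lemma~\ref{stuff like lemma 6.1 FKP}(\ref{lemma 6.1 part II residual selmer groups and torsion}) and (\ref{lemma 6.1 part III}) to see that $\Sel_{\mathcal F(\mathdutchbcal Q)}(k,\ad^0\rho_n)$ is $\varpi^{m-1}$-torsion, hence of $O$-length at most $(m-1)\cdot\dim_{O/\varpi}\Sel_{\mathcal F(\mathdutchbcal Q)}(k,\ad^0\overline\rho)$.

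The second step — bounding $\dim_{O/\varpi}\Sel_{\mathcal F(\mathdutchbcal Q)}(k,\ad^0\overline\rho)$ by a constant depending only on $c$ and $\rho$ — is where you diverge, and where there is a small but real gap. You propose to use the Greenberg--Wiles formula, but this computes the \emph{difference} $\dim\Sel_{\mathcal F(\mathdutchbcal Q)} - \dim\Sel_{\mathcal F(\mathdutchbcal Q)^*}$, not an absolute upper bound; here the local conditions are balanced, so the formula just tells you the two residual Selmer groups have equal dimension (this is Lemma~\ref{balanced residual selmer ranks lemma}), which is not enough. The paper instead makes a direct comparison with the fixed Selmer group $\Sel_{\mathcal F}(k,\ad^0\overline\rho)$ (the one with \emph{unramified} conditions at $\mathdutchbcal Q$, which does not depend on $\mathdutchbcal Q$): relaxing the local condition at the $c$ primes of $\mathdutchbcal Q$ and using the resulting exact sequence gives
$$\dim_{O/\varpi}\Sel_{\mathcal F(\mathdutchbcal Q)}(k,\ad^0\overline\rho) \leq \dim_{O/\varpi}\Sel_{\mathcal F}(k,\ad^0\overline\rho) + \sum_{\q\in\mathdutchbcal Q}\dim_{O/\varpi}\frac{H^1(k_\q,\ad^0\overline\rho)}{H^1_{\unr}(k_\q,\ad^0\overline\rho)},$$
and each summand is uniformly bounded by the \emph{local} Euler characteristic formula (not Greenberg--Wiles). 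Your intuition that each admissible prime contributes a bounded amount is correct, and your citation of $d_G$ via Proposition~\ref{prop:appendix_ord}(\ref{prop:appendix_ord_free}) is in the right spirit (though note that proposition controls $Z^1_{\ord}$, the cocycle space, not $H^1_{\ord}$); but you should replace the Greenberg--Wiles step with the elementary exact-sequence comparison above, which both avoids the circularity and matches what the paper actually does.
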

\begin{proof}
    By Lemma \ref{stuff like lemma 6.1 FKP}(\ref{lemma 6.1 part II residual selmer groups and torsion}), if $\overline \Sel_{\mathcal F(\mathdutchbcal Q)}(k, \ad^0 \rho_m) = 0$ then $$\varpi^{m-1}\Sel_{\mathcal F(\mathdutchbcal Q)}(k, \ad^0\rho_n) = 0.$$ Hence
    \begin{equation}\label{eq:cor_relaxed_bound}\begin{split}\lg_O \Sel_{\mathcal F(\mathdutchbcal Q)} (k, \ad^0\rho_n) &\leq 
\left(\dim_{O/\varpi} \Sel_{\mathcal F(\mathdutchbcal Q)}(k, \ad^0\rho_n)[\varpi]\right)(m-1)\\
    &=   
    \left(\dim_{O/\varpi} \Sel_{\mathcal F(\mathdutchbcal Q)}(k, \ad^0\overline\rho) \right) (m- 1) \\
    &\leq \left(\dim_{O/\varpi} \Sel_{\mathcal F}(k, \ad^0\overline\rho) + \sum_{\mathdutchbcal q\in\mathdutchbcal Q} \dim_{O/\varpi} \frac{H^1 (k_{\mathdutchbcal q}, \ad^0 \overline\rho)}{H^1_{\mathcal F}(k_{\mathdutchbcal q}, \ad^0\overline\rho)}\right)(m-1);
    \end{split}
    \end{equation}
    in the second line we have used Lemma \ref{stuff like lemma 6.1 FKP}(\ref{lemma 6.1 part III}).
    By the local Euler characteristic formula, $\dim_{O/\varpi}H^1(k_{\mathdutchbcal q}, \ad^0\overline\rho)$ is uniformly bounded in $\mathdutchbcal q$, so (\ref{eq:cor_relaxed_bound}) becomes
    \begin{equation}
        \lg_O \Sel_{\mathcal F(\mathdutchbcal Q)} (k, \ad^0\rho_n)\leq C_2 (m-1)
    \end{equation}
    for a constant $C_2$ depending only on $c = |\mathdutchbcal Q|$ and $\rho$. Combined with Proposition \ref{prop:const_relaxed}, this proves the corollary. 
\end{proof}
\begin{notation}\label{notation:appendix_global_rings}
Let $\Sigma$ be a finite set of places of $k$. 
    \begin{enumerate}
\item Let $A\in \CNL_O$.
 A lift $\rho_A: G_k \to G(A)$ of $\overline \rho$ is called $\Sigma$\emph{-good} if:
\begin{enumerate}[label = (\roman*)]
\item $\mu \circ \rho_A = \chi$ (notation as in Notation \ref{notation:appendix_def_rings});
    \item  $\rho_A$ is 
unramified outside $S\cup\Sigma_p\cup\Sigma_\infty \cup \Sigma $;
\item For all $v\in \Sigma_p^\crystal$ (resp. $v\in \Sigma_p - \Sigma_p^\crystal$), the map $\widetilde R_v\to A$ defined by $\rho_A|_{G_{k_v}}$ factors through $R_v^\crystal$ (resp. $R_v$).
\end{enumerate}

\item \label{notation:appendix_global_rings_firstdef}Let $\mathcal D_{\Sigma}^{\operatorname{global}}$ be the functor on $\CNL_O$ defined by 
$$\mathcal D_{\Sigma}^{\operatorname{global}} (A) = \set{\rho_A: G_k \to G(A)\,:\, \rho_A \otimes_A (O/\varpi) = \overline\rho \text{ and } \rho_A \text{ is }\Sigma\text{-good}}/\sim,$$
    where the equivalence relation is $\ker\left(G(A) \to G(O/\varpi)\right)$-conjugacy. 
By (the same argument of) \cite[Proposition 2.2.9]{clozel2008automorphy}, $\mathcal D_{\Sigma}^{\operatorname{global}}$ is represented by a global deformation ring which we denote $R^\Sigma$. 
\item \label{notation:appendix_global_rings_orddef}Now suppose $\Sigma = \mathdutchbcal Q$ for some finite subset $\mathdutchbcal Q \subset \mathfrak Q$. Let $\mathcal D_{\mathdutchbcal Q\text{-ord}}^{\operatorname{global}} \subset \mathcal D_{\mathdutchbcal Q}^{\operatorname{global}}$ be the subfunctor consisting of deformations which are $\mathdutchbcal q$-ordinary for all $\q\in \mathdutchbcal Q$. 
By (the same argument of) \cite[Proposition 2.2.9]{clozel2008automorphy}, $\mathcal D_{\mathdutchbcal Q\text{-ord}}^{\operatorname{global}}$ is represented by the $\mathdutchbcal Q$-\emph{ordinary quotient}   of $R^{\mathdutchbcal Q}$, which we denote $R_{\mathdutchbcal Q}$.

\item Given a homomorphism $f_ {\mathdutchbcal Q}: R _ {\mathdutchbcal Q}\rightarrow O $, define the
\emph {congruence ideal}
\begin{equation}
\eta_{f_ {\mathdutchbcal Q}}\subset O\coloneqq f_ {\mathdutchbcal Q} (\annihilator_{R_ {\mathdutchbcal Q}} (\kernel f_ {\mathdutchbcal Q}))\subset O.
\end{equation}

    \end{enumerate}
\end{notation}
\begin{lemma}\label{lem:eta_appendix}
Let $m \geq 1$ be an integer. Then we may choose the integer $n_0(m, \rho)\geq 1$ in Theorem \ref{main appendix theorem} such that the following holds: 
suppose $n \geq n_0(m, \rho)$, and   $\mathdutchbcal Q$ is an $n$-admissible set
such that $$\overline\summer_{\mathcal F (\mathdutchbcal Q)} (\adjoint ^ 0\rho_m) = 0. $$
Let 
$f_ {\mathdutchbcal Q}: R_ {\mathdutchbcal Q}\rightarrow O $
be the homomorphism corresponding to the representation $\rho^{\mathdutchbcal Q}$ from Theorem \ref{main appendix theorem}. 
Then $$\Ord_\varpi\eta_{f_ {\mathdutchbcal Q}}\leq \length_O\summer_{\mathcal F(\mathdutchbcal Q)^\rel} (\adjoint ^ 0\rho_{n - m + 1}). $$

\end{lemma}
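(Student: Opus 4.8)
The congruence ideal $\eta_{f_{\mathdutchbcal Q}}$ measures how far the augmentation $f_{\mathdutchbcal Q}: R_{\mathdutchbcal Q}\to O$ is from factoring as a direct factor of $\Spec R_{\mathdutchbcal Q}$; the standard mechanism is that $\Ord_\varpi \eta_{f_{\mathdutchbcal Q}}$ is bounded by the length of the cotangent space (relative tangent space) of $R_{\mathdutchbcal Q}$ at the point $f_{\mathdutchbcal Q}$, i.e. by the order of an appropriate relative deformation module. Concretely, if $\mathfrak p_{\mathdutchbcal Q}= \ker f_{\mathdutchbcal Q}$, then $\mathfrak p_{\mathdutchbcal Q}/\mathfrak p_{\mathdutchbcal Q}^2$ is a finitely generated $O$-module, and a routine commutative algebra argument (cf. the Wiles numerical criterion or \cite[Lemma 6.5]{fakhruddin2021relative}) shows $\Ord_\varpi \eta_{f_{\mathdutchbcal Q}} \le \lg_O\left(\mathfrak p_{\mathdutchbcal Q}/\mathfrak p_{\mathdutchbcal Q}^2\right)_{\tors}$, or more precisely is bounded by the length of the torsion in the relative cotangent module of $R_{\mathdutchbcal Q}$ over $O$ at $f_{\mathdutchbcal Q}$. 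So the first step is to reduce the statement to a bound on this relative cotangent module.

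\textbf{Second step.} I would identify the Pontryagin (or $O$-linear) dual of the torsion in $\mathfrak p_{\mathdutchbcal Q}/\mathfrak p_{\mathdutchbcal Q}^2$ with a relative deformation Selmer group. The key point is that $R_{\mathdutchbcal Q}$ pro-represents $\mathdutchbcal q$-ordinary, $\mathdutchbcal Q$-good deformations with fixed multiplier, whose local conditions at $v\in S\cup\Sigma_p$ are exactly those cut out by the quotients $R_v$ or $R_v^\crystal$ — that is, the local conditions defining $Z^\rel_{r,v}$, hence the Selmer structure $\mathcal F(\mathdutchbcal Q)^\rel$ (Definition \ref{def:loc_const_appendix}), with ordinary conditions at the primes of $\mathdutchbcal Q$. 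Thus for each $r$, the reduction mod $\varpi^r$ of the relative tangent space to $\Spec R_{\mathdutchbcal Q}$ at $f_{\mathdutchbcal Q}$ — where ``relative'' means computing deformations to $O[\epsilon]/(\epsilon^2,\varpi^r\epsilon)$ of $\rho^{\mathdutchbcal Q}$ rather than to $(O/\varpi^r)[\epsilon]$ of $\rho^{\mathdutchbcal Q}_r$ — is identified with $\Sel_{\mathcal F(\mathdutchbcal Q^{\mathdutchbcal Q})^\rel}(k,\ad^0\rho^{\mathdutchbcal Q}_r)$, the relative adjoint Selmer group attached to $\rho^{\mathdutchbcal Q}$ itself (not $\rho$). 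Passing to the limit over $r$, the torsion in the relative cotangent module is dual to a suitable limit of these, and in particular its length is bounded by $\lg_O \Sel_{\mathcal F(\mathdutchbcal Q)^\rel}(k,\ad^0\rho^{\mathdutchbcal Q}_{N'})$ for $N'$ large; here I use that the $Z^\rel_{r,v}$ and the ordinary conditions $Z^1_\ord$ behave well under reduction (Propositions \ref{prop identify Z and relaxed new}, \ref{prop:appendix_ord}).

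\textbf{Third step (transferring from $\rho^{\mathdutchbcal Q}$ back to $\rho$).} Now I would use Theorem \ref{main appendix theorem}(\ref{thm:appendix_main_one})--(\ref{thm:appendix_main_four}): $\rho^{\mathdutchbcal Q}\equiv\rho\pmod{\varpi^{N-m+1}}$, $\rho^{\mathdutchbcal Q}$ has the same local components ``up to irreducible component'' at $v\in S\cup\Sigma_p$ and is $\mathdutchbcal q$-ordinary at $\mathdutchbcal Q$. Consequently $\ad^0\rho^{\mathdutchbcal Q}_{r}\cong \ad^0\rho_r$ as $G_k$-modules for $r\le N-m+1$, and the relative local conditions at all places match on the nose for such $r$ (the irreducible-component statement guarantees $Z^\rel_{r,v}$ is the same, and $n$-admissibility with $n\ge N$ guarantees the ordinary conditions at $\mathdutchbcal Q$ are the same for $r$ up to $N$). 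Hence $\Sel_{\mathcal F(\mathdutchbcal Q)^\rel}(k,\ad^0\rho^{\mathdutchbcal Q}_{r}) = \Sel_{\mathcal F(\mathdutchbcal Q)^\rel}(k,\ad^0\rho_r)$ for $r = n-m+1$. Choosing $n_0(m,\rho)$ large enough (larger than $n_0$ supplied by Theorem \ref{main appendix theorem} and large enough that the limit over $r$ of relative cotangent modules stabilizes at level $n-m+1$, using that $\Sel_{\mathcal F(\mathdutchbcal Q)^\rel}$ is a finitely generated $O$-module of length controlled by Corollary \ref{cor:relaxed_bound_appendix} and hence $\varpi^{n-m+1}$-torsion once $n$ is large), I conclude $\Ord_\varpi \eta_{f_{\mathdutchbcal Q}} \le \lg_O \Sel_{\mathcal F(\mathdutchbcal Q)^\rel}(k,\ad^0\rho_{n-m+1})$, as desired.

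\textbf{Main obstacle.} The delicate point is the second step: carefully setting up the ``relative'' deformation-to-$O[\epsilon]/(\epsilon^2,\varpi^r\epsilon)$ framework so that the torsion in the cotangent module of $R_{\mathdutchbcal Q}$ at $f_{\mathdutchbcal Q}$ is genuinely bounded by — not merely related to — $\lg_O\Sel_{\mathcal F(\mathdutchbcal Q)^\rel}(k,\ad^0\rho^{\mathdutchbcal Q}_{n-m+1})$, with the correct truncation level and without losing a constant. One must check that the relative local conditions $Z^\rel_{r,v}$ at $v\in S\cup\Sigma_p$ (coming from $R_v$, $R_v^\crystal$) and $Z^1_\ord$ at $\mathdutchbcal Q$ are exactly the ones appearing in the global relative deformation problem represented by $R_{\mathdutchbcal Q}$, and that the smoothness Assumption \ref{ass:appendix_smooth} lets one identify these without extraneous torsion. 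This is essentially \cite[Lemma 6.5, Proposition 6.6]{fakhruddin2021relative} adapted to the $\mathdutchbcal Q$-ordinary setting; the rest is bookkeeping with the congruences from Theorem \ref{main appendix theorem}.
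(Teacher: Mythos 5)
Your proof takes essentially the same route as the paper. The paper's argument is: $\fitting_{R_{\mathdutchbcal Q}}(I)\subset\Ann_{R_{\mathdutchbcal Q}}(I)$, hence (base change for Fitting ideals) $\fitting_O(I/I^2)\subset\eta_{f_{\mathdutchbcal Q}}$; then one identifies $\Hom_O(I/I^2, O/\varpi^{n-m+1})\cong \Sel_{\mathcal F(\mathdutchbcal Q)^\rel}(k,\ad^0\rho_{n-m+1})$ directly via the deformation interpretation and the congruence $\rho^{\mathdutchbcal Q}\equiv\rho\pmod{\varpi^{n-m+1}}$ (your detour through $\ad^0\rho^{\mathdutchbcal Q}_r$ and then back to $\ad^0\rho_r$ is redundant, since these modules are literally equal for $r\le n-m+1$); and finally one shows $I/I^2$ is $\varpi^{n-m}$-torsion so that the length of the $\Hom$ group equals $\lg_O(I/I^2)$. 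This last point is where your write-up has a genuine imprecision.

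You invoke Corollary \ref{cor:relaxed_bound_appendix} to bound $\Sel_{\mathcal F(\mathdutchbcal Q)^\rel}$ and say it is ``$\varpi^{n-m+1}$-torsion once $n$ is large.'' Two problems. First, $\Sel_{\mathcal F(\mathdutchbcal Q)^\rel}(k,\ad^0\rho_{n-m+1})$ is \emph{automatically} $\varpi^{n-m+1}$-torsion; the non-trivial claim is that the torsion exponent is strictly smaller, so that $I/I^2$ (not a priori torsion) is forced to have no free part and to be $\varpi^{n-m+1}$-torsion. Second, the constant $C_2$ in Corollary \ref{cor:relaxed_bound_appendix} depends on $|\mathdutchbcal Q|$, so tuning $n_0$ to exceed that bound would make $n_0$ depend on the size of $\mathdutchbcal Q$, which the statement forbids. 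The paper instead uses the hypothesis $\overline\Sel_{\mathcal F(\mathdutchbcal Q)}(k,\ad^0\rho_m)=0$ together with Lemma \ref{stuff like lemma 6.1 FKP}(\ref{lemma 6.1 part II residual selmer groups and torsion}) to see that $\Sel_{\mathcal F(\mathdutchbcal Q)}(k,\ad^0\rho_{n-m+1})$ is $\varpi^{m-1}$-torsion, and then the exact sequence (\ref{relax the local conditions exact sequence in appendix}) to upgrade this to a $\varpi^{m+C_1-1}$-torsion bound on $\Sel_{\mathcal F(\mathdutchbcal Q)^\rel}$, with $C_1$ independent of $\mathdutchbcal Q$. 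Taking $n_0\ge C_1+2m-1$ then makes $I/I^2$ a $\varpi^{n-m}$-torsion module and the identity $\lg_O I/I^2=\lg_O\Hom_O(I/I^2,O/\varpi^{n-m+1})$ follows. Substituting this torsion bound for your length bound repairs the gap; the rest of your argument is sound and matches the paper.
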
 
\begin{proof}
Let $n_1(m, \rho) \geq 1$ satisfy the conclusion of Theorem \ref{main appendix theorem}, and let $C_1$ be the constant from Proposition \ref{prop:const_relaxed}. We set $n_0(m, \rho) \coloneqq\max\set{n_1(m,\rho), C_1 + 2m-1}$, and check the claimed property.  Write $I\coloneqq\kernel f_ {\mathdutchbcal Q} $. We have
$$\fitting_{R_ {\mathdutchbcal Q}} (I)\subset\annihilator_{R_ {\mathdutchbcal Q}} (I), $$
so by base change for Fitting ideals,
$$\fitting_{R_ {\mathdutchbcal Q}/I} (I/I ^ 2)\subset\eta_{f_ {\mathdutchbcal Q}}. $$
Because $R_ {\mathdutchbcal Q}/I = O$, it therefore suffices to bound $\length_OI/I ^ 2. $

Now note that $O $-module maps $I/I ^ 2\rightarrow O/\varpi ^ s $, for any integer $s\geq 1 $, are canonically in bijection with lifts
$R_{\mathdutchbcal Q}\rightarrow O [\epsilon]/(\epsilon ^ 2,\varpi ^ s\epsilon) $
of $f_{\mathdutchbcal Q} $. Taking $s = n- m +1$ and using that $\rho^{\mathdutchbcal Q} \equiv \rho \pmod {\varpi^{n-m+1}}$, such lifts are in bijection with classes in $\summer_{\mathcal F  (\mathdutchbcal Q)^\rel} (k,\adjoint ^ 0\rho_{n-m+1}) $. Hence
\begin{equation}\label{eq:IandSel}
     \Hom(I/I^2, O/\varpi^{n-m+1}) \cong \Sel_{\mathcal F(\mathdutchbcal Q)^\rel}(k, \ad^0 \rho_{n-m+1}).
\end{equation}

Now, by Lemma \ref{stuff like lemma 6.1 FKP}(\ref{lemma 6.1 part II residual selmer groups and torsion}),
$\summer_{\mathcal F (\mathdutchbcal Q)} (k,\adjoint ^ 0\rho_{n-m+1}) $
is $\varpi^{m-1}$-torsion. In particular, (\ref {relax the local conditions exact sequence in appendix}) shows that $\summer_{\mathcal F  (\mathdutchbcal Q)^\rel} (k,\adjoint ^ 0\rho_{n-m+1}) $
is $\varpi ^ {m + C_1-1} $-torsion, hence \emph{a fortiori} $\varpi^{n -m }$-torsion. Since $I$ is finitely generated over $R_{\mathdutchbcal Q}$, we conclude that $I/I^2$ is $\varpi^{n-m}$-torsion. 

Thus $$\lg_O I/I^2 = \length_O\Home_O (I/I ^ 2, O/\varpi ^ {n-m+1}),$$ and the lemma follows from (\ref{eq:IandSel}). 
\end {proof}
We remark that essentially the same argument shows:
\begin{rmk}\label{rmk:appendix_iso_criterion}
    The map $f_{\mathdutchbcal Q}: R_{\mathdutchbcal Q} \to O$ is an isomorphism if and only if $\Sel_{\mathcal F(\mathdutchbcal Q)^\rel} (k, \ad^0 \overline \rho) = 0$.
\end{rmk}
\begin{definition}\label{appendix definition standard}
Let $\q $
be $n $-admissible. We say that $\q $
is
\emph {standard} if:
\begin{enumerate}
\item \label{appendix definition standard part I}
There exists a representation $\tau_\q: G_{k_\q}\rightarrow G (O) $
which is both ordinary and unramified.
\item \label{appendix definition standard part II}For all $m\leq n $, both $$\frac {H ^ 1_\unramified (k_\q,\adjoint ^ 0\rho_m) + H ^ 1_\ordinary (k_\q,\adjoint ^ 0\rho_m)} {H ^ 1_\unramified (k_\q,\adjoint ^ 0\rho_m)} $$
and $$\frac {H ^ 1_\unramified (k_\q,\adjoint ^ 0\rho_m) + H ^ 1_\ordinary (k_\q,\adjoint ^ 0\rho_m)} {H ^ 1_\ordinary (k_\q,\adjoint ^ 0\rho_m)} $$
are free of rank one over $O/\varpi ^ m $.
\end{enumerate}
\end{definition}
\begin{lemma}\label{lem:std_upshot}
Fix an integer $m\geq 1 $ and let $n_0 = n_0 (m,\rho) $
be the integer of Theorem \ref{main appendix theorem}. Let $n \geq \max\set{n_0, 3m}$ be an integer and suppose given a finite set $\mathdutchbcal Q $
of $(n + m) $-admissible primes and two additional $n $-admissible primes ${\mathdutchbcal{p}},\q\not\in\mathdutchbcal Q $, such that:
\begin{enumerate}
\item \label{item q is std not admissible in appendix lemma} $\q $
is standard and
\emph {not} $(n +1) $-admissible.
\item \label{item nonzero Qq selmer group in appendix lemma}$\overline\summer_{\mathcal F (\mathdutchbcal Q)} (k,\adjoint ^ 0\rho_m) =\overline\summer_{\mathcal F (\mathdutchbcal Q\q{\mathdutchbcal{p}})} (k,\adjoint ^ 0\rho_m) = 0 $
but $\overline\summer_{\mathcal F (\mathdutchbcal Q\q)} (k,\adjoint ^ 0\rho_{2 m -1})\neq 0. $
\end{enumerate}
Then the representation $\rho ^ {\mathdutchbcal Q\q{\mathdutchbcal{p}}} $
constructed in Theorem \ref{main appendix theorem} is ramified at ${\mathdutchbcal{p}}$
modulo $\varpi ^ {n + m} $.
\end{lemma}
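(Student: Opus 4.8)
The plan is to argue by contradiction: suppose $\rho^{\mathdutchbcal Q\q\mathdutchbcal p}$ is unramified at $\mathdutchbcal p$ modulo $\varpi^{n+m}$, and derive a contradiction with hypothesis (\ref{item nonzero Qq selmer group in appendix lemma}). First I would use Theorem \ref{main appendix theorem}: the representation $\rho^{\mathdutchbcal Q\q\mathdutchbcal p}$ is $\mathdutchbcal r$-ordinary for all $\mathdutchbcal r\in\mathdutchbcal Q\q\mathdutchbcal p$, unramified outside $S\cup\Sigma_p\cup\mathdutchbcal Q\q\mathdutchbcal p$, lies on the same local component at $S\cup\Sigma_p$ as $\rho$, and agrees with $\rho$ modulo $\varpi^{n-m+1}$. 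The assumption that $\rho^{\mathdutchbcal Q\q\mathdutchbcal p}$ is unramified at $\mathdutchbcal p$ modulo $\varpi^{n+m}$ — combined with the fact that it is automatically unramified outside $S\cup\Sigma_p\cup\mathdutchbcal Q\q\mathdutchbcal p$ — should let me interpret $\rho^{\mathdutchbcal Q\q\mathdutchbcal p} \bmod \varpi^{n+m}$ as giving a lift witnessing a nonzero element of a relative Selmer group for $\ad^0\rho$ with $\mathdutchbcal Q\q$-ordinary conditions (and unramified at $\mathdutchbcal p$). The key observation is that $\q$ being \emph{not} $(n+1)$-admissible forces $n(\q) = n$, so by Lemma \ref{lem:std_upshot}-type reasoning (more precisely, by the standardness of $\q$ and the analysis behind Lemma \ref{lem:R_cong}/Lemma \ref{lem:std_upshot}) the ordinary condition at $\q$ is genuinely ramified to order exactly $n$, which is what produces a nonzero class in $H^1$.

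The heart of the argument is to convert this into the statement $\overline\Sel_{\mathcal F(\mathdutchbcal Q\q)}(k,\ad^0\rho_{2m-1})\ne 0$ contradicting \emph{only if} the class survives, i.e.\ to show the class does \emph{not} land in $\Sel_{\mathcal F(\mathdutchbcal Q\q\mathdutchbcal p)}$ and does \emph{not} come from $\Sel_{\mathcal F(\mathdutchbcal Q)}$. The comparison of $\rho^{\mathdutchbcal Q\q\mathdutchbcal p}$ with $\rho^{\mathdutchbcal Q}$ (both existing by Theorem \ref{main appendix theorem}, since $\overline\Sel_{\mathcal F(\mathdutchbcal Q)}(k,\ad^0\rho_m)=0$ and $\overline\Sel_{\mathcal F(\mathdutchbcal Q\q\mathdutchbcal p)}(k,\ad^0\rho_m)=0$) as two lifts of $\overline\rho$ agreeing modulo $\varpi^{n-m+1}$ gives a cocycle $c\in H^1(k,\ad^0\rho_m)$ (after reducing, since both agree with $\rho$ mod $\varpi^{n-m+1}$ and $n-m+1\ge 2m$). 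By construction $c$ lies in $\Sel_{\mathcal F(\mathdutchbcal Q\q)^\rel}$ or a closely related Selmer group, and the point is: $\rho^{\mathdutchbcal Q\mathdutchbcal p\q}$ unramified at $\mathdutchbcal p$ would force $c$ to be unramified at $\mathdutchbcal p$ too, hence $c\in \Sel_{\mathcal F(\mathdutchbcal Q\q)}(k,\ad^0\rho_m)$; but then the nonvanishing $\overline\Sel_{\mathcal F(\mathdutchbcal Q\q)}(k,\ad^0\rho_{2m-1})\ne 0$ should be incompatible with $c$ being the \emph{only} obstruction — one uses Lemma \ref{stuff like lemma 6.1 FKP}(\ref{lemma 6.1 FKP},\ref{lemma 6.1 part III}) to relate the $\rho_m$, $\rho_{2m-1}$ levels, together with the fact that $\overline\Sel_{\mathcal F(\mathdutchbcal Q\q\mathdutchbcal p)}(k,\ad^0\rho_m)=0$, to conclude that the extra rank-one contribution at $\q$ (which makes $\overline\Sel_{\mathcal F(\mathdutchbcal Q\q)}$ nonzero) must be genuinely ramified at $\mathdutchbcal p$ for the global class to exist. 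In other words, I would set up the exact sequences
$$0\to\Sel_{\mathcal F(\mathdutchbcal Q)}(k,\ad^0\rho_m)\to\Sel_{\mathcal F(\mathdutchbcal Q\q)}(k,\ad^0\rho_m)\to \frac{H^1_\ord(k_\q,\ad^0\rho_m)}{H^1_\unr(k_\q,\ad^0\rho_m)}$$
(and the analogue with $\mathdutchbcal p$ adjoined), use that the first and last-but-adjoined-$\mathdutchbcal p$ Selmer groups have vanishing residual part, and track where the class $c$ sits.

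The main obstacle I anticipate is the bookkeeping of torsion orders: one needs the class $c$ constructed from the two Taylor--Wiles-type representations to be nonzero at the correct level (not drowned out by $O(\varpi)$ ambiguities), which is why the hypotheses involve $\rho_m$, $\rho_{2m-1}$, $\rho_{n+m}$, and $(n+1)$-admissibility simultaneously — the $\q$-ordinary deformation being ramified to order \emph{exactly} $n$ (because $\q$ is not $(n+1)$-admissible) is what guarantees $c$ is nonzero mod $\varpi^m$, and the standardness of $\q$ (Definition \ref{appendix definition standard}) is what guarantees the relevant local quotients are free of rank one so that the Poitou--Tate exact sequences are clean. Carefully matching these — in particular showing that "$\rho^{\mathdutchbcal Q\q\mathdutchbcal p}$ unramified at $\mathdutchbcal p$ mod $\varpi^{n+m}$" really does kill the $\mathdutchbcal p$-component of $c$ at level $\rho_{2m-1}$ and hence forces $\overline\Sel_{\mathcal F(\mathdutchbcal Q\q)}(k,\ad^0\rho_{2m-1})=0$, contradicting (\ref{item nonzero Qq selmer group in appendix lemma}) — is the crux. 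I expect the argument to run parallel to the analogous lemmas in \cite{fakhruddin2021relative} and the congruence-ideal estimate of Lemma \ref{lem:eta_appendix}, so the reasoning should be largely formal once the Selmer-group exact sequences are in place.
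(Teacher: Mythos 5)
Your proposal identifies the right ingredients — the cocycle $c$ comparing $\rho^{\mathdutchbcal Q}$ and $\rho^{\mathdutchbcal Q\q\mathdutchbcal p}$, the role of $\q$ not being $(n+1)$-admissible, the standardness of $\q$, and "Poitou–Tate" in some form — but the key mechanism is missing. You want to argue by contradiction that "$c$ unramified at $\mathdutchbcal p$" forces $\overline\Sel_{\mathcal F(\mathdutchbcal Q\q)}(k,\ad^0\rho_{2m-1})=0$, but a single cocycle being unramified at one place gives no leverage on the dimension of a whole residual Selmer group, and you never supply a route from the first statement to the second. The exact sequence you write down relates primal Selmer groups across level-raising sets, which does not by itself produce the contradiction.

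The step the paper actually uses, which you do not mention, is this: by Lemma \ref{balanced residual selmer ranks lemma}, the nonvanishing $\overline\Sel_{\mathcal F(\mathdutchbcal Q\q)}(k,\ad^0\rho_{2m-1})\neq 0$ is equivalent to $\overline\Sel_{\mathcal F(\mathdutchbcal Q\q)^\ast}(k,\ad^0\rho_{2m-1}(1))\neq 0$, which supplies a \emph{dual} Selmer class $d$. The proof then computes the global Poitou–Tate pairing $\sum_v\langle c,d\rangle_v=0$ directly (no contradiction). At $v\neq\q,\mathdutchbcal p$ the pairing vanishes because $\loc_v c$ lands in the right local condition (Proposition \ref{proposition making selmer conditions deformation appendix}(\ref{proposition making selmer conditions deformation appendix part II}) with the chosen $n_0$). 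At $\q$, standardness gives a perfect $O/\varpi^{2m-1}$-pairing between $(H^1_\unr+H^1_\ord)/H^1_\ord$ and $H^1_\ord/(H^1_\ord\cap H^1_\unr)$; the $\q$ not $(n+1)$-admissible hypothesis forces $\Res_\q c$ to be nonzero mod $\varpi^m$ in the first quotient, and $\overline\Sel_{\mathcal F(\mathdutchbcal Q)^\ast}(k,\ad^0\rho_m(1))=0$ forces $\Res_\q d$ to be nonzero mod $\varpi^m$ in the second, whence $\langle c,d\rangle_\q\neq 0$. Poitou–Tate then yields $\langle c,d\rangle_{\mathdutchbcal p}\neq 0$, and since $\Res_{\mathdutchbcal p} d$ is unramified this forces $\Res_{\mathdutchbcal p} c$ to be ramified. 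Without introducing $d$ and the explicit local pairings at $\q$ and $\mathdutchbcal p$, your sketch cannot close.

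A smaller point: the difference cocycle lives in $H^1(k,\ad^0\rho_{2m-1})$, not $H^1(k,\ad^0\rho_m)$ as you write. Since both representations agree with $\rho$ mod $\varpi^{n-m+1}$, the difference modulo $\varpi^{n+m}$ is a cocycle valued in $\ad^0\rho_{(n+m)-(n-m+1)}=\ad^0\rho_{2m-1}$; this is exactly the level at which the hypothesis $\overline\Sel_{\mathcal F(\mathdutchbcal Q\q)}(k,\ad^0\rho_{2m-1})\neq 0$ is stated, and it matters for making the $\q$-local estimates precise.
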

\begin{proof}
Let $\rho ^ {\mathdutchbcal Q} $
and $\rho ^ {\mathdutchbcal Q\q{\mathdutchbcal{p}}} $
be the representations afforded by Theorem \ref{main appendix theorem}, so that $\rho ^ {\mathdutchbcal Q}\equiv\rho\pmod {\varpi ^ {n +1}} $
and $\rho ^ {\mathdutchbcal Q\q{\mathdutchbcal{p}}}\equiv\rho\pmod {\varpi ^ {n - m +1}}. $
Modulo $\varpi ^ {n + m} $, $\rho_{\mathdutchbcal Q} $
and $\rho_{\mathdutchbcal Q\q{\mathdutchbcal{p}}} $
differ by a cocycle $c\in H ^ 1 (k,\adjoint ^ 0\rho_{2 m -1}) $. (This make sense because $2 m -1\leq n - m +1 $.) Also let $d\in\summer_{\mathcal F (\mathdutchbcal Q\q)} (k,\adjoint ^ 0\rho_{2 m -1}(1)) $
be an element whose image in $\overline\summer_{\mathcal F (\mathdutchbcal Q\q)^\ast} (k,\adjoint ^ 0\rho_{2 m -1}(1)) $
is nonzero, which exists Lemma \ref{balanced residual selmer ranks lemma}. By global Poitou-Tate duality, we have
\begin{equation}\label{appendix equation global PT pairing zero}
    \sum_v\langle c, d\rangle_v = 0, 
\end{equation}
where $\langle c, d\rangle_v $
is the local Tate pairing.
By Proposition \ref{proposition making selmer conditions deformation appendix}(\ref{proposition making selmer conditions deformation appendix part II}) and by the choice of $n_0 $, $\local_v c $
lies in $H ^ 1_{\mathcal F (\mathdutchbcal Q\q)} (k_v,\adjoint ^ 0\rho_{2 m -1}) $
for all $v\neq{\mathdutchbcal{p}},\q $. In particular, 
\begin{equation}\label{eq:appendix_PT_upshot}
    \langle c, d\rangle_{\mathdutchbcal p} \neq 0 \iff \langle c, d\rangle _{\mathdutchbcal q} \neq 0.
\end{equation}Our next claim is that:
\begin{equation}\label{nonzero local tate pairing appendix equation}
\langle c, d\rangle_\q\neq 0. 
\end{equation}
Indeed, $\Res_\q c $
lies in $H ^ 1_{\unramified} (k_\q,\adjoint ^ 0\rho_{2 m -1}) + H ^ 1_\ordinary (k_\q,\adjoint ^ 0\rho_{2 m -1}) $; one can see this by comparing both $\rho ^ {\mathdutchbcal Q} $
and $\rho ^ {\mathdutchbcal Q\q{\mathdutchbcal{p}}} $
to the representation $\tau_\q $
in Definition \ref{appendix definition standard}(\ref{appendix definition standard part I}).  Because $\q $
is not $(n +1) $-admissible, $\rho ^ {\mathdutchbcal Q} $
is not ordinary at $\q $
modulo $\varpi ^ {n +1} $, so 
$$\Res_\q c \in \frac{H^1_\unr(k_\q, \ad^0 \rho_{2m-1}) + H^1_\ord(k_\q, \ad^0\rho_{2m-1})}{H^1_\ord(k_\q, \ad^0 \rho_{2m-1})} \approx O/\varpi^{2m-1}$$
is nonzero modulo $\varpi^m$.
On the other hand, 
\begin{equation*}
    \begin{split}
        \Res_\q d\in \frac {H ^ 1_\ordinary (k_\q,\adjoint ^ 0\rho_{2 m -1} (1))} {H ^ 1_\ordinary (k_\q,\adjoint ^ 0\rho_{2 m -1} (1))\intersection H ^ 1_\unramified (k_\q,\adjoint ^ 0\rho_{2 m -1} (1))}\\ =\frac {H ^ 1_\ordinary (k_\q,\adjoint ^ 0\rho_{2 m -1} (1)) + H ^ 1_\unramified (k_\q,\adjoint ^ 0\rho_{2 m -1} (1))} {H ^ 1_\unramified (k_\q,\adjoint ^ 0\rho_{2 m -1} (1))}\approximate O/\varpi ^ {2 m -1}
    \end{split}
\end{equation*}
is also nonzero modulo $\varpi ^ m $. Otherwise, the image of $d $
modulo $\varpi ^ m $
would lie in $\summer_{\mathcal F (\mathdutchbcal Q)^\ast} (k,\adjoint ^ 0\rho_m (1)) $, which contradicts the assumption that $\overline\summer_{\mathcal F (\mathdutchbcal Q)^\ast} (k,\adjoint ^ 0\rho_m (1)) = 0. $
Since local Poitou-Tate duality gives a perfect pairing
\begin{equation*}
\begin{split}\frac {H ^ 1_\unramified (k_\q,\adjoint ^ 0\rho_{2 m -1}) + H ^ 1_\ordinary (k_\q,\adjoint ^ 0\rho_{2 m -1})} {H ^ 1_\ordinary (k_\q,\adjoint ^ 0\rho_{2 m -1})}\times \frac {H ^ 1_\ordinary (k_\q,\adjoint ^ 0\rho_{2 m -1} (1))} {H ^ 1_\ordinary (k_\q,\adjoint ^ 0\rho_{2 m -1} (1))\intersection H ^ 1_\unramified (k_\q,\adjoint ^ 0\rho_{2 m -1} (1))}\to \\O/\varpi^{2m-1},
\end{split}
\end{equation*}
we indeed have (\ref{nonzero local tate pairing appendix equation}). Then by (\ref{eq:appendix_PT_upshot}), we conclude $$\langle c, d\rangle_{\mathdutchbcal{p}}\neq 0. $$
Since $\local_{\mathdutchbcal{p}} d $
is unramified, we must have $$\Res_{\mathdutchbcal p}c\not\in H ^ 1_\unramified (k_{\mathdutchbcal{p}},\adjoint ^ 0\rho_{2 m -1}). $$
Since $\rho ^ {\mathdutchbcal Q} |_{G_{k_{\mathdutchbcal{p}}}} $
is unramified, and $\Res_{\mathdutchbcal{p}} c $
measures the difference between $\rho ^ {\mathdutchbcal Q} |_{G_{k_{\mathdutchbcal{p}}}} $
and $\rho ^ {\mathdutchbcal Q\q{\mathdutchbcal{p}}} |_{G_{k_{\mathdutchbcal{p}}} }$
modulo $\varpi ^ {n + m} $, this proves the lemma.
\end{proof}
\section{Large image results}\label{sec:large_image}
Throughout this appendix, let $E$ be a finite extension of $\Q_p$, with ring of integers $O_E \subset E$.
\subsection{ Generalities on $p $-adic Lie groups}\label{subsec:appendixC_generalities}

\begin{lemma}\label{facts about simple Lie algebras}
Let $\mathfrak h $ be a simple Lie algebra over $ E $.
\begin{enumerate}
\item\label{facts about simple Lie algebras part one} If $\mathfrak g\subset\mathfrak h ^ {\oplus n} $
is a Lie subalgebra that surjects onto each factor, then $\mathfrak g $ is isomorphic to $\mathfrak h ^ {\oplus m} $ for some integer $ m\leq n $. Up to an automorphism of $\mathfrak h ^ {\oplus n} $, the map $\mathfrak g\cong\mathfrak h ^ {\oplus m}\rightarrow\mathfrak h ^ {\oplus n} $ is given by $$(h_1,\ldots, h_m)\mapsto (\underbrace {h_1,\ldots, h_1}_{n_1\text { times}},\underbrace {h_2,\ldots, h_2}_{n_2\text { times}},\ldots,\underbrace {h_m,\cdots, h_m}_{n_m\text { times}} )$$ with $ n_1+\cdots + n_m = n $.
\item\label{facts about simple Lie algebras part two} The only ideal $I\subset\mathfrak h ^ {\oplus n} $ that surjects onto each factor is $ I =\mathfrak h ^ {\oplus n} $.
\end{enumerate}
\end{lemma}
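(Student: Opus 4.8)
\textbf{Proof plan for Lemma \ref{facts about simple Lie algebras}.}

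The plan is to prove both parts by induction on $n$, with part (\ref{facts about simple Lie algebras part one}) as the main content and part (\ref{facts about simple Lie algebras part two}) a quick consequence. For part (\ref{facts about simple Lie algebras part one}), the base case $n=1$ is trivial: $\mathfrak g \subset \mathfrak h$ surjects onto $\mathfrak h$, so $\mathfrak g = \mathfrak h$. For the inductive step, consider $\mathfrak g \subset \mathfrak h^{\oplus n}$ surjecting onto each factor, and let $\pi_1 \colon \mathfrak h^{\oplus n} \to \mathfrak h$ be the first projection, with $\mathfrak k \coloneqq \mathfrak g \cap \ker \pi_1 = \mathfrak g \cap (0 \oplus \mathfrak h^{\oplus(n-1)})$. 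I would first argue that $\mathfrak k$ surjects onto each of the last $n-1$ factors: indeed $\pi_1(\mathfrak g) = \mathfrak h$ and $\mathfrak g / \mathfrak k \hookrightarrow \mathfrak h$ is onto, so $\mathfrak g$ is an extension of $\mathfrak h$ by $\mathfrak k$; the image of $\mathfrak g$ in the $i$-th factor ($i \geq 2$) is all of $\mathfrak h$, and one checks that the image of $\mathfrak k$ is an \emph{ideal} of $\mathfrak h$ (it is the image of a subalgebra normalized by $\mathfrak g$, and $\mathfrak g$ surjects onto the $i$-th $\mathfrak h$), hence is $0$ or $\mathfrak h$ by simplicity. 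So $\mathfrak k$ surjects onto $\mathfrak h^{\oplus S}$ where $S \subseteq \{2,\dots,n\}$ is the set of factors on which it is nonzero; by the inductive hypothesis applied to $\mathfrak k \subset \mathfrak h^{\oplus |S|}$ (after discarding the zero factors) and then to the diagonal-type embedding, $\mathfrak k \cong \mathfrak h^{\oplus m'}$ with the block-diagonal shape described.

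Next I would analyze the extension $0 \to \mathfrak k \to \mathfrak g \to \mathfrak h \to 0$. Writing $\mathfrak k \cong \mathfrak h^{\oplus m'}$, the key point is that any such extension of $\mathfrak h$ by a semisimple Lie algebra splits (Levi--Malcev, or directly: $H^2(\mathfrak h, \mathfrak k) = 0$ for $\mathfrak h$ simple acting on the semisimple module $\mathfrak k$ — in fact $\mathfrak g$ is semisimple since its radical would be a solvable ideal mapping to an ideal of $\mathfrak h$ and into an ideal of $\mathfrak k$, forcing it to be $0$). So $\mathfrak g \cong \mathfrak h \oplus \mathfrak h^{\oplus m'}= \mathfrak h^{\oplus m}$ with $m = m'+1$, and the decomposition into simple factors is unique. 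It remains to identify the embedding $\mathfrak g \hookrightarrow \mathfrak h^{\oplus n}$ up to automorphism: each simple factor of $\mathfrak g$ maps nontrivially (hence isomorphically, by simplicity of both source and each target component) into a nonempty set of the $n$ coordinate copies of $\mathfrak h$, these sets partition $\{1,\dots,n\}$ because distinct simple ideals of $\mathfrak g$ commute while the diagonal embeddings of a single $\mathfrak h$ do not, and two parallel diagonal embeddings $\mathfrak h \to \mathfrak h \oplus \mathfrak h$ differ by an automorphism of the target (compose with automorphisms of $\mathfrak h$ on each factor to straighten them to the standard diagonal). Collecting these block automorphisms yields the desired normal form.

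Part (\ref{facts about simple Lie algebras part two}) then follows quickly: an ideal $I \subset \mathfrak h^{\oplus n}$ that surjects onto each factor is in particular a subalgebra surjecting onto each factor, so by part (\ref{facts about simple Lie algebras part one}) it has the block-diagonal shape $\mathfrak h^{\oplus m}$; but since $I$ is an ideal and $[\mathfrak h^{\oplus n}, I]$ must land in $I$, if some block has size $n_j \geq 2$ then bracketing with an element supported in a single coordinate of that block produces an element of $\mathfrak h^{\oplus n}$ not of block-diagonal shape, contradicting that it lies in $I$; hence every block has size $1$, $m = n$, and $I = \mathfrak h^{\oplus n}$. Alternatively one can argue directly: the projection $I \to \mathfrak h$ to the $j$-th factor has kernel $I \cap (\text{other factors})$, and since $I$ is an ideal, $\mathfrak h \cdot I \subseteq I$ forces the $j$-th coordinate subalgebra $0 \oplus \mathfrak h \oplus 0$ to lie in $I$ for each $j$, whence $I = \mathfrak h^{\oplus n}$.

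\textbf{Main obstacle.} The only genuinely delicate point is the normal-form assertion in part (\ref{facts about simple Lie algebras part one}): showing that after an automorphism of $\mathfrak h^{\oplus n}$ the embedding becomes the stated block-diagonal one. This requires tracking which simple factors of $\mathfrak g$ hit which coordinates and invoking that the centralizer structure forces a partition, plus the (standard but slightly fiddly) fact that two embeddings of $\mathfrak h$ into $\mathfrak h^{\oplus k}$ that are ``diagonal'' differ by a product of automorphisms of the factors. Everything else — semisimplicity of $\mathfrak g$, vanishing of the relevant $H^2$, the ideal argument in part (\ref{facts about simple Lie algebras part two}) — is routine structure theory.
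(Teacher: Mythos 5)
Your proof is correct, but it takes a genuinely different route from the paper. The paper's inductive step projects $\mathfrak g$ onto the first $n-1$ factors to obtain $\mathfrak g'$, applies the inductive hypothesis to $\mathfrak g'$, and then invokes Goursat's Lemma for Lie algebras on $\mathfrak g \subset \mathfrak g' \oplus \mathfrak h$: since $\mathfrak h$ is simple, $\mathfrak g$ is either all of $\mathfrak g' \oplus \mathfrak h$ or the graph of an isomorphism from a simple factor of $\mathfrak g'$ to $\mathfrak h$, and the normal form is read off directly. You instead pass to the kernel $\mathfrak k = \mathfrak g \cap \ker\pi_1$, show $\mathfrak g$ is semisimple via the radical argument (radical projects to a solvable ideal of each simple factor, hence vanishes), split the extension $0 \to \mathfrak k \to \mathfrak g \to \mathfrak h \to 0$, and then establish the normal form by the partition argument on simple ideals: each simple ideal $\mathfrak s_j$ of $\mathfrak g$ projects to an ideal of $\mathfrak h$ in each coordinate, the supports $S_j$ partition $\{1,\ldots,n\}$ because distinct simple ideals commute while two copies of $\mathfrak h$ surjecting onto the same coordinate cannot, and two diagonal embeddings $\mathfrak h \hookrightarrow \mathfrak h^{\oplus k}$ differ by automorphisms of the target. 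Your argument is arguably more explicit where the paper waves its hand ("it is easy to check the embedding is of the desired form"), but it is also somewhat redundant: once you know $\mathfrak g$ is semisimple, the partition argument already gives everything — including $m \leq n$ — without the extension-splitting step or the induction on $n$ at all, so your proof could be streamlined into a direct, non-inductive argument. The paper's Goursat approach genuinely needs the induction and is more compact.

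For part (\ref{facts about simple Lie algebras part two}), your first argument (appeal to part (\ref{facts about simple Lie algebras part one}), then show a block of size $\geq 2$ is incompatible with the ideal condition) is essentially the paper's, which reduces to the diagonal in $\mathfrak h \oplus \mathfrak h$ and exhibits $[(h_1,h_1),(h_2,0)] = ([h_1,h_2],0)$. Your second argument — that $[\mathfrak h_j, I] \subseteq I$ together with $\pi_j(I) = \mathfrak h$ and perfectness of $\mathfrak h$ forces $\mathfrak h_j \subset I$ for every $j$ — is cleaner, does not use part (\ref{facts about simple Lie algebras part one}) at all, and is not in the paper.
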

\begin{proof}
We prove (\ref{facts about simple Lie algebras part one}) by induction on $ n $, with the case $ n = 1 $ being trivial. Supposing we know (\ref{facts about simple Lie algebras part one}) for $ n -1 $, let $\mathfrak g\subset\mathfrak h ^ {\oplus n} $ be a subalgebra surjective onto each factor, and let $\mathfrak g' $ be the image of $\mathfrak g $ under the projection $\mathfrak h ^ {\oplus n} =\mathfrak h ^ {\oplus (n -1)}\oplus\mathfrak h\rightarrow\mathfrak h ^ {\oplus (n -1)} $. Then by the inductive hypothesis, $\mathfrak g'\cong\mathfrak h ^ {\oplus m} $
for some integer $ m\leq n -1 $. Now, $\mathfrak g\subset\mathfrak g'\oplus\mathfrak h $
is a subalgebra surjective onto each factor, so by Goursat's Lemma for Lie algebras, $\mathfrak g $ is either $\mathfrak g'\oplus\mathfrak h $ or the graph of isomorphism between $\mathfrak h $ and a simple factor of $\mathfrak g' $. In particular, $\mathfrak g $ is isomorphic to either $\mathfrak g'\cong\mathfrak h ^ {\oplus m} $ or $\mathfrak g'\oplus\mathfrak h\cong\mathfrak h ^ {\oplus (m +1)} $, and it is easy to check that the embedding $\mathfrak g\rightarrow\mathfrak h ^ {\oplus n} $ is of the desired form using that $\mathfrak g'\rightarrow\mathfrak h ^ {\oplus (n -1)} $ is.
For  (\ref{facts about simple Lie algebras part two}), it suffices to check that the subalgebras in  (\ref{facts about simple Lie algebras part one}) are never ideals unless $ n = m $ (and hence $ n_1 = n_2 =\cdots = n_m = 1 $). Indeed, it suffices to check that the diagonal subalgebra $\mathfrak h\subset\mathfrak h\oplus\mathfrak h $ is not an ideal, but this is clear: since $\mathfrak h $ is simple, it is not abelian, so for some $ h_1, h_2\in\mathfrak h $ we have $ [h_1, h_2]\neq 0 $. In particular, the bracket $[(h_1, h_1), (h_2, 0)] $ is not contained in the diagonal subalgebra, which witnesses that the latter is not an ideal.
\end{proof}
\begin{corollary}\label{corollary subalgebra of simple Lie algebra with base change}
Let $\mathfrak h $ be an absolutely simple Lie algebra over $\Q_p $. Then for any finite extension $ E/\Q_p $:
\begin{enumerate}
\item \label{corollary subalgebra of simple Lie algebra with base change part one}The base change $\mathfrak h_E\coloneqq\mathfrak h\otimes_{\Q_p} E $ is simple as a Lie algebra over $\Q_p $.
\item \label{corollary subalgebra of simple Lie algebra with base change part two}For any $\Q_p $-Lie subalgebra $\mathfrak g\subset\mathfrak h_E $ such that $ E\cdot\mathfrak g =\mathfrak h_E $, $\mathfrak g $ is simple.
\end{enumerate}
\end{corollary}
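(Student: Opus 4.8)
The statement to prove is Corollary \ref{corollary subalgebra of simple Lie algebra with base change}: for $\mathfrak h$ an absolutely simple Lie algebra over $\Q_p$, (i) $\mathfrak h_E = \mathfrak h \otimes_{\Q_p} E$ is simple as a $\Q_p$-Lie algebra for any finite extension $E/\Q_p$, and (ii) any $\Q_p$-subalgebra $\mathfrak g \subset \mathfrak h_E$ with $E \cdot \mathfrak g = \mathfrak h_E$ is simple. The strategy is to reduce both parts to Lemma \ref{facts about simple Lie algebras} by exploiting the fact that ``absolutely simple'' means $\mathfrak h \otimes_{\Q_p} \overline{\Q_p}$ is simple, and then controlling how ideals over a subfield interact with the decomposition over a splitting field.

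\textbf{Part (i).} First I would pick a finite Galois extension $L/\Q_p$ containing $E$. Then $\mathfrak h_E \otimes_E L \cong \mathfrak h \otimes_{\Q_p} L \cong (\mathfrak h \otimes_{\Q_p} \overline{\Q_p}) \otimes_{\overline{\Q_p}} L$-\,type reasoning shows $\mathfrak h \otimes_{\Q_p} L \cong \mathfrak s^{\oplus [L:\Q_p]}$ where $\mathfrak s = \mathfrak h \otimes_{\Q_p} \overline{\Q_p}$ is simple (here one uses $L \otimes_{\Q_p} \overline{\Q_p} \cong \overline{\Q_p}^{[L:\Q_p]}$ via the distinct embeddings, an argument already invoked in the excerpt for $L_\p$ being a product). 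Now suppose $I \subset \mathfrak h_E$ is a nonzero $\Q_p$-ideal; I want to show $I = \mathfrak h_E$. Consider $I \otimes_{\Q_p} L \subset \mathfrak h_E \otimes_{\Q_p} L \cong \mathfrak s^{\oplus n}$ with $n = [E:\Q_p]\cdot[L:\Q_p]$; this is a nonzero ideal. The Galois group $\Gal(L/\Q_p)$ acts on $\mathfrak h_E \otimes_{\Q_p} L$ permuting a set of simple factors transitively among the $\Gal$-orbit structure; since $I$ is defined over $\Q_p$, $I \otimes_{\Q_p} L$ is Galois-stable. Combining Galois-stability with Lemma \ref{facts about simple Lie algebras}(\ref{facts about simple Lie algebras part two}) (after projecting to show it surjects onto each factor in a suitable orbit) forces $I \otimes_{\Q_p} L = \mathfrak h_E \otimes_{\Q_p} L$, hence $I = \mathfrak h_E$ by dimension count. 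Alternatively, and perhaps more cleanly, one observes directly that $\mathfrak h_E$ as a $\Q_p$-Lie algebra has the property that any ideal $I$ satisfies: $I$ is also an $E$-submodule up to the issue that $E$ might not preserve it — so the cleaner route is to note that the minimal nonzero ideals of $\mathfrak h_E$ are permuted transitively by $\Gal(L/\Q_p)$ acting through the simple factors of $\mathfrak h \otimes L$, and a $\Q_p$-ideal is a union of Galois orbits, whence if it is nonzero it must be everything because $\mathfrak h$ absolutely simple makes the $\Gal$-action transitive on factors.

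\textbf{Part (ii).} Given $\mathfrak g \subset \mathfrak h_E$ with $E\cdot \mathfrak g = \mathfrak h_E$, extend scalars: $\mathfrak g \otimes_{\Q_p} L \subset \mathfrak h_E \otimes_{\Q_p} L \cong \mathfrak s^{\oplus n}$. The hypothesis $E \cdot \mathfrak g = \mathfrak h_E$ implies (after tensoring with $L$) that $\mathfrak g \otimes_{\Q_p} L$ surjects onto each simple factor $\mathfrak s$ (because the $L$-span of $\mathfrak g \otimes L$ is all of $\mathfrak s^{\oplus n}$, and surjectivity onto a given factor is an $L$-linear condition on the image). By Lemma \ref{facts about simple Lie algebras}(\ref{facts about simple Lie algebras part one}), $\mathfrak g \otimes_{\Q_p} L \cong \mathfrak s^{\oplus m}$ for some $m \le n$, so $\mathfrak g$ is semisimple over $\Q_p$; I then need to upgrade semisimplicity to simplicity. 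For this, suppose $\mathfrak g = \mathfrak g_1 \times \mathfrak g_2$ nontrivially as $\Q_p$-Lie algebras; then $\mathfrak h_E = E\cdot\mathfrak g = (E \cdot \mathfrak g_1) + (E\cdot \mathfrak g_2)$, and each $E\cdot\mathfrak g_i$ is an $E$-ideal (indeed a $\Q_p$-ideal) of $\mathfrak h_E$, so their sum being direct-ish would contradict part (i) — more precisely, $E \cdot \mathfrak g_1$ is a nonzero proper ideal unless $\mathfrak g_2 \subset E\cdot\mathfrak g_1$, but one checks $[\mathfrak g_1, \mathfrak g_2] = 0$ forces $[E\cdot\mathfrak g_1, E\cdot \mathfrak g_2] = 0$, so $E\cdot\mathfrak g_1$ would be a proper nonzero ideal of the simple $\mathfrak h_E$, a contradiction. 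Hence $\mathfrak g$ has no nontrivial direct factor, and being semisimple it must be simple.

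\textbf{Main obstacle.} The subtle point is Part (i): carefully setting up the correspondence between $\Q_p$-ideals of $\mathfrak h_E$ and Galois-stable unions of simple factors of $\mathfrak h \otimes_{\Q_p} L$, and making sure ``absolutely simple'' is used exactly where needed (namely to ensure $\mathfrak s$ is simple and that the Galois action on the set of simple factors of $\mathfrak h \otimes L$ is transitive). Once that dictionary is in place, both parts follow mechanically from Lemma \ref{facts about simple Lie algebras}. A secondary care point in Part (ii) is justifying rigorously that $E \cdot \mathfrak g = \mathfrak h_E$ implies $\mathfrak g \otimes_{\Q_p} L$ surjects onto each factor $\mathfrak s$ of $\mathfrak s^{\oplus n}$; this is where one uses that the individual projections $\mathfrak h_E \otimes L \to \mathfrak s$ are $L$-linear so their restriction to the $L$-span of $\mathfrak g\otimes L$ being surjective is equivalent to surjectivity on $\mathfrak g \otimes L$ itself — but one must be slightly careful since the factors $\mathfrak s$ need not individually be defined over $E$, only over $L$, so the argument really does have to pass through $L$.
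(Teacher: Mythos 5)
Your proposal is correct, and for part (i) it coincides with the paper's argument: tensor up to a splitting field, note that a $\Q_p$-ideal becomes a Galois-stable ideal of a product of simple Lie algebras, and use transitivity of the Galois action on factors together with Lemma~\ref{facts about simple Lie algebras}(\ref{facts about simple Lie algebras part two}). The only substantive remark is that the paper tensors directly with $\overline{\Q_p}$ rather than a finite Galois $L$, which avoids the bookkeeping that trips you up: your count $n = [E:\Q_p]\cdot[L:\Q_p]$ is wrong — as an $L$-Lie algebra $\mathfrak h_E \otimes_{\Q_p} L \cong (\mathfrak h\otimes_{\Q_p}L)^{\oplus [E:\Q_p]}$ (indexed by embeddings $E\hookrightarrow L$, via $E\otimes_{\Q_p}L\cong L^{[E:\Q_p]}$), so $n = [E:\Q_p]$ and the relevant simple factor is $\mathfrak h\otimes_{\Q_p}L$, not $\mathfrak h\otimes_{\Q_p}\overline{\Q_p}$. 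Relatedly, ``the $L$-span of $\mathfrak g\otimes L$'' should be the $(E\otimes_{\Q_p}L)$-span — $\mathfrak g\otimes_{\Q_p}L$ is already $L$-stable, so the $L$-span is vacuous.

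For part (ii) you take a genuinely different route. You first extract semisimplicity of $\mathfrak g$ from Lemma~\ref{facts about simple Lie algebras}(\ref{facts about simple Lie algebras part one}) (as the paper does), but then you rule out a nontrivial decomposition $\mathfrak g = \mathfrak g_1\times\mathfrak g_2$ by noting that $E\cdot\mathfrak g_1$ is an ideal of $\mathfrak h_E$ (using $[\mathfrak g_1,\mathfrak g_2]=0$ and $[\mathfrak g_1,\mathfrak g_1]\subset\mathfrak g_1$), then invoking part (i) and triviality of the center. The paper instead takes an arbitrary nonzero ideal $I\subset\mathfrak g$, shows $E\cdot I=\mathfrak h_E$ by part (i), deduces that $I\otimes\overline{\Q_p}$ surjects onto each factor of $\mathfrak h_{\overline{\Q_p}}^{\oplus[E:\Q_p]}$, inspects the possible embeddings $\mathfrak h_{\overline{\Q_p}}^{\oplus m}\hookrightarrow\mathfrak h_{\overline{\Q_p}}^{\oplus[E:\Q_p]}$ from Lemma~\ref{facts about simple Lie algebras}(\ref{facts about simple Lie algebras part one}) to see that $I\otimes\overline{\Q_p}$ surjects onto each factor of $\mathfrak g\otimes\overline{\Q_p}$, and applies Lemma~\ref{facts about simple Lie algebras}(\ref{facts about simple Lie algebras part two}) a second time. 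Your version leans on the structure theorem for semisimple Lie algebras (decomposition into simple ideals) rather than analysing the embedding of $\mathfrak g\otimes\overline{\Q_p}$ in detail, which makes the ``upgrade to simplicity'' step shorter and more conceptual; the paper's version is self-contained within Lemma~\ref{facts about simple Lie algebras}. Both are sound.
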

\begin{proof}
For any subalgebra $\mathfrak g\subset\mathfrak h_E $, consider the extension of scalars $$\mathfrak g\otimes_{\Q_p}\overline\Q_p\subset\mathfrak h_E\otimes_{\Q_p}\overline\Q_p\cong\mathfrak h_{\overline\Q_p} ^ {[E:\Q_p]}. $$ For (\ref{corollary subalgebra of simple Lie algebra with base change part one}), suppose $\mathfrak g $ is an ideal; then the image of $\mathfrak g\otimes_{\Q_p}\overline\Q_p $ in each factor of $\mathfrak h_{\overline\Q_p} ^ {[E:\Q_p]} $ is a $\overline\Q_p $-stable ideal, hence either 0 or $\mathfrak h_{\overline\Q_p} $. Now, $\mathfrak g\otimes_{\Q_p}\overline\Q_p\subset\mathfrak h_E\otimes_{\Q_p}\overline\Q_p $ is stable under the action of $ G_{\Q_p} $, which transitively permutes the factors of $\mathfrak h_{\overline\Q_p} ^ {[E:\Q_p]} $. Hence if $\mathfrak g\neq 0 $, then $\mathfrak g\otimes_{\Q_p}\overline\Q_p $ surjects onto each factor of $\mathfrak h_{\overline\Q_p} ^ {[E:\Q_p]} $. Then by Lemma \ref{facts about simple Lie algebras}(\ref{facts about simple Lie algebras part two}), $\mathfrak g\otimes_{\Q_p}\overline\Q_p =\mathfrak h_E\otimes_{\Q_p}\overline\Q_p $, so $\mathfrak g =\mathfrak h_E $. This proves (\ref{corollary subalgebra of simple Lie algebra with base change part one}).
For (\ref{corollary subalgebra of simple Lie algebra with base change part two}), if $\mathfrak g\cdot E =\mathfrak h_E $, then $\mathfrak g\otimes_{\Q_p}\overline\Q_p $ generates $\mathfrak h_E\otimes_{{\Q_p}}\overline\Q_p \cong  \mathfrak h_{\overline\Q_p} ^ {[E:{\Q_p}]} $
under the action of $ E\otimes_{{\Q_p}}\overline\Q_p \cong\overline\Q_p ^ {[E:{\Q_p}]}$, so $\mathfrak g\otimes_{{\Q_p}}\overline\Q_p $ surjects onto each factor. By Lemma \ref{facts about simple Lie algebras}(\ref{facts about simple Lie algebras part one}), we conclude $\mathfrak g\otimes_{{\Q_p}}\overline\Q_p\cong\mathfrak h_{\overline\Q_p} ^ {\oplus m} $ for some $ m\leq [E:{\Q_p}] $.

If $ I\subset\mathfrak g $ is a nonzero ideal, then $ I\cdot E $ is a nonzero ideal of $\mathfrak g\cdot E =\mathfrak h_E $, so $ I\cdot E =\mathfrak h_E $. But then $I\otimes_{{\Q_p}}\overline\Q_p $
is an ideal of $\mathfrak g\otimes_{{\Q_p}}\overline\Q_p $ that surjects onto each factor of $\mathfrak h_{\overline\Q_p} ^ {[E:{\Q_p}]} $, and, inspecting the possible embeddings $$\mathfrak g\otimes_{{\Q_p}}\overline\Q_p\cong\mathfrak h ^ {\oplus m}_{\overline\Q_p}\hookrightarrow\mathfrak h_{\overline\Q_p} ^ {[E:{\Q_p}]} $$ from Lemma \ref{facts about simple Lie algebras}(\ref{facts about simple Lie algebras part one}), we conclude that $ I\otimes_{{\Q_p}}\overline\Q_p $ surjects onto each factor of $\mathfrak g\otimes_{\Q_p}\overline\Q_p\cong\mathfrak h ^ {\oplus m}_{\overline\Q_p} $. But by Lemma \ref{facts about simple Lie algebras}(\ref{facts about simple Lie algebras part two}), $ I\otimes_{{\Q_p}}\overline\Q_p =\mathfrak g\otimes_{{\Q_p}}\overline\Q_p $, so then $ I =\mathfrak g $. This proves (\ref{corollary subalgebra of simple Lie algebra with base change part two}).
\end{proof}
\subsection{ Strongly irreducible representations}\label{subsec:appendixC_strongly_irr}
For the following definition only, we allow $E$ to be an arbitrary algebraic extension of $\Q_p$. 
\begin{definition}\label{definition strongly irreducible}
Suppose $V$ is a finite-dimensional $E$-vector space, $G$ is a group, and $\rho: G\to \GL_E(V)$ is a representation. 
 Then $V$ (or $\rho$) is said to be
 {strongly irreducible} if, for any finite-index subgroup $ H\subset G $, $ (\rho, V) $ is absolutely irreducible as a representation of $ H $.
\end{definition}
\begin{lemma}\label{lemma with all the strongly irreducible statements}
Let $ V $ be an $ E $-vector space of finite dimension, and let $ G\subset\GL_E (V) $ be a compact $p $-adic Lie subgroup. If $ V $ is strongly irreducible as a representation of $ G $, then:
\begin{enumerate}
\item \label{lemma strongly irreducible part trivial center}No nontrivial element of $ G/Z_G $ is fixed under conjugation by an open subgroup $ U\subset G $; in particular $ G/Z_G $ has trivial center.
\item \label{lemma strongly irreducible part no finite normal}The group $ G/Z_G $ contains no finite normal subgroup.
\item\label{lemma strongly irreducible part unipotents have only one eigenvalue} If $ g\in G $ acts unipotently on $\Lie G $, then $ g $
has only one eigenvalue on $ V $.
\item\label{lemma strongly irreducible part lie algebra is semisimple} The Lie algebra $\Lie G/Z_G$ is semisimple.

\item \label{lemma strongly irreducible part center contains open subgroup of determinant}
The natural maps $ G\intersection\SL_E (V)\rightarrow G/Z_G $ and $Z_G \to \det G$ induce  isomorphisms on Lie algebras.
\item\label{lemma strongly irreducible part lie algebra representation is irreducible} $ V $ is absolutely irreducible as a representation of $\Lie (G\intersection\SL_E (V)) $.
\end{enumerate}
\end{lemma}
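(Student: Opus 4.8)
The strategy is to run everything through Schur's lemma, applied both to $G$ and to its Lie algebra $\mathfrak g = \Lie G$. The first point to establish is that strong irreducibility forces $V\otimes_E\overline E$ to be an absolutely irreducible $\mathfrak g$-module: choosing an open uniform pro-$p$ subgroup $U\subseteq G$, the $\overline E[U]$-stable subspaces of $V\otimes_E\overline E$ coincide with the $\mathfrak g\otimes_E\overline E$-stable ones, and $V$ is absolutely irreducible over $U$ by hypothesis. Since the representation $\mathfrak g\hookrightarrow\mathfrak{gl}_E(V)$ is faithful and now absolutely irreducible, the standard structure theory in characteristic zero shows $\mathfrak g$ is reductive, so $\mathfrak g = Z(\mathfrak g)\oplus[\mathfrak g,\mathfrak g]$ with $[\mathfrak g,\mathfrak g]$ semisimple. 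Schur's lemma identifies $Z(\mathfrak g)$ with the scalars in $\mathfrak g$, i.e.\ with $\mathfrak g\cap E\cdot\identity$; and since $Z_G = G\cap E^\times\cdot\identity$ (Schur's lemma for $G$) and $\Lie$ commutes with intersections of closed subgroups, $Z(\mathfrak g) = \Lie Z_G$. Hence $\Lie(G/Z_G) = \mathfrak g/\Lie Z_G \cong [\mathfrak g,\mathfrak g]$ is semisimple, which is (4).

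Parts (5) and (6) are then bookkeeping on this decomposition. Set $\mathfrak g_0 = \mathfrak g\cap\mathfrak{sl}_E(V) = \Lie(G\cap\SL_E(V))$. Semisimple subalgebras act by trace-zero operators, so $[\mathfrak g,\mathfrak g]\subseteq\mathfrak g_0$; a nonzero scalar has nonzero trace, so $Z(\mathfrak g)\cap\mathfrak g_0 = 0$; combining with $\mathfrak g = Z(\mathfrak g)\oplus[\mathfrak g,\mathfrak g]$ gives $\mathfrak g = \Lie Z_G\oplus\mathfrak g_0$ and $\mathfrak g_0 = [\mathfrak g,\mathfrak g]$. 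Thus the composite $\mathfrak g_0\hookrightarrow\mathfrak g\twoheadrightarrow\mathfrak g/\Lie Z_G$ is an isomorphism, the first assertion of (5). For the second, the Lie algebra map of $\det\colon G\to E^\times$ is the trace $\trace\colon\mathfrak g\to E$, which vanishes on $[\mathfrak g,\mathfrak g]$ and is nonzero on $\Lie Z_G$ whenever the latter is nonzero; so $\Lie(\det G)$ and $\Lie Z_G$ are simultaneously zero or one-dimensional, and in the latter case $\Lie Z_G = E\cdot\identity\xrightarrow{\trace}\Lie(\det G) = E$ is multiplication by $\dim V\ne 0$, hence an isomorphism. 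Finally, any $\mathfrak g_0$-stable subspace of $V\otimes_E\overline E$ is automatically stable under the scalars $Z(\mathfrak g)$, hence under $\mathfrak g = Z(\mathfrak g)\oplus\mathfrak g_0$, so absolute irreducibility over $\mathfrak g$ descends to $\mathfrak g_0$; that is (6).

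For (1) I would argue directly. If $gZ_G$ is fixed by conjugation by an open subgroup $U\subseteq G$, then $ugu^{-1} = \lambda(u)g$ with $\lambda\colon U\to E^\times$ a homomorphism, and comparing determinants gives $\lambda(u)^{\dim V} = 1$; hence $\ker\lambda$ is an open subgroup of $G$ centralizing $g$, and Schur's lemma for $\ker\lambda$ (absolutely irreducible by strong irreducibility) forces $g$ to be scalar, so $g\in Z_G$. The ``in particular'' clause and (2) then follow formally: a finite normal subgroup $N$ of $G/Z_G$ is centralized by the open kernel of the continuous conjugation map $G/Z_G\to\operatorname{Aut}(N)$, hence is trivial by (1).

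The remaining part (3) is where I expect the only genuine difficulty. If $\operatorname{Ad}(g)$ is unipotent on $\mathfrak g$ then it is unipotent on the characteristic subalgebra $\mathfrak g_0 = [\mathfrak g,\mathfrak g]$, which is semisimple by (4)--(5); over $\overline E$ a unipotent automorphism of a semisimple Lie algebra lies in the identity component of its automorphism group, i.e.\ is inner, so $\operatorname{Ad}(g)|_{\mathfrak g_0\otimes_E\overline E} = \operatorname{Ad}(\exp N)$ for some nilpotent $N\in\mathfrak g_0\otimes_E\overline E$, where $\exp N\in\GL(V\otimes_E\overline E)$ is the corresponding unipotent operator. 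Then $g\exp(-N)$ centralizes $\mathfrak g_0\otimes_E\overline E$, which acts absolutely irreducibly on $V\otimes_E\overline E$ by (6), so $g\exp(-N)$ is scalar by Schur and $g = \lambda\exp N$ has the single eigenvalue $\lambda$. The main obstacle is thus this characteristic-zero algebraic-group input — identifying unipotent automorphisms of a semisimple Lie algebra with the $\operatorname{Ad}$ of unipotent elements of the adjoint group — together with setting up carefully the passage from ``$V$ absolutely irreducible over $G$'' to ``over $\mathfrak g$'' via a uniform open subgroup.
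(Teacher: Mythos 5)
Your proof is correct, and the overall architecture is the same (Schur's lemma plus the reductive decomposition $\mathfrak g = Z(\mathfrak g)\oplus[\mathfrak g,\mathfrak g]$), but you diverge from the paper in two places worth noting.

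First, you make explicit a step the paper leaves implicit: the passage from strong irreducibility of $V$ over $G$ (or its open subgroups) to absolute irreducibility of $V\otimes_E\overline E$ over $\mathfrak g$, via a uniform pro-$p$ open subgroup. The paper simply cites Bourbaki for the reductive decomposition of $\mathfrak g$ and later invokes strong irreducibility to rule out $\mathfrak g$-stable subspaces, without spelling out the $G\leftrightarrow\mathfrak g$ dictionary; your version fills this in cleanly, which is a small gain in rigor.

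Second, and more substantively, your proof of (3) runs along a genuinely different route. The paper takes the multiplicative Jordan decomposition $g = g^{ss}g^u$ inside $\GL_E(V)$, expands $\Ad(g) - 1$ in terms of $(\Ad(g^{ss})-1)$ and $(\Ad(g^u)-1)$, and observes that nilpotence of $\Ad(g)-1$ on $\mathfrak g$ together with diagonalizability of $\Ad(g^{ss})-1$ forces $\mathfrak g\subseteq\ker(\Ad(g^{ss})-1)$; Schur's lemma on the open centralizer of $g^{ss}$ then makes $g^{ss}$ scalar. This argument uses only linear algebra and does not need parts (4)--(6) at all. Your route first builds the semisimple quotient $\mathfrak g_0$, then invokes the structure theorem that a unipotent automorphism of a semisimple Lie algebra lies in the identity component $\operatorname{Inn}(\mathfrak g_0)$ of $\operatorname{Aut}(\mathfrak g_0)$, hence equals $\operatorname{Ad}(\exp N)$ for some nilpotent $N\in\mathfrak g_0\otimes\overline E$, and finishes with Schur applied to $g\exp(-N)$. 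This is correct, but it depends on a heavier algebraic input (the component-group theory of $\operatorname{Aut}$ of a semisimple Lie algebra) and requires your reordering so that (4)--(6) are available before (3). The paper's version is more self-contained and elementary; yours is conceptually cleaner once the reductive structure is on the table. Both are valid; I would just flag that in the paper's numbering, (3) is proved before (4)--(6), so if you adopt your ordering you should state it explicitly to avoid the appearance of circularity.

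Parts (1), (2), (5), and (6) are essentially identical in substance to the paper's proofs.
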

\begin{proof}
\begin{enumerate}\item Let $ h\in G $ be an element whose image in $ G/Z_G $ is invariant under conjugation by $ U $. Then for all $ g\in U $, $ h g h ^{-1} g ^{-1} $ lies in $ Z_G $, so by Schur's Lemma 
\begin{equation}\label{Scherz Lemma strongly irreducible equation}
h g h ^{-1} = g \lambda_h(g)\;\;\text {for a scalar $\lambda_h (g)\in E ^\times $}.\end{equation}
It is easy to check that $ g\mapsto\lambda_h (g) $ is a group homomorphism $ U\rightarrow E ^\times $. On the other hand, if $\dimension_E V = n $, then (\ref {Scherz Lemma strongly irreducible equation}) implies that $\lambda_h (g) $
lies in $\mu_n (E) $ for all $ g\in U $. In particular, the homomorphism $ g\mapsto\lambda_h (g) $ has open kernel, so $ h $ commutes with an open subgroup of $ U $. By strong irreducibility and Schur's Lemma again, $ h $ is scalar, so has trivial image in $ G/Z_G $.

\item

Let $ H\subset G/Z_G $ be a finite normal subgroup. Then the map $ G\rightarrow\automorphisms (H) $ has open kernel, so (\ref{lemma strongly irreducible part trivial center}) implies that $ H $ is trivial.


\item Let $g = g ^{ss} g ^ u $ be the Jordan decomposition in $\GL_E (V) $. Then $$\Adjoint (g)-1 = (\Adjoint (g ^ {ss}) -1) (\Adjoint (g ^ u) -1) + (\Adjoint (g ^ u )-1) + (\Adjoint (g ^ {ss}) -1) $$ as operators on $\mathfrak {gl}_E (V) $. In particular, if $\Adjoint (g) -1 $
is nilpotent on $\mathfrak g\subset\mathfrak {gl}_E (V) $, then for $ N $ sufficiently large, $\mathfrak g $ lies in the kernel of $ (\Adjoint (g ^ {ss}) -1) ^ N $.

But since $\Adjoint g ^ {s s} -1 $ is diagonalizable over $\overline\Q_p $ as an operator on $\mathfrak {gl}_E (V) $, we conclude $\mathfrak g $ lies in the kernel of $\Adjoint (g ^ {ss}) -1 $; hence $ g ^ {ss} $ commutes with an open subgroup of $ G $, so by Schur's Lemma $ g ^ {ss} $ is a scalar in $\GL_E (V) $. In particular, $ g = g ^ {ss} g ^ u $ has a single eigenvalue on $ V $.
\item  By \cite[\S6, Proposition 5]{bourbaki1960algebres}, $\mathfrak g$ is  a direct sum $\mathfrak g = \mathfrak h \oplus \mathfrak s$, with $\mathfrak h$ semisimple and $\mathfrak s$ abelian. Since $\Lie(G/Z_G)$ has trivial center by (\ref{lemma strongly irreducible part trivial center}), it follows that the natural maps induce isomorphisms
 $\mathfrak h \isomorphism \Lie (G/Z_G)$ and $\Lie Z_G \isomorphism \mathfrak s$. 
In particular, $\Lie (G/Z_G)$ is semisimple.  \item 
The map $\Lie (G\cap \SL_E(V)) \to\Lie (G/Z_G) = \mathfrak h$ is injective with abelian cokernel; hence it is an isomorphism. Since $\mathfrak g = \mathfrak h \oplus \mathfrak s$, it follows that the determinant identifies  $ \Lie Z_G = \mathfrak s\isomorphism \Lie (\det(G))$. 
\item By (\ref{lemma strongly irreducible part center contains open subgroup of determinant}), 
$\mathfrak g $ is a direct sum $\mathfrak g =\Lie (Z_G)\oplus\Lie (G\intersection\SL_E (V)) $. So if $\Lie (G\intersection\SL_E (V)) $ stabilized any subspace of $ V $ after extending scalars, $\mathfrak g $ would as well, which contradicts strong irreducibility.
\end{enumerate}
\end{proof}
\begin{lemma}\label{lemma strongly irreducible in dimension 2 or 4}
Let $V $ be a symplectic $ E $-vector space of dimension 2 or 4, and let $ G\subset\GSP_E (V) $ be a compact $p $-adic Lie subgroup. If $ V $ is strongly irreducible as a representation of $ G $, then every nontrivial closed normal subgroup of $ G/Z_G $ has finite index.
\end{lemma}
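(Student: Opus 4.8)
The plan is to reduce the statement to the $\Q_p$-simplicity of the Lie algebra $\mathfrak g' \coloneqq \Lie(G/Z_G)$. By Lemma \ref{lemma with all the strongly irreducible statements}(\ref{lemma strongly irreducible part center contains open subgroup of determinant}) this is canonically identified with $\Lie(G\cap\SL_E(V))$, which is a $\Q_p$-Lie subalgebra of $\mathfrak{sp}_E(V)$ since $\mathfrak{gsp}_E(V)\cap\mathfrak{sl}_E(V)=\mathfrak{sp}_E(V)$. Now let $N\subset G/Z_G$ be a nontrivial closed normal subgroup; being closed in the compact $p$-adic Lie group $G/Z_G$, it is itself a compact $p$-adic Lie group. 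If $\dim N=0$ then $N$ is discrete and compact, hence finite, hence trivial by Lemma \ref{lemma with all the strongly irreducible statements}(\ref{lemma strongly irreducible part no finite normal}) --- a contradiction. So $\dim N>0$, and $\mathfrak n\coloneqq\Lie N$ is a nonzero ideal of $\mathfrak g'$. It therefore suffices to prove that $\mathfrak g'$ is simple as a $\Q_p$-Lie algebra: then $\mathfrak n=\mathfrak g'$, so $N$ is open in $G/Z_G$, hence of finite index by compactness.

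To prove $\mathfrak g'$ is $\Q_p$-simple, recall from Lemma \ref{lemma with all the strongly irreducible statements}(\ref{lemma strongly irreducible part lie algebra is semisimple}) that $\mathfrak g'$ is semisimple, so $\mathfrak g'\otimes_{\Q_p}\overline\Q_p=\prod_\alpha\mathfrak s_\alpha$ is a product of absolutely simple Lie algebras, permuted by $G_{\Q_p}$, with the $\Q_p$-simple factors of $\mathfrak g'$ corresponding to the $G_{\Q_p}$-orbits. Fix an algebraic closure $\overline\Q_p\supset E$ and write $V\otimes_{\Q_p}\overline\Q_p=\bigoplus_{\sigma\colon E\hookrightarrow\overline\Q_p}V_\sigma$ with $V_\sigma=V\otimes_{E,\sigma}\overline\Q_p$ of $\overline\Q_p$-dimension $\dim_E V\leq 4$. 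Since $\mathfrak g'$ acts $E$-linearly on $V$, each $V_\sigma$ is stable under $\mathfrak g'\otimes_{\Q_p}\overline\Q_p$, carries the base change $\psi_\sigma$ of the symplectic form preserved by $\mathfrak g'\subset\mathfrak{sp}_E(V)$, and is irreducible over $\mathfrak g'\otimes_{\Q_p}\overline\Q_p$ by Lemma \ref{lemma with all the strongly irreducible statements}(\ref{lemma strongly irreducible part lie algebra representation is irreducible}) together with a Galois twist (using that $\mathfrak g'$ is defined over $\Q_p$ and $G_{\Q_p}$ is transitive on the $\sigma$). Let $\mathfrak q_\sigma$ be the image of $\mathfrak g'\otimes_{\Q_p}\overline\Q_p$ in $\End_{\overline\Q_p}(V_\sigma)$; it is a product $\prod_{\alpha\in S_\sigma}\mathfrak s_\alpha$ acting faithfully, irreducibly, and symplectically on $V_\sigma$. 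If $|S_\sigma|\geq 2$, then the irreducible faithful module $V_\sigma$ would be an external tensor product of at least two nontrivial irreducibles, forcing (by $\dim V_\sigma\leq 4$) $|S_\sigma|=2$, each factor $\cong\mathfrak{sl}_2$, and $V_\sigma\cong W_1\boxtimes W_2$ with each $W_i$ the $2$-dimensional representation; but $W_1\otimes W_2$ then carries an invariant symmetric form and no invariant alternating form, contradicting the presence of $\psi_\sigma$. Hence $|S_\sigma|=1$ and $\mathfrak q_\sigma=\mathfrak s_{\alpha(\sigma)}$ for a single index $\alpha(\sigma)$.

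Finally, $\sigma\mapsto\alpha(\sigma)$ is $G_{\Q_p}$-equivariant, and it is surjective onto $\{\mathfrak s_\alpha\}$ because $\mathfrak g'\otimes_{\Q_p}\overline\Q_p$ acts faithfully on $\bigoplus_\sigma V_\sigma$, so no factor $\mathfrak s_\alpha$ can kill every $V_\sigma$. Since $G_{\Q_p}$ is transitive on $\{\sigma\}$ it is transitive on $\{\mathfrak s_\alpha\}$, so $\mathfrak g'$ is $\Q_p$-simple, which finishes the proof.

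The step I expect to be the main obstacle is the irreducibility of each $V_\sigma$ over $\mathfrak g'\otimes_{\Q_p}\overline\Q_p$ and, with it, the passage from ``$E\cdot\mathfrak g'$ is absolutely simple over $E$'' to ``$\mathfrak g'$ is $\Q_p$-simple'': one must be careful that the ``absolutely irreducible'' hypothesis of Lemma \ref{lemma with all the strongly irreducible statements}(\ref{lemma strongly irreducible part lie algebra representation is irreducible}) is applied to \emph{every} embedding $\sigma$, not only the distinguished inclusion $E\subset\overline\Q_p$; this is where the Galois-transitivity on embeddings enters. An alternative is to first show $E\cdot\mathfrak g'$ is the split form $\mathfrak{sl}_{2,E}$ or $\mathfrak{sp}_{4,E}$ (the nonsplit forms of $\mathfrak{sl}_2$ and $\mathfrak{sp}_4$ admitting no such representation over $E$) and then invoke Corollary \ref{corollary subalgebra of simple Lie algebra with base change}(\ref{corollary subalgebra of simple Lie algebra with base change part two}).
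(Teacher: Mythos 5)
Your argument is correct, and it takes a route that differs in its intermediate steps from the paper's, though the core representation-theoretic obstruction is the same.

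The paper works over a splitting field $E'\supset E$ of $E\cdot\overline{\mathfrak g}$: it observes that $\overline{\mathfrak g}$ is semisimple of rank at most $2$, enumerates the possibilities as $\mathfrak{sl}_2$, $\mathfrak{sl}_2\times\mathfrak{sl}_2$, or $\mathfrak{sp}_4$, rules out the middle case because $\mathfrak{sl}_2\times\mathfrak{sl}_2$ has no faithful irreducible $2$- or $4$-dimensional symplectic representation, and then appeals directly to Corollary~\ref{corollary subalgebra of simple Lie algebra with base change}(\ref{corollary subalgebra of simple Lie algebra with base change part two}) to descend the absolute simplicity of $E'\cdot\overline{\mathfrak g}$ to $\Q_p$-simplicity of $\overline{\mathfrak g}$. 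You instead decompose $V\otimes_{\Q_p}\overline\Q_p=\bigoplus_\sigma V_\sigma$ directly, show the image $\mathfrak q_\sigma$ of $\mathfrak g'\otimes_{\Q_p}\overline\Q_p$ on each summand is absolutely simple by the same symplectic-tensor-product argument (a faithful irreducible symplectic module of dimension $\leq 4$ over a product of $\geq 2$ simple factors would have to be $W_1\boxtimes W_2$ with a symmetric, not alternating, invariant form), and then close by observing that $\sigma\mapsto\alpha(\sigma)$ is a $G_{\Q_p}$-equivariant surjection onto the set of $\overline\Q_p$-simple factors, giving a single Galois orbit, hence $\Q_p$-simplicity. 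In effect you re-prove the relevant special case of Corollary~\ref{corollary subalgebra of simple Lie algebra with base change}(\ref{corollary subalgebra of simple Lie algebra with base change part two}) in the course of the argument. What your version buys is that it sidesteps the rank classification entirely — everything comes from dimension and self-duality of the $V_\sigma$, and the argument is uniform across $n=2,4$ and insensitive to which split rank-$2$ algebras could \emph{a priori} appear — at the cost of slightly redundant Galois bookkeeping given that the corollary is already available. You noted the paper's alternative yourself at the end; your ``main obstacle'' — irreducibility of every $V_\sigma$, not just the distinguished one — is indeed the right place for care, and your Galois twist handles it correctly since $\mathfrak g'$ is defined over $\Q_p$.
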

\begin{proof}
Abbreviate $\overline G = G/Z_G $
and $\overline {\mathfrak g} =\Lie\overline G $. Then $\overline {\mathfrak g} $
is a Lie subalgebra (over ${\Q_p} $) of $\mathfrak {sp}_{n, E} $, with $ n = 2 $ or 4. After replacing $ E $ with a finite extension, we may assume $ E\cdot\overline {\mathfrak g}\subset\mathfrak {s p}_{n, E} $ is split. It is also semisimple (by Lemma \ref{lemma with all the strongly irreducible statements}(\ref{lemma strongly irreducible part lie algebra is semisimple})) of rank at most 2, so $ E\cdot\overline {\mathfrak g} $ is isomorphic to $\mathfrak {s l}_{2, E} $, $\mathfrak {s l}_{2, E}\times\mathfrak {s l}_{2, E} $, or $\mathfrak {s p}_{4, E} $. The second case is impossible by Lemma \ref{lemma with all the strongly irreducible statements}(\ref{lemma strongly irreducible part center contains open subgroup of determinant}, \ref{lemma strongly irreducible part lie algebra representation is irreducible}), since $\mathfrak {s l}_{2, E}\times\mathfrak {s l}_{2, E} $ admits no faithful irreducible two- or four-dimensional symplectic representation. Hence $ E\cdot\overline {\mathfrak g} $ is simple, so $\overline {\mathfrak g} $ is simple by Corollary \ref{corollary subalgebra of simple Lie algebra with base change}(\ref{corollary subalgebra of simple Lie algebra with base change part two}). Now if $ H\subset\overline G $ is any closed normal subgroup, it has the structure of a compact $p $-adic Lie subgroup by \cite[Ch. III, Th\'eor\`eme 3.2.3]{lazard1965groupes}. In particular, $\mathfrak h\coloneqq\Lie H $ is an ideal of $\overline {\mathfrak g} $, hence $\mathfrak h = 0 $ or $\mathfrak h =\overline {\mathfrak g} $. By arguing with the exponential map, we see that $ H $ is either finite or has finite index; but if finite it is trivial by Lemma \ref{lemma with all the strongly irreducible statements}(\ref{lemma strongly irreducible part no finite normal}), so the lemma is proved.
\end{proof}
In fact, we extract the following more precise statement in the four-dimensional case.
\begin{prop}\label{prop cases for subgroups of GSp4}
Let $G\subset\GSP_4 (E) $ be a compact $p $-adic Lie subgroup, such that the defining representation is strongly irreducible. Write $\mathfrak h\coloneqq\Lie (G/Z_G)\subset\mathfrak {s p}_{4, E} $. Then after a finite extension of $ E $:
\begin{enumerate}
\item \label{prop cases for subgroups of GSp4 part one}$ E\cdot\mathfrak h $ is isomorphic to either $\mathfrak {s p}_{4, E} $ or $\mathfrak {s l}_{2, E} $.
\item \label{prop cases for subgroups of GSp4 part two}In the latter case $ G $ is contained in the image of the symmetric cube representation $\symmetric^3:\GL_2 (E)\rightarrow\GL_4 (E) $ up to $\GL_4 (E) $-conjugacy.
\end{enumerate}
\end{prop}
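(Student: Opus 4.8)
The key structural input is that the representation is strongly irreducible, so Lemma \ref{lemma with all the strongly irreducible statements}(\ref{lemma strongly irreducible part lie algebra is semisimple}) gives that $\mathfrak h = \Lie(G/Z_G)$ is semisimple, and by Corollary \ref{corollary subalgebra of simple Lie algebra with base change}(\ref{corollary subalgebra of simple Lie algebra with base change part two}) it is in fact simple (as in the proof of Lemma \ref{lemma strongly irreducible in dimension 2 or 4}). After a finite extension of $E$ we may assume $E\cdot\mathfrak h\subset\mathfrak{sp}_{4,E}$ is a split simple Lie algebra, and Lemma \ref{lemma with all the strongly irreducible statements}(\ref{lemma strongly irreducible part center contains open subgroup of determinant},\ref{lemma strongly irreducible part lie algebra representation is irreducible}) tells us that the defining four-dimensional symplectic representation, restricted to $\mathfrak g_0 := \Lie(G\cap\SP_4(E))$, is absolutely irreducible, and $E\cdot\mathfrak g_0 = E\cdot\mathfrak h$. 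So the first step is a classification: list the split simple Lie algebras $\mathfrak g$ of rank $\le 2$ admitting a faithful, irreducible, symplectic four-dimensional representation. Candidates are $\mathfrak{sl}_2$ (rank 1), $\mathfrak{sl}_3$, $\mathfrak{so}_5 = \mathfrak{sp}_4$, $\mathfrak{sl}_2\times\mathfrak{sl}_2$ (rank 2), and $\mathfrak g_2$; but $\mathfrak g_2$ has no 4-dimensional representation, $\mathfrak{sl}_3$ has no 4-dimensional irreducible representation, and $\mathfrak{sl}_2\times\mathfrak{sl}_2$ has no \emph{faithful} irreducible 4-dimensional symplectic representation (its only irreducible 4-dimensional representation is $\mathrm{std}\boxtimes\mathrm{std}$, which is orthogonal, and the faithful ones are not irreducible over $E$ — this is exactly the observation already used in the proof of Lemma \ref{lemma strongly irreducible in dimension 2 or 4}). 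This leaves $E\cdot\mathfrak h\cong\mathfrak{sp}_{4,E}$ or $E\cdot\mathfrak h\cong\mathfrak{sl}_{2,E}$, which is (\ref{prop cases for subgroups of GSp4 part one}).

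For (\ref{prop cases for subgroups of GSp4 part two}), suppose we are in the case $E\cdot\mathfrak g_0\cong\mathfrak{sl}_{2,E}$. Then the four-dimensional symplectic representation of $\mathfrak g_0$, being absolutely irreducible of dimension 4 for $\mathfrak{sl}_2$, must be the symmetric cube $\Sym^3$ of the standard representation (the unique 4-dimensional irreducible $\mathfrak{sl}_2$-representation), which indeed carries a symplectic form. Thus after conjugating by an element of $\GL_4(E)$ we may assume $\mathfrak g_0 = \Sym^3(\mathfrak{sl}_{2,E'})$ for a subalgebra $\mathfrak{sl}_{2,E'}\subset\mathfrak{sl}_{2,E}$ with $E'\cdot\mathfrak g_0 = \mathfrak{sl}_{2,E}$ (using Corollary \ref{corollary subalgebra of simple Lie algebra with base change} again to descend the field of definition). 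Now I would argue that $G$ normalizes the corresponding connected subgroup: concretely, the image of $\Sym^3$ is the unique connected algebraic subgroup of $\SP_4$ (up to conjugacy) with Lie algebra $\mathfrak g_0$, and $G$ acts on $\mathfrak g_0$ by the adjoint action through $G/Z_G$, preserving $\mathfrak g_0$. Hence $G/Z_G$ lands in the normalizer $N_{\PGSp_4(E)}(\Sym^3(\mathfrak{sl}_2))$. The key point is then to identify this normalizer: the image of $\Sym^3:\GL_2\to\GSp_4$ descends to an embedding $\PGL_2\hookrightarrow\PGSp_4$, and I would show its normalizer equals its own image, because any automorphism of $\PGL_2$ is inner (here one uses $p>2$, or at least that $\PGL_2$ has no outer automorphisms over an algebraically closed field) and there is no extra element swapping or twisting — this can be checked on root data, or directly by noting that an element normalizing $\Sym^3(\SL_2)$ and commuting with it up to the center must itself be scalar by Schur's lemma applied to the irreducible $\Sym^3$. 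Therefore $G/Z_G\subset\PGL_2(E)$ sitting inside $\PGSp_4(E)$ via $\Sym^3$, and lifting back through $Z_G$ (which consists of scalars in $\GL_4(E)$) shows $G\subset\Sym^3(\GL_2(E))\cdot Z_G = \Sym^3(\GL_2(E))$, since the scalars in $\GSp_4$ are already in the image of $\Sym^3$ applied to scalars in $\GL_2$ — up to adjusting $E$ to contain the relevant square roots.

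The main obstacle I anticipate is the normalizer computation: showing that $N_{\GSp_4(E)}(\Sym^3(\SL_2(E)))$ is contained in $\Sym^3(\GL_2(E))$ up to conjugacy, rather than something larger. The cleanest route is to observe that the centralizer of $\Sym^3(\SL_2)$ in $\GSp_4$ is just the scalars (by Schur, since $\Sym^3$ is absolutely irreducible), and the quotient of the normalizer by this centralizer injects into $\mathrm{Out}(\mathrm{PGL}_2) = 1$; hence the normalizer is generated by $\Sym^3(\mathrm{PGL}_2)$ and the centralizer, which gives exactly $\Sym^3(\GL_2(E))$ after accounting for the similitude factor — one checks the similitude character of $\Sym^3(g)$ is $\det(g)^3$, so every similitude factor occurring in $G$ already arises from $\Sym^3$ applied to some element of $\GL_2(E)$ after a further finite extension. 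A secondary subtlety is the passage from the Lie-algebra statement to the group statement: strictly one should note that $G$ is compact and $p$-adic-Lie (so by \cite[Ch. III, Th\'eor\`eme 3.2.3]{lazard1965groupes}, as used in Lemma \ref{lemma strongly irreducible in dimension 2 or 4}, its closure in the algebraic group has the expected connected component), and that $G$ normalizes the Zariski closure of $\exp(\mathfrak g_0)$; then conjugating that closure into the standard $\Sym^3(\SL_2)$ by an element of $\GL_4(E)$ does the job. None of these steps involves hard input beyond the representation theory of $\mathfrak{sl}_2$ and $\mathfrak{sp}_4$ and the results already established in \S\ref{subsec:appendixC_generalities}–\S\ref{subsec:appendixC_strongly_irr}.
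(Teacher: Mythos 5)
Your argument is correct and essentially reproduces the paper's proof: both establish part (1) via the rank-$\le 2$ split simple classification using the simplicity of $\mathfrak h$ from Corollary \ref{corollary subalgebra of simple Lie algebra with base change} and the symplectic-vs-orthogonal obstruction for $\mathfrak{sl}_2\times\mathfrak{sl}_2$, and both prove part (2) by identifying the defining representation of $E\cdot\mathfrak h\cong\mathfrak{sl}_{2,E}$ as $\Sym^3$, using that all automorphisms of $\mathfrak{sl}_2$/$\PGL_2$ are inner together with Schur's lemma to place $G$ inside $\Sym^3(\GL_2)\cdot E^\times$, and then absorbing the scalar factor by passing to a finite (cubic) extension of $E$. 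The paper phrases the normalizer step constructively (for each $g\in G$ it produces $h\in S=\im\Sym^3$ with $\Ad(h)=\Ad(g)$ on $\mathfrak{sl}_{2,E}$) rather than via $\mathrm{Out}(\PGL_2)=1$, but this is the same idea.
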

\begin{proof}
We have seen (\ref{prop cases for subgroups of GSp4 part one}) in the proof of Lemma \ref{lemma strongly irreducible in dimension 2 or 4}, so we prove (\ref{prop cases for subgroups of GSp4 part two}). Let $V$ be the four-dimensional defining representation of $G$; as a representation of $E\cdot \mathfrak h \cong\mathfrak{sl}_{2,E}$, $V$ is isomorphic to the symmetric cube. In particular, after extending $E$ if necessary, the embedding $E\cdot \mathfrak h \cong \mathfrak {sl}_{2,E} \hookrightarrow
\mathfrak{sp}_{4,E}$ is conjugate to $\Lie \symmetric^3$ by some $g\in \GL_4(E)$. We may assume without loss of generality that $g = 1$. Let $S\subset \GSP_4(E)$ be the image of the symmetric cube embedding over $\overline\Q_p$; we first claim that $G$ is contained in $S\cdot E^\times \subset \GL_4(E)$. Indeed, for any $g\in G$, $\Ad(g)$ preserves $\mathfrak{sl}_{2,E} = E\cdot \mathfrak h$. Since the automorphism group of $\mathfrak{sl}_{2,E}$ is $\PGL_2(E)$, for each $g\in G$ there exists $h\in S$ such that $\Ad(h) = \Ad(g)$ on $\mathfrak{sl}_{2, E}$. In particular $h^{-1}g\in \GL_4(E)$ commutes with an open subgroup of $G$ (by arguing with the exponential map), so by Schur's Lemma and strong irreducibility $h^{-1}g$ is scalar. So $G\subset S\cdot E^\times$, as desired.

If $E'$ denotes the compositum of the finitely many cubic extensions of $E$, then $S \cdot E^\times$ is contained in the image of the symmetric cube map $\GL_2(E') \to \GL_4(E')$, and this completes the proof.
\end{proof}
The following lemma is a corollary of \cite[Lemma 4.3]{calegari2013irreducibility}.
\begin{lemma}\label{lemma distinct Hodge Tate weights implies induction}
    Fix a number field $F$, and let $\rho:  G_F \to \GL_E(V)$ be a continuous,  absolutely irreducible  representation of $G_F$. Assume there exists a place $\p|p$ of $F$ such that $V|_{G_{F_\p}}$ is Hodge-Tate with distinct weights. Then, after possibly replacing $E$ by a finite extension, there exists a number field $K\supset F$ and a strongly irreducible, continuous  representation $\rho_0 : G_K \to \GL_E(V_0)$ such that $\rho \cong \Ind_{G_K}^{G_F} \rho_0$. 
\end{lemma}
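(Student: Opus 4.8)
The plan is to reduce the statement to \cite[Lemma 4.3]{calegari2013irreducibility} and iterate, with termination guaranteed by a dimension count. First I would dispose of the trivial case: if $\rho$ is already strongly irreducible (Definition \ref{definition strongly irreducible}), then $K = F$, $V_0 = V$, $\rho_0 = \rho$ works. Otherwise, by definition there is a finite-index subgroup of $G_F$ on which $\rho$ fails to be absolutely irreducible; replacing it by its closure (which is legitimate since, $\rho$ being continuous, a subspace is invariant under a subgroup iff it is invariant under the closure), we may assume this subgroup is open, corresponding to a finite extension $F'/F$, which we may take Galois. Now I would invoke \cite[Lemma 4.3]{calegari2013irreducibility}: since $\rho$ is absolutely irreducible, $G_F$ permutes the $\overline E$-irreducible constituents of $\rho|_{G_{F'}}$ transitively, and the stabilizer of one such constituent is an open subgroup $G_K$ with $F \subsetneq K \subseteq F'$; after a further finite extension of $E$, one obtains a continuous, absolutely irreducible representation $\rho_0 \colon G_K \to \GL_E(V_0)$ with $\dim_E V_0 = \dim_E V / [K:F] < \dim_E V$ and $\rho \cong \Ind_{G_K}^{G_F} \rho_0$.

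Next I would verify that the hypotheses of the lemma are preserved under this step, so that the process can be iterated. The representation $\rho_0$ is continuous and absolutely irreducible by construction, and $K$ is again a number field. For the Hodge--Tate condition: for any place $w$ of $K$ above $\p$, $V_0|_{G_{K_w}}$ is a subrepresentation of $\rho|_{G_{K_w}}$, hence Hodge--Tate; and since $V|_{G_{F_\p}} \cong \bigoplus_{w \mid \p} \Ind_{G_{K_w}}^{G_{F_\p}} V_0|_{G_{K_w}}$, the Hodge--Tate weights of $V_0$ with respect to any $\overline{\Q}_p$-embedding of $K_w$ form a subset of the Hodge--Tate weights of $V$ with respect to the corresponding embedding of $F_\p$, and hence are still pairwise distinct. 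Thus the pair $(K, \rho_0)$ satisfies exactly the hypotheses of the lemma, with strictly smaller underlying dimension.

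Finally, iterating and using transitivity of induction in stages, the process must terminate after at most $\log_2(\dim_E V)$ steps at a representation $\rho_0 \colon G_K \to \GL_E(V_0)$ which is strongly irreducible, with $\rho \cong \Ind_{G_K}^{G_F} \rho_0$; since only finitely many finite extensions of $E$ are made, $E$ is enlarged only finitely. I expect the only genuine difficulty to be entirely contained in the appeal to \cite[Lemma 4.3]{calegari2013irreducibility}: one must know that the finite-index subgroup produced by Clifford theory is open (i.e.\ cut out by a true number-field extension) and that the chosen irreducible constituent extends to an $E$-rational (after a finite extension) continuous representation of that subgroup. This is precisely the point at which the Hodge--Tate hypothesis is used, and it is exactly what \emph{loc.\ cit.} establishes; everything else is the bookkeeping sketched above.
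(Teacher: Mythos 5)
Your iterative reduction is a reasonable alternative to the paper's argument (the paper instead passes in one step to a Galois extension over which every constituent is already strongly irreducible, and then identifies the stabilizer), and the termination and bookkeeping are fine. But there is a real gap in the core step, and it sits exactly where you wave at ``loc. cit.''

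In step 2 you assert that the stabilizer of a chosen $\overline E$-irreducible constituent of $\rho|_{G_{F'}}$ is an open subgroup $G_K$ with $F \subsetneq K$. Clifford theory alone does not give this. If $\rho|_{G_{F'}}$ is \emph{isotypic}, i.e.\ $\rho|_{G_{F'}}^{ss} \cong e\cdot\tau$ with $e>1$ for a single irreducible $\tau$, then $\tau$ is stable under all of $G_F$, the stabilizer is the whole group, $K=F$, and $\rho$ need not be induced from any proper subgroup (the $2$-dimensional irreducible of the quaternion group restricted to its center is the standard counterexample). What rules this out is precisely the distinct-Hodge--Tate-weights hypothesis: an isotypic restriction with $e>1$ would force every Hodge--Tate weight of $V|_{G_{F'_\mathfrak{q}}}$ to occur with multiplicity at least $e$, contradicting distinctness. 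This is the step the paper's proof makes explicit (``the $\rho_i$ are all distinct because $\rho$ has distinct Hodge--Tate weights''), and it is also what needs to be rechecked at each stage of your iteration, not just verified in your step 3.

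Your closing paragraph identifies the use of the Hodge--Tate hypothesis with the rationality problem (descending the chosen constituent to an $E$-rational representation of $G_K$). That is not where it is used: rationality is handled simply by allowing a finite extension of $E$ and has nothing to do with Hodge--Tate theory. The Hodge--Tate hypothesis is needed to exclude the isotypic branch of Clifford theory, which is what makes ``$F\subsetneq K$'' true and lets the induction and dimension-drop go through. If you add that observation (at step 2 of each iteration), your argument is complete; as written, the crucial point is asserted but not justified.
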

\begin{proof}
    After taking a finite extension of $E$, there exists a Galois extension $K$ of  $ F$ such that each constituent of $\rho|_{G_K}$ is strongly irreducible. Write $$\rho|_{G_K}^{ss} = \bigoplus_{i = 0}^j \rho_i$$
    for some $0 \leq j < n$. Then the $\rho_i$ are all distinct because $\rho$ has distinct Hodge-Tate weights, so there is a well-defined action of $\Gal(K/F)$ on the set of $\rho_i$'s. This action must be transitive or else $\rho$ would be reducible; hence $\rho|_{G_K}$ is semisimple and each $\rho_i$ has the same dimension $m$. Replacing $K$ by the fixed field of the stabilizer of $\rho_0$, it follows that $\rho = \Ind_{G_K}^{G_F} \rho_0$.
\end{proof}

\begin{corollary}\label{cor:general_type_irr}
Let $\pi $ be a relevant, non-endoscopic automorphic representation of $\GSP_4 (\A_\Q) $  such that $\BC(\pi)$ (Lemma \ref{lem:BC}) is not an automorphic induction. Then for each isomorphism $\iota: \overline\Q_p \isomorphism \C$  with $p > 3$,  $V_{\pi,\iota} $
is strongly irreducible.
\end{corollary}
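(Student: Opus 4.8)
The plan is to argue by contraposition: I will show that if $V_{\pi,\iota}$ is not strongly irreducible, then $\BC(\pi)$ is an automorphic induction, contradicting the hypothesis. First, since $p>3$ and $\pi$ is non-endoscopic, Lemma \ref{lem:reducible_endoscopic} gives that $V_{\pi,\iota}$ is absolutely irreducible, and Theorem \ref{thm:rho_pi_LLC}(\ref{part:rho_pi_LLC_HT}) gives that $V_{\pi,\iota}|_{G_{\Q_p}}$ is Hodge-Tate with distinct weights. I would then apply Lemma \ref{lemma distinct Hodge Tate weights implies induction}: after enlarging $E$, there are a number field $K$ and a strongly irreducible $\rho_0\colon G_K\to \GL_E(V_0)$ with $V_{\pi,\iota}\cong\Ind_{G_K}^{G_\Q}\rho_0$. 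Since $\rho_0$ is strongly irreducible while $V_{\pi,\iota}$ is not, necessarily $K\neq\Q$, and comparing dimensions forces $[K:\Q]\in\{2,4\}$.

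The key step will be to exhibit a \emph{quadratic} subfield $M\subseteq K$ with $V_{\pi,\iota}\cong V_{\pi,\iota}\otimes\eta_M$, where $\eta_M$ is the nontrivial quadratic character of $G_\Q$ with fixed field $M$. When $[K:\Q]=2$ this is immediate with $M=K$, since $\eta_K|_{G_K}=1$ and hence $V_{\pi,\iota}\otimes\eta_K=\Ind_{G_K}^{G_\Q}(\rho_0\otimes\eta_K|_{G_K})=V_{\pi,\iota}$. When $[K:\Q]=4$, $\rho_0$ is a character; let $\widetilde K/\Q$ be the Galois closure of $K/\Q$. If $V_{\pi,\iota}|_{G_{\widetilde K}}$ is not multiplicity-free, its isotypic decomposition exhibits $V_{\pi,\iota}$ as induced from a two-dimensional representation of $G_M$ for a quadratic $M$, and one finishes as before. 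Otherwise $V_{\pi,\iota}|_{G_{\widetilde K}}$ is a sum of four distinct characters; using that the similitude character of $\rho_{\pi,\iota}$ is a character of $G_\Q$ and that the symplectic form is nondegenerate, each of these four lines has a unique partner under the pairing, so they are the weight lines of a maximal torus of $\GSP_4$ split into two pairs. Consequently the faithful transitive action of $\Gal(\widetilde K/\Q)$ on these lines identifies $\Gal(\widetilde K/\Q)$ with a transitive subgroup of the Weyl group $D_4$ of $\GSP_4$, that is, with $\Z/4$, $(\Z/2)^2$, or $D_4$; each of these admits an index-$2$ subgroup containing a point stabilizer, whose fixed field is a quadratic $M\subseteq K$, and then $V_{\pi,\iota}=\Ind_{G_M}^{G_\Q}(\Ind_{G_K}^{G_M}\rho_0)\cong V_{\pi,\iota}\otimes\eta_M$.

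Finally, I would conclude as follows. By Lemma \ref{lem:BC}, $\BC(\pi)$ is a cuspidal automorphic representation of $\GL_4(\A)$; local-global compatibility (Lemma \ref{lem:BC}, Theorem \ref{thm:rho_pi_LLC}) together with strong multiplicity one for $\GL_4$ upgrade $V_{\pi,\iota}\cong V_{\pi,\iota}\otimes\eta_M$ to $\BC(\pi)\cong \BC(\pi)\otimes\chi_M$, where $\chi_M$ is the quadratic Hecke character of $\Q$ attached to $M$. The cyclic automorphic descent and induction theorems of \cite{arthur1989basechange} (applicable since $2\mid 4$) then show $\BC(\pi)$ is the automorphic induction from $\GL_2(\A_M)$ of a cuspidal automorphic representation, contradicting the hypothesis that $\BC(\pi)$ is not an automorphic induction. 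The main obstacle is the middle paragraph: producing the quadratic subfield $M$ in the degree-$4$ monomial case, which requires the observation that the symplectic structure confines $\Gal(\widetilde K/\Q)$ to the Weyl group of $\GSP_4$. Given Lemma \ref{lemma distinct Hodge Tate weights implies induction} and \cite{arthur1989basechange}, the remaining steps are routine.
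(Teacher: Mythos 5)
Your argument is correct and reaches the same conclusion, but your handling of the quartic case is genuinely different from the paper's. The paper dispatches the quartic case directly: since $\rho_0$ is a one-dimensional, de Rham character of $G_K$, it corresponds to an algebraic Hecke character $\chi$ of $K$, and $\BC(\pi)$ is identified (by strong multiplicity one and local-global compatibility) with the automorphic induction of $\chi|\cdot|^{-1/2}$ from the quartic field $K$. You instead avoid the $p$-adic-character-to-Hecke-character dictionary entirely by exploiting the symplectic structure: the four distinct character lines in $V_{\pi,\iota}|_{G_{\widetilde K}}$ are paired by the similitude form, so $\Gal(\widetilde K/\Q)$ lands in the Weyl group $D_4\subset S_4$, whose transitive subgroups ($\Z/4$, the normal Klein four, $D_4$) always admit an index-two subgroup over the point stabilizer, producing a quadratic $M\subset K$ and a self-twist $V_{\pi,\iota}\cong V_{\pi,\iota}\otimes\eta_M$. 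This unifies the quadratic and quartic cases and lets you invoke only cyclic quadratic automorphic induction from \cite{arthur1989basechange}, rather than automorphic induction from a quartic extension; the price is the group-theoretic detour through the Weyl group, and a bit of extra bookkeeping around the multiplicity-free dichotomy (which in fact cannot fail here, since the constituents have distinct Hodge--Tate weights by the construction in Lemma \ref{lemma distinct Hodge Tate weights implies induction}, so your fallback branch is vacuous). Both proofs close the argument identically in the quadratic case.
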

\begin{proof}
By  Lemma \ref{lem:reducible_endoscopic}, $ V_{\pi,\iota}$ is absolutely irreducible. Suppose for contradiction that it is not strongly irreducible. By Lemma \ref{lemma distinct Hodge Tate weights implies induction}, we may assume that $ V_{\pi,\iota} =\induction_{G_K} ^ {G_\Q} \rho_0 $, where $ K/\Q $ is either quartic or quadratic and $ \rho_0 $ is strongly irreducible and Hodge-Tate. If $ K $ is quartic,  $ \rho_0 $ corresponds to Hecke character of $K$ with algebraic infinity type via the usual recipe, and $\BC(\pi) $ is the automorphic induction of $\chi |\cdot|^{-1/2}$, a contradiction.

If $ K $ is quadratic, then $V_{\pi,\iota}\cong V_{\pi,\iota}\otimes\omega_{K/\Q} $ where $\omega_{K/\Q} $ is the quadratic character of $ G_\Q $ corresponding to $ K $. Hence $\BC (\pi)\cong\BC (\pi)\otimes\omega_{K/\Q} $ by strong multiplicity one for $\GL_4 $, so by \cite[Theorem 4.2(b)]{arthur1989basechange} $\BC (\pi) $ is an automorphic induction. This is a contradiction, so indeed $V_{\pi,\iota} $ is strongly irreducible, as desired.
\end{proof}
\subsubsection{}
For the next corollary, we use the following notation. Let $\varpi\in O_E$ be a uniformizer; then for an $O_E$-lattice $T$ in an $E$-vector space $V$, we write $T_n \coloneqq T/\varpi^n T$ for all $n \geq 1$.
\begin{corollary}\label{cor:Galois_coh_restr}
    Let $F$ be a number field, and let $(\rho, V)$
    be as in Lemma \ref{lemma distinct Hodge Tate weights implies induction} above. Assume, if $\dim V = 1$, that there exists a place $\p | p$ of $F$ such that  the character $\rho|_{G_{F_\p}}$ has nonzero Hodge-Tate weight. Then  for any Galois-stable $O_E$-lattice $T\subset V$, there exists a constant $C\geq 0$ such that $$\varpi^CH^1(F(\rho)/F, T_n) = 0$$ for all $n \geq 1$. 
\end{corollary}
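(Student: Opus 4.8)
The plan is to reduce the statement to a uniform bound on $H^1(F(\rho)/F, T_n)$ by analyzing the image $G \coloneqq \rho(G_F) \subset \GL_E(V)$, which is a compact $p$-adic Lie group, and the quotient $\Gal(F(\rho)/F) \cong G$. The standard tool is inflation-restriction along the subgroup cut out by a sufficiently small open normal pro-$p$ subgroup: if $G_0 \subset G$ is open, normal, and pro-$p$ with $G/G_0$ of order prime to $p$ (which exists since $G$ is a compact $p$-adic Lie group), then $H^1(G/G_0, T_n^{G_0}) = 0$ because $T_n$ is a $p$-group, so restriction embeds $H^1(G, T_n) \hookrightarrow H^1(G_0, T_n)^{G/G_0}$. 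So it suffices to bound $H^1(G_0, T_n)$ uniformly in $n$, where now $G_0$ is a uniform (or at least pro-$p$, torsion-free) $p$-adic Lie group acting on $T_n$.

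The key input is Lemma \ref{lemma distinct Hodge Tate weights implies induction}: after a finite extension of $E$, we may write $\rho = \Ind_{G_K}^{G_F}\rho_0$ with $\rho_0$ strongly irreducible. First I would handle the case $\dim V_0 \geq 2$: here Lemma \ref{lemma with all the strongly irreducible statements}(\ref{lemma strongly irreducible part lie algebra is semisimple}) shows that $\Lie(\rho_0(G_K)/Z)$ is semisimple, hence $\rho_0(G_K)$ has no infinite abelian quotient; it follows (e.g. by \cite[Lemma B.1]{fakhruddin2021relative}, as invoked in Lemma \ref{lem:restr_for_adjt}) that $H^1$ of an open subgroup of $\rho_0(G_K)$ with coefficients in any finite $O_E$-module built from $V_0$ is uniformly bounded. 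One then transfers back from $G_K$ to $G_F$ via the finite-index inclusion using inflation-restriction once more (the relative term is controlled since $[F(\rho):K]$ is finite and the cokernel of $G_K \hookrightarrow G_F$ acts on the bounded-rank module $\Hom$), and uses Shapiro's lemma to pass between $\rho$ and $\rho_0$. In the case $\dim V_0 = 1$, $\rho_0$ is a character and the hypothesis on Hodge-Tate weights guarantees, by the theory of $p$-adic characters (cf. the cyclotomic character having infinite image), that $\rho_0(G_K)$ contains an open subgroup of $\Z_p^\times$ of infinite order; but then any element $g$ acting by a nontrivial scalar $z$ on $V_0$ forces $H^1(\langle \overline{g}\rangle, T_n)$ to be killed by $z-1$ (respectively $z^{\dim V} - 1$ after inducing, acting on $\det$), which is a fixed power of $\varpi$ independent of $n$ — this is precisely the mechanism used throughout \S\ref{sec:rk1}, e.g. in Lemma \ref{lem:restr_for_adjt}.

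The main obstacle is the bookkeeping in the induced case: one must track how the uniform bound for $\rho_0(G_K)$ propagates to $F$, since $\Ind_{G_K}^{G_F} T_0$ as a $G_F$-module is not simply a module over $\rho_0(G_K)$, and the extension $E$ may need to be enlarged (which changes $\varpi$ and the lattice $T$, but only by a bounded amount — one uses that the trace pairing $O_{E'} \to O_E$ is nondegenerate with bounded different, as in the proof of Lemma \ref{lem:zero_local_pairing}). Concretely, I would write $F(\rho)/F$ as a tower $F(\rho)/F(\rho|_{G_K})$-style extension is not available, so instead: inflation-restriction for $\Gal(F(\rho)/F) \twoheadrightarrow \Gal(F(\rho_0)|_{\text{suitable field}}/F)$ combined with the observation that $\Gal(F(\rho)/K(\rho_0))$ acts on $T_0|_{G_K}$, reducing to bounding $\Hom_{G_K}\left(\Gal(F(\rho)/K(\rho_0)), T_n|_{G_K}\right)$, which by \cite[Lemma 2.3.3]{liu2022beilinson} follows once one knows the relevant image contains an element acting by a scalar $\neq 1$ on $V_0$ — supplied by Lemma \ref{lemma with all the strongly irreducible statements}(\ref{lemma strongly irreducible part trivial center}) together with a Chebotarev/topological generation argument, or in the $1$-dimensional case directly from the Hodge-Tate hypothesis. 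I expect the argument closely mirrors the claim in the proof of Lemma \ref{lem:loc_cheb_rk0_endo} and the proof of Lemma \ref{lem:restr_for_adjt}, which is essentially this statement for the adjoint representation; the present corollary is the analogue for $T$ itself, and the only genuinely new point is checking the $1$-dimensional case carefully, where strong irreducibility is vacuous and one must lean on the Hodge-Tate weight assumption.
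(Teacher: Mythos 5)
Your general blueprint --- inflation-restriction, then a dichotomy driven by the Lie-algebra structure of the image, with the Hodge--Tate weight hypothesis supplying the nontrivial scalar in the degenerate case --- matches the ideas the paper actually uses. But there is one genuine error and one structural inefficiency in the way you execute them.

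The error is the assertion that semisimplicity of $\Lie(\rho_0(G_K)/Z)$ implies $\rho_0(G_K)$ has no infinite abelian quotient, from which you then apply \cite[Lemma B.1]{fakhruddin2021relative} directly to $\rho_0(G_K)$. This is false: take $\rho_0$ with $\det\rho_0 = \chi_{p,\cyc}$, so that $\rho_0(G_K)$ is (conjugate to) an open subgroup of $\GL_2(\Z_p)$ with full determinant. Then $\Lie(\rho_0(G_K)/Z) = \mathfrak{sl}_2$ is semisimple, yet the determinant map is an infinite abelian quotient, and $\Lie(\rho_0(G_K))$ is not semisimple, so the lemma does not apply to $\rho_0(G_K)$. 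Strong irreducibility controls $G/Z_G$, not $G$ itself; the central torus must be dealt with separately. The paper's dichotomy is therefore not by $\dim V_0$, but by whether $Z_G$ (the center of $\rho(G_{K^c})$, which acts on each $V_i$ by a scalar) acts trivially or not on $V_0$. When it acts nontrivially, a single short inflation-restriction sequence through $\langle z\rangle$ kills everything. When it acts trivially, one applies the semisimplicity argument to $G/Z_G$ (not $G$), and then handles the inflation term $H^1(Z_G, T_{0,n})^{G/Z_G} = \Hom_G(Z_G, T_{0,n})$ by observing that $G$ acts trivially on $Z_G$ (central) while $V_0$ has no trivial $G$-subquotient --- using that $V_0$ is not the trivial character, which is exactly where the Hodge--Tate hypothesis earns its keep: if $\rho_0$ were a finite-order character, all its conjugates would be too, forcing trivial Hodge--Tate weights.

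The structural point is that passing to the Galois closure $K^c$, rather than an ad-hoc pro-$p$ open normal subgroup followed by a transfer back from $G_K$ to $G_F$, is what makes the bookkeeping tractable: over $K^c$, $\rho$ decomposes as $\bigoplus_i V_i$, one can take $T = \bigoplus T_i$, and the whole problem reduces cleanly to a single $H^1(G, T_{0,n})$ for $G = \rho(G_{K^c}) \subset \prod_i \GL_E(V_i)$, avoiding the Shapiro/$\Hom$ manipulations you anticipate needing. You flagged this bookkeeping as "the main obstacle", and indeed your sketch there is the least convincing part; the paper sidesteps it entirely with this choice of subgroup.
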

\begin{proof}

 Without loss of generality, we can extend $E$ so that the conclusion of Lemma \ref{lemma distinct Hodge Tate weights implies induction} holds, for some finite extension $K/F$ and some  $\rho_0: G_K \to \GL_E(V_0)$.  Let $K^c$ be the Galois closure of $K$; then
 by inflation-restriction, it suffices to show \begin{equation}
    \label{first goal for restriction vanishing corollary}
    H^1(K^c(\rho)/K^c, T_n) \text{ is uniformly bounded in $n$.}
\end{equation}
 
 We label the $\Gal(K^c/F)$-conjugates of $(\rho_0,V_0)$ as $(\rho_i, V_i)$, for $0 \leq i < \dim V/\dim V_0,$ 
and let $$G\subset \prod_i \GL_E(V_i)$$ be the image of $G_{K^c}$ under $\rho$. We  assume without loss of generality that $T = \oplus T_i$ for Galois-stable $O_E$-lattices $T_i \subset V_i$. Hence to show (\ref{first goal for restriction vanishing corollary}), it suffices to show $H^1(G, T_{0,n})$ is uniformly bounded in $n$.
\setcounter{case}{0}
\begin{case}
    $Z_G$ contains an element $z$ that acts nontrivially on $V_0$.
\end{case}
Then by inflation-restriction, we have an exact sequence $$0 \to H^1(G/\langle z \rangle, T_{0,n}^z) \to H^1(G,T_{0,n}) \to H^1(\langle z \rangle, T_{0,n})^{G/\langle z \rangle}.$$ The outer terms are clearly uniformly bounded, so we are done in this case.
\begin{case}
    $Z_G$ acts trivially on $V_0$.
\end{case}
For this case, note that $G/ Z_G$ is a compact $p$-adic Lie group with semisimple Lie algebra; 
indeed, if $G_i = \rho_i(G_{K^c})$ with center $Z_{G_i} \subset G_i$, then we have an injection
\begin{equation}\label{eq:Lie_factors_ind}\Lie (G/Z_G) \hookrightarrow\prod_i \Lie(G_i/Z_{G_i}),\end{equation} and the semisimplicity of $\Lie(G/Z_G)$ follows by Goursat's Lemma and Lemma \ref{lemma with all the strongly irreducible statements}(\ref{lemma strongly irreducible part lie algebra is semisimple}).
 In particular, by \cite[Lemma B.1]{fakhruddin2021relative}, 
$H^1(G/Z_G, T_{0,n})$ is uniformly bounded in $n$.
By inflation-restriction again, it then suffices to show 
$$H^1(Z_G,T_{0,n})^{G/Z_G} = \Hom_G (Z_G, T_{0,n})$$ 
is uniformly bounded. Since $G$ acts trivially on $Z_G$ and $V_0$ is strongly irreducible,  it suffices to ensure $V_0$ is not the trivial representation of $G$. However, if this occurs then $\rho_0: G_K \to \GL_E(V_0)$ is a finite-order character, so all its Galois conjugates $\rho_i$ also have finite order. This would mean that $\rho|_{G_{F_v}}$ has trivial Hodge-Tate weights, which is ruled out by our assumptions on $\rho$. 

\end{proof}

\begin{lemma}\label{lemma with restriction to abelian Galois extension}
    Let $F$ be a number field, and let $\rho: G_F \to \GL_E(V)$ be as in Lemma \ref{lemma distinct Hodge Tate weights implies induction} above, so after an extension of scalars we can write 
    $$\rho\cong \Ind_{G_K}^{G_F}\rho_0$$ for a number field $K \supset F$ and a strongly irreducible representation $\rho_0: G_K \to \GL_E(V_0)$. Let $L$ be an abelian Galois extension of $F$ (possibly infinite) which is disjoint from $K$; then $\rho|_{G_L}$ is absolutely irreducible. 
\end{lemma}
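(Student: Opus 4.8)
The plan is to combine Mackey theory with the structure theory of strongly irreducible compact $p$-adic Lie groups recorded above.

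\textbf{Step 1 (Mackey reduction).} Since $L/F$ is Galois, $G_L$ is normal in $G_F$; and since $L/F$ is Galois with $L\cap K=F$, the fields $L$ and $K$ are linearly disjoint over $F$, so $G_LG_K=G_F$. Hence $G_L\backslash G_F/G_K$ consists of a single double coset, and Mackey's double coset formula gives a canonical isomorphism $\rho|_{G_L}\cong\Ind_{G_{LK}}^{G_L}\bigl(\rho_0|_{G_{LK}}\bigr)$, where $G_{LK}:=G_L\cap G_K$ is open of index $[LK:L]=[K:F]$ in $G_L$.

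\textbf{Step 2 (absolute irreducibility of $\rho_0|_{G_{LK}}$ and of its conjugates).} Because $L/F$ is Galois, $G_{LK}$ is a closed \emph{normal} subgroup of $G_K$ with abelian quotient $\Gal(LK/K)\cong\Gal(L/F)$. Let $G_0=\rho_0(G_K)$, a compact $p$-adic Lie group whose tautological representation on $V_0$ is strongly irreducible. By Lemma \ref{lemma with all the strongly irreducible statements}(\ref{lemma strongly irreducible part center contains open subgroup of determinant},\ref{lemma strongly irreducible part lie algebra representation is irreducible}) one has $\Lie G_0=\Lie Z_{G_0}\oplus\mathfrak h$ with $\mathfrak h=\Lie\bigl(G_0\cap\SL_E(V_0)\bigr)$ semisimple and acting absolutely irreducibly on $V_0\otimes_E\overline E$. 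Since $G_0/\rho_0(G_{LK})$ is abelian, the closure of the commutator subgroup of $G_0$ is contained in $\rho_0(G_{LK})$, and its Lie algebra contains $[\Lie G_0,\Lie G_0]=[\mathfrak h,\mathfrak h]=\mathfrak h$; hence $\rho_0(G_{LK})$ contains an open subgroup whose Lie algebra contains $\mathfrak h$, and this subgroup already acts absolutely irreducibly on $V_0$. Thus $\rho_0|_{G_{LK}}$ is absolutely irreducible. The identical argument — first restricting along the finite-index inclusions $G_{K\cdot g(K)}\subset G_K$ and $G_{K\cdot g(K)}\subset G_{g(K)}$, on which strong irreducibility of $\rho_0$ (resp. of the conjugate $\rho_0^g$) already yields absolute irreducibility, and then passing to the normal-with-abelian-quotient subgroup $G_{L\cdot K\cdot g(K)}$ — shows that $\rho_0|_{G_{L\cdot K\cdot g(K)}}$ and $\rho_0^g|_{G_{L\cdot K\cdot g(K)}}$ are absolutely irreducible for every $g\in G_F$.

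\textbf{Step 3 (Mackey's irreducibility criterion).} By Mackey's criterion, $\Ind_{G_{LK}}^{G_L}(\rho_0|_{G_{LK}})$ is absolutely irreducible once we know $\rho_0|_{G_{LK}}$ is (Step 2) and, for every $g\in G_L\setminus G_{LK}$, that $\rho_0$ and $\rho_0^g$ have no common constituent after restriction to $G_{LK}\cap gG_{LK}g^{-1}=G_L\cap G_K\cap G_{g(K)}=G_{L\cdot K\cdot g(K)}$; since both of these restrictions are absolutely irreducible by Step 2, it suffices to show $\rho_0|_{G_{L\cdot K\cdot g(K)}}\not\cong\rho_0^g|_{G_{L\cdot K\cdot g(K)}}$. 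I expect this last point to be the main obstacle, and I would resolve it using the Hodge--Tate hypothesis. Fix a place $\mathfrak p\mid p$ of $F$ at which $V$ has $\dim V$ distinct Hodge--Tate weights, and let $K^c$ be the Galois closure of $K/F$. Since $\rho|_{G_{K^c}}$ is the multiplicity-free direct sum of the $G_F$-conjugates of $\rho_0|_{G_{K^c}}$ (Lemma \ref{lemma distinct Hodge Tate weights implies induction}), and since Hodge--Tate weights are unchanged under restriction to open subgroups, these weights partition \emph{disjointly} among the distinct conjugates; hence distinct $G_F$-conjugates of $\rho_0$ remain pairwise non-isomorphic after restriction to any finite extension of $K^c$, in particular to $G_{L\cdot K\cdot g(K)}$. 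Finally, because $K$ was taken to be the fixed field of the stabilizer of $\rho_0$ (Lemma \ref{lemma distinct Hodge Tate weights implies induction}), the condition $g\in G_L\setminus G_{LK}$, i.e. $g\notin G_K$, forces $\rho_0^g$ to be a $G_F$-conjugate of $\rho_0$ genuinely distinct from $\rho_0$. Combining, $\rho_0|_{G_{L\cdot K\cdot g(K)}}\not\cong\rho_0^g|_{G_{L\cdot K\cdot g(K)}}$, so $\End_{\overline E[G_L]}(\rho\otimes_E\overline E)=\overline E$, i.e. $\rho|_{G_L}$ is absolutely irreducible.
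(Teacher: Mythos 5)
Your Steps 1 and 2 are sound and in fact parallel the key mechanism in the paper's proof (in both cases the point is that $\rho_0(G_{K^cL})$ has the same semisimple part of its Lie algebra as $\rho_0(G_{K^c})$, because the quotient is abelian, so the restriction stays absolutely irreducible). However, Step 3 has a genuine gap: you assert that distinct $G_F$-conjugates of $\rho_0$ remain non-isomorphic ``after restriction to any finite extension of $K^c$, in particular to $G_{L\cdot K\cdot g(K)}$'' --- but $L$ is allowed to be an \emph{infinite} abelian extension, so $L\cdot K\cdot g(K)$ is not a finite extension of $K^c$ and the italicized phrase does not apply. The underlying claim is in fact false for infinite extensions: for instance the cyclotomic character and the trivial character of $G_{\Q_p}$ have distinct Hodge--Tate weights but become isomorphic upon restriction to $G_{\Q_p(\mu_{p^\infty})}$. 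Thus a non-finite-order character of $\Gal(K^cL/K^c)$ could in principle trivialize under restriction and make two distinct conjugates $\rho_0, \rho_0^g$ coincide on $G_{K^cL}$, and nothing in your argument rules this out.

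This is precisely the step the paper's proof handles carefully: it shows that if $\Hom_{E[G_{K^cL}]}(\rho_0,\rho_0^\sigma)\neq 0$, then (by absolute irreducibility, which is your Step 2) this Hom space is one-dimensional and $G_{K^c}$-stable, so $\rho_0|_{G_{K^c}}\cong\rho_0^\sigma|_{G_{K^c}}\otimes\chi$ for a character $\chi$ of $\Gal(K^cL/K^c)$; and then --- the crucial point you are missing --- it shows $\chi$ has \emph{finite order}, by using that $\sigma$ lies in the finite group $\Gal(K^c/K)$ which acts trivially by conjugation on the abelian $\Gal(L/F)$, so that applying $\sigma$ repeatedly yields $\rho_0\cong\rho_0\otimes\chi^n$ for some $n\geq 1$. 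Only after knowing $\chi$ is finite order can one invoke the Hodge--Tate disjointness (a statement about representations of $G_{K^c}$, not of $G_{K^cL}$) to get a contradiction. If you insert this finite-order argument between your Step 2 and the Hodge--Tate comparison, the proof goes through and is essentially a Mackey-criterion reformulation of the paper's argument; as written it is incomplete.
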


\begin{proof}
    Let $K^c$ be the Galois closure of $K$; then  
    $$\rho|_{G_{K^c}} = \bigoplus \rho_0^\sigma|_{G_{K^c}},$$ where $\rho_0^\sigma$ runs over the distinct $\Gal(K^c/K)$-conjugates of $\rho_0$. In particular, if $v$ is a prime of $K^c$ lying over the $\p$ from Lemma \ref{lemma distinct Hodge Tate weights implies induction}, then    \begin{equation}\label{equation rho sigma and rho have distinct HT weights}
        \rho_0^\sigma|_{G_{K^c_v}}\text{ and }\rho_0 |_{G_{K^c_v}}\text{ have distinct Hodge-Tate weights if } \rho_0^\sigma\not\cong \rho_0.
    \end{equation}

    If $\rho|_{G_L}$ is reducible (after replacing $E$ by any finite extension), then
    \begin{align*}
        1 &< \dim_E \Hom_{E[G_L]} \left((\Ind_{G_K}^{G_F} \rho_0)|_{G_L}, (\Ind_{G_K}^{G_F} \rho_0)|_{G_L}\right) \\
        &= \dim_E \Hom_{E[G_L]} \left(\Ind_{G_{KL}}^{G_L} \rho_0|_{G_{KL}}, \Ind_{G_{KL}}^{G_L} \rho_0|_{G_{KL}}\right) \\
        &= \dim_E \Hom_{E[G_{KL}]} \left(\rho_0|_{G_{KL}}, \Res_{G_{KL}}^{G_L} \Ind_{G_{KL}}^{G_L} \rho_0|_{G_{KL}}\right) \\
        &\leq \dim_E\Hom_{E[G_{K^cL}]} \left(\rho_0|_{G_{K^cL}}, \bigoplus \rho_0^\sigma|_{G_{K^cL}}\right).
    \end{align*}
    In particular, we may fix $\sigma\in \Gal(K^c/K)$ such that $\rho_0^\sigma \not\cong \rho_0$ but $$\Hom_{E[G_{K^cL}]}(\rho_0|_{G_{K^cL}}, \rho_0^\sigma|_{G_{K^cL}}) \neq 0.$$
    We claim $\rho_0|_{G_{K^cL}}$ is absolutely irreducible; indeed, if $G_0 = \rho_0(G_{K^c})$, then $H = \rho_0 (G_{K^c L})$ is a normal subgroup of $G_0$ with abelian cokernel. Then $\Lie (H \cap \SL_E(V_0) )\subset \Lie( G_0 \cap \SL_E(V_0))$ has abelian cokernel, which implies $\Lie (H \cap \SL_E(V_0) )= \Lie( G_0 \cap \SL_E(V_0))$ by Lemma \ref{lemma with all the strongly irreducible statements}(\ref{lemma strongly irreducible part lie algebra is semisimple}, \ref{lemma strongly irreducible part center contains open subgroup of determinant}). Then $H \cap \SL_E(V_0)$ acts strongly irreducibly on $V_0$ by Lemma \ref{lemma with all the strongly irreducible statements}(\ref{lemma strongly irreducible part lie algebra representation is irreducible}), so \emph{a fortiori} $\rho_0|_{G_{K^cL}}$ is absolutely irreducible, as desired. 

    This implies that $\Hom_{E[G_{K^cL}]} (\rho_0, \rho_0^\sigma)$, which is nonzero by assumption, is in fact one-dimensional. It is also preserved by the natural action of $G_{K^c}$ on $\Hom_E(\rho_0, \rho_0^\sigma)$, because $G_{K^cL}$ is normal in $G_{K^c}$.  Hence $\Gal(K^c L/ K^c)$ acts on $\Hom_{E[G_{K^cL}]} (\rho_0, \rho_0^\sigma)$ by scalars, and in particular we conclude that 
    \begin{equation}
        \label{rho zero is a twist of rho zero sigma}
        \rho_0|_{G_{K^c}} \cong \rho_0^\sigma|_{G_{K^c}}\otimes \chi
    \end{equation}
    for a character $\chi$ of $\Gal(K^c L/K^c) \subset \Gal(L/F)$. Because $\Gal(K^c/K)$ acts trivially by conjugation on $\Gal(L/F)$ and $\sigma \in \Gal(K^c/K)$ has finite order, (\ref{rho zero is a twist of rho zero sigma}) implies 
    $$\rho_0 \cong \rho_0 \otimes \chi^n \text{ for some } n \geq 1,$$
    hence $\chi$ has finite order. But then (\ref{rho zero is a twist of rho zero sigma}) contradicts (\ref{equation rho sigma and rho have distinct HT weights}), so the lemma is proved.
\end{proof}

\subsection{Galois representations associated to Hilbert modular forms}
\subsubsection{}
Fix a totally real field $F$.
The following result is due to Nekovar:
\begin{thm}\label{thm:nekovar}
    Let $\pi$ be an automorphic representation of $\GL_2(\A_F)$ corresponding to a non-CM Hilbert modular form of weight $(2k_v)_{v|\infty}$, with each $k_v\geq 1$. If $E_0$ is a strong coefficient field for $\pi$, then there exists a subfield $E_1 \subset E_0$ and a quaternion algebra $D$ over $E_1$, along with a finite abelian extension $K$ of $F$, such that for all primes $\p$ of $E_0$, with residue characteristic $p$:
    \begin{enumerate}
        \item \label{thm:nekovar_one}The image of $\rho_{\pi,\p}$ contains an open subgroup of  $$H_\p\coloneqq \set{x \in \left(D\otimes_{E_1} E_{1,\p}\right)^\times\,:\, \Nm(x) \in \Q_p^\times},$$ where the embedding $H_\p \hookrightarrow \GL_2(E_{0,\p})$
is induced by the natural  embedding  $D\otimes_{E_1} E_{1,\p}\hookrightarrow D\otimes_{E_1} E_{0,\p} $ and an isomorphism $D\otimes_{E_1} E_{0,\p} \simeq M_2(E_{0,\p})$.
        \item The image $\rho_{\pi,\p}(G_K)$ is contained in $H_\p$.
    \end{enumerate}
    Moreover, for any  finite abelian extension $K'/K$ and all but finitely many $\p$, the image of $\rho_{\pi,\p}({G_{K'}})$ is a  conjugate of $$\set{g\in \GL_2(O_{E_{1,\p}})\,:\, \det g\in \Z_p^\times}.$$ 
\end{thm}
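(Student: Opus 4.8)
The plan is to reduce the statement to the theory of inner twists for Hilbert modular eigenforms, in the form established by Nekov\'a\v r, and to verify its hypotheses uniformly in $\p$ using the analysis of strongly irreducible Galois representations in \S\ref{subsec:appendixC_strongly_irr}. The first step is to check that $\rho_{\pi,\p}$ is strongly irreducible for \emph{every} prime $\p$ of $E_0$. Since $\pi$ is non-CM and cohomological, $\rho_{\pi,\p}$ is absolutely irreducible; by Theorem \ref{thm:rho_GL2_LLC}(\ref{part:rho_GL2_LLC_HT}), for each $v\mid p$ the local representation $\rho_{\pi,\p}|_{G_{F_v}}$ is Hodge--Tate with distinct weights $1-k_v$ and $k_v$ (here $k_v\geq 1$), so Lemma \ref{lemma distinct Hodge Tate weights implies induction} gives, after enlarging $E_0$, a finite extension $L/F$ and a strongly irreducible representation $\rho_0$ of $G_L$ with $\rho_{\pi,\p}\cong\Ind_{G_L}^{G_F}\rho_0$. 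If $L\neq F$, then $[L:F]=2$, $\rho_0$ is a character, and comparing archimedean $L$-parameters forces $L$ to be a totally imaginary (CM) quadratic extension of $F$, contradicting the absence of complex multiplication. Hence $L=F$ and $\rho_{\pi,\p}$ is strongly irreducible; in particular, by Lemma \ref{lemma with all the strongly irreducible statements} its image is a compact $p$-adic Lie group whose Lie algebra modulo the centre is, after extension of scalars, a form of $\mathfrak{sl}_2$ (this does not affect the intrinsically defined field $E_1$ below).

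Next I would set up the inner-twist formalism. Let $\Gamma$ be the group of field automorphisms $\sigma$ of $E_0$ for which there is a finite-order Hecke character $\chi_\sigma$ of $F$ with $a_v(\pi)^\sigma=\chi_\sigma(\Frob_v)\,a_v(\pi)$ for almost all $v$; equivalently $\rho_{\pi,\p}^\sigma\cong\rho_{\pi,\p}\otimes\rec(\chi_\sigma)$, a condition that holds for one $\p$ precisely when it holds for all, since both sides form compatible systems. Put $E_1:=E_0^\Gamma$, which is manifestly independent of $p$. Choose a finite abelian extension $K/F$ trivialising every $\chi_\sigma$ as well as the finite-order part of the nebentypus of $\pi$. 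Then the projective representation $G_K\to\PGL_2(E_{1,\p})$ deduced from $\rho_{\pi,\p}|_{G_K}$ is well defined, and the obstruction to lifting it to a $\GL_2(E_{1,\p})$-valued representation compatible with $\rho_{\pi,\p}$ is a $2$-torsion class in $\operatorname{Br}(E_1)$, represented by a quaternion algebra $D$ over $E_1$; running this construction inside the compatible system over $\overline\Q$ shows $D$ may be taken independently of $p$. On $G_K$ one has $\det\rho_{\pi,\p}=\chi_{p,\cyc}$, with values in $\Z_p^\times\subset\Q_p^\times$, so after the appropriate $\GL_2(E_{0,\p})$-conjugation $\rho_{\pi,\p}(G_K)\subset H_\p$, which is (2); and (1) follows because $[G_F:G_K]<\infty$ while, by the Lie algebra computation of the first paragraph together with Corollary \ref{corollary subalgebra of simple Lie algebra with base change} and Lemma \ref{lemma with all the strongly irreducible statements}, $\Lie\rho_{\pi,\p}(G_F)$ already equals $\Lie(H_\p)$, so the image is open in $H_\p$.

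The main obstacle is the ``moreover'' statement, a uniform-in-$p$ big-image theorem of Serre--Ribet type. For all but finitely many $\p$ the fixed algebra $D$ splits at $\p$, so $H_\p\cong\{g\in\GL_2(E_{1,\p}):\det g\in\Q_p^\times\}$ and $\{g\in\GL_2(O_{E_{1,\p}}):\det g\in\Z_p^\times\}$ is its natural maximal compact subgroup. I would then argue that for almost all $\p$: (i) $\overline\rho_{\pi,\p}$ is absolutely irreducible, since the compatible system has Zariski-dense image in $\SL_2$ by the first paragraph, so its residual images are eventually irreducible; (ii) the image of $\overline\rho_{\pi,\p}$ in $\GL_2(\overline\F_p)$ is neither dihedral, nor exceptional, nor contained in $\GL_2$ of a proper subfield. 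For the dihedral case one uses that a dihedral reduction for infinitely many $\p$ would force $\pi$ to have genuine complex multiplication, via a density argument on split primes; the exceptional groups $A_4,S_4,A_5$ have bounded order, incompatible with the infinite $p$-adic image; and the small-subfield case is excluded because the field generated by the residual Hecke eigenvalues stabilises to $O_{E_1}/\p$ by the very definition of $\Gamma$. Thus the residual image is the full group $\{g\in\GL_2(O_{E_{1,\p}}/\varpi_\p):\det g\in\F_p^\times\}$, and for $p$ large (so in particular $p>5$) the standard closed-subgroup lemma for $\SL_2$ promotes this to the $p$-adic level; restricting to $G_{K'}$ for a fixed finite abelian $K'/K$ costs nothing whenever $p\nmid[K':F]$, since the relevant $\SL_2$-level subgroups have no nontrivial abelian quotients of order prime to $p$ and no quotient of order $[K':F]$. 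The delicate parts of this --- the uniform exclusion of the dihedral, exceptional and small-subfield cases, and the $p$-independence of $E_1$, $D$ and $K$ --- are precisely what is proved in \cite[Theorem B.4.10]{nekovar2012levelraising}, which I would ultimately invoke after reconciling its normalisation of $\rho_{\pi,\p}$ with the one used here.
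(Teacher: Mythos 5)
The paper's own proof is much terser than yours: it invokes \cite[Theorem B.5.2]{nekovar2012levelraising} wholesale for the existence of $E_1$, $D$, and $K$, for parts (1) and (2), and for the ``moreover'' claim when $K'=K$; the only new work is the extension from $K$ to an arbitrary finite abelian $K'/K$. Your sketch re-derives a substantial part of Nekov\'a\v{r}'s inner-twist and big-image machinery before invoking it anyway, which is fine as background exposition but not a genuinely different route.

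The gap is in your treatment of general $K'$. You argue that passing to $G_{K'}$ ``costs nothing whenever $p\nmid[K':F]$,'' via perfectness of $\SL_2$-level subgroups. That argument does show $\rho_{\pi,\p}(G_{K'})\supset\SL_2(O_{E_{1},\p})$: the quotient $\rho_{\pi,\p}(G_K)/\rho_{\pi,\p}(G_{K'})$ is abelian, so the perfect subgroup $\SL_2(O_{E_{1},\p})$ dies in it. But it does not address the determinant. To conclude that the image equals $\set{g\in\GL_2(O_{E_{1},\p}):\det g\in\Z_p^\times}$ you also need $\det\rho_{\pi,\p}(G_{K'})=\det\rho_{\pi,\p}(G_K)$, i.e.\ $\chi_{p,\cyc}(G_{K'})=\chi_{p,\cyc}(G_K)$, which is equivalent to $K'\cap\Q(\mu_{p^\infty})=K\cap\Q(\mu_{p^\infty})$. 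This can fail even when $p\nmid[K':F]$: with $F=K=\Q$ and $K'=\Q(\mu_{p_0})$ for a fixed $p_0$, one has $[K':\Q]=p_0-1$ prime to $p_0$, yet $\chi_{p_0,\cyc}(G_{K'})=1+p_0\Z_{p_0}\subsetneq\Z_{p_0}^\times$, so the image at $\p\mid p_0$ is a proper subgroup. The paper's proof avoids this by restricting to the cofinitely many $\p$ with $K'\cap\Q(\mu_{p^\infty})=K\cap\Q(\mu_{p^\infty})$ and $\rho_{\pi,\p}(G_K)\supset\SL_2(O_{E_{1},\p})$; that is the correct finiteness condition, and it is not implied by $p\nmid[K':F]$.
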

\begin{proof}
Let $K$ be the field written $F_\Gamma$ in \cite[Theorem B.5.2]{nekovar2012levelraising}. Then the first two claims, and the last part when $K' = K$, follow from \emph{loc. cit.}

For the general case, note that $\det\rho_{\pi,\p}|_{G_K} = \chi_{p,\cyc}$. If $K'/K$ is a finite abelian extension, restrict to those primes $\p$  such that $\rho_{\pi,\p}({G_K})$ contains $\SL_2(O_{E_1,\p})$ and $K'\cap \Q(\mu_{p^\infty}) = K \cap \Q(\mu_{p^\infty}).$ Then 
    $\det\rho_{\pi,\p} (G_K) = \det\rho_{\pi,\p}(G_{K'})$. On the other hand,
    $\rho_{\pi,\p}(G_{K'})$ is a normal subgroup of $\rho_{\pi,\p} (G_K)$ with abelian cokernel, which necessarily contains $\SL_2(O_{E_1,\p})$; it follows that  $\rho_{\pi,\p} (G_K) = \rho_{\pi,\p}(G_{K'})$, which completes the proof. 
\end{proof}
\subsubsection{}\label{subsubsec:nekovar_pair_Setup}
Now let $\pi_1$ and $\pi_2$ be two automorphic representations as in Theorem \ref{thm:nekovar}, with $E_0$ a common strong coefficient field. For the rest of this section we will always write $p$ for the residue characteristic of a prime $\p$ of $E_0$. Also,
let $E_1$, $D_1$, $K_1$, $H_{1,\p}$, $E_2$, $D_2$,  $K_2$, and $H_{2,\p}$  be as in the conclusion of Theorem \ref{thm:nekovar} applied to $\pi_1$ and $\pi_2$, respectively; we can and do fix a finite abelian extension $K \supset K_1\cdot K_2$ such that \begin{equation}\det\rho_{\pi_1,\p}|_{G_K} = \det\rho_{\pi_2,\p}|_{G_K} = \chi_{p,\cyc}\end{equation} for all $\p$,
and  the conclusions of Theorem \ref{thm:nekovar} hold for this choice of $K$. 

We will also consider the joint representation 
\begin{equation}\label{eq:jt_repn_app}\rho_{\pi_1,\pi_2, \p}:G_F \xrightarrow{\rho_{\pi_1,\p}\times \rho_{\pi_2,\p}} \GL_2(E_{0,\p}) \times \GL_2(E_{0,\p}).\end{equation}
\begin{lemma}\label{lem:nekovar_twist}
    Suppose there exists a prime $\p$ of $E_0$ and a finite extension $L/F$ such that
    $$\rho_{\pi_1,\p}|_{G_L} \cong \rho_{\pi_2,\p}|_{G_L}.$$
    Then $\pi_1$ is the twist of $\pi_2$ by a finite-order automorphic character of $\A_F^\times$.
\end{lemma}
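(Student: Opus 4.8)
The plan is to first promote the hypothesis to a global twist-isomorphism of Galois representations, and then descend to automorphic representations by strong multiplicity one. Since the hypothesis $\rho_{\pi_1,\p}|_{G_L}\cong\rho_{\pi_2,\p}|_{G_L}$ is inherited by any finite extension of $F$ containing $L$, we may assume $L/F$ is Galois. By Theorem~\ref{thm:nekovar}(\ref{thm:nekovar_one}), the image $\rho_{\pi_i,\p}(G_F)$ contains an open subgroup of $H_{i,\p}$ for $i=1,2$; as $G_L$ has finite index in $G_F$, the image $\rho_{\pi_i,\p}(G_L)$ also contains an open subgroup of $H_{i,\p}$. Since $D_i\otimes_{E_i}E_{0,\p}\cong M_2(E_{0,\p})$, the $E_{0,\p}$-span of the image of $\Lie H_{i,\p}$ in $\mathfrak{gl}_2(E_{0,\p})$ contains $\mathfrak{sl}_2(E_{0,\p})$, so $\rho_{\pi_i,\p}|_{G_L}$ is absolutely irreducible. (Alternatively, reducibility of $\rho_{\pi_i,\p}|_{G_L}$ would force $\pi_i$ to have CM, contrary to hypothesis.)

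Consequently $\Home_{E_{0,\p}[G_L]}(\rho_{\pi_1,\p},\rho_{\pi_2,\p})$ is one-dimensional over $E_{0,\p}$. Because $G_L\triangleleft G_F$, this line is a $G_F$-stable subspace of the $G_F$-module $\Home_{E_{0,\p}}(\rho_{\pi_1,\p},\rho_{\pi_2,\p})$, on which $G_F$ therefore acts through a character $\eta\colon G_F\to E_{0,\p}^\times$. Since $\eta|_{G_L}$ is trivial, $\eta$ factors through the finite group $\Gal(L/F)$, and so has finite order. The associated nonzero $G_F$-equivariant map $\rho_{\pi_1,\p}\to\rho_{\pi_2,\p}\otimes\eta$ is an isomorphism, as $\rho_{\pi_1,\p}$ is absolutely irreducible and both sides are two-dimensional.

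By class field theory, $\eta=\rec(\chi)$ for a finite-order Hecke character $\chi$ of $F^\times\backslash\A_F^\times$. Comparing traces of Frobenius at primes $v$ of $F$ outside a finite set in the isomorphism $\rho_{\pi_1,\p}\cong\rho_{\pi_2,\p}\otimes\eta$, and invoking the local--global compatibility of Theorem~\ref{thm:rho_GL2_LLC}(\ref{part:rho_GL2_LLC_1}) together with the Chebotarev density theorem, we conclude that $\pi_{1,v}$ and $(\pi_2\otimes\chi)_v$ have the same Satake parameters for almost all $v$. By strong multiplicity one for $\GL_2$ over $F$, this forces $\pi_1\cong\pi_2\otimes\chi$, which is the desired statement.

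The argument is formal once absolute irreducibility of $\rho_{\pi_i,\p}|_{G_L}$ is available; securing that is the only real input, and it follows immediately from the large-image statement of Theorem~\ref{thm:nekovar}, so I do not expect a genuine obstacle here.
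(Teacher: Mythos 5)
Your proof is correct, and it differs from the paper's in a way worth noting. The paper's proof is a two-liner: it cites Rajan's theorem on strong multiplicity one for $\ell$-adic representations (reference [rajan2003strong], Theorem 2) to pass directly from $\rho_{\pi_1,\p}|_{G_L} \cong \rho_{\pi_2,\p}|_{G_L}$ to a global twist-isomorphism $\rho_{\pi_1,\p} = \rho_{\pi_2,\p}\otimes\chi$, observes that $\chi$ has finite order since it dies on $G_L$, and then descends by strong multiplicity one. You instead prove the twist relation by hand: using Theorem~\ref{thm:nekovar} to get absolute irreducibility of $\rho_{\pi_i,\p}|_{G_L}$, applying Schur to make the intertwiner space one-dimensional, and then using $G_L \triangleleft G_F$ (after passing to the Galois closure) to see that $G_F$ acts on that line by a character trivial on $G_L$ --- i.e., a Clifford-theoretic argument. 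Both endgames are identical (class field theory, local-global compatibility, strong multiplicity one). What your route buys is self-containedness --- it avoids a black-box citation in exchange for a short explicit argument that uses only the large-image theorem already set up in the appendix; what the paper's route buys is brevity. One small point: your parenthetical fallback "reducibility of $\rho_{\pi_i,\p}|_{G_L}$ would force $\pi_i$ to have CM" is also true, but the Lie-algebra argument you give as the primary route is cleaner since it avoids having to invoke the dihedral-implies-CM classification for Hilbert modular forms.
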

\begin{proof}
By \cite[Theorem 2]{rajan2003strong}, we have $$\rho_{\pi_1,\p} = \rho_{\pi_2,\p} \otimes \chi$$ for some character $\chi$ of $G_F$, which is of finite order because it vanishes on $G_L$. 
Viewing $\chi$ as a finite-order character of $F^\times \backslash \A_F^\times$ via class field theory, we conclude that $\rho_{\pi_1, \p} = \rho_{\pi_2\otimes \chi, \p},$
and hence $\pi_1 = \pi_2 \otimes \chi$.
    \end{proof}
\subsubsection{}
If $\pi$ is as in Theorem \ref{thm:nekovar}, corresponding to a holomorphic Hilbert modular form $f$, then for any $\sigma \in \Gal(\overline \Q/\Q)$ we write $\pi^\sigma$ for the automorphic representation corresponding to $f^\sigma$. 
The following result generalizes
\cite{loeffler2017images} to the setting of Theorem \ref{thm:nekovar}. 

\begin{thm}\label{thm:nekovar_pair}
    In the setting of (\ref{subsubsec:nekovar_pair_Setup}), suppose $\pi_1 \neq \pi_2^\sigma\otimes \chi$ for any $\sigma\in \Gal(\overline \Q/\Q)$  and any finite-order automorphic character $\chi$ of $\A_F^\times$. 
Then:
\begin{enumerate}
    \item \label{thm:nekovar_pair_open}For all primes $\p$ of $E_0$, $\rho_{\pi_1,\pi_2,\p}(G_K)$ contains an open subgroup of $H_{1,\p}\times_{\Q_p^\times} H_{2,\p}$.
    \item \label{thm:nekovar_pair_full}For all but finitely many $\p$, the image of $\rho_{\pi_1,\pi_2,\p} (G_K)$ contains a conjugate of
$$\set{(g_1,g_2)  \in \GL_2(O_{E_1,\p})\times \GL_2(O_{E_2,\p}):\, \det g= \det h \in \Z_p^\times }.$$
\end{enumerate}
\end{thm}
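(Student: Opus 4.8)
The plan is to run a Goursat-type argument at the level of $p$-adic Lie algebras: Theorem \ref{thm:nekovar} controls each factor, and the hypothesis on $\pi_1,\pi_2$, together with Lemma \ref{lem:nekovar_twist}, will rule out the degenerate alternative in Goursat's lemma. Write $G_i\coloneqq\rho_{\pi_i,\p}(G_K)$ and $G_{12}\coloneqq\rho_{\pi_1,\pi_2,\p}(G_K)\subseteq G_1\times_{\Q_p^\times}G_2$ (fibre product over the reduced norms), and set $\mathfrak g_{12}\coloneqq\Lie G_{12}$ and $\mathfrak h_i\coloneqq\Lie G_i$. Enlarging $E_0$, I would assume $E_0/\Q$ is Galois (the general case following by extension of scalars). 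By Theorem \ref{thm:nekovar}, $G_i$ contains an open subgroup of $H_{i,\p}$, so $\mathfrak h_i=\mathfrak h_i^{\der}\oplus\Q_p$ with $\mathfrak h_i^{\der}=\mathfrak{sl}_1(D_i\otimes_{E_i}E_{i,\p})$; this is a \emph{simple} $\Q_p$-Lie algebra by Corollary \ref{corollary subalgebra of simple Lie algebra with base change}(\ref{corollary subalgebra of simple Lie algebra with base change part one}). Since $\rho_{\pi_1,\p}$ and $\rho_{\pi_2,\p}$ have the same reduced norm $\chi_{p,\cyc}$ on $G_K$, which has open image, the Lie algebra of $H_{1,\p}\times_{\Q_p^\times}H_{2,\p}$ is $\mathfrak h_1^{\der}\oplus\mathfrak h_2^{\der}\oplus\Q_p$, and it suffices to prove $[\mathfrak g_{12},\mathfrak g_{12}]=\mathfrak h_1^{\der}\oplus\mathfrak h_2^{\der}$; assertion (\ref{thm:nekovar_pair_open}) then follows by exponentiation. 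As $[\mathfrak g_{12},\mathfrak g_{12}]\subseteq\mathfrak h_1^{\der}\oplus\mathfrak h_2^{\der}$ surjects onto each factor (each $\mathfrak h_i^{\der}$ being perfect), Lemma \ref{facts about simple Lie algebras}(\ref{facts about simple Lie algebras part one}) leaves only one alternative: $[\mathfrak g_{12},\mathfrak g_{12}]$ is the graph of an isomorphism $\phi\colon\mathfrak h_1^{\der}\isomorphism\mathfrak h_2^{\der}$.

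Suppose such $\phi$ exists. By \cite[Lemma 1.1.4]{loeffler2017images}, $\phi$ is induced by a field isomorphism $\sigma\colon E_{1,\p}\isomorphism E_{2,\p}$ and a $\sigma$-semilinear algebra isomorphism $D_1\otimes E_{1,\p}\isomorphism D_2\otimes E_{2,\p}$; extending scalars to $\overline\Q_p$ and choosing splittings, $\phi$ becomes $x\mapsto\alpha\,\widetilde\sigma(x)\,\alpha^{-1}$ for some $\alpha\in\GL_2(\overline\Q_p)$ and some $\widetilde\sigma\in\Gal(E_{0,\p}/\Q_p)$. Exponentiating the graph relation on the open subgroup $U\subseteq G_{12}$ fixing a finite extension $L/K$ gives $\mathbb P(\rho_{\pi_2,\p}|_{G_L})\cong\mathbb P\bigl((\widetilde\sigma\circ\rho_{\pi_1,\p})|_{G_L}\bigr)$; transporting Hecke eigenvalues, $\widetilde\sigma\circ\rho_{\pi_1,\p}\cong\rho_{\pi_1^\tau,\p}$ for a suitable $\tau\in\Gal(\overline\Q/\Q)$. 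Both restrictions are absolutely irreducible (restrictions of strongly irreducible representations to a finite-index subgroup), so the projective isomorphism lifts to $\rho_{\pi_2,\p}|_{G_L}\cong\rho_{\pi_1^\tau,\p}|_{G_L}\otimes\psi$ for a character $\psi$ of $G_L$; comparing determinants (both cyclotomic after enlarging $K$) gives $\psi^2=1$. Passing to the at-most-quadratic extension trivialising $\psi$ and applying Lemma \ref{lem:nekovar_twist} to $\pi_2$ and $\pi_1^\tau$, one gets $\pi_2=\pi_1^\tau\otimes\eta$ for a finite-order automorphic character $\eta$ of $\A_F^\times$, hence $\pi_1=\pi_2^{\tau^{-1}}\otimes\eta'$ --- contradicting the hypothesis. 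This proves (\ref{thm:nekovar_pair_open}).

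For (\ref{thm:nekovar_pair_full}), restrict to the cofinitely many $\p$ for which Theorem \ref{thm:nekovar} gives $G_i=a_i\widetilde G_ia_i^{-1}$ with $\widetilde G_i=\{g\in\GL_2(O_{E_i,\p}):\det g\in\Z_p^\times\}$, for which $\chi_{p,\cyc}(G_K)=\Z_p^\times$, and for which $p$ is large enough that the relevant reductions of $\SL_2$ of local rings are perfect. Conjugating $\rho_{\pi_1,\pi_2,\p}$ by $(a_1,a_2)$, the group $G_{12}\subseteq\widetilde G_1\times_{\Z_p^\times}\widetilde G_2$ is a closed subgroup with full Lie algebra (by (\ref{thm:nekovar_pair_open})) surjecting onto each factor, and a standard Goursat argument in the style of \cite{loeffler2017images} (compare \cite[Theorem 2.1]{ribet1985largeimage}) forces $G_{12}\supseteq\SL_2(O_{E_1,\p})\times\SL_2(O_{E_2,\p})$; since $G_{12}$ surjects onto $\Z_p^\times$ under the common determinant, $G_{12}=\widetilde G_1\times_{\Z_p^\times}\widetilde G_2$, which proves (\ref{thm:nekovar_pair_full}). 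The delicate point is the second paragraph --- matching the abstract field isomorphism $\sigma$ with an honest Galois conjugate $\pi_1^\tau$ compatibly with the $p$-adic realisations, and promoting the projective coincidence to an honest isomorphism over a controlled base change so that Lemma \ref{lem:nekovar_twist} applies; the group-theoretic step in (\ref{thm:nekovar_pair_full}) is routine but requires some care in specifying the excluded $\p$.
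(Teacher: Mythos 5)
Your treatment of part (\ref{thm:nekovar_pair_open}) is essentially the paper's argument, though you take a slightly longer route: by passing to the derived subalgebra $[\mathfrak g_{12},\mathfrak g_{12}]$ you only get a projective coincidence after exponentiating, so you need the extra step showing $\psi^2=1$ and passing to a quadratic extension before invoking Lemma \ref{lem:nekovar_twist}. The paper avoids this by keeping track of the full Lie algebra of the fibre product $H_{1,\p}\times_{\Q_p^\times}H_{2,\p}$, so the graph relation already respects determinants and exponentiates to an honest (not merely projective) isomorphism $\rho_{\pi_1,\p}|_{G_L}=\sigma\circ\rho_{\pi_2,\p}|_{G_L}$. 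Either route is fine, and your identification of $\widetilde\sigma$ with a global $\sigma\in\Gal(E_0/\Q)$ preserving $\p$, hence with $\pi_1^\sigma$, is the same device the paper uses.

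Part (\ref{thm:nekovar_pair_full}) has a genuine gap. Having full Lie algebra (openness) does not imply containing the integral subgroup $\SL_2(O_{E_1,\p})\times\SL_2(O_{E_2,\p})$: the congruence subgroup $\{(g,h)\in\SL_2(\Z_p)^2:g\equiv h\bmod p\}$ has Lie algebra $\mathfrak{sl}_2\oplus\mathfrak{sl}_2$ but fails the conclusion. So the real content of part (\ref{thm:nekovar_pair_full}) is ruling out a mod-$\p$ entanglement for all but finitely many $\p$, and this is not a ``routine Goursat step.'' The paper does it by invoking \cite[Proposition 3.2.1]{loeffler2017images} to see that any bad $\p$ forces a residual congruence $\rho_{\pi_1,\p}(g)=\pm\sigma\circ\rho_{\pi_2,\p}(g)\bmod\p$ on $G_K$ for some $\sigma\in\Gal(E_0/\Q)$ fixing $\p$, then pigeonholes to fix one $\sigma$ working for infinitely many bad $\p$, and observes that the squared-trace identity $a_{1,v}^2=\sigma(a_{2,v})^2$ must then hold identically (being divisible by infinitely many primes), before upgrading to $a_{1,v}=\sigma(a_{2,v})$ on a finer congruence class and applying Lemma \ref{lem:nekovar_twist}. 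You cite \cite{loeffler2017images} and Ribet in passing, but the sign ambiguity, the pigeonhole over $\Gal(E_0/\Q)$, and the passage from a single-prime residual congruence to an exact Hecke-eigenvalue identity are the substance of the argument and need to be written out or clearly attributed; as it stands, the ``standard Goursat argument'' does not cover them.
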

\begin{proof}
The proof is analogous to \cite[Theorem 3.2.2, Proposition 3.3.2]{loeffler2017images}, where we replace Lemma 3.1.1 of \emph{op. cit.} with Lemma \ref{lem:nekovar_twist} above. For completeness, we recall the argument.
    Let  $G_\p = \rho_{\pi_1,\pi_2,\p}(G_K)$,
    so that we have a natural embedding
$$G_\p \hookrightarrow H_{1,\p}\times_{\Q_p^\times} H_{2,\p}.$$
It follows from Goursat's Lemma for Lie algebras that $\Lie G_\p$ is either $\Lie\left(H_{1,\p} \times_{\Q_p^\times} H_{2\p}\right) $ or $\Lie G_{1,\p} = \Lie H_{1,\p}$, diagonally embedded by an isomorphism $\Lie H_{1,\p} \isomorphism \Lie H_{2,\p}$ that preserves the linearized determinant maps to $\Q_p$. To prove (\ref{thm:nekovar_pair_open}), we assume that we are in the latter case, and aim to show $\pi_1$ is a conjugate twist of $\pi_2$.

By \cite[Lemma 1.1.4]{loeffler2017images}, any isomorphism $\Lie H_{1,\p} \isomorphism \Lie H_{2,\p}$ is induced by an isomorphism $i: E_{1,\p} \isomorphism E_{2,\p}$ and an $i$-linear isomorphism $D_1 \otimes_{E_1} E_{1,\p} \isomorphism D_2\otimes_{E_2} E_{2,\p} $. In particular, assuming without loss of generality that $E_0$ is Galois, there exists an automorphism $\sigma \in \Gal(E_0/\Q)$ that preserves $\p$ and induces $i: E_{1,\p} \isomorphism E_{2,\p}$. 
Since all automorphisms of $M_2(E_{0,\p})$ are inner, it follows from the description of the embedding in Theorem \ref{thm:nekovar}(\ref{thm:nekovar_one}) that, 
after conjugating $\rho_{\pi_2,\p}$, $\Lie G_\p\subset \mathfrak{gl}_2(E_{0,\p})\times \mathfrak {gl}_2(E_{0,\p})$ is contained in a subalgebra of the form $$\set{(X, \sigma X)\,: X\in \mathfrak{gl}_2(E_{0,\p})}.$$ Exponentiating, 
for some finite extension $L/K$ we have
$$\rho_{\pi_1,\p} |_{G_L} = \sigma \circ \rho_{\pi_2,\p} |_{G_L}.$$
Since $\sigma \circ \rho_{\pi_2,\p} = \rho_{\pi_2^\sigma, \p}$, Lemma \ref{lem:nekovar_twist} concludes the proof of (\ref{thm:nekovar_pair_open}).

For (\ref{thm:nekovar_pair_full}),  we  restrict our attention to those $\p$ such that $\rho_{\pi_i,\p} (G_K) = \GL_2(O_{E_i,\p})$ (after conjugating) for $i = 1,2$, which eliminates only finitely many primes of $E_0$ by Theorem \ref{thm:nekovar}.
Let $S$ be the set of primes $\p$ as above such that the conclusion of (\ref{thm:nekovar_pair_full}) does \emph{not} hold; we assume for contradiction that $S$ is infinite. 
By \cite[Proposition 3.2.1]{loeffler2017images}, for all $\p\in S$, we have an element $\sigma \in \Gal(E_0/\Q)$ preserving $\p$ such that, after  conjugating $\rho_{\pi_2,\p}$:
\begin{equation}\label{eq:tr_cong_pair}\rho_{\pi_1, \p} (g) = \pm \sigma \circ \rho_{\pi_2,\p} (g) \pmod \p,\;\; \forall g\in G_K.\end{equation}
If  $S$ is infinite,  then there exists a single $\sigma\in \Gal(E_0/\Q)$  such  that (\ref{eq:tr_cong_pair}) holds for infinitely many of the $\p\in S$ fixed by $\sigma$.

Let $\Sigma$ be the set of primes $v$ of $F$ such that either $\pi_{1,v}$ or $\pi_{2,v}$ is ramified, and for $v\not\in \Sigma$,  let $a_{1,v}$ and $a_{2,v}$ be the eigenvalues of the standard Hecke operator at $v$ on the spherical vectors of $\pi_{1,v}$ and $\pi_{2,v}$, respectively. By (\ref{eq:tr_cong_pair}), for all  $v\not\in \Sigma$ that split completely in $K/F$, $a_{1,v}^2 - \sigma(a_{2,v})^2 \in O_{E_0}$ is divisible by infinitely many primes $\p\in S$, hence vanishes. Now take a single $\p\in S$ fixed by $\sigma$ such that (\ref{eq:tr_cong_pair}) holds, and assume without loss of generality that $p\neq 2$. 
Let $K'$ be the compositum of the fixed fields of $\overline\rho_{\pi_1,\p}|_{G_K}$ and $\overline\rho_{\pi_2,\p}|_{G_K}$. Then for all $v\not\in \Sigma$ that split completely in $K'$, $$a_{1,v}\equiv \sigma (a_{2,v}) \not\equiv 0 \pmod \p,$$ so  the identity $a_{1,v}^2 - \sigma (a_{2,v})^2 = 0$ implies $a_{1,v} = \sigma (a_{2,v})$. In particular, the traces of $\rho_{\pi_1,\p} |_{G_{K'}}$ and $\sigma \circ \rho_{\pi_2,\p}|_{G_{K'}}$  coincide,  so 
we have
$\rho_{\pi_1,\p} |_{G_{K'}} \cong \sigma \circ \rho_{\pi_2,\p} |_{G_{K'}}.$ Now we can conclude by Lemma \ref{lem:nekovar_twist}.

\end{proof}

We also have the following complementary result.
\begin{corollary}\label{cor:nekovar_pair_fail}
    In the setting of (\ref{subsubsec:nekovar_pair_Setup}), there exists a finite abelian extension $L$ of $K$ with the following property:
    \begin{enumerate}
        \item\label{cor:nekovar_pair_fail_1} For all primes $\p$ of $E_0$, either $\rho_{\pi_1,\pi_2,\p}(G_L)$ is an open subgroup of $H_{1,\p} \times_{\Q_p^\times} H_{2,\p}$, or there exists an isomorphism $\sigma_\p: E_{1,\p} \isomorphism E_{2,\p}$  and a $\sigma_\p$-linear isomorphism $i_\p:D_1\otimes_{E_1} E_{1,\p} \isomorphism D_2\otimes_{E_2} E_{2,\p}$ such that $\rho_{\pi_1,\pi_2,\p}(G_L)$ is an open subgroup of 
$$H_{1,\p}\xhookrightarrow{\operatorname{id}, i_\p} H_{1,\p}\times_{\Q_p^\times} H_{2,\p}.$$
    \item    
\label{cor:nekovar_pair_fail_2}For all but finitely many $\p$, 
        either the image of $\rho_{\pi_1,\pi_2,\p}(G_L)$ is a conjugate
        of $$\set{(g_1,g_2)  \in \GL_2(O_{E_1,\p})\times \GL_2(O_{E_2,\p}):\, \det g= \det h \in \Z_p^\times },$$ or there exists an isomorphism $\sigma_\p: E_{1,\p} \isomorphism E_{2,\p}$ such that $\rho_{\pi_1,\pi_2,\p}(G_L)$ is a conjugate of $$\set{(g, \sigma_\p(g))\in \GL_2(O_{E_{1,\p}}) \times \GL_2(O_{E_{2,\p}})\,:\, g \in \GL_2(O_{E_{1,\p}}), \, \det g \in \Z_p^\times}.$$
    \end{enumerate}
\end{corollary}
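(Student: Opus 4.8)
The plan is to dichotomize according to whether $\pi_1$ is a $\Gal(\overline\Q/\Q)$-conjugate twist of $\pi_2$. If $\pi_1 \neq \pi_2^\sigma \otimes \chi$ for every $\sigma \in \Gal(\overline\Q/\Q)$ and every finite-order automorphic character $\chi$ of $\A_F^\times$, then Theorem \ref{thm:nekovar_pair} applies directly with $L = K$: its part (\ref{thm:nekovar_pair_open}) gives the first alternative of (\ref{cor:nekovar_pair_fail_1}) for all $\p$, and part (\ref{thm:nekovar_pair_full}) gives the first alternative of (\ref{cor:nekovar_pair_fail_2}) for all but finitely many $\p$. So the substance is the complementary case $\pi_1 = \pi_2^{\sigma_0} \otimes \chi_0$ with $\sigma_0 \in \Gal(\overline\Q/\Q)$ and $\chi_0$ of finite order. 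I would first record that this dichotomy matches the Goursat dichotomy for $\Lie \rho_{\pi_1,\pi_2,\p}(G_K)$ — which, being a $\Q_p$-subalgebra of $\Lie(H_{1,\p}\times_{\Q_p^\times}H_{2,\p})$ surjecting onto each factor, is either everything or the graph of an isomorphism $\Lie H_{1,\p}\isomorphism \Lie H_{2,\p}$ — since the twist relation, once restricted to any extension of $F$ killing $\chi_0$, forces the joint image into a diagonal.

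In the complementary case I would first replace $E_0$ by a finite extension so that $E_0/\Q$ is Galois and $E_0$ contains all values of $\chi_0$; this affects neither the hypotheses nor the content of the conclusion, since the fields $E_i$ and quaternion algebras $D_i$ produced by Theorem \ref{thm:nekovar} extend scalars compatibly. Now $\sigma_0$ restricts to an element of $\Gal(E_0/\Q)$, and I would take $L$ to be the compositum of $K$ with the finite abelian extension of $F$ cut out by $\rec(\chi_0)$ — a choice independent of $\p$. For a prime $\p$ of $E_0$, the identity $\rho_{\pi_1,\p}\cong{}^{\sigma_0}\rho_{\pi_2,\p}\otimes\rec(\chi_0)$, interpreted with appropriate care for how $\sigma_0$ moves $\p$ and identifies the relevant completions of $E_1$ and $E_2$, becomes on restriction to $G_L$ an isomorphism identifying $\rho_{\pi_1,\p}(G_L)$ with the image of $\rho_{\pi_2,\p}(G_L)$ under a field isomorphism $\sigma_\p\colon E_{1,\p}\isomorphism E_{2,\p}$ composed with an inner automorphism of $M_2(E_{0,\p})$. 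Combining this with Theorem \ref{thm:nekovar} for $\pi_2$, I would extract a $\sigma_\p$-linear algebra isomorphism $i_\p\colon D_1\otimes_{E_1}E_{1,\p}\isomorphism D_2\otimes_{E_2}E_{2,\p}$ such that $\rho_{\pi_1,\pi_2,\p}(G_L)$ is contained in the diagonal subgroup $\{(g,i_\p(g)): g\in H_{1,\p}\}$ of $H_{1,\p}\times_{\Q_p^\times}H_{2,\p}$.

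To upgrade containment to openness, I would invoke Theorem \ref{thm:nekovar} for $\pi_1$ and for $\pi_2$ (with $K' = L$): each projection of $\rho_{\pi_1,\pi_2,\p}(G_L)$ is open in $H_{1,\p}$, resp. $H_{2,\p}$, so $\Lie\rho_{\pi_1,\pi_2,\p}(G_L)$ surjects onto $\Lie H_{1,\p}$ and $\Lie H_{2,\p}$; since each $\Lie H_{i,\p}$ is, modulo its one-dimensional central part (the kernel of the linearized reduced norm), the simple $\Q_p$-Lie algebra $\mathfrak{sl}_1(D_i)$, Goursat's lemma together with Corollary \ref{corollary subalgebra of simple Lie algebra with base change} and a norm/determinant bookkeeping force $\Lie\rho_{\pi_1,\pi_2,\p}(G_L)$ to be the full diagonal copy of $\Lie H_{1,\p}$, which gives (\ref{cor:nekovar_pair_fail_1}). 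For (\ref{cor:nekovar_pair_fail_2}) I would restrict to the cofinitely many $\p$ at which $D_1\otimes E_{1,\p}$ and $D_2\otimes E_{2,\p}$ split and the ``moreover'' clause of Theorem \ref{thm:nekovar} holds for both $\pi_1$ and $\pi_2$ with $K'=L$; then each projection of the joint image is a conjugate of $\{g\in\GL_2(O_{E_i,\p}):\det g\in\Z_p^\times\}$, and since the image lies in the diagonal cut out by $i_\p$ and surjects onto both maximal compacts it must be a conjugate of $\{(g,\sigma_\p(g))\}$, as required.

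The hard part will be the bookkeeping around $\sigma_0$ in the complementary case: following its effect on the subfields $E_1,E_2\subset E_0$ and on the prime $\p$, and in particular arranging a single finite abelian extension $L/K$ — independent of $\p$ — together with a compatible family of field isomorphisms $(\sigma_\p)$. By contrast, the passage from containment in the diagonal to openness, which reduces via Goursat and the results of \S\ref{subsec:appendixC_generalities} to the simplicity of $\mathfrak{sl}_1(D_i)$, should be routine.
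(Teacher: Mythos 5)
Your overall dichotomy and choice of $L$ are essentially right, but the proof you sketch for the complementary case diverges from the paper in a way that leaves a genuine gap. The paper does \emph{not} try to unwind the twist/conjugation relation between $\rho_{\pi_1,\p}$ and $\rho_{\pi_2,\p}$ by hand; instead it passes to the Galois-conjugate Hilbert form $\pi_2^{\sigma_0}$ at the \emph{same} prime $\p$ and cites Nekov\'a\v{r}'s Theorem B.4.10 for the pair $(\pi_2^{\sigma_0},\pi_2)$, which directly furnishes the graph-of-a-$\sigma_\p$-linear-isomorphism shape of the joint image of $(\rho_{\pi_2^{\sigma_0},\p},\rho_{\pi_2,\p})$ over a suitable abelian extension. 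One then observes that $\rho_{\pi_1,\p}|_{G_L}=\rho_{\pi_2^{\sigma_0},\p}|_{G_L}$ once $\rec(\chi_0)|_{G_L}=1$, so the joint image of $\rho_{\pi_1,\pi_2,\p}(G_L)$ coincides with that of $\rho_{\pi_2^{\sigma_0},\pi_2,\p}(G_L)$.

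Two problems with your version. First, the ``identity'' $\rho_{\pi_1,\p}\cong{}^{\sigma_0}\rho_{\pi_2,\p}\otimes\rec(\chi_0)$ is not well-formed at a fixed $\p$: the right-hand side has coefficients in $E_{0,\sigma_0(\p)}$, so it does not relate $\rho_{\pi_1,\p}(G_L)$ to $\rho_{\pi_2,\p}(G_L)$ (both of which sit over $E_{0,\p}$) — it relates $\rho_{\pi_1,\p}$ to $\rho_{\pi_2,\sigma_0^{-1}(\p)}$, at a \emph{different} prime unless $\sigma_0$ fixes $\p$. The correct replacement is the equality $\rho_{\pi_1,\p}|_{G_L}=\rho_{\pi_2^{\sigma_0},\p}|_{G_L}$ over the same coefficient field $E_{0,\p}$; the subtle content is then in relating $\rho_{\pi_2^{\sigma_0},\p}$ to $\rho_{\pi_2,\p}$, which is exactly what Nekov\'a\v{r}'s conjugate-pair theorem delivers. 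Second, your Goursat step only produces the graph of a $\Q_p$-Lie-algebra isomorphism $\Lie H_{1,\p}\isomorphism\Lie H_{2,\p}$; to extract from this the $\sigma_\p$-linear algebra isomorphism $i_\p$ demanded by the statement of part (\ref{cor:nekovar_pair_fail_1}), you need the rigidity statement that every such continuous isomorphism arises from a field isomorphism and an algebra isomorphism — that is, \cite[Lemma 1.1.4]{loeffler2017images}, which is used in Theorem \ref{thm:nekovar_pair} but is not invoked in your sketch for this case. With those two repairs — passing to $\pi_2^{\sigma_0}$ at the fixed prime and invoking the Loeffler rigidity lemma — your route becomes a viable re-derivation of what the paper obtains more quickly from \cite[Theorem B.4.10]{nekovar2012levelraising}.
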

\begin{proof}
By Theorem \ref{thm:nekovar_pair}, we may assume without loss of generality that $\pi_1 \cong \pi_2 ^\sigma \otimes \chi$ for some $\sigma \in \Gal(\overline \Q/\Q)$ and some finite-order automorphic character $\chi$ of $\A_F^\times$, which we also view as a character of $G_F$ via class field theory. 
Then for all primes $\p$ of $E_0$, 
$$\rho_{\pi_2,\p}\otimes_{E_{0,\p}, \sigma} E_{0, \sigma(\p)} \cong \rho_{\pi_2^\sigma,\sigma(\p)} \cong \rho_{\pi_1,\sigma(\p)} \otimes \chi.$$
By \cite[Theorem B.4.10]{nekovar2012levelraising}, for all but finitely many $\p$ the image of $\rho_{\pi_2^\sigma,\pi_2, \p}(G_{K_0})$ is one of the groups listed in part (\ref{cor:nekovar_pair_fail_2}), for a certain abelian extension $K_0$ of $F$ such that $\det_{\rho_{\pi_2}, \p}|_{G_{K_0}} = \chi_{p,\cyc}$. If
 $L$ is the compositum of $K_0$ with $K$ and with the fixed field of the kernel of $\chi$, 
it follows from the same argument as in Theorem \ref{thm:nekovar} that the image of $\rho_{\pi_2^\sigma,\pi_2, \p}(G_L) 
 = \rho_{\pi_1, \pi_2,\p}(G_L)$ coincides with that of $\rho_{\pi_2^\sigma,\pi_2, \p}(G_{K_0})$ for all but finitely many $\p$, and this proves (\ref{cor:nekovar_pair_fail_2}).
\end{proof}

\subsection{Large image for relevant representations}
\subsubsection{}

Fix a relevant automorphic representation $\pi$ of $\GSP_4(\A_\Q)$, with trivial central character and with strong coefficient field $E_0$. In this subsection, we prove some results on the image of the Galois representation $\rho_{\pi,\p}$ associated to $\pi$, with an eye towards studying the existence of admissible elements (Definition \ref{def:admissible}) and assumption  \ref{ass_A5_H1} from (\ref{subsubsec:where_rigid}). Throughout this section we write $p$ for the residue characteristic of a prime $\p$ of $E_0$. 

\begin{lemma}\label{lem:image_cube}
    Suppose $\pi$ is not endoscopic, and $\BC(\pi)$ is the symmetric cube lift of a non-CM automorphic representation $\pi_0$ of $\GL_2(\A_\Q)$.
    Consider the map of algebraic groups $$f = \Sym^3\otimes \debt^{-1}: \GL_2 \to \GSP_4.$$
  For all but finitely many primes $\p$ of $E_0$, the image of $\rho_{\pi,\p}$ contains a conjugate of $f(\GL_2(\Z_p))$. In particular, for all but finitely many $\p$, admissible elements exist for $\rho_{\pi,\p}$. 
\end{lemma}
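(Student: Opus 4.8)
The plan is to realise $\rho_{\pi,\p}$ as the image of the two-dimensional representation attached to $\pi_0$ under $f=\Sym^3\otimes\det^{-1}$, and then transport the large-image theorem of Nekov\'a\v{r} (Theorem~\ref{thm:nekovar}) through $f$. After possibly enlarging $E_0$ (harmless for the statement, since only cofinitely many $\p$ are at issue) I would assume $E_0$ is a strong coefficient field for $\pi_0$; by Lemma~\ref{lem:sym_cube_relevant} the archimedean component of $\pi_0$ is discrete series of weight $2$, and $\pi_0$ is non-CM by hypothesis, so $\rho_{\pi_0,\p}\colon G_\Q\to\GL_2(E_{0,\p})$ is defined for all $\p$. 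Since $\BC(\pi)=\Sym^3\pi_0$ (Lemma~\ref{lem:BC}), comparing Frobenius traces at almost all primes together with Theorem~\ref{thm:rho_pi_LLC} and local--global compatibility for $\Sym^3$ gives $r\circ\rho_{\pi,\p}\cong\Sym^3\rho_{\pi_0,\p}\otimes\psi$ for a character $\psi$ of $G_\Q$; comparing the similitude of $\rho_{\pi,\p}$ (which is $\chi_{p,\cyc}$ by the normalisation in Theorem~\ref{thm:rho_pi_LLC}) with that of $f\circ\rho_{\pi_0,\p}$ (which is $\det\rho_{\pi_0,\p}=\chi_{p,\cyc}\cdot\rec(\omega_{\pi_0})$) forces $\rho_{\pi,\p}=(f\circ\rho_{\pi_0,\p})\otimes\eta$ inside $\GSP_4(E_{0,\p})$ for a finite-order character $\eta$ with $\eta^2=\rec(\omega_{\pi_0})^{-1}$. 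As $\rec(\omega_{\pi_0})$ is independent of $\p$, all the characters $\eta$ that can occur factor through one fixed finite abelian extension $K_0/\Q$.

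Next I would apply Theorem~\ref{thm:nekovar} to $\pi_0$, obtaining a subfield $E_1\subset E_0$ and a finite abelian extension $K'/\Q$; put $K=K_0\cdot K'$. By the ``moreover'' clause of Theorem~\ref{thm:nekovar}, for all but finitely many $\p$ the image $\rho_{\pi_0,\p}(G_K)$ is a $\GL_2(E_{0,\p})$-conjugate of $\{g\in\GL_2(O_{E_1,\p}):\det g\in\Z_p^\times\}$. Restricting to these $\p$ (on which moreover $\eta|_{G_K}=1$, as $K\supseteq K_0$), and using $\GL_2(\Z_p)\subseteq\{g\in\GL_2(O_{E_1,\p}):\det g\in\Z_p^\times\}$, we get
$$\rho_{\pi,\p}(G_\Q)\ \supseteq\ \rho_{\pi,\p}(G_K)\ =\ f\bigl(\rho_{\pi_0,\p}(G_K)\bigr)\ \supseteq\ f\bigl(c\,\GL_2(\Z_p)\,c^{-1}\bigr)=f(c)\,f(\GL_2(\Z_p))\,f(c)^{-1}$$
for the relevant $c\in\GL_2(E_{0,\p})$, a $\GSP_4(E_{0,\p})$-conjugate of $f(\GL_2(\Z_p))$; this proves the first assertion.

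For the existence of admissible elements I would compute eigenvalues: if $h=\begin{pmatrix}a&0\\0&a^2\end{pmatrix}\in\GL_2(\Z_p)$ with $a\in\Z_p^\times$, then $f(h)=\Sym^3(h)\otimes\det(h)^{-1}$ is semisimple with eigenvalues $\{1,a,a^2,a^3\}$ and similitude factor $a^3$. Choosing the residue of $a$ modulo $p$ to avoid the at most $16$ classes with $a^5\equiv 1$ or $a^{12}\equiv 1$ in $\F_p^\times$ — possible for $p$ large, hence for all but finitely many $\p$ — and picking $g\in G_K$ with $\rho_{\pi,\p}(g)$ conjugate to $f(h)$ (which exists by the previous step), we get $\nu_g=\chi_{p,\cyc}(g)=a^3$, with $g$ acting on $V_{\pi,\p}$ by the eigenvalues $\{\nu_g,1,\alpha,\nu_g/\alpha\}$ for $\alpha=a$, where one checks $\alpha\not\equiv\pm\nu_g,\pm 1,\nu_g^2,\nu_g^{-1}\pmod{\varpi}$ and $\nu_g^4\not\equiv 1\pmod p$; thus $g$ is admissible in the sense of Definition~\ref{def:admissible} (and existence of admissible primes then follows from Lemma~\ref{lem:admissible_TFAE}). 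The one point of genuine care that I anticipate is pinning down the identification $\rho_{\pi,\p}=(f\circ\rho_{\pi_0,\p})\otimes\eta$ with $\eta$ finite-order and trivialised over a $\p$-independent abelian extension; the transport of large image and the eigenvalue bookkeeping are then routine.
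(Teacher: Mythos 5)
The proposal is correct and its skeleton is the same as the paper's: identify $\rho_{\pi,\p}$ with $f\circ\rho_{\pi_0,\p}$ (after absorbing the central character of $\pi_0$), apply a large-image theorem to the two-dimensional $\rho_{\pi_0,\p}$, and push a well-chosen diagonal matrix through $f$ to produce admissible elements. Two remarks on where the paper's treatment is tidier. First, the similitude comparison forces $\omega_{\pi_0}$ to be cubic, and since $\Sym^3\pi_0$ is unchanged by twisting $\pi_0$ by a cubic character, the paper simply replaces $\pi_0$ by its twist with trivial central character; after this, $\rho_{\pi,\p}=\Sym^3\rho_{\pi_0,\p}(-1)=f\circ\rho_{\pi_0,\p}$ on the nose and no auxiliary $\eta$ appears. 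In your version, the inference ``all characters $\eta$ that can occur factor through one fixed finite abelian $K_0$'' does not follow from $\eta^2$ being $\p$-independent alone --- a square root is determined only up to a quadratic character, and there are infinitely many of those. The conclusion is nevertheless true, but for a different reason: $\eta$ is uniquely pinned down by matching Frobenius eigenvalues of $\rho_{\pi,\p}$ against those of $f\circ\rho_{\pi_0,\p}$, and these depend only on Hecke eigenvalues, which are $\p$-independent. Second, since $\pi_0$ is a classical elliptic newform over $\Q$, the paper invokes Ribet's theorem \cite[Theorem 3.1]{ribet1985largeimage} directly rather than Theorem~\ref{thm:nekovar}; both suffice, but Ribet's is the more elementary tool in this setting. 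Your eigenvalue bookkeeping at the end agrees with the paper's.
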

\begin{proof}
By Lemma \ref{lem:sym_cube_relevant}, $\pi_{0,\infty}$ is discrete series of weight 2.
    Without loss of generality, extend $E_0$ so that it is also a strong coefficient field for $\pi_0$.
    Then for all primes $\p$ of $E_0$, 
 $\rho_{\pi,\p} = \Sym^3\rho_{\pi_0,\p} (-1)$. Comparing similitude characters, we see that the central character of $\pi_0$ is cubic; by twisting, we may assume without loss of generality that it is trivial. 
    Then the claim about the image of $\rho_{\pi,\p}$ follows from \cite[Theorem 3.1]{ribet1985largeimage}. 
    Restricting to these $\p$, if $p$ is sufficiently large we may fix $z\in \Z_p^\times$ with  
    \begin{equation}\label{eq:z_for_adm_largeimage}
            z\not\equiv \pm 1, \pm z^3, z^6, z^{-3}\pmod p, \;\; z^{12}\not\equiv 1 \pmod p.
    \end{equation}
  Then applying $f$ to the diagonal matrix $\begin{pmatrix} z & 0\\0 & z^2 
  \end{pmatrix}$, it follows that
  the image of $\rho_{\pi,\p}$ contains a matrix with eigenvalues $\set{1, z, z^2, z^3}$; in particular, admissible elements exist for $\rho_{\pi,\p}$. 
\end{proof}

\begin{lemma}\label{lem:large_image_general}
    Suppose $\pi$ is not endoscopic, and $\BC(\pi)$ is neither a (weak) symmetric cube lift nor a (weak) automorphic induction. For all but finitely many primes $\p$ of $E_0$:
    \begin{enumerate}
        \item\label{lem:lg_im_gen_1} The image of $\overline\rho_{\pi,\p}$ contains a conjugate of $\SP_4(\F_p)$.
        \item \label{lem:lg_im_gen_2}If $E_{0,\p} = \Q_p$, the image of $\overline\rho_{\pi,\p}$ is a conjugate of $\GSP_4(\F_p)$. 
    \end{enumerate}
    Moreover, for all primes $\p$ with $p > 3$, the Zariski closure (over $E_{0,\p}$) of the image of $\rho_{\pi,\p}$ is equal to $\GSP_4(E_{0,\p})$. 
\end{lemma}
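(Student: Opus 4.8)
The plan is to handle the characteristic-zero (``moreover'') statement directly for all $p>3$, and to import parts (1) and (2) --- which hold only for cofinitely many $\p$ --- from Weiss's classification of images \cite{weiss2022images}. For the Zariski closure: fix $\p$ with residue characteristic $p>3$ and an isomorphism $\iota: \overline\Q_p\isomorphism\C$ inducing $\p$. By Corollary \ref{cor:general_type_irr} (valid since $\pi$ is relevant, non-endoscopic, and $\BC(\pi)$ is not an automorphic induction), $V_{\pi,\p}$ is strongly irreducible, so its image $G_\p\subset\GSP_4(E_{0,\p})$ is a compact $p$-adic Lie subgroup acting strongly irreducibly. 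By Proposition \ref{prop cases for subgroups of GSp4}, after a finite extension of $E_{0,\p}$ the Lie algebra $\mathfrak h\coloneqq\Lie(G_\p/Z_{G_\p})$ is either $\mathfrak{sp}_4$ or $\mathfrak{sl}_2$, and in the latter case $G_\p$ is $\GL_4$-conjugate into the image of $\Sym^3: \GL_2\to\GL_4$. Once the $\mathfrak{sl}_2$ case is excluded, $\mathfrak h\cong\mathfrak{sp}_4$; then using Lemma \ref{lemma with all the strongly irreducible statements}(\ref{lemma strongly irreducible part center contains open subgroup of determinant}) together with the fact that $\nu\circ\rho_{\pi,\p}=\chi_{p,\cyc}$ has open image in $\Z_p^\times$, the $E_{0,\p}$-span of $\Lie(G_\p)$ is all of $\mathfrak{gsp}_{4,E_{0,\p}}$, and since $\GSP_4$ is connected the Zariski closure of $G_\p$ is $\GSP_4$.

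To exclude the $\mathfrak{sl}_2$ case, suppose that over a finite extension $E'/E_{0,\p}$ the image of $\rho_{\pi,\p}$ lies in the image of $\Sym^3: \GL_2(E')\to\GL_4(E')$. Passing from this symmetric-cube-shaped image to a genuine two-dimensional representation (after enlarging $E'$ and twisting, using pseudorepresentations and the irreducibility of $\rho_{\pi,\p}$), one obtains $\rho_{\pi,\p}\cong\Sym^3(\tau)\otimes\chi$ for a continuous $\tau: G_\Q\to\GL_2(\overline E')$ and a character $\chi$, with $\tau$ irreducible. Comparing Hodge-Tate weights with Theorem \ref{thm:rho_pi_LLC}(\ref{part:rho_pi_LLC_HT}) shows $\tau$ has distinct Hodge-Tate weights, hence is odd; as $\tau$ is also de Rham at $p$ and unramified outside a finite set, the Fontaine-Mazur conjecture for $\GL_2$ (\cite{pan2022fontaine}, already used in Lemma \ref{lem:coh_for_JL}) gives $\tau\cong\rho_{\pi_0,\iota}$ for a cuspidal automorphic representation $\pi_0$ of $\GL_2(\A)$, which is non-CM because $\Sym^3(\rho_{\pi_0,\p})\otimes\chi$ is irreducible. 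Matching Satake parameters by Chebotarev and invoking strong multiplicity one for $\GL_4$ together with Lemma \ref{lem:BC}, one concludes that $\BC(\pi)$ is a weak symmetric cube lift of the non-CM $\pi_0$, contradicting the hypothesis. The one delicate point I anticipate here is the bookkeeping with the twisting character $\chi$ needed to recognize $\BC(\pi)$ precisely as a symmetric cube lift (rather than, say, an automorphic induction), which should be routine given the weight and trivial-central-character constraints on $\pi$.

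For parts (1) and (2): since $\pi$ is relevant, non-endoscopic, and $\BC(\pi)$ is of general type (neither a symmetric cube lift nor an automorphic induction), Theorem~1.2 of \cite{weiss2022images} gives that for all but finitely many $\p$ the image of $\overline\rho_{\pi,\p}$ contains a conjugate of $\SP_4(\F_p)$, and coincides with a conjugate of $\GSP_4(\F_p)$ when $E_{0,\p}=\Q_p$ (the latter using that the similitude character of $\overline\rho_{\pi,\p}$ is the mod-$p$ cyclotomic character, surjective onto $\F_p^\times$ for $p>3$); alternatively these follow from the Zariski-density of $G_\p$ just proved, plus a standard argument bounding the finitely many $\p$ at which the residual image can degenerate. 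The main obstacle is exactly this last point --- residual fullness for cofinitely many $\p$ --- whereas the characteristic-zero monodromy computation is comparatively soft once Corollary \ref{cor:general_type_irr} and Proposition \ref{prop cases for subgroups of GSp4} are in hand.
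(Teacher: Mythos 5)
Your approach is essentially the same as the paper's: import parts (1) and (2) from \cite[Theorem 1.2(ii)]{weiss2022images}, use Corollary \ref{cor:general_type_irr} and Proposition \ref{prop cases for subgroups of GSp4} to reduce the Zariski-density statement to ruling out the $\Sym^3$ case, and exclude that case via the Fontaine--Mazur conjecture for $\GL_2$ \cite{pan2022fontaine}.

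However, there is a genuine gap at the step where you write ``as $\tau$ is also de Rham at $p$ and unramified outside a finite set.'' Neither of these is automatic, and both hinge on the twist bookkeeping you dismiss as routine. From $\rho_{\pi,\p}\cong\Sym^3(\tau)\otimes\chi$, the two-dimensional $\tau$ is only determined up to twist by a character of $G_\Q$, and there is a priori no reason the normalizing twist is de Rham or almost everywhere unramified. The paper fixes this by first normalizing so that $\Sym^3\rho_0=\rho_{\pi,\p}(1)$ exactly and $\det\rho_0=\chi_{p,\cyc}$. With that normalization, almost-everywhere unramifiedness follows because $\rho_0(I_\ell)\subset\ker(\Sym^3)=\mu_3$ for almost all $\ell$, which combined with $\det\rho_0(I_\ell)=1$ forces $\rho_0(I_\ell)=1$. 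For de Rham at $p$, the paper invokes \cite[Corollary 3.2.13]{patrikis2019variations}: there is a character $\tau_p$ of $G_{\Q_p}$ with $\rho_0\otimes\tau_p$ de Rham; then $\Sym^3(\rho_0\otimes\tau_p)=\rho_{\pi,\p}(1)\otimes\tau_p^3$ and $\det(\rho_0\otimes\tau_p)=\chi_{p,\cyc}\tau_p^2$ are de Rham, so $\tau_p^3$ and $\tau_p^2$ are de Rham, hence $\tau_p$ is, hence $\rho_0$ is. Without these steps the appeal to \cite{pan2022fontaine} is unjustified, so this is a missing argument rather than a matter of exposition.

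One smaller point: when extracting $\rho_0$ from the fact that the image lies in the image of $\Sym^3:\GL_2\to\GL_4$, the paper cites a specific lifting result (Tate's theorem as presented in \cite[Theorem 2.1.1]{patrikis2019variations}) rather than a pseudorepresentation argument; your sketch via pseudorepresentations is plausible but would need to be carried out carefully to guarantee a \emph{continuous} lift valued in $\GL_2$ of a finite extension of $E_{0,\p}$.
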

\begin{proof}
    Part (\ref{lem:lg_im_gen_1}) is \cite[Theorem 1.2(ii)]{weiss2022images}. Part (\ref{lem:lg_im_gen_2}) follows immediately. 
  For the final claim, by  Proposition \ref{prop cases for subgroups of GSp4}(\ref{prop cases for subgroups of GSp4 part two}) and Corollary \ref{cor:general_type_irr}, it suffices to rule out the case that $\rho_{\pi,\p}$ factors through the image of the symmetric cube representation $\GL_2(E_{0,\p})/\mu_3(E_{0,\p}) \hookrightarrow \GSP_4(E_{0,\p})$. If so, then by a theorem of Tate \cite[Theorem 2.1.1]{patrikis2019variations},  after extending $E_0$ if necessary there exists a representation $\rho_0: G_\Q \to \GL_2(E_{0,\p})$ such that $\Sym^3\rho_0= \rho_{\pi,\p}(1) \otimes \tau$ for a character $\tau: G_\Q \to E_{0,\p}^\times$. Comparing the similitude factors, $(\det \rho_0)^3 = \chi_{p,\cyc}^3 \otimes \tau^2$; replacing $\rho_0$ with  $ \rho_0\otimes (\tau^{-1} \det\rho_0 / \chi_{p,\cyc})$, we can assume 
  $\Sym^3\rho_0 = \rho_{\pi,\p}(1)$. 
  Now it follows that 
 $\det \rho_0 / \chi_{p,\cyc}$ is cubic; so after twisting, we may assume without loss of generality that $\det \rho_0 = \chi_{p,\cyc}$. 
 For all but finitely many primes $\l$, we have $\rho_0(I_\l) \subset \mu_3$, so the determinant condition implies $\rho_0$ is unramified almost everywhere.
 We also claim $\rho_0|_{G_{\Q_p}}$ is de Rham: indeed, by 
  \cite[Corollary 3.2.13]{patrikis2019variations} there exists a character $\tau$ of $G_{\Q_p}$ such that $\rho_0\otimes \tau$ is de Rham, so $\rho_{\pi,\p}(1) \otimes \tau^3$ is de Rham, hence $\tau^3$ is de Rham; also $\det \rho_0\otimes \tau^2 = \chi_{p,\cyc} \otimes \tau^2$ is de Rham, so $\tau^2$ is de Rham. It follows that $\tau$ is de Rham, so $\rho_0$ is as well. 
  Since $\rho_0$ is also clearly odd, \cite[Theorem 1.0.4]{pan2022fontaine} implies $\rho_0$ arises from a modular form, hence $\BC(\pi)$ is a symmetric cube lift, and this concludes the proof of the final claim. 
\end{proof}
\subsubsection{}
For an automorphic representation $\pi_0$ of $\GL_2(\A_K)$ as in Theorem \ref{thm:nekovar} with $K/\Q$ real quadratic, let $\pi_0^\tw$ denote the $\Gal(K/\Q)$-twist.  We say $\pi_0$ is \emph{exceptional}
if there exists $\sigma\in \Gal(\overline \Q/\Q)$ and a finite-order automorphic character $\chi$ of $\A_K^\times$ such that 
$$\pi_0^\tw \cong \pi_0^\sigma \otimes \chi.$$ 
\begin{lemma}\label{lem:image_RQ}
    Suppose $\pi$ is not endoscopic, and $\BC(\pi)$ is the automorphic induction of a non-CM automorphic representation $\pi_0$ of $\GL_2(\A_K)$ with $K/\Q$ real quadratic. Then
     for all but finitely many primes $\p$ of $E_0$, the following hold.
    \begin{enumerate}
    \item  \label{lem:image_RQ_1}The image of $\rho_{\pi,\p}$ contains a conjugate of $\GL_2(\Z_p)$, embedded diagonally via  
        $$\GL_2\hookrightarrow \GL_2 \times_{\mathbb G_m}\GL_2 \hookrightarrow \GSP_4.$$
    \item\label{lem:image_RQ_2} If  $p$ splits in $K$ or $\pi_0$ is not exceptional,
   the image of $\rho_{\pi,\p}$ contains a conjugate of $\GL_2(\Z_p) \times_{\Z_p^\times} \GL_2(\Z_p)$.
    \end{enumerate}
\end{lemma}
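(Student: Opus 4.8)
The plan is to reduce the statement to the image of the joint representation $\rho_{\pi_0,\pi_0^\tw,\p}\colon G_\Q\to\GL_2(E_{0,\p})\times\GL_2(E_{0,\p})$ and then invoke Corollary~\ref{cor:nekovar_pair_fail}, where $\pi_0^\tw$ denotes the $\Gal(K/\Q)$-twist. After possibly enlarging $E_0$ I would assume it is a strong coefficient field for $\pi_0$, hence also for $\pi_0^\tw$. By Lemma~\ref{lem:RQ_induction_relevant}, $\pi_0$ has trivial central character and non-parallel discrete series archimedean weights $2$ and $4$, and $V_{\pi,\p}|_{G_K}\cong\rho_{\pi_0,\p}\oplus\rho_{\pi_0^\tw,\p}$ with $\rho_{\pi_0^\tw,\p}\cong\rho_{\pi_0,\p}^\tau$ for a generator $\tau$ of $\Gal(K/\Q)$. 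Since $\pi_0$ and $\pi_0^\tw$ have different archimedean weights, $\rho_{\pi_0,\p}$ and $\rho_{\pi_0^\tw,\p}$ have different Hodge--Tate weights by Theorem~\ref{thm:rho_GL2_LLC}(\ref{part:rho_GL2_LLC_HT}), so in particular $\rho_{\pi_0,\p}\not\cong\rho_{\pi_0^\tw,\p}$ for every $\p$. As $\det\rho_{\pi_0,\p}=\det\rho_{\pi_0^\tw,\p}=\chi_{p,\cyc}$ (triviality of $\omega_{\pi_0}$ and Theorem~\ref{thm:rho_GL2_LLC}(\ref{part:rho_GL2_LLC_char})), each summand is essentially self-dual with multiplier $\chi_{p,\cyc}$, so the symplectic isomorphism $V_{\pi,\p}\isomorphism V_{\pi,\p}^\vee\otimes\chi_{p,\cyc}$ cannot interchange the two summands; hence it preserves each of them, i.e.\ the symplectic form restricts nondegenerately to $\rho_{\pi_0,\p}$ and to $\rho_{\pi_0^\tw,\p}$. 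Consequently, up to $\GSP_4(E_{0,\p})$-conjugacy, $\rho_{\pi,\p}(G_K)$ lands in the standard subgroup $\GL_2\times_{\mathbb G_m}\GL_2\subset\GSP_4$, and its image there is exactly the image of $\rho_{\pi_0,\pi_0^\tw,\p}(G_K)$.

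Next I would apply Corollary~\ref{cor:nekovar_pair_fail} to the pair $(\pi_0,\pi_0^\tw)$ of non-CM Hilbert modular forms of weight $\le 4$ over $K$ (note $\pi_0^\tw$ is non-CM since $\pi_0$ is). This yields a finite abelian extension $L/K$ and the dichotomy of Corollary~\ref{cor:nekovar_pair_fail}(\ref{cor:nekovar_pair_fail_2}): for all but finitely many $\p$, $\rho_{\pi_0,\pi_0^\tw,\p}(G_L)$ is a conjugate either of $\{(g_1,g_2)\in\GL_2(O_{E_1,\p})\times\GL_2(O_{E_2,\p})\colon\det g_1=\det g_2\in\Z_p^\times\}$ (the ``split'' case) or of $\{(g,\sigma_\p(g))\colon g\in\GL_2(O_{E_1,\p}),\ \det g\in\Z_p^\times\}$ for a $\Q_p$-algebra isomorphism $\sigma_\p\colon E_{1,\p}\isomorphism E_{2,\p}$ (the ``diagonal'' case). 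Since $\Z_p\subseteq O_{E_i,\p}$ and $\sigma_\p$ fixes $\Z_p$ pointwise, in either case the relevant conjugate contains a conjugate of the diagonally embedded $\{(g,g)\colon g\in\GL_2(\Z_p)\}$, and in the split case it moreover contains a conjugate of $\GL_2(\Z_p)\times_{\Z_p^\times}\GL_2(\Z_p)$. Transporting back through the conjugation of the first paragraph, $\rho_{\pi,\p}(G_L)\subseteq\rho_{\pi,\p}(G_\Q)$ contains a conjugate of $\GL_2(\Z_p)$ embedded diagonally in $\GL_2\times_{\mathbb G_m}\GL_2\subset\GSP_4$ for all but finitely many $\p$; this is (\ref{lem:image_RQ_1}).

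For (\ref{lem:image_RQ_2}) it then suffices to show that the diagonal case does not occur when $\pi_0$ is not exceptional, and, separately, when $p$ splits in $K$. If $\pi_0$ is not exceptional: by Theorem~\ref{thm:nekovar_pair} and the proof of Corollary~\ref{cor:nekovar_pair_fail}, the diagonal case for $(\pi_0,\pi_0^\tw)$ forces $\pi_0\cong(\pi_0^\tw)^\sigma\otimes\chi$ for some $\sigma\in\Gal(\overline\Q/\Q)$ and finite-order $\chi$; applying the involution $\pi'\mapsto(\pi')^\tw$ to the set $\{\pi_0^\sigma\otimes\chi\}_{\sigma,\chi}$ identifies this condition with $\pi_0^\tw\cong\pi_0^{\sigma'}\otimes\chi'$, i.e.\ with $\pi_0$ being exceptional --- a contradiction. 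If $p$ splits in $K$: in the diagonal case $\rho_{\pi_0,\p}|_{G_L}$ and $\rho_{\pi_0^\tw,\p}|_{G_L}$ agree up to $\GL_2(E_{0,\p})$-conjugacy and base change along the $\Q_p$-linear isomorphism $\sigma_\p$, hence, restricting to a place of $L$ above $p$, have the same Hodge--Tate weights as $G_{\Q_p}$-representations (these are intrinsic and unchanged by finite restriction or $\Q_p$-linear base change); but when $p$ splits, $K_v=\Q_p$ for $v\mid p$, so by Theorem~\ref{thm:rho_GL2_LLC}(\ref{part:rho_GL2_LLC_HT}) the weights of $\rho_{\pi_0,\p}$ and $\rho_{\pi_0^\tw,\p}$ at the two embeddings are $\{0,1\}$ and $\{-1,2\}$ in opposite orders, which differ --- again a contradiction. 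Hence we are in the split case, and $\rho_{\pi,\p}(G_L)$ contains a conjugate of $\GL_2(\Z_p)\times_{\Z_p^\times}\GL_2(\Z_p)\subset\GSP_4$, proving (\ref{lem:image_RQ_2}). I expect the main obstacle to be the first half of this last step: precisely matching the ``diagonal'' case of Corollary~\ref{cor:nekovar_pair_fail} with the notion of $\pi_0$ being exceptional, which is bookkeeping on how the $\Gal(K/\Q)$-twist interacts with conjugation by $\Gal(\overline\Q/\Q)$; the Hodge--Tate weight computation is routine by comparison.
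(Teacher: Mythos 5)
Your proof is correct and follows essentially the same route as the paper's: decompose $\rho_{\pi,\p}|_{G_K}$ as $\rho_{\pi_0,\p}\oplus\rho_{\pi_0^\tw,\p}$, invoke Corollary~\ref{cor:nekovar_pair_fail}(\ref{cor:nekovar_pair_fail_2}) for part (1), match the diagonal case of that dichotomy with the exceptionality of $\pi_0$ for the non-exceptional sub-case of part (2), and rule out the diagonal case when $p$ splits by comparing Hodge--Tate weights across the two embeddings. You simply spell out a few steps (nondegeneracy of the restricted symplectic form, the bookkeeping identifying the diagonal case with exceptionality, and the precise invariance of Hodge--Tate weights under restriction and $\Q_p$-linear base change) that the paper leaves implicit.
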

\begin{proof}
    Recall from Lemma \ref{lem:RQ_induction_relevant} that $\pi_0$ is the automorphic representation associated to a Hilbert modular form of weights $(2,4)$. Assume without loss of generality that $E_0$ is Galois and is also a strong coefficient field for $\pi_0^\tw$; then $$\rho_{\pi,\p}|_{G_K} = \rho_{\pi_0,\p}\oplus \rho_{\pi_0^\tw, \p}.$$ Hence part (\ref{lem:image_RQ_1}) follows from    Corollary \ref{cor:nekovar_pair_fail}(\ref{cor:nekovar_pair_fail_2}). For part (\ref{lem:image_RQ_2}), the non-exceptional case is immediate from Theorem \ref{thm:nekovar_pair}(\ref{thm:nekovar_pair_full}), so suppose without loss of generality that $p$ splits in $K$. 
    Then for any $\sigma \in \Gal(E_{0,\p}/\Q_p)$, and any fixed embedding $j:K\hookrightarrow \Q_p$, the Hodge-Tate weights of $\rho_{\pi_0, \p}$ and $\sigma\circ \rho_{\pi_0,\p}$ with respect to $j$ coincide. This rules out that $\rho_{\pi_0^\tw, \p}|_{G_L} \cong \sigma \circ \rho_{\pi_0, \p}|_{G_L}$
for any finite extension $L/K$, so by Corollary \ref{cor:nekovar_pair_fail}(\ref{cor:nekovar_pair_fail_2}), we obtain (\ref{lem:image_RQ_2}).
\end{proof}

\begin{lemma}\label{lem:image_IQ}
    Suppose $\BC(\pi)$ is the automorphic induction of an automorphic representation $\pi_0$ of $\GL_2(\A_K)$  with $K/\Q$ imaginary quadratic, and $\pi_0$ is not an automorphic induction. Then for all but finitely many primes $\p$ of $E_0$:
    \begin{enumerate}
        \item \label{lem:image_IQ_1}The image of $\rho_{\pi,\p}$ contains a conjugate of
    $\SL_2(\Z_p)$, where $\SL_2 \hookrightarrow \SP_4$ is embedded into the Levi factor of a Siegel parabolic.
    \item \label{lem:image_IQ_2}
  If $p$ splits in $K$, then admissible elements exist for $\rho_{\pi,\p}$. 
    \end{enumerate}
\end{lemma}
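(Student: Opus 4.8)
By Lemma~\ref{lem:IQ_induction_relevant}, after enlarging $E_0$ we may write $\pi_0 = \BC_{K/\Q}(\sigma)\otimes\chi$, where $\sigma$ corresponds to a non-CM classical newform of weight $2$ or $3$ and $\chi$ is a Hecke character of $K$; comparing Hodge--Tate weights via Theorem~\ref{thm:rho_pi_LLC}(\ref{part:rho_pi_LLC_HT}) shows that $\chi$ has infinity type $(-1,1)$ (weight $2$) or $(-1,0)$ (weight $3$), so its $p$-adic avatar $\theta\coloneqq\rec(\chi)$ satisfies $\theta\neq\theta^{\tw}$ (here $\theta^\tw$ is the $\Gal(K/\Q)$-conjugate) and has open image in $O_\p^\times$. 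One then has
\[
\rho_{\pi,\p}\;\cong\;\rho_{\sigma,\p}\otimes\Ind_{G_K}^{G_\Q}\theta,\qquad \rho_{\pi,\p}|_{G_K}\;\cong\;\bigl(\rho_{\sigma,\p}|_{G_K}\otimes\theta\bigr)\;\oplus\;\bigl(\rho_{\sigma,\p}|_{G_K}\otimes\theta^{\tw}\bigr).
\]
Applying Theorem~\ref{thm:nekovar} to $\sigma$ (with $F=\Q$), for all but finitely many $\p$ the image $\rho_{\sigma,\p}(G_K)$, and hence also $\rho_{\sigma,\p}(\overline{[G_K,G_K]})$ since the latter is the closure of the commutator subgroup, contains a conjugate of $\SL_2(O_{E_1,\p})\supseteq\SL_2(\Z_p)$; I restrict to such $\p$ with $p\geq 5$ throughout.

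For part (1), set $V_1=\rho_{\sigma,\p}|_{G_K}\otimes\theta$ and $V_2=\rho_{\sigma,\p}|_{G_K}\otimes\theta^{\tw}$. The restriction of the symplectic form on $V_{\pi,\p}$ to $V_i$ would be a nonzero $G_K$-equivariant alternating pairing valued in $\chi_{p,\cyc}$, forcing an isomorphism $\det\rho_{\sigma,\p}\cdot\theta^{2}\cong\chi_{p,\cyc}$ (resp. with $\theta^\tw$), hence $\theta=\theta^{\tw}$ — impossible. So $V_1$ and $V_2$ are Lagrangian and perfectly paired (giving $V_2\cong V_1^\vee(\chi_{p,\cyc})$ as $G_K$-modules), and $G_K$ acts through the Siegel parabolic $P$ stabilising $V_1$. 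On $\overline{[G_K,G_K]}$ the characters $\theta,\theta^{\tw},\chi_{p,\cyc}$ are trivial, so $\rho_{\pi,\p}$ acts there by $\rho_{\sigma,\p}$ identically on $V_1$ and $V_2$; since $\rho_{\sigma,\p}|_{\overline{[G_K,G_K]}}$ is self-dual (its image lies in $\SL_2$, which is its own dual via the alternating form), this coincides with the embedding $\SL_2\hookrightarrow\GL(V_1)$ into the Siegel Levi of $P$. Combined with the previous paragraph, $\rho_{\pi,\p}(G_K)$ contains a conjugate of $\SL_2(\Z_p)$ inside the Siegel Levi, which proves (1).

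For part (2) assume $p$ splits in $K$. Since $G_K\subseteq G_\Q$ and $\chi_{p,\cyc}(g)$ is the similitude factor of $\rho_{\pi,\p}(g)$, it suffices to produce $g\in G_K$ with $\rho_{\pi,\p}(g)$ having eigenvalues $\{\nu,1,\alpha,\nu/\alpha\}$, where $\nu=\chi_{p,\cyc}(g)$ satisfies $\nu^{4}\not\equiv1\bmod p$ and $\alpha\not\equiv\pm\nu,\pm1,\nu^{2},\nu^{-1}\bmod\varpi$; any preimage in $G_\Q$ is then admissible. From $V_2\cong V_1^\vee(\chi_{p,\cyc})$ one gets the identity $\theta\,\theta^{\tw}=\chi_{p,\cyc}(\det\rho_{\sigma,\p})^{-1}$ on $G_K$, so, writing $\gamma,\delta$ for the eigenvalues of $\rho_{\sigma,\p}(g)$, the eigenvalues of $\rho_{\pi,\p}(g)$ are $\{\theta(g)\gamma,\theta(g)\delta,\theta^{\tw}(g)\gamma,\theta^{\tw}(g)\delta\}$ with $\{\theta(g)\gamma,\theta^{\tw}(g)\delta\}$ and $\{\theta(g)\delta,\theta^{\tw}(g)\gamma\}$ the two pairs of product $\nu$. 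Thus it is enough to arrange $\theta(g)\gamma=1$ (which then forces $\theta^{\tw}(g)\delta=\nu$ automatically), with $\alpha=\theta(g)\delta=\delta/\gamma$. The plan is to combine a regular semisimple element of $\rho_{\sigma,\p}(\overline{[G_K,G_K]})\cap\SL_2(\Z_p)$ — which, via the $\SL_2(\Z_p)$-coset structure, lets one realise a prescribed eigenvalue $\gamma$ — with an element detecting the character $\theta$, and the mild freedom in $\chi_{p,\cyc}$ needed for $\nu^4\not\equiv1$. The crucial point is a Goursat-type analysis of the joint image of $(\rho_{\sigma,\p}|_{G_K},\theta)$ in $\GL_2(O_{E_1,\p})\times O_\p^\times$: that after fixing $\rho_{\sigma,\p}(g)$ one can still move $\theta(g)$ enough to solve $\theta(g)\gamma=1$ with $\delta/\gamma$ generic. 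This is exactly where $p$ splitting in $K$ is used: it makes $\theta$ Hodge--Tate of nonzero weight at a chosen place $\mathfrak p\mid p$, so the abelian extension cut out by $\theta$ is not contained in the (cyclotomic-type) maximal abelian subextension of $K(\rho_{\sigma,\p})/K$, giving the required non-entanglement.

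The structural identity and part (1) are routine given the inputs above; I expect the main obstacle to be part (2) — carrying out the joint-image analysis precisely enough to place the eigenvalue $\gamma=\theta(g)^{-1}$ where it is needed while keeping $\alpha=\delta/\gamma$ and $\nu$ in general position modulo $\varpi$, uniformly for all but finitely many $\p$ with $p$ split in $K$.
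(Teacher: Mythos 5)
Your approach to part (1) is essentially sound, and differs from the paper's in a reasonable way: the paper forces $\SL_2(O)$ into the image by establishing a field disjointness statement $L(\rho_{f,\p}) \cap LK(\chi_{p,\cyc}, \chi_\p, \chi_\p^\tw, \omega_f) = L(\det\rho_{f,\p})$, whereas you pass to the closed commutator subgroup $\overline{[G_K,G_K]}$, which kills all the abelian characters at once and (for $p\geq 5$, since $\SL_2(\Z_p)$ is topologically perfect) preserves the $\SL_2$-part of the image. Both work. However, there is a citation problem: you invoke Theorem~\ref{thm:nekovar} for the classical form $\sigma$, but $\sigma$ can have weight $k=3$, and Theorem~\ref{thm:nekovar} as stated in this paper covers only weight $(2k_v)_{v|\infty}$ with $k_v\geq 1$, i.e.\ even parallel weight. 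The paper sidesteps this by citing Ribet's result \cite[Theorem 3.1]{ribet1985largeimage} directly, which applies in all weights $\geq 2$. You would need to do the same, or justify why Nekovar's theorem extends. Also, the step ``$\det\rho_{\sigma,\p}\theta^2 \cong \chi_{p,\cyc}$ (resp.\ with $\theta^\tw$) hence $\theta = \theta^\tw$'' only gives $\theta^2 = (\theta^\tw)^2$; you need to add that $\theta/\theta^\tw$ has a nontrivial Hodge--Tate weight and therefore infinite order, so it cannot be $2$-torsion.

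For part (2), you have correctly identified the conceptual role of $p$ split in $K$ (it gives two independent places $v,\overline v$ above $p$, so the Hodge--Tate weights of $\theta$ and $\theta^\tw$ at $(I_v, I_{\overline v}) \cong \Z_p^\times\times\Z_p^\times$ are genuinely transverse to the cyclotomic direction), but you explicitly stop short of carrying out the ``joint-image analysis.'' This is where the paper's proof does the real work, and it is concrete rather than Goursat-theoretic: after restricting to $\p$ unramified in $L$ and crystalline, it computes $\chi_\p|_{I_v\times I_{\overline v}}:(z_1,z_2)\mapsto z_1^{-1}z_2^{3-k}$, $\chi_\p^\tw|_{I_v\times I_{\overline v}}:(z_1,z_2)\mapsto z_1^{3-k}z_2^{-1}$, and $\chi_{p,\cyc}|_{I_v\times I_{\overline v}}:(z_1,z_2)\mapsto z_1z_2$, and deduces directly that the image of $(\chi_\p,\chi_\p^\tw,\chi_{p,\cyc})$ on $G_L$ contains, with index at most $2$, the group $\{(a,b,c)\in(\Z_p^\times)^3: ab = c^{2-k}\}$. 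It then combines this with the $\SL_2$-image via the same disjointness statement used in part (1) to show the image of $(\rho_{f,\p},\chi_\p,\chi_\p^\tw)$ contains (up to index $2$) $\{(g,x,y): \det g\in(\Z_p^\times)^{k-1}, (xy)^{k-1}=(\det g)^{2-k}\}$. Finally it shows that for every $(g,c)$ in a set $S_n = \{(g,c):c,\det g\in(\Z_p^\times)^n\}$ with $n=4k-6$, the pair $(g,cg)^2$ lies in the image of $(\rho_{f,\p}\otimes\chi_\p, \rho_{f,\p}\otimes\chi_\p^\tw)$, by solving $\lambda^n = (\det g)^{k-2}c^{1-k}$; this yields admissible elements once $p$ is large. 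In particular the mechanism is not ``fix $\rho_{\sigma,\p}(g)$ and move $\theta(g)$'' in the sense you sketched, but rather to identify an explicit, uniform-in-$\p$ subgroup of $\rho_{\pi,\p}(G_\Q)$ large enough to contain an element with eigenvalues $\{1,z,z^2,z^3\}$ for some $z\in(\Z_p^\times)^n$. Your sketch is pointing at the right ingredients, but the actual estimate — in particular the fact that the constraint binding $(\det g, \chi_\p(g), \chi_\p^\tw(g))$ is exactly the single relation $(xy)^{k-1} = (\det g)^{2-k}$ and not something worse — is the content of the lemma and needs to be computed, not asserted.
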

\begin{proof}
    By Lemma \ref{lem:IQ_induction_relevant},
after possibly extending $E_0$ we can write
\begin{equation}\label{eq:IQ_decompose_foradm}\rho_{\pi,\p}|_{G_K} = \rho_{f,\p} \otimes \chi_\p \oplus \rho_{f,\p} \otimes \chi_\p^\tw\end{equation} for all $\p$, where:
\begin{itemize}
    \item $\rho_{f,\p}$ is the Galois representation attached to a classical modular form $f$ of weight $k = 2$ or 3 with coefficients in $E_0$; here the normalization is as usual, i.e. $\det \rho_{f,\p} = \omega_f \cdot \chi_{p,\cyc}^{k-1}$ where $\omega_f$ is the nebentype character of $f$, viewed as a character of $G_\Q$ via class field theory. 
    \item $\chi_\p$ is the $G_K$-representation attached to an algebraic Hecke character $\chi'$ of infinity type $(-1, 3-k)$.
    \item $\chi_\p^\tw$ is the $\Gal(K/\Q)$-twist of $\chi_\p$, which is also associated to the twist $(\chi')^\tw$.
\end{itemize}
The symplectic form in (\ref{eq:IQ_decompose_foradm}) is given by the natural pairing $$\rho_{f,\p} \otimes \rho_{f,\p} \otimes \chi_\p \otimes \chi_\p^\tw \to \det \rho_{f,\p} \otimes \chi_\p\chi_\p^\tw = \chi_{p,\cyc}.$$ 

Let $L$ be the fixed field of $\omega_f$, which is independent of $\p$.
After discarding finitely many primes $\p$ and changing basis, we may assume by \cite[Theorem 3.1]{ribet1985largeimage} that $$\rho_{f,\p}(G_L) = \set{g\in \GL_2(O)\,:\, \det g\in (\Z_p^\times)^{k-1}},$$
where $O$ is the ring of integers of 
a subfield of $E_{0,\p}$. 
Then
the Galois group $$\Gal(L(\rho_{f,\p}) \cap LK(\chi_{p,\cyc}, \chi_\p, \chi_\p^\tw, \omega_f)/L(\det(\rho_{f,\p})))$$ is a solvable quotient of $\SL_2(O)$, hence trivial if $p$ is sufficiently large; so we have
\begin{equation}\label{eq:disjoint_for_IQ_image}
    L(\rho_{f,\p}) \cap LK(\chi_{p,\cyc}, \chi_\p, \chi_\p^\tw, \omega_f) = L(\det(\rho_{f,\p})).
\end{equation}
In particular, this immediately implies (\ref{lem:image_IQ_1}).

For (\ref{lem:image_IQ_2}), we further restrict to those $\p$  such that $\chi_\p$ is crystalline at all primes above $p$. 
Fix a prime $v|p$ of $K$, and let $\overline v$ be its complex conjugate, with inertia subgroups $I_v, I_{\overline v} \subset G_K^{\ab}$; these are disjoint and each naturally identified with $\Z_p^\times$ since we are assuming $p$ is split (and unramified) in $K$. When restricted to inertia, the characters $\chi_p^\cyc$, $\chi_\p$, and $\chi_\p^\tw$ have the form:
\begin{align*}
    \chi_p^\cyc|_{I_v\times I_{\overline v}} \,: \,\Z_p^\times \times \Z_p^\times &\to \Z_p^\times \\
    (z_1, z_2) &\mapsto z_1z_2,\\
    \chi_\p|_{I_v\times I_{\overline v}}\,: \, \Z_p^\times \times \Z_p^\times &\to \Z_p^\times\subset O_{E,\p}^\times\\
    (z_1, z_2) &\mapsto z_1^{-1} z_2^{3-k} \\
     \chi_\p^\tw|_{I_v\times I_{\overline v}}\,: \, \Z_p^\times \times \Z_p^\times &\to \Z_p^\times\subset O_{E,\p}^\times\\
    (z_1, z_2) &\mapsto z_1^{3-k} z_2^{-1}.
\end{align*}
In particular, one can calculate that, for $p$ unramified in $L$, the image of
$$(\chi_\p, \chi_\p^\tw, \chi_{p,\cyc}): G_L \to O_{E_0,\p}^\times \times O_{E_0,\p}^\times \times \Z_p^\times$$
contains a subgroup of 
$$\set{(a, b, c) \in (\Z_p^\times)^3\,:\, ab= c^{2-k}}$$ 
with index at most 2. 
Comparing with (\ref{eq:disjoint_for_IQ_image}), we see that the image of \begin{equation}\label{eq:uber_dumb_IQ}(\rho_{f,\p}, \chi_{\p}, \chi_\p^\tw): G_L \to \GL_2(O) \times O_{E_0,\p}^\times \times O_{E_0,\p}^\times\end{equation} contains a subgroup of
\begin{equation}\label{eq:dumb_for_IQ}\set{(g, x, y)\in \GL_2(O) \times \Z_p^\times \times \Z_p^\times \,:\,\det g \in (\Z_p^\times)^{k-1}, \, (xy)^{(k-1)} = (\det g)^{2-k}}\end{equation}
with index at most $2$. 
Let $n \coloneqq 2(k-1)+ 2(k-2) = 4k - 6$, and set \begin{equation*}S_n \coloneqq \set{(g, c) \in \GL_2(\Z_p) \times \Z_p^\times\,:\, c, \det g\in (\Z_p^\times)^n}.\end{equation*}

Then for any $(g, c) \in S_n$, there exists $\lambda\in \Z_p^\times$ satisfying $$\lambda ^{n} = (\det g)^{k-2} c^{1-k}.$$
It follows from (\ref{eq:dumb_for_IQ}) that $(g\lambda^{-1}, \lambda, c\lambda)^2$ lies in the image of (\ref{eq:uber_dumb_IQ}); hence $(g, cg)^2$ lies  in the image of $(\rho_{f,\p} \otimes \chi_\p, \rho_{f,\p} \otimes \chi_\p^\tw): G_L \to \GL_2(E_{0,\p})\times \GL_2(E_{0,\p})$ for all $(g,c)\in S_n$. If $p$ is sufficiently large, this immediately implies that admissible elements exist for $\rho_{\pi,\p}$. 
\end{proof}

\begin{lemma}\label{lem:image_quartic}
 Suppose $\pi$ is not endoscopic, and $\BC(\pi)$ is the (weak) automorphic induction of a Hecke character $\chi_0$ of a quartic field $K \subset \C$. Then there exists a constant $n$ such that, for all but finitely many primes $\p$ of $E_0$, the following holds:
 \begin{enumerate}
     \item\label{lem:image_quartic_1} $\rho_{\pi,\p}(G_\Q)$ contains the scalar subgroup $(\Z_p^\times)^n \subset \GSP_4(\Z_p)$. 
     \item\label{lem:image_quart_2} If $p$  splits completely in the Galois closure $K^c$ of $K$, then
 $\rho_{\pi,\p} (G_\Q)$ contains a conjugate of 
 $$\set{\begin{pmatrix}
     x &&& \\ & y && \\ &&z & \\ &&& xz/y
 \end{pmatrix}\,:\, x, y, z \in (\Z_p^\times)^n}\subset \GSP_4(\Z_p).$$
 \end{enumerate}
\end{lemma}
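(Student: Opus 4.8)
The plan is to exploit the structure of $\rho_{\pi,\p}$ as an automorphic induction. Since $\BC(\pi)$ is cuspidal (Lemma \ref{lem:BC}) and equals the automorphic induction of the Hecke character $\chi_0$ of $K$, and $\rho_{\pi,\p}$ is essentially self-dual with similitude character $\chi_{p,\cyc}$, one first checks $K$ is a quartic CM field with maximal totally real subfield $K^+$, and that (after enlarging $E_0$) there is, for all but finitely many $\p$, an isomorphism $\rho_{\pi,\p}\cong\Ind_{G_K}^{G_\Q}\psi_\p$, where $\psi_\p\colon G_K\to E_{0,\p}^\times$ is the $\p$-adic avatar of the algebraic Hecke character $\chi_0\lvert\cdot\rvert^{-1/2}$, normalized so that $\psi_\p\cdot\psi_\p^{c}=\chi_{p,\cyc}|_{G_K}$ with $c\in\Gal(K/K^+)$ complex conjugation. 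Let $K^c$ be the Galois closure of $K$ and $G=\Gal(K^c/\Q)$, a transitive subgroup of $S_4$. Over $G_{K^c}$ one has $\rho_{\pi,\p}|_{G_{K^c}}\cong\bigoplus_{i=1}^{4}\psi_i$ with $\psi_i=\psi_\p^{\sigma_i}|_{G_{K^c}}$ for coset representatives $\sigma_i$; the symplectic form pairs $\psi_i$ with $\psi_{c(i)}$, so $\psi_i\psi_{c(i)}=\chi_{p,\cyc}$ and $\psi_1\psi_2\psi_3\psi_4=\chi_{p,\cyc}^{2}$ on $G_{K^c}$. Finally, by Theorem \ref{thm:rho_pi_LLC}(\ref{part:rho_pi_LLC_HT}) and compatibility of Hodge--Tate weights with induction, whenever $\psi_\p$ is crystalline above $p$ the Hodge--Tate weights $h_{i,\tau}$ of the $\psi_i$, as $\tau$ ranges over the embeddings $K^c\hookrightarrow\overline\Q_p$, form the $G$-translates of $\{-1,0,1,2\}$, the $c$-pairs of weights being $\{-1,2\}$ and $\{0,1\}$.

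Both parts reduce to controlling the image of $\Psi_\p=(\psi_1,\psi_2,\psi_3,\psi_4)\colon G_{K^c}\to(E_{0,\p}^\times)^{4}$, which lands in a maximal torus $T(\Z_p)\subset\GSP_4(\Z_p)$ adapted to the decomposition, in fact in the three-dimensional subtorus $T'\subset T$ cut out by $\psi_i\psi_{c(i)}=\chi_{p,\cyc}$. For (\ref{lem:image_quart_2}) the displayed group is a fixed finite-index subgroup of $T'(\Z_p)$, so it suffices to bound the index of $\Psi_\p(G_{K^c})$ in $T'(\Z_p)$ independently of $\p$. For (\ref{lem:image_quartic_1}) the scalar matrices in $\rho_{\pi,\p}(G_\Q)$ are precisely the common values $\psi_1(g)$ over $g$ in the closed subgroup $H=\ker\bigl((\psi_1/\psi_2,\psi_1/\psi_3,\psi_1/\psi_4)\bigr)\subset G_{K^c}$; since $\psi_1^{2}=\chi_{p,\cyc}$ on $H$, the scalar subgroup contains $(\Z_p^\times)^{n}$ for a suitable $n$ once $\chi_{p,\cyc}(H)$ has index in $\Z_p^\times$ bounded independently of $\p$, i.e.\ once the fixed field of $H$ meets $K^c(\mu_{p^\infty})$ in a bounded extension of $K^c$. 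The common input is the \emph{uniform} large-image statement: for all but finitely many $\p$, $\Psi_\p(G_{K^c})$ is open in its Zariski closure with index bounded only in terms of $K$ and the conductor of $\chi_0$ --- a uniform-in-$\p$ version of the theory of complex multiplication for $\chi_0$ and its conjugates, of the type underlying the results of \cite[\S B]{nekovar2012levelraising} used in Lemma \ref{lem:image_IQ}. Granting this, (\ref{lem:image_quartic_1}) amounts to showing the Zariski closure of $\Psi_\p(G_{K^c})$ contains the scalars, equivalently that there is no multiplicative relation $\psi_1^{e_1}\psi_2^{e_2}\psi_3^{e_3}\psi_4^{e_4}\chi_{p,\cyc}^{d}=(\text{finite order})$ on $G_{K^c}$ beyond those generated by $\psi_i\psi_{c(i)}=\chi_{p,\cyc}$; comparing Hodge--Tate weights at all $\tau$ reduces this to the assertion that the weight vectors $(h_{1,\tau},\dots,h_{4,\tau})$ span the expected sublattice of $\Z^{4}$, the exceptional configurations being exactly those that would make $\BC(\pi)$ an isobaric sum or an induction from a proper subfield --- excluded because $\pi$ is not endoscopic, $\BC(\pi)$ is cuspidal, and $\rho_{\pi,\p}$ is not induced from a subfield of $K$ (Corollary \ref{cor:general_type_irr}, Lemma \ref{lem:IQ_induction_relevant}).

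For (\ref{lem:image_quart_2}) I would instead argue directly at $p$, avoiding the full Hecke-character input. If $p$ splits completely in $K^c$ then every prime $v\mid p$ of $K^c$ has completion $\Q_p$, so for $\p$ outside a finite set each $\psi_i|_{G_{K^c_v}}$ is crystalline, hence on inertia equals $\chi_{p,\cyc}^{h_{i,v}}|_{I_v}$ for its Hodge--Tate weight $h_{i,v}\in\{-1,0,1,2\}$; since $\chi_{p,\cyc}(I_v)=\Z_p^\times$, the image $\Psi_\p(I_v)$ is the one-parameter subgroup $\{(t^{h_{1,v}},t^{h_{2,v}},t^{h_{3,v}},t^{h_{4,v}})\colon t\in\Z_p^\times\}\subset T'(\Z_p)$. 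As $v$ ranges over the (transitively permuted) primes above $p$, the distinctness of the weights $-1,0,1,2$ forces the subgroup of $T'(\Z_p)\cong(\Z_p^\times)^{3}$ generated by these one-parameter subgroups to have index bounded by a universal constant $n$; a fortiori $\rho_{\pi,\p}(G_{K^c})=\Psi_\p(G_{K^c})$ contains it, and conjugating $T'$ to the standard diagonal torus and relabelling the $\psi_i$ yields the displayed group. The main obstacle is the uniformity in $\p$ needed for (\ref{lem:image_quartic_1}) --- upgrading Zariski-density in $T'$ to index bounded independently of $p$ --- together with the case analysis over the possible $G\in\{C_4,V_4,D_4,A_4,S_4\}$ and CM types to exclude the degenerate weight configurations and deduce the exclusion from the non-endoscopy and cuspidality hypotheses.
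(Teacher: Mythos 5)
Your part (\ref{lem:image_quart_2}) is in spirit the same argument the paper uses: restrict to inertia at the completely split primes above $p$, note that each crystalline character of $G_{\Q_p}$ is a power of $\chi_{p,\cyc}$ on inertia, and show the resulting one‑parameter subgroups span a lattice of bounded index. Where you state this as a claim ("the distinctness of the weights forces bounded index"), the paper actually carries out the combinatorics: it projects onto the first three coordinates, uses transitivity of $G_{K^c}$ on $\{\chi_1,\dots,\chi_4\}$ together with the relations $\chi_1\chi_2=\chi_3\chi_4=\chi_{p,\cyc}$ to produce three explicit weight vectors $(1,0,2)$, $e\in\{(0,1,2),(0,1,-1)\}$, $f\in\{(2,-1,1),(-1,2,1)\}$, checks that all four triples are linearly independent, and takes $n$ to cover the (finitely many) possibilities. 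You would need to do this explicitly to close the gap, and you should be careful that "distinctness of weights" alone is not quite enough — it is the specific relations $\psi_i\psi_{c(i)}=\chi_{p,\cyc}$ plus transitivity that pin down which index-bounded lattices can occur.

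For part (\ref{lem:image_quartic_1}) your route is genuinely different from the paper's, and — as you acknowledge at the end — incomplete. You reduce to a uniform-in-$\p$ large-image statement for the quadruple of Hecke characters and a Zariski-density argument for the scalars, neither of which you prove. The paper avoids this machinery entirely: it restricts $\chi_\p$ to the scalar inertia subgroup $\Z_p^\times\hookrightarrow(O_K\otimes\Z_p)^\times\hookrightarrow G_K^{\ab}$, where $\chi_\p$ acts by $z\mapsto z^{-1+0+1+2}=z^2$ (the key input is just that the Hodge–Tate weights $\{-1,0,1,2\}$ sum to $2\neq 0$), and then on $\Z_p^\times\hookrightarrow(O_{K^c}\otimes\Z_p)^\times\hookrightarrow G_{K^c}^{\ab}$ each conjugate $\chi_i$ gives $z\mapsto z^{2[K^c:K]}$. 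Since all four conjugates agree on this subgroup, the image contains the scalar matrices $(\Z_p^\times)^{2[K^c:K]}$ directly, with the explicit uniform constant $n=2[K^c:K]$, and this works for \emph{all} $p$ (unramified in $K^c$ with $\chi_\p$ crystalline above $p$), not just the split ones. So the inertial-scalar subgroup you call $H$ is automatically available and needs no density or large-image input; the reduction you set up is unnecessary.
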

\begin{proof}
Let $\chi \coloneqq \chi_0 |\cdot|^{1/2}$. From Theorem \ref{thm:rho_pi_LLC}(\ref{part:rho_pi_LLC1}), we see that the local component $\chi_v$ of $\chi$ takes algebraic values on $K_v^\times$ for cofinitely many primes $v$ of $K$; hence $\chi_\infty$ is algebraic \cite[Th\'eor\`eme 3.1]{waldschmidt1982caracteres}. Extending $E_0$ if necessary, for all primes $\p$ of $E_0$ we have the $\p$-adic character $\chi_\p$ associated to $\chi$, and $\rho_{\pi,\p} = \Ind_{G_K}^{G_\Q} \chi_\p$ for all $\p$. We restrict to those $\p$ such that $K^c/\Q$ is unramified at $p$, and $\chi_\p$ is crystalline at all primes $v|p$. 
The Hodge-Tate weights of $\chi_\p$ with respect to the four embeddings $i: K\hookrightarrow\overline\Q_p$ are $\set{-1,0,1,2}$ in some order by Theorem \ref{thm:rho_pi_LLC}(\ref{part:rho_pi_LLC_HT}); hence on the subgroup
$$\Z_p^\times \hookrightarrow ( O_K \otimes \Z_p)^\times\hookrightarrow G_K^{\ab},$$ $\chi_\p$ is given by $z \mapsto z^{-1 + 0 + 1 + 2} = z^2$.
In particular, on the subgroup
$$\Z_p^\times \hookrightarrow (O_{K^c}\otimes \Z_p)^\times \hookrightarrow G_{K^c}^{\ab},$$ $\chi_\p$ is given by $z\mapsto z^{2[K^c:K]}$. The same is true for all $G_\Q$-conjugates of $\chi_\p$, so the image of $\rho_{\pi,\p}|_{G_{K^c}}$ contains the scalar subgroup $(\Z_p^\times)^{2[K^c:K]}$, proving (\ref{lem:image_quartic_1}).

For (\ref{lem:image_quart_2}), 
we decompose 
\begin{equation}
    \rho_{\pi,\p}|_{G_{K^c}} = \chi_1 \oplus \chi_2 \oplus \chi_3 \oplus \chi_4,
\end{equation}
where all of the characters $\chi_j$ are Galois conjugates of $\chi_\p|_{G_{K^c}}$ and
\begin{equation}\label{eq:chis_multiply}
    \chi_1\cdot \chi_2 = \chi_3\cdot \chi_4 = \chi_{p, \cyc}.
\end{equation}
For each $g\in G_\Q$, $\rho_{\pi,\p}(G_{K^c})$
contains the image of
\begin{equation}\label{eq:for_quartic_image}\begin{pmatrix}
    g\cdot \chi_1 &&& \\ & g\cdot \chi_2 && \\ && g\cdot \chi_3 \\ &&& g\cdot \chi_4
\end{pmatrix}.\end{equation}

Fix an embedding $i: K^c\hookrightarrow\Q_p$, and
for any Hodge-Tate character $\rho$ of $G_{K^c}$, let $\HT(\rho)$ denote the Hodge-Tate weight with respect to $i$. Let $I_p \subset G_{K^c}$ be the inertia subgroup for the prime induced by $i$. 
In particular, restricting (\ref{eq:for_quartic_image}) to $I_p$ and using that each $\chi_j$ is crystalline at primes above $p$, $\rho_{\pi,\p}(G_{K^c})$
contains the image of 
\begin{equation*}
    \begin{split}
        \Z_p^\times &\to \GL_4(\Z_p) \\
        z &\mapsto \begin{pmatrix}
            z^{\HT(g\cdot \chi_1)}&&& \\ &             z^{\HT(g\cdot \chi_2)} && \\ &&             z^{\HT(g\cdot \chi_3)} & \\ &&&            z^{\HT(g\cdot \chi_4)}
        \end{pmatrix}.
    \end{split}
\end{equation*}
Let \begin{equation}\label{eq:sublattice_for_quartic}L \subset \set{(x, y, z, w)\in \Z^4\, : \, x + y= z + w}\end{equation}be the sublattice  spanned by the vectors $(\HT(g\cdot\chi_1), \HT(g\cdot\chi_2), \HT(g\cdot\chi_3), \HT(g\cdot\chi_4))$ for $g\in G_\Q$.
\begin{claim}
For a constant $n\geq 1$ independent of $\p$,     the lattice $L$ contains $$n\cdot \set{(x, y, z, w)\in \Z^4\, : \, x + y= z + w}.$$
\end{claim}
Note that the claim implies the lemma, because, as  long as $p$ is sufficiently large, there exists $z\in (\Z_p^\times)^n$ 
satisfying (\ref{eq:z_for_adm_largeimage}); 
 an element $h\in G_\Q$ such that $\rho_{\pi,\p} (h)$ has eigenvalues $\set{1, z^3, z, z^2}$ is admissible for $\rho_{\pi,\p}$, and the claim implies such elements exist.

 Now we prove the claim.  Let $\pr: \Z^4 \to \Z^3$ be the projection onto the first three factors, and note that it suffices to show $\pr(L)$ contains $n\cdot \Z^3$. 
 Without loss of generality, suppose the Hodge-Tate weights of $\chi_1,\chi_2$, $\chi_3$, and $\chi_4$ are 1, 0, 2, and $-1$ (in order). 
 Because the action of $G_{K^c}$ on the set $\set{\chi_1,\chi_2,\chi_3,\chi_4}$ is transitive, for each $j \in 1,\ldots, 4$ we have some $g_j \in G_\Q$ such that $\HT(g_j \chi_j) = 1$. In particular, using (\ref{eq:chis_multiply}), $\pr(L)$ contains $(1,0,2)$; a vector  $e = (0,1, 2)$ or  $ e = (0, 1, -1)$; and a vector  $ f = (2, -1, 1)$ or $ f= (-1, 2, 1)$. In particular, the set $\set{(1, 0, 2), e, f}$ is always linearly independent; and, since there are only four total possibilities for this set, there exists $n \in \Z$ such that the $\Z$-span of $(1,0,2)$, $e$, and $f$ always contains $n\Z$. 

\end{proof}
Now we are ready to consider assumption \ref{ass_A5_H1} from the main text (see (\ref{subsubsec:where_rigid})). 
\begin{thm}\label{thm:appendix_A5}
    Let $\pi$ be a relevant, non-endoscopic automorphic representation of $\GSP_4(\A_\Q)$, with strong coefficient field $E_0$. Then \ref{ass_A5_H1} holds for all but finitely many primes $\p$ of $E_0$.
\end{thm}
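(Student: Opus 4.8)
The plan is to run through the classification of $\BC(\pi)$ and, in each case, read off a nontrivial residual scalar from one of the large-image lemmas proved above. First I would discard the finitely many primes $\p$ for which $\overline\rho_{\pi,\p}$ fails to be absolutely irreducible (using Lemma \ref{lem:easy_assumptions_cofinite}), so that $\overline T_{\pi,\p} = T_{\pi,\p}/\varpi_\p$; it then suffices to exhibit an element $g \in G_\Q$ such that $\rho_{\pi,\p}(g)$ is a scalar matrix $z\,I_4$ with $z\not\equiv 1\pmod{\varpi_\p}$, since the reduction of $g$ then lies in $\overline\rho_{\pi,\p}(G_\Q)$ as a nontrivial scalar, which is exactly condition \ref{ass_A5_H1}. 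I would also discard the finitely many $\p$ with $p \le 3$. Since $\pi$ is non-endoscopic, $\BC(\pi)$ is cuspidal (Lemma \ref{lem:BC}); by the classification of cuspidal symplectic representations of $\GL_4(\A_\Q)$, it is of general type, a symmetric cube lift of a non-CM representation $\pi_0$ of $\GL_2(\A_\Q)$, an automorphic induction from a quadratic field (imaginary or real), or an automorphic induction from a quartic field --- the subcase in which $\BC(\pi)$ is induced from an imaginary quadratic field $K$ while the underlying $\GL_2(\A_K)$-representation is itself an automorphic induction being absorbed into the quartic case, since $\BC(\pi)$ is then induced from a quartic field.

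For $\BC(\pi)$ of general type, Lemma \ref{lem:large_image_general}(\ref{lem:lg_im_gen_1}) gives, for all but finitely many $\p$, a conjugate of $\SP_4(\F_p)$ inside $\overline\rho_{\pi,\p}(G_\Q)$, and $-I_4 \in \SP_4(\F_p)$ is a nontrivial scalar since $p>2$. For a symmetric cube lift, Lemma \ref{lem:image_cube} gives a conjugate of $f(\GL_2(\Z_p))$ with $f = \Sym^3\otimes\debt^{-1}$ inside the image of $\rho_{\pi,\p}$, and $f(-I_2) = -I_4$. For an automorphic induction from an imaginary quadratic field with underlying representation not itself an induction, Lemma \ref{lem:image_IQ}(\ref{lem:image_IQ_1}) puts a conjugate of $\SL_2(\Z_p)$ into the image, embedded into a Siegel Levi via $g\mapsto\operatorname{diag}(g, g^{-t})$, so that $-I_2\mapsto -I_4$; for an automorphic induction from a real quadratic field, Lemma \ref{lem:image_RQ}(\ref{lem:image_RQ_1}) puts a conjugate of $\GL_2(\Z_p)$ into the image via the diagonal $\GL_2\hookrightarrow\GL_2\times_{\mathbb G_m}\GL_2\hookrightarrow\GSP_4$, again sending $-I_2$ to $-I_4$; and for an automorphic induction from a quartic field, Lemma \ref{lem:image_quartic}(\ref{lem:image_quartic_1}) puts the scalar subgroup $(\Z_p^\times)^n\subset\GSP_4(\Z_p)$ into the image for a fixed constant $n$ independent of $\p$, whose reduction $(\F_p^\times)^n$ is nontrivial once $p-1>n$.

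In each case one obtains, for all but finitely many $\p$, a nontrivial scalar in $\overline\rho_{\pi,\p}(G_\Q)$, establishing condition \ref{ass_A5_H1}. I do not expect a real obstacle: all of the substance is contained in the preceding large-image results, and the only points to check are that the case list is exhaustive for cuspidal symplectic $\BC(\pi)$ and that the diagonal and Siegel-Levi embeddings occurring in those lemmas send scalar matrices of $\GL_2$ to scalar matrices of $\GSP_4$, which they manifestly do.
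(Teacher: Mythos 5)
Your argument is correct and follows exactly the same route as the paper, whose entire proof is the one-line statement "This is an immediate consequence of Lemmas \ref{lem:image_cube} through \ref{lem:image_quartic}." You have simply made explicit the case analysis and the production of a nontrivial residual scalar in each case, which is precisely what the paper leaves to the reader. One very minor gloss: when you absorb degenerate quadratic-induction cases into the quartic case, you state this only for imaginary quadratic $K$ with $\pi_0$ itself an induction; the same absorption is needed (and works identically, since $\Ind_K^\Q\Ind_L^K\chi = \Ind_L^\Q\chi$) for real quadratic $K$ with $\pi_0$ of CM type, and likewise a symmetric cube lift of a CM form is an induction and must also be routed to the quartic case rather than to Lemma \ref{lem:image_cube}. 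These are bookkeeping points that do not affect the substance.
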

The theorem is also true in the endoscopic case, but not used in the main text; the proof uses Lemma \ref{lem:endoscopic_independence} below.
\begin{proof}
This is an immediate consequence of
 Lemmas  \ref{lem:image_cube} through \ref{lem:image_quartic}.

\end{proof}

\begin{prop}\label{prop:adm_IIa}
    Suppose $\pi$ is not endoscopic, and there exists a prime $\l$ such that $\pi_\l$ is of type IIa. Then for all but finitely many primes $\p$ of $E_0$, admissible elements exist for $\rho_{\pi,\p}$.
\end{prop}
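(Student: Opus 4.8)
The plan is to reduce everything to the large-image results already proved in this appendix. By Lemma \ref{lem:admissible_TFAE} it suffices to exhibit, for cofinitely many $\p$, a single admissible element for $\rho_{\pi,\p}$; equivalently, by the Chebotarev density theorem and the fact that $\nu\circ\overline\rho_{\pi,\p}=\overline\chi_{p,\cyc}$ is surjective, an element $g$ in the image of $\overline\rho_{\pi,\p}$ with $\nu_g:=\overline\chi_{p,\cyc}(g)$ satisfying $\nu_g^4\neq 1$ and with generalized eigenvalues $\{\nu_g,1,\alpha,\nu_g/\alpha\}$, $\alpha\notin\{\pm1,\pm\nu_g,\nu_g^2,\nu_g^{-1}\}$. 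Since $\pi$ is relevant and non-endoscopic, $\BC(\pi)$ (Lemma \ref{lem:BC}) is cuspidal and, by definition of general type, is either of general type, a symmetric cube lift of a cuspidal $\pi_0$ on $\GL_2(\A)$ (necessarily non-CM, since a symmetric cube of a CM form has reducible Galois representation, contradicting Lemma \ref{lem:reducible_endoscopic}), or an automorphic induction: from an imaginary quadratic field, from a quartic CM field, or — through a cuspidal $\pi_0$ on $\GL_2(\A_K)$ — from a real quadratic field $K$.

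Next I would use the type IIa hypothesis to eliminate the ``CM-type'' inductions. By Theorem \ref{thm:rho_pi_LLC}(\ref{part:rho_pi_LLC1}) together with the explicit local parameters of type IIa representations in \cite[Table A.7]{roberts2007local}, the monodromy operator $N$ on $\WD(\rho_{\pi,\iota}|_{G_{\Q_\ell}})$ has rank exactly one. But if $\BC(\pi)$ is an automorphic induction from characters of an imaginary quadratic or a quartic CM field, then $\rho_{\pi,\iota}|_{G_{\Q_\ell}}$ is itself induced from characters, so $N=0$; and if $\BC(\pi)$ is induced through a non-induction $\pi_0$ over an imaginary quadratic field, then the formula for the monodromy of an induced Weil--Deligne representation forces $\operatorname{rank}N$ to be even. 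Either way this contradicts rank one, so only three cases survive: $\BC(\pi)$ of general type, a symmetric cube of non-CM $\pi_0$, or an induction of a non-CM $\pi_0$ from a real quadratic field $K$.

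In the first case, for cofinitely many $\p$ the image of $\overline\rho_{\pi,\p}$ contains a conjugate of $\SP_4(\F_p)$ (Lemma \ref{lem:large_image_general}), so by surjectivity of $\nu\circ\overline\rho_{\pi,\p}$ it contains an element conjugate to a diagonal matrix with eigenvalues $\{\nu_g,1,\alpha,\nu_g/\alpha\}$ and similitude $\nu_g$, which is admissible for generic $\nu_g,\alpha$; in the second case Lemma \ref{lem:image_cube} provides elements with eigenvalues $\{1,z,z^2,z^3\}$ for generic $z$. In the third case I would apply Lemma \ref{lem:image_RQ}: when $p$ splits in $K$ or $\pi_0$ is not exceptional, part (\ref{lem:image_RQ_2}) shows the image contains a conjugate of $\GL_2(\Z_p)\times_{\Z_p^\times}\GL_2(\Z_p)$, and taking $g_1,g_2\in\GL_2(\Z_p)$ with eigenvalues $\{\nu_g,1\}$ and $\{\alpha,\nu_g/\alpha\}$ gives an admissible element. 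The remaining subcase — $p$ inert in $K$ and $\pi_0$ exceptional — is where the hypothesis on $\pi_\ell$ is needed once more: the monodromy count forces $\ell$ to split in $K$, with $\pi_{0,w}$ a twist of Steinberg and $\pi_{0,\bar w}$ unramified principal series, and I would argue that this refined local behavior is incompatible with $\pi_0$ being exceptional in the sense of \cite{nekovar2012levelraising} (equivalently, that such $\pi$ already arises by symmetric cube or by induction from a quartic CM field, hence is covered by the previous cases), or, failing a clean contradiction, read off an admissible element directly from the structure of $\rho_{\pi,\iota}|_{G_{\Q_\ell}}$ at $\ell$. Making this last step watertight — the monodromy bookkeeping at $\ell$ and its interaction with the exceptionality condition and Lemma \ref{lem:image_RQ} — is the main obstacle; everything else is a direct assembly of Lemmas \ref{lem:image_cube}, \ref{lem:large_image_general}, and \ref{lem:image_RQ}, in the spirit of the proof of Theorem \ref{thm:appendix_A5}.
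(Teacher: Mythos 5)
Your proposal takes a fundamentally different (and much longer) route than the paper, and you yourself flag that the final step is not watertight, so I'll address both points.

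The paper's actual proof is a one-paragraph direct computation that bypasses the case analysis entirely. Since $\pi_\l$ is type IIa, the Weil--Deligne parameter $\rec_{\operatorname{GT}}(\pi_\l\otimes|\cdot|^{1/2})$ is explicitly $\Frob_\l=\operatorname{diag}(\pm\l^{1/2},\alpha,\pm\l^{-1/2},\l/\alpha)$ together with a rank-one nilpotent $N$, and purity forces $|\alpha|=1$. By local--global compatibility (Theorem~\ref{thm:rho_pi_LLC}(\ref{part:rho_pi_LLC1})), for every $\p$ coprime to $\l$ the matrix $\rho_{\pi,\p}(\Frob_\l^2)$ has eigenvalues $\{\l^2,\alpha^2\l,1,\l^2/\alpha^2\}$, which (after enlarging $E_0$ so that $\alpha^2\in E_0$) satisfy the admissibility inequalities modulo all but finitely many $\p$ because $\alpha^2\l\ne\pm1,\pm\l^2,\l^{-2},\l^4$ and $\l^8\ne1$ as elements of $E_0$. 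So $\Frob_\l^2$ is itself an admissible element for almost all $\p$. No large-image input, no classification of $\BC(\pi)$, no exceptionality is needed; the whole point of the type-IIa hypothesis here is that it hands you an admissible element \emph{in the clear} at the single prime $\l$.

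On your approach: the reduction via the monodromy-rank constraint is not a gap in principle --- the rank-1 condition does rule out the quartic CM case ($N=0$) and, using Lemma~\ref{lem:IQ_induction_relevant} to write $\pi_0=\BC_{K/\Q}(\sigma)\otimes\chi$, the imaginary-quadratic case ($\operatorname{rank}N$ even in both the split and inert subcases); and your sketch for the exceptional real-quadratic subcase can in fact be closed, since $\l$ split in $K$ with $\pi_{0,w}$ special and $\pi_{0,\bar w}$ unramified is incompatible with $\pi_0^{\tw}\cong\pi_0^\sigma\otimes\chi$ (compare the local components at $w$: one side is unramified, the other a twist of Steinberg). But this requires several lemmas, a delicate tracking of how monodromy behaves under induction, and the explicit exceptionality analysis --- all to establish something that is visible immediately from the local parameter at $\l$. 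The lesson is that the hypothesis ``$\pi_\l$ is type IIa'' is not merely a tool to steer the global case analysis; it produces the admissible element directly, in the same spirit as the type-II case of Lemma~\ref{lem:IIa_new}(\ref{part:IIa_Galois}).
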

\begin{proof}
    The Weil-Deligne representation $\rec_{\operatorname{GT}} (\pi_\l)$ is tamely ramified; under the embedding $\GSP_4\hookrightarrow\GL_4$ from (\ref{subsubsec:coordinates_gsp2n}), it is given by
    $$\Frob_\l = \begin{pmatrix} \pm\l^{1/2} & && \\ & \alpha && \\ &&\pm\l^{-1/2} & \\ &&& \l/\alpha\end{pmatrix}\in \GSP_4(\C), \;\; N = \begin{pmatrix} &  & 1 &  \\  &  &  & \\ &  &  &  \\  & & &  \end{pmatrix}\in \GSP_4(\C)$$
    By the purity assertion in Theorem \ref{thm:rho_pi_LLC}(\ref{part:rho_pi_LLC1}) (for any prime $\p$ of $E_0$), we know $|\alpha| = 1$. Extend $E_0$ if necessary so that $\alpha^2\in E_0$. Then for all but finitely many primes $\p$ of $E_0$, we have:
    \begin{align*}
        \l^8 &\not\equiv 1 \pmod \p,\\
        \alpha^2\l &\not\equiv \pm 1, \pm \l^2, \l^{-2}, \l^4\pmod \p.
    \end{align*}
Suppose $\p$ satisfies the above conditions, and let $\Frob_\l\in G_\Q$ be any lift of Frobenius. 
   By Theorem \ref{thm:rho_pi_LLC}(\ref{part:rho_pi_LLC1}),  $\rho_{\pi,\p}(\Frob_\l^2)$ has eigenvalues  $\set{\l^2, \alpha^2\l, 1, \l^2/\alpha^2}$, hence $\Frob_\l^2$ is an admissible element for $\rho_{\pi,\p}$. 
\end{proof}

Combining Lemmas \ref{lem:image_cube} through \ref{lem:image_quartic} with Proposition \ref{prop:adm_IIa}, we obtain:
\begin{thm}\label{thm:when_adm_primes}
    Let $\pi$ be a relevant, non-endoscopic automorphic representation of $\GSP_4(\A_\Q)$, with strong coefficient field $E_0$.
        There is a set $S$ of rational primes of positive Dirichlet density such that for all $p \in S$ and all $\p|p$, admissible elements exist for $\rho_{\pi,\p}$.
    There exists such an $S$ containing all but finitely many $p$ if $\pi$ satisfies any of the following:
    \begin{enumerate}[(i)]
        \item          There exists a prime $\l$ such that $\pi_\l$ is of type IIa.
        \item $\BC(\pi)$ is a symmetric cube lift.
        \item $\BC(\pi)$ is the automorphic induction of a non-CM automorphic representation $\pi_0$ of $\GL_2(\A_K)$ with $K$ real quadratic, and $\pi_0^\tw \neq \pi_0^\sigma \otimes \chi$ for all $\sigma\in \Gal(\overline \Q/\Q)$ and all quadratic Hecke characters $\chi$ of $K$.
    \end{enumerate}
\end{thm}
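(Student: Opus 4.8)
The plan is to run a case analysis on the functorial type of $\BC(\pi)$ (Lemma \ref{lem:BC}), reducing each case to one of the explicit large-image results in Lemmas \ref{lem:image_cube}, \ref{lem:large_image_general}, \ref{lem:image_RQ}, \ref{lem:image_IQ}, \ref{lem:image_quartic} and in Proposition \ref{prop:adm_IIa}, and then exhibiting an admissible element inside the image. Two preliminary reductions make the bookkeeping routine. First, by Lemma \ref{lem:admissible_TFAE} it is equivalent to show that an admissible element $g\in G_\Q$ in the sense of Definition \ref{def:admissible} exists; and since admissibility of $g$ constrains only the generalized eigenvalues of $\rho_{\pi,\p}(g)$, which are conjugacy invariants, it suffices to exhibit such a $g$ after conjugating the image by an element of $\GSP_4(\overline\Q_p)$. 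Second, since $\pi$ is relevant and non-endoscopic, $\BC(\pi)$ is a cuspidal automorphic representation of $\GL_4(\A)$ (Lemma \ref{lem:BC}; cf. Lemma \ref{lem:reducible_endoscopic}), so by the classification of cuspidal representations of $\GL_4(\A)$ it is of exactly one of the following types: (a) of general type; (b) a symmetric cube lift of a (necessarily non-CM) cuspidal representation $\pi_0$ of $\GL_2(\A)$; (c) an automorphic induction from $\GL_2(\A_K)$ with $K$ real or imaginary quadratic and $\pi_0$ non-CM and not itself an induction; or (d) an automorphic induction from $\GL_1(\A_L)$ with $L$ quartic.

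For the first assertion, I would first dispose of the cases where $S$ may be taken cofinite (hence of density one). In case (a), Lemma \ref{lem:large_image_general} shows that for all but finitely many $\p$ the residual image $\overline\rho_{\pi,\p}(G_\Q)$ contains a conjugate of $\SP_4(\F_p)$; since its similitude character is the reduction of the cyclotomic character, which surjects onto $\F_p^\times$, after conjugating the image contains, for each $\nu_0\in\F_p^\times$, every element of $\GSP_4(\F_p)$ of similitude $\nu_0$ --- in particular a diagonalizable element with eigenvalues $\set{1,\nu_0,\alpha,\nu_0/\alpha}$, and for $p$ large one may pick $\nu_0,\alpha\in\F_p^\times$ avoiding the finitely many forbidden congruences in Definition \ref{def:admissible}, yielding an admissible element. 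In case (b), Lemma \ref{lem:image_cube} gives that the image contains a conjugate of $f(\GL_2(\Z_p))$ for the map $f$ of that lemma, and $f(\operatorname{diag}(z,z^2))$ has eigenvalues $\set{1,z,z^2,z^3}$, which is an admissible configuration once $z$ lies in an explicit congruence class mod $p$ (possible for $p$ large). In case (c) with $K$ real and $\pi_0$ not exceptional, Lemma \ref{lem:image_RQ}, part (\ref{lem:image_RQ_2}), shows the image contains a conjugate of $\GL_2(\Z_p)\times_{\Z_p^\times}\GL_2(\Z_p)$, which contains $(\operatorname{diag}(1,\nu),\operatorname{diag}(\alpha,\nu/\alpha))$ acting with eigenvalues $\set{1,\nu,\alpha,\nu/\alpha}$; choosing $\nu,\alpha$ suitably gives an admissible element for all but finitely many $\p$.

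In the remaining cases the best available statement requires a splitting condition on the underlying rational prime. In case (c) with $K$ imaginary, Lemma \ref{lem:image_IQ}, part (\ref{lem:image_IQ_2}), produces admissible elements for all $\p$ over primes splitting in $K$; in case (d), Lemma \ref{lem:image_quartic}, part (\ref{lem:image_quart_2}), gives a diagonal subgroup $\set{\operatorname{diag}(x,y,z,xz/y):x,y,z\in(\Z_p^\times)^n}$ inside the image for all $\p$ over primes splitting completely in the Galois closure $K^c$ of $L$, and taking $x=1$ and $z,y\in(\Z_p^\times)^n$ with $\bar z^4\neq 1$ and $\bar y$ generic (possible for $p$ large) produces an admissible element; and in case (c) with $K$ real and $\pi_0$ exceptional one again applies Lemma \ref{lem:image_RQ}, part (\ref{lem:image_RQ_2}), for $p$ splitting in $K$. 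In each sub-case the relevant set of rational primes has positive Dirichlet density by the Chebotarev density theorem, which together with the previous paragraph proves the first assertion. For the second assertion: case (i) is exactly Proposition \ref{prop:adm_IIa}; case (ii) is subsumed in case (b) above; and in case (iii), comparing central characters --- using that a relevant $\pi$ has trivial, in particular finite-order, central character so that $\omega_{\pi_0}=\omega_\pi\circ\Nm_{K/\Q}$ is trivial (Lemma \ref{lem:RQ_induction_relevant}), which forces the auxiliary twisting character to be quadratic --- shows that the displayed hypothesis on $\pi_0$ is equivalent to $\pi_0$ being not exceptional, so the cofinite statement from the previous paragraph applies.

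The main obstacle I anticipate is not the large-image input (which is the content of the cited lemmas) but the case (c)/(d) subtlety: for $K$ real the image always contains the diagonal copy of $\GL_2$ under $\GL_2\hookrightarrow\GL_2\times_{\mathbb G_m}\GL_2\hookrightarrow\GSP_4$ (Lemma \ref{lem:image_RQ}, part (\ref{lem:image_RQ_1})), but this subgroup consists of the block matrices $g\oplus g$, which always have a repeated eigenvalue and so are \emph{never} admissible; one genuinely needs the larger $\GL_2\times_{\Z_p^\times}\GL_2$-image, available only under non-exceptionality of $\pi_0$ or splitting of $K$, and the same phenomenon forces one to use the full diagonal torus rather than a proper subgroup of it in case (d). Thus the real work is checking, in each case, that the available subgroup of $\GSP_4$ is large enough to contain an element whose eigenvalues lie in the precise admissible configuration $\set{\nu,1,\alpha,\nu/\alpha}$, and tracking which congruence- and splitting-conditions on $p$ this imposes.
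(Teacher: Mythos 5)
Your approach is the same as the paper's: the proof given there is literally "combine Lemmas \ref{lem:image_cube}--\ref{lem:image_quartic} with Proposition \ref{prop:adm_IIa}," which is exactly your case analysis on the functorial type of $\BC(\pi)$ followed by reading off the explicit images in the appropriate lemma. Your reduction of condition (iii) to non-exceptionality via central characters (using $\omega_{\pi_0}=\omega_\pi\circ\Nm_{K/\Q}$ from Lemma \ref{lem:RQ_induction_relevant} and the relevant $\pi$ having trivial central character) is correct and is exactly the point the paper is silently using.

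There is, however, a gap in your treatment of case (a). You claim cofinite density for the general-type case, arguing that since $\overline\rho_{\pi,\p}(G_\Q)$ contains a conjugate of $\SP_4(\F_p)$ and its similitude surjects onto $\F_p^\times$, it "contains every element of $\GSP_4(\F_p)$ of similitude $\nu_0$." That inference does not follow: the residual image lies in $\GSP_4(k_\p)$ (not necessarily $\GSP_4(\F_p)$), and a subgroup of $\GSP_4(k_\p)$ containing $\SP_4(\F_p)$ with full $\F_p^\times$-similitude need not contain $\GSP_4(\F_p)$ --- the elements witnessing a given similitude value may live outside $\GSP_4(\F_p)$ (e.g., differ from a $\GSP_4(\F_p)$-element by a scalar in $k_\p^\times\setminus\F_p^\times$), and then the coset $g_0\cdot\SP_4(\F_p)$ need not contain an element with $1$ among its eigenvalues. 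Lemma \ref{lem:large_image_general}(2) does give the full image $\GSP_4(\F_p)$, but only under the hypothesis $E_{0,\p}=\Q_p$, i.e., for $p$ split completely in $E_0$. The fix is simply to restrict to such $p$, which is a positive-density set and yields the first assertion of the theorem; this is consistent with the fact that the theorem lists cofinite density only under (i)--(iii), \emph{not} for general type. You should therefore downgrade the general-type case from cofinite to positive-density (split primes), which is all the paper claims and needs.
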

\qed

Finally, we handle the endoscopic case separately. 
\begin{prop}\label{prop:adm_endosc}
    Suppose $\pi$ is endoscopic, associated to a pair $(\pi_1,\pi_2)$ of automorphic representations of $\GL_2(\A_\Q)$ (in any order). Then:
    \begin{enumerate}
        \item\label{prop:adm_endoscopic_one} If $\pi_1$ does not have CM, then for all but finitely many $p$ and all $\p|p$, there exist admissible primes for $\rho_{\pi,\p}$ that are BD-admissible for $\rho_{\pi_1,\p}$. 
        \item\label{prop:adm_endo_two} If $\pi_1$ has CM by a field $K$ and $\pi_2$ does not have CM by $K$, then for all but finitely many $p$ split in $K$ and all $\p|p$, there exist admissible primes for $\rho_{\pi,\p}$ that are BD-admissible for $\rho_{\pi_1,\p}$. 
    \end{enumerate}
\end{prop}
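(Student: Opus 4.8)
The plan is to reduce, via Lemma \ref{lem:admissible_TFAE} and the Chebotarev density theorem applied to $\Gal(\Q(\overline\rho_{\pi,\p})/\Q)$, to exhibiting a single element $g\in G_\Q$ which is admissible for $\rho_{\pi,\p}$ and BD-admissible for $\rho_{\pi_1,\p}$. Writing $\rho_{\pi,\p}=\rho_{\pi_1,\p}\oplus\rho_{\pi_2,\p}$ and $\nu=\chi_{p,\cyc}(g)$, this amounts to finding $g$ with $\nu^4\not\equiv 1\pmod p$ such that $\rho_{\pi_1,\p}(g)$ has generalized eigenvalues $\{1,\nu\}$ and $\rho_{\pi_2,\p}(g)$ has generalized eigenvalues $\{\alpha,\nu/\alpha\}$ for some $\alpha$ avoiding $\{\pm\nu,\pm1,\nu^2,\nu^{-1}\}$ modulo the maximal ideal; the determinant conditions are automatic since $\det\rho_{\pi_i,\p}=\chi_{p,\cyc}$. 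One then produces $g$ from a large-image statement for the joint representation $(\rho_{\pi_1,\p},\rho_{\pi_2,\p})\colon G_\Q\to\GL_2(E_{0,\p})\times\GL_2(E_{0,\p})$, valued in pairs of equal determinant, taking $p$ large enough that the required residues exist in $\F_p^\times$.

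When $\pi_1$ and $\pi_2$ are both non-CM the joint image is as large as possible: by Lemma \ref{lem:when_endoscopic_relevant} and Theorem \ref{thm:rho_GL2_LLC}(\ref{part:rho_GL2_LLC_HT}) the representations $\rho_{\pi_1,\p}$ and $\rho_{\pi_2,\p}$ have distinct Hodge--Tate weights, so $\pi_1\not\cong\pi_2^\sigma\otimes\chi$ for any Galois conjugation $\sigma$ and finite-order $\chi$, and Theorem \ref{thm:nekovar_pair}(\ref{thm:nekovar_pair_full}) gives, for cofinitely many $\p$, that $(\rho_{\pi_1,\p},\rho_{\pi_2,\p})(G_K)$ contains a conjugate of $\{(g_1,g_2)\in\GL_2(O_{E_1,\p})\times\GL_2(O_{E_2,\p}):\det g_1=\det g_2\in\Z_p^\times\}$ for the finite abelian extension $K/\Q$ of that theorem. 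One then realizes $g_1=\mathrm{diag}(1,z)$ and $g_2=\mathrm{diag}(\alpha,z/\alpha)$ with $z,\alpha\in\Z_p^\times$ chosen to meet all the congruences. This settles case (\ref{prop:adm_endoscopic_one}) when $\pi_2$ is non-CM.

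In the remaining cases at least one of $\pi_1,\pi_2$ has CM. Suppose first $\pi_2$ has CM by an imaginary quadratic field $K''$, so $\rho_{\pi_2,\p}=\Ind_{G_{K''}}^{G_\Q}\psi_\p$ for a Galois character $\psi_\p$ attached to an algebraic Hecke character of $K''$. The key inputs are: that $\rho_{\pi_1,\p}|_{G_{K''}}$ has open image for cofinitely many $\p$ (Theorem \ref{thm:nekovar}, applicable since $K''$ and the auxiliary field of that theorem are both abelian over $\Q$, hence so is their compositum); and that $\psi_\p$ is not the product of a power of $\chi_{p,\cyc}$ with a finite-order character, which one sees by comparing infinity types at a prime of $K''$ above $p$. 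A Goursat argument in the spirit of Corollary \ref{cor:nekovar_pair_fail} and the proof of Lemma \ref{lem:image_IQ} then shows the joint image of $(\rho_{\pi_1,\p},\psi_\p)$ on $G_{K''}$ is open in the obvious subgroup, which suffices to produce $g\in G_{K''}$ with the desired eigenvalues (so $\rho_{\pi_2,\p}(g)=\mathrm{diag}(\psi_\p(g),\psi_\p^\tw(g))$ and $\alpha=\psi_\p(g)$); if $p$ is inert in $K''$ one instead takes $g$ in the nontrivial $\Gal(K''/\Q)$-coset, where $\rho_{\pi_2,\p}(g)$ has eigenvalues $\pm\sqrt{-\nu}$ and the admissibility constraints become finitely many congruences on $\nu$ still satisfiable for $p$ large; this completes case (\ref{prop:adm_endoscopic_one}). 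For case (\ref{prop:adm_endo_two}), where $\pi_1$ has CM by $K$, one runs the dual argument: writing $\rho_{\pi_1,\p}=\Ind_{G_K}^{G_\Q}\psi_\p'$ and using that, when $p$ splits in $K$ (the source of that hypothesis), $\psi_\p'$ is trivial on the inertia group at one prime of $K$ above $p$, so the requirement on $\rho_{\pi_1,\p}(g)$ can be arranged for suitable $g\in G_K$ with $\psi_\p'(g)=1$, on which subgroup $\rho_{\pi_2,\p}$ still has large enough image; and if additionally $\pi_2$ has CM by a field $K''\neq K$, one works over $G_{KK''}$, where both $\rho_{\pi_1,\p}$ and $\rho_{\pi_2,\p}$ diagonalize into Hecke characters, comparing infinity types at primes above $p$.

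\textbf{The main obstacle} is organizing these CM and mixed cases cleanly: running the independence arguments while respecting the forced coincidence of both determinants with $\chi_{p,\cyc}$, pinning down exactly which splitting hypothesis on $p$ is needed — this is where the ``$p$ split in $K$'' assumption in (\ref{prop:adm_endo_two}) enters — and carrying out the descent to elements lying outside the relevant quadratic subfield's Galois group, where the eigenvalues acquire square roots and the list of forbidden residues must be re-derived. By contrast the both-non-CM case is essentially immediate from Theorem \ref{thm:nekovar_pair}.
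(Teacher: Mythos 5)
Your approach is genuinely different from the paper's, and it is more laborious in a way the paper deliberately avoids. The paper does not aim for a joint large-image theorem for the pair $(\rho_{\pi_1,\p},\rho_{\pi_2,\p})$. It instead exploits an asymmetry in what is actually required: BD-admissibility for $\rho_{\pi_1,\p}$ forces $\rho_{\pi_1,\p}(g)$ to have generalized eigenvalues $\{1,\nu\}$, but admissibility of $g$ for $\rho_{\pi,\p}$ only forces the two eigenvalues of $\rho_{\pi_2,\p}(g)$ to avoid a short list modulo $\varpi$. That list can be avoided even when $\rho_{\pi_2,\p}(g)$ is \emph{scalar}: taking $\rho_{\pi_2,\p}(g)=z\cdot I$ with $z^2=\nu$, the admissibility conditions collapse to $z^2\ne 1$, $z^3\ne 1$, $z^8\ne 1$ modulo $\varpi$, which any $z$ of sufficiently large order in $\overline\F_p^\times$ satisfies. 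Accordingly, the paper only needs (a) a conjugate of the full diagonal torus $\{\operatorname{diag}(x,y):x,y\in(\Z_p^\times)^n\}$ inside $\rho_{\pi_1,\p}(G_\Q)$ — from Nekov\'a\v{r} in the non-CM case, and in the CM case from a $\Q$-rational splitting of the relevant Serre torus, which is exactly where the ``$p$ split in $K$'' hypothesis enters in case (\ref{prop:adm_endo_two}); (b) the scalar subgroup $(\Z_p^\times)^n$ inside $\rho_{\pi_2,\p}(G_\Q)$, a far weaker condition that holds whether or not $\pi_2$ has CM; and (c) the independence statement Lemma \ref{lem:endoscopic_independence} to couple (a) and (b) into the subgroup $\{(\operatorname{diag}(x,y),\,zI):\ z^2=xy\}$. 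Because (b) requires no case analysis on $\pi_2$, the proof is short and uniform.

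Your proposal goes for the full joint image, which is why it is forced into the CM case analysis that you flag as the ``main obstacle,'' and that obstacle is not actually resolved in what you wrote. The Goursat argument asserting that $(\rho_{\pi_1,\p},\psi_\p)(G_{K''})$ is open in the ``obvious subgroup'' is never proved, and the inert-coset computation — where the eigenvalues of $\rho_{\pi_2,\p}(g)$ become $\pm\sqrt{-\nu}$, the list of forbidden residues for $\nu$ must be re-derived (it reduces to $\nu\not\equiv -1$, $\nu^3\not\equiv -1$, $\nu^4\not\equiv 1$), and this must still be coordinated with $\rho_{\pi_1,\p}(g)$ having eigenvalues $\{1,\nu\}$ on the nontrivial coset — is only asserted. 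Similarly, in case (\ref{prop:adm_endo_two}) with $\pi_2$ also CM, ``comparing infinity types at primes above $p$'' over $G_{KK''}$ is a program, not a proof. These steps are probably doable, so I would not call the route wrong, but they are a genuine gap in the write-up. The observation you missed — that $\rho_{\pi_2,\p}(g)$ can be taken scalar, so one never needs a strong image statement for $\rho_{\pi_2,\p}$ — is precisely what lets the paper sidestep all of them.
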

\begin{proof}
Let $S$ be the set of all rational primes in case (\ref{prop:adm_endoscopic_one}) and all rational primes $p$ split in $K$ in case (\ref{prop:adm_endo_two}). 
Then there exists a constant $n \geq 1$ such that,  for all but finitely many $p \in S$ and all $\p|p$, $\rho_{\pi_1,\p}(G_\Q)$ contains the diagonal subgroup
$$\set{\begin{pmatrix}
    x & \\ & y 
\end{pmatrix}\,:\, x, y\in (\Z_p^\times)^n}.$$
In the non-CM case this follows from Theorem \ref{thm:nekovar} (with $n = 1$), and in the CM case it follows from either \cite[Proposition B.6.3]{nekovar2012levelraising} or a similar argument to Lemma \ref{lem:image_quartic}. 
On the other hand, there exists a constant $n \geq 1$ such that, for all but finitely many $p$ and all $\p|p$, $\rho_{\pi_2,\p}(G_\Q)$ contains the scalar subgroup $(\Z_p^\times)^n\subset \GL_2(\Z_p)$.  

By Lemma \ref{lem:endoscopic_independence} below, after enlarging $n$ if necessary, for all but finitely many $p\in S$ and all $\p|p$, $\left(\rho_{\pi_1,\p}\times \rho_{\pi_2,\p}\right)(G_\Q)$ contains
$$\set{\begin{pmatrix}
    x& 0\\ 0 & y 
\end{pmatrix}, \begin{pmatrix}
     z & 0 \\ 0 & z
\end{pmatrix}\,:\, x,y,z\in (\Z_p^\times)^n, z^2 = xy};$$
and this implies the proposition.
\end{proof}


\begin{lemma}\label{lem:endoscopic_independence}
    Suppose $\pi$ is endoscopic, associated to a pair $(\pi_1,\pi_2)$ of automorphic representations of $\GL_2(\A_\Q)$ which do not both have CM by the same imaginary quadratic field. Then as $\p$ varies over primes of $E_0$, $\Q(\overline \rho_{\pi_1,\p}) \cap \Q(\overline \rho_{\pi_2,\p})$ has bounded degree over $\Q(\mu_p)$.
\end{lemma}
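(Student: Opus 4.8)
The plan is to reduce to a purely group‑theoretic assertion about a common quotient and then argue case by case according to the CM type of $\pi_1,\pi_2$. Fix a prime $\p$ of $E_0$ of residue characteristic $p$; since for each fixed $p$ all the relevant degrees are finite, it suffices to bound $[\Q(\overline\rho_{\pi_1,\p})\cap\Q(\overline\rho_{\pi_2,\p}):\Q(\mu_p)]$ uniformly over all but finitely many $\p$, and we may discard the finitely many $\p$ for which $\overline\rho_{\pi_1,\p}$ or $\overline\rho_{\pi_2,\p}$ fails to be absolutely irreducible. Write $L\coloneqq\Q(\overline\rho_{\pi_1,\p})\cap\Q(\overline\rho_{\pi_2,\p})$. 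Because $\pi$ has trivial central character, so do $\pi_1$ and $\pi_2$ (their common central character), whence $\det\overline\rho_{\pi_i,\p}=\overline{\chi_{p,\cyc}}$ by Theorem \ref{thm:rho_GL2_LLC}(\ref{part:rho_GL2_LLC_char}), so $\Q(\mu_p)\subseteq\Q(\overline\rho_{\pi_i,\p})$ and in particular $\Q(\mu_p)\subseteq L$. Set $G_i^0\coloneqq\Gal(\Q(\overline\rho_{\pi_i,\p})/\Q(\mu_p))=\overline\rho_{\pi_i,\p}(G_\Q)\cap\SL_2(k_\p)$. Applying Goursat's lemma to the image of $G_\Q$ in $G_1^0\times G_2^0$, the group $Q\coloneqq\Gal(L/\Q(\mu_p))$ is simultaneously a quotient of $G_1^0$ and of $G_2^0$, and it suffices to bound $|Q|$. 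Throughout we use that, by Lemma \ref{lem:when_endoscopic_relevant}, $\pi_1$ and $\pi_2$ are classical cuspidal newforms of distinct weights $2$ and $4$, so that $\pi_1\not\cong\pi_2^\sigma\otimes\chi$ for any $\sigma\in\Gal(\overline\Q/\Q)$ and any finite‑order automorphic character $\chi$ of $\A_\Q^\times$; we also assume $E_0$ Galois over $\Q$.

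Suppose first that $\pi_1$ and $\pi_2$ are both non‑CM, so that we are in the setting of (\ref{subsubsec:nekovar_pair_Setup}). Let $L_0/\Q$ be the finite abelian extension furnished by Corollary \ref{cor:nekovar_pair_fail}. By Corollary \ref{cor:nekovar_pair_fail}(\ref{cor:nekovar_pair_fail_2}), for all but finitely many $\p$ the image $\rho_{\pi_1,\pi_2,\p}(G_{L_0})$ is a conjugate either of the full fibre product $\{(g_1,g_2)\in\GL_2(O_{E_1,\p})\times\GL_2(O_{E_2,\p}):\det g_1=\det g_2\in\Z_p^\times\}$, or of a graph $\{(g,\sigma_\p(g))\}$. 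In the graph case $\rho_{\pi_1,\p}|_{G_{L_0}}$ is conjugate to $\rho_{\pi_2^{\sigma},\p}|_{G_{L_0}}$ for a suitable $\sigma\in\Gal(\overline\Q/\Q)$, so Lemma \ref{lem:nekovar_twist} (applied to $\pi_1$ and $\pi_2^\sigma$) shows $\pi_1$ is a finite‑order twist of $\pi_2^\sigma$, contradicting the weight discrepancy. Hence, for all but finitely many $\p$, the full fibre product occurs; then $\overline\rho_{\pi_1,\pi_2,\p}(G_{L_0})$ contains $\SL_2(\F_{q_1})\times\SL_2(\F_{q_2})$ (for $\F_{q_i}$ the residue field of $E_{i,\p}$), so a Goursat computation identifies the common quotient of $\overline\rho_{\pi_1,\p}(G_{L_0})$ and $\overline\rho_{\pi_2,\p}(G_{L_0})$ with the cyclotomic quotient: $\Q(\overline\rho_{\pi_1,\p})L_0\cap\Q(\overline\rho_{\pi_2,\p})L_0=\Q(\mu_p)L_0$. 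Since $L\subseteq\Q(\overline\rho_{\pi_1,\p})L_0\cap\Q(\overline\rho_{\pi_2,\p})L_0$, this gives $[L:\Q(\mu_p)]\le[L_0:\Q]$.

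Now suppose $\pi_1$ has CM by an imaginary quadratic field $K_1$. Then $\overline\rho_{\pi_1,\p}=\Ind_{G_{K_1}}^{G_\Q}\overline\psi_1$, so $\overline\rho_{\pi_1,\p}(G_\Q)$ lies in the normaliser $N(T)$ of a maximal torus $T\subseteq\GL_2(k_\p)$; as $N(T)/T\cong\Z/2$ acts on $T$ by inversion and $G_1^0\cap T$ is cyclic, $G_1^0$ is a nonabelian generalised dihedral group — it has the cyclic normal subgroup $C_1=\Gal(\Q(\overline\rho_{\pi_1,\p})/\Q(\mu_p)K_1)$ of index two, with the quotient acting by inversion — so its abelianisation is bounded. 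If $\pi_2$ is not CM, then $Q$ is a metabelian quotient of $G_1^0$, while $G_2^0$ contains $\SL_2(\F_{q_2})$ as a normal subgroup for all but finitely many $\p$ (Theorem \ref{thm:nekovar}); a metabelian quotient of $G_2^0$ must have bounded image of $\SL_2(\F_{q_2})$, for otherwise $[Q,Q]$ would contain the nonabelian group $\operatorname{PSL}_2(\F_{q_2})$; hence $|Q|$ is bounded. If instead $\pi_2$ has CM by $K_2\ne K_1$, then $G_2^0$ is likewise generalised dihedral with cyclic part $C_2=\Gal(\Q(\overline\rho_{\pi_2,\p})/\Q(\mu_p)K_2)$; were $|Q|$ unbounded, $Q$ would be a nonabelian generalised dihedral group whose unique cyclic subgroup of index two is simultaneously the image of $C_1$ and of $C_2$, so its fixed field in $L$ would equal both $\Q(\mu_p)K_1$ and $\Q(\mu_p)K_2$. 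But the only quadratic subfields of $\Q(\mu_p)K_1$ are $K_1$, $\Q(\sqrt{p^*})$ and $\Q(\sqrt{p^*\disc(K_1)})$ with $p^*=(-1)^{(p-1)/2}p$, the last two of which vary with $p$; so $\Q(\mu_p)K_1=\Q(\mu_p)K_2$ forces $K_1=K_2$ for all but finitely many $p$, a contradiction. Since the excluded case is precisely $K_1=K_2$, this exhausts all possibilities and $|Q|$ is bounded, proving the lemma.

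The main obstacle is the last case, both $\pi_i$ CM by distinct fields: one must verify that the generalised dihedral structure descends through the common quotient $Q$ and then correctly pin down the fixed field of the cyclic part, which hinges on distinguishing the $p$‑independent quadratic subfield $K_i$ of $\Q(\mu_p)K_i$ from the two $p$‑dependent ones. By contrast the non‑CM case follows almost at once from Corollary \ref{cor:nekovar_pair_fail} once the twist alternative is ruled out by the weights, and the mixed case is a short ``metabelian versus $\operatorname{PSL}_2$'' argument.
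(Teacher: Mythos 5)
Your proof is correct. The non‑CM and mixed cases track the paper's argument quite closely (the paper invokes \cite[Theorem 3.2.2]{loeffler2017images} directly for the non‑CM case while you route through Corollary \ref{cor:nekovar_pair_fail}(\ref{cor:nekovar_pair_fail_2}) and rule out the graph alternative via Lemma \ref{lem:nekovar_twist} and the weight discrepancy — same content; for the mixed case your ``metabelian versus $\operatorname{PSL}_2$'' step and the paper's ``$\SL_2(\F_q)$ is simple, dihedral is solvable'' step are the same observation).

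Where you genuinely diverge is the both‑CM case. The paper fixes complex conjugations $\tau_1,\tau_2\in G_\Q$ (each a conjugation on its own $K_i$, trivial on $K_{3-i}$) and argues on the conjugation action: $\tau_1$ and $\tau_1\tau_2$ both act by inversion on the common abelian subquotient $G=\Gal(K_1K_2(\overline\rho_{\pi_1,\p})\cap K_1K_2(\overline\rho_{\pi_2,\p})/K_1K_2)$, so $\tau_2$ acts trivially; by symmetry so does $\tau_1$; hence $G$ is $2$‑torsion and generated by at most two elements, so $|G|\le 4$. You instead analyse the quotient lattice of the generalised dihedral group directly: any quotient of $G_1^0$ of order $>4$ is itself generalised dihedral with its cyclic part the image of $C_1$, whose fixed field in $L$ is forced to be $\Q(\mu_p)K_1$, and likewise for $K_2$; the field‑theoretic identification $\Q(\mu_p)K_1=\Q(\mu_p)K_2$ then fails for all but finitely many $p$ because the only $p$‑independent quadratic subfield of $\Q(\mu_p)K_1$ is $K_1$ itself. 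Both arguments are sound and yield the same bound. The paper's is a bit slicker and does not rely on uniqueness of the index‑$2$ cyclic subgroup (so it avoids the small‑order edge case $|Q|\le 4$ that you have to dispose of separately), but yours makes the group‑theoretic structure more explicit and localises the failure to the single cyclotomic obstruction $K_2\subset\Q(\mu_p)K_1$, which some readers may find more transparent.
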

\begin{proof}
    If both $\pi_1$ and $\pi_2$ are non-CM, then the lemma follows from \cite[Theorem 3.2.2]{loeffler2017images}, or equivalently from Theorem \ref{thm:nekovar_pair}(\ref{thm:nekovar_pair_full}) above. 
Now suppose $\pi_1$ is CM and $\pi_2$ is not. Since $\SL_2(\F_q)$ is simple for $q$ sufficiently large and $\overline\rho_{\pi_1,\p}$ is dihedral for all $\p$, it follows from \cite[Theorem 3.1]{ribet1985largeimage} that $\Q(\overline\rho_{\pi_1,\p})\cap \Q(\overline \rho_{\pi_2,\p}) = \Q(\mu_p)$ for all but finitely many $\p$. 

If $\pi_1$ and $\pi_2$ are CM with respect to two different imaginary quadratic fields $K_1$ and $K_2$, fix elements $\tau_1, \tau_2\in G_\Q$ such that $\tau_i$ is a complex conjugation on $K_i$ but acts trivially on $K_j$, $i \neq j$. 
The abelian group $H \coloneqq \Gal(\Q(\overline\rho_{\pi_1,\p})/K_1)$
is a subgroup of  $k^\times$ for a quadratic \'etale algebra $k$ over the residue field of $\p$; in particular, $H$ is the product of at most two cyclic groups. Let $G\coloneqq \Gal(K_1K_2 (\overline \rho_{\pi_1,\p}) \cap K_1K_2 (\overline \rho_{\pi_2,\p})/K_1K_2)$. Because $G$ is a subquotient of $H$, the
conjugation actions on $G$ of both $\tau_1$ and $\tau_1\tau_2$ are by inversion, so the conjugation action of $\tau_2$ is trivial; arguing symmetrically, the conjugation action of $\tau_1$ is also trivial, so $G$ is 2-torsion and generated by at most two elements. We conclude that $|G|$ is uniformly bounded, which implies the lemma.
\end{proof}

\subsection{Complements for the second reciprocity law}
In this subsection, we prove some auxiliary results needed in \S\ref{sec:2ERL_contd}. Let us fix a relevant 
automorphic representation $\pi$ of $\GSP_4$, and an isomorphism $\iota: \overline \Q_p \isomorphism \C$ with  $p > 3$. 
\begin{lemma}\label{lem:unentangled}
   Let $\tau$ be a cuspidal automorphic representation of $\GL_2(\A_\Q)$ whose archimedean component is discrete series of even weight $k\geq 2$. Let $F$ be a number field such that $\tau$ does not have CM by any quadratic imaginary subfield $K\subset F(\rho_{\pi,\iota})$.
      If $F(\rho_{\pi,\iota}) \cap F(\ad^0\rho_{\tau,\iota})$ is infinite, 
    then $F(\ad^0\rho_{\tau,\iota}) \subset F(\rho_{\pi,\iota})$,
and moreover one of
 the following occurs:
        \begin{enumerate}[label = (\roman*)]
            \item$\pi$ is not  endoscopic, and if $g\in G_\Q$ is admissible for $\rho_{\pi,\iota}$, then $\rho_{\tau,\iota}(g^2)$ has distinct eigenvalues.
            \item $\pi$ is  endoscopic  associated to a pair $(\pi_1,\pi_2)$ of automorphic representations of $\GL_2(\A_\Q)$, and for $j = 1$ or 2, $\pi_j\cong \tau^\sigma\otimes \chi$ for some finite-order Hecke character $\chi$ and automorphism $\sigma\in \operatorname{Aut}(\overline\Q/\Q)$. 
        \end{enumerate}
\end{lemma}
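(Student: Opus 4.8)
The plan is to translate the hypothesis, via Goursat's lemma, into a statement about a common quotient of two Galois groups, and then to exploit the known structure of the images of $\rho_{\pi,\iota}$ and of $\ad^0\rho_{\tau,\iota}$. Write $H_1=\Gal(F(\rho_{\pi,\iota})/F)$ and $H_2=\Gal(F(\ad^0\rho_{\tau,\iota})/F)$, regarded as compact $p$-adic Lie groups, and put $Q=\Gal(F(\rho_{\pi,\iota})\cap F(\ad^0\rho_{\tau,\iota})/F)$. Since the Galois group of the compositum is the fibre product $H_1\times_Q H_2$, Goursat's lemma exhibits $Q$ as a common quotient of $H_1$ and $H_2$, and the hypothesis that $Q$ is infinite forces $\Lie Q\neq 0$. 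The first goal is to show $\Lie Q=\Lie H_2$, which gives $F(\ad^0\rho_{\tau,\iota})\subseteq F(\rho_{\pi,\iota})$; the second is to pin down the relation between $\tau$ and $\pi$.

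First I would dispose of the case that $\tau$ has CM, say by an imaginary quadratic field $K'$ (so $K'\not\subseteq F(\rho_{\pi,\iota})$ by hypothesis). Here $\ad^0\rho_{\tau,\iota}\cong\omega_{K'/\Q}\oplus\Ind_{G_{K'}}^{G_\Q}(\psi/\psi^{c})$ for a Hecke character $\psi$, so $F(\ad^0\rho_{\tau,\iota})=FK'(\psi/\psi^{c})$ is metabelian over $F$ and contains $FK'$. Since $K'\not\subseteq F(\rho_{\pi,\iota})$, the field $M\coloneqq F(\rho_{\pi,\iota})\cap F(\ad^0\rho_{\tau,\iota})$ is linearly disjoint from $FK'$ over $F$, so $\Gal(M/F)$ is a quotient of the abelian group $\Gal(F(\ad^0\rho_{\tau,\iota})/FK')$; hence $M$ lies in the maximal abelian-over-$F$ subfield of $F(\rho_{\pi,\iota})$. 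Using Corollary \ref{cor:general_type_irr}, Lemma \ref{lem:large_image_general} and Proposition \ref{prop cases for subgroups of GSp4} (non-endoscopic cases) or $\rho_{\pi,\iota}=\rho_{\pi_1,\iota}\oplus\rho_{\pi_2,\iota}$ (endoscopic case), one checks that this maximal abelian subfield is a finite extension of $F(\mu_{p^\infty})$: the abelianisation of $\rho_{\pi,\iota}(G_F)$ is, up to finite order, cyclotomic in every case, the one delicate point being that in the induced cases the only Galois-invariant character built from an induced family of Hecke characters is a norm character, which carries exactly the cyclotomic weight. But $\psi/\psi^{c}$ is anti-invariant under $\Gal(K'/\Q)$, hence cannot equal $\chi_{p,\cyc}^{n}$ up to a finite-order twist unless $n=0$; so $M/F$ is finite, contradicting the hypothesis. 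Thus $\tau$ is non-CM, and the remaining argument assumes this.

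For $\tau$ non-CM, Theorem \ref{thm:nekovar} together with Corollary \ref{corollary subalgebra of simple Lie algebra with base change} shows that $\Lie H_2$, the Lie algebra of the projective image of $\rho_{\tau,\iota}|_{G_F}$, is simple as a $\Q_p$-Lie algebra; moreover $H_2$ has no nontrivial finite normal subgroup by Lemma \ref{lemma with all the strongly irreducible statements}(2) applied to the (strongly irreducible) representation $\rho_{\tau,\iota}$. Since $\Lie Q=\Lie H_2$ (a simple Lie algebra has no proper nonzero quotient) and $\ker(H_2\twoheadrightarrow Q)$ is normal with trivial Lie algebra, hence finite, hence trivial, we get $Q\cong H_2$ and therefore $F(\ad^0\rho_{\tau,\iota})\subseteq F(\rho_{\pi,\iota})$. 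So $\Lie H_2$ is a simple quotient of $\Lie H_1$. By Lemma \ref{lemma distinct Hodge Tate weights implies induction}, Corollary \ref{cor:general_type_irr} and Proposition \ref{prop cases for subgroups of GSp4}, the possibilities for $\Lie H_1$ modulo its abelian centre are: $\mathfrak{sp}_4$ over the coefficient field (general type); $\mathfrak{sl}_2$ over the coefficient field (symmetric cube, via $\Sym^3\colon\GL_2\to\GSP_4$); a sum of two $\mathfrak{gl}_2$-type factors, or their diagonal copy (induction from a quadratic field, or the endoscopic case); or abelian (induction from a quartic CM field, or from a quadratic field with CM). The $\mathfrak{sp}_4$ and abelian possibilities are excluded at once, since neither admits a quotient isomorphic to an $\mathfrak{sl}_2$-type simple Lie algebra. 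In each surviving case one produces --- via Lemma \ref{lem:sym_cube_relevant}, Lemma \ref{lem:RQ_induction_relevant}, Lemma \ref{lem:IQ_induction_relevant} and Lemma \ref{lem:large_image_general} (the last to see that the auxiliary $\GL_2$-representation of the symmetric-cube case is itself modular) --- a cuspidal non-CM automorphic representation $f$ of $\GL_2$ over $\Q$ or over a real quadratic field: namely $\pi_j$ for $j=1$ or $2$ in the endoscopic case, the source $\pi_0$ of the symmetric cube lift, or the modular form underlying the (imaginary or real) quadratic induction, whose projective image has the same Lie algebra as $H_2$ and, by Lemma \ref{lemma with all the strongly irreducible statements}(2) once more, shares an infinite quotient with it. Hence $F(\ad^0\rho_{f,\iota})\cap F(\ad^0\rho_{\tau,\iota})$ (after base-changing to a common field when $f$ lives over a real quadratic field) is infinite, so Theorem \ref{thm:nekovar_pair}(1) --- whose conclusion, for our fixed $\iota$, forces an open joint image in the fibre product of quaternionic groups, hence a projective image surjecting with finite kernel onto the \emph{full} product of the two projective images, unless a conjugate-twist relation holds --- forces $f\cong\tau^{\sigma}\otimes\chi$ for some $\sigma\in\operatorname{Aut}(\overline\Q/\Q)$ (resp. $\BC_{K/\Q}(\tau)^{\sigma}\otimes\chi$ in the real quadratic case) and some finite-order Hecke character $\chi$. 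In the endoscopic case this is exactly conclusion (ii). In the non-endoscopic cases one finishes with an eigenvalue computation: $\rho_{\tau,\iota}$ (resp. its restriction to $G_K$) is a finite-order twist of a $\sigma$-conjugate of $\rho_{f,\iota}$, and the admissibility constraints of Definition \ref{def:admissible} on the eigenvalues $\{\nu_g,1,\alpha,\nu_g/\alpha\}$ of $\rho_{\pi,\iota}(g)$ --- read off through the symmetric-cube formula $\{a^3,a^2b,ab^2,b^3\}$, or through the induced structure (splitting according to whether $g\in G_K$) --- force the eigenvalues $a,b$ of the relevant value of $\rho_{f,\iota}$ to satisfy $a\neq\pm b$, so $\rho_{\tau,\iota}(g^2)$ has distinct eigenvalues.

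The hard part will be the organisation of the many structural cases for $\rho_{\pi,\iota}$ and, within each, verifying that the auxiliary $\GL_2$-object $f$ is genuinely modular (over $\Q$ or a real quadratic field) so that the entanglement input Theorem \ref{thm:nekovar_pair} applies; the concluding eigenvalue bookkeeping is elementary but must be carried out separately for the symmetric-cube and the induced shapes of $\rho_{\pi,\iota}$.
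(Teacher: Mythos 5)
Your route diverges from the paper's in a significant way. The paper's proof of (i) is a direct Lie-algebraic unipotence argument: after establishing $F(\ad^0\rho_{\tau,\iota})\subset F(\rho_{\pi,\iota})$, it shows the Lie algebra $\mathfrak g$ of the projective image of $\rho_{\pi,\iota}|_{G_F}$ is simple and the induced surjection $\mathfrak g\twoheadrightarrow\mathfrak h=\Lie(\ad^0\rho_{\tau,\iota}(G_F))$ is an isomorphism; then, if $\rho_{\tau,\iota}(g^2)$ had only one eigenvalue, $g^2$ would act unipotently on $\mathfrak h\cong\mathfrak g$, hence (via Lemma~\ref{lemma with all the strongly irreducible statements}(\ref{lemma strongly irreducible part unipotents have only one eigenvalue}) applied to each strongly irreducible summand) with a single eigenvalue on each factor of $V_{\pi,\iota}|_{G_F}$, leaving at most two eigenvalues in total, contradicting admissibility. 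This bypasses any identification of the Lie algebra type of $\mathfrak g$ and any production of an auxiliary $\GL_2$-form. Your plan, by contrast, classifies the shape of $\mathfrak g$, builds an auxiliary automorphic representation $f$, feeds $f$ and $\tau$ into Theorem~\ref{thm:nekovar_pair} to obtain $f\cong\tau^{\sigma}\otimes\chi$, and then tries to finish with a bare-hands eigenvalue computation.

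The bare-hands computation has a genuine gap in the real-quadratic induction case, which is exactly where the paper's Lie-algebraic propagation is doing invisible work. If $\BC(\pi)$ is induced from $\pi_0$ on $\GL_2(\A_K)$ with $K$ real quadratic, then $V_{\pi,\iota}|_{G_K}=T_{\pi_0}\oplus T_{\pi_0^\tw}$, and by Lemma~\ref{lem:heart_circ} an admissible $\Frob_q$ has residual eigenvalues $\{q,1\}$ on one summand and $\{\alpha,q/\alpha\}$ on the other. Theorem~\ref{thm:nekovar_pair} tells you $\pi_0\cong\BC_{K/\Q}(\tau)^{\sigma}\otimes\chi$ (or the same with $\pi_0^\tw$), but it does \emph{not} control which of the two summands has residual eigenvalues $\{q,1\}$. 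If $\tau$ matches the $\{\alpha,q/\alpha\}$ summand, your claim that admissibility "forces $a\neq\pm b$" fails: Definition~\ref{def:admissible} imposes $\alpha\neq\pm q,\pm 1,q^2,q^{-1}$ but nowhere forbids $\alpha^{2}\equiv q\pmod{\varpi}$, and a priori even the integral relation $\alpha_1=q/\alpha_1$ on the actual eigenvalues $\{\alpha_1,q/\alpha_1\}$ is allowed, which would make $\rho_{\tau,\iota}(g^2)$ have a repeated eigenvalue. What actually closes this case is the fact you have already established (but do not use): $\mathfrak g\cong\mathfrak h$ is simple and surjects onto the projective Lie algebra of \emph{both} summands, so a single-eigenvalue on $T_{\pi_0}$ forces unipotence of $g^2$ on $\mathfrak g$, hence a single eigenvalue on $T_{\pi_0^\tw}$ as well, contradicting the $\{q^2,1\}$ spectrum there. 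Your plan as written reintroduces the elementary eigenvalue computation just where it cannot stand on its own; you either need to retain the propagation argument (at which point the detour through Theorem~\ref{thm:nekovar_pair} is unnecessary for (i)), or prove the nontrivial lemma that under the hypothesis the two summands cannot independently have repeated $\Frob_q^2$-eigenvalues. I would also flag that your disposal of the CM case, via the maximal abelian subfield of $F(\rho_{\pi,\iota})$, is more delicate than it appears: when $\BC(\pi)$ is induced from a quartic CM field, $\rho_{\pi,\iota}(G_F)$ is abelian of positive dimension, so "the abelianisation is cyclotomic up to finite order" is not literally true and the argument needs the anti-invariance of $\psi/\psi^c$ to be carried out over $G_{K'}$ rather than over $\Q$; the paper's one-line group-theoretic observation (for $p$ odd, any infinite Galois subextension of the generalised dihedral extension $F(\ad^0\rho_{\tau,\iota})/F$ contains $FK'$) avoids all of this.
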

\begin{proof}
    First, we claim that  $\tau$ does not have CM. Indeed, if $\tau$    
    has CM by a quadratic imaginary field $K$, then it is easy to check that any infinite subfield of $F(\ad^0\rho_{\tau,\iota})$ contains $K$;  so if $F(\rho_{\pi,\iota}) \cap F(\ad^0\rho_{\tau,\iota})$ is infinite then $K \subset F(\rho_{\pi,\iota})$, which contradicts the hypotheses of the lemma.

    Since $\tau$ does not have CM, $\rho_{\tau,\iota}$ is strongly irreducible by Theorem \ref{thm:nekovar}. Hence 
 by Lemma \ref{lemma strongly irreducible in dimension 2 or 4}, the normal, infinite-index subgroup $$\Gal\left(F(\ad^0\rho_{\tau,\iota})/F(\ad^0\rho_{\tau,\iota})\cap F(\rho_{\pi,\iota})\right) \mathrel{\unlhd} \Gal\left(F(\ad^0\rho_{\tau,\iota})/F\right)$$ must be trivial; equivalently, we have $F(\ad^0\rho_{\tau,\iota}) \subset F(\rho_{\pi,\iota})$.

Suppose first that $\pi$ is not endoscopic, so $V_{\pi,\iota}$ is absolutely irreducible by Lemma \ref{lem:reducible_endoscopic}.
 Without loss of generality, we assume that $F$ is Galois and that \begin{equation}\label{eq:for_complements}V_{\pi,\iota}|_{G_F} = \bigoplus_i V_i\end{equation}for some strongly irreducible representations $V_i$, all of the same dimension $n$ (Lemma \ref{lemma distinct Hodge Tate weights implies induction}). Let $G= \rho_{\pi,\iota} (G_F)$, and let $H = \ad^0\rho_{\tau,\iota}(G_F)$; then the inclusion $F(\ad^0\rho_{\tau,\iota}) \subset F(\rho_{\pi,\iota})$ corresponds to a surjection $G/Z_G \twoheadrightarrow H$ (recall here that $H$ has trivial center by Lemma \ref{lemma with all the strongly irreducible statements}(\ref{lemma strongly irreducible part trivial center})).
 In particular, $n > 1$. 
 Write $\mathfrak g = \Lie (G/Z_G)$ and $\mathfrak h = \Lie H$, and recall that $\mathfrak h$ is simple by Theorem \ref{thm:nekovar} and Corollary \ref{corollary subalgebra of simple Lie algebra with base change}(\ref{corollary subalgebra of simple Lie algebra with base change part two}). We have a surjection
 \begin{equation}\label{eq:for_complements_2}
     \mathfrak g\twoheadrightarrow \mathfrak h 
 \end{equation}
which identifies $\mathfrak h$ with a simple factor of $\mathfrak g$.

Suppose first that $n = 2$.
Then the decomposition (\ref{eq:for_complements}) has exactly two factors, and $\mathfrak g$ is either simple, or isomorphic to $\mathfrak g_0 \times \mathfrak g_0$, with $\mathfrak g_0$ simple and the factors interchanged by the action of $G_\Q$. Since (\ref{eq:for_complements_2}) is $G_\Q$-equivariant, $\mathfrak g$ is simple, and (\ref{eq:for_complements_2}) is an isomorphism. 
If $\rho_{\tau, \iota} (g^2)$ has only one eigenvalue for some $g\in G_\Q$, then $g^2$ acts unipotently on $\mathfrak h$, hence also on $\mathfrak g$. 
Since $g^2$ is a square, it preserves the decomposition (\ref{eq:for_complements}), so by
  Lemma \ref{lemma with all the strongly irreducible statements}(\ref{lemma strongly irreducible part unipotents have only one eigenvalue}), we see that $g^2$ has at most two eigenvalues on $V_{\pi,\iota}$, which contradicts $g$ being admissible.

We are now reduced to the case $n = 4$, i.e. $\BC(\pi)$ is not an automorphic induction.
Hence $\mathfrak g$ is simple by Proposition \ref{prop cases for subgroups of GSp4}(\ref{prop cases for subgroups of GSp4 part one}) and Corollary \ref{corollary subalgebra of simple Lie algebra with base change}(\ref{corollary subalgebra of simple Lie algebra with base change part two}), and again (\ref{eq:for_complements_2}) is an isomorphism. 
If $\rho_{\tau,\iota}(g^2)$ had only one eigenvalue, then $g^2\in G_\Q$ would act unipotently on $\mathfrak h$, hence also on $\mathfrak g$; but by Lemma \ref{lemma with all the strongly irreducible statements}(\ref{lemma strongly irreducible part unipotents have only one eigenvalue}), this contradicts the admissibility of $g$.

It remains to consider the case when
 $\pi$ is  endoscopic  associated to a pair $(\pi_1,\pi_2)$ of cuspidal automorphic representations of $\GL_2(\A_\Q)$. We let $G_{\pi} = \rho_{\pi,\iota}(G_\Q),$ $G_{\pi_j} = \rho_{\pi_j,\iota}(G_\Q)$, $\mathfrak g_\pi = \Lie(G_\pi/Z_{G_\pi})$,  $\mathfrak g_{\pi_j} = \Lie(G_{\pi_j}/Z_{G_{\pi_j}})$, for $j= 1,2$.
 The assumption $F(\ad^0\rho_{\tau,\iota}) \subset F(\rho_{\pi,\iota})$ implies that   $G_\Q$ does not have open image in the product $G_\pi/Z_{G_\pi}\times  H$. By Goursat's Lemma, the Lie algebra of the image of $G_\Q$ is the graph of an isomorphism between simple factors of $\mathfrak g_\pi\subset \mathfrak g_{\pi_1}\oplus \mathfrak g_{\pi_2}$ and  $\mathfrak h$. Since $\tau$ is non-CM, we conclude that for $j = 1$ or 2, $\pi_j$ is non-CM and $G_\Q$ has non-open image in $G_{\pi_j}/Z_{\pi_j} \times H$. 
Hence by \cite[Proposition 3.3.2]{loeffler2017images}, there exists an automorphism $\sigma \in \operatorname{Aut}(\overline\Q/\Q)$ and a Hecke character $\chi$ such that  $\tau \cong \pi_j^\sigma \otimes \chi$, for $j = 1$ or 2; this concludes the proof of the lemma. 
    
\end{proof}
For the rest of the section, we fix a strong coefficient field $E_0$ for $\pi$ and let $\p$ be the prime of $E_0$ induced by $\iota$. 
\begin{lemma}\label{lem:inflation with tau}
   Let $\tau$ be a cuspidal automorphic representation of $\GL_2(\A_\Q)$ whose archimedean component is discrete series of even weight $k \geq 2$. If $\tau$ does not have CM by any quadratic imaginary subfield $K\subset \Q(\rho_{\pi,\iota})$,   then for any number field $F$ and any $O_\p$-stable lattice $T_\pi \subset V_{\pi,\p}$, $H^1(\Gal(F(\rho_{\pi,\iota}, \ad^0\rho_{\tau,\iota})/\Q), T_\pi)$ is finite.
\end{lemma}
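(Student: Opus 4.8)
The plan is an inflation--restriction argument along the tower $\Q \subseteq \Q(\rho_{\pi,\iota}) \subseteq F(\rho_{\pi,\iota},\ad^0\rho_{\tau,\iota})$. Set $G = \Gal(F(\rho_{\pi,\iota},\ad^0\rho_{\tau,\iota})/\Q)$, $\bar G = \Gal(\Q(\rho_{\pi,\iota})/\Q)$, and $N = \Gal(F(\rho_{\pi,\iota},\ad^0\rho_{\tau,\iota})/\Q(\rho_{\pi,\iota}))$. Since $T_\pi$ is a $\bar G$-module and $N$ acts trivially on it, inflation--restriction gives an exact sequence
\[
0 \longrightarrow H^1(\bar G, T_\pi) \longrightarrow H^1(G, T_\pi) \longrightarrow \Hom_{\bar G}\bigl(N^{\mathrm{ab}}, T_\pi\bigr),
\]
where $\bar G = G/N$ acts on $N^{\mathrm{ab}}$ through conjugation (inner automorphisms acting trivially on the abelianization). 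It suffices to show both outer terms are finite, and in fact I expect the restriction term to vanish.

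\emph{The inflation term.} When $\pi$ is not endoscopic, $V_{\pi,\p}$ satisfies the hypotheses of Lemma~\ref{lemma distinct Hodge Tate weights implies induction} (absolute irreducibility by Lemma~\ref{lem:reducible_endoscopic}, distinct Hodge--Tate weights by Theorem~\ref{thm:rho_pi_LLC}(\ref{part:rho_pi_LLC_HT})), so Corollary~\ref{cor:Galois_coh_restr} gives a constant $C$ with $\varpi^C H^1(\bar G, T_{\pi,n}) = 0$ for all $n$. As $\bar G$ is a topologically finitely generated compact $p$-adic Lie group, $H^1(\bar G, T_\pi)$ is a finitely generated $O_\p$-module, and the long exact sequences attached to $0 \to T_\pi \xrightarrow{\varpi^n} T_\pi \to T_{\pi,n}\to 0$ show $H^1(\bar G,T_\pi)/\varpi^n H^1(\bar G,T_\pi)$ injects into $H^1(\bar G,T_{\pi,n})$; taking $n>C$ forces the free rank to be zero and the torsion to be annihilated by $\varpi^C$, so $H^1(\bar G,T_\pi)$ is finite. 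When $\pi$ is endoscopic, $T_\pi = T_{\pi_1}\oplus T_{\pi_2}$ as $\bar G$-modules, and for each $j$ one runs the same argument for $\Gal(\Q(\rho_{\pi_j,\iota})/\Q)$ in place of $\bar G$; the extra inflation--restriction step $\bar G \to \Gal(\Q(\rho_{\pi_j,\iota})/\Q)$ contributes $\Hom$ of the abelianization of $\Gal(\Q(\rho_{\pi,\iota})/\Q(\rho_{\pi_j,\iota})) \cong \Gal(\Q(\rho_{\pi_{3-j},\iota})/\Q(\rho_{\pi_{3-j},\iota})\cap\Q(\rho_{\pi_j,\iota}))$ into $T_{\pi_j}$, which vanishes because this group is normal in $\Gal(\Q(\rho_{\pi_{3-j},\iota})/\Q)$ with Lie algebra an ideal of the reductive (resp., in the CM case, abelian) Lie algebra of the image of $\rho_{\pi_{3-j},\iota}$, so its abelianization has only $1$-dimensional constituents and admits no nonzero map to the $2$-dimensional irreducible $V_{\pi_j,\p}$ (a map from a finite group to the torsion-free $T_{\pi_j}$ being automatically trivial).

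\emph{The restriction term.} First, any continuous homomorphism $N \to T_\pi$ kills the subgroup $\Gal(F(\rho_{\pi,\iota},\ad^0\rho_{\tau,\iota})/\Q(\rho_{\pi,\iota},\ad^0\rho_{\tau,\iota}))$, which is normal in $G$ and finite since $F/\Q$ is finite (and $T_\pi$ is torsion-free); so we may replace $N$ by $N_0 := \Gal(\Q(\rho_{\pi,\iota},\ad^0\rho_{\tau,\iota})/\Q(\rho_{\pi,\iota})) \cong \Gal(\Q(\ad^0\rho_{\tau,\iota})/\Q(\ad^0\rho_{\tau,\iota})\cap\Q(\rho_{\pi,\iota}))$, which no longer involves $F$ and for which the hypothesis of the lemma is precisely the $F=\Q$ instance of the hypothesis of Lemma~\ref{lem:unentangled}. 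If $\Q(\ad^0\rho_{\tau,\iota})\cap\Q(\rho_{\pi,\iota})$ is infinite over $\Q$, then Lemma~\ref{lem:unentangled} (with $F=\Q$) gives $\Q(\ad^0\rho_{\tau,\iota}) \subseteq \Q(\rho_{\pi,\iota})$, whence $N_0 = 1$ and the term vanishes. If $\Q(\ad^0\rho_{\tau,\iota})\cap\Q(\rho_{\pi,\iota})$ is finite over $\Q$, then $N_0$ is an open subgroup of $\Gal(\Q(\ad^0\rho_{\tau,\iota})/\Q)$, and it suffices to prove $N_0^{\mathrm{ab}}$ is finite. If $\tau$ is non-CM, the image of $\ad^0\rho_{\tau,\iota}$ is the projective image of $\rho_{\tau,\iota}$, which by Theorem~\ref{thm:nekovar} contains an open subgroup whose Lie algebra becomes $\mathfrak{sl}_2$ after extending scalars; hence $\Gal(\Q(\ad^0\rho_{\tau,\iota})/\Q)$ has perfect Lie algebra, so $N_0$ does too, so $\overline{[N_0,N_0]}$ is open in $N_0$ and $N_0^{\mathrm{ab}}$ is finite. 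If $\tau$ has CM by $K$, then $K\not\subseteq\Q(\rho_{\pi,\iota})$ by hypothesis, and writing $\rho_{\tau,\iota} = \Ind_{G_K}^{G_\Q}\psi$ one has $\Q(\ad^0\rho_{\tau,\iota}) = K(\psi/\psi^\sigma)$ with $\psi/\psi^\sigma$ anti-cyclotomic, so the generator $\sigma$ of $\Gal(K/\Q)$ acts by inversion on the finitely generated $\Z_p$-module $A := \Gal(K(\psi/\psi^\sigma)/K)$; since $K\not\subseteq\Q(\rho_{\pi,\iota})$ forces $N_0\not\subseteq A$, picking $n\in N_0\setminus A$ gives $[n,a] = a^{-2}$ for $a\in N_0\cap A$, so $\overline{[N_0,N_0]} \supseteq \overline{(N_0\cap A)^2}$ and again $N_0^{\mathrm{ab}}$ is finite. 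In all cases $\Hom_{\bar G}(N_0^{\mathrm{ab}}, T_\pi) = 0$, and the lemma follows.

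The main technical point requiring care is the restriction term: the reduction from $F$ to $\Q$ (so that the hypothesis aligns with Lemma~\ref{lem:unentangled}) and, above all, the CM case, where the Lie-theoretic argument fails and one must instead exploit the anti-cyclotomic nature of $\psi/\psi^\sigma$ together with the hypothesis that the CM field of $\tau$ is not contained in $\Q(\rho_{\pi,\iota})$; a parallel subtlety arises in the secondary inflation--restriction step of the endoscopic case, resolved there using that each $V_{\pi_j,\p}$ is two-dimensional and irreducible.
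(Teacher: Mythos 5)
Your proposal is correct and follows the paper's overall skeleton (inflation--restriction twice, the first to drop $F$, the second to peel off $H^1(\Gal(\Q(\rho_{\pi,\iota})/\Q),T_\pi)$ via Corollary~\ref{cor:Galois_coh_restr}, followed by the invocation of Lemma~\ref{lem:unentangled} to dispose of the case where $\Q(\rho_{\pi,\iota})\cap\Q(\ad^0\rho_{\tau,\iota})$ is infinite). Where you diverge is in how the restriction term $\Hom_{G_\Q}(N_0,T_\pi)$ is killed once that intersection is finite. You prove outright that $N_0^{\mathrm{ab}}$ is finite, separating into a Lie-theoretic argument (non-CM: the Lie algebra of the projective image is perfect) and an explicit anti-cyclotomic computation (CM: $\sigma$ acts by $-1$ on $\Gal(K(\psi/\psi^\sigma)/K)$, so the abelianization is $2$-torsion, hence trivial for $p>2$), and then conclude because $T_\pi$ is torsion-free. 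The paper instead observes that $\Q(\mu_{p^\infty})\not\subset\Q(\ad^0\rho_{\tau,\iota})$, so one can pick a single $g\in G_\Q$ with $\ad^0\rho_{\tau,\iota}(g)=1$ but $\chi_{p,\cyc}(g)$ of infinite order; such a $g$ acts trivially by conjugation on $N_0$, so any $G_\Q$-equivariant map $N_0\to T_\pi$ lands in the proper subspace $T_\pi^{g=1}$, which forces it to vanish by absolute irreducibility (applied factor-by-factor when $\pi$ is endoscopic). The paper's trick buys a uniform treatment of the CM and non-CM cases and avoids computing $N_0^{\mathrm{ab}}$ — though the justification that the needed $g$ exists is left implicit and in fact rests on precisely the finiteness of the abelianization that you verify explicitly. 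Your version also spells out, more carefully than the paper does, the extra inflation--restriction step needed to run Corollary~\ref{cor:Galois_coh_restr} in the endoscopic case, where $V_{\pi,\p}$ is reducible; the paper is silent on this but the gap is bridged exactly as you describe.
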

\begin{proof}
By inflation-restriction, we may assume without loss of generality that $F = \Q$. Applying Corollary \ref{cor:Galois_coh_restr} and inflation-restriction again,
to  prove the lemma it suffices to show
    \begin{equation}\label{goal for vanishing of Lie group cohomology for 2nd ERL chebotarev}
H^1(\Gal(\Q(\rho_{\pi,\iota},\ad^0\rho_{\tau,\iota})/\Q(\rho_{\pi,\iota}), T_\pi) = \Hom_{G_\Q}(\Gal(\Q(\rho_{\pi,\iota},\ad^0\rho_{\tau,\iota})/\Q(\rho_{\pi,\iota})), T_\pi) = 0.
    \end{equation}
    By Lemma \ref{lem:unentangled}, we may assume without loss of generality that $\Q(\rho_{\pi,\iota})\cap \Q(\ad^0\rho_{\tau,\iota})$ is finite. Since $\chi_{p,\cyc}$ has infinite order, there exists $g\in G_\Q$ such that $\ad^0\rho_{\tau,\iota}(g) = 1$ and $ \chi_{p,\cyc}(g)$ has infinite order, meaning in particular that $\rho_{\pi,\iota} (g) \neq 1$. Then
    $g$ acts trivially by conjugation on $\Gal\left(\Q(\rho_{\pi,\iota}, \ad^0\rho_{\tau,\iota})/\Q(\rho_{\pi,\iota})\right)\hookrightarrow \ad^0\rho_{\tau,\iota}(G_\Q).$ In particular,  any $G_\Q$-invariant homomorphism $h: \Gal\left(\Q(\rho_{\pi,\iota},\ad^0\rho_{\tau,\iota})/\Q(\rho_{\pi,\iota})\right)\to T_\pi$ has image contained in $T_{\pi}^{g=1}\subsetneq T_\pi$. If $\pi$ is non-endoscopic, this shows $h = 0$ by Lemma \ref{lem:reducible_endoscopic}; if $\pi$ is endoscopic associated to $(\pi_1,\pi_2)$, the same argument applies because we cannot have $\rho_{\pi_1,\iota}(g) = 1$ or $\rho_{\pi_2,\iota} (g) = 1$ under the assumption that $\chi_{p,\cyc}(g)$ has infinite order. 
This shows (\ref{goal for vanishing of Lie group cohomology for 2nd ERL chebotarev}).
\end{proof}
\begin{prop} \label{prop:choosing_g_2ERL}
    Let $\pi$, $\iota$, $E_0$, and $\p$
    be as above with $\pi$ non-endoscopic, and suppose admissible primes exist for $\rho_\pi = \rho_{\pi,\p}$.
Suppose given the following data:
\begin{itemize}[label = $\circ$]
    \item A quadratic field $F\not\subset \Q(\rho_{\pi})$.
    \item A cuspidal automorphic representation $\tau$ of $\GL_2(\A_\Q)$ whose archimedean component is discrete series of even weight $k \geq 2$, such that $\tau$ does not have CM by any quadratic field $K \subset F(\rho_{\pi})$. 
\item A $G_\Q$-stable $O_\p$-lattice $T_\pi \subset V_{\pi,\p}$, and a non-torsion cocycle $c\in H^1(\Q, T_{\pi})$.
\end{itemize}
    Then there exists an element $g\in G_\Q$ such that:
    \begin{enumerate}
        \item \label{first property choosing g for 2nd ERL prop}$g$ is is admissible for $\rho_{\pi}$ and has nontrivial image in $\Gal(F/\Q)$.
        \item \label{second property choosing g for 2nd ERL prop}$\rho_{\tau, \iota}(g^2)$ has distinct eigenvalues.
        \item \label{third property choosing g for 2nd ERL prop}$c(g)$ has nonzero component in the 1-eigenspace for $g$. 
    \end{enumerate}    
\end{prop}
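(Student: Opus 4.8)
The plan is to construct $g$ in two stages. First I would produce $g_1\in G_\Q$ satisfying \ref{first property choosing g for 2nd ERL prop} and \ref{second property choosing g for 2nd ERL prop}, and then replace it by $g=g_1h$ for a suitable $h\in G_M$, where $M\coloneqq F(\rho_{\pi,\iota},\ad^0\rho_{\tau,\iota})$. Such an $h$ satisfies $\rho_{\pi,\iota}(h)=1$, $\ad^0\rho_{\tau,\iota}(h)=1$, and fixes $F$; consequently $g$ acts on $V_{\pi,\iota}$ exactly as $g_1$ (so $g$ is admissible, with the same one-dimensional $1$-eigenspace $L_0\subset T_\pi$, which is a direct summand since the generalized $1$-eigenvalue of an admissible element occurs with multiplicity one, cf.\ Lemma \ref{lem:M0_adm_def}), $\rho_{\tau,\iota}(h)$ is scalar so $\rho_{\tau,\iota}(g^2)$ is a scalar multiple of $\rho_{\tau,\iota}(g_1^2)$ and has the same eigenvalue ratio, and $g,g_1$ have the same image in $\Gal(F/\Q)$. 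Hence \ref{first property choosing g for 2nd ERL prop} and \ref{second property choosing g for 2nd ERL prop} are inherited from $g_1$, and only \ref{third property choosing g for 2nd ERL prop} must be arranged by the choice of $h$.

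For the first stage: by Lemma \ref{lem:admissible_TFAE} admissible elements exist, and whether an element is admissible depends only (through a closed condition, an intersection of clopen conditions on the finite quotients $\Gal(\Q(\rho_{\pi,n})/\Q)$) on its image in $\Gal(\Q(\rho_{\pi,\iota})/\Q)$. Since $F\not\subset\Q(\rho_{\pi,\iota})$ and $F$ is quadratic, $F$ is linearly disjoint from $\Q(\rho_{\pi,\iota})$ over $\Q$, so a compactness argument as in the proof of Lemma \ref{lem:admissible_TFAE} produces an admissible $g_1'\in G_\Q$ with nontrivial image in $\Gal(F/\Q)$. If $F(\rho_{\pi,\iota})\cap F(\ad^0\rho_{\tau,\iota})$ is infinite, then, $\pi$ being non-endoscopic, Lemma \ref{lem:unentangled}(i) guarantees that $\rho_{\tau,\iota}(g^2)$ has distinct eigenvalues for every admissible $g$, so we take $g_1=g_1'$. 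If that intersection is finite, then $\Q(\rho_{\pi,\iota})$ and $\Q(\ad^0\rho_{\tau,\iota})$ are disjoint over a finite extension; using that $\rho_{\tau,\iota}$ has open image in a $\GL_2$-type group ($\tau$ being non-CM, Theorem \ref{thm:nekovar}) one checks that the $\gamma$ for which $\rho_{\tau,\iota}(\gamma^2)$ has distinct eigenvalues form an open dense set which surjects onto every finite quotient of the image through which $F$ might factor, and a second compactness argument gives an admissible $g_1$ with nontrivial image in $\Gal(F/\Q)$ and $\rho_{\tau,\iota}(g_1^2)$ of distinct eigenvalues.

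For the second stage, fix $g_1$ with $1$-eigenspace $L_0\subset T_\pi$ and let $\pr_{L_0}\colon T_\pi\to L_0$ be projection along the complementary $g_1$-stable summand; for any $x$ acting trivially on $L_0$, the component $\pr_{L_0}(c(x))$ is independent of the cocycle representative. The restriction $\psi\coloneqq c|_{G_M}$ is a $G_\Q$-equivariant homomorphism $G_M\to T_\pi$ (note $G_M$ acts trivially on $T_\pi$), and it is nonzero: the kernel of restriction on $H^1(\Q,T_\pi)$ is the finite group $H^1(\Gal(M/\Q),T_\pi)$ (Lemma \ref{lem:inflation with tau}), which must vanish since $H^1(\Q,T_\pi)$ is $\varpi$-torsion-free (Lemma \ref{lem:H1_tors_free_new}) hence has no nontrivial finite subgroup, while $c\neq 0$. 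As $V_{\pi,\iota}$ is irreducible (Lemma \ref{lem:reducible_endoscopic}), $\image\psi$ has full rank in $T_\pi$, so $\pr_{L_0}\circ\psi\colon G_M\to L_0\cong O_\p$ is nonzero with image a nonzero ideal. Therefore $\{\pr_{L_0}(c(g_1h)):h\in G_M\}=\pr_{L_0}(c(g_1))+\pr_{L_0}(\psi(G_M))$ is a coset of a nonzero ideal of $O_\p$, hence contains a nonzero element; choosing $h$ realizing it gives $g=g_1h$ with $\pr_{L_0}(c(g))\neq 0$, i.e.\ \ref{third property choosing g for 2nd ERL prop}.

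The main obstacle is the first stage, and in particular imposing \ref{second property choosing g for 2nd ERL prop} simultaneously with admissibility: \emph{a priori} every admissible $g$ might force $\rho_{\tau,\iota}(g^2)$ to have a repeated eigenvalue. This is exactly what the dichotomy of Lemma \ref{lem:unentangled} --- which uses the hypothesis that $\tau$ has no CM by a quadratic subfield of $F(\rho_{\pi,\iota})$, together with the large-image input of Theorem \ref{thm:nekovar} --- is designed to rule out. The remaining compactness and Chebotarev bookkeeping is routine and parallels the proof of Lemma \ref{lem:loc_cheb_rk0}.
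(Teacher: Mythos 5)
Your two-stage structure (first arrange (1) and (2), then adjust by $h\in G_M$ with $M = F(\rho_\pi,\ad^0\rho_{\tau,\iota})$ for (3)) is sound, and Stage 2 is essentially the paper's argument; the observation that adjusting by $h\in G_M$ preserves (1) and (2) because $\rho_\pi(h)=1$ and $\rho_{\tau,\iota}(h)$ is scalar is correct and is what the paper does as well. However, there is a genuine gap in Stage 1.

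In the branch where $F(\rho_{\pi,\iota})\cap F(\ad^0\rho_{\tau,\iota})$ is finite, you write ``using that $\rho_{\tau,\iota}$ has open image in a $\GL_2$-type group ($\tau$ being non-CM, Theorem \ref{thm:nekovar})'' --- but the hypothesis only forbids CM by quadratic fields $K\subset F(\rho_\pi)$. It is entirely possible that $\tau$ has CM by some imaginary quadratic $K\not\subset F(\rho_\pi)$; Lemma \ref{lem:unentangled} shows this cannot happen in the infinite-intersection branch, but nothing rules it out in the finite-intersection branch. When $\tau$ is CM, $\rho_{\tau,\iota}$ has image inside the normalizer of a torus, Theorem \ref{thm:nekovar} does not apply, and your density/compactness argument for producing an admissible $g_1$ with nontrivial $\Gal(F/\Q)$-image and $\rho_{\tau,\iota}(g_1^2)$ of distinct eigenvalues collapses. (The application of this proposition in Theorem \ref{ultimate theorem for second TRL} genuinely needs the CM case: the auxiliary $\sigma_0$ there is only shown not to have CM by subfields of $F(\rho_\pi)$.)

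The paper's proof handles the CM case by decoupling the two constraints: first fix $g$ satisfying (1) and, when $\tau$ has CM by $K$, also with \emph{trivial} image in $\Gal(K/\Q)$ (possible because $K\not\subset F(\rho_\pi)$ and $F\not\subset\Q(\rho_\pi)$, so both are independent of the admissibility constraint). Then, since the Hodge--Tate weights of $\rho_{\tau,\iota}$ are distinct, one finds $h\in G_{K F(\rho_\pi)}$ with $\rho_{\tau,\iota}(h^2)$ of distinct eigenvalues; because $g$ and $h$ both land in $G_K$, their $\rho_{\tau,\iota}$-images lie in an abelian torus and commute, so $\rho_{\tau,\iota}((hg)^2) = \rho_{\tau,\iota}(h^2)\rho_{\tau,\iota}(g^2)$ has distinct eigenvalues whenever $\rho_{\tau,\iota}(g^2)$ is scalar. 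Either $g$ or $hg$ then satisfies (1) and (2). You should either adopt this commutativity trick for the CM case, or add a justification that $\tau$ is non-CM in your finite-intersection branch --- but as stated, no such justification is available.

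As a minor point: you invoke Lemma \ref{lem:H1_tors_free_new} to kill the finite kernel of restriction, but that lemma requires Assumption \ref{assumptions_on_p}(\ref{assume:irreducible}), which is not in force in Appendix \ref{sec:large_image}. It is simpler (and matches the paper) to observe directly that $c$ is non-torsion by hypothesis, hence cannot lie in the finite subgroup $H^1(\Gal(M/\Q),T_\pi)$.
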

Note the last condition is independent of the choice of cocycle representative for $c$.
\begin{proof}
First choose $g$ satisfying (\ref{first property choosing g for 2nd ERL prop}), and with the additional property that $g$ has trivial image in $\Gal(K/\Q)$ 
if $\tau$ has CM by a quadratic field $K$. (This choice is possible because we have $F\not\subset \Q(\rho_{\pi})$ and $K\not\subset F(\rho_{\pi})$.) 
\begin{claim}
    There exists $h\in G_{F(\rho_{\pi})}$ such that $hg$ satisfies  (\ref{second property choosing g for 2nd ERL prop}).
\end{claim}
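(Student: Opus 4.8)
The plan is to project everything to $\PGL_2$ and reduce to an elementary statement about a coset in the image group. Let $\bar\rho_\tau\colon G_\Q\to\PGL_2(\overline\Q_p)$ be the composite of $\rho_{\tau,\iota}$ with $\GL_2\to\PGL_2$; its kernel equals that of $\ad^0\rho_{\tau,\iota}$, so $F(\ad^0\rho_\tau)$ is the fixed field of $\ker\bar\rho_\tau$. Put $\bar H\coloneqq\bar\rho_\tau(G_{F(\rho_\pi)})$ and $\bar g_0\coloneqq\bar\rho_\tau(g)$. Any $h\in G_{F(\rho_\pi)}$ acts trivially on $V_{\pi,\iota}$ and fixes $F$, so replacing $g$ by $hg$ automatically preserves property (\ref{first property choosing g for 2nd ERL prop}); thus it suffices to arrange that $\rho_{\tau,\iota}((hg)^2)$ has distinct eigenvalues. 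For $A\in\GL_2(\overline\Q_p)$ the matrix $A^2$ has distinct eigenvalues exactly when $\trace(A)\ne 0$ and $\trace(A)^2\ne 4\det A$, a scale-invariant condition; hence it is governed by the image of $A$ in $\PGL_2$, and the ``bad'' elements form a proper Zariski-closed subset $Z\subset\PGL_2(\overline\Q_p)$. So I must find $\bar h\in\bar H$ with $\bar h\bar g_0\notin Z$ and then take for $h$ any lift of $\bar h$.

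Suppose first $\tau$ has no CM, and split on whether $F(\rho_\pi)\cap F(\ad^0\rho_\tau)$ is infinite. If it is, Lemma \ref{lem:unentangled} applies (its hypothesis on $\tau$ holds a fortiori), and since $\pi$ is non-endoscopic its conclusion (i) gives that $\rho_{\tau,\iota}(g^2)$ already has distinct eigenvalues, so $h=1$ works. If $F(\rho_\pi)\cap F(\ad^0\rho_\tau)$ is finite over $F$, then $\bar H$ has finite index in $\Gal(F(\ad^0\rho_\tau)/\Q)=\bar\rho_\tau(G_\Q)$, which by Theorem \ref{thm:nekovar} is open in a compact group Zariski-dense in an inner form of $\PGL_2$ over $\overline\Q_p$. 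Hence $\bar H$, and so the coset $\bar H\bar g_0$, is Zariski-dense and cannot be contained in the proper closed set $Z$; a good $\bar h$ exists.

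If instead $\tau$ has CM by an imaginary quadratic field $K$, then $K\not\subset F(\rho_\pi)$ by the standing hypothesis on $\tau$, and $F(\rho_\pi)\cap F(\ad^0\rho_\tau)$ is finite over $\Q$ (the contrapositive of the first assertion of Lemma \ref{lem:unentangled}, which forces $\tau$ non-CM when that intersection is infinite). Write $\rho_{\tau,\iota}|_{G_K}=\psi\oplus\psi^c$; then $\bar\rho_\tau(G_K)$ is the abelian index-two ``torus'' $\bar T\subset\bar\rho_\tau(G_\Q)$, on which $\bar\rho_\tau$ is controlled by the infinite-order character $\psi/\psi^c$. Since $g$ has trivial image in $\Gal(K/\Q)$, we have $\bar g_0\in\bar T$; take now $h\in G_{KF(\rho_\pi)}$, so that also $\bar h\in\bar T$, and $\bar h\bar g_0\in Z$ exactly when $(\psi/\psi^c)(hg)=\pm1$. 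One checks that $L\coloneqq KF(\rho_\pi)\cap F(\ad^0\rho_\tau)$ is finite over $K$ (its intersection with $F(\rho_\pi)$ is finite over $\Q$ and $[KF(\rho_\pi):F(\rho_\pi)]=2$), so $\bar\rho_\tau(G_{KF(\rho_\pi)})=\Gal(F(\ad^0\rho_\tau)/L)$ has finite index in $\bar T$ and $\psi/\psi^c$ still has infinite image on $G_{KF(\rho_\pi)}$. Therefore only finitely many cosets of $h$ in the infinite group $G_{KF(\rho_\pi)}$ are bad, and a good $h$ exists.

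The step I expect to be the main obstacle is the CM case: one must confirm that the anticyclotomic character $\psi/\psi^c$ remains of infinite order after restriction to $G_{KF(\rho_\pi)}$, which is precisely where one needs both the reduction via Lemma \ref{lem:unentangled} to the finiteness of $F(\rho_\pi)\cap F(\ad^0\rho_\tau)$ and the disjointness $K\not\subset F(\rho_\pi)$ built into the hypotheses. Everything else is a direct appeal to Lemma \ref{lem:unentangled} and Theorem \ref{thm:nekovar} or formal $\PGL_2$-geometry.
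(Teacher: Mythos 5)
Your proof is correct, and its skeleton coincides with the paper's: you split on whether $F(\rho_{\pi})\cap F(\ad^0\rho_{\tau,\iota})$ is infinite (invoking Lemma~\ref{lem:unentangled} when it is), and in the finite case treat non-CM and CM $\tau$ separately using Theorem~\ref{thm:nekovar} and the infinitude of the anticyclotomic character $\psi/\psi^c$, respectively. The two places you diverge are local repackagings rather than different ideas. In the non-CM finite case you make explicit the Zariski-density argument that underlies the paper's terse claim that ``$x\rho_{\tau,\iota}(g)x\rho_{\tau,\iota}(g)$ having distinct eigenvalues is an open condition''; this is a genuine improvement in exposition. In the CM case the paper explicitly produces a single $h_0\in G_K$ with $\rho_{\tau,\iota}(h_0^2)$ regular semisimple (using distinct Hodge--Tate weights), passes to a power lying in $G_{KF(\rho_{\pi})}$, and then argues by dichotomy (either $g$ or $hg$ works, using commutativity); you instead observe directly that the bad $h$ fill at most two cosets of the infinite-index subgroup $\ker(\psi/\psi^c)\cap G_{KF(\rho_{\pi})}$, which avoids the dichotomy and is slightly cleaner. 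One small slip: $\bar\rho_{\tau}(G_\Q)$ equals $\Gal(\Q(\ad^0\rho_{\tau,\iota})/\Q)$, not $\Gal(F(\ad^0\rho_{\tau,\iota})/\Q)$; the latter surjects onto the former with kernel of order at most $2$, so your finite-index conclusion about $\bar H$ is unaffected, but the asserted equality is not literally true when $F\not\subset\Q(\ad^0\rho_{\tau,\iota})$.
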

\begin{proof}[Proof of claim]
    If $F(\rho_{\pi}) \cap F(\ad^0 \rho_{\tau,\iota})$ is infinite then taking $h = 1$ suffices by  Lemma \ref{lem:unentangled}, so we may assume without loss of generality that $F(\rho_{\pi}) \cap F(\ad^0 \rho_{\tau,\iota})$ is finite. 
If $\tau$ is non-CM, then because $F(\rho_{\pi}) \cap F(\ad^0 \rho_{\tau,\iota})$ is finite, 
    Theorem \ref{thm:nekovar} implies that the image of $\rho_{\tau,\iota}|_{G_{F(\rho_{\pi})}}$ contains a compact open subgroup of $\set{x \in D^\times\,: \, \Nm(x) \in \Q_p^\times} \hookrightarrow \GL_2(\overline\Q_p)$, for a quaternion algebra $D$ over a finite extension $E$ of $\Q_p$. Since $x\rho_{\tau,\iota}(g) x\rho_{\tau,\iota}(g) $ having distinct eigenvalues is an open condition on $x\in D^\times$, the claim follows when $\tau$ is non-CM.

    If on the other hand $\tau$ has CM by an imaginary quadratic field $K$, then because $\rho_{\tau,\iota}|_{G_{\Q_p}}$ has distinct Hodge-Tate weights, there exists $h_0 \in G_K$ such that $\rho_{\tau,\iota}(h_0)$ has eigenvalues whose ratio is of infinite order. After replacing $h_0$ with a finite power, it acts trivially on $F(\rho_{\pi}) \cap F(\ad^0\rho_{\tau,\iota})$; thus there exists $h\in G_{K\cdot F(\rho_{\pi})}$ such that $\rho_{\tau,\iota}(h^2)$ has distinct eigenvalues. 
     Since $g$ has trivial image in $\Gal(K/\Q)$, $\rho_{\tau,\iota}(g)$ and $\rho_{\tau,\iota}(h)$ commute; in particular, if $\rho_{\tau,\iota}(g^2)$ is scalar, then  $\rho_{\tau,\iota}(hghg) = \rho_{\tau,\iota}(h^2)\rho_{\tau,\iota}(g^2)$ has distinct eigenvalues. Hence either $g$ or $hg$ satisfies (\ref{second property choosing g for 2nd ERL prop}), which shows the claim.
\end{proof}
Replacing $g$ with $hg$ as in the claim, we may now assume $g$ satisfies both (\ref{first property choosing g for 2nd ERL prop}) and (\ref{second property choosing g for 2nd ERL prop}).
By Lemma \ref{lem:inflation with tau} and inflation-restriction, $c$ has nonzero image in $$H^1(F(\rho_{\pi}, \ad^0\rho_{\tau,\iota}), T_\pi)^{G_\Q} = \Hom_{G_\Q}(\Gal(\overline \Q/F(\rho_{\pi}, \ad^0\rho_{\tau,\iota})), T_\pi),$$ and because $V_{\pi,\p}$ is absolutely irreducible, 
 there exists $h\in G_F$ such that $\ad^0\rho_{\tau,\iota}(h) = \rho_{\pi}(h) = 1$ and $c(h)$ has nonzero component in the 1-eigenspace for $g$. Then  either $g$ or $hg$ satisfies (\ref{first property choosing g for 2nd ERL prop}), (\ref{second property choosing g for 2nd ERL prop}), and (\ref{third property choosing g for 2nd ERL prop}), which proves the proposition.

\end{proof}

Finally, we have the endoscopic analogue of Proposition \ref{prop:choosing_g_2ERL}.
\begin{prop} \label{prop:choosing_g_2ERL_endo}
 Let $\pi$, $\iota$, $E_0$, and $\p$ be as above, with    $\pi$  endoscopic associated to  pair $(\pi_1,\pi_2)$ of automorphic representations of $\GL_2(\A_\Q)$; and assume that $E_0$ is a common strong coefficient field of 
 $\pi_1$ and $\pi_2$. Let $j = 1$ or 2, and suppose there exist admissible primes for $\rho_{\pi,\p}$ which are BD-admissible for $\rho_{\pi_j} = \rho_{\pi_j,\p}$.
  
  Suppose given the following data:
\begin{itemize}[label = $\circ$]
    \item A quadratic field $F\not\subset \Q(\rho_{\pi})$.
    \item A cuspidal automorphic representation $\tau$ of $\GL_2$ whose archimedean component is discrete series of weight at least 2, such that $\tau$ does not have CM by any quadratic field $K \subset F(\rho_{\pi})$. 
\item A $G_\Q$-stable $O_\p$ lattice $T_{\pi_j} \subset V_{\pi_j, \p}$, and a non-torsion cocycle $c\in H^1(\Q, T_{\pi_j})$.
\end{itemize}
    Then there exists an element $g\in G_\Q$ such that:
    \begin{enumerate}
        \item $g$ is is admissible for $\rho_{\pi}$ and BD-admissible for $\rho_{\pi_j}$, and has nontrivial image in $\Gal(F/\Q)$.
        \item $\rho_{\tau, \iota}(g^2)$ has distinct eigenvalues.
        \item $c(g)$ has nonzero component in the 1-eigenspace for $g$. 
    \end{enumerate}    
\end{prop}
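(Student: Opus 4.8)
The proof will follow that of Proposition \ref{prop:choosing_g_2ERL} line by line, with the modifications dictated by the decomposition $\rho_{\pi,\iota} = \rho_{\pi_1,\iota}\oplus\rho_{\pi_2,\iota}$. The one new auxiliary input I would record first is the endoscopic analogue of Lemma \ref{lem:inflation with tau}: for any number field $F$ and any $G_\Q$-stable $O_\p$-lattice $T_{\pi_j}\subset V_{\pi_j,\p}$, the group $H^1(\Gal(F(\rho_{\pi,\iota},\ad^0\rho_{\tau,\iota})/\Q), T_{\pi_j})$ is finite. Its proof is identical to that of Lemma \ref{lem:inflation with tau}: one inflates from $\Gal(F(\rho_{\pi,\iota})/\Q)$, observing that $H^1(\Gal(F(\rho_{\pi,\iota})/\Q), T_{\pi_j})$ is finite by Corollary \ref{cor:Galois_coh_restr} applied to $\rho_{\pi,\iota}$ (which has distinct Hodge--Tate weights) together with the fact that $T_{\pi_j}$ is, up to commensurability, a direct summand of a $G_\Q$-stable lattice in $V_{\pi,\p}$; then for the residual term one picks $g\in G_\Q$ with $\ad^0\rho_{\tau,\iota}(g) = 1$ and $\chi_{p,\cyc}(g)$ of infinite order, so that $\rho_{\pi_j,\iota}(g)\neq 1$, and concludes using the absolute irreducibility of $\overline\rho_{\pi_j,\p}$.

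Next I would choose the base point $g\in G_\Q$ by the Chebotarev density theorem so that: (a) $g$ is admissible for $\rho_{\pi,\p}$ and, using Remark \ref{rmk:BD_adm}(\ref{rmk:BD_adm_dichotomy_part}) and the hypothesis on $\p$, BD-admissible for $\rho_{\pi_j}$; (b) $g$ has nontrivial image in $\Gal(F/\Q)$ --- possible since $F\not\subset\Q(\rho_{\pi})$; (c) if $\tau$ has CM by a quadratic field $K$, then $g$ is trivial on $\Gal(K/\Q)$ --- possible since $K\not\subset F(\rho_{\pi})$; and (d) $\rho_{\pi_{3-j},\iota}(g^2)$ has distinct eigenvalues --- possible since $\rho_{\pi_{3-j},\iota}$ does not act through scalars on any finite-index subgroup. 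Note that (a) already forces $\rho_{\pi_j,\iota}(g^2)$ to have distinct eigenvalues, since $\rho_{\pi_j,\iota}(g)$ has eigenvalues $\chi_{p,\cyc}(g)$ and $1$ with $\chi_{p,\cyc}(g)^2\neq 1$ by BD-admissibility.

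The central step is to adjust $g$ by an element $h\in G_{F(\rho_{\pi})}$ so that $\rho_{\tau,\iota}((hg)^2)$ has distinct eigenvalues, following the ``Claim'' inside the proof of Proposition \ref{prop:choosing_g_2ERL}. If $F(\rho_{\pi,\iota})\cap F(\ad^0\rho_{\tau,\iota})$ is finite the argument is verbatim \emph{loc. cit.}: for $\tau$ non-CM one uses the largeness of $\rho_{\tau,\iota}(G_{F(\rho_{\pi})})$ from Theorem \ref{thm:nekovar} and the openness of the ``distinct eigenvalues'' condition, and for $\tau$ CM by $K$ one finds $h\in G_{KF(\rho_{\pi})}$ with $\rho_{\tau,\iota}(h^2)$ regular semisimple and uses that $g$ is trivial on $\Gal(K/\Q)$ so that $\rho_{\tau,\iota}(g)$ and $\rho_{\tau,\iota}(h)$ commute. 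If instead $F(\rho_{\pi,\iota})\cap F(\ad^0\rho_{\tau,\iota})$ is infinite, Lemma \ref{lem:unentangled} applies --- its CM hypothesis on $\tau$ is exactly a hypothesis of the proposition --- and since $\pi$ is endoscopic we land in case (ii): $\pi_{j'}\cong\tau^\sigma\otimes\chi$ for some $j'\in\{1,2\}$, some $\sigma\in\operatorname{Aut}(\overline\Q/\Q)$, and some finite-order Hecke character $\chi$. As a scalar twist and a $\overline\Q_p$-field automorphism do not affect whether a semisimple matrix has distinct eigenvalues, $\rho_{\tau,\iota}(g^2)$ has distinct eigenvalues if and only if $\rho_{\pi_{j'},\iota}(g^2)$ does; by (a) this holds when $j'=j$ and by (d) it holds when $j'=3-j$, so $h=1$ works. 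This treatment of the infinite-intersection case --- in particular the possibility $j'=3-j$, which has no counterpart in the non-endoscopic argument --- is the main obstacle, and is the reason for building condition (d) into the choice of $g$.

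Finally, replacing $g$ by the adjusted element (which still satisfies (1) and (2)), I would apply the $T_{\pi_j}$-analogue of Lemma \ref{lem:inflation with tau} together with inflation--restriction: since $c$ is non-torsion, it has nonzero image in $H^1(F(\rho_{\pi,\iota},\ad^0\rho_{\tau,\iota}), T_{\pi_j})^{G_\Q} = \Hom_{G_\Q}(\Gal(\overline\Q/F(\rho_{\pi,\iota},\ad^0\rho_{\tau,\iota})), T_{\pi_j})$, and the absolute irreducibility of $V_{\pi_j,\p}$ produces $h\in G_F$ with $\rho_{\pi,\iota}(h) = 1$ and $\ad^0\rho_{\tau,\iota}(h) = 1$ such that $c(h)$ has nonzero component in the $1$-eigenspace of $g$. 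Then one of $g$ or $hg$ simultaneously satisfies (1), (2) and (3), which completes the proof. The remaining points --- the linear-disjointness verifications in the Chebotarev choice of $g$, and the fact that ``$\rho_{\pi_{3-j},\iota}(g^2)$ regular semisimple'' is a generic condition --- are routine.
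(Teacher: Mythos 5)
Your proposal tracks the paper's proof closely and your overall plan is sound, but there is a structural difference at the step you dismiss as routine, and it hides the one place where the paper does substantive work.

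The paper does \emph{not} impose your condition (d) on $g$ at the outset. Instead it proceeds as in Proposition \ref{prop:choosing_g_2ERL}: when $F(\rho_{\pi})\cap F(\ad^0\rho_{\tau,\iota})$ is finite the claim is verbatim from the non-endoscopic case, and when it is infinite the paper invokes Lemma \ref{lem:unentangled} to get $\pi_{i}\cong\tau^\sigma\otimes\chi$ for some $i$, observes that the CM hypothesis on $\tau$ (CM field would land in $\Q(\rho_{\pi_i})\subset F(\rho_\pi)$) forces $\tau$, hence $\pi_i$, to be non-CM, so that $\pi_1$ and $\pi_2$ are not both CM, and \emph{only then} reduces to making $\rho_{\pi_2}(g^2)$ regular semisimple, which it achieves by showing $\Q(\rho_{\pi_1})\cap\Q(\rho_{\pi_2})$ is finite over $\Q(\mu_{p^\infty})$ via (a variant of) Lemma \ref{lem:endoscopic_independence}. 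That independence lemma has ``not both CM by the same field'' as its hypothesis, which is exactly what the $\tau$-non-CM observation buys. Your proposal builds (d) into the Chebotarev choice unconditionally, and justifies it with the remark that $\rho_{\pi_{3-j},\iota}$ is not projectively finite --- but that is not the relevant verification. What must be shown is that (d) is achievable \emph{simultaneously} with admissibility and BD-admissibility for $\rho_{\pi_j}$; admissibility constrains $\rho_{\pi_{3-j}}(g)$ to have eigenvalues $\{\alpha,\nu_g/\alpha\}$ with $\alpha\not\equiv\pm\nu_g,\pm1,\nu_g^2,\nu_g^{-1}$, while (d) additionally needs $\alpha^2\neq\pm\nu_g$, and neither this extra constraint nor its Chebotarev compatibility with the $\rho_{\pi_j}$-side is automatic. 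In the CM case for $\pi_{3-j}$ it also requires $g$ to be trivial on the CM field, an extra condition you do not record. None of this is catastrophic --- the argument can be made to work --- but the verification of simultaneity is precisely the content of the endoscopic-independence argument, and calling it routine is where your proof currently has a gap. A secondary, smaller imprecision: your ``iff'' between $\rho_{\tau,\iota}(g^2)$ and $\rho_{\pi_{j'},\iota}(g^2)$ having distinct eigenvalues, via ``a scalar twist and a $\overline\Q_p$-field automorphism,'' is not quite a one-liner (the automorphism $\sigma$ acts on coefficient fields, not on $\rho_{\tau,\iota}(g^2)$ itself in any continuous way); the paper avoids this by using the equality of fields $F(\ad^0\rho_{\tau,\iota})=F(\ad^0\rho_{\pi_{j'},\iota})$ and working directly with $\rho_{\pi_2}(g^2)$.
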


\begin{proof}
Without loss of generality, suppose $j = 1$. Clearly there exists $g\in G_\Q$ satisfying (\ref{first property choosing g for 2nd ERL prop}), such that, if $\tau$ has CM by an imaginary quadratic field $K$, $g$ has trivial image in $\Gal(K/\Q)$. 
    We next claim:
    \begin{claim}
        There exists $g\in G_\Q$ satisfying (\ref{first property choosing g for 2nd ERL prop}) and (\ref{second property choosing g for 2nd ERL prop}).
    \end{claim}
\begin{proof}
    If $F(\rho_{\pi}) \cap F(\ad^0 \rho_{\tau, \iota})$ is finite, then we conclude using the same argument as for the claim in the proof of Proposition \ref{prop:choosing_g_2ERL}. 
    By Lemma \ref{lem:unentangled}, we may therefore assume that, for $i = 1$ or 2,
    $\pi_i \cong \tau^\sigma \otimes \chi$ for some finite-order Hecke character $\chi$ and automorphism $\sigma \in \automorphisms(\overline \Q/\Q)$. In this case, $\tau$ is necessarily non-CM (because its CM field would be contained in $\Q(\rho_{\pi_i})$), so $\pi_1$ and $\pi_2$ cannot both be CM; and it suffices to show there exists $g\in G_\Q$ 
satisfying (\ref{first property choosing g for 2nd ERL prop}), such that $\rho_{\pi_2}(g^2)$ has distinct eigenvalues. Hence it suffices to show that $\Q(\rho_{\pi_1}) \cap \Q(\rho_{\pi_2})$ is finite over $\Q(\mu_{p^\infty})$; and this follows from an argument very similar to Lemma \ref{lem:endoscopic_independence}, using that $\pi_1$ and $\pi_2$ are not both CM. 
\end{proof}
Now take $g$ as in the claim. By Lemma \ref{lem:inflation with tau}, $c$ has nonzero image in
$$H^1(F(\rho_{\pi}, \ad^0 \rho_{\tau,\iota}), T_{\pi_1}).$$ Arguing as in Proposition \ref{prop:choosing_g_2ERL} and using the absolute irreducibility of $\rho_{\pi_1}$, the proposition follows. 
\end{proof}

\bibliographystyle{plain} 
\bibliography{mybib}

\end{document}